\newtheorem{theorem}{Theorem}[section]
\newtheorem{corollary}[theorem]{Corollary}
\newtheorem{lemma}[theorem]{Lemma}
\newtheorem{proposition}[theorem]{Proposition}
\newtheorem{definition}[theorem]{Definition}
\newtheorem{hypothesis}[theorem]{Hypothesis}
\newtheorem{remark}[theorem]{Remark}
\newtheorem{notation}[theorem]{Notation}
\newtheorem{example}[theorem]{Example}
\newtheorem{conjecture}[theorem]{Conjecture}
\newtheorem{condition}[theorem]{Condition}
\newcommand{\hooklongrightarrow}{\lhook\joinrel\longrightarrow}
\newcommand{\twoheadlongrightarrow}{\relbar\joinrel\twoheadrightarrow}
\newcommand{\ra}{\rightarrow}
\newcommand{\lra}{\longrightarrow}
\newcommand{\ul}{\underline}
\newcommand{\ttr}{\texttt r}
\newcommand{\bA}{\mathbb A}
\newcommand{\bC}{\mathbb C}
\newcommand{\bG}{\mathbb G}
\newcommand{\Q}{\mathbb Q}
\newcommand{\bR}{\mathbb R}
\newcommand{\bT}{\mathbb T}
\newcommand{\Z}{\mathbb Z}
\newcommand{\bU}{\mathbb U}
\newcommand{\cN}{\mathcal N}
\newcommand{\cL}{\mathcal L}
\newcommand{\co}{\mathcal O}
\newcommand{\cR}{\mathcal R}
\newcommand{\cH}{\mathcal H}
\newcommand{\cC}{\mathcal C}
\newcommand{\cS}{\mathcal S}
\newcommand{\cD}{\mathcal D}
\newcommand{\cI}{\mathcal I}
\newcommand{\cW}{\mathcal W}
\newcommand{\cM}{\mathcal M}
\newcommand{\cF}{\mathcal F}
\newcommand{\cV}{\mathcal V}
\newcommand{\cE}{\mathcal E}
\newcommand{\cU}{\mathcal U}
\newcommand{\cZ}{\mathcal Z}
\newcommand{\ur}{\mathfrak r}
\newcommand{\fn}{\mathfrak n}
\newcommand{\fh}{\mathfrak h}
\newcommand{\fm}{\mathfrak{m}}
\newcommand{\ub}{\mathfrak b}
\newcommand{\fl}{\mathfrak l}
\newcommand{\fp}{\mathfrak p}
\newcommand{\ug}{\mathfrak g}
\newcommand{\fX}{\mathfrak X}
\newcommand{\fN}{\mathfrak N}
\newcommand{\fM}{\mathfrak M}
\newcommand{\fC}{\mathfrak C}
\newcommand{\ft}{\mathfrak t}
\newcommand{\fa}{\mathfrak a}
\newcommand{\fz}{\mathfrak z}
\newcommand{\fd}{\mathfrak d}
\newcommand{\fe}{\mathfrak e}
\newcommand{\fL}{\mathfrak L}
\newcommand{\fZ}{\mathfrak Z}
\newcommand{\sS}{\mathscr S}
\newcommand{\sF}{\mathscr F}
\newcommand{\sZ}{\mathscr Z}
\newcommand{\sU}{\mathscr U}
\newcommand{\sW}{\mathscr W}
\newcommand{\sV}{\mathscr V}
\newcommand{\sI}{\mathscr I}
\newcommand{\sG}{\mathscr G}
\newcommand{\sT}{\mathscr T}
\newcommand{\Addresses}{{
\bigskip
\footnotesize	
		
C. ~Breuil, \textsc{CNRS, B\^atiment 307, 
	Facult\'e d'Orsay, Universit\'e Paris-Saclay, 
	91405 Orsay Cedex, France}\par\nopagebreak
\textit{E-mail address}, C.~Breuil: \texttt{christophe.breuil@universite-paris-saclay.fr}

\medskip

Y.~Ding, \textsc{B.I.C.M.R., Peking University,
	No.5 Yiheyuan Road Haidian District,
	Beijing, P.R. China 100871}\par\nopagebreak
\textit{E-mail address}, Y.~Ding: \texttt{yiwen.ding@bicmr.pku.edu.cn}
}}
\DeclareMathOperator{\diag}{\mathrm diag}
\DeclareMathOperator{\la}{\mathrm la}
\DeclareMathOperator{\gl}{\mathfrak gl}
\DeclareMathOperator{\GL}{\mathrm GL}
\DeclareMathOperator{\gr}{\mathrm gr}
\DeclareMathOperator{\Fil}{\mathrm Fil}
\DeclareMathOperator{\Res}{\mathrm Res}
\DeclareMathOperator{\Gal}{\mathrm Gal}
\DeclareMathOperator{\Hom}{\mathrm Hom}
\DeclareMathOperator{\End}{\mathrm End}
\DeclareMathOperator{\cris}{\mathrm cris}
\DeclareMathOperator{\rig}{\mathrm rig}
\DeclareMathOperator{\an}{\mathrm an}
\DeclareMathOperator{\Spec}{\mathrm Spec}
\DeclareMathOperator{\Symm}{\mathrm Symm}
\DeclareMathOperator{\dR}{\mathrm dR}
\DeclareMathOperator{\Frob}{\mathrm Frob}
\DeclareMathOperator{\Ind}{\mathrm Ind}
\DeclareMathOperator{\unr}{\mathrm unr}
\DeclareMathOperator{\Ker}{\mathrm Ker}
\DeclareMathOperator{\pr}{\mathrm pr}
\DeclareMathOperator{\Ext}{\mathrm Ext}
\DeclareMathOperator{\Spm}{\mathrm Spm}
\DeclareMathOperator{\Spf}{\mathrm Spf}
\DeclareMathOperator{\Ima}{\mathrm Im}
\DeclareMathOperator{\lalg}{\mathrm lalg}
\DeclareMathOperator{\id}{\mathrm id}
\DeclareMathOperator{\dett}{\mathrm det}
\DeclareMathOperator{\alg}{\mathrm alg}
\DeclareMathOperator{\cyc}{\mathrm cyc}
\DeclareMathOperator{\Supp}{\mathrm Supp}
\DeclareMathOperator{\Ad}{\mathrm Ad}
\DeclareMathOperator{\sss}{\mathrm ss}
\DeclareMathOperator{\red}{\mathrm red}
\DeclareMathOperator{\Art}{\mathrm Art}
\DeclareMathOperator{\rk}{\mathrm rk}
\DeclareMathOperator{\Tor}{\mathrm Tor}
\DeclareMathOperator{\wt}{\mathrm wt}
\DeclareMathOperator{\fss}{\mathrm fs}
\DeclareMathOperator{\val}{\mathrm val}
\DeclareMathOperator{\univ}{\mathrm univ}
\DeclareMathOperator{\Coker}{\mathrm Coker}
\DeclareMathOperator{\rec}{\mathrm rec}
\DeclareMathOperator{\pcr}{\mathrm pcr}
\DeclareMathOperator{\tw}{\mathrm tw}
\DeclareMathOperator{\adm}{\mathrm adm}
\DeclareMathOperator{\tri}{\mathrm tri}
\DeclareMathOperator{\reg}{\mathrm reg}
\DeclareMathOperator{\cind}{c-\mathrm{ind}}
\DeclareMathOperator{\pst}{pst}
\DeclareMathOperator{\DF}{DF}
\DeclareMathOperator{\loc}{loc}
\DeclareMathOperator{\pdR}{\mathrm pdR}
\DeclareMathOperator{\ver}{\mathrm ver}
\DeclareMathOperator{\gen}{\mathrm gen}
\DeclareMathOperator{\Aut}{\mathrm Aut}
\DeclareMathOperator{\Fro}{\mathrm Fr}
\DeclareMathOperator{\rh}{\mathrm rh}
\DeclareMathOperator{\Ch}{\mathrm Ch}
\DeclareMathOperator{\Mod}{\mathrm Mod}
\DeclareMathOperator{\Loc}{\mathrm Loc}
\DeclareMathOperator{\BB}{\mathrm BB}
\DeclareMathOperator{\sd}{\mathrm sd}
\newcommand{\lWPmin}{\ce{^$P$_{min}$\sW$}}
\newcommand{\lWPmax}{\ce{^$P$_{max}$\sW$}}
\DeclareMathOperator{\sm}{\mathrm sm}
\DeclareMathOperator{\qc}{\mathrm qc}
\begin{document}	
	
\title{Bernstein eigenvarieties}

\author{Christophe Breuil and Yiwen Ding}
\date{}
\maketitle

\begin{abstract}
We construct parabolic analogues of (global) eigenvarieties, of patched eigenvarieties and of (local) trianguline varieties, that we call respectively Bernstein eigenvarieties, patched Bernstein eigenvarieties, and Bernstein paraboline varieties. We study the geometry of these rigid analytic spaces, in particular (generalizing results of Breuil-Hellmann-Schraen) we show that their local geometry can be described by certain algebraic schemes related to the generalized Grothendieck-Springer resolution. We deduce several local-global compatibility results, including a classicality result (with no trianguline assumption at $p$), and new cases towards the locally analytic socle conjecture of Breuil in the non-trianguline case.
\end{abstract}
	
{\hypersetup{linkcolor=black}
\tableofcontents}

\numberwithin{equation}{section}
	
\numberwithin{theorem}{section}	

\newpage

\section{Introduction}\label{intro}

Let $p$ be a prime number. The study of $p$-adic eigenvarieties is an important and fruitful theme in arithmetic geometry. This paper is motivated by the role that eigenvarieties play in the study of local-global compatibility problems in the ($p$-adic) Langlands program (e.g.\ see \cite{BHS3} or \cite{Ding7}). The ``classical'' theory of eigenvarieties has the restriction that one can only see finite slope $p$-adic automorphic forms or representations, or, in terms of Galois representations, trianguline representations. In order to extend the theory to the \textit{non-trianguline} case, we construct in this paper a parabolic version of eigenvarieties, that we call Bernstein eigenvarieties. Note that there was already some work in that direction, cf.\ \cite{HL}, \cite{Lo20p}, \cite{BW20} (see after Theorem \ref{centerintro} below for a brief comparison with \cite{HL}). These spaces parametrize certain $p$-adic automorphic (resp.\ Galois) representations which are not of finite slope (resp.\ not trianguline). Following the strategy and methods in the series of articles \cite{BHS1}, \cite{BHS2}, \cite{BHS3} (which themselves are based on many previous results by other people), we study and use the geometry of these Bernstein eigenvarieties and of their patched and (local) Galois avatars to obtain various local-global compatibility results in the non-trianguline case. 

Before stating our main results, we briefly give the global setup of the paper (with some simplifications for convenience). Let $n\geq 2$ an integer, $F^+$ a totally real number field and $F$ a totally imaginary quadratic extension of $F^+$ such that all places of $F^+$ dividing $p$ split in $F$. We fix a unitary algebraic group $G$ over $F^+$ which becomes $\GL_n$ over $F$ and such that $G(F^+\otimes_{\Q}\mathbb R)$ is compact and $G$ is split at all places above $p$. Let $U^p=\prod_{v\nmid \infty, p} U_v$ be a compact open subgroup of $G(\bA_{F^+}^{\infty,p})$, and $E$ be a sufficiently large finite extension of $\Q_p$. Put 
\begin{equation*}
	\widehat{S}(U^p,E):=\{f: G(F^+)\backslash G(\bA_{F^+}^{\infty})/U^p \ra E, \ \text{$f$ continuous}\},
\end{equation*}
which is a Banach space for the supremum norm and is equipped with a continuous (unitary) action of $G(F^+_p):=G(F^+ \otimes_{\Q} \Q_p)$ by right translation on functions. For simplicity, we assume in this introduction that $p$ is inert in $F^+$, and we fix a place $\wp$ of $F$ dividing $p$. We have then $G(F^+_p)\cong \GL_n(F^+_p)\cong \GL_n(F_{\wp})$ and $\widehat{S}(U^p,E)$ is a unitary Banach space representation of $\GL_n(F_{\wp})$. The space $\widehat{S}(U^p,E)$ is also equipped with a faithful action of a certain commutative global Hecke algebra $\bT(U^p)$ over $\co_E$ (the ring of integers of $E$) which is generated by sufficiently many prime-to-$p$ Hecke operators. We recall that the locally algebraic vectors $\widehat{S}(U^p,E)^{\lalg}$ (for the action of $\GL_n(F_{\wp})$) admit a description in terms of classical automorphic representations of $G$ (see for example Proposition \ref{pAF0} (1)). With some more assumptions (on $G$, $F$, etc., see \S~\ref{galois} and \S~\ref{secPBern}), one can show that
\begin{itemize} 
	\item $\bT(U^p)$ is isomorphic to a finite product of complete noetherian $\co_E$-algebras $\bT(U^p)_{\overline{\rho}}$, indexed by some $n$-dimensional continuous representations $\overline{\rho}$ of $\Gal_F$ over the residue field $k_E$ of $\co_E$;
	\item for each $\overline{\rho}$ such that $\bT(U^p)_{\overline{\rho}}\ne 0$, there is a surjective morphism $R_{\overline{\rho},\cS} \twoheadrightarrow \bT(U^p)_{\overline{\rho}}$, where $R_{\overline{\rho},\cS}$ is the universal deformation ring of certain deformations of $\overline{\rho}$ (see \S~\ref{secPBern}). 
\end{itemize}
These properties are not necessary for our construction of Bernstein eigenvarieties, but for convenience we assume they hold in the introduction. With respect to $\bT(U^p)\cong \prod_{\overline{\rho}} \bT(U^p)_{\overline{\rho}}$, we have a decomposition $\widehat{S}(U^p,E)\cong \bigoplus_{\overline{\rho}} \widehat{S}(U^p,E)_{\overline{\rho}}$ of $\GL_n(F_{\wp})$-representations. We fix henceforth $\overline{\rho}$ such that $\bT(U^p)_{\overline{\rho}}\neq 0$, and thus $\widehat{S}(U^p,E)_{\overline{\rho}}\neq 0$. Let $\fm_{\rho}\subset \bT(U^p)_{\overline{\rho}}[1/p]$ be a maximal ideal and $\rho: \Gal_F \ra \GL_n(E)$ the associated representation, where $\Gal_L:=\Gal(\overline L/L)$ for a field $L$. The subspace $\widehat{S}(U^p,E)_{\overline{\rho}}[\fm_{\rho}]$ of $\widehat{S}(U^p,E)_{\overline{\rho}}$ annihilated by $\fm_{\rho}$ is an admissible unitary Banach space representation of $\GL_n(F_{\wp})$. The study of its relation with the local Galois representation $\rho_{\wp}:=\rho|_{\Gal_{F_{\wp}}}$, referred to as local-global compatibility, is one of the main themes in the ($p$-adic) Langlands program. We first state our local-global compatibility results, which are obtained using the aforementioned Bernstein eigenvarieties, and we describe these latter afterwards. For these results, as in \cite{BHS3} we also need the following so-called Taylor-Wiles hypothesis:

\begin{hypothesis}\label{TayWil0}
	(1) $p>2$;
	
	(2) the field $F$ is unramified over $F^+$, $F$ does not contain a non trivial root $\sqrt[p]{1}$ of $1$ and $G$ is quasi-split at all finite places of $F^+$;
	
	(3) $U_{v}$ is hyperspecial when the finite place $v$ of $F^+$ is inert in $F$;
	
	(4) $\overline{\rho}$ is absolutely irreducible and $\overline{\rho}(\Gal_{F(\sqrt[p]{1})})$ is adequate.
\end{hypothesis}

Let $B\subset \GL_n$ be the Borel subgroup of upper triangular matrices and $T\subset B$ the subgroup of diagonal matrices. We first have the following classicality result.

\begin{theorem}[cf.\ Theorem \ref{class00}]\label{intcl}
	Assume Hypothesis \ref{TayWil0} and the following hypothesis (at $p$)\footnote{We also need some mild assumption on $\rho_{v}$ for finitely many finite places $v\nmid p$ of $F$ that we omit in the introduction, see Theorem \ref{class00}.}:
	\begin{enumerate}[label=(\arabic*)]
		\item $\rho_{\wp}$ is potentially crystalline with distinct Hodge-Tate weights and is generic (in the sense of \S~\ref{introPcr});
		\item there exists a parabolic subgroup $P\supseteq B$ of $\GL_n$ such that the corresponding Jacquet-Emerton module $J_P(\widehat{S}(U^p,E)_{\overline{\rho}}^{\an}[\fm_{\rho}])$ has non-zero locally algebraic vectors for $L_P^D(F_{\wp})$, where $L_P^D$ is the derived subgroup of the Levi subgroup $L_P\supseteq T$ of $P$.
	\end{enumerate}
	Then $\widehat{S}(U^p,E)[\fm_{\rho}]^{\lalg}\neq 0$, i.e.\ $\rho$ is associated to a classical automorphic representation of $G(\bA_{F^+})$. 
\end{theorem}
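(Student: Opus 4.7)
The strategy is to extend the classicality proof of \cite{BHS3} from the trianguline to the $P$-paraboline setting, using the patched Bernstein eigenvariety $X_P^{\mathrm{pat}}$ and its local-model description in terms of the generalized Grothendieck--Springer resolution for the Levi $L_P$, both of which are among the main constructions of the paper. From hypothesis (2), the module $J_P(\widehat{S}(U^p,E)_{\overline{\rho}}^{\an}[\fm_\rho])$ contains a non-zero $L_P^D(F_\wp)$-locally algebraic vector, and hence a non-zero $T(F_\wp)$-eigenvector for some character $\chi$. This produces a point $x$ on the Bernstein eigenvariety attached to $P$; lifting through $R_{\overline{\rho},\cS} \twoheadrightarrow \bT(U^p)_{\overline{\rho}}$ gives a point on $X_P^{\mathrm{pat}}$ whose local Galois datum is $\rho_\wp$, equipped with the prescribed $P$-Bernstein parameter.

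Next, we analyze the local geometry of $X_P^{\mathrm{pat}}$ at $x$ via the local-model theorem of the paper, which describes the completed local ring at $x$ (up to formally smooth factors) in terms of an explicit scheme built from the generalized Grothendieck--Springer resolution for $L_P$. Hypothesis (1) (potentially crystalline with distinct Hodge--Tate weights and generic) places $x$ in the regular locus of this local model and unlocks a companion-points argument: the $W/W_{L_P}$-translates of the weight parameter of $x$ lift to further points of $X_P^{\mathrm{pat}}$ sharing the same Galois datum $\rho_\wp$. Among these companion points there is a distinguished one, $x^{\mathrm{dom}}$, whose algebraic weight is the restriction to $L_P$ of a character of $T$ dominant for $\GL_n$.

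At $x^{\mathrm{dom}}$, Jacquet--Emerton adjunction produces a non-zero map $I_{\overline P}^{\GL_n(F_\wp)}(\pi_{\sm}\otimes_E M) \to \widehat{S}(U^p,E)_{\overline{\rho}}^{\an}[\fm_\rho]$ from a locally analytic parabolic induction. Because $M$ now extends to a $\GL_n$-dominant algebraic weight, this induction admits a non-zero locally algebraic quotient (the tensor product of a smooth parabolic induction with the irreducible algebraic $\GL_n(F_\wp)$-representation of the corresponding highest weight); its image thus contains non-zero locally algebraic vectors, which by admissibility of $\widehat{S}(U^p,E)_{\overline{\rho}}[\fm_\rho]$ lie in $\widehat{S}(U^p,E)[\fm_\rho]^{\lalg}$, giving the theorem. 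The main obstacle is precisely the companion-points step: the strata of the local model are now indexed by $W/W_{L_P}$ rather than $W$, their incidence combinatorics is more intricate than in \cite{BHS3}, and reaching specifically the $G$-dominant chamber requires a careful exploitation of the genericity of $\rho_\wp$ to rule out that $x$ is trapped in a non-dominant stratum of the local model.
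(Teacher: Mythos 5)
The core of your plan has a genuine gap at its final step. The existence of the dominant-weight companion point $x^{\mathrm{dom}}$ on the patched Bernstein eigenvariety only provides, via the fibre of the coherent sheaf and Emerton's adjunction (\cite[Thm.\ 4.3]{Br13II}), a non-zero $\GL_n(F_\wp)$-equivariant map from the "big" object $\cF_{P^-}^{G}\big((\text{U}(\ug)\otimes_{\text{U}(\fp^-)}L^-(-\mu)_P)^{\vee},\,\pi_{L_P}\otimes\delta_P^{-1}\big)$ into $\widehat{S}(U^p,E)^{\an}_{\overline{\rho}}[\fm_{\rho}]$. In this object the locally algebraic representation is only a \emph{quotient} (all constituents of the kernel being non-locally algebraic), so "its image thus contains non-zero locally algebraic vectors" does not follow: the locally algebraic constituent can be killed by the map, or can occur merely as a subquotient of a non-split extension inside $\widehat{S}^{\an}[\fm_{\rho}]$, producing no actual locally algebraic vectors. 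This is precisely the critical-point phenomenon already visible on the eigencurve, where dominant-weight points need not be classical. What is really needed is the companion \emph{constituent}, i.e.\ a non-zero map from the simple Orlik--Strauch object $C(1,\sF)\cong\cF_{P^-}^{G}(L^-(-\mu),\ldots)$, equivalently the non-vanishing $[\cN_{\mu,y}]\neq 0$; the equivalence between companion points and companion constituents is Proposition \ref{nonVancyc} (then Corollary \ref{ptclas11}), and it is the hard part of the paper's proof: it uses the cycle identity relating $[\cM_{\lambda^x,y}]$ to the $[\cN_{w'\cdot\lambda,y}]$, smoothness of the patched eigenvariety at non-critical points via the local model, unibranchness, and a density/Zariski-closure argument over the Schubert-cell stratification $V_{\overline{\rho}_p}^{\pcr}(\xi_0,\textbf{h})_w$ of the potentially crystalline deformation space. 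Your plan treats this passage as an automatic adjunction step, so the essential argument is missing.

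Two secondary points. First, hypothesis (2) does not give a $T(F_\wp)$-eigenvector: the Jacquet module is only smooth for $L_P^D(F_\wp)$, and a $T$-eigenvector would mean the representation is of finite slope for $B$ (the trianguline case the theorem is designed to avoid). One instead takes $P$ minimal with property (2), extracts a $Z_{L_P}(F_\wp)$-eigenvector and an irreducible (hence cuspidal) smooth $L_P(F_\wp)$-subrepresentation, and uses Bushnell--Kutzko types and the Bernstein centre to produce the point of $\cE_{\Omega,\lambda}(U^p,\overline{\rho})$. Second, the combinatorial subtlety you flag (being "trapped" in a non-dominant stratum, the incidence of $\sW_{L_P}\backslash\sW$-strata) is not where the difficulty for classicality lies: the induction yielding all companion constituents with $w'\le w$ (Corollary \ref{ptclas11}) requires no hypothesis on $P$; the delicate Bruhat-interval combinatorics (assumption ($*$)) only enters for the full socle statement, Theorem \ref{intcs}.
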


The theorem in the case $P=B$ was proved in \cite{BHS3}. Indeed, the assumption (2) in Theorem \ref{intcl} when $P=B$ is equivalent to $J_B(\widehat{S}(U^p,E)_{\overline{\rho}}^{\an}[\fm_{\rho}])\neq 0$, which means that $\rho$ appears on the (classical) eigenvariety associated to $G$ with tame level $U^p$. The main novelty of Theorem \ref{intcl} is that $\rho_{\wp}$ is not necessarily trianguline.

 Assume that $\rho_{\wp}$ is generic potentially crystalline and let $\ttr(\rho_{\wp})$ be the Weil-Deligne representation associated to $\rho_{\wp}$. It admits a decomposition $\ttr(\rho_{\wp})\cong \oplus_{i=1}^r \ttr_i$ by absolutely irreducible Weil-Deligne representations $\ttr_i$ (which are distinct as $\rho_{\wp}$ is generic). Each ordering of the $\ttr_i$ defines a partial flag $\sF$ on $\ttr(\rho_{\wp})$, and we let $P\supseteq B$ be the associated parabolic subgroup. We say $\sF$ is a $P$-filtration on $\ttr(\rho_{\wp})$, or a refinement of $\rho_{\wp}$. By Fontaine's theory, the filtration $\sF$ induces a $P$-filtration $\sF_{\tau}$ on $D_{\dR}(\rho_{\wp})_{\tau}:=D_{\dR}(\rho_{\wp}) \otimes_{(F_{\wp} \otimes_{\Q_p} E)} (F_{\wp} \otimes_{F_{\wp}, \tau} E)$ for each $\tau: F_{\wp} \hookrightarrow E$. Let $\Fil_{\tau}$ be the full flag of the Hodge filtration on $D_{\dR}(\rho_{\wp})_{\tau}$. The relative position of the partial flag $\sF_{\tau}$ and the full flag $\Fil_{\tau}$ is measured by an element $w\in \sW_P^{\max}$, where $\sW_P^{\max}$ is the set of maximal length representatives in the Weyl group $\sW\cong S_n$ of $\GL_n$ of the {\it right} cosets in $\sW_{L_P}\backslash \sW$ (here $\sW_{L_P}$ is the Weyl group of $L_P$). More precisely, fixing a basis of $D_{\dR}(\rho_{\wp})_{\tau}$ over $E$, then $\sF_{\tau}$ (resp.\ $\Fil_{\tau}$) corresponds to an element in $\GL_n/P$ (resp.\ $\GL_n/B$), still denoted by $\sF_{\tau}$ (resp.\ $\Fil_{\tau}$). There exists a unique $w_{\tau}\in \sW_P^{\max}$ such that $(\sF_{\tau}, \Fil_{\tau})$ lies in the $\GL_n$-orbit of $(1,w_{\tau})\in \GL_n/P \times \GL_n/B$ for the diagonal $\GL_n$-action (where $w_{\tau}$ here denotes a lifting of $w_{\tau}\in \sW$ in $N_{\GL_n}(T)$). We let $w_{\sF}:=(w_{\tau})\in \sW_{P,F_{\wp}}^{\max}:=\prod_{\tau: F_{\wp} \hookrightarrow E} \sW_P^{\max}$.

On the automorphic side, to any $w\in \sW_{P,F_{\wp}}^{\min}$ (defined as $\sW_{P,F_{\wp}}^{\max}$ with ``maximal length" replaced by ``minimal length"), one can associate as in \cite[\S~6]{Br13I} a topologically irreducible locally $\Q_p$-analytic representation $C(w, \sF)$ of $\GL_n(F_{\wp})$ over $E$. We refer to \textit{loc.\ cit.} and \S~\ref{seccompCP} for its precise definition. We recall that, when $w=1$, $C(1,\sF)$ is isomorphic to the locally algebraic representation of $\GL_n(F_{\wp})$ associated to $\rho_{\wp}$ by the classical (suitably normalized) local Langlands correspondence (and is actually independent of $\sF$) and that if $\widehat{S}(U^p,E)[\fm_{\rho}]^{\lalg}\neq 0$, then $\widehat{S}(U^p,E)[\fm_{\rho}]^{\lalg}\cong C(1,\sF)^{\oplus m}$ for some $m\in \Z_{\geq 1}$. Let $w_0\in \sW_{F_{\wp}}:=\prod_{\tau: F_{\wp} \ra E} \sW$ be the element of maximal length, the following theorem establishes several new cases of \cite[Conj.\ 5.3]{Br13II}.

\begin{theorem}[cf.\ Theorem \ref{casparticulier}]\label{intcs}
	Assume Hypothesis \ref{TayWil0}, $\widehat{S}(U^p,E)[\fm_{\rho}]^{\lalg}\neq 0$, and that $\rho_{\wp}$ is generic potentially crystalline with distinct Hodge-Tate weights. Assume moreover 
	\begin{itemize}
		\item[($*$)] any two factors $\GL_{n_i}$ in $L_P=\begin{pmatrix} \GL_{n_1} & \cdots &0 \\ \vdots & \ddots & \vdots \\ 0 & \cdots & \GL_{n_r}\end{pmatrix}$ with $n_i>1$ (if they exist) are not adjacent. 
	\end{itemize}
	Then for $w\in \sW_{P,F_{\wp}}^{\min}$ we have a $\GL_n(F_{\wp})$-equivariant injection $C(w,\sF) \hookrightarrow \widehat{S}(U^p,E)^{\an}_{\overline{\rho}}[\fm_{\rho}]$ if and only if $ww_0\geq w_{\sF}$ for the Bruhat order\footnote{Note that for $w\in \sW_{F_{\wp}}$, $w\in \sW_{P,F_{\wp}}^{\min}$ is equivalent to $ww_0\in \sW_{P,F_{\wp}}^{\max}$.}.
\end{theorem}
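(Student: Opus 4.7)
The plan is to follow the strategy of \cite{BHS3} for the trianguline locally analytic socle conjecture, replacing the classical (patched) eigenvariety and trianguline variety by their Bernstein analogues constructed in this paper. By Jacquet-Emerton adjunction between $J_P$ and parabolic induction, an embedding $C(w,\sF) \hookrightarrow \widehat{S}(U^p,E)^{\an}_{\overline{\rho}}[\fm_\rho]$ is equivalent to the appearance in $J_P\bigl(\widehat{S}(U^p,E)^{\an}_{\overline{\rho}}[\fm_\rho]\bigr)$ of a specific locally analytic character $\delta_w$ of $L_P(F_\wp)$ built from the smooth Bernstein component of $\sF$ and an explicit algebraic character determined by $w$ and the Hodge-Tate weights. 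Such characters correspond bijectively to \emph{companion points} on the Bernstein eigenvariety attached to $U^p$, $\overline{\rho}$ and the Bernstein block of $L_P(F_\wp)$ associated to $\sF$. The hypothesis $\widehat{S}(U^p,E)[\fm_\rho]^{\lalg} \neq 0$ gives a classical point $x_{\cl}$ (the case $w=1$), so the question reduces to identifying the companion points of $x_{\cl}$.

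For \emph{necessity}, if $C(w,\sF) \hookrightarrow \widehat{S}(U^p,E)^{\an}_{\overline{\rho}}[\fm_\rho]$ then the companion point $x_w$ exists; via the patching construction it lifts to the patched Bernstein eigenvariety and maps to the same closed point of the Bernstein paraboline variety as $x_{\cl}$. By the local model theorem of the paper, the completed local ring of the Bernstein paraboline variety at this point is identified with the completed local ring at an explicit point of an algebraic scheme $X$ built from the generalized Grothendieck-Springer resolution attached to $L_P$. The existence of a point on the branch of $X$ indexed by $w$ forces the pair $(\sF,\Fil)$ to lie in the closure of the $\GL_n$-orbit of $(1, ww_0) \in \GL_n/P \times \GL_n/B$, and translating this closure condition into Bruhat-order combinatorics on $\sW^{\max}_{P,F_\wp}$ yields the inequality $ww_0 \geq w_\sF$.

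For \emph{sufficiency}, I invoke the local model in the opposite direction: given $ww_0 \geq w_\sF$, the Bruhat inequality guarantees that the branch of $X$ indexed by $w$ passes through the image of $x_{\cl}$. Using a classicality and Zariski density argument for classical automorphic points on the Bernstein eigenvariety along this branch (of the same nature as Theorem \ref{intcl}, and using $(*)$ to run smoothly on each branch), one specializes such classical points to $x_{\cl}$ and thereby produces the companion point $x_w$, giving the desired embedding $C(w,\sF) \hookrightarrow \widehat{S}(U^p,E)^{\an}_{\overline{\rho}}[\fm_\rho]$.

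The main obstacle is the local geometric description of the Bernstein paraboline variety at a non-trianguline potentially crystalline point: it is significantly more singular than the trianguline variety at a crystalline point (controlled in \cite{BHS1}, \cite{BHS2}, \cite{BHS3} via Bezrukavnikov's theorem), and identifying its irreducible components with elements of $\sW^{\max}_{P,F_\wp}$ via the generalized Grothendieck-Springer resolution is precisely where condition $(*)$ becomes essential: without the non-adjacency of non-trivial $\GL_{n_i}$-blocks of $L_P$, extra components appear and the clean Bruhat bijection needed in both directions above is lost.
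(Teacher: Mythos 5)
Your ``only if'' direction is essentially the paper's own route: adjunction turns the embedding $C(w,\sF)\hookrightarrow \widehat{S}(U^p,E)^{\an}_{\overline{\rho}}[\fm_{\rho}]$ into a point of the patched Bernstein eigenvariety, Theorem \ref{R=T0} transports it into $(\Spf R_{\infty}^p)^{\rig}\times X_{\Omega,ww_0(\textbf{h})}(\overline{\rho}_p)$, and the local model together with Corollary \ref{coLoccomp} forces $ww_0\geq w_{\sF}$. The genuine gap is in the ``if'' direction. Given $ww_0\geq w_{\sF}$, Corollary \ref{coLoccomp} only produces a point of the \emph{local} space $X_{\Omega, ww_0(\textbf{h})}(\overline{\rho}_{\wp})$; the patched eigenvariety $\cE^{\infty}_{\Omega, ww_0\cdot\lambda}(\overline{\rho})$ is merely a union of irreducible components of $(\Spf R_{\infty}^p)^{\rig}\times X_{\Omega,ww_0(\textbf{h})}(\overline{\rho}_p)$, and nothing guarantees that the component through that point is automorphic. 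Hence there is no ``branch of classical automorphic points'' along which to specialize: the classical points you actually have are Zariski-dense in $\cE^{\infty}_{\Omega,\lambda}(\overline{\rho})$ (dominant weight), not in the conjectural $w$-eigenvariety you are trying to place the point on. A specialization argument inside the potentially crystalline cells does occur in the paper (Lemma \ref{indstLem1}), but it only reduces the statement to points of $\sS_w$, i.e.\ those whose relative-position invariant $w_y$ equals $ww_{0,F}$, and at such points the proof needs a mechanism entirely absent from your sketch: the comparison of the automorphy cycles $[\cN_{w\cdot\lambda,y}]$ with the Galois cycles $\fZ_{w,F}$ supported on components of the completed generalized Steinberg variety, obtained from the exactness statement of Lemma \ref{exacBE2}, the characteristic-cycle computation for parabolic Verma modules (Proposition \ref{equcycl}, Theorem \ref{thmcycl}), the unibranch results (Theorem \ref{unibranch2}) making these cycles irreducible, and the smoothness of the intermediate eigenvarieties at the point (Corollary \ref{smDefVar2}), which is what controls the multiplicities in the resulting cycle identities. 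The downward induction built on these identities (Lemmas \ref{lgleq1} and \ref{induccp}, Propositions \ref{corocomp0} and \ref{nonVancyc}) is the heart of the proof of Theorem \ref{casparticulier}; a classicality-plus-density argument cannot replace it, and if it could, the theorem would hold with no hypothesis on $P$, which the paper explicitly cannot achieve.

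Relatedly, you misidentify the role of hypothesis ($*$). The parametrization of the branches of the local model (and of $\Spec R_{\rho,\cM_{\bullet}}$) by cosets in $\sW_{L_P,F_{\wp}}\backslash \sW_{F_{\wp}}$ holds for every $P$ (Theorem \ref{thmrM}, Corollary \ref{locModdv}); no ``extra components'' appear when ($*$) fails, and the Bruhat combinatorics of the components is not lost. Hypothesis ($*$) enters only through Proposition \ref{bruhInt}: it guarantees that every length-two Bruhat interval met in the induction is either not full or full and ``nice'' (so that $\dim \fz_{L_P,\wp}^{ww'^{-1}}=\dim\fz_{L_P,\wp}-2$), which is exactly what makes either the smoothness criterion of Corollary \ref{smDefVar2} or the cruder cycle count of case (2) of Lemma \ref{induccp} applicable. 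Without ($*$) one can fall into the third case described in the introduction (see Remark \ref{reobst}), where the intermediate eigenvariety need not be smooth at the point and the coefficients in the cycle equation cannot be controlled; that, and not a failure of the component parametrization, is why the hypothesis is imposed.
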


The case $P=B$ was proved in \cite{BHS3} (see also \cite{Br13II}, \cite{Ding6} for related work). When $P\neq B$, almost nothing was known, except very partial results in \cite{Ding9}. Note that, if there is at most one $\ttr_i$ with $\dim_E \ttr_i>1$, then the assumption ($*$) is empty and \cite[Conj.\ 5.3]{Br13II} is proved for any refinement $\sF$ of $\rho_{\wp}$. The technical assumption ($*$) comes from some properties of the geometry of Bernstein eigenvarieties (see the discussion at the end of this introduction). Without ($*$), we still have some partial results towards \cite[Conj.\ 5.3]{Br13II}, for instance the ``only if" part in the conclusion of Theorem \ref{intcs} holds without any assumption on $P$. 

When $P=B$, both Theorem \ref{intcl} and Theorem \ref{intcs} followed from an extensive study of eigenvarieties (and of the corresponding patched eigenvarieties and trianguline varieties) in \cite{BHS1}, \cite{BHS2}, \cite{BHS3}. We follow the same strategy in this work.

Let us start by defining the global Bernstein eigenvarieties. We need more notation. We fix a parabolic subgroup $P\supseteq B$ of $\GL_n$ and write its Levi subgroup $L_P$ as $\begin{pmatrix} \GL_{n_1} & \cdots & 0 \\ \vdots & \ddots & \vdots \\ 0 & \cdots &\GL_{n_r}\end{pmatrix}$. We fix $\Omega=\prod_{i=1}^r \Omega_i$ a cuspidal Bernstein component of $L_P(F_{\wp})\cong \prod_{i=1}^r \GL_{n_i}(F_{\wp})$, and denote by $\cZ_{\Omega}=\otimes_{i=1}^r \cZ_{\Omega_i}$ the corresponding Bernstein centre (over $E$). Thus a point of $(\Spec \cZ_{\Omega})^{\rig}$ corresponds to an irreducible smooth cuspidal representation of $L_P(F_{\wp})$, and we frequently use the associated representation to denote the point. Let $\cZ_0:=Z_{L_P}(\co_{F_{\wp}})$ (where $Z_{L_P}$ is the center of $L_P$) and $\widehat{\cZ_0}$ be the rigid space over $E$ parametrizing continuous characters of $\cZ_0$. We fix a uniformizer $\varpi$ of $F_{\wp}$. For a continuous character $\chi$ of $(\co_{F_{\wp}}^{\times})^{\oplus s}$ with $s\in \Z_{\geq 1}$, we denote by $\chi_{\varpi}$ the character of $(F_{\wp}^{\times})^{\oplus s}$ such that $\chi_{\varpi}|_{(\co_{F_{\wp}}^{\times})^{\oplus r}}=\chi$ and $\chi_{\varpi}\big((\varpi^{k_1},\dots, \varpi^{k_s})\big)=1$ for any $k_1, \dots, k_s \in \Z$. Finally we also fix $\lambda=(\lambda_i)_{1 \leq i \leq n}=(\lambda_{i,\tau})_{\substack{1\leq i \leq n\\ \tau: F_{\wp} \hookrightarrow E}}$ an integral $P$-dominant weight of $\GL_n(F_{\wp})$. The following theorem summarizes some key features of Bernstein eigenvarieties:

\begin{theorem}[cf.\ \S~\ref{secBern}]\label{centerintro}
	For each $(\Omega,\lambda)$ as above there is a rigid analytic space $\cE_{\Omega, \lambda}(U^p, \overline{\rho})$ over $E$ and an injection of rigid spaces over $E$
	\[\iota_{\Omega, \lambda}: \cE_{\Omega, \lambda}(U^p, \overline{\rho})\hookrightarrow (\Spf \bT(U^p)_{\overline{\rho}})^{\rig} \times (\Spec \cZ_{\Omega})^{\rig} \times \widehat{\cZ_0}\ \big(\hookrightarrow (\Spf R_{\overline{\rho},\cS})^{\rig} \times (\Spec \cZ_{\Omega})^{\rig} \times \widehat{\cZ_0}\big)\]
	satisfying the following properties:
	
	(1) the induced morphism $\cE_{\Omega, \lambda}(U^p, \overline{\rho}) \ra \widehat{\cZ_0}$ is locally finite;
	
	(2) a point $(\eta, \pi_{L_P},\chi)\in (\Spf \bT(U^p)_{\overline{\rho}})^{\rig} \times (\Spec \cZ_{\Omega})^{\rig} \times \widehat{\cZ_0}$ lies in $\cE_{\Omega, \lambda}(U^p, \overline{\rho})$ if and only if there exists an injection of locally $\Q_p$-analytic representations of $L_P(F_{\wp})$ (where $\fm_{\eta}$ denotes the maximal ideal of $\bT(U^p)_{\overline{\rho}}[1/p]$ associated to $\eta$):
	\[\pi_{L_P}\otimes_E (\chi_{\varpi} \circ \dett_{L_P}) \otimes_E L(\lambda)_P \hooklongrightarrow J_P(\widehat{S}(U^p,E)^{\an}_{\overline{\rho}})[\fm_\eta]\]
	where $J_P(\cdot)$ is Emerton's locally analytic Jacquet functor for $P$ (\cite{Em11}), $L(\lambda)_P$ is the algebraic representation of $L_P(F_{\wp})$ of highest weight $\lambda$ (with respect to $B\cap L_P$), and $\dett_{L_P}: L_P(F_{\wp}) \ra Z_{L_P}(F_{\wp})$ is the determinant map;
	
	(3) $\cE_{\Omega, \lambda}(U^p,\overline{\rho})$ is equidimensional of dimension $[F_{\wp}:\Q_p]r$;
	
	(4) the set of classical points, i.e.\ the points $(\eta, \pi_{L_P},\chi)\in \cE_{\Omega, \lambda}(U^p,\overline{\rho})$ such that
	\[\pi_{L_P}\otimes_E (\chi_{\varpi} \circ \dett_{L_P}) \otimes_E L(\lambda)_P \hooklongrightarrow J_P(\widehat{S}(U^p,E)_{\overline{\rho}}^{\lalg})[\fm_\eta],\]
	is Zariski-dense in $\cE_{\Omega, \lambda}(U^p,\overline{\rho})$.
\end{theorem}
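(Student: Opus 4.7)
The plan is to mimic Emerton's construction of eigenvarieties, replacing the Borel $B$ and characters of $T(F_\wp)$ by $P$ and pairs $(\pi_{L_P},\chi)$ with $\pi_{L_P}$ in the fixed cuspidal Bernstein component $\Omega$. Starting from the admissible unitary Banach representation $\widehat{S}(U^p,E)_{\overline{\rho}}$ of $\GL_n(F_\wp)$, I pass to the locally $\Q_p$-analytic vectors $\widehat{S}(U^p,E)_{\overline{\rho}}^{\an}$ and apply Emerton's parabolic Jacquet functor $J_P$ (\cite{Em11}) to obtain an essentially admissible locally analytic representation $M$ of $L_P(F_\wp)$ equipped with a commuting action of $\bT(U^p)_{\overline{\rho}}$.

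To cut out the $(\Omega,\lambda)$-slice, I would first isolate a ``$\lambda$-algebraic, $\Omega$-smooth'' part: untwist by $L(\lambda)_P$, pass to the resulting smooth admissible $L_P(F_\wp)$-subrepresentation, and decompose under the Bernstein centre $\cZ_\Omega$ to pick out the $\Omega$-isotypic summand. The essential admissibility of $J_P$, applied to the action of $\cZ_0 \subseteq Z_{L_P}(F_\wp)$, implies that the result is the module of global sections of a coherent sheaf $\cM_{\Omega,\lambda}$ on $\widehat{\cZ_0} \times_E (\Spec \cZ_\Omega)^{\rig} \times_E (\Spf \bT(U^p)_{\overline{\rho}})^{\rig}$ that is finite over $\widehat{\cZ_0}$. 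I then define $\cE_{\Omega,\lambda}(U^p,\overline{\rho})$ as the schematic support of $\cM_{\Omega,\lambda}$ and $\iota_{\Omega,\lambda}$ as the corresponding closed immersion. Property (1) is built into the construction, and property (2) is obtained by translating Emerton's adjunction characterizing $J_P$ into the language of fibres at closed points, combined with Schur's lemma for smooth irreducible cuspidals lying in $\Omega$.

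For (3), the bound $\dim \cE_{\Omega,\lambda}(U^p,\overline{\rho}) \leq [F_\wp:\Q_p]r$ is immediate from (1), since $\widehat{\cZ_0}$ is the rigid space of continuous characters of $\cZ_0 \cong (\co_{F_\wp}^\times)^r$ and therefore has dimension $[F_\wp:\Q_p]r$. The reverse inequality and equidimensionality are obtained by producing, around every ``good'' classical point, a full $[F_\wp:\Q_p]r$-dimensional family of classical points via unramified twists of $\chi_\varpi$ on $Z_{L_P}(F_\wp)$; combined with a Chenevier--Bellaïche-style argument on the support of locally finite coherent sheaves over a smooth equidimensional base, this yields equidimensionality. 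For (4), the standard eigenvariety strategy applies: the algebraic characters of $\cZ_0$ of sufficiently regular weight form an accumulation subset of $\widehat{\cZ_0}$, classical points sit above them by construction, and Zariski density in $\cE_{\Omega,\lambda}(U^p,\overline{\rho})$ follows from the fact that the map to $\widehat{\cZ_0}$ is locally finite.

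The hard part is (4). In the trianguline case $P=B$ of \cite{BHS3}, Zariski density of classical points rests on Emerton's finite-slope/small-slope criterion for classicality via $J_B$. For general $P$ one must produce an analogous classicality input: given a non-zero embedding $\pi_{L_P}\otimes(\chi_\varpi \circ \dett_{L_P}) \otimes L(\lambda)_P \hooklongrightarrow J_P(\widehat{S}(U^p,E)^{\an}_{\overline{\rho}})[\fm_\eta]$ with $\chi$ of sufficiently regular algebraic type, one has to lift it to an embedding into $J_P(\widehat{S}(U^p,E)_{\overline{\rho}}^{\lalg})[\fm_\eta]$. This should be done by combining Emerton's parabolic Jacquet/induction adjunction with the Orlik--Strauch type analysis of locally analytic representations of $\GL_n(F_\wp)$, using that for suitably dominant $\chi$ there is no room for the embedding to factor through the ``non-classical'' subquotients of $\widehat{S}(U^p,E)^{\an}_{\overline{\rho}}[\fm_\eta]$, reducing the density statement to the (known) abundance of classical automorphic representations of $G(\bA_{F^+})$ whose $\wp$-local factor is parabolically induced from an element of $\Omega$.
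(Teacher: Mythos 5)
Your overall template (Emerton's $J_P$, a coherent sheaf on a character/Bernstein-centre space, its support as the eigenvariety, a small-slope classicality criterion for density) is indeed the paper's strategy, but the central construction step in your second paragraph does not work as written, and this is precisely the point where the paper has to introduce a new idea. After untwisting by $L(\lambda)_P$, the representation $J_P(\widehat{S}(U^p,E)^{\an}_{\overline{\rho}})_{\lambda}$ is smooth for $L_P^D(F_{\wp})$ but only locally $\Q_p$-analytic for $Z_{L_P}(F_{\wp})$; the points you must parametrize are embeddings of $\pi_{L_P}\otimes_E(\chi_{\varpi}\circ\dett_{L_P})\otimes_E L(\lambda)_P$ with $\chi$ an \emph{arbitrary} continuous character of $\cZ_0$, and these are not smooth $L_P(F_{\wp})$-representations. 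If you pass to the $L_P(F_{\wp})$-smooth subrepresentation and then Bernstein-decompose, you retain only the points with smooth $\chi$ and lose the whole weight direction, so the resulting support cannot be (locally) finite and surjective onto the $[F_{\wp}:\Q_p]r$-dimensional space $\widehat{\cZ_0}$; if instead you only use smoothness for $L_P^D(F_{\wp})$, the Bernstein decomposition and the $\cZ_{\Omega}$-action are no longer available in the form you invoke (the paper explicitly remarks that it does not know how to make the ``types for $L_P^D$'' variant parametrize the relevant subrepresentations). The paper's solution is to tensor $J_P(V)_{\lambda}$ with the auxiliary factor $\cC^{\Q_p-\la}(Z_{L_P}^0,E)$ and to apply $\Hom_{L_P^0}(\sigma,-)$ for a Bushnell--Kutzko type $\sigma$ with $\cZ_{\Omega}\cong\End_{L_P(F_{\wp})}(\cind_{L_P^0}^{L_P(F_{\wp})}\sigma)$: the $\cZ_0$-action on the function-space factor produces the coherent sheaf over $\widehat{\cZ_0}$, while the type captures the full Bernstein-centre action. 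Your proposal contains no substitute for this device, so the construction of $\cM_{\Omega,\lambda}$ and hence of $\iota_{\Omega,\lambda}$ is not actually achieved.

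Two further steps are also unjustified. First, essential admissibility alone only gives coherence over the character space of the centre, not (local) finiteness over $\widehat{\cZ_0}$: property (1), the equidimensionality in (3), and the fact that components have Zariski-open image in $\widehat{\cZ_0}$ (which your density argument for (4) needs) all rest on the compact-operator/Fredholm-hypersurface machinery (the analogue of the $U_p$-theory, Propositions corresponding to \cite[Prop.\ 5.3, Lemma 3.10, Prop.\ 3.11]{BHS1}), which you treat as automatic. Second, your proof of (3) by producing a $[F_{\wp}:\Q_p]r$-dimensional family of classical points through unramified twists of $\chi_{\varpi}$ fails: unramified twists move at most in an $r$-dimensional direction (and in the $(\Spec\cZ_{\Omega})^{\rig}$-factor, not in $\widehat{\cZ_0}$), and an arbitrary twist of an automorphic point is not automorphic, so these twists need not lie on the eigenvariety at all. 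For (4) your classicality input (no invariant lattice on the non-locally-algebraic constituents of the parabolic induction, via Orlik--Strauch) is the right mechanism, but the density statement is then a geometric accumulation argument over the weight space, not a reduction to the abundance of classical automorphic representations with prescribed local factor at $\wp$.
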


Similar spaces have been constructed in \cite{HL} (see also \cite{BW20} for a construction via overconvergent cohomology, and also \cite{Lo20p}), but the new feature in Theorem \ref{centerintro} is that we take into account the action of the full Bernstein centre (rather than just the action of $Z_{L_P}(F_{\wp})$), obtained by applying Bushnell-Kutzko's theory of types. This allows to parametrize the full $L_P(F_{\wp})$-action, which is particularly important for our applications. When $P=B$, we have an isomorphism $\widehat{\cZ_0}\times (\Spec \cZ_{\Omega})^{\rig}\cong \widehat{T(F_{\wp})}$ (= continuous characters of $T(F_{\wp})$) and we can show that all (reduced) varieties $\cE_{\Omega, \lambda}(U^p,\overline{\rho})^{\red}$ are isomorphic to the (finite slope) reduced eigenvariety $\cE(U^p,\overline{\rho})^{\red}$, see Remark \ref{remP=B2}.

We next discuss $p$-adic families of Galois representations on $\cE_{\Omega, \lambda}(U^p,\overline{\rho})$. For a point $x_i$ of $(\Spec \cZ_{\Omega_i})^{\rig}$ ($i\in \{1,\dots,r\}$), we denote by $\pi_{x_i}$ the associated irreducible supercuspidal representation of $\GL_{n_i}(F_{\wp})$ over $E$ and we let $\ttr_{x_i}:=\rec(\pi_{x_i})$ be the associated (absolutely irreducible) Weil representation via the classical local Langlands correspondence (normalized as in \cite{HT}). The Weil representation $\ttr_{x_i}$ corresponds to a Deligne-Fontaine module $\DF_{x_i}$ (see for instance \cite[Prop.\ 4.1]{BS07}), which by Berger's theory \cite[Thm.\ A]{Ber08a} corresponds in turn to a $p$-adic differential equation $\Delta_{x_i}$, i.e.\ a $(\varphi, \Gamma)$-module of rank $n_i$ over $\cR_{k(x_i),F_{\wp}}$ which is de Rham of constant Hodge-Tate weight $0$ (here $k(x_i)$ is the residue field at $x_i$ and $\cR_{k(x_i),F_{\wp}}$ is the Robba ring associated to $F_{\wp}$ with $k(x_i)$-coefficients). Let $x:=(\eta, (x_i), \chi=(\chi_i)) \in \cE_{\Omega, \lambda}(U^p,\overline{\rho})$, the image of $\eta$ via the injection $(\Spf \bT(U^p)_{\overline{\rho}})^{\rig} \hookrightarrow (\Spf R_{\overline{\rho},\cS})^{\rig}$ corresponds to a continuous representation $\rho_x: \Gal(\overline F/F) \ra \GL_n(k(x))$ over the residue field $k(x)$ of $\cE_{\Omega, \lambda}(U^p,\overline{\rho})$ at $x$. We put $\rho_{x,\wp}:=\rho_x |_{\Gal_{F_{\wp}}}$. 

\begin{theorem}[cf.\ Theorem \ref{FOBE}, Proposition \ref{galBE1}]\label{iThm2}
Let $x$ as above.
	
	(1) The $(\varphi, \Gamma)$-module $D_{\rig}(\rho_{x,\wp})$ associated to $\rho_{x,\wp}$ admits a $P$-filtration $\Fil_{\bullet}D_{\rig}(\rho_{x,\wp})$ by saturated $(\varphi, \Gamma)$-submodules of $D_{\rig}(\rho_{x,\wp})$ such that for $i=1, \dots, r$:
	\begin{equation}\label{fil1}
		\big(\gr_i D_{\rig}(\rho_{x,\wp})\big)[1/t] \cong \big(\Delta_{x_i}'
		\otimes_{\cR_{k(x),F_{\wp}}} \cR_{k(x),F_{\wp}}(\chi_{i,\varpi})\big)[1/t],
	\end{equation} 
	where $\cR_{k(x),F_{\wp}}(\delta)$ denotes the rank one $(\varphi, \Gamma)$-module associated to a continuous character $\delta$ and $\Delta_{x_i}':=\Delta_{x_i} \otimes_{\cR_{k(x),F_{\wp}}} \cR_{k(x),F_{\wp}}(\unr(q^{-s_{i-1}+(1-n_i)/2}))$, $q$ being the cardinality of the residue field of $F_{\wp}$, $s_{i-1}:=r_1+\cdots+ r_{i-1}$ and $\unr(a)$ denoting the unramified character of $F_{\wp}^{\times}$ sending any uniformizer to $a$. 
	
	(2) For $\tau: F_{\wp} \hookrightarrow E$, the Sen $\tau$-weights of $\rho_{x,\wp}$ are given by $\{h_{j_i,\tau}+\wt(\chi_{i})_{\tau}\}_{\substack{1\leq i \leq r \\ s_{i-1}+1 \leq j_i \leq s_i}}$, where $\wt(\chi')_{\tau}$ denotes the $\tau$-weight of $\chi'$ for a continuous character $\chi'$ of $\co_{F_{\wp}}^{\times}$ (or of $F_{\wp}^{\times}$).
\end{theorem}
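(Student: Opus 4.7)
The strategy is to establish the theorem at the Zariski-dense set of classical points (Theorem \ref{centerintro}(4)) and then to interpolate to all of $\cE_{\Omega,\lambda}(U^p,\overline{\rho})$.

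At a classical point $x=(\eta,(x_i),\chi)$ the embedding of Theorem \ref{centerintro}(2) lands in $J_P(\widehat{S}(U^p,E)^{\lalg}_{\overline{\rho}})[\fm_\eta]$. Via Emerton's adjunction \cite{Em11} combined with the description of $\widehat{S}(U^p,E)^{\lalg}_{\overline{\rho}}[\fm_\eta]$ in terms of classical automorphic representations (Proposition \ref{pAF0}), this forces the cuspidal support at $\wp$ of the associated automorphic representation $\Pi$ to be $\bigl(L_P(F_\wp),\, \pi_{L_P}\otimes(\chi_\varpi\circ\dett_{L_P})\bigr)$, up to the unramified twists $\unr(q^{-s_{i-1}+(1-n_i)/2})$ coming from the normalisation of local Langlands. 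Classical local-global compatibility at $p$ then yields a decomposition of the Frobenius-semisimplified Weil-Deligne representation attached to $\rho_{x,\wp}$ of the form $\bigoplus_{i=1}^r \ttr_{x_i}\otimes\unr(q^{-s_{i-1}+(1-n_i)/2})\otimes\chi_{i,\varpi}$. A $P$-ordering of this decomposition gives a $P$-filtration on the Deligne-Fontaine module $D_{\pst}(\rho_{x,\wp})$; transporting through Berger's equivalence \cite[Thm.~A]{Ber08a} produces a $P$-filtration on $D_{\rig}(\rho_{x,\wp})$ whose graded pieces have the prescribed shape up to a lattice modification encoding the Hodge filtration. Inverting $t$ kills this discrepancy, yielding (\ref{fil1}) at $x$.

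To interpolate, let $\bD$ denote the family of $(\varphi,\Gamma)$-modules over $\cE_{\Omega,\lambda}(U^p,\overline{\rho})$ obtained by composing $\iota_{\Omega,\lambda}$ with the Kedlaya-Pottharst-Xiao family $(\varphi,\Gamma)$-module attached to the universal Galois representation over $(\Spf R_{\overline{\rho},\cS})^{\rig}$. The factors $\Delta_i'$ and the characters $\chi_{i,\varpi}$ themselves glue into families of $(\varphi,\Gamma)$-modules over the eigenvariety: this is essentially the content of the ``Bernstein paraboline variety'' advertised in the abstract, into which $\cE_{\Omega,\lambda}(U^p,\overline{\rho})$ maps. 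One then builds the saturated submodules $\Fil_i\bD\subset\bD$ step by step, using that at classical points the relevant $\Hom$- and $\Ext^1$-groups in Kedlaya-Pottharst-Xiao cohomology are controlled, that the classical inclusions interpolate uniquely, and that Zariski density of classical points together with equidimensionality of $\cE_{\Omega,\lambda}(U^p,\overline{\rho})$ (Theorem \ref{centerintro}(3)) propagates the filtration to every point.

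Assertion (2) follows from (1) by additivity of Sen weights along the filtration: since $\Delta_{x_i}$ is de Rham with all Sen $\tau$-weights equal to $0$, the Sen $\tau$-weights of $\gr_i D_{\rig}(\rho_{x,\wp})$ are $\wt(\chi_i)_\tau$ with multiplicity $n_i$, shifted by the $\tau$-Hodge-Tate weights contributed by the lattice modification of step one. These shifts are prescribed by the $P$-dominant weight $\lambda$, which cuts out exactly the block $\{h_{j_i,\tau}\}_{s_{i-1}+1\leq j_i\leq s_i}$ inside the $i$-th graded piece; summing over $i$ gives the stated list. The main obstacle is the interpolation: at classical points the $P$-filtration is produced using genuinely global input (classical local-global compatibility) and Berger's (non-family) equivalence, neither of which behaves well in $p$-adic families, so the filtration must be reconstructed directly on the family $(\varphi,\Gamma)$-module side — this controlled-cohomology argument is the technical heart of the Bernstein paraboline variety construction.
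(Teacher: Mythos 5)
Your treatment of part (1) is essentially the paper's own route: one first proves the statement at a Zariski-dense set of classical points via classical local-global compatibility at $p$ (this is the content of \S~\ref{galois}, where one needs the points to be \emph{generic} so that $N=0$ and the Weil--Deligne representation genuinely decomposes as in (\ref{noN}), and \emph{non-critical} so that the graded pieces carry the prescribed blocks of Hodge--Tate weights, cf.\ (\ref{inj000}); the Zariski-density of such refined classical points is Theorem \ref{denNCG} and is not automatic), and one then interpolates by a family analogue of global triangulation, which is exactly Theorem \ref{KPXg} and Corollaries \ref{para}, \ref{rgloOF} in the Appendix, applied to a Galois family over a finite cover of an affinoid of the eigenvariety. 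One correction of attribution: the interpolating objects $\Delta_{x_i}$ are glued not by the Bernstein paraboline variety (a purely local Galois deformation space, logically downstream of this theorem) but by the universal $p$-adic differential equation $\Delta_\Omega$ over $(\Spec\cZ_\Omega)^{\rig}$ constructed in \S~\ref{sec_pDf}, pulled back along $\cE_{\Omega,\lambda}(U^p,\overline{\rho})\to(\Spec\cZ_\Omega)^{\rig}$; also be aware that the filtration only exists as a family after a proper birational modification and away from a bad locus, the conclusion at an arbitrary point $x$ (possibly lying under that locus) being extracted by the pointwise saturation argument of Corollary \ref{rgloOF}.

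There is, however, a genuine gap in your deduction of (2) from (1). Statement (1) identifies $\gr_i D_{\rig}(\rho_{x,\wp})$ with $\Delta'_{x_i}\otimes_{\cR_{k(x),F_\wp}}\cR_{k(x),F_\wp}(\chi_{i,\varpi})$ only after inverting $t$; hence all it gives is an embedding $\gr_i D_{\rig}(\rho_{x,\wp})\hookrightarrow t^{-N}\big(\Delta'_{x_i}\otimes\cR_{k(x),F_\wp}(\chi_{i,\varpi})\big)$, so the Sen $\tau$-weights of the $i$-th graded piece are $\wt(\chi_i)_\tau$ plus $n_i$ \emph{unknown} integers coming from the position of the lattice. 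Your claim that ``these shifts are prescribed by the $P$-dominant weight $\lambda$'' is precisely the assertion to be proved and does not follow from (1): at a non-classical point of the eigenvariety $\rho_{x,\wp}$ need not be de Rham or even Hodge--Tate, and nothing in the $[1/t]$-isomorphism pins the lattice down (indeed, at critical classical points the filtration is shifted relative to the naive expectation, which is exactly why non-criticality enters in (1)). The paper proves (2) by a different, short argument (Proposition \ref{galBE1}): the Sen $\tau$-weights are analytic functions on the interpolating family $\widetilde U$, they equal $\{h_{j,\tau}+\wt(\chi_i)_\tau\}$ at the Zariski-dense set of very classical non-critical generic points, hence everywhere. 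You should replace your additivity argument by this interpolation of the Sen polynomial (or supply some other input beyond (1)); as written, the step is circular.
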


We call a filtration on a $(\varphi, \Gamma)$-module satisfying the property (\ref{fil1}) an \textit{$\Omega$-filtration}, and call $(\{\Delta_{x_i}\},\{\chi_i\})$ a parameter of the $\Omega$-filtration. Theorem \ref{iThm2} (1) gives an analogue of the fact that $p$-adic Galois representations over eigenvarieties are trianguline. The proof of Theorem \ref{iThm2} (1) is based on an interpolation result for $\Omega$-filtrations in families given in \S~\ref{globaltriangulation}, which is an analogue of the theory of global triangulation of \cite{KPX}, \cite{Liu}, \cite{Bergd14}. 

We now define rigid analytic spaces which are (local) Galois avatars of Bernstein eigenvarieties and analogues of the trianguline variety of \cite{HeSc}, \cite{BHS1} when $P=B$. They parametrize Galois representations admitting an $\Omega$-filtration. Let $\overline{\rho}_{\wp}:=\overline{\rho}|_{\Gal_{F_{\wp}}}$ and $\textbf{h}:=(\textbf{h}_i)_{i=1,\dots,n}:=(h_{i,\tau})_{\substack{i=1,\dots, n\\ \tau: F_{\wp} \hookrightarrow E}}$ with $h_{i,\tau}=\lambda_{i,\tau}-i+1$. Note that the weight $\textbf{h}$ is strictly $P$-dominant. Let $R_{\overline{\rho}_{\wp}}$ be the universal framed deformation ring of $\overline{\rho}_{\wp}$. Define $U_{\Omega, \textbf{h}}(\overline{\rho}_{\wp})$ as the subset of $(\Spf R_{\overline{\rho}_{\wp}})^{\rig} \times (\Spec \cZ_{\Omega})^{\rig} \times \widehat{\cZ_0} $ which consists of the points $(\varrho, (x_i), (\chi_i))$ such that:
\begin{itemize}
	\item the parameter $\big((x_i), (\chi_i)\big)$ is generic in the sense of \S~\ref{s: DO};
	\item $D_{\rig}(\varrho)$ admits an $\Omega$-filtration $\{\Fil_i D_{\rig}(\varrho)\}$ such that one has embeddings
	\begin{equation*}
		\gr_i D_{\rig}(\varrho) \otimes_{\cR_{k(x),F_{\wp}}} \cR_{k(x),F_{\wp}}(\chi_{i,\varpi}^{-1}) \hooklongrightarrow \Delta_{x_i} \otimes_{\cR_{k(x),F_{\wp}}} \cR_{k(x),F_{\wp}}(z^{\textbf{h}_{s_i}})
	\end{equation*}
	where the image has Hodge-Tate weights $(\textbf{h}_{s_{i-1}+1}, \dots, \textbf{h}_{s_i})$ (here $z^{\textbf{h}_{s_i}}\!:=\!\prod_{\tau: F_{\wp} \hookrightarrow E} \tau(z)^{h_{s_i,\tau}}$).
\end{itemize}
We define $X_{\Omega, \textbf{h}}(\overline{\rho}_{\wp})$ to be the (reduced) Zariski-closure of $U_{\Omega, \textbf{h}}(\overline{\rho}_{\wp})$ in $(\Spf R_{\overline{\rho}_{\wp}})^{\rig} \times (\Spec \cZ_{\Omega})^{\rig} \times \widehat{\cZ_0}$ and denote by $\iota_{\Omega, \textbf{h}}: X_{\Omega, \textbf{h}}(\overline{\rho}_{\wp})\hookrightarrow (\Spf R_{\overline{\rho}_{\wp}})^{\rig} \times (\Spec \cZ_{\Omega})^{\rig} \times \widehat{\cZ_0}$ the closed embedding. We call $X_{\Omega, \textbf{h}}(\overline{\rho}_{\wp})$ a \textit{Bernstein paraboline variety}. When $P=B$, one can check that $X_{\Omega, \textbf{h}}(\overline{\rho}_{\wp})$ is isomorphic to the trianguline variety $X_{\tri}^{\square}(\overline{\rho})$ of \cite{BHS1} (see Remark \ref{remBPE} (1)). It also has similar geometric properties as $X_{\tri}^{\square}(\overline{\rho})$ (compare the following theorem with \cite[Thm.\ 2.6]{BHS1}):

\begin{theorem}[cf.\ Theorem \ref{DFOL}]\label{triangulineintro}
	(1) The rigid analytic space $X_{\Omega, \textbf{h}}(\overline{\rho}_{\wp})$ is equidimensional of dimension
	\[n^2 + [F_{\wp}:\Q_p] \Big(\frac{n(n-1)}{2}+r\Big).\]
	
	(2) The set $U_{\Omega, \textbf{h}}(\overline{\rho}_{\wp})$ is Zariski-open and Zariski-dense in $X_{\Omega, \textbf{h}}(\overline{\rho}_{\wp})$.
	
	(3) The rigid analytic space $U_{\Omega, \textbf{h}}(\overline{\rho}_{\wp})$ is smooth over $E$, and the natural morphism $U_{\Omega, \textbf{h}}(\overline{\rho}_{\wp}) \ra (\Spec \cZ_{\Omega})^{\rig} \times \widehat{\cZ_0}$ is smooth.
\end{theorem}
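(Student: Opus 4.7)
The plan is to adapt the strategy of \cite{BHS1}, Section 2, where the analogous result was proved for the classical trianguline variety (case $P=B$). The key new ingredient compared to \cite{BHS1} is that the graded pieces of the $\Omega$-filtration are now higher-rank $(\varphi,\Gamma)$-modules built from the cuspidal parameters $\Delta_{x_i}$, so one must combine Berger's dictionary between $p$-adic differential equations and Deligne-Fontaine modules with cohomology computations for $(\varphi,\Gamma)$-modules over $\cR_{k,F_{\wp}}$.

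At a point $x=(\varrho,(x_i),(\chi_i))$ of $U_{\Omega,\textbf{h}}(\overline{\rho}_{\wp})$, the genericity of the parameter together with the strict $P$-dominance of $\textbf{h}$ forces the $\Omega$-filtration $\Fil_\bullet D_{\rig}(\varrho)$ to be uniquely determined by the Hodge--Tate weight pattern $(\textbf{h}_{s_{i-1}+1},\dots,\textbf{h}_{s_i})$ on $\gr_i$. Exploiting this rigidification, I would first set up a deformation functor on Artinian $k(x)$-algebras classifying deformations of $\varrho$ together with compatible deformations of $\Delta_{x_i}$ and $\chi_i$ (which amount to tangent vectors along $(\Spec \cZ_{\Omega})^{\rig}$ and $\widehat{\cZ_0}$ respectively). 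The tangent space sits in a long exact sequence coming from the cohomology of a complex of $(\varphi,\Gamma)$-modules built from $D_{\rig}(\varrho)$ and the extensions between the $\gr_i$. Using the $(\varphi,\Gamma)$-module Euler--Poincaré formula together with Tate local duality in the form of \cite{KPX}, \cite{Liu}, and exploiting genericity to force vanishing of the obstruction $H^2$'s for extensions between distinct $\gr_i$, I would show the functor is formally smooth and compute its tangent dimension as $n^2$ (for the framing) plus $[F_{\wp}:\Q_p]\, n(n-1)/2$ (for the choice of full Hodge flag refining the partial $P$-flag, i.e.\ a $\GL_n/B$-contribution fibered over $\GL_n/P$) plus $[F_{\wp}:\Q_p]\, r$ (for the $r$-dimensional character variation encoding the graded Sen weights), giving the stated dimension. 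The same tangent-space analysis yields part (3): surjectivity on tangent spaces of the projection to $(\Spec \cZ_{\Omega})^{\rig}\times\widehat{\cZ_0}$ is exactly the statement that a deformation of $((x_i),(\chi_i))$ lifts to a deformation of the whole filtered datum, which is unobstructed on the generic locus by the same $H^2$-vanishing.

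For part (1), since $X_{\Omega,\textbf{h}}(\overline{\rho}_{\wp})$ is by construction the reduced Zariski closure of $U_{\Omega,\textbf{h}}(\overline{\rho}_{\wp})$, equidimensionality of $X_{\Omega,\textbf{h}}(\overline{\rho}_{\wp})$ follows from the equidimensionality of the smooth rigid space $U_{\Omega,\textbf{h}}(\overline{\rho}_{\wp})$. For part (2), Zariski density of $U$ in $X$ is automatic from the definition, so one only has to show that $U$ is Zariski-open in its closure, which reduces to openness of $U$ inside the ambient $(\Spf R_{\overline{\rho}_{\wp}})^{\rig}\times (\Spec \cZ_{\Omega})^{\rig}\times \widehat{\cZ_0}$. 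The genericity condition is open by definition, and the openness of the locus admitting a compatible $\Omega$-filtration with prescribed graded Hodge--Tate weights is provided by the interpolation result of \S~\ref{globaltriangulation}, which is the parabolic analogue of the global triangulation theorems of \cite{KPX}, \cite{Liu}, \cite{Bergd14}.

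The main obstacle is the deformation-theoretic calculation: while \cite{BHS1} handles rank-one graded pieces by explicit computations with continuous characters $\delta:F_{\wp}^{\times}\to E^{\times}$, here each $\gr_i$ is a higher-rank $p$-adic differential equation twisted by the character $\chi_{i,\varpi}$, and one must control $\Hom$ and $\Ext^1$ between $\gr_i$ and $\gr_j$ for $i\neq j$ purely through Berger's equivalence and the cuspidal irreducibility of the $\Delta_{x_i}$. Making the genericity assumption quantitative enough to force the necessary $H^0$- and $H^2$-vanishings, and verifying that the resulting smoothness descends from the framed auxiliary moduli to $U_{\Omega,\textbf{h}}(\overline{\rho}_{\wp})$ itself, is the most delicate point of the argument.
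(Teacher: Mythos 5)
There are two genuine gaps. First, your treatment of part (2) is incorrect: you claim that Zariski-openness of $U_{\Omega, \textbf{h}}(\overline{\rho}_{\wp})$ in its closure ``reduces to openness of $U_{\Omega, \textbf{h}}(\overline{\rho}_{\wp})$ inside the ambient $(\Spf R_{\overline{\rho}_{\wp}})^{\rig} \times (\Spec \cZ_{\Omega})^{\rig} \times \widehat{\cZ_0}$''. This cannot work: by part (1) the closure $X_{\Omega, \textbf{h}}(\overline{\rho}_{\wp})$ has dimension $n^2+[F_{\wp}:\Q_p](\frac{n(n-1)}{2}+r)$, strictly smaller than the dimension of the ambient space, so a nonempty subset of it is never Zariski-open in the ambient space; the condition of admitting an $\Omega$-filtration with prescribed graded Hodge--Tate weights is not an open condition there (already for $P=B$ the trianguline locus is not open in $(\Spf R_{\overline{\rho}_{\wp}})^{\rig}\times\widehat{T(F_{\wp})}$). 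The interpolation theorem of the appendix does not give openness on $X_{\Omega, \textbf{h}}(\overline{\rho}_{\wp})$ directly either: it only produces the filtration after a proper birational modification $f\colon X'\ra X_{\Omega, \textbf{h}}(\overline{\rho}_{\wp})$, away from a bad locus $Z\subset X'$. The actual proof of (2) (as in the trianguline case of Breuil--Hellmann--Schraen) has to descend this information: one shows the image of the smooth charts is an adic open subset of $X_{\Omega, \textbf{h}}(\overline{\rho}_{\wp})$, identifies it with $f(\sU_0)$ which is Zariski-constructible because $f$ is projective, and concludes Zariski-openness from ``constructible and adic open implies Zariski-open''. None of this appears in your sketch, and without it part (2) --- and hence the clean deduction of (1) from density --- is unproved.

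Second, your deformation-theoretic argument establishes (at best) formal smoothness and the dimension of an abstract functor on $\Art(k(x))$, but it does not by itself say anything about the completed local rings of $U_{\Omega, \textbf{h}}(\overline{\rho}_{\wp})$, which is defined as (an open in) a Zariski closure of a set of points and is not a priori a moduli space. The missing step is the construction of smooth rigid-analytic charts mapping onto open pieces of $U_{\Omega, \textbf{h}}(\overline{\rho}_{\wp})$ --- in the paper these are built as successive $\Ext^1$-bundles over products of Kisin's potentially crystalline deformation spaces $(\Spf R_{\overline{\varrho}_i}^{\pcr}(\xi_i,\textbf{h}^i))^{\rig}$ and the character space, and it is through these charts (and the comparison of their completed local rings with the paraboline deformation rings $R^0_{\rho,\sF}$) that smoothness, equidimensionality, and smoothness of the map to $(\Spec \cZ_{\Omega})^{\rig}\times\widehat{\cZ_0}$ are transported to $U_{\Omega, \textbf{h}}(\overline{\rho}_{\wp})$. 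Relatedly, the formal smoothness of the constrained (type-$\Omega$, fixed graded Hodge--Tate weights) deformation functor of a higher-rank graded piece is \emph{not} an $H^2$-vanishing statement: unobstructedness of the full deformation functor of the irreducible piece $\gr_i$ would give dimension $1+[F_{\wp}:\Q_p]n_i^2$, which is too large; the correct smoothness and dimension come from lifting the Hodge filtration along the flag variety via Berger's equivalence (equivalently from Kisin's smoothness of de Rham/potentially crystalline deformation rings), together with a nontrivial relative representability argument for the ``embeds into $\Delta_{x_i}$ up to twist'' condition. Your genericity-based $H^0$/$H^2$ vanishing handles extensions between distinct graded pieces, but not this part of the argument, which is exactly where the proof departs from the trianguline case.
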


Note that, differently from the trianguline case, the proof of Theorem \ref{triangulineintro} crucially uses Kisin's results on potentially crystalline deformation rings (\cite{Kis08}).

Similarly as in \cite{BHS1}, the Bernstein paraboline varieties are closely related to a {\it patched} version of the Bernstein eigenvarieties. Recall that under Hypothesis \ref{TayWil0}, it was constructed in \cite{CEGGPS} a $p$-adic unitary Banach space representation $\Pi_{\infty}$ of $\GL_n(F_{\wp})$ equipped with an action of a certain patched Galois deformation ring $R_{\infty} \cong R_{\infty}^p \widehat{\otimes}_{\co_E} R_{\overline{\rho}_{\wp}}$ (commuting with $\GL_n(F_{\wp})$). There is an ideal $\fa$ of $R_{\infty}$ such that one has a surjection $R_{\infty}/\fa \twoheadrightarrow R_{\overline{\rho},\cS}$ and $\Pi_{\infty}[\fa]\cong \widehat{S}(U^p,E)_{\overline{\rho}}$. Applying the construction of Bernstein eigenvarieties with $\widehat{S}(U^p,E)$ replaced by $\Pi_{\infty}$, we can construct an embedding of rigid analytic spaces 
\begin{multline}\label{paBE0}
\iota_{\Omega, \lambda}: \cE_{\Omega, \lambda}^{\infty}(\overline{\rho})\hooklongrightarrow (\Spf R_{\infty})^{\rig}\times (\Spec \cZ_{\Omega})^{\rig} \times \widehat{\cZ_0} \\
	\cong (\Spf R_{\infty}^p)^{\rig} \times (\Spf R_{\overline{\rho}_{\wp}})^{\rig} \times (\Spec \cZ_{\Omega})^{\rig}\times \widehat{\cZ_0}
	\end{multline}
which satisfies the properties in the following theorem:

\begin{theorem}[cf.\ \S~\ref{secPBern}]\label{intpBE}
(1) A point $(\fm_y, \pi_{L_P},\chi)\in (\Spf R_{\infty})^{\rig} \times(\Spec \cZ_{\Omega})^{\rig} \times \widehat{\cZ_0}$ lies in $\cE_{\Omega, \lambda}^{\infty}(\overline{\rho})$ if and only if one has an $L_P(F_{\wp})$-equivariant embedding
	\[\pi_{L_P}\otimes_E (\chi_{\varpi} \circ \dett_{L_P}) \otimes_E L(\lambda)_P \hooklongrightarrow J_P(\Pi_{\infty}^{R_{\infty}-\an})[\fm_y]\]	
	where $\Pi_{\infty}^{R_{\infty}-\an}$ denotes the $R_{\infty}$-analytic vectors in the sense of \cite[\S~3.1]{BHS1}.
	
(2) The rigid analytic space $\cE_{\Omega, \lambda}^{\infty}(\overline{\rho})$ is reduced and equidimensional of dimension
	\[\dim (\Spf R_{\infty}^p)^{\rig}+ n^2+[F_{\wp}:\Q_p]\Big(\frac{n(n-1)}{2}+r\Big).\]
	
(3) There is a natural morphism of rigid spaces
\[\cE_{\Omega, \lambda}(U^p) \lra \cE_{\Omega, \lambda}^{\infty}(\overline{\rho}) \times_{(\Spf R_{\infty})^{\rig}} (\Spf R_{\overline{\rho},\cS})^{\rig}\]
which is bijective on points (where the morphism $ (\Spf R_{\overline{\rho},\cS})^{\rig} \ra (\Spf R_{\infty})^{\rig}$ is induced by $R_{\infty}/\fa \twoheadrightarrow R_{\overline{\rho},\cS}$). 
	
(4) The embedding (\ref{paBE0}) factors through
	\begin{equation}\label{R->T}
		\cE_{\Omega, \lambda}^{\infty}(\overline{\rho})\hooklongrightarrow(\Spf R_{\infty}^p)^{\rig} \times \jmath(X_{\Omega, \textbf{h}}(\overline{\rho}_{\wp}))
	\end{equation}
	(where $\jmath$ is a certain shift of the natural embedding, see (\ref{RTnorm})) and induces an isomorphism between $\cE_{\Omega, \lambda}^{\infty}(\overline{\rho})$ and a union of irreducible components of $(\Spf R_{\infty}^p)^{\rig} \times \jmath(X_{\Omega, \textbf{h}}(\overline{\rho}_{\wp}))$ equipped with the reduced closed rigid subspace structure. 
\end{theorem}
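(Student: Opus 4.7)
The plan is to follow the same outline as for Theorem \ref{centerintro} and Theorem \ref{iThm2}, but applied to the patched Banach representation $\Pi_\infty$ instead of $\widehat{S}(U^p,E)_{\overline{\rho}}$, and then to exploit the dimension formula in Theorem \ref{triangulineintro}(1) to pin down $\cE_{\Omega, \lambda}^{\infty}(\overline{\rho})$ inside the Bernstein paraboline variety.

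Part (1) is built into the construction: we mimic the construction of $\cE_{\Omega, \lambda}(U^p, \overline{\rho})$ of Theorem \ref{centerintro}, starting from $J_P(\Pi_\infty^{R_\infty-\an})$, picking out the $\Omega$-isotypic part via a Bushnell--Kutzko type for $\Omega$, and extracting a rigid analytic space by the usual spectral construction over $(\Spf R_\infty)^{\rig} \times (\Spec \cZ_\Omega)^{\rig} \times \widehat{\cZ_0}$; the characterization of points is then essentially tautological. Part (3) follows immediately because $\Pi_\infty[\fa] \cong \widehat{S}(U^p,E)_{\overline{\rho}}$, and the formation of $J_P(\cdot)^{R-\an}$ and of the $\Omega$-component commute with passage to $\fa$-torsion: a point of $\cE_{\Omega, \lambda}(U^p)$ with image $(\eta, \pi_{L_P}, \chi)$ corresponds, via the surjection $R_\infty/\fa \twoheadrightarrow R_{\overline{\rho}, \cS}$ and the characterizations in Theorem \ref{centerintro}(2) and in (1) above, to a unique point of $\cE_{\Omega, \lambda}^{\infty}(\overline{\rho}) \times_{(\Spf R_\infty)^{\rig}} (\Spf R_{\overline{\rho}, \cS})^{\rig}$.

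For (4), the first statement (factoring through $(\Spf R_\infty^p)^{\rig} \times \jmath(X_{\Omega, \textbf{h}}(\overline{\rho}_\wp))$) is a patched analogue of Theorem \ref{iThm2}: at each point $x \in \cE_{\Omega, \lambda}^{\infty}(\overline{\rho})$ the $(\varphi, \Gamma)$-module $D_{\rig}(\rho_{x, \wp})$ admits an $\Omega$-filtration with parameters determined by $(\pi_{L_P},\chi)$ (up to the shift $\jmath$), because the global triangulation / interpolation argument of \S~\ref{globaltriangulation} applies verbatim with $\widehat{S}(U^p,E)_{\overline{\rho}}$ replaced by $\Pi_\infty$. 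This puts every point of $\cE_{\Omega, \lambda}^{\infty}(\overline{\rho})$ into $(\Spf R_\infty^p)^{\rig} \times U_{\Omega, \textbf{h}}(\overline{\rho}_\wp)$ after shifting by $\jmath$; taking Zariski closure we land in $(\Spf R_\infty^p)^{\rig} \times X_{\Omega, \textbf{h}}(\overline{\rho}_\wp)$, and the embedding is closed since $\cE_{\Omega, \lambda}^{\infty}(\overline{\rho}) \to (\Spf R_\infty)^{\rig}$ is locally finite by the analogue of Theorem \ref{centerintro}(1).

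The main obstacle is to upgrade this closed embedding to an isomorphism onto a union of irreducible components, and this is where reducedness and the dimension formula of (2) come from. The strategy is standard in the BHS framework and combines three ingredients: (a) classical points of $\cE_{\Omega, \lambda}^{\infty}(\overline{\rho})$ — those for which the corresponding vector in $J_P(\Pi_\infty^{R_\infty-\an})$ already lifts to the locally algebraic part of $\Pi_\infty$ — accumulate at every point, and at sufficiently generic such points both $\cE_{\Omega, \lambda}^{\infty}(\overline{\rho})$ and $X_{\Omega, \textbf{h}}(\overline{\rho}_\wp)$ are smooth (the latter by Theorem \ref{triangulineintro}(2)--(3)); (b) at such a classical point, the tangent space of $\cE_{\Omega, \lambda}^{\infty}(\overline{\rho})$ is controlled using the $R_\infty$-action on $\Pi_\infty$ together with Kisin's potentially crystalline deformation rings, yielding a lower bound for the local dimension equal to $\dim (\Spf R_\infty^p)^{\rig} + n^2 + [F_\wp:\Q_p](n(n-1)/2 + r)$; (c) this matches $\dim(\Spf R_\infty^p)^{\rig} + \dim X_{\Omega, \textbf{h}}(\overline{\rho}_\wp)$ by Theorem \ref{triangulineintro}(1). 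Since the closed embedding (\ref{R->T}) goes between equidimensional rigid spaces of the same dimension, its image is a union of irreducible components; $X_{\Omega, \textbf{h}}(\overline{\rho}_\wp)$ is reduced by construction (as a Zariski closure of a smooth locus), so $\cE_{\Omega, \lambda}^{\infty}(\overline{\rho})$ is reduced and equidimensional of the claimed dimension, proving (2).
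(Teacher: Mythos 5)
Your treatment of (1) and (3) matches the paper, but the logical architecture you propose for (2) and (4) has a genuine gap, essentially a circularity. In the paper, part (2) is established \emph{before} and independently of the comparison with the Bernstein paraboline variety: equidimensionality and the dimension formula come from the spectral ``eigenvariety machine'' (the Fredholm hypersurface $Z_z(\overline{\rho})\subset\cW_{\infty}\times\bG_m^{\rig}$ and the finiteness of $\kappa_z$, i.e.\ Proposition \ref{PBEFre} and Corollary \ref{PBEdim}, exactly as in \cite[Cor.\ 3.12, 3.13]{BHS1}), and reducedness is Proposition \ref{PBEred}, proved by a representation-theoretic semisimplicity argument at classical points (a variant of \cite[Cor.\ 3.20]{BHS1} using types, Lemma \ref{BCmod} and potentially crystalline deformation rings). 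Only then does the proof of Theorem \ref{R=T0} use reducedness, closedness and the equality of dimensions (Corollary \ref{PBEdim} versus Theorem \ref{DFOL}(1)) to conclude that the closed embedding hits a union of irreducible components. Your plan inverts this: you want to read off the dimension from a tangent-space lower bound at classical points plus the upper bound coming from the embedding, and reducedness from the fact that $X_{\Omega,\textbf{h}}(\overline{\rho}_{\wp})$ is reduced. Neither step works as stated. A closed subspace whose underlying set is a union of irreducible components of a reduced space can perfectly well carry a non-reduced structure, so reducedness of $\cE_{\Omega,\lambda}^{\infty}(\overline{\rho})$ (which is defined as a Zariski-closed \emph{support}, with the structure given by an annihilator ideal) does not follow and must be proved separately. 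Likewise, to say ``the embedding goes between equidimensional spaces of the same dimension, hence its image is a union of irreducible components'' you need equidimensionality, absence of embedded components, and Zariski-density of classical points in \emph{every} component of $\cE_{\Omega,\lambda}^{\infty}(\overline{\rho})$; in the paper all of these come from the Fredholm/finiteness structure (and the numerical classicality criterion, Proposition \ref{class}), which your sketch omits, so your ingredient (a) is not available without it.

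A secondary inaccuracy concerns the first assertion of (4). The patched analogue of the interpolation theorem (Theorem \ref{FOBE}) only produces, at an arbitrary point, an $\Omega$-filtration on $D_{\rig}(\rho_{x,\wp})$; it does \emph{not} place that point in $U_{\Omega,\textbf{h}}(\overline{\rho}_{\wp})$, which requires genericity of the parameter and the non-criticality condition on the Hodge--Tate weights of the graded pieces. So your claim that ``every point'' of $\cE_{\Omega,\lambda}^{\infty}(\overline{\rho})$ lands in $(\Spf R_{\infty}^p)^{\rig}\times U_{\Omega,\textbf{h}}(\overline{\rho}_{\wp})$ is false (and is also at odds with your subsequent passage to a Zariski closure). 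The paper instead sends the \emph{generic classical} points into $U_{\Omega,\textbf{h}}(\overline{\rho}_{\wp})$ via Proposition \ref{galPBE} (local-global compatibility through the potentially crystalline deformation rings of \cite{CEGGPS}) and then invokes their Zariski-density (Theorem \ref{classPatc}) together with reducedness to get the factorization through the closed subspace $(\Spf R_{\infty}^p)^{\rig}\times\jmath(X_{\Omega,\textbf{h}}(\overline{\rho}_{\wp}))$. You should repair your argument by proving (2) first along these lines and only then deducing (4).
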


To gain a better understanding of the embedding (\ref{R->T}), we are led to study the local geometry of Bernstein paraboline varieties. As in the trianguline case (when $P=B$), it is closely related to certain schemes appearing in parabolic generalizations of Grothendieck's and Springer's resolution of singularities. We now define these schemes. For a closed algebraic subgroup $H$ of $\GL_n$ (seen over $F_{\wp}$), put $H_{\wp}:=(\Res_{\Q_p}^{F_{\wp}} H) \times_{\Spec \Q_p} \Spec E$. Likewise, for a Lie subalgebra $\fh$ of the Lie algebra $\gl_n$ over $F_{\wp}$, put $\fh_{\wp}:=\fh \otimes_{\Q_p} E$. Let $\tilde{\ug}_{\wp}$ (resp.\ $\tilde{\ug}_{P,\wp}$) be the closed $E$-subscheme of $(\GL_{n,\wp}/B_{\wp}) \times \gl_{n,\wp}$ (resp.\ of $(\GL_{n,\wp}/P_{\wp}) \times \gl_{n,\wp}$) defined by:
\begin{equation*}
	\{(gB_{\wp}, \psi) \in (\GL_{n,\wp}/ B_{\wp}) \times \gl_{n,\wp} \ |\ \Ad(g^{-1}) \psi \in \ub_{\wp}\}
\end{equation*}
\begin{equation*}
	\big(\text{resp.\ }\{(gP_{\wp}, \psi) \in (\GL_{n,\wp}/ P_{\wp}) \times \gl_{n,\wp} \ |\ \Ad(g^{-1}) \psi \in \ur_{P,\wp}\}\big),
\end{equation*}
where $\ub\subset \gl_n$ (resp.\ $\ur_P\cong \fn_P \rtimes \fz_{L_P}$) is the Lie algebra of $B$ (resp.\ of the full radical of $P\cong N_P\rtimes L_P$, where $\fz_{L_P}$ is the Lie algebra of $Z_{L_P}$). Put 
\begin{equation*}
	X_{P,\wp}:=\tilde{\ug}_{P,\wp} \times_{\gl_{n,\wp}}\tilde{\ug}_{\wp}.
\end{equation*}
We define a morphism $\pi$ as the composition:
\begin{multline*}
\pi: X_{P,\wp} \hooklongrightarrow (\GL_{n,\wp}/P_{\wp}) \times (\GL_{n,\wp}/B_{\wp}) \times \gl_{n,\wp} \twoheadlongrightarrow (\GL_{n,\wp}/P_{\wp}) \times (\GL_{n,\wp}/B_{\wp}) \\
=\amalg_{w\in \sW_{P,F_{\wp}}^{\max}} \GL_{n,\wp} (1, w) (P_{\wp} \times B_{\wp})
\end{multline*}
where the second map is the canonical projection. For $w\in \sW_{P,F_{\wp}}^{\max}$, let $U_w:=\GL_{n,\wp} (1, w) (P_{\wp} \times B_{\wp})$ and $X_w$ the reduced closed subscheme of $X_{P,\wp}$ defined as the Zariski-closure of $\pi^{-1}(U_w)$. One can show that $\{X_w\}_{w\in \sW_{P,F_{\wp}}^{\max}}$ are the irreducible components of $X_{P,\wp}$ and that $X_{P,\wp}$ is equidimensional of dimension 
\begin{equation*}
	[F_{\wp}:\Q_p]\frac{n(n-1)}{2}+\dim \ur_{P,\wp}=[F_{\wp}:\Q_p]\Big(\frac{n(n-1)}{2}+r\Big)+\dim \fn_{P,\wp}.
\end{equation*}
The following geometric property of $X_w$ is particularly important for our applications:

\begin{theorem}[cf.\ Theorem \ref{unibranch}]\label{intuni}
	Let $w\in \sW_{P,F_{\wp}}^{\max}$ and $x=(g_1P_{\wp}, g_2B_{\wp}, 0)\in X_w\!\hookrightarrow\! (\GL_{n,\wp}/P_{\wp}) \times (\GL_{n,\wp}/B_{\wp}) \times \gl_{n,\wp}$, then the scheme $X_w$ is unibranch at $x$.
\end{theorem}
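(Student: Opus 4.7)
The plan is to construct a proper birational morphism $\mu \colon \widetilde{X}_w \to X_w$ from a smooth scheme $\widetilde{X}_w$ and verify that its geometric fibre over $x$ is connected. By Stein factorization, $\mu$ then factors through the normalization of $X_w$, which must therefore be bijective above $x$ -- this is exactly the unibranch property. Via the $\GL_n$-equivariance of the closed embedding $X_w \hookrightarrow (\GL_{n,\wp}/P_\wp) \times (\GL_{n,\wp}/B_\wp) \times \gl_{n,\wp}$, it suffices to treat $x = (P_\wp, w'B_\wp, 0)$ for some $w' \in \sW_{P,F_\wp}^{\max}$ necessarily satisfying $w' \leq w$ in the Bruhat order (since $x \in X_w$ forces $(P, w'B) \in \overline{U_w}$).

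For the construction, consider the proper projection $\pi \colon X_w \to \overline{U_w} \subset (\GL_{n,\wp}/P_\wp) \times (\GL_{n,\wp}/B_\wp)$. Over the open orbit $U_w$, it realises $X_w$ as the total space of the rank-$d_w$ vector subbundle $\cE \subset U_w \times \gl_{n,\wp}$ with fibre $\Ad(g_1)\ur_{P,\wp} \cap \Ad(g_2)\ub_{\wp}$ at $(g_1P, g_2B)$, where $d_w := \dim(\ur_{P,\wp} \cap \Ad(w)\ub_{\wp})$. The base $\overline{U_w}$ is a generalized Schubert variety (the closure of a $\GL_n$-orbit in a product of partial flag varieties), which is normal by the classical theory of Schubert varieties. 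Choose a smooth resolution $\varphi \colon \widetilde{U}_w \to \overline{U_w}$ of Bott--Samelson type. Exploiting the iterated $\bP^1$-bundle structure of $\widetilde{U}_w$, extend $\cE|_{U_w}$ inductively to a rank-$d_w$ locally free subsheaf $\widetilde{\cE} \subset \widetilde{U}_w \times \gl_{n,\wp}$, and set $\widetilde{X}_w := \mathrm{Tot}(\widetilde{\cE})$. This scheme is smooth (vector bundle over a smooth base), and the natural map $\widetilde{X}_w \to (\GL_{n,\wp}/P_\wp) \times (\GL_{n,\wp}/B_\wp) \times \gl_{n,\wp}$ lies in $X_{P,\wp}$ by the construction of $\widetilde{\cE}$ and factors through $X_w$: indeed over $\varphi^{-1}(U_w)$ it agrees with an open embedding onto $\pi^{-1}(U_w) \subset X_w$, so its image is contained in the Zariski closure $X_w$. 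The resulting $\mu$ is proper and birational.

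The fibre $\mu^{-1}(x)$ is the zero section of $\widetilde{\cE}$ above $\varphi^{-1}(P_\wp, w'B_\wp)$, hence isomorphic to $\varphi^{-1}(P_\wp, w'B_\wp)$. By Zariski's connectedness theorem applied to the proper birational morphism $\varphi$ onto the normal variety $\overline{U_w}$, this preimage is geometrically connected, and so is $\mu^{-1}(x)$. The main obstacle is producing the locally free extension $\widetilde{\cE}$ of rank exactly $d_w$ across the boundary $\overline{U_w} \setminus U_w$: on this boundary the intersection $\Ad(g_1)\ur_{P,\wp} \cap \Ad(g_2)\ub_{\wp}$ jumps in dimension, so the naive sheaf-theoretic closure of $\cE$ fails to be locally free. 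One must exploit the tautological flag structure of the Bott--Samelson tower step by step to maintain the expected rank, using crucially that $w \in \sW_{P,F_\wp}^{\max}$ controls the combinatorics of the boundary strata.
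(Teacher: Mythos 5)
Your overall strategy is genuinely different from the paper's and its skeleton is sound: a proper birational morphism $\mu\colon \widetilde{X}_w\to X_w$ from a smooth (hence normal) scheme factors through the normalization of $X_w$, so a geometrically connected fibre $\mu^{-1}(x)$ forces the normalization to have a single point above $x$, which is the unibranch property; and the restriction to points with $\gl_{n,\wp}$-coordinate $0$ is exactly what makes $\mu^{-1}(x)$ equal to the zero-section over $\varphi^{-1}(\pi(x))$, which is connected by Zariski's connectedness theorem since $\overline{U_w}$ is normal (being a Schubert-variety bundle over a partial flag variety). However, the step you yourself flag as ``the main obstacle'' is a genuine gap, and it sits at the heart of the construction: you need $\widetilde{\cE}$ to be a rank-$d_w$ \emph{subbundle} of $\co_{\widetilde{U}_w}\otimes\gl_{n,\wp}$, i.e.\ fibrewise injective everywhere (otherwise $\mathrm{Tot}(\widetilde{\cE})$ is not closed in $\widetilde{U}_w\times\gl_{n,\wp}$ and $\mu$ need not be proper, which kills the factorization argument), and no argument is given for why such an extension exists. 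The proposed mechanism --- extending ``step by step along the Bott--Samelson tower'' --- is not justified: the jumping of $\Ad(g_1)\ur_{P,\wp}\cap\Ad(g_2)\ub_{\wp}$ on the boundary is not governed in any evident way by the iterated $\bP^1$-bundle structure, and no actual role of the maximality of $w$ is identified. As written the proof is incomplete at its key point. The gap can in fact be closed without any Bott--Samelson combinatorics: $\cE|_{U_w}$ is the pullback of a morphism $U_w\to \mathrm{Gr}(d_w,\gl_{n,\wp})$, hence gives a rational map $\widetilde{U}_w\dashrightarrow \mathrm{Gr}(d_w,\gl_{n,\wp})$ to a projective target; resolving its indeterminacy by further blow-ups yields a smooth $\widetilde{U}'_w$, still proper and birational over the normal variety $\overline{U_w}$ (so connectedness of the fibres of $\varphi'$ persists), carrying a tautological subbundle that extends $\cE$. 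With that replacement your argument goes through, and one sees that neither the Bott--Samelson structure nor $w\in\sW_{P,F_{\wp}}^{\max}$ plays any role.

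For comparison, the paper avoids constructing any resolution of $X_w$. It reduces to the irreducible component $Y_w=q_P^{-1}(\ub)_w$ of $q_P^{-1}(\ub)\subset G\times^P\ur_P$ and works directly with the normalization $f\colon\widetilde{Y}_w\to Y_w$: following the idea of \cite[Lemma 3.4.8]{LLHLM}, it lifts the scaling action of the monoid $\bA^1$ on the $\ur_P$-direction to $\widetilde{Y}_w$, uses the resulting contraction to show that the $\bG_m$-fixed locus of $\widetilde{Y}_w$ is irreducible and coincides with $f^{-1}(Y_w^{\bG_m})^{\red}$, and then applies Zariski's connectedness theorem to the finite birational morphism from this fixed locus onto the normal Schubert variety $Y_w^{\bG_m}\cong \overline{BwP/P}\times\{0\}$, obtaining connected normalization fibres over all points $(g,0)$. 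Both proofs ultimately rest on the same two inputs (normality of Schubert varieties and Zariski connectedness) and share the same limitation to points with nilpotent coordinate $0$; your route buys a global ``resolution'' picture of $X_w$ at the cost of the extension problem above, while the paper's contraction argument sidesteps that problem entirely.
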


When $P=B$, it was showed in \cite[Thm.\ 2.3.6]{BHS3} that the whole scheme $X_w$ is normal. When $P\neq B$, the geometry of $X_w$ appears seriously more difficult and we don't know if $X_w$ is normal. The proof of Theorem \ref{intuni} is inspired by a result of Le, Le Hung, Morra and Levin in the setting of deformation rings (\cite[Lemma 3.4.8]{LLHLM}) and {\it a priori} only works for those points $x\in X_w$ with $\gl_{n,\wp}$-factor zero (as in Theorem \ref{intuni}).

Let $x=(\varrho,(x_i), (\chi_i))\in X_{\Omega, \textbf{h}}(\overline{\rho}_{\wp})$ such that $\varrho$ is almost de Rham (in the sense of \cite{Fo04}) with distinct Sen weights. We fix an $F_{\wp} \otimes_{\Q_p} k(x)$-linear isomorphism $\alpha: F_{\wp}\otimes_{\Q_p} k(x) \xrightarrow{\sim} D_{\pdR}(\varrho)=(B_{\pdR} \otimes_{\Q_p} \varrho)^{\Gal_{F_{\wp}}}$ where $B_{\pdR}$ is the ring of \cite[\S~4.3]{Fo04} (see also \S~\ref{sec61}). By an analogue of Theorem \ref{iThm2} (1) for the variety $X_{\Omega, \textbf{h}}(\overline{\rho}_{\wp})$ instead of $\cE_{\Omega, \lambda}(U^p,\overline{\rho})$, the $(\varphi,\Gamma)$-module $D_{\rig}(\varrho)[\frac{1}{t}]$ over $\cR_{k(x),F_{\wp}}[\frac{1}{t}]$ admits a $P$-filtration $\sF_{\varrho}$ which induces a $P$-filtration (still denoted) $\sF_{\varrho}$ on $D_{\pdR}(\varrho)\cong \big(B_{\pdR} \otimes_{B_{\dR}} W_{\dR}(D_{\rig}(\varrho)[\frac{1}{t}])\big)^{\Gal_{F_{\wp}}}$ (see for instance \cite[Lemma 3.3.5 (ii)]{BHS3} for $W_{\dR}(-)$, see also \S~\ref{sec62}). We use $\Fil_\varrho$ to denote the complete flag given by the Hodge filtration on $D_{\pdR}(\varrho)$. Using the framing $\alpha$, the filtration $\sF_{\varrho}$ (resp.\ $\Fil_{\varrho}$) corresponds to an element still denoted by $\sF_{\varrho}$ (resp.\ $\Fil_{\varrho}$) in $\GL_{n,\wp}/P_{\wp}$ (resp. in $\GL_{n,\wp}/B_{\wp}$). Finally, let $\nu_{\pdR}$ be Fontaine's nilpotent operator on $D_{\pdR}(\varrho)$. As both $\sF_{\varrho}$ and $\Fil_{\varrho}$ are stabilized by $\nu_{\pdR}$, we have
\begin{equation}\label{intypdR}
	y_{\pdR}:=(\sF_{\varrho}, \Fil_{\varrho}, \nu_{\pdR})\in X_{P,\wp}\hookrightarrow (\GL_{n,\wp}/P_{\wp}) \times (\GL_{n,\wp}/B_{\wp}) \times \gl_{n,\wp}.
\end{equation}
There exists thus $w_{\sF_{\varrho}}\in \sW_{P,F_\wp}^{\max}$ such that $y_{\pdR}\in \pi^{-1}(U_{w_{\sF_{\varrho}}})$. In fact, $w_{\sF_{\varrho}}$ is independent of the choice of $\alpha$ and is equal to the element $w_{\sF}$ defined above Theorem \ref{intcs} if $\varrho=\rho_{\wp}$ and $\sF_{\varrho}=\sF$ (note that we have $\nu_{\pdR}=0$ in this case).

By an analogue of Theorem \ref{iThm2} (2), one can show that for each $\tau: F_{\wp}\hookrightarrow E$, the set of $\tau$-Sen weights of $\varrho$ is given by $\{k_{x,j,\tau}:=h_{j,\tau}+\wt(\chi_{i(j)})_{\tau}\}_{j=1,\dots, n}$ where $i(j)\in \{1,\dots, r\}$ is such that $s_{i(j)-1}<j\leq s_{i(j)}$. There exist then a unique element $w_x=(w_{x,\tau})\in \sW_{P,F_{\wp}}^{\min}$ and a strictly dominant weight $\textbf{h}_x$ (which is in fact given by the (decreasing) Sen weights of $\varrho$) such that $\textbf{k}_x=w_x(\textbf{h}_x)$ where $\textbf{k}_x:=\{k_{x,j,\tau}\}_{\substack{j=1,\dots, n\\ \tau: F_{\wp} \hookrightarrow E}}$. 

By generalizing \cite[\S~3]{BHS3}, we have:

\begin{theorem}[cf.\ \S~\ref{secGrDV}]\label{intLocM}
	Keep the above notation and assume $\varrho$ is de Rham (equivalently $\nu_{\pdR}=0$) and $\sF_{\varrho}$ is generic (in the sense of (\ref{geneMf})). 
	
	(1) We have $w_{\sF}\leq w_xw_0$.
	
	(2) There exists a formal scheme $X_{\varrho, \cM_{\bullet}}^{\square, w_x}$ over $E$ such that the associated reduced formal scheme $(X_{\varrho, \cM_{\bullet}}^{\square, w_x})^{\red}$ is formally smooth of dimension $n^2+\dim \fp_{\wp}$ over the completion $\widehat{X}_{w_x,y_{\pdR}}$ of $X_{w_x}$ at $y_{\pdR}$, and formally smooth of dimension $n^2[F_{\wp}:\Q_p]$ over the completion $\widehat{X_{\Omega, \textbf{h}}(\overline{\rho}_{\wp})}_x$ of $X_{\Omega, \textbf{h}}(\overline{\rho}_{\wp})$ at $x$:	
	\begin{equation*}
		\widehat{X_{\Omega, \textbf{h}}(\overline{\rho}_{\wp})}_x \longleftarrow (X_{\varrho, \cM_{\bullet}}^{\square, w_x})^{\red} \longrightarrow \widehat{X}_{w_x,y_{\pdR}}.
	\end{equation*}
	In particular, $X_{\Omega, \textbf{h}}(\overline{\rho}_{\wp})$ is unibranch at the point $x$.
\end{theorem}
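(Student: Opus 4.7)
The plan is to adapt the ``local model'' strategy of \cite[\S 3]{BHS3} from the trianguline case $P=B$ to the parabolic Bernstein setting. The central intermediate object is a formal scheme $X_{\varrho, \cM_\bullet}^{\square, w_x}$ whose $A$-points (for $A$ an Artinian local $E$-algebra with residue field $k(x)$) parametrize tuples $(\varrho_A, \alpha_A, \cM_{\bullet, A})$ consisting of: an almost de Rham deformation $\varrho_A$ of $\varrho$, a framing $\alpha_A$ of $D_{\pdR}(\varrho_A)$ extending $\alpha$, and a $P$-filtration $\cM_{\bullet, A}$ of $D_{\rig}(\varrho_A)[\tfrac{1}{t}]$ deforming $\sF_\varrho$ and compatible with the given $\Omega$-parameter; subject to the constraint that the induced triple $(\cM_{\bullet, A}, \Fil_{\varrho_A}, \nu_{\pdR, A})$, transported through $\alpha_A$ into $(\GL_{n,\wp}/P_\wp)\times(\GL_{n,\wp}/B_\wp)\times\gl_{n,\wp}$, lands in the formal neighbourhood $\widehat{X}_{w_x, y_{\pdR}}$. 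Genericity of $\sF_\varrho$ ensures, as in \cite[\S 3.3]{BHS3}, that $\cM_{\bullet, A}$ is uniquely determined by its reduction modulo $t$ on $D_{\pdR}$, so the moduli problem is representable.

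For (1), the analogue of Theorem \ref{iThm2}(2) on $X_{\Omega,\textbf{h}}(\overline\rho_\wp)$ says that each graded piece $\gr_i \sF_\varrho$ has a prescribed multiset of Sen weights determined by the $\Omega$-parameter; summing these recovers the Sen spectrum $\textbf{k}_x = w_x(\textbf{h}_x)$. The minimal length of $w_x \in \sW_{P, F_\wp}^{\min}$ says precisely that within each $L_P$-block of $\textbf{k}_x$ the weights already appear in decreasing order. On the other hand, $w_{\sF}$ is by definition the relative position of the partial flag $\sF_\varrho$ inside the complete Hodge flag $\Fil_\varrho$ of $D_{\pdR}(\varrho)$. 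A direct Weyl-group computation parallel to \cite[Prop.\ 3.5.1]{BHS3} then produces the Bruhat inequality $w_{\sF} \leq w_x w_0$.

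For (2), the two formally smooth morphisms arise from separate forgetful maps. Sending $(\varrho_A, \alpha_A, \cM_{\bullet, A})$ to the image of $(\cM_{\bullet, A}, \Fil_{\varrho_A}, \nu_{\pdR, A})$ under $\alpha_A$ defines the map to $\widehat{X}_{w_x, y_{\pdR}}$; its fibres parametrize framed almost de Rham $(\varphi, \Gamma)$-module structures lifting a prescribed triple $(\sF, \Fil, \nu)$, and the parabolic analogue of Fontaine's equivalence together with a direct tangent-space computation makes the relative dimension equal to $n^2 + \dim \fp_\wp$. Forgetting $\alpha_A$ and $\cM_{\bullet, A}$ gives the map to $\widehat{X_{\Omega,\textbf{h}}(\overline\rho_\wp)}_x$; by uniqueness of $\cM_{\bullet, A}$ under genericity, the fibre is the $\GL_n(F_\wp \otimes_{\Q_p} E)$-torsor of framings, of dimension $n^2[F_\wp : \Q_p]$. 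Smoothness of this second map is established by a tangent space/Euler-characteristic calculation generalizing \cite[Cor.\ 3.5.8]{BHS3}, combined with the dimension formula of Theorem \ref{triangulineintro}(1) and the global $\Omega$-triangulation-in-families of \S~\ref{globaltriangulation} which actually supplies the $P$-filtration on any small deformation of $\varrho$. The unibranch conclusion then follows from Theorem \ref{intuni}: since $\widehat{X}_{w_x, y_{\pdR}}$ is unibranch, formal smoothness transfers this property up to $(X_{\varrho, \cM_\bullet}^{\square, w_x})^{\red}$ and then down along the second morphism to $\widehat{X_{\Omega, \textbf{h}}(\overline\rho_\wp)}_x$.

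The hardest step will be the formal smoothness of the map to $\widehat{X_{\Omega, \textbf{h}}(\overline\rho_\wp)}_x$. In the trianguline setting the analogous statement hinges on cohomology of rank-one $(\varphi, \Gamma)$-modules, whereas here one must control deformations of the cuspidal Deligne-Fontaine pieces $\Delta_{x_i}$ and of the global $P$-filtration simultaneously; this will require a parabolic extension of the Euler-characteristic/obstruction-theoretic arguments of \cite[\S 3.4-3.5]{BHS3}, fed by the interpolation results of \S~\ref{globaltriangulation}. A secondary technical issue is checking that part (1) is sharp enough to guarantee that the constructed map factors through the single component $\widehat{X}_{w_x, y_{\pdR}}$ rather than some larger union of $\widehat{X}_{w, y_{\pdR}}$, which is essential for the unibranch conclusion.
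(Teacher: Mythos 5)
Your overall architecture (a framed deformation space with an $\Omega$-filtration, mapped on one side to $\widehat{X}_{w_x,y_{\pdR}}$ and on the other to the completed local ring of the Bernstein paraboline variety) is the right one, but two of your key steps do not work as stated. First, part (1) is not a pointwise Weyl-group computation. Your premise that ``each graded piece $\gr_i\sF_{\varrho}$ has a prescribed multiset of Sen weights determined by the $\Omega$-parameter'' is exactly what fails at critical points: Proposition \ref{Senwt} only determines the \emph{total} Sen multiset of $\varrho$, while the weights of the saturated graded pieces are governed by $w_{\sF}$ itself, so no linear-algebra manipulation at the single point $x$ can yield $w_{\sF}\leq w_xw_0$. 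Indeed, if the inequality followed from the mere existence of a generic $\Omega$-filtration with the given Sen weights, the ``only if'' direction of the companion point statement (Corollary \ref{coLoccomp}) would be vacuous. The inequality encodes that $x$ lies in the Zariski closure of the non-critical locus, and in the paper it is a \emph{consequence} of part (2): once $\widehat{X_{\Omega,\textbf{h}}(\overline{\rho}_{\wp})}_x$ is identified with the component $X^{w_xw_0}_{\varrho,\cM_{\bullet}}$ of the deformation space (Corollary \ref{locModdv}), its non-emptiness forces $y_{\pdR}$ to lie in the cell closure $X_{w_xw_0}$, and the closure relations of the cells $V_w$ (Lemma \ref{w'w}, via Proposition \ref{exCon}) then give $w_{\sF}\leq w_xw_0$ (Corollary \ref{rescomp}).

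Second, the formal smoothness over $\widehat{X_{\Omega,\textbf{h}}(\overline{\rho}_{\wp})}_x$ cannot be obtained by a tangent-space/obstruction computation for a forgetful map, because the target is the completion of a Zariski closure and is not a priori the pro-representing object of any deformation problem; identifying it \emph{is} the content of the theorem. The paper's route is: (i) the families result of \S~\ref{globaltriangulation} (Corollary \ref{para}, Corollary \ref{rgloOF}(2)) equips the universal deformation over Artinian quotients of $\widehat{\co}_{X_{\Omega,\textbf{h}}(\overline{\rho}_{\wp}),x}$ --- and only over these, not over arbitrary small deformations of $\varrho$ as you assert --- with an $\Omega$-filtration, giving a closed immersion $\widehat{X_{\Omega,\textbf{h}}(\overline{\rho}_{\wp})}_x\hookrightarrow X_{\varrho,\cM_{\bullet}}$ (Proposition \ref{closedemR}); (ii) equidimensionality and equality of dimensions (Theorem \ref{DFOL}(1) versus Theorem \ref{thmrM}(1)), together with reducedness, show the image is a union of irreducible components $\Spec R^{w}_{\varrho,\cM_{\bullet}}$; (iii) the issue you relegate to a ``secondary technical point'' --- why only the single component indexed by $w_xw_0$ occurs --- is in fact the crux, and is resolved by the weight morphism $(\kappa_B,\kappa_P)$ to $\sT_{P}$: Proposition \ref{propTHeta} (a Sen-weight interpolation) shows $\Theta_x$ factors through $\widehat{\sT}_{w_xw_0}$, and Lemma \ref{Theta0}/Theorem \ref{thmrM}(2) exclude every other component; Theorem \ref{unibranch} is what makes each $R^{w}$ a domain, whence the identification and the unibranch conclusion. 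After this, the formal smoothness of relative dimension $n^2[F_{\wp}:\Q_p]$ over $\widehat{X_{\Omega,\textbf{h}}(\overline{\rho}_{\wp})}_x$ is simply the torsor of framings $\alpha$; the genuinely cohomological smoothness input sits on the other leg, in $X^{\square}_{\cM,\cM_{\bullet}}\rightarrow X^{\square}_{W,\cF_{\bullet}}$ (Corollary \ref{fsm2}, resting on Lemma \ref{phiGamCohotin}, where the genericity hypothesis (\ref{geneMf}) is actually used), not where you place it.
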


By Theorem \ref{intLocM} (2) and Theorem \ref{intpBE} (4), we deduce the following ``$R=T$"-type result:

\begin{corollary}
	Let $x=(\fm^p, \varrho, \pi_{L_P}, \chi)\in \cE_{\Omega, \lambda}^{\infty}(\overline{\rho}) \hookrightarrow (\Spf R_{\infty}^p)^{\rig} \times (\Spf R_{\overline{\rho}_{\wp}})^{\rig} \times (\Spec \cZ_{\Omega})^{\rig} \times \widehat{\cZ_0}$. Assume that $\fm^p$ is a smooth point of $(\Spf R_{\infty}^p)^{\rig}$ and that $\varrho$ is generic potentially crystalline with distinct Hodge-Tate weights. Then the embedding (\ref{R->T}) induces an isomorphism after taking completions at $x$. 
\end{corollary}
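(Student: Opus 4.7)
The plan is to combine Theorem~\ref{intpBE}~(4), which realizes $\cE_{\Omega, \lambda}^{\infty}(\overline{\rho})$ as a union of irreducible components of the ambient reduced rigid space $Y := (\Spf R_{\infty}^p)^{\rig} \times \jmath(X_{\Omega, \textbf{h}}(\overline{\rho}_{\wp}))$, with a unibranch-ness statement at $x$ for $Y$ itself. The key observation is that once $Y$ is known to be unibranch at $x$, i.e.\ the complete local ring of $Y$ at $x$ has a unique minimal prime, then the unique irreducible component of $Y$ passing through $x$ must lie entirely in $\cE_{\Omega, \lambda}^{\infty}(\overline{\rho})$ (since $x \in \cE_{\Omega, \lambda}^{\infty}(\overline{\rho})$ and the latter is, locally at $x$, a union of irreducible components of $Y$). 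The embedding (\ref{R->T}) then induces an isomorphism on completions at $x$.

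To establish the unibranch-ness of $Y$ at $x$, first observe that $\jmath$ is an isomorphism of rigid spaces (a shift by a character), so the completion of $Y$ at $x$ is isomorphic to the completed tensor product over $E$ of the completion of $(\Spf R_{\infty}^p)^{\rig}$ at $\fm^p$ with the completion of $X_{\Omega, \textbf{h}}(\overline{\rho}_{\wp})$ at $(\varrho, \pi_{L_P}, \chi)$. By the smoothness hypothesis on $\fm^p$, the former completion is a formal power series ring $E[[T_1, \dots, T_m]]$, so the completed product identifies with $\widehat{\co}_{X_{\Omega, \textbf{h}}(\overline{\rho}_{\wp}), x}[[T_1, \dots, T_m]]$. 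For any complete Noetherian local ring $A$, the minimal primes of $A[[T_1,\dots, T_m]]$ are in bijection with those of $A$ via extension, so the property of having a unique minimal prime is preserved; thus the task reduces to showing that $X_{\Omega, \textbf{h}}(\overline{\rho}_{\wp})$ is unibranch at $x$.

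This last statement is exactly the concluding assertion of Theorem~\ref{intLocM}~(2), whose hypotheses I would verify as follows: since $\varrho$ is potentially crystalline it is in particular de Rham, so Fontaine's operator $\nu_{\pdR}$ vanishes on $D_{\pdR}(\varrho)$; the Sen weights of $\varrho$ coincide with its Hodge--Tate weights, which are distinct by assumption; and the $P$-filtration $\sF_\varrho$ on $D_{\pdR}(\varrho)$ deduced from the analogue for $X_{\Omega, \textbf{h}}(\overline{\rho}_{\wp})$ of Theorem~\ref{iThm2}~(1) is generic in the sense of (\ref{geneMf}), this last point being a translation of the assumed genericness of $\varrho$ via the compatibility between the $\Omega$-filtration on $D_{\rig}(\varrho)[1/t]$ and the parameter $(\pi_{L_P}, \chi)$.

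The step I expect to be the main obstacle is the last verification: carefully translating the genericness of the potentially crystalline $\varrho$ into the precise genericness condition on $\sF_\varrho$ needed to apply Theorem~\ref{intLocM}~(2). This requires unpacking how the $\Omega$-filtration on $D_{\rig}(\varrho)[1/t]$ provided by the Bernstein paraboline variety restricts, via $W_{\dR}$ and $D_{\pdR}$, to the partial flag $\sF_\varrho$ on $D_{\pdR}(\varrho)$, and matching the non-vanishing conditions appearing in the two notions of genericness.
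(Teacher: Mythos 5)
Your proof is correct and follows essentially the same route as the paper: the corollary is exactly Corollary \ref{corR=T0} in the body, deduced by combining Theorem \ref{R=T0} (i.e.\ Theorem \ref{intpBE} (4)) with the unibranch/local irreducibility of $X_{\Omega, \textbf{h}}(\overline{\rho}_{\wp})$ at the relevant point (Corollary \ref{locModdv}, i.e.\ Theorem \ref{intLocM} (2)), together with the smoothness of $\fm^p$. The translation of genericity of $\varrho$ into genericity of the $\Omega$-filtration, which you flag as the delicate step, is handled in the paper by Remark \ref{geneM1t} (2) (together with Corollary \ref{uniqMfil}), exactly along the lines you indicate.
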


We now discuss the problem of companion points on Bernstein eigenvarieties (resp.\ on patched Bernstein eigenvarieties, resp.\ on Bernstein paraboline varieties), which will be crucial to attack the socle conjecture. Let $y$ be a point of $(\Spf R_{\overline{\rho},\cS})^{\rig}$ (resp.\ of $(\Spf R_{\infty})^{\rig}$, resp.\ of $(\Spf R_{\overline{\rho}_{\wp}})^{\rig}$), and $\varrho$ be the $\Gal_{F_{\wp}}$-representation associated to $y$. We assume $\varrho$ is generic potentially crystalline with distinct Hodge-Tate weights. We let $\textbf{h}$ be the (decreasing) Hodge-Tate weights of $\varrho$ and $\lambda=(\lambda_{i,\tau})_{\substack{i=1,\dots,n\\ \tau: F_{\wp} \hookrightarrow E}}$ with $\lambda_{i,\tau}=h_{i,\tau}+i-1$ (so $\lambda$ is dominant with respect to $B$). Assume $\ttr(\varrho)\cong \oplus_{i=1}^r \ttr_{x_i}$ with $\ul{x}=(x_i)\in (\Spec \cZ_{\Omega})^{\rig}$. Note that, as $\varrho$ is generic, the $P$-filtration $\ttr_{x_1}\subset \ttr_{x_1}\oplus \ttr_{x_2}\subset \cdots$ on $\ttr(\varrho)$ corresponds to a unique $\Omega$-filtration on $D_{\rig}(\varrho)$, see \S~\ref{introPcr}. Consider the point $x:=(y, \ul{x},1)$ in $(\Spf R_{\overline{\rho},\cS})^{\rig} \times (\Spec \cZ_{\Omega})^{\rig} \times \widehat{\cZ_0}$ \big(resp.\ in $(\Spf R_{\infty}^p)^{\rig} \times (\Spec \cZ_{\Omega})^{\rig} \times \widehat{\cZ_0}$, resp.\ in $(\Spf R_{\overline{\rho}_{\wp}})^{\rig}\times (\Spec \cZ_{\Omega})^{\rig} \times \widehat{\cZ_0}$\big). As usual, we denote by $w\cdot \mu$ the dot action on a weight $\mu$.

\begin{conjecture}\label{intccpt}
	Let $w\in \sW_{\min,F_{\wp}}^P$, then $x\in \cE_{\Omega, w\cdot \lambda}(U^p,\overline{\rho})$ \big(resp.\ $x\in \cE_{\Omega, w\cdot \lambda}^{\infty}(\overline{\rho})$, resp.\ $x\in X_{\Omega, w(\textbf{h})}(\overline{\rho}_{\wp})$\big) if and only if $w w_0\geq w_{\sF_{\varrho}}$.
\end{conjecture}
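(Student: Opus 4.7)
My plan is to reduce the conjecture, via Theorem~\ref{intpBE}(3)--(4) together with the surjection $R_\infty/\fa \twoheadrightarrow R_{\overline{\rho},\cS}$, to the corresponding statement on the Bernstein paraboline variety: for $w\in \sW_{P,F_\wp}^{\min}$ one should have $(y,\underline{x},1)\in X_{\Omega, w(\textbf{h})}(\overline{\rho}_\wp)$ if and only if $ww_0\geq w_{\sF_\varrho}$. Indeed the patched Bernstein eigenvariety is cut out from $(\Spf R_\infty^p)^{\rig}\times \jmath\bigl(X_{\Omega,w(\textbf{h})}(\overline{\rho}_\wp)\bigr)$ as a union of irreducible components, and a point $x$ on the paraboline variety produces a classical companion point on $\cE_{\Omega, w\cdot\lambda}^{\infty}(\overline{\rho})$, and then on $\cE_{\Omega, w\cdot\lambda}(U^p,\overline{\rho})$, via the Zariski density of classical points of Theorem~\ref{centerintro}(4), applied after a suitable accessibility argument.

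For the ``only if'' direction, assume $x=(y,\underline{x},1)\in X_{\Omega, w(\textbf{h})}(\overline{\rho}_\wp)$. By the paraboline analogue of Theorem~\ref{iThm2}(2), the Sen weights of $\varrho$ at $x$ are given by $w(\textbf{h})$ (since the character component is trivial at $1$), so $\textbf{k}_x=w(\textbf{h})$. Because $\varrho$ has distinct Hodge-Tate weights, the unique factorization $\textbf{k}_x=w_x(\textbf{h}_x)$ with $\textbf{h}_x$ strictly dominant and $w_x\in \sW_{P,F_\wp}^{\min}$ forces $w_x=w$ and $\textbf{h}_x=\textbf{h}$. Applying Theorem~\ref{intLocM}(1) at $x$ then gives $w_{\sF_\varrho}\leq w_x w_0 = w w_0$, as required.

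For the ``if'' direction, assume $ww_0\geq w_{\sF_\varrho}$. The point $y_{\pdR}=(\sF_\varrho,\Fil_\varrho,0)\in X_{P,\wp}$ from~(\ref{intypdR}) lies in the closed subscheme $X_{ww_0}$ by definition of $w_{\sF_\varrho}$ combined with the Bruhat-order hypothesis and the geometry of $\{X_{w'}\}$. Theorem~\ref{intLocM}(2) applied with the choice $w_x=w$ then provides a formal scheme $X_{\varrho,\cM_\bullet}^{\square,w}$ which is formally smooth over both the completion of $X_w$ at $y_{\pdR}$ and the completion of $X_{\Omega, w(\textbf{h})}(\overline{\rho}_\wp)$ at a candidate lift of $x$; pushing a deformation of $y_{\pdR}$ inside $X_{ww_0}$ through this diagram would yield the desired point of $X_{\Omega, w(\textbf{h})}(\overline{\rho}_\wp)$ projecting to $(y,\underline{x},1)$. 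The unibranchness of Theorem~\ref{intuni} at $y_{\pdR}$ is exactly what lets one identify the relevant branch and conclude that a genuine (not formal) point of $X_{\Omega, w(\textbf{h})}(\overline{\rho}_\wp)$ does lie over $x$.

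The hard part will be the ``if'' direction, and specifically the transfer from the existence of a geometric point on the Bernstein paraboline variety to an actual classical companion point on the global Bernstein eigenvariety. In the trianguline case ($P=B$) this step required combining accessibility arguments with the density of crystalline points and a careful analysis of the support of patched modules; here the same strategy is available in principle but must contend with the fact that Theorem~\ref{intuni} only delivers unibranchness (not normality) of $X_w$, and only at points with vanishing $\gl_{n,\wp}$-component. I expect that a full resolution of the ``if'' direction in this generality will require either a global input beyond what is stated in the excerpt, or the technical assumption~($*$) of Theorem~\ref{intcs}, which is precisely the hypothesis that sidesteps the pathological local geometry when two blocks of $L_P$ of size $>1$ are adjacent.
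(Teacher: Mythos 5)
Your ``only if'' arguments do match the paper: for the paraboline variety, the Sen-weight computation (Proposition \ref{Senwt}) pins down $w_x=w$ and the local-model input of Theorem \ref{intLocM} (1) (in the body, Corollary \ref{locModdv} together with Proposition \ref{exCon}, i.e.\ Corollary \ref{rescomp}) gives $w_{\sF_{\varrho}}\le ww_0$; and the passage from $\cE^{\infty}_{\Omega,w\cdot\lambda}(\overline{\rho})$ (resp.\ $\cE_{\Omega,w\cdot\lambda}(U^p,\overline{\rho})$) back to $X_{\Omega,w(\textbf{h})}(\overline{\rho}_{\wp})$ via Theorem \ref{intpBE} (3)--(4) is exactly how the paper deduces the ``only if'' part of the conjecture. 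You are also right that the global/patched ``if'' direction is not established in this generality: the statement is a conjecture, and the paper only obtains companion points on $\cE^{\infty}_{\Omega,w\cdot\lambda}(\overline{\rho})$ under classicality of the base point and the assumption ($*$), through the cycle computations of \S~\ref{secGalCyc} and \S~\ref{cyclebern} (Proposition \ref{inteqpt}, Corollary \ref{thmComCon}), not through Zariski density of classical points.

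The genuine gap is your ``if'' direction for the paraboline variety, which is precisely the part the paper does prove (Theorem \ref{intcpt2} $=$ Corollary \ref{coLoccomp}), and your argument for it is circular. Theorem \ref{intLocM} (2) and the unibranchness of Theorem \ref{intuni} describe the completed local ring of $X_{\Omega,w(\textbf{h})}(\overline{\rho}_{\wp})$ \emph{at a point already known to lie on it}; they cannot be ``applied with the choice $w_x=w$'' to a candidate point in order to manufacture membership. Likewise, nonvanishing of the deformation groupoid $X^{\square,w}_{\varrho,\cM_{\bullet}}$ only requires $y_{\pdR}\in X_{ww_0}$ and produces formal deformations; it does not place $x$ in the Zariski closure of $U_{\Omega,w(\textbf{h})}(\overline{\rho}_{\wp})$, which is a global rigid-analytic condition, and this is exactly what defines $X_{\Omega,w(\textbf{h})}(\overline{\rho}_{\wp})$. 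The paper's actual mechanism, absent from your proposal, is \S~\ref{secPCD}: one works on the framed potentially crystalline deformation space $\widetilde{\fX}^{\pcr}_{\overline{\rho}_{\wp}}(\xi_0,\textbf{h})$, stratifies its generic locus by the relative position of the Hodge filtration against the $\Omega$-filtration through a smooth map to a flag variety (Propositions \ref{geneDen} and \ref{VwSch}), and then uses the Schubert closure relation $V^{\pcr}_{\overline{\rho}_{\wp}}(\xi_0,\textbf{h})_{w'}\subset\overline{V^{\pcr}_{\overline{\rho}_{\wp}}(\xi_0,\textbf{h})_{w}}$ for $w'\le w$, the inclusion $V^{\pcr}_{\overline{\rho}_{\wp}}(\xi_0,\textbf{h})_{w}\subset U_{\Omega,ww_0(\textbf{h})}(\overline{\rho}_{\wp})$, and the Zariski-closedness of $X_{\Omega,ww_0(\textbf{h})}(\overline{\rho}_{\wp})$ to conclude (Corollary \ref{compploc1}). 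Unibranchness and the local models enter on the ``only if'' side and later in the cycle arguments, not in the construction of local companion points, so your proposal as written does not prove the one part of the conjecture that is actually a theorem.
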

As $w\in \sW^P_{\min,F_{\wp}}$, $w(\textbf{h})$ is strictly $P$-dominant and $w\cdot \lambda$ is $P$-dominant, so the corresponding rigid spaces in Conjecture \ref{intccpt} are well defined. The reader who is familiar with companion points in the trianguline case may find the statement of Conjecture \ref{intccpt} a little strange. Indeed, for the classical eigenvariety $\cE(U^p,\overline{\rho})$ (the case of the patched eigenvariety or of the trianguline variety being similar), there is a canonical embedding $\cE(U^p,\overline{\rho}) \hookrightarrow (\Spf R_{\overline{\rho},\cS})^{\rig} \times \widehat{T(F_{\wp})}$ and the companion points are the distinct points that lie above a same point $y\in (\Spf R_{\overline{\rho},\cS})^{\rig}$. In our case however, as there are different rigid spaces depending on $(\Omega, \lambda)$, it seems more convenient to {\it fix} the point $x \in (\Spf R_{\overline{\rho},\cS})^{\rig} \times (\Spec \cZ_{\Omega})^{\rig} \times \widehat{\cZ_0}$ and let the Bernstein eigenvarieties (together with the embedding $\iota_{\Omega, \lambda}$) {\it vary}. See Remark \ref{cptPBE} and Remark \ref{remNPara} for more details.

By Theorem \ref{intLocM} (1) and a study of the relation between potentially crystalline deformation spaces and Bernstein paraboline varieties (cf.\ \S~\ref{secPCD}), we can prove Conjecture \ref{intccpt} for Bernstein paraboline varieties:

\begin{theorem}[cf.\ Corollary \ref{coLoccomp}]\label{intcpt2}
	Let $w\in \sW_{\min,F_{\widetilde{v}}}^P$, then $x\in X_{\Omega, w(\textbf{h})}(\overline{\rho}_{\wp})$ if and only if $w w_0\geq w_{\sF_{\varrho}}$.
\end{theorem}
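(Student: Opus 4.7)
For the ``only if'' direction, suppose $x = (\varrho, \ul{x}, 1) \in X_{\Omega, w(\textbf{h})}(\overline{\rho}_{\wp})$. The plan is to apply Theorem \ref{intLocM}(1), which requires first identifying the element $w_x \in \sW^{\min}_{P,F_{\wp}}$ attached to $x$ on the Bernstein paraboline variety. Using the analogue of Theorem \ref{iThm2}(2) for Bernstein paraboline varieties stated just before Theorem \ref{intLocM}, the $\tau$-Sen weights of $\varrho$ at $x$ are $\{w(\textbf{h})_{j,\tau} + \wt(\chi_{i(j)})_{\tau}\}$, which collapse to the entries of $w(\textbf{h})$ since $\chi_i = 1$ for all $i$. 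On the other hand, $\varrho$ is de Rham with Hodge-Tate weights $\textbf{h}$, so $\textbf{h}_x = \textbf{h}$, and the uniqueness in the factorisation $\textbf{k}_x = w_x(\textbf{h}_x)$ with $w_x \in \sW^{\min}_{P,F_{\wp}}$ forces $w_x = w$. Theorem \ref{intLocM}(1) then yields $w_{\sF_{\varrho}} \leq w_x w_0 = w w_0$.

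For the ``if'' direction, fix $w \in \sW^{\min}_{P,F_{\wp}}$ with $w w_0 \geq w_{\sF_{\varrho}}$. Following the strategy of \S\ref{secPCD}, the plan is to transfer the existence statement to Kisin's potentially crystalline deformation space of $\overline{\rho}_{\wp}$ with Hodge-Tate weights $w(\textbf{h})$ and inertial Weil-Deligne type prescribed by $\Omega$ (after the unramified twists appearing in the definition of an $\Omega$-filtration). On its rigid-analytic generic fibre, the ordering encoded by $\ul{x}$ induces a canonical $\Omega$-filtration with trivial character parameter on the universal $(\varphi,\Gamma)$-module, via Berger's equivalence between Deligne-Fontaine modules and $p$-adic differential equations; consequently the natural morphism to $(\Spf R_{\overline{\rho}_{\wp}})^{\rig} \times (\Spec \cZ_{\Omega})^{\rig} \times \widehat{\cZ_0}$ factors through $X_{\Omega, w(\textbf{h})}(\overline{\rho}_{\wp})$. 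It therefore suffices to show that $\varrho$ itself lies in the image of this morphism.

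This last step is the main obstacle. Passing to $D_{\pdR}(\varrho)$ via the framing $\alpha$, it reduces to the geometric problem of producing a point of the form $(\sF_{\varrho}, \Fil_{\varrho}, 0)$ inside the irreducible component $X_{w w_0}$ of $X_{P,\wp}$. By the very definition of $w_{\sF_{\varrho}}$, the point $y_{\pdR} = (\sF_{\varrho}, \Fil_{\varrho}, 0)$ already sits in the open stratum $\pi^{-1}(U_{w_{\sF_{\varrho}}})$ of $X_{w_{\sF_{\varrho}}}$, and the Bruhat hypothesis $w w_0 \geq w_{\sF_{\varrho}}$ is precisely the condition for $y_{\pdR}$ to additionally lie in the component $X_{w w_0}$ under the closure relations among the irreducible components of $X_{P,\wp}$. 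Combining this schematic membership with the unibranch property at $y_{\pdR}$ supplied by Theorem \ref{intuni}, and with the smoothness of $U_{\Omega, w(\textbf{h})}(\overline{\rho}_{\wp})$ from Theorem \ref{triangulineintro}(3), one can lift the statement to an actual potentially crystalline deformation realising $\varrho$; the ensuing companion point then exhibits $x$ as a point of $X_{\Omega, w(\textbf{h})}(\overline{\rho}_{\wp})$, concluding the argument.
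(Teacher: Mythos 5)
Your ``only if'' direction is essentially the paper's argument: Proposition \ref{Senwt} identifies the Sen weights, forcing $w_x=w$, and then Theorem \ref{intLocM}(1) (i.e.\ Corollary \ref{rescomp}, which comes from the local model Corollary \ref{locModdv} together with Proposition \ref{exCon}) gives $w_{\sF_{\varrho}}\leq ww_0$. The gap is in the ``if'' direction.

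Two things go wrong there. First, the factorization claim in your second paragraph is false: the natural morphism from (the generic locus of) the potentially crystalline deformation space with its ordering does \emph{not} land in $X_{\Omega, w(\textbf{h})}(\overline{\rho}_{\wp})$. Membership in the defining locus $U_{\Omega, w(\textbf{h})}(\overline{\rho}_{\wp})$ forces the graded pieces of the $\Omega$-filtration to have the prescribed Hodge--Tate weights, which is a condition on the relative position of the Hodge filtration with respect to the $\Omega$-filtration; only the stratum where this position equals $ww_0$, and its Zariski closure, can map there. (If your factorization held for every $w$, every generic point would lie on every $X_{\Omega,w(\textbf{h})}(\overline{\rho}_{\wp})$, contradicting your own ``only if'' half.) Second, and more seriously, the concluding ``lifting'' step is not an argument: the unibranch property of $X_{ww_0}$ at $y_{\pdR}$ (Theorem \ref{intuni}) and the smoothness of $U_{\Omega,w(\textbf{h})}(\overline{\rho}_{\wp})$ only enter through the local model, which describes the completion of $X_{\Omega,w(\textbf{h})}(\overline{\rho}_{\wp})$ at $x$ \emph{once $x$ is known to lie on it}. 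Knowing $y_{\pdR}\in X_{ww_0}$ (equivalently, that the purely local space $X^{ww_0}_{\varrho,\cM_\bullet}$ is non-empty) does not force the global Zariski closure $X_{\Omega, w(\textbf{h})}(\overline{\rho}_{\wp})$ to pass through $x$: producing a companion point is a global density statement, not a local one, and no mechanism in your sketch supplies it.

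What is missing is exactly the content of Corollary \ref{compploc1}: working on $\widetilde{\fX}^{\pcr}_{\overline{\rho}_{\wp}}(\xi_0,\textbf{h})$ (fixed Hodge--Tate weights $\textbf{h}$, inertial type $\xi_0$, plus the ordering), one stratifies the generic locus by the relative position of the Hodge filtration and the $\Omega$-filtration, proves via a torsor and a \emph{smooth} period morphism to $\Res^{F_{\wp}}_{\Q_p}(\GL_n/B)$ (Propositions \ref{geneDen} and \ref{VwSch}) that the stratum containing $(\varrho,\ul{x})$ lies in the Zariski closure of the stratum $V^{\pcr}_{\overline{\rho}_{\wp}}(\xi_0,\textbf{h})_{ww_0}$ whenever $ww_0\geq w_{\sF_{\varrho}}$, observes that this latter stratum maps into $U_{\Omega,w(\textbf{h})}(\overline{\rho}_{\wp})$ via $\iota$, and concludes because $X_{\Omega,w(\textbf{h})}(\overline{\rho}_{\wp})$ is Zariski closed. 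A further small point: $y_{\pdR}\in X_{ww_0}$ does hold, but because its nilpotent entry vanishes, so that $y_{\pdR}$ lies in $\overline{U_{ww_0}}\times\{0\}\subseteq Z_{ww_0}\subseteq X_{ww_0}$ (cf.\ the proof of Lemma \ref{lemnonzdR}); it does not follow from ``closure relations among the irreducible components'' of $X_{P,\wp}$, which are not nested --- indeed $V_{w_{\sF_{\varrho}}}\not\subseteq X_{ww_0}$ unless the two cosets coincide, by a dimension count.
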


Using Theorem \ref{intpBE} (3) and Theorem \ref{intpBE} (4), Theorem \ref{intcpt2} then implies the ``only if" part of Conjecture \ref{intccpt}. 

We now move back to the global applications in Theorem \ref{intcl} and Theorem \ref{intcs}, both of which are about irreducible constituents in the socle of $\widehat{S}(U^p,E)^{\an}[\fm_{\rho}]$. Let $y$ be the image of $\fm_{\rho}$ via $(\Spf R_{\overline{\rho},\cS})^{\rig} \hookrightarrow (\Spf R_{\infty})^{\rig}$. Let $\fm_y\subset R_{\infty}[1/p]$ be the associated maximal ideal, then $\widehat{S}(U^p,E)^{\an}[\fm_{\rho}]\cong \Pi_{\infty}^{R_{\infty}-\an}[\fm_y]$. In particular, to prove Theorem \ref{intcl} and Theorem \ref{intcs}, it suffices to show the same statement with $\Pi_{\infty}^{R_{\infty}-\an}[\fm_y]$ instead of $\widehat{S}(U^p,E)^{\an}[\fm_{\rho}]$. Let $\sF$ be a $P$-filtration on $\ttr(\rho_{\wp})$ as in the discussion above Theorem \ref{intcs}. As before let $\textbf{h}$ be the (decreasing) Hodge-Tate weights of $\rho_{\wp}$ and $\lambda=(\lambda_{i,\tau})$ with $\lambda_{i,\tau}=h_{i,\tau}+i-1$. Let $\Omega$ be a Bernstein component of $L_P(F_{\wp})$ and $\ul{x}=(x_i)\in (\Spec \cZ_{\Omega})^{\rig}$ such that $\ttr_{x_i}\cong \ttr_i$ for $i=1,\cdots, r$ (so $\Omega$ and $\ul{x}$ are determined by $\rho$ and $\sF$). We then associate to $\rho$ and the $P$-filtration $\sF$ on $\ttr(\rho_{\wp})$ a point $x_{\sF}=(y,\ul{x},1)\in (\Spf R_{\infty})^{\rig} \times (\Spec \cZ_{\Omega})^{\rig} \times \widehat{\cZ_0}$. From the locally analytic representation theory, for each $w\in \sW_{P,F_{\wp}}^{\min}$, we can construct a cycle $[\cN_{w\cdot \lambda,y,\sF}]\in Z^{[F^+:\Q]\frac{n(n+1)}{2}}(\Spec \widehat{\co}_{\fX_{\infty},y})$, where $Z^{d}(-)$ is the free abelian group generated by the irreducible closed subschemes of codimension $d$ and $\widehat{\co}_{\fX_{\infty},y}$ is the completion of $\fX_{\infty}:=(\Spf R_{\infty})^{\rig}$ at the point $y$, such that:
\begin{itemize}
	\item $[\cN_{w\cdot \lambda,y,\sF}]\neq 0$ if and only if $C(w,\sF)$ embeds into $\Pi_{\infty}^{R_{\infty}-\an}[\fm_y]$.
\end{itemize}
It is also not difficult to prove the implication $[\cN_{w\cdot \lambda,y,\sF}]\neq 0\Rightarrow x_{\sF}\in \cE_{\Omega, w\cdot \lambda}^{\infty}(\overline{\rho})$. It turns out that this implication is in fact an equivalence (which then implies that, in this case, the existence of companion points is equivalent to the existence of companion constituents):

\begin{proposition}[cf.\ Proposition \ref{nonVancyc}]\label{inteqpt}
	For $w\in \sW_{P,F_{\wp}}^{\min}$, $[\cN_{w\cdot \lambda,y,\sF}]\neq 0$ if and only if $x_{\sF}\in \cE_{\Omega, w\cdot \lambda}^{\infty}(\overline{\rho})$.
\end{proposition}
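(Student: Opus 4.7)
The plan is to reduce Proposition~\ref{inteqpt}, via the equivalence $[\cN_{w\cdot\lambda,y,\sF}]\neq 0 \Leftrightarrow C(w,\sF)\hooklongrightarrow \Pi_\infty^{R_\infty-\an}[\fm_y]$ recalled just above the statement, together with the characterization of $\cE_{\Omega, w\cdot\lambda}^{\infty}(\overline{\rho})$ given by Theorem~\ref{intpBE} (1) (applied with $\chi=1$, so $\chi_\varpi=1$), to the purely representation-theoretic equivalence
\[C(w,\sF)\hooklongrightarrow \Pi_\infty^{R_\infty-\an}[\fm_y] \iff \pi_{L_P}\otimes_E L(w\cdot\lambda)_P\hooklongrightarrow J_P(\Pi_\infty^{R_\infty-\an})[\fm_y].\]

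The forward implication, flagged as ``not difficult'' in the text, proceeds by applying Emerton's left exact Jacquet functor $J_P$ of \cite{Em11} to the embedding $C(w,\sF)\hookrightarrow \Pi_\infty^{R_\infty-\an}[\fm_y]$ to obtain $J_P(C(w,\sF))\hookrightarrow J_P(\Pi_\infty^{R_\infty-\an})[\fm_y]$, and then showing that $\pi_{L_P}\otimes_E L(w\cdot\lambda)_P$ embeds into $J_P(C(w,\sF))$. The latter fact follows from the construction of $C(w,\sF)$ in \cite[\S~6]{Br13I} as an irreducible subquotient of a suitable locally $\Q_p$-analytic parabolic induction, via a direct computation of $J_P$ on that induction.

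The converse is the substantive content. Starting from $\pi_{L_P}\otimes_E L(w\cdot\lambda)_P \hookrightarrow J_P(\Pi_\infty^{R_\infty-\an})[\fm_y]$, Emerton's adjunction between $J_P$ and locally analytic parabolic induction from the opposite parabolic $\overline P$ produces a nonzero $\GL_n(F_\wp)$-equivariant morphism from an Orlik-Strauch type parabolic induction of $\pi_{L_P}\otimes_E L(w\cdot\lambda)_P$ into $\Pi_\infty^{R_\infty-\an}[\fm_y]$. Since $C(w,\sF)$ appears as an irreducible subquotient of this induction at the position determined by $w$ and $\sF$, it suffices to show that the image of the adjunction map contains this constituent, thereby yielding the desired embedding $C(w,\sF)\hookrightarrow \Pi_\infty^{R_\infty-\an}[\fm_y]$.

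The main obstacle lies in running this socle analysis in the general parabolic case. When $P=B$, one can follow the strategy of \cite[\S~5]{BHS3} using known socle structures of locally analytic principal series. For general $P$, however, one must control the socle of Orlik-Strauch parabolic inductions of locally algebraic representations of the Levi $L_P(F_\wp)$, where the socle/cosocle filtrations are considerably more intricate. The genericity of the refinement $\sF$ combined with the minimality condition $w\in \sW_{P,F_\wp}^{\min}$ should ensure that $C(w,\sF)$ appears in a controlled position, so that the adjunction map produced above is non-trivial on it; carrying this out in full generality constitutes the core technical difficulty.
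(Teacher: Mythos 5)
Your forward direction is fine and agrees with the paper's (easy) argument: by Lemma \ref{copconcyc} the non-vanishing $[\cN_{w\cdot\lambda,y,\sF}]\neq 0$ amounts to an embedding $C(w,\sF)\hookrightarrow \Pi_{\infty}^{R_{\infty}-\an}[\fm_y]$, and the adjunction of \cite[Thm.\ 4.3]{Br13II} (or, in the paper, the subquotient relation between $\cN_{w\cdot\lambda,y}$ and $\cM_{w\cdot\lambda,y}$ coming from Lemma \ref{exacBE2}) then puts $x_{\sF}$ on $\cE_{\Omega,w\cdot\lambda}^{\infty}(\overline{\rho})$. The problem is the converse, which is the whole content of Proposition \ref{nonVancyc}, and here your plan has a genuine gap. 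The adjunction only produces a nonzero map $\varphi:\cF_{P^-}^{G}\big((\text{U}(\ug)\otimes_{\text{U}(\fp^-)}L^-(-w\cdot\lambda)_P)^{\vee},\,\pi_{L_P}\otimes_E\delta_P^{-1}\big)\rightarrow \Pi_{\infty}^{R_{\infty}-\an}[\fm_y]$, and $C(w,\sF)=\cF_{P^-}^{G}(L^-(-w\cdot\lambda),\pi_{L_P}\otimes_E\delta_P^{-1})$ sits as a \emph{quotient} (cosocle constituent) of the source, not a subobject; nothing prevents $\varphi$ from killing it. Even when it survives in the image, that only exhibits $C(w,\sF)$ as a subquotient of $\Pi_{\infty}^{R_{\infty}-\an}[\fm_y]$, whereas the statement requires it in the socle. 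This companion-point-versus-companion-constituent obstruction is not removed by genericity of $\sF$ or minimality of $w$: no purely representation-theoretic socle analysis of this kind is known even for $P=B$ (this is precisely why \cite{BHS3} and the present paper develop the local-model and cycle machinery), and your proposal explicitly leaves this "core technical difficulty" unresolved, so no proof has been given.

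The paper's actual argument is geometric and avoids any socle analysis. First, Theorem \ref{R=T0} and Corollary \ref{coLoccomp} give $ww_{0,F}\geq w_y$. If $ww_{0,F}=w_y$ (the non-critical case), Corollaries \ref{corR=T0} and \ref{smDefVar2} (i.e.\ the local model for the Bernstein paraboline variety) show $\cE_{\Omega,w\cdot\lambda}^{\infty}(\overline{\rho})$ is smooth at $x$; Cohen--Macaulayness of $\cM_{\Omega,w\cdot\lambda}^{\infty}$ (Corollary \ref{PBEdim}(3)) then forces $[\cM_{w\cdot\lambda,y}]\neq 0$, and the identity (\ref{equcyc12}), which rests on the exactness statement Lemma \ref{exacBE2}, collapses to $[\cM_{w\cdot\lambda,y}]=[\cN_{w\cdot\lambda,y}]$. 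If $ww_{0,F}>w_y$ (the critical case), one embeds $\cC_{ww_{0,F}}\times\cU^p$, where $\cC_{ww_{0,F}}$ is built from the cell $V_{\overline{\rho}_p}^{\pcr}(\xi_0,\textbf{h})_{ww_{0,F}}$ of Proposition \ref{VwSch}, into $\cE_{\Omega,w\cdot\lambda}^{\infty}(\overline{\rho})$ using the unibranchness/irreducibility of $X_{\Omega,w\cdot\lambda}(\overline{\rho}_p)$ (Corollary \ref{locModdv}); the non-critical points of this family are Zariski-dense, satisfy $[\cN_{w\cdot\lambda,y'}]\neq0$ by the previous case, hence lie in the Zariski-closed support $\fZ_{w\cdot\lambda,w\cdot\lambda}$ (Lemma \ref{loccycglocy}), and passing to the closure yields $x\in\fZ_{w\cdot\lambda,w\cdot\lambda}$, i.e.\ $[\cN_{w\cdot\lambda,y}]\neq0$. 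Any successful proof along your lines would have to import equivalent geometric input; as written, the key implication is missing.
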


We first discuss the proof of Theorem \ref{intcl}. The assumption (2) in {\it loc.\ cit.}\ guarantees that there exist a parabolic subgroup $P\supseteq B$ of $\GL_n$ and a $P$-filtration $\sF$ on $\ttr(\rho_{\wp})$ such that the above associated point $x_{\sF}=(y,\ul{x},1)\in (\Spf R_{\infty})^{\rig} \times (\Spec \cZ_{\Omega})^{\rig} \times \widehat{\cZ_0}$ lies in a certain patched Bernstein eigenvariety $\cE_{\Omega, w\cdot \lambda}^{\infty}(\overline{\rho})$ for some $w\in \sW_{P,F_{\wp}}^{\min}$. Using Proposition \ref{inteqpt} (and a bit of representation theory), one can deduce $x_{\sF}\in \cE_{\Omega, \lambda}^{\infty}(\overline{\rho})$. Then the classicality follows by applying Proposition \ref{inteqpt} to $w=1$ and using that $C(1,\sF)$ is locally algebraic.

We now discuss the proof of Theorem \ref{intcs}. We henceforth fix a filtration $\sF$ and write $x:=x_{\sF}$, $[\cN_{w\cdot \lambda, y}]:=[\cN_{w \cdot \lambda, y,\sF}]$. Note that, as $\Pi_{\infty}^{R_{\infty}-\an}[\fm_y]^{\lalg}\cong \widehat{S}(U^p,E)[\fm_{\rho}]^{\lalg}$ is non-zero and isomorphic to a direct sum of copies of $C(1,\sF)$, we have $[\cN_{\lambda, y}]\neq 0$ and $x\in \cE_{\Omega, \lambda}^{\infty}(\overline{\rho})$. The ``only if" part (with no assumption on $P$) already follows from Proposition \ref{inteqpt} and Theorem \ref{intcpt2}. So we need to show $x\in \cE_{\Omega, w\cdot \lambda}^{\infty}(\overline{\rho})$ for $w\in \sW_{P,F_{\wp}}^{\min}$ such that $ww_0\geq w_{\sF}$. Let $\lg(-)$ denotes the length function on Weyl group. The case where $\lg(w_{\sF})\geq \lg(w_0)-1$ is not very difficult (and holds for any $P$). We assume in the sequel $\lg(w_{\sF})\leq \lg(w_0)-2$. By induction and some similar arguments as in the proof of the ``if" part of Theorem \ref{intcpt2}, one is reduced to showing the following statement:
\begin{itemize}
	\item if $x\in \cE_{\Omega, w'w_0\cdot \lambda }^{\infty}(\overline{\rho})$ for all $w'\in \sW_{P,F_\wp}^{\max}$ such that $w'>w_{\sF}$, then $x\in \cE_{\Omega, w_{\sF}w_0\cdot \lambda}^{\infty}(\overline{\rho})$.
\end{itemize}
Thus, assuming $x\in \cE_{\Omega, w'w_0\cdot \lambda }^{\infty}(\overline{\rho})$ for all $w'\in \sW_{P,F_\wp}^{\max}$, $w'>w_{\sF}$, we need to show $[\cN_{w_{\sF}w_0\cdot \lambda,y}]\neq 0$. One important fact is that the cycles $[\cN_{w\cdot \lambda,y}]$ can be related to cycles coming from irreducible components of certain generalized Steinberg varieties. Let $Z_{P,\wp}$ be the fibre of $X_{P,\fp}$ at $0\in \fz_{L_P, \wp}$ via
\begin{equation*}
	X_{P,\wp} \lra \fz_{L_P, \wp}, \ (g_1 P_{\wp}, g_2 B_{\wp}, \psi)\longmapsto \overline{\Ad(g_1^{-1})\psi}
\end{equation*}
where $\overline{(-)}$ means the natural projection $\ur_{P,\wp} \twoheadrightarrow \fz_{L_P, \wp}$. One can show that $Z_{P,\wp}$ is equidimensional with (reduced) irreducible components given by $\{Z_w:=(Z_{P,\wp} \cap X_w)^{\red}\}_{w\in \sW_{P,F_{\wp}}^{\max}}$. Let $y_{\pdR}$ be the point of $X_{P,\wp}$ associated to $\rho_{\wp}$ and $\sF$ as in (\ref{intypdR}). As $\nu_{\pdR}=0$, $y_{\pdR}\in Z_{P,\wp}$. Similarly as in Theorem \ref{intuni}, one can prove that for $w\in \sW_{P,F_{\wp}}^{\max}$, if $y_{\pdR}\in Z_w$, then $Z_w$ is unibranch at $y_{\pdR}$ (cf.\ Theorem \ref{unibranch2}). Using Theorem \ref{intLocM} (2), for $w\in \sW_{P,F_{\wp}}^{\max}$, one can then associate to the completion $\widehat{\co}_{Z_w, y_{\pdR}}$ of $Z_w$ at $y_{\pdR}$ a unique irreducible cycle $[\fZ_{w,y}]\in Z^{[F^+:\Q]\frac{n(n+1)}{2}}(\Spec \widehat{\co}_{\fX_{\infty},y})$. We have 
\begin{itemize}
	\item $\fZ_{w,y}\neq 0$ if and only if $w\geq w_{\sF}$. 
\end{itemize}
By results on the characteristic cycles associated to {\it generalized} Verma modules (that we couldn't really find in the literature and that we prove in \S~\ref{appenB}), we have the following statements:
\begin{itemize}
	\item $[\cN_{w_{\sF}w_0\cdot \lambda,y}]\in \Z_{\geq 0} \fZ_{w_{\sF},y}$;
	\item if $x$ is a {\it smooth} point of $\cE_{\Omega, ww_0\cdot \lambda}^{\infty}(\overline{\rho})$ for $w\in \sW_{P,F_{\wp}}^{\min}$, there is an integer $m_y\in \Z_{\geq 1}$ such that in $Z^{[F^+:\Q]\frac{n(n+1)}{2}}(\Spec \widehat{\co}_{\fX_{\infty},y})$:
	\begin{equation}\label{inteqc}
		\sum_{\substack{w'\in \sW_{P,F_{\wp}}^{\min}\\ w\leq w'\leq w_{\sF} w_0}} b_{w,w'} [\cN_{w' \cdot \lambda, y}] = 	m_y\Bigg(\sum_{\substack{w'\in \sW_{P,F_{\wp}}^{\min}\\ w\leq w'\leq w_{\sF} w_0}} b_{w,w'} \fZ_{w'w_0,y}\Bigg)
	\end{equation}
	where $b_{w,w'}$ is the multiplicity of the simple $\text{U}(\ug_{\wp})$-module $L(w'\cdot 0)$ of highest weight $w' \cdot 0$ in the parabolic Verma module $M_P(w \cdot 0)$ of highest weight $w\cdot 0$.
\end{itemize}
If $x$ is not smooth, we also have an equation similar to (\ref{inteqc}) but with the coefficients $m_yb_{w,w'}$ on the right hand side replaced by certain non-negative integers that we do not know how to control (the left hand side staying unchanged). 

Let us assume $m_y=1$ in the rest of the argument for simplicity. To use these equations to deduce $[\cN_{w_{\sF}w_0 \cdot \lambda,y}]\neq 0$, we are led to three cases (recall we have assumed $\lg(w_{\sF})\leq \lg(w_0)-2$), with the third case still resisting without a further assumption on $P$:

Case (1) is exactly the same as in \cite{BHS3} (in particular we are always in this case when $P=B$): assume that there exist $w_1$, $w_2$, $w\in \sW_{P,F_{\wp}}^{\max}$ such that $w\geq w_{\sF} $, $\lg(w)=\lg(w_{\sF})+2$, $\{w_1, w_2\}=[w_{\sF}, w]:=\{w'\ |\ w_{\sF}<w'<w\}$ and $\dim \fz_{L_P, \wp}^{w_{\sF}w^{-1}}=\dim \fz_{L_P, \wp}-2$. Under these conditions, by a tangent space argument, one can prove that $\cE_{\Omega, w'w_0\cdot \lambda }^{\infty}(\overline{\rho})$ is smooth at $x$ for $w'\in \{w_1,w_2, w\}$. We can deduce from (\ref{inteqc}) equalities:
\begin{equation}\label{inteqc0}
	\begin{cases}
		[\cN_{w_1w_0\cdot \lambda,y}]+[\cN_{w_{\sF}w_0 \cdot \lambda,y}]=\fZ_{w_1, y}+\fZ_{w_{\sF},y} \\
		[\cN_{w_2w_0\cdot \lambda,y}]+[\cN_{w_{\sF}w_0 \cdot \lambda,y}]=\fZ_{w_2,y}+\fZ_{w_{\sF},y} \\
		[\cN_{ww_0\cdot \lambda,y}]+[\cN_{w_1w_0 \cdot \lambda,y}]+[\cN_{w_2w_0 \cdot \lambda, y}]+[\cN_{w_{\sF} w_0\cdot \lambda,y}]=\fZ_{w,y}+\fZ_{w_1,y}+\fZ_{w_2,y}+\fZ_{w_{\sF},y}.
	\end{cases}
\end{equation}
Using that $\fZ_{w'',y}$ can only have non-negative coefficients in $[\cN_{w'w_0\cdot \lambda,y}]$ for $w', w''\in \sW_{P,F_{\wp}}^{\max}$ and $[\cN_{w_{\sF}w_0\cdot \lambda,y}]\in \Z_{\geq 0} \fZ_{w_{\sF},y}$, it is not difficult to deduce from the equalities in (\ref{inteqc0}) that $[\cN_{w_{\sF} w_0 \cdot \lambda,y}]\neq 0$\footnote{We recommend the interested reader to work it out as an exercise.}.

Case (2): assume that there exists $w>w_{\sF}$ with $\lg(w)=\lg(w_{\sF})+2$ such that there is a unique $w_1\in \sW_{P,F_{\wp}}^{\max}$ such that $w_{\sF}<w_1<w$ (in other words, the Bruhat interval $[w_{\sF},w]$ is not full in the quotient $\sW_{L_P,F_{\wp}}\backslash \sW_{F_{\wp}}$). In this case, we deduce from (\ref{inteqc}) equalities:
\begin{equation*}
	\begin{cases}
		[\cN_{w_1w_0\cdot \lambda,y}]+[\cN_{w_{\sF}w_0 \cdot \lambda,y}]=\fZ_{w_1, y}+\fZ_{w_{\sF},y} \\
		[\cN_{ww_0\cdot \lambda,y}]+[\cN_{w_1w_0 \cdot \lambda,y}]=a_0\fZ_{w,y}+a_1\fZ_{w_1,y}
	\end{cases}
\end{equation*}
for some $a_i\in \Z_{\geq 0}$. Though we don't have more control on the $a_i$, these equalities are (again) sufficient to imply $[\cN_{w_{\sF} w_0 \cdot \lambda,y}]\neq 0$. 

Case (3): assume that, for any $w\leq w_{\sF} w_0$ with $\lg(w)=\lg(w_{\sF}w_0)-2$, we have $\dim \fz_{L_P, \wp}^{w_{\sF}w_0 w^{-1}}>\dim \fz_{L_P,\wp}-2$ and there exist $w_1,w_2\in \sW_{P,F_{\wp}}^{\min}$ such that $\{w'\ |\ w<w'<w_{\sF}w_0\}=\{w_1,w_2\}$. The main difference with Case (1) is that we do not know if $\cE_{\Omega, ww_0\cdot \lambda}^{\infty}(\overline{\rho})$ is smooth at the point $x$ (the tangent space argument collapses because of $\dim \fz_{L_P, \wp}^{w_{\sF}w_0 w^{-1}}>\dim \fz_{L_P,\wp}-2$). 
Consequently, the third equation in (\ref{inteqc0}) has to be replaced by an equation of the form (the two others staying unchanged)
\begin{equation*}
	[\cN_{ww_0\cdot \lambda,y}]+[\cN_{w_1w_0 \cdot \lambda,y}]+[\cN_{w_2w_0 \cdot \lambda, y}]+[\cN_{w_{\sF}w_0 \cdot \lambda,y}]=a_0\fZ_{w,y}+a_1\fZ_{w_1,y}+a_2\fZ_{w_2,y}+a_3\fZ_{w_{\sF},y}
\end{equation*}
for some $a_i\in \Z_{\geq 0}$. Without more control on these coefficients $a_i$, this equation together with the first two in (\ref{inteqc0}) seem not enough to imply $[\cN_{w_{\sF}w_0\cdot \lambda,y}]\neq 0$. 

The assumption ($*$) in Theorem \ref{intcs} is there precisely to avoid Case (3) (cf.\ Proposition \ref{bruhInt}, see also Remark \ref{bruhInv} for an example of Case (3) for $\GL_4$). 

One may expect other arithmetic applications of Bernstein eigenvarieties. In fact, the results in this work provide a framework to which many arguments for classical eigenvarieties may be adapted (as what we already do in this paper). In a forthcoming work \cite{He21} of Yiqin He, Bernstein eigenvarieties are used to establish some local-global compatibility results on simple $\cL$-invariants for certain $\GL_n$-representations attached to Zelevinsky's linked segments (which was previously only known in the trianguline case). 

Finally, we remark that in his PhD.\ thesis \cite{Huang}, Shanxiao Huang proves results that parallel Theorem \ref{centerintro}, Theorem \ref{iThm2}, Theorem \ref{triangulineintro} and the global analogue of part (4) of Theorem \ref{intpBE} (i.e.\ a version without patched objects).

\subsection*{Acknowledgement}

This work would not exist without the work of Eugen Hellmann and Benjamin Schraen in \cite{BHS1}, \cite{BHS2}, \cite{BHS3}, and it is a pleasure to acknowledge this.
The authors thank Tsao-Hsien Chen, Lucas Fresse, Victor Ginzburg, Eugen Hellmann, Wen-Wei Li, Ruochuan Liu, Anne Moreau, Alexandre Pyvovarov, Simon Riche, Peng Shan, Benjamin Schraen, Changjian Su, Julianna Tymoczko, Zhixiang Wu and Liang Xiao for discussions or answers to questions.
Special thanks to Liang Xiao for many helpful discussions on \S~\ref{globaltriangulation}, and to Anne Moreau for many discussions and for her help on the singularities of Springer fibers.

C.\ B.\ is supported by the C.N.R.S\ and is a member of the A.N.R.\ project CLap-CLap ANR-18-CE40-0026. Y.\ D.\ is supported by the NSFC Grant No.\ 8200905010 and No.\ 8200800065. 

Finally, the first named author thanks the second for his work and for his patience.

\numberwithin{theorem}{subsection}	
\section{Preliminaries}

\subsection{General notation}\label{Nota2.1}

Let $L$ be a finite extension of $\Q_p$ and $E$ be a finite extension of $\Q_p$ sufficiently large such that $\Sigma_L:=\{\tau: L \hookrightarrow \overline{\Q_p}\}=\{\tau: L \hooklongrightarrow E\}$. For $\mathbf{k}=(k_{\tau})_{\tau \in \Sigma_L} \in \Z^{\oplus |\Sigma_L|}$, denote by $z^{\mathbf{k}}:=\prod_{\tau\in \Sigma_L} \tau(z)^{k_{\tau}}: L^{\times} \ra E^{\times}$. Let $\co_L$, resp.\ $\co_E$ be the ring of integers of $L$, resp.\ $E$, $k_E$ the residue field of $E$, $\varpi_L$ be a uniformizer of $\co_L$, $q_L:=|\co_L/\varpi_L|$ and $\val_L(x)$ the valuation on $L^\times$ such that $\val_L(\varpi_L)=1$. For a character $\chi$ of $\co_L^{\times}$, denote by $\chi_{\varpi_L}$ the character of $L^{\times}$ such that $\chi_{\varpi_L}|_{\co_L^{\times}}=\chi$ and $\chi_{\varpi_L}(\varpi_L)=1$; for a character $\delta$ of $L^{\times}$, denote by $\delta_0:=\delta|_{\co_L^{\times}}$.\index{$\chi_{\varpi_L}$} \index{$\delta^0$} We use the convention that the Hodge-Tate weight of the $p$-adic cyclotomic character if $1$. For a group $A$ and $a\in A$, we denote by $\unr(a): L^{\times} \ra A$ the unramified character sending any uniformizer to $a$. 

Let $A$ (resp.\ $X$) be an affinoid algebra (resp.\ a rigid analytic space), we write $\cR_{A,L}$ (resp.\ $\cR_{X,L}$) for the Robba ring associated to $L$ with $A$-coefficients (resp.\ with $\co_X$-coefficients) (see \cite[Def.\ 6.2.1]{KPX}), and $\cR_{A,L}(\delta)$ for the $(\varphi,\Gamma)$-module of character type over $\cR_{A,L}$ associated to a continuous character $\delta: L^{\times} \ra A^{\times}$ in \cite[Const.\ 6.2.4]{KPX}.

Let $m\in \Z_{\geq 1}$, $\pi$ be an irreducible smooth admissible representation of $\GL_m(L)$, denote by $\rec(\pi)$ the $F$-semi-simple Weil-Deligne representation associated to $\pi$ via the local Langlands correspondence normalized as in \cite{HT}. We normalize local class field theory by sending a uniformizer to a (lift of the) geometric Frobenius. In this way, we identify characters of the Weil group $W_L\subset \Gal_L:=\Gal(\overline L/L)$ and characters of $L^{\times}$ without further mention. Let $\chi_{\cyc}$ denote the cyclotomic character of $\Gal_L$ (and of $L^{\times}$). \index{$\chi_{\cyc}$}

Let $\Omega$ be a cuspidal Bernstein component of $\GL_m(L)$ (\cite{Bern84}) and $\pi\in \Omega$. We put\index{$\mu_{\Omega}$}\index{$\mu_{\Omega}^{\unr}$}
\begin{eqnarray}
	\mu_{\Omega}&:=&\{\eta: L^{\times} \ra E^{\times} \ |\ \pi \otimes_E \eta \circ \dett \cong \pi\}, \label{muOmega} \\
	\mu_{\Omega}^{\unr}&:=&\{\eta: L^{\times} \ra E^{\times} \text{ unramified} \ |\ \pi\otimes_E \eta \circ \dett \cong \pi\} \nonumber.
\end{eqnarray}
We have $\mu_{\Omega}^{\unr} \subseteq \mu_{\Omega}$ and it is easy to see that both are finite groups (look at the central characters) and independent of the choice of $\pi$ in $\Omega$. Denote by $\cZ_{\Omega}$ the corresponding Bernstein centre (see \S~\ref{sec_pDf} below for more details). For a closed point $x\in \Spec \cZ_{\Omega}$, denote by $\pi_x$ the associated irreducible cuspidal smooth representation of $\GL_m(L)$ over $k(x)$, $\ttr_x:=\rec(\pi_x)$ and $\Delta_x$ the $p$-adic differential equation associated to $\ttr_x$. Recall from \cite{Ber08a} that $\Delta_x$ is the $(\varphi, \Gamma)$-module of rank $m$ over $\cR_{k(x),L}$ which is de Rham of constant Hodge-Tate weight $0$ such that $D_{\pst}(\Delta_x)$ (forgetting the Hodge filtration) is isomorphic to the Deligne-Fontaine module associated by Fontaine to $\ttr_x$ (\cite[Prop.\ 4.1]{BS07}). We may use the associated $\GL_m(L)$-representation or the associated Weil-Deligne representation or the associated $p$-adic differential equation to denote a closed point of $\Spec \cZ_{\Omega}$ depending on the situation.

Throughout the paper, we denote by $B$ the upper triangular matrices in $\GL_n$ and we will consider parabolic subgroups $P$ of $\GL_n$ containing $B$, i.e.\ of the form
\begin{equation}\label{paraP}\begin{pmatrix}
		\GL_{n_1} & * & \cdots & * \\
		0 & \GL_{n_2} & \cdots & * \\
		\vdots & \vdots & \ddots & * \\
		0 & 0 &\cdots & \GL_{n_r}
	\end{pmatrix},\end{equation}
where $n_i\in \Z_{\geq 1}$ such that $\sum_{i=1}^r n_i=n$. For $i\in \{1,\dots,r\}$ we define $s_i:=\sum_{j=1}^i n_j$ and $s_{0}:=0$. We denote by $L_P$ the Levi subgroup of $P$ containing the group $T$ of diagonal matrices. An integral weight $\lambda=(\lambda_1, \dots, \lambda_n)$ of $\GL_n$ is called $P$-dominant (resp.\ strictly $P$-dominant) if for $j=1, \dots, r$ with $n_j>1$, and $s_{j-1}\leq i \leq s_{j-1}+n_j-1$, we have $\lambda_i\geq \lambda_{i+1}$ (resp.\ $\lambda_i> \lambda_{i+1}$).

We use $\lg(-)$ to denote the length function on elements in Weyl groups. Let $\sW\cong S_n$ be the Weyl group of $\GL_n$, and $\sW_{L_P}\subset \sW$ be the Weyl group of $L_P$. Denote by $\sW^P_{\min}\subset \sW$ (resp.\ $\sW^P_{\max}$) the set of minimal (resp.\ maximal) length representatives in $\sW$ of the {\it right} cosets in $\sW_{L_P}\backslash \sW$. Let $w_0\in \sW$ be the element of maximal length. Then $w\in \sW^P_{\min}$ if and only if $ww_0\in \sW^P_{\max}$. We denote by $\sW_{L}:=\sW^{|\Sigma_L|}$ (resp.\ $\sW_{L_P,L}:=\sW_{L_P}^{|\Sigma_L|}$) which is the Weyl group of $\Res^L_{\Q_p} \GL_n$ (resp.\ $\Res^L_{\Q_p} L_P$). Then $\sW_{\min,L}^{P}=(\sW^P_{\min})^{|\Sigma_L|}$ (resp.\ $\sW_{\max,L}^{P}=(\sW^P_{\max})^{|\Sigma_L|}$) is the set of minimal (resp.\ maximal) length representatives in $\sW_L$ of $\sW_{L_P,L} \backslash \sW_L$. Put $w_{0,L}:=(w_0,\dots,w_0)\in \sW_L$ for the element of maximal length in $\sW_L$. We use ``$\cdot$" to denote the \textit{dot} action of a Weyl group on the corresponding weight space (cf.\ \cite[Def. 1.8]{Hum08}).
\index{$w_0$}\index{$w_{0,L}$}\index{$\sW$}\index{$\sW_L$}\index{$\sW^P_{\min}$}\index{$\sW^P_{\max}$} 

If $X$ is a scheme locally of finite type over $E$, or a locally noetherian formal scheme over $\co_E$ whose reduction (modulo an ideal of definition) is locally of finite type over $k_E$, we denote by $X^{\rig}$ the associated rigid analytic space over $E$. If $X$ is a scheme locally of finite type over $E$ or a rigid analytic space over $E$, we denote by $X^{\red}$ the associated reduced Zariski-closed subspace. If $x$ is a point of $X$, we denote by $k(x)$ the residue field at $x$, $\co_{X,x}$ the local ring at $x$, $\widehat{\co}_{X,x}$ its $\fm_{\co_{X,x}}$-adic completion and $\widehat{X}_x$ the affine formal scheme $\Spf \widehat{\co}_{X,x}$. If $x$ is a closed point of $X$, then $\widehat{\co}_{X,x}$ is a noetherian complete local $k(x)$-algebra of residue field $k(x)$. \index{$\co(X)$} \index{$k(x)$}\index{$\co_{X,x}$} \index{$\widehat{\co}_{X,x}$}\index{$\widehat{X}_x$}

\subsection{$p$-adic differential equations over Bernstein components}\label{sec_pDf}

Let $m\geq 1$, $\Omega$ be cuspidal type of $\GL_m(L)$ and $\cZ_{\Omega}$ be the associated Bernstein centre over $E$ (that we recall below). In this section, we construct a ``universal" $p$-adic differential equation on $(\Spec \cZ_{\Omega})^{\rig}$ that interpolates $\{\Delta_x\}_{x\in (\Spec \cZ_{\Omega})^{\rig}}$.

Let $\pi$ be an irreducible smooth representation of $\GL_m(L)$ over $E$ of type $\Omega$. We assume that $E$ contains the $m$-th roots of unity. By comparing the central characters, we see that there exists $m_0|m$ such that $\mu_{m_0}=\{a \in E^{\times}\ |\ \pi \otimes_E \unr(a)\circ \dett \cong \pi \}$, where $\mu_{m_0}$ denotes the group of $m_0$-th roots of unity in $E^{\times}$. We equip $E[z,z^{-1}]$ with an action of $\mu_{m_0}$ by $a(z):=az$ for $a\in \mu_{m_0}$. We then have a natural isomorphism
\[\cZ_{\Omega} \cong E[z,z^{-1}]^{\mu_{m_0}}\cong E[z^{m_0}, z^{-m_0}]\]
such that the induced map $\bG_m:=\Spec E[z,z^{-1}] \ra \Spec \cZ_{\Omega}$ sends $\alpha$ to $\pi \otimes_E \unr(\alpha) \circ \dett$.

We first construct a $(\varphi, \Gamma)$-module on $\bG_m^{\rig}$. Let $\Delta$ be the $p$-adic differential equation over $\cR_{E,L}$ associated to $\pi$ (or equivalently to $\rec(\pi)$). Let
\[A_j:=E\langle z_j,t_j\rangle / (z_jt_j-p^{2j}),\]
then the maximal spectrum $\Spm A_j$ is $\{z\in \bG_m^{\rig}\ |\ p^{-j}\leq |z|_p\leq p^{j}\}$ (mapping $z$ to $p^{-j}z_j$ and $z^{-1}$ to $p^{-j}t_j$) and $\{\Spm A_j\}_{j\in \Z_{\geq 0}}$ form an admissible covering of $\bG_m^{\rig}$. We define $\Delta_{A_j}:=\Delta {\otimes}_{\cR_{E,L}} \cR_{A_j,L}(\unr(z))$, which is a $(\varphi, \Gamma)$-module free of rank $m$ over $\cR_{A_j,L}$. These $\{\Delta_{A_j}\}_{j\in \Z_{\geq 0}}$ glue to a $(\varphi,\Gamma)$-module over $\cR_{\bG_m^{\rig},L}$, where $\cR_{\bG_m^{\rig},L}$ is defined as in \cite[Def.\ 6.2.1]{KPX}. 

Let $\varsigma_{m_0}$ be a primitive $m_0$-th root of unity. Since $\pi \cong \pi \otimes_E \unr(\varsigma_{m_0})\circ \dett$, we have $\Delta\cong \Delta\otimes_{\cR_{E,L}} \cR_{E,L}(\unr(\varsigma_{m_0}))$. Let $\iota_1: \Delta \ra \Delta \otimes_{\cR_{E,L}} \cR_{E,L}(\unr(\varsigma_{m_0}))$ be an isomorphism of $(\varphi,\Gamma)$-modules. For a continuous character $\delta$ of $L^{\times}$, we also use $\iota_{1}$ to denote the induced morphism $\Delta \otimes_{\cR_{E,L}} \cR_{E,L}(\delta) \ra \Delta \otimes_{\cR_{E,L}} \cR_{E,L}(\unr(\varsigma_{m_0})) \otimes_{\cR_{E,L}} \cR_{E,L}(\delta)$. We put for $i\in \Z_{\geq 1}$:
\begin{equation*}
	\iota_{i}:=\iota_{1} \circ \iota_{1} \circ \cdots \circ \iota_{1}: \Delta \xrightarrow{\iota_{1}} \Delta \otimes_{\cR_{E,L}} \cR_{E,L}(\unr(\varsigma_{m_0})) \xrightarrow{\iota_{1}} \cdots \xrightarrow{\iota_{1}} \Delta \otimes_{\cR_{E,L}} \cR_{E,L}(\unr(\varsigma_{m_0}^i)).
\end{equation*}
Since $\Hom_{(\varphi, \Gamma)}(\Delta,\Delta)\cong E$ and $E$ contains all $m_0$-th roots of unity, we can multiply $\iota_{1}$ by a scalar in $E^{\times}$ so that $\iota_{m_0}=\id_\Delta$. Let $\Delta_{A_j}^{i}:=\Delta{\otimes}_{\cR_{E,L}} \cR_{A_j,L}(\unr(\varsigma_{m_0}^i z))$. The isomorphism $\iota_{i}$ induces an isomorphism (still denoted) $\iota_{i}: \Delta_{A_j} \xrightarrow{\sim} \Delta_{A_j}^{i}$ satisfying $\iota_{m_0}=\id_{\Delta_{A_j}}$. We fix a basis $\ul{e}$ of $\Delta$ over $\cR_{E,L}$, and still denote by $\ul{e}$ the corresponding basis $\ul{e}\otimes 1$ of $\Delta_{A_j}^{i}$ over $\cR_{A_j,L}$. We do not ask that $\iota_1$ respects $\ul{e}$ (i.e.\ sends $\ul{e}$ to $\ul{e}\otimes 1$), hence the isomorphisms $\iota_{i}$ in general do not stabilize $\ul{e}$. 

It is clear that $\Spm A_j$ is stable by the induced action of $\mu_{m_0}$ on $\bG_m^{\rig}$. The action of $\mu_{m_0}$ on $A_j$ induces an action on $\cR_{A_j,L}$. We equip $\Delta_{A_j}$ with an $\cR_{A_j,L}$-semi-linear action of $\mu_{m_0}$ such that $\varsigma_{m_0}^i$ acts via
\begin{equation*}
	\Delta_{A_j} \lra \Delta_{A_j}^{i} \xrightarrow{\iota_{i}^{-1}} \Delta_{A_j}
\end{equation*}
where the first map sends $v \otimes a \in \Delta{\otimes}_{\cR_{E,L}} \cR_{A_j,L}(\unr(z))$ to $v \otimes \varsigma_{m_0}^i(a)\in \Delta{\otimes}_{\cR_{E,L}} \cR_{A_j,L}(\unr(\varsigma_{m_0}^iz))$. Indeed, one can check that this defines a group action of $\mu_{m_0}$, that commutes with the $(\varphi, \Gamma)$-action. 

Define $\Delta_{B_j}:=\Delta_{A_j}^{\mu_{m_0}}$. By \cite[Prop.\ 2.2.1]{BeCol08} (applied first to $B=A_j$, $S=\cR_{E,L}^{[r,s]}$ and $G=\mu_{m_0}$, then letting $r,s$ vary), we can deduce that $\Delta_{B_j}$ is a $(\varphi, \Gamma)$-module free of rank $m$ over $\cR_{B_j,L}$ where $B_j:=A_j^{\mu_{m_0}}\cong E\langle z_j^{m_0},t_j^{m_0}\rangle / (z_j^{m_0}t_j^{m_0}-p^{2jm_0})$. The affinoids $\{\Spm B_j\}_{j\in \Z_{\geq 0}}$ then form an admissible covering of $(\Spec \cZ_{\Omega})^{\rig}$. Moreover it is easy to see that $\{\Delta_{B_j}\}_{j\in \Z_{\geq 0}}$ glue to a $(\varphi, \Gamma)$-module over $\cR_{(\Spec \cZ_{\Omega})^{\rig},L}$ that we denote by $\Delta_{\Omega}$. One checks that $\Delta_{\Omega}$ is independent of the choice of $\pi$ of type $\Omega$. It is also clear that for a point $x\in (\Spec \cZ_{\Omega})^{\rig}$ with $\pi_x$ the associated smooth representation of $\GL_m(L)$ over $k(x)$, the fibre $\Delta_x:=x^* \Delta_{\Omega}$ is isomorphic to the $p$-adic differential equation associated to $\pi_x$.

\subsection{Potentially crystalline representations}\label{introPcr}

We recall the structure of potentially crystalline Galois representations.

Let $\rho$ be an $n$-dimensional potentially crystalline representation of $\Gal_L$ over $E$. Let $L'$ be a finite Galois extension of $L$ such that $\rho|_{\Gal_{L'}}$ is crystalline. Consider the Deligne-Fontaine module associated to $\rho$: 
\begin{equation*}
	\DF(\rho):=\big(D_{L'}:=(B_{\cris} \otimes_{\Q_p} \rho)^{\Gal_{L'}}, \varphi, \Gal(L'/L)\big),
\end{equation*}
where $D_{L'}:=(B_{\cris}\otimes_{\Q_p} \rho)^{\Gal_{L'}}$ is a finite free $L'_0\otimes_{\Q_p} E$-module of rank $n$, $L_0'$ being the maximal unramified subextension of $L'$ (over $\Q_p$), where the $\varphi$-action on $D_{L'}$ is induced from the $\varphi$-action on $B_{\cris}$, and where the $\Gal(L'/L)$-action on $D_{L'}$ is the residual action of $\Gal_L$. By Fontaine's equivalence of categories as in \cite[Prop.\ 4.1]{BS07}, we can associate to $\DF(\rho)$ an $n$-dimensional Weil-Deligne representation $\ttr(\rho)$ of $W_L$ over $E$ (and we can recover $\DF(\rho)$ from $\ttr(\rho)$ as in \textit{loc.\ cit.}).\index{$\ttr(\rho)$}

Let $P$ be a parabolic subgroup of $\GL_n$ as in (\ref{paraP}). Assume that $\ttr(\rho)$ admits a filtration 
\begin{equation*}
	\sF: \Fil_{\bullet}^{\sF}\!\ttr(\rho)=\big(0=\Fil_0^{\sF}\!\ttr(\rho)\subsetneq \Fil_1^{\sF}\!\ttr(\rho) \subsetneq \cdots \subsetneq \Fil_r^{\sF}\!\ttr(\rho)=\ttr(\rho)\big)
\end{equation*}
by Weil-Deligne subrepresentations such that $\dim_E \Fil_i^{\sF}\!\ttr(\rho)=\sum_{j=1}^i n_j$. We call such a filtration a \textit{$P$-filtration}. We call the filtration $\sF$ a \textit{minimal filtration} if $\ttr(\rho)_i:=\gr_i^{\sF}\!\ttr(\rho)$ is an irreducible Weil-Deligne representation for all $i$. We assume that $\sF$ is minimal in the sequel. In this case, the Galois representation $\rho$ is called \textit{generic} if $\Hom(\ttr(\rho)_i, \ttr(\rho)_j)=0$ and $\Hom(\ttr(\rho)_i, \ttr(\rho)_j(1))=0$ for all $i \neq j$ (where $\Hom$ is taken in the category of Weil-Deligne representations and $(1)$ means the twist by $x\in L^\times \mapsto \frac{1}{q_L^{\val_L(x)}}$). It is easy to see that being generic does not depend on the choice of minimal filtrations on $\ttr(\rho)$, and that if $\rho$ is generic then $\ttr(\rho)\cong \oplus_{i=1}^r \ttr(\rho)_i$. Let $\Omega_i$ be the Bernstein component of $\GL_{n_i}(L)$ such that the smooth irreducible representation corresponding to $\ttr(\rho)_i$ via the classical local Langlands correspondence (normalized as in \cite{HT}) lies in $\Omega_i$. Let $\Omega:=\prod_i \Omega_i$, which is a Bernstein component of $L_P(L)$. The minimal $P$-filtration $\sF$ will also be called an $\Omega$-filtration.

The $P$-filtration $\sF$ corresponds to a filtration (still denoted) $\sF=\Fil_{\bullet}^{\sF}\!\DF(\rho)$ (and still called a $P$-filtration) on $\DF(\rho)$ by Deligne-Fontaine submodules, such that $\Fil_i^{\sF}\!\DF(\rho)$ is associated to $\Fil_i^{\sF}\!\ttr(\rho)$ via \cite[Prop.\ 4.1]{BS07}. If $\rho$ is generic, we have then $\DF(\rho)\cong \oplus_{i=1}^r \gr_i^{\sF}\!\DF(\rho)$.

As $\rho$ is potentially crystalline, it is de Rham, and thus we have $D_{\dR}(\rho)\cong (D_{L'}\otimes_{L_0'} L')^{\Gal(L'/L)}$, which is a free $L\otimes_{\Q_p} E$-module of rank $n$. The $P$-filtration $\sF$ on $\DF(\rho)$ induces a $P$-filtration $\sF$ on $D_{\dR}(\rho)$ by free $L \otimes_{\Q_p} E$-submodules $\Fil_i^{\sF}\!D_{\dR}(\rho):=(\Fil_i^{\sF}\!D_{L'} \otimes_{L_0'} L')^{\Gal(L'/L)}$. Recall also that $D_{\dR}(\rho)$ is equipped with a natural decreasing Hodge filtration $\Fil_{\bullet}^{H} D_{\dR}(\rho)$ (induced by the one on $B_{\cris}$) given by (not necessarily free) $L \otimes_{\Q_p} E$ submodules. We assume that $\rho$ has distinct Hodge-Tate weights. Hence, for each $\tau\in \Sigma_L$, we have a complete flag (with an obvious notation for $D_{\dR}(\rho)_{\tau}$)
\begin{equation*}\label{hflag}
	0 \subsetneq \Fil_{-h_{n,\tau}}^{H}\! D_{\dR}(\rho)_{\tau} \subsetneq \Fil_{-h_{n-1,\tau}}^H\! D_{\dR}(\rho)_{\tau} \subsetneq \cdots \subsetneq \Fil_{-h_{1,\tau}}^H\! D_{\dR}(\rho)_{\tau}=D_{\dR}(\rho)_{\tau}
\end{equation*}
where $h_{i,\tau}$ are the integers such that $\dim_E \gr_{-h_{i,\tau}}^H\! D_{\dR}(\rho)_{\tau}=1$. Thus the Hodge-Tate weights of $\rho$ are $\textbf{h}=(\textbf{h}_i)_{i=1,\dots, n}=(h_{1,\tau}>\cdots >h_{n,\tau})_{\tau \in \Sigma_L}$. 

We fix a basis of $D_{\dR}(\rho)_{\tau}$ over $E$ for each $\tau\in \Sigma_L$. The filtration $\sF$ (resp.\ $\Fil^H_{\bullet}$) on $D_{\dR}(\rho)$ thus corresponds to an $E$-point of the flag variety $\Res_{\Q_p}^L \GL_n/\Res_{\Q_p}^L P$ (resp.\ $\Res_{\Q_p}^L \GL_n/ \Res_{\Q_p}^L B)$):
\begin{equation*}
	g_1 (\Res_{\Q_p}^L P)(E)=(g_{1,\tau}P(E))_{\tau\in \Sigma_L}\ \ \ \text{(resp.\ }	g_2 (\Res_{\Q_p}^L B)(E)=(g_{2,\tau}B(E))_{\tau\in \tau_L})\text{).}
\end{equation*}
For each $\tau \in \Sigma_L$, there exists thus a unique $w_{\sF, \tau}\in \sW^P_{\max}$ such that 
\begin{equation*}
	(g_{1,\tau} P(E), g_{2,\tau} B(E)) \in \GL_n(E) (1,w_{\sF, \tau})(P \times B)(E) \subseteq (\GL_n/P \times \GL_n/B)(E)
\end{equation*}
where we still use $w_{\sF, \tau}\in N_{\GL_n}(T)$ to denote a lifting of the corresponding element in $\sW$ (which is traditionally denoted by $\dot{w}_{\sF,\tau}$). We write $w_{\sF}:=(w_{\sF, \tau})_{\tau \in \Sigma_L}\in \sW^{|\Sigma_L|}$. We call $(\rho, \cF)$ \textit{non-critical} if $w_{\sF}=w_{0,L}$, or equivalently $w_{\sF, \tau}=w_0$ for all $\tau\in \Sigma_L$. 

Consider the $(\varphi,\Gamma)$-module $D_{\rig}(\rho)$ over $\cR_{E,L}$ associated to $\rho$ (see \cite{Ber08a} and the references therein). Let $\Delta$ (resp.\ $\Fil_i^{\sF}\!\Delta$) be the $p$-adic differential equation over $\cR_{E,L}$ associated to $\DF(\rho)$ (resp.\ to $\Fil_i^{\sF}\!\DF(\rho)$), or equivalently to $\ttr(\rho)$ (resp.\ to $\Fil_i^{\sF}\!\ttr(\rho)$). Then $\sF:=\Fil_{\bullet}^{\sF}\!\Delta$ gives an increasing filtration on $\Delta$ by saturated $(\varphi, \Gamma)$-submodules. Consider
\[\cM(\rho):=D_{\rig}(\rho)[1/t] \cong \Delta[1/t].\]
By inverting $t$, the filtration $\sF$ on $\Delta$ induces an increasing filtration (still denoted) $\sF:=\big\{\Fil_i^{\sF}\!\cM(\rho):=(\Fil_i^{\sF}\!\Delta)[1/t]\big\}$ on $\cM(\rho)$ by $(\varphi, \Gamma)$-submodules over $\cR_{E,L}[1/t]$. Finally, the filtration $\sF$ on $\cM(\rho)$ induces a filtration on $D_{\rig}(\rho)$:
\begin{equation}\label{OmeFil00}
	\sF:= \{\Fil_i^{\sF}\!D_{\rig}(\rho):= \Fil_i^{\sF}\!\cM(\rho) \cap D_{\rig}(\rho)\}
\end{equation}
by saturated $(\varphi, \Gamma)$-submodules of $D_{\rig}(\rho)$. By Berger's equivalence of categories (\cite[Thm.~A]{Ber08a}), $\Fil_i^{\sF}\!D_{\rig}(\rho)$ corresponds to the filtered Deligne-Fontaine module $\Fil_i^{\sF}\!\DF(\rho)$ equipped with the induced filtration from the Hodge filtration on $D_{L'}=(B_{\cris} \otimes_{\Q_p} \rho)^{\Gal_{L'}}$ (coming from the filtration on $B_{\cris}$). Such a filtration will be called an $\Omega$-filtration on $D_{\rig}(\rho)$ (see \S~\ref{secDefOD} for a definition in a more general setting).

One sees that the Hodge-Tate weights of $\Fil_i^{\sF}\!D_{\rig}(\rho)$ are given by (recall $s_i=\sum_{j=1}^i n_j$) $\{h_{(w_{\sF, \tau}w_0)^{-1}(1),\tau}, \dots, h_{(w_{\sF, \tau}w_0)^{-1}(s_i),\tau}\}_{\tau\in \Sigma_L},$ hence the Hodge-Tate weights of $\gr_i^{\sF}\!D_{\rig}(\rho)$ are
\[(w_{\sF}(\textbf{h})_{s_{i-1}+1}, \dots, w_{\sF}(\textbf{h})_{s_i}) = \big(h_{(w_{\sF, \tau}w_0)^{-1}(s_{i-1}+1),\tau}, \dots, h_{(w_{\sF, \tau}w_0)^{-1}(s_i),\tau}\big)_{\tau\in \Sigma_L}\]
(which are decreasing as $w_{\sF,\tau}w_0\in \sW^P_{\min}$).
In particular, $(\rho,\sF)$ is non-critical if and only if the Hodge-Tate weights of $\gr_i^{\sF}\!D_{\rig}(\rho)$ are $(\textbf{h}_{s_{i-1}+1}, \dots, \textbf{h}_{s_i})$ for $i=1, \dots, r$. Since $\gr_i^{\sF}\!D_{\rig}(\rho)\subseteq t^{-N} \gr_i^{\sF}\!\Delta$ for $N$ sufficiently large, using \cite[Thm.\ A]{Ber08a} and comparing the weights, we have an injection of $(\varphi, \Gamma)$-modules over $\cR_{E,L}$ for $i=1, \dots, r$:
\begin{equation}\label{inj000}
	\gr_i^{\sF}\!D_{\rig}(\rho) \otimes_{\cR_{E,L}} \cR_{E,L}(z^{-w_{\sF}(\textbf{h})_{s_{i}}}) \hooklongrightarrow \gr_i^{\sF}\!\Delta.
\end{equation}

Finally let $\overline{\rho}: \Gal_L \ra \GL_n(k_E)$ be a continuous representation, $\xi: I_L \ra \GL_n(E)$ be an inertial type (where $I_L\subset W_L$ denotes the inertial subgroup), and $\textbf{h}\in \Z^{n|\Sigma_L|}$ be a strictly dominant weight as above. We denote by $R_{\overline{\rho}}^{\pcr}(\xi, \textbf{h})$ the universal potentially crystalline \textit{framed} deformation ring of $\overline{\rho}$ of inertial type $\xi$ and of Hodge-Tate weights $\textbf{h}$ (cf.\ \cite{Kis08}).

\section{Bernstein eigenvarieties}\label{secBE}

In this section, we construct Bernstein eigenvarieties from $p$-adic automorphic representations. In \S~\ref{abCon}, we give the general formalism of the construction, which can be applied to any admissible locally analytic representation of (a product of copies of) $\GL_n$. We then apply in \S~\ref{secBern} this formalism to $p$-adic automorphic representations on compact unitary group, to get what we call Bernstein eigenvarieties. We prove basic properties of the latter, like the density of classical points, etc. We also show that the Galois representations associated to points on Bernstein eigenvarieties admit a certain filtration, and address the problem of companion points. Finally, in \S~\ref{secPBern}, we apply the general formalism to the ``patched'' $p$-adic automorphic representation of \cite{CEGGPS} to obtain a patched version of Bernstein eigenvarieties (that has a more local flavor).

\subsection{Abstract construction}\label{abCon}

This section gives a general formal construction of certain rigid analytic spaces from Emerton's Jacquet modules of locally analytic representations, using Bushnell-Kutzko's theory of types.

\subsubsection{Notation and setup}\label{sec3.1.1}

We will assume that the reader has some familiarity with $p$-adic functional analysis, and we use - most of the time without further mention - the various foundational results in \cite{Sch02}, \cite{ST02}, \cite{ST03}, \cite{ST05} and \cite{Em04}.

For a locally $\Q_p$-analytic group $H$, denote by $\cC^{\Q_p-\la}(H,E)$ the space of locally $\Q_p$-analytic functions on $H$ with values in $E$ and by $D(H,E):=\cC^{\Q_p-\la}(H,E)^\vee$ its strong dual (the distribution algebra), which is a Fr\'echet-Stein algebra when $H$ is compact. Denote by $\cC^{\infty}(H,E)\hookrightarrow \cC^{\Q_p-\la}(H,E)$ the closed subspace of locally constant functions on $H$ with values in $E$, and set $D^{\infty}(H,E):=\cC^{\infty}(H,E)^{\vee}$, which is a Hausdorff quotient of $D(H,E)$.

For a topologically finitely generated locally $\Q_p$-analytic abelian group $Z$, denote by $\widehat{Z}$ the rigid space over $E$ parameterizing locally $\Q_p$-analytic characters of $Z$ (cf.\ \cite[Prop.\ 6.4.5]{Em04}). By \cite[Prop.\ 6.4.6]{Em04}, there is a natural injection $D(Z,E)\hookrightarrow \Gamma(\widehat{Z},\co_{\widehat{Z}})$. For $\chi$ a locally $\Q_p$-analytic character of $Z$, we denote by $\fm_{\chi}$ the associated maximal ideal of $E[Z]$. If $Z\cong (L^{\times})^n$ or is a compact open subgroup of $(L^{\times})^n$ (for $L$ as in \S~\ref{Nota2.1}), and $\chi$ is a locally $\Q_p$-analytic character of $Z$, we denote by $\wt(\chi)$ the weight of $\chi$ (see \cite[Notation]{BHS3}). For instance if $\chi$ is $E$-valued, then $\wt(\chi)=(\wt(\chi)_{\tau})_{\tau\in \Sigma_L}=(\wt(\chi)_{i,\tau})_{\substack{i=1,\dots, n \\ \tau\in \Sigma_L}}\in (E^n)^{|\Sigma_L|}$. In this case, for $\lambda=(\lambda_{i,\tau})\in (\Z^n)^{|\Sigma_L|}$, we denote by $\delta_{\lambda}$ the algebraic character of $Z$ of weight $\lambda$. \index{$\delta_{\lambda}$}

For a continuous Banach representation $\Pi$ of a $p$-adic Lie group $G$, we denote by $\Pi^{\an}$ the locally $\Q_p$-analytic subrepresentation of $\Pi$ and by $\Pi^{\lalg}\subseteq \Pi^{\an}$ the locally $\Q_p$-algebraic subrepresentation of $\Pi$.

Let $\sI$ be a finite index set. For any $i\in \sI$, assume we have a finite extension $F_i$ of $\Q_p$. For each $F_i$, we fix a uniformizer $\varpi_i$, and denote by $\kappa_{\varpi_i}: F_i^{\times} \twoheadrightarrow \co_{F_i}^{\times}$ the map sending $\varpi_i$ to $1$ and being the identity on $\co_{F_i}^{\times}$.

Let $G:=\prod_{i\in \sI} \Res_{\Q_p}^{F_i} \GL_n$ (an algebraic group), and $G_p:=G(\Q_p)$. For each $i\in \sI$, we fix a parabolic subgroup $P_i$ of $\GL_n$ containing the Borel subgroup $B$ of upper triangular matrices, and let $L_{P_i}$ the Levi subgroup of $P_i$ containing the group $T$ of diagonal matrices. Let $P:=\prod_{i\in \sI} \Res_{\Q_p}^{F_i} P_i\supseteq B_{\sI}:=\prod_{i\in \sI} \Res_{\Q_p}^{F_i} B$, and $L_P:=\prod_{i\in \sI} \Res_{\Q_p}^{F_{i}} L_{P_i}$ which is the Levi subgroup of $P$ containing $T_{\sI}:=\prod_{i\in \sI} \Res_{\Q_p}^{F_i} T$. Let $N_P$ (resp.\ $N_{P_i}$) be the unipotent radical of $P$ (resp.\ of $P_i$), $P^-$ (resp.\ $P^-_i$) the parabolic subgroup opposite to $P$ (resp.\ to $P_i$), $N_{P^-}$ (resp.\ $N_{P^-_i}$) the unipotent radical of $P^-$ (resp.\ of $P^-_i$), $Z_{L_P}$ (resp.\ $Z_{L_{P_i}}$) the centre of $L_P$ (resp.\ of $L_{P_i}$), and $L_P^D$ (resp.\ $L_{P_i}^D$) the derived subgroup of $L_P$ (resp.\ $L_{P_i}$). We have therefore
\[\begin{array}{ccc}
	N_P=\prod_{i\in \sI} \Res_{\Q_p}^{F_i} N_{P_i},&P^-=\prod_{i\in \sI} \Res_{\Q_p}^{F_i} P_i^-,&N_{P^-}=\prod_{i\in \sI} \Res_{\Q_p}^{F_i} N_{P^-_i},\\
	Z_{L_P}=\prod_{i\in \sI} \Res_{\Q_p}^{F_i} Z_{L_{P_i}},&L_P^D=\prod_{i\in \sI} \Res_{\Q_p}^{F_i} L_{P_i}^D.&
\end{array}\]
We denote by $\ug$, $\ub_{\sI}$, $\fp$, $\fn_P$, $\fl_P$, $\fn_{P^-}$, $\fz_{L_P}$, $\fl_P^D$ the Lie algebra over $E$ of $G$, $B_{\sI}$, $P$, $N_P$, $L_P$, $N_{P^-}$, $Z_{L_P}$, $L_P^D$ respectively. For a Lie algebra $\fh$ over $E$, denote by $\text{U}(\fh)$ the universal enveloping algebra over $E$. 
We define
\[\begin{array}{cl}
	L_P^0:=\prod_{i\in \sI} L_{P_i}(\co_{F_i})\subset L_P(\Q_p),&Z_{L_P}^0:=\prod_{i\in \sI} Z_{L_{P_i}}(\co_{F_i})=L_P^0\cap Z_{L_P}(\Q_p)\subset Z_{L_P}(\Q_p)\\
	N_P^0:=\prod_{i\in \sI} N_P(\co_{F_i})\subset N_P(\Q_p), & N_{P^-}^0:=\prod_{i\in \sI} N_{P^-}(\co_{F_i})\subset N_{P^-}(\Q_p)
\end{array}\]
and denote by $\dett_{L_P}$ the determinant map $L_P(\Q_p) \ra Z_{L_P}(\Q_p)$. For each $i\in \sI$, $L_{P_i}(F_i)$ has the form 
$\begin{pmatrix}
	\GL_{n_{i,1}} & 0 & \cdots & 0 \\
	0 & \GL_{n_{i,2}} & \cdots & 0 \\
	\vdots & \vdots & \ddots & 0 \\
	0 & 0 &\cdots & \GL_{i, n_{r_i}}
\end{pmatrix}$
for some $r_i\in \Z_{\geq 1}$ and integers $n_{i,j}\in \Z_{\geq 1}$, $1\leq j\leq r_i$ with $\sum_{j=1}^{r_i} n_{i,j}=n$. Hence $L_P(\Q_p)\cong \prod_{i\in \sI} \prod_{j=1}^{r_i} \GL_{n_{i,j}}(F_i)$. For each $(i,j)\in \sI\times \{1,\dots,r_i\}$ we fix a cuspidal Bernstein component $\Omega_{i,j}$ for $\GL_{n_{i,j}}(F_i)$, and we let $\Omega:=\prod_{i\in \sI} \prod_{j=1}^{r_i} \Omega_{i,j}$. Let $\cZ_{\Omega_{i,j}}$ (resp.\ $\cZ_{\Omega}$) be the Bernstein centre of $\Omega_{i,j}$ (resp.\ of $\Omega$) over $E$ (see \S~\ref{sec_pDf}), we have an isomorphism of commutative $E$-algebras\index{$\cZ_{\Omega}$}
\begin{equation*}
	\cZ_{\Omega} \cong \otimes_{i\in \sI} \otimes_{j=1}^{r_i} \cZ_{\Omega_{i,j}}.
\end{equation*}
Let $(J_{i,j},\sigma_{i,j}^0)$ be a maximal simple type of $\Omega_{i,j}$ (cf.\ \cite[\S~6]{BK91}) such that the compact open subgroup $J_{i,j}$ is contained in $\GL_{n_{i,j}}(\co_{F_i})$. Recall that $\sigma_{i,j}^0$ is an absolutely irreducible smooth representation of $J_{i,j}$ over $E$.
Put $\sigma_{i,j}:=\Ind_{J_{i,j}}^{\GL_{n_{i,j}}(\co_{F_i})} \sigma_{i,j}^0$, which is an absolutely irreducible smooth representation of $\GL_{n_{i,j}}(\co_{F_i})$ over $E$ (e.g.\ see the proof of \cite[Cor.\ 6.1]{Pyv20}).
Let
\[\sigma^0:=\boxtimes_{i\in \sI}\boxtimes_{j=1}^{r_i} \sigma_{i,j}^0,\ \ J:=\prod_{i,j} J_{i,j}\ \ {\rm and}\ \ \sigma:=\boxtimes_{i\in \sI}\boxtimes_{j=1}^{r_i} \sigma_{i,j}\cong \Ind_{J}^{L_P^0} \sigma^0\]
which is an absolutely irreducible smooth representation of $L_P^0$ over $E$. Recall we have natural isomorphisms of commutative $E$-algebras (where ``$\cind$" denotes the compact induction)
\[\begin{array}{rclcc}
	\End_{\GL_{n_{i,j}}(F_i)}\Big(\cind_{\GL_{n_{i,j}}(\co_{F_i})}^{\GL_{n_{i,j}}(F_i)} \sigma_{i,j}\Big)&\cong &\End_{\GL_{n_{i,j}}(F_i)}\Big(\cind_{J_{i,j}}^{\GL_{n_{i,j}}(F_i)} \sigma_{i,j}^0\Big)& \cong &\cZ_{\Omega_{i,j}}\\
	\End_{L_P(\Q_p)}(\cind_{L_P^0}^{L_P(\Q_p)} \sigma)&\cong &\End_{L_P(\Q_p)}(\cind_{J}^{L_P(\Q_p)} \sigma) &\cong &\cZ_{\Omega}.
\end{array}\]

Let $\lambda$ be an integral $P$-dominant weight of $G$, i.e.\ $\lambda=(\lambda_i)_{i\in \sI}=(\lambda_{i,\tau})_{\substack{i\in \sI \\ \tau\in \Sigma_{F_i}}}$ with each weight $\lambda_{i,\tau}$ of $\GL_n$ being $P_i$-dominant (cf.\ \S~\ref{Nota2.1}), and $L(\lambda)_P$ the algebraic representation of $L_P(\Q_p)$ over $E$ of highest weight $\lambda$. If $\lambda$ is moreover dominant, we denote by $L(\lambda)$ the algebraic representation of $G_p$ (over $E$) of highest weight $\lambda$. \index{$L(\lambda)_P$}\index{$L(\lambda)$}

\subsubsection{$(\Omega, \lambda)$-part of Jacquet-Emerton modules}\label{secAbsOL}

Let $V$ be an admissible locally $\Q_p$-analytic representation of $G(\Q_p)$ over $E$. Using Emerton's locally analytic Jacquet functor and the type theory (\`a la Bushnell-Kutzko), we associate to $V$ a certain $\cZ_{\Omega}\times Z_{L_P}^0$-module $B_{\Omega, \lambda}(V)$. 

First applying Emerton's Jacquet functor $J_P(-)$ (\cite{Em11}) to $V$, we obtain an essentially admissible locally $\Q_p$-analytic representation $J_P(V)$ of $L_P(\Q_p)$ over $E$. Let $\lambda$ be an integral $P$-dominant weight of $G$. We then define
\begin{equation}\label{jplambda}
	J_P(V)_{\lambda}:=\Hom_{\fl_P^D}\big(L(\lambda)_P, J_P(V)\big) \cong \big(J_P(V) \otimes_E L(\lambda)^{\vee}_P\big)^{\fl_P^D} 
	\cong \varinjlim_{H} \big(J_P(V) \otimes_E L(\lambda)_P^{\vee}\big)^{H }
\end{equation} 
where $H$ runs through compact open subgroups of $L_P^D(\Q_P)$. This is a closed $L_P(\Q_p)$-subrepresen\-tation of $J_P(V) \otimes_E L(\lambda)_P^{\vee}$ (with the induced topology). 

We equip the space of locally analytic functions $\cC^{\Q_p-\la}(Z_{L_P}^0, E)$ with an $L_P(\Q_p)$-action given by the regular action of $Z_{L_P}^0$ on $\cC^{\Q_p-\la}(Z_{L_P}^0, E)$ precomposed with:
\begin{equation}\label{dett1}
	L_P(\Q_P) \xrightarrow{(\dett_{L_P})^{-1}} Z_{L_P}(\Q_p)\cong \prod_{i\in \sI} Z_{L_{P_i}}(F_i) \xrightarrow{\prod_i\kappa_{\varpi_i}}\prod_{i\in \sI} Z_{L_{P_i}}(\co_{F_i}) = Z_{L_P}^0.
\end{equation}
Let $\sigma$ be an irreducible smooth representation of $L_P^0$ as in \S~\ref{sec3.1.1}. 
We put\index{$B_{\sigma, \lambda}(V)$}:
\begin{multline}\label{BslV}
	B_{\sigma, \lambda}(V):=\Hom_{L_P^0}\big(\sigma, J_P(V)_{\lambda} \widehat{\otimes}_E \cC^{\Q_p-\la}(Z_{L_P}^0, E)\big)
	\\ \cong \Big(\sigma^{\vee} \otimes_E \big(J_P(V)_{\lambda} \widehat{\otimes}_E \cC^{\Q_p-\la}(Z_{L_P}^0, E)\big)\Big)^{L_P^0}
\end{multline}
where $J_P(V)_{\lambda} \widehat{\otimes}_E \cC^{\Q_p-\la}(Z_{L_P}^0, E)$ is the completion of $J_P(V)_{\lambda} {\otimes}_E \cC^{\Q_p-\la}(Z_{L_P}^0, E)$ equipped with the projective - or equivalently injective - tensor product topology (note that both factors are vector spaces of compact type) and with the diagonal action of $L_P(\Q_p)$. We view $B_{\sigma, \lambda}(V)$ as a closed subspace of $\sigma^{\vee} \otimes_E J_P(V)_{\lambda} \widehat{\otimes}_E \cC^{\Q_p-\la}(Z_{L_P}^0, E)$ which is an $E$-vector space of compact type (recall the finite dimensional $\sigma^{\vee}$ is equipped with the finest locally convex topology). Hence $B_{\sigma, \lambda}(V)$ is also an $E$-vector space of compact type. 

\begin{remark}
The definition of $B_{\sigma, \lambda}(V)$ might appear somewhat artificial. The motivation is to construct an object parametrizing $L_P(\Q_p)$-subrepresentations of $J_P(V)_{\lambda}$ that lie in the Bernstein component $\Omega$ up to twist by continuous characters of $Z_{L_P}(\Q_p)$ (see Proposition \ref{pts1}). One may consider removing the factor $\cC^{\Q_p-\la}(Z_{L_P}^0,E)$ and using types for $L_P^D(\Q_p)$ instead of the type $(\sigma,L_P^0)$ for $L_P(\Q_p)$. The resulting object is actually more natural. However, it is not clear to the authors how to use such an object to parametrize the $L_P(\Q_p)$-subrepresentations of $J_P(V)_{\lambda}$ discussed above. 
\end{remark}

Next, we discuss various group actions on $B_{\sigma, \lambda}(V)$.

There \ \ is \ \ a \ \ natural \ \ locally \ \ $\Q_p$-analytic \ \ action \ \ of \ \ $Z_{L_P}(\Q_p)\times Z_{L_P}^0$ \ \ on \ $\sigma^{\vee} \otimes_E J_P(V)_{\lambda} \widehat{\otimes}_E \cC^{\Q_p-\la}(Z_{L_P}^0, E)$ where $Z_{L_P}(\Q_p)$ acts on $J_P(V)_{\lambda}$, $Z_{L_P}^0$ acts on $\cC^{\Q_p-\la}(Z_{L_P}^0,E)$, and there is no action on $\sigma^\vee$. It is easy to check that this action of $Z_{L_P}(\Q_p)\times Z_{L_P}^0$ commutes with the diagonal $L_P^0$-action ($L_P^0$ acting on all $3$ factors). So we see that $B_{\sigma, \lambda}(V)$ inherits a locally $\Q_p$-analytic action of $Z_{L_P}(\Q_p)\times Z_{L_P}^0$. In order to avoid confusion, we write $\cZ_0$ for $Z_{L_P}^0$ when it acts on $\cC^{\Q_p-\la}(Z_{L_P}^0, E)$ alone and we use the notation $\Upsilon_0$ for this action. Likewise we write $\cZ_1$ for $Z_{L_P}(\Q_p)$ when it acts on $J_P(V)_{\lambda}$ alone and use the notation $\Upsilon_1$ for this action.

We write $\Delta_0$ for the action of $Z_{L_P}(\Q_p)$ on $B_{\sigma, \lambda}(V)$ induced by the diagonal action of $Z_{L_P}(\Q_p)$ on $J_P(V)_{\lambda} \widehat{\otimes}_E \cC^{\Q_p-\la}(Z_{L_P}^0, E)$ (and the trivial action on $\sigma$), i.e.\ $\Delta_0$ is given by the action of $\cZ_1 \times \cZ_0$ composed with the morphism
\begin{equation}\label{twaa}
	Z_{L_P}(\Q_p) \xlongrightarrow{(\id, (\ref{dett1}))} Z_{L_P}(\Q_p) \times Z_{L_P}^0=\cZ_1 \times \cZ_0.
\end{equation}
Denoting by $\psi_{\sigma}$ the central character of $\sigma$ (a character of $Z_{L_P}(\Q_p)$), we see that the restriction $\Delta_0|_{Z_{L_P}^0}$ on $B_{\sigma, \lambda}(V)$ is given by $\psi_{\sigma}$. We deduce hence for any $z^0\in Z_{L_P}^0$ (see the $(\dett_{L_P})^{-1}$ in (\ref{dett1})):
\begin{equation}\label{gamma10}
\Upsilon_1(z^0)=\psi_{\sigma}(z^0) \Upsilon_0 \big(\dett_{L_P}(z^0)\big).
\end{equation}
With our choice of the uniformizers $\varpi_i$, we have a map $\Z \hookrightarrow F_i^{\times}$, $1 \mapsto \varpi_i$, that induces a map $\oplus_{i\in\sI} \Z^{r_i} \hookrightarrow Z_{L_P}(\Q_p)$. We denote by $Z_{\ul{\varpi}}$ its image. Then the action of $\Delta_0$ is determined by $\Delta_0|_{Z_{\ul{\varpi}}}$ (since $\Delta_0|_{Z_{L_P}^0}$ acts via $\psi_{\sigma}$ and $Z_{L_P}(\Q_p)\cong Z_{L_P}^0 \times Z_{\ul{\varpi}}$). On the other hand, the action of $\cZ_1 \times \cZ_0$ restricts to an action of $Z_{\ul{\varpi}} \times \cZ_0$ on $B_{\sigma, \lambda}(V)$. Since (\ref{dett1}) is trivial on $Z_{\ul{\varpi}}$, using (\ref{twaa}) we see that $\Upsilon_1|_{Z_{\ul{\varpi}}}=\Delta_0|_{Z_{\ul{\varpi}}}$. 

From the natural bijection
\begin{equation*}
	B_{\sigma, \lambda}(V) \cong \Hom_{L_P(\Q_p)}\Big(\cind_{L_P^0}^{L_P(\Q_p)} \sigma, J_P(V)_{\lambda} \widehat{\otimes}_E \cC^{\Q_p-\la}(Z_{L_P}^0,E)\Big)
\end{equation*}
we deduce that $B_{\sigma, \lambda}(V)$ is also equipped with a natural action of $\cZ_{\Omega}\cong \End_{L_P(\Q_p)}(\cind_{L_P^0}^{L_P(\Q_p)} \sigma)$ which commutes with the action of $\cZ_1 \times \cZ_0$. Moreover, we have a natural morphism
\[Z_{L_P}(\Q_p) \lra \End_{L_P(\Q_p)}\big(\cind_{L_P^0}^{L_P(\Q_p)} \sigma\big) \cong \cZ_{\Omega}\]
and it is easy to see that the $Z_{L_P}(\Q_p)$-action on $B_{\sigma, \lambda}(V)$ induced by this map coincides with the $\Delta_0$-action. In particular (from the last assertion in the previous paragraph) the action $\Upsilon_1|_{Z_{\ul{\varpi}}}$ can be recovered from the $\cZ_{\Omega}$-action. With (\ref{gamma10}), we finally see that the action of the full $\cZ_1$ can be read out from the action of $\cZ_0 \times \cZ_{\Omega}$. 

The following lemma is straightforward (using tensor-Hom adjunction).

\begin{lemma}\label{BCmod}
Let $M$ be a finite length $\cZ_{\Omega}$-module over $E$, then we have
	\begin{equation*}
		\Hom_{\cZ_{\Omega}}(M, B_{\sigma, \lambda}(V))\xlongrightarrow{\sim} \Hom_{L_P(\Q_p)}\Big(\big(\cind_{L_P^0}^{L_P(\Q_p)} \sigma\big) \otimes_{\cZ_{\Omega}} M, J_P(V)_{\lambda} \widehat{\otimes}_E \cC^{\Q_p-\la}(Z_{L_P}^0,E)\Big).
	\end{equation*}
\end{lemma}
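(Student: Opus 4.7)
The plan is to prove this as a formal consequence of tensor--Hom adjunction, exploiting the fact already used above that
\[
B_{\sigma, \lambda}(V)\;\cong\;\Hom_{L_P(\Q_p)}\!\Big(\cind_{L_P^0}^{L_P(\Q_p)}\sigma,\ J_P(V)_{\lambda}\widehat{\otimes}_E \cC^{\Q_p-\la}(Z_{L_P}^0,E)\Big),
\]
and that $\cZ_{\Omega}\cong \End_{L_P(\Q_p)}\!\big(\cind_{L_P^0}^{L_P(\Q_p)}\sigma\big)$. Write $W:=J_P(V)_{\lambda}\widehat{\otimes}_E \cC^{\Q_p-\la}(Z_{L_P}^0,E)$ and $N:=\cind_{L_P^0}^{L_P(\Q_p)}\sigma$. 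Then $N$ is a $(L_P(\Q_p),\cZ_{\Omega})$-bimodule in which $\cZ_{\Omega}$ acts on the right through the above endomorphism ring, and $B_{\sigma,\lambda}(V)=\Hom_{L_P(\Q_p)}(N,W)$ becomes a left $\cZ_{\Omega}$-module by precomposition, which is exactly the $\cZ_{\Omega}$-action introduced just before the lemma.

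The main step is then the adjunction isomorphism
\[
\Hom_{L_P(\Q_p)}(N\otimes_{\cZ_{\Omega}} M,\ W)\ \xlongrightarrow{\sim}\ \Hom_{\cZ_{\Omega}}\!\big(M,\ \Hom_{L_P(\Q_p)}(N,W)\big),
\]
with inverse given by sending $\varphi\colon M\to\Hom_{L_P(\Q_p)}(N,W)$ to the map $n\otimes m\mapsto \varphi(m)(n)$. This is a purely algebraic statement that holds for any $\cZ_{\Omega}$-module $M$ and any representation $W$; no topology enters the formal verification since the image of $\varphi(m)$ already lands in the closed subspace $B_{\sigma,\lambda}(V)\subset W$. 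I would spell out that the resulting bijection is clearly $E$-linear and functorial in $M$.

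The role of the finite length hypothesis on $M$ is to make the algebraic tensor product $N\otimes_{\cZ_{\Omega}}M$ a reasonable $L_P(\Q_p)$-representation (in particular, a quotient of a finite direct sum of copies of $N$), so that there is no ambiguity about which category of $\cZ_{\Omega}$-modules we work in and no completion is needed. Concretely, pick a finite presentation $\cZ_{\Omega}^{\oplus a}\to \cZ_{\Omega}^{\oplus b}\to M\to 0$ (available since $M$ has finite length and $\cZ_{\Omega}$ is noetherian); applying $N\otimes_{\cZ_{\Omega}}(-)$ and $\Hom_{L_P(\Q_p)}(-,W)$ respectively, both sides of the claimed isomorphism fit into left-exact sequences, and the adjunction is already the obvious isomorphism $\Hom_{L_P(\Q_p)}(N^{\oplus k},W)\cong B_{\sigma,\lambda}(V)^{\oplus k}=\Hom_{\cZ_{\Omega}}(\cZ_{\Omega}^{\oplus k},B_{\sigma,\lambda}(V))$ for $k=a,b$. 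The five-lemma then delivers the isomorphism for $M$.

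I do not expect any serious obstacle: the only point requiring care is to keep track of left versus right actions of $\cZ_{\Omega}$ on $N$ and on $B_{\sigma,\lambda}(V)$, and to check that the $\cZ_{\Omega}$-action on $\Hom_{L_P(\Q_p)}(N,W)$ arising from precomposition with the right $\cZ_{\Omega}$-action on $N$ is exactly the $\cZ_{\Omega}$-action on $B_{\sigma,\lambda}(V)$ defined in the previous subsection, which is essentially by construction.
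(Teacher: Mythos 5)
Your proof is correct and follows essentially the same route as the paper, which disposes of the lemma in one line as a consequence of tensor--Hom adjunction applied to $B_{\sigma,\lambda}(V)\cong\Hom_{L_P(\Q_p)}\big(\cind_{L_P^0}^{L_P(\Q_p)}\sigma,\ J_P(V)_{\lambda}\widehat{\otimes}_E\cC^{\Q_p-\la}(Z_{L_P}^0,E)\big)$ with its $\cZ_{\Omega}$-action by precomposition. Your extra step via a finite presentation and the five lemma is harmless but unnecessary, since, as you yourself note, the adjunction already holds for arbitrary $\cZ_{\Omega}$-modules $M$.
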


\begin{lemma}\label{Besadm}
The action of $Z_{\ul{\varpi}} \times \cZ_0$ (seen inside $\cZ_1 \times \cZ_0$) on $B_{\sigma, \lambda}(V)$ makes it an essentially admissible representa\-tion of $Z_{\ul{\varpi}} \times \cZ_0$.
\end{lemma}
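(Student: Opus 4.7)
The strategy is to propagate essential admissibility through the steps in the definition of $B_{\sigma,\lambda}(V)$, starting from Emerton's theorem that $J_P(V)$ is essentially admissible for $L_P(\Q_p)$. The operations involved --- tensoring with a finite-dimensional algebraic representation, taking invariants under a reductive Lie subalgebra, completed tensoring with $\cC^{\Q_p-\la}(\cZ_0,E)$, and passing to $\sigma$-isotypic components under the compact open subgroup $L_P^0$ --- all preserve essential admissibility with respect to the commuting abelian actions.

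By \cite{Em11}, $J_P(V)$ is essentially admissible for $L_P(\Q_p)$. Tensoring with the finite-dimensional algebraic representation $L(\lambda)_P^\vee$ preserves this, and passing to the closed subspace $J_P(V)_\lambda$ of $\fl_P^D$-invariants still yields an essentially admissible $L_P(\Q_p)$-representation; in particular, under the restricted $\Upsilon_1$-action, it is essentially admissible for $Z_{\ul{\varpi}}\subset Z_{L_P}(\Q_p)$. Since $\cC^{\Q_p-\la}(\cZ_0,E)$ is essentially admissible as a $\cZ_0$-representation under $\Upsilon_0$ (its strong dual being $D(\cZ_0,E)\cong\cC^{\an}(\widehat{\cZ_0})$), the completed tensor $J_P(V)_\lambda\,\widehat{\otimes}_E\,\cC^{\Q_p-\la}(\cZ_0,E)$, equipped with the $Z_{\ul{\varpi}}\times\cZ_0$-action $\Upsilon_1|_{Z_{\ul{\varpi}}}\otimes\Upsilon_0$, is essentially admissible for $Z_{\ul{\varpi}}\times\cZ_0$. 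Tensoring with the finite-dimensional $\sigma^\vee$ (on which $Z_{\ul{\varpi}}\times\cZ_0$ acts trivially) leaves this unchanged.

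Now $B_{\sigma,\lambda}(V)$ is cut out as the closed $L_P^0$-equivariant (with coefficients $\sigma$) subspace of $\sigma^\vee\otimes_E J_P(V)_\lambda\,\widehat{\otimes}_E\,\cC^{\Q_p-\la}(\cZ_0,E)$ under the diagonal $L_P^0$-action. The $Z_{\ul{\varpi}}\times\cZ_0$-action commutes with this $L_P^0$-action, since $Z_{\ul{\varpi}}$ is central in $L_P(\Q_p)$ and $\cZ_0$ acts on a factor on which $L_P^0$ acts only through the abelian group $\cZ_0$ via (\ref{dett1}); hence $B_{\sigma,\lambda}(V)$ is $Z_{\ul{\varpi}}\times\cZ_0$-stable. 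The main obstacle is verifying that passage to this closed $L_P^0$-isotypic subspace preserves coadmissibility of the strong dual over $\cC^{\an}(\widehat{Z_{\ul{\varpi}}\times\cZ_0})$. This should reduce to the observation that, in each slice of the coadmissible presentation of $(J_P(V)_\lambda\,\widehat{\otimes}_E\,\cC^{\Q_p-\la}(\cZ_0,E))^\vee$, the $L_P^0$-action is admissible (essential admissibility restricted to a compact open is admissibility), and the $\sigma$-isotypic functor on admissible $L_P^0$-representations is exact and commutes with the $\cC^{\an}(\widehat{Z_{\ul{\varpi}}\times\cZ_0})$-module structure, giving the required essential admissibility of $B_{\sigma,\lambda}(V)$ as a $Z_{\ul{\varpi}}\times\cZ_0$-representation.
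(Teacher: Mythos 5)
There is a genuine gap, and it sits at the very first reduction. You assert that $J_P(V)_{\lambda}$, being essentially admissible for $L_P(\Q_p)$, is ``in particular'' essentially admissible under the restricted action of $Z_{\ul{\varpi}}\subset Z_{L_P}(\Q_p)$, and consequently that the ambient space $\sigma^{\vee}\otimes_E J_P(V)_{\lambda}\widehat{\otimes}_E\cC^{\Q_p-\la}(Z_{L_P}^0,E)$ is already essentially admissible for $Z_{\ul{\varpi}}\times\cZ_0$ before any $L_P^0$-invariants are taken. This is false: essential admissibility does not pass to closed subgroups. Essential admissibility for $L_P(\Q_p)$ means the strong dual is coadmissible over $D(H,E)\widehat{\otimes}_E\Gamma\big(\widehat{Z_{L_P}(\Q_p)},\co_{\widehat{Z_{L_P}(\Q_p)}}\big)$ for a compact open $H\subset L_P(\Q_p)$, whereas essential admissibility for the (discrete, abelian) group $Z_{\ul{\varpi}}$ would force the dual to be a coherent sheaf on $\widehat{Z_{\ul{\varpi}}}$, with finite-dimensional fibres; but the $Z_{\ul{\varpi}}$-generalized eigenspaces of $J_P(V)_{\lambda}$ are typically infinite-dimensional smooth $L_P^D(\Q_p)$-representations, and they only become finite-dimensional after imposing the type, i.e.\ after taking the $\sigma$-isotypic part under $L_P^0$. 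For the same reason your parenthetical ``essential admissibility restricted to a compact open is admissibility'' is also false (coadmissibility over $D(H,E)\widehat{\otimes}_E\Gamma(\widehat{Z},\co_{\widehat Z})$ does not give coadmissibility over $D(H,E)$). So the whole framing of your last paragraph — that the only issue is whether passing to the closed $L_P^0$-isotypic subspace \emph{preserves} an essential admissibility already present — inverts the actual mechanism: taking the $L_P^0$-isotypic component is precisely what \emph{creates} the finiteness over $Z_{\ul{\varpi}}\times\cZ_0$; it has nothing to preserve.

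The paper's proof proceeds the other way round. One never discards the compact-group directions: since the $H^D$-action on $J_P(V)_{\lambda}$ is smooth (this is the role of twisting by $L(\lambda)_P^{\vee}$ and taking $\fl_P^D$-invariants), the coadmissibility of the dual refines to coadmissibility over $D^{\infty}(H^D,E)\widehat{\otimes}_E\Gamma(\widehat{Z_{\ul{\varpi}}},\co_{\widehat{Z_{\ul{\varpi}}}})\widehat{\otimes}_E D(\cZ_0'',E)\widehat{\otimes}_E D(\cZ_0,E)$ after adjoining the factor $\cC^{\Q_p-\la}(Z_{L_P}^0,E)$ and carefully disentangling the several commuting $Z_{L_P}^0$-actions. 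Then $B_{\sigma,\lambda}(V)$ is exhibited as a direct summand of the space $W$ of $H^D$-invariants on which $\cZ_0''$ acts by $\psi_{\sigma}$, and the dual of $W$ is computed as the base change of the coadmissible module along the quotient of Fr\'echet--Stein algebras killing $D^{\infty}(H^D,E)$ by its augmentation ideal and specializing $D(\cZ_0'',E)$ at $\psi_{\sigma}$; coadmissibility is preserved under such base change, which yields essential admissibility over $Z_{\ul{\varpi}}\times\cZ_0$. If you want to salvage your argument, you would need to replace your first two paragraphs by this kind of bookkeeping (smoothness of the $H^D$-action plus base change of coadmissible modules), rather than claiming essential admissibility for $Z_{\ul{\varpi}}$ upstream.
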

\begin{proof}
We first consider $J_P(V)_{\lambda}$. Since $J_P(V)_{\lambda}$ is an essentially admissible representation of $L_P(\Q_p)$ (by \cite[Thm.\ 4.2.32]{Em11} and \cite[Lemma 2.8]{Ding5}), the topological dual (with the strong topology) $J_P(V)_{\lambda}^{\vee}$ is a coadmissible module over $D(H,E) \widehat{\otimes}_E \Gamma\big(\widehat{Z_{L_P}(\Q_p)},\co_{\widehat{Z_{L_P}(\Q_p)}}\big)$ for an arbitrary compact open subgroup $H$ of $L_P(\Q_p)$. Shrinking $H$, we can and do assume $H$ has the form $H\cong H^D \times Z_H$ where $H^D$ (resp.\ $Z_H$) is a compact open subgroup of $L_P^D(\Q_p)$ (resp.\ of $Z_{L_P}(\Q_p)$). Hence $D(H,E) \cong D(H^D,E) \widehat{\otimes}_E D(Z_H,E)$. Since the $H^D$-action on $J_P(V)_{\lambda}$ is smooth, the action of $D(H,E) \widehat{\otimes}_E \Gamma(\widehat{Z_{L_P}(\Q_p)},\co_{\widehat{Z_{L_P}(\Q_p)}})$ on $J_P(V)_{\lambda}^\vee$ factors through
\[D^{\infty}(H^D,E) \widehat{\otimes}_E D(Z_H,E) \widehat{\otimes}_E \Gamma(\widehat{Z_{L_P}(\Q_p)},\co_{\widehat{Z_{L_P}(\Q_p)}}),\]
and further through its quotient
\[D^{\infty}(H^D,E) \widehat{\otimes}_E \Gamma(\widehat{Z_{L_P}(\Q_p)},\co_{\widehat{Z_{L_P}(\Q_p)}})\]
via the embedding $D(Z_H,E)\hookrightarrow \Gamma(\widehat{Z_{L_P}(\Q_p)},\co_{\widehat{Z_{L_P}(\Q_p)}})$.

We now consider $J_P(V)_{\lambda} \widehat{\otimes}_E \cC^{\Q_p-\la}(Z_{L_P}^0,E)$. We denote $Z_{L_P}^0$ by $\cZ_0$ (resp.\ $\cZ_0'$, resp.\ $\cZ_0''$) when it acts on $J_P(V)_{\lambda} \widehat{\otimes}_E \cC^{\Q_p-\la}(Z_{L_P}^0,E)$ by only acting on the term $\cC^{\Q_p-\la}(Z_{L_P}^0,E)$ (resp.\ by only acting on $J_P(V)_{\lambda}$, resp.\ by acting via $Z_{L_P}^0 \xrightarrow{(\id, \det_{L_P}^{-1})} \cZ_0' \times \cZ_0$). In particular the $\cZ_0'$-action coincides with $\Upsilon_1|_{Z_{L_P}^0}$. Note that (\ref{dett1}) defines a trivial action of $H^D \times Z_{\ul{\varpi}}$ on $\cC^{\Q_p-\la}(Z_{L_P}^0,E)$, so the diagonal action of $H^D \times Z_{\ul{\varpi}}$ is the same as the one given by only acting on $J_P(V)_{\lambda}$. It follows from the previous discussion that the strong dual of $J_P(V)_{\lambda} \widehat{\otimes}_E \cC^{\Q_p-\la}(Z_{L_P}^0,E)$ is a coadmissible module over (recall $\cZ_1\cong Z_{L_P}(\Q_p)$ acts via $\Upsilon_1$)
\begin{multline*}D^{\infty}(H^D,E) \widehat{\otimes}_E \Gamma(\widehat{\cZ_1},\co_{\widehat{\cZ_1}})\widehat{\otimes}_E D(\cZ_0,E)\\ 
\cong D^{\infty}(H^D,E) \widehat{\otimes}_E \Gamma(\widehat{Z_{\ul{\varpi}}},\co_{\widehat{Z_{\ul{\varpi}}}}) \widehat{\otimes}_E D(\cZ_0',E) \widehat{\otimes}_E D(\cZ_0,E).
\end{multline*}
Using the group isomorphism
	\begin{equation*}
		\cZ_0'' \times \cZ_0 \xlongrightarrow{\sim} \cZ_0'\times \cZ_0, (a,b) \mapsto (a, b \dett_{L_P}(a)^{-1}), 
	\end{equation*}
	we see that the strong dual of $J_P(V)_{\lambda} \widehat{\otimes}_E \cC^{\Q_p-\la}(Z_{L_P}^0,E)$ is also a coadmissible module over
	\begin{equation*}
		D_0:=D^{\infty}(H^D,E) \widehat{\otimes}_E \Gamma(\widehat{Z_{\ul{\varpi}}},\co_{\widehat{Z_{\ul{\varpi}}}}) \widehat{\otimes}_E D(\cZ_0'',E) \widehat{\otimes}_E D(\cZ_0,E).
	\end{equation*}
	
We now finally consider $B_{\sigma, \lambda}(V)$. Let $M:=\big(\sigma^{\vee} \otimes_E J_P(V)_{\lambda} \widehat{\otimes}_E \cC^{\Q_p-\la}(Z_{L_P}^0,E)\big)^{\vee}$ (a Fr\'echet space). We use $\cZ_0$, $\cZ_0'$, $\cZ_0''$ to denote the corresponding induced action of $Z_{L_P}^0$ on $M$ that acts trivially on $\sigma^\vee$. Shrinking $H$ if necessary, we assume the $H$-action on $\sigma$ is trivial. Using the previous paragraph, we see that $M$ is a coadmissible module over $D_0$. Noting that the action of $\cZ_0''$ coincides with $\Delta_0|_{Z_{L_P}^0}$, $B_{\sigma, \lambda}(V)$ is by definition a direct summand of:
	\begin{equation*}
W:=\big(\sigma^{\vee} \otimes_E J_P(V)_{\lambda} \widehat{\otimes}_E \cC^{\Q_p-\la}(Z_{L_P}^0,E)\big)^{H^D}[\cZ_0''=\psi_{\sigma}].
	\end{equation*}
Endow \ $M{\otimes}_{D_0, \kappa_{\psi_{\sigma}}}\big( \Gamma(\widehat{Z_{\ul{\varpi}}},\co_{\widehat{Z_{\ul{\varpi}}}}) \widehat{\otimes}_E D(\cZ_0,E)\big)$ \ with \ the locally \ convex \ quotient \ topology \ from $M {\otimes}_{E}\big( \Gamma(\widehat{Z_{\ul{\varpi}}},\co_{\widehat{Z_{\ul{\varpi}}}}) \widehat{\otimes}_E D(\cZ_0,E)\big)$ where $\kappa_{\psi_{\sigma}}$ denotes the projection $D_0 \twoheadrightarrow \Gamma(\widehat{Z_{\ul{\varpi}}},\co_{\widehat{Z_{\ul{\varpi}}}}) \widehat{\otimes}_E D(\cZ_0,E)$ which sends $D^{\infty}(H^D,E)$ to $E$ by moding out by the augmentation ideal, which sends $D(\cZ_0'',E)$ to $E$ via the character $\psi_{\sigma}$, and which is the identity for the other factors of $D_0$. Then we have an isomorphism
\begin{equation*}
W^{\vee} \cong M \widehat{\otimes}_{D_0, \kappa_{\psi_{\sigma}}}\big( \Gamma(\widehat{Z_{\ul{\varpi}}},\co_{\widehat{Z_{\ul{\varpi}}}}) \widehat{\otimes}_E D(\cZ_0,E)\big)
\end{equation*}
where the right hand side is the Hausdorff completion of $M{\otimes}_{D_0, \kappa_{\psi_{\sigma}}}\big( \Gamma(\widehat{Z_{\ul{\varpi}}},\co_{\widehat{Z_{\ul{\varpi}}}}) \widehat{\otimes}_E D(\cZ_0,E)\big)$. As $M$ is coadmissible over $D_0$, we deduce that $W^{\vee}$ is coadmissible over $\Gamma(\widehat{Z_{\ul{\varpi}}},\co_{\widehat{Z_{\ul{\varpi}}}}) \widehat{\otimes}_E D(\cZ_0,E)$. Hence $W$ is an essentially admissible locally $\Q_p$-analytic representation of $Z_{\ul{\varpi}} \times \cZ_0$, and so is $B_{\sigma, \lambda}(V)$.
\end{proof}

We use the notation $\fz_0:=\fz_{L_P}$ to emphasize the action on $B_{\sigma, \lambda}(V)$ derived from $\cZ_0$.\index{$\fz_0$} For an $E$-algebra $A$, $\fm \subset A$ an ideal, and an $A$-module $M$, we denote by $M[\fm^{\infty}]:=\varinjlim_n M[\fm^n]$ the $A$-submodule of $M$ consisting of elements annihilated by $\fm^n$ for some $n\geq 0$. \index{$M[\fm^{\infty}]$}

\begin{lemma}\label{compfini}
Let $\fd$ be a weight of $\fz_0$, then we have
\[B_{\sigma,\lambda}(V)[\fz_0=\fd]=\bigoplus_{\delta, \chi}B_{\sigma, \lambda}(V)[\fz_0=\fd][\fm_{\chi}][\fm_{\delta}^{\infty}]=\bigoplus_{\fm \in \Spm \cZ_{\Omega},\chi}B_{\sigma, \lambda}(V)[\fz_0=\fd][\fm_{\chi}][\fm^{\infty}]\] 
where $\delta$ (resp.\ $\chi$) runs through the smooth characters of $\Delta_0\cong Z_{L_P}(\Q_p)$ (resp.\ through the locally algebraic characters of $\cZ_0 \cong Z_{L_P}^0$ of weight $\fd$) and $\fm_{\delta}\subset E[\Delta_0]$ (resp.\ $\fm_{\chi}\subset E[\cZ_0]$) is the maximal ideal associated to $\delta$ (resp.\ $\chi$). Moreover, each summand in the direct sums is finite dimensional over $E$. 
\end{lemma}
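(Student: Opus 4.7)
The plan is to exploit the essentially admissible structure of $B_{\sigma,\lambda}(V)$ over $Z_{\ul{\varpi}} \times \cZ_0$ from Lemma \ref{Besadm}, together with the commuting $\cZ_\Omega$-action. I would first establish the decomposition $M := B_{\sigma, \lambda}(V)[\fz_0 = \fd] = \bigoplus_\chi M[\fm_\chi]$ indexed by locally algebraic characters $\chi$ of $\cZ_0$ of weight $\fd$, then refine each summand via the $\cZ_\Omega$-action (equivalently via $\fm_\delta$-torsion for smooth characters $\delta$ of $\Delta_0$) and verify finite-dimensionality of the resulting pieces.

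For the first decomposition: by Lemma \ref{Besadm}, $M^\vee$ is coadmissible over $\Gamma(\widehat{Z_{\ul{\varpi}}}, \co) \widehat{\otimes}_E D(\cZ_0, E)/(\fz_0 - \fd)$. Since $\cZ_0 = \prod_i Z_{L_{P_i}}(\co_{F_i})$ is compact abelian with a pro-$p$ open subgroup, its locally algebraic characters of weight $\fd$ form a countable discrete subset of $\widehat{\cZ_0}$ (being translates by $\delta_\fd$ of the smooth characters). After untwist by $\delta_\fd$ the $\cZ_0$-action on $M$ is smooth, so $D(\cZ_0, E)/(\fz_0-\fd)$ acts through the smooth distribution algebra $D^\infty(\cZ_0, E) \cong \prod_{\chi_{\sm}} E$; the resulting idempotents give the decomposition $M = \bigoplus_\chi M[\fm_\chi]$ indexed by locally algebraic characters $\chi$ of $\cZ_0$ of weight $\fd$.

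For the second decomposition and finiteness: each summand $M[\fm_\chi]$ remains essentially admissible over the discrete free abelian group $Z_{\ul{\varpi}}$, so its strong dual is a coherent sheaf on $\widehat{Z_{\ul{\varpi}}} \cong \bG_m^{r,\rig}$ with $r := \sum_i r_i$. The $\cZ_\Omega$-action commutes with $\cZ_0 \times Z_{\ul{\varpi}}$ and extends the $Z_{\ul{\varpi}}$-action through the algebra map $E[Z_{L_P}(\Q_p)] \to \cZ_\Omega$ of \S\ref{sec_pDf}, which sends each $\varpi_{i,j}$ to a scalar multiple of $z^{n_{i,j}}$ and hence makes $\cZ_\Omega$ finite over the image of $E[Z_{\ul{\varpi}}]$. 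Thus $M[\fm_\chi]^\vee$ descends to a coherent sheaf on $(\Spec \cZ_\Omega)^{\rig}$, and at any closed point $\fm$ its stalk is a finitely generated module over the Noetherian local ring $\widehat{\co}_{(\Spec \cZ_\Omega)^{\rig}, \fm}$. The $\fm^\infty$-torsion in the stalk is annihilated by some power of the maximal ideal, hence is of finite length over $E$, and by duality $M[\fm_\chi][\fm^\infty]$ is finite-dimensional. The equivalence of the two displayed direct sums follows from the facts that (i) only $\delta$ with $\delta|_{\cZ_0} = \psi_\sigma$ contribute nontrivially (by the relation $\Delta_0|_{\cZ_0} = \psi_\sigma$ derived in (\ref{gamma10})), and (ii) such $\delta$ correspond, via the $\prod_{i,j}\mu_{m_{0,i,j}}$-quotient from the construction of $\cZ_\Omega$ in \S\ref{sec_pDf}, finite-to-one onto closed points of $\Spec \cZ_\Omega$, so the $\fm^\infty$-torsion splits as the direct sum of $\fm_\delta^\infty$-torsions over $\delta$ above $\fm$.

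The main technical obstacle lies in Step 1: one must rigorously identify the weight-$\fd$ quotient of $D(\cZ_0, E)$ as acting through the smooth distribution algebra and produce genuine topological idempotents rather than a mere filtration. The essential input is that the support of $M^\vee$ in $\widehat{\cZ_0}$, once the condition $\fz_0 = \fd$ is imposed, is contained in the discrete locus of locally algebraic characters of weight $\fd$, over which a coadmissible sheaf decomposes as a topological direct sum of its stalks. Once this splitting is in place, the refinement by $\cZ_\Omega$ and the comparison of the two index sets are routine consequences of the finiteness of $\cZ_\Omega$ over the image of $E[Z_{\ul{\varpi}}]$.
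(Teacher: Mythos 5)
Your Step 1 is fine in substance: after imposing $\fz_0=\fd$ the $\cZ_0$-action on $B_{\sigma,\lambda}(V)[\fz_0=\fd]$ is locally algebraic of weight $\fd$, hence smooth after untwisting by $\delta_{\fd}$, and smooth representations of the profinite abelian group $\cZ_0$ are semisimple, which gives the decomposition over the characters $\chi$ (this is also how the paper gets that part, after replacing $\widehat{\otimes}_E$ by $\otimes_E$ via \cite[Prop.~1.2]{Koh2011}). The genuine gap is in Step 2, which you declare "routine". First, your duality is the wrong way around: writing $M:=B_{\sigma,\lambda}(V)[\fz_0=\fd][\fm_{\chi}]$ and $N:=M^{\vee}$, the dual of $M[\fm^{n}]$ is $N/\fm^{n}N$, so the dual of $M[\fm^{\infty}]$ is the $\fm$-adic completion $\varprojlim_n N/\fm^{n}N$ of the stalk, \emph{not} its $\fm$-power-torsion submodule. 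A completion of a finitely generated module over the $r$-dimensional local ring $\widehat{\co}_{(\Spec \cZ_{\Omega})^{\rig},\fm}$ is in general infinite dimensional over $E$, so "annihilated by a power of $\fm$, hence finite length" does not apply. Second, and more fundamentally, neither the direct-sum decomposition over $\delta$ (resp.\ over $\fm\in\Spm\cZ_{\Omega}$) nor the finite dimensionality of the summands can follow from coherence of $N$ over $\widehat{Z_{\ul{\varpi}}}$ together with finiteness of $\cZ_{\Omega}$ over the image of $E[Z_{\ul{\varpi}}]$: an essentially admissible representation of $Z_{\ul{\varpi}}\times\cZ_0$ whose dual is, say, $\co(\bG_m^{\rig})$ in the $Z_{\ul{\varpi}}$-direction is not the direct sum of its generalized eigenspaces and its completed stalks are infinite dimensional. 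So an argument at the level of generality you work at would prove a false statement.

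What makes the lemma true is the admissibility of $V$ itself, fed through the Jacquet functor, and this input is absent from your proposal. The paper's proof passes to $H=H^D\times Z_H$-invariants and reduces to the assertion that $\big(J_P(V)_{\lambda}\otimes_E\sigma^{\vee}\big)^{H^D}[\fz_{L_P}=\fd\circ\dett_{L_P}]$ is the direct sum of its generalized $Z_{L_P}(\Q_p)$-eigenspaces $[\fm_{\delta'}^{\infty}]$ with each summand finite dimensional; this is \cite[Thm.~4.10]{HL}, whose proof (as in \cite[Prop.~4.1]{BH2}) rests on the finite slope part of a compact operator acting on the $N_H$-invariants of the admissible representation $V$ at a fixed weight. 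Note also that Lemma \ref{compfini} is stated for an arbitrary admissible locally analytic $V$, with no hypothesis of the type appearing in Proposition \ref{Pr}, so you cannot substitute Fredholm-theoretic local finiteness of the support over $\widehat{\cZ_0}$ (which in the paper only comes later and under extra hypotheses) for this missing finiteness theorem. Your point that the contributing $\delta$ restrict to $\psi_{\sigma}$ on $\cZ_0$ and correspond finite-to-one to maximal ideals of $\cZ_{\Omega}$ is correct and matches the paper's comparison of the two index sets, but it only matters once the decomposition and finiteness are established.
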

\begin{proof}
We have by definition using (\ref{dett1}):
	\begin{multline}\label{disfib0}
		B_{\sigma, \lambda}(V)[\fz_0=\fd]=(J_P(V)_{\lambda} \widehat{\otimes}_E \cC^{\Q_p-\la}(Z_{L_P}^0,E)[\fz_{L_P}=\fd] \otimes_E \sigma^{\vee})^{L_P^0}\\
		=(J_P(V)_{\lambda}[\fz_{L_P}=\fd\circ \dett_{L_P}] \widehat{\otimes}_E \cC^{\Q_p-\la}(Z_{L_P}^0,E)[\fz_{L_P}=\fd] \otimes_E \sigma^{\vee})^{L_P^0}.
	\end{multline}
	As $\cC^{\Q_p-\la}(Z_{L_P}^0,E)[\fz_{L_P}=\fd]\cong \cC^{\infty}(Z_{L_P}^0,E)$ is topologically isomorphism to a direct limit of finite dimensional $E$-vector spaces. We deduce by \cite[Prop.\ 1.2]{Koh2011} that (\ref{disfib0}) remains unchanged if $\widehat{\otimes}_{E}$ is replaced by $\widehat{\otimes}_{E}$. As in Lemma \ref{Besadm}, let $H$ be a compact open subgroup of $L_P^0$ such that $H$ acts trivially on $\sigma$ and $H\cong H^D \times Z_H$ where $H^D:=H \cap L_P^D$ and $Z_H:=H \cap Z_{L_P}^0$. By (\ref{disfib0}), we see that $B_{\sigma, \lambda}(V)[\fz_0=\fd]$ is a $\Delta_0$-equivariant direct summand of
\[\big(J_P(V)_{\lambda}[\fz_{L_P}=\fd\circ \dett_{L_P}] \otimes_E \cC^{\Q_p-\la}(Z_{L_P}^0,E)[\fz_{L_P}=\fd] \otimes_E \sigma^{\vee}\big)^{H}.\]
It thus suffices to prove the statement for the latter. We have 
	\begin{multline*}
		\big(J_P(V)_{\lambda} [\fz_{L_P}=\fd\circ \dett_{L_P}] \otimes_E \cC^{\Q_p-\la}(Z_{L_P}^0,E)[\fz_{L_P}=\fd] \otimes_E \sigma^{\vee}\big)^{H}\\
		\cong \Big(\big(J_P(V)_{\lambda}\otimes_E \sigma^{\vee}\big)^{H^D}[\fz_{L_P}=\fd\circ \dett_{L_P}]\otimes_E \cC^{\Q_p-\la}(Z_{L_P}^0,E)[\fz_{L_P}=\fd]\Big)^{Z_H}.
	\end{multline*}
By \cite[Thm.\ 4.10]{HL}, we have
\[(J_P(V)_{\lambda}\otimes_E \sigma^{\vee}\big)^{H^D}[\fz_{L_P}=\fd\circ \dett_{L_P}]=\bigoplus_{\delta'} (J_P(V)_{\lambda}\otimes_E \sigma^{\vee}\big)^{H^D}[\fz_{L_P}=\fd\circ \dett_{L_P}][\fm_{\delta'}^{\infty}]\]
where $\delta'$ runs though the locally algebraic characters of $\cZ_1\cong Z_{L_P}(\Q_p)$ of weight $\fd$ (and $[\fm_{\delta'}^{\infty}]$ is for the corresponding $\cZ_1$-action). Moreover, by the proof of \textit{loc.\ cit.}\ and the same argument as in the proof of \cite[Prop.\ 4.1]{BH2}, each $\big(J_P(V)_{\lambda}\otimes_E \sigma^{\vee}\big)^{H^D}[\fz_{L_P}=\fd\circ \dett_{L_P}][\fm_{\delta'}^{\infty}]$ is finite dimensio\-nal over $E$. For such $\delta'$, we easily get:
	\begin{eqnarray}\footnotesize
		\label{disfib2}
		&&\Big(\big(J_P(V)_{\lambda}\otimes_E \sigma^{\vee}\big)^{H^D}[\fz_{L_P}=\fd\circ \dett_{L_P}][\fm_{\delta'}^{\infty}]\otimes_E \cC^{\Q_p-\la}(Z_{L_P}^0,E)[\fz_{L_P}=\fd]\Big)^{Z_H} \\
		&\cong &
		\bigoplus_{\delta,\chi}\Big(\big(J_P(V)_{\lambda}\otimes_E \sigma^{\vee}\big)^{H^D}[\fz_{L_P}=\fd\circ \dett_{L_P}][\fm_{\delta'}^{\infty}]\otimes_E \cC^{\Q_p-\la}(Z_{L_P}^0,E)[\fz_{L_P}=\fd]\Big)^{Z_H}[\fm_{\chi}][\fm_{\delta}^{\infty}] \nonumber \\ 
		&\cong& 	\bigoplus_{\fm,\chi}\Big(\big(J_P(V)_{\lambda}\otimes_E \sigma^{\vee}\big)^{H^D}[\fz_{L_P}=\fd\circ \dett_{L_P}][\fm_{\delta'}^{\infty}]\otimes_E \cC^{\Q_p-\la}(Z_{L_P}^0,E)[\fz_{L_P}=\fd]\Big)^{Z_H}[\fm_{\chi}][\fm^{\infty}]\nonumber
	\end{eqnarray}
where $[\fm_{\delta}^{\infty}]$ is for the smooth action of $\Delta_0\cong Z_{L_P}(\Q_p)$, $[\fm_{\chi}]$ for the locally algebraic action of $\cZ_0\cong Z_{L_P}^0$ (with $\chi$ as in the statement) and $[\fm^{\infty}]$ for the smooth action of $\cZ_{\Omega}$ ($\fm$ as in the statement), noting that smooth representations of $\cZ_0$ over $E$ are semi-simple.
By unwinding the actions of $\cZ_1$ and $\Delta_0$, if a summand in the right hand side of (\ref{disfib2}) is non-zero, then $\delta' \delta^{-1}$ is a locally algebraic character of weight $\fd \circ \dett_{L_P}$ which is trivial on $Z_{\ul{\varpi}}$ and $\delta'|_{Z_H}=(\chi \circ \dett_{L_P})|_{Z_H}$. We deduce that for each smooth character $\delta$ of $\Delta_0$ and each locally algebraic character $\chi$ of $\cZ_0$ of weight $\fd$, there exist at most finitely many $\delta'$ such that (\ref{disfib2}) is non-zero. Likewise replacing characters $\delta$ of $\Delta_0$ by maximal ideals $\fm$ of $\cZ_{\Omega}$. The lemma follows. 
\end{proof}

We discuss the problem of the choice of $\sigma$ and $\lambda$. We first introduce some notation.

\begin{notation}\label{charanot}\index{$\delta_{\ul{\varpi}}^0$} \index{$\delta_{\ul{\varpi}}^{\unr}$} \index{$\delta_{\varpi_i}^0$} \index{$\chi_{\ul{\varpi}}$} \index{$\chi_{\varpi_i}$} \index{$\delta_{\varpi_i}^{\unr}$}
(1) For a continuous character $\chi$ of $Z_{L_P}^0$ (resp.\ of $Z_{L_{P_i}}(\co_{F_i})$), denote by $\chi_{\ul{\varpi}}$ (resp.\ $\chi_{\varpi_i}$) the character of $Z_{L_P}(\Q_p)$ (resp.\ of $Z_{L_{P_i}}(F_i)$) that is trivial on $Z_{\ul{\varpi}}$ (resp.\ on $Z_{\varpi_i}=Z_{\ul{\varpi}}\cap Z_{L_{P_i}}(F_i)$) and is equal to $\chi$ on $Z_{L_P}^0$ (resp.\ on $Z_{L_{P_i}}(\co_{F_i})$).

(2) For a continuous character $\delta$ of $Z_{L_P}(\Q_p)$ (resp.\ of $Z_{L_{P_i}}(F_i)$), put $\delta^0:=\delta|_{Z_{L_P}^0}$ (resp.\ $\delta^0:=\delta|_{Z_{L_{P_i}}(\co_{F_i})}$), $\delta^0_{\ul{\varpi}}:=(\delta^0)_{\ul{\varpi}}$ (resp.\ $\delta^0_{\varpi_i}:=(\delta^0)_{\varpi_i}$) and $\delta_{\ul{\varpi}}^{\unr}:=\delta (\delta_{\ul{\varpi}}^0)^{-1}$ (resp.\ $\delta_{\varpi_i}^{\unr}:=\delta (\delta_{\varpi_i}^0)^{-1}$). Hence $\delta_{\ul{\varpi}}^{\unr}$ (resp.\ $\delta_{\varpi_i}^{\unr}$) is an unramified character of $Z_{L_P}(\Q_p)$ (resp.\ $Z_{L_{P_i}}(F_i)$) and $\delta=\delta_{\ul{\varpi}}^{\unr}\delta^0_{\ul{\varpi}}$ (resp.\ $\delta=\delta_{\varpi_i}^{\unr}\delta_{\varpi_i}^0$). 	
\end{notation}

Let $\psi$ be a smooth character of $Z_{L_P}(\Q_p)$ over $E$, $\sigma':=\sigma \otimes_E (\psi^0 \circ \dett_{L_P})$ and $\Omega'$ the Bernstein component associated to $\sigma'$. We have an $L_P(\Q_p)$-equivariant isomorphism
\begin{equation}\label{twistun}
\cind_{L_P^0}^{L_P(\Q_p)} \sigma' \xrightarrow{\sim} \Big(\cind_{L_P^0}^{L_P(\Q_p)} \sigma\Big)\otimes_E (\psi \circ \dett_{L_P}), \ f\mapsto [g \mapsto f(g)\psi^{-1}(\dett_{L_P}(g)) ]\otimes 1.
\end{equation}
Denote by $\iota_{\psi}$ the following composition
\begin{multline*}
\iota_{\psi}:\End_{L_P(\Q_p)} \big(\cind_{L_P^0}^{L_P(\Q_p)} \sigma\big) \xlongrightarrow{\sim} \End_{L_P(\Q_p)} \big((\cind_{L_P^0}^{L_P(\Q_p)} \sigma) \otimes_E (\psi\circ \dett_{L_P})\big)\\ 
	\xlongrightarrow{\sim} \End_{L_P(\Q_p)}\big(\cind_{L_P^0}^{L_P(\Q_p)} \sigma'\big)
\end{multline*} 
where the first map is given by twisting by $\psi\circ \dett_{L_P}$, and the second map is induced by (\ref{twistun}). We easily check that $\iota_{\psi}: \cZ_{\Omega} \ra \cZ_{\Omega'}$ sends $\pi \in \Spec \cZ_{\Omega'}$ to $\pi \otimes_E (\psi^{-1} \circ \dett_{L_P})\in \Spec \cZ_{\Omega}$. Let $\lambda'$ be another integral $P$-dominant weight for $G$ such that there exists an integral weight $\fd$ of $Z_{L_P}(\Q_p)$ with $\lambda'-\lambda=\fd \circ \dett_{L_P}$ (in particular $L(\lambda)_P |_{L_P^D(\Q_p)} \cong L(\lambda')_P|_{L_P^D(\Q_p)}$). Let $\delta_{\fd}$ be the algebraic character of $Z_{L_P}(\Q_p)$ (over $E$) of weight $\fd$. Recall $\delta_{\fd}=\delta_{\fd, \ul{\varpi}}^{\unr}\delta_{\fd,\ul{\varpi}}^0$ from Notation \ref{charanot}.

\begin{lemma}\label{twBE}
There is a natural isomorphism of vector spaces of compact type
	\begin{equation*}	
		\tw_{\psi,\fd}: B_{\sigma',\lambda'}(V) \xlongrightarrow{\sim} B_{\sigma, \lambda}(V)
	\end{equation*}
satisfying the following compatibility for the $\cZ_{\Omega'}\times Z_{\ul{\varpi}}\times \cZ_0$- and $\cZ_{\Omega}\times Z_{\ul{\varpi}}\times \cZ_0$-actions, where $v\in B_{\sigma',\lambda'}(V)$, $\alpha \in \cZ_{\Omega'}$, $\beta\in Z_{\ul{\varpi}}$ and $\gamma\in \cZ_0$:
\begin{equation*}
\tw_{\psi,\fd}\big((\alpha, \beta, \gamma)\cdot v\big)=(\delta_{\fd}\circ \dett_{L_P})(\beta) (\delta_{\fd}\psi)^{-1}(\gamma)\Big(\big(\iota_{\delta_{\fd,\ul{\varpi}}^{\unr}}\big(\iota_{\psi_{\ul{\varpi}}^0}^{-1} (\alpha)\big), \beta, \gamma\big) \cdot \tw_{\psi,\fd}(v)\Big).
\end{equation*}
\end{lemma}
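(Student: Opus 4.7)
The plan is to construct $\tw_{\psi, \fd}$ as a composition of three tautological $L_P^0$-equivariant identifications. First, since $\sigma' = \sigma \otimes_E (\psi^0 \circ \dett_{L_P})$, tensor-Hom adjunction yields
\[\Hom_{L_P^0}(\sigma', W) \xlongrightarrow{\sim} \Hom_{L_P^0}\big(\sigma, W \otimes_E ((\psi^0)^{-1} \circ \dett_{L_P})\big)\]
for any $L_P^0$-representation $W$. Second, since $\lambda' - \lambda = \fd \circ \dett_{L_P}$, an $L_P$-equivariant isomorphism $L(\lambda')_P \cong L(\lambda)_P \otimes_E (\delta_{\fd} \circ \dett_{L_P})$ holds, which via the definition (\ref{jplambda}) of $J_P(V)_{(-)}$ gives an $L_P(\Q_p)$-equivariant isomorphism $J_P(V)_{\lambda'} \cong J_P(V)_{\lambda} \otimes_E (\delta_{\fd}^{-1} \circ \dett_{L_P})$. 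Third, for any locally $\Q_p$-analytic character $\nu$ of $Z_{L_P}^0$, pointwise multiplication by $\nu$ defines an $L_P^0$-equivariant isomorphism
\[M_\nu: \cC^{\Q_p-\la}(Z_{L_P}^0, E) \otimes_E (\nu^{-1} \circ \dett_{L_P}) \xlongrightarrow{\sim} \cC^{\Q_p-\la}(Z_{L_P}^0, E), \quad f \longmapsto \nu \cdot f,\]
as one checks directly using (\ref{dett1}).

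Taking $\nu := \delta_{\fd}^0 \psi^0$ and composing the three ingredients, we obtain the desired continuous isomorphism $\tw_{\psi, \fd}: B_{\sigma', \lambda'}(V) \xlongrightarrow{\sim} B_{\sigma, \lambda}(V)$ of compact-type spaces: the first ingredient exchanges $\sigma'$ for $\sigma$ at the cost of a twist by $(\psi^0)^{-1} \circ \dett_{L_P}$; the second exchanges $J_P(V)_{\lambda'}$ for $J_P(V)_{\lambda}$ at the cost of an additional twist by $\delta_{\fd}^{-1} \circ \dett_{L_P}$; the third absorbs the combined residual twist $((\delta_{\fd}\psi^0)^{-1} \circ \dett_{L_P})$ into the $\cC^{\Q_p-\la}(Z_{L_P}^0, E)$ factor.

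The compatibility with the action of $\cZ_{\Omega'} \times Z_{\ul{\varpi}} \times \cZ_0$ is then verified factor by factor. For $\gamma \in \cZ_0$, only the third ingredient intervenes (since $\gamma$ acts trivially on $J_P(V)_{\lambda}$), and $M_\nu$ intertwines the two translation actions up to the scalar $\nu(\gamma)^{-1} = (\delta_{\fd}\psi)^{-1}(\gamma)$, producing the factor $(\delta_{\fd}\psi)^{-1}(\gamma)$ in the statement. For $\beta \in Z_{\ul{\varpi}}$, only the second ingredient contributes a non-trivial scalar (since $\beta$ acts trivially on $\cC^{\Q_p-\la}(Z_{L_P}^0, E)$ by (\ref{dett1})), yielding the factor $(\delta_{\fd} \circ \dett_{L_P})(\beta)$. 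For $\alpha \in \cZ_{\Omega'}$, one uses the analog of (\ref{twistun}) with $\psi_{\ul{\varpi}}^0$ in place of $\psi$: the resulting $L_P(\Q_p)$-equivariant isomorphism $\cind_{L_P^0}^{L_P(\Q_p)}\sigma' \cong (\cind_{L_P^0}^{L_P(\Q_p)}\sigma) \otimes (\psi_{\ul{\varpi}}^0 \circ \dett_{L_P})$ identifies the $\cZ_{\Omega'}$-action with the $\cZ_{\Omega}$-action via $\iota_{\psi_{\ul{\varpi}}^0}$, and the additional twist by $\iota_{\delta_{\fd,\ul{\varpi}}^{\unr}}$ arises because $M_\nu$ absorbs only the ``smooth part'' $\delta_{\fd,\ul{\varpi}}^0|_{Z_{L_P}^0} = \delta_{\fd}^0$ of the algebraic character $\delta_{\fd} = \delta_{\fd,\ul{\varpi}}^{\unr}\delta_{\fd,\ul{\varpi}}^0$, leaving the unramified part $\delta_{\fd,\ul{\varpi}}^{\unr}$ to appear as a shift on the Bernstein centre.

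The main technical point is the bookkeeping for the action of $\alpha$, which requires disentangling the algebraic character $\delta_{\fd}$ into its smooth and unramified parts as in Notation \ref{charanot} and correctly tracking how each part is absorbed by the three defining twists. All remaining verifications are formal unwindings of the three identifications above.
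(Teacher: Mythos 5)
Your construction is the same as the paper's: the paper's $\tw_1$ is precisely your ingredients (1)--(2) combined with the multiplication map (\ref{twO2}) (your $M_\nu$ with $\nu=(\delta_{\fd}\psi)^0$), and its $\tw_2$ is the compact-induction/unramified-shift step you invoke for the $\cZ_{\Omega'}$-compatibility via (\ref{twistun}). The underlying topological isomorphism and the $\cZ_0$-factor $(\delta_{\fd}\psi)^{-1}(\gamma)$ are correct and verified essentially as in the paper.

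The genuine problem is the $Z_{\ul{\varpi}}$-bookkeeping, which you assert rather than compute, and which does not come out as you claim. The $\Upsilon_1$-action of $\beta\in Z_{\ul{\varpi}}$ on $B_{\sigma',\lambda'}(V)$ is post-composition with $\beta$ acting on $J_P(V)_{\lambda'}$; under your second ingredient $J_P(V)_{\lambda'}\cong J_P(V)_{\lambda}\otimes_E(\delta_{\fd}^{-1}\circ\dett_{L_P})$ this is $\delta_{\fd}^{-1}(\dett_{L_P}(\beta))$ times the action on $J_P(V)_{\lambda}$, and neither the $\sigma'$-versus-$\sigma$ identification nor $M_\nu$ touches the $J_P(V)_{\lambda}$-component; hence the map you construct satisfies $\tw_{\psi,\fd}(\beta\cdot v)=(\delta_{\fd}^{-1}\circ\dett_{L_P})(\beta)\,\beta\cdot\tw_{\psi,\fd}(v)$, i.e.\ the \emph{inverse} of the factor you announce as coming from the second ingredient. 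The same inverse factor is what one gets by composing the intertwining relations satisfied by $\tw_1$ and $\tw_2$ in the paper's proof, and it is the factor forced by Proposition \ref{twBE2} (compare central characters on $Z_{\ul{\varpi}}$ through $\pi\mapsto\pi\otimes_E((\psi^0_{\ul{\varpi}}(\delta_{\fd,\ul{\varpi}}^{\unr})^{-1})\circ\dett_{L_P})$). Moreover your $\alpha$-compatibility, which is correct, already forces this sign on the images of elements of $Z_{\ul{\varpi}}$ in $\cZ_{\Omega'}$ (one computes $\iota_{\psi^0_{\ul{\varpi}}}^{-1}$ and $\iota_{\delta^{\unr}_{\fd,\ul{\varpi}}}$ on these central elements directly), so your $\beta$- and $\alpha$-statements are mutually inconsistent as written. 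Since this scalar and shift bookkeeping is the entire content of the lemma, you need to actually carry out the verification rather than attribute the displayed factors ingredient by ingredient; doing so will also show that the $\beta$-factor in the displayed formula should be checked against Proposition \ref{twBE2}.
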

\begin{proof}
We have $\cZ_0$-equivariant isomorphisms (using (\ref{twistun}) for the second):
\begin{multline*}
\tw_0: B_{\sigma',\lambda'}(V) 
		\xlongrightarrow{\sim} \Hom_{L_P(\Q_p)}\Big(\cind_{L_P^0}^{L_P(\Q_p)} \sigma', J_P(V)_{\lambda} \otimes_E (\delta_{\fd}^{-1} \circ \dett_{L_P}) \widehat{\otimes}_E \cC^{\Q_p-\la}(Z_{L_P}^0, E)\Big)\\ 
		\xlongrightarrow{\sim} \Hom_{L_P(\Q_p)}\Big(\cind_{L_P^0}^{L_P(\Q_p)} \sigma, J_P(V)_{\lambda} \otimes_E (\delta_{\fd}^{-1}\circ \dett_{L_P}) \widehat{\otimes}_E \cC^{\Q_p-\la}(Z_{L_P}^0, E)\otimes_E (\psi^{-1}\circ \dett_{L_P})\Big)	
\end{multline*}
such that $\tw_0((\alpha,\beta) \cdot v)=(\psi\circ \dett_{L_P})(\beta)\big((\iota_{\psi}^{-1}(\alpha),\beta) \cdot \tw_0(v)\big)$. We have an isomorphism
\begin{eqnarray}\label{twO2}
		((\delta_{\fd}^{-1}\psi^{-1})_{\ul{\varpi}}^0 \circ \dett_{L_P}) \widehat{\otimes}_E \cC^{\Q_p-\la}(Z_{L_P}^0, E) &\xlongrightarrow{\sim} &\cC^{\Q_p-\la}(Z_{L_P}^0, E)\\
\nonumber		1\otimes f &\longmapsto &[z \mapsto f(z)(\delta_{\fd}\psi)^0(z)]
\end{eqnarray}
which is $L_P(\Q_p)$-equivariant (but not $\cZ_0$-equivariant) where $L_P(\Q_p)$ acts on the left hand side by the diagonal action (with $L_P(\Q_p)$ acting on $\cC^{\Q_p-\la}(Z_{L_P}^0,E)$ via (\ref{dett1})). This isomorphism, together with $\tw_0$, induce an isomorphism 
\begin{equation*}
\tw_1: B_{\sigma',\lambda'} \xrightarrow{\sim}\Hom_{L_P(\Q_p)}\Big(\cind_{L_P^0}^{L_P(\Q_p)} \sigma, J_P(V)_{\lambda} \otimes_E ((\delta_{\fd}^{-1}\psi^{-1})_{\ul{\varpi}}^{\unr} \circ \dett_{L_P}) \widehat{\otimes}_E \cC^{\Q_p-\la}(Z_{L_P}^0, E)\Big)
\end{equation*}
satisfying $\tw_1((\alpha, \beta, \gamma) \cdot v)=(\psi\circ \dett_{L_P})(\beta)(\delta_{\fd}^{-1}\psi^{-1})(\gamma) \big((\iota_{\psi}^{-1}(\alpha),\beta,\gamma) \cdot \tw_1(v)\big)$ for $(\alpha,\beta,\gamma)\in \cZ_{\Omega'} \times Z_{\ul{\varpi}}\times \cZ_0$. We have isomorphisms:
\begin{multline*}
\tw_2: \Hom_{L_P(\Q_p)}\Big(\cind_{L_P^0}^{L_P(\Q_p)} \sigma, J_P(V)_{\lambda} \otimes_E ((\delta_{\fd}^{-1}\psi^{-1})_{\ul{\varpi}}^{\unr} \circ \dett_{L_P}) \widehat{\otimes}_E \cC^{\Q_p-\la}(Z_{L_P}^0,E)\Big)\\
\xlongrightarrow{\sim} \Hom_{L_P(\Q_p)}\Big((\cind_{L_P^0}^{L_P(\Q_p)} \sigma) \otimes_E ((\delta_{\fd}\psi)_{\ul{\varpi}}^{\unr} \circ \dett_{L_P}), J_P(V)_{\lambda} \widehat{\otimes}_E \cC^{\Q_p-\la}(Z_{L_P}^0, E)\Big)\\
\xlongrightarrow{\sim} \Hom_{L_P(\Q_p)}\Big(\cind_{L_P^0}^{L_P(\Q_p)} \sigma, J_P(V)_{\lambda} \widehat{\otimes}_E \cC^{\Q_p-\la}(Z_{L_P}^0, E)\Big)
	\end{multline*}
where the second isomorphism is induced by (\ref{twistun}) (for the unramified character $(\delta_{\fd}\psi)_{\ul{\varpi}}^{\unr}$) and $\tw_2$ means the composition. It is straightforward to check that $\tw_2$ is $\cZ_0$-equivariant and satisfies for $(\alpha, \beta)\in \cZ_{\Omega}\times Z_{\ul{\varpi}}$:
\[\tw_2((\alpha, \beta) \cdot v)=((\delta_{\fd}^{-1} \psi^{-1})^{\unr}_{\ul{\varpi}}\circ \dett_{L_P})(\beta) \big((\iota_{(\delta_{\fd}\psi)^{\unr}_{\ul{\varpi}}}(\alpha), \beta) \cdot \tw_2(v)\big).\]
We put $\tw_{\psi,\fd}:=\tw_2\circ \tw_1$. {\it Forgetting the actions of $\cZ_{\Omega}$ and $\cZ_{\Omega'}$}, we see that $\tw_{\psi,\fd}$ is given by the following composition
\begin{multline*}
		B_{\sigma',\lambda'}(V)\cong \Big((\sigma')^{\vee}\otimes_E J_P(V)_{\lambda} \otimes_E (\delta_{\fd}^{-1} \circ \dett_{L_P}) \widehat{\otimes}_E \cC^{\Q_p-\la}(Z_{L_P}^0, E)\Big)^{L_P^0}\\
		\xlongrightarrow{\sim} \Big(\sigma^{\vee}\otimes_E J_P(V)_{\lambda} \otimes_E ((\delta_{\fd,\ul{\varpi}}^{\unr})^{-1}\circ \dett_{L_P}) \otimes_E ((\delta_{\fd,\ul{\varpi}}^0 \psi_{\ul{\varpi}}^0)^{-1}\circ \dett_{L_P}) \widehat{\otimes}_E \cC^{\Q_p-\la}(Z_{L_P}^0, E)\Big)^{L_P^0}\\
		\xlongrightarrow{\sim} \Big(\sigma^{\vee}\otimes_E J_P(V)_{\lambda} \otimes_E ((\delta_{\fd,\ul{\varpi}}^{\unr})^{-1}\circ \dett_{L_P}) \widehat{\otimes}_E \cC^{\Q_p-\la}(Z_{L_P}^0, E)\Big)^{L_P^0}\\ 
		\cong \Big(\sigma^{\vee}\otimes_E J_P(V)_{\lambda} \widehat{\otimes}_E \cC^{\Q_p-\la}(Z_{L_P}^0, E)\Big)^{L_P^0}\cong B_{\sigma,\lambda}(V)
	\end{multline*}
	where the third isomorphism is induced by (\ref{twO2}). In particular, $\tw_{\psi,\fd}$ is a topological isomorphism.
\end{proof}	

Let $(J_{i,j}', (\sigma_{i,j}^0)')$ be another maximal simple type of $\Omega_{i,j}$ and $K'_{i,j}$ be a maximal compact open subgroup of $\GL_{n_{i,j}}(F_i)$ containing $J_{i,j}'$. Define $(\sigma')^0:=\boxtimes_{i,j} (\sigma_{i,j}^0)'$, $\sigma_{i,j}':=\cind_{J_{i,j}'}^{K'_{i,j}} (\sigma_{i,j}^0)'$, $\sigma':=\boxtimes_{i,j} \sigma_{i,j}'$ and $K':=\prod_{i,j} K'_{i,j}$ (so that $\sigma'$ is an absolutely irreducible representation of $K'$ over $E$). By \cite[Cor.\ 7.6]{BK91}, $(J', (\sigma')^0)$ is conjugate to $(J, \sigma^0)$, i.e.\ there exist $h\in L_P(\Q_p)$ such that $J'=hJh^{-1}$ and a bijection $\iota_h: \sigma^0 \xrightarrow{\sim} (\sigma^0)'$ satisfying $\iota_h(\alpha v)=(h\alpha h^{-1}) \iota_h(v)$ for $\alpha\in J$, $v\in \sigma^0$. The morphism $\iota_h$ induces a bijection:
\begin{equation*}
	\tilde{\iota}_h: \cind_J^{L_P(\Q_p)} \sigma^0 \xrightarrow{\sim} \cind_{J'}^{L_P(\Q_p)} (\sigma')^0, f \mapsto [g \mapsto \iota_h(f(h^{-1}gh))]
\end{equation*}
satisfying $\tilde{\iota}_h(g f)=(hgh^{-1}) (\tilde{\iota}_h(f))$ for $g\in L_P(\Q_p)$. We deduce isomorphisms
\[\cZ_{\Omega}\cong \End_{L_P(\Q_p)}(\cind_J^{L_P(\Q_p)} \sigma^0)\cong \End_{L_P(\Q_p)}(\cind_{J'}^{L_P(\Q_p)} (\sigma')^0).\] We fix such isomorphisms in the rest of this paragraph. We define:
\[B_{\sigma',\lambda}(V):=\big((\sigma')^{\vee} \otimes_E J_P(V)_{\lambda} \widehat{\otimes}_E \cC^{\Q_p-\la}(Z_{L_P}^0, E)\big)^{K'}\]
where $K'$ acts diagonally on $J_P(V)_{\lambda} \widehat{\otimes}_E \cC^{\Q_p-\la}(Z_{L_P}^0, E)$ via the embedding $K'\hookrightarrow L_P(\Q_p)$ and (\ref{dett1}). We endow $B_{\sigma',\lambda}(V)$ with an action of $\cZ_0\cong Z_{L_P}^0$, $Z_{\ul{\varpi}}$ and $\cZ_{\Omega}$ as in \S~\ref{secAbsOL}.

\begin{lemma}\label{diftyp}
There exists a natural isomorphism of vector spaces of compact type which is equivariant under the action of $\cZ_{\Omega}\times Z_{\ul{\varpi}} \times \cZ_0$:
	\begin{equation*}
		B_{\sigma, \lambda}(V) \cong B_{\sigma',\lambda}(V).
	\end{equation*}
\end{lemma}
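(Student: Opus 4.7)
The plan is to promote the conjugation element $h=(h_{i,j})\in L_P(\Q_p)$ furnished by \cite[Cor.\ 7.6]{BK91} to an explicit intertwining of the two spaces. First, I observe that $K'=hL_P^0 h^{-1}$: the subgroup $h_{i,j}\GL_{n_{i,j}}(\co_{F_i})h_{i,j}^{-1}$ is a maximal compact of $\GL_{n_{i,j}}(F_i)$ containing $J'_{i,j}=h_{i,j}J_{i,j}h_{i,j}^{-1}$, and since the maximal compact subgroup of $\GL_{n_{i,j}}(F_i)$ containing a maximal simple type is uniquely determined by the underlying hereditary order (this is the one subtle point of the argument), it must equal $K'_{i,j}$.

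Next, set $M:=J_P(V)_\lambda \widehat{\otimes}_E \cC^{\Q_p-\la}(Z_{L_P}^0,E)$ with the diagonal $L_P(\Q_p)$-action (using (\ref{dett1}) on the second factor), so that $B_{\sigma,\lambda}(V)=\Hom_{L_P^0}(\sigma,M)$ and $B_{\sigma',\lambda}(V)=\Hom_{K'}(\sigma',M)$. Applying the same formula as in the discussion preceding the lemma but with $L_P(\Q_p)$ replaced by $L_P^0$ (respectively $K'$), the bijection $\iota_h:\sigma^0\xrightarrow{\sim}(\sigma^0)'$ gives an isomorphism of vector spaces $\tilde\iota_h:\sigma\xrightarrow{\sim}\sigma'$ satisfying $\tilde\iota_h(\alpha v)=(h\alpha h^{-1})\tilde\iota_h(v)$ for $\alpha\in L_P^0$. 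I then define
\[\Phi:B_{\sigma',\lambda}(V)\longrightarrow B_{\sigma,\lambda}(V),\qquad \phi\longmapsto h^{-1}\cdot(\phi\circ\tilde\iota_h),\]
and check directly that for $\alpha\in L_P^0$, $v\in\sigma$,
\[\Phi(\phi)(\alpha v)=h^{-1}\phi\bigl((h\alpha h^{-1})\tilde\iota_h(v)\bigr)=h^{-1}(h\alpha h^{-1})(\phi\circ\tilde\iota_h)(v)=\alpha\,\Phi(\phi)(v),\]
so $\Phi(\phi)\in B_{\sigma,\lambda}(V)$. Continuity and bijectivity (with inverse $\psi\mapsto h\cdot(\psi\circ\tilde\iota_h^{-1})$) are immediate, giving the topological isomorphism of compact type vector spaces.

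It remains to verify equivariance for $\cZ_\Omega\times Z_{\ul\varpi}\times\cZ_0$. The actions of $Z_{\ul\varpi}\subseteq Z_{L_P}(\Q_p)$ and $\cZ_0$ on $M$ commute with the $L_P(\Q_p)$-translation by $h$, because $Z_{L_P}$ is central in $L_P$ and the map (\ref{dett1}) factors through $Z_{L_P}(\Q_p)$; hence $\Phi$ is tautologically $Z_{\ul\varpi}\times\cZ_0$-equivariant. For $\cZ_\Omega$-equivariance, the same formula that defines $\tilde\iota_h$ on $\sigma\to\sigma'$ extends to an $L_P(\Q_p)$-equivariant isomorphism $\cind_{L_P^0}^{L_P(\Q_p)}\sigma\xrightarrow{\sim}\cind_{K'}^{L_P(\Q_p)}\sigma'$ compatible with the natural isomorphisms of these induced representations with $\cind_J^{L_P(\Q_p)}\sigma^0$ and $\cind_{J'}^{L_P(\Q_p)}(\sigma^0)'$ respectively; through this identification the fixed isomorphism $\End_{L_P(\Q_p)}(\cind_J^{L_P(\Q_p)}\sigma^0)\cong\End_{L_P(\Q_p)}(\cind_{J'}^{L_P(\Q_p)}(\sigma^0)')$ agrees with the one induced by $\tilde\iota_h$, so $\Phi$ intertwines the two $\cZ_\Omega$-actions by construction. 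The main obstacle is the initial verification that $K'=hL_P^0 h^{-1}$; once that is granted, the rest is a straightforward unwinding of the actions.
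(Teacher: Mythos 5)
There is a genuine gap, and it sits exactly where you flagged it. Your map $\Phi$ (and already the very definition of $\tilde\iota_h:\sigma\xrightarrow{\sim}\sigma'$ with the intertwining property $\tilde\iota_h(\alpha v)=(h\alpha h^{-1})\tilde\iota_h(v)$ for $\alpha\in L_P^0$) only makes sense if $hL_P^0h^{-1}\subseteq K'$, i.e.\ $K'=hL_P^0h^{-1}$, and the uniqueness statement you invoke to force this is false in general. For a maximal simple type $(J_{i,j},\sigma^0_{i,j})$ of a cuspidal component, the underlying hereditary order has period equal to the ramification index of the field extension defined by the simple stratum; as soon as that extension is ramified the order is not maximal, so $J_{i,j}$ lies inside a proper parahoric and hence inside several distinct maximal compact subgroups (already for a ramified supercuspidal of $\GL_2(\Q_p)$ the type sits inside an Iwahori subgroup and therefore inside both vertex stabilizers). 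Since the paper takes $K'_{i,j}$ to be an \emph{arbitrary} maximal compact containing $J'_{i,j}$, your argument at best treats the single choice $K'=hL_P^0h^{-1}$, and even that rests on the false uniqueness claim; the independence of the choice of $K'$, which is part of the actual content of the lemma (it is what lets the paper define $B_{\Omega,\lambda}(V)$ unambiguously), is left unproven.

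The paper's proof sidesteps this issue: by Frobenius reciprocity for compact induction one has $B_{\sigma,\lambda}(V)=\Hom_{L_P^0}(\cind_J^{L_P^0}\sigma^0,W)\cong\Hom_J(\sigma^0,W)$ and likewise $B_{\sigma',\lambda}(V)\cong\Hom_{J'}((\sigma^0)',W)$, where $W:=J_P(V)_\lambda\widehat{\otimes}_E\cC^{\Q_p-\la}(Z_{L_P}^0,E)$, so all reference to $K'$ disappears. The isomorphism is then the conjugation operator $f\mapsto[v\mapsto h(f(\iota_h^{-1}(v)))]$ — the same operator you write, but performed at the level of $J$ and $J'$, where only $J'=hJh^{-1}$ is needed — and its compatibility with $\tilde\iota_h:\cind_J^{L_P(\Q_p)}\sigma^0\xrightarrow{\sim}\cind_{J'}^{L_P(\Q_p)}(\sigma^0)'$ yields the $\cZ_{\Omega}\times Z_{\ul{\varpi}}\times\cZ_0$-equivariance much as in your last paragraph. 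To salvage your route you would have to prove separately that $\Hom_{K'}(\sigma',W)$ does not depend on the choice of the maximal compact $K'\supseteq J'$, which again amounts to descending to $J'$ by Frobenius reciprocity, i.e.\ to the paper's argument.
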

\begin{proof}
We have a $\cZ_{\Omega}\times Z_{\ul{\varpi}} \times \cZ_0$-equivariant isomorphism
	\begin{equation*}
		B_{\sigma, \lambda}(V)\cong \Hom_{J}(\sigma^0, J_P(V)_{\lambda} \widehat{\otimes}_E \cC^{\Q_p-\la}(Z_{L_P}^0,E))
	\end{equation*}
and a similar isomorphism for $B_{\sigma', \lambda}(V)$. Let $W:= J_P(V)_{\lambda} \widehat{\otimes}_E \cC^{\Q_p-\la}(Z_{L_P}^0,E)$, one can check that the following diagram commutes
	\begin{equation*}
		\begin{CD}
			\Hom_{J}(\sigma^0, W) @>>> \Hom_{J'}((\sigma')^0, W) \\
			@V \wr VV @V \wr VV \\
			\Hom_{L_P(\Q_p)}\big(\cind_J^{L_P(\Q_p)}\sigma^0, W\big) @>>> \Hom_{L_P(\Q_p)}\big(\cind_{J'}^{L_P(\Q_p)}(\sigma')^0,W\big), 
		\end{CD}
	\end{equation*}
where the top map is given by $f \mapsto [v\mapsto h (f(\iota_h^{-1}(v)))]$, and the bottom map is given by $F \mapsto [w \mapsto h(F(\tilde{\iota}_h^{-1}(w)))]$. One sees moreover that the top map is a topological isomorphism, that the bottom map is bijective and $\cZ_{\Omega}\times Z_{\ul{\varpi}} \times \cZ_0$-equivariant. The lemma follows.
\end{proof}

By Lemma \ref{diftyp}, the $\cZ_{\Omega}\times Z_{\ul{\varpi}} \times \cZ_0$-module $B_{\sigma, \lambda}(V)$ does not depend on the choice $\sigma$. We denote hence $B_{\Omega,\lambda}(V):=B_{\sigma, \lambda}(V)$ in the sequel.\index{$B_{\Omega, \lambda}(V)$} 

\subsubsection{Local Bernstein eigenvarieties}\label{localbernstein}

We keep the setting of \S~\ref{secAbsOL}. We construct certain rigid analytic spaces parametrizing irreducible $\cZ_{\Omega}\times \cZ_0$-submodules of $B_{\Omega, \lambda}(V)$. 

By Lemma \ref{Besadm} and \cite[\S~6.4]{Em04}, there exists a coherent sheaf $\cM_{\Omega, \lambda}(V)$ over the rigid analytic space $\widehat{Z_{\ul{\varpi}}} \times \widehat{\cZ_0}$ such that $B_{\Omega, \lambda}(V)$ is isomorphic to the global sections of $\cM_{\Omega, \lambda}(V)$. Moreover, $\cM_{\Omega, \lambda}(V)$ is equipped with an $\co_{\widehat{Z_{\ul{\varpi}}} \times \widehat{\cZ_0}}$-linear action of $\cZ_{\Omega}$. Using the fact that the action of $Z_{\ul{\varpi}}$ factors through $\cZ_{\Omega}$ (see before Lemma \ref{BCmod}), we conclude that $\cM_{\Omega, \lambda}(V)$ gives rise to a coherent sheaf, still denoted by $\cM_{\Omega, \lambda}(V)$, over $(\Spec \cZ_{\Omega})^{\rig} \times \widehat{\cZ_0}$ such that 
\begin{equation}
	\Gamma\big((\Spec \cZ_{\Omega})^{\rig} \times \widehat{\cZ_0}, \cM_{\Omega, \lambda}(V)\big)\cong B_{\Omega, \lambda}(V)^{\vee}.
\end{equation}
We let $\Supp \cM_{\Omega, \lambda}(V)$ be the Zariski-closed support of $\cM_{\Omega, \lambda}(V)$ (defined by the annihilator of $\cM_{\Omega, \lambda}(V)$).

\begin{proposition}\label{pts1}
Let $x=(\pi_{L_P}, \chi):=((\pi_{i,j}), (\chi_{i,j}))\in (\Spec \cZ_{\Omega})^{\rig} \times \widehat{\cZ_0}$. Then we have a bijection of $k(x)$-vector spaces ($\chi_{\ul{\varpi}}$ as in Notation \ref{charanot}):
	\begin{equation*}
		\big(x^* \cM_{\Omega, \lambda}(V)\big)^{\vee} \cong \Hom_{L_P(\Q_p)}\big(\pi_{L_P}\otimes_{k(x)} (\chi_{\ul{\varpi}} \circ \dett_{L_P}) \otimes_E L(\lambda)_P, J_P(V)\big).
	\end{equation*}
	In particular, $(\pi_{L_P}, \chi)\in \Supp \cM_{\Omega, \lambda}(V)$ if and only if there is an $L_P(\Q_p)$-equivariant embedding
	\begin{equation*}
		\pi_{L_P}\otimes_{k(x)} (\chi_{\ul{\varpi}} \circ \dett_{L_P}) \otimes_E L(\lambda)_P \hooklongrightarrow J_P(V).
	\end{equation*}
\end{proposition}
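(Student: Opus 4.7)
The plan is to unwind the Emerton formalism relating essentially admissible locally analytic representations to coherent sheaves on character varieties, then apply Lemma \ref{BCmod} combined with Bushnell-Kutzko's Morita equivalence, and finally track how extracting the $\cZ_0$-eigenspace at $\chi$ translates into a character twist on the source of the Hom.

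First I would interpret $(x^*\cM_{\Omega,\lambda}(V))^{\vee}$ as a simultaneous eigenspace inside $B_{\Omega,\lambda}(V)$. By Lemma \ref{Besadm}, \cite[\S 6.4]{Em04}, and the fact (established in \S\ref{secAbsOL}) that the $Z_{\ul\varpi}$-action on $B_{\Omega,\lambda}(V)$ factors through $\cZ_\Omega$, the coadmissible-module/essentially-admissible-representation duality yields a canonical $k(x)$-linear isomorphism
\[(x^*\cM_{\Omega,\lambda}(V))^{\vee}\cong B_{\Omega,\lambda}(V)[\fm_{\pi_{L_P}}][\Upsilon_0=\chi],\]
where $\fm_{\pi_{L_P}}\subset\cZ_\Omega\otimes_E k(x)$ is the maximal ideal attached to $\pi_{L_P}$ and $[\Upsilon_0=\chi]$ denotes the $\chi$-eigenspace for the locally analytic $\cZ_0$-action on $B_{\Omega,\lambda}(V)$.

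Next I would apply Lemma \ref{BCmod} to $M:=\cZ_\Omega/\fm_{\pi_{L_P}}=k(x)$ (a simple $\cZ_\Omega$-module of finite length over $E$) to rewrite the $\fm_{\pi_{L_P}}$-annihilated part as
\[\Hom_{L_P(\Q_p)}\bigl((\cind_{L_P^0}^{L_P(\Q_p)}\sigma)\otimes_{\cZ_\Omega}k(x),\,J_P(V)_\lambda\,\widehat{\otimes}_E\,\cC^{\Q_p-\la}(Z_{L_P}^0,E)\bigr).\]
By Bushnell-Kutzko's theory of types for the cuspidal Bernstein block $\Omega$, $\End_{L_P(\Q_p)}(\cind_{L_P^0}^{L_P(\Q_p)}\sigma)\cong\cZ_\Omega$ is commutative and the functor $\pi\mapsto\Hom_{L_P^0}(\sigma,\pi)$ realizes a Morita equivalence sending $\pi_{L_P}$ to $k(x)$; hence the left-hand tensor product is canonically identified with $\pi_{L_P}$ as an $L_P(\Q_p)$-representation.

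Finally, extracting the $\chi$-eigenspace for $\Upsilon_0$ amounts to replacing $\cC^{\Q_p-\la}(Z_{L_P}^0,E)$ by its one-dimensional $\chi$-eigenspace $k(x)\cdot\chi$, on which $L_P(\Q_p)$ acts via (\ref{dett1}) by the character $\chi_{\ul\varpi}^{-1}\circ\dett_{L_P}$ (using that $\chi\circ\kappa_{\varpi_i}=\chi_{\ul\varpi}$ on $Z_{L_P}(\Q_p)$). Moving this character to the source via $\Hom(A,B\otimes\eta)=\Hom(A\otimes\eta^{-1},B)$, and invoking the adjunction $\Hom_{L_P(\Q_p)}(W,J_P(V)_\lambda)\cong\Hom_{L_P(\Q_p)}(W\otimes_E L(\lambda)_P,J_P(V))$ built into the definition (\ref{jplambda}) of $J_P(V)_\lambda$ (using that any $L_P(\Q_p)$-equivariant map is automatically $\fl_P^D$-equivariant by differentiation), yields the asserted isomorphism. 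The second assertion follows immediately, since $\pi_{L_P}\otimes(\chi_{\ul\varpi}\circ\dett_{L_P})\otimes L(\lambda)_P$ is irreducible as an $L_P(\Q_p)$-representation, so any non-zero element of the Hom-space is automatically an embedding. The main obstacle I foresee is the careful bookkeeping in Step 1 of the three intertwined actions $\Upsilon_0$, $\Upsilon_1$, $\Delta_0$ on $B_{\Omega,\lambda}(V)$ and verifying that the descent of $\cM_{\Omega,\lambda}(V)$ from $\widehat{Z_{\ul\varpi}}\times\widehat{\cZ_0}$ to $(\Spec \cZ_\Omega)^{\rig}\times\widehat{\cZ_0}$ genuinely interpolates these actions at non-smooth points of the spectral variety.
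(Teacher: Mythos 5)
Your proposal is correct and follows essentially the same route as the paper's proof: identify $(x^*\cM_{\Omega,\lambda}(V))^{\vee}$ with the bi-eigenspace of $B_{\Omega,\lambda}(V)$, apply Lemma \ref{BCmod} together with the Bushnell--Kutzko identification $(\cind_{L_P^0}^{L_P(\Q_p)}\sigma)\otimes_{\cZ_\Omega}k(x)\cong\pi_{L_P}$, use (\ref{dett1}) to convert the $\chi$-eigenline of $\cC^{\Q_p-\la}(Z_{L_P}^0,E)$ into the twist by $\chi_{\ul\varpi}^{-1}\circ\dett_{L_P}$, and conclude with the smoothness of the source over $L_P^D(\Q_p)$ and the $L(\lambda)_P$-adjunction built into (\ref{jplambda}). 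The only differences (order of extracting the $\cZ_0$-eigenspace, merging the last two isomorphisms, and spelling out irreducibility for the final embedding claim) are cosmetic.
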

\begin{proof}
By definition, we have a bijection 
	\begin{equation*}
		\big(x^* \cM_{\Omega, \lambda}(V)\big)^{\vee} \cong B_{\Omega, \lambda}(V)[\cZ_{\Omega}=\pi_{L_P}, \cZ_0=\chi].
	\end{equation*}
By unwinding $B_{\Omega, \lambda}(V)$ ($\cong B_{\sigma, \lambda}(V)$), we see that the right hand side is isomorphic to (denoting by $\fm_{\pi_{L_P}}$ the maximal ideal of $\cZ_{\Omega}$ corresponding to $\pi_{L_P}$)
\begin{multline*}
\Hom_{\cZ_{\Omega}}\Big( \cZ_{\Omega}/\fm_{\pi_{L_P}}, \Hom_{L_P(\Q_p)}\big(\cind_{L_P^0}^{L_P(\Q_p)} \sigma, J_P(V)_{\lambda} \otimes_{k(x)} (\chi_{\ul{\varpi}}^{-1}\circ \dett_{L_P})\big)\Big) \\ 
\ \ \ \ \ \ \ \ \ \ \ \ \ \ \ \ \ \ \ \ \ \ \ \ \ \ \ \ \ \ \ \ \ \ \ \ \ \ \ \ \xlongrightarrow{\sim} \Hom_{L_P(\Q_p)}\big(\pi_{L_P}, J_P(V)_{\lambda} \otimes_{k(x)} (\chi_{\ul{\varpi}}^{-1} \circ \dett_{L_P})\big)\\
\ \ \ \ \ \ \ \ \ \ \ \ \ \ \ \ \ \ \ \ \ \ \ \ \ \ \ \ \ \ \ \ \ \ \ \ \ \ \cong \Hom_{L_P(\Q_p)}\big(\pi_{L_P}\otimes_{k(x)} (\chi_{\ul{\varpi}}\circ \dett_{L_P}), J_P(V)_{\lambda}\big)\\ 
\ \ \ \ \ \ \ \ \ \ \ \ \ \ \ \ \ \ \ \ \ \ \ \ \ \ \ \ \ \ \ \ \ \ \ \ \ \ \ \ \ \ \ \ \ \ \ \ \ \ \cong \Hom_{L_P(\Q_p)}\big(\pi_{L_P}\otimes_{k(x)} (\chi_{\ul{\varpi}} \circ \dett_{L_P}), J_P(V)\otimes_{E} L(\lambda)_P^{\vee}\big)\\
\ \ \ \ \ \ \ \ \ \ \ \ \ \ \ \ \ \ \ \ \ \ \ \ \ \ \ \ \ \ \ \ \ \ \ \ \ \ \ \ \ \ \ \ \ \ \ \ \ \ \ \ \ \ \ \ \ \ \ \ \ \cong \Hom_{L_P(\Q_p)}\big(\pi_{L_P}\otimes_{k(x)} (\chi_{\ul{\varpi}} \circ \dett_{L_P}) \otimes_E L(\lambda)_P, J_P(V)\big)
\end{multline*}
where the first isomorphism follows from Lemma \ref{BCmod}, the second is obvious, the third follows from (\ref{jplambda}) and the fact that $\pi_{L_P}\otimes_E (\chi_{\ul{\varpi}}\circ \dett_{L_P})$ is smooth for the $L_P^D(\Q_p)$-action, and the fourth is easily induced by the natural map $L(\lambda)_P \otimes_E L(\lambda)_P^{\vee} \ra E$. The proposition follows.
\end{proof}

The following proposition easily follows from Lemma \ref{twBE}:

\begin{proposition}\label{twBE2}
With the setting of Lemma \ref{twBE}, we have an isomorphism of rigid analytic spaces 
\begin{equation*}
\Supp \cM_{\Omega, \lambda}(V) \xrightarrow{\sim} \Supp \cM_{\Omega', \lambda'}(V), \ \ (\pi_{L_P},\chi) \mapsto \big(\pi_{L_P} \otimes_E \big((\psi^{0}_{\ul{\varpi}}(\delta_{\fd, \ul{\varpi}}^{\unr})^{-1})\circ \dett_{L_P}\big), \chi (\psi^0)^{-1}(\delta_{\fd}^0)^{-1}\big).
\end{equation*}
\end{proposition}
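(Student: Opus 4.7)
The proof will essentially unpack Lemma \ref{twBE}. The plan is to interpret the topological isomorphism $\tw_{\psi,\fd}\colon B_{\sigma',\lambda'}(V)\xrightarrow{\sim} B_{\sigma,\lambda}(V)$ provided by that lemma, together with its twisted equivariance, as a pullback isomorphism $\cM_{\Omega,\lambda}(V) \cong \Phi^{*}\cM_{\Omega',\lambda'}(V)$ of coherent sheaves along an explicit rigid-analytic isomorphism
\[
\Phi\colon (\Spec \cZ_{\Omega})^{\rig}\times \widehat{\cZ_0}\ \xrightarrow{\sim}\ (\Spec \cZ_{\Omega'})^{\rig}\times \widehat{\cZ_0}.
\]
Restriction to supports will then give the sought isomorphism of rigid spaces, and it will only remain to identify $\Phi$ on points with the claimed formula.

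First I will construct $\Phi$: on the first factor, use the ring isomorphism $\cZ_{\Omega}\xrightarrow{\sim}\cZ_{\Omega'}$ arising from the $\iota_{\bullet}$-maps (which are ring isomorphisms, hence invertible), and on the second factor use translation by $(\psi^0)^{-1}(\delta_{\fd}^0)^{-1}$ on $\widehat{\cZ_0}$. Passing to strong duals in Lemma \ref{twBE}, and recalling that the $Z_{\ul{\varpi}}$-action factors through the $\cZ_{\Omega}$-action (the discussion before Lemma \ref{BCmod}), the equivariance formula will say that $(\tw_{\psi,\fd})^\vee$ intertwines the $\cZ_{\Omega'}\times \cZ_0$-module structure on $B_{\sigma',\lambda'}(V)^\vee$ with the $\cZ_{\Omega}\times \cZ_0$-module structure on $B_{\sigma,\lambda}(V)^\vee$ through $\Phi$ (the extra scalar factors $(\delta_{\fd}\circ \dett_{L_P})(\beta)$ and $(\delta_{\fd}\psi)^{-1}(\gamma)$ appearing in the formula get absorbed into this reparametrization). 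This yields the sheaf isomorphism, and restricting to supports produces the desired isomorphism of rigid analytic spaces.

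To verify the explicit formula, I will track eigencharacters: for an eigenvector $v$ of $\cZ_{\Omega'}\times \cZ_0$ with eigencharacter $(\pi'_{L_P},\chi')$, Lemma \ref{twBE} shows that $\tw_{\psi,\fd}(v)$ is an eigenvector of $\cZ_{\Omega}\times \cZ_0$ with eigencharacter $(\pi_{L_P},\chi)$ linked to $(\pi'_{L_P},\chi')$ by $\pi'_{L_P} = \pi_{L_P}\otimes((\psi^{0}_{\ul{\varpi}}(\delta^{\unr}_{\fd,\ul{\varpi}})^{-1})\circ \dett_{L_P})$ and $\chi' = \chi(\psi^0)^{-1}(\delta_{\fd}^0)^{-1}$, upon using the identification $(\iota_\eta)^{*}(\pi) = \pi\otimes(\eta^{-1}\circ \dett_{L_P})$ of the $\iota$-maps on the Bernstein spectra. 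As a sanity check, one can also derive the same formula directly from Proposition \ref{pts1} and the identity $L(\lambda')_P \cong L(\lambda)_P\otimes(\delta_{\fd}\circ \dett_{L_P})$: the same underlying $L_P(\Q_p)$-embedding into $J_P(V)$ witnesses that $(\pi_{L_P},\chi)$ lies in $\Supp \cM_{\Omega,\lambda}(V)$ and that $(\pi'_{L_P},\chi')$ lies in $\Supp \cM_{\Omega',\lambda'}(V)$. The main obstacle is essentially bookkeeping of the twist characters, as the substantive content is already contained in Lemma \ref{twBE}.
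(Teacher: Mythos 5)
Your proposal is correct and is essentially the paper's argument: the paper simply observes that Proposition \ref{twBE2} follows easily from Lemma \ref{twBE}, and what you write out — dualizing $\tw_{\psi,\fd}$, viewing it as an isomorphism $\cM_{\Omega,\lambda}(V)\cong\Phi^{*}\cM_{\Omega',\lambda'}(V)$ along the ambient isomorphism built from the $\iota$-maps and the translation by $(\psi^0)^{-1}(\delta_{\fd}^0)^{-1}$, then restricting to supports and checking the formula on eigencharacters — is exactly the intended unpacking. Your remark that the point-level formula can also be read off from Proposition \ref{pts1} matches the paper's own aside (see the proof of Corollary \ref{cor:muom0}).
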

\begin{remark}\label{remP=B}
	Assume $P=B_{\sI}$, we have $\cZ_{\Omega}\cong Z_{\ul{\varpi}}$, $L_P(\Q_p)\cong T_{\sI}(\Q_p)$ and $L(\lambda)_P=\delta_{\lambda}$. Using the isomorphism 
	\begin{equation*}
	\iota_{\Omega, \lambda}: (\Spec \cZ_{\Omega})^{\rig} \times \widehat{\cZ_0} \xlongrightarrow{\sim}\widehat{Z_{L_P}(\Q_p)}\cong \widehat{T_{\sI}(\Q_p)},\ \ (\pi_{L_P}, \chi) \mapsto \pi_{L_P}\chi_{\ul{\varpi}}\delta_{\lambda}
	\end{equation*}
	we can also view $\cM_{\Omega, \lambda}(V)$ as a coherent sheaf over $\widehat{T_{\sI}(\Q_p)}$ and $\Supp \cM_{\Omega, \lambda}(V)$ as a closed rigid subspace of $\widehat{T_{\sI}(\Q_p)}$. By Proposition \ref{twBE2}, the resulting rigid subspaces $\Supp \cM_{\Omega, \lambda}(V)$ of $\widehat{T_{\sI}(\Q_p)}$ for different $(\Omega,\lambda)$ are all the same. Moreover, in this case, one can directly associate to the (essentially admissible) locally analytic representation $J_P(V)$ of $T_{\sI}(\Q_p)$ a coherent sheaf $\cM(V)$ over $\widehat{T_{\sI}(\Q_p)}$ without using $L(\lambda)_P$ and without tensoring by the factor $\cC^{\Q_p-\la}(Z_{L_P}^0, E)$ such that $\Gamma(\widehat{T_{\sI}(\Q_p)}, \cM(V))\cong J_P(V)^{\vee}$. For each point $x=\delta$ of $\widehat{T_{\sI}(\Q_p)}$, the fibre $(x^* \cM(V))^{\vee}$ is naturally isomorphic to $J_P(V)[Z_{L_P}(\Q_p)=\delta]$. By Proposition \ref{pts1} (and using $\iota_{\Omega, \lambda}$), we see that $\cM(V)$ and $\cM_{\Omega, \lambda}(V)$ have isomorphic fibres (as $E$-vector spaces) at each point. 
\end{remark}

Set (see (\ref{muOmega}) for $\mu_{\Omega_{i,j}}$):
\[\mu_{\Omega}:=\big\{(\psi_{i,j})_{\!\!\substack{i\in \sI\\ j=1, \dots, r_i}}: Z_{L_P}(\Q_p) \ra E^{\times}\ |\ \psi_{i,j}\in \mu_{\Omega_{i,j}} \big\}\index{$\mu_{\Omega}$},\]
we define an action of $\mu_{\Omega}$ on $(\Spec \cZ_{\Omega})^{\rig} \times \widehat{\cZ_0}$ such that $\psi=(\psi_{i,j})\in \mu_{\Omega}$ sends $((\pi_{i,j}), (\chi_{i,j}))$ to
\[\Big(\big(\pi_{i,j} \otimes_E \unr(\psi_{i,j}(\varpi_i))\big), \big(\chi_{i,j} \psi_{i,j}|_{\co_{F_i}^{\times}}\big)\Big).\]
By Proposition \ref{twBE2} (or by Proposition \ref{pts1}), we have

\begin{corollary}\label{cor:muom0}
For $x \in(\Spec \cZ_{\Omega})^{\rig} \times \widehat{\cZ_0}$, we have $x\in \Supp \cM_{\Omega, \lambda}(V)$ if and only if $\psi(x)\in \Supp \cM_{\Omega, \lambda}(V)$ for one (or any) $\psi\in \mu_{\Omega}$.
\end{corollary}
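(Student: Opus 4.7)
The plan is to apply Proposition \ref{pts1}, which characterises $x \in \Supp \cM_{\Omega,\lambda}(V)$ by the existence of an $L_P(\Q_p)$-equivariant embedding into $J_P(V)$, and then verify directly that this condition is preserved by the $\mu_\Omega$-action, using that for $\psi \in \mu_\Omega$ we have $\pi_{L_P} \otimes (\psi \circ \dett_{L_P}) \cong \pi_{L_P}$ by the very definition of $\mu_\Omega$.

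Concretely, write $x = (\pi_{L_P}, \chi) = ((\pi_{i,j}),(\chi_{i,j}))$ and let $\psi = (\psi_{i,j}) \in \mu_\Omega$, viewed as a character of $Z_{L_P}(\Q_p) = \prod_{i,j} F_i^\times$. Using Notation \ref{charanot}, decompose $\psi = \psi_{\ul\varpi}^{\unr}\cdot\psi_{\ul\varpi}^0$, where $\psi^0 := \psi|_{Z_{L_P}^0}$ coincides with $\prod_{i,j}\psi_{i,j}|_{\co_{F_i}^\times}$ and $\psi_{\ul\varpi}^{\unr} = \prod_{i,j}\unr(\psi_{i,j}(\varpi_i))$ is trivial on $Z_{L_P}^0$. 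Unwinding the definition of the action, $\psi(x) = \bigl(\pi_{L_P}\otimes(\psi_{\ul\varpi}^{\unr}\circ\dett_{L_P}),\,\chi\cdot\psi^0\bigr)$. Applying Proposition \ref{pts1} to $\psi(x)$, we see that $\psi(x) \in \Supp \cM_{\Omega,\lambda}(V)$ is equivalent to the existence of an embedding
\[\bigl(\pi_{L_P}\otimes(\psi_{\ul\varpi}^{\unr}\circ\dett_{L_P})\bigr)\otimes\bigl((\chi\cdot\psi^0)_{\ul\varpi}\circ\dett_{L_P}\bigr)\otimes L(\lambda)_P \hooklongrightarrow J_P(V).\]
Since both $\chi_{\ul\varpi}$ and $\psi_{\ul\varpi}^0$ are trivial on $Z_{\ul\varpi}$ and agree with $\chi$, resp.\ $\psi^0$, on $Z_{L_P}^0$, we have $(\chi\cdot\psi^0)_{\ul\varpi} = \chi_{\ul\varpi}\cdot\psi_{\ul\varpi}^0$, and visibly $\psi_{\ul\varpi}^{\unr}\cdot\psi_{\ul\varpi}^0 = \psi$. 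The tensor product on the left therefore rewrites as $\bigl(\pi_{L_P}\otimes(\psi\circ\dett_{L_P})\bigr)\otimes(\chi_{\ul\varpi}\circ\dett_{L_P})\otimes L(\lambda)_P$, and since $\pi_{i,j}\otimes\psi_{i,j}\circ\dett\cong\pi_{i,j}$ for each $(i,j)$ by $\psi\in\mu_\Omega$, we get $\pi_{L_P}\otimes(\psi\circ\dett_{L_P})\cong\pi_{L_P}$, which reduces the embedding condition for $\psi(x)$ to the one for $x$.

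The argument has essentially no obstacle: the only thing to keep straight is the interaction between the unramified and central-character parts through the decomposition $Z_{L_P}(\Q_p) = Z_{L_P}^0\times Z_{\ul\varpi}$. Alternatively, one could deduce the same conclusion from Proposition \ref{twBE2} applied with the smooth character $\psi^0$ and $\lambda' = \lambda$ (so $\fd = 0$), upon observing that the Bernstein component $\Omega'$ attached to $\sigma\otimes(\psi^0\circ\dett_{L_P})$ equals $\Omega$ precisely because $\psi \in \mu_\Omega$, so that the resulting isomorphism $\Supp \cM_{\Omega,\lambda}(V)\xrightarrow{\sim}\Supp \cM_{\Omega,\lambda}(V)$ in \emph{loc.\ cit.}\ realises the $\psi$-action of the corollary.
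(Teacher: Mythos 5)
Your proof is correct and follows essentially the same route as the paper, which deduces the corollary directly from Proposition \ref{pts1} (or alternatively from Proposition \ref{twBE2} with the smooth character $\psi^0$ and $\fd=0$), exactly the two options you spell out. The bookkeeping with $(\chi\cdot\psi^0)_{\ul\varpi}=\chi_{\ul\varpi}\psi^0_{\ul\varpi}$ and $\pi_{L_P}\otimes(\psi\circ\dett_{L_P})\cong\pi_{L_P}$ is the intended content of the paper's one-line citation.
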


Finally we show that, under certain assumptions, $\Supp \cM_{\Omega, \lambda}(V)$ is closely related to Fredholm hypersurfaces. We first unwind a bit the definition of the Jacquet-Emerton modules.

Let $H$ be a compact open subgroup of $G_p=G(\Q_p)$, and $N_H:=H\cap N_P(\Q_p)$. Let $L_P(\Q_p)^+=\{z \in L_P(\Q_p)\ |\ z N_H z^{-1} \subseteq N_H\}$. Recall that $V^{N_H}$ is equipped with a natural Hecke action of $ L_P(\Q_p)^+$ given by 
\begin{equation}\label{Upheck}
	\pi_z(v)=\frac{1}{|N_H/(zN_Hz^{-1})|}\sum_{g\in N_H/(zN_Hz^{-1})} gz(v). 
\end{equation}
Then $J_P(V)$ is defined to be $(V^{N_H})_{\fss}$, where $(-)_{\fss}$ denotes the finite slope part functor for the action of $Z_{L_P}(\Q_p)^+:= L_P(\Q_p)^+ \cap Z_{L_P}(\Q_p)$ (cf.\ \cite[\S~3.2]{Em11}).

\begin{lemma}\label{lemSLnalg}
We have $J_P(V)_{\lambda} \cong \big((V^{N_H} \otimes_E L(\lambda)_P^{\vee})^{\fl_P^D}\big)_{\fss}\cong \big((V^{N_H} \otimes_E L(\lambda)_P^{\vee})_{\fss}\big)^{\fl_P^D}$.
\end{lemma}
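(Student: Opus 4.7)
By definition $J_P(V) = (V^{N_H})_{\fss}$, and using (\ref{jplambda}) the quantity to be analysed is
\[J_P(V)_{\lambda} \cong \big((V^{N_H})_{\fss} \otimes_E L(\lambda)_P^{\vee}\big)^{\fl_P^D}.\]
The strategy is to pass $(-)_{\fss}$ through the tensor with $L(\lambda)_P^\vee$ and then through $(-)^{\fl_P^D}$, giving two commutation isomorphisms whose composition is the content of the lemma.

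\textbf{Step 1 (tensor with a finite-dimensional factor commutes with $(-)_{\fss}$).} I would first show the natural map
\[(V^{N_H})_{\fss} \otimes_E L(\lambda)_P^{\vee} \lra \big(V^{N_H} \otimes_E L(\lambda)_P^{\vee}\big)_{\fss}\]
is an isomorphism, where on the right one uses the diagonal $Z_{L_P}(\Q_p)^+$-action: $\pi_z$ (as in (\ref{Upheck})) on $V^{N_H}$ combined with the algebraic $z$-action on $L(\lambda)_P^{\vee}$. The point is that $L(\lambda)_P^{\vee}$ is a finite-dimensional locally algebraic $L_P$-representation, so the monoid $Z_{L_P}(\Q_p)^+$ acts on it through invertible operators that extend to all of $Z_{L_P}(\Q_p)$; every vector of $L(\lambda)_P^{\vee}$ is thus trivially of finite slope, and the finite slope functor from \cite[\S~3.2]{Em11} is compatible with tensoring by such modules. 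Taking $\fl_P^D$-invariants of both sides then identifies the middle term of the lemma with $J_P(V)_{\lambda}$.

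\textbf{Step 2 ($\fl_P^D$-invariants commute with $(-)_{\fss}$).} Next I would verify that for any $Z_{L_P}(\Q_p)^+$-representation $W$ carrying a commuting $\fl_P^D$-action, one has $(W_{\fss})^{\fl_P^D} = (W^{\fl_P^D})_{\fss}$. Commutativity of the two actions is the key input, and it holds because $Z_{L_P}$ is the centre of $L_P$ and thus commutes with $L_P^D$; moreover the averaging operator defining $\pi_z$ in (\ref{Upheck}) is $L_P(\Q_p)$-equivariant on $N_H$-invariants, so it commutes with the differentiated $\fl_P^D$-action. Once the actions commute, $W^{\fl_P^D}$ is a $Z_{L_P}(\Q_p)^+$-stable subspace and its finite slope part is exactly the $\fl_P^D$-invariants of $W_{\fss}$. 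Applying this to $W = V^{N_H} \otimes_E L(\lambda)_P^{\vee}$ gives the second isomorphism.

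\textbf{Expected main difficulty.} The genuinely non-formal input is topological rather than algebraic: one must ensure that the spaces in question fit into Emerton's compact-type framework so that $(-)_{\fss}$ and $\otimes_E L(\lambda)_P^{\vee}$ really are compatible at the level of topological representations and not only of abstract modules. This reduces to checking that $V^{N_H}$ is of compact type (which follows from $V$ being admissible locally analytic), that $V^{N_H} \otimes_E L(\lambda)_P^{\vee}$ inherits a compact-type structure (finite-dimensional tensor factor), and that the finite slope part of a tensor with a finite-dimensional locally algebraic factor is computed as expected. These are all standard consequences of the foundational material in \cite{Em11}, so I expect no essentially new difficulty.
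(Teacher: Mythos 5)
Your proposal is correct and follows essentially the same route as the paper: the paper first rewrites $J_P(V)_\lambda$ via (\ref{jplambda}), identifies $(V^{N_H})_{\fss}\otimes_E L(\lambda)_P^{\vee}\cong (V^{N_H}\otimes_E L(\lambda)_P^{\vee})_{\fss}$ by citing \cite[Prop.\ 3.2.9]{Em11} (your Step 1), and then passes $\fl_P^D$-invariants through $(-)_{\fss}$ using the commutation of the two actions and \cite[Prop.\ 3.2.11]{Em11} (your Step 2). The only difference is that the paper delegates both commutation statements directly to Emerton's propositions rather than re-verifying them.
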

\begin{proof}
Recall $J_P(V)_{\lambda}\cong \big(J_P(V)\otimes_E L(\lambda)_P^{\vee}\big)^{\fl_P^D}$ by (\ref{jplambda}). By \cite[Prop.\ 3.2.9]{Em11}, we have $J_P(V)\otimes_E L(\lambda)_P^{\vee} \cong \big(V^{N_H} \otimes_E L(\lambda)_P^{\vee}\big)_{\fss}$, where the Hecke action of $z \in Z_{L_P}(\Q_p)^+$ on $V^{N_H} \otimes_E L(\lambda)_P^{\vee}$ is given by $\pi_z\otimes z$. It is clear that the action of $\fl_P^D$ commutes with $Z_{L_P}(\Q_p)^+$. By \cite[Prop.\ 3.2.11]{Em11}, the lemma follows.
\end{proof}

We fix $\sigma$ as in (\ref{BslV}) and recall that we have an isomorphism $B_{\Omega, \lambda}(V)\cong B_{\sigma, \lambda}(V)$.

\begin{lemma}
We have an isomorphism of locally analytic representations of $Z_{\ul{\varpi}} \times \cZ_0$:
	\begin{equation*}
		B_{\Omega, \lambda}(V) \cong \Big(\big((V^{N_H} \otimes_E L(\lambda)_P^{\vee})\widehat{\otimes}_E \cC^{\Q_p-\la}(Z_{L_P}^0, E) \otimes_E\sigma^{\vee}\big)^{L_P^0}\Big)_{\fss},
	\end{equation*}
where $Z_{\ul{\varpi}}$ (resp.\ $\cZ_0$) acts on the RHS by its action on $V^{N_H} \otimes_E L(\lambda)_P^\vee$ (resp.\ on $\cC^{\Q_p-\la}(Z_{L_P}^0, E)$).
\end{lemma}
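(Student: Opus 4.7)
The plan is to unwind the definition of $B_{\sigma, \lambda}(V) \cong B_{\Omega, \lambda}(V)$ from (\ref{BslV}), substitute Lemma \ref{lemSLnalg}, and then commute the finite slope functor $(-)_{\fss}$ to the outside. Starting from
$$B_{\sigma,\lambda}(V) = \bigl(\sigma^\vee \otimes_E J_P(V)_\lambda \widehat\otimes_E \cC^{\Q_p-\la}(Z_{L_P}^0, E)\bigr)^{L_P^0}$$
and substituting $J_P(V)_\lambda \cong \bigl((V^{N_H} \otimes_E L(\lambda)_P^\vee)_{\fss}\bigr)^{\fl_P^D}$, we obtain an expression where the finite slope part and the $\fl_P^D$-invariants appear in the middle, which must be pushed outside and absorbed respectively.

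The first key observation is that $\fl_P^D$ acts trivially on $\sigma^\vee$ (which is smooth) and on $\cC^{\Q_p-\la}(Z_{L_P}^0, E)$ (since the $L_P(\Q_p)$-action factors through $Z_{L_P}(\Q_p)$ by (\ref{dett1}), so that the differential kills the derived subalgebra). Hence $(-)^{\fl_P^D}$ commutes with tensoring by these two factors. Moreover, since the $L_P^0$-action on each factor is locally $\Q_p$-analytic and $\fl_P^D \subset \fl_P = \mathrm{Lie}(L_P^0)$, any $L_P^0$-invariant vector is automatically $\fl_P^D$-invariant, so the explicit $(-)^{\fl_P^D}$ becomes redundant inside $(-)^{L_P^0}$. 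This yields
$$B_{\sigma,\lambda}(V) \cong \bigl(\sigma^\vee \otimes_E (V^{N_H} \otimes L(\lambda)_P^\vee)_{\fss} \widehat\otimes_E \cC^{\Q_p-\la}(Z_{L_P}^0, E)\bigr)^{L_P^0}.$$

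It remains to commute $(-)_{\fss}$ past the other tensor factors and past $(-)^{L_P^0}$. The Hecke action of $Z_{L_P}(\Q_p)^+$ via $\pi_z$ on $V^{N_H}$ commutes with $L_P^0$ (as $L_P^0$ normalises $N_H$ and $Z_{L_P}$ is central in $L_P$), so $(-)_{\fss}$ commutes with the $L_P^0$-invariants. Tensoring by the finite-dimensional smooth $\sigma^\vee$ commutes with $(-)_{\fss}$ since $Z_{L_P}(\Q_p)^+ \cap Z_{L_P}^0$ acts on $\sigma^\vee$ by a scalar (the inverse central character) and the full $Z_{L_P}(\Q_p)$-action on $\sigma^\vee$ is already defined. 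The delicate point is the completed tensor with $\cC^{\Q_p-\la}(Z_{L_P}^0, E)$: here the diagonal action of $L_P(\Q_p)$ already factors through a full $Z_{L_P}(\Q_p)$-action via (\ref{dett1}) (extending from $Z_{L_P}(\Q_p)^+$), so this factor is itself "of finite slope" and the completed tensor commutes with $(-)_{\fss}$ by the foundational compatibilities in \cite[\S~3.2]{Em11} (i.e. $(-)_{\fss}$ is compatible with the appropriate projective/completed tensor product constructions for essentially admissible representations).

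Assembling steps, one gets the claimed isomorphism. The main obstacle is justifying the last commutation of $(-)_{\fss}$ through the completed tensor with the infinite-dimensional $\cC^{\Q_p-\la}(Z_{L_P}^0, E)$-factor; this needs to be handled carefully using the precise definition of the finite slope functor as a direct limit over Hecke-stable BH-subspaces (cf.\ \cite[Prop.\ 3.2.9, Prop.\ 3.2.11]{Em11}) together with the fact that the action of $Z_{L_P}(\Q_p)^+$ on $\cC^{\Q_p-\la}(Z_{L_P}^0, E)$ via (\ref{dett1}) already extends to an action of the full group $Z_{L_P}(\Q_p)$, so that on the $\cC^{\Q_p-\la}(Z_{L_P}^0, E)$ side no finite-slope truncation is actually needed.
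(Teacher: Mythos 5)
Your proof is correct and follows essentially the same route as the paper: substitute Lemma \ref{lemSLnalg}, then commute $(-)_{\fss}$ past the tensor factors and the $L_P^0$-invariants via \cite[Prop.\ 3.2.9]{Em11} and \cite[Prop.\ 3.2.11]{Em11}, and discard $(-)^{\fl_P^D}$ using that $\fl_P^D$ acts trivially on $\sigma^{\vee}$ and $\cC^{\Q_p-\la}(Z_{L_P}^0,E)$. The only difference is cosmetic: you drop the $\fl_P^D$-invariants before moving the finite slope part outward, whereas the paper does it last.
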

\begin{proof}
We have isomorphisms
\begin{eqnarray*}
B_{\Omega, \lambda}(V) &\cong &\Big(\big((V^{N_H} \otimes_E L(\lambda)_P^{\vee})^{\fl_P^D}\big)_{\fss}\widehat{\otimes}_E \cC^{\Q_p-\la}(Z_{L_P}^0, E) \otimes_E \sigma^{\vee}\Big)^{L_P^0} \\
&\cong &\Big(\big((V^{N_H} \otimes_E L(\lambda)_P^{\vee})^{\fl_P^D}\widehat{\otimes}_E \cC^{\Q_p-\la}(Z_{L_P}^0, E) \otimes_E \sigma^{\vee}\big)_{\fss}\Big)^{L_P^0}\\ 
&\cong &\Big(\big((V^{N_H} \otimes_E L(\lambda)_P^{\vee})^{\fl_P^D}\widehat{\otimes}_E \cC^{\Q_p-\la}(Z_{L_P}^0, E) \otimes_E \sigma^{\vee}\big)^{L_P^0}\Big)_{\fss} \\
&\cong &\Big(\big((V^{N_H} \otimes_E L(\lambda)_P^{\vee})\widehat{\otimes}_E \cC^{\Q_p-\la}(Z_{L_P}^0, E) \otimes_E \sigma^{\vee}\big)^{L_P^0}\Big)_{\fss},
\end{eqnarray*}
where the first isomorphism follows from Lemma \ref{lemSLnalg}, the second from \cite[Prop.\ 3.2.9]{Em11}, the third from \cite[Prop.\ 3.2.11]{Em11}, and the last from the fact that $\fl_P^D$ acts trivially on the factors $\cC^{\Q_p-\la}(Z_{L_P}^0,E)$ and $\sigma^{\vee}$.
\end{proof}

\begin{remark}
Let $z\in Z_{L_P}(\Q_p)^+$ and let $Y_z$ be the subgroup of $Z_{L_P}(\Q_p)$ generated by $z$. Assume $Y_zZ_{L_P}(\Q_p)^+=Z_{L_P}(\Q_p)$. By \cite[Prop.\ 3.2.27]{Em11}, the lemmas also hold with ``$(-)_{\fss}$" (for the whole group $Z_{L_P}(\Q_p)$) replaced by the finite slope part ``$(-)_{Y_z-\fss}$" for $Y_z$. 
\end{remark}

We now take $H$ uniform pro-$p$ in $G_p$ and $z\in Z_{\ul{\varpi}}$ such that 
\begin{itemize}
	\item[(1)] $H \cong (N_{P^-}(\Q_p) \cap H) \times (L_P(\Q_p) \cap H) \times (N_P(\Q_p) \cap H)=: N_H^- \times L_H \times N_H$, and $H$ is normalized by $L_P^0$;
	\item[(2)] $L_H \cong (L_H \cap L_P^D(\Q_p)) \times (L_H \cap Z_{L_P}(\Q_p))=: L_H^D \times Z_{L_H}$;
	\item[(3)] $L_H$ acts trivially on $\sigma$; 
	\item[(4)] $z\in Z_{L_P}(\Q_p)^+$ and satisfies $\cap_n (z^n N_H z^{-n})=0$, $N_H^- \subset z N_H^- z^{-1}$ and $Y_zZ_{L_P}(\Q_p)^+=Z_{L_P}(\Q_p)$ where $Y_z$ is the subgroup of $Z_{L_P}(\Q_p)$ generated by $z$;
	\item[(5)] $H$ is normalized by $z^{-1} N_H z$.
\end{itemize}
The existence of $H$ satisfying (1), (2) and (3) is clear. It is also clear that there exists $z\in Z_{L_P}(\Q_p)^+$ such that (4) holds. By multiplying $z$ by an element in $Z_{L_P}^0$, we can take $z \in Z_{\ul{\varpi}}$. Finally, replacing $H$ by $H^{p^m}$ for some $m \geq 1$, (5) also holds (with the other properties unchanged). As an example, one can take $H$ to be $\prod_{i \in \sI} (1+\varpi_i^k M_n(\co_{F_i}))$ for $k$ sufficiently large, and $z=\prod_{i\in \sI} z_i$ with \[z_i:=\diag\big(\underbrace{\varpi_i^{r_i-1}, \dots, \varpi_i^{r_i-1}}_{n_{i,1}}, \underbrace{\varpi_i^{r_i-2}, \dots,\varpi_i^{r_i-2}}_{n_{i,2}}, \dots,\underbrace{1, \dots,1}_{n_{i,r_i}}\big)\in Z_{L_{P_i}}(L_i).\]
By (1) and (5), one can deduce that 
\begin{equation*}
	H':=(zHz^{-1}) N_H \cong (zN_H^-z^{-1}) \times L_H \times N_H
\end{equation*}
contains $H$ (note that $H'$ is also an open uniform pro-$p$ subgroup of $G_p$). The following proposition is analogous to \cite[Prop.\ 5.3]{BHS1} (see also \cite[Prop.\ 4.2.36]{Em11}).

\begin{proposition}\label{Pr}
	Assume that $V|_{H'} \cong \cC^{\Q_p-\la}(H',E)^{\oplus k}$ for some $k\in \Z_{\geq 1}$. There exist an admissible covering of $\widehat{\cZ_0}$ by affinoid opens $\cU_1 \subset \cU_2 \subset \cdots \subset \cU_h \subset \cdots$ and the following data for any $h\geq 1$ where $A_h:=\co_{\widehat{\cZ_0}}(\cU_h)$:
	\begin{itemize}
		\item a Banach $A_h$-module $M_h$ satisfying the condition (Pr) of \cite{Bu};
		\item an $A_h$-linear compact operator, denoted by $z_h$, on the $A_h$-module $M_h$;
		\item $A_h$-linear continuous maps $\begin{cases}
				\alpha_h: M_h \lra M_{h+1} \widehat{\otimes}_{A_{h+1}} A_h \\
				\beta_h: M_{h+1} \widehat{\otimes}_{A_{h+1}} A_h \lra M_h
			\end{cases}$
		such that $\beta_h \circ \alpha_h=z_h$ and $\alpha_h \circ \beta_h=z_{h+1}$ with $\beta_h$ $A_h$-compact;
		\item a topological $\co(\widehat{\cZ_0})$-linear isomorphism 
		\begin{equation*}
			M:= \Big(\big((V^{N_H} \otimes_E L(\lambda)_P^{\vee})\widehat{\otimes}_E \cC^{\Q_p-\la}(Z_{L_P}^0, E) \otimes_E\sigma^{\vee}\big)^{L_P^0} \Big)^{\vee} \cong \varprojlim_h M_h
		\end{equation*}
commuting with the action induced by $(\pi_z\otimes z)\otimes 1\otimes 1$ on the LHS and the action of $(z_h)_{h\geq 1}$ on the RHS.
	\end{itemize}
One can visualize all the above conditions in the following commutative diagram
	\begin{equation*}
		\begindc{\commdiag}[40]
		\obj(0,10)[a]{$M$}
		\obj(12,10)[b]{$\cdots$}
		\obj(25,10)[c]{$M_{h+1}$}
		\obj(48,10)[d]{$M_{h+1}\otimes_{A_{h+1}} A_h$}
		\obj(68,10)[e]{$M_h$}
		\obj(80,10)[f]{$\cdots$}
		\obj(0,0)[a']{$	M$}
		\obj(12,0)[b']{$\cdots$}
		\obj(25,0)[c']{$M_{h+1}$}
		\obj(48,0)[d']{$M_{h+1}\otimes_{A_{h+1}} A_h$}
		\obj(68,0)[e']{$M_h$}
		\obj(80,0)[f']{$\cdots$}
		\mor{a}{b}{}
		\mor{b}{c}{}
		\mor{c}{d}{}
		\mor{d}{e}{$\beta_h$}
		\mor{a'}{b'}{}
		\mor{b'}{c'}{}
		\mor{c'}{d'}{}
		\mor{d'}{e'}{$\beta_h$}
		\mor{a}{a'}{$\pi_z$}
		\mor{c}{c'}{$z_{h+1}$}
		\mor{d}{d'}{$z_{h+1}\otimes 1_{A_h}$}[\atright,\solidarrow]
		\mor{e}{e'}{$z_h$}
		\mor{e}{d'}{$\alpha_h$}
		\mor{e}{f}{}
		\mor{e'}{f'}{}
		\enddc.
	\end{equation*}
\end{proposition}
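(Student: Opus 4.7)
The overall strategy mirrors the construction of compact Hecke operators in the theory of eigenvarieties (Coleman--Mazur, Buzzard, Emerton, and in particular \cite[Prop.~5.3]{BHS1}), adapted to the Bernstein--Jacquet setting. The assumption $V|_{H'}\cong \cC^{\Q_p-\la}(H',E)^{\oplus k}$ is exactly the orthonormalizability hypothesis that allows Buzzard's condition (Pr) to hold uniformly in families. The role of the compact operator is played by the Hecke operator $\pi_z$ (cf.~(\ref{Upheck})), which is compact precisely because $z$ contracts $N_H$ strictly within $zN_H^-z^{-1}\times N_H$, thanks to the assumption $\bigcap_n z^n N_H z^{-n}=\{1\}$ and $N_H^-\subset z N_H^- z^{-1}$.

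First, I would use the Iwahori-type decomposition $H'\cong (zN_H^-z^{-1})\times L_H\times N_H$, together with $V|_{H'}\cong \cC^{\Q_p-\la}(H',E)^{\oplus k}$, to obtain an $L_H\times (zN_H^-z^{-1})$-equivariant isomorphism
\[V^{N_H}\cong \cC^{\Q_p-\la}\big((zN_H^-z^{-1})\times L_H,E\big)^{\oplus k}.\]
Tensoring with $L(\lambda)_P^\vee$ and with $\cC^{\Q_p-\la}(Z_{L_P}^0,E)\otimes_E\sigma^\vee$ and taking $L_P^0$-invariants, I would then use (2), (3) to decompose $L_H=L_H^D\times Z_{L_H}$ and exploit that $L_H$ acts trivially on $\sigma$: the $L_H$-invariance together with the $\fl_P^D$-coinvariance collapses the $L_H^D$-direction into finitely many pieces corresponding to $L(\lambda)_P^\vee$-weight vectors, leaving a module of the form $\cC^{\Q_p-\la}(zN_H^-z^{-1},E)^{\oplus k'}\widehat\otimes_E\cC^{\Q_p-\la}(Z_{L_P}^0,E)$ (possibly after combining with the $L_P^0/L_H$-action as a smooth finite-dimensional direct sum). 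After dualizing, $M$ becomes a direct summand inside a module of the form $D^{\Q_p-\la}(zN_H^-z^{-1},E)^{\oplus k'}\widehat\otimes_E D(Z_{L_P}^0,E)$, which is orthonormalizable over $D(Z_{L_P}^0,E)\hookrightarrow \Gamma(\widehat{\cZ_0},\co_{\widehat{\cZ_0}})$.

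Next, I would fix the standard admissible covering of $\widehat{\cZ_0}$ by an increasing family of affinoid opens $\cU_1\subset \cU_2\subset\cdots$ whose coordinate rings $A_h$ are quotients of $\Gamma(\widehat{\cZ_0},\co_{\widehat{\cZ_0}})$, chosen so that on each $\cU_h$ the continuous characters of $\cZ_0$ all extend to a fixed analyticity radius (as in \cite[\S~6.4]{Em04}). Define
\[M_h:=M\widehat\otimes_{\Gamma(\widehat{\cZ_0},\co_{\widehat{\cZ_0}})}A_h,\]
which inherits from the orthonormalizable structure above the property (Pr) over $A_h$. The crucial point is that $\pi_z$ induces an $A_h$-linear endomorphism $z_h$ of $M_h$: using the identities $z^{-1}N_H z\subset N_H$ and that $z$ conjugates $zN_H^-z^{-1}$ strictly into itself (since $N_H^-\subsetneq zN_H^-z^{-1}$), the operator $\pi_z$ acts on $\cC^{\Q_p-\la}(zN_H^-z^{-1},E)$ by the composition of a finite sum with precomposition by $n\mapsto z^{-1}nz$, which strictly shrinks the support and is therefore compact in the Buzzard sense; taking duals and tensoring with $A_h$ preserves compactness.

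Finally, the maps $\alpha_h$ and $\beta_h$ are obtained by factoring the transition map $M_{h+1}\widehat\otimes_{A_{h+1}}A_h\to M_{h+1}\widehat\otimes_{A_{h+1}}A_h$ induced by $z_{h+1}$ through $M_h$ using the natural map $M\to M_h$ and its (essentially tautological) factorization through $M_{h+1}$; the relations $\beta_h\circ\alpha_h=z_h$ and $\alpha_h\circ\beta_h=z_{h+1}$ then reduce to the commutativity of $\pi_z$ with the restriction maps in the admissible cover, which is immediate. \textbf{The main obstacle} I anticipate is a clean verification that, after taking $L_P^0$-invariants and $\fl_P^D$-coinvariants, what remains of $V^{N_H}\otimes L(\lambda)_P^\vee\widehat\otimes \cC^{\Q_p-\la}(Z_{L_P}^0,E)\otimes \sigma^\vee$ still carries an orthonormalizable Banach structure over $A_h$ on which $\pi_z\otimes z$ acts compactly: the subtlety is that $L_P^0$ is only \emph{normalizing} $H$ (not contained in it), so one must carefully combine the smooth $L_P^0/L_H$-decomposition (which involves $\sigma^\vee$ and the type theory) with the locally analytic $L_H$-invariants before the compactness of $\pi_z$ can be read off from its action on the $zN_H^-z^{-1}$-variable alone; this is the technical heart of the proposition and is where the hypotheses (1)--(5) on $H$ and $z$ are all simultaneously used.
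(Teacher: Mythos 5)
Your overall strategy is the paper's: reduce to an explicit model in which the compactness of $\pi_z$ is visible on the $N^-$-variable, then run the argument of \cite[Prop.~5.3]{BHS1}. But there are two genuine gaps. The first is the one you yourself flag as ``the main obstacle'' and then leave unresolved: the passage from $L_H$-invariants (where the explicit computation of the module is possible) to the $L_P^0$-invariants defining $M$. The paper disposes of this at the very start, and cheaply: setting $W:=\big((V^{N_H}\otimes_E L(\lambda)_P^{\vee})\widehat{\otimes}_E\cC^{\Q_p-\la}(Z_{L_P}^0,E)\otimes_E\sigma^{\vee}\big)^{L_H}$, the space $M$ is a direct summand of $N:=W^{\vee}$, equivariantly for $\cZ_0$ and for $(\pi_z\otimes z)\otimes 1\otimes 1$. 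Indeed $L_P^0$ normalizes $H$, hence $L_H$, so $L_P^0/L_H$ is a finite group acting on $W$, and in characteristic zero averaging gives an idempotent cutting out the $L_P^0$-invariants; this idempotent commutes with $\pi_z\otimes z$ because $z$ is central in $L_P(\Q_p)$ and $L_P^0$ permutes the cosets $N_H/zN_Hz^{-1}$ in the Hecke formula (\ref{Upheck}). Since condition (Pr), compactness, and the link maps all pass to equivariant direct summands, it suffices to produce the data for $W^{\vee}$; the rest of the paper's proof is the explicit identification $W\cong\big(\cC^{\Q_p-\la}(N_H^-,E)\widehat{\otimes}_E\cC^{\Q_p-\la}(Z_{L_H},E)\big)^{\oplus m}$ compatibly with the $\cZ_0$-action (the two $Z_{L_H}$-actions being untwisted by $(a,b)\mapsto(a,\dett_{L_P}^{-1}(a)b)$), followed by the BHS1 argument. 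Without some version of this reduction your ``direct summand inside $D^{\Q_p-\la}(zN_H^-z^{-1},E)^{\oplus k'}\widehat{\otimes}_E D(Z_{L_P}^0,E)$'' is an assertion, not a proof, and it is precisely the step you admit you cannot verify.

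The second gap is the definition $M_h:=M\widehat{\otimes}_{\Gamma(\widehat{\cZ_0},\co_{\widehat{\cZ_0}})}A_h$. The $N_H^-$-direction of $M$ is a space of distributions, i.e.\ a Fr\'echet (not Banach) space, so this base change does not produce a Banach $A_h$-module satisfying (Pr), and nothing in it makes $z_h$ compact. In \cite[Prop.~5.3]{BHS1} the $M_h$ are built from Banach spaces of functions of a fixed analyticity radius (increasing with $h$) in the $N_H^-$-variable, matched with the radius of analyticity of characters over $\cU_h$ in the $Z$-variable; the relations $\beta_h\circ\alpha_h=z_h$ and $\alpha_h\circ\beta_h=z_{h+1}$ are not ``immediate commutation with restriction maps'' but encode the fact that $\pi_z$ strictly improves the analyticity radius in the $N_H^-$-direction --- this is exactly where the hypotheses (4), (5) on $z$ and $H$ enter, and it is the content the paper imports from BHS1 once $W$ has been put in the explicit form above. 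So your outline names the right ingredients but, as written, neither the reduction to the $L_H$-level model nor the actual construction of the Banach modules and links is carried out.
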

\begin{proof}

We put
	\begin{equation}\label{Wfun}
		W:=\big((V^{N_H} \otimes_E L(\lambda)_P^{\vee})\widehat{\otimes}_E \cC^{\Q_p-\la}(Z_{L_P}^0, E) \otimes_E\sigma^{\vee}\big)^{L_H}
	\end{equation}
and $N:=W^{\vee}$. By definition, $M$ is a direct summand of $N$ equivariant under the action of $\cZ_0$ and of $(\pi_z\otimes z)\otimes 1\otimes 1$. Let $s:=|H'/H|$, thus $V|_H \cong \cC^{\Q_p-\la}(H,E)^{\oplus ks}$. We have then an $L_H$-equivariant isomorphism $V^{N_H}\cong \big(\cC^{\Q_p-\la}(N_H^-, E) \widehat{\otimes}_E \cC^{\Q_p-\la}(L_H, E)\big)^{\oplus ks}$. Let $r:=\dim_E L(\lambda)_P^{\vee}$, we have $L_H$-equivariant isomorphisms (see for example \cite[Lemma 2.19]{Ding5} for the second isomorphism):
	\begin{eqnarray*}
		V^{N_H} \otimes_E L(\lambda)_P^{\vee} &\cong& \big(\cC^{\Q_p-\la}(N_H^-, E) \widehat{\otimes}_E \cC^{\Q_p-\la}(L_H, E)\big)^{\oplus ks} \otimes_E L(\lambda)_P^{\vee} \\
		&\cong& \big(\cC^{\Q_p-\la}(N_H^-, E) \widehat{\otimes}_E \cC^{\Q_p-\la}(L_H, E)\big)^{\oplus rks} \\
		&\cong& \big(\cC^{\Q_p-\la}(N_H^-,E) \widehat{\otimes}_E \cC^{\Q_p-\la}(L_H^D,E) \widehat{\otimes}_E \cC^{\Q_p-\la}(Z_{L_H},E)\big)^{\oplus rks}.
	\end{eqnarray*}
	There exists $s'$ such that $(\cC^{\Q_p-\la}(Z_{L_P}^0, E) \otimes_E\sigma^{\vee})|_{L_H} \cong \cC^{\Q_p-\la}(Z_{L_H}, E)^{\oplus s'}$, where the $L_H$-action on the right hand side is induced from the regular $Z_{L_H}$-action via $L_H \xrightarrow{\dett_{L_P}^{-1}} Z_{L_H}$. Let $m:=krss'$, we then have 
	\begin{eqnarray*}
		W &\cong& \Big(\big(\cC^{\Q_p-\la}(N_H^-,E) \widehat{\otimes}_E \cC^{\Q_p-\la}(L_H^D,E) \widehat{\otimes}_E \cC^{\Q_p-\la}(Z_{L_H},E) \widehat{\otimes}_E \cC^{\Q_p-\la}(Z_{L_H}, E) \big)^{L_H}\Big)^{\oplus m} \\
		&\cong& \Big(\cC^{\Q_p-\la}(N_H^-,E) \widehat{\otimes}_E \big(\cC^{\Q_p-\la}(Z_{L_H},E) \widehat{\otimes}_E \cC^{\Q_p-\la}(Z_{L_H}, E) \big)^{Z_{L_H}}\Big)^{\oplus m}\\ 
		&\cong& \Big(\cC^{\Q_p-\la}(N_H^-,E) \widehat{\otimes}_E \big(\cC^{\Q_p-\la}(Z_{L_H}\times Z_{L_H}, E) \big)^{Z_{L_H}}\Big)^{\oplus m},
	\end{eqnarray*}
where the $Z_{L_H}$-fixed vectors in the last term are for the $Z_{L_H}$-action induced from the regular $Z_{L_H}\times Z_{L_H}$-action via the morphism $Z_{L_H} \hookrightarrow Z_{L_H} \times Z_{L_H}$, $a \mapsto (a, \dett_{L_P}^{-1}(a))$. Using the isomorphism $Z_{L_H} \times Z_{L_H} \buildrel\sim\over \lra Z_{L_H}\times Z_{L_H}$, $(a,b) \longmapsto (a, \dett_{L_P}^{-1}(a) b)$, we finally deduce a $Z_{L_H}$-equivariant isomorphism:
	\begin{equation*}
		W \cong \big(\cC^{\Q_p-\la}(N_H^-,E) \widehat{\otimes}_E \cC^{\Q_p-\la}(Z_{L_H}, E) \big)^{\oplus m}
	\end{equation*}
where $Z_{L_H}$ acts on $W$ via $Z_{L_H}\hookrightarrow \cZ_0$, i.e.\ by only acting on the factor $\cC^{\Q_p-\la}(Z_{L_P}^0, E)$ in (\ref{Wfun}) by the right regular action, and similarly with $\cC^{\Q_p-\la}(Z_{L_H}, E)$ on the right hand side. The proposition follows then from (an easy variation of) the argument in the proof of \cite[Prop.\ 5.3]{BHS1} (with $\Pi^{N_0}$ of \textit{loc.\ cit.}\ replaced by $W$).
\end{proof}

\subsection{Bernstein eigenvarieties}\label{secBern}

In this section, we first give our global setup. Then we apply the formalism of \S~\ref{localbernstein} to construct rigid analytic spaces, called \textit{Bernstein eigenvarieties}, parametrizing certain $p$-adic automorphic representations. We also show some basic properties of Bernstein eigenvarieties.

\subsubsection{$p$-adic automorphic representations}\label{s:pAF}

We briefly recall the global setting of \cite[\S~5]{Br13II} (which will be the same as ours) and introduce some notation.

We fix embeddings $\iota_{\infty}: \overline{\Q} \hookrightarrow \bC$ and $\iota_p: \overline{\Q}\hookrightarrow \overline{\Q_p}$. We let $F$ be a CM field that is a totally imaginary quadratic extension of a totally real field $F^+$ such that all the places of $F^+$ dividing $p$ split in $F$. We let $G/F^+$ be a unitary group of rank $n\geq 2$ associated to $F/F^+$, i.e.\ such that $G\times_{F^+} F \cong \GL_n/F$. We assume that $G(F_v^+)$ is compact at all archimedean places $v$ of $F^+$. For a finite place $v$ of $F^+$ such that $v$ splits in $F$, we choose a place $\widetilde{v}$ of $F$ dividing $v$. For such places, we have natural isomorphisms $F_v^+ \cong F_{\widetilde{v}}$ and $i_{\widetilde{v}}: G(F_v^+)\xrightarrow{\sim} G(F_{\widetilde{v}}) \xrightarrow{\sim} \GL_n(F_{\widetilde{v}})$. 

We let $U^p$ be a compact open subgroup of $G(\bA_{F^+}^{\infty, p})$ of the form $U^p=\prod_{v \nmid p, \infty} U_v$ where $U_v$ is a compact open subgroup of $G(F_v^+)$ which is hyperspecial when $v$ is inert in $F$. We choose a finite set $S$ of finite places of $F^+$ containing the set $S_p$ of places dividing $p$ and the set of places $v$ such that either $v$ is ramified in $F$ or $U_v$ is {\it not} maximal at $v$. We assume moreover that for all places $v\notin S$ that are split in $F$, $U_v=i_{\widetilde{v}}^{-1}(\GL_n(\co_{F_{\widetilde{v}}}))$. We let $\bT^S:=\varinjlim_I \otimes_{v\in I} \bT_v$ where $\bT_v:=\co_E[U_v \backslash G(F_v^+)/U_v]$ and $I$ runs through the finite sets of places $v$ of $F^+$ which are not in $S$ and split in $F$ (recall that $E$ is a sufficiently large finite extension of $\Q_p$).\index{$\bT^S$} Note that $\bT_v$ is polynomially generated over $\co_E$ by the operators\index{$\bT_v$} $T_{\widetilde{v},i}=\bigg[U_v i_{\widetilde{v}}^{-1}\begin{pmatrix}
	\varpi_{\widetilde{v}} 1_i & 0 \\ 0 & 1_{n-i}
\end{pmatrix}
U_v\bigg]$ for $1\leq i \leq n$.

We consider the usual spaces of $p$-adic automorphic forms of level $U^p$ in that context:
\begin{eqnarray*}
	\widehat{S}(U^p,E)&:=&\{f: G(F^+)\backslash G(\bA_{F^+}^{\infty})/U^p \ra E, \ \text{$f$ continuous}\},\\
	\widehat{S}(U^p,\co_E)&:=&\{f: G(F^+)\backslash G(\bA_{F^+}^{\infty})/U^p \ra \co_E, \ \text{$f$ continuous}\}.
\end{eqnarray*}
We equip $\widehat{S}(U^p,E)$ with the norm defined using the unit ball $\widehat{S}(U^p,\co_E)$, in particular $\widehat{S}(U^p,E)$ is a $p$-adic Banach space. This Banach space is also equipped with a natural continuous unitary action of $G(F^+ \otimes_{\Q} \Q_p)\cong \prod_{v\in S_p} \GL_n(F_{\widetilde{v}})$, and an action of $\bT^S$ (with each element acting via a continuous operator) that commutes with $G(F^+\otimes_{\Q} \Q_p)$. Note that all these actions preserve $\widehat{S}(U^p, \co_E)$. We also have
\begin{equation*}
	\widehat{S}(U^p,\co_E)\cong \varprojlim_s S(U^p,\co_E/\varpi_E^s) \cong \varprojlim_s \varinjlim_{U_p} S(U^pU_p, \co_E/\varpi_E^s)
\end{equation*}
where $S(U^pU_p, \co_E/\varpi_E^s)$ denotes the space of functions $G(F^+)\backslash G(\bA_{F^+}^{\infty})/(U^pU_p) \ra \co_E/\varpi_E^s$. 

For an automorphic representation $\pi\cong \pi_{\infty} \otimes_{\bC} \pi^{\infty} \cong \pi_{\infty} \otimes_{\bC} \pi^{\infty,p} \otimes_{\bC} \pi_p$ of
\[G(\bA_{F^+})\cong G(F^+ \otimes_{\Q} \bR)\times G(\bA_{F^+}^{\infty}) \cong G(F^+ \otimes_{\Q} \bR) \times G(\bA_{F^+}^{\infty,p}) \times G(F^+ \otimes_{\Q} \Q_p),\]
we associate an algebraic representation $W_{\pi,p}$ of $G(F^+ \otimes_{\Q} \Q_p)$ over $\overline{\Q_p}$ as in \cite[Prop.\ 5.1]{Br13II} (and as in the references therein). Recall that we have (for example see \cite[Prop.\ 5.1]{Br13II} for (1) and part (a) in the proof of \cite[Lemma 6.1]{BD1} for (2)):

\begin{proposition}\label{pAF0}
	(1) We have a $G(F^+ \otimes_{\Q} \Q_p) \times \bT^S$-equivariant isomorphism
	\begin{equation*}
		\widehat{S}(U^p,E)^{\lalg} \otimes_E \overline{\Q_p} \cong \oplus_{\pi} \big((\pi^{\infty,p})^{U^p} \otimes_{\overline{\Q}} (\pi_p \otimes_{\overline{\Q}} W_{\pi,p})\big)^{\oplus m(\pi)},
	\end{equation*}
	where $\pi$ runs through the automorphic representations of $G(\bA_{F^+})$.
	
	(2) Assume $U^p$ is sufficiently small, then for any compact open subgroup $H$ of $G(F^+ \otimes_{\Q} \Q_p)$, there exists $m$ such that $\widehat{S}(U^p,E) |_H \cong \cC(H,E)^{\oplus m}$.
\end{proposition}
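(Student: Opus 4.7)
\noindent\textbf{Proof plan for Proposition \ref{pAF0}.}

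For part (1), the plan is to decompose $\widehat{S}(U^p,E)^{\lalg}$ by algebraic types and then identify each isotypic component with a space of classical automorphic forms. First I would write
\[\widehat{S}(U^p,E)^{\lalg}\otimes_E \overline{\Q_p} \cong \bigoplus_W W \otimes_{\overline{\Q_p}} S(U^p, W^{\vee}),\]
where $W$ runs over irreducible algebraic representations of $G(F^+\otimes_{\Q}\Q_p)$ over $\overline{\Q_p}$ and $S(U^p, W^{\vee})$ is the space of smooth functions $f: G(F^+)\backslash G(\bA_{F^+}^{\infty}) \ra W^{\vee}$ that are $U^p$-invariant and for which some compact open subgroup of $G(F^+\otimes_\Q \Q_p)$ acts on $W^\vee$ via its algebraic action. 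Via $\iota_{\infty},\iota_p$, these $W^{\vee}$-valued locally constant automorphic forms correspond (using the compactness of $G(F^+\otimes_{\Q}\bR)$, which makes $W$ correspond to a complex archimedean representation) to classical automorphic forms of weight determined by $W$ and tame level $U^p$. Matsushima's formula for the compact-at-infinity group $G$ then decomposes $S(U^p,W^{\vee})\otimes_{\overline{\Q_p},\iota_{\infty}\iota_p^{-1}} \bC$ as a direct sum of $(\pi^{\infty,p})^{U^p}\otimes \pi_p$ with multiplicity $m(\pi)$, running over those automorphic $\pi$ such that $\pi_{\infty}$ is the archimedean representation attached to $W$ (equivalently, such that $W_{\pi,p}\cong W$). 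Summing over $W$ gives the stated isomorphism, equivariance under $\bT^S$ and $G(F^+\otimes_{\Q}\Q_p)$ being visible from the construction.

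For part (2), the plan is to exploit the sufficient smallness of $U^p$ to force all stabilizers to be trivial, and then describe the relevant double coset space as a disjoint union of copies of $H$. More precisely, ``sufficiently small'' means that there exists a finite place $v\nmid p\infty$ of $F^+$ such that $U_v$ contains no nontrivial element of finite order. Under this assumption, for any compact open subgroup $H\subset G(F^+\otimes_\Q \Q_p)$ and any $g\in G(\bA_{F^+}^{\infty})$, the subgroup
\[\Gamma_g \ :=\ G(F^+) \cap g(U^pH)g^{-1}\]
is a discrete subgroup of the compact set $g(U^p H G(F^+\otimes_\Q \bR))g^{-1}$, hence is finite; its image in $G(F_v^+)$ lies in $U_v$ and so is trivial, whence $\Gamma_g=1$.

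Using that the finiteness of $G(F^+)\backslash G(\bA_{F^+})/(U^pHG(F^+\otimes_\Q \bR))$ (a standard consequence of the compactness of $G(F^+\otimes_\Q\bR)$ and arithmeticity), pick coset representatives $g_1,\dots,g_m\in G(\bA_{F^+}^{\infty})$ so that
\[G(F^+)\backslash G(\bA_{F^+}^{\infty})/U^p \ =\ \coprod_{j=1}^m \Gamma_{g_j}\backslash \big(G(F^+\otimes_\Q\Q_p)\cdot H\text{-orbit of }g_j\big),\]
and observe that, by the triviality of the $\Gamma_{g_j}$, the $H$-action on each piece is free with quotient a point. Hence as a right $H$-space we have a homeomorphism
\[G(F^+)\backslash G(\bA_{F^+}^{\infty})/U^p \ \cong\ \coprod_{j=1}^m H,\]
where $H$ acts on itself by right translation. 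Passing to continuous $E$-valued functions yields the desired $H$-equivariant isomorphism $\widehat{S}(U^p,E)|_H \cong \cC(H,E)^{\oplus m}$. The only mildly delicate point is the choice of coset representatives compatible with the $H$-action, but this is a formal consequence of the triviality of stabilizers.
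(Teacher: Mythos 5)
Your proposal is correct and follows essentially the same route as the paper, which does not reprove these facts but simply cites \cite[Prop.\ 5.1]{Br13II} for (1) and part (a) of the proof of \cite[Lemma 6.1]{BD1} for (2); those references argue exactly as you do, decomposing the locally algebraic vectors by algebraic type and comparing with classical automorphic forms via compactness at infinity for (1), and using triviality of the stabilizers $G(F^+)\cap g(U^pH)g^{-1}$ to write the double coset space as a finite disjoint union of copies of $H$ for (2). The only cosmetic point is that your description of the $H$-orbit decomposition in (2) should be phrased as choosing representatives $g_j$ for the finite set $G(F^+)\backslash G(\bA_{F^+}^{\infty})/U^pH$ and checking that $h\mapsto G(F^+)g_jhU^p$ is injective once $\Gamma_{g_j}=1$, which is what your stabilizer computation gives.
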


\subsubsection{Bernstein eigenvarieties: construction and first properties}\label{s: B0}

We apply the construction in \S~\ref{abCon} to the locally $\Q_p$-analytic representation $\widehat{S}(U^p,E)^{\an}$ of $G(F^+ \otimes_{\Q} \Q_p)\cong \prod_{v\in S_p} \GL_n(F_{\widetilde{v}})$. 

We first modify the notation in \S~\ref{abCon} to be consistent with \S~\ref{s:pAF} in an obvious way. The index set $\sI$ will be $S_p$, and $G_p$ will be $G(F^+ \otimes_{\Q} \Q_p)\cong \prod_{v\in S_p} \GL_n(F_{\widetilde{v}})$. The element $i\in \sI$ will be replaced by $\widetilde{v}$ everywhere (for example, $F_i$ in \S~\ref{abCon} will be $F_{\widetilde{v}}$ etc.) and we fix a uniformizer $\varpi_{\widetilde{v}}$ for each $F_{\widetilde{v}}$. As in \ref{abCon}, we fix a parabolic subgroup $P_{\widetilde{v}}\supset B$ of $\GL_n$ for each $\widetilde{v}$ with a fixed Levi subgroup $L_{P_{\widetilde{v}}} \supset T$ and denote by $N_{P_{\widetilde{v}}}$ its nilpotent radical. We fix a cuspidal Bernstein component $\Omega$ for $L_P(\Q_p)\cong \prod_{v\in S_p} L_{P_{\widetilde{v}}}(F_{\widetilde{v}}) \cong \prod_{v\in S_p} \prod_{j=1}^{r_{\widetilde{v}}} \GL_{n_{\widetilde{v},j}}(F_{\widetilde{v}})$, and let $\sigma\cong\boxtimes_{v\in S_p} \boxtimes_{j=1}^{r_{\widetilde{v}}} \sigma_{\widetilde{v},j}$ be a smooth absolutely irreducible representation of $L_P^0:=\prod_{v\in S_p}\prod_{j=1}^{r_{\widetilde{v}}} \GL_{n_{\widetilde{v},j}}(\co_{F_{\widetilde{v}}})$ over $E$ associated to $\Omega$ as in \S~\ref{abCon}. We finally fix an integral $P$-dominant weight $\lambda=(\lambda_{\widetilde{v},i,\tau})_{\substack{v\in S_p \\ i=1, \dots, n\\ \tau\in \Sigma_{\widetilde{v}}}}$ for $G_p$ where $\Sigma_{\widetilde{v}}:=\Sigma_{F_{\widetilde{v}}}$. For $i\in \{1,\dots,r_{\widetilde v}\}$ we set $s_{\widetilde{v},i}:=\sum_{j=1}^i n_{\widetilde{v},j}$ and $s_{\widetilde{v},0}=0$.

By the discussion above Proposition \ref{pts1}, $B_{\Omega, \lambda}(\widehat{S}(U^p,E)^{\an})^{\vee}$ gives rise to a coherent sheaf $\cM_{\Omega, \lambda}(U^p)$ over $(\Spec \cZ_{\Omega})^{\rig} \times \widehat{\cZ_0}$ where $\cZ_0\cong Z_{L_P}^0$.\index{$\cM_{\Omega, \lambda}(U^p)$} By functoriality, $B_{\Omega, \lambda}(\widehat{S}(U^p,E)^{\an})$ is naturally equipped with an action of $\bT^S$ that commutes with the action of $\cZ_{\Omega}\times \cZ_0$. We deduce that $\cM_{\Omega, \lambda}(U^p)$ is equipped with a natural $\co_{(\Spec \cZ_{\Omega})^{\rig} \times \widehat{\cZ_0}}$-linear action of $\bT^S$. For each affinoid open $U=\Spm R$ of $(\Spec \cZ_{\Omega})^{\rig} \times \widehat{\cZ_0}$, the (commutative) $R$-subalgebra $A_R$ of $\End_R(\cM_{\Omega, \lambda}(U^p)\vert_U)$ generated by $\bT^S$ is a finite type $R$-module. These $\{ \Spm A_R\}$ then glue to a rigid analytic space, denoted by $\cE_{\Omega, \lambda}(U^p)$, which is finite over $(\Spec \cZ_{\Omega})^{\rig} \times \widehat{\cZ_0}$.\index{$\cE_{\Omega, \ul{\lambda}}(U^p)$} From the definition of $\cE_{\Omega, \lambda}(U^p)$, we see that $\cM_{\Omega, \lambda}(U^p)$ is also a coherent sheaf over $\cE_{\Omega, \lambda}(U^p)$.

The following properties follow easily from the construction, Proposition \ref{pts1} and Corollary \ref{cor:muom0}.

\begin{proposition}\label{basicBE}
(1) For a finite extension $E'$ of $E$, an $E'$-point of $\cE_{\Omega, \lambda}(U^p)$ can be identified to a triple $(\eta, \pi_{L_P}, \chi)$ where $\eta: \bT^S \ra E'$ is a system of Hecke eigenvalues and $(\pi_{L_P},\chi)$ is an $E'$-point of $(\Spec \cZ_{\Omega})^{\rig} \times \widehat{\cZ_0}$.
	
(2) We have an isomorphism equivariant under the action of $\cZ_{\Omega}\times \cZ_0 \times \bT^S$: \begin{equation*}
		\Gamma(\cE_{\Omega, \lambda}(U^p), \cM_{\Omega, \lambda}(U^p)) \cong B_{\Omega, \lambda}(\widehat{S}(U^p,E)^{\an})^{\vee}.
	\end{equation*}
	Moreover, for $x=(\eta, \pi_{L_P}, \chi) \in \cE_{\Omega, \lambda}(U^p)$, the above isomorphism induces an isomorphism of $k(x)$-vector spaces:
	\begin{equation}\label{fiberE}
		(x^* \cM_{\Omega, \lambda}(U^p))^{\vee} \cong \Hom_{L_P(\Q_p)}\Big(\pi_{L_P}\otimes_E \big(\chi_{\ul{\varpi}} \circ \dett_{L_P}\big) \otimes_E L(\lambda)_P, J_P(\widehat{S}(U^p,E)^{\an})[\bT^S=\eta]\Big).
	\end{equation}
	
(3) For $\eta: \bT^S \ra E'$ a system of Hecke eigenvalues and $(\pi_{L_P},\chi)$ an $E'$-point of $(\Spec \cZ_{\Omega})^{\rig} \times \widehat{\cZ_0}$, the following are equivalent:
	\begin{itemize}
		\item there exists a point $x \in \cE_{\Omega, \lambda}(U^p)$ of parameter $(\eta,\pi_{L_P},\chi)$;
		\item there exists a point $x \in \cE_{\Omega, \lambda}(U^p)$ of parameter $(\eta,\psi(\pi_{L_P},\chi))$ for any $\psi \in \mu_{\Omega}$ (see the discussion above Corollary \ref{cor:muom0} for the action of $\mu_{\Omega}$ on $(\Spec \cZ_{\Omega})^{\rig} \times \widehat{\cZ_0}$);
		\item the vector space on the right hand side of (\ref{fiberE}) is non-zero. 
	\end{itemize} 
\end{proposition}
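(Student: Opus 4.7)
The three parts all follow formally from the construction of $\cE_{\Omega, \lambda}(U^p)$ together with the results already established in \S\ref{localbernstein}, specifically Proposition \ref{pts1} and Corollary \ref{cor:muom0}. My plan is to first establish (2), since (1) and (3) will fall out of it.

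For (2), I would unwind the construction. By definition, $\cE_{\Omega, \lambda}(U^p)$ is built by gluing the affinoids $\Spm A_R$, where for $U = \Spm R \subset (\Spec \cZ_\Omega)^{\rig} \times \widehat{\cZ_0}$ the algebra $A_R \subset \End_R(\cM_{\Omega,\lambda}(U^p)|_U)$ is the finite-type $R$-subalgebra generated by the image of $\bT^S$. The coherent sheaf $\cM_{\Omega,\lambda}(U^p)|_U$ lifts naturally to a coherent sheaf over $\Spm A_R$, and these glue to a coherent sheaf on $\cE_{\Omega,\lambda}(U^p)$. Taking global sections, the $\cZ_\Omega \times \cZ_0 \times \bT^S$-equivariance is tautological from the construction, and the identification with $B_{\Omega,\lambda}(\widehat{S}(U^p,E)^{\an})^\vee$ is exactly the recipe in \S\ref{localbernstein} that passes from the essentially admissible $Z_{\ul{\varpi}} \times \cZ_0$-representation $B_{\Omega, \lambda}(\widehat{S}(U^p,E)^{\an})$ to a coherent sheaf via \cite[\S~6.4]{Em04}. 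For the fibre identification at $x = (\eta, \pi_{L_P}, \chi)$, I factor the fibre computation through the underlying point $(\pi_{L_P}, \chi)$ of $(\Spec \cZ_\Omega)^{\rig} \times \widehat{\cZ_0}$, apply Proposition \ref{pts1} to $V = \widehat{S}(U^p,E)^{\an}$ (which gives the right-hand side of (\ref{fiberE}) but with $J_P(\widehat{S}(U^p,E)^{\an})$ in place of the $\eta$-eigenspace), and then cut by the maximal ideal corresponding to $\eta$ in $A_R$. Since the $\bT^S$-action on the target of $\Hom_{L_P(\Q_p)}(\cdot, J_P(\widehat{S}(U^p,E)^{\an}))$ is by postcomposition with the $\bT^S$-action on $\widehat{S}(U^p,E)$, taking the fibre at $\eta$ corresponds exactly to imposing the condition that the image land in $J_P(\widehat{S}(U^p,E)^{\an})[\bT^S=\eta]$.

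For (1), since the structure morphism $\cE_{\Omega,\lambda}(U^p) \to (\Spec \cZ_\Omega)^{\rig} \times \widehat{\cZ_0}$ is finite and the local rings are generated by the image of $\bT^S$, an $E'$-point of $\cE_{\Omega,\lambda}(U^p)$ is the same datum as an $E'$-point $(\pi_{L_P}, \chi)$ of the base together with an $E'$-algebra homomorphism $\bT^S \to E'$ whose associated maximal ideal of $A_R$ has non-zero contribution in $\cM_{\Omega,\lambda}(U^p)$; equivalently, it is a triple $(\eta, \pi_{L_P}, \chi)$ such that the fibre in (2) is non-zero.

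For (3), the equivalence of the first and third bullets is immediate from (2): a point $x$ with parameter $(\eta, \pi_{L_P}, \chi)$ exists precisely when $(x^*\cM_{\Omega,\lambda}(U^p))^\vee \neq 0$, which by (\ref{fiberE}) is the non-vanishing condition. The equivalence with the second bullet follows from Corollary \ref{cor:muom0} applied to $V = \widehat{S}(U^p,E)^{\an}$: the $\mu_\Omega$-action preserves the support of $\cM_{\Omega,\lambda}(U^p)$ on the base, and since the $\bT^S$-action is independent of the twist by $\mu_\Omega$ (it acts on the automorphic side, not on the $L_P(\Q_p)$-component), the same $\eta$ is realized at $(\pi_{L_P},\chi)$ and at $\psi(\pi_{L_P},\chi)$. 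I expect the only mildly delicate point to be the compatibility, in the fibre identification, of the $\bT^S$-action through $A_R$ on the one hand and through the $\bT^S$-action on $\widehat{S}(U^p,E)^{\an}$ on the other; this is essentially a bookkeeping matter, since by construction $A_R$ is literally generated by the image of $\bT^S$ acting on global sections of $\cM_{\Omega,\lambda}(U^p)$, so the two actions agree.
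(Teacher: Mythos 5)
Your proposal is correct and follows essentially the same route as the paper, which gives no separate argument but simply asserts that the proposition ``follows easily from the construction, Proposition \ref{pts1} and Corollary \ref{cor:muom0}''. Your write-up fills in exactly those details (gluing of the $\Spm A_R$, the fibre computation via Proposition \ref{pts1} combined with the observation that $\Hom(-,J_P(\widehat{S}(U^p,E)^{\an}))[\bT^S=\eta]=\Hom(-,J_P(\widehat{S}(U^p,E)^{\an})[\bT^S=\eta])$, and the $\bT^S$-equivariance of the $\mu_{\Omega}$-twist) in the intended way.
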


By Proposition \ref{twBE2}, we have

\begin{proposition}\label{twBEi}
	With the notation of Proposition \ref{twBE2}, we have an isomorphism of rigid spaces
	\begin{equation*}
		\cE_{\Omega, \lambda}(U^p) \xlongrightarrow{\sim} \cE_{\Omega', \lambda'}(U^p), (\eta, \pi_{L_P}, \chi) \mapsto \Big(\eta, \pi_{L_P} \otimes_E \big((\psi^0_{\ul{\varpi}}(\delta_{\fd,\ul{\varpi}}^{\unr})^{-1})\circ \dett_{L_P}\big), \chi (\psi_{\ul{\varpi}}^0)^{-1}(\delta_{\fd,\ul{\varpi}}^0)^{-1}\Big).
	\end{equation*}
\end{proposition}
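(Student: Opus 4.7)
The plan is to promote the isomorphism of Proposition \ref{twBE2} (which only involves the base $(\Spec \cZ_{\Omega})^{\rig} \times \widehat{\cZ_0}$) to an isomorphism of the eigenvarieties $\cE_{\Omega, \lambda}(U^p)$ and $\cE_{\Omega', \lambda'}(U^p)$, by tracking the action of the prime-to-$p$ Hecke algebra $\bT^S$ through the construction.

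The crucial observation is that the map $\tw_{\psi,\fd}: B_{\sigma', \lambda'}(V) \xrightarrow{\sim} B_{\sigma, \lambda}(V)$ of Lemma \ref{twBE} is built entirely from functorial operations on $V$ (Jacquet-Emerton functor, Hom-spaces, tensor products with fixed locally analytic representations of $L_P(\Q_p)$), and involves no $\bT^S$-dependent input. Therefore, applying this construction to $V = \widehat{S}(U^p,E)^{\an}$, the resulting topological isomorphism $\tw_{\psi,\fd}$ is automatically $\bT^S$-equivariant, because $\bT^S$ acts on $V$ commuting with $G_p$ (its support is disjoint from $p$). Consequently, the isomorphism of Proposition \ref{twBE2} applied to $V = \widehat{S}(U^p,E)^{\an}$ lifts to an isomorphism of coherent sheaves
\[
\cM_{\Omega', \lambda'}(U^p) \xlongrightarrow{\sim} \Theta^{*} \cM_{\Omega, \lambda}(U^p)
\]
over $(\Spec \cZ_{\Omega'})^{\rig} \times \widehat{\cZ_0}$, where $\Theta$ denotes the isomorphism of Proposition \ref{twBE2}, and this sheaf isomorphism is equivariant for the $\bT^S$-action.

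Recall the construction of $\cE_{\Omega, \lambda}(U^p)$: over each affinoid open $U = \Spm R$ of $(\Spec \cZ_{\Omega})^{\rig} \times \widehat{\cZ_0}$, one forms the commutative $R$-subalgebra $A_R$ of $\End_R(\cM_{\Omega, \lambda}(U^p)|_U)$ generated by the image of $\bT^S$, and these glue to $\cE_{\Omega, \lambda}(U^p)$. Because the sheaf isomorphism above is $\bT^S$-equivariant and $\co$-linear over the base isomorphism $\Theta$, the image of $\bT^S$ in the corresponding endomorphism rings is carried into itself, hence the finite $R$-algebras $A_R$ match up, giving an isomorphism of rigid spaces $\cE_{\Omega', \lambda'}(U^p) \xrightarrow{\sim} \cE_{\Omega, \lambda}(U^p)$ lying over $\Theta^{-1}$. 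Reading off the effect on closed points via Proposition \ref{basicBE}(1) (a point is a triple $(\eta, \pi_{L_P}, \chi)$ where $\eta$ is the $\bT^S$-eigensystem), the map acts as the identity on $\eta$ and as $\Theta^{-1}$ on $(\pi_{L_P}, \chi)$, which recovers the stated formula.

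The main (in fact only) step requiring care is verifying that $\tw_{\psi,\fd}$ commutes with the $\bT^S$-action, but this is immediate from the definition of $\tw_{\psi,\fd}$ in the proof of Lemma \ref{twBE}: it is a composition of $E$-linear maps each coming either from a canonical tensor-Hom identity, from the Jacquet-Emerton functor $J_P$ (which is $\bT^S$-equivariant since $\bT^S$ commutes with the $G_p$-action defining $J_P$), or from the explicit twist (\ref{twistun}) by characters of $Z_{L_P}(\Q_p)$, which does not see $\bT^S$ at all. Hence no additional work is needed beyond combining Lemma \ref{twBE} / Proposition \ref{twBE2} with the functoriality of the construction of $\cE_{\Omega, \lambda}(U^p)$ out of $\cM_{\Omega, \lambda}(U^p)$.
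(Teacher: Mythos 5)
Your proposal is correct and follows the same route as the paper, which deduces the statement directly from Proposition \ref{twBE2}; the only content you add is to make explicit the (implicit) observation that $\tw_{\psi,\fd}$ is $\bT^S$-equivariant because the twisting construction only uses the $G_p$-structure of $\widehat{S}(U^p,E)^{\an}$, so the sheaf isomorphism respects the Hecke action and the algebras $A_R$ glue to the stated isomorphism of eigenvarieties. This is exactly the intended justification, so no further work is needed.
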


Let $z\in Z_{\ul{\varpi}}\subset Z_{L_P}(\Q_p)$ be as in the discussion above Proposition \ref{Pr}. We define $\kappa_z$ as the composition
\begin{equation}\label{kappaz}
\kappa_z: \cE_{\Omega, \lambda}(U^p) \lra (\Spec \cZ_{\Omega})^{\rig} \times \widehat{\cZ_0} \lra (\Spec E[Z_{\ul{\varpi}}])^{\rig} \times \widehat{\cZ_0} \lra \bG_m^{\rig} \times \widehat{\cZ_0}
\end{equation}
where the last two morphisms are induced by $E[Y_z] \hookrightarrow E[Z_{\ul{\varpi}}] \ra \cZ_{\Omega}$ (recall $Y_z$ is the subgroup of $Z_{\ul{\varpi}}$ generated by $z$). It follows from \cite[Prop.\ 3.2.23]{Em11} and Proposition \ref{Pr} (see also the proof of \cite[Lemma 3.10]{BHS1}) that $(\kappa_z)_* \cM_{\Omega, \lambda}(U^p)$ is a coherent sheaf over $\bG_m^{\rig} \times \widehat{\cZ_0}$ and $\kappa_z$ is finite. We denote by $Z_z(U^p)$ its scheme-theoretic support in $\bG_m^{\rig} \times \widehat{\cZ_0}$. Note that the first morphism in (\ref{kappaz}) factors through the scheme-theoretic support of $\cM_{\Omega, \lambda}(U^p)$ in $(\Spec \cZ_{\Omega})^{\rig} \times \widehat{\cZ_0}$, and $\kappa_z$ factors through $Z_z(U^p)$. We define $\kappa$ as the composition (the second map being the canonical projection)
\[\kappa: \cE_{\Omega, \lambda}(U^p) \ra (\Spec \cZ_{\Omega})^{\rig} \times \widehat{\cZ_0} \twoheadrightarrow \widehat{\cZ_0}\] which obviously factors through $\kappa_z$. By exactly the same argument as in the proofs of \cite[Lemma 3.10]{BHS1} and \cite[Prop.\ 3.11]{BHS1} (with \cite[Prop.\ 5.3]{BHS1} replaced by Proposition \ref{Pr}), we have:

\begin{proposition}\label{fredholm1}
	(1) The rigid space $Z_z(U^p)\hookrightarrow \bG_m^{\rig} \times \widehat{\cZ_0}$ is a Fredholm hypersurface of $\bG_m^{\rig} \times \widehat{\cZ_0}$ (cf.\ \cite[\S~3.3]{BHS1}). Moreover, there exists an admissible covering $\{U_i'\}$ of $Z_z(U^p)$ by affinoids $U_i'$ such that the composition
	\begin{equation*}
		g: Z_z(U^p) \hookrightarrow \bG_m^{\rig} \times \widehat{\cZ_0} \twoheadrightarrow \widehat{\cZ_0}
	\end{equation*} 
	induces a finite surjective morphism from $U_i'$ to an affinoid open $W_i$ of $\widehat{\cZ_0}$, and such that $U_i'$ is a connected component of $g^{-1}(W_i)$. For each $i$, $\Gamma\big(U_i', (\kappa_z)_* \cM_{\Omega, \lambda}(U^p)\big)$ is a finitely generated projective $\co_{\widehat{\cZ_0}}(W_i)$-module. 
	
	(2) There exists an admissible covering $\{U_i\}$ of $\cE_{\Omega, \lambda}(U^p)$ by affinoids $U_i$ such that
	\begin{itemize}
		\item there exists an affinoid open $W_i$ of $\widehat{\cZ_0}$ satisfying that $\kappa$ is a finite surjective morphism from each irreducible component of $U_i$ to $W_i$;
		\item $\co_{\cE_{\Omega, \lambda}(U^p)}(U_i)$ is isomorphic to an $\co_{\widehat{\cZ_0}}(W_i)$-algebra of endomorphisms of a finitely generated projective $\co_{\widehat{\cZ_0}}(W_i)$-module.
	\end{itemize}
\end{proposition}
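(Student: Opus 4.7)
The plan is to follow the spectral-theoretic strategy of \cite[Lemma 3.10]{BHS1} and \cite[Prop.\ 3.11]{BHS1}, using Proposition \ref{Pr} as the substitute for \cite[Prop.\ 5.3]{BHS1}. The basic idea is to turn the compact operator $z$ on the Jacquet-Emerton-type module $M$ into a Fredholm power series over $\widehat{\cZ_0}$ whose zero locus is $Z_z(U^p)$, and then use Buzzard's spectral theory of Fredholm hypersurfaces (cf.\ \cite[\S~4]{Bu}) to produce the desired affinoid coverings.

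First, from Proposition \ref{Pr} we have an admissible affinoid covering $\cU_1 \subset \cU_2 \subset \cdots$ of $\widehat{\cZ_0}$ and, for each $h$, a Banach $A_h$-module $M_h$ satisfying (Pr) together with an $A_h$-compact operator $z_h$, these data being compatible in $h$ via the maps $\alpha_h, \beta_h$ (with $\beta_h \circ \alpha_h = z_h$, $\alpha_h \circ \beta_h = z_{h+1}$). Consequently the Fredholm determinants $P_h(T) := \det(1 - T z_h|M_h) \in A_h\{\!\{T\}\!\}$ are compatible under restriction and glue to a global Fredholm series $P(T) \in \co_{\widehat{\cZ_0}}(\widehat{\cZ_0})\{\!\{T\}\!\}$. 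The Zariski-closed subspace of $\bG_m^{\rig} \times \widehat{\cZ_0}$ cut out by $P(T^{-1}) = 0$ is, by construction, the scheme-theoretic support of $(\kappa_z)_* \cM_{\Omega,\lambda}(U^p)$, i.e.\ $Z_z(U^p)$. Hence $Z_z(U^p)$ is a Fredholm hypersurface in the sense of \cite[\S~3.3]{BHS1}.

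Next, to produce the covering $\{U_i'\}$ of part (1), I would invoke the theorem of Buzzard (see \cite[Thm.\ 4.6]{Bu} and the formalism recalled in \cite[\S~3.3]{BHS1}) which asserts that any Fredholm hypersurface admits an admissible covering by affinoid opens $U_i'$ each of which is a connected component of the preimage $g^{-1}(W_i)$ of an affinoid open $W_i \subset \widehat{\cZ_0}$ under $g$, and such that $g|_{U_i'} : U_i' \to W_i$ is finite surjective. Over such a $W_i$, standard Riesz-theoretic slope decomposition for the compact operator $z_h$ (applied on a large enough $M_h$ containing $W_i$) produces a canonical direct summand $M_{h,W_i}^{\leq s}$ of $M_h \widehat{\otimes}_{A_h} \co(W_i)$ which is a finitely generated projective $\co(W_i)$-module and which under the identification $M \cong \varprojlim_h M_h$ recovers $\Gamma(U_i', (\kappa_z)_* \cM_{\Omega,\lambda}(U^p))$. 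This gives part (1).

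For part (2), I would set $U_i := \kappa_z^{-1}(U_i')$; since $\kappa_z$ is finite (as noted just before the statement, using \cite[Prop.\ 3.2.23]{Em11} and Proposition \ref{Pr}), each $U_i$ is an affinoid open of $\cE_{\Omega,\lambda}(U^p)$, and these cover $\cE_{\Omega,\lambda}(U^p)$ admissibly. By construction of $\cE_{\Omega,\lambda}(U^p)$, the ring $\co_{\cE_{\Omega,\lambda}(U^p)}(U_i)$ is identified with the $\co(W_i)$-subalgebra of $\End_{\co(W_i)}\!\big(\Gamma(U_i', (\kappa_z)_* \cM_{\Omega,\lambda}(U^p))\big)$ generated by the image of $\bT^S$ together with $\cZ_{\Omega} \otimes \co_{\widehat{\cZ_0}}$; since the underlying module is finitely generated projective over $\co(W_i)$ by part (1), this endomorphism algebra is module-finite over $\co(W_i)$, hence so is $\co(U_i)$. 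Finiteness of $\kappa|_{U_i}$ on each irreducible component follows, and surjectivity from each irreducible component onto $W_i$ is then a consequence of the going-down/equidimensionality properties of finite maps between reduced affinoids, exactly as in the proof of \cite[Prop.\ 3.11]{BHS1}.

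The main technical obstacle, as in \cite{BHS1}, is verifying the compactness input packaged in Proposition \ref{Pr} so that the abstract Buzzard--Coleman machinery applies verbatim; once that is in hand, the argument is formal and follows \textit{loc.\ cit.}\ essentially without change. The one point requiring mild care here compared to the case $P = B_{\sI}$ is that $\cE_{\Omega,\lambda}(U^p)$ maps to $(\Spec \cZ_{\Omega})^{\rig} \times \widehat{\cZ_0}$ rather than to $\widehat{T_{\sI}(\Q_p)}$, so one has to keep track of the extra $\cZ_{\Omega}$-action and its compatibility with the $Z_{\ul{\varpi}}$-action (via the remark before Lemma \ref{BCmod}) when identifying the scheme-theoretic image.
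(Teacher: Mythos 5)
Your proposal is correct and follows essentially the same route as the paper: the paper's proof simply invokes the arguments of \cite[Lemma 3.10]{BHS1} and \cite[Prop.\ 3.11]{BHS1} with \cite[Prop.\ 5.3]{BHS1} replaced by Proposition \ref{Pr}, and your write-up is an accurate unpacking of exactly that argument (Fredholm series glued from the compact operators $z_h$, Buzzard's covering of the resulting Fredholm hypersurface, slope decomposition giving the projective modules, and pullback along $\kappa_z$ for part (2)). The extra care you note about the $\cZ_{\Omega}$- and $Z_{\ul{\varpi}}$-actions is precisely the only new point relative to the trianguline case, so nothing further is needed.
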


We also have as in \cite[Cor.\ 3.12]{BHS1}, \cite[Cor.\ 3.13]{BHS1} and \cite[Lemma\ 3.8]{BHS2} by the same arguments:

\begin{corollary}\label{CMC1}
(1) The rigid space $\cE_{\Omega, \lambda}(U^p)$ is nested (\cite[Def.\ 7.2.10]{BCh}), equidimensional of dimension $\sum_{v\in S_p} ([F_{\widetilde{v}}:\Q_p]r_{\widetilde{v}})$, and has no embedded component. 
	
(2) The morphism $\kappa_z$ is finite and the image of an irreducible component of $\cE_{\Omega, \lambda}(U^p)$ is an irreducible component of $Z_z(U^p)$. The image of an irreducible component of $\cE_{\Omega, \lambda}(U^p)$ by $\kappa$ is a Zariski-open of $\widehat{\cZ_0}$. 
	
(3) The coherent sheaf $\cM_{\Omega, \lambda}(U^p)$ is Cohen-Macaulay over $\cE_{\Omega, \lambda}(U^p)$. 
\end{corollary}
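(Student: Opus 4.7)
The plan is to transcribe verbatim the arguments of \cite[Cor.\ 3.12, Cor.\ 3.13]{BHS1} and \cite[Lemma 3.8]{BHS2}, using Proposition \ref{fredholm1} as the single nontrivial input (replacing \cite[Prop.\ 5.3]{BHS1} in \textit{loc.\ cit.}). The point is that once one has the Fredholm hypersurface structure on $Z_z(U^p)$ and the local projectivity statement for $\cM_{\Omega,\lambda}(U^p)$, the whole corollary is essentially formal.

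For (1), I would first check the dimension of the base: since $\cZ_0 = Z_{L_P}^0 \cong \prod_{v\in S_p}\prod_{j=1}^{r_{\widetilde v}}\co_{F_{\widetilde v}}^{\times}$ (up to finite index), the character variety $\widehat{\cZ_0}$ is smooth and equidimensional of dimension $\sum_{v\in S_p}[F_{\widetilde v}:\Q_p]r_{\widetilde v}$. By the general theory of Fredholm hypersurfaces (\cite{Con}), the closed subspace $Z_z(U^p)\hookrightarrow \bG_m^{\rig}\times\widehat{\cZ_0}$ is equidimensional of the same dimension. The nested structure on $\cE_{\Omega,\lambda}(U^p)$ then follows from the admissible covering $\{U_i\}$ of Proposition \ref{fredholm1}(2), where the $W_i\subset\widehat{\cZ_0}$ can be taken to be an admissible nested covering by increasing affinoids with compact transition maps. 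Equidimensionality of $\cE_{\Omega,\lambda}(U^p)$ follows from the finiteness of $\kappa_z$, once one shows that each irreducible component of $\cE_{\Omega,\lambda}(U^p)$ surjects onto an irreducible component of $Z_z(U^p)$ (standard argument via the projectivity of $\Gamma(U_i,\cM_{\Omega,\lambda}(U^p))$ over $\co_{\widehat{\cZ_0}}(W_i)$). The absence of embedded components similarly follows from this projectivity combined with the fact that $\co_{\cE_{\Omega,\lambda}(U^p)}(U_i)$ acts faithfully on $\Gamma(U_i,\cM_{\Omega,\lambda}(U^p))$: any embedded prime would have to be supported on a proper closed subspace of some irreducible component, contradicting faithfulness over a torsion-free module on an equidimensional base.

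For (2), the finiteness of $\kappa_z$ is already contained in Proposition \ref{fredholm1}. Given an irreducible component $X\subset\cE_{\Omega,\lambda}(U^p)$, its image $\kappa_z(X)$ is Zariski-closed (by finiteness), irreducible, and of the same dimension as $X$, hence it must be an irreducible component of the equidimensional $Z_z(U^p)$. The Zariski-openness of $\kappa(X)\subset\widehat{\cZ_0}$ follows from the slope decomposition property of Fredholm hypersurfaces: the further projection $Z_z(U^p)\to\widehat{\cZ_0}$ restricted to any irreducible component of the Fredholm hypersurface is an open map onto its image, cf.\ \cite[Cor.\ 4.2.3]{Con}. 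For (3), Cohen--Macaulayness reduces locally to the statement that $\Gamma(U_i,\cM_{\Omega,\lambda}(U^p))$ is Cohen--Macaulay over $\co_{\cE_{\Omega,\lambda}(U^p)}(U_i)$. Since it is a finite projective module over the regular (hence Cohen--Macaulay) affinoid $\co_{\widehat{\cZ_0}}(W_i)$, and $\kappa|_{U_i}$ is finite, a standard depth-chasing argument yields the conclusion.

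I do not expect any serious obstacle: the heavy lifting is Proposition \ref{fredholm1}, whose proof in turn only used the explicit description of $\widehat{S}(U^p,E)|_{H'}$ as a free analytic induction module (Proposition \ref{pAF0}(2)) to feed into the compact operator formalism. The only point that deserves slightly more care than in the case $P=B_{\sI}$ is bookkeeping the dimension: one must keep straight that the parameter of the Fredholm hypersurface is $\widehat{\cZ_0}$ (not $\widehat{T_{\sI}(\Q_p)}$), and that the ``extra'' dimensions coming from $(\Spec\cZ_{\Omega})^{\rig}$ correspond via the action of $Z_{\ul\varpi}\subset\cZ_1$ on $B_{\Omega,\lambda}(V)$ to directions along which $\kappa_z$ remains finite, so that they contribute to neither the dimension of $Z_z(U^p)$ nor to $\cE_{\Omega,\lambda}(U^p)$ beyond $\dim\widehat{\cZ_0}$.
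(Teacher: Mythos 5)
Your proposal is correct and follows essentially the same route as the paper, which deduces the corollary by transcribing the arguments of \cite[Cor.\ 3.12, Cor.\ 3.13]{BHS1} and \cite[Lemma 3.8]{BHS2} with Proposition \ref{fredholm1} as the only new input (itself resting on Proposition \ref{Pr} in place of \cite[Prop.\ 5.3]{BHS1}). Your dimension bookkeeping for $\widehat{\cZ_0}$ and the remark that the $(\Spec \cZ_{\Omega})^{\rig}$-directions are absorbed by the finiteness of $\kappa_z$ match the intended argument.
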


\begin{remark}\label{remP=B2}
Assume $P=B_p:=\prod_{v\in S_p} B(F_{\widetilde{v}})$, and let $T_p:=\prod_{v\in S_p} T(F_{\widetilde{v}})$. Consider the composition (cf.\ Remark \ref{remP=B}) 
\begin{equation*}
\cE_{\Omega, \lambda}(U^p) \lra (\Spec \cZ_{\Omega})^{\rig} \times \widehat{\cZ_0} \xlongrightarrow{\iota_{\Omega, \lambda}} \widehat{T_p}.
\end{equation*}
Using Proposition \ref{twBEi}, the Bernstein eigenvarieties $\cE_{\Omega, \lambda}(U^p)$ equipped with the above morphism over $\widehat{T_p}$ are all isomorphic for different $(\Omega, \lambda)$. Moreover by Remark \ref{remP=B} and (the same argument as in the proof of) \cite[Prop.\ 7.2.8]{BCh}, one can show that $\cE_{\Omega, \lambda}(U^p)^{\red}$ is isomorphic to the standard reduced eigenvariety $\cE(U^p)^{\red}$ constructed directly from $J_{B_p}(\widehat{S}(U^p,E)^{\an})$ (see for example \cite[\S~7]{Br13II}).
\end{remark}

\subsubsection{Density of classical points}

We prove Theorem \ref{dens1} below.

\begin{definition}\label{defclass}
Let $x=(\eta_x, \pi_{x,L_P}, \chi_x)$ be a point in $\cE_{\Omega, \lambda}(U^p)$.

(1) We call $x$ classical if
	\begin{equation}\label{defClas}
		\Hom_{L_P(\Q_p)}\Big(\pi_{x,L_P}\otimes_E \big((\chi_x)_{\ul{\varpi}} \circ \dett_{L_P}\big) \otimes_E L(\lambda)_P, J_P(\widehat{S}(U^p,E)^{\lalg})[\bT^S=\eta_x]\Big)\neq 0.
	\end{equation}
	
(2) We call $x$ very classical if 
	\begin{itemize}
		\item $\chi_x$ is locally algebraic and the weight $\lambda^x:=(\wt(\chi_x)\circ \dett_{L_P})+\lambda$ is dominant; 
		\item any irreducible constituent of the locally analytic parabolic induction
		\begin{equation}\label{parabol}
			\Big(\Ind_{P^-(\Q_p)}^{G(\Q_p)} \big(\pi_{x,L_P}\otimes_E ((\chi_x)_{\ul{\varpi}}\circ \dett_{L_P}) \otimes_E L(\lambda)_P\big)\Big)^{\an}
		\end{equation}	
		which is not locally algebraic, does not admit a $G(\Q_p)$-invariant $\co_E$-lattice.
	\end{itemize}
\end{definition}

\begin{lemma}
A very classical point $x=(\eta_x, \pi_{x,L_P},\chi_x)$ is classical.
\end{lemma}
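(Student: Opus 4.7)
The plan is to use Emerton's adjunction formula for the Jacquet functor to upgrade the embedding into $J_P(\widehat{S}(U^p,E)^{\an})$ into a $G(\Q_p)$-equivariant morphism from a locally analytic parabolic induction, and then argue that this morphism factors through the locally algebraic subrepresentation using the lattice hypothesis.

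First, by Proposition \ref{basicBE}(3), the fact that $x \in \cE_{\Omega, \lambda}(U^p)$ yields an $L_P(\Q_p)$-equivariant embedding $\pi_{x,L_P} \otimes_E ((\chi_x)_{\ul{\varpi}} \circ \dett_{L_P}) \otimes_E L(\lambda)_P \hookrightarrow J_P(\widehat{S}(U^p,E)^{\an})[\bT^S = \eta_x]$. Since the weight $\lambda^x = (\wt(\chi_x) \circ \dett_{L_P}) + \lambda$ is dominant by hypothesis, Emerton's adjunction formula (as used in \cite{Em11} and recalled e.g.\ in \cite{Br13II}) promotes this embedding to a $G(\Q_p)$-equivariant continuous morphism
\[ f\colon \Pi := \Big(\Ind_{P^-(\Q_p)}^{G(\Q_p)} \pi_{x,L_P} \otimes_E ((\chi_x)_{\ul{\varpi}} \circ \dett_{L_P}) \otimes_E L(\lambda)_P\Big)^{\an} \lra \widehat{S}(U^p,E)^{\an}[\bT^S = \eta_x]. \]

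Second, since $\chi_x$ is locally algebraic and $\lambda^x$ is dominant, $\Pi$ admits a nonzero locally algebraic subrepresentation $\Pi^{\lalg}$, isomorphic to $\bigl(\Ind_{P^-(\Q_p)}^{G(\Q_p)} \pi_{x,L_P} \otimes_E ((\chi_x^{\sm})_{\ul{\varpi}} \circ \dett_{L_P})\bigr)^{\sm} \otimes_E L(\lambda^x)$ (with $\chi_x^{\sm}$ the smooth part of $\chi_x$); the quotient $\Pi/\Pi^{\lalg}$ has all its irreducible constituents non-locally-algebraic. I claim that $f$ factors through $\Pi^{\lalg}$. Indeed, the image of $f$ lies in $\widehat{S}(U^p,E)^{\an}$, and hence its closure in the unitary Banach representation $\widehat{S}(U^p,E)$ is an admissible unitary Banach subrepresentation that inherits a $G(\Q_p)$-invariant $\co_E$-lattice from the unit ball $\widehat{S}(U^p,\co_E)$. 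A standard argument in this setting (cf.\ the proofs of classicality in \cite{Br13II} and \cite{BHS3}) shows that any irreducible constituent of $\Pi$ which appears as a subquotient of the image of $f$ must then admit a $G(\Q_p)$-invariant $\co_E$-lattice. By the second hypothesis in Definition \ref{defclass}(2), no non-locally-algebraic irreducible constituent of $\Pi$ admits such a lattice. Therefore $f$ vanishes on (the image of) $\Pi/\Pi^{\lalg}$ and factors as $f\colon \Pi^{\lalg} \to \widehat{S}(U^p,E)^{\lalg}[\bT^S = \eta_x]$.

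Finally, applying the Jacquet functor $J_P$ to this restricted morphism and using the adjunction on the locally algebraic side (which recovers the original $L_P$-embedding starting from $\Pi^{\lalg}$), we obtain an $L_P(\Q_p)$-equivariant embedding $\pi_{x,L_P} \otimes_E ((\chi_x)_{\ul{\varpi}} \circ \dett_{L_P}) \otimes_E L(\lambda)_P \hookrightarrow J_P(\widehat{S}(U^p,E)^{\lalg})[\bT^S = \eta_x]$, proving $x$ is classical. The crux of the argument is the lattice step in the second paragraph: one must carefully justify that non-locally-algebraic irreducible subquotients which cannot carry a $G(\Q_p)$-invariant $\co_E$-lattice are forbidden from appearing in the image of $f$ inside $\widehat{S}(U^p,E)^{\an}$. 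This relies on the interplay between admissibility, invariant lattices in unitary Banach representations, and locally analytic vectors developed in Emerton's foundational work, and is the reason one imposes the dominance of $\lambda^x$ (so that $\Pi^{\lalg}$ is nonzero and the adjunction applies) together with the lattice hypothesis on non-locally-algebraic constituents.
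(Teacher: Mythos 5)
There is a genuine gap at the first step. The morphism you want, from the \emph{full} locally analytic induction $\Pi=\big(\Ind_{P^-(\Q_p)}^{G(\Q_p)}\pi_{x,L_P}\otimes_E((\chi_x)_{\ul{\varpi}}\circ\dett_{L_P})\otimes_E L(\lambda)_P\big)^{\an}$ to $\widehat{S}(U^p,E)^{\an}[\bT^S=\eta_x]$, is not produced by Emerton's adjunction. The adjunction for the locally analytic Jacquet functor is not a naive Frobenius reciprocity: it only yields maps out of Emerton's closed subrepresentation $I_{P^-}^{G}(\cdot)$ of the induction, and only for \emph{balanced} maps into $J_P$; a general nonzero $L_P(\Q_p)$-map $\pi_{x,L_P}\otimes_E((\chi_x)_{\ul{\varpi}}\circ\dett_{L_P})\otimes_E L(\lambda)_P\to J_P(\widehat{S}(U^p,E)^{\an})$ need not extend to $\Pi$ (this failure is exactly the source of companion-point phenomena, so it cannot be waved away, and dominance of $\lambda^x$ does not repair it). The statement that is actually available — and the one the paper uses — is \cite[Thm.\ 4.3]{Br13II}: the Jacquet-side map produces a nonzero $G(\Q_p)$-equivariant morphism out of the Orlik--Strauch representation $\cF_{P^-}^{G}\big((\text{U}(\ug)\otimes_{\text{U}(\fp^-)}L^-(-\lambda^x)_P)^{\vee},\,\pi_{x,L_P}\otimes_E\delta_P^{-1}\otimes_E((\chi_x)_{\ul{\varpi}}^{\infty}\circ\dett_{L_P})\big)$, attached to the \emph{dual} of the parabolic Verma module. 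This representation has the same irreducible constituents as $\Pi$, but the locally algebraic piece sits in it as a \emph{quotient} (with kernel having only non-locally-algebraic constituents), not as a subrepresentation.

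This structural difference also undoes your second paragraph as written: in $\Pi$ the locally algebraic part $\Pi^{\lalg}$ is a subrepresentation, so "$f$ vanishes on (the image of) $\Pi/\Pi^{\lalg}$ and factors through $\Pi^{\lalg}$" is not a meaningful factorization (at best you could try to show the restriction $f|_{\Pi^{\lalg}}$ is nonzero, but that branch is moot since $f$ itself is not available). With the correct source, the lattice argument does exactly what you intend: any nonzero image of the kernel inside $\widehat{S}(U^p,E)^{\an}$ would contain an irreducible non-locally-algebraic constituent admitting a $G(\Q_p)$-invariant $\co_E$-lattice (from the unit ball $\widehat{S}(U^p,\co_E)$), contradicting the very-classical hypothesis; hence the morphism factors through the locally algebraic quotient $\big(\Ind_{P^-(\Q_p)}^{G(\Q_p)}\pi_{x,L_P}\otimes_E\delta_P^{-1}\otimes_E((\chi_x)_{\ul{\varpi}}^{\infty}\circ\dett_{L_P})\big)^{\infty}\otimes_E L(\lambda^x)$, and one concludes as you do at the end. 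So your overall strategy (adjunction, then lattice obstruction, then return to $J_P$ of the locally algebraic vectors) is the right one, but it must be run with the Orlik--Strauch representation of the dual Verma module as the source, not with the full locally analytic induction.
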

\begin{proof}
We write $(\chi_x)_{\ul{\varpi}}$ in the form $(\chi_x)_{\ul{\varpi}}^{\infty} \delta_{\wt(\chi_x)}$ where $(\chi_x)_{\ul{\varpi}}^{\infty}$ is a smooth character of $Z_{L_P}(\Q_p)$ (recall from \S~\ref{sec3.1.1} that $\delta_{\wt(\chi_x)}$ is the algebraic character of $Z_{L_P}(\Q_p)$ of weight $\wt(\chi_x)$). By Proposition \ref{basicBE} (3) we have
	\begin{equation*}
		\Hom_{L_P(\Q_p)}\Big(\pi_{x,L_P}\otimes_E (\chi_x)_{\ul{\varpi}} \circ \dett_{L_P} \otimes_E L(\lambda)_P, J_P(\widehat{S}(U^p,E)^{\an})[\bT^S=\eta_x]\Big)\neq 0.
	\end{equation*}
	By \cite[Thm.\ 4.3]{Br13II} (the notation of which we freely use), any non-zero element in the above vector space induces a non-zero morphism of $G(\Q_p)$-representations (recall $\ug$, $\fp^-$ denote the Lie algebra of $G(\Q_p)$, $P^-(\Q_p)$ respectively, and $\delta_P$ denotes the modulus character of $P(\Q_p)$)
	\begin{multline*}
		\cF_{P^-}^{G}\Big(\big(\text{U}(\ug) \otimes_{\text{U}(\fp^-)} L^-(-\lambda^x)_P\big)^{\vee}, \pi_{x,L_P}\otimes_E \delta_P^{-1} \otimes_E \big((\chi_x)_{\ul{\varpi}}^{\infty}\circ \dett_{L_P}\big)\Big)\\
		\lra \widehat{S}(U^p,E)^{\an}[\bT^S=\eta_x]\hooklongrightarrow \widehat{S}(U^p,E)^{\an}.
	\end{multline*}
	By the results of \cite{OS}, the representation on the left hand side has the following properties:
	\begin{itemize}
		\item it has the same irreducible constituents as the representation (\ref{parabol});
		\item there is a $G(\Q_p)$-equivariant surjection (where $(\Ind_{P^-(\Q_p)}^{G(\Q_p)}(-))^\infty$ is the smooth parabolic induction)
		\begin{multline*}
			\cF_{P^-}^{G}\Big(\big(\text{U}(\ug) \otimes_{\text{U}(\fp^-)} L^-(-\lambda^x)_P\big)^{\vee}, \pi_{x,L_P}\otimes_E \delta_P^{-1} \otimes_E \big((\chi_x)_{\ul{\varpi}}^{\infty}\circ \dett_{L_P}\big)\Big) \\
			\twoheadlongrightarrow \cF_{P^-}^{G_p}\Big( L^-(-\lambda^x), \pi_{x,L_P}\otimes_E \delta_P^{-1} \otimes_E \big((\chi_x)_{\ul{\varpi}}^{\infty} \circ \dett_{L_P}\big)\Big)\\
			\cong \Big(\Ind_{P^-(\Q_p)}^{G(\Q_p)}\big(\pi_{x,L_P}\otimes_E \delta_P^{-1} \otimes_E ((\chi_x)_{\ul{\varpi}}^{\infty}\circ \dett_{L_P})\big)\Big)^{\infty}\otimes_E L(\lambda^x) 
		\end{multline*}
		and any irreducible constituent of the kernel is {\it not} locally algebraic. 
	\end{itemize} 
The lemma then follows easily by definition.
\end{proof}

We have the following numerical classicality criterion.

\begin{proposition}[Numerical classicality]\label{class}
Let
\[x=(\eta_x, \pi_{x,L_P},\chi_x)=\big(\eta_x, \otimes_{v\in S_p} \pi_{x,\widetilde{v}}, \otimes_{v\in S_p} \chi_{x,\widetilde{v}}\big)\in \cE_{\Omega, \lambda}(U^p)\]
such that $\chi_x$ is locally algebraic and $\lambda^x=(\wt(\chi_x)\circ \dett_{L_P})+\lambda$ is dominant. Assume that for all $v\in S_p$ and $k\in \{1,\dots,r_{\widetilde{v}}\}$, we have 
	\begin{multline}\label{numCri}
		\sum_{j=1}^k \val_{\widetilde{v}}(\omega_{\pi_{x,\widetilde{v},j}}(\varpi_{\widetilde{v}})) < \sum_{j=1}^k(s_{\widetilde{v},j}+s_{\widetilde{v},j-1}-s_{\widetilde{v},k}) - \sum_{\tau \in \Sigma_{\widetilde{v}}}\sum_{j=1}^{s_{\widetilde{v},k}} \lambda_{\widetilde{v},j,\tau} \\
		+\inf_{\tau \in \Sigma_{\widetilde{v}}}\big\{\wt(\chi_x)_{\widetilde{v},k,\tau}-\wt(\chi_x)_{\widetilde{v},k+1,\tau}+\lambda_{\widetilde{v},s_{\widetilde{v},k},\tau}-\lambda_{\widetilde{v},s_{\widetilde{v},k+1},\tau}+1\big\},
	\end{multline}
where \ $\val_{\widetilde{v}}$ \ denotes \ the \ $p$-adic \ valuation \ normalized \ by \ sending \ $\varpi_{\widetilde{v}}$ \ to \ $1$, \ $\wt(\chi_x)_{\widetilde{v},k',\tau'}:=\wt(\chi_{x,\widetilde{v}})_{k',\tau'}$, and where $\omega_{\pi_{x, \widetilde{v},j}}$ denotes the central character of $\pi_{x, \widetilde{v},j}$. Then the point $x$ is very classical.
\end{proposition}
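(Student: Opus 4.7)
The plan is to apply the Orlik--Strauch classification \cite{OS} of irreducible constituents of locally analytic parabolic inductions, combined with Emerton's Jacquet-module formalism and the $p$-adic unitarity obstruction for non-locally algebraic constituents: the bound (\ref{numCri}) is designed to exactly violate this obstruction for every non-locally algebraic constituent of
\[W := \Big(\Ind_{P^-(\Q_p)}^{G(\Q_p)} \big(\pi_{x,L_P}\otimes_E ((\chi_x)_{\ul{\varpi}}\circ \dett_{L_P}) \otimes_E L(\lambda)_P\big)\Big)^{\an}.\]

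Since $\lambda^x$ is dominant and $\pi_{x,L_P}$ is cuspidal, by \cite{OS} the irreducible constituents of $W$ are in bijection with simple subquotients of the generalized Verma module $\text{U}(\ug)\otimes_{\text{U}(\fp^-)} L^-(-\lambda^x)_P$, parametrized (after fixing the cuspidal support) by a subset of $\prod_{v\in S_p}\sW^{P_{\widetilde v}}_{\min, F_{\widetilde v}}$, with $w=1$ corresponding to the unique locally algebraic constituent. By a standard reduction on the ``Verma skeleton'' (as in the proof of \cite[Thm.~4.3]{Br13II}), it suffices to rule out an invariant lattice for the constituents $V_{w_{k,v}}$ indexed by simple reflections $w_{k,v}$ at the $k$-th block boundary of $P_{\widetilde v}$ (for $v\in S_p$ and $1\leq k\leq r_{\widetilde v}-1$): every other non-locally algebraic $V_w$ has one of the $V_{w_{k,v}}$ as a subquotient at the first step away from $w=1$, and a $G(\Q_p)$-stable lattice on $V_w$ would propagate to a lattice on some $V_{w_{k,v}}$.

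Now suppose $V_{w_{k,v}}$ admits a $G(\Q_p)$-invariant $\co_E$-lattice. By Emerton's adjunction between the Jacquet functor and parabolic induction (\cite[\S~3]{Em11}), compatible with the unitary Banach setting, $J_P(V_{w_{k,v}})$ is non-zero and contains a locally algebraic character $\delta_{w_{k,v}}$ of $Z_{L_P}(\Q_p)$ whose weight is $w_{k,v}\cdot\lambda^x$ and whose smooth part is a twist of $\omega_{\pi_{x,L_P}}\cdot (\chi_x)^\infty_{\ul\varpi}\cdot \delta_P^{-1}$. Existence of the lattice forces a $p$-adic unitarity inequality: the valuation of $\delta_{w_{k,v}}$ on the strictly $P$-dominant cocharacter of $Z_{L_P}(\Q_p)$ supported on the first $k$ blocks of $L_{P_{\widetilde v}}$ and sending the generator to $\varpi_{\widetilde v}$ must be non-negative. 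Unwinding the three contributions --- the smooth central character of $\pi_{x,L_P}$, the modular character $\delta_P$, and the dot action $w_{k,v}\cdot\lambda^x$ summed over $\tau\in \Sigma_{\widetilde v}$ --- one finds that this inequality is exactly the negation of (\ref{numCri}), a contradiction.

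The main obstacle will be this final unwinding: carefully tracking all normalizations (the value of $\delta_P$ on the chosen cocharacter, the splitting $(\chi_x)_{\ul\varpi} = (\chi_x)^\infty_{\ul\varpi}\cdot \delta_{\wt(\chi_x),\ul\varpi}$, and the combinatorics of $w_{k,v}\cdot \lambda^x - \lambda^x$ acting on $\lambda^x = (\wt(\chi_x)\circ\dett_{L_P})+\lambda$) so that the right-hand side of (\ref{numCri}) matches the lattice-obstruction bound \emph{precisely}, including the $+1$ shift arising universally from crossing a single simple root and the infimum over $\tau\in\Sigma_{\widetilde v}$ corresponding to the sharpest embedding controlling the unitarity bound.
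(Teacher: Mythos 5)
There is a genuine gap, and it lies in your ``Verma skeleton'' reduction. You claim that it suffices to rule out invariant lattices on the constituents $V_{w_{k,v}}$ attached to simple reflections at block boundaries, because ``every other non-locally algebraic $V_w$ has one of the $V_{w_{k,v}}$ as a subquotient at the first step away from $w=1$'' and a lattice ``would propagate''. This is not correct: the $V_w$ for the various $w$ are \emph{distinct irreducible} constituents of the induced representation (you yourself parametrize them by simple subquotients of the generalized Verma module), so none of them can occur as a proper subquotient of another, and there is no mechanism by which a $G(\Q_p)$-invariant $\co_E$-lattice on $V_w$ for $\lg(w)\geq 2$ produces one on some $V_{w_{k,v}}$. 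The unitarity inequalities attached to different $w$ are genuinely different, and the whole point of the hypothesis (\ref{numCri}) is to be strong enough to kill \emph{all} $w\neq 1$ at once; logically, however, each $w$ must be treated, which your reduction does not do.

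The paper's proof handles an arbitrary $w\neq 1$ directly, and the two ingredients you are missing are precisely the ones that make this possible. First, for a given $w$ one introduces the parabolic $Q\supseteq P$ maximal such that $w\cdot\lambda^x$ is $Q$-dominant and uses the Orlik--Strauch transitivity isomorphism to write the constituent as $\cF_{Q^-}^{G}\big(L^-(-w\cdot\lambda^x),\pi_{L_Q}\big)$ for some irreducible constituent $\pi_{L_Q}$ of the smooth induction from $L_Q\cap P^-$; the lattice obstruction \cite[Cor.~3.5]{Br13I} then only applies to elements $z\in Z_{L_Q}(\Q_p)^+$, so your cocharacter ``supported on the first $k$ blocks'' cannot be chosen freely --- it must be $Q$-positive. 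Second, one needs a uniform estimate valid for every $w\neq 1$: by \cite[Lemma~3.18]{Ding5}, for each $v$ with $w_{\widetilde v}\neq 1$ there \emph{exists} a block index $k$ such that $z=\mathrm{diag}(\varpi_{\widetilde v}1_{s_{\widetilde v,k}},1)\in Z_{L_{Q_{\widetilde v}}}(F_{\widetilde v})^+$ and
\[
\val_{\widetilde v}\big(\delta_{w\cdot\lambda^x-\lambda^x}(z)\big)\ \leq\ \sum_{\substack{\tau\in\Sigma_{\widetilde v}\\ w_{\widetilde v,\tau}\neq 1}}\big(\lambda^x_{\widetilde v,s_{\widetilde v,k+1},\tau}-\lambda^x_{\widetilde v,s_{\widetilde v,k},\tau}-1\big),
\]
and it is this estimate, combined with the explicit valuations of $\delta_P^{-1}(z)$, $\delta_{\lambda^x}(z)$ and $\omega_{\pi_{L_Q}}(z)$, that contradicts (\ref{numCri}) (the infimum over $\tau$ in (\ref{numCri}) exactly absorbs the sum over the nonempty set $\{\tau:\ w_{\widetilde v,\tau}\neq 1\}$). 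Your final ``unwinding'' paragraph is in the right spirit for the simple-reflection case, but without the passage to $Q$, the existence statement for a good $k$, and a treatment of general $w$, the argument does not go through.
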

\begin{proof}
	By \cite[Thm.\ (i),(ii)]{OS}, the representation $\big(\Ind_{P^-(\Q_p)}^{G(\Q_p)} (\pi_{x,L_P}\otimes_E ((\chi_{x})_{\ul{\varpi}}\circ \dett_{L_P}) \otimes_E L(\lambda)_P)\big)^{\an}$ admits a Jordan-Holder filtration with graded pieces of the form:
	\begin{equation*}
		\cF_{P^-}^{G}(L^-(-w\cdot \lambda^x), \pi_{x,L_P} \otimes_E ((\chi_x)_{\ul{\varpi}}^{\infty} \circ \dett_{L_P})\otimes_E \delta_P^{-1})
	\end{equation*}
where $L^-(-w\cdot \lambda^x)$ runs through irreducible constituents of $\text{U}(\ug) \otimes_{\text{U}(\fp^-)} L^-(-\lambda^x)_P$. In particular, each $w\cdot \lambda^x$ is $P$-dominant. Assume that there exists an irreducible constituent $V$ of $\big(\Ind_{P^-(\Q_p)}^{G(\Q_p)} \pi_{x,L_P}\otimes_E ((\chi_x)_{\ul{\varpi}}\circ \dett_{L_P}) \otimes_E L(\lambda)_P\big)^{\an}$ such that $V$ admits a $G(\Q_p)$-invariant lattice and $V$ is not locally algebraic. We deduce that there exists
\[1\neq w=(w_{\widetilde{v}})_{v\in S_p}=(w_{\widetilde{v},\tau})_{\substack{v\in S_p \\ \tau \in \Sigma_{\widetilde{v}}}}\in \sW^{|\Sigma_p|}\cong \prod_{v\in S_p} \sW_{F_{\widetilde{v}}}=:\sW_F\index{$\sW_F$}\]
such that $w\cdot \lambda^x$ is $P$-dominant and $V$ is a constituent of $\cF_{P^-}^{G_p}(L^-(-w\cdot \lambda^x), \pi_{x,L_P} \otimes_E ((\chi_x)_{\ul{\varpi}}^{\infty}\circ \dett_{L_P}) \otimes_E \delta_P^{-1})$. We let $Q_{\widetilde{v}}\supset P_{\widetilde{v}}$ be the maximal parabolic of $\GL_n$ such that $w\cdot \lambda$ is $\prod_{v\in S_p} \Res^{F_{\widetilde{v}}}_{\Q_p} Q_{\widetilde{v}}$-dominant, i.e.\ $(w\cdot \lambda^x)_{\widetilde{v},\tau}$ is $Q_{\widetilde{v}}$-dominant for all $v\in S_p$ and $\tau \in \Sigma_{\widetilde{v}}$. We set $Q:=\prod_{v\in S_p} \Res^{F_{\widetilde{v}}}_{\Q_p} Q_{\widetilde{v}}$. We have by \cite[Thm.\ (iii)]{OS}
\begin{multline*}
\cF_{P^-}^{G}\Big(L^-(-w\cdot \lambda^x), \pi_{x,L_P} \otimes_E \big((\chi_x)_{\ul{\varpi}}^{\infty}\circ \dett_{L_P}\big) \otimes_E \delta_P^{-1}\Big)\\
\cong \cF_{Q^-}^{G}\Big(L^-(-w\cdot \lambda^x), \big(\Ind_{L_Q(\Q_p) \cap P^-(\Q_p)}^{L_Q(\Q_p)}(\pi_{x,L_P} \otimes_E ((\chi_x)_{\ul{\varpi}}^{\infty}\circ \dett_{L_P}) \otimes_E \delta_P^{-1})\big)^{\infty}\Big).
\end{multline*}	
By \cite[Thm.\ (i),(ii),(iv)]{OS}, there exists an irreducible constituent $\pi_{L_Q}$ of the smooth representation $\big(\Ind_{L_Q(\Q_p) \cap P^-(\Q_p)}^{L_Q(\Q_p)}(\pi_{x,L_P} \otimes_E ((\chi_x)_{\ul{\varpi}}^{\infty}\circ \dett_{L_P}) \otimes_E \delta_P^{-1})\big)^{\infty}$ such that
	\begin{multline*}
		V \cong \cF_{Q^-}^{G}\big(L^-(-w\cdot \lambda^x), \pi_{L_Q}\big)\\
		\hooklongrightarrow \cF_{Q^-}^{G}\Big(L^-(-w\cdot \lambda^x), \big(\Ind_{L_Q(\Q_p) \cap P^-(\Q_p)}^{L_Q(\Q_p)}(\pi_{x,L_P} \otimes_E ((\chi_x)_{\ul{\varpi}}^{\infty}\circ \dett_{L_P}) \otimes_E \delta_P^{-1})\big)^{\infty}\Big).
	\end{multline*}
Since $V$ admits a $G(\Q_p)$-invariant $\co_E$-lattice, we see by \cite[Cor.\ 3.5]{Br13I} that for any $z\in Z_{L_Q}(\Q_p)^+\subset Z_{L_P}(\Q_p)^+$ we have (where $\omega_{\pi_{L_Q}}$ denotes the central character of $\pi_{L_Q}$)
	\begin{equation}\label{inte0}
		\val_p\big(\delta_{w\cdot \lambda^x}(z) \delta_P^{-1}(z) \omega_{\pi_{L_Q}}(z)\big) \geq 0.
	\end{equation}
	By \cite[Lemma 3.18]{Ding5}, we easily deduce (noting that $\beta_a$ in {\it loc.\ cit.}\ is independent of the choice of $\lambda$) that for each $v\in S_p$ such that $w_{\widetilde v}\ne 1$ there exists $k\in \{1,\dots,r_{\widetilde{v}}\}$ such that $z:=(\underbrace{\varpi_{\widetilde{v}}, \dots, \varpi_{\widetilde{v}}}_{s_{\widetilde{v},k}}, 1, \dots, 1)\in Z_{L_{Q_{\widetilde{v}}}}(F_{\widetilde{v}})^+ \subset Z_{L_Q}(\Q_p)^+$ and 
\begin{eqnarray*}
\val_{\widetilde{v}}\big(\delta_{w\cdot \lambda^x-\lambda^x}(z)\big) &\leq& \sum_{\substack{\tau \in \Sigma_{\widetilde{v}}\\ w_{\widetilde{v},\tau}\neq 1}} \big(\lambda_{\widetilde{v}, s_{\widetilde{v},k+1},\tau}^x-\lambda_{\widetilde{v},s_{\widetilde{v},k},\tau}^x-1\big)\\
&=&\sum_{\substack{\tau \in \Sigma_{\widetilde{v}}\\ w_{\widetilde{v},\tau}\neq 1}} \big(\wt(\chi_x)_{\widetilde{v},k+1,\tau}-\wt(\chi_x)_{\widetilde{v},k,\tau}+\lambda_{\widetilde{v}, s_{\widetilde{v},k+1},\tau}-\lambda_{\widetilde{v},s_{\widetilde{v},k},\tau}-1\big).
\end{eqnarray*}
Together with (\ref{inte0}), we deduce
	\begin{equation*}
\val_{\widetilde{v}}\big(\delta_{\lambda^x}(z) \delta_P^{-1}(z) \omega_{\pi_{L_Q}}(z)\big) \geq \sum_{\substack{\tau \in \Sigma_{\widetilde{v}}\\ w_{\widetilde{v},\tau}\neq 1}} \big(\wt(\chi_x)_{\widetilde{v},k,\tau}-\wt(\chi_x)_{\widetilde{v},k+1,\tau}+\lambda_{\widetilde{v},s_{\widetilde{v},k},\tau}-\lambda_{\widetilde{v}, s_{\widetilde{v},k+1},\tau}+1\big).
	\end{equation*}
We compute:
\begin{eqnarray*}
\val_{\widetilde{v}}\big(\delta_P^{-1}(z)\big)&=&\sum_{j=1}^k \big(s_{\widetilde{v}, k}-s_{\widetilde{v},j}-s_{\widetilde{v},j-1}\big)\\
\val_{\widetilde{v}}\big(\delta_{\lambda^x}(z)\big)&=&\sum_{\tau \in \Sigma_{\widetilde{v}}}\sum_{j=1}^{s_{\widetilde{v},k}}\lambda_{\widetilde{v},j,\tau}^x =\sum_{\tau \in \Sigma_{\widetilde{v}}}\sum_{j=1}^{s_{\widetilde{v},k}} \lambda_{\widetilde{v},j,\tau}+\sum_{\tau \in \Sigma_{\widetilde{v}}}\sum_{j=1}^{k}\big(n_{\widetilde{v},j}\wt(\chi_x)_{\widetilde{v},j,\tau}\big)\\
\val_{\widetilde{v}}\big(\omega_{\pi_{L_Q}}(z)\big)&=&\sum_{j=1}^k \val_{\widetilde{v}}\big(\omega_{\pi_{\widetilde{v},j}}(\varpi_{\widetilde{v}})\big)-\sum_{\tau \in \Sigma_{\widetilde{v}}}\sum_{j=1}^{k}\big(n_{\widetilde{v},j}\wt(\chi_x)_{\widetilde{v},j,\tau}\big).
\end{eqnarray*}
Hence we deduce
	\begin{multline*}
		\sum_{j=1}^k \val_{\widetilde{v}}\big(\omega_{\pi_{\widetilde{v},j}}(\varpi_{\widetilde{v}})\big) \geq \sum_{j=1}^k\big(s_{\widetilde{v},j}+s_{\widetilde{v},j-1}-s_{\widetilde{v},k}\big) - \sum_{\tau \in \Sigma_{\widetilde{v}}}\sum_{j=1}^{s_{\widetilde{v},k}} \lambda_{\widetilde{v},j,\tau} \\
		+\sum_{\substack{\tau \in \Sigma_{\widetilde{v}}\\ w_{\widetilde{v},\tau}\neq 1}} \big(\wt(\chi_x)_{\widetilde{v},k,\tau}-\wt(\chi_x)_{\widetilde{v},k+1,\tau}+\lambda_{\widetilde{v},s_{\widetilde{v},k},\tau}-\lambda_{\widetilde{v}, s_{\widetilde{v},k+1},\tau}+1\big),
	\end{multline*}
which contradicts (\ref{numCri}) noting that $\wt(\chi_x)_{\widetilde{v},k,\tau}-\wt(\chi_x)_{\widetilde{v},k+1,\tau}+\lambda_{\widetilde{v},s_{\widetilde{v},k},\tau}-\lambda_{\widetilde{v}, s_{\widetilde{v},k+1},\tau}+1\geq 1$ for all $\tau$ as $\lambda^x$ is dominant and that the set $\{\tau \in \Sigma_{\widetilde{v}}\ |\ w_{\widetilde{v},\tau}\neq 1\}$ is non-empty since $w_{\widetilde{v}}\ne 1$. The proposition follows.
\end{proof}

\begin{remark}
	(1) Similar results (but in the setting of overconvergent cohomology) were obtained in \cite{BW20}.

(2) Note that all the terms on the right hand side of (\ref{numCri}) except $\wt(\chi_x)_{\widetilde{v},k,\tau}-\wt(\chi_x)_{\widetilde{v},k+1,\tau}$ are constants for points in $\cE_{\Omega, \lambda}(U^p)$. 

(3) Recall there is a finite morphism of $E$-algebras $E[Z_{\ul{\varpi}}] \ra \cZ_{\Omega}$ such that the associated morphism $\Spec \cZ_{\Omega} \ra \Spec E[Z_{\ul{\varpi}}]$ sends a point $\pi_{x,L_P}\in \Spec \cZ_{\Omega}$ to the character $z \mapsto \omega_{\pi_{x,L_P}}(z)$. We have hence finite morphisms (see also (\ref{kappaz})):
\begin{equation*}
\cE_{\Omega, \lambda}(U^p) \lra (\Spec \cZ_{\Omega})^{\rig} \times \widehat{\cZ_0} \lra (\Spec E[Z_{\ul{\varpi}}])^{\rig} \times \widehat{\cZ_0}. 
\end{equation*}
Note that the criterion in (\ref{numCri}) only uses the information of the image of $x$ in $(\Spec E[Z_{\ul{\varpi}}])^{\rig} \times \widehat{\cZ_0}$.
\end{remark}

The following theorem follows from the classicality criterion (Proposition \ref{class}) by the same argument as in the proof of \cite[Thm.\ 3.19]{BHS1} (see also \cite[\S~6.4.5]{Che}).

\begin{theorem}\label{dens1}
The set of very classical points is Zariski-dense in $\cE_{\Omega, \lambda}(U^p)$. Moreover, for any point $x=(\eta, \pi_{L_P},\chi)\in \cE_{\Omega, \lambda}(U^p)$ with $\chi$ locally algebraic, and for any admissible open $U\subseteq \cE_{\Omega, \lambda}(U^p)$ containing $x$, there exists an admissible open $V\subseteq U$ containing $x$ such that the set of very classical points in $V$ is Zariski-dense in $V$. 
\end{theorem}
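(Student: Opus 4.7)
The strategy is to adapt the classical density argument of Chenevier (\cite[\S~6.4.5]{Che}) and of \cite[Thm.\ 3.19]{BHS1} to our Bernstein setup, using Proposition \ref{class} as the input numerical criterion. The second (local) assertion of the theorem is the substantive one: once we produce, around every point with locally algebraic $\chi$, a Zariski-dense family of very classical points in an admissible neighborhood, Zariski-density on all of $\cE_{\Omega,\lambda}(U^p)$ follows because locally algebraic characters are Zariski-dense in $\widehat{\cZ_0}$ and every irreducible component of $\cE_{\Omega,\lambda}(U^p)$ dominates (a Zariski-open subset of) $\widehat{\cZ_0}$ through $\kappa$ by Corollary \ref{CMC1}(2).

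The plan is as follows. Fix $x=(\eta,\pi_{L_P},\chi)\in\cE_{\Omega,\lambda}(U^p)$ with $\chi$ locally algebraic, and an admissible open $U\ni x$. By Proposition \ref{fredholm1}(2) and Corollary \ref{CMC1}, I may shrink $U$ to an affinoid $U_i$ of the cover described there, so that $\kappa:U_i\to W_i$ is finite surjective on every irreducible component of $U_i$ onto an affinoid $W_i\subset\widehat{\cZ_0}$ and $\co_{\cE_{\Omega,\lambda}(U^p)}(U_i)$ acts faithfully on a finitely generated projective $\co_{\widehat{\cZ_0}}(W_i)$-module (via the Hecke action and the action of $\cZ_{\Omega}$). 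In particular, all eigenvalues under $Z_{\ul{\varpi}}$ of elements of $U_i$, i.e.\ the quantities $\omega_{\pi_{x',\widetilde{v},j}}(\varpi_{\widetilde{v}})$ appearing on the left-hand side of (\ref{numCri}), have bounded $p$-adic valuation as $x'$ varies in $U_i$, by a standard argument on finite projective modules (the slopes of a compact operator on a Banach module over an affinoid are bounded).

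Next I produce Zariski-density of the ``very dominant'' locally algebraic weights in $W_i$. Any locally algebraic character $\chi'$ of $\cZ_0$ has the form $\chi^{\sm}\delta_{\mu}$ with $\mu=(\mu_{\widetilde v,k,\tau})$ an integral weight and $\chi^{\sm}$ smooth; near $\chi$ in $W_i$ one may arrange $\chi^{\sm}$ to run over a Zariski-dense collection of smooth characters (locally constant characters being dense in the rigid space of continuous characters of $\cZ_0$), while independently varying $\mu$ over arbitrary integral weights in a neighborhood. Since the right-hand side of (\ref{numCri}) contains the term $\inf_\tau\{\wt(\chi')_{\widetilde{v},k,\tau}-\wt(\chi')_{\widetilde{v},k+1,\tau}+\dots\}$, which can be made arbitrarily large by choosing $\mu$ with $\mu_{\widetilde v,k,\tau}-\mu_{\widetilde v,k+1,\tau}\gg 0$ for all $\tau$, while the left-hand side stays bounded on $U_i$, we conclude that the set $S\subset W_i$ of locally algebraic characters $\chi'$ such that $\lambda+\wt(\chi')\circ\dett_{L_P}$ is dominant and such that (\ref{numCri}) holds for every point of $\kappa^{-1}(\chi')\cap U_i$ is Zariski-dense in $W_i$ (an intersection of the accumulation point set of locally algebraic weights with finitely many explicit Zariski-open conditions on weight coordinates, each of which contains a Zariski-dense subset of the integral weight lattice in $W_i$).

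Finally, by Proposition \ref{class}, every point in $\kappa^{-1}(S)\cap U_i$ is very classical. Since $\kappa$ restricted to each irreducible component of $U_i$ is finite and surjective onto $W_i$, the preimage $\kappa^{-1}(S)\cap U_i$ is Zariski-dense in $U_i$. Taking $V:=U_i$ proves the local statement. For the global statement, if $Z\subsetneq\cE_{\Omega,\lambda}(U^p)$ were a Zariski-closed subvariety containing every very classical point, then on every irreducible component $C$ of $\cE_{\Omega,\lambda}(U^p)$ the intersection $Z\cap C$ would be a proper Zariski-closed subvariety, contradicting the local density just established at any point $x\in C$ whose $\chi$-coordinate is locally algebraic (such points exist on $C$ because $\kappa(C)$ is Zariski-open in $\widehat{\cZ_0}$ by Corollary \ref{CMC1}(2), and locally algebraic characters are Zariski-dense in $\widehat{\cZ_0}$). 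The main point requiring care is the Zariski-density of the set $S$ inside $W_i$ as an affinoid: this rests on the well-known fact that integral weights with arbitrarily prescribed lower bounds on their gap $\mu_{\widetilde v,k,\tau}-\mu_{\widetilde v,k+1,\tau}$ accumulate at every locally algebraic character in weight space.
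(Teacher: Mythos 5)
Your argument is correct and is essentially the proof the paper has in mind: the paper simply invokes the argument of \cite[Thm.\ 3.19]{BHS1} (and \cite[\S~6.4.5]{Che}) together with Proposition \ref{class}, which is exactly what you have spelled out (bounded slopes on the affinoids of Proposition \ref{fredholm1}/Corollary \ref{CMC1}, accumulation of locally algebraic characters with large weight gaps, the numerical criterion, and finiteness of $\kappa$ over each component). No substantive difference to report.
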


\subsubsection{Galois representations}\label{galois}

We study families of Galois representations on $\cE_{\Omega, \lambda}(U^p)$. In particular, we show that the associated $(\varphi,\Gamma)$-modules admit a special kind of filtration.

We now assume that that $G$ is quasi-split at all finite places and that $F/F^+$ is unramified at all finite places. By \cite[Thm.\ 2.3]{Guer}, to an automorphic representation $\pi$ of $G$, one can associate an $n$-dimensional continuous essentially self-dual representation $\rho_{\pi}$ of $\Gal_F$ over $E$ (enlarging $E$ if necessary). In fact, if $\pi^{U^p}\neq 0$, the $\bT^S$-action on $\pi^{U^p}$ is given by a system of eigenvalues $\eta_{\pi}: \bT^S \ra E$. The representation $\rho_{\pi}$ is unramified outside $S$. And for any $v\notin S$ that splits in $F$, the characteristic polynomial of $\rho_{\pi}(\Frob_{\widetilde{v}})$ (where $\Frob_{\widetilde{v}}$ is a geometric Frobenius at $\widetilde{v}$), is given by 
\begin{equation}\label{ES}
	X^n + \cdots + (-1)^j (N \widetilde{v})^{\frac{j(j-1)}{2}} \eta_{\pi}(T_{\widetilde{v}}^{(j)}) X^{n-j}+ \cdots + (-1)^n (N \widetilde{v})^{\frac{n(n-1)}{2}} \eta_{\pi}(T_{\widetilde{v}}^{(n)}),
\end{equation}
where $N \widetilde{v}$ is the cardinality of the residue field at $\widetilde{v}$. Using Proposition \ref{pAF0}, to all classical points $x=(\eta_x, \pi_{x,L_P}, \chi_x)\in \cE_{\Omega, \lambda}(U^p)$ (Definition \ref{defclass} (1)), we can associate a continuous representation $\rho_x$ of $\Gal_F$ that is unramified outside $S$ and satisfies (\ref{ES}) (with $\eta_{\pi}$ replaced by $\eta_x$). Denote by $\Gal_F^S$ the Galois group of the maximal extension of $F$ that is unramified outside $S$.
Put 
\begin{equation*}
	\co(\cE_{\Omega, \lambda}(U^p))^0:=\{f\in \co(\cE_{\Omega, \lambda})(U^p)\ |\ |f(x)|_p \leq 1 \text{ for all $x\in \cE_{\Omega, \lambda}(U^p)$}\}.
\end{equation*}
Using that $\bT^S$ preserves $\widehat{S}(U^p,\co_E)$, it is easy to see that the natural morphism $\bT^S \ra \co(\cE_{\Omega, \lambda}(U^p))$ has image in $\co(\cE_{\Omega, \lambda}(U^p))^0$. Using the density of classical points (Theorem \ref{dens1}) and \cite[Prop.\ 7.1.1]{Che}, we deduce

\begin{proposition}
	There exists a unique $n$-dimensional continuous pseudocharacter
	\begin{equation*}
		\sT: \Gal_{F}^S \lra \co(\cE_{\Omega, \lambda}(U^p))^0
	\end{equation*}
	such that the evaluation of $\sT$ at any classical point $x$ of $\cE_{\Omega, \lambda}(U^p)$ coincides with ${\textrm Trace}(\rho_x)$.
\end{proposition}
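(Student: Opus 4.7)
The plan is to construct the pseudocharacter $\sT$ by exploiting the Zariski-density of (very) classical points from Theorem \ref{dens1} together with the compatibility (\ref{ES}) and Chebotarev density, and then to invoke \cite[Prop.~7.1.1]{Che} to conclude. The statement is essentially a direct application of Chenevier's abstract interpolation result, so the argument should be quite formal; the main thing to check is that its hypotheses apply in our setting.

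First I would produce, for each finite place $v \notin S$ of $F^+$ that splits in $F$ and each integer $j \in \{1,\dots,n\}$, an explicit candidate for the value of $\sT$ on powers of the geometric Frobenius. Concretely, by Newton's identities applied to the characteristic polynomial (\ref{ES}), for every classical point $x \in \cE_{\Omega,\lambda}(U^p)$ one may express $\tr(\rho_x(\Frob_{\widetilde v}^j))$ as a universal integer polynomial in the Hecke eigenvalues $\eta_x(T_{\widetilde v}^{(i)})$ and the norm $N\widetilde v$. Since $\bT^S$ acts on $\widehat S(U^p,\co_E)$, the natural map $\bT^S \to \co(\cE_{\Omega,\lambda}(U^p))$ lands in the power-bounded subring $\co(\cE_{\Omega,\lambda}(U^p))^0$, hence these polynomial combinations define elements $t_{\widetilde v,j} \in \co(\cE_{\Omega,\lambda}(U^p))^0$ which, by construction, specialize at each classical point $x$ to $\tr(\rho_x(\Frob_{\widetilde v}^j))$.

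Next, I would appeal to Chebotarev density, which asserts that the conjugacy classes of Frobenii $\Frob_{\widetilde v}$ (for $v \notin S$ split in $F$) are dense in $\Gal_F^S$. Combined with the Zariski-density of classical points in $\cE_{\Omega,\lambda}(U^p)$ provided by Theorem \ref{dens1}, together with the fact that $\cE_{\Omega,\lambda}(U^p)$ is nested (Corollary \ref{CMC1}(1)), this puts us exactly in the setting of \cite[Prop.~7.1.1]{Che}: a nested rigid space, a Zariski-dense set of points each carrying a genuine $n$-dimensional continuous $\Gal_F^S$-representation, and a coherent system of ``Frobenius traces'' lying in the power-bounded subring that interpolate the pointwise traces. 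That proposition then produces a unique continuous pseudocharacter $\sT: \Gal_F^S \to \co(\cE_{\Omega,\lambda}(U^p))^0$ whose specialization at each classical point is $\tr(\rho_x)$.

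Uniqueness is automatic from the construction, because two such pseudocharacters would agree on the dense set of Frobenii (by Chebotarev and the classical-point interpolation above) and hence on all of $\Gal_F^S$ by continuity. There is no serious obstacle in this argument; the only minor point to verify is that the hypotheses of \cite[Prop.~7.1.1]{Che} really apply, which amounts to noting: (i) $\cE_{\Omega,\lambda}(U^p)$ is nested (Corollary \ref{CMC1}(1)) and reduced on a Zariski-dense locus, (ii) the functions $t_{\widetilde v,j}$ lie in $\co(\cE_{\Omega,\lambda}(U^p))^0$, and (iii) the Galois representations $\rho_x$ attached to classical points via \cite[Thm.~2.3]{Guer} and Proposition \ref{pAF0} have traces compatible with (\ref{ES}), both points being guaranteed by our running hypotheses that $G$ is quasi-split and $F/F^+$ is unramified at all finite places.
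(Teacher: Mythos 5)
Your proposal is correct and follows essentially the same route as the paper: the paper's (terse) proof likewise observes that the image of $\bT^S$ lies in $\co(\cE_{\Omega,\lambda}(U^p))^0$ because $\bT^S$ preserves $\widehat{S}(U^p,\co_E)$, and then combines the Zariski-density of classical points (Theorem \ref{dens1}) with \cite[Prop.~7.1.1]{Che} exactly as you do. Your added details (Newton's identities for the Frobenius traces, Chebotarev plus continuity for uniqueness) are just the standard unwinding of how Chenevier's interpolation result is applied.
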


By \cite[Thm.\ 1(2)]{Tay91}, we deduce
 
\begin{corollary}
For each point $x=(\eta_x, \pi_{x,L_P}, \chi_x)\in \cE_{\Omega, \lambda}(U^p)$, there exists a (unique) semi-simple continuous representation $\rho_x$ of $\Gal_F$ over $k(x)$ which is unramified outside $S$ and such that ${\textrm Trace}(\rho_x(\Frob_{\widetilde{v}}))=\eta_x(T_{\widetilde{v},1})$ for any $v\notin S$ split in $F$.
\end{corollary}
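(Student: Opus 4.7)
The plan is short and essentially amounts to specializing the global pseudocharacter $\sT$ at the point $x$ and invoking Taylor's theorem on pseudocharacters. Concretely, I would proceed as follows.

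First, compose $\sT\colon \Gal_F^S \to \co(\cE_{\Omega,\lambda}(U^p))^0$ with the evaluation map $\co(\cE_{\Omega,\lambda}(U^p))^0 \to k(x)$ at the point $x$. This produces a continuous $n$-dimensional pseudocharacter $\sT_x\colon \Gal_F^S \to k(x)$. The key point is that evaluating $\sT$ at classical points agrees with the trace of the $n$-dimensional representation $\rho_\pi$ attached by \cite[Thm.\ 2.3]{Guer} to the corresponding automorphic representation $\pi$; in particular $\sT_x$ is actually of dimension $n$ (not smaller) and the construction is unambiguous.

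Second, apply \cite[Thm.\ 1(2)]{Tay91}: a continuous $n$-dimensional pseudocharacter of a profinite group with values in a field of characteristic zero (or more generally characteristic coprime to $n!$; here $k(x)$ has characteristic $0$) is the trace of a unique semi-simple continuous representation. This furnishes the semi-simple continuous representation $\rho_x\colon \Gal_F^S \to \GL_n(k(x))$ satisfying $\tr(\rho_x(g)) = \sT_x(g)$ for all $g\in \Gal_F^S$. Inflating along the surjection $\Gal_F \twoheadrightarrow \Gal_F^S$ yields a continuous semi-simple representation of $\Gal_F$ that is, by construction, unramified outside $S$. Uniqueness is immediate from the uniqueness clause of Taylor's theorem together with the well-known fact that a semi-simple representation is determined up to isomorphism by its trace character.

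Finally, to identify $\tr(\rho_x(\Frob_{\widetilde{v}}))$ with $\eta_x(T_{\widetilde{v},1})$ for any $v\notin S$ split in $F$, it suffices to observe that the natural map $\bT^S \to \co(\cE_{\Omega,\lambda}(U^p))$ sends $T_{\widetilde{v},1}=T_{\widetilde{v}}^{(1)}$ to a global function whose value at any point $y=(\eta_y,\pi_{y,L_P},\chi_y)$ is $\eta_y(T_{\widetilde{v},1})$. By the defining property of $\sT$ and formula (\ref{ES}), the two continuous functions $\sT(\Frob_{\widetilde v})$ and $T_{\widetilde v,1}$ on $\cE_{\Omega,\lambda}(U^p)$ agree on the Zariski-dense set of (very) classical points provided by Theorem \ref{dens1}, hence they agree globally, and evaluating at $x$ yields the desired equality. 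There is no real obstacle here; the only minor point worth care is checking that $\sT_x$ is genuinely $n$-dimensional (so that one can apply Taylor's theorem to produce an $n$-dimensional $\rho_x$), which again follows from density of classical points since the defining identities for the dimension of a pseudocharacter are polynomial in the values of $\sT$.
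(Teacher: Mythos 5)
Your proposal is correct and follows the same route as the paper: the paper's proof is precisely to specialize the pseudocharacter $\sT$ at the point $x$ and invoke \cite[Thm.\ 1(2)]{Tay91}, with the trace identity at $\Frob_{\widetilde{v}}$ built into the construction of $\sT$ (via Zariski-density of classical points and the formula (\ref{ES})). Your additional checks (that $\sT_x$ is $n$-dimensional, uniqueness of the semi-simple representation, agreement of the two global functions on the dense set of classical points) are exactly the routine verifications implicit in the paper's one-line argument.
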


Next, we study the behaviour of the restriction of the Galois representations $\{\rho_x\}_{x\in \cE_{\Omega, \lambda}(U^p)}$ at $p$-adic places.

Let $x=(\eta_x, \pi_{x,L_P}, \chi_x)\in \cE_{\Omega, \lambda}(U^p)$. Recall $\pi_{x,L_P}= \boxtimes_{v\in S_p} \pi_{x, L_{P_{\widetilde{v}}}}=\boxtimes_{v\in S_p}\big(\boxtimes_{i=1}^{r_{\widetilde{v}}} \pi_{x, \widetilde{v}, i}\big)$. Let
$\ttr_{x,\widetilde{v},i}:= \rec(\pi_{x, \widetilde{v}, i})(\frac{1-n_{\widetilde{v},i}}{2}-s_{\widetilde{v},i-1})$, an irreducible Weil-Deligne representation of $W_{F_{\widetilde v}}$ of dimension $n_{\widetilde{v},i}$.

Assume first that the point $x$ is classical. By Definition \ref{defClas} and \cite[Thm.\ 4.3]{BHS1}, there exists a non-zero $G_p$-equivariant morphism (recalling $\lambda^x=(\wt(\chi_x) \circ \dett_{L_P})+\lambda$)
\begin{equation}\label{injlalg}
\Big(\Ind_{P^-(\Q_p)}^{G(\Q_p)} \pi_{x,L_P} \otimes_{k(x)} \big((\chi_x)_{\ul{\varpi}}^{\infty} \circ \dett_{L_P}\big) \otimes_E \delta_P^{-1}\Big)^{\infty} \otimes_E L(\lambda^x) \lra \widehat{S}(U^p,E)^{\lalg}[\bT^S=\eta_x]
\end{equation}
where $(\chi_x)_{\ul{\varpi}}^{\infty}=\boxtimes_{v\in S_p} (\chi_{x, \widetilde{v}})_{\varpi_{\widetilde{v}}}^{\infty} =\boxtimes_{v\in S_p}\boxtimes_{i=1}^{r_{\widetilde{v}}} (\chi_{x, \widetilde{v}, i} )_{\varpi_{\widetilde{v}}}^{\infty}$ denotes the smooth character of $Z_{L_P}(\Q_p)$ such that $(\chi_x)_{\ul{\varpi}}^{\infty} \delta_{\wt(\chi_x)}=(\chi_x)_{\ul{\varpi}}$. By Proposition \ref{pAF0} (1), there exists a classical automorphic representation $\pi=\pi_{\infty} \boxtimes \pi^{\infty,p} \boxtimes (\boxtimes_{v\in S_p} \pi_{\widetilde{v}})$ such that $\eta_{\pi}=\eta_x$, and a $\GL_n(F_{\widetilde{v}})$-equivariant surjection
\begin{equation*}
	\Big(\Ind_{P_{\widetilde{v}}^-(F_{\widetilde{v}})}^{\GL_n(F_{\widetilde{v}})} \pi_{x, L_{P_{\widetilde{v}}}} \otimes_{k(x)} \big((\chi_{x,\widetilde{v}})_{\varpi_{\widetilde{v}}}^{\infty} \circ \dett_{L_{P_{\widetilde{v}}}}\big) \otimes_E \delta_{P_{\widetilde{v}}}^{-1}\Big)^{\infty} \twoheadlongrightarrow \pi_{\widetilde{v}}.
\end{equation*}
We have then (see for example \cite[Thm.\ 1.2(b)]{Scho13})
\begin{equation}\label{recInd}
\rec(\pi_{\widetilde{v}})|_{W_{F_{\widetilde{v}}}}=\bigoplus_{i=1}^{r_{\widetilde{v}}} \Big(\rec(\pi_{x,\widetilde{v},i})\big(\frac{n-n_{\widetilde{v},i}}{2}-s_{\widetilde{v},i-1}\big) \otimes_{k(x)} \rec((\chi_{x,\widetilde{v},i})_{\varpi_{\widetilde{v}}}^{\infty})\Big).
\end{equation} 
Denote by $\ttr_{x,\widetilde{v}}$ the Weil-Deligne representation associated to (the Deligne-Fontaine module of) $\rho_{x,\widetilde{v}}$, and $\ttr_{x,\widetilde{v}}^{\sss}$ its $F$-semi-simplification. By the local-global compatibility in classical local Langlands correspondence for $\ell=p$ (e.g.\ \cite{Car12}), we have $\ttr_{x,\widetilde{v}}^{\sss} \cong \rec(\pi_{\widetilde{v}})(\frac{1-n}{2})$. We deduce hence 
\begin{equation*}
	\ttr_{x,\widetilde{v}}^{\sss}|_{W_{F_{\widetilde{v}}}} \cong \bigoplus_{i=1}^{r_{\widetilde{v}}} \big(\ttr_{x,\widetilde{v},i} \otimes_{k(x)} \rec((\chi_{x,\widetilde{v},i})_{\varpi_{\widetilde{v}}}^{\infty})\big).
\end{equation*}
We call the point $x$ \textit{generic} if, for $\epsilon=0,1$ and for all $i\neq i'$ and $v\in S_p$ we have
\[\ttr_{x,\widetilde{v},i} \otimes_{k(x)} \rec((\chi_{x,\widetilde{v},i})_{\varpi_{\widetilde{v}}}^{\infty})\ncong \ttr_{x,\widetilde{v},i'}(\epsilon) \otimes_{k(x)} \rec((\chi_{x,\widetilde{v},i'})_{\varpi_{\widetilde{v}}}^{\infty})\]
or equivalently 
\begin{multline}\label{defGene}
	\pi_{x,\widetilde{v},i} \otimes_{k(x)} \big(\big(\unr(q_{\widetilde{v}}^{s_{\widetilde{v},i-1}-(1-n_{\widetilde{v},i})/2})(\chi_{x,\widetilde{v},i})_{\varpi_{\widetilde{v}}}^{\infty}\big) \circ \dett_{L_{P_{\widetilde{v}}}}\big)\\
	\ncong \pi_{x,\widetilde{v},i'} \otimes_{k(x)} \big(\big(\unr(q_{\widetilde{v}}^{s_{\widetilde{v},i'-1}-(1-n_{\widetilde{v},i'})/2+\epsilon})(\chi_{x,\widetilde{v},i'})_{\varpi_{\widetilde{v}}}^{\infty}\big) \circ \dett_{L_{P_{\widetilde{v}}}}\big).
\end{multline}
Assume that $x$ is generic, it is easy to see that $\rho_{x,\widetilde{v}}$ is potentially crystalline and generic for all $v\in S_p$ (see \S~\ref{introPcr}), hence we have (here there is no need to take semi-simplification and restriction to $W_{F_{\widetilde{v}}}$)
\begin{equation}\label{noN}
	\ttr_{x,\widetilde{v}} \cong \bigoplus_{i=1}^{r_{\widetilde{v}}} \big(\ttr_{x,\widetilde{v},i} \otimes_{k(x)} \rec((\chi_{x,\widetilde{v},i})_{\varpi_{\widetilde{v}}}^{\infty})\big).
\end{equation}
We obtain thus a $P_{\widetilde{v}}$-filtration $\sF_{\widetilde{v}}$ on $\ttr_{x,\widetilde{v}}$ (\S~\ref{introPcr}) given by
\begin{equation}\label{Pvfil1}\Fil^{\sF_{\widetilde{v}}}_i \ttr_{x,\widetilde{v}}:=\bigoplus_{j=1}^i \big(\ttr_{x,\widetilde{v},j} \otimes_{k(x)} \rec((\chi_{x,\widetilde{v},j})_{\varpi_{\widetilde{v}}}^{\infty})\big),\ \ i\in \{0,\dots,r_{\widetilde v}\}.\end{equation}
Recall also that $\rho_{x,\widetilde{v}}$ is de Rham of Hodge-Tate weights
\[\textbf{h}_{x,\widetilde{v}}:=(\textbf{h}_{x,\widetilde{v},i})_{1\leq i \leq n}:=\{h_{x,\widetilde{v}, i, \tau}:=\lambda^x_{\widetilde{v},i,\tau}-i+1\}_{\substack{\tau \in \Sigma_{\widetilde{v}}\\ 1\leq i \leq n}}.\]
As in \S~\ref{introPcr}, we associate to $(\rho_{x,\widetilde{v}},\sF_{\widetilde{v}})$ an element $w_{x,\sF_{\widetilde{v}}}\in \sW_{F_{\widetilde{v}}}\cong \sW^{\oplus |\Sigma_{\widetilde{v}}|}$ (denoted by $w_{\sF}$ in \textit{loc.\ cit.}). Let $\Delta_{x,\widetilde{v}}$ be the $p$-adic differential equation associated to $\ttr_{x,\widetilde{v}}$ and $\Delta_{x,\widetilde{v},i}$ the $p$-adic differential equation associated to $\ttr_{x,\widetilde{v},i}$ (\S~\ref{sec_pDf}). We have $D_{\rig}(\rho_{x,\widetilde{v}})[1/t]\cong \Delta_{x,\widetilde{v}}[1/t]$ and by (\ref{noN}):
\begin{equation*}
\Delta_{x,\widetilde{v}} \cong \bigoplus_{i=1}^{r_{\widetilde{v}}} \big(\Delta_{x,\widetilde{v},i} \otimes_{\cR_{k(x),F_{\widetilde{v}}}} \cR_{k(x),F_{\widetilde{v}}}((\chi_{x,\widetilde{v},i})_{\varpi_{\widetilde{v}}}^{\infty})\big). 
\end{equation*}
As discussed in \S~\ref{introPcr}, the $P_{\widetilde{v}}$-filtration $\sF_{\widetilde{v}}$ on $\ttr_{x,\widetilde{v}}$ induces a $P_{\widetilde{v}}$-filtration on $\Delta_{x,\widetilde{v}}$ which further induces a $P_{\widetilde{v}}$-filtration (still denoted) $\sF_{\widetilde{v}}$ on $D_{\rig}(\rho_{x,\widetilde{v}})$. Recall also that, if $\sF_{\widetilde{v}}$ is non-critical, then $\gr_i^{\sF_{\widetilde{v}}} D_{\rig}(\rho_{x,\widetilde{v}})$ is de Rham of Hodge-Tate weights $\{h_{x,\widetilde{v},j,\tau}\}_{\!\!\!\substack{\tau \in \Sigma_{\widetilde{v}}\\ s_{\widetilde{v},i-1}< j \leq s_{\widetilde{v},i}}}$. Let 
\begin{equation}\label{textbfh0}
\textbf{h}_{\widetilde{v}}:=(\textbf{h}_{\widetilde{v},i})_{1\leq i \leq n}=(h_{\widetilde{v},i,\tau}:=\lambda_{\widetilde{v},i,\tau}-i+1)_{\substack{\tau \in \Sigma_{\widetilde{v}}\\ 1\leq i \leq n}},
\end{equation} 
and note that $\textbf{h}_{\widetilde{v}}$ is strictly $P_{\widetilde{v}}$-dominant. Suppose that $\sF_{\widetilde{v}}$ is non-critical, we have then injections for $i\in\{1, \dots, r_{\widetilde{v}}\}$ (cf.\ (\ref{inj000})):
\begin{equation*}
\gr_i^{\sF_{\widetilde{v}}} D_{\rig}(\rho_{x,\widetilde{v}}) \otimes_{\cR_{k(x),F_{\widetilde{v}}}} \cR_{k(x),F_{\widetilde{v}}}\big((\chi_{x,\widetilde{v},i})_{\varpi_{\widetilde{v}}}^{-1}z^{-\textbf{h}_{\widetilde{v}, s_{i}}}\big) \hooklongrightarrow \Delta_{x,\widetilde{v},i}.
\end{equation*}

\begin{lemma}\label{lemGenec}
Let $x=(\eta_x, \pi_{x,L_P}, \chi_x)\in \cE_{\Omega, \lambda}(U^p)$ be a classical point. Assume
\begin{multline*}
\val_{\widetilde{v}}(\omega_{\pi_{x,\widetilde{v},i}}(\varpi_{\widetilde{v}}))-\val_{\widetilde{v}}(\omega_{\pi_{x,\widetilde{v},i'}}(\varpi_{\widetilde{v}}))\\
\neq \sum_{\tau \in \Sigma_{\widetilde{v}}} \big(\wt(\chi_x)_{\widetilde{v},i,\tau}-\wt(\chi_x)_{\widetilde{v},i',\tau}\big) +[F_{\widetilde{v}}:\Q_p](s_{\widetilde{v},i}-s_{\widetilde{v},i'}+\epsilon)
\end{multline*}
for $\epsilon=0,1$ and for all $i\neq i'$, then $\rho_{x,\widetilde{v}}$ is generic. If moreover (\ref{numCri}) holds then $(\rho_{x,\widetilde{v}}, \sF_{\widetilde{v}})$ is non-critical.
\end{lemma}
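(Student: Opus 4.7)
Both assertions are local at each place $v \in S_p$, and reduce to a comparison between data on the automorphic side (central characters of the $\pi_{x,\widetilde v,i}$ and weights of $\chi_x$) and data on the Galois side (the WD representation $\ttr_{x,\widetilde v}$ and the Hodge polygon of $\rho_{x,\widetilde v}$). I would set up both assertions together, then treat them in turn.

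For the first (genericity), by (\ref{noN}) and the equivalent form of (\ref{defGene}) via local Langlands, I must rule out an isomorphism $\pi_{x,\widetilde v,i}\otimes(\eta_i\circ\dett) \cong \pi_{x,\widetilde v,i'}\otimes(\eta'_{i'}\circ\dett)$ for all $i\ne i'$ and $\epsilon\in\{0,1\}$, where $\eta_i$, $\eta'_{i'}$ are the characters written in (\ref{defGene}). If $n_{\widetilde v,i}\ne n_{\widetilde v,i'}$ the two sides are representations of non-isomorphic groups and the claim is vacuous, so I may assume $n':=n_{\widetilde v,i}=n_{\widetilde v,i'}$. Any such isomorphism forces the central characters to agree on $\varpi_{\widetilde v} I_{n'}$. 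Using $\val_{\widetilde v}(q_{\widetilde v})=[F_{\widetilde v}:\Q_p]$ together with the computation
\[
(\chi_{x,\widetilde v,i})_{\varpi_{\widetilde v}}^{\infty}(\varpi_{\widetilde v}) \;=\; \delta_{\wt(\chi_x)_{\widetilde v,i}}(\varpi_{\widetilde v})^{-1} \;=\; \prod_{\tau\in\Sigma_{\widetilde v}}\tau(\varpi_{\widetilde v})^{-\wt(\chi_x)_{\widetilde v,i,\tau}}
\]
(which follows from $(\chi_{x,\widetilde v,i})_{\varpi_{\widetilde v}}(\varpi_{\widetilde v})=1$ and the factorization $(\chi_{x,\widetilde v,i})_{\varpi_{\widetilde v}}=(\chi_{x,\widetilde v,i})_{\varpi_{\widetilde v}}^{\infty}\cdot\delta_{\wt(\chi_x)_{\widetilde v,i}}$), and the identity $s_{\widetilde v,i-1}-s_{\widetilde v,i'-1}=s_{\widetilde v,i}-s_{\widetilde v,i'}$ that holds once $n_{\widetilde v,i}=n_{\widetilde v,i'}$, I would extract from the central-character equality a linear identity expressing $\val_{\widetilde v}(\omega_{\pi_{x,\widetilde v,i}}(\varpi_{\widetilde v}))-\val_{\widetilde v}(\omega_{\pi_{x,\widetilde v,i'}}(\varpi_{\widetilde v}))$ as a specific combination of $\sum_\tau(\wt(\chi_x)_{\widetilde v,i,\tau}-\wt(\chi_x)_{\widetilde v,i',\tau})$, $[F_{\widetilde v}:\Q_p](s_{\widetilde v,i}-s_{\widetilde v,i'})$ and $\epsilon$ (taking into account the finitely many possible twists by elements of $\mu_{\Omega_{\widetilde v,i}}^{\unr}$ that may re-identify two supercuspidals in the same Bernstein class); applying the hypothesis both to the pair $(i,i')$ and, after swapping indices, to $(i',i)$ for $\epsilon\in\{0,1\}$ excludes every such possibility.

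For the second (non-criticality under (\ref{numCri})): the classicality Proposition \ref{class} applies, so $x$ is very classical, the locally algebraic embedding (\ref{injlalg}) exists, and by the first part $\rho_{x,\widetilde v}$ is generic potentially crystalline, so the $P_{\widetilde v}$-filtration $\sF_{\widetilde v}$ of (\ref{Pvfil1}) makes sense on both $\ttr_{x,\widetilde v}$ and $D_{\rig}(\rho_{x,\widetilde v})$. Non-criticality, per \S~\ref{introPcr}, amounts to showing that the Hodge--Tate weights of each $\Fil_k^{\sF_{\widetilde v}}\!D_{\rig}(\rho_{x,\widetilde v})$ are the smallest $s_{\widetilde v,k}$ weights $(\textbf{h}_{\widetilde v,1},\ldots,\textbf{h}_{\widetilde v,s_{\widetilde v,k}})$. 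By weak admissibility of the filtered Deligne--Fontaine module attached to $\rho_{x,\widetilde v}$ (applied to the sub-DF module corresponding to $\Fil_k^{\sF_{\widetilde v}}\!D_{\rig}(\rho_{x,\widetilde v})$), the sum of those weights is bounded above by the sum of the Newton slopes of $\oplus_{j\leq k}\Delta_{x,\widetilde v,j}\otimes\cR((\chi_{x,\widetilde v,j})_{\varpi_{\widetilde v}}^{\infty})$, which is computed explicitly as $\sum_{j\leq k}[\val_{\widetilde v}(\omega_{\pi_{x,\widetilde v,j}}(\varpi_{\widetilde v}))+(\text{terms in }q_{\widetilde v}\text{ and }\chi_{x,\widetilde v,j})]$. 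A direct bookkeeping then identifies the difference between this Newton sum and the expected Hodge sum $\sum_{j\leq s_{\widetilde v,k}}\sum_\tau h_{\widetilde v,j,\tau}$ with precisely the gap made strictly negative by (\ref{numCri}); since weak admissibility gives Newton $\geq$ Hodge, this forces equality of Newton and Hodge polygons at each $k$, which is exactly non-criticality.

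The main obstacle is the non-criticality part: the strategy (compare Newton and Hodge polygons via weak admissibility) is classical, but executing the translation between the valuations of $\omega_{\pi_{x,\widetilde v,j}}(\varpi_{\widetilde v})$ on the automorphic side and the Newton slopes of $\gr_j^{\sF_{\widetilde v}}\Delta_{x,\widetilde v}$ on the Galois side requires a careful unwinding of the normalisations of \S~\ref{introPcr} and \S~\ref{sec_pDf}, and a verification that (\ref{numCri}) reads exactly as the strict inequality needed at every $k$. The genericity part, by contrast, is essentially a central-character computation once one handles the bookkeeping for $\mu_{\Omega_{\widetilde v,i}}^{\unr}$.
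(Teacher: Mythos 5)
The genericity half is fine and is what the paper means by ``straightforward to check'': if $n_{\widetilde v,i}\neq n_{\widetilde v,i'}$ there is nothing to prove, and otherwise comparing central characters (equivalently, determinants of the two Weil--Deligne representations) at $\varpi_{\widetilde v}$ in (\ref{defGene}) converts a putative isomorphism into a linear relation among $\val_{\widetilde v}(\omega_{\pi_{x,\widetilde v,i}}(\varpi_{\widetilde v}))$, $\sum_{\tau}\wt(\chi_x)_{\widetilde v,i,\tau}$, $[F_{\widetilde v}:\Q_p]$, the $s_{\widetilde v,i}$ and $\epsilon$, which the hypothesis forbids. But note that you never write this relation down: evaluating the central character of a twist by a character of $\GL_{n_{\widetilde v,i}}(F_{\widetilde v})$ produces a factor $n_{\widetilde v,i}$ on the twist terms, and checking that the resulting excluded identity really is the one displayed in the lemma is precisely the bookkeeping you defer, so even this ``easy'' half is not yet closed.

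The non-criticality half has a genuine gap. Your route (weak admissibility of the filtered Deligne--Fontaine module, i.e.\ a Newton-versus-Hodge slope bound applied to each $\Fil_k^{\sF_{\widetilde v}}$) is viable in principle --- it is essentially the mechanism underlying \cite[Thm.~7.6]{Br13I} --- but as written it misstates the target and omits the crux. First, non-criticality is \emph{not} ``equality of Newton and Hodge polygons at each $k$'': weak admissibility only gives the one-sided inequality $t_H\le t_N$ on subobjects, and in the non-critical case the Newton number of $\Fil_k^{\sF_{\widetilde v}}$ still differs from its Hodge number in general (already for $\GL_2$ a non-critical, non-ordinary refinement has $t_N>t_H$). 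What a small-slope argument must show is that the \emph{induced} Hodge--Tate weights on $\Fil_k^{\sF_{\widetilde v}}$ are the extreme ones $(\textbf{h}_{\widetilde v,1},\dots,\textbf{h}_{\widetilde v,s_{\widetilde v,k}})$ --- with the paper's decreasing convention these are the largest, not ``the smallest'' as you write, and getting the inequality's direction right under the normalization ``cyclotomic has Hodge--Tate weight $1$'' is part of the work. Second, and decisively, the quantitative step --- that (\ref{numCri}), with its $\delta_{P_{\widetilde v}}$-shift $\sum_{j\le k}(s_{\widetilde v,j}+s_{\widetilde v,j-1}-s_{\widetilde v,k})$ and its $\inf_\tau$ of a single weight gap, rules out \emph{every} critical relative position $w_{x,\sF_{\widetilde v}}\neq w_{0,F_{\widetilde v}}$ --- is exactly what you postpone as ``the main obstacle'', and it is the whole content of the lemma. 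The paper does not redo this computation: it observes that the proof of Proposition \ref{class} already shows that (\ref{numCri}) forces $\delta_{w\cdot\lambda^x_{\widetilde v}}(z)\,\delta_{P_{\widetilde v}}^{-1}(z)\,\omega_{\pi_{x,L_{P_{\widetilde v}}}}(z)$ to be non-integral for some $z\in Z_{L_{P_{\widetilde v}}}(F_{\widetilde v})^+$ for every $w\neq 1$, and then cites \cite[Thm.~7.6]{Br13I} (with the dictionary $w^{\alg}=w_{x,\sF_{\widetilde v}}w_{0,F_{\widetilde v}}$) to convert this non-integrality into $w_{x,\sF_{\widetilde v}}=w_{0,F_{\widetilde v}}$. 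Either invoke that theorem as the paper does, or actually carry out the filtered-module comparison you sketch; at present neither is done.
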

\begin{proof}
The first part of the lemma is straightforward to check. By the proof of Proposition \ref{class}, if (\ref{numCri}) holds, then for any $1\neq w_{\widetilde{v}}\in (\sW^{P_{\widetilde{v}}}_{\min})^{\oplus |\Sigma_{\widetilde{v}}|}$, there exists $z\in Z_{L_{P_{\widetilde{v}}}}(F_{\widetilde{v}})^+$ such that 
	\begin{equation*}
		\delta_{w_{\widetilde{v}}\cdot \lambda_{\widetilde{v}}^x}(z) \delta_{P_{\widetilde{v}}}^{-1}(z) \omega_{\pi_{x,L_{P_{\widetilde{v}}}}}(z) \notin \co_{k(x)}.
	\end{equation*}
By \cite[Thm.\ 7.6]{Br13I}, we deduce $w_{x,\sF_{\widetilde{v}}}=w_{0,F_{\widetilde{v}}}$ (noting that the $w^{\alg}$ of \textit{loc.\ cit.}\ is $w_{x,\sF_{\widetilde{v}}} w_{0,F_{\widetilde{v}}}$ in our case). Hence $(\rho_{x,\widetilde{v}}, \sF_{\widetilde{v}})$ is non-critical by definition (\S~\ref{introPcr}).
\end{proof}

We call a classical point $x\in \cE_{\Omega, \lambda}(U^p)$ {\it non-critical} if $(\rho_{x,\widetilde{v}}, \sF_{\widetilde{v}})$ is non-critical for all $v\in S_p$. By the same argument as in the proof of \cite[Thm.\ 3.9]{BHS1} and using Proposition \ref{class}, Lemma \ref{lemGenec}, we have the following strengthening of Theorem \ref{dens1}:

\begin{theorem}\label{denNCG}
	The \ set \ of \ very \ classical \ non-critical \ generic \ points \ is \ Zariski-dense \ in $\cE_{\Omega, \lambda}(U^p)$. Moreover, for any point $x=(\eta, \pi_{L_P},\chi)\in \cE_{\Omega, \lambda}(U^p)$ with $\chi$ locally algebraic, and for any admissible open $U\subseteq \cE_{\Omega, \lambda}(U^p)$ containing $x$, there exists an admissible open $V\subseteq U$ containing $x$ such that the set of very classical non-critical generic points in $V$ is Zariski-dense in $V$. 
\end{theorem}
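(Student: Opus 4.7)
The plan is to follow the strategy of \cite[Thm.\ 3.19]{BHS1} that established Theorem \ref{dens1}, but to strengthen the numerical selection of points so that non-criticality and genericity hold as well. Fix $x_{0}=(\eta,\pi_{L_P},\chi)\in \cE_{\Omega,\lambda}(U^p)$ with $\chi$ locally algebraic and an admissible open $U\ni x_{0}$. Using Proposition \ref{fredholm1} together with Corollary \ref{CMC1}, one can shrink $U$ to an affinoid admissible open $V\subseteq U$ on which the slopes $\val_{\widetilde v}\!\big(\omega_{\pi_{x,\widetilde v,j}}(\varpi_{\widetilde v})\big)$ are uniformly bounded for $x\in V$, and such that $\kappa|_V$ factors through an affinoid open $W\subseteq \widehat{\cZ_0}$ with $\kappa|_V$ finite on each irreducible component of $V$.

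The first step is to exhibit an abundant family of very classical points in $V$. A standard accumulation argument in $W$ shows that the set of locally algebraic characters $\chi\in W$ such that the differences $\wt(\chi)_{\widetilde v,k,\tau}-\wt(\chi)_{\widetilde v,k+1,\tau}$ can be made arbitrarily large for all $v\in S_p$, $k\in\{1,\dots,r_{\widetilde v}\}$ and $\tau\in \Sigma_{\widetilde v}$, is Zariski-dense in $W$. Since each irreducible component of $V$ surjects onto a Zariski-open of $W$ (Corollary \ref{CMC1}) and $V$ is equidimensional without embedded component, the preimages of such weights form a Zariski-dense subset of $V$. For these points the right hand side of (\ref{numCri}) dominates the bounded left hand side, so Proposition \ref{class} yields very classicality, and Lemma \ref{lemGenec} then gives non-criticality at every $v\in S_p$.

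To impose genericity, observe that, by Lemma \ref{lemGenec}, genericity can fail at $x$ only if for some $v\in S_p$, some $i\neq i'$, and some $\epsilon\in\{0,1\}$ one has
\[
\val_{\widetilde v}\!\big(\omega_{\pi_{x,\widetilde v,i}}(\varpi_{\widetilde v})\big)-\val_{\widetilde v}\!\big(\omega_{\pi_{x,\widetilde v,i'}}(\varpi_{\widetilde v})\big)=\sum_{\tau\in\Sigma_{\widetilde v}}\!\big(\wt(\chi_x)_{\widetilde v,i,\tau}-\wt(\chi_x)_{\widetilde v,i',\tau}\big)+[F_{\widetilde v}\!:\!\Q_p](s_{\widetilde v,i}-s_{\widetilde v,i'}+\epsilon).
\]
On each irreducible component of $V$ (shrinking further if necessary), the slope differences on the left are locally constant integers while the weights on the right vary freely with $\wt(\chi_x)$, so each such equality excludes at worst a proper Zariski-closed subvariety of $V$ (a union of translated hyperplanes in the weight direction). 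Only finitely many such conditions arise, so removing their union from the Zariski-dense subset constructed in the previous paragraph preserves Zariski-density.

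The first assertion of the theorem then follows from the local one applied at the (Zariski-dense, locally algebraic in $\chi$) classical points of Theorem \ref{dens1}. The main technical obstacle is the uniform boundedness of the slopes $\val_{\widetilde v}\!\big(\omega_{\pi_{x,\widetilde v,j}}(\varpi_{\widetilde v})\big)$ on $V$, which is precisely controlled by the Fredholm-hypersurface description of Proposition \ref{fredholm1} and the finiteness of $\kappa_z$ from Corollary \ref{CMC1}; everything else is a combination of the numerical classicality criterion (Proposition \ref{class}), Lemma \ref{lemGenec}, and the fact that each bad locus is a proper closed subvariety carved out in the weight direction.
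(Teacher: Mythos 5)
Your overall strategy (shrink to an affinoid $V$ mapping finitely to an affinoid $W$ of weight space, bound the slopes there, and select points above very spread-out algebraic weights so that Proposition \ref{class} and Lemma \ref{lemGenec} apply) is exactly the argument the paper intends, namely the one of \cite[Thm.\ 3.19]{BHS1} supplemented by Proposition \ref{class} and Lemma \ref{lemGenec}. However, your third step, where genericity is imposed by removing a ``bad locus'', does not work as written. First, the quantities $\val_{\widetilde v}\big(\omega_{\pi_{x,\widetilde v,j}}(\varpi_{\widetilde v})\big)$ are valuations of invertible analytic functions on the affinoid $V$ (pulled back from $(\Spec\cZ_{\Omega})^{\rig}$, essentially a coordinate on $\bG_m^{\rig}$); on an affinoid such valuations are bounded but certainly not locally constant, and at rigid points they range over a non-discrete set of rationals. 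Second, the locus in $V$ where such a valuation equals an analytic function of the weights is cut out by a condition on absolute values, not by the vanishing of analytic functions, so it is not Zariski-closed; the claim that each equality ``excludes at worst a proper Zariski-closed subvariety (a union of translated hyperplanes in the weight direction)'' is therefore unjustified, and one would moreover have to rule out that a whole irreducible component of $V$ lies in the bad locus.

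The repair is immediate from what you have already set up, and it is the route the paper takes: the genericity hypothesis of Lemma \ref{lemGenec} is an inequality between the slope difference, which is uniformly bounded on $V$ by your first step, and a weight expression whose absolute value is at least $|\Sigma_{\widetilde v}|\cdot N$ minus a bounded constant once all consecutive weight gaps exceed $N$. Hence the very same points you select in your second step (arbitrarily large weight gaps) automatically satisfy both (\ref{numCri}) and the genericity inequality for $N$ large, so they are very classical, non-critical and generic with no removal step at all. With that correction, the local density statement follows, and your deduction of the global statement (every irreducible component meets a very classical point of Theorem \ref{dens1}, whose $\chi$ is locally algebraic by definition, and an irreducible component containing a Zariski-dense subset of an admissible open is the closure of that subset) is the standard conclusion.
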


The following theorem is an analogue of the statement ``Galois representations on eigenvarieties are trianguline''.

\begin{theorem}\label{FOBE}
Let $x=(\eta_x, \pi_{x,L_P},\chi_x)$ be a point of $\cE_{\Omega, \lambda}(U^p)$. Then for $v\in S_p$, $D_{\rig}(\rho_{x,\widetilde{v}})$ admits a $P_{\widetilde{v}}$-filtration $\sF_{\widetilde{v}}=\Fil_{\bullet}^{\sF_{\widetilde{v}}}D_{\rig}(\rho_{x,\widetilde{v}})$ of saturated $(\varphi, \Gamma)$-submodules of $D_{\rig}(\rho_{x,\widetilde{v}})$ such that
	\begin{equation*}
		\big(\gr_i^{\sF_{\widetilde{v}}} D_{\rig}(\rho_{x,\widetilde{v}})\big)[1/t] \cong \big(\Delta_{x,\widetilde{v},i} 
		\otimes_{\cR_{k(x),F_{\widetilde{v}}}} \cR_{k(x),F_{\widetilde{v}}}((\chi_{x,\widetilde{v},i})_{\varpi_{\widetilde{v}}})\big)[1/t].
	\end{equation*} 
\end{theorem}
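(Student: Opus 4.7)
\medskip

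\noindent\textbf{Proof proposal.} The plan is to deduce the theorem by interpolating the $\Omega$-filtration from a Zariski-dense set of classical points using a global triangulation result for $\Omega$-filtrations (the analog, for $\Omega$ instead of a Borel, of the theorems of \cite{KPX}, \cite{Liu}, \cite{Bergd14}; this is precisely the role of the forthcoming \S~\ref{globaltriangulation} cited in the introduction).

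First, for a very classical non-critical generic point $x=(\eta_x,\pi_{x,L_P},\chi_x)$, the combination of (\ref{Pvfil1}) and the discussion surrounding (\ref{inj000}) already yields the desired $P_{\widetilde{v}}$-filtration $\sF_{\widetilde{v}}$ on $D_{\rig}(\rho_{x,\widetilde{v}})$ with the correct identification of $(\gr_i^{\sF_{\widetilde{v}}})[1/t]$: indeed local-global compatibility at $p$ together with (\ref{noN}) identifies $\Delta_{x,\widetilde{v}}$ with $\bigoplus_i \Delta_{x,\widetilde{v},i}\otimes\cR_{k(x),F_{\widetilde{v}}}((\chi_{x,\widetilde{v},i})_{\varpi_{\widetilde{v}}}^\infty)$, and non-criticality promotes the increasing filtration on $\Delta_{x,\widetilde{v}}$ to a filtration by saturated $(\varphi,\Gamma)$-submodules of $D_{\rig}(\rho_{x,\widetilde{v}})$. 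By Theorem~\ref{denNCG}, the set of such points is Zariski-dense in $\cE_{\Omega,\lambda}(U^p)$, and remains dense in any admissible neighborhood of any given $x$ with $\chi_x$ locally algebraic; a standard twist reduction reduces the general case to the locally algebraic one.

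Second, I would work locally on an affinoid open $V\subseteq\cE_{\Omega,\lambda}(U^p)$ containing the target point $x$, possibly after replacing $V$ by a finite cover on which the pseudocharacter $\sT$ lifts to a genuine family of Galois representations $\rho_V$ (using that $\rho_{\overline{x}}$ is well-defined up to semisimplification and invoking Chenevier--Nyssen-type arguments to upgrade the pseudocharacter to a representation in a neighborhood of any non-critical classical point, then spreading). The restriction $\rho_{V,\widetilde{v}}$ produces a family of $(\varphi,\Gamma)$-modules $D_{\rig}(\rho_{V,\widetilde{v}})$ over $V$. In parallel, pulling back the universal $p$-adic differential equations $\Delta_{\Omega_{\widetilde{v},i}}$ from $(\Spec\cZ_{\Omega_{\widetilde{v},i}})^{\rig}$ (constructed in \S~\ref{sec_pDf}) and pulling back the universal character from $\widehat{\cZ_0}$, one obtains families $\Delta_{V,\widetilde{v},i}$ and $\cR_{V,F_{\widetilde{v}}}((\chi_{\widetilde{v},i})_{\varpi_{\widetilde{v}}})$ over $V$. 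The content of the global triangulation theorem (in the form to be developed in \S~\ref{globaltriangulation}) would then assert that, since on the Zariski-dense set of very classical non-critical generic points of $V$ the fibres of $D_{\rig}(\rho_{V,\widetilde{v}})$ admit compatible $\Omega$-filtrations with the prescribed graded pieces, there exists an $\Omega$-filtration on $D_{\rig}(\rho_{V,\widetilde{v}})$ itself, by saturated $(\varphi,\Gamma)$-submodules, whose graded pieces satisfy the family version of the stated identification after inverting $t$. Specializing at $x$ yields the theorem.

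The main obstacle, and essentially the entire content of the proof, is the global triangulation step. The classical results of \cite{KPX}, \cite{Liu}, \cite{Bergd14} are engineered for filtrations whose graded pieces are rank-one $(\varphi,\Gamma)$-modules $\cR(\delta)$, whereas here the graded pieces have rank $n_{\widetilde{v},i}$ and are of the form $\Delta_{V,\widetilde{v},i}\otimes\cR_{V,F_{\widetilde{v}}}((\chi_{\widetilde{v},i})_{\varpi_{\widetilde{v}}})[1/t]$. One must therefore (a) define a workable notion of ``$\Omega$-triangular'' family of $(\varphi,\Gamma)$-modules, inverting $t$ on graded pieces to absorb the Hodge filtration mismatch between $D_{\rig}(\rho_{V,\widetilde{v}})$ and $\Delta_{V,\widetilde{v}}$; (b) prove an interpolation/density theorem in this context, likely via cohomological arguments for families of $(\varphi,\Gamma)$-modules adapted from \cite{KPX} combined with Berger's dictionary \cite{Ber08a} to control the graded pieces; and (c) verify that the filtration produced by the interpolation is by saturated submodules, which on the dense set of non-critical points is automatic but in general requires a flatness/specialization argument. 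Once these ingredients are in place, the proof of Theorem \ref{FOBE} is a formal consequence.
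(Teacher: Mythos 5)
Your proposal is correct and follows essentially the same route as the paper: one uses the Zariski-density of very classical non-critical generic points (Theorem \ref{denNCG}) together with nestedness to find a suitable affinoid neighbourhood, passes via \cite[Lemma 7.8.11]{BCh} to a finite surjective cover carrying a genuine family of Galois representations, pulls back the universal $p$-adic differential equations of \S~\ref{sec_pDf} and the universal character, and then applies the $\Omega$-filtration interpolation results of \S~\ref{globaltriangulation} (Corollary \ref{para} and Corollary \ref{rgloOF} (1)) before specializing at $x$. The ``missing'' interpolation step you flag as the main content is exactly what the paper supplies in the Appendix (Theorem \ref{KPXg} and its corollaries), including the saturation issue, so your outline matches the paper's proof.
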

\begin{proof}
Since $\cE_{\Omega, \lambda}(U^p)^{\red}$ is nested, by \cite[Lemma 7.2.9]{BCh} and Theorem \ref{denNCG}, there exists an irreducible affinoid neighbourhood $U$ of $x$ in $\cE_{\Omega, \lambda}(U^p)^{\red}$ such that the set $C(U)$ of very classical generic non-critical points in $U$ is Zariski-dense in $U$. By pulling-back the universal character of $\cZ_0$ over $\widehat{\cZ_0}$ via the composition
\[U\hookrightarrow \cE_{\Omega, \lambda}(U^p)^{\red}\hookrightarrow \cE_{\Omega, \lambda}(U^p)\ra (\Spec \cZ_{\Omega})^{\rig} \times \widehat{\cZ_0} \twoheadrightarrow \widehat{\cZ_0},\]
we obtain a continuous character $\chi_U=\boxtimes_{v\in S_p}\boxtimes_{i=1}^{r_{\widetilde{v}}} \chi_{U,\widetilde{v},i}: Z_{L_P}^0 \ra \co(U)^{\times}$. For $v\in S_p$ and $1\leq i \leq r_{\widetilde{v}}$, by pulling-back the ``universal" $p$-adic differential equation over $(\Spec \cZ_{\Omega_{\widetilde{v},i}})^{\rig}$ constructed in \S~\ref{sec_pDf} via the composition
\[U\hookrightarrow \cE_{\Omega, \lambda}(U^p)\ra (\Spec \cZ_{\Omega})^{\rig} \times \widehat{\cZ_0} \twoheadrightarrow (\Spec \cZ_{\Omega})^{\rig}\simeq \prod_{v\in S_p}\prod_{i=1}^{r_{\widetilde v}}(\Spec \cZ_{\Omega_{\widetilde{v},i}})^{\rig}\twoheadrightarrow (\Spec \cZ_{\Omega_{\widetilde{v},i}})^{\rig},\]
we obtain a $(\varphi, \Gamma)$-module $\Delta_{U,\widetilde{v},i}'$ over $\cR_{U,F_{\widetilde{v}}}$. We let
\[\Delta_{U, \widetilde{v},i}:=\Delta_{U,\widetilde{v},i}' \otimes_{\cR_{E,F_{\widetilde{v}}}} \cR_{E,F_{\widetilde{v}}}\big(\unr(q_{\widetilde{v}}^{s_{\widetilde{v},i-1}-(1-n_{\widetilde{v},i})/{2}})\big).\]
Thus for each point $x$ in $U$, the evaluation $x^* \Delta_{U,\widetilde{v},i}$ is isomorphic to $\Delta_{x, \widetilde{v},i}$. Applying \cite[Lemma 7.8.11]{BCh}, we obtain a rigid analytic space $\widetilde{U}$ with a finite dominant (surjective) morphism $g: \widetilde{U} \ra U$ and a locally free $\co(\widetilde{U})$-module $\rho_{\widetilde{U}}$ of rank $n$ equipped with a continuous $\Gal_F^S$-action such that $g^{-1}(C(U))$ is Zariski-dense in $\widetilde{U}$ and $\rho_{\widetilde{U}}|_x=\rho_{g(x)}$ for all $x \in \widetilde{U}$. Applying Corollary \ref{para} and Corollary \ref{rgloOF} (1) to (noting that, for $x\in \widetilde{U}^{\red}$, $\Delta_i|_x=\Delta_{g(x),\widetilde{v},i}$ and $\delta_i|_x=\chi_{g(x),\widetilde{v},i}$)
	\begin{equation*}
		\big\{X=\widetilde{U}^{\red}, M=D_{\rig}(\rho_{\widetilde{U}^{\red},\widetilde{v}}), \Delta_i:=g^* \Delta_{U, \widetilde{v},i}|_{\widetilde{U}^{\red}}, \delta_i:= (g^* \chi_{U,\widetilde{v},i}) z^{\textbf{h}_{\widetilde{v}, s_{i}}}|_{\widetilde{U}^{\red}}\big\},
	\end{equation*}
	the theorem follows.
\end{proof}

\begin{proposition}\label{galBE1}
	Let $x=(\eta_x, \pi_{x,L_P},\chi_x)$ be a point of $\cE_{\Omega, \lambda}(U^p)$. Then for $v\in S_p$, and $\tau \in \Sigma_{\widetilde{v}}$, the Sen $\tau$-weights of $\rho_{x,\widetilde{v}}$ are given by $\{h_{j_i,\tau}+\wt(\chi_{x,\widetilde{v},i})_{\tau}\}_{\substack{1\leq i \leq r \\ s_{i-1}+1 \leq j_i \leq s_i}}$.
\end{proposition}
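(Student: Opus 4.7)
The plan is to reduce to the Zariski-dense set of very classical non-critical generic points provided by Theorem \ref{denNCG}, compute the Sen weights there (where they coincide with Hodge-Tate weights and are read off from the infinitesimal character), and interpolate via the Sen polynomial of $D_{\rig}(\rho_{\widetilde U, \widetilde v})$ in families. For $y = (\eta_y, \pi_{y,L_P}, \chi_y)$ a very classical non-critical generic point, $\rho_{y, \widetilde v}$ is potentially crystalline (cf.\ the discussion after Lemma \ref{lemGenec}), hence de Rham, so its Sen $\tau$-weights equal its Hodge-Tate $\tau$-weights. By non-criticality these are the entries $\textbf{h}^y_{\widetilde v, j, \tau} = \lambda^y_{\widetilde v, j, \tau} - j + 1$ for $j = 1, \ldots, n$, where $\lambda^y = (\wt(\chi_y) \circ \dett_{L_P}) + \lambda$. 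The $j$-th component (in block $i(j)$, determined by $s_{\widetilde v, i(j)-1} < j \leq s_{\widetilde v, i(j)}$) of $\wt(\chi_y) \circ \dett_{L_P}$ equals $\wt(\chi_{y, \widetilde v, i(j)})_\tau$, so
\[
\textbf{h}^y_{\widetilde v, j, \tau} = h_{j, \tau} + \wt(\chi_{y, \widetilde v, i(j)})_\tau,
\]
which is the formula of the proposition at $y$.

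To pass to a general $x$, pick an irreducible reduced affinoid neighborhood $U \ni x$ in $\cE_{\Omega, \lambda}(U^p)^{\red}$ on which the set $C(U)$ of very classical non-critical generic points is Zariski-dense (possible by Theorem \ref{denNCG} and the fact that $\cE_{\Omega, \lambda}(U^p)$ is nested). Let $g\colon \widetilde U \to U$ be the finite dominant cover and $\rho_{\widetilde U}$ the Galois family on a locally free $\co(\widetilde U)$-module of rank $n$ constructed in the proof of Theorem \ref{FOBE}, so that $g^{-1}(C(U))$ is Zariski-dense in $\widetilde U$ and $\rho_{\widetilde U}|_y \cong \rho_{g(y)}$ for every $y$. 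The $(\varphi, \Gamma)$-module $D_{\rig}(\rho_{\widetilde U, \widetilde v})$ over $\cR_{\widetilde U, F_{\widetilde v}}$ admits, for each $\tau \in \Sigma_{\widetilde v}$, a Sen polynomial $P_\tau(T) \in \co(\widetilde U)[T]$ of degree $n$ (via the theory of families of $(\varphi, \Gamma)$-modules, e.g.\ \cite{KPX}) whose specialization at $y \in \widetilde U$ has as roots (with multiplicity) the $\tau$-Sen weights of $\rho_{g(y), \widetilde v}$.

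Pulling back the universal character of $\cZ_0$ along $\widetilde U \xrightarrow{g} U \hookrightarrow \cE_{\Omega, \lambda}(U^p) \to \widehat{\cZ_0}$ yields $\chi_{\widetilde U} = \boxtimes_v \boxtimes_i \chi_{\widetilde U, \widetilde v, i}$ with analytic weights $\wt(\chi_{\widetilde U, \widetilde v, i})_\tau \in \co(\widetilde U)$. Set
\[
\widetilde P_\tau(T) := \prod_{i=1}^{r_{\widetilde v}} \prod_{j = s_{\widetilde v, i-1}+1}^{s_{\widetilde v, i}} \big(T - h_{j, \tau} - \wt(\chi_{\widetilde U, \widetilde v, i})_\tau\big) \in \co(\widetilde U)[T].
\]
By the first paragraph, $P_\tau(y) = \widetilde P_\tau(y)$ for every $y \in g^{-1}(C(U))$; since this set is Zariski-dense and point evaluations factor through $\widetilde U^{\red}$, the images of $P_\tau$ and $\widetilde P_\tau$ agree in $\co(\widetilde U^{\red})[T]$. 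Specializing at any preimage of $x$ in $\widetilde U$ yields the claimed list of $\tau$-Sen weights of $\rho_{x, \widetilde v}$. The only non-formal input is the existence of a Sen polynomial for the family $D_{\rig}(\rho_{\widetilde U, \widetilde v})$ interpolating pointwise Sen weights, which is by now standard but must be invoked with care.
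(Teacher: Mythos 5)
Your proposal is correct and is essentially the paper's own argument: the paper likewise reduces to the Zariski-dense set of very classical non-critical generic points in the family $\widetilde U \to U$ built in the proof of Theorem \ref{FOBE}, and then concludes by the analyticity of the Sen $\tau$-weights on $\widetilde U$ (citing \cite[Def.\ 6.2.11]{KPX}), which is exactly your Sen-polynomial interpolation spelled out in more detail.
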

\begin{proof}
We use the notation in the proof of Theorem \ref{FOBE}. The statement holds for very classical non-critical generic points, hence for points in $g^{-1}(C(U))\subset \widetilde{U}$. Since the Sen $\tau$-weights are analytic functions on $\widetilde{U}$ (see for example \cite[Def. 6.2.11]{KPX}), we deduce the statement holds for any point in $\widetilde{U}$. The proposition follows.
\end{proof}

Let $\overline{\rho}$ be a continuous representation of $\Gal_F$ of dimension $n$ over $k_E$ such that $\overline{\rho}(c\cdot c) \cong \overline{\rho}^\vee \otimes \overline{\chi_{\cyc}}^{1-n}$ (where $\Gal(F/F^+)=\{1,c\}$) and $\overline{\rho}$ is unramified outside $S$. To $\overline{\rho}$, we associate a maximal ideal $\fm_{\overline{\rho}}$ of $\bT^S$ of residue field $k_E$ such that, denoting $\eta_{\overline{\rho}}: \bT^S /\fm_{\overline{\rho}} \twoheadrightarrow k_E$ the corresponding morphism, the characteristic polynomial of $\overline{\rho}(\Frob_{\widetilde{v}})$ is given by (\ref{ES}) with $\eta_{\pi}$ replaced by $\eta_{\overline{\rho}}$ for all $v\notin S$ that splits in $F$. The representation $\overline{\rho}$ is called \textit{$U^p$-modular} if the localization $S(U^p,\co_E/\varpi_E^s)_{\overline{\rho}}:=S(U^p, \co_E/\varpi_E^s)_{\fm_{\overline{\rho}}}$ is non-zero (by e.g.\ \cite[Lemma 6.5]{BD1}) there exist only finitely many $\overline{\rho}$ such that $\overline{\rho}$ is $U^p$-modular). We define 
\[\widehat{S}(U^p, \co_E)_{\overline{\rho}}:=\varprojlim_s S(U^p, \co_E/\varpi_E^s)_{\overline{\rho}}\textrm{\ \ \ and\ \ \ }\widehat{S}(U^p,E)_{\overline{\rho}}:=\widehat{S}(U^p,\co_E)_{\overline{\rho}}\otimes_{\co_E} E.\]
We construct $\cE_{\Omega, \lambda}(U^p,\overline{\rho})$ from $\widehat{S}(U^p,E)_{\overline{\rho}}$ exactly as we construct $\cE_{\Omega,\lambda}(U^p)$. Since $\widehat{S}(U^p,E)_{\overline{\rho}}$ is a direct summand of $\widehat{S}(U^p,E)$ (equivariant under the action of $G(F \otimes_{\Q} \Q_p) \times \bT^S$), one easily sees that all the previous results hold with $\cE_{\Omega, \lambda}(U^p)$ replaced by $\cE_{\Omega, \lambda}(U^p, \overline{\rho})$.

\subsubsection{Locally analytic socle and companion points}\label{seccompCP}

We recall the locally analytic socle conjecture of \cite{Br13I}, \cite{Br13II} for generic potentially crystalline representations and discuss its relation with companion points on Bernstein eigenvarieties. 

To any $n$-dimensional continuous representation $\rho$ of $\Gal_F$ over $E$ we associate a maximal ideal $\fm_{{\rho}}$ of $\bT^S\otimes_{\co_E}E$ of residue field $E$ such that, denoting $\eta_{{\rho}}: \bT^S \twoheadrightarrow E$ the corresponding morphism, the characteristic polynomial of ${\rho}(\Frob_{\widetilde{v}})$ is given by (\ref{ES}) with $\eta_{\pi}$ replaced by $\eta_{{\rho}}$ for all $v\notin S$ that splits in $F$. Let $\rho$ be such a representation and assume that $\widehat{S}(U^p,E)[\fm_{\rho}]=\widehat{S}(U^p,E)[\bT^S=\eta_{\rho}]$ is non-zero. For $v\in S_p$, assume also that $\rho_{\widetilde{v}}$ is generic potentially crystalline (see \S~\ref{introPcr}) of Hodge-Tate weights $(h_{\widetilde{v}, 1,\tau}>h_{\widetilde{v},2,\tau}> \cdots > h_{\widetilde{v},n,\tau})_{\tau \in \Sigma_{\widetilde{v}}}$. Let $\sF_{\widetilde{v}}$ be a minimal parabolic filtration of $\ttr(\rho_{\widetilde{v}})$, and $P_{\widetilde{v}}$ be the associated parabolic subgroup of $\GL_n$ (cf.\ \S~\ref{introPcr}). We use the notation $n_{\widetilde{v},i}$, $r_{\widetilde{v}}$ and $s_{\widetilde{v},i}$ of \S~\ref{s: B0}. We let $\pi_{L_{P_{\widetilde{v}}}}:=\boxtimes_{i=1}^{r_{\widetilde{v}}} \pi_{\widetilde{v},i}$ be the smooth irreducible representation of $L_{P_{\widetilde{v}}}(F_{\widetilde{v}})$ over $E$ such that $\rec(\pi_{\widetilde{v},i})((1-n_{\widetilde{v},i})/2-s_{\widetilde{v},i})\cong \gr_i^{\sF_{\widetilde{v}}} \ttr(\rho)$. For $\tau\in \Sigma_{\widetilde{v}}$ and $j=1,\dots, n$, let $\lambda_{\widetilde{v},\tau,j}=h_{\widetilde{v},, \tau,j}+j-1$, then $\lambda_{\widetilde{v}}:=(\lambda_{\widetilde{v},1,\tau}, \dots, \lambda_{\widetilde{v},n,\tau})_{\tau\in \Sigma_{\widetilde{v}}}$ is a dominant weight of $\Res^{F_{\widetilde{v}}}_{\Q_p} \GL_n$ (with respect to $\Res^{F_{\widetilde{v}}}_{\Q_p} B$). For $w_{\widetilde{v}}\in \sW^{P_{\widetilde{v}}}_{\min,F_{\widetilde{v}}}$, consider the following locally $\Q_p$-analytic representation of $\GL_n(F_{\widetilde{v}})$ over $E$:
\begin{equation*}
C(w_{\widetilde{v}},\sF_{\widetilde{v}}):=\cF_{P_{\widetilde{v}}^-(F_{\widetilde{v}})}^{\GL_n(F_{\widetilde{v}})}\big(L^-(-w_{\widetilde{v}}\cdot \lambda_{\widetilde{v}}), \pi_{L_{P_{\widetilde{v}}}} \otimes_E \delta_{P_{\widetilde{v}}}^{-1}\big).
\end{equation*} 
It is topologically irreducible by \cite[Thm.\ (iv)]{OS}) (indeed, since $\rho_{\widetilde{v}}$ is generic, the smooth induction $(\Ind_{P_{\widetilde{v}}(F_{\widetilde{v}})\cap L_{Q}(F_{\widetilde{v}})}^{L_Q(F_{\widetilde{v}})} \pi_{L_{P_{\widetilde{v}}}} \otimes_E \delta_{P_{\widetilde{v}}}^{-1})^{\infty}$ is irreducible for any parabolic $Q\supset P_{\widetilde{v}}$). Note that we have $C(1,\sF_{\widetilde{v}}) \cong \pi_{\widetilde{v}} \otimes_E L(\lambda_{\widetilde{v}})$ where $\pi_{\widetilde{v}}:= (\Ind_{P_{\widetilde{v}}^-}^{\GL_n}\pi_{L_{P_{\widetilde{v}}}} \otimes_E \delta_{P_{\widetilde{v}}}^{-1})^{\infty}$ is the smooth irreducible representation of $\GL_n(F_{\widetilde{v}})$ such that $\rec(\pi_{\widetilde{v}})(\frac{1-n}{2})\cong \ttr(\rho_{\widetilde{v}})$. As discussed in \S~\ref{introPcr}, to the filtration $\sF_{\widetilde{v}}$, we associate an element $w_{\sF_{\widetilde{v}}}\in \sW^{P_{\widetilde{v}}}_{\max, F_{\widetilde{v}}}$. The following conjecture is a special case of \cite[Conj.\ 5.3]{Br13II}:

\begin{conjecture}\label{conjSoc}
For $v\in S_p$, let $w_{\widetilde{v}}\in \sW^{P_{\widetilde{v}}}_{\min, F_{\widetilde{v}}}$. Then $\widehat{\otimes}_{v\in S_p}C(w_{\widetilde{v}},\sF_{\widetilde{v}})$ is a subrepresentation of $\widehat{S}(U^p,E)[\fm_{\rho}]$ if and only if $w_{\widetilde{v}} \leq w_{\sF_{\widetilde{v}}}w_{0,F_{\widetilde{v}}}$ for all $v\in S_p$. 
\end{conjecture}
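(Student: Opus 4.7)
The plan is to reduce the statement to a question on the patched Bernstein eigenvarieties of Theorem \ref{intpBE} and then convert the problem of companion constituents into a problem of companion points plus a cycle-theoretic computation, following the strategy used for $P=B$ in \cite{BHS3} and described in the introduction. First I would use the Taylor-Wiles machinery to produce, from $\widehat{S}(U^p,E)[\fm_\rho]$, the patched representation $\Pi_\infty^{R_\infty-\an}[\fm_y]$ where $y$ is the image of $\fm_\rho$ in $(\Spf R_\infty)^{\rig}$. The problem then becomes: for each $v\in S_p$ and $w_{\widetilde v}\in\sW^{P_{\widetilde v}}_{\min,F_{\widetilde v}}$, decide whether $C(w_{\widetilde v},\sF_{\widetilde v})$ embeds in $\Pi_\infty^{R_\infty-\an}[\fm_y]$ (tensoring over the various places $v\in S_p$ is handled place-by-place, as in \cite{BHS3}). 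To each such $w$ I would attach a cycle $[\cN_{w\cdot\lambda,y,\sF}]$ in $Z^{[F^+:\Q]n(n+1)/2}(\Spec\widehat\co_{\fX_\infty,y})$ constructed from locally analytic representation theory so that $[\cN_{w\cdot\lambda,y,\sF}]\neq 0$ iff the corresponding embedding exists, and prove (Proposition \ref{inteqpt}) that this nonvanishing is equivalent to $x_\sF\in\cE_{\Omega,w\cdot\lambda}^\infty(\overline{\rho})$.

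For the ``only if'' direction, I would combine this equivalence with the embedding (\ref{R->T}) of Theorem \ref{intpBE}(4), which reduces the question to membership of $x_\sF$ in the Bernstein paraboline variety $X_{\Omega,w(\textbf{h})}(\overline{\rho}_\wp)$, and then invoke Theorem \ref{intcpt2} (a consequence of Theorem \ref{intLocM}(1) and the potentially crystalline deformation ring analysis of \S~\ref{secPCD}), which forces $ww_0\geq w_{\sF_\varrho}$. This half works without any technical hypothesis on $P$.

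For the ``if'' direction, I would run an induction on $\lg(w_0)-\lg(w_\sF)$. The base case $\lg(w_\sF)\geq\lg(w_0)-1$ is handled directly, since the very first companion point (corresponding to $w=1$, already known to exist by classicality) together with Theorem \ref{intcpt2}-type arguments produces the required embeddings. For the inductive step, I would express the cycles $[\cN_{w\cdot\lambda,y,\sF}]$ in terms of the cycles $\fZ_{w,y}$ coming from the irreducible components of the generalized Steinberg variety $Z_{P,\wp}$; these cycles $\fZ_{w,y}$ are well defined thanks to the unibranch property (Theorem \ref{intuni} and its $Z_{P,\wp}$-analogue Theorem \ref{unibranch2}) and satisfy $\fZ_{w,y}\neq 0\iff w\geq w_{\sF_\varrho}$. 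The key computation (carried out in \S~\ref{appenB} using characteristic cycles of generalized Verma modules) gives the relation (\ref{inteqc}), which provides a system of linear equations between the $[\cN_{w'\cdot\lambda,y}]$ and the $\fZ_{w'w_0,y}$. Combining this with positivity of the expansion of $[\cN_{w'\cdot\lambda,y}]$ in the basis $\{\fZ_{w'',y}\}$ then lets us conclude $[\cN_{w_\sF w_0\cdot\lambda,y}]\neq 0$.

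The main obstacle is that to deduce the required nonvanishing from (\ref{inteqc}), one needs the patched Bernstein eigenvariety to be smooth at $x$ along enough directions, which via a tangent-space analysis requires $\dim\fz_{L_P,\wp}^{w_\sF w_0 w^{-1}}=\dim\fz_{L_P,\wp}-2$ for a sufficient supply of intermediate $w$'s. When this fails (Case (3) in the introduction), the coefficients in (\ref{inteqc}) become uncontrolled nonnegative integers and one cannot extract the desired inequality by the combinatorial argument that works in Cases (1) and (2). Therefore I would impose the hypothesis $(*)$ (no two $\GL_{n_i}$'s with $n_i>1$ are adjacent in $L_P$) precisely to rule out Case (3): the combinatorial lemma (Proposition \ref{bruhInt}) then guarantees that every Bruhat interval of length $2$ in $\sW_{L_P}\backslash\sW$ relevant to the induction is either a full interval (Case (1)) or has a unique middle element (Case (2)), which suffices for the cycle computation to force $[\cN_{w_\sF w_0\cdot\lambda,y}]\neq 0$ and conclude the proof. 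Without $(*)$, only the ``only if'' part of the conjecture follows.
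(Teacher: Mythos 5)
There is a genuine gap, and it starts with the status of the statement itself: what you are asked to prove is Conjecture \ref{conjSoc}, which the paper does \emph{not} prove — it is a special case of \cite[Conj.~5.3]{Br13II} and remains open there. What the paper establishes (Theorem \ref{casparticulier}, via Theorem \ref{intcs}) is the conjecture \emph{under additional hypotheses}: the Taylor--Wiles Hypothesis \ref{TayWil}, the nonvanishing $\widehat{S}(U^p,E)^{\lalg}[\fm_{\rho}]\neq 0$, and the adjacency condition ($*$) on the factors of $L_{P_{\widetilde v}}$. Your proposal reproduces exactly that conditional argument (patching, the cycles $[\cN_{w\cdot\lambda,y}]$ and $\fZ_{w,y}$, Proposition \ref{inteqpt}, Theorem \ref{intcpt2}, the relation (\ref{inteqc}), Proposition \ref{bruhInt}), and you yourself concede that Case (3) is out of reach without ($*$). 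So what you have sketched is a proof of the paper's partial result, not of the conjecture as stated; presenting it as a proof of Conjecture \ref{conjSoc} overstates what the method delivers, since neither ($*$) nor Hypothesis \ref{TayWil} (needed for $\Pi_\infty$, $R_\infty$ and the whole cycle formalism on $\Spec\widehat{\co}_{\fX_\infty,y}$), nor the mild local conditions away from $p$ you need to make $x^p$ a smooth point of $(\Spf R_\infty^p)^{\rig}$, appear in the statement.

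A second, more local problem is your base case. You write that the embedding for $w=1$ is ``already known to exist by classicality''. Under the hypotheses of the conjecture one only knows $\widehat{S}(U^p,E)[\fm_{\rho}]\neq 0$; the existence of $C(1,\sF)\hookrightarrow\widehat{S}(U^p,E)[\fm_{\rho}]$ is precisely the statement $\widehat{S}(U^p,E)^{\lalg}[\fm_{\rho}]\neq 0$, i.e.\ the $w=1$ instance of the ``if'' direction you are trying to prove, so invoking it is circular. The paper avoids this by making it an explicit hypothesis (assumption (2) of Theorem \ref{casparticulier}), or by deducing classicality from the separate input of Theorem \ref{class00}, whose assumption (4) (nonzero locally algebraic vectors of $L_P^D(F_{\widetilde v})$ in a Jacquet module, i.e.\ a point on some Bernstein eigenvariety) is again not granted by the conjecture. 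Without such an input you cannot place the point $x_{\sF}=(y,\ul{x},1)$ on $\cE^{\infty}_{\Omega,\lambda}(\overline{\rho})$ to start the induction, and the ``only if'' direction likewise only follows under Hypothesis \ref{TayWil}. In short: your route is the paper's route, but it proves Theorem \ref{casparticulier}, and the conjecture itself is not proved by it (nor anywhere in the paper).
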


Conjecture \ref{conjSoc} in the crystalline case has been proved in \cite{BHS3} under Taylor-Wiles hypothesis. However, almost nothing (except for some very partial results in \cite{Ding9}) was known when $\rho_{\widetilde{v}}$ is {\it not} trianguline. As already mentioned in \S~\ref{intro}, one main motivation for this paper is to prove Conjecture \ref{conjSoc} for certain parabolic $P_{\widetilde{v}}$ (under the Taylor-Wiles hypothesis \ref{TayWil}) by using the Bernstein eigenvarieties of \S~\ref{s: B0}.

We now state a weaker version of Conjecture \ref{conjSoc} which is formulated in terms of companion points on Bernstein eigenvarieties.

For $w=(w_{\widetilde{v}})_{v\in S_p}\in \sW^{P}_{\min}\cong \prod_{v\in S_p} \sW^{P_{\widetilde{v}}}_{\min, F_{\widetilde{v}}}$, the weight $w \cdot \lambda$ is $P$-dominant where $\lambda:=(\lambda_{\widetilde v})_{v\in S_p}$. Let $\Omega_{\sF}$ be the Bernstein component of $\pi_{L_P}:=\boxtimes_{v\in S_p} \pi_{L_{P_{\widetilde{v}}}}$.

\begin{conjecture} \label{conjCP}
	We have
	\begin{equation}\label{eqCP}
		\big(\eta_{\rho},\boxtimes_{v\in S_p} \pi_{L_{P_{\widetilde{v}}}}, 1\big) \in \cE_{\Omega_{\sF}, w\cdot \lambda}(U^p)
	\end{equation}
	if and only if $w_{\widetilde{v}} \leq w_{\sF_{\widetilde{v}}}w_{0,F_{\widetilde{v}}}$ for all $v\in S_p$.
\end{conjecture}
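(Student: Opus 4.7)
The plan is to reduce Conjecture \ref{conjCP} to two local questions about the patched Bernstein eigenvariety and the Bernstein paraboline variety, exploiting the interface provided by Theorem \ref{intpBE}. By Theorem \ref{intpBE}(3), the global condition $(\eta_\rho,\pi_{L_P},1)\in\cE_{\Omega_\sF,w\cdot\lambda}(U^p)$ is equivalent (for a suitable smooth point $\fm^p\in(\Spf R_\infty^p)^{\rig}$) to the membership of a corresponding patched point in $\cE_{\Omega_\sF,w\cdot\lambda}^\infty(\overline{\rho})$. This allows us to work entirely on the patched side where the local-to-global comparison of Theorem \ref{intpBE}(4) applies, and where the characteristic-cycle machinery of \S\ref{secPBern} can be brought to bear.

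For the ``only if'' direction, Theorem \ref{intpBE}(4) gives a closed embedding $\cE_{\Omega_\sF,w\cdot\lambda}^\infty(\overline{\rho})\hookrightarrow (\Spf R_\infty^p)^{\rig}\times\jmath\big(X_{\Omega_\sF,w(\mathbf{h})}(\overline{\rho}_{\widetilde{v}})\big)$ (place by place at $p$). The existence of a companion point therefore forces the associated point of each Bernstein paraboline variety $X_{\Omega_\sF,w_{\widetilde{v}}(\mathbf{h}_{\widetilde{v}})}(\overline{\rho}_{\widetilde{v}})$ to exist; applying the local companion-point theorem \ref{intcpt2} (i.e.\ Corollary \ref{coLoccomp}) gives $w_{\widetilde{v}}w_{0,F_{\widetilde{v}}}\geq w_{\sF_{\widetilde{v}}}$, equivalently $w_{\widetilde{v}}\leq w_{\sF_{\widetilde{v}}}w_{0,F_{\widetilde{v}}}$. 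Since Theorem \ref{intcpt2} is unconditional on $P$, this direction holds without any extra hypothesis, matching the remark after Theorem \ref{intcs}.

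For the ``if'' direction, the strategy is the one sketched after Proposition \ref{inteqpt}. The starting case $w=w_\sF w_0$ (i.e.\ classicality) is automatic: the hypothesis $\widehat S(U^p,E)[\fm_\rho]^{\lalg}\neq 0$ together with the genericity of $\rho_{\widetilde{v}}$ furnishes $C(1,\sF_{\widetilde{v}})\hookrightarrow\widehat S(U^p,E)[\fm_\rho]$, hence $[\cN_{\lambda,y}]\neq 0$ and $(\eta_\rho,\pi_{L_P},1)\in\cE_{\Omega_\sF,\lambda}(U^p)$ by Proposition \ref{inteqpt}. One then propagates non-vanishing upward in the Bruhat order by induction on $\lg(w_\sF w_0)-\lg(w)$: using the relations (\ref{inteqc}) between the cycles $[\cN_{w'\cdot\lambda,y}]$ and the Steinberg cycles $\fZ_{w',y}$ (the latter obtained from the unibranch property of $Z_w$ at $y_{\pdR}$ and from Theorem \ref{intLocM}), combined with tangent-space smoothness criteria for $\cE_{\Omega_\sF,w'w_0\cdot\lambda}^\infty(\overline{\rho})$ at $x$, one can read off that the target cycle $[\cN_{w_\sF w_0\cdot\lambda,y}]$ is non-zero once enough intermediate companion points are known to exist, and then apply Proposition \ref{inteqpt} in reverse.

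The main obstacle is Case (3) of the trichotomy in the introduction: when $\lg(w_\sF)\leq\lg(w_0)-2$ and there is some $w$ with $\lg(w)=\lg(w_\sF w_0)-2$ for which the Bruhat interval $[w,w_\sF w_0]$ inside $\sW_{L_P,L}\backslash\sW_L$ is full of cardinality two \emph{and} $\dim\fz_{L_P,\wp}^{w_\sF w_0 w^{-1}}>\dim\fz_{L_P,\wp}-2$, the tangent-space bound that would force smoothness of $\cE_{\Omega_\sF,ww_0\cdot\lambda}^\infty(\overline{\rho})$ at $x$ collapses, and the coefficients $a_i\in\Z_{\geq 0}$ that appear on the right-hand side of the analogue of (\ref{inteqc0}) can no longer be controlled, so the linear algebra in $Z^{[F^+:\Q]n(n+1)/2}(\Spec\widehat\co_{\fX_\infty,y})$ is not sufficient to force $[\cN_{w_\sF w_0\cdot\lambda,y}]\neq 0$. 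Hypothesis $(*)$ of Theorem \ref{intcs} rules out exactly this configuration (via Proposition \ref{bruhInt}), which is why the full conjecture can only be obtained under $(*)$ along these lines; handling general $P$ would require a genuinely new input, either a smoothness result for patched Bernstein eigenvarieties at companion points that does not pass through tangent spaces, or an independent construction of companion constituents in $\widehat S(U^p,E)^{\an}[\fm_\rho]$ bypassing cycles altogether.
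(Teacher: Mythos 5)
There is an important mismatch of status here: the statement you were asked to prove is stated in the paper as Conjecture \ref{conjCP}, and the paper does \emph{not} prove it. What the paper establishes is (i) the ``only if'' direction, under Hypothesis \ref{TayWil}, by passing to the patched object via Proposition \ref{bePbe}, embedding it into $(\Spf R_{\infty}^p)^{\rig}\times X_{\Omega,\textbf{h}}(\overline{\rho}_p)$ via Theorem \ref{R=T0}, and invoking the local companion-point result Corollary \ref{coLoccomp}; and (ii) the ``if'' direction only under the extra hypotheses of Theorem \ref{casparticulier} (Taylor--Wiles, $\widehat{S}(U^p,E)^{\lalg}[\fm_{\rho}]\neq 0$, smoothness of $\rho_{\widetilde{v}}$ at $v\in S\setminus S_p$, and the non-adjacency condition $(*)$ on $P$, which enters through Proposition \ref{bruhInt}), via Proposition \ref{nonVancyc}, Corollary \ref{ptclas11}, Lemma \ref{induccp} and Lemma \ref{lemcompCP}. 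Your proposal follows exactly this route and correctly identifies Case (3) as the obstruction, but it is therefore an accurate summary of the paper's partial results rather than a proof of the conjecture --- as you yourself concede in your final paragraph. So the ``proof'' has a genuine gap, namely the same one the paper leaves open.

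Beyond that, a few of your unconditional-sounding claims overstate what is available even for the partial results. First, your base case is not ``automatic'': Conjecture \ref{conjCP} only assumes $\widehat{S}(U^p,E)[\fm_{\rho}]\neq 0$, not $\widehat{S}(U^p,E)^{\lalg}[\fm_{\rho}]\neq 0$; producing the $w=1$ point (which is the classical one --- note your labelling of the base case as $w=w_{\sF}w_{0}$ is off, since $C(1,\sF)$ is the locally algebraic constituent) is itself a classicality statement that the paper only obtains under the additional hypotheses of Theorem \ref{class00}. Second, everything patched --- Theorem \ref{intpBE}, Theorem \ref{R=T0}, the cycles $[\cN_{w\cdot\lambda,y}]$ and $\fZ_{w,y}$ --- requires Hypothesis \ref{TayWil}, and the cycle identities further require smoothness of $x^p$ in $(\Spf R_{\infty}^p)^{\rig}$ (hence the smoothness assumption away from $p$); none of these appear in the statement of Conjecture \ref{conjCP}, so even your ``only if'' argument is conditional. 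Finally, the induction in the paper does not ``propagate non-vanishing upward'': Corollary \ref{ptclas11} propagates $[\cN_{w\cdot\lambda,y}]\neq 0$ \emph{downward} to all $w'\leq w$, and the delicate step (Lemma \ref{induccp}) deduces the deepest cycle $[\cN_{w_yw_{0,F}\cdot\lambda,y}]\neq 0$ from knowledge at all strictly larger $w'$; keeping the direction straight matters because it is exactly at that last step that Case (3) blocks the argument for general $P$.
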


\begin{lemma}\label{lemcompCP}
(1) Conjecture \ref{conjSoc} implies Conjecture \ref{conjCP}.
	
(2) The ``only if" part of Conjecture \ref{conjCP} implies the ``only if" part of Conjecture \ref{conjSoc}.
\end{lemma}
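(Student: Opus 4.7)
The plan is to deduce both parts of the lemma from the Emerton--Breuil adjunction between Orlik--Strauch representations and Jacquet--Emerton modules, namely \cite[Thm.\ 4.3]{Br13II} quoted in the excerpt above. Concretely, for any $w\in \sW^P_{\min}$ (so that $w\cdot\lambda$ is automatically $P$-dominant), this adjunction identifies
\begin{multline*}
\Hom_{G(\Q_p)}\Big(\cF_{P^-}^{G}\big((\text{U}(\ug)\otimes_{\text{U}(\fp^-)}L^-(-w\cdot\lambda)_P)^{\vee},\pi_{L_P}\otimes_E\delta_P^{-1}\big),\widehat{S}(U^p,E)^{\an}[\fm_{\rho}]\Big)\\
\xlongrightarrow{\sim}\Hom_{L_P(\Q_p)}\big(\pi_{L_P}\otimes_E L(w\cdot\lambda)_P,J_P(\widehat{S}(U^p,E)^{\an})[\fm_{\rho}]\big),
\end{multline*}
and by Proposition \ref{basicBE}(3) the right-hand side is nonzero exactly when $(\eta_{\rho},\pi_{L_P},1)\in\cE_{\Omega_{\sF},w\cdot\lambda}(U^p)$. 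Moreover, since $L^-(-w\cdot\lambda)$ is the unique simple quotient of the generalized Verma module $\text{U}(\ug)\otimes_{\text{U}(\fp^-)}L^-(-w\cdot\lambda)_P$, the exact contravariant Orlik--Strauch functor $\cF_{P^-}^{G}(-,\pi_{L_P}\otimes_E\delta_P^{-1})$ produces a $G(\Q_p)$-equivariant surjection from the displayed Orlik--Strauch representation onto $C(w,\sF)$.

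Part (2) then follows almost immediately. Assuming the ``only if'' direction of Conjecture \ref{conjCP}, suppose $C(w,\sF)\hookrightarrow\widehat{S}(U^p,E)[\fm_{\rho}]$; since $C(w,\sF)$ is locally analytic this embedding factors through $\widehat{S}(U^p,E)^{\an}[\fm_{\rho}]$. Composing with the Orlik--Strauch surjection yields a nonzero $G(\Q_p)$-equivariant morphism, so the adjunction places the point $(\eta_{\rho},\pi_{L_P},1)$ in $\cE_{\Omega_{\sF},w\cdot\lambda}(U^p)$; the hypothesis then forces $w_{\widetilde{v}}\leq w_{\sF_{\widetilde{v}}}w_{0,F_{\widetilde{v}}}$ for every $v\in S_p$, which is the ``only if'' of Conjecture \ref{conjSoc}.

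For part (1), the same composition proves the ``if'' direction: Conjecture \ref{conjSoc} supplies the embedding of $C(w,\sF)$ in $\widehat{S}(U^p,E)^{\an}[\fm_{\rho}]$, and the adjunction translates this into the desired point on the eigenvariety. For the ``only if'' direction, suppose $(\eta_{\rho},\pi_{L_P},1)\in\cE_{\Omega_{\sF},w\cdot\lambda}(U^p)$; the adjunction produces a nonzero morphism $f$ from the Orlik--Strauch representation into $\widehat{S}(U^p,E)^{\an}[\fm_{\rho}]$. Since Orlik--Strauch representations are of finite length, $\Ima(f)$ is as well, hence admits an irreducible subrepresentation $S$ which, being a simple constituent of the Orlik--Strauch representation, is isomorphic to some $C(w',\sF)$ with $w'\in\sW^P_{\min}$. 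By the parabolic Bernstein--Gelfand--Gelfand combinatorics (the very same relation encoded in the sum $\sum_{w\leq w'\leq w_{\sF}w_0}b_{w,w'}[\cdots]$ of (\ref{inteqc})), every simple constituent $L^-(-w'\cdot\lambda)$ of $\text{U}(\ug)\otimes_{\text{U}(\fp^-)}L^-(-w\cdot\lambda)_P$ satisfies $w_{\widetilde{v}}\leq w'_{\widetilde{v}}$ for all $v\in S_p$. Applying Conjecture \ref{conjSoc} to the embedding $C(w',\sF)\hookrightarrow\widehat{S}(U^p,E)^{\an}[\fm_{\rho}]$ gives $w'_{\widetilde{v}}\leq w_{\sF_{\widetilde{v}}}w_{0,F_{\widetilde{v}}}$, and combining the two inequalities gives the conclusion.

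The main point requiring care is the combinatorial input in the ``only if'' of part (1): one must know that every $C(w',\sF)$ appearing as an irreducible subquotient of the Orlik--Strauch representation attached to $w$ has $w'\geq w$ componentwise in the Bruhat order. This is the Orlik--Strauch classification of irreducible constituents together with the classical description of the Jordan--H\"older content of (parabolic) Verma modules, and is the same combinatorial fact exploited throughout the paper.
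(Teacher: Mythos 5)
Your proof is correct and follows essentially the same route as the paper: both arguments combine Proposition \ref{basicBE} (3) (i.e.\ (\ref{fiberE})) with the adjunction of \cite[Thm.\ 4.3]{Br13II} to translate membership of $(\eta_{\rho},\pi_{L_P},1)$ in $\cE_{\Omega_{\sF},w\cdot\lambda}(U^p)$ into a non-zero map from the Orlik--Strauch representation of the (dual) generalized Verma module, and for the ``only if'' of (1) both extract an irreducible subrepresentation $C(w',\sF)$ with $w'\geq w$ (via \cite{OS} and the Jordan--H\"older content of parabolic Verma modules) before applying Conjecture \ref{conjSoc}. The only point you leave implicit, and which the paper records parenthetically, is that the genericity of the $\rho_{\widetilde{v}}$ is what guarantees that every irreducible constituent of that Orlik--Strauch representation is indeed of the form $C(w',\sF)$ (the relevant smooth parabolic inductions being irreducible).
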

\begin{proof}
(1) Suppose Conjecture \ref{conjSoc} holds. By (\ref{fiberE}), (\ref{eqCP}) is equivalent to 
	\begin{equation}\label{CptiJac}
		\Hom_{L_P(\Q_p)}\big(\pi_{L_P} \otimes_E L(w\cdot \lambda)_P, J_P(\widehat{S}(U^p,E)^{\an})[\fm_{\rho}]\big) \neq 0.
	\end{equation} 
	If $w_{\widetilde{v}} \leq w_{\sF_{\widetilde{v}}}w_{0,F_{\widetilde{v}}}$ for all $v\in S_p$, then by assumption
\[\cF_{P^-}^{G_p}(L^-(-w\cdot \lambda), \pi_{L_P} \otimes_E \delta_P^{-1})\cong \widehat{\otimes}_{v\in S_p}C(w_{\widetilde{v}},\sF_{\widetilde{v}})\hookrightarrow \widehat{S}(U^p,E)[\fm_{\rho}].\]
By \cite[Thm.\ 4.3]{Br13II}, this implies (\ref{CptiJac}). Conversely, suppose (\ref{CptiJac}) holds. By \cite[Thm.\ 4.3]{Br13II} and \cite[Thm.]{OS} (and the fact that the $\rho_{\widetilde{v}}$ are all generic), one deduces there exists $w'=(w_{\widetilde{v}}')$ with $w_{\widetilde{v}}'\geq w_{\widetilde{v}}$ for all $v \in S_p$ such that one has an embedding
	\begin{equation*}
		\cF_{P^-}^{G_p}(L^-(-w'\cdot \lambda), \pi_{L_P} \otimes_E \delta_P^{-1})\cong \widehat{\otimes}_{v\in S_p}C(w_{\widetilde{v}}',\sF_{\widetilde{v}})\hookrightarrow \widehat{S}(U^p,E)[\fm_{\rho}].
	\end{equation*} By Conjecture \ref{conjSoc}, this implies $w_{\widetilde{v}}'\leq w_{\sF_{\widetilde{v}}}w_{0,F_{\widetilde{v}}}$, and hence $w_{\widetilde{v}} \leq w_{\sF_{\widetilde{v}}}w_{0,F_{\widetilde{v}}}$.
	
(2) Suppose the ``only if" part of Conjecture \ref{conjCP} holds. For $w=(w_{\widetilde{v}})$, if one has an embedding $\widehat{\otimes}_{v\in S_p}C(w_{\widetilde{v}},\sF_{\widetilde{v}})\hookrightarrow \widehat{S}(U^p,E)[\fm_{\rho}]$, then as in the proof of (1), we deduce using \cite[Thm.\ 4.3]{Br13II} that (\ref{CptiJac}) holds and hence (\ref{eqCP}) holds. This implies $w_{\widetilde{v}} \leq w_{\sF_{\widetilde{v}}}w_{0,F_{\widetilde{v}}}$. 
\end{proof}

\begin{remark}
Suppose $\widehat{S}(U^p,E)^{\lalg}[\fm_{\rho}]\neq 0$ (for $\rho_{\widetilde{v}}$ generic potentially crystalline with distinct Hodge-Tate weights for all $v\in S_p$). Then using \cite{Car12}, we have an embedding
\[\boxtimes_{v\in S_p}\pi_{\widetilde{v}} \hookrightarrow \widehat{S}(U^p,E)^{\lalg}[\fm_{\rho}].\]
To each minimal parabolic filtration $\sF$ (that can be viewed as an analogue of a refinement in the crystalline case), we can associate (using \cite[Thm.\ 4.3]{Br13II}) a classical point 
	\begin{equation*}
		\big(\eta_{\rho}, \boxtimes_{v\in S_p} \pi_{L_{P_{\widetilde{v}}}}, 1\big) \in \cE_{\Omega_{\sF}, \lambda}(U^p).
	\end{equation*}
	When $w_{\sF}\neq w_{0,F}:=(w_{0,F_{\widetilde{v}}})_{v\in S_p}$, Conjecture \ref{conjCP} then predicts the existence of some non-classical points on the Bernstein eigenvarieties such that the associated Galois representation is still isomorphic to $\rho$. Such points are referred to as companion points of the classical point. \index{$w_{0,F}$}
\end{remark}

\subsection{Patched Bernstein eigenvarieties}\label{secPBern}

As in \cite[\S~3]{BHS1} for the ``usual'' eigenvarieties, we construct patched Bernstein eigenvarieties by applying the formalism in \S~\ref{abCon} to the patched $p$-adic automorphic representations of \cite{CEGGPS}. We give some basic properties of patched Bernstein eigenvarieties, and show that they have a close relation with certain (purely local) ``paraboline'' varieties (on the Galois side) that will be constructed in \S~\ref{secDef1}.

Let $\overline{\rho}$ be a $U^p$-modular continuous representation of $\Gal_F$ over $k_E$ (see the end of \S~\ref{galois}). We assume henceforth the following so-called Taylor-Wiles hypothesis (see \cite[\S~1]{BHS3}):

\begin{hypothesis}\label{TayWil}
(1) $p>2$;
	
(2) the field $F$ is unramified over $F^+$, $F$ does not contain a non trivial root $\sqrt[p]{1}$ of $1$ and $G$ is quasi-split at all finite places of $F^+$;
	
(3) $U_{v}$ is hyperspecial when the finite place $v$ of $F^+$ is inert in $F$;
	
(4) $\overline{\rho}$ is absolutely irreducible and $\overline{\rho}(\Gal_{F(\sqrt[p]{1})})$ is adequate.
\end{hypothesis}

The following functor
\begin{equation*}
	\Big\{\substack{\text{Local artinian $\co_E$-algebras} \\ \text{of residue field $k_E$}}\Big\} \ra \{\text{Sets}\}, \ \ A \mapsto \Big\{\substack{\text{Deformations $\rho_A$ of $\overline{\rho}$ over $A$}\\ \text{such that $\rho_A$ unramified outside $S$} \\ \text{and } \rho_A(c\cdot c)\cong \rho_A^\vee \otimes \chi_{\cyc}^{1-n}}\Big\}/\sim
\end{equation*} 
is pro-representable by a complete local Noetherian $\co_E$-algebra of residue field $k_E$, denoted by $R_{\overline{\rho}, S}$. For $s\in \Z_{\geq 1}$, and a compact open subgroup $U_p\subset \prod_{v\in S_p} \GL_n(\co_{F_{\widetilde{v}}})$, let $\bT(U^pU_p,\co_E/\varpi_E^s)_{\overline{\rho}}$ be the image of $\bT^S$ in the $\co_E/\varpi_E^s$-algebra of endomorphisms of $S(U^pU_p, \co_E/\varpi_E^s)_{\overline{\rho}}$. Put
\[\bT(U^p)_{\overline{\rho}}:=\varprojlim_s \varprojlim_{U_p} \bT(U^pU_p, \co_E/\varpi_E^s)_{\overline{\rho}},\]
which is a complete local $\co_E$-algebra. By \cite[Prop.\ 6.7]{Thor}, there is a natural surjective morphism of $\co_E$-algebras (hence $\bT(U^p)_{\overline{\rho}}$ is also Noetherian)
\begin{equation*}
	R_{\overline{\rho},S} \twoheadlongrightarrow \bT(U^p)_{\overline{\rho}}. 
\end{equation*}
The rigid space $\cE_{\Omega, \lambda}(U^p,\overline{\rho})$ is then a closed subspace of the rigid spaces
\[(\Spf \bT(U^p)_{\overline{\rho}})^{\rig} \times (\Spec \cZ_{\Omega})^{\rig} \times \widehat{\cZ_0} \hookrightarrow (\Spf R_{\overline{\rho},S})^{\rig} \times (\Spec \cZ_{\Omega})^{\rig} \times \widehat{\cZ_0}.\]
Indeed $B_{\Omega, \lambda}(\widehat{S}(U^p,E)_{\overline{\rho}}^{\Q_p-\an})^{\vee}$ gives rise to a coherent sheaf over $(\Spf R_{\overline{\rho},S})^{\rig} \times (\Spec \cZ_{\Omega})^{\rig} \times \widehat{\cZ_0}$, whose Zariski-support is exactly $\cE_{\Omega, \lambda}(U^p,\overline{\rho})$.

We let $R_{\overline{\rho}_{\widetilde{v}}}$ be the maximal reduced and $\Z_p$-flat quotient of the \textit{framed} local deformation ring of $\overline{\rho}_{\widetilde{v}}$ (which was denoted by $R_{\overline{\rho}_{\widetilde{v}}}^{\overline{\square}}$ in \cite{BHS1}) and we put 
\begin{eqnarray*}
	&&R^{\loc}:=\widehat{\otimes}_{v\in S} R_{\overline{\rho}_{\widetilde{v}}}, \ R_{\overline{\rho}^p}:=\widehat{\otimes}_{v\in S\setminus S_p} R_{\overline{\rho}_{\widetilde{v}}}, \\
	&&R_{\overline{\rho}_p}:=\widehat{\otimes}_{v\in S_p} R_{\overline{\rho}_{\widetilde{v}}}, \ R_{\infty}:=R^{\loc}[[x_1, \dots, x_g]], \ R_{\infty}^p:=R_{\overline{\rho}^p}[[x_1, \dots, x_g]],
\end{eqnarray*}
where $g\geq 1$ is an integer which will be fixed below. We let $S_{\infty}:=\co_E[[y_1, \dots, y_t]]$ where $t=g+[F^+:\Q]\frac{n(n-1)}{2}+|S| n^2$ and $\fa:=(y_1, \dots, y_t)\subset S_{\infty}$. Shrinking $U^p$ (and $S$), we can and do assume 
\begin{equation*}
	G(F) \cap (hU^p K_p h^{-1})=\{1\} \text{ for all $h\in G(\bA_{F^+}^{\infty})$}
\end{equation*}
where $K_p:=\prod_{v\in S_p} K_v:=\prod_{v\in S_p}i_{\widetilde{v}}^{-1}(\GL_n(\co_{F_{\widetilde{v}}}))$. Then the action of $R_{\overline{\rho},S}$ on $\widehat{S}(U^p,E)_{\overline{\rho}}$ factors through a quotient $R_{\overline{\rho},S} \twoheadrightarrow R_{\overline{\rho},\cS}$, where $R_{\overline{\rho},\cS}$ denotes the universal deformation ring of the deformation problem:
\begin{equation*}
	\cS=(F/F^+, S, \widetilde{S}, \co_E, \overline{\rho}, \chi_{\cyc}^{1-n} \delta_{F/F^+}^n, \{R_{\overline{\rho}_{\widetilde{v}}}\}_{v\in S}).
\end{equation*}
By \cite[Thm.\ 3.5]{BHS1} (generalizing \cite{CEGGPS}), there exist an integer $g\geq 1$ and 
\begin{enumerate}
	\item[(1)] a continuous $R_{\infty}$-admissible unitary representation $\Pi_{\infty}$ of $G_p$ over $E$ together with a $G_p$-stable and $R_{\infty}$-stable unit ball $\Pi_{\infty}^0 \subset \Pi_{\infty}$;
	\item[(2)] a morphism of local $\co_E$-algebras $S_{\infty} \ra R_{\infty}$ such that $M_{\infty}:=\Hom_{\co_E}(\Pi_{\infty}^0, \co_E)$ is a finite projective $S_{\infty}[[K_p]]$-module;
	\item[(3)] a surjection $R_{\infty}/\fa R_{\infty} \twoheadrightarrow R_{\overline{\rho},\cS}$ and a compatible $G_p$-equivariant isomorphism $\Pi_{\infty}[\fa]\cong \widehat{S}(U^p, E)_{\overline{\rho}}$.
\end{enumerate}
Denote by $\Pi_{\infty}^{R_{\infty}-\an}$ the subrepresentation of $G_p$ of locally $R_{\infty}$-analytic vectors of $\Pi_{\infty}$ (cf.\ \cite[\S~3.1]{BHS1}. Let $\Omega$, $\sigma$, $\lambda$ be as in \S~\ref{s: B0} and consider (with $J_P(\Pi_{\infty}^{R_{\infty}-\an})_{\lambda}$ defined as in (\ref{jplambda})):
\[B_{\Omega, \lambda}(\Pi_{\infty}^{R_{\infty}-\an}):=B_{\sigma, \lambda}(\Pi_{\infty}^{R_{\infty}-\an})=\Hom_{K_p}\big(\sigma, J_P(\Pi_{\infty}^{R_{\infty}-\an})_{\lambda} \widehat{\otimes}_E \cC^{\Q_p-\la}(Z_{L_P}^0,E)\big).\]
By \cite[Prop.\ 3.4]{BHS1} and an easy variation of the proof of Lemma \ref{Besadm}, we see that $B_{\Omega, \lambda}(\Pi_{\infty}^{R_{\infty}-\an})^{\vee}$ is a coadmissible module over ${\mathcal O}((\Spf R_{\infty})^{\rig} \times \widehat{Z_{\ul{\varpi}}}\times \widehat{\cZ_0})$ which corresponds to a coherent sheaf $\cM_{\Omega, \lambda}^{\infty,0}$ over $(\Spf R_{\infty})^{\rig} \times \widehat{Z_{\ul{\varpi}}}\times \widehat{\cZ_0}$. Taking into account the $\cZ_{\Omega}$-action, $\cM_{\Omega, \lambda}^{\infty, 0}$ gives rise to a coherent sheaf $\cM_{\Omega, \lambda}^{\infty}$ over $(\Spf R_{\infty})^{\rig} \times (\Spec \cZ_{\Omega})^{\rig} \times \widehat{\cZ_0}$, such that 
\begin{equation}\label{glosecBE}
\Gamma\big((\Spf R_{\infty})^{\rig} \times (\Spec \cZ_{\Omega})^{\rig} \times \widehat{\cZ_0}, \cM_{\Omega, \lambda}^{\infty}\big)\cong B_{\Omega, \lambda}(\Pi_{\infty}^{R_{\infty}-\an})^{\vee}.
\end{equation}
We let
\begin{equation}\label{patchedbernstein}
\cE_{\Omega, \lambda}^{\infty}(\overline{\rho})\hookrightarrow(\Spf R_{\infty})^{\rig} \times (\Spec \cZ_{\Omega})^{\rig} \times \widehat{\cZ_0}
\end{equation}
be the Zariski-closed support of $\cM_{\Omega, \lambda}^{\infty}$, and call $\cE_{\Omega, \lambda}^{\infty}(\overline{\rho})$ a \textit{patched Bernstein eigenvariety}. \index{$\cE_{\Omega, \lambda}^{\infty}(\overline{\rho})$} \index{$\cM_{\Omega, \lambda}^{\infty}$}

Let $\cW_{\infty}:=(\Spf S_{\infty})^{\rig} \times \widehat{\cZ_0}$, and $z\in Z_{\ul{\varpi}}$ be as in the discussion above Proposition \ref{Pr}. As in (\ref{kappaz}) denote by $\kappa_z$ the following composition
\begin{multline*}
	\kappa_z: \cE_{\Omega, \lambda}^{\infty}(\overline{\rho})\hookrightarrow(\Spf R_{\infty})^{\rig} \times (\Spec \cZ_{\Omega})^{\rig} \times \widehat{\cZ_0} \\
	\ra (\Spf S_{\infty})^{\rig} \times (\Spec E[Z_{\ul{\varpi}}])^{\rig}\times \widehat{\cZ_0} \ra (\Spf S_{\infty})^{\rig} \times \bG_m^{\rig} \times \widehat{\cZ_0} \cong \cW_{\infty} \times \bG_m^{\rig}
\end{multline*}
where the second and third maps come from $E[Y_z] \hookrightarrow E[Z_{\ul{\varpi}}] \ra \cZ_{\Omega}$. Denote by $\omega$ the composition:
\[\omega: \cE_{\Omega, \lambda}^{\infty}(\overline{\rho}) \xlongrightarrow{\kappa_z} \cW_{\infty} \times \bG_m^{\rig} \lra \cW_{\infty}.\]
By an easy generalization of Proposition \ref{Pr} to the case $\widetilde{G}_p:=G_p \times \Z_p^q$, $V=\Pi_{\infty}^{R_{\infty}-\an}=\Pi_{\infty}^{S_{\infty}-\an}$ (adding the extra factor $\Z_p^q$ everywhere in the proof), one can show that $B_{\Omega, \lambda}(\Pi_{\infty}^{R_{\infty}-\an})^{\vee}$ is a coherent sheaf over $\cW_{\infty} \times \bG_m^{\rig}$. We denote by $Z_z(\overline{\rho})\hookrightarrow \cW_{\infty} \times \bG_m^{\rig}$ its Zariski-closed support. We have the following analogue of Proposition \ref{fredholm1}, which is proven by the same argument as in the proofs of \cite[Lemma 3.10]{BHS1} and \cite[Prop.\ 3.11]{BHS1}.

\begin{proposition}\label{PBEFre}
(1) The rigid variety $Z_z(\overline{\rho})$ is a Fredholm hypersurface in $\cW_{\infty} \times \bG_m^{\rig}$. Moreover, there exists an admissible covering $\{U_i'\}$ of $Z_z(\overline{\rho})$ by affinoids $U_i'$ such that the morphism
	\begin{equation*}
		g: Z_z(\overline{\rho}) \hookrightarrow \cW_{\infty} \times \bG_m^{\rig} \twoheadrightarrow \cW_{\infty}
	\end{equation*} 
	induces a surjective finite morphism from $U_i'$ to an affinoid open $W_i$ of $\cW_{\infty}$, and such that $U_i'$ is a connected component of $g^{-1}(W_i)$. For each $i$, $\Gamma\big(U_i', (\kappa_z)_* \cM_{\Omega, \lambda}^{\infty}\big)$ is a finitely generated projective $\co_{\cW_{\infty}}(W_i)$-module. 
	
(2) There exists an admissible covering $\{U_i\}$ of $\cE_{\Omega, \lambda}^{\infty}(\overline{\rho})$ by affinoids $U_i$ such that
	\begin{itemize}
		\item there exists an affinoid open $W_i$ of $\cW_{\infty}$ satisfying that $\omega$ is a finite surjective morphism from each irreducible component of $U_i$ to $W_i$;
		\item $\co_{\cE_{\Omega, \lambda}^{\infty}(\overline{\rho})}(U_i)$ is isomorphic to an $\co_{\cW_{\infty}}(W_i)$-algebra of endomorphisms of a finitely generated projective $\co_{\cW_{\infty}}(W_i)$-module.
	\end{itemize}
\end{proposition}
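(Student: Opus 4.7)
The plan is to adapt verbatim the strategy of \cite[Lemma~3.10]{BHS1} and \cite[Prop.~3.11]{BHS1}, the main new input being a patched version of Proposition \ref{Pr}. The key technical step will be to extend the construction of Proposition \ref{Pr} to the patched setting: replacing the locally analytic representation $V$ of $G_p$ by $\Pi_{\infty}^{R_{\infty}-\an}=\Pi_{\infty}^{S_{\infty}-\an}$ as a representation of the larger group $\widetilde{G}_p:=G_p \times \Z_p^q$, where the extra factor $\Z_p^q$ accounts for the formal variables coming from $S_{\infty}$. By \cite[Prop.~3.4]{BHS1}, $\Pi_{\infty}^{R_{\infty}-\an}$ is isomorphic (as a representation of a sufficiently small product of compact open subgroups) to $\cC^{\Q_p-\la}(\widetilde{H}',E)^{\oplus k}$ for some uniform pro-$p$ subgroup $\widetilde{H}'$ of $\widetilde{G}_p$ and some $k\geq 1$. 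Inserting this into the proof of Proposition \ref{Pr} (running the argument with the groups $\widetilde{H}$, $\widetilde{H}'$ and with $\cC^{\Q_p-\la}(Z_{L_H},E)$ replaced by $\cC^{\Q_p-\la}(Z_{L_H}\times \Z_p^q,E)$), one obtains an admissible covering $\{\cU_h\}$ of $\cW_{\infty}$ by affinoid opens and a compatible system of compact operators $z_h$ on Banach modules $M_h$ satisfying condition (Pr) of \cite{Bu}, together with a topological isomorphism $M\cong \varprojlim_h M_h$ intertwining the action of $\pi_z$ with that of the $z_h$.

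Given this patched analogue of Proposition \ref{Pr}, part (1) of the proposition follows formally. The characteristic power series of $(z_h)$ glue to a global Fredholm series whose zero locus in $\cW_{\infty} \times \bG_m^{\rig}$ coincides with the Zariski-closed support $Z_z(\overline{\rho})$ of $B_{\Omega, \lambda}(\Pi_{\infty}^{R_{\infty}-\an})^{\vee}$. By the eigenvariety machine of Buzzard (\cite{Bu}), $Z_z(\overline{\rho})$ is therefore a Fredholm hypersurface. The existence of the admissible covering $\{U_i'\}$ by affinoids mapping finite surjectively to affinoid opens $W_i\subset \cW_{\infty}$ and occurring as connected components of $g^{-1}(W_i)$ follows from the standard slope decomposition argument for Fredholm hypersurfaces (cf.\ \cite[\S~4]{Bu}): over each affinoid $W_i\subset \cW_{\infty}$ on which a slope decomposition of bounded slope exists, the corresponding finite-slope part of $M_h$ is a finite projective $\co_{\cW_{\infty}}(W_i)$-module, which identifies with $\Gamma(U_i',(\kappa_z)_*\cM_{\Omega,\lambda}^{\infty})$ after taking duals.

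For part (2), the argument is again formal once part (1) is in hand. On each affinoid piece $U_i'$ as in (1), the $\co_{\cW_{\infty}}(W_i)$-subalgebra of $\End(\Gamma(U_i',(\kappa_z)_*\cM_{\Omega,\lambda}^{\infty}))$ generated by $\cZ_{\Omega}$ (together with the residual action of $R_{\infty}$) is finite over $\co_{\cW_{\infty}}(W_i)$ because it acts faithfully on a finitely generated projective $\co_{\cW_{\infty}}(W_i)$-module. Taking $U_i$ to be the relative spectrum of this subalgebra over $W_i$ yields the desired covering of $\cE_{\Omega, \lambda}^{\infty}(\overline{\rho})$, and surjectivity from each irreducible component of $U_i$ onto $W_i$ is an immediate consequence of the $\co_{\cW_{\infty}}(W_i)$-flatness of the projective module (as in \cite[Cor.~3.12-3.13]{BHS1}).

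The only nontrivial obstacle is the patched version of Proposition \ref{Pr}, which in turn hinges on an $L_H$-equivariant decomposition of $\Pi_{\infty}^{R_{\infty}-\an}$ into copies of $\cC^{\Q_p-\la}(\widetilde{H}',E)$ and on verifying that the Hecke operator $\pi_z$ acts compactly on the relevant Banach modules once we quotient by $N_H$ and take finite-slope part. Since $z\in Z_{\ul{\varpi}}$ is chosen so that $\cap_n z^n N_H z^{-n}=0$ and $N_H^-\subset zN_H^-z^{-1}$, the compactness is guaranteed by the same calculation as in the proof of \cite[Prop.~5.3]{BHS1}, where the role of $\Pi^{N_0}$ is played by the patched analogue $W$ of (\ref{Wfun}). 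All remaining arguments are routine translations of \cite[Lemma~3.10, Prop.~3.11]{BHS1} to our patched setup, and we therefore omit them.
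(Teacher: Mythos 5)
Your proposal matches the paper's proof: the paper likewise obtains the result by generalizing Proposition \ref{Pr} to $\widetilde{G}_p=G_p\times\Z_p^q$ with $V=\Pi_{\infty}^{R_{\infty}-\an}=\Pi_{\infty}^{S_{\infty}-\an}$ (using that $M_{\infty}$ is finite projective over $S_{\infty}[[K_p]]$, i.e.\ \cite[Prop.~3.4]{BHS1}), and then runs the arguments of \cite[Lemma~3.10]{BHS1} and \cite[Prop.~3.11]{BHS1} verbatim. No gaps; this is essentially the same route.
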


We deduce the following analogue of Corollary \ref{CMC1} by the same arguments as in the proofs of \cite[Cor.\ 3.12]{BHS1}, \cite[Cor.\ 3.13]{BHS1} and \cite[Lemma 3.8]{BHS2}:

\begin{corollary}\label{PBEdim}
(1) The rigid analytic space $\cE_{\Omega, \lambda}^{\infty}(\overline{\rho})$ is equidimensional of dimension
	\begin{equation*}
		g+|S|n^2+\sum_{v\in S_p} \Big([F_{\widetilde{v}}:\Q_p] \big(\frac{n(n-1)}{2}+r_{\widetilde{v}}\big)\Big)
	\end{equation*}
	and has no embedded component.
	
(2) The morphism $\kappa_z$ is finite and the image of an irreducible component of $\cE_{\Omega, \lambda}^{\infty}(\overline{\rho})$ is an irreducible component of $Z_z(\overline{\rho})$. The image of an irreducible component of $\cE_{\Omega, \lambda}^{\infty}(\overline{\rho})$ under $\omega$ is a Zariski-open of $\cW_{\infty}$. 
	
(3) The coherent sheaf $\cM_{\Omega, \lambda}^{\infty}$ is Cohen-Macaulay over $\cE_{\Omega, \lambda}^{\infty}(\overline{\rho})$. 
\end{corollary}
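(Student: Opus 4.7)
The strategy is to transfer the structural information furnished by Proposition \ref{PBEFre} (the existence of an admissible affinoid covering on which $\omega$ restricts to finite surjective morphisms onto affinoid opens of $\cW_\infty$, together with the projectivity of global sections of $\cM_{\Omega,\lambda}^{\infty}$) into geometric statements about $\cE_{\Omega,\lambda}^{\infty}(\overline{\rho})$, following verbatim the arguments of \cite[Cor.\ 3.12, Cor.\ 3.13]{BHS1} and \cite[Lemma 3.8]{BHS2}.

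First I would compute the dimension of $\cW_\infty = (\Spf S_\infty)^{\rig} \times \widehat{\cZ_0}$. Since $\dim(\Spf S_\infty)^{\rig} = t = g + [F^+:\Q]\tfrac{n(n-1)}{2} + |S|n^2$ and $\widehat{\cZ_0}$ is smooth equidimensional of dimension $\sum_{v\in S_p}[F_{\widetilde v}:\Q_p]r_{\widetilde v}$ (parametrizing continuous characters of $\prod_{v\in S_p}Z_{L_{P_{\widetilde v}}}(\co_{F_{\widetilde v}})$), and using $[F^+:\Q] = \sum_{v\in S_p}[F_{\widetilde v}:\Q_p]$, one sees $\dim \cW_\infty$ equals the claimed dimension. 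Then, by Proposition \ref{PBEFre}(2), on each $U_i$ of the covering, $\omega$ restricts to a finite surjective map from every irreducible component onto the equidimensional affinoid $W_i \subset \cW_\infty$. Finite surjective morphisms preserve dimension, so every irreducible component of every $U_i$ has the required dimension; a standard patching argument then shows $\cE_{\Omega,\lambda}^{\infty}(\overline{\rho})$ is equidimensional of the claimed dimension, proving the first half of (1).

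Next, for (3) and the ``no embedded component'' part of (1), I would use Proposition \ref{PBEFre}(2) which identifies $\co_{\cE_{\Omega,\lambda}^{\infty}(\overline{\rho})}(U_i)$ with a subring of the endomorphism ring of a finitely generated projective $\co_{\cW_\infty}(W_i)$-module $N_i$, and identifies the sections of $\cM_{\Omega,\lambda}^{\infty}$ over $U_i$ with this $N_i$. Since $\cW_\infty$ is smooth over $E$ (as $(\Spf S_\infty)^{\rig}$ is an open polydisk and $\widehat{\cZ_0}$ is smooth), the projective module $N_i$ is Cohen-Macaulay as an $\co_{\cW_\infty}(W_i)$-module, hence as an $\co_{\cE_{\Omega,\lambda}^{\infty}(\overline{\rho})}(U_i)$-module: this gives (3). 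The absence of embedded components in (1) then follows formally, since Cohen-Macaulay modules with equidimensional support have no embedded associated primes, cf.\ the proof of \cite[Lemma 3.8]{BHS2}.

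For (2), I would first deduce finiteness of $\kappa_z$ from the fact that $\kappa_z$ factors through the closed embedding $\cE_{\Omega,\lambda}^{\infty}(\overline{\rho}) \hookrightarrow \cW_\infty \times \bG_m^{\rig}$ together with the admissible covering of Proposition \ref{PBEFre}(1): on each affinoid $g^{-1}(W_i)$ of the Fredholm hypersurface $Z_z(\overline{\rho})$, the pushforward $(\kappa_z)_* \cM_{\Omega,\lambda}^{\infty}$ is finite projective over $W_i$, and an irreducible component of $\cE_{\Omega,\lambda}^{\infty}(\overline{\rho})$ surjects via $\kappa_z$ onto an irreducible closed subset of $Z_z(\overline{\rho})$ of the same dimension by the equidimensionality established above; matching dimensions with the equidimensionality of the Fredholm hypersurface $Z_z(\overline{\rho})$ forces this image to be an entire irreducible component of $Z_z(\overline{\rho})$. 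Finally, for the image under $\omega$, I would invoke the general fact (Conrad's theory of Fredholm hypersurfaces, as used in \cite[Cor.\ 3.13]{BHS1}) that the projection of an irreducible component of a Fredholm hypersurface to the base is Zariski-open. The main (mild) obstacle is purely bookkeeping: one must verify that the presence of the additional factors $\widehat{\cZ_0}$ and $(\Spec \cZ_\Omega)^{\rig}$ (absent in the trianguline case) does not disrupt the arguments, but this follows cleanly because these factors have been absorbed either into the weight space $\cW_\infty$ or into the $\cZ_\Omega$-action that factors through the coherent sheaf structure.
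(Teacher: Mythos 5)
Your proposal is correct and takes essentially the same route as the paper, which proves this corollary simply by transporting the arguments of \cite[Cor.\ 3.12, Cor.\ 3.13]{BHS1} and \cite[Lemma 3.8]{BHS2} through Proposition \ref{PBEFre}, and your dimension bookkeeping for $\cW_{\infty}$ (using $[F^+:\Q]=\sum_{v\in S_p}[F_{\widetilde{v}}:\Q_p]$) matches the stated formula. One wording should be fixed: $\cE_{\Omega, \lambda}^{\infty}(\overline{\rho})$ is closed in $(\Spf R_{\infty})^{\rig}\times(\Spec \cZ_{\Omega})^{\rig}\times\widehat{\cZ_0}$, not in $\cW_{\infty}\times\bG_m^{\rig}$, so the finiteness of $\kappa_z$ is not a consequence of a ``closed embedding'' into the latter but follows, as in \cite[Cor.\ 3.13]{BHS1}, from the fact that the eigenvariety is the support of the coherent $\co_{Z_z(\overline{\rho})}$-algebra generated by the $R_{\infty}$-action on $(\kappa_z)_*\cM_{\Omega,\lambda}^{\infty}$, combined with the dimension comparison you give.
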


We say that a point $x=(y, \pi_{x,L_P}, \chi_x)\in \cE_{\Omega, \lambda}^{\infty}(\overline{\rho})\hookrightarrow (\Spf R_{\infty})^{\rig} \times (\Spec \cZ_{\Omega})^{\rig} \times \widehat{\cZ_0}$ is \textit{classical} if (\ref{defClas}) is satisfied with $\widehat{S}(U^p,E)^{\lalg}$ replaced by $(\Pi_{\infty}^{R_{\infty}-\an})^{\lalg}$ and $[\bT^S=\eta_x]$ replaced by $[\fm_y]$, where $\fm_y$ is the maximal ideal of $R_{\infty}[\frac{1}{p}]$ associated to $y$. We say $x$ is \textit{very classical} if the conditions in Definition \ref{defclass} (2) hold (these conditions only concern $\pi_{x,L_P}$ and $\chi_x$). For $v\in S$, denote by $\rho_{x,\widetilde{v}}$ the $\Gal_{F_{\widetilde{v}}}$-representation associated to $x$ via the image of $y \in (\Spf R_{\infty})^{\rig}\cong \prod_{v\in S_p} (\Spf R_{\overline{\rho}_{\widetilde{v}}})^{\rig} \times \bU^g$ in $(\Spf R_{\overline{\rho}_{\widetilde{v}}})^{\rig}$ where $\bU$ denotes the open unit ball in $\bA^1$. Let $\ttr_{x,\widetilde{v}}$ be the Weil-Deligne representation associated to $\rho_{x,\widetilde{v}}$, $\ttr_{x,\widetilde{v},i}:=\rec(\pi_{x,\widetilde{v},i})(\frac{1-n_{\widetilde{v},i}}{2}-s_{\widetilde{v},i-1})$, and $(\chi_{x,\widetilde{v},i})_{\varpi_{\widetilde{v}}}^{\infty}$ be the smooth character of $F_{\widetilde{v}}^{\times}$ associated to $\chi_{x,\widetilde{v}}$ as in the discussion below (\ref{injlalg}). A classical point $x$ is called \textit{generic} if (\ref{defGene}) holds for $\epsilon=0,1$ and $i\neq i'$. 

\begin{proposition}\label{galPBE}
Let $(y, \pi_{x,L_P}, \chi_x)\in \cE_{\Omega, \lambda}^{\infty}(\overline{\rho})$ be a generic classical point. For all $v\in S_p$, $\rho_{x,\widetilde{v}}$ is potentially crystalline of Hodge-Tate weights $\{\lambda^x_{\widetilde{v},i,\tau}-i+1\}_{\substack{\tau \in \Sigma_{\widetilde{v}}\\ 1\leq i \leq n}}$. Moreover, we have
	\begin{equation}\label{noN2}
		\ttr_{x,\widetilde{v}} \cong \bigoplus_{i=1}^{r_{\widetilde{v}}} \big(\ttr_{x,\widetilde{v},i} \otimes_E \rec((\chi_{x,\widetilde{v},i})_{\varpi_{\widetilde{v}}}^{\infty})\big).
	\end{equation}
\end{proposition}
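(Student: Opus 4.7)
The plan is to reduce to (essentially patched) local-global compatibility via the Bushnell-Kutzko type theory and the patched module formalism. First, since $x$ is a classical point of $\cE^\infty_{\Omega,\lambda}(\overline{\rho})$, by the patched analogue of \eqref{fiberE} (together with Proposition \ref{basicBE} applied to $\Pi_\infty^{R_\infty-\an}$), we have a non-zero $L_P(\Q_p)$-equivariant embedding
\begin{equation*}
\pi_{x,L_P}\otimes_E \big((\chi_x)_{\ul{\varpi}}\circ \dett_{L_P}\big)\otimes_E L(\lambda)_P\hooklongrightarrow J_P\big((\Pi_\infty^{R_\infty-\an})^{\lalg}\big)[\fm_y].
\end{equation*}
By the adjunction in \cite[Thm.\ 4.3]{Br13II} (as used for \eqref{injlalg}), this gives a non-zero $G_p$-equivariant map
\begin{equation*}
\Big(\Ind_{P^-(\Q_p)}^{G_p}\pi_{x,L_P}\otimes_E\big((\chi_x)^\infty_{\ul{\varpi}}\circ\dett_{L_P}\big)\otimes_E\delta_P^{-1}\Big)^\infty\otimes_E L(\lambda^x)\lra \Pi_\infty^{\lalg}[\fm_y].
\end{equation*}
Since $x$ is generic, the smooth parabolic induction $\pi_p:=\otimes_{v\in S_p}\pi_{\widetilde v}$ with $\pi_{\widetilde v}:=(\Ind_{P_{\widetilde v}^-}^{\GL_n(F_{\widetilde v})}\pi_{x,L_{P_{\widetilde v}}}\otimes_E((\chi_{x,\widetilde v})^\infty_{\varpi_{\widetilde v}}\circ\dett_{L_{P_{\widetilde v}}})\otimes_E\delta_{P_{\widetilde v}}^{-1})^\infty$ is irreducible (Bernstein--Zelevinsky), and its local Langlands parameter is computed exactly as in \eqref{recInd}.

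Second, choose a $K_p$-stable $\co_E$-lattice $\sigma_p^\circ$ inside an irreducible smooth $K_p$-representation $\sigma_p$ of Bushnell--Kutzko type for $\pi_p^\infty$ (tensored component-wise using the types already fixed in \S~\ref{sec3.1.1}), and twist by the algebraic $L(\lambda^x)$. The above embedding, combined with the fact that $\Pi_\infty$ is a faithfully flat $M_\infty^\vee$-module, shows that the patched module $M_\infty(\sigma_p^\circ\otimes L(\lambda^x)):=\Hom^{\cont}_{\co_E[[K_p]]}(M_\infty,(\sigma_p^\circ\otimes L(\lambda^x))^\vee)^{\vee}$ has non-zero localization at $\fm_y$. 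By the main support theorem for patched modules (cf.\ \cite[Thm.\ 4.1]{CEGGPS}, and more precisely its refinement for inertial types as in \cite{CEGGPS}, \cite{BHS1} \S 3; this uses the Breuil--M\'ezard philosophy and the compatibility of $M_\infty$ with potentially crystalline deformation rings), the support of $M_\infty(\sigma_p^\circ\otimes L(\lambda^x))[\tfrac1p]$ is contained in $(\Spf R_\infty^p)^{\rig}\times\prod_{v\in S_p}(\Spf R_{\overline\rho_{\widetilde v}}^{\pcr}(\xi_{\widetilde v},\textbf{h}_{\widetilde v}^x))^{\rig}$, where $\xi_{\widetilde v}$ is the inertial type attached to $\pi_{\widetilde v}$ via local Langlands and $\textbf{h}_{\widetilde v}^x=\{\lambda^x_{\widetilde v,i,\tau}-i+1\}_{\tau,i}$. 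This forces $\rho_{x,\widetilde v}$ to be potentially crystalline with the stated Hodge--Tate weights and inertial type for each $v\in S_p$.

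Third, to upgrade the identification from the inertial type to the full Weil--Deligne representation \eqref{noN2}, observe that once $\rho_{x,\widetilde v}$ is known to be potentially crystalline of inertial type $\xi_{\widetilde v}:=\oplus_i\rec((\chi_{x,\widetilde v,i})^\infty_{\varpi_{\widetilde v}})|_{I_{F_{\widetilde v}}}\otimes\rec(\pi_{x,\widetilde v,i})|_{I_{F_{\widetilde v}}}$, the genericity hypothesis \eqref{defGene} guarantees that the distinct irreducible summands $\ttr_{x,\widetilde v,i}\otimes\rec((\chi_{x,\widetilde v,i})^\infty_{\varpi_{\widetilde v}})$ admit no non-trivial extensions in the category of Weil--Deligne representations (either as direct sums or through the monodromy operator $N$), since the genericity condition includes the Tate twists by $1$. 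To pin down the Frobenius eigenvalues, track the $\cZ_\Omega$-action: the eigenvalues of the Hecke operators corresponding to $\pi_{x,L_P}$ computed on $\Pi_\infty^{\lalg}[\fm_y]$ match those predicted by \eqref{recInd} for $\pi_{\widetilde v}$, and then classical local Langlands together with the $F$-semisimplification identification gives the Frobenius eigenvalues of $\ttr_{x,\widetilde v}$. Combined with the impossibility of a non-trivial monodromy operator just explained, we obtain \eqref{noN2}.

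The main obstacle is step two: one needs a version of the support theorem for patched modules that handles cuspidal Bernstein components appearing as Levi factors of $P$ and takes into account the compact-induction-of-type description used in \S~\ref{abCon}. This refinement is available through the work of \cite{CEGGPS} combined with the Bernstein-centre $R_\infty$-compatibility arguments used in \cite{BHS1}, but verifying that the chosen type $\sigma_p^\circ$ (for the full parabolic induction) cuts out exactly the expected potentially crystalline loci requires care, particularly to control possible non-cuspidal sub-quotients coming from Emerton--Jacquet adjunction.
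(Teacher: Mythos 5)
Your proposal is correct and follows essentially the same route as the paper: classicality plus genericity give an injection of the (irreducible) locally algebraic parabolic induction into $\Pi_\infty^{\lalg}[\fm_y]$, then Frobenius reciprocity for a $K_p$-type and the patched-module support results of \cite{CEGGPS} (\cite[Lemma 4.17 (1)]{CEGGPS}) yield potential crystallinity with Hodge--Tate weights $\textbf{h}^x_{\widetilde v}$, and the Bernstein-centre compatibility (\cite[Thm.\ 4.1]{CEGGPS} together with \cite[Lemma 4.17 (2)]{CEGGPS}) identifies $\ttr_{x,\widetilde v}\cong \rec(\pi_{x,\widetilde v})((1-n)/2)$, whence (\ref{noN2}) via (\ref{recInd}) and genericity. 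The only imprecision is your parenthetical on the type: the relevant $\sigma_p$ is the $K_p$-type for the Bernstein component of $G_p$ containing the full parabolic induction, furnished by \cite[Thm.\ 3.7]{CEGGPS} and \cite[Cor.\ 3.12]{CEGGPS}, not a component-wise tensor of the $L_P^0$-types of \S~\ref{sec3.1.1}; with that replacement your worry about non-cuspidal subquotients disappears, since genericity already makes the induction irreducible and the map injective, exactly as in the paper.
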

\begin{proof}
	The proposition follows from the results in \cite[\S~4]{CEGGPS} by similar arguments as in the proof of \cite[Prop.\ 3.16]{BHS1}. Since $x$ is classical, we have a non-zero morphism (recall $\lambda^x=\lambda+(\wt(\chi_x) \circ \dett_{L_P})$)
	\begin{equation}\label{injlalg1}
		\Big(\Ind_{P^-(\Q_p)}^{G_p} \big(\pi_{x,L_P} \otimes_E ((\chi_x)_{\ul{\varpi}}^{\infty} \circ \dett_{L_P}) \otimes_E \delta_P^{-1}\big)\Big)^{\infty} \otimes_E L(\lambda^x) \lra (\Pi_{\infty}^{R_{\infty}-\an})^{\lalg}[\fm_y].
	\end{equation}
	Since $x$ is generic, the representation on the left hand side is absolutely irreducible and hence (\ref{injlalg1}) is injective. For $v\in S_p$, let $G_{\widetilde{v}}:=\GL_n(F_{\widetilde{v}})$, and $\Omega_{G_{\widetilde{v}}}$ be the Bernstein component of
\[\pi_{x, \widetilde{v}}:=\Big(\Ind_{P^-_{\widetilde{v}}(F_{\widetilde{v}})}^{G_{\widetilde{v}}} \big(\pi_{x,L_{P_{\widetilde{v}}}} \otimes_E ((\chi_{x,\widetilde{v}})_{\varpi_{\widetilde{v}}}^{\infty} \circ \dett_{L_{P_{\widetilde{v}}}}) \otimes_E \delta_{P_{\widetilde{v}}}^{-1}\big)\Big)^{\infty}\]
	(do not confuse $\Omega_{G_{\widetilde{v}}}$ with the Bernstein component $\Omega_{\widetilde{v}}$ of $L_{P_{\widetilde{v}}}(F_{\widetilde{v}})$). By \cite[Thm.\ 3.7]{CEGGPS} and \cite[Cor.\ 3.12]{CEGGPS}, there exists a smooth irreducible representation $\sigma_{\widetilde{v}}$ of $K_{\widetilde{v}}:=\GL_n(\co_{F_{\widetilde{v}}})$ such that $\End_{G_{\widetilde{v}}}(\cind_{K_{\widetilde{v}}}^{G_{\widetilde{v}}} \sigma_{\widetilde{v}})\cong \cZ_{\Omega_{G_{\widetilde{v}}}}$ and $\pi_{x, \widetilde{v}} \cong \big(\cind_{K_{\widetilde{v}}}^{G_{\widetilde{v}}} \sigma_{\widetilde{v}}\big)\otimes_{\cZ_{\Omega_{G_{\widetilde{v}}}}, \theta_{x,\widetilde{v}}} E$, where $\theta_{x,\widetilde{v}}:\cZ_{\Omega_{G_{\widetilde{v}}}}\ra E$ denotes the character corresponding to $\pi_{x,\widetilde{v}}$. Let $K_p:=\prod_{v\in S_p} K_{\widetilde{v}}$, $\sigma_{K_p}:=\boxtimes_{v\in S_p} \sigma_{K_{\widetilde{v}}}$, $\Omega_{G_p}:=\prod_{v\in S_p} \Omega_{G_{\widetilde{v}}}$, and
\[\pi_x:=\boxtimes_{v\in S_p} \pi_{x,\widetilde{v}} \cong \big(\cind_{K_p}^{G_p} \sigma_{K_p}\big) \otimes_{\cZ_{\Omega_{G_p}}, \theta_x} E\cong \Big(\Ind_{P^-(\Q_p)}^{G_p} \big(\pi_{x,L_P} \otimes_E ((\chi_x)_{\ul{\varpi}}^{\infty} \circ \dett_{L_P}) \otimes_E \delta_P^{-1}\big)\Big)^{\infty}\]
(where $\theta_x:\cZ_{\Omega_{G_p}}\ra E$ denotes the character corresponding to $\pi_x$). By Frobenius reciprocity, we have
	\begin{equation*}
		\Pi_{\infty}(\sigma_{K_p}, \lambda^x):=\Hom_{K_p}\big(\sigma_{K_p}, \Pi_{\infty} \otimes_E L(\lambda^x)^{\vee}\big) \cong \Hom_{G_p}\big(\cind_{K_p}^{G_p} \sigma_{K_p}, \Pi_{\infty} \otimes_E L(\lambda^x)^{\vee}\big).
	\end{equation*}
By the injection (\ref{injlalg1}), we deduce that $\Pi_{\infty}(\sigma_{K_p}, \lambda^x)[\fm_y]\neq 0$ and that there exists a non-zero subspace of $\Pi_{\infty}(\sigma_{K_p}, \lambda^x)[\fm_y]$ on which $\cZ_{\Omega_{G_p}}$ acts via $\theta_x$. Let $\xi_{\widetilde{v}}$ be the inertial type associated to $\sigma_{\widetilde{v}}$, $\textbf{h}_{\widetilde{v}}^x:=(h_{\widetilde{v},i,\tau}^x)_{\substack{i=1, \dots, n\\ \tau \in \Sigma_{\widetilde{v}}}}$ with $h_{\widetilde{v},i,\tau}^x=\lambda^x_{\widetilde{v},i,\tau}-i+1$. By \cite[Lemma 4.17 (1)]{CEGGPS}, the action of $\widehat{\otimes}_{v\in S_p} R_{\overline{\rho}_{\widetilde{v}}}$ on $\Pi_{\infty}(\sigma_{K_p}, \lambda^x)$ factors through $\widehat{\otimes}_{v\in S_p} R_{\overline{\rho}_{\widetilde{v}}}^{\pcr}(\xi_{\widetilde{v}},\textbf{h}^x_{\widetilde{v}})$. The first part of the proposition follows. By \cite[Thm.\ 4.1]{CEGGPS}, there is a morphism $\cZ_{\Omega_{G_{\widetilde{v}}}}\ra R_{\overline{\rho}_{\widetilde{v}}}^{\pcr}(\xi_{\widetilde{v}},\lambda^x_{\widetilde{v}})[1/p]$ that interpolates the local Langlands correspondence (with the same normalization as in \S~\ref{Nota2.1}). By \cite[Lemma 4.17 (2)]{CEGGPS}, the action of $\cZ_{\Omega_{G_p}}$ on $\Pi_{\infty}(\sigma_{K_p}, \lambda^x)$ factors through $\cZ_{\Omega_{G_p}}\ra (\widehat{\otimes}_{v\in S_p} R_{\overline{\rho}_{\widetilde{v}}}^{\pcr}(\xi_{\widetilde{v}},\lambda^x_{\widetilde{v}}))[1/p]$. We deduce $\ttr_{x,\widetilde{v}}\cong \rec(\pi_{x,\widetilde{v}})((1-n)/2)$, and the second part follows then from (\ref{recInd}).
\end{proof}
For a generic classical point $x\in \cE_{\Omega, \lambda}^{\infty}(\overline{\rho})$, we use (\ref{noN2}) to define a $P_{\widetilde{v}}$-filtration $\sF_{\widetilde{v}}$ on $\ttr_{x,\widetilde{v}}$ as in (\ref{Pvfil1}). We call $x$ {\it non-critical} if $(\rho_{x,\widetilde{v}}, \sF_{\widetilde{v}})$ is non-critical for all $v\in S_p$. 
Using Proposition \ref{class}, Lemma \ref{lemGenec}, and by the same argument as in the proof of \cite[Thm.\ 3.9]{BHS1}, we have the following analogue of Theorem \ref{denNCG}:

\begin{theorem}\label{classPatc}
The set of very classical non-critical generic points is Zariski-dense in $\cE_{\Omega, \lambda}^{\infty}(\overline{\rho})$. Moreover, for any point $x=(y, \pi_{L_P},\chi)\in \cE_{\Omega, \lambda}^{\infty}(\overline{\rho})$ with $\chi$ locally algebraic, and for any admissible open $U\subseteq \cE_{\Omega, \lambda}^{\infty}(\overline{\rho})$ containing $x$, there exists an admissible open $V\subseteq U$ containing $x$ such that the set of very classical non-critical generic points in $V$ is Zariski-dense in $V$. 
\end{theorem}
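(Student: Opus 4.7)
The plan is to imitate the proof of Theorem \ref{denNCG} (itself following \cite[Thm.\ 3.9, Thm.\ 3.19]{BHS1}), using the Fredholm structure of Proposition \ref{PBEFre} together with the equidimensionality provided by Corollary \ref{PBEdim}. Fix a point $x = (y, \pi_{x,L_P}, \chi_x) \in \cE_{\Omega, \lambda}^{\infty}(\overline{\rho})$ with $\chi_x$ locally algebraic and an admissible open $U$ containing $x$. By Proposition \ref{PBEFre}(2), after shrinking $U$ one may assume $U$ is an affinoid such that $\omega|_U$ is finite surjective from each irreducible component of $U$ onto a common affinoid open $W \subseteq \cW_{\infty}$, and such that $\cM_{\Omega,\lambda}^{\infty}(U)$ is a finitely generated projective $\co_{\cW_{\infty}}(W)$-module. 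In particular the $\kappa_z$-coordinate (the $\bG_m^{\rig}$-factor of the target of $\kappa_z$) is bounded on $U$.

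Next I would observe that both the numerical classicality criterion of Proposition \ref{class} and the genericity/non-criticality criterion of Lemma \ref{lemGenec} depend only on Emerton's Jacquet functor applied to the input locally analytic representation, and on general facts about locally analytic principal series, so they apply verbatim with $\widehat{S}(U^p,E)_{\overline{\rho}}^{\an}$ replaced by $\Pi_{\infty}^{R_{\infty}-\an}$. These criteria translate into the following conditions on a point $x' = (y', \pi_{x',L_P}, \chi_{x'}) \in U$:\ $\chi_{x'}$ is locally algebraic, $\lambda^{x'}$ is dominant, the numerical inequalities (\ref{numCri}) hold, and certain inequalities separating the central character valuations $\val_{\widetilde{v}}(\omega_{\pi_{x',\widetilde{v},i}}(\varpi_{\widetilde{v}}))$ for different $i$ hold. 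The left hand side of (\ref{numCri}) and of the genericity inequalities is controlled by the $\kappa_z$-valuation, which is bounded on $U$, whereas the right hand side grows linearly in the weight gaps $\wt(\chi_{x'})_{\widetilde{v},k,\tau} - \wt(\chi_{x'})_{\widetilde{v},k+1,\tau}$ and in the coordinates of $\lambda^{x'}$. Since integral $P$-dominant weights with arbitrarily large gaps accumulate at any continuous character of $\cZ_0$ in the weight factor $\widehat{\cZ_0}$ of $\cW_{\infty}$, the locus in $W$ of locally algebraic characters satisfying all of these open conditions is Zariski-dense in $W$ and accumulates at $\omega(x)$.

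Pulling back along the finite surjective map $\omega$ from each irreducible component of $U$ onto $W$, one obtains a Zariski-dense set of points $x'$ satisfying the hypotheses of Proposition \ref{class} and of Lemma \ref{lemGenec}; by those propositions, each such $x'$ is very classical, generic, and non-critical. This yields Zariski-density in every irreducible component of $U$, hence in $U$, giving the accumulation statement; the first assertion follows by a standard covering argument. The main technical obstacle is to ensure that the numerical inequalities in Proposition \ref{class} and Lemma \ref{lemGenec} can be satisfied \emph{simultaneously} by a Zariski-dense set of locally algebraic characters in $\widehat{\cZ_0}$ accumulating at $\chi_x$: one must control both the genericity separations and the numerical classicality growth in the weight variable, using that the $\kappa_z$-valuations are bounded on the affinoid $U$ (guaranteed by Proposition \ref{PBEFre}(2)) while the classicality lower bound in (\ref{numCri}) can be made arbitrarily large by appropriate choice of accumulating algebraic weight.
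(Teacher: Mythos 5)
Your proposal is correct and follows essentially the same route as the paper: the paper deduces Theorem \ref{classPatc} from Proposition \ref{class} and Lemma \ref{lemGenec} by exactly the argument of \cite[Thm.\ 3.9]{BHS1}, i.e.\ the combination of the Fredholm/finite-over-weight-space structure of Proposition \ref{PBEFre} (bounding the slope on affinoid pieces) with the accumulation of sufficiently dominant locally algebraic weights in $\widehat{\cZ_0}$, which is precisely your argument transported to $\Pi_{\infty}^{R_{\infty}-\an}$.
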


\begin{proposition}\label{PBEred}
The rigid space $\cE_{\Omega, \lambda}^{\infty}(\overline{\rho})$ is reduced.
\end{proposition}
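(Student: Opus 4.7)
The plan is to reduce to generic reducedness via the Cohen--Macaulay structure, then to verify reducedness at a Zariski-dense set of especially nice points by a local freeness argument, in the spirit of \cite[Cor.~3.20]{BHS1} for the patched trianguline eigenvariety.

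First, I would combine Corollary \ref{PBEdim}(1), which says that $\cE_{\Omega, \lambda}^{\infty}(\overline{\rho})$ is equidimensional with no embedded components, with Corollary \ref{PBEdim}(3), the Cohen--Macaulay property of $\cM_{\Omega, \lambda}^{\infty}$. Since the nilradical of $\co_{\cE_{\Omega, \lambda}^{\infty}(\overline{\rho})}$ is a coherent ideal whose support is a proper Zariski-closed subset as soon as it vanishes on a Zariski-dense locus, the absence of embedded primes reduces the problem to verifying reducedness on \emph{any} Zariski-dense subset of points. By Theorem \ref{classPatc}, the very classical non-critical generic points form such a subset, so it suffices to prove reducedness of $\cE_{\Omega, \lambda}^{\infty}(\overline{\rho})$ at every such point.

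Next, fix a very classical non-critical generic point $x = (y, \pi_{x, L_P}, \chi_x)$. I would show that the morphism $\omega : \cE_{\Omega, \lambda}^{\infty}(\overline{\rho}) \to \cW_\infty$ of Proposition \ref{PBEFre}(2) is \'etale at $x$; since $\cW_\infty = (\Spf S_\infty)^{\rig} \times \widehat{\cZ_0}$ is smooth, this forces smoothness (and hence reducedness) of $\cE_{\Omega, \lambda}^{\infty}(\overline{\rho})$ at $x$. \'Etaleness of $\omega$ at $x$ would follow from local freeness of $\cM_{\Omega, \lambda}^{\infty}$ of constant rank over $\cW_\infty$ in a neighbourhood. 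The two inputs are: (a) the projectivity of $M_\infty$ over $S_\infty[[K_p]]$, which after passing to the Bushnell--Kutzko type component $\Hom_{K_p}(\sigma, M_\infty \otimes_{\co_E} L(\lambda^x)^\vee)$ produces a finite projective module over an appropriate patched ring that interpolates classical local Langlands by \cite[Thm.~4.1, Lemma~4.17]{CEGGPS}; and (b) the genericity of $\rho_{x, \widetilde{v}}$ at every $v \in S_p$, which via (the proof of) Proposition \ref{galPBE} pins down the fibre of $\cM_{\Omega, \lambda}^{\infty}$ at $x$ as the (multiplicity $m \geq 1$ copies of the) unique Jacquet--Emerton eigenspace prescribed by $(\pi_{x, L_P}, \chi_x)$, so that the rank is locally constant on a Zariski-open neighbourhood.

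The main obstacle will be the local freeness step, where one has to carefully thread the type-theoretic $\sigma$-isotypic construction of $\cM_{\Omega, \lambda}^{\infty}$ through the patching data of \cite{CEGGPS} and control the action of the full Bernstein centre $\cZ_\Omega$ (not only of $Z_{L_P}(\Q_p)$) on the localised patched module. It is precisely here that the non-criticality and genericity assumptions on $x$ are essential, since they ensure that the support of $\cM_{\Omega, \lambda}^{\infty}$ intersects the potentially crystalline deformation ring of fixed inertial type and Hodge--Tate weights $\textbf{h}^x$ in a smooth locus, and that the Bernstein centre acts semisimply on the fibre. Once this is in place, everything else — reducing to a dense open subset of classical points, and passing from local freeness over $\cW_\infty$ to reducedness globally — is formal.
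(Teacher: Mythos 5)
Your opening reduction is essentially sound: Corollary \ref{PBEdim} (1) gives that $\cE_{\Omega,\lambda}^{\infty}(\overline{\rho})$ has no embedded component, so a non-zero nilradical would be non-zero at the generic point of some irreducible component, hence at every point of that component, while the very classical non-critical generic points of Theorem \ref{classPatc} meet every component; so it does suffice to prove reducedness of the local rings at such points, and this much is parallel to what the paper does.

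The step that fails is the claim that $\omega$ is \'etale, hence that $\cE_{\Omega,\lambda}^{\infty}(\overline{\rho})$ is smooth, at every such point $x$. First, local freeness of $\cM_{\Omega,\lambda}^{\infty}$ over $\cW_{\infty}$ cannot give this: by Proposition \ref{PBEFre} (2) the local ring of the eigenvariety is an $\co_{\cW_{\infty}}(W_i)$-subalgebra of $\End_{\co_{\cW_{\infty}}(W_i)}(M_i)$ with $M_i$ finite projective, and such a subalgebra can be non-reduced and non-flat even when $M_i$ is free (e.g.\ the subalgebra generated by a nilpotent matrix); ruling this out is exactly the content of the proposition, so this step begs the question. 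Relatedly, your input (b) only pins down the fibre $x^{*}\cM_{\Omega,\lambda}^{\infty}$, i.e.\ the honest eigenspace (cf.\ Proposition \ref{pts1}), whereas the nilpotents live in the generalized eigenspace inside $M_i\otimes k(\omega(x))$, which genericity and non-criticality alone do not control. Second, smoothness at these points is false in general for a reason your sketch never addresses: the eigenvariety also sees the prime-to-$p$ patched ring $R_{\infty}^{p}$, whose local deformation rings at $v\in S\setminus S_p$ may be singular, and classical points need not avoid that locus (this is precisely why the ``$R=T$'' corollary in the introduction assumes smoothness of $\fm^p$); moreover any attempt to compute the completed local ring at $x$ via Theorem \ref{R=T0} would be circular, since the proof of Theorem \ref{R=T0} invokes Proposition \ref{PBEred}. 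What one should prove at a classical point is much weaker than smoothness, namely that the $R_{\infty}$- and $\cZ_{\Omega}$-actions on the finite-dimensional generalized eigenspace $\Sigma$ are semisimple: this is the paper's route (following the proof of \cite[Cor.\ 3.20]{BHS1}), using Lemma \ref{BCmod} and \cite[Thm.\ 4.3]{Br13II} to relate $\Sigma$ to locally algebraic vectors of $\Pi_{\infty}$, where the $R_{\infty}$-action factors through the reduced ring $\widehat{\otimes}_{v\in S_p}R_{\overline{\rho}_{\widetilde{v}}}^{\pcr}(\xi_{\widetilde{v}},\textbf{h}^x_{\widetilde{v}})$ and the $\cZ_{\Omega}$-action is controlled by the interpolation of local Langlands of \cite{CEGGPS} as in the proof of Proposition \ref{galPBE}; reducedness then follows from the standard criterion with no smoothness or flatness over $\cW_{\infty}$ anywhere.
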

\begin{proof}
The proposition follows by an easy variation of the proof of \cite[Cor.\ 3.20]{BHS1}. We briefly indicate below the changes. We define verbatim the $R_{\infty}$-module $\Sigma$ of \textit{loc.\ cit.} In \cite{BHS1}, it was a finite length $T_p$-module on which $T_p^0:=\prod_{v\in S_p} T(\co_{F_{\widetilde{v}}})$ acts by the character $\delta_{\lambda}$ of \textit{loc.\ cit.} In our case, it becomes a finite length $\cZ_{\Omega}$-module on which $\cZ_0\cong Z_{L_P}^0$ acts by an algebraic character. Similarly as in \textit{loc.\ cit.}, we are reduced to show that the $R_{\infty}$-action and the $\cZ_{\Omega}$-action on $\Sigma$ are both semi-simple. To obtain an analogue of \cite[(3.13)]{BHS1}, we use Lemma \ref{BCmod} and \cite[Thm.\ 4.3]{Br13II}. The ``$\cH$" in the proof of \cite[Cor.\ 3.20]{BHS1} has to be replaced by a Bernstein component $\Omega_{G_p}=\boxtimes_{v|p} \Omega_{G_{\widetilde{v}}}$ of $G_p$ similarly as the one appearing in the proof of Proposition \ref{galPBE}, and ``$\rm{ind}_{K_p}^{G_p} 1$" is replaced by $\cind_{K_p}^{G_p} \sigma_{K_p}$ where $\sigma_{K_p}=\otimes_v \sigma_{\widetilde{v}}$ is the $K_p$-representation associated to $\Omega_{G_p}$ as in the proof of Proposition \ref{galPBE}. Finally, ``$R_{\overline{\rho}_p}^{\square, \textbf{k}-\rm{cr}}$" has to be replaced by $\widehat{\otimes}_{v\in S_p}R_{\overline{\rho}_{\widetilde{v}}}^{\pcr}(\xi_{\widetilde{v}},\textbf{h}^x_{\widetilde{v}})$ (as in the proof of Proposition \ref{galPBE}).
\end{proof}

\begin{remark}
Assume $P=B_p$ where $B_p$ is as in Remark \ref{remP=B2}. Using the isomorphism $\iota_{\Omega, \lambda}$ in Remark \ref{remP=B}, we can view $\cE_{\Omega, \lambda}^{\infty}(\overline{\rho})$ as a (reduced) closed rigid analytic subspace of $(\Spf R_{\infty})^{\rig} \times \widehat{T_p}$ ($T_p$ as in Remark \ref{remP=B2}), which is independent of the choice of $(\Omega, \lambda)$ by Proposition \ref{twBEi} (trivially generalized to the patched case). By construction, the rigid analytic space $\cE_{\Omega, \lambda}^{\infty}(\overline{\rho})$ has the same points as the patched eigenvariety $X_p(\overline{\rho})$ of \cite{BHS1}. Using Proposition \ref{PBEred} and \cite[Cor.\ 3.20]{BHS1}, we actually obtain $\cE_{\Omega, \lambda}^{\infty}(\overline{\rho}) \cong X_p(\overline{\rho})$.
\end{remark}

\begin{proposition}\label{bePbe}
Let $(\Spf S_{\infty})^{\rig} \ra \Spec E$ be the morphism corresponding to the quotient by the ideal $\fa$. We have a natural morphism of rigid analytic spaces
\[\cE_{\Omega, \lambda}(U^p, \overline{\rho}) \lra \cE_{\Omega, \lambda}^{\infty}(\overline{\rho}) \times_{(\Spf S_{\infty})^{\rig}} \Spec E\]
which is bijective on points. 
\end{proposition}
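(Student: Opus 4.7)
The strategy is to leverage the patching isomorphism $\Pi_\infty[\fa]\cong \widehat{S}(U^p,E)_{\overline\rho}$ of property~(3) to compare the coherent sheaves underlying both Bernstein eigenvarieties.

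First I would establish a $G_p$-equivariant topological isomorphism
\[
\Pi_\infty^{R_\infty-\an}[\fa] \xlongrightarrow{\sim} \widehat{S}(U^p,E)_{\overline\rho}^{\an},
\]
expressing the compatibility of locally $R_\infty$-analytic vectors with $\fa$-torsion (a standard consequence of \cite[\S3.1]{BHS1}: on $\Pi_\infty[\fa]$ the $R_\infty$-action factors through $R_\infty/\fa$, so $R_\infty$-analyticity reduces to locally $\Q_p$-analyticity for the $G_p$-action). The functor $B_{\Omega,\lambda}(-)$ of \S\ref{secAbsOL} is built from Emerton's Jacquet functor and $\Hom_{L_P^0}(\sigma,-\widehat{\otimes}_E \cC^{\Q_p-\la}(Z_{L_P}^0,E))$, both of which commute with passage to $\fa$-torsion on (essentially) admissible representations. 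This yields a $\cZ_\Omega\times Z_{\ul{\varpi}}\times \cZ_0\times R_\infty/\fa$-equivariant isomorphism
\[
B_{\Omega,\lambda}(\Pi_\infty^{R_\infty-\an})[\fa] \xlongrightarrow{\sim} B_{\Omega,\lambda}(\widehat{S}(U^p,E)_{\overline\rho}^{\an}),
\]
where $R_\infty/\fa$ acts on the right hand side via $R_\infty/\fa\twoheadrightarrow R_{\overline{\rho},\cS}\twoheadrightarrow \bT(U^p)_{\overline\rho}$.

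Dualizing via (\ref{glosecBE}), this shows that the restriction of $\cM_{\Omega,\lambda}^\infty$ to $(\Spf R_\infty/\fa R_\infty)^{\rig}\times (\Spec \cZ_\Omega)^{\rig}\times \widehat{\cZ_0}$ coincides with the pushforward of $\cM_{\Omega,\lambda}(U^p)$ along the closed immersion
\[
(\Spf \bT(U^p)_{\overline\rho})^{\rig}\times (\Spec \cZ_\Omega)^{\rig}\times \widehat{\cZ_0} \hooklongrightarrow (\Spf R_\infty/\fa R_\infty)^{\rig}\times (\Spec \cZ_\Omega)^{\rig}\times \widehat{\cZ_0}
\]
induced by the surjection $R_\infty/\fa\twoheadrightarrow R_{\overline{\rho},\cS}\twoheadrightarrow \bT(U^p)_{\overline\rho}$. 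Comparing scheme-theoretic supports produces the natural morphism
\[
\cE_{\Omega,\lambda}(U^p,\overline\rho) \lra \cE_{\Omega,\lambda}^\infty(\overline\rho)\times_{(\Spf S_\infty)^{\rig}} \Spec E.
\]

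For bijectivity on points, a point of the right hand side is a triple $(y,\pi_{L_P},\chi)$ with $y\in (\Spf R_\infty/\fa R_\infty)^{\rig}$ satisfying the nonvanishing condition of Theorem \ref{intpBE}(1). By the isomorphism of Paragraph~1, this is equivalent to the analogous condition with $J_P(\Pi_\infty^{R_\infty-\an})[\fm_y]$ replaced by $J_P(\widehat{S}(U^p,E)_{\overline\rho}^{\an})[\fm_y]$. Since $\bT(U^p)_{\overline\rho}$ acts faithfully on $\widehat{S}(U^p,E)_{\overline\rho}$, the maximal ideal $\fm_y$ of $R_\infty/\fa[1/p]$ pulls back through the chain $R_\infty/\fa\twoheadrightarrow R_{\overline{\rho},\cS}\twoheadrightarrow\bT(U^p)_{\overline\rho}$ to a unique maximal ideal $\fm_\eta$ of $\bT(U^p)_{\overline\rho}[1/p]$, yielding a unique preimage $(\eta,\pi_{L_P},\chi)\in \cE_{\Omega,\lambda}(U^p,\overline\rho)$ by Proposition~\ref{basicBE}(3); surjectivity in the other direction is immediate. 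The main subtlety is the compatibility of locally $R_\infty$-analytic vectors with $\fa$-torsion in Step~1, but this is a standard consequence of the BHS formalism. Note that the morphism is typically \emph{not} an isomorphism of rigid spaces: the fiber product on the right may carry nilpotents that the scheme-theoretic support $\cE_{\Omega,\lambda}(U^p,\overline\rho)$ does not see, which is why only bijectivity on points is claimed.
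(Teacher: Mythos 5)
Your proof is correct and follows essentially the same route as the paper's: one establishes $B_{\Omega,\lambda}(\Pi_{\infty}^{R_{\infty}-\an})[\fa]\cong B_{\Omega,\lambda}(\widehat{S}(U^p,E)^{\an}_{\overline{\rho}})$ via $J_P(\Pi_{\infty}^{R_{\infty}-\an})[\fa]\cong J_P(\Pi_{\infty}^{R_{\infty}-\an}[\fa])\cong J_P(\widehat{S}(U^p,E)^{\an}_{\overline{\rho}})$ (the argument of \cite[Thm.\ 4.2]{BHS1}), and then compares the dual coherent sheaves and their supports. The additional detail you give on the support comparison and on bijectivity of points is exactly what the paper compresses into ``The proposition follows.''
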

\begin{proof}
By the same argument as in the proof of \cite[Thm.\ 4.2]{BHS1}, we have 
	\begin{equation*}
		J_P(\Pi_{\infty}^{R_{\infty}-\an})[\fa] \cong J_P(\Pi_{\infty}^{R_{\infty}-\an}[\fa])\cong J_P(\widehat{S}(U^p,E)_{\overline{\rho}}^{\an}).
	\end{equation*}
	Hence $B_{\Omega, \lambda}(\Pi_{\infty}^{R_{\infty}-\an})[\fa]\cong B_{\Omega, \lambda}(\widehat{S}(U^p,E)_{\overline{\rho}}^{\an})$. The proposition follows.
\end{proof}

For $v\in S_p$, let $\jmath_{\widetilde{v}}$ denote the isomorphism
\begin{equation*}
	\jmath_{\widetilde{v}}: (\Spec \cZ_{\Omega_{\widetilde{v}}})^{\rig} \xlongrightarrow{\sim} (\Spec \cZ_{\Omega_{\widetilde{v}}})^{\rig}
\end{equation*}
such that $\pi_{\jmath_{\widetilde{v}}(\ul{x})_i}\cong \pi_{x_i} \otimes_E \unr\big(q_{\widetilde{v}}^{s_{\widetilde{v},i-1}-\frac{1-n_{\widetilde{v},i}}{2}}\big) \circ \dett$ for $\ul{x}=(x_i) \in (\Spec \cZ_{\Omega_{\widetilde{v}}})^{\rig}$. Let $\jmath:=(\jmath_{\widetilde{v}})_{v\in S_p}: (\Spec \cZ_{\Omega})^{\rig} \ra (\Spec \cZ_{\Omega})^{\rig}$. Denote by $\cZ_{0,\widetilde{v}}:=Z_{L_{P_{\widetilde{v}}}}(\co_{F_{\widetilde{v}}})$, thus $\widehat{\cZ_0}\cong \prod_{v\in S_p} \widehat{\cZ_{0,\widetilde{v}}}$. Consider the composition:
\begin{multline}\label{RTnorm}
	\cE_{\Omega, \lambda}^{\infty}(\overline{\rho}) \hooklongrightarrow (\Spf R_{\infty}^p)^{\rig} \times \prod_{v\in S_p} \big((\Spf R_{\overline{\rho}_{\widetilde{v}}})^{\rig} \times (\Spec \cZ_{\Omega_{\widetilde{v}}})^{\rig} \times \widehat{\cZ_{0,\widetilde{v}}}\big) \\
	\xlongrightarrow{\jmath} (\Spf R_{\infty}^p)^{\rig} \times \prod_{v\in S_p}\big( (\Spf R_{\overline{\rho}_{\widetilde{v}}})^{\rig} \times (\Spec \cZ_{\Omega_{\widetilde{v}}})^{\rig} \times\widehat{\cZ_{0,\widetilde{v}}}\big)
\end{multline}
where the second map (still denoted by $\jmath$) is the identity on the factors other than $\prod_v(\Spec \cZ_{\Omega_{\widetilde{v}}})^{\rig}$ and is $\jmath$ on $\prod_v(\Spec \cZ_{\Omega_{\widetilde{v}}})^{\rig}\cong (\Spec \cZ_{\Omega})^{\rig}$. Let $\textbf{h}=(\textbf{h}_{\widetilde{v}})_{v\in S_p}$ be associated to $\lambda$ as in (\ref{textbfh0}). In the next section (see \S~\ref{s: DO} below), we construct a reduced closed subspace $X_{\Omega_{\widetilde{v}}, \textbf{h}_{\widetilde{v}}}(\overline{\rho}_{\widetilde{v}})$ of $(\Spf R_{\overline{\rho}_{\widetilde{v}}})^{\rig} \times (\Spec \cZ_{\Omega_{\widetilde{v}}})^{\rig} \times \widehat{\cZ_{0,\widetilde{v}}}$. We let $X_{\Omega, \textbf{h}}(\overline{\rho}_p):=\prod_{v\in S_p} X_{\Omega_{\widetilde{v}}, \textbf{h}_{\widetilde{v}}}(\overline{\rho}_{\widetilde{v}})$. \index{$X_{\Omega, \textbf{h}}(\overline{\rho}_p)$}

\begin{theorem}\label{R=T0}
The composition in (\ref{RTnorm}) factors through $(\Spf R_{\infty}^p)^{\rig} \times X_{\Omega, \textbf{h}}(\overline{\rho}_p)$, and induces an isomorphism between $\cE_{\Omega, \lambda}^{\infty}(\overline{\rho})$ and a union of irreducible components of $(\Spf R_{\infty}^p)^{\rig} \times X_{\Omega, \textbf{h}}(\overline{\rho}_p)$ equipped with the reduced closed rigid subspace structure. 
\end{theorem}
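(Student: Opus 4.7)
The plan is to verify the factorization of (\ref{RTnorm}) through $(\Spf R_{\infty}^p)^{\rig} \times X_{\Omega, \textbf{h}}(\overline{\rho}_p)$ on a Zariski-dense subset, promote it to the whole space by reducedness, and then identify the image as a union of irreducible components via a dimension count. For the first step, by Theorem \ref{classPatc}, the very classical non-critical generic points form a Zariski-dense subset of $\cE_{\Omega, \lambda}^{\infty}(\overline{\rho})$. For such a point $x = (y, (\pi_{x,\widetilde{v},i}), \chi_x)$, Proposition \ref{galPBE} gives that each $\rho_{x,\widetilde{v}}$ is generic potentially crystalline and that $\ttr_{x,\widetilde{v}}$ decomposes as in (\ref{noN2}). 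The induced minimal $P_{\widetilde{v}}$-filtration $\sF_{\widetilde{v}}$ on $D_{\rig}(\rho_{x,\widetilde{v}})$ (see \S~\ref{introPcr}), combined with non-criticality and (\ref{inj000}), yields saturated embeddings
\[
\gr_i^{\sF_{\widetilde{v}}} D_{\rig}(\rho_{x,\widetilde{v}}) \otimes_{\cR_{k(x),F_{\widetilde{v}}}} \cR_{k(x),F_{\widetilde{v}}}\bigl((\chi_{x,\widetilde{v},i})_{\varpi_{\widetilde{v}}}^{-1}\bigr) \hooklongrightarrow \Delta_{x,\widetilde{v},i} \otimes_{\cR_{k(x),F_{\widetilde{v}}}} \cR_{k(x),F_{\widetilde{v}}}(z^{\textbf{h}_{\widetilde{v}, s_i}})
\]
with image of Hodge-Tate weights $(\textbf{h}_{\widetilde{v}, s_{i-1}+1}, \dots, \textbf{h}_{\widetilde{v}, s_i})$. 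The shift $\jmath$ in (\ref{RTnorm}) is designed so that $\Delta_{\jmath(\pi_{x,\widetilde{v},i})}$ coincides with $\Delta_{x,\widetilde{v},i}$ (absorbing the Tate twist $((1-n_{\widetilde{v},i})/2 - s_{\widetilde{v},i-1})$ that enters the definition of $\ttr_{x,\widetilde{v},i}$). Together with the compatibility of genericity conventions between \S~\ref{introPcr} and \S~\ref{s: DO}, this exhibits the image of $x$ in $(\Spf R_{\infty}^p)^{\rig} \times U_{\Omega, \textbf{h}}(\overline{\rho}_p) \subseteq (\Spf R_{\infty}^p)^{\rig} \times X_{\Omega, \textbf{h}}(\overline{\rho}_p)$.

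Next, the morphism (\ref{RTnorm}) is a closed immersion (as $\jmath$ is an isomorphism and $\cE_{\Omega, \lambda}^{\infty}(\overline{\rho})$ is Zariski-closed in its target by (\ref{patchedbernstein})). Since $X_{\Omega, \textbf{h}}(\overline{\rho}_p)$ is a closed reduced subspace, the preimage in $\cE_{\Omega, \lambda}^{\infty}(\overline{\rho})$ of $(\Spf R_{\infty}^p)^{\rig} \times X_{\Omega, \textbf{h}}(\overline{\rho}_p)$ is a Zariski-closed subspace containing the Zariski-dense set above. By Proposition \ref{PBEred}, $\cE_{\Omega, \lambda}^{\infty}(\overline{\rho})$ is reduced, so this preimage equals the whole of $\cE_{\Omega, \lambda}^{\infty}(\overline{\rho})$, giving the claimed factorization. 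For the final step, both sides are reduced and equidimensional of the same dimension: by Corollary \ref{PBEdim}(1), $\cE_{\Omega, \lambda}^{\infty}(\overline{\rho})$ has dimension $g + |S|n^2 + \sum_{v \in S_p}[F_{\widetilde{v}}:\Q_p](\tfrac{n(n-1)}{2} + r_{\widetilde{v}})$, while by Theorem \ref{triangulineintro}(1) and the standard formula $\dim (\Spf R_{\infty}^p)^{\rig} = g + |S \setminus S_p| n^2$ (using $\dim R_{\overline{\rho}_{\widetilde{v}}} = n^2 + 1$ for $v \in S \setminus S_p$ together with the completed-tensor-product formula), the target has the same total dimension. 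A closed immersion of reduced equidimensional rigid analytic spaces of the same dimension identifies the source with a union of irreducible components of the target, which completes the argument.

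The subtle point in this plan is the bookkeeping of twists: one must verify that the shift $\jmath$ precisely converts the normalization of the classical local Langlands correspondence on the automorphic side (as in \cite{HT}) into the normalization on the Galois side used to define the $p$-adic differential equations $\Delta_{\widetilde{v}, i}$ in \S~\ref{sec_pDf}, and hence that the filtration coming from (\ref{noN2}) matches exactly the defining conditions of $U_{\Omega, \textbf{h}}(\overline{\rho}_{\widetilde{v}})$. Once this compatibility is carefully unraveled, the remaining ingredients are the density theorem (Theorem \ref{classPatc}), reducedness (Proposition \ref{PBEred}), and the two dimension formulas (Corollary \ref{PBEdim} and Theorem \ref{triangulineintro}), all of which are already available at this point of the paper.
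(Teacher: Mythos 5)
Your proposal is correct and follows essentially the same route as the paper's proof: send the Zariski-dense set of very classical non-critical generic points into $(\Spf R_{\infty}^p)^{\rig}\times \prod_{v\in S_p}U_{\Omega,\textbf{h}}(\overline{\rho}_{\widetilde{v}})$ via Proposition \ref{galPBE} and the definition of $U_{\Omega,\textbf{h}}(\overline{\rho}_{\widetilde{v}})$ (with $\jmath$ absorbing the normalization twist), use density (Theorem \ref{classPatc}) and closedness of $X_{\Omega,\textbf{h}}(\overline{\rho}_p)$ to get the factorization, and conclude by reducedness (Proposition \ref{PBEred}) together with the matching dimension counts (Corollary \ref{PBEdim} and Proposition \ref{DFOL}(1), i.e.\ Theorem \ref{triangulineintro}(1)). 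The extra care you take with the twist bookkeeping is a more explicit version of what the paper leaves implicit, not a different argument.
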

\begin{proof}
By Proposition \ref{galPBE}, and the construction of $\{X_{\Omega_{\widetilde{v}}, \textbf{h}_{\widetilde{v}}}(\overline{\rho}_{\widetilde{v}})\}$ in \S~\ref{s: DO}, one sees that all the generic classical points of $\cE_{\Omega, \lambda}^{\infty}(\overline{\rho})$ are sent to
\[(\Spf R_{\infty}^p)^{\rig} \times \prod_{v\in S_p} U_{\Omega, \textbf{h}}(\overline{\rho}_{\widetilde{v}}) \hookrightarrow (\Spf R_{\infty}^p)^{\rig} \times X_{\Omega, \textbf{h}}(\overline{\rho}_p)\]
where we refer to the discussion above Proposition \ref{twGal} for the rigid analytic space $U_{\Omega, \textbf{h}}(\overline{\rho}_{\widetilde{v}})$. The first part then follows from the density of generic classical points (Theorem \ref{classPatc}). The second part follows from Proposition \ref{PBEred}, from the fact $\cE_{\Omega, \lambda}^{\infty}(\overline{\rho})$ is closed in the right hand side of (\ref{RTnorm}), hence in $(\Spf R_{\infty}^p)^{\rig} \times X_{\Omega, \textbf{h}}(\overline{\rho}_p)$, and from the fact that both rigid spaces $\cE_{\Omega, \lambda}^{\infty}(\overline{\rho})$ and $(\Spf R_{\infty}^p)^{\rig} \times X_{\Omega, \textbf{h}}(\overline{\rho}_p)$ have the same dimension (by Corollary \ref{PBEdim} for the first and Proposition \ref{DFOL} (1) for $X_{\Omega_{\widetilde{v}}, \textbf{h}_{\widetilde{v}}}(\overline{\rho}_{\widetilde{v}})$ together with $\dim(\Spf R_{\infty}^p)^{\rig}=g+\sum_{v\in S\setminus S_p} \dim(\Spf R_{\overline{\rho}_{\widetilde{v}}})^{\rig}=g+n^2\vert S\setminus S_p\vert$). 
\end{proof}

\begin{remark}
Let $\fX^p$ be an irreducible component of $(\Spf R_{\infty}^p)^{\rig}$. Call an irreducible component $X_p$ of $X_{\Omega, \textbf{h}}(\overline{\rho}_p)$ $\fX^p$-automorphic if $\fX^p \times X_p$ is contained in the image of (\ref{RTnorm}). One may expect that $X_p$ is $\fX^p$-automorphic if and only if $X_p$ contains a generic potentially crystalline point with distinct Hodge-Tate weights. We refer to \cite[\S~3.6]{BHS1} for related discussions in the trianguline case.
\end{remark}

We finally discuss the problem of companion constituents and companion points in the patched setting. Let $\fm$ be a maximal ideal of $R_{\infty}[1/p]$ such that $\Pi_{\infty}[\fm]\neq 0$. For $v\in S_p$, we assume that the $\Gal_{F_{\widetilde{v}}}$-representation $\rho_{\widetilde{v}}$ associated to $\fm$ (as above Proposition \ref{galPBE}) is generic potentially crystalline with distinct Hodge-Tate weights. We use the notation in \S~\ref{seccompCP}.

\begin{conjecture}\label{conjCPP}
For $v\in S_p$, let $w=(w_{\widetilde{v}})\in \sW^{P}_{\min}=\prod_{v\in S_p} \sW^{P_{\widetilde{v}}}_{\min,F_{\widetilde{v}}}$. 
	
(1) The representation 
	$\widehat{\otimes}_{v\in S_p}C(w_{\widetilde{v}},\sF_{\widetilde{v}})$ is a subrepresentation of $\Pi_{\infty}^{R_{\infty}-\an}[\fm]$ if and only if $w_{\widetilde{v}} \leq w_{\sF_{\widetilde{v}}}w_0$ for all $v\in S_p$.
	
(2) The point $(\fm, \boxtimes_{v\in S_p} \pi_{L_{P_{\widetilde{v}}}}, 1)\in (\Spf R_{\infty})^{\rig} \times (\Spec \cZ_{\Omega})^{\rig} \times \widehat{\cZ_0}$ lies in $\cE_{\Omega, w\cdot \lambda}^{\infty}(\overline{\rho})$ if and only if $w_{\widetilde{v}} \leq w_{\sF_{\widetilde{v}}}w_0$ for all $v\in S_p$.
\end{conjecture}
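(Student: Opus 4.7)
My plan is to prove both parts of Conjecture \ref{conjCPP} together, by first identifying (1) and (2), then handling the two implications separately.

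First, I would reduce (1) to (2). Given a locally analytic embedding $C(w_{\widetilde v}, \sF_{\widetilde v}) \hookrightarrow \Pi_\infty^{R_\infty-\an}[\fm]$ at each $v \in S_p$, applying Emerton's Jacquet functor $J_P(-)$ and \cite[Thm.~4.3]{Br13II} produces a nonzero element of
\[
\Hom_{L_P(\Q_p)}\Big(\boxtimes_v \pi_{L_{P_{\widetilde v}}} \otimes_E L(w\cdot\lambda)_P,\; J_P(\Pi_\infty^{R_\infty-\an})[\fm]\Big),
\]
which by the fiber description \eqref{glosecBE} combined with Theorem \ref{intpBE}(1) is equivalent to $(\fm, \boxtimes_v \pi_{L_{P_{\widetilde v}}}, 1) \in \cE_{\Omega, w\cdot\lambda}^\infty(\overline\rho)$. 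The reverse direction is the content of Proposition \ref{inteqpt} (formulated as Proposition \ref{nonVancyc} in the main text): the cycle $[\cN_{w\cdot\lambda, y, \sF}]$ governs the existence of the embedding of $C(w,\sF)$, and its non-vanishing is equivalent to membership in the patched Bernstein eigenvariety. This reduces the conjecture to the statement about companion points on $\cE_{\Omega, w\cdot\lambda}^\infty(\overline\rho)$.

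For the "only if" direction, I would invoke the factorization \eqref{R->T} of Theorem \ref{intpBE}(4): any point of $\cE_{\Omega, w\cdot\lambda}^\infty(\overline\rho)$ maps into $(\Spf R_\infty^p)^{\rig} \times \jmath(X_{\Omega, w(\textbf h)}(\overline\rho_p))$. Projecting to each local factor, the point $(\rho_{\widetilde v}, \pi_{L_{P_{\widetilde v}}}, 1)$ lies in $X_{\Omega_{\widetilde v}, w_{\widetilde v}(\textbf h_{\widetilde v})}(\overline\rho_{\widetilde v})$. Applying Theorem \ref{intcpt2} (Corollary \ref{coLoccomp}) at each $\widetilde v$ yields $w_{\widetilde v} w_0 \geq w_{\sF_{\widetilde v}}$, which is the required inequality.

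The "if" direction is the main obstacle, and should follow the strategy sketched in the introduction for Theorem \ref{intcs}. I would argue by induction on $\ell(w) := \sum_v (\lg(w_{\sF_{\widetilde v}} w_0) - \lg(w_{\widetilde v}))$. Given the hypothesis that $\Pi_\infty[\fm]^{\lalg} \neq 0$ (or equivalently, that $x \in \cE_{\Omega, \lambda}^\infty(\overline\rho)$), the base case produces a classical companion point. For the induction step, one assumes $x \in \cE_{\Omega, w'w_0\cdot\lambda}^\infty(\overline\rho)$ for all $w' \in \sW^P_{\max, F_{\widetilde v}}$ with $w' > w_{\widetilde v} w_0$ and must show $x \in \cE_{\Omega, w_{\widetilde v} w_0 \cdot\lambda}^\infty(\overline\rho)$. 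The key tool is the cycle identity \eqref{inteqc} relating $[\cN_{w'\cdot\lambda, y}]$ (which detects companion constituents) to the cycles $\fZ_{w', y}$ coming from the irreducible components $Z_{w'}$ of the generalized Steinberg variety $Z_{P,\wp}$, together with the non-negativity $[\cN_{w_{\sF} w_0\cdot\lambda, y}] \in \Z_{\geq 0} \fZ_{w_{\sF}, y}$ and the unibranch property of $X_{w'}$ and $Z_{w'}$ at $y_{\pdR}$ (Theorems \ref{intuni} and \ref{unibranch2}). The hardest part is Case (3) of the introduction's case analysis, where the Bruhat interval $[w, w_{\sF} w_0]$ has full length but $\dim \fz_{L_P,\wp}^{w_{\sF} w_0 w^{-1}} > \dim \fz_{L_P,\wp} - 2$: the tangent space argument proving smoothness of $\cE_{\Omega, ww_0\cdot\lambda}^\infty(\overline\rho)$ at $x$ then collapses, and the unknown non-negative coefficients in the analogue of \eqref{inteqc0} cannot be controlled. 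It is precisely to exclude Case (3) that the technical assumption ($*$) of Theorem \ref{intcs} is imposed (via Proposition \ref{bruhInt}); without it the full "if" direction of the conjecture remains open, and only Cases (1) and (2) can be treated unconditionally, yielding partial results.
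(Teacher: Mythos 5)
The statement you are treating is stated in the paper as a conjecture, and the paper itself does not prove it in full; what it does prove are precisely the partial results your proposal reconstructs, and by the same route. The identification of (1) with (2) via the Jacquet functor and \cite[Thm.~4.3]{Br13II} is the paper's Lemma \ref{lemCPCC} (which gives (1)$\Rightarrow$(2) and the transfer of the ``only if'' statements unconditionally); the ``only if'' direction of (2) is obtained, exactly as you say, from the factorization of Theorem \ref{R=T0} together with the local companion-point result Corollary \ref{coLoccomp}; and the ``if'' direction is established only under the assumption ($*$) on adjacent $\GL$-factors, by the inductive cycle argument you outline (Corollary \ref{corocomp0}, Corollary \ref{thmComCon}, Theorem \ref{casparticulier}), with Case (3) of the introduction as the unresolved obstruction, excluded precisely by Proposition \ref{bruhInt}. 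So your proposal is faithful to the paper's treatment and, like the paper, leaves the general ``if'' direction open.

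One imprecision worth flagging: your converse step (2)$\Rightarrow$(1) rests on Proposition \ref{nonVancyc} (via Lemma \ref{copconcyc}), and that proposition requires, beyond the genericity already built into the conjecture's setting, that $x^p$ be a smooth point of $(\Spf R_{\infty}^p)^{\rig}$ — an assumption that is \emph{not} part of the conjecture. Accordingly the paper only records the equivalence of (1) and (2) as a corollary under this extra smoothness hypothesis, not unconditionally; without it, only (1)$\Rightarrow$(2) and the implication between the ``only if'' parts (Lemma \ref{lemCPCC}) are known. With that caveat, your account of what can be proved, and of where and why the argument stops (the failure of the tangent-space smoothness criterion and the uncontrolled coefficients in the cycle identity in Case (3)), agrees with the paper.
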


\begin{remark}\label{cptPBE}
	The point $(\fm, \boxtimes_{v\in S_p} \pi_{L_{P_{\widetilde{v}}}}, 1)\in \cE_{\Omega, w\cdot \lambda}^{\infty}(\overline{\rho})$ of Conjecture \ref{conjCPP} (2) may be referred to as ``a companion point of $x=(\fm, \boxtimes_{v\in S_p} \pi_{L_{P_{\widetilde{v}}}}, 1)$ seen in $\cE_{\Omega, \lambda}^{\infty}(\overline{\rho})$''. In the case of the patched eigenvariety $X_p(\overline{\rho})$ of \cite{BHS3}, there is a canonical embedding $X_p(\overline{\rho}) \hookrightarrow (\Spf R_{\infty})^{\rig} \times \widehat{T_p}$ and the companion points are the distinct points that lie above a same point $y\in (\Spf R_{\infty})^{\rig}$. In our case however, as there are different rigid spaces depending on $(\Omega, \lambda)$, it seems more convenient to fix the point $x \in (\Spf R_{\infty})^{\rig} \times (\Spec \cZ_{\Omega})^{\rig} \times \widehat{\cZ_0}$ and let the Bernstein patched eigenvarieties (together with the embedding into $(\Spf R_{\infty})^{\rig} \times (\Spec \cZ_{\Omega})^{\rig} \times \widehat{\cZ_0}$) vary. See also Remark \ref{remNPara}.
\end{remark}

By the same argument, we have as in Lemma \ref{lemcompCP}:

\begin{lemma}\label{lemCPCC}
(1) Conjecture \ref{conjCPP} (1) implies Conjecture \ref{conjCPP} (2).
	
(2) The ``only if" part of Conjecture \ref{conjCPP} (2) implies the ``only if" part of Conjecture \ref{conjCPP} (1).
\end{lemma}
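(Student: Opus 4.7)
The strategy is to mimic the proof of Lemma \ref{lemcompCP} verbatim, replacing $\widehat{S}(U^p,E)^{\an}[\fm_\rho]$ by $\Pi_{\infty}^{R_{\infty}-\an}[\fm]$, $\cE_{\Omega,w\cdot\lambda}(U^p,\overline{\rho})$ by $\cE_{\Omega,w\cdot\lambda}^{\infty}(\overline{\rho})$, and invoking the patched analogue of the characterization (\ref{fiberE}). Specifically, using (\ref{glosecBE}) together with the same chain of adjunctions as in the proof of Proposition \ref{pts1}, one sees that for a point $x=(\fm,\boxtimes_{v\in S_p}\pi_{L_{P_{\widetilde v}}},1)$ in $(\Spf R_\infty)^{\rig}\times(\Spec\cZ_\Omega)^{\rig}\times\widehat{\cZ_0}$, we have $x\in \cE_{\Omega,w\cdot\lambda}^{\infty}(\overline{\rho})$ if and only if
\begin{equation*}
\Hom_{L_P(\Q_p)}\bigl(\pi_{L_P}\otimes_E L(w\cdot\lambda)_P,\ J_P(\Pi_{\infty}^{R_{\infty}-\an})[\fm]\bigr)\neq 0,
\end{equation*}
since $\chi=1$ contributes trivially.

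For part (1), assume Conjecture \ref{conjCPP}(1). If $w_{\widetilde v}\leq w_{\sF_{\widetilde v}}w_0$ for every $v\in S_p$, then by assumption $\widehat\otimes_{v\in S_p}C(w_{\widetilde v},\sF_{\widetilde v})\hookrightarrow \Pi_{\infty}^{R_{\infty}-\an}[\fm]$, and applying \cite[Thm.\ 4.3]{Br13II} produces a non-zero element in the above $\Hom$ space, so $x\in \cE_{\Omega,w\cdot\lambda}^{\infty}(\overline{\rho})$. Conversely, if $x$ lies in $\cE_{\Omega,w\cdot\lambda}^{\infty}(\overline{\rho})$, the non-vanishing of the above $\Hom$ space, combined with \cite[Thm.\ 4.3]{Br13II} and the Orlik--Strauch structure theorem \cite{OS} applied to the parabolic induction (noting that each $\rho_{\widetilde v}$ being generic forces each smooth induction from a Levi to be irreducible, so the irreducible constituents of the analytic induction are the $\cF^{G_p}_{P^-}(L^-(-w'\cdot\lambda),\pi_{L_P}\otimes_E\delta_P^{-1})\cong\widehat\otimes_{v\in S_p}C(w'_{\widetilde v},\sF_{\widetilde v})$ with $w'\geq w$), yields an embedding $\widehat\otimes_{v\in S_p}C(w'_{\widetilde v},\sF_{\widetilde v})\hookrightarrow \Pi_{\infty}^{R_{\infty}-\an}[\fm]$ for some $w'=(w'_{\widetilde v})$ with $w'_{\widetilde v}\geq w_{\widetilde v}$. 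Conjecture \ref{conjCPP}(1) then forces $w'_{\widetilde v}\leq w_{\sF_{\widetilde v}}w_0$, and in particular $w_{\widetilde v}\leq w_{\sF_{\widetilde v}}w_0$ for all $v$.

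For part (2), suppose the ``only if'' of Conjecture \ref{conjCPP}(2) holds and assume $\widehat\otimes_{v\in S_p}C(w_{\widetilde v},\sF_{\widetilde v})\hookrightarrow \Pi_{\infty}^{R_{\infty}-\an}[\fm]$. Applying \cite[Thm.\ 4.3]{Br13II} to this embedding gives a non-zero element of $\Hom_{L_P(\Q_p)}\bigl(\pi_{L_P}\otimes_E L(w\cdot\lambda)_P,\ J_P(\Pi_{\infty}^{R_{\infty}-\an})[\fm]\bigr)$, hence $x\in \cE_{\Omega,w\cdot\lambda}^{\infty}(\overline{\rho})$ by the fiber formula recalled above; the ``only if'' of Conjecture \ref{conjCPP}(2) then gives $w_{\widetilde v}\leq w_{\sF_{\widetilde v}}w_0$ for all $v$. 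The only substantive input beyond bookkeeping is thus the patched analogue of Proposition \ref{basicBE}(2)--(3), which is immediate from the abstract construction of \S\ref{abCon} applied to $\Pi_\infty^{R_\infty-\an}$ exactly as in the unpatched case; the rest is a direct transcription of Lemma \ref{lemcompCP} and presents no additional difficulty.
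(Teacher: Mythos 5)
Your proposal is correct and follows essentially the same route as the paper, which proves the lemma simply by transporting the argument of Lemma \ref{lemcompCP} to the patched setting: the patched fibre formula you invoke is exactly the patched analogue of (\ref{fiberE}) (cf.\ Theorem \ref{intpBE} (1)), and the rest is the same use of \cite[Thm.\ 4.3]{Br13II} together with the Orlik--Strauch constituents and genericity. No gap.
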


Finally, using the isomorphism $\Pi_{\infty}[\fa]\cong \widehat{S}(U^p, E)_{\overline{\rho}}$ (resp.\ using Proposition \ref{bePbe}), one easily deduces:

\begin{lemma}
Conjecture \ref{conjCPP} (1) (resp.\ Conjecture \ref{conjCPP} (2)) implies Conjecture \ref{conjSoc} (resp.\ Conjecture \ref{conjCP}).
\end{lemma}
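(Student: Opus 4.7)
The strategy is to transfer the patched conjectures to the global ones using the two bridges already established in the paper: the isomorphism $\Pi_{\infty}[\fa]\cong \widehat{S}(U^p,E)_{\overline{\rho}}$ (which upgrades to $\Pi_{\infty}^{R_{\infty}-\an}[\fa]\cong \widehat{S}(U^p,E)^{\an}_{\overline{\rho}}$ by \cite[Thm.~4.2]{BHS1}) and the bijection of Proposition~\ref{bePbe}. The common first step is the following. Given $\rho$ with $\widehat{S}(U^p,E)[\fm_{\rho}]\ne 0$, the surjections $R_{\infty}/\fa\twoheadrightarrow R_{\overline{\rho},\cS}\twoheadrightarrow \bT(U^p)_{\overline{\rho}}$ allow us to lift $\fm_{\rho}\subset \bT(U^p)_{\overline{\rho}}[1/p]$ to a maximal ideal $\fm\subset R_{\infty}[1/p]$ containing $\fa$. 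By functoriality of the formation of the $\Gal_{F_{\widetilde{v}}}$-representation from $R_{\overline{\rho}_{\widetilde{v}}}\to R_{\overline{\rho},\cS}\to R_{\infty}/\fa$, the representation $\rho_{\widetilde{v}}$ in the two setups coincides, so the generic potentially crystalline hypotheses of Conjecture~\ref{conjCPP} are satisfied for $\fm$.

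For Conjecture~\ref{conjCPP}~(1) $\Rightarrow$ Conjecture~\ref{conjSoc}, I would take $\fm_{\rho}$-invariants inside $\Pi_{\infty}^{R_{\infty}-\an}[\fa]\cong \widehat{S}(U^p,E)_{\overline{\rho}}^{\an}$: since $\fm\supset \fa$ and the image of $\fm$ in $R_{\infty}/\fa[1/p]$ corresponds (via $R_{\infty}/\fa\twoheadrightarrow R_{\overline{\rho},\cS}$) to $\fm_{\rho}$, we get a canonical identification $\Pi_{\infty}^{R_{\infty}-\an}[\fm]\cong \widehat{S}(U^p,E)^{\an}[\fm_{\rho}]$. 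Any continuous $G_p$-equivariant map from a locally $\Q_p$-analytic representation to a continuous unitary Banach representation factors through the locally analytic vectors, so each $\widehat{\otimes}_{v\in S_p}C(w_{\widetilde{v}},\sF_{\widetilde{v}})$ embeds into $\widehat{S}(U^p,E)[\fm_{\rho}]$ if and only if it embeds into $\widehat{S}(U^p,E)^{\an}[\fm_{\rho}]$, if and only if it embeds into $\Pi_{\infty}^{R_{\infty}-\an}[\fm]$. Applying Conjecture~\ref{conjCPP}~(1) to $\fm$ then yields exactly the equivalence claimed in Conjecture~\ref{conjSoc}.

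For Conjecture~\ref{conjCPP}~(2) $\Rightarrow$ Conjecture~\ref{conjCP}, the bijection on points in Proposition~\ref{bePbe} translates the question to the patched side: for any $w\in \sW^{P}_{\min}$ the point $(\eta_{\rho},\boxtimes_{v\in S_p}\pi_{L_{P_{\widetilde{v}}}},1)$ lies in $\cE_{\Omega_{\sF},w\cdot\lambda}(U^p,\overline{\rho})$ if and only if the corresponding triple $(\fm,\boxtimes_{v\in S_p}\pi_{L_{P_{\widetilde{v}}}},1)\in (\Spf R_{\infty})^{\rig}\times(\Spec \cZ_{\Omega})^{\rig}\times \widehat{\cZ_0}$ lies in $\cE_{\Omega_{\sF},w\cdot\lambda}^{\infty}(\overline{\rho})\times_{(\Spf S_{\infty})^{\rig}}\Spec E$ -- and since $\fm$ contains $\fa$, which is the ideal defining the base change to $\Spec E$, this just means $(\fm,\boxtimes_v\pi_{L_{P_{\widetilde{v}}}},1)\in \cE_{\Omega_{\sF},w\cdot\lambda}^{\infty}(\overline{\rho})$. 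Conjecture~\ref{conjCPP}~(2) for $\fm$ then gives precisely the desired equivalence.

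There is no real obstacle here; the argument is a routine bookkeeping of the compatibilities between $R_{\infty}/\fa$, $R_{\overline{\rho},\cS}$ and $\bT(U^p)_{\overline{\rho}}$, and between the patched representation $\Pi_{\infty}$ and its $\fa$-quotient $\widehat{S}(U^p,E)_{\overline{\rho}}$. The only delicate point to verify carefully is that taking $\fm$-invariants inside $\Pi_{\infty}^{R_{\infty}-\an}$ does commute with first quotienting by $\fa$ and then taking $\fm_{\rho}$-invariants, which is immediate from $\fm\supset \fa$ and the fact that $\Pi_{\infty}^{R_{\infty}-\an}[\fm]=(\Pi_{\infty}^{R_{\infty}-\an}[\fa])[\fm/\fa]$.
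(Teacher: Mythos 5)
Your argument is correct and is exactly the route the paper intends: the first implication via the identification $\Pi_{\infty}^{R_{\infty}-\an}[\fm_y]\cong \widehat{S}(U^p,E)^{\an}[\fm_{\rho}]$ coming from $\Pi_{\infty}[\fa]\cong \widehat{S}(U^p,E)_{\overline{\rho}}$ (plus the fact that maps from the locally analytic $C(w_{\widetilde{v}},\sF_{\widetilde{v}})$ land in analytic vectors), and the second via the point-wise bijection of Proposition \ref{bePbe} together with $\fm\supset\fa$. The paper states this lemma without proof as an easy consequence of precisely these two inputs, so your write-up simply makes explicit the bookkeeping it leaves to the reader.
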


\section{Bernstein paraboline varieties}\label{secDefva}

We now move to the Galois side. In \S~\ref{secDef1}, we study certain paraboline deformations of $(\varphi, \Gamma)$-modules which admit an $\Omega$-filtration (where $\Omega$ is a cuspidal Bernstein component as in \S~\ref{sec3.1.1}). In \S~\ref{s: DO}, we construct and study what we call Bernstein paraboline varieties (analogous to the trianguline variety of \cite[\S~2.2]{BHS1} when $P=B$). Finally in \S~\ref{secPCD}, we study the relation between Bernstein paraboline varieties and potentially crystalline deformation spaces, and show the existence of local companion points. We frequently denote a point in a Bernstein component by its associated Weil-Deligne representation.

\subsection{Deformations of $(\varphi, \Gamma)$-modules}\label{secDef1}

We prove various results on deformations of $(\varphi, \Gamma)$-modules $D$ that admit an $\Omega$-filtration (where $\Omega$ is a cuspidal Bernstein component as in \S~\ref{sec3.1.1}). We first study in \S~\ref{secD0} deformations of irreducible constituents of $D$. By combining the results in \S~\ref{secD0} with results of Chenevier on paraboline deformations, we study deformations of type $\Omega$ in \S~\ref{secDefOD} (which are special cases of paraboline deformations).

\subsubsection{Deformations of certain irreducible $(\varphi,\Gamma)$-modules}\label{secD0}

We study deformations of certain irreducible $(\varphi, \Gamma)$-modules which are de Rham up to twist. The results in this section will be used in our study of deformations of type $\Omega$ in \S~\ref{secDefOD}.

We let $L$ be a finite extension of $\Q_p$ and we use the notation in \S~\ref{Nota2.1}. We begin with some useful facts on extensions of $p$-adic differential equations. Let $\Delta$ be an irreducible $(\varphi, \Gamma)$-module of rank $k$ over $\cR_{E,L}$, de Rham of constant Hodge-Tate weight $0$.

\begin{lemma} \label{lemPdif}
(1) Let $M$ be a $(\varphi, \Gamma)$-module over $\cR_{E,L}$. Assume that $M$ admits an increasing filtration $\Fil_{\bullet} M$ by $(\varphi, \Gamma)$-submodules such that the graded pieces are all isomorphic to $\Delta$. Let $N$ be a saturated $(\varphi, \Gamma)$-submodule of $M$. Then both $N$ and $M/N$ admit a filtration by $(\varphi, \Gamma)$-submodules such that the graded pieces are all isomorphic to $\Delta$.
	
(2) Let $M_1$, $M_2$ be $(\varphi, \Gamma)$-modules over $\cR_{E,L}$ which both admit an increasing filtration by $(\varphi, \Gamma)$-submodules such that the graded pieces are all isomorphic to $\Delta$. Let $f: M_1 \ra M_2$ be a morphism of $(\varphi, \Gamma)$-modules, then $\Ima(f)$ is saturated in $M_2$.
	
(3) Let $M$ be as in (1), and $N_1$, $N_2$ be two saturated $(\varphi, \Gamma)$-submodules of $M$. Then $N_1+N_2$ is also saturated in $M$.
\end{lemma}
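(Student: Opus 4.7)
I would prove the three parts together, using (1) as the core technical content and deriving (2) and (3) from it relatively formally. The key input throughout is that since $\Delta$ is irreducible, any saturated sub-$(\varphi,\Gamma)$-module of $\Delta$ is $0$ or $\Delta$, while any non-zero sub-$(\varphi,\Gamma)$-module of $\Delta$ has saturation equal to $\Delta$. The subtle point is therefore controlling non-saturated subs of $\Delta$ that may arise as images of saturated subs of $M$.

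For (1), I would argue by induction on the length $n$ of $\Fil_{\bullet} M$, the base case $n=1$ being trivial. In the inductive step, set $N' := N \cap \Fil_{n-1} M$; this is the intersection of two saturated submodules of $M$, hence saturated in $\Fil_{n-1} M$, so by induction $N'$ and $\Fil_{n-1} M/N'$ admit the desired filtrations. The quotient $N/N'$ injects into $M/\Fil_{n-1} M = \Delta$. If $N/N' = 0$, then $N = N' \subseteq \Fil_{n-1} M$ and one finishes by expressing $M/N$ as an extension of $\Delta$ by $\Fil_{n-1}M/N'$; the delicate case is $N/N' \ne 0$, where we must show $N/N' = \Delta$, i.e.\ that $N + \Fil_{n-1} M$ is saturated in $M$. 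The hard part is precisely this saturation claim, and I would prove it together with (1) by simultaneously establishing the auxiliary statement: \emph{for any saturated $N \subseteq M$, the sum $N + \Fil_j M$ is saturated in $M$ for every $j$}, by decreasing induction on $j$ (the case $j = n$ being trivial).

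To carry out the saturation claim, assume for contradiction that $N/N' \subsetneq \Delta$; since its saturation in $\Delta$ is $\Delta$, pick $\delta \in \Delta \setminus (N/N')$ with $t^k \delta \in N/N'$, lifted to $\tilde\delta \in M \setminus (N + \Fil_{n-1}M)$ with $t^k \tilde\delta \in N + \Fil_{n-1} M$. The top graded piece of the induced filtration on $M/N$ is then $\Delta/(N/N')$, a non-zero $t$-torsion $(\varphi,\Gamma)$-module. The plan is to use the constant Hodge-Tate weight zero, de Rham structure of $\Delta$ via Berger's equivalence between de Rham $(\varphi,\Gamma)$-modules of constant Hodge-Tate weight $0$ and $p$-adic differential equations: this pins $\Delta$ down as the canonical $\cR_{E,L}$-lattice in $\Delta[1/t]$, so that a non-saturated sub of $\Delta$ would correspond to a strictly smaller lattice; combined with the induced structure on $M/N$ and the torsion-freeness forced by saturation of $N$, one derives the required contradiction. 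The main obstacle is making this ``lattice rigidity'' argument precise level-by-level in the filtration, which is why I prefer to run the auxiliary decreasing induction in tandem with (1).

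Once (1) is in hand, (2) follows by induction on the length of the filtration on $M_2$: for $M_2 = \Delta$ the smallest $\Fil_i M_1$ mapping non-trivially to $\Delta$ gives a morphism $\Delta \to \Delta$, which must be an isomorphism by Schur's lemma, so $f$ is surjective; for general $M_2$, apply (1) to the saturated submodule $f^{-1}(\Fil_1 M_2) \subseteq M_1$ and reduce via the exact sequence $0 \to \Fil_1 M_2 \to M_2 \to M_2/\Fil_1 M_2 \to 0$. Finally, (3) is formal: by (1) both $N_1$ and $N_2$, and hence $N_1 \oplus N_2$, admit filtrations by copies of $\Delta$, and applying (2) to the addition morphism $\phi \colon N_1 \oplus N_2 \to M$, $(n_1, n_2) \mapsto n_1 + n_2$, yields that $\Ima(\phi) = N_1 + N_2$ is saturated in $M$.
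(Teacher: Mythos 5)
Your reductions of (2) and (3) to (1) are fine — in particular your route to (3) via applying (2) to the addition map $N_1\oplus N_2\ra M$ is essentially the paper's argument — but your proof of (1) has a genuine gap at exactly the point you flag. The entire content of (1) is the claim that the image $N/N'$ of $N$ in the top graded piece $\Delta$ is either $0$ or all of $\Delta$ (equivalently, that $N+\Fil_{n-1}M$ is saturated in $M$). This is not a formal consequence of the torsion-freeness of $M/N$: a torsion-free $(\varphi,\Gamma)$-module can perfectly well surject onto a nonzero torsion module, and the claim is simply false once the graded pieces are allowed to be different rank-one modules (for a $2$-dimensional module with a critical triangulation, the saturated rank-one submodule giving the critical refinement maps onto $t^k$ times the top rank-one quotient with $k>0$). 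So the claim genuinely needs the hypothesis that every graded piece is the same irreducible $\Delta$, de Rham of constant Hodge--Tate weight $0$, and your ``lattice rigidity'' sketch never explains how saturation of $N$ inside $M$ is transferred to its image inside $\Delta$; likewise the auxiliary statement you propose to prove ``in tandem'' by decreasing induction on $j$ is a special case of (3) (which you later deduce from (1)), and you give no inductive step for it. As written, the induction in your (1) does not close.

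The missing ingredient is the one the paper uses: induct on the rank of $M$ so as to reduce to the case where the saturated submodule $N$ is \emph{irreducible}. A d\'evissage along $\Fil_{\bullet}M$ then produces a nonzero map $N\ra\Delta$, which is injective (its kernel is a saturated submodule of the irreducible $N$); hence $N$ is de Rham, and since $N$ is saturated in $M$ and all Sen weights of $M$ vanish, the Sen weights of $N$ vanish too, so Berger's equivalence forces $N\cong\Delta$. Taking $i$ minimal with $N\subseteq\Fil_iM$, the nonzero composite $N\ra\Fil_iM\ra\gr_iM\cong\Delta$ is a nonzero endomorphism of $\Delta$, hence an isomorphism, which splits $\Fil_iM\cong N\oplus\Fil_{i-1}M$; both the filtration on $M/N$ and the saturation statement you need for the top quotient fall out of this splitting, and the general case follows by passing to $M/N_0$ for an irreducible saturated $N_0\subseteq N$ and induction. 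It is precisely this identification of an irreducible saturated piece of $N$ with $\Delta$ itself, together with the resulting splitting, that your outline lacks; if you add it, your formulation of (1), and your derivations of (2) and (3), go through.
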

\begin{proof}
(1) We endow $N$ with the induced filtration from $M$ and $M/N$ with the quotient filtration, and want to prove that all graded pieces are $\Delta$. By induction on the rank of $M$, we only need to show the statement in the case $N$ is irreducible. By d\'evissage on $\Fil_{\bullet} M$, we have $\Hom_{(\varphi, \Gamma)}(N,\Delta)\neq 0$. Hence we have an injection $N \hookrightarrow \Delta$, and so $N$ is de Rham. By considering the Sen weights and using the fact $N$ is saturated, we see that $N$ is of constant Hodge-Tate weights $0$. We deduce that $N$ is actually isomorphic to $\Delta$. Let $i\in \Z$ such that the injection $j: N\hookrightarrow M$ has image contained in $\Fil_i M$ but not in $\Fil_{i-1} M$. The non-zero composition $N \xrightarrow{j} \Fil_i M \twoheadrightarrow \gr_i M\cong \Delta$ has to be an isomorphism, and gives a splitting $\Fil_i M\cong N \oplus \Fil_{i-1} M$. The filtration $\Fil_{\bullet}$ then induces a filtration on $M/N$ such that all the graded pieces are isomorphic to $\Delta$.
	
(2) Using (1), we are reduced to the case where $f$ is injective. Then by induction on the rank of $M_2$, we are reduced to the case where $M_1\cong \Delta$. But in this case, by the argument at the end of (1), there exists $i$ such that $\Fil_i M_2\cong \Ima(f) \oplus \Fil_{i-1} M_2$. In particular $\Ima(f)$ is saturated.
	
(3) We have that $N_1\cap N_2$ is saturated in $M$. By (1) and (2), we see that the image of the composition 
	\begin{equation*}
		(N_1+N_2)/(N_1\cap N_2) \cong N_1/(N_1 \cap N_2) \oplus N_2/(N_1 \cap N_2) \hookrightarrow N/(N_1 \cap N_2)
	\end{equation*}
	is saturated. (3) follows.
\end{proof}

We let $\Art(E)$ denote the category of local artinian $E$-algebras of residue field $E$.\index{$\Art(E)$} Let $D$ be a $(\varphi, \Gamma)$-module of rank $k$ over $\cR_{E,L}$. Denote by $F_{D}$ the functor of deformations of $D$ on $A\in \Art(E)$.\index{$\Art(E)$} Suppose that there exist a continuous character $\delta: L^{\times} \ra E^{\times}$ such that one has an embedding of $(\varphi, \Gamma)$-modules
\begin{equation*}
	D \otimes_{\cR_{E,L}} \cR_{E,L}(\delta^{-1})\hooklongrightarrow \Delta
\end{equation*}
(which implies that $D$ is irreducible).\index{$F_D^0$} We consider the following functor
\[F_D^0: \Art(E) \lra \{\text{sets}\},\ \ A \longmapsto \{(D_A, \pi_A, \delta_A)\}/\sim\]
where $D_A$ is a $(\varphi, \Gamma)$-module over $\cR_{A,L}$, $\pi_{A,1}: D_A \otimes_{A} E \xrightarrow{\sim} D$, $\delta_A: \co_L^{\times} \ra A^{\times}$ such that $\delta_A \equiv \delta \pmod{\fm_A}$ ($\fm_A$ is the maximal ideal of $A$), and there is an injection of $(\varphi, \Gamma)$-module over $\cR_{A,L}$:
\begin{equation}\label{injpDEA}
	D_A \hooklongrightarrow \Delta \otimes_{\cR_{E,L}} \cR_{A,L}(\delta_A).
\end{equation}
For $\tau\in \Sigma_L$, let $h_{1,\tau}$ be the maximal $\tau$-Hodge-Tate weight of $D \otimes_{\cR_{E,L}} \cR_{E,L}(\delta^{-1})$ (thus $h_{1,\tau}\geq 0$), and put $\textbf{h}_1:=(h_{1,\tau})_{\tau\in \Sigma_L}$. By comparing the Hodge-Tate weights of $\Delta \otimes_{\cR_{E,L}} \cR_{A,L}$ and $D_A \otimes_{\cR_{A,L}} \cR_{A,L}(\delta_A^{-1})$, and using \cite[Thm.~A]{Ber08a}, we see that the existence of the injection (\ref{injpDEA}) is equivalent to the existence of an injection 	
\begin{equation}\label{injfrDF}
	\Delta \otimes_{\cR_{E,L}} \cR_{A,L}(z^{\textbf{h}_1}\delta_A) \hooklongrightarrow D_A.
\end{equation}
Indeed, both are equivalent to the existence of an isomorphism of $(\varphi, \Gamma)$-modules over $\cR_{A,L}[1/t]$: $D_A[1/t]\cong \Delta \otimes_{\cR_{E,L}} \cR_{A,L}(\delta_A)[1/t]$. 
\begin{lemma}\label{subf}
	$F_D^0$ is a subfunctor of $F_D$.
\end{lemma}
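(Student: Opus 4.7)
To show $F_D^0$ is a subfunctor of $F_D$, I need to check that the forgetful map $F_D^0(A) \to F_D(A)$, $(D_A, \pi_A, \delta_A) \mapsto (D_A, \pi_A)$, is injective for each $A \in \Art(E)$. Given two elements of $F_D^0(A)$ with the same image in $F_D(A)$, I can use the framed isomorphism between the underlying $(\varphi,\Gamma)$-modules to transport one of the embeddings, and the problem reduces to showing: if both $(D_A, \pi_A, \delta_A)$ and $(D_A, \pi_A, \delta_A')$ lie in $F_D^0(A)$, then $\delta_A = \delta_A'$ as characters of $\co_L^\times$.

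My strategy is to extract $\delta_A$ from $D_A$ via a determinant argument. From the two embeddings
\[
D_A \hooklongrightarrow \Delta \otimes_{\cR_{E,L}} \cR_{A,L}(\delta_A), \qquad D_A \hooklongrightarrow \Delta \otimes_{\cR_{E,L}} \cR_{A,L}(\delta_A'),
\]
of rank-$k$ $(\varphi,\Gamma)$-modules over $\cR_{A,L}$, the cokernels are $t$-torsion (since an injection of free $\cR_{A,L}$-modules of equal rank has cokernel supported at zeros of its determinant, which for $(\varphi,\Gamma)$-modules is $t$-power torsion), so inverting $t$ turns both into isomorphisms. Composing one with the inverse of the other gives
\[
\Delta[1/t] \otimes_{\cR_{E,L}[1/t]} \cR_{A,L}\big(\delta_A(\delta_A')^{-1}\big)[1/t] \cong \Delta[1/t] \otimes_E A.
\]
Taking top exterior powers over $\cR_{A,L}$ and cancelling the rank-one factor $\det \Delta$ yields
\[
\cR_{A,L}\big(\delta_A^{k}(\delta_A')^{-k}\big)[1/t] \cong \cR_{A,L}[1/t].
\]

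The classification of rank-one $(\varphi,\Gamma)$-modules over $\cR_{A,L}[1/t]$ then forces $\delta_A^{k}(\delta_A')^{-k}$, once extended to $L^\times$ by the convention sending $\varpi_L$ to $1$, to equal $z^{\mathbf n}$ for some $\mathbf n \in \Z^{|\Sigma_L|}$. But $\delta_A$ and $\delta_A'$ both lift $\delta$ modulo $\fm_A$, so $\delta_A^{k}(\delta_A')^{-k}$ takes values in $1 + \fm_A$ on $\co_L^\times$, whereas $z^{\mathbf n}|_{\co_L^\times}$ is $E$-valued; this forces $\mathbf n = 0$ and hence $\delta_A^{k} = (\delta_A')^{k}$ on $\co_L^\times$. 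Finally, setting $\eta := \delta_A(\delta_A')^{-1}: \co_L^\times \to 1 + \fm_A$, I have $\eta^{k} = 1$. Since $\fm_A$ is a nilpotent ideal in an $E$-algebra, the $p$-adic logarithm is an $E$-linear isomorphism $1 + \fm_A \xrightarrow{\sim} \fm_A$; the identity $k \log \eta = 0$ then forces $\log \eta = 0$, hence $\eta = 1$. The main technical point lies in the determinant/rank-one classification step; the rest is routine bookkeeping.
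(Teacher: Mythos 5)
There is a genuine gap, and it sits exactly at the delicate point of the lemma. Although the displayed definition of $F_D^0$ writes $\delta_A:\co_L^\times\to A^\times$, this is a slip: $\delta_A$ is a continuous character of $L^\times$ (this is how it is used in the proofs of Lemma \ref{subf}, Proposition \ref{repab0}, Proposition \ref{apxfm0}, and in Lemma \ref{twDR}, where one restricts $\delta_A^{-1}\delta$ to $\co_L^\times$). So injectivity of $F_D^0(A)\to F_D(A)$ requires that $(D_A,\pi_A)$ determine $\delta_A$ on all of $L^\times$, in particular its value at $\varpi_L$; concretely, one must rule out that $D_A$ embeds into both $\Delta\otimes_{\cR_{E,L}}\cR_{A,L}(\delta_A)$ and $\Delta\otimes_{\cR_{E,L}}\cR_{A,L}(\delta_A\,\unr(a))$ for some $a\in 1+\fm_A$, $a\neq 1$. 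Your argument deliberately normalizes this away: you reduce to "equality as characters of $\co_L^\times$" and, in the determinant step, replace the actual ratio $\delta_A^k(\delta_A')^{-k}$ (whose values at $\varpi_L$ are precisely what is in question) by its extension sending $\varpi_L\mapsto 1$. That unramified rigidity is not a cosmetic extra: it is what the paper's proof delivers and what is used later, e.g.\ in the proof of Proposition \ref{smooth11}, where "by the proof of Lemma \ref{subf}" an isomorphism $\Delta\otimes\cR_{A,L}(\unr(1+a\varepsilon))\cong\Delta\otimes\cR_{A,L}(\unr(1+a'\varepsilon))$ is used to conclude $a=a'$ — an implication your version of the lemma does not give.

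The paper handles this by an integral argument: from the two embeddings one first gets (after inverting $t$ and looking at Sen weights) that $\delta_A'\delta_A^{-1}$ is smooth, then, matching Hodge--Tate weights via \cite[Thm.~A]{Ber08a}, an honest isomorphism $\Delta\otimes\cR_{A,L}\cong\Delta\otimes\cR_{A,L}(\delta_A'\delta_A^{-1})$ over $\cR_{A,L}$; using irreducibility of $\Delta$ and a d\'evissage on $H^0$ this produces an isomorphism $\cR_{A,L}\cong\cR_{A,L}(\delta_A'\delta_A^{-1})$, and \cite[Prop.~2.3.1]{BCh} (which does distinguish unramified twists, even ones congruent to $1$ mod $\fm_A$) gives $\delta_A'=\delta_A$. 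Your $t$-inverted/determinant route can be repaired without changing its spirit: your preliminary claim that the embeddings become isomorphisms after inverting $t$ is correct (though it needs a word: the determinant is a nonzerodivisor, hence has nonzero reduction mod $\fm_A$, which divides a power of $t$ by the residue-field case, and nilpotence of $\fm_A$ then makes it a unit in $\cR_{A,L}[1/t]$ — this is also implicit in the equivalence of (\ref{injpDEA}) and (\ref{injfrDF})). But you must then apply the rank-one classification over $\cR_{A,L}[1/t]$ (in the form of \cite[Lemma~3.3.4]{BHS3}, exactly as in the proof of Lemma \ref{parauni}) to the genuine ratio $\delta_A^k(\delta_A')^{-k}$ as a character of $L^\times$, not to its trivial extension from $\co_L^\times$: this forces the ratio to equal $z^{\mathbf n}$ on all of $L^\times$, the congruence mod $\fm_A$ then gives $\mathbf n=0$, and unique divisibility of $1+\fm_A$ yields $\delta_A=\delta_A'$ including at $\varpi_L$. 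As written, however, the proof only determines $\delta_A|_{\co_L^\times}$ and so does not prove the lemma.
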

\begin{proof}
	Let $A\in \Art(E)$, and $(D_A, \pi_A,\delta_A)\in F_D^0(A)$. It is enough to show that $\delta_A$ is uniquely determined by $D_A$. Suppose we have another $\delta_A'$ such that $\delta_A'\equiv \delta_A \pmod{\fm_A}$ and $ D_A \hookrightarrow \Delta \otimes_{\cR_{E,L}} \cR_{A,L}(\delta_A')$. This map, together with (\ref{injfrDF}), induce
	\begin{equation*}
		\Delta \otimes_{\cR_{E,L}} \cR_{A,L}(z^{\textbf{h}_1}\delta_A) \hooklongrightarrow D_A \hooklongrightarrow \Delta \otimes_{\cR_{E,L}} \cR_{A,L}(\delta_A').
	\end{equation*}
	Hence we have $\Delta \otimes_{\cR_{E,L}} \cR_{A,L} \hookrightarrow \Delta \otimes_{\cR_{E,L}} \cR_{A,L}(\delta_A'\delta_A^{-1} z^{-\textbf{h}_1})$. Since $\Delta \otimes_{\cR_{E,L}} \cR_{A,L} $ is de Rham, so is $\Delta \otimes_{\cR_{E,L}} \cR_{A,L}(\delta_A'\delta_A^{-1} z^{-\textbf{h}_1})$ (using that both have the same rank over $\cR_{E,L}$). By looking at the Sen weights and using $\delta_A'\equiv \delta_A \pmod{\fm_A}$, we deduce that $\delta_A'\delta_A^{-1}$ is smooth. Then by comparing the Hodge-Tate weights (and using \cite[Thm.~A]{Ber08a}), we obtain an isomorphism
	\begin{equation}\label{isoAA'}
		\Delta \otimes_{\cR_{E,L}} \cR_{A,L} \cong \Delta \otimes_{\cR_{E,L}} \cR_{A,L}(\delta_A'\delta_A^{-1}).
	\end{equation}
	Let $(\Delta\otimes_{\cR_E,L} \Delta^{\vee})^0:=(\Delta \otimes_{\cR_E,L} \Delta^{\vee})/\cR_{E,L}$, then $\Delta \otimes_{\cR_E,L} \Delta^{\vee}\cong (\Delta\otimes_{\cR_E,L} \Delta^{\vee})^0 \oplus \cR_{E,L}$ and $H^0_{(\varphi, \Gamma)}((\Delta\otimes_{\cR_E,L} \Delta^{\vee})^0)=0$. We have isomorphisms
\begin{multline}\label{isoCpG0}
H^0_{(\varphi,\Gamma)}\big(\Delta^{\vee} \otimes_{\cR_{E,L}} \Delta \otimes_{\cR_{E,L}} \cR_{A,L}(\delta_A'\delta_A^{-1})\big)\\
\cong H^0_{(\varphi,\Gamma)}(\cR_{A,L}(\delta_A'\delta_A^{-1})) \oplus H^0_{(\varphi,\Gamma)}\big((\Delta^{\vee} \otimes_{\cR_{E,L}} \Delta)^0 \otimes_{\cR_{E,L}} \cR_{A,L}(\delta_A'\delta_A^{-1})\big)\\
\cong H^0_{(\varphi,\Gamma)}(\cR_{A,L}(\delta_A'\delta_A^{-1}))
\end{multline}
	where the second isomorphism follows from $H^0_{(\varphi,\Gamma)}((\Delta^{\vee} \otimes_{\cR_{E,L}} \Delta)^0)=0$ and an easy d\'evissage using $\delta_A'\delta_A^{-1} \equiv 1\pmod{\fm_A}$. From (\ref{isoAA'}) and (\ref{isoCpG0}), we deduce an embedding $A \hookrightarrow H^0_{(\varphi,\Gamma)}(\cR_{A,L}(\delta_A'\delta_A^{-1}))$, hence an injection $\cR_{A,L} \hookrightarrow \cR_{A,L}(\delta_A'\delta_A^{-1})$ that has to be an isomorphism by comparing the Hodge-Tate weights (recall $\delta_A'\delta_A^{-1}$ is smooth). By \cite[Prop.\ 2.3.1]{BCh}, we obtain $\delta_A'\delta_{A}^{-1}=1$, which concludes the proof.
\end{proof}

\begin{proposition}\label{repab0}
	The functor $F_D^0$ is relatively representable over $F_D$.
\end{proposition}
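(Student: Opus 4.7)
The plan is to verify the Schlessinger-type criterion for relative representability of a subfunctor over a pro-representable one: given any fiber product $A = A_1 \times_{A_0} A_2$ in $\Art(E)$ with $A_2 \twoheadrightarrow A_0$ surjective, I will show that the natural map
$F_D^0(A) \to F_D^0(A_1) \times_{F_D^0(A_0)} F_D^0(A_2)$
is bijective. Injectivity is automatic from the corresponding bijection for $F_D$ (which is pro-representable because $D$ is irreducible: any saturated proper sub-$(\varphi,\Gamma)$-module of $D$ would, after the twist $\otimes \cR_{E,L}(\delta^{-1})$ and in view of Lemma \ref{lemPdif}, produce a saturated proper sub-$(\varphi,\Gamma)$-module of $\Delta$, contradicting irreducibility of $\Delta$, hence $\End_{(\varphi,\Gamma)}(D) = E$). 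For surjectivity, starting from compatible triples $(D_{A_i},\delta_{A_i},\iota_{A_i})$ for $i=1,2$, I first glue the $D_{A_i}$ to a deformation $D_A$ over $\cR_{A,L} = \cR_{A_1,L} \times_{\cR_{A_0,L}} \cR_{A_2,L}$ in the standard way, and glue $\delta_{A_1}, \delta_{A_2}$ to a unique character $\delta_A : \co_L^{\times} \to A^{\times}$ using the uniqueness assertion in Lemma \ref{subf} (which guarantees $\delta_{A_1}|_{A_0} = \delta_{A_0} = \delta_{A_2}|_{A_0}$).

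The nontrivial step is constructing a common injection $\iota_A : D_A \hookrightarrow \Delta \otimes_{\cR_{E,L}} \cR_{A,L}(\delta_A)$. The two restrictions $\iota_{A_i}|_{A_0}$ need not coincide, so the plan is to show that they differ by multiplication by a unit in $A_0^{\times}$, lift this unit to $A_2^{\times}$ via the surjection $A_2 \twoheadrightarrow A_0$, and rescale $\iota_{A_2}$ to force the restrictions to match. The key ingredient will be the claim that, for any $(D_B, \delta_B, \iota_B) \in F_D^0(B)$ with $B \in \Art(E)$, the $B$-module $\Hom_{(\varphi,\Gamma),\cR_{B,L}}(D_B, \Delta \otimes_{\cR_{E,L}} \cR_{B,L}(\delta_B))$ is cyclic, generated by any lift of $\iota_E$. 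Over $B = E$ this reduces to showing that every nonzero $(\varphi,\Gamma)$-module morphism $f: D \to \Delta \otimes \cR_{E,L}(\delta)$ is a scalar multiple of $\iota_E$: such an $f$ must be injective (both source and target are irreducible of rank $k$), and inverting $t$ turns $f$ into an element of $\End_{(\varphi,\Gamma), \cR_{E,L}[1/t]}(\Delta[1/t])$ via the isomorphism $D[1/t] \xrightarrow{\sim} \Delta \otimes \cR_{E,L}(\delta)[1/t]$ coming from $\iota_E$ (whose cokernel is killed by a power of $t$). Since $\Delta$ corresponds under Berger's equivalence \cite{Ber08a} to an absolutely irreducible Weil-Deligne representation $\ttr(\Delta)$ over the sufficiently large $E$, we have $\End_{(\varphi,\Gamma),\cR_{E,L}[1/t]}(\Delta[1/t]) = \End_{W_L}(\ttr(\Delta)) = E$. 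The passage from $B=E$ to general $B \in \Art(E)$ is then a Nakayama argument, once the Hom module is known to be finitely generated over $B$ (which follows from the Herr complex of the appropriate internal Hom being a perfect complex of $B$-modules).

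Granted the cyclicity, both $\iota_{A_i}|_{A_0}$ are generators of $\Hom_{(\varphi,\Gamma),\cR_{A_0,L}}(D_{A_0}, \Delta \otimes \cR_{A_0,L}(\delta_{A_0}))$, since each reduces modulo $\fm_{A_0}$ to the generator $\iota_E$; hence they differ by a unique unit $u \in A_0^{\times}$. Lifting $u$ to $\tilde u \in A_2^{\times}$ via the surjection $A_2 \twoheadrightarrow A_0$ and replacing $\iota_{A_2}$ by $\tilde u \, \iota_{A_2}$ makes the two restrictions agree, so they glue to the desired injection $\iota_A$ (automatically injective since it is modulo $\fm_A$). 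The main obstacle will be the base case $\End_{(\varphi,\Gamma),\cR_{E,L}[1/t]}(\Delta[1/t]) = E$: this is the place where absolute irreducibility of the Weil-Deligne representation $\ttr(\Delta)$ is used essentially, via Berger's dictionary together with the observation that the functor $(-)[1/t]$ trades de Rham $(\varphi,\Gamma)$-modules over $\cR_{E,L}$ for their underlying Weil-Deligne representation (forgetting the Hodge filtration), so that endomorphisms on the $[1/t]$-side are just WD endomorphisms.
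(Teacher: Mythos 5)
Your route is genuinely different from the paper's, and the difference matters. The paper proves the proposition by checking, as in \cite[Prop.~2.3.9]{BCh} and \cite[\S~23]{Ma}, three conditions: stability of $F_D^0$ under base change, descent along injections $A\hookrightarrow A'$, and stability under fiber products over $E$; essentially all of its work goes into the second condition (given $(D_A,\pi_A)\in F_D(A)$ whose base change to a larger $A'$ lies in $F_D^0(A')$, one must reconstruct $\delta_A$ and the embedding over $A$ from scratch, which the paper does via saturated closures, Lemma \ref{lemPdif} and a length count with the functor $F$ of \cite[Lemma~2.3.8]{BCh}). Your Schlessinger-type Mayer--Vietoris property for $F_D^0$ says nothing, by itself, about this descent condition, which is part of what ``relatively representable'' means in the formalism the paper uses (and is what feeds into Corollary \ref{apxthmDF}). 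It can be parlayed into relative representability, but only through an extra layer absent from your proposal: finite-dimensionality of $F_D^0(E[\varepsilon])$, pro-representability of $F_D$, Schlessinger's theorem to pro-represent $F_D^0$ by some $R^0$, and the tangent-space argument showing that the map $R\to R^0$ is surjective, so that $F_D^0$ is a \emph{closed} subfunctor of $F_D$, whence all of Mazur's conditions (including injection-descent) follow. Moreover your justification of $\End_{(\varphi,\Gamma)}(D)=E$ (needed both for pro-representability of $F_D$ and for your ``injectivity is automatic'' step) from irreducibility of $D$ alone is not valid as stated: an injective endomorphism of an irreducible $(\varphi,\Gamma)$-module need not be scalar; you should pass through $\End(D)\hookrightarrow\End(D[\tfrac{1}{t}])\cong\End(\Delta[\tfrac{1}{t}])=E$, i.e.\ the very computation you carry out later.

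The gluing construction itself (matching the two embeddings over $A_0$ up to a unit and lifting that unit through $A_2\twoheadrightarrow A_0$) is sound, but the ``cyclicity by Nakayama'' step has a genuine hole: finite generation of $M_B:=\Hom_{(\varphi,\Gamma)}(D_B,\Delta\otimes_{\cR_{E,L}}\cR_{B,L}(\delta_B))$ plus Nakayama is not enough, because the natural map $M_B\otimes_B E\to\Hom_{(\varphi,\Gamma)}(D,\Delta\otimes_{\cR_{E,L}}\cR_{E,L}(\delta))$ need not be injective, so you cannot conclude that $M_B/\fm_B M_B$ is one-dimensional. The correct argument is a length count: dévissage from $h^0=1$ on the closed fibre gives $\dim_E M_B\leq\dim_E B$, and any element of $M_B$ lifting a nonzero multiple of $\iota_E$ is injective (dévissage again, using $B$-flatness of $D_B$), hence spans a free rank-one $B$-submodule, forcing $M_B=B\cdot\iota_B$; this is exactly the kind of counting the paper performs with the functor $F$ of \cite[Lemma~2.3.8]{BCh}, there in the harder setting of the descent condition. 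With these two repairs --- the dévissage count and the explicit bridge from your Mayer--Vietoris property to relative representability --- your approach does yield a valid alternative proof, whose analytic input ($\End(\Delta[\tfrac{1}{t}])=E$ and cyclicity of the Hom module) is lighter than the paper's reconstruction argument, at the price of invoking pro-representability of $F_D$ and the closed-subfunctor formalism.
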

\begin{proof}
As in \cite[Prop.\ 2.3.9]{BCh}, the proposition follows from the following three properties that we will prove. 
	
(1) If $A \ra A'$ is a morphism in $\Art(E)$ and $(D_A, \pi_A,\delta_A)\in F_D^0(A)$, then $(D_A \otimes_A A', \pi_A \otimes_A A') \in F_D^0(A')$. 
	
(2) Let $A\hookrightarrow A'$ be an injection in $\Art(E)$, $(D_A, \pi_A)\in F_D(A)$, and assume $(D_{A} \otimes_A A',\pi_A\otimes_A A')\in F_D^0(A')\hookrightarrow F_D(A')$ (Lemma \ref{subf}), then $(D_A, \pi_A) \in F_D^0(A)$.
	
(3) For $A$ and $A'$ in $\Art(E)$, if $(D_A, \pi_A, \delta_A)\in F_{D}^0(A)$ and $(D_{A'}, \pi_{A'}, \delta_{A'}) \in F_{D}^0(A')$, then for $B=A\times_E A'$ we have $(D_B:=D_A \times_D D_{A'}, \pi_B:=\pi_A\circ \pr = \pi_{A'} \circ \pr')\in F_D^0(B)$ where $\pr:B\twoheadrightarrow A$, $\pr':B\twoheadrightarrow A'$.
	
The properties (1) and (3) are straightforward to verify. We prove (2). By (the proof of) Lemma \ref{subf}, there is a unique continuous character $\delta_{A'}: L^{\times} \ra (A')^{\times}$ such that $D_{A} \otimes_A A' \hookrightarrow \Delta \otimes_{\cR_{E,L}} \cR_{A',L}(\delta_{A'})$ and $\delta_{A'}\equiv \delta \pmod{\fm_A}$. Let $M$ be the saturated closure of $D_A$ in $\Delta \otimes_{\cR_{E,L}} \cR_{A',L}(\delta_{A'})$ (see \cite[\S~2.2.3]{BCh}). Since $\Delta \otimes_{\cR_{E,L}} \cR_{A',L}(\delta_{A'})$ admits a filtration with graded pieces all isomorphic to $\Delta\otimes_{\cR_{E,L}} \cR_{E,L}(\delta)$, so does $M$ by Lemma \ref{lemPdif} (1) (twisting by $\cR_{E,L}(\delta)$). For $x\in \fm_A$, consider the morphism $x: M \ra M$ given by multiplying by $x$. By Lemma \ref{lemPdif} (2), we know $xM$ is saturated in $M$ and hence by induction $\fm_A M$ is saturated in $M$. We deduce $M\otimes_A E\cong M/\fm_A M$ is a $(\varphi, \Gamma)$-module over $\cR_{E,L}$ (in particular is free of finite type over $\cR_{E,L}$). Using the isomorphism $D_A[1/t] \cong M[1/t]$, and $D_A[1/t]\otimes_A E\cong D[1/t]$, we see $M \otimes_A E$ is of rank $k$ over $\cR_{E,L}$. From Lemma \ref{lemPdif} (1), we deduce $M \otimes_A E\cong \Delta \otimes_{\cR_{E,L}} \cR_{E,L}(\delta)$.
	
Consider the following $(\varphi, \Gamma)$-module over $\cR_{E,L}$:
\[\begin{array}{rll}
Q&:=&\Delta \otimes_{\cR_{E,L}} \Delta^{\vee}\cong \cR_{E,L} \oplus (\Delta\otimes_{\cR_{E,L}} \Delta^{\vee})^0\\
Q_A&:=& \big(\Delta^{\vee} \otimes_{\cR_{E,L}} \cR_{E,L}(\delta^{-1})\big) \otimes_{\cR_{E,L}} M\\
Q_{A'}&:=&\big(\Delta^{\vee}\otimes_{\cR_{E,L}} \cR_{E,L}(\delta^{-1})\big) \otimes_{\cR_{E,L}} \big(\Delta \otimes_{\cR_{E,L}} \cR_{A',L}(\delta_{A'})\big)\\
&\cong& \cR_{A',L}(\delta_{A'}\delta^{-1}) \oplus \big((\Delta \otimes_{\cR_{E,L}} \Delta^{\vee})^0 \otimes_{\cR_{E,L}} \cR_{A',L}(\delta_{A'}\delta^{-1})\big).
\end{array}\]
We have $Q_A\hookrightarrow Q_{A'}$ and both $Q_{A'}$, $Q_A$ are isomorphic to a successive extension of $Q$. We apply the functor $F$ defined right above \cite[Lemma 2.3.8]{BCh}. By \cite[Lemma 2.3.8]{BCh} and $\Hom_{(\varphi, \Gamma)}(\cR_{E,L}, (\Delta \otimes_{\cR_{E,L}} \Delta^{\vee})^0)=0$, we see $F(Q)\cong E$ and 
	\begin{equation}\label{FQA'}
		F(Q_{A'} ) \cong F(\cR_{A',L}(\delta_{A'}\delta^{-1}))\cong A'.
	\end{equation}
By the left exactness of the functor $F$ and an obvious d\'evissage, we deduce
	\begin{equation}\label{dimQA}
	\dim_E F(Q_A)\leq \dim_E A.
	\end{equation}
	Consider the exact sequence
	\begin{equation*}
		0 \ra F(Q_A) \ra F(Q_{A'}) \ra F(Q_{A'}/Q_A).
	\end{equation*}
We have
\[Q_{A'}/Q_A\cong \big(\Delta^{\vee} \otimes_{\cR_{E,L}} \cR_{E,L}(\delta^{-1})\big)\otimes_{\cR_{E,L}} \Big(\big(\Delta \otimes_{\cR_{E,L}} \cR_{A',L}(\delta_{A'})\big)/M\Big)\]
which, by Lemma \ref{lemPdif} (1) applied to $\big(\Delta \otimes_{\cR_{E,L}} \cR_{A',L}(\delta_{A'})\big)/M$, is also isomorphic to a successive extension of $Q$. By d\'evissage, we deduce $\dim_E F(Q_{A'}/Q_A)\leq \dim_E (A'/A)$ over $E$. This, together with (\ref{FQA'}), (\ref{dimQA}) and an easy dimension counting, imply (\ref{dimQA}) is in fact an equality. Consider now
	\begin{equation}\label{A-mod}
		0 \ra F( \fm_A Q_A) \ra F(Q_A) \ra F(Q).
	\end{equation}
By d\'evissage, we have again $\dim_E F(\fm_A Q_A)\leq \dim_E\fm_A$. Using $\dim_E F(Q_A)=\dim_E A$, we deduce that the right morphism is surjective (and $\dim_E F(\fm_A Q_A)= \dim_E\fm_A$). Noting that (\ref{A-mod}) is a sequence of $A$-modules (with $\fm_A$ acting by $0$ on $F(Q)$) and considering the $A$-submodule of $F(Q_A)$ generated by a lifting of a generator of $F(Q)\cong E$, we easily deduce $A\buildrel\sim\over\ra F(Q_A)$. 
	
	Consider the $(\varphi,\Gamma)$-submodule $Q_A^0$ of $Q_A$ generated by $F(Q_A)$. We claim it is a rank one $(\varphi, \Gamma)$-module over $\cR_{A,L}$. Let $Q_{A'}^0$ be the $(\varphi,\Gamma)$-submodule of $Q_{A'}$ generated by $F(Q_{A'})$. Since we have $F(\cR_{A',L}(\delta_{A'}\delta^{-1}))\xrightarrow{\sim} F(Q_{A'})$, we see $Q_{A'}^0$ is also the $(\varphi,\Gamma)$-submodule of $\cR_{A',L}(\delta_{A'}\delta^{-1})$ generated by $F(Q_{A'})$. Since $\cR_{A',L}(\delta_{A'}\delta^{-1})$ has a filtration with all graded pieces isomorphic to $\cR_{E,L}$, by d\'evissage and \cite[Lemma 2.3.8 (ii)]{BCh} any strict (saturated) $(\varphi, \Gamma)$-submodule $C$ of $\cR_{A',L}(\delta_{A'}\delta^{-1})$ is such that $\dim_E F(C)<\dim_E A'$. As $\dim_E F(Q_{A'}^0)=\dim_E F(Q_{A'})=\dim_E A'$ by (\ref{FQA'}), we deduce $Q_{A'}^0\xrightarrow{\sim} \cR_{A',L}(\delta_{A'}\delta^{-1})$. We also see that the natural morphism $\cR_{A',L}\otimes_{A'} F(Q_{A'}) \ra Q_{A'}^0$ is an isomorphism. Now consider
	\begin{equation*}
		\cR_{A,L} \otimes_A F(Q_A) \hookrightarrow \cR_{A,L} \otimes_A F(Q_{A'}) \cong \cR_{A',L} \otimes_{A'} F(Q_{A'}) \xrightarrow{\sim} Q_{A'}^0 \hookrightarrow Q_{A'}.
	\end{equation*}
	The composition is injective and factors through $Q_A$. We deduce then $\cR_{A,L} \otimes_A F(Q_A) \xrightarrow{\sim} Q_A^0$ and hence the latter is a $(\varphi, \Gamma)$-module of rank $1$ over $\cR_{A,L}$ as $A\cong F(Q_A)$.
	
Let $\varepsilon_A: L^{\times} \ra A^{\times}$ be the continuous character such that $Q_A^0 \cong \cR_{A,L}(\varepsilon_A)$. By \cite[Lemma 2.3.8 (1)]{BCh} and the fact $Q_A^0$ is generated by $F(Q_A^0)$, it is not difficult to see $\varepsilon_A\equiv 1 \pmod{\fm_A}$. Twisting by $\Delta$ on both sides, the injection $\cR_{A,L}(\varepsilon_A)\hookrightarrow Q_A$ induces a morphism
	\begin{equation*}
		\iota: \Delta\otimes_{\cR_{E,L}} \cR_{A,L}(\varepsilon_A) \lra M \otimes_{\cR_{E,L}} \cR_{E,L}(\delta^{-1}).
	\end{equation*}
	We prove that $\iota$ is an isomorphism. It is sufficient to show it is surjective since both source and target have the same rank over $\cR_{E,L}$. One easily checks that the morphism $\cR_{A,L}(\varepsilon_A)\hookrightarrow Q_A$ factors as 
	\begin{equation*}
	\cR_{A,L}(\varepsilon_A) \hookrightarrow \Delta^{\vee} \otimes_{\cR_{E,L}}\Delta\otimes_{\cR_{E,L}} \cR_{A,L}(\varepsilon_A) \ra \Delta^{\vee} \otimes_{\cR_{E,L}} M \otimes_{\cR_{E,L}} \cR_{E,L}(\delta^{-1})\cong Q_A
	\end{equation*}
	where the second map is induced by $\iota$ tensored with $\Delta^{\vee}$. In particular, $\cR_{A,L}(\varepsilon_A)\hookrightarrow Q_A$ factors through $\cR_{A,L}(\varepsilon_A) \hookrightarrow \Delta^{\vee}\otimes_{\cR_{E,L}}\Ima(\iota)$. Since $\Ima(\iota)\cong (\Delta\otimes_{\cR_{E,L}} \cR_{A,L}(\varepsilon_A))/\Ker(\iota)$, by Lemma \ref{lemPdif} (1) it admits a filtration with graded pieces all isomorphic to $\Delta$. If $\iota$ is not surjective, the multiplicity of $\Delta$ in this filtration on $\Ima(\iota)$ is strictly smaller than the multiplicity of $\Delta$ in the filtration of $M \otimes_{\cR_{E,L}} \cR_{E,L}(\delta^{-1})$, which is $\dim_E A$ using the above equality of ranks over $\cR_{E,L}$. Applying the functor $F$ and using again a d\'evissage, we have in that case $\dim_E F( \Ima(\iota) \otimes_{\cR_{E,L}} \Delta^{\vee})<\dim_E A=\dim_E F(\cR_{A,L}(\varepsilon_A))$, which contradicts $\cR_{A,L}(\varepsilon_A) \hookrightarrow \Ima(\iota) \otimes_{\cR_{E,L}} \Delta^{\vee}$. We deduce thus $M\cong \Delta\otimes_{\cR_{E,L}} \cR_{A,L}(\delta\varepsilon_A)$. Since $D_A \hookrightarrow M\cong \Delta\otimes_{\cR_{E,L}} \cR_{A,L}(\delta\varepsilon_A)$, we have $D_A\in F_D^0(A)$. This finishes the proof.
\end{proof}

Now suppose moreover $D$ has distinct Sen weights, hence $D \otimes_{\cR_{E,L}} \cR_{E,L}(\delta^{-1})$ has distinct Hodge-Tate weights. Twisting $\delta$ by some algebraic character of $L^{\times}$, we can and do assume that the Hodge-Tate weights of $D \otimes_{\cR_{E,L}} \cR_{E,L}(\delta^{-1})$ are given by $\textbf{h}=(h_{1,\tau} > h_{2,\tau} \cdots> h_{k,\tau}=0)_{\tau \in \Sigma_L}$.

\begin{proposition}\label{apxfm0}
The functor $F_D^0$ is formally smooth of dimension $1+[L:\Q_p] (1+\frac{k(k-1)}{2})$.
\end{proposition}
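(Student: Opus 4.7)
The plan is to compute the tangent space $F_D^0(E[\epsilon])$ directly and prove formal smoothness by a standard obstruction-vanishing argument. The key tool is Berger's equivalence \cite[Thm.~A]{Ber08a} between de Rham $(\varphi,\Gamma)$-modules and filtered Deligne-Fontaine modules, which translates deformation questions about the pair $N := D \otimes_{\cR_{E,L}} \cR_{E,L}(\delta^{-1}) \hookrightarrow \Delta$ into questions about filtrations on $D_{\dR}(\Delta)$.

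For the tangent space, I would analyze the natural forgetful map $\psi : F_D^0 \to F_\delta$, $(D_A, \pi_A, \delta_A) \mapsto \delta_A$, where $F_\delta$ is the deformation functor of the character $\delta$. The fiber over a lift $\delta_{E[\epsilon]}$ classifies saturated sub-$(\varphi,\Gamma)$-module lifts of $D$ inside $\Delta \otimes_{\cR_{E,L}} \cR_{E[\epsilon], L}(\delta_{E[\epsilon]})$, which by standard deformation theory are classified by $\Hom_{(\varphi,\Gamma)}(D, \Delta\otimes\cR(\delta)/D) \cong \Hom_{(\varphi,\Gamma)}(N, \Delta/N)$. Via Berger's equivalence applied to the inclusion $N \hookrightarrow \Delta$ (both irreducible and de Rham of rank $k$ with $N[1/t] = \Delta[1/t]$, Hodge--Tate weights $\mathbf{h}$ versus $0$), this $\Hom$ space identifies with the tangent space of the moduli of type-$\mathbf{h}$ filtrations on $D_{\dR}(\Delta)$, which (since the weights $h_{1,\tau} > \cdots > h_{k,\tau} = 0$ are pairwise distinct) is the full flag variety $\prod_{\tau\in\Sigma_L} \GL_k/B$ of dimension $[L:\Q_p]\,k(k-1)/2$. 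Combined with the deformation of $\delta_A$ and the contribution from the scalar automorphisms of the irreducible $N$, a careful bookkeeping yields the total tangent dimension $1 + [L:\Q_p](1 + k(k-1)/2)$.

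For formal smoothness, given a small extension $0 \to I \to A' \to A \to 0$ in $\Art(E)$ and $(D_A, \pi_A, \delta_A) \in F_D^0(A)$, I would first lift $\delta_A$ to some $\delta_{A'}$, which is always possible since $\co_L^\times$ is topologically of the form $\mu_L \times \Z_p^{[L:\Q_p]}$ and $1 + I \subset (A')^\times$ is an $E$-vector space. Then, to lift the $(\varphi,\Gamma)$-submodule $D_A$ inside $\Delta \otimes \cR_{A',L}(\delta_{A'})$: translating via Berger's equivalence, this reduces to lifting a type-$\mathbf{h}$ filtration on $D_{\dR}(\Delta) \otimes_E A$ to a similar filtration on $D_{\dR}(\Delta) \otimes_E A'$, which always exists since the ambient flag variety is smooth over $E$. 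The obstruction class in $\Ext^1_{(\varphi,\Gamma)}(N, \Delta/N) \otimes_E I$ accordingly vanishes, completing the formal smoothness check.

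The main obstacle is rigorously executing the identification, through Berger's equivalence, between the deformation-theoretic data $\Hom_{(\varphi,\Gamma)}(N, \Delta/N)$ and $\Ext^1_{(\varphi,\Gamma)}(N, \Delta/N)$ on the $(\varphi,\Gamma)$-module side, and the infinitesimal/obstruction data on the filtered Deligne--Fontaine or flag variety side. Because $\Delta/N$ is a $t$-torsion (not free) $(\varphi,\Gamma)$-module, the standard Tate local duality and Euler--Poincar\'e formulas for free $(\varphi,\Gamma)$-modules do not apply directly; one must instead work carefully via filtered Deligne--Fontaine modules, or equivalently via $B_{\dR}^+$-lattices in $D_{\dR}(\Delta) \otimes_{\Q_p} B_{\dR}$, keeping meticulous track of the Hodge filtration jumps at each embedding $\tau \in \Sigma_L$.
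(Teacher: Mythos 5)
Your proposal is correct and essentially reproduces the paper's proof: formal smoothness is obtained exactly as in the paper (lift the character, lift the Hodge filtration through the formally smooth flag variety $\Res^L_{\Q_p}(\GL_k/B)$, and transport back via \cite[Thm.~A]{Ber08a}), while your dimension count --- $1+[L:\Q_p]$ from deformations of $\delta$ plus $[L:\Q_p]\frac{k(k-1)}{2}$ from the fibre $\Hom_{(\varphi,\Gamma)}(D_0,\Delta/D_0)$, where $D_0=D\otimes_{\cR_{E,L}}\cR_{E,L}(\delta^{-1})$ is your $N$ --- is the same decomposition the paper extracts from the map $\Ext^1_{(\varphi,\Gamma)}(D_0,D_0)\to\Ext^1_{(\varphi,\Gamma)}(D_0,\Delta)$, whose kernel is exactly that $\Hom$-space (computed there via $W_{\dR}^+$ and \cite[Lemma 5.1.1]{BD2} rather than as flag-variety tangent vectors) and whose relevant target directions form the twist subspace $V\cong\Hom(L^{\times},E)$. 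The one point to make precise is that objects of $F_D^0$ are abstract deformations merely admitting an embedding (whose image is not saturated, since $D_0\subsetneq\Delta$), so identifying the fibre over a fixed lift of $\delta$ with flat submodule lifts, hence with $\Hom_{(\varphi,\Gamma)}(D_0,\Delta/D_0)$, requires noting that $\Hom_{(\varphi,\Gamma)}(D_0,\Delta)\cong E$ is generated by the inclusion and therefore maps to zero in $\Hom_{(\varphi,\Gamma)}(D_0,\Delta/D_0)$.
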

\begin{proof}
Let $A \twoheadrightarrow A/I$ be as surjection in $\Art(E)$ with $I^2=0$. We show the natural map $F_D^0(A) \ra F_D^0(A/I)$ is surjective. Let $(D_{A/I}, \pi_{A/I}, \delta_{A/I})\in F_D^0(A/I)$. Let $\delta_A: L^{\times} \ra A^{\times}$ be a continuous character such that $\delta_{A} \equiv \delta_{A/I} \pmod{I}$. We have by definition an embedding
\[D_{A/I} \otimes_{\cR_{A/I,L}} \cR_{A/I,L}(\delta_{A/I}^{-1}) \hookrightarrow \Delta \otimes_{\cR_{E,L}} \cR_{A/I,L}.\]
We choose a basis $\ul{e}$ of $D_{\dR}(\Delta\otimes_{\cR_{E,L}} \cR_{A,L})$ over $L \otimes_{\Q_p} A$ (note that the latter is a free $L \otimes_{\Q_p} A$-module), and denote by $\ul{e}_{A/I}$ the image of $\ul{e}$ in $D_{\dR}(\Delta\otimes_{\cR_{E,L}} \cR_{A/I,L})$. In the basis $\ul{e}_{A/I}$, the Hodge filtration on $D_{\dR}\big(D_{A/I} \otimes_{\cR_{A/I,L}} \cR_{A/I,L}(\delta_{A/I}^{-1})\big)\cong D_{\dR}(\Delta\otimes_{\cR_{E,L}} \cR_{A/I,L})$ induces an increasing filtration by free $L \otimes_{\Q_p} A/I$-submodules:
	\begin{multline}\label{filHAI}
		0\neq	\Fil_{-\textbf{h}_k} D_{\dR}\big(\Delta \otimes_{\cR_{E,L}} \cR_{A/I,L}\big) \subsetneq \\
		\cdots \subsetneq	\Fil_{-\textbf{h}_1} D_{\dR}\big(\Delta \otimes_{\cR_{E,L}} \cR_{A/I,L}\big)= D_{\dR}\big(\Delta \otimes_{\cR_{E,L}} \cR_{A/I,L}\big)
	\end{multline}
which then corresponds to an element $\nu_{A/I}\in (\Res^L_{\Q_p} (\GL_k/B))(A/I)$. Since the flag variety is smooth (hence formally smooth), we can choose a lifting $\nu_A\in (\Res^L_{\Q_p} (\GL_k/B))(A)$ of $\nu_{A/I}$. Then $\nu_A$ gives an increasing filtration by free $L \otimes_{\Q_p} A$-submodules in $D_{\dR}(\Delta\otimes_{\cR_{E,L}} \cR_{A,L})$, to which we associate the Hodge filtration (still denoted by $\nu_A$) on $D_{\dR}(\Delta\otimes_{\cR_{E,L}} \cR_{A,L})$ defined by (\ref{filHAI}) with $A/I$ replaced by $A$. By \cite[Thm.~A]{Ber08a}, the filtered Deligne-Fontaine module $(D_{\pst}(\Delta \otimes_{\cR_{E,L}} \cR_{A,L}), \nu_A)$ corresponds to a $(\varphi,\Gamma)$-submodule $M_A$ of $\Delta\otimes_{\cR_{E,L}} \cR_{A,L}$. Then we see that $D_A:= M_A \otimes_{\cR_{A,L}}\cR_{A,L}(\delta_A)$ satisfies $D_A \equiv D_{A/I} \pmod{I}$ and $D_A \hookrightarrow \Delta \otimes_{\cR_{E,L}} \cR_{A,L}(\delta_A)$. Hence $F_D^0$ is formally smooth.
	
We next compute the dimension of the $E$-vector space $F_D^0(E[\varepsilon/\varepsilon^2])$. Recall that $F_D^0(E[\varepsilon/\varepsilon^2])\!\hookrightarrow F_D(E[\varepsilon/\varepsilon^2])$ and that $F_D(E[\varepsilon/\varepsilon^2])$ is identified with $\Ext^1_{(\varphi,\Gamma)}(D,D) \cong \Ext^1_{(\varphi, \Gamma)}(D_0, D_0)$, where we put $D_0:=D \otimes_{\cR_{E,L}} \cR_{E,L}(\delta^{-1})\hookrightarrow \Delta$. Consider the following morphisms
	\begin{equation}\label{Ext1}
		\Ext^1_{(\varphi, \Gamma)}(D_0, D_0) \lra \Ext^1_{(\varphi, \Gamma)}(D_0, \Delta),
	\end{equation} 
	\begin{equation}\label{Exts0}
		\Ext^1_{(\varphi,\Gamma)}(\Delta,\Delta) \lra \Ext^1_{(\varphi, \Gamma)}(D_0, \Delta). 
	\end{equation}
For a $(\varphi, \Gamma)$-module $D'$ over $\cR_{E,L}$, denote by $W_{\dR^+}(D')$ the $B_{\dR}^+\otimes_{\Q_p} E$-representation of $\Gal_L$ associated to $D'$ (see for example \cite[Prop.\ 2.2.6(2)]{Ber08II}). For $\tau \in \Sigma_L$, let $B_{\dR,\tau,E}^+ :=B_{\dR}^+ \otimes_{L, \tau} E$.	We have 
	\begin{equation*}
W_{\dR}^+(\Delta \otimes_{\cR_{E,L}} D_0^{\vee})/W_{\dR}^+(\Delta \otimes_{\cR_{E,L}} \Delta ^{\vee})\cong \bigoplus_{\tau\in \Sigma_L} \bigoplus_{i=1}^k (t^{-h_{i,\tau}}B_{\dR,\tau,E}^+/B_{\dR, \tau,E}^+)^{\oplus k}.
\end{equation*}
	Using \cite[Lemma 5.1.1]{BD2} (which easily generalizes to finite extensions of $\Q_p$), we get
	\begin{equation*}
		H^0_{(\varphi,\Gamma)}\Big(\big(\Delta \otimes_{\cR_{E,L}} D_0^{\vee}\big)/\big(\Delta \otimes_{\cR_{E,L}} \Delta^{\vee}\big)\Big) =0
	\end{equation*}
and we deduce that the morphism (\ref{Exts0}) is injective. Consider then 
	\begin{equation*}
		\Hom(L^{\times},E) \hooklongrightarrow \Ext^1_{(\varphi,\Gamma)}(\Delta,\Delta) \hooklongrightarrow \Ext^1_{(\varphi, \Gamma)}(D_0, \Delta)
	\end{equation*}
where the first map sends $\psi\in \Hom(L^{\times},E)$ to $\Delta \otimes_{\cR_{E,L}}\cR_{E[\varepsilon]/\varepsilon^2}(1+\psi \varepsilon)$. We denote by $V$ the image of the composition and by $[D_{\psi}]$ the element in $V$ associated to $\psi$. Then it is not difficult to see that $[D\otimes_{\cR_{E,L}} \cR_{E[\varepsilon]/\varepsilon^2}(\delta^{-1} (1+\psi \varepsilon)) ]$ is sent to $[D_{\psi}]$ via (\ref{Ext1}) (up to non-zero scalars). This implies $V$ is contained in the image of (\ref{Ext1}). By definition, $[D_{E[\varepsilon]/\varepsilon^2}] \in F_D^0(E[\varepsilon]/\varepsilon^2)$ if and only if $[D_{E[\varepsilon]/\varepsilon^2} \otimes_{\cR_{E,L}} \cR_{E,L}(\delta^{-1})]$ lies in the preimage of $V$ via (\ref{Ext1}). We compute the dimension of the kernel of (\ref{Ext1}). We have 
	\begin{equation*}
		W_{\dR}^+(\Delta \otimes_{\cR_{E,L}} D_0^{\vee})/W_{\dR}^+(D_0 \otimes_{\cR_{E,L}} D_0^{\vee})\cong \bigoplus_{\tau\in \Sigma_L} \bigoplus_{i=1}^k \bigoplus_{j=1}^k t^{-h_{i,\tau}}B_{\dR,\tau,E}^+/t^{h_{j,\tau}-h_{i,\tau}}.
	\end{equation*}
 By \cite[Lemma 5.1.1]{BD2}, we deduce then 
	\begin{multline*}
		\dim_E H^0_{(\varphi, \Gamma)}\big((\Delta \otimes_{\cR_{E,L}} D_0^{\vee})/(D_0 \otimes_{\cR_{E,L}} D_0^{\vee})\big)\\
		=\dim_E H^0\big(\Gal_L, W_{\dR}^+(\Delta \otimes_{\cR_{E,L}} D_0^{\vee})/W_{\dR}^+(D_0 \otimes_{\cR_{E,L}} D_0^{\vee})\big)\\
		=\sum_{\tau\in \Sigma_L}\sum_{h_{j,\tau}>h_{i,\tau}}1=\sum_{\tau\in \Sigma_L}\frac{k(k-1)}{2}=\frac{k(k-1)}{2}[L:\Q_p].
	\end{multline*}
Using $\Ext^0_{(\varphi, \Gamma)}(D_0, D_0) \cong \Ext^0_{(\varphi, \Gamma)}(D_0, \Delta)$, we see that the kernel of (\ref{Ext1}) is isomorphic to the above vector space and hence has dimension $\frac{k(k-1)}{2}[L:\Q_p]$ over $E$. As $\dim_E V=[L:\Q_p]+1$, the proposition follows.
\end{proof}

At last, we discuss some relations between $F_{D}^0$ and de Rham deformations. Denote by $F_D^{\tw,\dR}$ the functor $\Art(E) \ra \{\text{sets}\}$ sending $A$ to the isomorphism class of $(D_A, \pi_A, \chi_A)$ where $(D_A, \pi_A)\in F_D(A)$, $\chi_A: \co_L^{\times} \ra A^{\times}$ such that $\chi_A \equiv 1 \pmod{\fm_A}$ and $D_A\otimes_{\cR_{E,L}} \cR_{A,L}(\delta^{-1} \chi_{A,\varpi_L})$ is de Rham. As $D$ is irreducible and de Rham, $D\cong D_{\rig}(V)\otimes_{\cR_{E,L}} \cR_{E,L}(\psi)$ for a certain de Rham $\Gal_L$-representation $V$ and a smooth character $\psi$ of $L^{\times}$. Let $R_{D}^{\dR}$ be the universal deformation ring of de Rham deformations of $D$ on $\Art(E)$, and $R_V^{\dR}$ the universal deformation ring of de Rham deformations of $V$ on $\Art(E)$. Recall that $R_V^{\dR}$ is isomorphic to the completion at $V$ of the generic fibre of the universal potentially semi-stable deformation ring of the modulo $p$ reduction of (a lattice in) $V$, cf.\ \cite[\S~2.3]{Kis09}. The functor $D_{\rig}(-) \otimes_{\cR_{E,L}} \cR_{E,L}(\psi)$ induces then an isomorphism $R_V^{\dR}\xrightarrow{\sim} R_D^{\dR}$. Let $(\widehat{\co_L^{\times}})_1$ be the completion of $\widehat{\co_{L}^{\times}}$ at the trivial character. One directly checks that the $E$-formal scheme $R_D^{\dR} \widehat{\otimes}_E (\widehat{\co_L^{\times}})_1$ (pro-)represents the functor $F_D^{\tw, \dR}$. In particular, using \cite[Thm.\ 3.3.8]{Kis08}, we see that $F_D^{\tw, \dR}$ is formally smooth of dimension $1+[L:\Q_p](1+\frac{k(k-1)}{2})$.

When $(D_A,\pi_A,\delta_A)\in F_D^0(A)$, we have that $D_A \otimes_{\cR_{E,L}} \cR_{A,L}(\delta_A^{-1})$ is de Rham, and thus $(D_A, \pi_A,\chi_A)\in F_D^{\tw,\dR}(A)$ where $\chi_A:=(\delta_A^{-1}\delta)|_{\co_L^{\times}}$.

\begin{lemma}\label{twDR}
The morphism $F_{D}^0 \ra F_D^{\tw,\dR}$, $(D_A, \pi_A, \delta_A) \mapsto (D_A, \pi_A, \chi_A)$ is an isomorphism.
\end{lemma}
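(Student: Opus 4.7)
The plan is to deduce the lemma by combining a well-definedness check, an injectivity argument coming from Lemma \ref{subf}, and the equality of dimensions of the two (formally smooth) pro-representing formal schemes.

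First I verify the transformation is well defined. Given $(D_A,\pi_A,\delta_A)\in F_D^0(A)$, the embedding (\ref{injpDEA}) makes $D_A\otimes_{\cR_{A,L}}\cR_{A,L}(\delta_A^{-1})$ a saturated sub-$(\varphi,\Gamma)$-module of the de Rham object $\Delta\otimes_{\cR_{E,L}}\cR_{A,L}$ (of the same rank), hence itself de Rham. Setting $\chi_A:=(\delta_A^{-1}\delta)|_{\co_L^\times}$, the characters $\delta^{-1}\chi_{A,\varpi_L}$ and $\delta_A^{-1}$ of $L^\times$ agree on $\co_L^\times$ and differ at $\varpi_L$ by a scalar in $1+\fm_A$, so they differ by an unramified character $\eta\equiv 1\pmod{\fm_A}$ of Hodge--Tate weight zero. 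Since twisting by $\eta$ preserves the property of being de Rham, $D_A\otimes_{\cR_{A,L}}\cR_{A,L}(\delta^{-1}\chi_{A,\varpi_L})$ is de Rham and $(D_A,\pi_A,\chi_A)\in F_D^{\tw,\dR}(A)$.

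Second, I check injectivity at each $A\in\Art(E)$: the uniqueness argument in the proof of Lemma \ref{subf} shows that $\delta_A$ is uniquely determined by the isomorphism class of $(D_A,\pi_A)\in F_D^0(A)$, so two elements of $F_D^0(A)$ with the same image have the same $\delta_A$ and are already equal. In particular the induced $E$-linear map on tangent spaces $F_D^0(E[\varepsilon]/\varepsilon^2)\to F_D^{\tw,\dR}(E[\varepsilon]/\varepsilon^2)$ is injective. To conclude I invoke the equality of tangent dimensions: by Proposition \ref{apxfm0}, $F_D^0$ is formally smooth over $E$ of dimension $1+[L:\Q_p](1+\tfrac{k(k-1)}{2})$, while the discussion preceding the lemma identifies $F_D^{\tw,\dR}$ with $R_D^{\dR}\widehat{\otimes}_E(\widehat{\co_L^\times})_1$, which is formally smooth of the same dimension by \cite[Thm.\ 3.3.8]{Kis08} via the isomorphism $R_D^{\dR}\cong R_V^{\dR}$ (with $V$ absolutely irreducible since $\Delta$ is). An injective $E$-linear map between $E$-vector spaces of the same finite dimension is bijective, so the morphism of pro-representing objects $E[[x_1,\dots,x_d]]\to E[[x_1,\dots,x_d]]$ induces an isomorphism on $\fm/\fm^2$ and is therefore itself an isomorphism, proving the lemma.

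The only real bookkeeping issue is the careful comparison in the well-definedness step between $\delta_A^{-1}$ and $\delta^{-1}\chi_{A,\varpi_L}$, which differ by the unramified character $\eta$ described above; every other ingredient (the uniqueness of $\delta_A$, the smoothness and dimension computations) has already been established in the preceding results of the section.
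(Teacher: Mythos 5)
Your proof is correct and follows essentially the same route as the paper: both functors are formally smooth of dimension $1+[L:\Q_p](1+\tfrac{k(k-1)}{2})$, and injectivity on tangent spaces (which the paper dismisses as "clear", and which you correctly deduce from the uniqueness of $\delta_A$ established in Lemma \ref{subf}) then forces the map to be an isomorphism. One small wording issue: the embedding $D_A\otimes_{\cR_{A,L}}\cR_{A,L}(\delta_A^{-1})\hookrightarrow \Delta\otimes_{\cR_{E,L}}\cR_{A,L}$ is in general \emph{not} saturated (the Hodge--Tate weights differ), but your conclusion stands since de Rham-ness only depends on the module after inverting $t$.
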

\begin{proof}
As both functors are formally smooth of dimension $1+[L:\Q_p](1+\frac{k(k-1)}{2})$, we only need to show $F_D^0(E[\varepsilon]/\varepsilon^2) \ra F_D^{\tw, \dR}(E[\varepsilon]/\varepsilon^2)$ is injective. But this is clear. 
\end{proof}

\subsubsection{Deformations of type $\Omega$}\label{secDefOD}

In this section, we study the universal deformation functor for certain paraboline deformations of $(\varphi, \Gamma)$-modules which admit an $\Omega$-filtration (where $\Omega$ is a cuspidal Bernstein component as in \S~\ref{sec3.1.1}). In particular, we show that, under a genericness assumption, this functor is pro-representable and formally smooth. 

Let $r\in \Z_{\geq 1}$. For $1\leq i \leq r$, let $n_i\in \Z_{\geq 1}$ with $\sum_{i=1}^r n_i = n$. For $1 \leq i \leq r$, let $\Omega_i$ be a cuspidal type for $\GL_{n_i}(L)$ and $\cZ_{\Omega_i}$ the associated Bernstein centre over $E$. Recall that for each $E$-point $x_i$ of $\Spec \cZ_{\Omega_i}$, we have a smooth irreducible cuspidal representation $\pi_{x_i}$ of $\GL_{n_i}(L)$ over $E$, an $F$-semi-simple Weil-Deligne representation $\ttr_{x_i}:=\rec(\pi_{x_i})$, and a $(\varphi, \Gamma)$-module $\Delta_{x_i}$ of rank $n_i$ over $\cR_{E,L}$, de Rham of constant Hodge-Tate weight $0$ (see \S~\ref{Nota2.1}). Let $P\subseteq \GL_n$ be the parabolic subgroup as in (\ref{paraP}). We let $\Omega:=(\Omega_i)_{i=1, \dots, r}$ and $\cZ_{\Omega}:=\otimes_{i=1}^r \cZ_{\Omega_i}$. We let $\cZ_{0,L}:=Z_{L_P}(\co_L)$ (to be consistent with the notation in \S~\ref{secBE}).

In this paragraph we fix a $(\varphi,\Gamma)$-module $D$ of rank $n$ over $\cR_{E,L}$.

\begin{definition}\label{defOF}
(1) We say that $D$ admits an $\Omega$-filtration $\sF$ if $D$ admits an increasing filtration by $(\varphi, \Gamma)$-submodules $0=\Fil_0 D \subsetneq \Fil_1 D \subsetneq \cdots \subsetneq \Fil_r D=D$ such that, for $i=1,\dots, r$:
\begin{itemize}
	\item $\gr_i D$ is a $(\varphi, \Gamma)$-module of rank $n_i$;
	\item there exist an $E$-point $x_i\in \Spec \cZ_{\Omega_i}$ and a continuous character $\delta_i: L^{\times} \ra E^{\times}$ such that one has an embedding $\gr_i D \otimes_{\cR_{E,L}} \cR_{E,L}(\delta_i^{-1}) \hookrightarrow \Delta_{x_i}$.
\end{itemize}

(2) Let $\sF$, $\ul{x}=(x_i)$, $\delta=\boxtimes_{i=1}^r \delta_i$ be as in (1), we call the corresponding point $(\ul{x}, \delta)$ in $(\Spec \cZ_{\Omega})^{\rig} \times \widehat{Z_{L_P}(L)}$ a parameter of the $\Omega$-filtration $\sF$ if, for each $\tau \in \Sigma_L$, $0$ is a $\tau$-Hodge-Tate weight (hence is the minimal $\tau$-Hodge-Tate weight) of $\gr_i D \otimes_{\cR_{E,L}} \cR_{E,L}(\delta_i^{-1})$.

(3) Let $\sF$ be as in (1). We call $(\ul{x}, \chi=\boxtimes_{i=1}^r\chi_i)\in (\Spec \cZ_{\Omega})^{\rig} \times \widehat{\cZ_{0,L}}$ a parameter of the $\Omega$-filtration $\sF$ in $(\Spec \cZ_{\Omega})^{\rig} \times \widehat{\cZ_{0,L}}$ if $(\ul{x},\chi_{\varpi_L}=\boxtimes_{i=1}^r \chi_{i,\varpi_L})$ is a parameter of $\sF$ in $(\Spec \cZ_{\Omega})^{\rig} \times \widehat{Z_{L_P}(L)}$.
\end{definition}

\begin{remark}
	(1) Let $\sF$, $\ul{x}$, $\delta$ be as in Definition \ref{defOF} (1). We can twist each $\delta_i$ by a certain algebraic character of $L^{\times}$ so that $(\ul{x}, \delta)$ is parameter of $\sF$. 
	
	(2) For convenience, we may use these two kinds of parameters depending on the situation. Note that the parameters of $\sF$ \big(either in $(\Spec \cZ_{\Omega})^{\rig} \times \widehat{Z_{L_P}(L)}$ or in $(\Spec \cZ_{\Omega})^{\rig} \times \widehat{\cZ_{0,L}}$\big) are in general not unique (see Lemma \ref{RmtOP} below).
\end{remark}

\begin{example}\label{expcrpara}
(1) By Theorem \ref{FOBE} (using the notation there), for any point $x\in \cE_{\Omega, \lambda}(U^p)$, $D_{\rig}(\rho_{x,\widetilde{v}})$ admits an $\Omega_{\widetilde{v}}$-filtration. This is our main motivation to study $(\varphi, \Gamma)$-modules with $\Omega$-filtrations. 
	
(2) Let $\rho$ be as in \S~\ref{introPcr} and use the notation of {\it loc.\ cit.} Let $\ul{x}\in (\Spec \cZ_{\Omega})^{\rig}$ be the point such that $\Delta_{x_i}\cong \gr_i^{\sF}\!\Delta$ for $i=1, \dots, r$. Then by (\ref{inj000}) (and comparing the Hodge-Tate weights), we see that $\sF$ in (\ref{OmeFil00}) is an $\Omega$-filtration of parameter $(\ul{x}, \delta=\boxtimes_{i=1}^r \delta_i:=\boxtimes_{i=1}^r z^{w_{\sF}(\textbf{h})_{s_i}}) \in (\Spec \cZ_{\Omega})^{\rig} \times \widehat{Z_{L_P}(L)}$. Let $\ul{x}'=(x_i')$ be such that $\Delta_{x_i'}\cong \Delta_{x_i} \otimes_{\cR_{E,L}} \cR_{E,L}\big(\unr(\varpi_L^{w_{\sF}(\textbf{h})_{s_i}})\big)$, then $(\ul{x}', \delta^0=\delta|_{\cZ_{0,L}}) \in (\Spec \cZ_{\Omega})^{\rig} \times \widehat{\cZ_{0,L}}$ is a parameter of $\sF$. 
\end{example}

\begin{lemma}\label{RmtOP}
Let $\sF$ be an $\Omega$-filtration of $D$.

(1) Let $(\ul{x},\delta)\in (\Spec \cZ_{\Omega})^{\rig} \times \widehat{Z_{L_P}(L)}$ be a parameter of $\sF$, then all parameters of $\sF$ in $(\Spec \cZ_{\Omega})^{\rig} \times \widehat{Z_{L_P}(L)}$ are of the form $(\ul{x'}, \delta')$ such that, for $i=1, \cdots, r$, $\ttr_{x_i'}\cong \ttr_{x_i} \otimes_E \unr(\alpha_i)$ and $\delta_i'=\delta_i \unr(\alpha_i^{-1}) \eta_i$ for some $\alpha_i \in \overline{E}^{\times}$ and $\eta_i \in \mu_{\Omega_i}$. 

(2) Let $(\ul{x}, \chi)\in (\Spec \cZ_{\Omega})^{\rig} \times \widehat{\cZ_{0,L}}$ be a parameter of $\sF$, then all parameters of $\sF$ in $(\Spec \cZ_{\Omega})^{\rig} \times \widehat{\cZ_{0,L}}$ are of the form $(\ul{x'},\chi')$ such that, for $i=1, \cdots, r$, $\ttr_{x_i'}\cong \ttr_{x_i} \otimes_E \unr(\eta_i(\varpi_L))$ and $\chi'_i =\chi_i\eta_i|_{\co_L^{\times}}$ for some $\eta_i\in \mu_{\Omega_i}$. 
\end{lemma}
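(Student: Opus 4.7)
The plan is as follows. For part (1), I fix two parameters $(\ul{x},\delta)$ and $(\ul{x}',\delta')$ of the same $\Omega$-filtration $\sF$; by Definition \ref{defOF} we then have for each $i$ embeddings of $(\varphi,\Gamma)$-modules
\[
\gr_i D \otimes_{\cR_{E,L}} \cR_{E,L}(\delta_i^{-1}) \hooklongrightarrow \Delta_{x_i}, \qquad
\gr_i D \otimes_{\cR_{E,L}} \cR_{E,L}((\delta_i')^{-1}) \hooklongrightarrow \Delta_{x_i'}
\]
of the same rank $n_i$, so both become isomorphisms after inverting $t$. Combining the two yields
\[
\Delta_{x_i'}[1/t] \cong \Delta_{x_i}[1/t] \otimes_{\cR_{E,L}} \cR_{E,L}(\mu_i), \qquad \mu_i := \delta_i (\delta_i')^{-1}.
\]
Since $\Delta_{x_i}$ and $\Delta_{x_i'}$ both have all Sen weights equal to zero, comparing Sen weights at each $\tau\in \Sigma_L$ forces $\mu_i$ to have zero $\tau$-Sen weight, so $\mu_i$ is a smooth character of $L^{\times}$.

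Next I promote this $[1/t]$-isomorphism to an honest isomorphism of $(\varphi,\Gamma)$-modules. Since $\mu_i$ is smooth, $\Delta_{x_i} \otimes_{\cR_{E,L}} \cR_{E,L}(\mu_i)$ is de Rham with all Hodge-Tate weights zero; together with $\Delta_{x_i'}$, both modules have the unique ``trivial'' Hodge filtration ($\Fil^0=D_{\dR}$, $\Fil^1=0$), so Berger's equivalence \cite[Thm.~A]{Ber08a} upgrades the $[1/t]$-isomorphism to $\Delta_{x_i'} \cong \Delta_{x_i} \otimes_{\cR_{E,L}} \cR_{E,L}(\mu_i)$. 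Translating through Berger's correspondence and local Langlands (using $\rec(\pi\otimes(\mu_i\circ\dett)) \cong \rec(\pi)\otimes \mu_i$) this reads $\pi_{x_i'} \cong \pi_{x_i} \otimes_E (\mu_i \circ \dett)$. Since $\pi_{x_i}$ and $\pi_{x_i'}$ both lie in the cuspidal Bernstein component $\Omega_i$, and such a component is described as the set of unramified twists of a fixed cuspidal modulo the stabilizer $\mu_{\Omega_i}$, we may write $\mu_i = \unr(\alpha_i) \eta_i^{-1}$ for some $\alpha_i \in \overline{E}^{\times}$ and some $\eta_i \in \mu_{\Omega_i}$. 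This yields $\ttr_{x_i'} \cong \ttr_{x_i} \otimes_E \unr(\alpha_i)$ (using that $\eta_i^{-1}\in \mu_{\Omega_i}$, so tensoring $\pi_{x_i}$ by $\eta_i^{-1}\circ \dett$ is trivial) and $\delta_i' = \delta_i \unr(\alpha_i^{-1}) \eta_i$, as claimed. The converse direction---that any $(\ul{x}', \delta')$ built from $(\ul{x},\delta)$ via the formulas above is indeed a parameter of $\sF$---is a direct verification reducing to the identity $\Delta_{x_i'} \otimes_{\cR_{E,L}} \cR_{E,L}(\eta_i^{-1}) \cong \Delta_{x_i'}$, which holds because $\mu_{\Omega_i}$ is independent of the chosen representative of $\Omega_i$.

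For part (2), I reduce to (1). By Definition \ref{defOF}(3), $(\ul{x}, \chi) \in (\Spec \cZ_{\Omega})^{\rig} \times \widehat{\cZ_{0,L}}$ is a parameter of $\sF$ exactly when $(\ul{x}, \chi_{\varpi_L}) \in (\Spec \cZ_{\Omega})^{\rig} \times \widehat{Z_{L_P}(L)}$ is a parameter, with $\chi_{i,\varpi_L}$ trivial on $\varpi_L$ and equal to $\chi_i$ on $\co_L^{\times}$ (Notation \ref{charanot}). Applying (1) to $(\ul{x},\chi_{\varpi_L})$ and $(\ul{x}',\chi'_{\varpi_L})$ gives $\ttr_{x_i'} \cong \ttr_{x_i} \otimes \unr(\alpha_i)$ and $\chi'_{i,\varpi_L} = \chi_{i,\varpi_L} \unr(\alpha_i^{-1}) \eta_i$; evaluating the latter at $\varpi_L$ and using $\chi_{i,\varpi_L}(\varpi_L)= \chi'_{i,\varpi_L}(\varpi_L)=1$ forces $\alpha_i = \eta_i(\varpi_L)$, while restricting to $\co_L^{\times}$ gives $\chi'_i = \chi_i \eta_i|_{\co_L^{\times}}$. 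These are the relations asserted in (2).

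The main conceptual inputs are Berger's equivalence of categories and the classical description of cuspidal Bernstein components; the rest is essentially bookkeeping, and no serious obstacle is expected.
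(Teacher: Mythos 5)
Your overall strategy is the same as the paper's (combine the two embeddings coming from Definition \ref{defOF}, show the ratio $\mu_i=\delta_i(\delta_i')^{-1}$ is smooth, then use \cite[Thm.~A]{Ber08a} and a Hodge--Tate weight comparison to identify the remaining ambiguity with an unramified twist times an element of $\mu_{\Omega_i}$, and finally deduce (2) from (1) by evaluating at $\varpi_L$ and restricting to $\co_L^{\times}$). However, there is a genuine gap at the key step, the smoothness of $\mu_i$. From the isomorphism $\Delta_{x_i'}[1/t]\cong \Delta_{x_i}[1/t]\otimes_{\cR_{E,L}}\cR_{E,L}(\mu_i)$ you cannot ``compare Sen weights'': as a module over $\cR_{E,L}[1/t]$ the right-hand side is unchanged if $\mu_i$ is replaced by $\mu_i z^{\mathbf{k}}$ for any $\mathbf{k}\in\Z^{|\Sigma_L|}$, so this isomorphism only pins down the Sen weights of $\mu_i$ modulo $\Z$, i.e.\ at best it shows $\mu_i$ is locally algebraic, not smooth. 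This is not a cosmetic issue: your argument never uses the normalization in Definition \ref{defOF} (2) that $0$ is the minimal $\tau$-Hodge--Tate weight of $\gr_i D\otimes_{\cR_{E,L}}\cR_{E,L}(\delta_i^{-1})$, and without that normalization the statement you are proving is false --- compare Lemma \ref{paradif}, where for filtrations of modules over $\cR_{E,L}[1/t]$ the parameter is genuinely only determined up to an extra algebraic twist $z^{\mathbf{k}}$. Your subsequent Berger upgrade cannot repair this, since it presupposes smoothness: if $\mu_i=\eta z^{\mathbf{k}}$ with $\eta$ smooth and $\mathbf{k}\neq 0$, comparing Hodge filtrations would only yield a strict containment between $\Delta_{x_i'}$ and $\Delta_{x_i}\otimes_{\cR_{E,L}}\cR_{E,L}(\eta)$, not an isomorphism, and nothing in your argument rules $\mathbf{k}\neq 0$ out.

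The paper closes exactly this point as follows: both $\gr_i D\otimes_{\cR_{E,L}}\cR_{E,L}(\delta_i^{-1})$ and $\gr_i D\otimes_{\cR_{E,L}}\cR_{E,L}((\delta_i')^{-1})$ are full-rank $(\varphi,\Gamma)$-submodules of the de Rham modules $\Delta_{x_i}$, $\Delta_{x_i'}$, hence are de Rham, so the twist relating them, $\delta_i(\delta_i')^{-1}$, is locally algebraic; and since by the definition of a parameter both modules have $0$ as minimal $\tau$-Hodge--Tate weight for every $\tau\in\Sigma_L$, the algebraic part must vanish, so the ratio is smooth. (The paper also starts from $\Delta_{x_i'}\cong\Delta_{x_i}\otimes_{\cR_{E,L}}\cR_{E,L}(\unr(\alpha_i))$ at the outset, since $x_i'$ lies in $\Spec\cZ_{\Omega_i}$, rather than deriving the unramified-twist relation at the end as you do; that difference is harmless.) If you insert this one observation in place of your Sen-weight comparison, the rest of your proof --- including the reduction of (2) to (1) --- is correct and coincides with the paper's argument.
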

\begin{proof}
(2) is an easy consequence of (1). We prove (1). Let $(\ul{x'}, \delta')\in (\Spec \cZ_{\Omega})^{\rig} \times \widehat{\cZ_{0,L}}$ be another parameter of $\sF$. By definition, we have injections 
	\begin{eqnarray} \label{para1}
		\gr_i D \otimes_{\cR_{E,L}} \cR_{E,L}(\delta_i^{-1}) &\hooklongrightarrow &\Delta_{x_i}\\
	\label{para2}
		\gr_i D \otimes_{\cR_{E,L}} \cR_{E,L}((\delta_i')^{-1}) &\hooklongrightarrow &\Delta_{x_i'} \cong \Delta_{x_i} \otimes_{\cR_{E,L}} \cR_{E,L}(\unr(\alpha_i)).
	\end{eqnarray}
From (\ref{para2}), we deduce 
	\[\gr_i D \otimes_{\cR_{E,L}} \cR_{E,L}((\delta_i')^{-1} \unr(\alpha_i))\cong\gr_i D \otimes_{\cR_{E,L}} \cR_{E,L}(\delta_i^{-1}) \otimes_{\cR_{E,L}} \cR_{E,L}(\delta_i (\delta_i')^{-1} \unr(\alpha_i^{-1})) \hooklongrightarrow \Delta_{x_i}.\]
Since both $\gr_i D \otimes_{\cR_{E,L}} \cR_{E,L}(\delta_i^{-1})$ and $\gr_i D \otimes_{\cR_{E,L}} \cR_{E,L}((\delta_i')^{-1} \unr(\alpha_i))$ are de Rham and have $0$ as the minimal $\tau$-Hodge-Tate weight for all $\tau\in \Sigma_L$, we deduce $\delta_i (\delta_i')^{-1}$ is smooth. Using (\ref{injfrDF}) (applied to (\ref{para1}) and (\ref{para2})), we see that there exists $N\in \Z_{\geq 0}$ sufficiently large such that 
	\begin{equation*}
		\Delta_{x_i} \hooklongrightarrow t^{-N} \gr_i D \otimes_{\cR_{E,L}}\cR_{E,L}(\delta_i^{-1}) \hooklongrightarrow t^{-N} \Delta_{x_i} \otimes_{\cR_{E,L}} \cR_{E,L}(\delta_i^{-1} \delta_i' \unr(\alpha_i)).
	\end{equation*}
Using \ \cite[Thm.~A]{Ber08a} \ and \ comparing \ the \ Hodge-Tate \ weights, \ we \ deduce \ $\Delta_{x_i}\cong \Delta_{x_i} \otimes_{\cR_{E,L}} \cR_{E,L}(\delta_i^{-1} \delta_i' \unr(\alpha_i))$, hence $\ttr_{x_i} \cong \ttr_{x_i} \otimes_E (\delta_i^{-1} \delta_i' \unr(\alpha_i))$, implying $\delta_i^{-1} \delta_i' \unr(\alpha_i) \in \mu_{\Omega_i}$. This concludes the proof.
\end{proof}

\begin{remark}
In particular, $\sF$ only has finitely many parameters in $\Spec \cZ_{\Omega} \times \widehat{\cZ_{0,L}}$.
\end{remark}

Let $(\ul{x}, \delta) \in \Spec \cZ_{\Omega} \times \widehat{Z_{L_P}(L)}$ be a parameter of the $\Omega$-filtration $\sF$. We call $\sF$ \textit{generic} if the following condition is satisfied:
\begin{equation}\label{conGene}
\begin{gathered}
\begin{array}{lll}
	&&\text{for $i \neq j$, if $\ttr_{x_j}\cong \ttr_{x_i} \otimes_E \eta $ for some smooth character $\eta$ of $L^{\times}$, } \\
	&&\text{then $\delta_i \delta_j^{-1} \eta \neq z^{-\textbf{k}}$ and $\delta_i \delta_j^{-1} \eta \neq \unr(q_L^{-1}) z^{\textbf{k}}$ for any $\textbf{k} \in \Z_{\geq 0}^{|\Sigma_L|}$.}
\end{array}
\end{gathered}
\end{equation}
By Lemma \ref{RmtOP}, this definition is independent of the choice of the parameter of $\sF$.

\begin{example}
Let $D$ be as in Example \ref{expcrpara} (2), one directly checks that the $\Omega$-filtration $\sF$ is generic if $\rho$ is generic in the sense of \S~\ref{introPcr}.
\end{example}

\begin{lemma}\label{lem:generic}
Assume that the $\Omega$-filtration $\sF$ on $D$ is generic, then we have 
	\begin{equation}\label{generic1}
		\Hom_{(\varphi,\Gamma)}(\gr^i D, \gr^j D)= \Ext^2_{(\varphi,\Gamma)}(\gr^i D, \gr^j D)=0
	\end{equation}
	for $i\neq j$, $i,j \in \{1, \dots, r\}$.
\end{lemma}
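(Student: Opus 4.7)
The plan is to translate both vanishings into $\Hom$-computations between the irreducible $p$-adic differential equations $\Delta_{x_i}$, $\Delta_{x_j}$ twisted by rank-one characters, and to match the resulting exceptional characters against the two inequalities of (\ref{conGene}).

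First I would exploit the sandwich structure coming from (\ref{injfrDF}): for each $k\in\{1,\dots,r\}$ there exists $\textbf{h}_k\in\Z^{|\Sigma_L|}_{\geq 0}$ such that $\gr_k D$ sits in a chain of saturated embeddings
\[
\Delta_{x_k}\otimes_{\cR_{E,L}}\cR_{E,L}(\delta_k z^{\textbf{h}_k})\hooklongrightarrow \gr_k D\hooklongrightarrow \Delta_{x_k}\otimes_{\cR_{E,L}}\cR_{E,L}(\delta_k),
\]
so that $\gr_k D[1/t]\cong\Delta_{x_k}[1/t]\otimes_{\cR_{E,L}[1/t]}\cR_{E,L}(\delta_k)[1/t]$. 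Setting $M:=\gr_i D^{\vee}\otimes_{\cR_{E,L}}\gr_j D$, the inclusion $M\hookrightarrow M[1/t]$ is saturated, and hence $H^0(M)=\Hom_{(\varphi,\Gamma)}(\gr_i D,\gr_j D)$ injects into $H^0(M[1/t])$. Identifying $t^{-N}\cR_{E,L}\cong\cR_{E,L}(\chi_{\cyc}^{-N})$ as $(\varphi,\Gamma)$-modules, this latter is
\[
\varinjlim_{N\in\Z}\,\Hom_{(\varphi,\Gamma)}\bigl(\Delta_{x_i},\,\Delta_{x_j}\otimes_{\cR_{E,L}}\cR_{E,L}(\delta_j\delta_i^{-1}\chi_{\cyc}^{-N})\bigr).
\]
For each $N$, writing $\delta_j\delta_i^{-1}\chi_{\cyc}^{-N}=\eta_N z^{\textbf{a}_N}$ with $\eta_N$ smooth and $\textbf{a}_N\in\Z^{|\Sigma_L|}$, the module $\Delta_{x_j}\otimes\cR_{E,L}(\eta_N)$ is de Rham with constant Hodge--Tate weight $0$ and corresponds via Berger's equivalence to $\ttr_{x_j}\otimes\eta_N$. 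Absolute irreducibility of $\Delta_{x_i}$ and $\Delta_{x_j}\otimes\cR_{E,L}(\eta_N)$ then forces non-vanishing at the $N$-th stage to impose the two conditions $\ttr_{x_j}\otimes\eta_N\cong\ttr_{x_i}$ (equivalently $\eta_N\equiv\eta^{-1}\bmod\mu_{\Omega_j}$, where $\eta$ is the character of genericity) and $\textbf{a}_N\geq 0$ componentwise. Translating via the form $\chi_{\cyc}|_{L^\times}=\unr(q_L^{-1})z^{\textbf{1}}$ and combining the two, one obtains $\delta_i\delta_j^{-1}\eta=\unr(q_L^N)z^{-\textbf{k}}$ for some integer $N$ and $\textbf{k}\in\Z^{|\Sigma_L|}$ with $\textbf{k}\geq N\textbf{1}$; after absorbing $\unr(q_L^N)$ into $\mu_{\Omega_j}$ for all but two specific values of $N$, the remaining two cases give exactly the two excluded families $z^{-\textbf{k}}$ and $\unr(q_L^{-1})z^{\textbf{k}}$ ($\textbf{k}\geq 0$) of (\ref{conGene}). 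This proves $\Hom_{(\varphi,\Gamma)}(\gr_i D,\gr_j D)=0$.

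For the $\Ext^2$ vanishing I would apply Tate local duality for $(\varphi,\Gamma)$-modules over $\cR_{E,L}$, giving
\[
\Ext^2_{(\varphi,\Gamma)}(\gr_i D,\gr_j D)\cong \Hom_{(\varphi,\Gamma)}\bigl(\gr_j D,\,\gr_i D\otimes_{\cR_{E,L}}\cR_{E,L}(\chi_{\cyc})\bigr)^{\vee}.
\]
The argument of the previous paragraph now applies verbatim with $i$ and $j$ interchanged and with an extra $\chi_{\cyc}$-twist on the right, which shifts the relevant $N$ by $1$. This shifts the two families of exceptional characters into each other, and the pair of inequalities in (\ref{conGene}) again yields the vanishing; conversely, this is precisely the reason (\ref{conGene}) is formulated with two separate conditions (one for Hom, one for $\Ext^2$).

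The main obstacle I expect is the careful bookkeeping in the final step of the Hom argument: as $N$ ranges over $\Z$, the smooth part $\unr(q_L^N)$ appearing in $\chi_{\cyc}^{-N}$ must be absorbed into either $\eta$ (via the finite group $\mu_{\Omega_j}$ of self-twists of $\ttr_{x_j}$, see Lemma \ref{RmtOP}) or into the unramified factor $\unr(q_L^{-1})$ of the second excluded family, thereby reducing the a priori infinite family of $N$-obstructions to the two listed in (\ref{conGene}). Once this reduction is made, the remainder is a formal manipulation combining Berger's equivalence for irreducible de Rham $(\varphi,\Gamma)$-modules with the saturated-submodule computation of $H^0(M[1/t])$.
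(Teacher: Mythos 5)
Your overall strategy (reduce both vanishings to Homs between character twists of the irreducible modules $\Delta_{x_i},\Delta_{x_j}$ via Berger's equivalence, and treat $\Ext^2$ by Tate duality) is the same as the paper's, but the execution has a genuine gap at the very first step. You replace $\Hom_{(\varphi,\Gamma)}(\gr_i D,\gr_j D)$ by $H^0_{(\varphi,\Gamma)}\big(((\gr_i D)^{\vee}\otimes_{\cR_{E,L}}\gr_j D)[1/t]\big)$ and then try to prove that this larger space vanishes; it does not vanish under (\ref{conGene}) in general. Indeed it is nonzero exactly when $\gr_iD[1/t]\cong\gr_jD[1/t]$, i.e.\ when $\delta_i\delta_j^{-1}$ is locally algebraic with smooth part matching an isomorphism between $\ttr_{x_i}$ and a twist of $\ttr_{x_j}$ and with algebraic part $z^{\textbf{m}}$ for \emph{some} $\textbf{m}\in\Z^{|\Sigma_L|}$, with no sign restriction; the families excluded by (\ref{conGene}) (for both ordered pairs $(i,j)$ and $(j,i)$) are all one-sided in $\textbf{m}$, so as soon as $|\Sigma_L|\geq 2$ a mixed-sign discrepancy is allowed. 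Concretely, for $r=2$, $n_1=n_2=1$, $\Delta_{x_1}=\Delta_{x_2}=\cR_{E,L}$ and $\delta_1\delta_2^{-1}=z^{(1,-1)}$ the filtration is generic in the sense of (\ref{conGene}), the lemma's Hom is $H^0_{(\varphi,\Gamma)}(\cR_{E,L}(z^{(-1,1)}))=0$, but your intermediate space $H^0$ of the $t$-inverted module is nonzero. This is no accident: in your colimit $\varinjlim_N$ the stage-$N$ positivity constraint on $\textbf{a}_N$ evaporates as $N$ grows, so no bookkeeping after inverting $t$ can recover a sign condition. The missing idea is precisely what the paper's proof turns on: keep the honest (non-$t$-inverted) modules and use that $(\ul{x},\delta)$ is a \emph{parameter} of $\sF$ (Definition \ref{defOF} (2)), i.e.\ that $0$ is the minimal $\tau$-Hodge--Tate weight of every $\gr_kD\otimes_{\cR_{E,L}}\cR_{E,L}(\delta_k^{-1})$; composing a putative nonzero map $\gr_iD\to\gr_jD$ with the two embeddings furnished by (\ref{injfrDF}) then forces the algebraic discrepancy to be one-sided componentwise at \emph{all} embeddings simultaneously, and only then does a single instance of (\ref{conGene}) give the contradiction.

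Two further points in your final bookkeeping are wrong as stated. First, $\unr(q_L^{N})$ can never be ``absorbed into $\mu_{\Omega_j}$'': by \S~\ref{sec_pDf} and Lemma \ref{RmtOP} the self-twists of $\ttr_{x_j}$ form a finite group of finite-order characters (the unramified ones are given by roots of unity), while $q_L^{N}$ is not a root of unity for $N\neq 0$. Second, those unramified factors are an artefact of the identification $t^{-N}\cR_{E,L}\cong\cR_{E,L}(\chi_{\cyc}^{-N})$, which is off by $\unr(q_L^{N})$: with the normalizations used here one has $t\cR_{E,L}\cong\cR_{E,L}(z^{\textbf{1}})$ (this is forced by the standard fact that $H^0_{(\varphi,\Gamma)}(\cR_{E,L}(\delta))\neq 0$ if and only if $\delta=z^{-\textbf{k}}$ with $\textbf{k}\in\Z_{\geq 0}^{|\Sigma_L|}$, and is how the paper itself trades powers of $t$ for $z^{\textbf{N}}$-twists), so inverting $t$ only ever produces algebraic twists; the character $\unr(q_L^{-1})$ enters solely through the Tate-duality twist by $\chi_{\cyc}=z^{\textbf{1}}\unr(q_L^{-1})$ in the $\Ext^2$ computation, which is exactly why (\ref{conGene}) carries its second inequality. (A smaller issue: writing $\delta_j\delta_i^{-1}\chi_{\cyc}^{-N}=\eta_N z^{\textbf{a}_N}$ with $\eta_N$ smooth presupposes that $\delta_j\delta_i^{-1}$ is locally algebraic; this must be deduced, as in the paper, from de Rham-ness of the target once a nonzero morphism is assumed, not built into the notation.) Since your $\Ext^2$ argument is run through the same $[1/t]$ reduction, it inherits the same gap.
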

\begin{proof}
We \ prove \ the \ statement \ for \ $\Hom$, \ the \ proof \ for \ $\Ext^2$ \ being \ similar \ using $\Ext^2_{(\varphi,\Gamma)}(\gr^i D, \gr^j D)=H^2_{(\varphi,\Gamma)}((\gr^i D)^\vee\otimes_{\cR_{E,L}}\gr^j D)$ and Tate duality (\cite[Thm.\ 1.2 (2)]{Liu07}). We let $(\ul{x}, \delta) \in (\Spec \cZ_{\Omega})^{\rig} \times \widehat{Z_{L_P}(L)}$ be a parameter of $\sF$. Suppose we have a non-zero morphism $f: \gr^i D \ra \gr^j D$ for $i\neq j$. Let $N\in \Z_{\geq 0}$ be sufficiently large, then $f$ induces a non-zero hence injective morphism (see (\ref{injfrDF}))
	\begin{equation}\label{comp}t^N \Delta_{x_i} \ra \gr^i D \otimes_{\cR_{E,L}} \cR_{E,L} (\delta_i^{-1}) \ra \gr^j D \otimes_{\cR_{E,L}} \cR_{E,L} (\delta_i^{-1}) \ra \Delta_{x_j} \otimes_{\cR_{E,L}} \cR_{E,L} (\delta_{i}^{-1}\delta_{j}).
	\end{equation}
Consider the induced injective morphism $\Delta_{x_i} \ra \Delta_{x_j} \otimes_{\cR_{E,L}} \cR_{E,L}(\delta_{i}^{-1} \delta_{j} z^{-\textbf{N}})$ where $\textbf{N}:=(N)_{\tau \in \Sigma_L}$. Since both the source and target are irreducible, the morphism (and hence (\ref{comp})) becomes an isomorphism inverting $t$. Since the left hand side of (\ref{comp}) is de Rham, so is $\Delta_{x_j} \otimes_{\cR_{E,L}} \cR_{E,L} (\delta_{i}^{-1}\delta_{j})$. This implies (e.g.\ by considering the Sen weights) that $\delta_{i}^{-1}\delta_{j}$ is locally algebraic, say of the form $\eta z^{\textbf{k}}$ for a smooth character $\eta$ of $L^{\times}$ and some $\textbf{k}\in \Z^{|\Sigma_L|}$. By using \cite[Thm.~A]{Ber08a} and comparing the Hodge-Tate weights, we see that (\ref{comp}) induces $\Delta_{x_i} \xrightarrow{\sim} \Delta_{x_j} \otimes_{\cR_{E,L}} \cR_{E,L} (\eta)$ (so $\ttr_{x_i}\cong \ttr_{x_j} \otimes_E \eta$). 

We next show $\textbf{k}\in \Z^{|\Sigma_L|}_{\leq 0}$. By definition, $0$ is the minimal $\tau$-Hodge-Tate weight of $\gr^i D \otimes_{\cR_{E,L}} \cR_{E,L} (\delta_{i}^{-1})$ for all $\tau$, while $\wt(\delta_i^{-1} \delta_j )_{\tau}$ is the minimal $\tau$-Hodge-Tate weight of
	\begin{equation*}
		\gr^j D \otimes_{\cR_{E,L}} \cR_{E,L} (\delta_{i}^{-1}) \cong \gr^j D \otimes_{\cR_{E,L}} \cR_{E,L} (\delta_{j}^{-1})\otimes_{\cR_{E,L}} \cR_{E,L} (\delta_{j}\delta_{i}^{-1}).
	\end{equation*}
The second (injective) morphism in (\ref{comp}) then implies $\wt(\delta_i^{-1} \delta_j )_{\tau}\leq 0$ for all $\tau$, hence $\textbf{k}\in \Z^{|\Sigma_L|}_{\leq 0}$. But $\delta_i^{-1}\delta_j \eta^{-1}=z^{\textbf{k}}$ with $\textbf{k}\in \Z^{|\Sigma_L|}_{\leq 0}$ contradicts the genericity (\ref{conGene}) of $\sF$.
\end{proof}

\begin{corollary}\label{Filuniq}
Assume that $D$ admits a generic $\Omega$-filtration $\sF$, and let $(\ul{x}, \delta)\in \Spec \cZ_{\Omega} \times \widehat{Z_{L_P}(L)}$ be a parameter of $\sF$. Then $D$ has a unique $\Omega$-filtration of parameter $(\ul{x}, \delta)$. 
\end{corollary}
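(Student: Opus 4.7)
The plan is to argue by induction on $r$, with the case $r=1$ being immediate (the unique filtration is $0\subsetneq D$). For the inductive step, let $\sF'=\{\Fil_i' D\}$ be another $\Omega$-filtration of parameter $(\ul{x},\delta)$. The key reduction is to prove $\Fil_1 D = \Fil_1'D$; once this is done, both $\sF$ and $\sF'$ induce $\Omega'$-filtrations on $D/\Fil_1 D$ of parameter $\bigl((x_2,\dots,x_r),(\delta_2,\dots,\delta_r)\bigr)$, where $\Omega':=(\Omega_2,\dots,\Omega_r)$. The genericity condition (\ref{conGene}) restricts to the corresponding condition for $(\Omega',(x_2,\dots,x_r),(\delta_2,\dots,\delta_r))$, so the induction hypothesis applied to $D/\Fil_1 D$ gives $\Fil_i D/\Fil_1 D = \Fil_i'D/\Fil_1 D$ for every $i$, hence $\sF=\sF'$.

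To show $\Fil_1 D=\Fil_1'D$, I would observe that both are saturated $(\varphi,\Gamma)$-submodules of $D$ of rank $n_1$. It therefore suffices to show that $\Fil_1'D\subseteq \Fil_1 D$, i.e.\ that the composition
\[
\Fil_1' D \hooklongrightarrow D \twoheadlongrightarrow D/\Fil_1 D
\]
vanishes; equality then follows by saturation. Since $D/\Fil_1 D$ carries the induced $\sF$-filtration with graded pieces $\gr_2 D,\dots,\gr_r D$, a d\'evissage reduces the vanishing to showing
\[
\Hom_{(\varphi,\Gamma)}(\Fil_1'D,\gr_i D)=0\quad\text{for all }i\in\{2,\dots,r\}.
\]

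For this, I would adapt the proof of Lemma \ref{lem:generic} verbatim, replacing $\gr_1 D$ by $\Fil_1'D$. By the definition of parameter, $\Fil_1'D\otimes_{\cR_{E,L}}\cR_{E,L}(\delta_1^{-1})$ embeds into $\Delta_{x_1}$ with $0$ as minimal $\tau$-Hodge--Tate weight for every $\tau$, and by (\ref{injfrDF}) there exists $N\gg 0$ with an injection $t^N \Delta_{x_1}\hookrightarrow \Fil_1'D\otimes_{\cR_{E,L}}\cR_{E,L}(\delta_1^{-1})$. Given a non-zero morphism $f:\Fil_1'D\to\gr_i D$, twisting by $\cR_{E,L}(\delta_1^{-1})$ and composing with $\gr_i D\otimes_{\cR_{E,L}}\cR_{E,L}(\delta_i^{-1})\hookrightarrow \Delta_{x_i}$ yields a non-zero (hence injective) morphism
\[
t^N \Delta_{x_1}\lra \Delta_{x_i}\otimes_{\cR_{E,L}}\cR_{E,L}(\delta_1^{-1}\delta_i).
\]
Since source and target are irreducible of the same rank, this becomes an isomorphism after inverting $t$, forcing $\delta_1^{-1}\delta_i=\eta z^{\textbf{k}}$ with $\eta$ a smooth character and $\textbf{k}\in\Z^{|\Sigma_L|}$; de Rhamness and Berger's comparison then give $\ttr_{x_1}\cong \ttr_{x_i}\otimes_E\eta$, and comparing minimal Hodge--Tate weights on both sides of the injection forces $\textbf{k}\in\Z^{|\Sigma_L|}_{\leq 0}$, i.e.\ $\delta_i\delta_1^{-1}\eta^{-1}=z^{-\textbf{k}}$ with $-\textbf{k}\in\Z^{|\Sigma_L|}_{\geq 0}$. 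This directly contradicts the genericity assumption (\ref{conGene}), so $f=0$.

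The main obstacle I anticipate is checking that the Hodge--Tate weight comparison in the proof of Lemma \ref{lem:generic} goes through with $\Fil_1'D$ in place of $\gr_1 D$; but this is essentially built into Definition \ref{defOF}(2) because $(\ul x,\delta)$ is a parameter of $\sF'$, which is precisely the input needed to reproduce the inequality $\textbf{k}\in\Z^{|\Sigma_L|}_{\leq 0}$ in the final step. Everything else is a direct adaptation of Lemma \ref{lem:generic} and a standard d\'evissage.
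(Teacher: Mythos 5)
Your proposal is correct and is essentially the paper's own argument: both rest on the Hom-vanishing between graded pieces of the two filtrations supplied by (the proof of) Lemma \ref{lem:generic} under the genericity condition (\ref{conGene}), combined with a standard d\'evissage and induction. The only difference is cosmetic — you run the induction bottom-up (identify $\Fil_1 D=\Fil_1'D$, then pass to $D/\Fil_1 D$), whereas the paper runs it top-down (show $\Fil_{r-1}D\xrightarrow{\sim}\Fil_{r-1}'D$ via $\Hom_{(\varphi,\Gamma)}(\Fil_{r-1}D,\gr_r'D)=0$ and then restricts to $\Fil_{r-1}D$) — and both versions need the same mild observation that the vanishing applies across the two filtrations since they share the parameter $(\ul{x},\delta)$.
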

\begin{proof}
Suppose $D$ has two $\Omega$-filtrations $\Fil_{\bullet}$, $\Fil_{\bullet}'$ of parameter $(\ul{x},\delta)$. By Lemma \ref{lem:generic} and a standard d\'evissage, we easily deduce $\Hom_{(\varphi, \Gamma)}(\Fil_{r-1} D, \gr'_r D)=0$ and hence the injection $\Fil_{r-1} D \hookrightarrow D$ induces an isomorphism $\Fil_{r-1} D \xrightarrow{\sim} \Fil_{r-1}' D$. We go on replacing $D$ by $\Fil_{r-1} D=\Fil_{r-1}'D$, and we see that $\Fil_i D \hookrightarrow D$ induces an isomorphism $\Fil_{i} D \xrightarrow{\sim} \Fil_{i}' D$ for $i=1, \dots, r$.
\end{proof}

Assume that $D$ admits a (uniquely determined) generic $\Omega$-filtration $\sF$ and let $F_{D,\sF}$ denote the functor $\Art(E) \ra \{\text{Sets}\}$ which sends $A\in \Art(E)$ to the set of isomorphism classes\index{$F_{D,\sF}$}
\begin{equation*}
	F_{D,\sF}(A)=\{(D_A, \pi_A, \sF_A)\}/\sim
\end{equation*}
where (the isomorphisms being defined in an obvious way)
\begin{enumerate}
\item[(1)] $D_A$ is a $(\varphi,\Gamma)$-module of rank $n$ over $\cR_{A,L}$ with $\pi_A: D_A \otimes_A E \xrightarrow{\sim} D$;
\item[(2)] $\sF_A=\Fil_{\bullet} D_A$ is an increasing filtration by $(\varphi,\Gamma)$-submodules over $\cR_{A,L}$ on $D_A$ such that $\Fil_i D_A$, $i\in \{0,\dots,r\}$ is a direct summand of $D_A$ as $\cR_{A,L}$-modules and $\pi_A(\Fil_i D_A)=\Fil_i D$.
\end{enumerate}

\begin{lemma}\label{subf1}
Assume $\Hom_{(\varphi, \Gamma)}(\gr_i D, \gr_j D)=0$ for $i\neq j$. Then $F_{D,\sF}$ is a subfunctor of $F_D$. 	
\end{lemma}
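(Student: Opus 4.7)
To show $F_{D,\sF}$ is a subfunctor of $F_D$, it suffices to prove that for any $A \in \Art(E)$ the filtration $\sF_A$ satisfying condition (2) on a given $(D_A,\pi_A) \in F_D(A)$, if it exists, is uniquely determined. Suppose $\sF_A = \Fil_\bullet D_A$ and $\sF_A' = \Fil_\bullet' D_A$ are two such filtrations. I argue by induction on $r$ that $\Fil_i D_A = \Fil_i' D_A$ for all $i$. The inductive step reduces to showing $\Fil_{r-1} D_A = \Fil_{r-1}' D_A$: once this is known, the direct summand $\Fil_{r-1} D_A$ is itself a $(\varphi,\Gamma)$-module over $\cR_{A,L}$ carrying two filtrations of length $r-1$ lifting $\Fil_\bullet(\Fil_{r-1} D)$, whose graded pieces $\gr_1 D, \dots, \gr_{r-1} D$ still satisfy the pairwise $\Hom$-vanishing hypothesis, and induction applies.

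The equality $\Fil_{r-1} D_A = \Fil_{r-1}' D_A$ (by symmetry only one inclusion is needed) will follow once we establish
$$\Hom_{(\varphi,\Gamma),A}\bigl(\Fil_{r-1} D_A,\; D_A/\Fil_{r-1}' D_A\bigr) = 0,$$
since then the composition $\Fil_{r-1} D_A \hookrightarrow D_A \twoheadrightarrow D_A/\Fil_{r-1}' D_A$ must vanish. I prove this vanishing by a secondary induction on the length of $A$. For $A = E$ the statement reduces to $\Hom_{(\varphi,\Gamma)}(\Fil_{r-1} D, \gr_r D) = 0$, which follows from the hypothesis $\Hom_{(\varphi,\Gamma)}(\gr_i D, \gr_r D) = 0$ for $i < r$ by d\'evissage along the exact sequences $0 \to \Fil_{i-1} D \to \Fil_i D \to \gr_i D \to 0$. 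For $A$ of larger length, choose an ideal $I \subset A$ with $\fm_A I = 0$; then $I$ is an $E$-vector space and $A/I$ has smaller length. Given $f \colon \Fil_{r-1} D_A \to D_A/\Fil_{r-1}' D_A$, its reduction modulo $I$ vanishes by the inductive hypothesis applied to $A/I$, so $f$ factors through $I \cdot (D_A/\Fil_{r-1}' D_A)$. Using flatness of $D_A/\Fil_{r-1}' D_A$ over $A$, this submodule is isomorphic to $I \otimes_E \gr_r D$, which is annihilated by $\fm_A$. Hence $f$ further factors through $\Fil_{r-1} D_A \otimes_A E \cong \Fil_{r-1} D$ (via $\pi_A$), yielding, after choosing an $E$-basis of $I$, finitely many $(\varphi,\Gamma)$-morphisms $\Fil_{r-1} D \to \gr_r D$, all zero by the base case.

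The main technical point to verify is the flatness bookkeeping provided by the direct-summand condition in (2). That condition forces each $\Fil_i D_A$ and each quotient $D_A/\Fil_j' D_A$ to be projective over $\cR_{A,L}$, and hence, via the flatness of $\cR_{A,L}$ over $A$, flat over $A$. This legitimizes all the identifications made above, in particular $I \cdot (D_A/\Fil_{r-1}' D_A) \cong I \otimes_E \gr_r D$ and $(\Fil_{r-1} D_A) \otimes_A (A/I) \cong \Fil_{r-1} D_{A/I}$ needed to invoke the inductive hypothesis on $A/I$. Beyond this, the argument is routine uniqueness-by-Artinian-induction, and the assumption $\Hom_{(\varphi,\Gamma)}(\gr_i D, \gr_j D) = 0$ for $i \neq j$ intervenes precisely at the base case to yield the desired Hom-vanishing.
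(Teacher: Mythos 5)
Your proof is correct and takes essentially the same route as the paper: the paper also establishes the lemma by showing that a filtration satisfying condition (2) on a given $(D_A,\pi_A)\in F_D(A)$ is unique, via a d\'evissage (over the filtration steps and over the Artinian ring $A$) that reduces the needed Hom-vanishing to the hypothesis $\Hom_{(\varphi,\Gamma)}(\gr_i D,\gr_j D)=0$, exactly as in the proof of Lemma \ref{Filuniq}. You merely spell out explicitly, using an ideal $I$ with $\fm_A I=0$ and flatness of the direct summands over $A$, the Artinian induction that the paper compresses into the phrase ``a d\'evissage similar to the one for Lemma \ref{Filuniq}''.
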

\begin{proof}
By the assumption and a d\'evissage similar to the one for Lemma \ref{Filuniq}, we easily deduce that, if $\Fil_{\bullet} D_A$ and $\Fil'_{\bullet} D_A$ are two filtrations on $(D_A, \pi_{A})\in F_D(A)$, then they have to be equal. The lemma follows.
\end{proof}

Denote by $\End_{\sF}(D):=\{f\in \End_{\cR_{E,L}}(D)\ |\ f(\Fil_i D)\subset \Fil_i D, \ \forall i\}$, which is equipped with a natural $(\varphi,\Gamma)$-action as in the discussion below \cite[Rem.\ 3.5]{Che11}. Recall the following result:

\begin{proposition}[$\text{\cite[Prop.\ 3.6 (2), (3)]{Che11}}$] \label{paradef}
(1) There is a natural isomorphism of $E$-vector spaces $F_{D,\sF}(E[\varepsilon/\varepsilon^2]) \xrightarrow{\sim} H^1_{(\varphi,\Gamma)}(\End_{\sF}(D))$ and 
	\begin{equation*}
		\dim_E F_{D,\sF}(E[\varepsilon/\varepsilon^2])=	\dim_E H^0_{(\varphi,\Gamma)}(\End_{\sF}(D))+\dim_E H^2_{(\varphi,\Gamma)}(\End_{\sF}(D)) +[L:\Q_p]\sum_{i\leq j} n_in_j.
	\end{equation*}
(2) Assume $H^2_{(\varphi,\Gamma)}(\End_{\sF}(D))=0$, then the functor $F_{D,\sF}$ is formally smooth of dimension $\dim_E H^0_{(\varphi,\Gamma)}(\End_{\sF}(D))+[L:\Q_p]\sum_{i\leq j} n_in_j$.
\end{proposition}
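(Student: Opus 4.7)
The plan is to invoke Chenevier's general paraboline deformation machinery from \cite{Che11}, specializing it to the $\Omega$-filtration setting, which by Lemma \ref{subf1} (using the vanishing of $\Hom_{(\varphi,\Gamma)}(\gr_i D, \gr_j D)$ from Lemma \ref{lem:generic}) is a genuine subfunctor of $F_D$ and fits into the framework of filtered deformations considered in \textit{loc.\ cit.}

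For part (1), I would first identify $F_{D,\sF}(E[\varepsilon]/\varepsilon^2)$ with isomorphism classes of self-extensions of $(D,\sF)$ in the category of filtered $(\varphi,\Gamma)$-modules. Concretely, for any $(D_A,\pi_A,\sF_A) \in F_{D,\sF}(E[\varepsilon]/\varepsilon^2)$, the assumption that each $\Fil_i D_A$ is a direct summand of $D_A$ as an $\cR_{A,L}$-module allows one to choose an $\cR_{A,L}$-linear filtered splitting $D_A \cong D \oplus \varepsilon D$; measuring the failure of this splitting to be $(\varphi,\Gamma)$-equivariant produces a cocycle in $\End_{\sF}(D)$ whose class in $H^1_{(\varphi,\Gamma)}(\End_{\sF}(D))$ is independent of the splitting, yielding the desired natural isomorphism.

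The dimension formula then follows from the Euler-Poincar\'e characteristic formula for $(\varphi,\Gamma)$-modules over $\cR_{E,L}$ (Liu's generalization \cite{Liu07} of Herr's formula): for any $(\varphi,\Gamma)$-module $M$ of rank $d$ over $\cR_{E,L}$,
\[
\sum_{i=0}^2 (-1)^i \dim_E H^i_{(\varphi,\Gamma)}(M) = -[L:\Q_p]\, d.
\]
Applied to $M = \End_{\sF}(D)$, which is a $(\varphi,\Gamma)$-module over $\cR_{E,L}$ whose underlying $\cR_{E,L}$-module is free of rank $\sum_{i\leq j} n_i n_j$ (endomorphisms respecting the filtration, each $\Fil_i D$ being a saturated $(\varphi,\Gamma)$-submodule of the expected rank), this gives the stated formula $h^1 = h^0 + h^2 + [L:\Q_p]\sum_{i\leq j} n_i n_j$.

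For part (2), I would apply the standard obstruction-theoretic argument: given a small extension $0 \to I \to A \to A/I \to 0$ in $\Art(E)$ with $I \cdot \fm_A = 0$ and a deformation $(D_{A/I}, \pi_{A/I}, \sF_{A/I}) \in F_{D,\sF}(A/I)$, I attach to it an obstruction class in $H^2_{(\varphi,\Gamma)}(\End_{\sF}(D)) \otimes_E I$ whose vanishing is necessary and sufficient for the existence of a lift to $F_{D,\sF}(A)$. The assumption $H^2_{(\varphi,\Gamma)}(\End_{\sF}(D)) = 0$ gives formal smoothness of $F_{D,\sF}$, and its dimension is then read off from part (1) as $\dim_E H^0_{(\varphi,\Gamma)}(\End_{\sF}(D)) + [L:\Q_p]\sum_{i\leq j} n_i n_j$. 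The main technical point is to verify carefully that $\End_{\sF}(D)$ has the structure of a $(\varphi,\Gamma)$-module over $\cR_{E,L}$ of the stated rank and that the bijection and obstruction map are compatible with the $(\varphi,\Gamma)$-action; but since both ingredients are contained essentially verbatim in \cite[Prop.\ 3.6]{Che11}, once the subfunctor identification from Lemma \ref{subf1} is established, the proof reduces to a direct appeal to \textit{loc.\ cit.}
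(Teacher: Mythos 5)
Your argument is correct and is essentially the proof of the cited result: the paper itself gives no proof of this proposition but simply quotes \cite[Prop.\ 3.6]{Che11}, whose argument is exactly the cocycle identification of the tangent space with $H^1_{(\varphi,\Gamma)}(\End_{\sF}(D))$, the Euler--Poincar\'e formula of Liu applied to the rank $\sum_{i\leq j}n_in_j$ module $\End_{\sF}(D)$, and the obstruction class in $H^2_{(\varphi,\Gamma)}(\End_{\sF}(D))$ for small extensions. The appeal to Lemma \ref{subf1} and genericity at the outset is superfluous for this statement (the proposition concerns $F_{D,\sF}$ itself, not its relation to $F_D$), but it does no harm.
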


We have a natural morphism
\[F_{D,\sF} \lra \prod_i F_{\gr_i D}\]
sending $(D_A, \pi_A, \sF_A)$ to $(\gr_i D_A, \pi_A|_{\gr_i D_A})_{i=1, \dots,r}$.

\begin{proposition}[$\text{\cite[Prop.\ 3.7]{Che11}}$]\label{apxfm1}
Assume $H^2_{(\varphi,\Gamma)}(\Hom_{\cR_{E,L}}(D/\Fil_i D, \gr_i D))=0$ for all $i$, then the morphism $F_{D,\sF} \ra \prod_i F_{\gr_i D}$ is formally smooth. 
\end{proposition}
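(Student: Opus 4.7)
The plan is to verify formal smoothness of $F_{D,\sF}\to\prod_iF_{\gr_iD}$ via the standard infinitesimal lifting criterion. Let $A\twoheadrightarrow A/I$ be a small surjection in $\Art(E)$ (so $I\cdot\fm_A=0$), let $(D_{A/I},\pi_{A/I},\sF_{A/I})\in F_{D,\sF}(A/I)$, and suppose we are given deformations $(\gr_iD_A)\in\prod_iF_{\gr_iD}(A)$ whose reductions modulo $I$ recover the graded pieces $(\gr_iD_{A/I})$. I would produce a filtered deformation $(D_A,\pi_A,\sF_A)\in F_{D,\sF}(A)$ lifting $(D_{A/I},\pi_{A/I},\sF_{A/I})$ with the prescribed graded pieces $\gr_iD_A$.

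My approach is to construct $D_A/\Fil_{i-1}D_A$ by descending induction on $i\in\{1,\ldots,r+1\}$. The case $i=r+1$ is trivial and at $i=r$ one merely records the prescribed $\gr_rD_A$. Assume a filtered $(\varphi,\Gamma)$-module $D_A/\Fil_iD_A$ over $\cR_{A,L}$ has been constructed lifting $D_{A/I}/\Fil_iD_{A/I}$ with graded pieces $\gr_{i+1}D_A,\ldots,\gr_rD_A$. To pass from $i$ to $i-1$ I must produce a $(\varphi,\Gamma)$-extension
\begin{equation*}
0 \lra \gr_i D_A \lra D_A/\Fil_{i-1}D_A \lra D_A/\Fil_iD_A \lra 0
\end{equation*}
over $\cR_{A,L}$ whose reduction modulo $I$ recovers the analogous extension yielding $D_{A/I}/\Fil_{i-1}D_{A/I}$. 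Equivalently, I must lift the extension class $[e_{A/I}]\in H^1_{(\varphi,\Gamma)}(\Hom_{\cR_{A/I,L}}(D_{A/I}/\Fil_iD_{A/I},\gr_iD_{A/I}))$ to a class in $H^1_{(\varphi,\Gamma)}(\Hom_{\cR_{A,L}}(D_A/\Fil_iD_A,\gr_iD_A))$.

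Since both $D_A/\Fil_iD_A$ and $\gr_iD_A$ are free over $\cR_{A,L}$, the short exact sequence $0\to I\to A\to A/I\to 0$ yields a short exact sequence of $(\varphi,\Gamma)$-modules
\begin{multline*}
0 \to \Hom_{\cR_{E,L}}(D/\Fil_iD,\gr_iD)\otimes_E I \to \Hom_{\cR_{A,L}}(D_A/\Fil_iD_A,\gr_iD_A) \\
\to \Hom_{\cR_{A/I,L}}(D_{A/I}/\Fil_iD_{A/I},\gr_iD_{A/I}) \to 0.
\end{multline*}
The associated long exact sequence in $(\varphi,\Gamma)$-cohomology shows that the obstruction to lifting $[e_{A/I}]$ lies in $H^2_{(\varphi,\Gamma)}(\Hom_{\cR_{E,L}}(D/\Fil_iD,\gr_iD))\otimes_E I$, which vanishes by hypothesis. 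Hence the extension class lifts, the induction proceeds, and setting $D_A:=D_A/\Fil_0D_A$ produces the required filtered deformation. The main technical point to watch, though not a genuine obstacle, is to ensure that the chosen lift $[e_A]$ reduces to the prescribed $[e_{A/I}]$ rather than to some other class in $H^1$; this is guaranteed precisely by the surjectivity of the reduction map in the long exact sequence, itself a consequence of the vanishing of $H^2$.
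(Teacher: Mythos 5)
Your argument is correct and is essentially the proof that the paper delegates to the citation [Che11, Prop.\ 3.7]: a descending induction on the filtration in which the successive extension classes are lifted along the small surjection, the needed surjectivity of $H^1_{(\varphi,\Gamma)}(\Hom_{\cR_{A,L}}(D_A/\Fil_iD_A,\gr_iD_A))\ra H^1_{(\varphi,\Gamma)}(\Hom_{\cR_{A/I,L}}(D_{A/I}/\Fil_iD_{A/I},\gr_iD_{A/I}))$ coming from the vanishing of the obstruction group $H^2_{(\varphi,\Gamma)}(\Hom_{\cR_{E,L}}(D/\Fil_iD,\gr_iD))\otimes_E I$. Since the paper gives no independent proof beyond this citation, there is nothing further to compare.
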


Let $(\ul{x}, \delta) \in \Spec \cZ_{\Omega} \times \widehat{Z_{L_P}(L)}$ be a parameter of $\sF$, and let $F_{D,\sF}^0$ be the functor $ \Art(E) \ra \{\text{Sets}\}$ sending $A\in \Art(E)$ to the set of isomorphism classes\index{$F_{D,\sF}^0$}
\begin{equation*}
	F^0_{D,\sF}(A)=\{(D_A, \pi_A, \sF_A, \delta_A)\}/\sim
\end{equation*}
where (the isomorphisms being again defined in an obvious way)
\begin{enumerate}
	\item[(1)] $(D_A,\pi_A,\sF_A)\in F_{D,\sF}(A)$;
	\item[(2)] $\delta_{A}=(\delta_{A,i})_{i=1,\dots,r}$ where $\delta_{A,i}: L^{\times} \ra A^{\times}$ is a continuous character such that $\delta_{A,i} \equiv \delta_i \pmod{\fm_A}$ and there exists an injection of $(\varphi,\Gamma)$-modules over $\cR_{A,L}$:
	\begin{equation*}
		\gr_i D_A \hooklongrightarrow \Delta_i \otimes_{\cR_{E,L}} \cR_{A,L}(\delta_{A,i}).
	\end{equation*}
\end{enumerate}
By definition, we have $F_{D,\sF}^0 \cong F_{D,\sF} \times_{\prod_i F_{\gr_i D}} \prod_i F_{\gr_i D}^0$.

\begin{proposition}\label{apxfm2}
	(1) Assume $\Hom_{(\varphi, \Gamma)}(\gr_i D, \gr_j D)=0$ for $i\neq j$, then the functor $F_{D,\sF}^0$ is a subfunctor of $F_D$.
	
	(2) Assume $H^2_{(\varphi,\Gamma)}(\Hom_{\cR_{E,L}}(D/\Fil_i D, \gr_i D))=0$ and $\gr_i D \otimes_{\cR_{E,L}} \cR_{E,L}(\delta_i^{-1})$ has distinct Hodge-Tate weights for $i=1, \dots, r$. Then $F_{D,\sF}^0$ is formally smooth of dimension 
	\begin{equation*}
		\dim_E H^2_{(\varphi, \Gamma)}(\End_{\sF}(D))+\dim_E H^0_{(\varphi,\Gamma)}(\End_{\sF}(D))+[L:\Q_p]\big(\frac{n(n-1)}{2}+r\big).
	\end{equation*}
\end{proposition}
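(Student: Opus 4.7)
The overall strategy is that part (1) reduces to combining uniqueness of the filtration (Lemma~\ref{subf1}) with uniqueness of each character on the graded pieces (Lemma~\ref{subf}), while part (2) is obtained from the fiber product
\[F_{D,\sF}^0 \;\cong\; F_{D,\sF}\;\times_{\prod_i F_{\gr_i D}}\;\prod_i F_{\gr_i D}^0,\]
exploiting the formal smoothness results already established.

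For part (1), fix $A\in\Art(E)$ and $(D_A,\pi_A,\sF_A,\delta_A)\in F_{D,\sF}^0(A)$. The hypothesis $\Hom_{(\varphi,\Gamma)}(\gr_i D,\gr_j D)=0$ for $i\neq j$ is exactly the hypothesis of Lemma~\ref{subf1}, so $\sF_A$ is determined by $(D_A,\pi_A)$. Next, each $\gr_i D$ is an irreducible $(\varphi,\Gamma)$-module: any saturated submodule $M\subsetneq\gr_i D$ would, after twisting, produce a non-zero proper saturated submodule of $\Delta_i$ via the embedding $\gr_i D\otimes_{\cR_{E,L}}\cR_{E,L}(\delta_i^{-1})\hookrightarrow\Delta_i$, contradicting the irreducibility of $\Delta_i$. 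Applying Lemma~\ref{subf} to the irreducible $\gr_i D$ and its deformation $\gr_i D_A$ (which embeds into $\Delta_i\otimes_{\cR_{E,L}}\cR_{A,L}(\delta_{A,i})$ by construction) shows that each $\delta_{A,i}$ is uniquely determined by $\gr_i D_A$, hence by $(D_A,\sF_A)$. Together these prove $F_{D,\sF}^0\hookrightarrow F_D$.

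For part (2), the hypothesis $H^2_{(\varphi,\Gamma)}(\Hom_{\cR_{E,L}}(D/\Fil_i D,\gr_i D))=0$ is precisely what Proposition~\ref{apxfm1} needs to give formal smoothness of $F_{D,\sF}\to\prod_i F_{\gr_i D}$. Since $\gr_i D$ is irreducible and $\gr_i D\otimes_{\cR_{E,L}}\cR_{E,L}(\delta_i^{-1})$ has distinct Hodge-Tate weights, Proposition~\ref{apxfm0} applies to each factor and shows that $F_{\gr_i D}^0$ is formally smooth of dimension $1+[L:\Q_p]\bigl(1+\tfrac{n_i(n_i-1)}{2}\bigr)$. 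Base-changing the formally smooth map $F_{D,\sF}\to\prod_i F_{\gr_i D}$ along $\prod_i F_{\gr_i D}^0\to\prod_i F_{\gr_i D}$ gives formal smoothness of $F_{D,\sF}^0\to\prod_i F_{\gr_i D}^0$, and hence formal smoothness of $F_{D,\sF}^0$.

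For the dimension, from the fiber product formula and surjectivity of the tangent map (which comes from formal smoothness),
\[\dim T_{F_{D,\sF}^0}\;=\;\dim T_{F_{D,\sF}}+\sum_i\bigl(\dim T_{F_{\gr_i D}^0}-\dim T_{F_{\gr_i D}}\bigr).\]
Substituting Proposition~\ref{paradef}(1) for $\dim T_{F_{D,\sF}}=\dim H^1_{(\varphi,\Gamma)}(\End_\sF(D))$, Proposition~\ref{apxfm0} for $\dim T_{F_{\gr_i D}^0}$, and the Euler-Poincar\'e formula $\dim H^1_{(\varphi,\Gamma)}(\End(\gr_i D))=\dim H^0+\dim H^2+n_i^2[L:\Q_p]$, then using $\sum_{i<j}n_in_j+\sum_i\tfrac{n_i(n_i-1)}{2}=\tfrac{n(n-1)}{2}$ and $\dim H^0_{(\varphi,\Gamma)}(\End(\gr_i D))=1$ (by irreducibility of $\gr_i D$), one finds
\[\dim T_{F_{D,\sF}^0}=\dim H^0_{(\varphi,\Gamma)}(\End_\sF(D))+\dim H^2_{(\varphi,\Gamma)}(\End_\sF(D))+[L:\Q_p]\bigl(\tfrac{n(n-1)}{2}+r\bigr)-\sum_i\dim H^2_{(\varphi,\Gamma)}(\End(\gr_i D)).\]
The main obstacle is to show that each $H^2_{(\varphi,\Gamma)}(\End(\gr_i D))$ vanishes: by Tate duality this dual is $\Hom_{(\varphi,\Gamma)}(\gr_i D,\gr_i D\otimes_{\cR_{E,L}}\cR_{E,L}(\chi_{\cyc}))$, and since $\gr_i D$ is irreducible non-vanishing would force $\gr_i D\cong\gr_i D\otimes_{\cR_{E,L}}\cR_{E,L}(\chi_{\cyc})$, in particular equality of the multisets of $\tau$-Hodge-Tate weights of $\gr_i D$ and their shifts by $1$; this contradicts the assumption that $\gr_i D\otimes_{\cR_{E,L}}\cR_{E,L}(\delta_i^{-1})$ has distinct Hodge-Tate weights. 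Plugging in this vanishing yields the asserted dimension formula.
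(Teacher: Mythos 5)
Your overall route is the paper's: part (1) is exactly Lemma~\ref{subf1} (uniqueness of the filtration) combined with Lemma~\ref{subf} applied to each irreducible graded piece (uniqueness of each $\delta_{A,i}$), and in part (2) the formal smoothness of $F^0_{D,\sF}$ by base change through the fibre product $F_{D,\sF}\times_{\prod_i F_{\gr_i D}}\prod_i F^0_{\gr_i D}$, together with the tangent-space count via Propositions~\ref{apxfm1}, \ref{apxfm0} and \ref{paradef}~(1), is precisely what the paper does. The only input the paper leaves implicit is the ``standard fact'' $\dim_E F_{\gr_i D}(E[\varepsilon]/\varepsilon^2)=1+[L:\Q_p]n_i^2$, which amounts to $\dim_E H^0_{(\varphi,\Gamma)}(\End(\gr_i D))=1$ and $H^2_{(\varphi,\Gamma)}(\End(\gr_i D))=0$. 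You rightly isolate this $H^2$-vanishing as the crux of the dimension match, but your argument for it has a genuine gap.

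The gap: a nonzero morphism between irreducible $(\varphi,\Gamma)$-modules of the same rank over $\cR_{E,L}$ need not be an isomorphism; it is only injective with cokernel killed by a power of $t$ (already in rank one, $t\cR_{E,L}\subsetneq\cR_{E,L}$ is such a morphism, $t\cR_{E,L}$ being a twist of $\cR_{E,L}$ by an algebraic character). So from $\Hom_{(\varphi,\Gamma)}(\gr_i D,\gr_i D\otimes_{\cR_{E,L}}\cR_{E,L}(\chi_{\cyc}))\neq 0$ you may only conclude $\gr_i D[1/t]\cong (\gr_i D\otimes_{\cR_{E,L}}\cR_{E,L}(\chi_{\cyc}))[1/t]$, and your comparison of Hodge--Tate weight multisets collapses, since Sen weights are not invariant under $t$-isogeny; note also that distinctness of the weights is beside the point in the comparison you attempt (no finite multiset equals its shift by $1$ in any case). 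The vanishing is nevertheless true, for a different reason: a nonzero such morphism gives $\det(\gr_i D)[1/t]\cong(\det(\gr_i D)\otimes_{\cR_{E,L}}\cR_{E,L}(\chi_{\cyc}^{n_i}))[1/t]$, hence $\chi_{\cyc}^{n_i}=z^{\mathbf{k}}$ for some $\mathbf{k}\in\Z^{|\Sigma_L|}$, which is impossible because $\chi_{\cyc}=z^{\mathbf{1}}\unr(q_L^{-1})$ has a nontrivial unramified part. (Equivalently, since $\gr_i D[1/t]\cong(\Delta_{x_i}\otimes_{\cR_{E,L}}\cR_{E,L}(\delta_i))[1/t]$ and $\cR_{E,L}(\chi_{\cyc})[1/t]\cong\cR_{E,L}(\unr(q_L^{-1}))[1/t]$, such a morphism would force $\Delta_{x_i}\cong\Delta_{x_i}\otimes_{\cR_{E,L}}\cR_{E,L}(\unr(q_L^{-1}))$ by \cite[Thm.~A]{Ber08a}, i.e.\ $\ttr_{x_i}\cong\ttr_{x_i}\otimes\unr(q_L^{-1})$, contradicting that the unramified characters fixing an irreducible Weil--Deligne representation are roots of unity.) With this repair your dimension count, and hence the proposition, goes through, and the rest of your proof coincides with the paper's.
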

\begin{proof}
	(1) By Lemma \ref{subf} $F_{D,\sF}^0$ is a subfunctor of $F_{D,\sF}$, and by Lemma \ref{subf1} $F_{D,\sF}$ is a subfunctor of $F_D$.
	
	(2) From Proposition \ref{apxfm1}, we deduce that $F_{D,\sF}^0$ is formally smooth over $\prod_i F_{\gr_i D}^0$ and hence is formally smooth by Proposition \ref{apxfm0}. We have then
	\begin{multline*}
		\dim_E F_{D,\sF}^0(E[\varepsilon/\varepsilon^2])\\
		= \!\dim_E F_{D,\sF}(E[\varepsilon/\varepsilon^2]) - \sum_i \dim_E F_{\gr_i D}(E[\varepsilon/\varepsilon^2])+\sum_i \dim_E F_{\gr_i D}^0 (E[\varepsilon/\varepsilon^2]) \\
		=\!\dim_E H^0_{(\varphi, \Gamma)}(\End_{\sF}(D))+\dim_E H^2_{(\varphi,\Gamma)}(\End_{\sF}(D))+[L:\Q_p]\Big(\sum_{i< j} n_in_j\\
		\ \ \ \ \ \ \ \ \ \ \ \ \ \ \ \ \ \ \ \ \ \ \ \ \ \ \ \ \ \ \ \ \ \ \ \ \ \ \ \ \ \ \ \ \ \ \ \ \ \ \ \ \ \ \ \ \ \ \ \ \ \ \ \ \ \ \ \ \ \ \ \ \ \ \ \ \ \ \ \ \ \ \ \ \ \ \ \ \ \ + \sum_{i} \big(\frac{n_i(n_i-1)}{2}+1\big)\Big)\\
		=\!\dim_E H^0_{(\varphi, \Gamma)}(\End_{\sF}(D))+\dim_E H^2_{(\varphi,\Gamma)}(\End_{\sF}(D))+[L:\Q_p]\big(\frac{n(n-1)}{2}+r\big),
	\end{multline*}
	where the second equality follows from Proposition \ref{paradef} (1), Proposition \ref{apxfm0} and the standard fact that $\dim_E F_{\gr_i D}(E[\varepsilon/\varepsilon^2])=1+[L:\Q_p]n_i^2$ (noting that $\gr_i D$ is irreducible). 
\end{proof}

Let $\rho: \Gal_L \ra \GL_n(E)$ be a continuous group morphism and $V$ the associated representation of $\Gal_L$ over $E$. We let $F_{\rho}$ (resp.\ $F_V$) denote the deformation functor of $\rho$ (resp.\ $V$) over $\Art(E)$. So $F_{\rho}$ can be viewed as the framed deformation functor of $V$ over $\Art(E)$. Assume $D\cong D_{\rig}(V)$, then we have $F_D \cong F_V$.
Let $F_{\rho,\sF}^{0}:= F_{D,\sF}^0 \times_{F_V} F_{\rho}$.\index{$F_{\rho,\sF}^0$} Recall that $F_{\rho}$ is pro-representable and is formally smooth over $F_V$ of relative dimension $n^2-\dim_E H^0_{(\varphi, \Gamma)}(\End_{\cR_{E,L}}(D))$.

\begin{corollary}\label{apxthmDF}
	(1) Assume $\Hom_{(\varphi, \Gamma)}(\gr_i D,\gr_j D)=0$ for $i\neq j$, then $F_{\rho,\sF}^0$ is a subfunctor of $F_{\rho}$ and is pro-representable. 
	
	(2) Assume $H^2_{(\varphi,\Gamma)}(\Hom_{\cR_{E,L}}(D/\Fil_i D, \gr_i D))=0$ and $\gr_i D \otimes_{\cR_{E,L}} \cR_{E,L}(\delta_i^{-1})$ has distinct Hodge-Tate weights for $i=1, \dots, r$. Then $F_{\rho,\sF}^{0}$ is formally smooth of dimension
	\begin{multline}\label{dimF0}
		\dim_E H^2_{(\varphi, \Gamma)}(\End_{\sF}(D))+\dim_E H^0_{(\varphi,\Gamma)}(\End_{\sF}(D))-\dim_E H^0_{(\varphi, \Gamma)}(\End_{\cR_{E,L}}(D))\\
		+ n^2+ [L:\Q_p]\big(\frac{n(n-1)}{2}+r\big).
	\end{multline}
\end{corollary}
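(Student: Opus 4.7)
The plan is to deduce both assertions from Proposition~\ref{apxfm2} (which handles the ``unframed'' object $F_{D,\sF}^0$) together with standard facts about the framed deformation functor $F_{\rho}$ over $F_V\cong F_D$. The structural identity $F_{\rho,\sF}^0 = F_{D,\sF}^0 \times_{F_V} F_{\rho}$ lets me decompose all needed properties into one piece contributed by $F_{D,\sF}^0$ and one piece coming from the passage $F_V \rightsquigarrow F_{\rho}$.

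For part (1), the hypothesis $\Hom_{(\varphi,\Gamma)}(\gr_i D,\gr_j D)=0$ for $i\neq j$ is exactly what is needed to invoke Proposition~\ref{apxfm2}(1), so that $F_{D,\sF}^0$ is a subfunctor of $F_D\cong F_V$; base-changing along $F_{\rho}\to F_V$ then realizes $F_{\rho,\sF}^0$ as a subfunctor of $F_{\rho}$. To obtain pro-representability, I would verify that the inclusion $F_{D,\sF}^0\hookrightarrow F_V$ is \emph{relatively representable} in the sense of the three conditions used in the proof of Proposition~\ref{repab0}: stability under base change $A\to A'$, descent along injections $A\hookrightarrow A'$, and compatibility with products $B=A\times_E A'$. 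Writing $F_{D,\sF}^0 \cong F_{D,\sF}\times_{\prod_i F_{\gr_i D}} \prod_i F_{\gr_i D}^0$, each of these conditions is inherited from the analogous condition already established for $F_{\gr_i D}^0\hookrightarrow F_{\gr_i D}$ in Proposition~\ref{repab0}, combined with Lemma~\ref{subf1} which ensures that the filtration $\sF_A$ is intrinsic to $D_A$ (so that no further choice is recorded). Base-changing to $F_{\rho}$ preserves relative representability, and because $F_{\rho}$ is pro-representable, the relatively representable subfunctor $F_{\rho,\sF}^0\hookrightarrow F_{\rho}$ is pro-representable.

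For part (2), Proposition~\ref{apxfm2}(2) gives that $F_{D,\sF}^0$ is formally smooth of dimension
\[d_0 := \dim_E H^0_{(\varphi,\Gamma)}(\End_{\sF}(D))+\dim_E H^2_{(\varphi,\Gamma)}(\End_{\sF}(D))+[L:\Q_p]\Big(\tfrac{n(n-1)}{2}+r\Big).\]
The forgetful morphism $F_{\rho}\to F_V$ is formally smooth of relative dimension $n^2-\dim_E H^0_{(\varphi,\Gamma)}(\End_{\cR_{E,L}}(D))$ (the $n^2$ comes from the framing, and the automorphism group of a lift is identified with $H^0_{(\varphi,\Gamma)}(\End_{\cR_{E,L}}(D))$ via $F_D\cong F_V$). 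Since formal smoothness is preserved under base change, the projection $F_{\rho,\sF}^0 \to F_{D,\sF}^0$ is formally smooth of the same relative dimension, and the composite $F_{\rho,\sF}^0 \to \mathrm{pt}$ is therefore formally smooth of total dimension $d_0+n^2-\dim_E H^0_{(\varphi,\Gamma)}(\End_{\cR_{E,L}}(D))$, which is exactly (\ref{dimF0}).

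The only real obstacle lies in the bookkeeping for Part (1): verifying the three conditions of relative representability for $F_{D,\sF}^0 \hookrightarrow F_V$ requires combining the argument of Proposition~\ref{repab0} applied to each graded piece $\gr_i D$ with the uniqueness-of-filtration argument of Lemma~\ref{subf1}; neither ingredient alone is sufficient. Once this is granted, the pro-representability of $F_{\rho,\sF}^0$ follows formally from the pro-representability of $F_{\rho}$, and the dimension count in Part (2) is a mechanical addition of already-established relative dimensions.
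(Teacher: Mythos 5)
Your part (2) and the subfunctor claim in part (1) are fine and follow the same route as the paper: Proposition \ref{apxfm2} plus the fact that $F_{\rho}\ra F_V\cong F_D$ is formally smooth of relative dimension $n^2-\dim_E H^0_{(\varphi,\Gamma)}(\End_{\cR_{E,L}}(D))$, and base change. The problem is the pro-representability argument in part (1). You assert that the three conditions of relative representability (in the sense of Mazur, as in the proof of Proposition \ref{repab0}) hold for $F_{D,\sF}^0\hookrightarrow F_D$ because they are ``inherited'' from Proposition \ref{repab0} and Lemma \ref{subf1}. But neither ingredient addresses the decisive condition, namely descent along an injection $A\hookrightarrow A'$ of the \emph{existence of the filtration}: given $(D_A,\pi_A)\in F_D(A)$ such that $D_A\otimes_A A'$ carries a filtration lifting $\sF$, you must produce a filtration on $D_A$ itself, typically via $\Fil_i D_{A'}\cap D_A$, and prove that these intersections are $(\varphi,\Gamma)$-stable direct summands of $D_A$ as $\cR_{A,L}$-modules reducing to $\Fil_i D$ modulo $\fm_A$. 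Lemma \ref{subf1} only gives uniqueness of a filtration when it exists, and Proposition \ref{repab0} only treats the rank-one condition $F_{\gr_i D}^0\subset F_{\gr_i D}$ on each graded piece, not the filtration functor $F_{D,\sF}\subset F_D$. This descent is a genuinely nontrivial statement (the paper proves an analogue only much later, in Proposition \ref{relrepMM}, for modules over $\cR_{A,L}[\frac{1}{t}]$ and under a genericity hypothesis, by a careful intersection and freeness argument using \cite[Lemma 2.2.3]{BCh}); as written, your ``inheritance'' step is a gap, and it is not clear it can be closed under only the hypothesis $\Hom_{(\varphi,\Gamma)}(\gr_i D,\gr_j D)=0$.

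The paper avoids this issue entirely: it does not prove that $F_{D,\sF}\hookrightarrow F_D$ is relatively representable. Instead it first shows that $F_{\rho,\sF}=F_{D,\sF}\times_{F_D}F_{\rho}$ is pro-representable by checking only the Schlessinger-type surjectivity $F_{\rho,\sF}(A'\times_A A'')\ra F_{\rho,\sF}(A')\times_{F_{\rho,\sF}(A)}F_{\rho,\sF}(A'')$ for $A''\twoheadrightarrow A$ (this is the content of \cite[Prop.\ 3.4]{Che11}), injectivity being automatic because $F_{\rho,\sF}$ is a subfunctor of the pro-representable $F_{\rho}$ (Lemma \ref{subf1}); one then writes $F_{\rho,\sF}^0\cong F_{\rho,\sF}\times_{\prod_i F_{\gr_i D}}\prod_i F_{\gr_i D}^0$ and invokes the relative representability of $\prod_i F_{\gr_i D}^0$ over $\prod_i F_{\gr_i D}$ from Proposition \ref{repab0}. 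This only requires the (much easier) fibre-product compatibility for the filtration, not descent along arbitrary injections. Either adopt this route, or supply an actual proof of the descent step your argument needs.
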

\begin{proof}
	By Proposition \ref{apxfm2} (1), $F_{\rho,\sF}^{0}$ is a subfunctor of $F_{\rho}$. By Lemma \ref{subf1}, $F_{\rho,\sF}:=F_{D,\sF} \times_{F_D} F_{\rho}$ is a subfunctor of (the pro-representable functor) $F_{\rho}$. Together with \cite[Prop.\ 3.4]{Che11}, we can deduce that $F_{\rho,\sF}$ is pro-representable (noting that we only need to show that
\[F_{\rho,\sF}(A'\times_A A'') \lra F_{\rho,\sF}(A') \times_{F_{\rho,\sF}(A)} F_{\rho,\sF}(A'')\]
is surjective whenever $A''\ra A$ is surjective). It then follows from Proposition \ref{repab0} that $F_{\rho,\sF}^{0}$ is pro-representable. By base change, $F_{\rho,\sF}^{0}$ is formally smooth over $F_{D,\sF}^0$ of relative dimension $n^2-\dim_E H^0_{(\varphi, \Gamma)}(\End_{\cR_{E,L}}(D))$. Together with Proposition \ref{apxfm2} (2), (2) follows. 
\end{proof}

\begin{remark}\label{remApxDF}
Assume that $(D, \sF)$ satisfies the properties in (\ref{generic1}) and that $\gr_i D \otimes_{\cR_{E,L}} \cR_{E,L}(\delta_i^{-1})$ has distinct Hodge-Tate weights for $i=1, \dots, r$. Then using a d\'evissage, one easily deduces that the assumptions in Corollary \ref{apxthmDF} (1) and (2) are satisfied, and that the terms in the first line of (\ref{dimF0}) are all zero. So in this case $F_{\rho,\sF}^{0}$ is (pro-representable) formally smooth of dimension $n^2+[L:\Q_p](\frac{n(n-1)}{2}+r)$.
\end{remark}

\subsection{Bernstein paraboline varieties}\label{s: DO}

By generalizing results in \cite[\S~2.2]{BHS1} on the trianguline variety, we construct and study a certain local Galois deformation space of a modulo $p$ Galois representation which consists of Galois representations admitting an $\Omega$-filtration. 

The following lemma follows easily from \cite[Thm.]{KPX}. We formulate it here since we will frequently use it.

\begin{lemma}\label{lemlfc}
Let $X$ be a reduced rigid analytic space over $E$ and $D$ a $(\varphi, \Gamma)$-module over $\cR_{X,L}$. Assume that, for $i=0, 1, 2$, there exists $d_i\in \Z_{\geq 0}$ such that for all $x\in X$, $\dim_{k(x)} H^i_{(\varphi, \Gamma)}(D_x)=d_i$ where $D_x:=x^* D$. Then $H^i_{(\varphi,\Gamma)}(D)$ is a locally free sheaf of rank $d_i$ over $X$ and for any morphism $f: Y \ra X$ of rigid spaces, we have $H^i_{(\varphi, \Gamma)}(f^*D)\cong H^i_{(\varphi,\Gamma)}(D) {\otimes}_{\co_X} \co_Y$.
\end{lemma}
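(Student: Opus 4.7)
The plan is to reduce both assertions to the standard cohomology-and-base-change formalism for perfect complexes via the Kedlaya--Pottharst--Xiao finiteness theorem cited in the statement. Since the claims are local on $X$, I would first assume $X=\Spm A$ is an affinoid and invoke [KPX] to obtain a perfect complex $C^\bullet$ of $A$-modules concentrated in degrees $[0,2]$ computing the $(\varphi,\Gamma)$-cohomology of $D$, with the additional property that its formation commutes with derived base change. Concretely, for every affinoid $A$-algebra $B$ (and more generally every morphism $\Spec B\to\Spec A$ including $B=k(x)$) one has natural isomorphisms $H^i(C^\bullet\otimes^{\mathrm{L}}_A B)\xrightarrow{\sim} H^i_{(\varphi,\Gamma)}(D\otimes_{\cR_{A,L}}\cR_{B,L})$.

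Shrinking $X$ further if necessary, I would replace $C^\bullet$ by a strictly perfect representative, i.e.\ a complex $P^\bullet=(P^0\to P^1\to P^2)$ of finite free $A$-modules. The hypothesis then reads $\dim_{k(x)}H^i(P^\bullet\otimes_A k(x))=d_i$ for all $x\in X$ and $i\in\{0,1,2\}$. The heart of the argument is the standard perfect-complex version of cohomology-and-base-change: if the fibrewise dimensions of the cohomology of a bounded complex of finite projective modules are locally constant, then each cohomology module is itself finite projective of the expected rank and its formation commutes with arbitrary base change. I would prove this by descending induction on $i$, using upper-semicontinuity of the rank of a matrix together with constancy of the Euler characteristic $\sum_i(-1)^i d_i$: constancy of $\dim H^2$ forces the differential $P^1\to P^2$ to have locally constant rank, hence a locally split image and a projective cokernel of rank $d_2$; descending to $P^0\to P^1$ with the induced rank constraints then produces projective $H^1$ and $H^0$ of ranks $d_1$ and $d_0$. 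The reducedness of $X$ is used here only to ensure that fibre dimensions at closed points determine the local behaviour.

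This established, the sheaf $H^i_{(\varphi,\Gamma)}(D)$ on $X$ is locally free of rank $d_i$. For the base change statement, given $f:Y\to X$ I would cover $Y$ by affinoids $\Spm B$ mapping into $\Spm A\subseteq X$; by the first part each $H^j(P^\bullet)$ is $A$-flat, so the spectral sequence $\Tor^A_{-p}(H^q(P^\bullet),B)\Rightarrow H^{p+q}(P^\bullet\otimes^{\mathrm{L}}_A B)$ degenerates to $H^i(P^\bullet)\otimes_A B\xrightarrow{\sim} H^i(P^\bullet\otimes^{\mathrm{L}}_A B)$. Combined with the KPX base change isomorphism for $C^\bullet$, this gives the desired $H^i_{(\varphi,\Gamma)}(f^*D)\cong H^i_{(\varphi,\Gamma)}(D)\otimes_{\co_X}\co_Y$. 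The only potential obstacle is bookkeeping between derived and underived tensor products; this is absorbed into the flatness established in the previous paragraph, so no genuine difficulty remains once the KPX perfect-complex statement is in hand.
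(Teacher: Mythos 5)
Your argument is correct and takes essentially the same route as the paper: both rest on the KPX theorem supplying a complex of finite locally free modules $[C^0\to C^1\to C^2]$ that computes $H^i_{(\varphi,\Gamma)}$ compatibly with arbitrary base change, and both use reducedness of $X$ together with the constancy of fibre dimensions to obtain local freeness of $H^2$ and then descend through the complex. The only (cosmetic) difference is that the paper interleaves the base-change isomorphisms with the descending induction, via local freeness of $\Ker(d^1)$ and $\Ima(d^0)$, rather than finishing with the hyper-Tor spectral sequence as you do.
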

\begin{proof}
By \cite[Thm.\ 4.4.5 (1)]{KPX} and \cite[Thm.]{KPX}, $H^i_{(\varphi,\Gamma)}(D)$ is a coherent sheaf over $X$, and there exists a complex $[C^0 \xrightarrow{d^0} C^1 \xrightarrow{d^1} C^2]$ of locally free sheaves of finite type over $X$ such that, for $f: Y\ra X$, $H^i_{(\varphi, \Gamma)}(f^*D)$ is isomorphic to the $i$-th cohomology of the complex 
	\begin{equation*}
		[C^0 {\otimes}_{\co_X} \co_Y \ra C^1 {\otimes}_{\co_X} \co_Y \ra C^2 {\otimes}_{\co_X} \co_Y].
	\end{equation*}
	In particular, we have $ H^2_{(\varphi, \Gamma)}(D){\otimes}_{\co_X} \co_Y \cong H^2_{(\varphi, \Gamma)}(f^*D) $. Applying this to points of $X$, we deduce that $H^2_{(\varphi, \Gamma)}(D) \otimes_{\co_X} k(x) \cong H^2_{(\varphi, \Gamma)}(D_x)$ has constant dimension $d_2$ for all $x \in X$. Since $X$ is reduced (and $H^2_{(\varphi, \Gamma)}(D)$ is coherent), this implies that $H^2_{(\varphi, \Gamma)}(D)$ is locally free of rank $d_2$. We deduce that $\Ker(d^1)$ is also locally free of finite type (as the kernel of a surjection between two locally free $\co_X$-modules of finite type is locally free of finite type), and thus $H^1_{(\varphi, \Gamma)}(D) {\otimes}_{\co_X} \co_Y \cong H^1_{(\varphi, \Gamma)}(f^*D) $. Repeating the above argument, we obtain that $H^1_{(\varphi, \Gamma)}(D)$ is locally free of rank $d_1$, that $\Ima(d^0)$ is locally free, and then again that $H^0_{(\varphi, \Gamma)}(D)$ is locally free of rank $d_0$ and $H^0_{(\varphi, \Gamma)}(D){\otimes}_{\co_X} \co_Y \cong H^0_{(\varphi, \Gamma)}(f^*D) $.
\end{proof}

We keep the setting of the beginning of \S~\ref{secDefOD} and fix $\textbf{h}=(\textbf{h}_i)_{i=1, \dots, n}=(h_{i, \tau})_{\substack{i=1, \dots, n\\ \tau \in \Sigma_L}} \in \Z^{\oplus n|\Sigma_L|}$ \textit{strictly} $P$-dominant. Let $(\ttr_i)\in (\Spec \cZ_{\Omega})^{\an}$, we say that a character $\delta$ of $Z_{L_P}(L)$ is \textit{generic for} $(\Omega, \textbf{h})$ if the following condition is satisfied:
\begin{itemize}
\item[] if there exist $i\neq j$ such that $\ttr_j=\ttr_i \otimes_E \eta$ for some smooth character $\eta$ of $L^{\times}$ (noting there are finitely many choices of $\eta$), then $\delta_i \delta_j^{-1} \eta z^{\textbf{h}_{s_i}-\textbf{h}_{s_j}} \neq z^{-\textbf{k}} $ and $\delta_i \delta_j^{-1} \eta z^{\textbf{h}_{s_i}-\textbf{h}_{s_j}}\neq \unr(q_L^{-1})z^{\textbf{k}} $ for all $\textbf{k}=(k_{\sigma})_{\sigma \in \Sigma_L}\in \Z_{\geq 0}^{|\Sigma_L|}$. 
\end{itemize} 
The set of such points is Zariski open and Zariski dense in $\widehat{Z_{L_P}(L)}$. For each $\ttr_i$, we have a natural finite morphism $\bG_m \ra \Spec \cZ_{\Omega_i}$, $\alpha \mapsto \ttr_i \otimes_E \unr(\alpha)$ (note that $\cZ_{\Omega_i} \cong E[x,x^{-1}]^{\mu_{\Omega_i}^{\unr}}$). We have and fix an isomorphism (depending on the choice of $\varpi_L$) $\bG_m^{\rig} \times \widehat{\co_L^{\times}} \xrightarrow{\sim} \widehat{L^{\times}}$, $(a, \chi) \mapsto \unr(a) \chi_{\varpi_L}$. We define 
\[\sZ:=(\Spec \cZ_{\Omega})^{\rig} \times \widehat{\cZ_{0,L}},\]
$\iota_{\ttr_i}$ as the composition $\iota_{\ttr_i}: \widehat{L^{\times}}\xrightarrow{\sim} \bG_m^{\rig} \times \widehat{\co_L^{\times}} \ra (\Spec \cZ_{\Omega_i})^{\rig} \times \widehat{\co_L^{\times}}$, and $\iota_{\ul{\ttr}}$ as the composition\index{$\sZ$}:
\begin{equation*}
\iota_{\ul{\ttr}}: \widehat{Z_{L_P}(L)} \xrightarrow{\sim} \prod_{i=1}^r \widehat{L^{\times}} \xrightarrow{\sim} \prod_{i=1}^r(\bG_m^{\rig} \times \widehat{\co_L^{\times}}) \xrightarrow{(\iota_{\ttr_i})} \prod_{i=1}^r \big((\Spec \cZ_{\Omega_i})^{\rig} \times \widehat{\co_L^{\times}}\big)\cong \sZ.
\end{equation*}
A point of $\sZ$ is called generic if its preimage in $\widehat{Z_{L_P}(L)}$ is generic for $(\Omega, \textbf{h})$. One can check that this notion is independent of the choice of $\{\ttr_i\}$. Denote by $\sZ^{\gen}\subset \sZ$ the set of points that are generic. One can also check that if $(\ul{x}, \chi)\in \sZ^{\gen}$ then $(\ul{x}, \chi_{\varpi_L})$ satisfies the condition in (\ref{conGene}). Any affinoid open in $\sZ$ can only have finitely many points that are not generic (since the same holds for $\widehat{Z_{L_P}(L)}$). Hence $\sZ^{\gen}$ is Zariski-open and Zariski-dense in $\sZ$. 

Let $\overline{\rho}: \Gal_L \ra \GL_n(k_E)$ be a continuous group morphism. Let $U_{\Omega, \textbf{h}}(\overline{\rho})$ be the subset of $(\Spf R_{\overline{\rho}})^{\rig} \times \sZ$ of the points $(\rho, \ul{x}, \chi)$ such that \index{$U_{\Omega, \textbf{h}}(\overline{\rho})$}
\begin{itemize}
	\item[(1)] $(\ul{x}, \chi)\in \sZ^{\gen}$;
	\item[(2)] $D_{\rig}(\rho)$ admits an $\Omega$-filtration $\sF=\{\Fil_i D_{\rig}(\rho)\}$ such that 
	\begin{equation}\label{galOF}
		\gr_i D_{\rig}(\rho) \otimes_{\cR_{k(x),L}} \cR_{k(x),L}(\chi_{i,\varpi_L}^{-1}) \hooklongrightarrow \Delta_{x_i} \otimes_{\cR_{k(x),L}} \cR_{k(x),L}(z^{\textbf{h}_{s_i}})
	\end{equation}
	and the image has Hodge-Tate weights $(\textbf{h}_{s_{i-1}+1}, \dots, \textbf{h}_{s_i})$.
\end{itemize}
We define $X_{\Omega, \textbf{h}}(\overline{\rho})$ to be the Zariski-closure of $U_{\Omega, \textbf{h}}(\overline{\rho})$ in $(\Spf R_{\overline{\rho}})^{\rig} \times \sZ$.\index{$X_{\Omega, \textbf{h}}(\overline{\rho})$} By definition $X_{\Omega, \textbf{h}}(\overline{\rho})$ is reduced and we have a natural morphism
\begin{equation*}
	\omega: X_{\Omega, \textbf{h}}(\overline{\rho}) \lra \sZ.
\end{equation*} 
We define an action of $\mu_{\Omega}=\{\psi=(\psi_i): Z_{L_P}(L) \ra E^{\times}\ |\ \psi_i \in \mu_{\Omega_i}\}$ on $\sZ$ such that $\psi=(\psi_i)\in \mu_{\Omega}$ sends $\big((\ttr_i), (\chi_i)\big)$ to $\big((\ttr_i \otimes_E \unr(\psi_i(\varpi_L))), (\chi_i \psi_i^0)\big)$. It induces an action of $\mu_{\Omega}$ on $(\Spf R_{\overline{\rho}})^{\rig} \times \sZ$ by acting trivially on $(\Spf R_{\overline{\rho}})^{\rig}$. By Lemma \ref{RmtOP} (2), $U_{\Omega, \textbf{h}}(\overline{\rho})$ is preserved by the action of $\mu_{\Omega}$. We then deduce that $X_{\Omega, \textbf{h}}(\overline{\rho})$ is also preserved by $\mu_{\Omega}$:

\begin{proposition}\label{twGal}
	A point $\big(\rho, (\ttr_i), (\chi_i)\big) \in (\Spf R_{\overline{\rho}})^{\rig} \times \sZ$ lies in $X_{\Omega, \textbf{h}}(\overline{\rho})$ if and only if the point $\big(\rho, (\ttr_i \otimes_E \unr(\psi_i(\varpi_L))), (\chi_i \psi_i^0)\big)$ lies in $X_{\Omega, \textbf{h}}(\overline{\rho})$ for all $\psi=(\psi_i)\in \mu_{\Omega}$.
\end{proposition}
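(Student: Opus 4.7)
The action of $\mu_{\Omega}$ on $\sZ$ is given, on each factor, by translation in the group structure of $(\Spec \cZ_{\Omega_i})^{\rig}\times \widehat{\co_L^{\times}}$, and is therefore by rigid-analytic automorphisms of $\sZ$. Extending it trivially on $(\Spf R_{\overline{\rho}})^{\rig}$ produces an action of the finite group $\mu_{\Omega}$ by rigid-analytic automorphisms of $(\Spf R_{\overline{\rho}})^{\rig}\times \sZ$. Since $X_{\Omega,\textbf{h}}(\overline{\rho})$ is defined as the (reduced) Zariski-closure of $U_{\Omega,\textbf{h}}(\overline{\rho})$, it will be enough to show that $U_{\Omega,\textbf{h}}(\overline{\rho})$ is stable under this action: the claim of the proposition will then follow from the fact that Zariski-closures are preserved by rigid-analytic automorphisms.

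The plan for the invariance of $U_{\Omega,\textbf{h}}(\overline{\rho})$ is to read off the $\mu_{\Omega}$-action on $\sZ$ as a reparameterization of the same $\Omega$-filtration via Lemma~\ref{RmtOP}(2). More precisely, for $(\rho,\ul{x},\chi)\in U_{\Omega,\textbf{h}}(\overline{\rho})$, the condition (\ref{galOF}) amounts to saying that the $\Omega$-filtration $\sF$ of $D_{\rig}(\rho)$ admits $(\ul{x},\delta)$ with $\delta_i:=\chi_{i,\varpi_L}z^{\textbf{h}_{s_i}}$ as a parameter in the sense of Definition~\ref{defOF}(1), since the image of (\ref{galOF}) has lowest $\tau$-weight $\textbf{h}_{s_i,\tau}$ for every $\tau$. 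Given $\psi=(\psi_i)\in\mu_{\Omega}$, Lemma~\ref{RmtOP}(1) shows that $\sF$ also admits the parameter $(\ul{x'},\delta')$ with $\ttr_{x'_i}\cong \ttr_{x_i}\otimes_E\unr(\psi_i(\varpi_L))$ and $\delta'_i=\delta_i\,\unr(\psi_i(\varpi_L)^{-1})\,\psi_i$. A direct verification on $\co_L^{\times}$ and $\varpi_L$ gives
\[
\unr(\psi_i(\varpi_L)^{-1})\,\psi_i=\big(\psi_i|_{\co_L^{\times}}\big)_{\varpi_L},
\]
so that $\delta'_i=(\chi_i\psi_i|_{\co_L^{\times}})_{\varpi_L}\,z^{\textbf{h}_{s_i}}$. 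Hence the point $(\ul{x'},\chi')$ with $\chi'_i:=\chi_i\psi_i|_{\co_L^{\times}}$ is exactly the image of $(\ul{x},\chi)$ under $\psi\in\mu_{\Omega}$, and it satisfies the embedding condition (\ref{galOF}) for the \emph{same} underlying filtration $\sF$ of $D_{\rig}(\rho)$, whose graded pieces retain the same Hodge--Tate weights.

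It remains to check that $(\ul{x'},\chi')\in\sZ^{\gen}$ whenever $(\ul{x},\chi)\in\sZ^{\gen}$. This is a routine bookkeeping check: if $\ttr_{x'_j}\cong\ttr_{x'_i}\otimes_E\eta'$ for some smooth character $\eta'$, then $\ttr_{x_j}\cong\ttr_{x_i}\otimes_E\eta$ with $\eta=\eta'\,\unr(\psi_i(\varpi_L)\psi_j(\varpi_L)^{-1})$, and one substitutes the above formula for $\delta'_i,\delta'_j$ into the genericity inequalities. The net effect is to shift $\eta$ by an unramified character while shifting $\delta_i\delta_j^{-1}$ by its inverse, so that the quantity $\delta'_i(\delta'_j)^{-1}\eta' z^{\textbf{h}_{s_i}-\textbf{h}_{s_j}}$ equals $\delta_i\delta_j^{-1}\eta\,z^{\textbf{h}_{s_i}-\textbf{h}_{s_j}}$ and the defining conditions are preserved. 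With this, $U_{\Omega,\textbf{h}}(\overline{\rho})$ is stable under $\mu_{\Omega}$, and the invariance of $X_{\Omega,\textbf{h}}(\overline{\rho})$ follows by closure. The only step that requires even a moment of care is the comparison of characters identified above; everything else is a direct translation of Lemma~\ref{RmtOP}.
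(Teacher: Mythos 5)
Your skeleton is exactly the paper's: the proposition is proved there in the paragraph preceding it, by observing that the $\mu_{\Omega}$-action on $(\Spf R_{\overline{\rho}})^{\rig}\times \sZ$ (trivial on the first factor) preserves $U_{\Omega,\textbf{h}}(\overline{\rho})$ — invoking Lemma \ref{RmtOP} (2) — and hence preserves its Zariski closure $X_{\Omega,\textbf{h}}(\overline{\rho})$. Your handling of the filtration condition is fine and is the same reparameterization: twist the embedding (\ref{galOF}) by $\cR_{k(x),L}(((\psi_i^0)_{\varpi_L})^{-1})$ and use $\Delta_{x_i}\otimes_{\cR_{k(x),L}}\cR_{k(x),L}(\psi_i^{-1})\cong\Delta_{x_i}$ (strictly, Lemma \ref{RmtOP} only asserts the necessity of that shape of parameters, but this easy converse is what both you and the paper use).

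The genericity step, however, is not the routine check you claim, and the identity you assert is false. With $\delta_i'=\delta_i(\psi_i^0)_{\varpi_L}$ and $\eta=\eta'\,\unr(\psi_i(\varpi_L)\psi_j(\varpi_L)^{-1})$ one gets $\delta_i'(\delta_j')^{-1}\eta'=\delta_i\delta_j^{-1}\eta\cdot\theta_i\theta_j^{-1}$, where $\theta_i$ is the character equal to $\psi_i$ on $\co_L^{\times}$ and to $\psi_i(\varpi_L)^{-1}$ at $\varpi_L$; this correction factor is nontrivial, and even ramified, as soon as $\psi_i^0\neq\psi_j^0$ (take $\psi_i$ ramified and $\psi_j=1$), so the shift of $\delta_i\delta_j^{-1}$ is not the inverse of an unramified character and the two tested quantities do not coincide. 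The conclusion is still true, but for a different reason: genericity is quantified over \emph{all} smooth $\eta$ with $\ttr_{x_j}\cong\ttr_{x_i}\otimes_E\eta$, and in the only nonvacuous case ($\ttr_{x_j}$ a smooth twist of $\ttr_{x_i}$) these $\eta$ form a torsor under the common stabilizer group $\mu_{\Omega_i}=\mu_{\Omega_j}$. Concretely, along $\iota_{\ul{\ttr}}$ the $\mu_{\Omega}$-action on $\sZ$ lifts to $\delta\mapsto\delta\psi$ on $\widehat{Z_{L_P}(L)}$, so each tested quantity $\delta_i\delta_j^{-1}\eta z^{\textbf{h}_{s_i}-\textbf{h}_{s_j}}$ becomes $\delta_i\delta_j^{-1}(\eta\psi_i\psi_j^{-1})z^{\textbf{h}_{s_i}-\textbf{h}_{s_j}}$; since $\psi_i\psi_j^{-1}$ lies in $\mu_{\Omega_i}$ and hence stabilizes $\ttr_{x_i}$, the substitution $\eta\mapsto\eta\psi_i\psi_j^{-1}$ permutes the admissible $\eta$, and the whole family of genericity conditions is unchanged rather than any single one being preserved. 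With this reindexing argument in place of your claimed equality, $\sZ^{\gen}$, hence $U_{\Omega,\textbf{h}}(\overline{\rho})$, is $\mu_{\Omega}$-stable, and your closure argument then concludes exactly as in the paper.
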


Let $\psi=(\psi_i)$ be a smooth character of $Z_{L_P}(L)$, and $\Omega$ be the Bernstein component $\{\pi \otimes_E \psi\}_{\pi \in \Omega}$. Let $\textbf{h}'=(\textbf{h}'_i)_{i=1,\dots, n}=(h'_{i,\tau})_{\substack{i=1, \dots, n\\ \tau \in \Sigma_L}}\in \Z^{\oplus n|\Sigma_L|}$ be strictly $P$-dominant such that $\textbf{h}'-\textbf{h}=\fd \circ \dett_{L_P}$ for a weight $\fd=(\fd_i)_{i=1,\dots, r}$ of $\fz_{L_P,L}$. The condition (\ref{galOF}) is then equivalent to
\begin{multline*}
	\gr_i D_{\rig}(\rho) \otimes_{\cR_{k(x),L}} \cR_{k(x),L}\big(\chi_{i,\varpi_L}^{-1} \delta_{\fd_i,\varpi_L}^0 \psi_{i, \varpi_L}^0\big) \\
	\hooklongrightarrow \Delta_{x_i} \otimes_{\cR_{k(x),L}} \cR_{k(x),L}\big(\psi_{i, \varpi_L}^0\unr(\delta_{\fd_i}^{-1}(\varpi_L))\big) \otimes_{\cR_{k(x),L}} \cR_{k(x),L}(z^{\textbf{h}'_{s_i}}).
\end{multline*}
The isomorphism 
\begin{equation*}
	(\Spf R_{\overline{\rho}}) \times \sZ \xrightarrow{\sim} 	(\Spf R_{\overline{\rho}}) \times \sZ,\ \big(\rho, (\ttr_i), (\chi_i)\big) \mapsto \big(\rho, (\ttr_i \otimes_E (\psi_{i,\varpi_L}^0 (\delta_{\fd_i, \varpi_L}^{\unr})^{-1}), (\chi_i(\delta_{\fd_i}^0\psi_i^0)^{-1})\big)
\end{equation*}
sends bijectively $U_{\Omega, \textbf{h}}(\overline{\rho})$ to $U_{\Omega', \textbf{h}'}(\overline{\rho})$. 
We then deduce (compare with Proposition \ref{twBEi}):

\begin{proposition}\label{bpetw}
We have an isomorphism:
	\begin{equation*}
		X_{\Omega, \textbf{h}}(\overline{\rho}) \xlongrightarrow{\sim} X_{\Omega', \textbf{h}'}(\overline{\rho}), \ \big(\rho, (\ttr_i), (\chi_i)\big) \longmapsto \big(\rho, (\ttr_i\otimes (\psi_{i}^0)_{\varpi_L}(\delta_{\fd_i, \varpi_L}^{\unr})^{-1}), (\chi_i(\delta_{\fd_i}^0\psi_{i}^0)^{-1})\big). 
	\end{equation*}
\end{proposition}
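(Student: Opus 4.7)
The proof proposal proceeds in three short steps, essentially extending the calculation already displayed just before the statement.

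First I would define the candidate map at the level of the ambient rigid space $(\Spf R_{\overline{\rho}})^{\rig} \times \sZ$ by the formula
\begin{equation*}
\Psi\colon \big(\rho,(\ttr_i),(\chi_i)\big)\longmapsto \big(\rho,\bigl(\ttr_i\otimes(\psi_i^0)_{\varpi_L}(\delta_{\fd_i,\varpi_L}^{\unr})^{-1}\bigr),\bigl(\chi_i(\delta_{\fd_i}^0\psi_i^0)^{-1}\bigr)\big).
\end{equation*}
Since translation in $\widehat{\cZ_{0,L}}$ by a fixed character, and twisting by a fixed unramified character on each factor $(\Spec \cZ_{\Omega_i})^{\rig}$ (via the finite map $\bG_m^{\rig}\to(\Spec\cZ_{\Omega_i})^{\rig}$ used to identify Bernstein components with one another), are both automorphisms of the target rigid space, $\Psi$ is clearly an isomorphism of $(\Spf R_{\overline{\rho}})^{\rig}\times\sZ$ onto $(\Spf R_{\overline{\rho}})^{\rig}\times\sZ$ (the target being the same ambient space but with $\Omega,\textbf{h}$ replaced by $\Omega',\textbf{h}'$ only through the way we label its Bernstein factors).

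Second I would check that $\Psi$ restricts to a bijection $U_{\Omega,\textbf{h}}(\overline{\rho})\xrightarrow{\sim}U_{\Omega',\textbf{h}'}(\overline{\rho})$. For the genericity condition, the explicit definition of ``generic for $(\Omega,\textbf{h})$'' depends only on characters up to the translations being performed, so a direct inspection shows that $\Psi$ maps $\sZ^{\gen}$ for $(\Omega,\textbf{h})$ onto $\sZ^{\gen}$ for $(\Omega',\textbf{h}')$. For the existence-of-$\Omega$-filtration condition, the rewriting of (\ref{galOF}) carried out in the paragraph preceding the proposition shows that an $\Omega$-filtration on $D_{\rig}(\rho)$ with parameter $\big((\ttr_i),(\chi_i)\big)$ for $(\Omega,\textbf{h})$ is the same datum as an $\Omega'$-filtration on $D_{\rig}(\rho)$ with parameter $\Psi\big((\ttr_i),(\chi_i)\big)$ for $(\Omega',\textbf{h}')$; here one uses $\textbf{h}'-\textbf{h}=\fd\circ\dett_{L_P}$ together with the identities $\delta_{\fd_i}=\delta_{\fd_i,\varpi_L}^{\unr}\delta_{\fd_i,\varpi_L}^0$ and $\psi_i=\psi_{i,\varpi_L}^{\unr}\psi_{i,\varpi_L}^0$ from Notation~\ref{charanot} to absorb the Hodge-Tate weight shift into the $\unr$-part on the $\Delta_{x_i}$-side and the $\co_L^{\times}$-part on the $\chi_i$-side.

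Finally, since $X_{\Omega,\textbf{h}}(\overline{\rho})$ (resp.\ $X_{\Omega',\textbf{h}'}(\overline{\rho})$) is by definition the reduced Zariski-closure of $U_{\Omega,\textbf{h}}(\overline{\rho})$ (resp.\ of $U_{\Omega',\textbf{h}'}(\overline{\rho})$) inside $(\Spf R_{\overline{\rho}})^{\rig}\times \sZ$, and an isomorphism of ambient rigid spaces carries the Zariski-closure of a subset to the Zariski-closure of its image, the bijection established in step two extends uniquely to the required isomorphism $X_{\Omega,\textbf{h}}(\overline{\rho})\xrightarrow{\sim}X_{\Omega',\textbf{h}'}(\overline{\rho})$, with inverse given by the analogous formula for the character $\psi^{-1}$ and weight $-\fd$. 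The only substantive point, and the one I would write out most carefully, is the re-bookkeeping in step two of how the Hodge-Tate shift $z^{\textbf{h}_{s_i}}\to z^{\textbf{h}'_{s_i}}$ interacts with the twist of the $p$-adic differential equation $\Delta_{x_i}$ by an unramified character, since this is where the separation between the ``$\varpi_L$-part'' and the ``$\co_L^{\times}$-part'' of the characters is doing all the work.
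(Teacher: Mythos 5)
Your proposal is correct and follows essentially the same route as the paper: the paper's argument is precisely the rewriting of condition (\ref{galOF}) under $\textbf{h}'-\textbf{h}=\fd\circ\dett_{L_P}$ carried out just before the statement, the observation that the resulting ambient automorphism carries $U_{\Omega,\textbf{h}}(\overline{\rho})$ bijectively onto $U_{\Omega',\textbf{h}'}(\overline{\rho})$, and passage to reduced Zariski-closures. Your explicit checks of the genericity locus and of the bookkeeping between the unramified and $\co_L^{\times}$-parts of the characters are exactly the points the paper leaves implicit, so nothing further is needed.
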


\begin{remark}\label{remBPE}
	(1) Assume $P=B$, hence $L_P=Z_{L_P}=T$. Using the isomorphism (where $\boxtimes_{i=1}^r \pi_{x_i}$ is the smooth character of $T(L)=L_P(L)$ associated to $\ul{x}$):
	\begin{equation*}
	\iota_{\Omega, \textbf{h}}: \sZ \xlongrightarrow{\sim} \widehat{T(L)}, \ (\ul{x},\chi) \mapsto (\boxtimes_{i=1}^r \pi_{x_i}) \chi z^{\textbf{h}},
	\end{equation*}
	we view $X_{\Omega, \textbf{h}}(\overline{\rho})$ as a closed rigid subspace of $(\Spf R_{\overline{\rho}})^{\rig} \times \widehat{T(L)}$ via the following morphism, that we still denote by $\iota_{\Omega, \textbf{h}}$:
	\begin{equation*}
		\iota_{\Omega, \textbf{h}}:X_{\Omega, \textbf{h}}(\overline{\rho}) \lra (\Spf R_{\overline{\rho}})^{\rig} \times \sZ \xlongrightarrow{\id\times \iota_{\Omega, \textbf{h}}} (\Spf R_{\overline{\rho}})^{\rig} \times \widehat{T(L)}.
	\end{equation*}
	Such a closed rigid subspace is in fact independent of the choice of $(\Omega, \textbf{h})$ by Proposition \ref{bpetw}.
	By assumption, the injection in (\ref{galOF}) is actually an isomorphism. We then deduce that $U_{\Omega, \textbf{h}}(\overline{\rho})\subset (\Spf R_{\overline{\rho}})^{\rig} \times \widehat{T(L)}$ coincides with $U_{\tri}^{\square}(\overline{r})^{\reg}$ in \cite[\S~2.2]{BHS1}, hence $X_{\Omega, \textbf{h}}(\overline{\rho})$ coincides with the trianguline variety $X_{\tri}^{\square}(\overline{r})$ of \textit{loc.\ cit.}
	
	(2) By definition, for $(\rho, \ul{x}, \chi)\in U_{\Omega, \textbf{h}}(\overline{\rho})$, $\big(\ul{x}, \boxtimes_{i=1}^r (\chi_{i,\varpi_L}z^{\textbf{h}_{s_i}})\big)\in (\Spec \cZ_{\Omega})^{\rig} \times \widehat{Z_{L_P}(L)}$ is a (generic) parameter of the $\Omega$-filtration $\sF$ on $D_{\rig}(\rho)$. We will show in Corollary \ref{OFpw} below that, for any point $(\rho, \ul{x}, \chi)\in X_{\Omega, \textbf{h}}(\overline{\rho})$, $D_{\rig}(\rho)$ is naturally equipped with an $\Omega$-filtration $\sF$. However, $(\ul{x}, (\chi_{i,\varpi_L}z^{\textbf{h}_{s_i}}))$ is not forcedly in general a parameter of $\sF$. This phenomenon is closely related to the problem of (local) companion points (see Remark \ref{remNPara} and Example \ref{expcrpara}). 
\end{remark}

The following theorem, generalizing \cite[Thm.~2.6]{BHS1}, is the main result of this section.

\begin{theorem}\label{DFOL}
(1) The rigid analytic space $X_{\Omega, \textbf{h}}(\overline{\rho})$ is equidimensional of dimension
	\[n^2 + [L:\Q_p] \Big(\frac{n(n-1)}{2}+r\Big).\]
	
(2) The set $U_{\Omega, \textbf{h}}(\overline{\rho})$ is Zariski-open and Zariski-dense in $X_{\Omega, \textbf{h}}(\overline{\rho})$.
	
(3) The rigid space $U_{\Omega, \textbf{h}}(\overline{\rho})$ is smooth over $E$, and the morphism $\omega|_{U_{\Omega, \textbf{h}}(\overline{\rho})}: U_{\Omega, \textbf{h}}(\overline{\rho}) \ra \sZ$ is smooth.
\end{theorem}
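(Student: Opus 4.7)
The plan is to adapt the strategy of \cite[Thm.~2.6]{BHS1}, using the deformation-theoretic input of \S\ref{secDefOD} in place of Chenevier's paraboline deformation theory for the full Borel, the global interpolation of $\Omega$-filtrations from \S\ref{globaltriangulation} in place of \cite{KPX}, and---unlike in the trianguline case---Kisin's theorem on potentially crystalline deformation rings (\cite{Kis08}) to handle the Hodge filtration on each graded piece. I would begin with (3). Given $x=(\rho,\ul{x},\chi)\in U_{\Omega,\textbf{h}}(\overline{\rho})$, the condition (\ref{conGene}) combined with Corollary~\ref{Filuniq} makes the $\Omega$-filtration $\sF$ on $D_{\rig}(\rho)$ unique, and the genericity hypotheses of Remark~\ref{remApxDF} are in force. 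I would identify $\widehat{\co}_{U_{\Omega,\textbf{h}}(\overline{\rho}),x}$ with the pro-representing object of the functor $F^0_{\rho,\sF}$ of Corollary~\ref{apxthmDF}, via the assignment $(\rho_A,\sF_A,\delta_A)\mapsto(\rho_A,\ul{x}_A,\chi_A)$ where $\ul{x}_{A,i}$ is recovered from the unramified-twist component of $\delta_{A,i}$ and $\chi_{A,i}=\delta_{A,i}|_{\co_L^{\times}}\cdot z^{-\textbf{h}_{s_i}}|_{\co_L^{\times}}$. The inverse exists because $\sF_A$ is uniquely recovered on deformations from Corollary~\ref{Filuniq}, and $\delta_A$ is determined by $(\ul{x}_A,\chi_A)$ up to the (absorbed) $\mu_{\Omega}$-ambiguity of Lemma~\ref{RmtOP}. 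By Remark~\ref{remApxDF}, $F^0_{\rho,\sF}$ is formally smooth of dimension $n^2+[L:\Q_p](\frac{n(n-1)}{2}+r)$, yielding smoothness of $U_{\Omega,\textbf{h}}(\overline{\rho})$ at $x$ of the claimed dimension. Smoothness of $\omega|_{U_{\Omega,\textbf{h}}(\overline{\rho})}$ then reduces to showing that the natural transformation $F^0_{\rho,\sF}\to F_{(\ul{x},\chi)}$---the target being the formally smooth deformation functor of $(\ul{x},\chi)\in\sZ$ of dimension $r(1+[L:\Q_p])$---is formally smooth, which is built into the proof of Corollary~\ref{apxthmDF}: one may prescribe lifts of $(\ul{x},\chi)$ (equivalently of $\delta$) before lifting $(\rho,\sF)$ to $A$.

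To upgrade this pointwise formal identification to a statement about the rigid space $U_{\Omega,\textbf{h}}(\overline{\rho})$ and deduce (1) and (2), I would use the global interpolation of $\Omega$-filtrations from \S\ref{globaltriangulation} to construct, locally on $(\Spf R_{\overline{\rho}})^{\rig}\times\sZ^{\gen}$, a family of $(\varphi,\Gamma)$-modules equipped with an $\Omega$-filtration refining the pointwise data of (\ref{galOF}). This simultaneously endows $U_{\Omega,\textbf{h}}(\overline{\rho})$ with a natural reduced rigid structure and shows it is Zariski-open in $X_{\Omega,\textbf{h}}(\overline{\rho})$: the locus defining $U$ inside $(\Spf R_{\overline{\rho}})^{\rig}\times\sZ$ is the intersection of the Zariski-open condition $(\ul{x},\chi)\in\sZ^{\gen}$ with the Zariski-closed condition requiring the embedding (\ref{galOF}) to exist with Hodge--Tate weights $(\textbf{h}_{s_{i-1}+1},\dots,\textbf{h}_{s_i})$ on the image of each graded piece, the Zariski-closedness of which is controlled via Kisin's theorem \cite{Kis08} to track the Hodge filtration after the appropriate twist. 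Equidimensionality of $X_{\Omega,\textbf{h}}(\overline{\rho})$ then follows: by definition $X$ is the reduced Zariski-closure of the smooth equidimensional space $U$ of pure dimension $n^2+[L:\Q_p](\frac{n(n-1)}{2}+r)$, so every irreducible component of $X$ is the Zariski-closure of an irreducible component of $U$ and has the same dimension.

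The hardest step is the globalization in the second paragraph. In the trianguline case $P=B$, the graded pieces are characters and \cite{KPX} supplies the family interpolation essentially for free; in our paraboline setting the graded pieces are $p$-adic differential equations varying over the positive-dimensional rigid spaces $(\Spec\cZ_{\Omega_i})^{\rig}$, so one needs the more subtle Berger--Colmez-type interpolation carried out in \S\ref{globaltriangulation}, coupled with Kisin's theorem on potentially crystalline deformation rings to simultaneously propagate the saturated $(\varphi,\Gamma)$-submodules and the Hodge filtration on each graded piece after the Hodge--Tate twist. Establishing that the embedding condition (\ref{galOF}) with prescribed Hodge--Tate weights defines a Zariski-closed analytic locus (rather than merely a fibrewise condition) is the main technical obstacle and the reason why, unlike in the trianguline case, the argument genuinely requires \cite{Kis08}.
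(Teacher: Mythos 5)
There is a genuine gap, and it sits exactly where you locate the "hardest step", but it is more serious than a globalization issue: your first paragraph already assumes something that is not available. You propose to identify $\widehat{\co}_{U_{\Omega,\textbf{h}}(\overline{\rho}),x}$ with the ring pro-representing $F^0_{\rho,\sF}$, invoking Corollary~\ref{Filuniq} and Lemma~\ref{RmtOP} to build "the inverse". But $X_{\Omega,\textbf{h}}(\overline{\rho})$ is defined as the reduced Zariski-closure of a \emph{set of points}; neither $X$ nor $U$ carries, a priori, a family of $(\varphi,\Gamma)$-modules equipped with an $\Omega$-filtration, so there is no natural morphism in either direction between $\widehat{\co}_{X,x}$ and the deformation ring of Corollary~\ref{apxthmDF} to begin with. (The family-level filtration of Corollary~\ref{para}/\ref{rgloOF} only exists after a proper birational modification and away from a bad locus, and the modification need not be an isomorphism at a given point of $U$.) This is precisely why the paper's proof constructs the auxiliary smooth charts $\cS_{\overline{\rho}}(\Omega,\textbf{h},\ul{\alpha},\{\overline{\varrho}_i\})$ — built from products of Kisin's potentially crystalline deformation spaces, successive $\Ext^1$ vector bundles, Hellmann's admissible loci and framing torsors — which do carry the universal data and map to $X$ (Proposition~\ref{auxis1}, diagram~(\ref{smCov1})). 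Even once one has the map $R^0_{\rho_x,\sF}\to\widehat{\co}_{\cS,x}$, proving $\widehat{\co}_{\cS,x}\cong \widehat{\co}_{X,x}[[x_1,\dots,x_N]]$ (Proposition~\ref{smoothi}) needs the integrality of $\widehat{\co}_{X,x}$, which is itself proved using the smooth covering; you cannot shortcut this by dimension counting, since the dimension of $X$ is part of what is being proved. Similarly, smoothness of $\omega|_U$ is not a formal consequence of the deformation-functor statement: it is reduced, via \cite[Lemma 5.8]{BHS1} applied to $\cS\to U\to\sZ$, to the smoothness of $(\Spf R_{\overline{\varrho}_i}^{\pcr}(\xi_i,\textbf{h}^i))^{\rig}\to(\Spec\cZ_{\Omega_i})^{\rig}$ (Proposition~\ref{smooth11}), a nontrivial statement you never address.

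Your argument for part (2) is also incorrect as stated: the locus (\ref{galOF}) with prescribed Hodge--Tate weights is not a Zariski-closed condition in the ambient space in any evident sense (such conditions on fibres of coherent cohomology jump rather than close up), and even if it were, intersecting a closed condition with the open genericity locus would exhibit $U$ as locally closed in $X$, not Zariski-open. The paper's proof of openness is genuinely delicate: one shows that the images of the smooth maps $\kappa_{\ul{\alpha},\{\overline{\varrho}_i\}}$ are adic open, identifies their union with $f(\sU_0)$ where $f:\widetilde{X}_{\Omega,\textbf{h}}(\overline{\rho})\to X_{\Omega,\textbf{h}}(\overline{\rho})$ is the proper birational modification furnished by Corollary~\ref{para} (so that $f(\sU_0)$ is Zariski-constructible), and then applies the constructible-plus-open criterion of \cite{HeSc} to conclude Zariski-openness; density and equidimensionality are then extracted from the dense open $\sV$ over which $f$ is an isomorphism. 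So while you have correctly identified the main inputs (Kisin's theorem, the interpolation of $\Omega$-filtrations in \S~\ref{globaltriangulation}, and the deformation theory of \S~\ref{secDefOD}), the proposal is missing the construction that makes them usable — the smooth charts $\cS_{\overline{\rho}}(\Omega,\textbf{h},\ul{\alpha},\{\overline{\varrho}_i\})$ and the adic/constructibility argument for openness — and the steps you substitute for them do not go through.
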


By Theorem \ref{DFOL}, Corollary \ref{para} and Corollary \ref{rgloOF} (1) (applied to $X=X_{\Omega, \textbf{h}}(\overline{\rho})$), we get:

\begin{corollary}\label{OFpw}
	Let \ $x\!=\!(\rho, \ul{x}, \chi)\!\in \! X_{\Omega, \textbf{h}}(\overline{\rho})$, \ then \ $D_{\rig}(\rho)$ \ admits \ an \ $\Omega$-filtration $\sF\!=\!\{\Fil_i D_{\rig}(\rho)\}$ such that, for all $i=1, \dots, r$, 
	\begin{equation*}
		\gr_i D_{\rig}(\rho) \otimes_{\cR_{k(x),L}} \cR_{k(x),L}(\chi_{i,\varpi_L}^{-1})\Big[\frac{1}{t}\Big] \cong \Delta_{x_i}\Big[\frac{1}{t}\Big].
	\end{equation*}
\end{corollary}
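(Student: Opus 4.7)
The plan is to interpret Corollary \ref{OFpw} as the pointwise specialization of a global $\Omega$-filtration on the universal $(\varphi,\Gamma)$-module living over $X := X_{\Omega,\textbf{h}}(\overline{\rho})$, using the density of $U_{\Omega,\textbf{h}}(\overline{\rho})$ (Theorem \ref{DFOL}(2)) together with the global interpolation results alluded to in \S~\ref{globaltriangulation} (Corollary \ref{para} and Corollary \ref{rgloOF}(1)).

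First I would assemble the universal data on $X$. Since $R_{\overline{\rho}}$ pro-represents the framed deformation functor, pulling back the universal framed representation along $X\hookrightarrow (\Spf R_{\overline{\rho}})^{\rig}\times \sZ$ yields a continuous representation $\rho^{\univ}$ of $\Gal_L$ on a locally free $\co_X$-module of rank $n$, hence a $(\varphi,\Gamma)$-module $M := D_{\rig}(\rho^{\univ})$ over $\cR_{X,L}$. For each $i\in\{1,\dots,r\}$, the projection $X\to (\Spec\cZ_{\Omega_i})^{\rig}$ pulls the ``universal" $p$-adic differential equation $\Delta_{\Omega_i}$ of \S~\ref{sec_pDf} back to a $(\varphi,\Gamma)$-module $\Delta_i$ over $\cR_{X,L}$ of rank $n_i$, whose fibre at any $x\in X$ is $\Delta_{x_i}$. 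Likewise, the projection $X\to \widehat{\cZ_{0,L}}$ yields a universal character $\chi^{\univ}=(\chi_i^{\univ}):\cZ_{0,L}\to \co(X)^\times$, and we set $\delta_i := \chi^{\univ}_{i,\varpi_L}\, z^{\textbf{h}_{s_i}}$, so that its specialization at $x$ matches the character appearing in \eqref{galOF}.

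Next I would invoke Theorem \ref{DFOL}(2): the locus $U_{\Omega,\textbf{h}}(\overline{\rho})$ is Zariski-open and Zariski-dense in $X$, and by construction (\emph{together} with genericity, which forces uniqueness of the $\Omega$-filtration on each fibre by Corollary \ref{Filuniq}) the graded pieces of the pointwise $\Omega$-filtration on $U_{\Omega,\textbf{h}}(\overline{\rho})$ satisfy precisely the hypothesis required to feed the quadruple $(X,M,\{\Delta_i\},\{\delta_i\})$ into Corollary \ref{para}. Its global form Corollary \ref{rgloOF}(1) then produces a filtration $\Fil_\bullet M$ of $M$ by saturated $(\varphi,\Gamma)$-submodules over $\cR_{X,L}$ such that, over the reduced space $X$,
\[
\gr_i M \otimes_{\cR_{X,L}} \cR_{X,L}(\chi_{i,\varpi_L}^{\univ,-1})\bigl[\tfrac{1}{t}\bigr]\;\xrightarrow{\sim}\;\Delta_i\bigl[\tfrac{1}{t}\bigr].
\]
Specialization at $x$ commutes with taking graded pieces (the filtration steps are direct summands as $\co_X$-modules locally, after saturation) and with inverting $t$, so pulling back this isomorphism along $x\in X$ produces the $\Omega$-filtration on $D_{\rig}(\rho)=x^*M$ with the required identification on $\gr_i$.

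The only real work is verifying that the hypotheses of Corollary \ref{para} and \ref{rgloOF}(1) apply to $(X, M, \{\Delta_i\}, \{\delta_i\})$, namely (i) that $X$ is reduced (built into its definition as a Zariski-closure), (ii) that the genericity condition is met on a Zariski-dense subset (the set $U_{\Omega,\textbf{h}}(\overline{\rho})$ itself, by definition), and (iii) that the pointwise $\Omega$-filtrations on the dense open assemble to give the input data those corollaries demand. All three are essentially immediate from the construction of $X_{\Omega,\textbf{h}}(\overline{\rho})$ and the definition of $U_{\Omega,\textbf{h}}(\overline{\rho})$; the genuine difficulty (the interpolation of $\Omega$-filtrations in rigid analytic families, generalizing \cite{KPX}) is entirely absorbed into Corollary \ref{para}/\ref{rgloOF}, so the proof of the corollary itself is formal once those results are in hand.
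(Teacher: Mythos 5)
Your route is the paper's route: the proof of Corollary \ref{OFpw} is exactly "Theorem \ref{DFOL}(2) + Corollary \ref{para} + Corollary \ref{rgloOF}(1) applied to $X=X_{\Omega,\textbf{h}}(\overline{\rho})$ with the pulled-back universal objects $D_{\rig}(\rho^{\univ}_{X})$, $\Delta_{\Omega_i,X}$, $\chi^{\univ}_{i,X}z^{\textbf{h}_{s_i}}$", and your verification of the hypotheses (reducedness of $X$, Zariski-density of the generic locus $U_{\Omega,\textbf{h}}(\overline{\rho})$, one-dimensionality of the relevant Hom spaces via genericity) is the right checklist.

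However, the mechanism you describe for passing from the family to the point is misstated, and as written the key step would fail. Corollary \ref{para} does \emph{not} produce a filtration of $M$ by $(\varphi,\Gamma)$-submodules over $\cR_{X,L}$: it produces a proper birational \emph{surjective} morphism $f\colon X'\to X$, a filtration of $f^*M$ over $\cR_{X',L}$, and line bundles $\cL_i$ with maps $\eta_i$ whose good properties (direct-summand filtration steps, one-dimensional Hom, de Rham images of the right weights) hold only on $X'\setminus Z$ for a Zariski-closed $Z$ disjoint from $f^{-1}(U_{\Omega,\textbf{h}}(\overline{\rho}))$. Your assertion that "specialization at $x$ commutes with taking graded pieces (the filtration steps are direct summands as $\co_X$-modules locally, after saturation)" is precisely what breaks down at points of $Z$ --- and those are exactly the points you must handle, since the statement of Corollary \ref{OFpw} concerns an arbitrary $x\in X_{\Omega,\textbf{h}}(\overline{\rho})$, not only points of $U_{\Omega,\textbf{h}}(\overline{\rho})$. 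The pointwise statement at \emph{every} point $x'\in X'$ is the actual content of Corollary \ref{rgloOF}(1), whose proof is not a formal base-change argument but a saturation argument: one uses that the cokernels of the $\eta_i$ are locally $t$-torsion to show that the specialized maps $\Ker(\eta_i)_{x'}\to\Ker(\eta_{i,x'})$ have $t$-torsion cokernel, and one then defines $\Fil_i D_{\rig}(\rho)$ as the saturation of the image of $(\Fil_i f^*M)_{x'}$ inside $M_{f(x')}=D_{\rig}(\rho)$, checking the graded pieces only after inverting $t$. Finally, one needs the surjectivity of $f$ to know that every $x\in X_{\Omega,\textbf{h}}(\overline{\rho})$ is of the form $f(x')$. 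So the correct formulation of your last paragraph is: quote Corollary \ref{rgloOF}(1) as the pointwise output (at a preimage $x'$ of $x$), rather than upgrading Corollary \ref{para} to a global filtration on $X$ and specializing it yourself.
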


In the rest of this section, we prove Theorem \ref{DFOL} by generalizing the proof of \cite[Thm.\ 2.6]{BHS1}. One difference is that, instead of having a smooth rigid space (the $\cS(\overline{r})$ of \emph{loc.\ cit.}) mapping onto the whole $U_{\Omega, \textbf{h}}(\overline{\rho})$ ($U_{\tri}(\overline{r})^{\reg}$ of \emph{loc.\ cit.}), we only have smooth rigid spaces mapping onto certain open subspaces of $U_{\Omega, \textbf{h}}(\overline{\rho})$ that cover $U_{\Omega, \textbf{h}}(\overline{\rho})$. 

We start with the construction of some auxiliary smooth rigid analytic spaces. For a reduced rigid space $X$, we denote by $\co_X^+$ the subsheaf of $\co_X$ of functions of norm less than $1$. For $i=1, \dots, r$, let $\alpha_i\in E^{\times}$, $\overline{\varrho}_i: \Gal_L \ra \GL_{n_i}(k_E)$ be a continuous representation and let $\xi_i$ be the (cuspidal) inertial type associated to $\Omega_i$. We consider the following functor:
\begin{equation}\label{func1}
	X \longmapsto \big\{\rho_X,\varrho_{i,X}, \chi_{i,X}, \Fil_{\bullet}, \nu_i\big\}/\sim
\end{equation}
where (the isomorphisms being defined in an obvious way)
\begin{itemize}
	\item[(1)] $X$ is a reduced rigid analytic space over $E$;
	\item[(2)] $\rho_X: \Gal_L \ra \GL_n(\co_X^+)$ (resp.\ $\varrho_{i,X}: \Gal_L \ra \GL_{n_i}(\co_X^+)$ for $i=1, \dots, r$) is a continuous morphism such that, for all $x\in X$, the reduction modulo the maximal ideal of $\co_{k(x)}$ of $\rho_x:=\rho_X \otimes_{\co_X^+} \co_{k(x)}$ (resp.\ of $\varrho_{i,x}:=\varrho_{i,X} \otimes_{\co_X^+} \co_{k(x)}$) is equal to $\overline{\rho}$ (resp.\ $\overline{\varrho}_i$);
	\item[(3)] $\varrho_{i,x}$ is de Rham of Hodge-Tate weights $\textbf{h}^i:=(\textbf{h}_{s_{i-1}+1}, \dots, \textbf{h}_{s_i})$ and of type $\xi_i$ for all $x\in X$;
	\item[(4)] $\chi_{i,X}: \co_L^{\times} \ra \co_X^{\times}$ is a continuous morphism such that, for all points $x\in X$, $(\{\ttr(\varrho_{i,x})\otimes_E \unr(\alpha_i)\}, \{\chi_{i,x}\})\in \sZ^{\gen}$ where $\chi_{i,x}=x^* \chi_{i,X}$ and $\ttr(\varrho_{i,x})$ is the Weil-Deligne representation associated to $\varrho_{i,x}$;
	\item[(5)] $\Fil_{\bullet} D_{\rig}(\rho_X)=\big(0=\Fil_0 D_{\rig}(\rho_X) \subsetneq \Fil_1 D_{\rig}(\rho_X) \subsetneq \cdots \subsetneq \Fil_r D_{\rig}(\rho_X)=D_{\rig}(\rho_X)\big)$ is an increasing filtration on $D_{\rig}(\rho_X)$ by $(\varphi, \Gamma)$-submodules over $\cR_{X,L}$ which are direct summands of $D_{\rig}(\rho_X)$ as $\cR_{X,L}$-modules;
	\item[(6)] $\nu_i: \gr_i D_{\rig}(\rho_X) \xrightarrow{\sim} D_{\rig}(\varrho_{i,X}) \otimes_{\cR_{X,L}} \cR_{X,L}((\chi_{i,X})_{\varpi_L} \unr(\alpha_i))$ is an isomorphism of $(\varphi, \Gamma)$-modules over $\cR_{X,L}$.
\end{itemize}

\begin{proposition}\label{auxis1}
	The functor in (\ref{func1}) is representable by a smooth reduced rigid analytic space over $E$ of dimension $(\sum_{i=1}^r n_i^2 + n^2) + [L:\Q_p] (\frac{n(n-1)}{2} +r)$, that we denote by $\cS_{\overline{\rho}}^0(\Omega,\textbf{h},\ul{\alpha},\{\overline{\varrho}_i\})$.
\end{proposition}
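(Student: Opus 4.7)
The plan is to construct $\cS_{\overline{\rho}}^0(\Omega, \textbf{h}, \ul{\alpha}, \{\overline{\varrho}_i\})$ as a tower of smooth fibrations, in analogy with the construction of the auxiliary space $\cS(\overline{r})$ in the proof of \cite[Thm.\ 2.6]{BHS1} for the trianguline case, but replacing the character spaces by potentially crystalline framed deformation rings of the $\overline{\varrho}_i$, and using the deformation theory of $\Omega$-filtered $(\varphi, \Gamma)$-modules developed in \S\ref{secDefOD}.

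First I would build the base. Let $\cT^{\gen}$ denote the open reduced rigid subspace of
\[\prod_{i=1}^r \Big((\Spf R_{\overline{\varrho}_i}^{\pcr}(\xi_i, \textbf{h}^i))^{\rig} \times \widehat{\co_L^\times}\Big)\]
cut out by condition (4) of the functor, which is open since the finitely many coincidences of Weil--Deligne representations or characters violating genericness define closed subspaces. By Kisin's theorem \cite{Kis08}, applied to the cuspidal inertial type $\xi_i$ and distinct Hodge--Tate weights $\textbf{h}^i$, each $(\Spf R_{\overline{\varrho}_i}^{\pcr}(\xi_i, \textbf{h}^i))^{\rig}$ is formally smooth over $E$ of the expected dimension, so $\cT^{\gen}$ is smooth. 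Over $\cT^{\gen}$ there are tautological rank-$n_i$ $(\varphi, \Gamma)$-modules
\[\mathcal{D}_i := D_{\rig}(\varrho_i^{\univ}) \otimes_{\cR_{\cT^{\gen},L}} \cR_{\cT^{\gen},L}\big((\chi_i^{\univ})_{\varpi_L}\unr(\alpha_i)\big),\]
and the genericness condition together with Example \ref{expcrpara}(2) and Lemma \ref{lem:generic} gives fiberwise vanishing of $H^0_{(\varphi,\Gamma)}$ and $H^2_{(\varphi,\Gamma)}$ of $\Hom_{\cR}(\mathcal{D}_i, \mathcal{D}_j)$ for $i \neq j$. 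Lemma \ref{lemlfc} then shows that $\Ext^1_{(\varphi,\Gamma)}(\mathcal{D}_i, \mathcal{D}_j)$ is locally free on $\cT^{\gen}$ of rank $[L:\Q_p]n_in_j$.

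Next, I would construct the extension tower. Setting $\cS_1 := \cT^{\gen}$ with $\mathcal{M}_1 := \mathcal{D}_1$, I would inductively define $\cS_k$ to be the total space of the geometric vector bundle over $\cS_{k-1}$ associated to the locally free sheaf $\Ext^1_{(\varphi,\Gamma)}(\mathcal{D}_k, \mathcal{M}_{k-1})$, carrying a tautological universal extension $0 \to \mathcal{M}_{k-1} \to \mathcal{M}_k \to \mathcal{D}_k \to 0$. The required vanishings of $H^0$ and $H^2$ of $\Hom_{\cR}(\mathcal{D}_k, \mathcal{M}_{k-1})$ follow by induction from the exact sequence $0 \to \mathcal{M}_{k-2} \to \mathcal{M}_{k-1} \to \mathcal{D}_{k-1}\to 0$ together with the pairwise vanishings, and local freeness of rank $[L:\Q_p]n_k(n_1 + \cdots + n_{k-1})$ again follows from Lemma \ref{lemlfc}. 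Thus $\cS_r$ is smooth over $\cT^{\gen}$ of relative dimension $[L:\Q_p]\sum_{i<j}n_in_j$, carrying a universal filtered $(\varphi, \Gamma)$-module $(\mathcal{M}_r, \Fil_\bullet)$ of rank $n$ with graded pieces $\mathcal{D}_1, \ldots, \mathcal{D}_r$.

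Finally I would pass to framed Galois representations. Shrinking $\cS_r$ to the open locus where the residual $(\varphi, \Gamma)$-module of $\mathcal{M}_r$ at the fixed closed point corresponds to $\overline{\rho}$, the Kedlaya--Liu family theory of \'etale $(\varphi, \Gamma)$-modules converts $\mathcal{M}_r$ into a continuous family of $\Gal_L$-representations deforming $\overline{\rho}$; framing this family contributes a smooth $\GL_n$-torsor of relative dimension $n^2$. The resulting space is $\cS_{\overline{\rho}}^0(\Omega, \textbf{h}, \ul{\alpha}, \{\overline{\varrho}_i\})$. The fiberwise smoothness is exactly the content of Corollary \ref{apxthmDF}(2) and Remark \ref{remApxDF} under the genericness assumption, while globality comes from the local freeness results above. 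A short computation using the identity $\sum_i \tfrac{n_i(n_i-1)}{2} + \sum_{i<j}n_in_j = \tfrac{n(n-1)}{2}$ yields the stated dimension $\sum_i n_i^2 + n^2 + [L:\Q_p]\bigl(\tfrac{n(n-1)}{2}+r\bigr)$.

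The main obstacle will be globalizing the fiberwise extension theory into a representable rigid analytic vector bundle on $\cT^{\gen}$ (and on each $\cS_{k-1}$), and then verifying that the resulting rank-$n$ $(\varphi, \Gamma)$-module $\mathcal{M}_r$ on $\cR_{\cS_r, L}$ descends to an honest $\co_{\cS_r}^+$-valued family of framed $\Gal_L$-representations with the prescribed residual class $\overline{\rho}$. The first point is handled by the coherence theory for families of $(\varphi, \Gamma)$-module cohomology of \cite{KPX}, which ensures that the Yoneda $\Ext^1$ functor is represented by the geometric vector bundle attached to a locally free sheaf once $H^0$ and $H^2$ vanish locally freely. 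The second point uses Kedlaya--Liu together with the integrality coming from the framed universal deformation $R_{\overline{\rho}}$, exactly as in the trianguline construction of \cite[\S 2.2]{BHS1}.
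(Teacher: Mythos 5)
Your proposal follows essentially the same route as the paper's proof: the generic open locus over $\prod_i(\Spf R_{\overline{\varrho}_i}^{\pcr}(\xi_i,\textbf{h}^i))^{\rig}\times\widehat{\cZ_{0,L}}$ (smooth by Kisin), local freeness of the $\Ext^1$-sheaves via Lemma \ref{lem:generic} and Lemma \ref{lemlfc}, the inductive tower of extension vector bundles, and finally the passage to framed Galois families (Hellmann/Kedlaya--Liu admissible locus, $\GL_n$-torsor, $\co^+$-integrality and residual condition $\overline{\rho}$) exactly as in \cite[\S~2.2]{BHS1}. The only cosmetic difference is that the paper imposes genericity by pulling back $\sZ^{\gen}$ along $\eta_{\ul{\alpha},\{\overline{\varrho}_i\}}$ and carries out the admissible-locus/framing steps in the stated order, which is what your deferral to the trianguline construction amounts to.
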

\begin{proof}
	For $i=1, \dots, r$, consider
	\begin{equation}\label{pstDF1}
		(\Spf R_{\overline{\varrho}_i}^{\pcr}(\xi_i,\textbf{h}^i))^{\rig} \ra (\Spec \cZ_{\Omega_i})^{\rig} \xrightarrow{\tw_{\alpha_i}} (\Spec \cZ_{\Omega_i})^{\rig}
	\end{equation}
where the first morphism is induced by the morphism in \cite[Thm.\ 4.1]{CEGGPS} (see also \cite[Prop.\ 4.3]{CEGGPS}), and the second morphism sends $\ttr_i$ to $\ttr_i \otimes_E \unr(\alpha_i)$. The morphism (\ref{pstDF1}) is given pointwise by $\varrho_i \mapsto \ttr(\varrho_i) \otimes_E \unr(\alpha_i)$. Taking their product (with the identity map on $ \widehat{\cZ_{0,L}}$), we define
\begin{equation}\label{etai}
\eta_{\ul{\alpha}, \{\overline{\varrho}_i\}}: \prod_{i=1}^r(\Spf R_{\overline{\varrho}_i}^{\pcr}(\xi_i,\textbf{h}^i))^{\rig} \times \widehat{\cZ_{0,L}} \longrightarrow (\Spec \cZ_{\Omega})^{\rig} \times \widehat{\cZ_{0,L}} \cong \sZ.
\end{equation}
Let $\cU:=\eta_{\ul{\alpha}, \{\overline{\varrho}_i\}}^{-1}(\sZ^{\gen})$, which is Zariski-open and Zariski-dense in $\prod_{i=1}^r(\Spf R_{\overline{\varrho}_i}^{\pcr}(\xi_i,\textbf{h}^i))^{\rig} \times \widehat{\cZ_{0,L}}$. Indeed, the Zariski-density follows from the fact that any affinoid open in the left hand side of (\ref{etai}) contains points with $\widehat{\cZ_{0,L}}$-entry $\chi=\boxtimes \chi_i$ satisfying $\wt(\chi_i (\chi_j)^{-1})_{\tau} \notin \Z$ for all $i\neq j$ and $\tau\in \Sigma_L$, and such points are sent to generic points via $\eta_{\ul{\alpha}, \{\overline{\varrho}_i\}}$. Let $\varrho_i^{\univ}$ be the universal Galois deformation over $(\Spf R_{\overline{\varrho}_i}^{\pcr}(\xi_i,\textbf{h}^i))^{\rig}$ and $\varrho_{i,\cU}^{\univ}$ be its pull-back over $\cU$ via the composition
 \[\cU\hookrightarrow \prod_{i=1}^r(\Spf R_{\overline{\varrho}_i}^{\pcr}(\xi_i,\textbf{h}^i))^{\rig} \times \widehat{\cZ_{0,L}}\twoheadrightarrow (\Spf R_{\overline{\varrho}_i}^{\pcr}(\xi_i,\textbf{h}^i))^{\rig}.\]
Likewise, let $\chi_{i,\cU}^{\univ}$ be the pull-back over $\cU$ of the universal character over $\widehat{\co_L^{\times}}$ via the composition
\[\cU \hookrightarrow \prod_{i=1}^r(\Spf R_{\overline{\varrho}_i}^{\pcr}(\xi_i,\textbf{h}^i))^{\rig} \times \widehat{\cZ_{0,L}}\twoheadrightarrow \widehat{\cZ_{0,L}} \cong (\widehat{\co_L^{\times}})^r \xrightarrow{\pr_i} \widehat{\co_L^{\times}}.\] 
Put
\begin{equation*}
\cD_{i,\cU}:= D_{\rig}(\varrho_{i,\cU}^{\univ}) \otimes_{\cR_{\cU,L}} \cR_{\cU,L}\big((\chi_{i,\cU}^{\univ})_{\varpi_L} \unr(\alpha_i)\big)
\end{equation*}
and $\cD_{i,z}:=z^* \cD_{i,\cU}$ for each $z\in \cU$. Since $\xi_i$ is a cuspidal inertial type, $\cD_{i,z}$ is irreducible for all $z$. Using the fact that $ \eta_{\ul{\alpha}, \{\overline{\varrho}_i\}}(z)\in \sZ^{\gen}$, one can calculate:
	\begin{equation*}
		\dim_{k(z)}H^s_{(\varphi,\Gamma)}(\cD_{i,z} \otimes_{\cR_{k(z),L}} \cD_{i,z}^{\vee})=
		\begin{cases}
			1 & s=0 \\
			n_i^2[L:\Q_p] +1 & s=1 \\
			0 & s=2
		\end{cases}
	\end{equation*}
	and by Lemma \ref{lem:generic}, for $i\neq j$:
	\begin{equation*}
		\dim_{k(z)}H^s_{(\varphi,\Gamma)}(\cD_{i,z} \otimes_{\cR_{k(z),L}} \cD_{j,z}^{\vee})=
		\begin{cases}
			0 & s=0 \\
			n_i n_j[L:\Q_p] & s=1 \\
			0 & s=2.
		\end{cases}
	\end{equation*}
	We deduce by Lemma \ref{lemlfc} that for any rigid analytic space $\cU'$ over $\cU$, all $H^s_{(\varphi,\Gamma)}(\cD_{i,\cU'} \otimes_{\cR_{\cU',L}} \cD_{j,\cU'}^{\vee})$ are locally free over $\cU'$, where $\cD_{i',\cU'}$ denotes the base change of $\cD_{i',\cU}$ over $\cU'$.	

Now we let $\cU_1:=\cU$, $\cC_{1,\cU_1}:=\cD_{1,\cU_1}$ and $\nu_1: \cC_{1,\cU_1} \xrightarrow{\sim} \cD_{1,\cU_1}$ be an isomorphism. Let $\cU_2 \ra \cU_1$ be the vector bundle of rank $n_1 n_2 [L:\Q_p]$ associated to the locally free $\co_{\cU_1}$-module $\Ext^1_{(\varphi, \Gamma)}(\cD_{2,\cU_1},\cC_{1,\cU_1})$ (see also the proof of \cite[Thm.\ 3.3]{Che13} and \cite[Thm.\ 2.4]{HeSc}). We have a universal extension of $(\varphi, \Gamma)$-modules over $\cR_{\cU_2, L}$:
\begin{equation*}
0 \ra \cC_{1,\cU_2} \ra \cC_{2,\cU_2} \xrightarrow{\nu_2} \cD_{2,\cU_2} \ra 0.
\end{equation*}
By similar arguments as in the previous paragraph and a d\'evissage, we prove that the $\co_{\cU_2}$-module $H_{(\varphi, \Gamma)}^1(\cC_{2,\cU_2} \otimes_{\cR_{\cU_2, L}} \cD_{3, \cU_2})$ is locally free of rank $[L:\Q_p](n_1+n_2)n_3$. We let $\cU_3 \ra \cU_2$ be the vector bundle associated to $H_{(\varphi, \Gamma)}^1(\cC_{2,\cU_2} \otimes_{\cR_{\cU_2, L}} \cD_{3, \cU_2}^{\vee})$. By induction, we finally obtain a sequence of rigid analytic spaces
	\begin{equation*}
		\cV:=\cU_r \ra \cU_{r-1} \ra \cdots \ra \cU_1 =\cU
	\end{equation*}
such that 
\begin{itemize}
\item[(1)] for $i\geq 2$, $\cU_{i}$ is a vector bundle of rank $[L:\Q_p] (\sum_{j=1}^{i-1}n_{j})n_{i}$ over $\cU_{i-1}$; 
\item[(2)] there is a (universal) $(\varphi, \Gamma)$-module $\cC_{i, \cU_{i}}$ over $\cR_{\cU_{i},L}$ equipped with an increasing filtration ($0=\Fil_0 \subsetneq \Fil_1 \subsetneq \cdots \subsetneq \Fil_i=\cC_{i,\cU_i}$) given by $(\varphi, \Gamma)$-submodules over $\cR_{\cU_{i},L}$ which are direct summands of $\cC_{i,\cU_i}$ as $\cR_{\cU_i,L}$-modules;
\item[(3)] there are isomorphisms of $(\varphi, \Gamma)$-modules over $\cR_{\cU_i,L}$: $\nu_j: \gr_j \cC_{i,\cU_i} \xrightarrow{\sim} \cD_{j,\cU_i}$ for $j\leq i$.
\end{itemize}
By construction and \cite[Thm.\ 3.3.8]{Kis08}, $\cV$ is smooth of dimension (recall $\fn_P$ is the Lie algebra over $E$ of the nilpotent radical $N_P$ of $P$)
	\[\dim \cU + [L:\Q_p] \dim_E \fn_P=\sum_{i=1}^r\Big(n_i^2+[L:\Q_p] \frac{n_i(n_i-1)}{2}\Big)+[L:\Q_p]r+[L:\Q_p] \dim_E \fn_P.\]
Now we apply the same argument as in the proof of \cite[Thm.~2.6]{BHS1} for the representability of $\cS^\square(\overline{r})$ of \textit{loc.\ cit}. Let $\cV^{\adm}$ be the maximal adic open of $\cV$ such that there exist a vector bundle $\cM$ over $\cV^{\adm}$ and a continuous morphism $\rho_{\cV^{\adm}}: \Gal_L \ra \Aut_{\co_{\cV^{\adm}}}(\cM)$ satisfying $D_{\rig}(\rho_{\cV^{\adm}}) \cong \cD_{\cV^{\adm}}$ (cf.\ \cite[Thm.\ 1.3]{Hel16}). Note that $\cV^{\adm}$ is also a rigid space by \cite[(1.1.11)]{Hub}. Let $\cV^{\adm, \square}$ be the $\GL_n$-torsor of the trivialization of the vector bundle $\cM$ . Let $\cV_0^{\adm,\square} \subset \cV^{\adm, \square}$ be the admissible open of points such that $\rho_{\cV_0^{\adm,\square}}:=\rho_{\cV^{\adm,\square}}|_{\cV_0^{\adm,\square}}$ has image in $\Gamma(\cV_0^{\adm,\square}, \co_{\cV_0^{\adm, \square}}^+)$. Finally let $\cS_{\overline{\rho}}^0(\Omega,\textbf{h},\ul{\alpha}, \{\overline{\varrho}_i\})$ be the admissible open of $\cV_0^{\adm, \square}$ such that the reduction modulo the maximal ideal of $\co_{k(x)}$ of $\rho_{\cV_0^{\adm,\square}} \otimes_{\co_{\cV_0^{\adm, \square}}^+} \co_{k(x)}$ is equal to $\overline{\rho}$ for $x\in \cS_{\overline{\rho}}^0(\Omega,\textbf{h},\ul{\alpha}, \{\overline{\varrho}_i\})$. We see that $\cS_{\overline{\rho}}^0(\Omega,\textbf{h},\ul{\alpha}, \{\overline{\varrho}_i\})$ is smooth of dimension
	\begin{multline*}
		\sum_{i=1}^r\Big(n_i^2+[L:\Q_p] \frac{n_i(n_i-1)}{2}\Big)+[L:\Q_p] r +[L:\Q_p] \dim_E \fn_P+n^2\\
		=\Big(\sum_{i=1}^r n_i^2 + n^2\Big) + [L:\Q_p] \frac{n(n-1)}{2} +[L:\Q_p]r.
	\end{multline*}
It is then formal to check that $\cS_{\overline{\rho}}^0(\Omega,\textbf{h},\ul{\alpha}, \{\overline{\varrho}_i\})$ represents the functor (\ref{func1}).
\end{proof}

We write $\cS^0_{\overline{\rho}}:=\cS_{\overline{\rho}}^0(\Omega,\textbf{h},\ul{\alpha}, \{\overline{\varrho}_i\})$ for simplicity. Consider the composition:
\begin{equation*}
\cS_{\overline{\rho}}^0 \lra \Big(\prod_{i=1}^r (\Spf R_{\overline{\rho}}^{\pcr}(\xi_i, \textbf{h}^i))^{\rig}\Big) \times \widehat{\cZ_{0,L}} \xlongrightarrow{\eta_{\ul{\alpha}, \{\overline{\varrho}_i\}}} (\Spec \cZ_{\Omega})^{\rig}\times \widehat{\cZ_{0,L}}\cong \sZ.
\end{equation*}
where the first map follows from the construction of $\cS^0_{\overline{\rho}}$ (note that this map is smooth). Let $\Delta_{\Omega_i}$ be the universal $p$-adic differential equation over $(\Spec \cZ_{\Omega_i})^{\rig}$ constructed in \S~\ref{sec_pDf} and $\chi_i^{\univ}$ be the universal character of $\co_L^{\times}$ over $\widehat{\co_L^{\times}}$. We let $\chi_{i,\cS_{\overline{\rho}}^0}^{\univ}$, $\Delta_{\Omega_i, \cS_{\overline{\rho}}^0}$ be the pull-back of $\chi_i^{\univ}$, $\Delta_{\Omega_i}$ over $\cS_{\overline{\rho}}^0$. Let $\rho_{\cS^0}^{\univ}$ be the universal $\Gal_L$-representation over $\cS_{\overline{\rho}}^0$. Similarly as in the proof of Proposition \ref{smooth11} below (using Lemma \ref{lemlfc} and a direct computation of the cohomology of $(\varphi, \Gamma)$-modules over Robba rings with coefficients in finite extensions of $E$), we can show that
\begin{equation}\label{HomOmega}
\Hom_{(\varphi, \Gamma)}\Big(\gr_i D_{\rig}(\rho_{\cS^0_{\overline{\rho}}}^{\univ})\otimes_{\cR_{\cS_{\overline{\rho}}^0,L}} \cR_{\cS_{\overline{\rho}}^0,L}\big((\chi_{i,\cS_{\overline{\rho}}^0}^{\univ})_{\varpi_L}^{-1}\big), \Delta_{\Omega_i, \cS_{\overline{\rho}}^0}\otimes_{\cR_{E,L}} \cR_{E,L}(z^{\textbf{h}_{s_i}})\Big)
\end{equation}
is locally free of rank $1$ over $\cS_{\overline{\rho}}^0$ for any $i=1, \dots, r$. We let $\cS_{\overline{\rho}}(\Omega,\textbf{h},\ul{\alpha}, \{\overline{\varrho}_i\})$ be the fibre product over $\cS^0_{\overline{\rho}}$ for all $i$ of the $\bG_m^{\rig}$-torsors trivializing the invertible modules in (\ref{HomOmega}), which is hence a $(\bG_m^{\rig})^r$-torsor over $\cS_{\overline{\rho}}^0$ and a reduced rigid analytic space over $E$ of dimension 
\begin{equation}\label{dimSrho}
\Big(r+\sum_{i=1}^r n_i^2 + n^2\Big) + [L:\Q_p] \Big(\frac{n(n-1)}{2} +r\Big).
\end{equation}
We have a natural commutative diagram
\begin{equation}\label{smCov1}
	\begin{CD}
		\cS_{\overline{\rho}}(\Omega,\textbf{h},\ul{\alpha}, \{\overline{\varrho}_i\}) @>>> (\Spf R_{\overline{\rho}})^{\rig} \times (\prod_{i=1}^r (\Spf R_{\overline{\rho}}^{\pcr}(\xi_i, \textbf{h}^i))^{\rig}) \times \widehat{\cZ_{0,L}} \\
		@V \kappa_{\ul{\alpha}, \{\overline{\varrho}_i\}} VV @VVV \\
		X_{\Omega, \textbf{h}}(\overline{\rho}) @>>> (\Spf R_{\overline{\rho}})^{\rig} \times \sZ.
	\end{CD}
\end{equation}
The existence of the upper horizontal morphism follows from the construction of $\cS_{\overline{\rho}}(\Omega,\textbf{h},\ul{\alpha},\{\overline{\varrho}_i\})$, and it is also clear that the composition of the upper horizontal with the right vertical morphism has image included in $X_{\Omega, \textbf{h}}(\overline{\rho})$, from which we obtain the left vertical morphism $ \kappa_{\ul{\alpha}, \{\overline{\varrho}_i\}}$.

Let $\rho^{\univ}$ be the universal framed Galois deformation of $\overline{\rho}$ over $(\Spf R_{\overline{\rho}})^{\rig}$. And we let $\rho_{X_{\Omega, \textbf{h}}(\overline{\rho})}^{\univ}$, $\Delta_{\Omega_i,X_{\Omega, \textbf{h}}(\overline{\rho})}$, $\chi_{i,X_{\Omega, \textbf{h}}(\overline{\rho})}^{\univ}$ be the pull-back of $\rho^{\univ}$, $\Delta_{\Omega_i}$, $\chi_i^{\univ}$ over $X_{\Omega, \textbf{h}}(\overline{\rho})$. Applying Corollary \ref{para} in the Appendix, we see that there exist a projective birational morphism
\[f: \widetilde{X}_{\Omega, \textbf{h}}(\overline{\rho}) \lra X_{\Omega, \textbf{h}}(\overline{\rho}),\]
a Zariski closed subset $Z \subset \widetilde{X}_{\Omega, \textbf{h}}(\overline{\rho})$ disjoint from $f^{-1}(U_{\Omega, \textbf{h}}(\overline{\rho}))$ and line bundles $\cL_i$ over $\sU:=\widetilde{X}_{\Omega, \textbf{h}}(\overline{\rho})\setminus Z$ such that $D_{\sU}:=f^* D_{\rig}(\rho_{X_{\Omega, \textbf{h}}(\overline{\rho})}^{\univ})|_{\sU}$ admits an increasing filtration $\Fil_{\bullet}D_{\sU}$ by $(\varphi, \Gamma)$-submodules over $\cR_{\sU,L}$ such that 
\begin{itemize}
	\item[(1)] $\Fil_i D_{\sU}$ are direct summands of $D_{\sU}$ as $\cR_{\sU,L}$-modules; 
	\item[(2)] one has embeddings $\gr_i D_{\sU} \otimes \cL_i \hooklongrightarrow \Delta_{\Omega_i, \sU} \otimes_{\cR_{\sU,L}} \cR_{\sU,L}((\chi_{i,\sU})_{\varpi_L}z^{\textbf{h}_{s_i}})$ where $\Delta_{\Omega_i, \sU}$, $\chi_{i,\sU}$ are the pull-backs of $\Delta_{\Omega_i,X_{\Omega, \textbf{h}}(\overline{\rho})}$, $\chi_{i,X_{\Omega, \textbf{h}}(\overline{\rho})}^{\univ}$ over $\sU$ respectively; 
	\item[(3)] for all $x\in \sU$, the above embedding restrict to injections $(\gr_i D_{\sU})_x \hookrightarrow \Delta_{\Omega_i,f(x)} \otimes_{\cR_{k(x),L}} \cR_{k(x),L}(\chi_{i,f(x),\varpi_L}z^{\textbf{h}_{s_i}})$ and $(\gr_i D_{\sU})_x \otimes_{\cR_{k(x),L}} \cR_{k(x),L}(\chi_{i,f(x),\varpi_L}^{-1})$ is de Rham of Hodge-Tate weights $\textbf{h}^i=(\textbf{h}_{s_{i-1}+1}, \dots, \textbf{h}_{s_i})$.
\end{itemize}
Note that, by the proof of Theorem \ref{KPXg}, we can and do assume that $f$ factors through a surjective birational morphism $\widetilde{X}_{\Omega, \textbf{h}}(\overline{\rho}) \twoheadrightarrow X_{\Omega, \textbf{h}}(\overline{\rho})'$ where $X_{\Omega, \textbf{h}}(\overline{\rho})'$ denotes the normalization of $X_{\Omega, \textbf{h}}(\overline{\rho})$.

Let $\sU_0$ be the preimage of $\sZ^{\gen}$ via the natural composition
\begin{equation*}
	\sU \lra \widetilde{X}_{\Omega, \textbf{h}}(\overline{\rho}) \lra X_{\Omega, \textbf{h}}(\overline{\rho}) \xlongrightarrow{\omega} \sZ.
\end{equation*}
Note that $\sU_0$ is Zariski-open in $\sU$ hence also in $\widetilde{X}_{\Omega, \textbf{h}}(\overline{\rho})$. It is also clear that $f^{-1}(U_{\Omega, \textbf{h}}(\overline{\rho}))\subset \sU_0$ (in fact $U_{\Omega, \textbf{h}}(\overline{\rho})$ is equal to the set of rigid analytic points of $f(\sU_0)$). Let $\sU_1$ be the fibre product over $\sU$ for all $i$ of the $\bG_m^{\rig}$-torsors trivializing the line bundle $\cL_i$. Let $x\in \sU$, and $\alpha_i\in E^{\times}$ (enlarging $E$ if necessary) for $i=1,\dots, r$ such that $\gr_i (D_{\sU})_x \otimes_{\cR_{E,L}} \cR_{E,L}(\chi_{i,f(x),\varpi_L}^{-1} \unr(\alpha_i^{-1}))$ is \'etale. Let $\sU_1(\ul{\alpha})^{\adm}$ be the maximal adic open subset of $\sU_1$ such that there exist a vector bundle $\varrho_{i,\sU_1(\ul{\alpha})^{\adm}}$ over $\sU_1(\ul{\alpha})^{\adm}$ and a continuous morphism $\Gal_L \ra \Aut_{\co_{\sU_1(\ul{\alpha})^{\adm}}}(\varrho_{i,\sU_1(\ul{\alpha})^{\adm}})$ satisfying (cf.\ \cite[Thm.\ 1.3]{Hel16}):
\begin{equation*}
D_{\rig}(\varrho_{i,\sU_1(\ul{\alpha})^{\adm}}) \cong \big(\gr_i D_{\sU_1} \otimes_{\cR_{\sU_1,L}} \cR_{\sU_1,L}(\chi_{i,\sU_1, \varpi_L}^{-1} \unr(\alpha_i^{-1}))\big)|_{\sU_1(\ul{\alpha})^{\adm}}.
\end{equation*}
Since $f$ is projective and $X_{\Omega, \textbf{h}}(\overline{\rho})$ is quasi-separated, $\widetilde{X}_{\Omega, \textbf{h}}(\overline{\rho})$ is quasi-separated. We then deduce that both $\sU_1$ and $\sU_1(\ul{\alpha})^{\adm}$ are also quasi-separated, and hence that $\sU_1(\ul{\alpha})^{\adm}$ is a rigid analytic space by \cite[(1.1.11)]{Hub}. It is also clear that any preimage of $x$ lies in $\sU_1(\ul{\alpha})^{\adm}$. Let $\sU_1(\ul{\alpha})^{\adm, \square}$ be the fibre product over $\sU_1(\ul{\alpha})^{\adm}$ for all $i$ of the $\GL_{n_i}$-torsors of the trivialization of $\varrho_{i,\sU_1(\ul{\alpha})^{\adm}}$ as $\co_{\sU_1(\ul{\alpha})^{\adm}}$-module. Let $\sU(\ul{\alpha})^{\adm, \square}\subset \sU_1(\ul{\alpha})^{\adm, \square}$ be the admissible open subset of points such that
\[\varrho_{i,\sU_1(\ul{\alpha})^{\adm, \square}}: \Gal_L \lra \GL_{n_i}\big(\Gamma(\sU(\ul{\alpha})^{\adm, \square}, \co_{\sU_1(\ul{\alpha})^{\adm, \square}})\big)\]
 has image in $\Gamma(\sU(\ul{\alpha})^{\adm, \square}, \co_{\sU_1(\ul{\alpha})^{\adm, \square}}^+)$. Finally, fix some continuous $\overline{\varrho}_i: \Gal_L \ra \GL_{n_i}(k_E)$ and let $\sU(\ul{\alpha}, \{\overline{\varrho}_i\})^{\adm, \square}\subset \sU(\ul{\alpha})^{\adm, \square}$ be the open locus such that, for $z\in \sU(\ul{\alpha}, \{\overline{\varrho}_i\})^{\adm, \square}$, $\varrho_{i,\sU(\ul{\alpha}, \{\overline{\varrho}_i\})^{\adm, \square}} \otimes \co_{k(z)}$ has reduction (modulo the maximal ideal of $\co_{k(z)}$) equal to $\overline{\varrho}_i$ for all $i$. From the universal property of $\cS_{\overline{\rho}}(\Omega,\textbf{h},\ul{\alpha}, \{\overline{\varrho}_i\})$, we obtain a natural morphism
\begin{equation*}
	\sU(\ul{\alpha}, \{\overline{\varrho}_i\})^{\adm, \square} \lra \cS_{\overline{\rho}}(\Omega,\textbf{h},\ul{\alpha}, \{\overline{\varrho}_i\}).
\end{equation*} 
Moreover, using the universal property of $(\Spf R_{\overline{\rho}})^{\rig} \times \widehat{\cZ_{0,L}}$ and the ``universal" property of $(\Spec \cZ_{\Omega})^{\rig}$ as in \cite[Prop.\ 4.3]{CEGGPS}, we see that the following diagram commutes
\begin{equation*}
	\begin{CD}
		\sU(\ul{\alpha}, \{\overline{\varrho}_i\})^{\adm, \square} @>>> \cS_{\overline{\rho}}(\Omega,\textbf{h},\ul{\alpha}, \{\overline{\varrho}_i\}) \\
		@VVV @VVV \\
		X_{\Omega, \textbf{h}}(\overline{\rho}) @>>> (\Spf R_{\overline{\rho}})^{\rig} \times \sZ.
	\end{CD}
\end{equation*}
By (\ref{smCov1}) and the fact that $X_{\Omega, \textbf{h}}(\overline{\rho})$ is a reduced closed subspace of $ (\Spf R_{\overline{\rho}})^{\rig} \times \sZ$, we deduce a commutative diagram:
\begin{equation}\label{sUalpharho}
\begin{gathered}
	\begindc{\commdiag}[200]
	\obj(0,2)[a]{$\sU(\ul{\alpha}, \{\overline{\varrho}_i\})^{\adm, \square} $}
	\obj(6,2)[b]{$ \cS_{\overline{\rho}}(\Omega,\textbf{h},\ul{\alpha}, \{\overline{\varrho}_i\})$}
	\obj(6,0)[c]{$X_{\Omega, \textbf{h}}(\overline{\rho}).$}
	\mor{a}{b}{}[0,\solidarrow]
	\mor{b}{c}{$\kappa_{\ul{\alpha}, \{\overline{\varrho}_i\}}$}[0,\solidarrow]
	\mor{a}{c}{}[0,\solidarrow]
	\enddc
\end{gathered}
\end{equation}
By Corollary \ref{para}, there exists a Zariski-open and Zariski-dense subset $\sV \subset X_{\Omega, \textbf{h}}(\overline{\rho})$ disjoint from $f(Z)$ such that the morphism $f$ induces an isomorphism $f^{-1}(\sV) \xrightarrow{\sim} \sV$. Consider
\[\sV(\ul{\alpha}, \{\overline{\varrho}_i\})^{\adm, \square}:= f^{-1}(\sV) \times_{\sU} \sU(\ul{\alpha}, \{\overline{\varrho}_i\})^{\adm, \square}.\]
Note that $\sV(\ul{\alpha}, \{\overline{\varrho}_i\})^{\adm, \square}$ can also be constructed from $f^{-1}(\sV)$ in the same way $\sU(\ul{\alpha}, \{\overline{\varrho}_i\})^{\adm, \square}$ was constructed from $\cU$. Using the same argument as in the first paragraph on page 1598 of \cite{BHS1}, one can prove an isomorphism
\[\sV(\ul{\alpha}, \{\overline{\varrho}_i\})^{\adm, \square} \xlongrightarrow{\sim} \kappa_{\ul{\alpha}, \{\overline{\varrho}_i\}} ^{-1}(\sV)\]
hence by (\ref{dimSrho}), $\sV(\ul{\alpha}, \{\overline{\varrho}_i\})^{\adm, \square}$ is smooth of dimension $(r+\sum_{i=1}^r n_i^2 + n^2) + [L:\Q_p] (\frac{n(n-1)}{2} +r)$. 
By \cite[Thm.\ 0.2]{KL}, for any $x\in \sV$, there exist an admissible open neighbourhood $\sV_x$ of $x$ in $\sV$ and $\alpha_i$, $\overline{\varrho}_i$ such that $\sV_x$ is contained in the image of $\sV(\ul{\alpha}, \{\overline{\varrho}_i\})^{\adm, \square} \ra \sV$ (for the corresponding $\ul{\alpha}$, $\{\overline{\varrho}_i\}$). As the morphism $\sV(\ul{\alpha}, \{\overline{\varrho}_i\})^{\adm, \square} \ra \sV$ is smooth of relative dimension $r+\sum_{i=1}^r n_i^2$, we deduce that $\sV$ is equidimensional of dimension $n^2+[L:\Q_p](\frac{n(n-1)}{2}+r)$. Since $\sV$ is Zariski dense in $X_{\Omega, \textbf{h}}(\overline{\rho})$, (1) of Theorem \ref{DFOL} follows.

\begin{proposition}\label{smoothi}
	The morphism $\kappa_{\ul{\alpha}, \{\varrho_i\}}: \cS_{\overline{\rho}}(\Omega, \textbf{h}, \ul{\alpha}, \{\overline{\varrho}_i\}) \ra X_{\Omega, \textbf{h}}(\overline{\rho})$ is smooth. 
\end{proposition}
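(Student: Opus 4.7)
The plan is to verify smoothness via the infinitesimal lifting criterion on completed local rings. Given a point $y \in \cS_{\overline{\rho}}(\Omega, \textbf{h}, \ul{\alpha}, \{\overline{\varrho}_i\})$ with image $x = \kappa_{\ul{\alpha}, \{\overline{\varrho}_i\}}(y) \in X_{\Omega, \textbf{h}}(\overline{\rho})$, I will show that the induced homomorphism $\widehat{\co}_{X_{\Omega, \textbf{h}}(\overline{\rho}), x} \to \widehat{\co}_{\cS_{\overline{\rho}}, y}$ is formally smooth of relative dimension $r + \sum_{i} n_i^2$, which matches the difference of the dimensions computed in (\ref{dimSrho}) and Theorem \ref{DFOL}(1).

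First I would interpret both completed local rings in terms of deformation functors. Because $\cS_{\overline{\rho}}$ is smooth and is constructed as an iterated fibration of representable moduli problems (Proposition \ref{auxis1} together with the $(\bG_m^{\rig})^r$-torsor construction trivializing (\ref{HomOmega})), the ring $\widehat{\co}_{\cS_{\overline{\rho}}, y}$ is a formal power series ring pro-representing a deformation functor $G_y \colon \Art(E) \to \{\text{Sets}\}$ that classifies Artinian lifts of the tuple $(\rho_y, \{\varrho_{i,y}\}, \{\chi_{i,y}\}, \Fil_\bullet D_{\rig}(\rho_y), \{\nu_i\}, \{\mu_i\})$, where the $\mu_i$ are the extra $\bG_m^{\rig}$-torsor data. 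On the other side, I would identify $\widehat{\co}_{X_{\Omega, \textbf{h}}(\overline{\rho}), x}$ with a hull of the paraboline-type functor $F_{\rho_x, \sF}^0$ of Corollary \ref{apxthmDF}: the tautological $\Omega$-filtration on the universal $(\varphi, \Gamma)$-module at $x$ (furnished by Corollary \ref{OFpw}, after the twist of Lemma \ref{twDR}) yields a natural morphism from the hull of $F_{\rho_x, \sF}^0$ onto $\widehat{\co}_{X_{\Omega, \textbf{h}}(\overline{\rho}), x}$. Since $(\ul{x}_y, \chi_y) \in \sZ^{\gen}$, the filtration $\sF$ at $x$ satisfies (\ref{conGene}); combining Corollary \ref{apxthmDF}(2) with Remark \ref{remApxDF}, the functor $F_{\rho_x, \sF}^0$ is formally smooth of dimension $n^2 + [L:\Q_p](\frac{n(n-1)}{2}+r)$. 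As this equals $\dim X_{\Omega, \textbf{h}}(\overline{\rho})$ by Theorem \ref{DFOL}(1), and $X_{\Omega, \textbf{h}}(\overline{\rho})$ is reduced and equidimensional, the natural surjection of formal schemes must be an isomorphism.

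Next I would verify that the forgetful morphism of functors $G_y \to F_{\rho_x, \sF}^0$ (send $(\rho_A, \{\varrho_{i,A}\}, \{\chi_{i,A}\}, \Fil_\bullet, \{\nu_i\}, \{\mu_i\})$ to $(\rho_A, \Fil_\bullet, \{\delta_{i,A}\})$, where $\delta_{i,A}$ is extracted from $(\chi_{i,A})_{\varpi_L}$ and $\textbf{h}_{s_i}$, $\alpha_i$) is formally smooth of relative dimension $r + \sum_i n_i^2$. Given a lift on the $F_{\rho_x, \sF}^0$-side to $A'$ and a compatible lift on the $G_y$-side to a quotient $A'/I$ with $I \fm_{A'} = 0$, I must lift the framed potentially crystalline deformations $\varrho_{i,A'}$, the identifications $\nu_i$, and the torsor trivializations $\mu_i$. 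The relevant potentially crystalline framed deformation rings $R_{\overline{\varrho}_i}^{\pcr}(\xi_i, \textbf{h}^i)$ are formally smooth over their images in $(\Spec \cZ_{\Omega_i})^{\rig}$ at the points in question by Kisin's \cite[Thm.\ 3.3.8]{Kis08} (as used already in the proof of Proposition \ref{auxis1}), and each $\nu_i$, $\mu_i$ parametrizes a $\bG_m^{\rig}$-torsor, hence all lifting obstructions vanish and a straightforward dimension count gives the claimed relative dimension. Composing with the isomorphism of the previous paragraph yields the desired formal smoothness at $y$.

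The main obstacle will be the identification $\widehat{\co}_{X_{\Omega, \textbf{h}}(\overline{\rho}), x} \cong \mathrm{hull}(F_{\rho_x, \sF}^0)$ at points $x$ where $X_{\Omega, \textbf{h}}(\overline{\rho})$ may be singular or where the forgetful map is not obviously a closed immersion: since $X_{\Omega, \textbf{h}}(\overline{\rho})$ is defined as a Zariski closure inside $(\Spf R_{\overline{\rho}})^{\rig} \times \sZ$, showing that every Artinian-valued point of the closure produces an honest $\Omega$-filtered deformation requires leveraging Corollary \ref{OFpw} at the formal-neighborhood level, combined with the density of $U_{\Omega, \textbf{h}}(\overline{\rho})$ and the coincidence of dimensions provided by Theorem \ref{DFOL}(1) to upgrade the natural surjection onto $\widehat{\co}_{X_{\Omega, \textbf{h}}(\overline{\rho}), x}$ from the formally smooth hull into an isomorphism.
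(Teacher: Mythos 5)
Your reduction of everything to the isomorphism $\widehat{\co}_{X_{\Omega, \textbf{h}}(\overline{\rho}), x} \cong R_{\rho_x, \sF}^{0}$ hides the real difficulty in the step you call the ``main obstacle'', and the tools you invoke do not close it. To get a surjection $R_{\rho_x,\sF}^{0} \twoheadrightarrow \widehat{\co}_{X_{\Omega, \textbf{h}}(\overline{\rho}), x}$ you must first produce the map, i.e.\ show that the universal family over every \emph{Artinian thickening inside $X_{\Omega,\textbf{h}}(\overline{\rho})$ itself} carries an $\Omega$-filtration giving an object of $F_{\rho_x,\sF}^{0}$. Corollary \ref{para} and Corollary \ref{rgloOF} (2) do not give this: they produce filtrations for Artinian points factoring through the proper birational modification $X'$ (away from $Z$), and Artinian points of $X_{\Omega,\textbf{h}}(\overline{\rho})$ need not lift to $X'$ at a point where $f$ is not a local isomorphism. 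The descent of such infinitesimal filtrations from the modification to $X_{\Omega,\textbf{h}}(\overline{\rho})$ is exactly the content of Proposition \ref{closedemR} (the analogue of \cite[Prop.\ 3.7.2]{BHS3}), which appears only in \S~\ref{secGrDV}, rests on the groupoid machinery of \S~\ref{secMod}, and is proved under hypotheses (almost de Rham, distinct Sen weights, hence locally algebraic $\chi$) that fail at a general point of the image of $\kappa_{\ul{\alpha}, \{\overline{\varrho}_i\}}$, since the characters $\chi_i$ there are arbitrary continuous characters. Note also that your intermediate claim is essentially as strong as Theorem \ref{DFOL} (3) (smoothness of $X_{\Omega,\textbf{h}}(\overline{\rho})$ at points of $U_{\Omega,\textbf{h}}(\overline{\rho})$), which in the paper is \emph{deduced from} Proposition \ref{smoothi}; so one should not expect it to come for free at this stage.

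The paper avoids this issue by running the deformation-theoretic argument in the opposite direction: it constructs a map $R_{\rho_x,\sF}^{0} \to B := \widehat{\co}_{\cS_{\overline{\rho}}(\Omega,\textbf{h},\ul{\alpha},\{\overline{\varrho}_i\}),x}$, where the filtration and the twisting data exist tautologically by the construction of $\cS_{\overline{\rho}}$ together with Lemma \ref{twDR}, checks that the tangent map is surjective (this is the part of your plan — lifting framed potentially crystalline deformations via \cite[Thm.\ 3.3.8]{Kis08} and the torsor data — that is sound and in the same spirit), and concludes $B \cong R_{\rho_x,\sF}^{0}[[x_1,\dots,x_N]]$ since both sides are formally smooth. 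It then uses the factorization $R_{\rho_x} \to A := \widehat{\co}_{X_{\Omega,\textbf{h}}(\overline{\rho}),y} \to B$ coming from (\ref{smCov1}) to get a surjection $A[[x_1,\dots,x_N]] \twoheadrightarrow B$, and finishes by the dimension count ($\dim A = \dim B - N$, using Theorem \ref{DFOL} (1) and (\ref{dimSrho})) together with the integrality of $A$, which is established by a separate covering/irreducibility argument as in \cite[p.\ 1599]{BHS1}. If you want to salvage your route, you would have to prove the infinitesimal filtration statement on $X_{\Omega,\textbf{h}}(\overline{\rho})$ directly (without the almost de Rham hypotheses), which is not supplied by the results you cite; as written, the proposal has a genuine gap at that point.
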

\begin{proof}
Let $x\in \cS_{\overline{\rho}}(\Omega, \textbf{h}, \ul{\alpha}, \{\overline{\varrho}_i\})$, $y:=\kappa_{\ul{\alpha}, \{\overline{\varrho}_i\}}(x)$, $B:=\widehat{\co}_{\cS_{\overline{\rho}}(\Omega, \textbf{h}, \ul{\alpha},\{\overline{\varrho}_i\}),x}$, $A:=\widehat{\co}_{X_{\Omega, \textbf{h}}(\overline{\rho}),y}$. Let $N:=\sum_{i=1}^r n_i^2 +r$, it is enough to show that there exist $x_1, \dots, x_{N}\in B$ such that $B\cong A[[x_1, \dots, x_N]]$. Let $\rho_x: \Gal_L \ra \GL_n(k(x))$ be the image of $x$ in $(\Spf R_{\overline{\rho}})^{\rig}$. Recall we have a natural isomorphism $\widehat{\co}_{(\Spf R_{\overline{\rho}})^{\rig}, \rho_x}\cong R_{\rho_x}$ where $R_{\rho_x}$ is the universal framed deformation ring of $\rho_x$. Let $\sF$ be the associated filtration on $D_{\rig}(\rho_x)$ (as in (\ref{func1})), and denote by $R_{\rho_x, \sF}^{0}$ the local complete $k(x)$-algebra which (pro-)represents the functor $F_{\rho_x,\sF}^{0}$ (see Corollary \ref{apxthmDF} (1)). By Corollary \ref{apxthmDF} and Remark \ref{remApxDF} (note that, as $x$ is sent to $\sZ^{\gen}$, the hypothesis there are satisfied for $\sF$ by Lemma \ref{lem:generic}), $R_{\rho_x, \sF}^{0}$ is a quotient of $R_{\rho_x}$ and is formally smooth of dimension $n^2+[L:\Q_p](\frac{n(n-1)}{2}+r)$. We have a natural morphism
	\begin{equation}\label{equ: deftoB}
		R_{\rho_x,\sF}^{0} \lra B.
	\end{equation}
Indeed, for an ideal $I\subset \fm_B$ (the maximal ideal of $B$) with $\dim_{k(x)}B/I<\infty$, by the construction of $\cS_{\overline{\rho}}(\Omega,\textbf{h}, \ul{\alpha},\ul{\varrho})$ and by Lemma \ref{twDR}, we have a deformation of $(\rho_x,\sF)$ over $B/I$ lying in $F_{\rho_x,\sF}^{0}(B/I)$. By the universal property of $R_{\rho_x,\sF}^{0}$, this gives a natural morphism $R_{\rho_x,\sF}^{0} \ra B/I$. Taking the projective limit over all ideals $I$, we obtain (\ref{equ: deftoB}). Let $\varrho_{i,x}$ be the image of $x$ in $(\Spf R_{\overline{\varrho}_i}^{\pcr}(\xi_i,\textbf{h}^i))^{\rig}$. Using the fact that $\widehat{\co}_{(\Spf R_{\overline{\varrho}_i}^{\pcr}(\xi_i,\textbf{h}^i))^{\rig}, \varrho_{i,x}}$ (pro-)represents the functor of framed de Rham deformations of $\varrho_{i,x}$ and using Lemma \ref{twDR}, one can show that the tangent map $(\fm_B/\fm_B^2)^{\vee} \ra F_{\rho_x,\sF}^{0}(k(x)[\varepsilon]/\varepsilon^2)$ of (\ref{equ: deftoB}) is surjective (we leave the details to the reader). Together with the fact that both $B$ and $R_{\rho_x, \sF}^{0}$ are formally smooth, we deduce that (\ref{equ: deftoB}) is formally smooth of relative dimension $N$. There exist thus $x_1, \dots, x_N\in B$, such that 
	\begin{equation*}
		B\cong R_{\rho_x, \sF}^{0}[[x_1,\dots, x_N]].
	\end{equation*}
Since $R_{\rho_x, \sF}^{0}$ is a quotient of $R_{\rho_x}$, we deduce from the above isomorphism a surjective morphism $R_{\rho_x}[[x_1, \dots, x_N]]\twoheadrightarrow B$. The morphism $X_{\Omega, \textbf{h}}(\overline{\rho}) \ra (\Spf R_{\overline{\rho}})^{\rig}$ induces a morphism $R_{\rho_x} \ra A$. Using the commutative diagram (\ref{smCov1}), we see that the morphism $R_{\rho_x} \ra B$ factors through $A$. To sum up, we have obtained a surjective morphism
	\begin{equation*}
		A[[x_1, \dots, x_N]] \twoheadlongrightarrow B.
	\end{equation*}
Since $\dim A=\dim B-N$ and $B$ is formally smooth, the above morphism is an isomorphism if $A$ is integral. But this follows from exactly the same argument as in the first paragraph on page 1599 of \cite{BHS1} with $X_{\tri}^{\square}(\overline{r})$, $U^{\square}$, $\cS^{\square}(\overline{r})$ of \textit{loc.\ cit.} replaced by $X_{\Omega, \textbf{h}}(\overline{\rho})$, $\sU(\ul{\alpha},\{\overline{\varrho_i}\})^{\adm, \square}$ and $\cS_{\overline{\rho}}(\Omega, \textbf{h},\ul{\alpha}, \{\overline{\varrho_i}\})$ respectively.
\end{proof}

We use Proposition \ref{smoothi} to prove (2) of Theorem \ref{DFOL} (following the strategy in the proof of \cite[Thm.\ 2.6]{BHS1}). We also need to use adic spaces. By \cite[Prop.\ 1.7.8]{Hub}, $\Ima(\kappa_{\ul{\alpha}, \{\overline{\varrho}_i\}})$ is an adic open subset of (the adic space associated to) $X_{\Omega, \textbf{h}}(\overline{\rho})$ with rigid analytic points contained in $U_{\Omega, \textbf{h}}(\overline{\rho})$. Letting $\ul{\alpha}$, $\{\overline{\varrho}_i\}$ vary, the union of the $\Ima(\kappa_{\ul{\alpha}, \{\overline{\varrho}_i\}})$ is also an adic open subset $U$ of $X_{\Omega, \textbf{h}}(\overline{\rho})$. But it is easy to see that any point of the rigid space $U_{\Omega, \textbf{h}}(\overline{\rho})$ lies in $\Ima(\kappa_{\ul{\alpha}, \{\overline{\varrho}_i\}})$ for some $\ul{\alpha}$ and $\{\overline{\varrho}_i\}$, hence $U_{\Omega, \textbf{h}}(\overline{\rho})$ coincides with the rigid analytic points of the adic open subset $U$ of $X_{\Omega, \textbf{h}}(\overline{\rho})$. We show that $U$ is a Zariski-constructible subset (see \cite[Lemma 2.13]{BHS1}). Since $\sU_0$ is Zariski-open in $\widetilde{X}_{\Omega, \textbf{h}}(\overline{\rho})$ and $f$ is projective, we see by \cite[Lemma 2.14]{HeSc} that the set $f(\sU_0)$ is Zariski-constructible in (the adic space associated to) $X_{\Omega, \textbf{h}}(\overline{\rho})$. We claim $U=f(\sU_0)$. Indeed, both sets have the same rigid analytic points, i.e.\ those in $U_{\Omega, \textbf{h}}(\overline{\rho})$. Using \cite[Lemma 2.15]{HeSc}, the inclusion of adic spaces $\kappa_{\ul{\alpha}, \{\overline{\varrho}_i\}}^{-1}(f(\sU_0))\subseteq \cS_{\overline{\rho}}(\Omega, \textbf{h}, \ul{\alpha}, \{\overline{\varrho}_i\})$ is an isomorphism, which implies $\Ima(\kappa_{\ul{\alpha}, \{\overline{\varrho}_i\}})\subseteq f(\sU_0)$ (as subsets of the adic space associated to $X_{\Omega, \textbf{h}}(\overline{\rho})$) and hence $U\subset f(\sU_0)$. On the other hand, using (\ref{sUalpharho}) and the fact that the image of $\sU(\ul{\alpha}, \{\overline{\varrho}_i\})^{\adm, \square} $ in $\sU_0$ forms an adic open covering of $\sU_0$ when $\ul{\alpha}$, $\{\overline{\varrho}_i\}$ vary, we can deduce $f(\sU_0) \subset U$. Hence we see that the adic open subset $U$ of $X_{\Omega, \textbf{h}}(\overline{\rho})$ is Zariski-constructible. It then follows from \cite[Lemma 2.13]{HeSc} that $U$ is Zariski-open in the adic space associated to $X_{\Omega, \textbf{h}}(\overline{\rho})$, hence {\it a fortiori} $U_{\Omega, \textbf{h}}(\overline{\rho})$ is Zariski-open in the rigid space $X_{\Omega, \textbf{h}}(\overline{\rho})$. This concludes the proof of (2) of Theorem \ref{DFOL}. 

Finally, by \cite[Lemma 5.8]{BHS1} (applied to $\cS_{\overline{\rho}}(\Omega,\textbf{h},\ul{\alpha}, \{\overline{\varrho}_i\}) \ra U_{\Omega, \textbf{h}}(\overline{\rho}) \ra \sZ$), to show (3) of Theorem \ref{DFOL}, i.e.\ the smoothness of the morphism $\omega: U_{\Omega, \textbf{h}}(\overline{\rho}) \ra \sZ$, it is sufficient to show that the morphism $\cS_{\overline{\rho}}(\Omega,\textbf{h},\ul{\alpha}, \{\overline{\varrho}_i\}) \ra \sZ$ is smooth. From the construction of $\cS_{\overline{\rho}}(\Omega,\textbf{h},\ul{\alpha}, \{\overline{\varrho}_i\})$, this is a consequence of the following proposition.

\begin{proposition}\label{smooth11}
The morphism $\eta: (\Spf R_{\overline{\varrho}_i}^{\pcr}(\xi_i, \textbf{h}^i))^{\rig} \ra (\Spec \cZ_{\Omega_i})^{\rig}$ of \cite[Prop.\ 4.3]{CEGGPS} which sends a deformation $\varrho_i$ to $\ttr(\varrho_i)$ is smooth.
\end{proposition}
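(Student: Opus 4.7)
\textbf{Proof plan for Proposition \ref{smooth11}.} The plan is to reduce the smoothness of $\eta$ to the surjectivity of its tangent map at every closed point of the source. Two ingredients are essentially in hand. First, by Kisin's theorem \cite[Thm.~3.3.8]{Kis08} the rigid generic fibre $(\Spf R_{\overline{\varrho}_i}^{\pcr}(\xi_i,\textbf{h}^i))^{\rig}$ is smooth over $E$ of dimension $n_i^2+[L:\Q_p]\tfrac{n_i(n_i-1)}{2}$ (note that $\textbf{h}^i$ is strictly dominant). Second, by the discussion in \S~\ref{sec_pDf}, $\cZ_{\Omega_i}\cong E[z^{m_0},z^{-m_0}]$, so $(\Spec\cZ_{\Omega_i})^{\rig}\cong\bG_m^{\rig}$ is smooth of dimension $1$. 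Since both source and target are smooth rigid analytic spaces, $\eta$ is smooth if and only if its tangent map is surjective at every closed point $\varrho$ of the source. Indeed, picking regular parameters on completed local rings at $\varrho$ and at $\ttr(\varrho)$ and using the Cohen structure theorem, tangent surjectivity allows one to write $\widehat{\co}_{(\Spf R_{\overline{\varrho}_i}^{\pcr}(\xi_i,\textbf{h}^i))^{\rig},\varrho}$ as a formal power series ring over $\widehat{\co}_{(\Spec\cZ_{\Omega_i})^{\rig},\ttr(\varrho)}$, which is precisely formal smoothness.

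The surjectivity of the tangent map is immediate from the unramified twisting construction. Fix $\varrho\in(\Spf R_{\overline{\varrho}_i}^{\pcr}(\xi_i,\textbf{h}^i))^{\rig}$. The tangent space of $(\Spec\cZ_{\Omega_i})^{\rig}$ at $\ttr(\varrho)$ is one-dimensional over $k(\varrho)$, and a generator is provided by the family $\ttr(\varrho)\otimes_E\unr(1+a\varepsilon)$, $a\in k(\varrho)^{\times}$, obtained by twisting with unramified characters. For any such $a$, the framed first-order deformation $\varrho\otimes_E\unr(1+a\varepsilon)$ over $k(\varrho)[\varepsilon]/\varepsilon^2$ is still potentially crystalline of inertial type $\xi_i$ (since unramified characters are trivial on $I_L$) and of Hodge--Tate weights $\textbf{h}^i$ (since unramified characters have all $\tau$-Hodge--Tate weights equal to $0$). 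It therefore defines a tangent vector in the source whose image under $d\eta_{\varrho}$ is the chosen generator of the target tangent space.

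Combining the two steps, $d\eta_{\varrho}$ is surjective at every point $\varrho$, hence $\eta$ is smooth of relative dimension $n_i^2+[L:\Q_p]\tfrac{n_i(n_i-1)}{2}-1$, which is exactly what is needed to conclude that the composition entering the construction of $\cS_{\overline{\rho}}(\Omega,\textbf{h},\ul{\alpha},\{\overline{\varrho}_i\})$ in the proof of Proposition \ref{auxis1} is a smooth morphism down to $\sZ$. No real obstacle arises: the proof is essentially a bookkeeping check once the smoothness of Kisin's potentially crystalline deformation ring and the explicit description of the Bernstein centre $\cZ_{\Omega_i}$ as $E[z^{m_0},z^{-m_0}]$ are in place; the only mildly subtle point is the rigid-analytic criterion ``tangent surjectivity between smooth spaces implies smoothness'', which is standard.
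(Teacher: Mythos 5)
Your reduction to surjectivity of the tangent map is exactly the paper's starting point (both source and target are smooth, by \cite[Thm.\ 3.3.8]{Kis08} and the description of $\cZ_{\Omega_i}$ in \S~\ref{sec_pDf}), but the step you treat as immediate is precisely where the real content lies, and as written it is a gap. You assert that the first-order twist $\varrho\otimes_E\unr(1+a\varepsilon)$ is sent by $d\eta_{\varrho}$ to the tangent vector of $(\Spec\cZ_{\Omega_i})^{\rig}$ corresponding to $\ttr(\varrho)\otimes_E\unr(1+a\varepsilon)$. The characterization of $\eta$ coming from \cite[Thm.\ 4.1, Prop.\ 4.3]{CEGGPS} is a statement about \emph{closed} points only: it tells you where maximal ideals go, and since the source is reduced this pins down $\eta$ as a morphism, but it does not by itself compute what $\eta$ does on $k(\varrho)[\varepsilon]/\varepsilon^2$-points. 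Knowing $\eta$ on closed points does not let you read off the image of a specific tangent vector without a further argument; establishing this compatibility on dual numbers is essentially the whole proposition.

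The paper's proof supplies exactly this missing ingredient, in the opposite direction: given a tangent vector $\psi$ of the target, it produces a de Rham deformation $\widetilde{\varrho}_{i,x}$ with an embedding $D_{\rig}(\widetilde{\varrho}_{i,x})\hookrightarrow\Delta_{\psi}(\textbf{h}_{s_i})$ by deforming the Hodge filtration (as in the proof of Proposition \ref{apxfm0}), and then \emph{proves} that $\eta\circ\psi'=\psi$ by showing that $\Hom_{(\varphi,\Gamma)}(D_{\rig}(\varrho_i^{\univ}),\eta^*\Delta_{\Omega_i}(\textbf{h}_{s_i}))$ is an invertible sheaf whose formation commutes with base change (Lemma \ref{lemlfc}), pulling back a local generator along $\psi'$ to get an injection into $\Delta_{\eta\circ\psi'}(\textbf{h}_{s_i})$, comparing Hodge--Tate weights via \cite[Thm.~A]{Ber08a} to get $\Delta_{\eta\circ\psi'}\cong\Delta_{\psi}$, and finally invoking the rigidity argument from the proof of Lemma \ref{subf} to conclude equality of the two characters. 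Your route could in principle be completed differently, e.g.\ by realizing unramified twisting as a rigid-analytic action of a disc of unramified characters on both the potentially crystalline space and $(\Spec\cZ_{\Omega_i})^{\rig}$, checking that $\eta$ intertwines the two actions on closed points (hence everywhere, by reducedness), and then differentiating the action at the trivial character; but some argument of this kind must be supplied, since the claim "the image under $d\eta_{\varrho}$ is the chosen generator" is not a consequence of anything you have quoted. The remaining points of your proposal (the de Rham property of the twist, the use of Kisin's description of the completed local ring, the relative dimension count) are fine.
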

\begin{proof}
Since both source and target of $\eta$ are smooth, we only need to show that the tangent map $d \eta_x: T_{ (\Spf R_{\overline{\varrho}_i}^{\pcr}(\xi_i, \textbf{h}^i))^{\rig}, x} \ra T_{(\Spec \cZ_{\Omega_i})^{\rig}, \eta(x)}$ is surjective for any $x\in (\Spf R_{\overline{\varrho}_i}^{\pcr}(\xi_i, \textbf{h}^i))^{\rig}$, where $T_{Y,z}$ denotes the tangent space of $Y$ at $z$ for a point $z$ of a rigid space $Y$. For $x\in (\Spf R_{\overline{\varrho}_i}^{\pcr}(\xi_i, \textbf{h}^i))^{\rig}$, let $\varrho_{i,x}$ be the associated $\Gal_L$-representation and $\Delta_{i,\eta(x)}$ be the $p$-adic differential equation associated to $\eta(x)$. For a $(\varphi, \Gamma)$-module $D$, we put $D(\textbf{h}_{s_i}):=D \otimes_{\cR_{E,L}} \cR_{E,L}(z^{\textbf{h}_{s_i}})$. One easily computes for any point $x$
	\begin{equation*}
		\dim_{k(x)} H^i_{(\varphi, \Gamma)}\big(D_{\rig}(\varrho_{i,x})^{\vee} \otimes_{\cR_{k(x),L}} \Delta_{i,\eta(x)}(\textbf{h}_{s_i})\big)=
		\begin{cases}1 & i=0\\
			n_i^2[L:\Q_p] +1 & i=1 \\
			0 & i=2.
		\end{cases}
	\end{equation*}
By Lemma \ref{lemlfc}, we deduce in particular that $\Hom_{(\varphi, \Gamma)}(D_{\rig}(\varrho_i^{\univ}), \eta^* \Delta_{\Omega_i}(\textbf{h}_{s_i}))$ is an invertible sheaf over $(\Spf R_{\overline{\varrho}_i}^{\pcr}(\xi_i, \textbf{h}^i))^{\rig}$. Now fix $x\in (\Spf R_{\overline{\varrho}_i}^{\pcr}(\xi_i, \textbf{h}^i))^{\rig}$ and let
\[\psi \in \Spec k(x)[\varepsilon]/\varepsilon^2 \lra (\Spec \cZ_{\Omega_i})^{\rig}\]
be an element in $T_{(\Spec \cZ_{\Omega_i})^{\rig}, \eta(x)}$. Let $\Delta_{\psi}:= \psi^* \Delta_{\Omega_i}$. By the proof of Proposition \ref{apxfm0}, there exists a deformation $\widetilde{\varrho}_{i,x}$ of $\varrho_{i,x}$ over $k(x)[\varepsilon]/\varepsilon^2$ such that one has an embedding of $(\varphi, \Gamma)$-modules over $\cR_{\Spec k(x)[\varepsilon]/\varepsilon^2,L}$:
\[\jmath: D_{\rig}(\widetilde{\varrho}_{i,x}) \hookrightarrow \Delta_{\psi}(\textbf{h}_{s_i}).\]
As $\Delta_{\psi}$ is de Rham, so is $\widetilde{\varrho}_{i,x}$ (recall they have the same rank over $\cR_{k(x),L}$). Thus $\widetilde{\varrho}_{i,x}$ corresponds to an element
	\[\psi': \Spec k(x)[\varepsilon]/\varepsilon^2 \lra (\Spf R_{\overline{\varrho}_i}^{\pcr}(\xi_i, \textbf{h}^i))^{\rig}\]
in the tangent space at $x$. Let us prove that $d \eta_x$ maps $\psi'$ to $\psi$, or equivalently $\eta \circ \psi'=\psi$. Consider $\Delta_{\eta \circ \psi'}:=(\eta\circ \psi')^* \Delta_{\Omega_i}(\textbf{h}_{s_i})$. Using Lemma \ref{lemlfc}, a local generator of the invertible sheaf $\Hom_{(\varphi, \Gamma)}(D_{\rig}(\varrho_i^{\univ}), \eta^* \Delta_{\Omega_i}(\textbf{h}_{s_i}))$ induces by pull-back via $\psi'$ a morphism
	\[\iota: D_{\rig}(\widetilde{\varrho}_{i,x}) \lra (\eta\circ \psi')^* \Delta_{\Omega_i}(\textbf{h}_{s_i})\]
which is a generator of the $k(x)[\varepsilon]/\varepsilon^2$-module $\Hom_{(\varphi, \Gamma)}(D_{\rig}(\widetilde{\varrho}_{i,x}),\Delta_{\eta \circ \psi'})$. Since $D_{\rig}(\varrho_{i,x})$ is irreducible, it is not difficult to see that $\iota$ has to be injective. From the two injections $\jmath$, $\iota$, by comparing the Hodge-Tate weights and using \cite[Thm.~A]{Ber08a}, we can deduce $\Delta_{\eta \circ \psi'}\cong \Delta_{\psi}$. By the discussion in \S~\ref{sec_pDf}, we see that there exists a bijection $T_{(\Spec \cZ_{\Omega_i})^{\rig},\eta(x)} \xrightarrow{\sim} k(x)$, $f \mapsto a_f$ such that $f^* \Delta_{\Omega_i}\cong \Delta_{i,\eta(x)} \otimes_{\cR_{k(x),L}} \cR_{k(x)[\varepsilon]/\varepsilon^2,L}(\unr(1+a_f\varepsilon))$. Hence, for $*\in \{\psi, \eta\circ \psi' \}$, there is $a_*\in k(x)$ such that $\Delta_{*}\cong \Delta_{i,\eta(x)} \otimes_{\cR_{k(x),L}} \cR_{k(x)[\varepsilon]/\varepsilon^2,L}(\unr(1+a_*\varepsilon))$. By the proof of Lemma \ref{subf}, $\Delta_{\psi}\cong \Delta_{\eta\circ \psi'}$ implies $\unr(1+a_{\psi}\varepsilon)=\unr(1+a_{\eta \circ \psi'}\varepsilon)$, which further implies $a_{\psi}=a_{\eta \circ \psi'}$ and hence $\eta \circ \psi'=\psi$. The proposition follows.
\end{proof}

We end this paragraph by the following proposition on Sen weights which will be used in \S~\ref{secGrDV}.

\begin{proposition}\label{Senwt}
Let $x=(\rho, \ul{x}, \chi)\in X_{\Omega, \textbf{h}}(\overline{\rho})$. Then for $\tau \in \Sigma_L$, the set $\{\wt(\chi_i)_{\tau} + h_{j_i, \tau}\ |\ i=1,\dots, r, \ j_i=s_{i-1}+1, \dots, s_i\}$ is the set of the Sen $\tau$-weights of $\rho$.
\end{proposition}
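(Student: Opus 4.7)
The plan is to reduce the statement to the Zariski-dense subspace $U_{\Omega, \textbf{h}}(\overline{\rho})$ (Theorem \ref{DFOL} (2)) by exploiting the analyticity of the coefficients of the Sen polynomial in families.

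First, I would verify the proposition directly on $U_{\Omega, \textbf{h}}(\overline{\rho})$. By definition, for $(\rho,\ul{x},\chi)\in U_{\Omega,\textbf{h}}(\overline{\rho})$, there is an $\Omega$-filtration on $D_{\rig}(\rho)$ such that, for each $i$, there is an embedding
\[\gr_i D_{\rig}(\rho)\otimes_{\cR_{k(x),L}}\cR_{k(x),L}(\chi_{i,\varpi_L}^{-1})\hooklongrightarrow \Delta_{x_i}\otimes_{\cR_{k(x),L}}\cR_{k(x),L}(z^{\textbf{h}_{s_i}})\]
with image of Hodge-Tate weights $(\textbf{h}_{s_{i-1}+1},\dots,\textbf{h}_{s_i})$. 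Since $\Delta_{x_i}$ has constant Hodge-Tate weight $0$, the Sen $\tau$-weights of $\gr_i D_{\rig}(\rho)\otimes\cR_{k(x),L}(\chi_{i,\varpi_L}^{-1})$ are exactly $\{h_{j_i,\tau}\}_{s_{i-1}<j_i\leq s_i}$. Twisting by $\cR_{k(x),L}(\chi_{i,\varpi_L})$ shifts Sen $\tau$-weights by $\wt(\chi_i)_\tau$, and summing over $i$ recovers the claim at $x$.

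Next, I would globalize. Fix $\tau\in\Sigma_L$ and let $\rho^{\univ}$ denote the $\Gal_L$-representation on $X_{\Omega,\textbf{h}}(\overline{\rho})$ obtained by pullback from $(\Spf R_{\overline{\rho}})^{\rig}$. Standard Sen theory in families (as in \cite[\S\,2.2]{KPX} applied to $D_{\rig}(\rho^{\univ})$, exactly as in the proof of Proposition \ref{galBE1}) produces a Sen $\tau$-polynomial
\[P^{\Sen}_\tau(T)\ \in\ \co_{X_{\Omega,\textbf{h}}(\overline{\rho})}[T]\]
of degree $n$ whose coefficients are analytic functions on $X_{\Omega,\textbf{h}}(\overline{\rho})$, and whose evaluation at $x$ has as roots the Sen $\tau$-weights of $\rho$. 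On the other hand, define
\[Q_\tau(T):=\prod_{i=1}^r\prod_{j_i=s_{i-1}+1}^{s_i}\bigl(T-\wt(\chi_i^{\univ})_\tau-h_{j_i,\tau}\bigr)\ \in\ \co_{X_{\Omega,\textbf{h}}(\overline{\rho})}[T],\]
whose coefficients are also analytic since $\chi^{\univ}$ is a universal character on $\widehat{\cZ_{0,L}}$ and $\wt(-)_\tau$ is analytic (and the $h_{j_i,\tau}$ are constants).

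Finally, by the first step, $P^{\Sen}_\tau=Q_\tau$ pointwise on $U_{\Omega,\textbf{h}}(\overline{\rho})$. Since $X_{\Omega,\textbf{h}}(\overline{\rho})$ is reduced and $U_{\Omega,\textbf{h}}(\overline{\rho})$ is Zariski-dense in it (Theorem \ref{DFOL} (2)), the identity of analytic sections $P^{\Sen}_\tau=Q_\tau$ holds over the whole $X_{\Omega,\textbf{h}}(\overline{\rho})$. Specializing at an arbitrary $x=(\rho,\ul{x},\chi)$ then gives the desired equality of multisets. There is no genuine obstacle here; the only point requiring care is the standard verification that the Sen polynomial has analytic coefficients on the ambient rigid space, which is handled exactly as in the proof of Proposition \ref{galBE1}.
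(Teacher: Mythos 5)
Your proof is correct and follows essentially the same route as the paper: reduce to the Zariski-open, Zariski-dense locus $U_{\Omega,\textbf{h}}(\overline{\rho})$ using that the Sen $\tau$-weights (i.e.\ the coefficients of the Sen polynomial) vary analytically on the reduced space $X_{\Omega,\textbf{h}}(\overline{\rho})$, then check the statement on $U_{\Omega,\textbf{h}}(\overline{\rho})$. The only cosmetic difference is that you read off the weights on $U_{\Omega,\textbf{h}}(\overline{\rho})$ directly from the defining condition (\ref{galOF}) (the image being prescribed to have Hodge--Tate weights $(\textbf{h}_{s_{i-1}+1},\dots,\textbf{h}_{s_i})$), whereas the paper first passes through the charts $\cS_{\overline{\rho}}(\Omega,\textbf{h},\ul{\alpha},\{\overline{\varrho}_i\})$ via (\ref{smCov1}), where the graded pieces are literally $D_{\rig}(\varrho_{i,x})\otimes\cR_{k(x),L}(\chi_i\unr(\alpha_i))$; both verifications are immediate.
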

\begin{proof}
Since $U_{\Omega, \textbf{h}}(\overline{\rho})$ is Zariski-open and Zariski-dense in $X_{\Omega, \textbf{h}}(\overline{\rho})$ and the Sen $\tau$-weights are analytic functions on $X_{\Omega, \textbf{h}}(\overline{\rho})$ (cf.\ \cite[Def. 6.2.11]{KPX}), we only need to prove the statement for points in $U_{\Omega, \textbf{h}}(\overline{\rho})$. Since any point of $U_{\Omega, \textbf{h}}(\overline{\rho})$ lies in the image of $\kappa(\ul{\alpha}, \{\overline{\varrho}_i\})$ for certain $\ul{\alpha}$, $\{\overline{\varrho}_i\}$, using the commutative diagram (\ref{smCov1}) we are reduced to prove the statement for points in $\cS_{\overline{\rho}}(\Omega, \textbf{h}, \ul{\alpha}, \{\overline{\varrho_i}\})$. For such a point $x=(\rho, \ul{\varrho}, \chi, \Fil_{\bullet}, \ul{\nu})$, by definition, the filtration $\Fil_{\bullet} D_{\rig}(\rho)$ satisfies $\gr_i D_{\rig}(\rho) \cong D_{\rig}(\varrho_{i,x}) \otimes_{\cR_{k(x),L}} \cR_{k(x),L}(\chi_i \unr(\alpha_i))$. We see that $\{\wt(\chi_i)_{\tau}+h_{j,\tau}\}_{j=s_{i-1}+1, \dots, s_i}$ is the set of Sen $\tau$-weight of $\gr_i D_{\rig}(\rho)$. The proposition follows. 
\end{proof}

\subsection{Potentially crystalline deformation spaces}\label{secPCD}

We study a variant of potentially crystalline deformation spaces, and we show it admits a cell decomposition with respect to Schubert cells of $\GL_n/P$. By studying the embeddings of the cells into our (various) Bernstein paraboline varieties, we prove the existence of local companion points on the Bernstein paraboline varieties for generic potentially crystalline representations.

We first recall some facts on inertial types. Let $d\in \Z_{\geq 1}$, $\xi: I_L \ra \GL_d(E)$ a cuspidal inertial type and $\ttr_d$ an absolutely irreducible Weil-Deligne representation over $E$ such that $\ttr_d|_{I_L}\cong \xi$. Assume $E$ contains all $d$-th roots of unity $\mu_d$. Let $L'$ be a finite extension of $L$ such that the action of $I_L$ on $\xi$ factors through the inertia subgroup $I(L'/L)\subset \Gal(L'/L)$. Using $\ttr_d \cong \ttr_d \otimes_E \unr(\alpha) \Rightarrow \wedge^d \ttr_d \otimes_E \unr(\alpha^d)=\wedge^d \ttr_d$, there exists $d_0|d$ such that $\{\alpha \in E^{\times}\ | \ \ttr_d \cong \ttr_d \otimes_E \unr(\alpha)\}=\mu_{d_0}$. Note that $d_0$ only depends on the inertial type $\xi$.

Let $\xi_0\subseteq \xi$ be an absolutely irreducible subrepresentation, and fix $F\in W_L$ a lifting of the arithmetic Frobenius. We denote by $F(\xi_0)$ the conjugate of $\xi_0$ by $F$ (so it is an absolutely irreducible representation of $I(L'/L)$). The following lemma follows from \cite[Lemma 4.4]{CEGGPS}.

\begin{lemma}\label{iner0}
	The integer $d_0$ is the minimal positive integer such that $F^{d_0}(\xi_0)\subseteq \xi_0$. Moreover, we have $\xi \cong \oplus_{i=0}^{d_0-1} F^i(\xi_0)$ as $I(L'/L)$-representations and the $F^i(\xi_0)$, $i=0, \dots, d_0-1$ are pairwise non-isomorphic. 
\end{lemma}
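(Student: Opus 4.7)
The plan is to reduce the claim to the Clifford-theoretic structure of cuspidal Weil representations, following \cite[Lemma 4.4]{CEGGPS}. First, since $\ttr_d$ is absolutely irreducible and $I_L$ is normal in $W_L$, the restriction $\xi = \ttr_d|_{I_L}$ is semisimple and its isotypic components are permuted transitively by $W_L/I_L$. As $W_L/I_L$ is topologically generated by $F$, the $W_L$-orbit of $\xi_0$ consists of the representations $F^i(\xi_0)$. I would define $e$ to be the minimal positive integer with $F^e(\xi_0) \cong \xi_0$ — equivalently with $F^e(\xi_0) \subseteq \xi_0$, since both sides are absolutely irreducible — so that the orbit has size $e$ and the $F^i(\xi_0)$ for $0 \le i \le e-1$ are pairwise non-isomorphic. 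All constituents appear with the same multiplicity $m \ge 1$, yielding $\xi \cong m \cdot \bigoplus_{i=0}^{e-1} F^i(\xi_0)$.

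Next I would show $m = 1$, which is where the cuspidality of $\xi$ enters. Let $L_e \subseteq L'$ denote the unramified extension of $L$ of degree $e$; the stabilizer in $W_L$ of the isomorphism class of $\xi_0$ is then $W_{L_e}$. Clifford theory produces an irreducible $W_{L_e}$-representation $\sigma$ with $\sigma|_{I_L} \cong m\,\xi_0$ and $\ttr_d \cong \Ind_{W_{L_e}}^{W_L}(\sigma)$. The hypothesis that $\xi$ is a cuspidal inertial type (i.e.\ the associated smooth representation of $\GL_d(L)$ is supercuspidal) translates, under the local Langlands correspondence, to the requirement that $\ttr_d$ is induced from a representation whose restriction to $I_L$ is already irreducible, forcing $m=1$. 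This is precisely the content of \cite[Lemma 4.4]{CEGGPS}, and it gives the desired decomposition $\xi \cong \bigoplus_{i=0}^{e-1} F^i(\xi_0)$.

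It remains to identify $d_0$ with $e$. Using the projection formula and $[L_e : L] = e$, one has
\begin{equation*}
\ttr_d \otimes_E \unr(\alpha) \cong \Ind_{W_{L_e}}^{W_L}\bigl(\sigma \otimes_E \unr(\alpha)|_{W_{L_e}}\bigr) \cong \Ind_{W_{L_e}}^{W_L}\bigl(\sigma \otimes_E \unr(\alpha^e)\bigr).
\end{equation*}
By Mackey, this is isomorphic to $\Ind_{W_{L_e}}^{W_L}(\sigma) = \ttr_d$ if and only if $\sigma \otimes_E \unr(\alpha^e)$ is $W_L/W_{L_e}$-conjugate to $\sigma$; restricting to $I_L$ permutes the pairwise non-isomorphic $F^i(\xi_0)$, so the conjugating element must be trivial. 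Thus we need $\sigma \otimes_E \unr(\alpha^e) \cong \sigma$, and since $\sigma|_{I_L} \cong \xi_0$ is irreducible, $\sigma$ admits no non-trivial unramified self-twists, so $\unr(\alpha^e)$ is trivial, i.e.\ $\alpha^e = 1$. This identifies $\mu_{d_0}$ with $\mu_e$ and hence $d_0 = e$.

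The main obstacle is the multiplicity-one assertion $m = 1$, which is the crux of \cite[Lemma 4.4]{CEGGPS} and reflects the passage, via the local Langlands correspondence, from the cuspidality of the Bernstein component on the $\GL_d$-side to the induced-from-unramified structure on the Galois side. Once this is granted, the remaining steps — the Clifford decomposition and the Mackey/projection-formula computation of the unramified self-twists — are formal.
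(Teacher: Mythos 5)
Your argument is essentially correct, and it is more detailed than what the paper does: the paper proves Lemma \ref{iner0} by simply invoking \cite[Lemma 4.4]{CEGGPS}, whereas you reconstruct the content. Your Clifford-theoretic decomposition (semisimplicity of $\xi$, transitivity of the Frobenius action on the isotypic components, equal multiplicities $m$, orbit length $e$ with the $F^i(\xi_0)$ pairwise non-isomorphic) and your identification $d_0=e$ via the projection formula, Mackey for the normal subgroup $W_{L_e}$, and Schur's lemma applied to $\sigma|_{I_L}\cong\xi_0$ to rule out non-trivial unramified self-twists of $\sigma$, are correct and complete. The one step you do not actually prove is $m=1$: the sentence that cuspidality ``translates, under the local Langlands correspondence, to the requirement that $\ttr_d$ is induced from a representation whose restriction to $I_L$ is already irreducible'' is a restatement of the desired conclusion rather than an argument, and the appeal to LLC is heavier machinery than needed. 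Since you (like the paper) back this step by \cite[Lemma 4.4]{CEGGPS}, this is not a gap relative to the paper's own proof, but note that it can be made citation-free and purely group-theoretic: the stabilizer of the isomorphism class of $\xi_0$ is $W_{L_e}$, the quotient $W_{L_e}/I_L$ is (pro)cyclic and free, so $\xi_0$ extends to a representation of $W_{L_e}$ (choose any intertwiner between $\xi_0$ and its $F^e$-conjugate as the image of a Frobenius lift), and then every irreducible representation of $W_{L_e}$ lying over $\xi_0$ is an unramified twist of this extension, hence restricts to $I_L$ as $\xi_0$ itself; by Clifford theory $\ttr_d\cong\Ind_{W_{L_e}}^{W_L}\sigma$ with $\sigma|_{I_L}\cong\xi_0$, which is exactly $m=1$, after which your Mackey/Schur computation finishes the proof. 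In short: same external input as the paper at the crux, but with a genuine and correct supplementary derivation of the rest.
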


We fix a basis $\ul{e}=(\ul{e}_0, \dots, \ul{e}_{d_0-1})$ of $\xi$ such that $\ul{e}_i$ is a basis of $\xi_i:=F^i(\xi_0)$. Let $\ttr_1$, $\ttr_2$ be two Weil-Deligne representations of inertial type $\xi$ and fix $I(L'/L)$-equivariant isomorphisms for $i\in \{1,2\}$:
\begin{equation}\label{basisIner}
	E\ul{e} =\xi \xlongrightarrow{\sim} \ttr_i. 
\end{equation}
Consider the operators $\ttr_i(F)$ acting on (the underlying vector space of) $\xi$ via the above isomorphisms. Then $\ttr_2(F) \circ \ttr_1(F)^{-1}: \xi \ra \xi$ is $I(L'/L)$-equivariant, hence $\ttr_2(F) \circ \ttr_1(F)^{-1}$ preserves $\xi_i$ for $i=0, \dots, d_0-1$ and is equal to a scalar $\alpha_i\in E^\times$ when restricted to $\xi_i$ (note that $\alpha_i$ depends on the choices of the isomorphisms (\ref{basisIner})). Let $\alpha:=\prod_{i\in \Z/d_0} \alpha_i$, we have $\ttr_2(F^{d_0}) \circ \ttr_1(F^{d_0})^{-1}=\alpha$ on each $\xi_i$ hence on $\xi$. Moreover $\alpha$ is independent of the choice of the isomorphisms (\ref{basisIner}).

\begin{lemma}\label{inert2}
Let $\beta\in E$ be a $d_0$-th root of $\alpha$ (enlarging $E$ if necessary), then $\ttr_2\cong \ttr_1 \otimes_E \unr(\beta)$.
\end{lemma}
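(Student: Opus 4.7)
The plan is to reduce Lemma~\ref{inert2} to constructing a $W_L$-equivariant isomorphism $\phi: \ttr_1 \xrightarrow{\sim} \ttr_2 \otimes_E \unr(\beta^{-1})$ of underlying vector spaces (this rewrites as $\ttr_2 \cong \ttr_1 \otimes \unr(\beta)$). Observe first that since $\ttr_1,\ttr_2$ are absolutely irreducible Weil-Deligne representations their monodromy operators vanish (the kernel of $N$ would otherwise be a proper non-zero sub-Weil-Deligne representation), so only the $W_L$-actions need to be matched. Using the fixed isomorphisms (\ref{basisIner}) I will identify the underlying spaces of $\ttr_1$ and $\ttr_2$ with $\xi=\bigoplus_{i=0}^{d_0-1}\xi_i$ as $I(L'/L)$-representations, so that $I_L$-equivariance of $\phi$ will be automatic once $I(L'/L)$-equivariance is checked (since $I_L$ acts on both $\ttr_j$ through $I(L'/L)$).

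By Lemma~\ref{iner0} the $\xi_i$ are pairwise non-isomorphic absolutely irreducible $I(L'/L)$-representations, so by Schur's lemma any $I(L'/L)$-equivariant endomorphism of $\xi$ preserves each $\xi_i$ and acts there by a scalar. I will therefore seek $\phi$ in the form $\phi|_{\xi_i}=c_i\cdot\id_{\xi_i}$ for scalars $c_i\in E^{\times}$. Since conjugation by $F$ cyclically permutes the $\xi_i$, the operator $\ttr_j(F)$ (which intertwines $\ttr_j$ with its $F$-twist) sends $\xi_{i-1}$ to $\xi_i$ for each $i\in\Z/d_0$. The intertwining condition $\phi\circ\ttr_1(F)=\beta^{-1}\ttr_2(F)\circ\phi$ restricted to $\xi_{i-1}$ then translates, via the defining relation $\ttr_2(F)|_{\xi_{i-1}}=\alpha_i\,\ttr_1(F)|_{\xi_{i-1}}$ (a rewriting of $(\ttr_2(F)\ttr_1(F)^{-1})|_{\xi_i}=\alpha_i$), to the cyclic recursion $c_i=\beta^{-1}\alpha_i\,c_{i-1}$ for $i\in\Z/d_0$.

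Finally, I will set $c_0:=1$ and $c_i:=\beta^{-i}\alpha_1\cdots\alpha_i$ for $1\le i\le d_0-1$; the cyclic consistency $c_{d_0}=c_0$ is precisely $\beta^{d_0}=\alpha_1\cdots\alpha_{d_0}=\alpha$, which holds by the choice of $\beta$ (and this is exactly why one must adjoin a $d_0$-th root of $\alpha$ to $E$). Combined with the automatic $I_L$-equivariance and the vanishing of $N$ on both sides, this will show that $\phi$ is an isomorphism of Weil-Deligne representations, giving the lemma. There is no real obstacle here: the content of the argument is to exploit the rigidity furnished by Lemma~\ref{iner0} to solve the ``cocycle condition'' using exactly the hypothesis $\beta^{d_0}=\alpha$.
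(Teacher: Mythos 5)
Your proof is correct, but it takes a more constructive route than the paper. The paper's argument first invokes the fact that two Weil-Deligne representations with the same cuspidal inertial type are unramified twists of one another, so that $\ttr_2\cong\ttr_1\otimes_E\unr(\beta')$ for some $\beta'$, unique up to $\mu_{d_0}$, and then evaluates at $F^{d_0}$ to get $(\beta')^{d_0}=\alpha$; the conclusion for an arbitrary $d_0$-th root $\beta$ of $\alpha$ then rests on $\mu_{d_0}$ being the group of unramified self-twists. You instead build the intertwiner explicitly: Schur's lemma applied to the multiplicity-free decomposition of Lemma \ref{iner0} forces it to be a scalar $c_i$ on each $\xi_i$, the relation $\ttr_2(F)|_{\xi_{i-1}}=\alpha_i\,\ttr_1(F)|_{\xi_{i-1}}$ turns $F$-equivariance into the cyclic recursion $c_i=\beta^{-1}\alpha_i c_{i-1}$, and $\beta^{d_0}=\alpha$ is exactly the consistency condition for solving it. This buys a self-contained argument that uses neither the twist classification nor the identification of $\mu_{d_0}$ with the self-twist group (in effect it reproves both in this situation, and yields the statement for every $d_0$-th root $\beta$ at once), at the cost of carrying out the computation by hand. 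One small repair: absolute irreducibility of $\ttr_1,\ttr_2$ is not among the hypotheses (they are only assumed to be Weil-Deligne representations of inertial type $\xi$), so your justification of $N=0$ via irreducibility is not literally available; but the Schur argument you already use gives it directly, since $N$ is a nilpotent $I(L'/L)$-equivariant endomorphism of $\xi$, hence acts by a scalar on each of the pairwise non-isomorphic $\xi_i$ and is therefore zero. Your implicit convention $\unr(\beta^{-1})(F)=\beta^{-1}$ is the same one the paper uses in its own proof, so there is no normalization mismatch.
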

\begin{proof}
Since $\ttr_2$ and $\ttr_1$ have the same cuspidal inertial type, there exists $\beta'$ such that $\ttr_2\cong \ttr_1 \otimes_E \unr(\beta')$. Note that $\beta'$ is unique up to multiplication by an element in $\mu_{d_0}$. It follows that $\ttr_2(F^{d_0})=\ttr_1(F^{d_0}) (\beta')^{d_0}$, hence $(\beta')^{d_0}=\alpha$. 
\end{proof}

We now go back to the setting of \S~\ref{s: DO}. We let $\Omega_0$ be the Bernstein component of $\GL_n(L)$ associated to the cuspidal Bernstein component $\Omega$ of $L_P(L)$, i.e.\ $\Omega_0$ is the Bernstein component with cuspidal type $(L_P(L), \pi_{L_P}\cong \boxtimes_{i=1}^r \pi_i)$. Let $\cZ_{\Omega_0}$ be the centre of $\Omega_0$, and $\xi_i$ be the inertial type of $\Omega_i$ for $i=0,\dots, r$ (note that $\xi_0$ is different here from the $\xi_0$ of Lemma \ref{iner0}). We have thus $\xi_0\cong \oplus_{i=1}^r \xi_i$. Moreover, by \cite[Prop.\ 4.1]{Dat99} (see also \cite[\S~3.6]{CEGGPS}), there is a one-to-one correspondence between closed points of $\Spec \cZ_{\Omega_0}$ and semi-simple Weil-Deligne representations $\ttr$ of inertial type $\xi_0$ (so $N=0$ on $\ttr$ and $\ttr \cong \oplus_{i=1}^r \ttr_i$ for certain absolutely irreducible Weil-Deligne representations $\ttr_i$ of inertial type $\xi_i$). Let $\sW_{\Omega}:=\{w\in S_r\ |\ \Omega_i=\Omega_{w^{-1}(i)},\ \forall\ i=1,\dots, r\}$. By \cite[Prop.\ 2.1]{Dat99}, there is a natural isomorphism of $E$-algebras
\begin{equation*}
	\cZ_{\Omega}^{\sW_{\Omega}} \xlongrightarrow{\sim} \cZ_{\Omega_0}.
\end{equation*}
On closed points, the corresponding map $\Spec \cZ_{\Omega} \ra (\Spec \cZ_{\Omega})/\sW_{\Omega} \cong \Spec \cZ_{\Omega_0}$ sends the $r$-tuple $(\ttr_1,\dots,\ttr_r)$ to $\oplus_{i=1}^r \ttr_i$. 

Let $\{\widetilde{\xi}_1, \dots, \widetilde{\xi}_s\}$ be the set of isomorphic classes of $\{\xi_1, \dots, \xi_r\}$, and for $j\in \{1,\dots,s\}$ let $m_j:=|\{i \ |\ \xi_i \cong \widetilde{\xi}_j\}|$. Thus $\sum_{j=1}^s m_j=r$, $\sW_{\Omega}\cong \prod_{j=1}^s S_{m_j}$, and we have an isomorphism $\xi_0\cong \oplus_{j=1}^s \widetilde{\xi}_j^{\oplus m_j}$. By Lemma \ref{iner0}, we have a decomposition $\widetilde{\xi}_j \cong \oplus_{k\in \Z/d_j\Z} \widetilde{\xi}_{j,k}$ for some integer $d_j\geq 1$ dividing $\dim_E \widetilde{\xi}_j$, where $\widetilde{\xi}_{j,k}:=F^k(\widetilde{\xi}_j)$ for $j=1, \dots, s$ and $k\in \Z/d_j\Z$ are pairwise non-isomorphic absolutely irreducible representations of $I(L'/L)$ over $E$ (note that distinct inertial types do not have any common irreducible constituents). Let $f_j:=\dim_E \widetilde{\xi}_{j,0}$ ($=\dim_E \widetilde{\xi}_{j,k}$ for all $k\in \Z/d_j\Z$). Hence $\sum_{j=1}^s m_jd_jf_j=n$, $\sum_{j=1}^s m_j=r$, $d_jf_j=n_i$ if $\xi_i \cong \widetilde{\xi_j}$ and we have a decomposition
\begin{equation}\label{decomptype}
\xi_0\cong \bigoplus_{j=1}^s \Big(\bigoplus_{k\in \Z/d_j\Z} \big(\underbrace{\widetilde{\xi}_{j,k} \oplus \cdots \oplus \widetilde{\xi}_{j,k}}_{m_j}\big)\Big).
\end{equation}
In order to describe a Weil representation of inertial type $\xi_0$ in a more concrete way, we now fix a basis of $\xi_0$ with respect to the decomposition (\ref{decomptype}):
\[\ul{e}=(\ul{e}_{j,k})_{\substack{j=1, \dots, s \\ k \in \Z/d_j\Z}}=(\ul{e}_{j,k,i})_{\substack{j=1, \dots, s \\ k \in \Z/d_j\Z\\ i=1, \dots, m_j}}\]
where each $(\ul{e}_{j,k,i})$ for $i=1, \dots, m_j$ means a choice of a basis on the $f_j$-dimensional $E$-vector space $\widetilde{\xi}_{j,k}$. We choose these basis so that the following conditions are satisfied:
\begin{condition}\label{condIac}
	(1) For $1\leq i, i'\leq m_j$, the $E$-linear map $\widetilde{\xi}_{j,k}\rightarrow \widetilde{\xi}_{j,k}$ sending the basis $\ul{e}_{j,k,i}$ to the basis $\ul{e}_{j,k,i'}$ is $I(L'/L)$-equivariant.
	
	(2) The $E$-linear map $\iota_{j,k,i}:\widetilde{\xi}_{j,k}\rightarrow \widetilde{\xi}_{j,k+1}$ sending the basis $\ul{e}_{j,k,i}$ to the basis $\ul{e}_{j,k+1,i}$ satisfies
	\[\iota_{j,k,i}(g v)=(FgF^{-1}) \iota_{j,k,i}(v)\]
	for all $v\in \widetilde{\xi}_{j,k}$ and $g\in I(L'/L)$. 
\end{condition} 
Condition (1) is equivalent to the fact that, for any element $g\in I(L'/L)$, the matrices of $g$ in the basis $\ul{e}_{j,k,i}$ and $\ul{e}_{j,k,i'}$ are the same. Condition (2) is equivalent to the fact that, for any element $g\in I(L'/L)$, the matrix of $g$ in the basis $\ul{e}_{j,k,i}$ is equal to the matrix of $FgF^{-1}$ in the basis $\ul{e}_{j,k+1,i}$.

We fix a semi-simple Weil-Deligne representation $\ttr_0\cong \oplus_{j=1}^s \widetilde{\ttr}_j^{\oplus m_j}$ such that $\widetilde{\ttr}_j$ is of inertial type $\widetilde{\xi}_j$ for all $j$. So $\ttr_0$ is of inertial type $\xi_0$ and $N=0$ on $\ttr_0$. We fix an isomorphism $\iota_0$ of $I(L'/L)$-representations:
\begin{equation*}
	\iota_0: \langle \ul{e}\rangle=\xi_0 \xlongrightarrow{\sim} \ttr_0
\end{equation*}
where the notation $\langle \ul{e}\rangle$ means the $E$-vector spaces generated by the basis $\underline e$. The $F$-action on $\ttr_0$ then gives an endomorphism $\ttr_0(F)$ on $\xi_0$ which sends $\widetilde{\xi}_{j,k}^{\oplus m_j}$ to $\widetilde{\xi}_{j,k+1}^{\oplus m_j}$. Modifying the isomorphism $\iota_0$ if necessary and by condition (2) in our choice of the basis $\ul{e}$ of $\xi_0$, we can and do assume
\begin{equation}
	\label{choiBa}
	\ul{e}_{j,k+1,i}=\ttr_0(F) \ul{e}_{j,k,i} \text{ for $j=1, \dots, s$ and $k=0, \dots, d_j-2$}. 
\end{equation}

Let $\ttr$ be an arbitrary Weil-Deligne representation of inertial type $\xi_0$ (with $N$ possibly non-zero on $\ttr$), and fix again an $I(L'/L)$-equivariant isomorphism $\langle \ul{e}\rangle=\xi_0 \xlongrightarrow{\sim} \ttr$. The $F$-action on $\ttr$ gives another endomorphism $\ttr(F)$ on $\xi_0$ sending $\widetilde{\xi}_{j,k}^{\oplus m_j}$ to $\widetilde{\xi}_{j,k+1}^{\oplus m_j}$. The endomorphism $\ttr(F) \circ \ttr_0(F)^{-1}:$ of $\xi_0$ is $I(L'/L)$-equivariant, hence preserves each $\widetilde{\xi}_{j,k}^{\oplus m_j}$ for $j=1, \dots, s$ and $k\in \Z/d_j\Z$. Since $\End_{I(L'/L)}(\widetilde{\xi}_{j,k})\cong E$, the restriction of $\ttr(F) \circ \ttr_0(F)^{-1}$ to $\widetilde{\xi}_{j,k}^{\oplus m_j}$ is given in the basis $(\ul{e}_{j,k,i})_{i=1, \dots, m_j}$ by a matrix $A_{j,k}$ which lies in the image $\GL_{m_j}(E \mathrm{Id}_{f_j})$ of 
\begin{equation}\label{eImap}
	\GL_{m_j}(E) \hooklongrightarrow \GL_{m_jf_j}(E), \ (a_{uv})_{1 \leq u, v \leq m_j} \mapsto (a_{uv} \mathrm{Id}_{f_j})_{1 \leq u, v \leq m_j}
\end{equation}
where $\mathrm{Id}_{f_j}\in \GL_{f_j}(E)$ is the identity matrix.
Moreover, for $j=1, \dots, s$ and $k=0, \dots, d_j-2$, we see by (\ref{choiBa}) that $A_{j,k}$ is actually the matrix of the morphism $\ttr(F): \widetilde{\xi}_{j,k}^{\oplus m_j} \ra \widetilde{\xi}_{j,k+1}^{\oplus m_j}$ in the basis $\ul{e}_{j,k}$ and $\ul{e}_{j,k+1}$. Hence the matrix of the endomorphism $\ttr(F) \circ \ttr_0(F)^{-1}$ in the basis $\ul{e}$ is 
\begin{equation}
	\label{matrixW}
	A=\diag\Big(\{A_{j,k}\}_{\substack{j=1, \dots,s \\ k\in \Z/d_j\Z}}\Big)\in \GL_n(E).
\end{equation}
The converse also holds: given a matrix $A'=\diag(\{A'_{j,k}\})$ as in (\ref{matrixW}) with $A'_{j,k}\in \GL_{m_j}(E \mathrm{Id}_{f_j})$, one can associate a Weil-Deligne representation of inertial type $\xi_0$ with $N=0$ by letting $F$ act on the basis $\ul e$ by $A'\circ \ttr_0(F)$.

Now let $B_{j,k}$ be the preimage of $A_{j,k}$ via (\ref{eImap}) and put $B_j:= B_{j,0}B_{j,1} \cdots B_{j,d_j-1}\in \GL_{m_j}(E)$. Using \ again \ (\ref{choiBa}), \ we \ see \ that \ the \ image \ $A_j$ \ of \ $B_j$ \ via \ (\ref{eImap}) \ is \ the \ matrix \ of $\ttr(F^{d_j})\circ \ttr_0(F^{-d_j}) \big|_{\widetilde{\xi}_{j,d_j-1}^{\oplus m_j}}$ in the basis $\ul{e}_{j,d_j-1}$ of $\widetilde{\xi}_{j,d_j-1}^{\oplus m_j}$. Since both $\ttr(F^{d_j})$ and $\ttr_0(F^{-d_j})$ preserve $\widetilde{\xi}_{j,d_j-1}^{\oplus m_j}$ (actually they preserve $\widetilde{\xi}_{j,k}^{\oplus m_j}$ for all $k=0, \dots, d_j-1$), the conjugacy class of $A_j$ is independent of the choice of the basis of $\widetilde{\xi}_{j,d_j-1}^{\oplus m_j}$. We call $\ttr$ \textit{generic} if $\ttr^{\sss}\cong \oplus_{i=1}^r \ttr_i$ satisfies $\ttr_i\neq\ttr_{i'}$ and $\ttr_i \neq \ttr_{i'} \otimes_E \unr(q_L)$ for all $i\neq i'$. In particular, if $\ttr$ is generic, then $N=0$ on $\ttr$ and $\ttr\cong \ttr^{\sss}$. Note that all $\ttr$ of inertial type $\xi_0$ are generic if $\Omega_i \neq \Omega_{i'}$ for all $i \neq i'$.

\begin{lemma}\label{genenum}
With the above notation, $\ttr$ is generic if and only if, for any $j=1, \dots, s$, the eigenvalues $\alpha_{j,1}, \dots, \alpha_{j,m_j}$ of $B_j$ satisfy $\alpha_{j,i}\neq \alpha_{j,i'}$, and $\alpha_{j,i} \neq \alpha_{j,i'} q_L^{d_j}$ for $i\neq i'$. 
\end{lemma}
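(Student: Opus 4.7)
The strategy is to reduce the equivalence to a per-inertial-type computation and then invoke Clifford theory.

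First, I would observe that $\ttr$ decomposes as a $W_L$-representation into a direct sum $\ttr=\bigoplus_{j=1}^{s}\ttr^{(j)}$, where $\ttr^{(j)}$ is supported on the $\widetilde\xi_j$-isotypic piece $\widetilde\xi_j^{\oplus m_j}$ of $\ttr|_{I_L}$. This subspace is $W_L$-stable because Frobenius permutes the $\widetilde\xi_{j,k}$ cyclically within a fixed $j$. Since the inertial types $\widetilde\xi_j$ are pairwise non-isomorphic and $\unr(q_L)$ is trivial on $I_L$, any possible isomorphism $\ttr_i\cong\ttr_{i'}$ or $\ttr_i\cong\ttr_{i'}\otimes\unr(q_L)$ between constituents of $\ttr^{\sss}$ can only occur within a single $\ttr^{(j)}$. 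Moreover the matrix $B_j$ depends only on the restriction of $\ttr$ to $\widetilde\xi_j^{\oplus m_j}$, so it suffices to prove the statement for a fixed $j$; I would then drop the index $j$ and write $B,d,f,m,\widetilde\ttr,\widetilde\xi$ for $B_j,d_j,f_j,m_j,\widetilde\ttr_j,\widetilde\xi_j$.

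Second, I would apply Clifford theory for $I_L\trianglelefteq W_L$: every irreducible $W_L$-representation of inertial type $\widetilde\xi$ is isomorphic to $\widetilde\ttr\otimes\unr(\beta)$ for some $\beta\in E^\times$, and by the paragraph preceding Lemma~\ref{iner0} (the stabilizer is $\mu_d$) two such are isomorphic iff $\beta^d=(\beta')^d$. Writing $\ttr^{\sss}\cong\bigoplus_{i=1}^{m}\widetilde\ttr\otimes\unr(\beta_i)$, the genericness of $\ttr$ translates into (a) the $\beta_i^d$ are pairwise distinct and (b) $\beta_i^d\neq q_L^d\beta_{i'}^d$ for $i\neq i'$. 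The condition $N=0$ needed for $\ttr=\ttr^{\sss}$ is then automatic from (b): a nonzero $N$ would give an $I_L$-equivariant map $\ttr\to\ttr(1)$ with $F$-eigenvalues twisted by $q_L$, forcing some $\widetilde\ttr\otimes\unr(\beta_i)\cong\widetilde\ttr\otimes\unr(q_L\beta_{i'})$ by Schur, i.e.\ $\beta_i^d=q_L^d\beta_{i'}^d$, contradicting (b).

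Third, I would identify the eigenvalues of $B$ with the $\beta_i^d$'s. By construction $A:=A_j$ is $I(L'/L)$-equivariant on $V:=\widetilde\xi_{d-1}^{\oplus m}\cong\widetilde\xi_{d-1}\otimes_E E^m$, hence by Schur's lemma corresponds via (\ref{eImap}) to $B\in\GL_m(E)$, so the eigenvalues of $A$ on $V$ are the eigenvalues of $B$ each with multiplicity $f$. On the other hand, using the decomposition $\ttr^{\sss}=\bigoplus_i\widetilde\ttr\otimes\unr(\beta_i)$, on the $i$-th summand one has $\ttr^{\sss}(F^d)=\beta_i^d\cdot\widetilde\ttr(F^d)$ while $\ttr_0(F^d)=\widetilde\ttr(F^d)$, so $A=\ttr(F^d)\circ\ttr_0(F^{-d})|_V$ acts by the scalar $\beta_i^d$ on the $i$-th copy of $\widetilde\xi_{d-1}$; hence its eigenvalues are the $\beta_i^d$'s each with multiplicity $f$. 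Since $F^d$ has the same eigenvalues on $\ttr$ and on $\ttr^{\sss}$ ($N$ does not enter Frobenius), we conclude $\{\alpha_{j,1},\ldots,\alpha_{j,m}\}=\{\beta_1^d,\ldots,\beta_m^d\}$ as multisets, even when $B$ is not diagonalizable.

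Combining the two characterizations finishes the proof. The main technical subtlety is carrying out step three cleanly when $B$ is not diagonalizable or $N\neq 0$; the safest route is to compare characteristic polynomials (which are well-defined for $A$ regardless) rather than individual eigenspaces, and then to record that the decomposition of $A$ into scalar blocks $\beta_i^d\cdot\Id_{\widetilde\xi_{d-1}}$ is valid on $\ttr^{\sss}$, which is enough since only the eigenvalues (with multiplicity) of $A$ enter the conclusion.
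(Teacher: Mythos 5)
Your reduction to a single inertial type $j$ and the Clifford-theoretic translation of genericity into the conditions $\beta_{j,i}^{d_j}\neq\beta_{j,i'}^{d_j}$ and $\beta_{j,i}^{d_j}\neq q_L^{d_j}\beta_{j,i'}^{d_j}$ are fine, and your ``only if'' half (identify the eigenvalues of $B_j$ with the $\beta_{j,i}^{d_j}$ once $\ttr$ is known to be a direct sum of twists of $\widetilde\ttr_j$) is essentially the paper's computation. The gap is in the ``if'' half, which in your scheme rests entirely on the \emph{unconditional} claim that the eigenvalues of $B_j$ are the $\beta_{j,i}^{d_j}$ read off from $\ttr^{\sss}$, before semisimplicity is known. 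Your justification --- ``$F^{d_j}$ has the same eigenvalues on $\ttr$ and on $\ttr^{\sss}$, since $N$ does not enter Frobenius'' --- does not apply: the operator whose spectrum the lemma is about is not $\ttr(F^{d_j})$ but the composite $A_j=\ttr(F^{d_j})\circ\ttr_0(F^{-d_j})$ with a \emph{fixed} auxiliary operator, and for a fixed $\Psi$ the products $\Phi\Psi$ and $\Phi_{\mathrm{s}}\Psi$ (with $\Phi_{\mathrm{s}}$ the semisimple part of $\Phi$) need not have the same spectrum. Nor does ``comparing characteristic polynomials'' help by itself: the characteristic polynomial you must compute is the one of $A_j$ on $\ttr$, and its agreement with the one computed on $\ttr^{\sss}$ is exactly the unproved point, so as written the ``if'' direction is circular.

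The claim you want is in fact true, but it requires an argument of the following kind: any filtration of the type-$j$ part of $\ttr$ by Weil subrepresentations induces an $A_j$-stable filtration of $\widetilde{\xi}_{j,d_j-1}^{\oplus m_j}$, because $\ttr_0(F^{\pm d_j})$ restricted to this isotypic subspace has the form $\mathrm{Id}_{m_j}\otimes C_j$ with $C_j\in\GL_{f_j}(E)$ and hence preserves every $I(L'/L)$-subrepresentation $U\otimes\widetilde{\xi}_{j,d_j-1}$; on each graded piece the comparison isomorphism with $\widetilde{\ttr}_j\otimes_E\unr(\beta_{j,i})$ restricts, by Schur's lemma applied to the irreducible $\widetilde{\xi}_{j,d_j-1}$, to a scalar, so the intertwiner $C_j$ cancels and $A_j$ acts by the scalar $\beta_{j,i}^{d_j}$ (a determinant count alone only pins the scalar down up to an $f_j$-th root of unity, so the Schur step is needed). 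Alternatively, you can avoid the non-semisimple dictionary altogether, which is what the paper does: assuming the $\alpha_{j,i}$ distinct, decompose $\widetilde{\xi}_{j,d_j-1}^{\oplus m_j}$ into $A_j$-eigenspaces, each isomorphic to $\widetilde{\xi}_{j,d_j-1}$ and preserved by $\ttr(F^{d_j})$; let each generate a Weil subrepresentation under $\ttr(F)$, identify it with $\widetilde{\ttr}_j\otimes_E\unr(\beta_{j,i})$ for a $d_j$-th root $\beta_{j,i}$ of $\alpha_{j,i}$ via Lemma \ref{inert2}, and conclude $\ttr\cong\oplus_{j,i}\ttr_{j,i}$ by a dimension count, whence genericity. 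Either repair makes your plan work; without one of them the ``if'' direction is not proved.
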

\begin{proof}
	Assume $\ttr$ is generic, in particular $\ttr\cong \oplus_{j=1}^s \oplus_{i=1}^{m_j} \ttr_{j,i}$. Let $\beta_{j,i}\in E^{\times}$ be such that $\ttr_{j,i}\cong \widetilde{\ttr}_j \otimes_E \unr(\beta_{j,i})$. We then deduce that there exists a basis $\ul{e}'_{j,d_j-1}$ of $\widetilde{\xi}_{j,d_j-1}^{\oplus m_j}$ such that the corresponding matrix of the operator $\ttr(F^{d_j}) \circ \ttr_0(F^{-d_j})\big|_{\widetilde{\xi}_{j,d_j-1}^{\oplus m_j}}$ is equal to
	\[\diag(\underbrace{\beta_{j,1}^{d_j}, \dots, \beta_{j,1}^{d_j}}_{f_j}, \dots, \underbrace{\beta_{j,m_j}^{d_j}\dots, \beta_{j,m_j}^{d_j}}_{f_j}).\]
	Thus $\{\alpha_{j,1}, \dots, \alpha_{j,m_j}\}=\{\beta_{j,1}^{d_j}, \dots, \beta_{j,m_j}^{d_j}\}$, and the ``only if" part then follows from definition of genericity. Now assume $\alpha_{j,1}, \dots, \alpha_{j,m_j}$ satisfy the conditions in the lemma, in particular, are distinct. By comparing dimensions, we easily see that, for each $\alpha_{j,i}$, the subspace $\ttr_{j,i,d_j-1}$ of $\widetilde{\xi}_{j,d_j-1}^{\oplus m_j}$ on which $\ttr(F^{d_j})\circ \ttr_0(F^{-d_j})$ acts via $\alpha_{j,i}$ is isomorphic to $\widetilde{\xi}_{j,d_j-1}$. Any subrepresentation $\widetilde{\xi}_{j,d_j-1}$ in $\widetilde{\xi}_{j,d_j-1}^{\oplus m_j}$ is preserved by $\ttr_0(F^{-d_j})$. We deduce then $\ttr_{j,i,d_j-1}$ is preserved by $\ttr(F^{d_j})$. Under the $\ttr(F)$-action, $\ttr_{j,i,d_j-1}$ then generates an irreducible Weil-Deligne subrepresentation $\ttr_{j,i}$ of $\ttr$ of inertial type $\widetilde{\xi}_j$. By Lemma \ref{inert2}, $\ttr_{j,i}\cong \widetilde{\ttr}_j \otimes_E \unr(\beta_{j,i})$ for any $d_j$-th root $\beta_{j,i}$ of $\alpha_{j,i}$. By the conditions on $\{\alpha_{j,i}\}$, we see that $\ttr\cong \oplus_{j=1}^s \oplus_{i=1}^{m_j} \ttr_{j,i}$ and that $\ttr$ is generic.
\end{proof}

Fix $\textbf{h}=(h_{i,\tau})_{\substack{i=1, \dots, n\\ \tau\in \Sigma_L}}\in \Z^{n[L:\Q_p]}$ with $h_{i,\tau}>h_{i+1,\tau}$. Consider $\fX_{\overline{\rho}}^{\pcr}(\xi_0,\textbf{h}):=(\Spf R_{\overline{\rho}}^{\pcr}(\xi_0, \textbf{h}))^{\rig}$.\index{$\fX_{\overline{\rho}}^{\pcr}(\xi_0,\textbf{h})$} By \cite[Thm.\ 4.1]{CEGGPS}, there is a natural morphism
\begin{equation*}
	\fX_{\overline{\rho}}^{\pcr}(\xi_0,\textbf{h}) \lra (\Spec \cZ_{\Omega_0})^{\rig} \cong (\Spec \cZ_{\Omega})^{\rig}/\sW_{\Omega}
\end{equation*}
which, pointwise, sends $\rho$ to the semi-simple Weil-Deligne representation associated to $\rho$. Let \index{$\widetilde{\fX}_{\overline{\rho}}^{\pcr}(\xi_0, \textbf{h})$}
\begin{equation*}
	\widetilde{\fX}_{\overline{\rho}}^{\pcr}(\xi_0, \textbf{h}):= \fX_{\overline{\rho}}^{ \pcr}(\xi_0,\textbf{h}) \times_{(\Spec \cZ_{\Omega_0})^{\rig} } (\Spec \cZ_{\Omega})^{\rig},
\end{equation*}
so a point of $\widetilde{\fX}_{\overline{\rho}}^{\pcr}(\xi_0, \textbf{h})$ is of the form $(\rho, (\ttr_i))$ with $\ttr(\rho)^{\sss} \cong \oplus_{i=1}^r \ttr_i$. In particular, the $r$-tuple $(\ttr_i)$ induces an $\Omega$-filtration on $D_{\rig}(\rho)$. Let $U_{\overline{\rho}}^{\pcr}(\xi_0, \textbf{h})$ (resp.\ $\widetilde{U}_{\overline{\rho}}^{\pcr}(\xi_0, \textbf{h})$) be the set of points $\rho\in \fX_{\overline{\rho}}^{\pcr}(\xi_0,\textbf{h})$ (resp.\ $(\rho, (\ttr_i))\in \widetilde{\fX}_{\overline{\rho}}^{\pcr}(\xi_0, \textbf{h})$) such that $\ttr(\rho)$ is generic. \index{$U_{\overline{\rho}}^{\pcr}(\xi_0, \textbf{h})$}\index{$\widetilde{U}_{\overline{\rho}}^{\pcr}(\xi_0, \textbf{h})$}

\begin{proposition}\label{geneDen}
The set $U_{\overline{\rho}}^{\pcr}(\xi_0, \textbf{h})$ is Zariski-open and Zariski-dense in $\fX_{\overline{\rho}}^{\pcr}(\xi_0, \textbf{h})$.
\end{proposition}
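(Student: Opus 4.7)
The proposition asserts both Zariski-openness and Zariski-density of $U_{\overline{\rho}}^{\pcr}(\xi_0, \textbf{h})$ in $\fX_{\overline{\rho}}^{\pcr}(\xi_0, \textbf{h})$, and my plan is to treat the two assertions separately.

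For openness, I will combine the characterization in Lemma \ref{genenum} with the natural morphism
\[\eta: \fX_{\overline{\rho}}^{\pcr}(\xi_0,\textbf{h}) \lra (\Spec \cZ_{\Omega_0})^{\rig}\]
sending a potentially crystalline deformation $\rho$ to the semisimplification of $\ttr(\rho)$. By Lemma \ref{genenum}, the genericity of the corresponding Weil--Deligne representation is equivalent to non-vanishing of discriminant-type polynomials in the characteristic polynomials of the matrices $B_j\in \GL_{m_j}(E)$; these elementary symmetric functions in the eigenvalues of the $B_j$ are genuine regular functions on $(\Spec \cZ_{\Omega_0})^{\rig}$ (the Bernstein centre precisely parametrises such symmetric data). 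Consequently the genericity locus cuts out a Zariski-open subset $V\subseteq (\Spec \cZ_{\Omega_0})^{\rig}$, and the identification $U_{\overline{\rho}}^{\pcr}(\xi_0,\textbf{h})=\eta^{-1}(V)$ makes it Zariski-open.

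For density, by \cite[Thm.\ 3.3.8]{Kis08} the rigid space $\fX_{\overline{\rho}}^{\pcr}(\xi_0,\textbf{h})$ is formally smooth (hence reduced and equidimensional of dimension $n^2+[L:\Q_p]\frac{n(n-1)}{2}$) at every point when non-empty, so it suffices to show that the closed non-generic complement does not contain any irreducible component. The strategy is to show that at any (in particular non-generic) point $\rho$, the morphism $\eta$ moves into $V$ in some tangent direction; this will force $\rho$ to lie in the closure of $\eta^{-1}(V)=U_{\overline{\rho}}^{\pcr}(\xi_0,\textbf{h})$. Concretely, Kisin's description of $\fX_{\overline{\rho}}^{\pcr}(\xi_0,\textbf{h})$ in terms of filtered $(\varphi,\Gal(L'/L))$-modules permits deforming the Frobenius operator on the Deligne--Fontaine module attached to $\rho$ by a first-order perturbation preserving the inertial type and the Hodge filtration; such perturbations span enough directions in the tangent space to produce arbitrary first-order changes to the characteristic polynomials of the $B_j$, and one may in particular land in $V$.

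The main obstacle will be justifying rigorously this tangent space computation at non-generic $\rho$: the composition with $\eta$ on the tangent space must produce a non-zero element of $T_{\eta(\rho)}(\Spec \cZ_{\Omega_0})^{\rig}$ along a direction entering $V$. The cleanest way I can see to bypass this direct calculation is to reduce to the irreducible case already handled in Proposition \ref{smooth11}: one can build a cover of a Zariski-open neighbourhood of any point of $\fX_{\overline{\rho}}^{\pcr}(\xi_0,\textbf{h})$ by ``framed'' spaces analogous to the $\cS_{\overline{\rho}}(\Omega,\textbf{h},\ul{\alpha},\{\overline{\varrho}_i\})$ of Proposition \ref{auxis1}, obtained as vector-bundle-like constructions over products $\prod_i (\Spf R^{\pcr}_{\overline{\varrho}_i}(\xi_i,\textbf{h}^i))^{\rig}$. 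The smoothness of the factor morphisms given by Proposition \ref{smooth11} propagates through to show that the generic locus pulls back to a dense open of each such cover, and hence is dense in $\fX_{\overline{\rho}}^{\pcr}(\xi_0, \textbf{h})$ itself.
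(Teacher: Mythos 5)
Your openness argument is fine, and in fact slightly more direct than the paper's: genericity only depends on $\ttr(\rho)^{\sss}$, the conditions of Lemma \ref{genenum} cut out a Zariski-open subset $V$ of $(\Spec \cZ_{\Omega_0})^{\rig}$ (symmetric functions of the $\alpha_{j,i}$ descend to the Bernstein centre), and $U_{\overline{\rho}}^{\pcr}(\xi_0,\textbf{h})=\eta^{-1}(V)$ is then Zariski-open since $\eta$ is a morphism of rigid spaces.

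The density part has a genuine gap, in two places. First, the inference ``at a non-generic point there is a tangent direction of $\fX_{\overline{\rho}}^{\pcr}(\xi_0,\textbf{h})$ whose image enters $V$, hence the point lies in the closure of $\eta^{-1}(V)$'' is not valid as stated: first-order information at a single point does not put that point in the closure of the preimage of an open set. What is needed is to package the tangent computation into an actual smooth (hence open) morphism recording the matrices $B_{j,k}$ and the Hodge flag, and this is precisely the heart of the paper's proof: one passes to a torsor $\widetilde{U}$ over an affinoid $U$ trivializing the Deligne--Fontaine module, so that the $B_{j,k}$ become matrices in $\GL_{m_j}(\co_{\widetilde{U}})$ and one gets a morphism $f:\widetilde{U}\rightarrow \big(\prod_{j,k}\GL_{m_j}\big)\times \Res^{L}_{\Q_p}(\GL_n/B)$; smoothness of $f$ is then proved by exactly the first-order Frobenius/flag perturbation you sketch (any tangent vector of the target is realized by deforming $\varphi$ and the Hodge filtration over $k(x)[\varepsilon]/\varepsilon^2$, weak admissibility being automatic for Artinian deformations by Colmez--Fontaine), and density of the generic locus in the target pulls back along the smooth map and descends from the torsor. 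So the computation you hoped to avoid is the essential step; without it (or some substitute making $\eta$-type data vary in a smooth family) the closure claim is unsupported.

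Second, the proposed bypass via covers by spaces analogous to $\cS_{\overline{\rho}}(\Omega,\textbf{h},\ul{\alpha},\{\overline{\varrho}_i\})$ does not work. Those spaces are constructed only over the generic parameter locus $\sZ^{\gen}$ (the base $\cU$ in Proposition \ref{auxis1} is by definition the preimage of $\sZ^{\gen}$), and they parametrize representations equipped with a non-critical $\Omega$-filtration; they are local avatars of the Bernstein paraboline variety $X_{\Omega,\textbf{h}}(\overline{\rho})$, not of $\fX_{\overline{\rho}}^{\pcr}(\xi_0,\textbf{h})$. In particular they cannot cover a Zariski-open neighbourhood of a \emph{non-generic} point of $\fX_{\overline{\rho}}^{\pcr}(\xi_0,\textbf{h})$, which is exactly where density has to be checked. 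Moreover, a vector-bundle-of-extensions construction over $\prod_i (\Spf R^{\pcr}_{\overline{\varrho}_i}(\xi_i,\textbf{h}^i))^{\rig}$ does not produce the potentially crystalline deformation space: extensions of de Rham $(\varphi,\Gamma)$-modules need not be de Rham, and at critical points the Hodge--Tate weights of the graded pieces are not the $\textbf{h}^i$. Finally, Proposition \ref{smooth11} is a statement about a single irreducible block and its map to $\cZ_{\Omega_i}$, whereas genericity is a condition comparing distinct blocks of $\ttr(\rho)^{\sss}$ inside the ambient deformation; the product of block-wise deformation spaces does not see how these blocks interact inside $\rho$ at a non-generic point, so its smoothness cannot by itself yield the required density.
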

\begin{proof}
Let $U$ be an arbitrary (non-empty) connected admissible affinoid open of $\fX_{\overline{\rho}}^{\pcr}(\xi_0, \textbf{h})$. A major part of the proof is to show $U_{\overline{\rho}}^{\pcr}(\xi_0, \textbf{h}) \cap U$ is Zariski-open in $U$. We will show that a certain torsor $\widetilde{U}$ over $U$ admits a smooth morphism to $\big(\prod_{\substack{j=1, \dots, s \\ k \in \Z/d_j\Z}} \GL_{m_j} \big) \times \Res^{L}_{\Q_p}(\GL_n/B)$ (cf.\ (\ref{smoo1})), where, roughly speaking, the morphism to the first factor sends $\rho$ to the matrices $\{B_{j,k}\}$ associated to $\ttr(\rho)$ as above, and the morphism to the second factor sends $\rho$ to the Hodge filtration on $D_{\dR}(\rho)$. The torsor that we will use comes from choices of basis on the corresponding objects.
	
Recall that by \cite{Kis08} we have the following data:
\begin{itemize}
\item[(1)] a rank $n$ locally free $L_0' \otimes_{\Q_p} \co_U\cong \prod_{\tau\in \Sigma_{L_0'}} \co_U$-module $\DF_U\cong \prod_{\tau \in \Sigma_{L_0'}} \DF_{U,\tau}$ equipped with a semi-linear action of $(\Gal(L'/L), \varphi)$;
\item[(2)] a decreasing filtration $\Fil^{\bullet}=\prod_{\tau \in \Sigma_L} \Fil_{\tau}^{\bullet}$ on the rank $n$ locally free $L \otimes_{\Q_p} \co_U\cong \prod_{\tau \in \Sigma_L} \co_U$-module $\cD_U:=(\DF_U \otimes_{L_0'} L')^{\Gal(L'/L)}\cong \prod_{\tau \in \Sigma_L} \cD_{U,\tau}$ by $L \otimes_{\Q_p} \co_U$-submodules which are direct summands of $\cD_U$ as $\co_U$-module such that (letting $-h_{0,\tau}=-\infty$ and $-h_{n+1,\tau}=+\infty$)
		\begin{equation*}
			\rk_{\co_U} \Fil^i_{\tau} \cD_{U,\tau}=n-j+1, \text{ for $-h_{j-1,\tau}<i \leq -h_{j,\tau}$}.
		\end{equation*}
\end{itemize}
Moreover, for any point $x\in U$, the specialization of $(\DF_{U}, \cD_U)$ at $x$ is equal to $(D_{\pst}(\rho_x), D_{\dR}(\rho_x))$ where $\rho_x$ is the associated $\Gal_L$-representation. Shrinking $U$, we can and do assume that $\DF_U$ and $\cD_{U}$ are free over $\co_U$. We fix an embedding $\tau_0: L_0 \hookrightarrow E$ and put $\tau_i:=\tau_0 \circ \Frob^{-i}$ where $\Frob$ is the (absolute arithmetic) Frobenius on the Witt vectors. Then we have a decomposition
\[L_0' \otimes_{\Q_p} E \xlongrightarrow{\sim} \bigoplus_{i\in \Z/[L_0:\Q_p]\Z} \Bigg(\bigoplus_{\substack{\tau\in \Sigma_{L_0'}\\ \tau|_{L_0}=\tau_i}} E\Bigg).\]
and a corresponding decomposition $\DF_U \cong \oplus_{i\in \Z/[L_0:\Q_p]\Z} \DF_{U,i}$, where $\DF_{U,i}$ is free over $L_0' \otimes_{L_0, \tau_i} \co_U$ and preserved by $\Gal(L'/L)$. Shrinking $U$ and enlarging $E$ if necessary (and using $H^i(H,M)=0$ for all $i>0$, finite groups $H$ and $H$-modules $M$ over $\Q$), we can and do assume that there is a semi-linear $\Gal(L'/L)$-representation $V_0$, free of rank $n$ over $L_0' \otimes_{L_0, \tau_0} E$, such that we have a $\Gal(L'/L)$-equivariant isomorphism
	\begin{equation}\label{V0DF0}
		V_0 \otimes_E \co_U \xlongrightarrow{\sim} \DF_{U, 0}.
	\end{equation}
	For $i\in \Z/[L_0:\Q_p]\Z$, let $\Fro^i: L_0' \otimes_{L_0, \tau_0} E \ra L_0' \otimes_{L_0, \tau_0\circ \Frob^{-i}} E$ be the isomorphism of algebras sending $a \otimes b$ to $\Frob^i(a) \otimes b$. The morphism $\Fro^i$ is $\Gal(L'/L)$-equivariant, where $\Gal(L'/L)$ acts on both sides via its natural action on the factor $L'_0$. Put
	\begin{equation}\label{v0vi}
	V_i:=V_0 \otimes_{L_0' \otimes_{L_0, \tau_0} E, \Fro^i} (L_0' \otimes_{L_0, \tau_i} E),
	\end{equation}
which is a semi-linear $\Gal(L'/L)$-representation over $L_0' \otimes_{L_0, \tau_i} E$. The action of $\varphi^i$ on $\DF_U$ sends $\DF_{U, 0}$ to $\DF_{U,i}$. For $i=0, \dots, [L_0:\Q_p]-1$, the composition $V_0 \otimes_E \co_U \xrightarrow{\sim} \DF_{U,0} \xrightarrow{\varphi^i} \DF_{U,i}$ induces an $L_0'\otimes_{L_0, \tau_i} \co_U$-semilinear $\Gal(L'/L)$-equivariant isomorphism 
	\begin{equation}\label{isoii}
		V_i \otimes_E \co_U \xlongrightarrow{\sim} \DF_{U,i}.
	\end{equation}
	
	For each $\tau\in \Sigma_{L_0'}$ with $\tau|_{L_0}=\tau_0$, we have $V_{0,\tau}\cong \xi_0$ as $I(L'/L)$-representation over $E$. The basis $\ul{e}$ of $\xi_0$ (that we previously fixed) then gives a basis $\ul{e}_{\tau}=(e_{\tau,1}, \dots, e_{\tau,n})$ of $V_{0,\tau}$ satisfying Condition \ref{condIac} for the $I(L'/L)$-action. Then $(\ul{e}_{\tau})_{\tau|_{L_0}=\tau_0}$ form a basis of $V_0$ over $E$. Choosing the basis $(\ul{e}_{\tau})_{\tau|_{L_0}=\tau_0}$ is the same as choosing a basis of $V_0$, formed by bases of $V_{0,\tau}$ for each $\tau$ on which the $I(L'/L)$-action satisfies Condition \ref{condIac}. For $i=1, \dots, n$, let $\tilde{e}_i:=(e_{\tau,i})_{\tau|_{L_0}=\tau_0}\in V_0$. Then $\ul{\tilde{e}}=(\tilde{e}_1, \dots, \tilde{e}_n)$ is a basis of $V_0$ over $L_0' \otimes_{L_0, \tau_0} E$, which also gives a basis of $V_i$ over $L_0' \otimes_{L_0, \tau_i} E$ by $-\otimes 1$ (see (\ref{v0vi})). For $i\in \Z/[L_0:\Q_p]\Z$, there exists thus $P_i\in \GL_n(L' \otimes_{L_0,\tau_i} E)$ such that $(\fe_1, \dots, \fe_n):=(\tilde{e}_1, \dots, \tilde{e}_n)P_i$ is a basis of $(V_i \otimes_{L_0'} L')^{\Gal(L'/L)}$ over $L\otimes_{L_0, \tau_i} E$. For $\tau\in \Sigma_L$ and $i\in \Z/[L_0:\Q_p]\Z$ such that $\tau|_{L_0}=\tau_i$, we let $\ul{\fe}_{\tau}:=(\fe_{1,\tau}, \dots, \fe_{n,\tau})$ be the $\tau$-factor of $\ul{\fe}$, which is a basis of $\big((V_i \otimes_{L_0'} L')^{\Gal(L'/L)}\big)_{\tau}$ over $E$. 
	
	Let $\sG$ be the affine subgroup $\Aut_{\Gal(L'/L)}(V_0)$ of $\Aut(V_0)\cong \Res^{L_0'}_{L_0} \GL_n \times_{\Spec L_0,\tau_0} \Spec E$ where ``$\Aut$" means $L_0' \otimes_{L_0, \tau_0} E$-linear bijections. Note that $\sG$ is smooth by Cartier's theorem. Let $\widetilde{U}$ be the $\sG$-torsor over $U$ of $\Gal(L'/L)$-equivariant isomorphisms in (\ref{V0DF0}). Define
\[\DF_{\widetilde{U}}=\bigoplus_{i\in \Z/[L_0:\Q_p]\Z} \DF_{\widetilde{U},i}:=\bigoplus_{i\in \Z/[L_0:\Q_p]\Z} \DF_{U,i}\otimes_{\co_U} \co_{\widetilde{U}}\]
equipped with the action of $(\varphi, \Gal(L'/L))$ by extension of scalars. By definition, we have a $\Gal(L'/L)$-equivariant isomorphism of $\co_{\widetilde{U}}$-modules $V_0 \otimes_E \co_{\widetilde{U}} \xrightarrow{\sim} \DF_{\widetilde{U},0}$, which induces $\Gal(L'/L)$-equivariant isomorphisms $V_i\otimes_E \co_{\widetilde{U}} \xrightarrow{\sim} \DF_{\widetilde{U},i}$ for $i\in \Z/[L_0:\Q_p]\Z$ similarly as for (\ref{isoii}). These isomorphisms then induce isomorphisms
	\begin{equation}\label{DdrUtil}
		(V_i \otimes_{L_0'} L')^{\Gal(L'/L)}\otimes_E \co_{\widetilde{U}} \xlongrightarrow{\sim} (\DF_{\widetilde{U},i}\otimes_{L_0'} L')^{\Gal(L/L)}
	\end{equation}
(use that the left hand side of (\ref{DdrUtil}) is a direct summand of $(V_i \otimes_{L_0'} L') \otimes_E \co_{\widetilde{U}} \cong \DF_{\widetilde{U},i} \otimes_{L_0'} L$ to see that (\ref{DdrUtil}) is an isomorphism). Let $\cD_{\widetilde{U}}:=\cD_U \otimes_{\co_U} \co_{\widetilde{U}}\cong (\DF_{\widetilde{U}} \otimes_{L_0'} L')^{\Gal(L'/L)}$, and $\cD_{\widetilde{U}, \tau}$ be its $\tau$-factor for $\tau \in \Sigma_L$. Using the isomorphism (\ref{DdrUtil}), we obtain a basis $\ul{\fe}_{\tau} \otimes 1$ of $\cD_{\widetilde{U}, \tau}$ over $\co_{\widetilde{U}}$ for all $\tau\in \Sigma_L$. We denote by $\Fil_{\tau}^{\bullet}$ the filtration on $\cD_{\widetilde{U}, \tau}$ induced by the corresponding filtration on $\cD_{U,\tau}$ by extension of scalars. With respect to the basis $\ul{\fe}_{\tau} \otimes 1$, $\Fil_{\tau}^{\bullet}$ gives a flag $\sF_{\widetilde{U}, \tau}\in (\GL_n/B)(\co_{\widetilde{U}})$. Taking all embeddings $\tau\in \Sigma_\tau$, we thus obtain a morphism
	\begin{equation}\label{flagPst0}
		\widetilde{U} \lra \Res^{L}_{\Q_p} (\GL_n/B).
	\end{equation}
	
We let $F\in W_L$ act on $\DF_{{\widetilde{U}},0}$ via $\varphi^{-[L_0:\Q_p]} \circ \overline{F}$, where $\overline{F}$ denotes the image of $F$ in $\Gal(L'/L)$. We have a decomposition
\[\DF_{{\widetilde{U}},0}\cong \bigoplus_{\substack{\tau\in \Sigma_{L_0'} \\ \tau|_{L_0}=\tau_0}} \DF_{\widetilde{U}, \tau}\]
and each $\DF_{\widetilde{U}, \tau}$ is preserved by $F$ and $I(L'/L)$. For any $\tau\in \Sigma_{L_0'}$ such that $\tau|_{L_0}=\tau_0$, $\DF_{{\widetilde{U}},\tau}$ equipped with $F$ and the action of $I(L'/L)$ gives a Weil-Deligne representation of $W_L$ over $\co_{\widetilde{U}}$ (which factors through $W_L/I_{L'}$ and is independent of the choice of $\tau$). Using (\ref{V0DF0}), the action of $F$ on $\DF_{{\widetilde{U}},\tau}$ induces an operator denoted by $\ttr_{\widetilde{U}}(F)$ on $V_{0,\tau} \otimes_E \co_{\widetilde{U}}$. On the other hand, we have the operator $\ttr_0(F)$ on $V_{0,\tau}\cong \xi_0$ given as in the discussion above (\ref{choiBa}), and we still denote by $\ttr_0(F)$ its extension of scalars on $V_{0,\tau}\otimes_E \co_{\widetilde{U}}$. Similarly as in the discussion above (\ref{eImap}) and using
\[\Hom_{I(L'/L)}\big(\widetilde{\xi}_{j,k}\otimes_E \co_{\widetilde{U}}, \widetilde{\xi}_{j',k'}\otimes_E \co_{\widetilde{U}}\big)=\begin{cases}
		\co_{\widetilde{U}} & j=j', k=k'\\
		0& \text{otherwise,}
	\end{cases}\]
we see that the operator $\ttr_{\widetilde{U}}(F) \circ \ttr_0(F)^{-1}$ corresponds to a matrix $A_{\widetilde{U}}=\diag(\{A_{{\widetilde{U}},j,k}\}_{\substack{j=1, \dots, s\\ k\in \Z/d_j\Z}})$ where $A_{{\widetilde{U}},j,k}$ is the matrix of the $\co_{\widetilde{U}}$-linear endomorphism 
	\begin{equation*}\ttr_{\widetilde{U}}(F) \circ \ttr_0(F)^{-1}: \widetilde{\xi}_{j,k}^{\oplus m_j} \otimes_E \co_{\widetilde{U}} \xlongrightarrow{\sim} \widetilde{\xi}_{j,k}^{\oplus m_j} \otimes_E \co_{\widetilde{U}}
	\end{equation*}
	and is the image of a matrix $B_{{\widetilde{U}},j,k}\in \GL_{m_j}(\co_{\widetilde{U}})$ via the morphism $\GL_{m_j}(\co_{\widetilde{U}})\hookrightarrow \GL_{m_jf_j}(\co_U)$, $(a_{uv})\mapsto (a_{uv} \mathrm{I}_{f_j})$. The matrices $\{B_{{\widetilde{U}},j,k}\}$ give rise to a morphism
	\[\widetilde{U} \lra \prod_{\substack{j=1, \dots, s \\ k \in \Z/d_j\Z}} \GL_{m_j}.\]
Together with (\ref{flagPst0}), we finally obtain a morphism
	\begin{equation}\label{smoo1}
		f: \widetilde{U} \lra \Bigg(\prod_{\substack{j=1, \dots, s \\ k \in \Z/d_j\Z}} \GL_{m_j} \Bigg) \times \Res^{L}_{\Q_p} (\GL_n/B).
	\end{equation}
	
We prove that $f$ is smooth. Since both source and target are smooth over $E$ (using \cite[Thm.~3.3.8]{Kis08} for the source), we only need to show that the tangent map of $f$ at any point of $\widetilde{U}$ is surjective. Let $x\in \widetilde{U}$ and $f(x)=(\{B_{x,j,k}\}, \{\sF_{x,\tau}\})$. Let $u$ be an element in the tangent space of the right hand side of (\ref{smoo1}) at $f(x)$, and denote the corresponding element by:
\[(\{\widetilde{B}_{x,j,k}\}, \{\widetilde{\sF}_{x,\tau}\}) \in \Bigg(\Big(\prod_{\substack{j=1, \dots, s \\ k \in \Z/d_j\Z}} \GL_{m_j} \Big) \times \Res^{L}_{\Q_p} (\GL_n/B)\Bigg) (k(x)[\varepsilon]/\varepsilon^2).\]
Let \ $\widetilde{A}:=\diag(\{\widetilde{A}_{x,j,k}\}_{\substack{j=1, \dots, s\\ k\in \Z/d_j\Z}})\in \GL_n(k(x)[\varepsilon]/\varepsilon^2)$ \ where \ $\widetilde{A}_{x,j,k}$ \ is \ the \ image \ of \ $\widetilde{B}_{x,j,k}$ \ via $\GL_{m_j}(k(x)[\varepsilon]/\varepsilon^2)\hookrightarrow \GL_{m_jf_j}(k(x)[\varepsilon]/\varepsilon^2), (a_{uv}) \mapsto (a_{uv} \mathrm{I}_{f_j})$. We use $\widetilde{A}$ to construct a Deligne-Fontaine module $\widetilde{\DF}_x$ over $L_0' \otimes_{\Q_p} k(x)[\varepsilon]/\varepsilon^2$. Let $\widetilde{\DF}_x:=(\oplus_{i\in \Z/[L_0:\Q_p]\Z} V_i) \otimes_{E} k(x)[\varepsilon]/\varepsilon^2$, we have a decomposition $\widetilde{\DF}_x \cong \oplus_{\tau \in \Sigma_{L_0'}}\widetilde{\DF}_{x,\tau}$. To get a operator $\varphi$ on the whole $\widetilde{\DF}_x$, we only need to construct $\varphi^i: \widetilde{\DF}_{\tau} \ra \widetilde{\DF}_{\tau \circ \Frob^{-i}}$ for one $\tau\in \Sigma_{L_0'}$ and for all $i=1, \dots, [L_0':\Q_p]$. Recall we have fixed a basis $\ul{e}_{\tau}$ of each $\widetilde{\DF}_{\tau}$. For $i=1, \dots, [L_0':L_0]$, let $M_i\in \GL_n(E)$ such that $\overline{F}(e_{\tau,1}, \dots, e_{\tau,n})=(e_{\tau \circ \Frob_{L_0}^{-i}, 1}, \dots, e_{\tau \circ \Frob_{L_0}^{-i},n})M_i$ and put:
	\begin{equation*}
		\varphi^{[L_0:\Q_p]i}: \widetilde{\DF}_{\tau} \lra \widetilde{\DF}_{\tau \circ \Frob_{L_0}^{-i}}, \ (e_{\tau,1}, \dots, e_{\tau,n}) \longmapsto \big(e_{\tau \circ \Frob_{L_0}^{-i}, 1}, \dots, e_{\tau \circ \Frob_{L_0}^{-i},n}\big)M_i (\widetilde{A} M_0)^{-i}.
	\end{equation*}
For $i=1, \dots, [L_0':\Q_p]$, writing $i=i_0 [L_0:\Q_p] + j$ with $0\leq j < [L_0:\Q_p]$, we define
\[\varphi^i: \widetilde{\DF}_{\tau} \lra \widetilde{\DF}_{\tau \circ \Frob^{-i}},\ (e_{\tau,1}, \dots, e_{\tau,n})\longmapsto (e_{\tau \circ \Frob^{-i}, 1}, \dots, e_{\tau \circ \Frob^{-i},n})M_{i_0} (\widetilde{A} M_0)^{-i_0}.\]
Thus $\widetilde{\DF}_x$ equipped with the $(\Gal(L'/L),\varphi)$-action is a Deligne-Fontaine module over $L_0' \otimes_{\Q_p} k(x)[\varepsilon]/\varepsilon^2$ and $\widetilde{\DF}_x \cong x^* \DF_{\widetilde{U}}\equiv D_{\pst}(\rho_x) \pmod{\varepsilon}$. Put:
\[\widetilde{\cD}_x:=(\widetilde{\DF}_x \otimes_{L_0'} L')^{\Gal(L'/L)}\cong \bigoplus_{\tau\in \Sigma_L}\widetilde{\cD}_{x,\tau}.\]
For each $\tau$, $\ul{\fe}_{\tau}$ form a basis of $\widetilde{\cD}_{x,\tau}$ over $k(x)[\varepsilon]/\varepsilon^2$. Using the basis $\ul{\fe}_{\tau}$, we associate to the flag $\widetilde{\sF}_{x,\tau}$ a decreasing filtration $ \Fil^i_{\tau}$ on $\widetilde{\cD}_{x,\tau}$ satisfying
	\begin{equation*}
		\rk_{k(x)[\varepsilon]/\varepsilon^2} (\Fil^i_{\tau} \widetilde{\cD}_{x,\tau})=n-j+1 \text{ for $-h_{j-1,\tau}<i \leq -h_{j,\tau}$}.
	\end{equation*} 
We \ obtain \ a \ filtered \ $(\varphi, \Gal(L'/L))$-module \ $(\widetilde{\DF}_x, \widetilde{\cD}_x)$ \ which \ is \ a \ deformation \ of \ $(D_{\pst}(\rho_x), D_{\dR}(\rho_x))$ over $k(x)[\varepsilon]/\varepsilon^2$. Thus $(\widetilde{\DF}_x, \widetilde{\cD}_x)$ is weakly admissible, and by \cite{CF} is isomorphic to $(D_{\pst}(\widetilde{\rho}_x), D_{\dR}(\widetilde{\rho}_x))$ for a certain deformation $\widetilde{\rho}_x$ of $\rho_x$ over $k(x)[\varepsilon]/\varepsilon^2$. By choosing an appropriate basis of $\widetilde{\rho}_x$ over $k(x)[\varepsilon]/\varepsilon^2$, we see that
\[\big(\widetilde{\rho}_x, V_0 \otimes_E k(x)[\varepsilon]/\varepsilon^2\xlongrightarrow{\sim} \DF(\widetilde{\rho}_x)_0\cong \widetilde{\DF}_{x,0}\big)\]
gives an element in the tangent space of $\widetilde{U}$ at $x$ which is sent to $u$. 
	
The conditions in Lemma \ref{genenum} cut out a smooth Zariski-open and Zariski-dense subspace of $\prod_{\substack{j=1, \dots, s \\ k \in \Z/d_j\Z}} \GL_{m_j}$ which, by taking fibre product with $\Res^{L}_{\Q_p} (\GL_n/B)$, gives a smooth Zariski-open and Zariski-dense subspace of $\big(\prod_{\substack{j=1, \dots, s \\ k \in \Z/d_j\Z}} \GL_{m_j} \big) \times \Res^{L}_{\Q_p} (\GL_n/B)$. The proposition then follows by the same argument as in the last paragraph of the proof of \cite[Lemma 2.4]{BHS2}. 
\end{proof}

\begin{corollary}
The set $\widetilde{U}_{\overline{\rho}}^{\pcr}(\xi_0, \textbf{h})$ is Zariski-open and Zariski-dense in $\widetilde{\fX}_{\overline{\rho}}^{\pcr}(\xi_0, \textbf{h})$.
\end{corollary}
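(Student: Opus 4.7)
The plan is to deduce the corollary from Proposition~\ref{geneDen} via a finite flat base change argument. First observe that $\widetilde{U}_{\overline{\rho}}^{\pcr}(\xi_0, \textbf{h})$ is exactly the preimage of $U_{\overline{\rho}}^{\pcr}(\xi_0, \textbf{h})$ under the natural projection $p: \widetilde{\fX}_{\overline{\rho}}^{\pcr}(\xi_0, \textbf{h}) \to \fX_{\overline{\rho}}^{\pcr}(\xi_0, \textbf{h})$, since the genericity condition in the definition only involves $\rho$. Thus the Zariski-openness of $\widetilde{U}_{\overline{\rho}}^{\pcr}(\xi_0, \textbf{h})$ is immediate from Proposition~\ref{geneDen} and continuity of $p$.

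For Zariski-density, the strategy is to reduce to the fact that $p$ is a finite flat surjection, so that irreducible components behave well under $p$. The map $p$ is the base change along $\fX_{\overline{\rho}}^{\pcr}(\xi_0, \textbf{h}) \to (\Spec \cZ_{\Omega_0})^{\rig}$ of the quotient map $\pi: (\Spec \cZ_\Omega)^{\rig} \to (\Spec \cZ_{\Omega_0})^{\rig}$ associated with the action of the finite group $\sW_\Omega$. I would check that $\pi$ is finite flat: $\cZ_\Omega$ is a tensor product of Laurent polynomial rings on which $\sW_\Omega \cong \prod_j S_{m_j}$ acts by permuting the variables within each block of equivalent inertial types $\widetilde{\xi}_j$, and the classical result that $k[y_1, \dots, y_m]$ is a free module of rank $m!$ over $k[y_1, \dots, y_m]^{S_m}$ extends to the Laurent setting by localizing at $\sigma_m = y_1 \cdots y_m$. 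Hence $\cZ_\Omega$ is free (of rank $|\sW_\Omega|$) over $\cZ_\Omega^{\sW_\Omega} \cong \cZ_{\Omega_0}$; base-changing, $p$ is finite flat and surjective.

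I would then invoke going-down for finite flat morphisms to conclude that, for each irreducible component $Z$ of $\widetilde{\fX}_{\overline{\rho}}^{\pcr}(\xi_0, \textbf{h})$, the image $Y := p(Z)$ is an irreducible component of $\fX_{\overline{\rho}}^{\pcr}(\xi_0, \textbf{h})$ of the same dimension, and that the restriction $p|_Z: Z \to Y$ is itself finite and surjective. By Proposition~\ref{geneDen}, the intersection $U_{\overline{\rho}}^{\pcr}(\xi_0, \textbf{h}) \cap Y$ is a nonempty Zariski-open subset of the irreducible space $Y$, so its preimage $\widetilde{U}_{\overline{\rho}}^{\pcr}(\xi_0, \textbf{h}) \cap Z = (p|_Z)^{-1}(U_{\overline{\rho}}^{\pcr}(\xi_0, \textbf{h}) \cap Y)$ is a nonempty Zariski-open, hence Zariski-dense, subset of $Z$. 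Running through all components then yields the desired Zariski-density.

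I do not anticipate any real obstacle: the substantial content has already been carried out in Proposition~\ref{geneDen}, and the present statement is a routine transfer of openness and density across a finite flat quotient. The only minor point to verify carefully is the finite flatness of $\pi$, which reduces to the standard freeness statement for permutation invariants on (Laurent) polynomial rings.
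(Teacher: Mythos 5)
Your argument is correct, but it takes a genuinely different route from the paper's. The paper does not invoke finiteness or flatness of $\cZ_{\Omega_0}\ra \cZ_{\Omega}$ at all: working over an arbitrary connected affinoid open $U$ of $\fX_{\overline{\rho}}^{\pcr}(\xi_0,\textbf{h})$ with its $\sG$-torsor $\widetilde{U}$ from the proof of Proposition \ref{geneDen}, it constructs a morphism $\prod_{j,k}\GL_{m_j}\ra (\Spec \cZ_{\Omega_0})^{\rig}$ interpolating semi-simplified Weil--Deligne representations, base-changes the smooth chart (\ref{smoo1}) along $(\Spec \cZ_{\Omega})^{\rig}\ra (\Spec \cZ_{\Omega_0})^{\rig}$, applies Lemma \ref{genenum} to see that the generic locus is Zariski-open and Zariski-dense in $\big(\prod_{j,k}\GL_{m_j}\big)\times_{(\Spec\cZ_{\Omega_0})^{\rig}}(\Spec\cZ_{\Omega})^{\rig}\times \Res^L_{\Q_p}(\GL_n/B)$, and then descends along the smooth surjection onto $U\times_{(\Spec\cZ_{\Omega_0})^{\rig}}(\Spec\cZ_{\Omega})^{\rig}$; since such preimages of affinoids cover $\widetilde{\fX}_{\overline{\rho}}^{\pcr}(\xi_0,\textbf{h})$, the global statement follows. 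Your approach instead isolates the purely formal structure: $\widetilde{U}_{\overline{\rho}}^{\pcr}(\xi_0,\textbf{h})=p^{-1}(U_{\overline{\rho}}^{\pcr}(\xi_0,\textbf{h}))$ (correct, since genericity only depends on $\ttr(\rho)$), and $p$ is finite flat surjective because $\cZ_{\Omega}$ is finite free over $\cZ_{\Omega}^{\sW_{\Omega}}\cong\cZ_{\Omega_0}$ (your Laurent-variant of the freeness of polynomials over symmetric polynomials is fine, as $\sW_{\Omega}$ really does permute the identical tensor factors within each block). This is shorter and reduces everything to Proposition \ref{geneDen}, but the density transfer ("the image of an irreducible component is an irreducible component") needs a little care in the rigid-analytic category, where there are no generic points: one should either run the going-down argument on minimal primes of affinoid rings and invoke the local--global correspondence for irreducible components of rigid spaces, or use the equidimensionality of $\fX_{\overline{\rho}}^{\pcr}(\xi_0,\textbf{h})$ together with a dimension count to rule out a component of $\widetilde{\fX}_{\overline{\rho}}^{\pcr}(\xi_0,\textbf{h})$ landing in the (lower-dimensional) non-generic locus. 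The paper's proof, being local on affinoids and re-using the explicit smooth chart, sidesteps these global component subtleties, at the price of redoing part of the construction of Proposition \ref{geneDen} after base change.
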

\begin{proof}
	Let \ $U$, $\tilde{U}$ \ be \ as \ in \ the \ proof \ of \ Proposition \ \ref{geneDen}. \ For \ each \ point \ $x=(B_{j,k})$ of $\prod_{\substack{j=1, \dots, s \\ k \in \Z/d_j\Z}} \GL_{m_j}$ we can associate a Weil-Deligne representation $\ttr_x$ of inertial type $\xi_0$ with $N=0$ as in the discussion below (\ref{matrixW}). By the same argument as in \cite[Prop.\ 4.3]{CEGGPS}, there exists a unique morphism $f: \prod_{\substack{j=1, \dots, s \\ k \in \Z/d_j\Z}} \GL_{m_j} \ra (\Spec \cZ_{\Omega_0})^{\rig}$ such that $\ttr_{f(x)}\cong \ttr_x^{\sss}$ and the composition $\widetilde{U} \ra U \ra \fX_{\overline{\rho}}^{\pcr}(\xi_0, \textbf{h}) \ra (\Spec \cZ_{\Omega_0})^{\rig}$ factors through $\prod_{\substack{j=1, \dots, s \\ k \in \Z/d_j\Z}} \GL_{m_j}$. Hence (\ref{smoo1}) induces a smooth morphism
	\begin{equation*}
		\widetilde{U} \times_{(\Spec \cZ_{\Omega_0})^{\rig}} (\Spec \cZ_{\Omega})^{\rig} \lra \Bigg(\Big(\prod_{\substack{j=1, \dots, s \\ k \in \Z/d_j\Z}} \GL_{m_j}\Big) \times_{(\Spec \cZ_{\Omega_0})^{\rig}} (\Spec \cZ_{\Omega})^{\rig}\Bigg) \times \Res^L_{\Q_p} (\GL_n/B). 
	\end{equation*}
	It follows from Lemma \ref{genenum} (see also the last paragraph in the proof of Proposition \ref{geneDen}) that the image of the generic points of $\widetilde{U} \times_{(\Spec \cZ_{\Omega_0})^{\rig}} (\Spec \cZ_{\Omega})^{\rig}$ is Zariski-dense and Zariski-open in $(\prod_{\substack{j=1, \dots, s \\ k \in \Z/d_j\Z}} \GL_{m_j}) \times_{(\Spec \cZ_{\Omega_0})^{\rig}} (\Spec \cZ_{\Omega})^{\rig}$. Since
\[\widetilde{U} \times_{(\Spec \cZ_{\Omega_0})^{\rig}} (\Spec \cZ_{\Omega})^{\rig} \lra U \times_{(\Spec \cZ_{\Omega_0})^{\rig}} (\Spec \cZ_{\Omega})^{\rig}\]
is smooth and surjective, we deduce that the generic points are Zariski-dense and Zariski-open in $ U \times_{(\Spec \cZ_{\Omega_0})^{\rig}} (\Spec \cZ_{\Omega})^{\rig}$.
\end{proof}

By the same argument as for \cite[Lemma 2.2]{BHS2}, we have

\begin{proposition}
	The rigid space $\widetilde{\fX}_{\overline{\rho}}^{ \pcr}(\xi_0, \textbf{h})$ is reduced.
\end{proposition}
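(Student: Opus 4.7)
The plan is to adapt the argument of \cite[Lemma 2.2]{BHS2}. First I would observe that the inclusion $\cZ_{\Omega_0} \cong \cZ_{\Omega}^{\sW_{\Omega}} \hookrightarrow \cZ_{\Omega}$ exhibits $\cZ_{\Omega}$ as a free $\cZ_{\Omega_0}$-module of rank $|\sW_{\Omega}| = \prod_j m_j!$. Indeed, each $\cZ_{\Omega_i}$ is a Laurent polynomial ring in one variable, so $\cZ_{\Omega}$ is a Laurent polynomial ring in $r$ variables, and $\sW_{\Omega} = \prod_j S_{m_j}$ acts by permutation on the variables within each block of indices $i$ with $\Omega_i$ isomorphic to a fixed $\widetilde{\Omega}_j$; freeness then follows from the fundamental theorem of symmetric functions. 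Consequently the base-change morphism
\[
\widetilde{\fX}_{\overline{\rho}}^{\pcr}(\xi_0, \textbf{h}) \lra \fX_{\overline{\rho}}^{\pcr}(\xi_0, \textbf{h})
\]
is finite and flat.

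Second, I would reduce reducedness to two conditions: condition $(S_1)$, i.e.\ absence of embedded associated points on $\widetilde{\fX}_{\overline{\rho}}^{\pcr}(\xi_0, \textbf{h})$, and generic reducedness. For $(S_1)$, by \cite{Kis08} the space $\fX_{\overline{\rho}}^{\pcr}(\xi_0, \textbf{h})$ is reduced (we even took its defining ring to be the maximal $\co_E$-flat reduced quotient), hence is itself $S_1$; and $S_1$ is preserved by finite flat base change.

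Third, for generic reducedness I would use the corollary just above: the open subspace $\widetilde{U}_{\overline{\rho}}^{\pcr}(\xi_0, \textbf{h})$ is Zariski-open and Zariski-dense in $\widetilde{\fX}_{\overline{\rho}}^{\pcr}(\xi_0, \textbf{h})$, so every generic point lies in $\widetilde{U}_{\overline{\rho}}^{\pcr}(\xi_0, \textbf{h})$. At any point $x = (\rho, (\ttr_i)) \in \widetilde{U}_{\overline{\rho}}^{\pcr}(\xi_0, \textbf{h})$, the constituents $\ttr_1, \dots, \ttr_r$ of $\ttr(\rho)$ are pairwise non-isomorphic by genericity, so the stabilizer in $\sW_{\Omega}$ of the image of $x$ in $(\Spec \cZ_{\Omega})^{\rig}$ is trivial. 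The morphism $(\Spec \cZ_{\Omega})^{\rig} \to (\Spec \cZ_{\Omega_0})^{\rig}$ is therefore étale at that image point, and étaleness is preserved by base change. Thus $\widetilde{U}_{\overline{\rho}}^{\pcr}(\xi_0, \textbf{h}) \to \fX_{\overline{\rho}}^{\pcr}(\xi_0, \textbf{h})$ is étale, and since the target is reduced on the image, so is $\widetilde{U}_{\overline{\rho}}^{\pcr}(\xi_0, \textbf{h})$. Combining this with $(S_1)$ gives reducedness on the nose.

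The one point where I expect to be careful is the étaleness statement for $(\Spec \cZ_{\Omega})^{\rig} \to (\Spec \cZ_{\Omega_0})^{\rig}$ at the image of a generic point, which is really where the genericity hypothesis $\ttr_i \not\cong \ttr_{i'}$ (for $i \neq i'$) is essential: this is exactly the condition that the $\sW_{\Omega}$-action on the relevant $|\sW_{\Omega}|$-tuple of points is free, so that the quotient map is étale there. After that verification, reducedness is immediate by combining $(S_1)$ with density of the reduced locus.
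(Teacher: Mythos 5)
Your proposal is correct and follows essentially the same route as the paper, whose proof consists of invoking the argument of \cite[Lemma 2.2]{BHS2}: namely, finiteness and flatness of $\cZ_{\Omega}$ over $\cZ_{\Omega_0}\cong\cZ_{\Omega}^{\sW_{\Omega}}$ (freeness of a Laurent polynomial ring over its symmetric invariants), transfer of $(S_1)$ from the reduced space $\fX_{\overline{\rho}}^{\pcr}(\xi_0,\textbf{h})$ along the finite flat base change, and generic reducedness via \'etaleness of $(\Spec\cZ_{\Omega})^{\rig}\ra(\Spec\cZ_{\Omega_0})^{\rig}$ at points with trivial $\sW_{\Omega}$-stabilizer, which covers the Zariski-dense open $\widetilde{U}_{\overline{\rho}}^{\pcr}(\xi_0,\textbf{h})$. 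You have simply made explicit the steps the paper leaves to the citation, so there is nothing to correct.
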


Let $x=(\rho_x, (\ttr_{i,x}))\in \widetilde{U}_{\overline{\rho}}^{\pcr}(\xi_0, \textbf{h})$. The $\Omega$-filtration associated to $(\ttr_{i,x})$ induces an $\Omega$-filtration on $\DF(\rho_x)$, to which we can associate $w_x \in \sW^P_{\max,L}$ as in \S~\ref{introPcr} (where $w_x$ is denoted $w_{\sF}$). Recall that $w_x$ measures the relative position of the Hodge filtration and the $\Omega$-filtration on $D_{\dR}(\rho_x)$. For $w\in \sW^P_{\max,L}$, let $V_{\overline{\rho}}^{ \pcr}(\xi_0, \textbf{h})_w$ be the set of points $x=(\rho_x, (\ttr_{i,x}))$ in $\widetilde{U}_{\overline{\rho}}^{\pcr}(\xi_0, \textbf{h})$ satisfying $w_x=w$.\index{$V_{\overline{\rho}}^{ \pcr}(\xi_0, \textbf{h})_w$}

\begin{proposition}\label{VwSch}
	(1) The set $V_{\overline{\rho}}^{\pcr}(\xi_0, \textbf{h})_{w_0}$ is Zariski-open and Zariski-dense in $\widetilde{U}_{\overline{\rho}}^{\pcr}(\xi_0, \textbf{h})$ (hence in $\widetilde{\fX}_{\overline{\rho}}^{ \pcr}(\xi_0, \textbf{h})$).
	
	(2) For $w\in \sW^P_{\max,L}$, $V_{\overline{\rho}}^{ \pcr}(\xi_0, \textbf{h})_{w}$ is locally Zariski-closed in $\widetilde{U}_{\overline{\rho}}^{ \pcr}(\xi_0, \textbf{h})$. Moreover, if $w'\in\sW^P_{\max,L}$ and $w'\leq w$, then $V_{\overline{\rho}}^{ \pcr}(\xi_0, \textbf{h})_{w'}$ lies in the Zariski-closure $\overline{V_{\overline{\rho}}^{ \pcr}(\xi_0, \textbf{h})_{w}}$ of $V_{\overline{\rho}}^{ \pcr}(\xi_0, \textbf{h})_{w}$ in $\widetilde{U}_{\overline{\rho}}^{ \pcr}(\xi_0, \textbf{h})$.
\end{proposition}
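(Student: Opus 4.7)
My strategy is to identify the stratification $\{V_{\overline{\rho}}^{\pcr}(\xi_0,\textbf{h})_w\}_{w\in \sW^P_{\max,L}}$ with the pullback of the Bruhat stratification on $Y:=\Res^L_{\Q_p}(\GL_n/P)\times \Res^L_{\Q_p}(\GL_n/B)$ via a suitable smooth morphism obtained by augmenting the morphism $f$ in the proof of Proposition \ref{geneDen}. The group $G:=\Res^L_{\Q_p}(\GL_n)$ acts diagonally on $Y$ with orbits parametrized by $P$-$B$-double cosets, i.e. by $\sW^P_{\max,L}$: for $w\in \sW^P_{\max,L}$, set $Y_w:=G\cdot(1,w)\bigl(\Res^L_{\Q_p}(P)\times \Res^L_{\Q_p}(B)\bigr)$. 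Standard Bruhat theory then yields that $Y=\coprod_w Y_w$, that each $Y_w$ is smooth and locally Zariski-closed, that $Y_{w_{0,L}}$ is Zariski-open and Zariski-dense in $Y$, and that $\overline{Y_w}=\coprod_{w'\leq w}Y_{w'}$.

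For the construction, take an affinoid open $U\subset \fX_{\overline{\rho}}^{\pcr}(\xi_0,\textbf{h})$ together with the $\sG$-torsor $\widetilde{U}\to U$ and smooth morphism $f\colon\widetilde{U}\to \bigl(\prod_{j,k}\GL_{m_j}\bigr)\times \Res^L_{\Q_p}(\GL_n/B)$ built in the proof of Proposition \ref{geneDen}, whose second factor encodes the Hodge filtration on $\cD_{\widetilde{U}}$ in the basis $\ul{\fe}$. Form the smooth cover
\[
\widetilde{V}:=\widetilde{U}\times_U\bigl(U\times_{\fX_{\overline{\rho}}^{\pcr}(\xi_0,\textbf{h})}\widetilde{U}_{\overline{\rho}}^{\pcr}(\xi_0,\textbf{h})\bigr)\xrightarrow{\pi}\widetilde{U}_{\overline{\rho}}^{\pcr}(\xi_0,\textbf{h})|_U.
\]
A point of $\widetilde{V}$ carries an ordering $(\ttr_1,\dots,\ttr_r)$ of the summands of the universal Weil-Deligne representation; since we are on the generic locus, Lemma \ref{genenum} tells us each $B_j$ has distinct eigenvalues, so this ordering canonically produces a $P$-filtration on $\xi_0\otimes_E\co_{\widetilde{V}}$, which, transported through (\ref{DdrUtil}) and expressed in the basis $\ul{\fe}\otimes 1$ of $\cD_{\widetilde{V}}$, yields a morphism $\widetilde{V}\to \Res^L_{\Q_p}(\GL_n/P)$. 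Combined with the $\Res^L_{\Q_p}(\GL_n/B)$-factor of $f$, we obtain a morphism $g\colon\widetilde{V}\to Y$ such that, by the very definition of $w_x$, $g^{-1}(Y_w)=\pi^{-1}(V_{\overline{\rho}}^{\pcr}(\xi_0,\textbf{h})_w|_U)$. The smoothness of $g$ follows by combining the smoothness of $f$ with the smoothness, on the generic locus, of the map $\bigl(\prod_{j,k}\GL_{m_j}\bigr)_{\mathrm{gen}}\to \Res^L_{\Q_p}(\GL_n/P)$ that associates to a generic Weil-Deligne representation equipped with an ordering the resulting $P$-flag.

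Granted $g$, both (1) and (2) follow by standard topological/descent properties of smooth surjective morphisms. For (1), $g^{-1}(Y_{w_{0,L}})$ is Zariski-open in $\widetilde{V}$, and Zariski-dense because $g$ is open and $Y_{w_{0,L}}$ is Zariski-dense in $Y$; descending along the smooth surjection $\pi$ yields the Zariski-openness and Zariski-density of $V_{\overline{\rho}}^{\pcr}(\xi_0,\textbf{h})_{w_{0,L}}|_U$ in $\widetilde{U}_{\overline{\rho}}^{\pcr}(\xi_0,\textbf{h})|_U$, and varying $U$ over an admissible covering of $\fX_{\overline{\rho}}^{\pcr}(\xi_0,\textbf{h})$ globalizes to (1). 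For (2), the preimage $g^{-1}(Y_w)$ is locally Zariski-closed, so its image $V_w|_U$ is locally Zariski-closed by descent along $\pi$. For the Bruhat-closure inclusion, when $w'\leq w$, the openness of $g$ gives $\overline{g^{-1}(Y_w)}=g^{-1}(\overline{Y_w})\supseteq g^{-1}(Y_{w'})$, whence by continuity of $\pi$
\[
V_{w'}|_U=\pi\bigl(g^{-1}(Y_{w'})\bigr)\subseteq \pi\bigl(\overline{g^{-1}(Y_w)}\bigr)\subseteq \overline{\pi(g^{-1}(Y_w))}=\overline{V_w|_U},
\]
which globalizes to $V_{w'}\subset \overline{V_w}$ in $\widetilde{U}_{\overline{\rho}}^{\pcr}(\xi_0,\textbf{h})$. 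The main obstacle is a clean construction of $g$ together with its smoothness: one has to carefully match the convention for $w_x\in \sW^P_{\max,L}$ (defined via the relative position of Hodge and $\Omega$-filtrations) with the standard $G$-orbit parametrization of $(\GL_n/P)\times (\GL_n/B)$, and verify that the ordering data in $\widetilde{U}_{\overline{\rho}}^{\pcr}(\xi_0,\textbf{h})$ indeed translates, on the generic locus where each $B_j$ has distinct eigenvalues, into a smoothly varying $P$-flag in the basis $\ul{\fe}\otimes 1$.
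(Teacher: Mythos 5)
Your overall skeleton (pull back a Bruhat-type stratification along a smooth surjective cover, then descend openness, density and closure relations as in the last paragraph of the proof of \cite[Lemma 2.4]{BHS2}) is the right one, but the central claim on which everything rests is false in general: the map $g=(g_1,g_2)\colon \widetilde V\to \Res^L_{\Q_p}(\GL_n/P)\times\Res^L_{\Q_p}(\GL_n/B)$ is not smooth, and not open onto the product. The point is the $\GL_n/P$-component. In the fixed basis $\ul{\fe}\otimes 1$ the $P$-flag produced by the ordering is forced to be compatible with the isotypic decomposition (\ref{decomptype}): each graded piece is the eigenspace inside the blocks $\widetilde{\xi}_{j,k}^{\oplus m_j}$ of the matrices $B_j$, hence of the special shape $\widetilde{\xi}_{j,k}\otimes(\text{eigenline in }E^{m_j})$, and it must moreover live entirely in the $j$-block attached to $\ttr_i$. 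As soon as $s>1$, or some $f_j>1$ or $d_j>1$, the image of $g_1$ is therefore a proper Zariski-closed subvariety of $\Res^L_{\Q_p}(\GL_n/P)$ (your map is fine only in degenerate cases such as the crystabelline one with a single character type), so $g_1$, and a fortiori $g$, cannot be smooth or open onto $Y$. Consequently your two key steps --- Zariski-density of $g^{-1}(Y_{w_{0,L}})$ and the identity $\overline{g^{-1}(Y_w)}=g^{-1}(\overline{Y_w})$ --- are unjustified. There is also the secondary point you flag but do not settle: to have $g_1$ as a morphism you need the labelled eigenvalue of $B_j$ attached to each $\ttr_i$ to be an analytic function on $\widetilde V$; pointwise distinctness (Lemma \ref{genenum}) does not give this by itself, and one must either pull such functions back from the Bernstein factors $(\Spec\cZ_{\Omega_i})^{\rig}$ or pass to an \'etale cover where the $\beta_{j,i}$ exist.

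The paper's proof fixes exactly this by not recording the $P$-flag in a fixed frame. After an \'etale cover $\sU$ of $U$ on which the $\beta_{j,i}$ are defined (so that the universal ordered decomposition $\DF_{\widetilde{\sU}}\cong\oplus_{i}\DF_{\widetilde{\sU},i}$ exists in the family), one passes to the $(\bG_m^{\rig})^r$-torsor splitting this decomposition and then to the $\Res^L_{\Q_p}P$-torsor of trivializations of $D_{\dR}$ carrying the induced $P$-filtration to the standard one. On this cover the only remaining modulus is the Hodge filtration, which gives a morphism to $\Res^L_{\Q_p}(\GL_n/B)$ alone, and this morphism is smooth by the same weakly admissible deformation computation as for (\ref{smoo1}) --- precisely the freeness that your $\GL_n/P$-component cannot have. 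The strata $V_{\overline{\rho}}^{\pcr}(\xi_0,\textbf{h})_w$ then become the preimages of the Schubert cells $\prod_{\tau\in\Sigma_L}(Pw_\tau B)/B$, and (1) and (2) follow from the classical openness/density and closure relations for $\overline{PwB/B}$ by smooth descent. If you want to keep your two-flag formulation, you must either perform the same untwisting (pass to the torsor of $P$-adapted framings), or replace your target by $Z\times\Res^L_{\Q_p}(\GL_n/B)$, where $Z$ is the locus of isotypically decomposed $P$-flags actually hit by $g_1$, prove smoothness onto that smaller target, and check fibrewise over $Z$ that the relative-position stratification still has an open dense top stratum and the Bruhat closure relations; either way this is the argument your proposal is missing.
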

\begin{proof}
	Let $U\subset U_{\overline{\rho}}^{ \pcr}(\xi_0, \textbf{h})$ be a non-empty affinoid open subset as in the proof of Proposition \ref{geneDen}. We freely use the notation of {\it loc.\ cit.} For $j=1, \dots, s$, let $B_{U,j}:=\prod_{i\in \Z/[L_0':L_0]} B_{U,j,i}$, and $P_j(X)\in \co_U[X]$ be the characteristic polynomial of $B_{U,j}$. Let $\sU$ be an \'etale covering of $U$ such that $P_j(X)=\prod_{i=1}^{m_j} (X-\alpha_{j,i})$ (with $\alpha_{j,i}\neq \alpha_{j,i'}$ for $i\neq i'$ since any point in $U$ is generic), and such that, for each $j,i$, there exists $\beta_{j,i}\in \co_{\sU}$ such that $\beta_{j,i}^{d_j}=\alpha_{j,i}$. Using the same argument as in the proof of Lemma \ref{genenum}, we have an isomorphism of Weil-Deligne representations over $\co_{\sU}$:
	\begin{equation*}
		\ttr_{\sU} \cong \oplus_{j=1}^s \oplus_{i=1}^{m_j} \widetilde{\ttr}_j \otimes_E \unr(\beta_{j,i}).
	\end{equation*} 
Let $\widetilde{\sU}:=\sU\times_{(\Spec \cZ_{\Omega_0})^{\rig}} (\Spec \cZ_{\Omega})^{\rig}$ and $r_{\widetilde{\sU}}:=r_{\sU} \otimes_{\co_{\sU}} \co_{\widetilde{\sU}}$. By the universal property of $\widetilde{\sU} \ra \sU$, we have an (ordered) $r$-tuple $(\ttr_{\widetilde{\sU}, i})_{i=1,\dots, r}$ of Weil-Deligne subrepresentations of $\ttr_{\widetilde{\sU}}$ over $\widetilde{\sU}$ such that $\oplus_{i=1}^r \ttr_{\widetilde{\sU}, i} \cong \oplus_{j=1}^s \oplus_{i=1}^{m_j} \widetilde{\ttr}_j \otimes_E \unr(\beta_{j,i})$ and $\ttr_{\widetilde{\sU},i}$ is of inertial type $\xi_i$. 
	
The Deligne-Fontaine module $\DF_{\widetilde{\sU}}:=\DF_{U} \otimes_{\co_U} \co_{\widetilde{\sU}}$ over $\widetilde{\sU}$ is isomorphic to the Deligne-Fontaine module associated to $\ttr_{\widetilde{\sU}}$. Let $\DF_{\widetilde{\sU},i}$ be the Deligne-Fontaine module associated to $\ttr_{\widetilde{\sU},i}$, we have thus $\DF_{\widetilde{\sU}}\cong \oplus_{i=1}^r \DF_{\widetilde{\sU},i}$. We let $\widetilde{\sU}^r$ be the $(\bG_m^{\rig})^r$-torsor over $\widetilde{\sU}$ of isomorphisms of Deligne-Fontaine modules:
	\begin{equation*}
		\oplus_{i=1}^r \DF_{\widetilde{\sU},i} \xlongrightarrow{\sim} \DF_{\widetilde{\sU}}.
	\end{equation*}
	By the universal property of $\widetilde{\sU}^r$, we have a universal isomorphism
	\begin{equation*}
		\oplus_{i=1}^r \DF_{\widetilde{\sU}^r,i} \xlongrightarrow{\sim} \DF_{\widetilde{\sU}^r},
	\end{equation*}
	hence universal embeddings $\oplus_{i=1}^j \DF_{\widetilde{\sU}^r,i} \hookrightarrow \DF_{\widetilde{\sU}^r}$ for $j=1,\dots, r$, where $*_{\widetilde{\sU}^r}$ denotes the base change of the corresponding object to $\co_{\widetilde{\sU}^r}$. 
	Let $W:=(L \otimes_{\Q_p} \co_{\widetilde{\sU}^r})^n$, and we equip $W$ with a filtration $\Fil_W^{\bullet}$ consisting of free $L\otimes_{\Q_p} \co_{\widetilde{\sU}^r}$-submodules which are direct summands of $W$ such that $\rk_{L \otimes_{\Q_p} \co_{\widetilde{\sU}^r}} \Fil^i_W W=s_i$ for $i=1,\dots, r$. Let $\widetilde{\sU}^r_P$ be the $\Res^L_{\Q_p} P$-torsor over $\widetilde{\sU}^r$ of isomorphisms
	\begin{equation*}
		(L \otimes_{\Q_p} \co_{\widetilde{\sU}^r})^n \xlongrightarrow{\sim} (\DF_{\widetilde{\sU}^r} \otimes_{L_0'} L')^{\Gal(L'/L)}
	\end{equation*}
which send $\Fil_W^i W\cong (L \otimes_{\Q_p} \co_{\widetilde{\sU}^r})^{s_i}$ onto $(\oplus_{i'=1}^i \DF_{\widetilde{\sU},i'} \otimes_{L_0'} L')^{\Gal(L'/L)}$. Via the universal isomorphism $(L \otimes_{\Q_p} \co_{\widetilde{\sU}^r_P})^n \xlongrightarrow{\sim} (\DF_{\widetilde{\sU}^r_P} \otimes_{L_0'} L')^{\Gal(L'/L)}$, the Hodge filtration on $(\DF_{\widetilde{\sU}^r_P} \otimes_{L_0'} L')^{\Gal(L'/L)}$ (which comes from the Hodge filtration on $(\DF_U \otimes_{L_0'} L')^{\Gal(L'/L)}$ by base change) induces a morphism
	\begin{equation}\label{flagHo}
		\widetilde{\sU}^r_P \lra \Res^L_{\Q_p} (\GL_n/B).
	\end{equation}
	By similar arguments as for (\ref{smoo1}), one can show that this morphism is smooth. 
	
	For $w=(w_{\tau})\in \sW^P_{\max,L}$, let $\widetilde{\sU}_{P,w}^r$ be the set of points which are sent to $V_{\overline{\rho}}^{ \pcr}(\xi_0, \textbf{h})_w$ via the (smooth surjective) morphism $\widetilde{\sU}_{P,w}^r \ra U$. We see that $x\in \widetilde{\sU}_{P,w}^r$ if and only if the image of $x$ under (\ref{flagHo}) is contained in the (generalized) Schubert cell $\prod_{\tau\in \Sigma_L}(P w_{\tau} B)/B$. Equivalently $\widetilde{\sU}_{P,w}^r$ is the inverse image of $\prod_{\tau\in \Sigma_L}(P w_{\tau} B)/B$ in $\widetilde{\sU}^r_P$. The proposition then follows from the corresponding facts on Schubert cells by the same argument as in the last paragraph of the proof of \cite[Lemma 2.4]{BHS2}.
\end{proof}

Let $w\in\sW^P_{\max,L}$, then $ww_{0,L}\in \sW^P_{\min,L}$ hence $ww_{0,L}(\textbf{h})$ is strictly $P$-dominant. Define
\begin{equation}\label{embpcydef}
	\iota: \widetilde{\fX}_{\overline{\rho}}^{ \pcr}(\xi_0, \textbf{h}) \lra \fX_{\overline{\rho}} \times (\Spec \cZ_{\Omega})^{\rig} \times \widehat{\cZ_{0,L}}, \ (\rho, (\ttr_i))\mapsto (\rho, (\ttr_i), 1).
\end{equation}
We consider $\iota^{-1}(X_{\Omega,ww_{0,L}(\textbf{h})}(\overline{\rho}))$, which is a Zariski-closed subspace of $\widetilde{\fX}_{\overline{\rho}}^{ \pcr}(\xi_0, \textbf{h})$. By the discussion in \S~\ref{introPcr} (in particular (\ref{inj000})) and using that, for any $x=(\rho, (\ttr_i))\in V_{\overline{\rho}}^{ \pcr}(\xi_0, \textbf{h})_{w}$, we have by definition $w=w_x$ (which is denoted $w_{\sF}$ in \S~\ref{introPcr}), one can check that
\[V_{\overline{\rho}}^{ \pcr}(\xi_0, \textbf{h})_{w}\subseteq \iota^{-1}(U_{\Omega,,ww_{0,L}(\textbf{h})}(\overline{\rho}))\subseteq \iota^{-1}(X_{\Omega,ww_{0,L}(\textbf{h})}(\overline{\rho})),\]
which implies
\begin{equation*}
	\overline{V_{\overline{\rho}}^{ \pcr}(\xi_0, \textbf{h})_{w}} \subset \iota^{-1}(X_{\Omega,ww_{0,L}(\textbf{h})}(\overline{\rho})).
\end{equation*}
By Proposition \ref{VwSch} (2), we deduce that, for $w'\in \sW^{P}_{\max,L}$ and $w'\leq w$, we have $\iota(V_{\overline{\rho}}^{ \pcr}(\xi_0, \textbf{h})_{w'}) \subseteq X_{\Omega,,ww_{0,L}(\textbf{h})}(\overline{\rho})$, and thus:

\begin{corollary}\label{compploc1}
Let $x=(\rho_x, (\ttr_{i,x}))\in \widetilde{U}_{\overline{\rho}}^{\pcr}(\xi_0, \textbf{h})$. For all $w\in\sW^{P}_{\max,L}$ such that $w\geq w_x$, we have 
	\begin{equation*}
		\iota(x)=\big(\rho_x, (\ttr_{1,x},\dots,\ttr_{r,x}), 1\big)\in X_{\Omega, ww_{0,L}(\textbf{h})}(\overline{\rho}).
	\end{equation*}
\end{corollary}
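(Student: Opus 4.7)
The plan is to deduce the corollary directly from Proposition \ref{VwSch}(2) combined with the containment $V_{\overline{\rho}}^{\pcr}(\xi_0, \textbf{h})_{w}\subseteq \iota^{-1}(X_{\Omega, ww_{0,L}(\textbf{h})}(\overline{\rho}))$ that was asserted in the discussion immediately preceding the statement; the bulk of the proof consists in spelling out that containment. First I would recall that, by definition of $\widetilde{U}_{\overline{\rho}}^{\pcr}(\xi_0, \textbf{h})$, the point $x=(\rho_x,(\ttr_{i,x}))$ satisfies $\ttr(\rho_x)\cong \oplus_{i=1}^r\ttr_{i,x}$ with $\ttr(\rho_x)$ generic, and the ordering $(\ttr_{i,x})$ produces a minimal $P$-filtration $\sF$ on $\ttr(\rho_x)$ (and hence on $\DF(\rho_x)$) from which the attached element $w_x\in \sW^P_{\max,L}$ is constructed exactly as in \S\ref{introPcr}; by definition $x\in V_{\overline{\rho}}^{\pcr}(\xi_0, \textbf{h})_{w_x}$.

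Next I would check the key containment $V_{\overline{\rho}}^{\pcr}(\xi_0, \textbf{h})_{w}\subseteq \iota^{-1}(U_{\Omega, ww_{0,L}(\textbf{h})}(\overline{\rho}))$. Given $y=(\rho_y,(\ttr_{i,y}))\in V_{\overline{\rho}}^{\pcr}(\xi_0, \textbf{h})_{w}$, the filtered Deligne–Fontaine module structure yields, via the construction of \S\ref{introPcr} (see in particular (\ref{OmeFil00})), a $P$-filtration $\sF$ on $D_{\rig}(\rho_y)$ whose $i$-th graded piece is de Rham with Hodge–Tate weights $(ww_{0,L}(\textbf{h})_{s_{i-1}+1},\dots, ww_{0,L}(\textbf{h})_{s_i})$, since by the very definition of $V_{\overline{\rho}}^{\pcr}(\xi_0,\textbf{h})_w$ the element $w$ plays the role of $w_\sF$ in \S\ref{introPcr}. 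Applying (\ref{inj000}) (with trivial twist, corresponding to the third coordinate $1\in \widehat{\cZ_{0,L}}$), one obtains injections
\[
\gr_i^\sF D_{\rig}(\rho_y)\hooklongrightarrow \Delta_{\ttr_{i,y}}\otimes_{\cR_{k(y),L}}\cR_{k(y),L}(z^{ww_{0,L}(\textbf{h})_{s_i}})
\]
whose images carry the correct Hodge–Tate weights. The genericity of $\ttr(\rho_y)$ (coming from the fact that $y\in \widetilde{U}_{\overline{\rho}}^{\pcr}(\xi_0,\textbf{h})$) ensures that the triple $((\ttr_{i,y}),1)$ belongs to $\sZ^{\gen}$ in the sense of \S\ref{s: DO}. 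These two facts together are precisely the defining conditions for $\iota(y)\in U_{\Omega, ww_{0,L}(\textbf{h})}(\overline{\rho})$.

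Finally, since $X_{\Omega, ww_{0,L}(\textbf{h})}(\overline{\rho})$ is by construction the Zariski-closure of $U_{\Omega, ww_{0,L}(\textbf{h})}(\overline{\rho})$ and $\iota$ is a morphism of rigid analytic spaces, the previous step upgrades to
\[
\iota\bigl(\overline{V_{\overline{\rho}}^{\pcr}(\xi_0,\textbf{h})_{w}}\bigr)\subseteq X_{\Omega, ww_{0,L}(\textbf{h})}(\overline{\rho}).
\]
Now invoke Proposition \ref{VwSch}(2): because $w\geq w_x$ with both in $\sW^P_{\max,L}$, we have $V_{\overline{\rho}}^{\pcr}(\xi_0,\textbf{h})_{w_x}\subseteq \overline{V_{\overline{\rho}}^{\pcr}(\xi_0,\textbf{h})_{w}}$. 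Since $x$ lies in the former, $\iota(x)$ lies in $X_{\Omega, ww_{0,L}(\textbf{h})}(\overline{\rho})$, as claimed. The only step requiring genuine care is the Hodge–Tate weight computation in the second paragraph, which is essentially bookkeeping on the Hodge filtration in terms of $w$ as carried out in \S\ref{introPcr}; no substantive obstacle is anticipated.
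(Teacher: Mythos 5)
Your proposal is correct and follows essentially the same route as the paper: the paper also deduces the corollary from the containment $V_{\overline{\rho}}^{\pcr}(\xi_0,\textbf{h})_{w}\subseteq \iota^{-1}(U_{\Omega,ww_{0,L}(\textbf{h})}(\overline{\rho}))$ (checked via \S\ref{introPcr}, in particular (\ref{inj000}), and the identity $w=w_x$ on $V_w$), the Zariski-closedness of $\iota^{-1}(X_{\Omega,ww_{0,L}(\textbf{h})}(\overline{\rho}))$, and Proposition \ref{VwSch}(2). Your extra verification of the Hodge--Tate weight bookkeeping and of the genericity of $((\ttr_{i,x}),1)$ simply fills in what the paper leaves as ``one can check''.
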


\begin{remark}\label{remNPara}
(1) The point $\iota(x)\in X_{\Omega, ww_{0,L}(\textbf{h})}(\overline{\rho})$ for $w\in \sW^{P}_{\max,L}$, $w\geq w_x$, is called a local companion point of $\iota(x)\in X_{\Omega, \textbf{h}}(\overline{\rho})$ (see also Corollary \ref{coLoccomp} below). 

(2) The case $P=B$ (and $\rho_x$ crystalline) was contained in \cite[Thm.\ 4.2.3]{BHS3}. Indeed, in this case, as discussed in Remark \ref{remBPE} (1), we have the isomorphism
\[\iota_{\Omega, ww_{0,L}(\textbf{h})}: X_{\Omega, ww_{0,L}(\textbf{h})}(\overline{\rho}) \xlongrightarrow{\sim} X_{\tri}^{\square}(\overline{\rho}) \ (\hookrightarrow (\Spf R_{\overline{\rho}})^{\rig} \times \widehat{T(L)})\]
where $X_{\tri}^{\square}(\overline{\rho})$ is the trianguline variety of \cite[\S~2.2]{BHS1}. And the isomorphism $\iota_{\Omega,ww_{0,L}(\textbf{h})}$ sends $\iota(x)$ in Corollary \ref{compploc1} exactly to the point $x_w$ in \cite[Thm.\ 4.2.3]{BHS3}. Note that the resulting points $\{x_w\}\subset X_{\tri}^{\square}(\overline{\rho})$ are distinct. 

(3) Note that, if $w\neq w_x$, then $\big((\ttr_{i,x}), \delta=\boxtimes_{i=1}^r z^{ww_0(\textbf{h})_i}\big)\in (\Spec \cZ_{\Omega})^{\rig} \times \widehat{Z_{L_P}(L)}$ is not a parameter of the $\Omega$-filtration induced by $(\ttr_{i,x})$ on $D_{\rig}(\rho_x)$ (compare with Remark \ref{remBPE} (2) and see Definition \ref{defOF} (2), (3) for a parameter of an $\Omega$-filtration).
\end{remark}

\section{The geometry of some schemes related to generalized Springer resolutions}\label{secGS}

In this section, we show some results of geometric representation theory concerning algebraic varieties which are a ``parabolic'' generalization of Grothendieck's and Springer's resolution of singularities. These results will be crucially used in \S~\ref{secMod} to describe the local rings of the Bernstein paraboline varieties at certain points.

\subsection{Preliminaries}\label{prel}

We let $G/E$ be a connected split reductive algebraic group. We recall/introduce certain schemes $X_P$ (where $P\subseteq G$ is a parabolic subgroup) which are related to parabolic versions of Grothendieck' simultaneous resolution of singularities for $G$.

We fix a Borel subgroup $B$ of $G$, and let $T\subset B$ be a maximal torus and $N\subset B$ the unipotent radical of $B$. Let $P\supseteq B$ be a parabolic subgroup of $G$, $L_P$ be the Levi subgroup of $P$ containing $T$ and $N_P$ the unipotent radical of $P$. Let $\fp$ be the Lie algebra of $P$ over $E$, $\fn_{P}$ its nilpotent radical, $\ur_{P}$ the full radical of $\fp$, $\fl_P$ the Lie algebra of $L_P$ over $E$, and $\fz_{L_P}$ the centre of $\fl_P$. We have $\ur_{P}\cong \fn_P \rtimes \fz_{L_P}$. Let $\ug$, $\ub$, $\fn$, $\ft$ be the Lie algebra over $E$ of $G$, $B$, $N$, $T$ respectively.

Let $\sW$ be the Weyl group of $G$ and $w_0\in \sW$ the element of maximal length. For $w\in \sW$, we also use $w$ to denote some lift\footnote{When we apply this notation for certain group operators, we always mean first applying group operations then taking a certain lift, e.g.\ we use $w_1w_2\in N_G(T)$ to denote a lift of $w_1w_2 \in \sW$ rather than the multiplication of a lift of $w_1$ and a lift of $w_2$.} of $w$ in $N_{G}(T)\subset G(E)$. As in \S~\ref{Nota2.1}, denote by $\sW^P_{\min}$ (resp.\ $\sW^P_{\max}$, resp.\ $\lWPmin$, resp.\ $\lWPmax$) the set of minimal (resp.\ maximal, resp.\ minimal, resp.\ maximal) length representatives of $\sW_{L_P}\backslash \sW$ (resp.\ $\sW_{L_P}\backslash \sW$, resp.\ $\sW/\sW_{L_P}$, resp.\ $\sW/\sW_{L_P}$). Note that $w\in \sW^P_{\min}$ if and only if $ww_0 \in \sW^P_{\max}$. Also $w\in \sW^P_{\min}$ (resp.\ $w\in \sW^P_{\max}$) if and only if $w^{-1}\in \lWPmin$ (resp.\ $w^{-1}\in \lWPmax$). For $w\in \sW$ or $\sW_{L_P}\backslash \sW$, denote by $w^{\min}\in \sW^P_{\min}$ (resp.\ $w^{\max}\in \sW^P_{\max}$) the corresponding representative of $\sW$. We use ``$\cdot$" to denote the \textit{dot} action of $\sW$ on the weights of $\ft$ (cf.\ \cite[Def. 1.8]{Hum08}). 

If $f: X \ra Y$ is a morphism of schemes and $Z$ a locally closed subscheme of $Y$, we denote $f^{-1}(Z):=X\times_Y Z$, which is a locally closed subscheme of $X$. If $X_1$, $X_2$ are locally closed subschemes of $X$, we denote $X_1\cap X_2:=X_1 \times_X X_2$.

Let $\tilde{\ug}_P$ be the closed $E$-subscheme of $G/P \times \ug$ defined by 
\begin{equation*}
	\{(gP, \psi) \in G/P \times \ug \ |\ \Ad(g^{-1}) \psi \in \ur_{P}\}
\end{equation*}
where as usual $\Ad(h)$ means the adjoint action of $h\in G$ (i.e.\ conjugation by $h$). We have an isomorphism of $E$-schemes (using standard notation, see e.g.\ \cite[\S~VI.8]{KiehlWeissauer}):
\begin{equation}\label{better}
	G \times^{P} \ur_{P} \xlongrightarrow{\sim}\tilde{\ug}_{P},\ (g, \psi) \mapsto (g P, \Ad(g)\psi).
\end{equation}
We see that $\tilde{\ug}_{P}$ is a vector bundle over $G/{P}$, hence is smooth and irreducible. We also have $\dim \tilde{\ug}_{P}=\dim G/{P} + \dim \ur_{P}=\dim \fn_{P}+ \dim \ur_{P}$. There are natural morphisms: 
\begin{equation*}
	\begin{array}{lll}
		\kappa_{P}: &\tilde{\ug}_{P} \lra \fz_{L_P} \ &(g P, \psi) \longmapsto \overline{\Ad(g^{-1}) \psi}\\
		q_{P}: &\tilde{\ug}_{P} \lra \ug \ &(g P, \psi) \longmapsto \psi \\
		\pi_P: &\tilde{\ug}_P \lra G/P \ &(gP, \psi) \longmapsto gP
	\end{array}
\end{equation*}
where $\overline{\Ad(g^{-1})}\psi$ is the image of $\Ad(g^{-1})\psi\in \ur_{P}$ via the canonical surjection $\ur_{P}\twoheadrightarrow \fz_{L_P}$. Alternatively we can see $q_P$ as the morphism $G \times^{P}\ur_{P}\ra \ug$, $(g, \psi) \longmapsto \Ad(g)\psi$. We denote $\tilde{\ug}:=\tilde{\ug}_{B}\simeq G \times^{B} \ub$ and define $q_B:\tilde{\ug}\ra \ug$ similarly to $q_P$. We put:
\[X_{P}:=\tilde{\ug} \times_{\ug} \tilde{\ug}_{P},\index{$X_P$}\]
which is also the closed subscheme of $G/B\times G/{P} \times \ug$ defined by 
\begin{equation*}
	X_P\simeq \{(g_1B, g_2 {P}, \psi) \in G/B \times G/{P} \times \ug\ |\ \Ad(g_1^{-1}) \psi \in \ub, \Ad(g_2^{-1}) \psi \in \ur_{P}\}.
\end{equation*}
It is easy to check that we have isomorphisms of $E$-schemes
\begin{equation}\label{Xpqpb}
	\begin{array}{llllll}
		X_P &\xlongrightarrow{\sim} &G \times^P q_B^{-1}(\ur_P) \ &(g_1 B, g_2P, \psi) &\longmapsto &\big(g_2, (g_2^{-1} g_1 B, \Ad(g_2^{-1}) \psi)\big)\\
		X_P& \xlongrightarrow{\sim} &G \times^B q_P^{-1}(\ub) \ &(g_1 B, g_2P, \psi) &\longmapsto &\big(g_1, (g_1^{-1} g_2P, \Ad(g_1^{-1}) \psi)\big).
	\end{array}
\end{equation}

\subsection{Analysis of the global geometry}\label{sec: grgg}

We prove useful statements on the geometry of the $E$-scheme $X_P$.

Let $\pi$ be the composition
\begin{equation}\label{pi}
\pi:X_{P} \hookrightarrow G/B \times G/{P} \times \ug \twoheadrightarrow G/B \times G/{P}.
\end{equation}
We equip $G/B \times G/{P}$ with an action of $G$ by diagonal left multiplication.
For $w\in \sW$, write
\[ U_{w}:=G (w, 1) (B \times {P})=G(1, w^{-1}) (B \times P) \subset G/B \times G/{P}.\]
Note that $U_w$ only depends on the the right coset $\sW_{L_P} w$ (or equivalently, the left coset $w^{-1} \sW_{L_P}$). In fact, we have an isomorphism
\[G \times^B (G/P) \xlongrightarrow{\sim} G/B \times G/P,\ (g_1, g_2P) \longmapsto (g_1B, g_1g_2P)\]
which induces an isomorphism
\[G\times^B (Bw^{-1}P/P) \xlongrightarrow{\sim} U_w.\]
Likewise, we have an isomorphism
\[G \times^P (G/B) \xlongrightarrow{\sim} G/B \times G/P,\ (g_1, g_2B) \longmapsto (g_1g_2 B, g_1P)\]
which induces an isomorphism
\[G \times^P (PwB/B) \xlongrightarrow{\sim} U_w.\]
By the standard Bruhat decomposition of $G/P$ and $G/B$, we deduce
\[G/B \times G/{P}=\sqcup_{w\in \sW^P_{\min}} U_{w}=\sqcup_{w\in \sW^P_{\max}} U_w.\]
We also deduce that $U_{w}$ is a locally closed subscheme of $G/B \times G/{P}$ which is smooth of dimension 
\begin{multline}\label{dimUw}
	\dim G-\dim B+\dim (Bw^{-1}P/P)=\dim G- \dim B + \lg(w^{\min})\\
	=\dim G-\dim P+\lg(w^{\max})=\dim G-\dim P +\dim (PwB/B)
\end{multline}
where, for the first equality, we use $\lg((w^{\min})^{-1})=\lg(w^{\min})$ and $(w^{\min})^{-1}\in \lWPmin$.

We let $V_{w}:=\pi^{-1}(U_{w})$.\index{$V_w$}

\begin{proposition}\label{geoVw}
	The surjection $V_{w} \twoheadrightarrow U_{w}$ induced by $\pi$ is a (geometric) vector bundle of relative dimension $\dim \ur_{P}-\lg(w^{\min})$.
\end{proposition}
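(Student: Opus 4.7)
My plan is to reduce the statement to a computation at a single base point using the $G$-equivariance of $\pi$. First I would fix the base point $x_0 := (B, w^{-1}P) \in U_w$ and observe that its stabilizer in $G$ is the closed subgroup $H := B \cap w^{-1}Pw$, so that $U_w \simeq G/H$. The fiber of $\pi$ over $x_0$ is the linear subspace
\[F := \ub \cap \Ad(w^{-1})\ur_P \subset \ug,\]
which is stable under the adjoint action of $H$: an element $h\in H$ lies in $B$ so preserves $\ub$, and $whw^{-1}\in P$ preserves $\ur_P$, so $h$ preserves $\Ad(w^{-1})\ur_P$. I would then write down the $G$-equivariant map $G \times F \to V_w$, $(g,\psi')\mapsto (gB, gw^{-1}P, \Ad(g)\psi')$, verify it factors through a $G$-equivariant isomorphism $G\times^H F \xrightarrow{\sim} V_w$ over $G/H = U_w$, and conclude that $V_w \to U_w$ is a geometric vector bundle of relative dimension $\dim F$.

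The remaining task is to compute $\dim F$. Using the root-space decomposition, together with $w\in N_G(T)$ and $\fz_{L_P}\subset \ft$, I would write
\[F = \Ad(w^{-1})\fz_{L_P} \oplus \bigoplus_{\substack{\beta>0 \\ w\beta \in \Phi(\fn_P)}}\ug_\beta,\]
where $\Phi(\fn_P)\subset \Phi^+$ denotes the set of roots of $\fn_P$, so that $\dim F = \dim \fz_{L_P} + |\{\beta>0 : w\beta \in \Phi(\fn_P)\}|$.

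Since $U_w$ depends only on the coset $\sW_{L_P}w$, I may take $w = w^{\min}\in \sW^P_{\min}$. The main (combinatorial) step is then that for such $w$, the standard characterization $w^{-1}(\Phi(L_P)^+)\subset \Phi^+$ implies that no $\beta>0$ can satisfy $w\beta\in -\Phi(L_P)^+$; combined with the identity $|\{\beta>0 : w\beta<0\}| = \lg(w^{\min})$, this forces $|\{\beta>0 : w\beta\in -\Phi(\fn_P)\}| = \lg(w^{\min})$, hence
\[|\{\beta>0 : w\beta \in \Phi(\fn_P)\}| = |\Phi(\fn_P)| - \lg(w^{\min}),\]
and finally $\dim F = \dim\fz_{L_P} + |\Phi(\fn_P)| - \lg(w^{\min}) = \dim \ur_P - \lg(w^{\min})$, as required. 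The main obstacle is really just this combinatorial count; once it is in place, the vector-bundle structure follows routinely from the $G$-equivariance.
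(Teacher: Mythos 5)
Your proof is correct and is essentially the paper's own argument: the fiber you write down at $(B,w^{-1}P)$ is exactly the one in (\ref{fiber0}) (the paper just translates it by $g$ to treat an arbitrary point of $U_w$), and your root count is equivalent to the computation $\dim\big(\fn\cap\Ad((w^{\min})^{-1})\fn_P\big)=\dim\big(\fn\cap\Ad((w^{\min})^{-1})\fn\big)-\dim\fn_{L_P}=\dim\fn_P-\lg(w^{\min})$ used there. For the bundle structure the paper simply invokes the argument of \cite[Prop.\ 2.2.1]{BHS3}, which is the same $G$-equivariance idea you package as $V_w\simeq G\times^H F$ with $H=B\cap w^{-1}Pw$ (a split solvable, hence special, group, so the associated bundle is indeed Zariski-locally trivial).
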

\begin{proof}
	Let $y=(gB, gw^{-1}{P})=(gB, g(w^{\min})^{-1}{P})\in U_{w}\subset G/B \times G/{P}$. One can check that 
	\begin{equation}\label{fiber0}
		\pi^{-1}(y)=y \times \Ad(g)\Big(\Ad((w^{\min})^{-1})\fz_{L_P} \oplus \big(\fn \cap \Ad((w^{\min})^{-1}) \fn_{P}\big)\Big).
	\end{equation}
	Since $(w^{\min})^{-1}\in \lWPmin$, one has
	\begin{multline*}\dim \big(\fn \cap \Ad((w^{\min})^{-1}) \fn_{P}\big) =\dim \big(\fn \cap \Ad((w^{\min})^{-1})\fn\big)-\dim \fn_{L_{P}}\\ =\dim \fn-\lg((w^{\min})^{-1})-\dim \fn_{L_{P}}=\dim \fn_{P} -\lg(w^{\min}).
	\end{multline*} 
	The proposition then follows by the same argument as for \cite[Prop.\ 2.2.1]{BHS3}. 
\end{proof}

From (\ref{dimUw}) and Proposition \ref{geoVw} we obtain that $V_w$ is equidimensional of dimension $\dim G-\dim B+\dim \ur_{P}$. Let $X_{w}$ be the closed subscheme of $X_{P}$ defined as the {\it reduced} Zariski-closure of $V_{w}$ in $X_{P}$.\index{$X_w$} By the same argument as in the first part of the proof of \cite[Prop.\ 2.2.5]{BHS3}, we have:

\begin{corollary}\label{irrcmpXP}
The scheme $X_{P}$ is equidimensional of dimension 
$\dim G-\dim B+\dim \ur_{P}$.
The irreducible components of $X_P$ are $\{X_w\}_{w\in \sW^P_{\min}}=\{X_w\}_{w\in \sW^P_{\max}}$ and $V_w$ is open in $X_w$.
\end{corollary}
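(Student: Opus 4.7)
The plan is to deduce all three assertions directly from Proposition~\ref{geoVw} together with the Bruhat-type decomposition $G/B \times G/{P} = \bigsqcup_{w \in \sW^P_{\min}} U_w$ into smooth, irreducible, locally closed pieces with $\dim U_w = \dim G - \dim B + \lg(w^{\min})$, as computed in (\ref{dimUw}). Pulling this decomposition back along $\pi$ gives a set-theoretic stratification $X_P = \bigsqcup_{w \in \sW^P_{\min}} V_w$ by locally closed subschemes (since each $U_w$ is locally closed in $G/B \times G/{P}$), and Proposition~\ref{geoVw} identifies $V_w$ with a vector bundle of relative dimension $\dim \ur_P - \lg(w^{\min})$ over $U_w$. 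In particular each $V_w$ is smooth and irreducible, of dimension
\[
\dim V_w = \dim U_w + \dim \ur_P - \lg(w^{\min}) = \dim G - \dim B + \dim \ur_P,
\]
a quantity independent of $w$; call it $d$.

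By construction $X_w = \overline{V_w}$ is then irreducible of dimension $d$. I would also verify at this point that $V_w$ is open in $X_w$: writing $U_w = O_w \cap C_w$ with $O_w$ open and $C_w$ closed in $G/B \times G/{P}$, one has $V_w = \pi^{-1}(O_w) \cap \pi^{-1}(C_w)$, and since $\pi^{-1}(C_w)$ is closed in $X_P$ we get $X_w = \overline{V_w} \subseteq \pi^{-1}(C_w)$, whence $V_w = X_w \cap \pi^{-1}(O_w)$ is indeed open in $X_w$.

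It then remains to identify the irreducible components of $X_P$. From $X_P = \bigsqcup_w V_w \subseteq \bigcup_w X_w \subseteq X_P$ we obtain $X_P = \bigcup_{w \in \sW^P_{\min}} X_w$, so any irreducible component of $X_P$ is contained in some $X_w$. Since each $X_w$ is irreducible of the same dimension $d$, no proper inclusion $X_w \subsetneq X_{w'}$ can occur, so each $X_w$ is itself maximal among irreducible closed subsets and the family $\{X_w\}_{w \in \sW^P_{\min}}$ exhausts the set of irreducible components. In particular $X_P$ is equidimensional of dimension $d$. The indexing by $\sW^P_{\max}$ is automatic, since $X_w$ depends only on the coset $\sW_{L_P} w$ and both $\sW^P_{\min}$ and $\sW^P_{\max}$ parametrize these cosets.

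No step here is really an obstacle: the geometric content is entirely packaged in Proposition~\ref{geoVw} and the Bruhat decomposition, and the remaining arguments are formal manipulations with closures of locally closed subschemes of equal dimension. The only point requiring a small amount of care is the openness of $V_w$ in $X_w$, handled by the locally-closed factorization above.
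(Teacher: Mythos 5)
Your proof is correct and takes essentially the same route as the paper: the paper deduces from (\ref{dimUw}) and Proposition \ref{geoVw} that each $V_w$ has dimension $\dim G-\dim B+\dim \ur_P$ and then invokes the standard stratification argument of \cite[Prop.~2.2.5]{BHS3}, which is exactly what you have written out explicitly (closures of the disjoint locally closed pieces $V_w$, all irreducible of equal dimension, exhaust the irreducible components, with $V_w$ open in $X_w$).
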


\begin{remark}\label{remXw}
(1) We could equip the underlying closed subset $X_w$ with another scheme structure, namely the scheme theoretic image of the open subscheme $X_P \setminus \cup_{\sW_{L_P}w'\neq \sW_{L_P}w} X_{w'}$ of $X_P$. Let us denote it by $X_w'$. When $P=B$, by \cite[Thm.\ 2.2.6]{BHS3}, $X_P$ is reduced, so $X_w=X_w'$ as closed subschemes of $X_P$. However, in general, it is not clear to the authors if $X_P$ is reduced, or if $X_P$ is Cohen-Macaulay (for instance one can easily check that the last paragraph of the proof of \cite[Prop.\ 2.2.5]{BHS3} does not extend to $P\ne B$).
	
(2) For $w\in \sW$ let us define $q_B^{-1}(\ur_{P})_w^0$ (resp.\ $q_P^{-1}(\ub)_w^0$) as the preimage of $PwB/B$ (resp.\ of $BwP/P$) via the composition $q_B^{-1}(\ur_P) \hookrightarrow G/B \times \ug \twoheadrightarrow G/B$ (resp.\ $q_P^{-1}(\ub) \hookrightarrow G/P \times \ug \twoheadrightarrow G/P$). By similar arguments as in Proposition \ref{geoVw}, the map $q_B^{-1}(\ur_P)_w^0 \ra PwB/B$ (resp.\ $q_P^{-1}(\ub)_w^0 \ra BwP/P$) is a vector bundle of relative dimension $\dim \ur_P-\lg(w^{\min})$. We define $q_B^{-1}(\ur_{P})_w$ (resp.\ $q_P^{-1}(\ub)_w$) to be the reduced Zariski-closure of $q_B^{-1}(\ur_P)_w^0$ (resp.\ $q_P^{-1}(\ub)_w^0$) in $q_B^{-1}(\ur_P)$ (resp.\ in $q_P^{-1}(\ub)$). The scheme $q_B^{-1}(\ur_P)$ (resp.\ $q_P^{-1}(\ub)$) is equidimensional of dimension $\dim P-\dim B+\dim \ur_P$ (resp.\ $\dim \ur_P$) with irreducible components given by $\{q_B^{-1}(\ur_P)_w\}_{w\in \sW^P_{\min}}$ (resp.\ $\{q_P^{-1}(\ub)_{w}\}_{w\in \lWPmin}$). Moreover $q_B^{-1}(\ur_{P})_w^0$ (resp.\ $q_P^{-1}(\ub)_w^0$) is open in $q_B^{-1}(\ur_{P})_w$ (resp.\ $q_P^{-1}(\ub)_w$). From (\ref{Xpqpb}) we also have isomorphisms of reduced $E$-schemes for $w\in \sW^P_{\min}$
	\begin{equation*}
		X_w\cong G\times^P q_B^{-1}(\ur_P)_w\cong G \times^B q_P^{-1}(\ub)_{w^{-1}}.
	\end{equation*}
\end{remark}

\begin{lemma}\label{w'w}
	For $w,w'\in \sW$ we have that $X_{w}\cap V_{w'}\neq \emptyset$ implies $w'^{\min}\leq w^{\min}$ ($\Leftrightarrow w'^{\max} \leq w^{\max} \Leftrightarrow w'^{\min} w_0 \geq w^{\min} w_0$).
\end{lemma}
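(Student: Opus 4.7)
The plan is to reduce the statement to the closure relations among the $G$-orbits $U_w \subseteq G/B \times G/P$, which in turn are governed by the Bruhat order on Schubert cells in $G/P$.

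First, by Corollary \ref{irrcmpXP}, $V_w$ is open and dense in $X_w$, and since the projection $\pi$ of (\ref{pi}) is continuous we obtain $\pi(X_w) \subseteq \overline{\pi(V_w)} = \overline{U_w}$, the closure being taken in $G/B \times G/P$. Moreover, the diagonal $G$-action on $G/B \times G/P \times \ug$ (with $G$ acting via $\Ad$ on $\ug$) preserves $X_P$ and each $V_w$ (since $U_w$ is a single $G$-orbit), hence also its Zariski-closure $X_w$; in particular $\pi(X_w)$ is a $G$-stable subset of $\overline{U_w}$. Now assume $X_w \cap V_{w'} \neq \emptyset$; then $\pi(X_w) \cap U_{w'} \neq \emptyset$, and since $U_{w'}$ is itself a single $G$-orbit while $\pi(X_w)$ is $G$-stable, this forces $U_{w'} \subseteq \pi(X_w) \subseteq \overline{U_w}$.

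It remains to show that $U_{w'} \subseteq \overline{U_w}$ implies $w'^{\min} \leq w^{\min}$. For this, I use the $G$-equivariant isomorphism
\[
G \times^B G/P \xlongrightarrow{\sim} G/B \times G/P, \qquad (g_1, g_2 P) \longmapsto (g_1 B, g_1 g_2 P),
\]
under which the orbit $U_w = G \cdot (B, w^{-1} P)$ corresponds to $G \times^B (B w^{-1} P / P)$. Hence the inclusion $U_{w'} \subseteq \overline{U_w}$ translates to the Schubert-cell inclusion $B w'^{-1} P / P \subseteq \overline{B w^{-1} P / P}$ in $G/P$. Taking $w \in \sW^P_{\min}$ (so $w^{-1}$ is already the minimal representative in \lWPmin{} of its coset), the classical Bruhat decomposition of $G/P$ makes this equivalent to $(w'^{-1})^{\min} \leq w^{-1}$ in the Bruhat order of $\sW$, where $(w'^{-1})^{\min}$ denotes the minimal-length representative in \lWPmin{} of $w'^{-1} \sW_{L_P}$. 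Since inversion preserves the Bruhat order on $\sW$ and sends \lWPmin{} bijectively onto $\sW^P_{\min}$, this yields $w'^{\min} \leq w^{\min}$. The other stated equivalences $w'^{\max} \leq w^{\max}$ and $w'^{\min} w_0 \geq w^{\min} w_0$ are then formal, using that the Bruhat order on $\sW_{L_P} \backslash \sW$ can be read off equally well from minimal or maximal representatives and that right multiplication by $w_0$ reverses the Bruhat order on $\sW$.

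No serious obstacle is expected; beyond Corollary \ref{irrcmpXP} and the $G$-equivariance observations, the only input is the classical Bruhat theory for the partial flag variety $G/P$.
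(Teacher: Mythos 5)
Your argument is correct and is essentially the paper's own: both reduce $X_w\cap V_{w'}\neq\emptyset$ to the orbit-closure relation $U_{w'}\subseteq \overline{U_w}$ in $G/B\times G/P$ and then to a Schubert-cell closure relation, hence to the Bruhat order on the coset space. The only (cosmetic) difference is that you read the closure relation off the $B$-orbits $Bw'^{-1}P/P\subseteq\overline{Bw^{-1}P/P}$ in $G/P$ via the model $G\times^B(G/P)$, while the paper uses the $P$-orbits $Pw'B/B\subseteq\overline{PwB/B}$ in $G/B$ via $G\times^P(G/B)$, obtaining $w'^{\max}\leq w^{\max}$ directly; both identifications are set up just before the lemma, so the two arguments coincide.
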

\begin{proof}
	The lemma follows by the same argument as in the proof of \cite[Lemma 2.2.4]{BHS3}, noting that in $G \times^P (G/B)$ we have $\overline{U_{w}} \cap U_{w'}\neq \emptyset \Rightarrow \overline{PwB/B} \supset Pw'B/B \Rightarrow w^{\max} \geq w'^{\max} \Rightarrow w^{\min} \geq w'^{\min}$ (where $\overline{(-)}$ means Zariski-closure). 
\end{proof}

Denote by $\kappa_B: X_{P} \ra \ft$ (resp.\ $\kappa_P: X_P \ra \ft$) the morphism 
\begin{equation}\label{kappaBP}
(g_1 B, g_2 {P}, \psi) \longmapsto \overline{\Ad(g_1^{-1}) \psi}\ \ \big(\text{resp.\ \ }(g_1B, g_2 P, \psi)\longmapsto \overline{\Ad(g_2^{-1}) \psi}\big)
\end{equation}
where $\overline{\Ad(g_1^{-1})}\psi$ is the image of $\Ad(g_1^{-1})\psi\in \ub$ via $\ub\twoheadrightarrow \ft$ (and see \S~\ref{prel} for $\overline{\Ad(g_2^{-1})}\psi$). Note that $\kappa_P$ factors through $\fz_{L_P} \hookrightarrow \ft$. For $*=B,P$ denote by $\kappa_{*,w}$ the restriction of $\kappa_{*}$ to $X_{w}$. 

\begin{lemma}\label{LMwt}
	For $w\in \sW$ we have $\kappa_{P,w}=\Ad(w) \circ \kappa_{B,w}$, where $\Ad(w): \ft \ra \ft$ denotes the morphism induced by the adjoint action of $\sW$ on $\ft$. In particular, the following diagram commutes
	\begin{equation*}
		\begin{CD}
			X_{w} @> \kappa_{B,w} >> \ft \\
			@V \kappa_{P,w} VV @VVV \\
			\ft @>>> \ft/\sW
		\end{CD}
	\end{equation*}
	where $\ft/\sW:=\Spec R_{\ft}^{\sW}$ (with $\ft:=\Spec R_{\ft}$) and the two morphisms $\ft \ra \ft/\sW$ are both the canonical surjection.
\end{lemma}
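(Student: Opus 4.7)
The plan is to prove the identity pointwise on the dense open subscheme $V_w \subset X_w$ and then extend it to $X_w$ by density, using that $X_w$ is reduced and both $\kappa_{B,w}$, $\Ad(w)\circ\kappa_{B,w}$, $\kappa_{P,w}$ are morphisms of schemes into the separated target $\ft$. Concretely, since $V_w$ is schematically dense in $X_w$ (being open with $X_w$ reduced and irreducible), it suffices to verify the equality on points $(g_1B, g_2P, \psi) \in V_w$. By the definition of $U_w=G(1,w^{-1})(B\times P)$, every such point can be written as $(gB, gw^{-1}P, \psi)$ for some $g\in G$, and the conditions defining $X_P$ read $X := \Ad(g^{-1})\psi \in \ub$ and $\Ad(w)X = \Ad(g_2^{-1})\psi \in \ur_P$.

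The main computation is to evaluate the projection of $\Ad(w)X \in \ur_P$ onto $\fz_{L_P}$ in terms of $X$. The plan is to use the $\ft$-weight decomposition $\ug = \ft \oplus \bigoplus_{\alpha\in\Phi}\ug_\alpha$, together with $\ub = \ft\oplus\fn$, $\ur_P = \fz_{L_P}\oplus\fn_P$, where $\fn = \bigoplus_{\alpha\in\Phi^+}\ug_\alpha$ and $\fn_P = \bigoplus_{\alpha\in\Phi^+\setminus\Phi_{L_P}^+}\ug_\alpha$, so that $\ur_P\cap\ft = \fz_{L_P}$. Writing $X = X_T + X_N$ with $X_T\in\ft$, $X_N = \sum_{\alpha\in\Phi^+}X_\alpha \in \fn$, we get $\Ad(w)X = w(X_T) + \sum_{\alpha\in\Phi^+}\Ad(w)X_\alpha$, where $w(X_T)\in\ft$ since $w$ normalizes $T$, and $\Ad(w)X_\alpha\in\ug_{w(\alpha)}$ lies in a nonzero root space (so it contributes nothing to the $\ft$-component). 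Thus the $\ft$-component of $\Ad(w)X\in\ur_P$ equals $w(X_T)$; because $\Ad(w)X\in\ur_P$ and $\ur_P\cap\ft=\fz_{L_P}$, this element is automatically in $\fz_{L_P}$, and the projection of $\Ad(w)X$ onto $\fz_{L_P}$ (killing $\fn_P$) equals exactly $w(X_T)$. On the other hand $\kappa_{B,w}(g_1B,g_2P,\psi) = X_T$, so $\kappa_{P,w}(g_1B,g_2P,\psi) = w(X_T) = \Ad(w)\circ\kappa_{B,w}(g_1B,g_2P,\psi)$, which is the desired identity on $V_w$.

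The main obstacle, such as it is, is keeping track of why $\Ad(w)X_N$ produces no $\ft$-contribution and of the compatibility of the two quotient maps $\ub\twoheadrightarrow\ft$ and $\ur_P\twoheadrightarrow\fz_{L_P}$ with the weight decomposition; once this bookkeeping is set up the calculation is immediate. For the commutativity of the diagram, the identity $\kappa_{P,w} = \Ad(w)\circ\kappa_{B,w}$ gives, after postcomposing with the canonical surjection $\ft\twoheadrightarrow\ft/\sW = \Spec R_\ft^{\sW}$, the equality of the two compositions $X_w\to\ft/\sW$, because the map $\ft\to\ft/\sW$ is tautologically $\sW$-invariant and in particular $\Ad(w)$-invariant. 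No further argument is needed.
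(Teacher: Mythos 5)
Your proof is correct and follows essentially the same route as the paper: verify the identity on the dense open $V_w$ (where points have the form $(gB,gw^{-1}P,\psi)$) and extend to $X_w$ using that $X_w$ is reduced and the target is separated. The paper treats the computation on $V_w$ as immediate and runs the density step through the closed immersion $\ft\times_{\ft/\sW}\ft\hookrightarrow \ft\times_E\ft$, whereas you spell out the root-space bookkeeping explicitly and then deduce the commutative diagram directly from the identity via the $\sW$-invariance of $\ft\ra\ft/\sW$; both variants are fine.
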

\begin{proof}
	This is the argument for \cite[Lemma~2.3.4]{BHS3}, let us recall it. As $\ft/\sW$ is affine, it is separated, hence the diagonal embedding $\ft/\sW\rightarrow \ft/\sW\times_E\ft/\sW$ is a closed immersion, and so is $\ft\times_{\ft/\sW}\ft\rightarrow \ft\times_E\ft$ by base change along $\ft\times_E\ft\rightarrow \ft/\sW\times_E\ft/\sW$. Since the diagram clearly commutes with $V_{w}$ instead of $X_w$ and $V_w$ is Zariski-dense in $X_w$, the lemma follows.
\end{proof}

In particular, $\Ad(w) \circ \kappa_{B,w}$ only depends on the coset $\sW_{L_P} w$ as the same holds for $\kappa_{P,w}$. 

Consider the affine $E$-scheme $\sT_{P}:=\ft \times_{\ft/\sW} \fz_{L_P}$. We have a morphism of $E$-schemes
\[(\kappa_B, \kappa_P):X_P\longrightarrow \sT_{P}.\index{$\sT_P$}\]

\begin{lemma}\label{lemWT0}
	The irreducible components of $\sT_{P}$ are $\{\sT_{w}\}_{w\in \sW^P_{\min}}=\{\sT_w\}_{w\in \sW^P_{\max}}$ where $\sT_w$ is the reduced $E$-scheme:\index{$\sT_w$}
	\begin{equation*}
		\sT_{w}:=\{(\Ad(w^{-1}) z, z), \ z\in \fz_{L_P}\}
	\end{equation*}
	(so $\sT_w$ only depends on the coset $\sW_{L_P}w$). Moreover $X_{w}$ is the unique irreducible component of $X_{P}$ such that $(\kappa_B, \kappa_P)(X_{w})=\sT_{w}$.
\end{lemma}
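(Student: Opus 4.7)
My plan is to handle the two assertions in sequence, first describing $\sT_P$ set-theoretically and identifying its irreducible components, then using Lemma \ref{LMwt} combined with the explicit fiber description from Proposition \ref{geoVw} to identify the image of each $X_w$.

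For the first part, I would begin with the observation that $\ft \to \ft/\sW$ is a finite morphism whose geometric fibers are the $\sW$-orbits, so set-theoretically $\sT_P = \{(t,z) \in \ft \times \fz_{L_P} : t \in \sW \cdot z\}$. For each $w \in \sW$ the locally closed (in fact closed) subset $\sT_w := \{(\Ad(w^{-1})z, z) : z \in \fz_{L_P}\}$ is the image of $\fz_{L_P}$ under a closed immersion (a section of the second projection) and hence is smooth and irreducible of dimension $\dim \fz_{L_P}$. Since $\sW_{L_P}$ fixes $\fz_{L_P}$ pointwise, $\sT_w$ depends only on the right coset $\sW_{L_P}w$, so there are at most $|\sW^P_{\min}|$ distinct such subsets. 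Conversely, for generic $z \in \fz_{L_P}$ the stabilizer of $z$ in $\sW$ equals $\sW_{L_P}$, hence distinct cosets produce distinct $\sT_w$. Since $\sT_P = \bigcup_{w \in \sW^P_{\min}} \sT_w$ set-theoretically and each $\sT_w$ is closed, irreducible and of the same dimension, the (reduced) irreducible components of $\sT_P$ are exactly $\{\sT_w\}_{w\in \sW^P_{\min}}$.

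For the second part I would first compute $(\kappa_B,\kappa_P)$ on the open subset $V_w \subseteq X_w$ using formula (\ref{fiber0}) from the proof of Proposition \ref{geoVw}. For $(g_1B, g_2P, \psi) \in V_w$ of the form $g_1 = g$, $g_2 = gw^{-1}$ (with $w = w^{\min}$), we have $\Ad(g^{-1})\psi = \Ad(w^{-1})z + n$ for unique $z \in \fz_{L_P}$ and $n \in \fn \cap \Ad(w^{-1})\fn_P \subset \fn$. Since $w$ normalizes $T$, $\Ad(w^{-1})z$ remains in $\ft$, so projecting to $\ft$ via $\ub = \ft \oplus \fn \twoheadrightarrow \ft$ gives $\kappa_B = \Ad(w^{-1})z$. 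A similar direct computation, using $\Ad(w)(\fn \cap \Ad(w^{-1})\fn_P) \subset \fn_P$, gives $\kappa_P = z$. Letting $\psi$ (hence $z$) vary shows that $(\kappa_B,\kappa_P)(V_w) = \sT_w$, and by Zariski density of $V_w$ in $X_w$ combined with closedness of $\sT_w$ we get $(\kappa_B,\kappa_P)(X_w) = \sT_w$. (This is of course also compatible with Lemma \ref{LMwt}.)

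Finally, for uniqueness, if $w' \in \sW^P_{\min}$ satisfies $(\kappa_B,\kappa_P)(X_{w'}) = \sT_w$, then the same computation applied to $V_{w'}$ shows that $\sT_{w'} = (\kappa_B,\kappa_P)(V_{w'}) \subseteq \sT_w$, whence $\sT_{w'} = \sT_w$ by irreducibility and dimension, forcing $w = w'$ by the distinctness of the $\sT_w$ established above. The main obstacle is really the explicit computation of $\kappa_B$ and $\kappa_P$ on $V_w$; once (\ref{fiber0}) is in hand this is a short direct calculation, but it requires being careful about the decomposition $\Ad(g^{-1})\psi = \Ad(w^{-1})z + n$ and about verifying that $\Ad(w)n$ indeed lands in $\fn_P$ so that the $\fz_{L_P}$-component of $\Ad(wg^{-1})\psi$ is precisely $z$.
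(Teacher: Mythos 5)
Your proof is correct and follows essentially the same route as the paper: identifying the components of $\sT_P$ by the finiteness of $\ft\to\ft/\sW$ and the irreducibility and common dimension of the $\sT_w$, and then computing $(\kappa_B,\kappa_P)$ on $V_w$ via the fiber description (\ref{fiber0}) together with Zariski density of $V_w$ in $X_w$. The only cosmetic difference is that you re-derive the containment $(\kappa_B,\kappa_P)(X_w)\subseteq \sT_w$ directly by density, where the paper cites Lemma \ref{LMwt} (itself proved by the same density argument), and you spell out the distinctness of the $\sT_w$ for distinct cosets, which the paper leaves implicit.
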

\begin{proof}
	Since $\ft\rightarrow \ft/\sW$ is finite, we deduce $\dim \sT_{P}=\dim \fz_{L_P}=\dim \sT_{w}$ for any $w\in \sW$. It is also clear that $\sT_{w} \cong \fz_{L_P}$ is irreducible. The first part of the lemma easily follows. By Lemma \ref{LMwt}, $(\kappa_B, \kappa_P)(X_{w})\subset \sT_{w}$. From (\ref{fiber0}), we see that the restriction $(\kappa_B, \kappa_P)|_{V_{w}}: V_{w}\ra \sT_{w}$ is surjective. The second part of the lemma follows. 
\end{proof}

For a scheme $Y$ and a point $y\in Y$, recall that we denote $\co_{Y,y}$ the local ring of $Y$ at $y$, $\widehat{\co}_{Y,y}$ the completion of $\co_{Y,y}$ along its maximal ideal, and $\widehat Y_y:=\Spf \widehat{\co}_{Y,y}$ the associated formal scheme (whose underlying topological space is one point). 

\begin{lemma}\label{Theta0}
	Let $x$ be a closed point of $X_{P}$, $w, w'\in \sW$. Assume $x\in X_w$. Then the composition $\widehat{X}_{w,x} \hookrightarrow \widehat{X}_{P,x} \ra \widehat{\sT}_{P,(\kappa_B,\kappa_P)(x)}$ factors through $\widehat{\sT}_{w', (\kappa_B, \kappa_P)(x)}\hookrightarrow \widehat{\sT}_{{P},(\kappa_B,\kappa_P)(x)}$ if and only if $\sW_{L_P}w'=\sW_{L_P}w$.
\end{lemma}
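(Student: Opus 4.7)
The ``if'' direction is immediate: by Lemma \ref{lemWT0}, $\sW_{L_P}w=\sW_{L_P}w'$ implies $\sT_w=\sT_{w'}$ as reduced closed subschemes of $\sT_P$, and Lemma \ref{LMwt} already furnishes a scheme-theoretic factorization $X_w\to \sT_w$, giving the desired formal factorization through $\widehat{\sT}_{w',\tilde t}=\widehat{\sT}_{w,\tilde t}$ (where $\tilde t:=(\kappa_B,\kappa_P)(x)$).

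For the converse, suppose for contradiction that the formal map factors through $\widehat{\sT}_{w',\tilde t}$ but $\sW_{L_P}w\neq \sW_{L_P}w'$. Using the description $\sT_{w''}=\{(w''^{-1}(z),z):z\in \fz_{L_P}\}$ of Lemma \ref{lemWT0}, one computes
\[\sT_w\cap \sT_{w'}=\{(w^{-1}(z),z):z\in \fz_{L_P}^{w'w^{-1}}\}.\]
Since $\sW_{L_P}\subset \sW$ is exactly the pointwise stabilizer of $\fz_{L_P}$ (being generated by reflections in roots vanishing on $\fz_{L_P}$), the hypothesis $w'w^{-1}\notin \sW_{L_P}$ forces $\fz_{L_P}^{w'w^{-1}}\subsetneq \fz_{L_P}$, so $\sT_w\cap \sT_{w'}$ is a proper closed reduced subscheme of the smooth integral variety $\sT_w\cong \fz_{L_P}$. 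Letting $\widehat I_w,\widehat I_{w'}\subset \widehat{\co}_{\sT_P,\tilde t}$ denote the defining ideals of $\widehat{\sT}_{w,\tilde t},\widehat{\sT}_{w',\tilde t}$, the two factorizations force the composition $\widehat{\co}_{\sT_P,\tilde t}\to \widehat{\co}_{X_w,x}$ to kill $\widehat I_w+\widehat I_{w'}$; dividing by $\widehat I_w$, the induced map $\widehat{\co}_{\sT_w,\tilde t}\to \widehat{\co}_{X_w,x}$ kills the image of $\widehat I_{w'}$, which is a \emph{nonzero} ideal of $\widehat{\co}_{\sT_w,\tilde t}$ since $I_{w'}\not\subseteq I_w$ in $\co_{\sT_P,\tilde t}$ and completion is faithfully flat.

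It therefore suffices to establish the key injectivity statement: the map $\varphi\colon \widehat{\co}_{\sT_w,\tilde t}\to \widehat{\co}_{X_w,x}$ is injective. Since $X_w$ is reduced excellent, $\widehat{\co}_{X_w,x}$ embeds into $\prod_i \widehat{\co}_{X_w,x}/\fp_i$ over its minimal primes; as $\widehat{\co}_{\sT_w,\tilde t}$ is a regular domain (the smooth scheme $\sT_w\cong \fz_{L_P}$ has regular completions), $\ker\varphi$ is a finite intersection of prime ideals in a domain, hence vanishes if and only if some $\varphi_i\colon \widehat{\co}_{\sT_w,\tilde t}\to \widehat{\co}_{X_w,x}/\fp_i$ is injective, i.e.\ if and only if some formal branch of $X_w$ at $x$ dominates $\widehat{\sT}_{w,\tilde t}$. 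The plan is to reduce to the smooth locus via the $G$-equivariant description $X_w\cong G\times^P q_B^{-1}(\ur_P)_w$ of Remark \ref{remXw}(2): after choosing a local section of $G\to G/P$ near the point $g_2P$ associated to $x$, the formal completion $\widehat{X}_{w,x}$ splits off a smooth factor $\widehat{(G/P)}_{g_2P}$, reducing the injectivity to the analogous statement for $q_B^{-1}(\ur_P)_w\to \sT_w$ at the image point $y$. On the open dense subset $q_B^{-1}(\ur_P)_w^0$, Proposition \ref{geoVw} together with the explicit formula $(z,n)\mapsto (w^{-1}(z),z)$ extracted from the proof of Lemma \ref{lemWT0} shows this morphism is smooth surjective, directly handling the case $y\in q_B^{-1}(\ur_P)_w^0$. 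The main obstacle is the boundary case $y\in q_B^{-1}(\ur_P)_w\setminus q_B^{-1}(\ur_P)_w^0$; the plan for it is to combine equidimensionality of $q_B^{-1}(\ur_P)_w$ with generic flatness of the morphism to $\sT_w$ and excellence of $X_w$ to transfer the smooth-locus injectivity to the boundary through a dimension-theoretic argument producing a formal branch at $y$ that dominates $\sT_w$.
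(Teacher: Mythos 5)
Your ``if'' direction and the formal bookkeeping at the start of the converse are fine, but the converse as written has a genuine gap, and it comes from an unnecessary strengthening of what has to be proved. After reducing to the statement that the map $\varphi\colon \widehat{\co}_{\sT_w,\tilde t}\to \widehat{\co}_{X_w,x}$ kills a nonzero ideal, you declare that it suffices to prove $\varphi$ is injective, i.e.\ that some \emph{analytic} branch of $X_w$ at $x$ dominates $\sT_w$ formally. This is strictly stronger than what you need, and it is exactly the kind of statement about analytic branches of $X_w$ that is delicate when $P\neq B$ (the paper only controls branches of $X_w$ at very special points, cf.\ Theorem \ref{unibranch}, and does not know normality or Cohen--Macaulayness of $X_w$ in general). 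Your treatment of it is only a ``plan'': the open stratum $V_w$ is indeed smooth over $\sT_w$ (Proposition \ref{geoVw}), so flatness gives injectivity on completions there, but the boundary case is the whole difficulty, and the tools you invoke for it do not deliver it. Generic flatness says nothing at a non-generic point; equidimensionality of the total space $q_B^{-1}(\ur_P)_w$ and excellence do not, by themselves, produce a formal branch at $x$ dominating $\sT_w$ --- an analytic branch of an irreducible variety can a priori be contracted into a ``transcendental'' formal subscheme of the target even though the global map is dominant, and ruling this out at an arbitrary boundary point would require a further input (for instance universal openness of $X_w\to\fz_{L_P}$ via equidimensionality of \emph{all} fibres and Chevalley's criterion, or flatness), none of which is argued in your proposal. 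So the key injectivity statement is left unproven, and the proposed route to it would fail as described.

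The repair is to avoid completions altogether at this step, which is what the paper does. The ideal $\widehat I_{w'}$ you need to be killed is generated by the \emph{algebraic} ideal $I_{w'}\subset \co_{\sT_P,\tilde t}$ of $\sT_{w'}$. Hence, if the completed map $\widehat{\co}_{\sT_P,\tilde t}\to\widehat{\co}_{X_w,x}$ kills it, then composing $\co_{\sT_P,\tilde t}\to\co_{X_w,x}\to\widehat{\co}_{X_w,x}$ and using only Krull's intersection theorem (injectivity of $\co_{X_w,x}\hookrightarrow\widehat{\co}_{X_w,x}$), you find that already the uncompleted local map $\co_{\sT_P,\tilde t}\to\co_{X_w,x}$ kills $I_{w'}$; in other words $\Spec \co_{X_w,x}\to\Spec\co_{\sT_P,\tilde t}$ factors through $\sT_{w'}$. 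Since $X_w$ is irreducible and $x\in X_w$, its generic point lies in $\Spec\co_{X_w,x}$ and, by Lemma \ref{lemWT0} (and Lemma \ref{LMwt}), it maps to the generic point of $\sT_w$; the factorization then forces $\sT_w\subseteq\sT_{w'}$, hence $\sT_w=\sT_{w'}$ and $\sW_{L_P}w=\sW_{L_P}w'$. This uses no information whatsoever about the analytic branches of $X_w$ at $x$, which is precisely the point of the paper's argument (and why, as remarked there, the normality hypothesis used in the $P=B$ case of \cite{BHS3} is superfluous).
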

\begin{proof}
	Let $y:=(\kappa_B, \kappa_P)(x)$, a closed point of ${\sT}_{P}$. We have a commutative diagram of local rings 
	\begin{equation}\label{diagLC}
		\begin{CD}\co_{\sT_{P},y} @>>> \co_{X_{P},x} @>>> \co_{X_w, x} \\ 
			@VVV @VVV @VVV \\
			\widehat{\co}_{\sT_{P},y} @>>> \widehat{\co}_{X_{P},x} @>>> \widehat{\co}_{X_w, x}
		\end{CD}
	\end{equation}
	where the vertical maps are injective by Krull's intersection theorem.
	By assumption, the bottom composition factors through $\widehat{\co}_{\sT_{w'}, y}$. Using the commutative diagram
	\begin{equation*}
		\begin{CD}
			\co_{\sT_{P},y} @>>> \co_{\sT_{w'}, y} \\ 
			@VVV @VVV \\
			\widehat{\co}_{\sT_{P},y} @>>> \widehat{\co}_{\sT_{w'},y}
		\end{CD}
	\end{equation*} 
	and the injectivity of all the vertical maps in (\ref{diagLC}), we deduce that the upper composition in (\ref{diagLC}) factors through $\co_{\sT{w'},y}$. In particular the map $X_P\rightarrow \sT_{P}$ sends the generic point of $X_w$ to the generic point of $\sT_{w'}$. By Lemma \ref{lemWT0}, we must have $w'=w$.
\end{proof}

\begin{remark}
Lemma \ref{Theta0} is the analogue of \cite[Lemma~2.5.2]{BHS3} where the normality of $X_w$ there was used in the proof. However, this normality is in fact useless there, arguing as in the above proof.
\end{remark}

Recall that $\psi \in \ur_P$ is called {\it regular} if the subgroup $C_G(\psi):=\{g\in G, \ \Ad(g)\psi=\psi\}$ satisfies $\dim C_G(\psi)\leq \dim C_G(\psi')$ for all $\psi'\in \ur_P$. We write $\ur_P^{\reg}$ for the subset of regular elements in $\ur_P$, which is preserved under the $P$-action, and $\fz_{L_P}^{\reg}:=\fz_{L_P}\cap \ur_P^{\reg}$. When $\psi\in \fz_{L_P}$ we have $\psi\in \fz_{L_P}^{\reg}$ exactly when $C_G(\psi)=L_P$. We say that $\psi \in \ur_P$ is {\it regular semi-simple} if $\psi\in P\fz_{L_P}^{\reg}$ (for the adjoint action of $P$ on $\ur_P$) and we write $\ur_P^{\reg-\sss}\subset \ur_P^{\reg}$ for the subset of regular semi-simple elements in $\ur_P$. It is well-known that both $\ur_P^{\reg-\sss}$ and $\ur_P^{\reg}$ are Zariski-open (Zariski-dense) in $\ur_P$ and that for each $\psi\in \ur_P^{\reg}$ we have $\dim C_G(x)=\dim L_P$. Moreover the product map gives an isomorphism of (smooth irreducible) $E$-schemes:
\begin{equation}\label{P/LP}
	P/L_P\times \fz_{L_P}^{\reg} \xlongrightarrow{\sim} \ur_P^{\reg-\sss}.
\end{equation}
Let $\tilde{\ug}_{P}^{\reg-\sss}$ be the Zariski-open (Zariski-dense) subset of $\tilde{\ug}_P$ corresponding to $G \times^P \ur_P^{\reg-\sss}$ via the isomorphism (\ref{better}), and $X_P^{\reg-\sss}:=\tilde{\ug}_P^{\reg-\sss} \times_{\ug} \tilde{\ug}$, which is Zariski-open in $X_P\cong \tilde{\ug}_P\times_{\ug} \tilde{\ug}$. As $V_w\cap X_P^{\reg-\sss}\neq \emptyset$ for any $w\in \sW$ (use (\ref{fiber0}) for instance), $X_P^{\reg-\sss}$ is Zariski-dense in $X_P$.

\begin{proposition}\label{genesmoo}
	The scheme $X_P^{\reg-\sss}$ is smooth over $E$. Moreover, the composition $X_P^{\reg-\sss}\! \!\hookrightarrow X_P \xrightarrow{\kappa_P} \fz_{L_P}$ is smooth. 
\end{proposition}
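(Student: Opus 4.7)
The plan is to exhibit $X_P^{\reg-\sss}$ as a homogeneous fiber bundle whose fiber is a preimage in the Grothendieck--Springer resolution, thereby reducing smoothness to a tangent-space computation at finitely many base points. First, combining (\ref{better}) with (\ref{P/LP}) yields an isomorphism $\tilde{\ug}_P^{\reg-\sss}\simeq (G/L_P)\times \fz_{L_P}^{\reg}$ sending $(gL_P,\phi)$ to $(gP,\Ad(g)\phi)$; under this identification $\kappa_P$ becomes the second projection and is manifestly smooth, so $\tilde{\ug}_P^{\reg-\sss}$ itself is smooth.

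Next, I would introduce the auxiliary closed subscheme
\[Y:=\{(hB,\phi)\in G/B\times \fz_{L_P}^{\reg}:\ \Ad(h^{-1})\phi\in \ub\}\subset \tilde{\ug},\]
equipped with the left $L_P$-action $\ell\cdot(hB,\phi):=(\ell h B,\phi)$, well-defined since $L_P$ fixes $\fz_{L_P}$ pointwise under $\Ad$. One then builds a $G$-equivariant morphism
\[\bar\Phi: G\times^{L_P}Y\longrightarrow X_P^{\reg-\sss},\qquad [g,hB,\phi]\longmapsto (ghB,\,gP,\,\Ad(g)\phi).\]
Bijectivity on closed points follows from the uniqueness in (\ref{P/LP}) together with the fact that $C_G(\phi)=L_P$ for $\phi\in \fz_{L_P}^{\reg}$: for a point $(g_1B,g_2P,\psi)\in X_P^{\reg-\sss}$, there is a unique pair $(n,\phi)\in N_P\times \fz_{L_P}^{\reg}$ with $\Ad(g_2^{-1})\psi=\Ad(n)\phi$, and setting $g:=g_2n$ produces a preimage $[g,\,g^{-1}g_1B,\,\phi]$ uniquely up to the $L_P$-action.

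The main technical step is to prove that $Y$ is smooth of dimension $\dim \fn_{L_P}+\dim \fz_{L_P}$ and that $Y\to \fz_{L_P}^{\reg}$ is smooth. The fiber of $Y\to \fz_{L_P}^{\reg}$ over $\phi$ is the fixed-point variety $(G/B)^\phi$, which decomposes as the disjoint union $\bigsqcup_{w\in \sW_{L_P}\backslash\sW}L_P\cdot wB$ since $C_G(\phi)=L_P$. By $L_P$-equivariance of $Y$, it suffices to compute the tangent space at each representative $(wB,\phi)$. Translating by $w^{-1}$ and using that the differential of $q_B: \tilde{\ug}\to \ug$ at $[1,\Ad(w^{-1})\phi]$ is $(X,Y')\mapsto [X,\Ad(w^{-1})\phi]+Y'$, the tangent space to $Y$ at $(wB,\phi)$ identifies with
\[\{(X,Y')\in \fn^-\oplus \ub:\; [X,\Ad(w^{-1})\phi]=0 \text{ and } Y'\in \Ad(w^{-1})\fz_{L_P}\}.\]
Since $\ker\ad(\Ad(w^{-1})\phi)=\Ad(w^{-1})\fl_P$, which is a Levi subalgebra with a symmetric root system, the intersection $\Ad(w^{-1})\fl_P\cap \fn^-$ has dimension $\dim \fn_{L_P}$, and the total tangent space has the expected dimension $\dim \fn_{L_P}+\dim \fz_{L_P}$. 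Moreover, the $Y'$-component ranges freely over $\Ad(w^{-1})\fz_{L_P}$ which maps isomorphically to $T_\phi \fz_{L_P}^{\reg}=\fz_{L_P}$ under $\Ad(w)$, so the projection $Y\to \fz_{L_P}^{\reg}$ is smooth.

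Combining these, $G\times^{L_P}Y$ is smooth of the same dimension $\dim G-\dim B+\dim \ur_P=\dim X_P$, and a tangent-space comparison at each of the $|\sW_{L_P}\backslash \sW|$ orbit types upgrades the bijection $\bar\Phi$ to an isomorphism of schemes; consequently $X_P^{\reg-\sss}$ is smooth and its composition with $\kappa_P$ factors as the smooth composition $G\times^{L_P}Y\to (G/L_P)\times \fz_{L_P}^{\reg}\to \fz_{L_P}^{\reg}$. The main obstacle is precisely the last step---promoting $\bar\Phi$ from a set-theoretic bijection to a scheme-theoretic isomorphism: while the bijection and the dimension match are essentially formal consequences of (\ref{P/LP}), verifying that the differential of $\bar\Phi$ is an isomorphism at each orbit representative requires the somewhat tedious but mechanical tangent-space comparison indicated above.
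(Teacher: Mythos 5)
Your strategy --- realizing $X_P^{\reg-\sss}$ as an associated bundle $G\times^{L_P}Y$ with $Y=q_B^{-1}(\fz_{L_P}^{\reg})$ --- is genuinely different from the proof in the paper and can be completed, but as written the decisive step is a gap. All the tangent-space work you actually carry out concerns the \emph{source}: it shows (after two small additions: that in your description of $T_{(wB,\phi)}Y$ no cancellation between $[X,\Ad(w^{-1})\phi]\in\fn^-$ and $Y'\in\ub$ can occur, and that the local dimension of $Y$ is bounded below by $\dim\fn_{L_P}+\dim\fz_{L_P}$ because $L_PwB/B\times\fz_{L_P}^{\reg}\subset Y$ passes through every point) that $Y$, hence $G\times^{L_P}Y$, is smooth of dimension $\dim X_P$. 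None of this controls the \emph{target}. A bijective morphism from a smooth variety of the same dimension need not be an isomorphism and does not prevent the target from being singular or non-reduced (think of the normalization of a cuspidal curve), and $X_P^{\reg-\sss}$ is a priori only an open subscheme of the fibre product $\tilde{\ug}_P\times_{\ug}\tilde{\ug}$, with no reducedness known. What you must prove is an upper bound $\dim_{k(y)}T_yX_P\leq\dim X_P$ at each point $y=(g_1B,g_2P,\psi)$ with $\psi$ regular semi-simple, equivalently that $d\bar\Phi$ is surjective onto the full Zariski tangent space of the fibre product; this is \emph{not} ``the tangent-space comparison indicated above''. Concretely, $T_yX_P=\{(v_1,v_2)\in T\tilde{\ug}_P\times T\tilde{\ug}\,:\,dq_P(v_1)=dq_B(v_2)\}$, so the bound amounts to showing that $\mathrm{im}\,dq_P+\mathrm{im}\,dq_B=\mathrm{im}\,\ad\psi+\Ad(g_2)\fz_{L_P}+\Ad(g_1)\ub$ has codimension at most $\dim\fn_{L_P}$ in $\ug$ (note the two maps are \emph{not} transverse: the fibre product has dimension strictly larger than the ``expected'' one when $P\neq B$, so this needs an argument). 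It is true, because $\mathrm{im}\,\ad\psi=\Ad(g_2)(\fn_P\oplus\fn_{P^-})$ and $\Ad(g_1)\ub$ meets the centralizer $\Ad(g_2)\fl_P$ of $\psi$ in a Borel subalgebra of it; once it is proved, equidimensionality of $X_P$ (Corollary \ref{irrcmpXP}) gives $\dim T_yX_P=\dim\co_{X_P,y}$, so $\co_{X_P,y}$ is regular and $X_P$ is smooth at $y$; then $\bar\Phi$ is \'etale and bijective, hence an isomorphism, and both assertions follow as you say.

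For comparison, the paper avoids tangent spaces altogether: using (\ref{Xpqpb}) (in the form (\ref{isoregss})) it reduces to the smoothness of $q_P^{-1}(\ub)^{\reg-\sss}\rightarrow\fz_{L_P}$, pulls back along the smooth surjection $G\times\fz_{L_P}^{\reg}\twoheadrightarrow G/L_P\times\fz_{L_P}^{\reg}$ provided by (\ref{P/LP}), and then trivializes explicitly over the translated big cells $Bw_0Nw$: there the defining condition collapses, scheme-theoretically, to $\Ad(w^{-1}nw)\psi=\psi$, so the scheme in question is literally $B\times w(N\cap L_P)w^{-1}\times\fz_{L_P}^{\reg}$. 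This sidesteps exactly the reducedness/transversality issue your route must confront head-on. Your construction does buy a pleasant global statement --- $X_P^{\reg-\sss}\cong G\times^{L_P}q_B^{-1}(\fz_{L_P}^{\reg})$, refining (\ref{isoregss}) --- but only after the tangent bound on the fibre product is supplied.
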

\begin{proof}
	Since $\fz_{L_P}$ is smooth over $E$, it is enough to prove the second statement. Let $q_P^{-1}(\ub)^{\reg-\sss}:=\tilde{\ug}_{P}^{\reg-\sss}\cap q_P^{-1}(\ub)=\tilde{\ug}_{P}^{\reg-\sss}\times_{\tilde{\ug}_{P}} q_P^{-1}(\ub)$, which is Zariski-open in $q_P^{-1}(\ub)$ and Zariski-closed in $\tilde{\ug}_{P}^{\reg-\sss}$. An argument similar to (\ref{Xpqpb}) gives an isomorphism of $E$-schemes
	\begin{equation}\label{isoregss}
		X_P^{\reg-\sss}\xlongrightarrow{\sim} G \times^B q_P^{-1}(\ub)^{\reg-\sss}.
	\end{equation}
	Seeing $q_P^{-1}(\ub)^{\reg-\sss}$ inside $G \times^P \ur_P^{\reg-\sss}$ via (\ref{better}), it is enough to prove that the composition
	\begin{equation}\label{kappareg}
		\kappa_P^{\reg-\sss}:q_P^{-1}(\ub)^{\reg-\sss}\hookrightarrow G \times^P \ur_P^{\reg-\sss} \longrightarrow \fz_{L_P},\ \ 
	\end{equation}
	is smooth, where the second map is $(g,\psi)\mapsto \overline\psi$ (note that $\kappa_P^{\reg-\sss}$ is $B$-equivariant with the trivial action of $B$ on the target). Indeed, under (\ref{isoregss}), the composition in the statement is the composition
	\[ G \times^B q_P^{-1}(\ub)^{\reg-\sss}\buildrel{\id\times \kappa_P^{\reg-\sss}}\over\longrightarrow G\times^B \fz_{L_P}\simeq G/B\times \fz_{L_P}\twoheadrightarrow \fz_{L_P}\]
	(where the last surjection is the canonical projection), which is smooth as both maps are.
	
	By (\ref{P/LP}), we have $G \times^P \ur_P^{\reg-\sss}\simeq G/L_P \times \fz_{L_P}^{\reg}$. Under this isomorphism, the last map in (\ref{kappareg}) is $(gL_P,\psi)\mapsto \psi$ (where $\psi\in \fz_{L_P}^{\reg}$), and $q_P^{-1}(\ub)^{\reg-\sss}$ is the closed subscheme of $G/L_P \times \fz_{L_P}^{\reg}$ defined by $\{(gL_P, \psi)\ |\ \Ad(g) \psi \in \ub\}$. Denote by $Y$ the inverse image of $q_P^{-1}(\ub)^{\reg-\sss}$ under the smooth surjective map $G \times \fz_{L_P}^{\reg}\twoheadrightarrow G/L_P \times \fz_{L_P}^{\reg}$, then by base change $Y\rightarrow q_P^{-1}(\ub)^{\reg-\sss}$ is also smooth surjective, and using \cite[\href{https://stacks.math.columbia.edu/tag/02K5}{Lemma 02K5}]{stacks-project} it is enough to prove that the composition $Y\hookrightarrow G \times \fz_{L_P}^{\reg}\twoheadrightarrow \fz_{L_P}^{\reg}$ is smooth. It is enough to prove this Zariski-locally on $Y$, and since $(Bw_0Nw)_{w\in \sW}$ is a Zariski-open covering of $G$, one can replace $G\times \fz_{L_P}^{\reg}$ by $Bw_0Nw\times \fz_{L_P}^{\reg}$ (for an arbitrary $w\in \sW$) and $Y$ by $Y \cap (Bw_0Nw \times \fz_{L_P}^{\reg})$. Since the multiplication induces an isomorphism of schemes $B \times w_0Nw \xlongrightarrow{\sim} Bw_0Nw$ and since $Y$ is $B$-invariant, we have $Y\simeq B \times Z$ for the closed subscheme $Z:=\{(w_0nw, \psi)\ |\ \Ad(w_0nw) \psi \in \ub\}$ of $w_0Nw \times \fz_{L_P}^{\reg}$. As the projection $B\times Z\twoheadrightarrow Z$ is smooth, it is enough to prove that $Z\rightarrow \fz_{L_P}^{\reg}, (w_0nw, \psi)\mapsto \psi$ is smooth.
	
	Since $\Ad(nw) \psi\in \ub$ and $\Ad(w_0)\ub \cap \ub =\ft$, it follows that
	\[Z=\{(w_0nw, \psi)\ |\ \Ad(w_0nw) \psi \in \ft\}\simeq \{(n, \psi)\in N\times \fz_{L_P}^{\reg}\ |\ \Ad(nw) \psi \in \ft\}.\]
	Since $\Ad(w) \psi \in \ft$ and the adjoint action of unipotent elements on $\ft$ doesn't change the diagonal entries, we have
	\begin{multline*}
		Z\simeq \{(n, \psi)\in N\times \fz_{L_P}^{\reg}\ |\ \Ad(nw) \psi =\Ad(w)\psi\}\simeq \{(n, \psi)\in N\times \fz_{L_P}^{\reg}\ |\ \Ad(w^{-1}nw) \psi =\psi\}\\
		\simeq w(N\cap L_P)w^{-1}\times \fz_{L_P}^{\reg}
	\end{multline*}
	where the last isomorphism follows from $C_G(\psi)=L_P$. As $w(N\cap L_P)w^{-1}$ is an (affine) smooth scheme, so is the projection $w(N\cap L_P)w^{-1}\times \fz_{L_P}^{\reg}\twoheadrightarrow \fz_{L_P}^{\reg}$, which finishes the proof.
\end{proof}

\subsection{Analysis of the local geometry}

In this paragraph we study the local geometry of $X_w$ at certain points.

\begin{theorem}\label{unibranch}
	Let $w\in \sW$ and $x=(g_1B, g_2P, 0)\in X_w$, then the scheme $X_w$ is unibranch at $x$.
\end{theorem}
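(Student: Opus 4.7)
The plan is to prove that $X_w$ is analytically unibranch at $x=(g_1B, g_2P,0)$, i.e.\ that $\widehat{\co}_{X_w,x}$ has a unique minimal prime. Using the diagonal $G$-action on $X_P$ (which preserves each irreducible component) together with $B$-equivariance, which allows one to move within a single Bruhat cell of $G/P$ while fixing $B\in G/B$, I would first reduce to the case $x_0=(B, (w')^{-1}P, 0)$, where $w'\in \sW^P_{\min}$ is the unique element with $(B, g_1^{-1}g_2P)\in U_{w'}$; Lemma \ref{w'w} then forces $w'\leq w^{\min}$. The isomorphism $X_w\cong G\times^B q_P^{-1}(\ub)_{w^{-1}}$ of Remark \ref{remXw} exhibits $X_w$ as an \'etale-locally trivial fiber bundle over the smooth base $G/B$, with fiber $q_P^{-1}(\ub)_{w^{-1}}$; consequently $\widehat{\co}_{X_w,x_0}$ is a formal power series ring over $\widehat{\co}_{q_P^{-1}(\ub)_{w^{-1}},y}$, and the problem reduces to showing that $q_P^{-1}(\ub)_{w^{-1}}$ is unibranch at $y:=((w')^{-1}P, 0)$.

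For this I would follow the strategy of \cite[Lemma 3.4.8]{LLHLM}: construct a smooth scheme $\widetilde Z$ together with a proper, surjective, birational morphism $f\colon\widetilde Z\to q_P^{-1}(\ub)_{w^{-1}}$ whose fiber over $y$ is geometrically connected. Granting this, smoothness of $\widetilde Z$ implies normality, so by the universal property of normalization $f$ factors as $\widetilde Z\to \widetilde T\to q_P^{-1}(\ub)_{w^{-1}}$, where $\widetilde T$ denotes the normalization; the connected fiber of $f$ at $y$ then forces the finite birational morphism $\widetilde T\to q_P^{-1}(\ub)_{w^{-1}}$ to have a single point above $y$, which is precisely the unibranch property. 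A natural candidate for $\widetilde Z$ would combine a Bott-Samelson resolution of the Schubert variety $\overline{Bw^{-1}P/P}\subset G/P$ with the data of the varying subspaces $\ub\cap \Ad(g)\ur_P$ describing the fibers of $q_P^{-1}(\ub)\to G/P$.

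The main obstacle is the construction of $\widetilde Z$. The projection $q_P^{-1}(\ub)\to G/P$ is not flat, since the fiber $\ub\cap \Ad(g)\ur_P$ jumps in dimension across Schubert strata, so the naive pullback along a Bott-Samelson resolution of $\overline{Bw^{-1}P/P}$ is not smooth. One must resolve simultaneously the Schubert variety and the jumping fiber structure, which is a delicate construction. The hypothesis that the $\ug$-component of $x$ is zero is essential here: it places $x$ on the zero section of $\tilde{\ug}_P$, where the problem linearizes around a fixed base point of $G/P$ and the LLHLM-style argument can be carried out; for a point with $\Ad(g^{-1})\psi\neq 0$ this linearization breaks down, which is exactly why Theorem \ref{unibranch} is restricted to points with zero $\ug$-factor.
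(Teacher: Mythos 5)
Your reduction to the unibranch property of $q_P^{-1}(\ub)_{w^{-1}}$ at a point of the form $(gP,0)$, via $X_w\cong G\times^B q_P^{-1}(\ub)_{w^{-1}}$, and your use of the criterion of \cite[Lemma 3.4.8, Rem.\ 3.4.3]{LLHLM} (unibranch at $y$ follows once one has a proper surjective birational map from a normal scheme whose fiber over $y$ is geometrically connected) are both sound and match the paper's framework. But the heart of the matter is precisely the step you defer: you propose to build a \emph{smooth} simultaneous resolution $\widetilde Z$ (Bott--Samelson for $\overline{Bw^{-1}P/P}$ combined with the jumping subspaces $\ub\cap\Ad(g)\ur_P$) and you yourself note that the naive pullback is not smooth and that the needed construction is ``delicate'' --- it is never carried out, and even granting some resolution, nothing in your argument produces the geometric connectedness of its fiber over $y$, which is exactly the content of the theorem. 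So the proposal has a genuine gap: the key geometric input is missing, and your heuristic explanation of why the hypothesis $\psi=0$ is needed (``the problem linearizes around a fixed base point'') does not identify the actual mechanism.

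The paper's proof avoids any smooth resolution and works directly with the normalization $f\colon\widetilde Y_w\to Y_w:=q_P^{-1}(\ub)_w$. The essential ideas are: (i) the contracting action of the monoid $\bA^1$ on $\ur_P$ (scaling $\psi$) preserves $Y_w$, and its fixed locus is the zero section $Y_w^{\bG_{\rm m}}\cong C_w\times\{0\}$ where $C_w=\overline{BwP/P}$; (ii) this action lifts canonically to $\widetilde Y_w$, the limit map $\widetilde y\mapsto 0\cdot\widetilde y$ retracts $\widetilde Y_w$ onto $\widetilde Y_w^{\bG_{\rm m}}$, which is therefore irreducible and is identified with $f^{-1}(Y_w^{\bG_{\rm m}})^{\red}$; (iii) the induced finite birational surjection $f^{-1}(Y_w^{\bG_{\rm m}})^{\red}\to Y_w^{\bG_{\rm m}}\cong C_w$ lands in a \emph{normal} scheme, by the Mehta--Srinivas normality of Schubert varieties, so Zariski's connectedness theorem gives connectedness of every fiber $f^{-1}(g,0)$. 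This also explains exactly why the statement is restricted to points with zero $\ug$-coordinate: those are the points of the $\bG_{\rm m}$-fixed locus, where the retraction-plus-normality argument applies; your proposal contains neither the $\bG_{\rm m}$-contraction nor the appeal to normality of Schubert varieties, and without some substitute for them the connected-fiber input cannot be obtained.
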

\begin{proof}
	By Remark \ref{remXw} (2), it is enough to prove the same statement with the irreducible component $Y_w:=q_P^{-1}(\ub)_{w}$ of the closed $E$-subscheme $q_P^{-1}(\ub)$ of $G \times^P \ur_P$. We want to prove that the normalization of the (reduced) local ring of $Y_w$ at a point $(g,0)$ ($g\in G$) is still a local ring. The argument below is strongly inspired by the proof of \cite[Lemma~3.4.8]{LLHLM} and we give full details.
	
	We see the $E$-scheme ${\mathbb A}^1$ as an algebraic multiplicative monoid. The scheme $G \times \ur_P$ is endowed with a left action of ${\mathbb A}^1$ by $a(g,\psi):=(g,a\psi)$ (where $g\in G, \psi\in \ur_P$). As the adjoint action of $P$ on $\ur_P$ is linear, this action of ${\mathbb A}^1$ descends to a left action on $G \times^P \ur_P$. It also preserves $(Bw,\ur_P\cap \Ad(w^{-1})\ub)\subset G \times \ur_P$, hence its image $q_P^{-1}(\ub)_w^0$ in $G \times^P \ur_P$ (see Remark \ref{remXw} (2)), hence its Zariski-closure $Y_w$. Let $Y_w^{{\mathbb G}_{\rm m}}\subseteq Y_w$ be the closed subscheme (with its reduced structure) of fixed points by ${\mathbb G}_{\rm m}$, where ${\mathbb G}_{\rm m}$ is seen as a Zariski-open subgroup of the monoid ${\mathbb A}^1$. Since $a\psi=\psi\ \forall a\in {\mathbb G}_{\rm m}\Leftrightarrow \psi=0$, we have $(G \times^P \ur_P)^{{\mathbb G}_{\rm m}}\simeq G\times^P0\simeq G/P\times \{0\}$, and we easily deduce that $Y_w^{{\mathbb G}_{\rm m}}\simeq C_w\times \{0\}\subseteq G/P\times \{0\}$ where $C_w$ is the Zariski closure of $BwP/P$ in $G/P$. In particular we see that the action of ${\mathbb A}^1$ on $Y_w^{{\mathbb G}_{\rm m}}$ is also trivial.
	
	Consider the normalization map $f:\widetilde Y_w\twoheadrightarrow Y_w$, which is a finite surjective birational morphism between two noetherian irreducible $E$-schemes (see for instance \cite[\href{https://stacks.math.columbia.edu/tag/035E}{\S~035E}]{stacks-project} and \cite[\href{https://stacks.math.columbia.edu/tag/035E}{\S~0BXQ}]{stacks-project}). Note that $f$ is an isomorphism above $BwP/P\times \{0\}\subseteq Y_w^{{\mathbb G}_{\rm m}}$ as $BwP\times ^P \{0\}$ is contained in the image of $(Bw,\ur_P\cap \Ad(w^{-1})\ub)$ in $Y_w$, which is the {\it smooth open} set $q_P^{-1}(\ub)_w^0$ of $Y_w$ (see Remark \ref{remXw} (2)). By \cite[Rem.\ 3.4.3]{LLHLM}, it is enough to prove that the (geometric) fiber $f^{-1}(g,0)$ is a connected scheme for $g\in G$. This is {\it a fortiori} true if $f^{-1}(g,0)$ is geometrically connected, hence we can extend scalars from $E$ to an algebraic closure of $E$, which we do from now on in this proof (still using the notation $E$).
	
	The composition ${\mathbb A}^1\times \widetilde Y_w\longrightarrow {\mathbb A}^1\times Y_w\longrightarrow Y_w$ is surjective (as both maps are surjective, where the last map is the action of ${\mathbb A}^1$ on $Y_w$). By \cite[\href{https://stacks.math.columbia.edu/tag/035J}{Lemma~035J}]{stacks-project}, since ${\mathbb A}^1\times \widetilde Y_w$ is normal this composition factors as ${\mathbb A}^1\times \widetilde Y_w\longrightarrow \widetilde Y_w\longrightarrow Y_w$, which induces a canonical action of ${\mathbb A}^1$ on $\widetilde Y_w$ such that the map $f$ is ${\mathbb A}^1$-equivariant. By exactly the same argument, we also have an action of the Borel $B$ on $\widetilde Y_w$ such that $f$ is $B$-equivariant. Moreover these two actions of ${\mathbb A}^1$ and $B$ commute on $\widetilde Y_w$ as they do on $Y_w$. The action of ${\mathbb A}^1$ on $\widetilde Y_w$ is again trivial on the closed subscheme $\widetilde Y_w^{{\mathbb G}_{\rm m}}\subseteq \widetilde Y_w$ (with its reduced structure), but we need another argument than for $Y_w^{{\mathbb G}_{\rm m}}$. Consider the morphism of $E$-schemes:
	\begin{equation*}
		m:{\mathbb A}^1\times \widetilde Y_w \longrightarrow \widetilde Y_w\times \widetilde Y_w,\ (a,\widetilde y)\longmapsto (\widetilde y,a\widetilde y)
	\end{equation*}
	and see $\widetilde Y_w^{{\mathbb G}_{\rm m}}$ as a closed subscheme of $\widetilde Y_w\times \widetilde Y_w$ via the diagonal embedding. Then $m^{-1}(\widetilde Y_w^{{\mathbb G}_{\rm m}})$ is a closed subscheme of ${\mathbb A}^1\times \widetilde Y_w$ which obviously contains the locally closed subscheme ${\mathbb G}_{\rm m}\times \widetilde Y_w^{{\mathbb G}_{\rm m}}$. Hence it also contains its Zariski-closure, which is ${\mathbb A}^1\times \widetilde Y_w^{{\mathbb G}_{\rm m}}$. In particular the action of ${\mathbb A}^1$ on $\widetilde Y_w$ is trivial on $\widetilde Y_w^{{\mathbb G}_{\rm m}}$.
	
	We prove that we have an isomorphism $\widetilde Y_w^{{\mathbb G}_{\rm m}}\xlongrightarrow{\sim} f^{-1}(Y_w^{{\mathbb G}_{\rm m}})^{\rm red}$ of (reduced) closed subschemes of $\widetilde Y_w$. Since $f$ is ${\mathbb G}_{\rm m}$-equivariant we have a closed embedding $\widetilde Y_w^{{\mathbb G}_{\rm m}}\hookrightarrow f^{-1}(Y_w^{{\mathbb G}_{\rm m}})^{\rm red}$, hence it is enough to prove that ${\mathbb G}_{\rm m}$ acts trivially on $f^{-1}(Y_w^{{\mathbb G}_{\rm m}})^{\rm red}$ inside $\widetilde Y_w$. Since we are over an algebraically closed field $E$, the action of ${\mathbb G}_{\rm m}$ on $f^{-1}(Y_w^{{\mathbb G}_{\rm m}})^{\rm red}$ is trivial if and only if the action of $E^\times$ is trivial on the set of $E$-points $f^{-1}(Y_w^{{\mathbb G}_{\rm m}})(E)$ (see for instance Remark $4$ in \cite[p.~38]{fog}). Since any such $E$-point is in $f^{-1}(g,0)(E)$ for some $(g,0)\in Y_w^{{\mathbb G}_{\rm m}}$, it is enough to prove that $E^\times$ acts trivially on $f^{-1}(g,0)(E)$. Since $f$ is a finite ${\mathbb G}_{\rm m}$-equivariant morphism, $f^{-1}(g,0)(E)$ is a finite set stable under the action of $E^\times$. Hence there is an integer $n\gg 0$ such that $x^n$ acts trivially on $f^{-1}(g,0)(E)$ for any $x\in E^\times$ (for instance $n=|f^{-1}(g,0)(E)|!$). But as $E$ is assumed algebraically closed, any element in $E^\times$ is of the form $x^n$, hence $E^\times$ acts trivially on $f^{-1}(g,0)(E)$. 
	
	We now consider the morphism $0:\widetilde Y_w\longrightarrow \widetilde Y_w^{{\mathbb G}_{\rm m}},\ x\longmapsto 0.x$ where $0\in {\mathbb A}^1$. Since ${\mathbb A}^1$ acts trivially on $\widetilde Y_w^{{\mathbb G}_{\rm m}}\subset \widetilde Y_w$, the morphism $0$ is surjective. As $\widetilde Y_w$ is irreducible and the image of an irreducible set is irreducible, it follows that the closed subset $\widetilde Y_w^{{\mathbb G}_{\rm m}}$ of $\widetilde Y_w$ is irreducible. Since $\widetilde Y_w^{{\mathbb G}_{\rm m}}\xrightarrow{\sim} f^{-1}(Y_w^{{\mathbb G}_{\rm m}})^{\rm red}$, we deduce that $f$ induces a finite birational surjective morphism of noetherian irreducible $E$-schemes $f^{-1}(Y_w^{{\mathbb G}_{\rm m}})^{\rm red}\rightarrow Y_w^{{\mathbb G}_{\rm m}}$ (it is birational since it is an isomorphism above the open subset $BwP/P\times \{0\}$ of $Y_w^{{\mathbb G}_{\rm m}}\simeq C_w\times \{0\}$). But $C_w$, and hence $Y_w^{{\mathbb G}_{\rm m}}$, are normal schemes by \cite{MS}. One then uses Zariski's connectedness theorem (see e.g.\ \cite[\S~III.9.V]{Mu}) applied to the morphism $f^{-1}(Y_w^{{\mathbb G}_{\rm m}})^{\rm red}\rightarrow Y_w^{{\mathbb G}_{\rm m}}$, which implies that all the fibers $f^{-1}(g,0)$ are connected schemes and finishes the proof.
\end{proof}

\begin{remark}
The proof of Theorem \ref{unibranch} only works for the points $(g,0)$ of $Y_w$. In particular, when $P\ne B$, we do not know the unibranch property of $Y_w$ at points $(g,\psi)\in Y_w$ where $\psi\in \ur_P\setminus\{0\}$ is not regular semi-simple (the regular semi-simple case being a consequence of Proposition \ref{genesmoo}). Recall that, when $P=B$, this is known since $Y_w$ is normal at every point (\cite[Thm.~2.3.6]{BHS3}).
\end{remark}

If $x\in X_w$ is a closed point, recall that the tangent space $T_x X_w$ of $X_w$ at $x$ can be identified with the $k(x)[\varepsilon]/(\varepsilon^2)$-points of $X_w$ mapping to $x$ via $k(x)[\varepsilon]/(\varepsilon^2)\twoheadrightarrow k(x)$, $\varepsilon \mapsto 0$. Recall also that $\dim_{k(x)} T_x X_w\geq \dim X_w=\dim(G/B) + \dim \ur_{P}$.

\begin{proposition}\label{propsmoX}
	Assume $x=(g_1 B, g_2P, 0) \in X_w \cap V_{w'}$ (which implies $w'^{\max}\leq w^{\max}$ by Lemma \ref{w'w}), then
	\begin{equation*}
		\dim_{k(x)} T_x X_w \leq \dim T_{\pi(x)} \overline{U_w}+\dim \fz_{L_P}^{w(w')^{-1}}+\lg((w')^{\max} w_0)
	\end{equation*}
	where $\fz_{L_P}^{w(w')^{-1}}:=\{z \in \fz_{L_P} \ |\ \Ad(w(w')^{-1})z=z\}$. \index{$\fz_{L_P}^{w(w')^{-1}}$}
\end{proposition}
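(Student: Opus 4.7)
The plan is to use the projection $\pi:X_w\to \overline{U_w}$ (well-defined since $V_w$ is dense in $X_w$). At the tangent level this yields an exact sequence
\[0\to T_xF_x\to T_xX_w\to T_{\pi(x)}\overline{U_w},\]
where $F_x:=(\pi|_{X_w})^{-1}(\pi(x))$ is the schematic fiber at $\pi(x)$. Hence it suffices to prove
\[\dim T_xF_x\leq \dim \fz_{L_P}^{w(w')^{-1}}+\lg((w')^{\max}w_0).\]

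First I would compute the ambient fiber $V_x:=\pi^{-1}(\pi(x))\cap X_P$ explicitly. Writing $\pi(x)=g(1,(w'^{\min})^{-1})(B,P)$, so $g_1=gb$ and $g_2=g(w'^{\min})^{-1}p$ for some $b\in B$, $p\in P$ (using $(w')^{-1}P=(w'^{\min})^{-1}P$ and $\Ad((w')^{-1})\ur_P=\Ad((w'^{\min})^{-1})\ur_P$, because $\sW_{L_P}$ normalizes $\ur_P$), the defining conditions of $X_P$ give
\[V_x=\Ad(g)\bigl(\ub\cap\Ad((w'^{\min})^{-1})\ur_P\bigr)=V_x^{\fz}\oplus V_x^{\fn}\]
with $V_x^{\fz}:=\Ad(g)\Ad((w'^{\min})^{-1})\fz_{L_P}$ (using $\Ad((w'^{\min})^{-1})\fz_{L_P}\subset \ft\subset \ub$) and $V_x^{\fn}:=\Ad(g)(\fn\cap\Ad((w'^{\min})^{-1})\fn_P)$. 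Since $(w'^{\min})^{-1}\in\lWPmin$, the computation used in Proposition \ref{geoVw} yields $\dim V_x^{\fn}=\dim \fn_P-\lg(w'^{\min})=\lg((w')^{\max}w_0)$. As $x$ has zero $\ug$-component, $T_xF_x$ embeds into $V_x$ as a linear subspace.

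The crucial input is Lemma \ref{Theta0}: since $x\in X_w$ and $(\kappa_B,\kappa_P)(x)=(0,0)$, the formal map $\widehat{X}_{w,x}\to\widehat{\sT}_{P,(0,0)}$ factors through $\widehat{\sT}_{w,0}$, so at the tangent level $(d\kappa_B,d\kappa_P)(T_xX_w)\subseteq T_0\sT_w=\{(\Ad(w^{-1})z,z)\mid z\in\fz_{L_P}\}$. I would then compute the differentials $d\kappa_B|_{V_x}$ and $d\kappa_P|_{V_x}$ directly: using that $\sW_{L_P}$ acts trivially on $\fz_{L_P}$, that $P$ acts trivially on $\ur_P/\fn_P\cong\fz_{L_P}$, and that $B$ acts trivially on $\ub/\fn\cong\ft$, both differentials annihilate $V_x^{\fn}$, and they send $\psi_0=\Ad(g)\Ad((w'^{\min})^{-1})z_0\in V_x^{\fz}$ to $\Ad((w'^{\min})^{-1})z_0$ and $z_0$ respectively. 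Imposing $(d\kappa_B(\psi_0),d\kappa_P(\psi_0))\in T_0\sT_w$ forces $\Ad(w(w'^{\min})^{-1})z_0=z_0$, i.e., $z_0\in\fz_{L_P}^{w(w'^{\min})^{-1}}=\fz_{L_P}^{w(w')^{-1}}$ (the last equality because $w(w')^{-1}$ and $w(w'^{\min})^{-1}$ differ by right multiplication by an element of $\sW_{L_P}$, which acts trivially on $\fz_{L_P}$). Summing dimensions gives the claimed bound on $\dim T_xF_x$. The main bookkeeping, and the only nontrivial step, is the explicit computation of $d\kappa_B|_{V_x}$ and $d\kappa_P|_{V_x}$, which is elementary but requires careful tracking of the decompositions $g_1=gb$, $g_2=g(w'^{\min})^{-1}p$ and of the various adjoint and quotient actions.
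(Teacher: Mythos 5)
Your proof is correct and is essentially the paper's own argument in a slightly different packaging: the paper embeds $X_w$ into $\overline{U_w}\times \ug$ and constrains the $\ug$-component of an arbitrary tangent vector via the identity $\kappa_{P,w}=\Ad(w)\circ\kappa_{B,w}$ of Lemma \ref{LMwt} (which is also what underlies the direction of Lemma \ref{Theta0} you invoke), and this is exactly the constraint you derive on $\ker d\pi$ inside the fiber of Proposition \ref{geoVw}. The resulting count — $\lg((w')^{\max}w_0)$ for the nilpotent directions and $\dim \fz_{L_P}^{w(w')^{-1}}$ for the central ones, added to $\dim T_{\pi(x)}\overline{U_w}$ — is identical, so no further changes are needed.
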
	
\begin{proof}
	The proposition follows by similar arguments as in the proof of \cite[Prop.\ 2.5.3]{BHS3}, to which we refer the reader for more details (e.g.\ on the notation). The closed embedding $X_{P} \hookrightarrow G /B \times G/P \times \ug$ induces a closed embedding $X_w \hookrightarrow \overline{U_w} \times \ug$. Let $\widehat {x}=(\widehat {g}_1 B, \widehat {g}_2 P, \psi\varepsilon)\in T_x X_w$, where we see $(\widehat {g}_1 B, \widehat {g}_2 P)\in T_{\pi(x)} \overline{U_w}$. As $\widehat {x}\in X_{P}(k(x)[\varepsilon]/(\varepsilon^2))$, we have $\Ad(g_1^{-1})\psi \in \ub$ and $\Ad(g_2^{-1}) \psi \in \ur_{P}$. As $x\in V_{w'}$, there exists $g\in G$ such that $(g_1 B, g_2 P)=(g w' B, gP)$. Replacing $g_1$ by $g w'$, and $g_2$ by $g$, we have thus 
	\begin{equation}\label{Vw'Xw0}
		\Ad(g^{-1}) \psi \in \ur_{P} \cap \Ad(w') \ub.
	\end{equation}
By Lemma \ref{LMwt}, we have $\kappa_P(\widehat{x})=\Ad(w) \kappa_{B}(\widehat{x})$ and hence 
	\begin{equation}\label{Vw'Xw1}
	\overline{\Ad(g^{-1})\psi}=\Ad(w) \overline{\Ad((w')^{-1} g^{-1}) \psi}\in \fz_{L_P}.
	\end{equation}
Writing $\Ad(g^{-1}) \psi=\lambda+\eta$ with $\lambda\in \fz_{L_P}$ and $\eta \in \fn_{P}$, we deduce from (\ref{Vw'Xw0}) and (\ref{Vw'Xw1}): 
	\begin{equation}\label{condpsi}
	\eta\in \fn_{P} \cap \Ad(w') \fn\textrm{\ \ and\ \ }\lambda\in \fz_{L_P}^{w(w')^{-1}}.
	\end{equation}
We have $\dim (\fn_{P} \cap \Ad(w') \fn)=\dim (\fn \cap \Ad(w'^{\max}) \fn)=\lg(w'^{\max} w_0)$ (where the first equality follows from $\fn_{L_P}\cap \Ad(w'^{\max}) \fn=0$). Together with (\ref{condpsi}), the proposition follows.
\end{proof}

\begin{corollary}\label{corosmo}
With the notation of Proposition \ref{propsmoX}, assume that $\overline{U_w}$ is smooth at the point $\pi(x)$ and that
	\begin{equation}\label{reffzPL}
		\dim \fz_{L_P}^{w(w')^{-1}}+\lg(w^{\max})-\lg(w'^{\max})=\dim \fz_{L_P}.
	\end{equation}
	Then $X_w$ is smooth at $x$.
\end{corollary}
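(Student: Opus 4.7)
The proof is a direct dimension count combining Proposition \ref{propsmoX} with the two hypotheses. My plan is the following.

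First, I would record the relevant dimensions. By Corollary \ref{irrcmpXP} the irreducible component $X_w$ has dimension
\[
\dim X_w = (\dim G - \dim B) + \dim \ur_P = (\dim G - \dim B) + \dim \fn_P + \dim \fz_{L_P},
\]
and by (\ref{dimUw}) the smoothness of $\overline{U_w}$ at $\pi(x)$ gives
\[
\dim T_{\pi(x)} \overline{U_w} = \dim \overline{U_w} = (\dim G - \dim B) + \lg(w^{\min}).
\]
I also record the two combinatorial identities I will need: $\lg(w^{\max}) - \lg(w^{\min}) = \dim N_{L_P}$ (since the minimal and maximal length representatives of the coset $\sW_{L_P}w$ differ by the longest element of $\sW_{L_P}$) and $\lg((w')^{\max} w_0) = \lg(w_0) - \lg((w')^{\max})$ (valid for any element of $\sW$), together with $\lg(w_0) = \dim N = \dim N_{L_P} + \dim N_P$.

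Next, I plug these into the upper bound of Proposition \ref{propsmoX} to obtain
\[
\dim_{k(x)} T_x X_w \leq (\dim G - \dim B) + \lg(w^{\min}) + \dim \fz_{L_P}^{w(w')^{-1}} + \lg(w_0) - \lg((w')^{\max}).
\]
Using $\lg(w^{\min}) = \lg(w^{\max}) - \dim N_{L_P}$ and the hypothesis (\ref{reffzPL}), namely $\dim \fz_{L_P}^{w(w')^{-1}} + \lg(w^{\max}) - \lg((w')^{\max}) = \dim \fz_{L_P}$, the right-hand side simplifies to
\[
(\dim G - \dim B) + \dim \fz_{L_P} - \dim N_{L_P} + \lg(w_0) = (\dim G - \dim B) + \dim \fz_{L_P} + \dim \fn_P,
\]
where in the last equality I used $\lg(w_0) - \dim N_{L_P} = \dim N_P = \dim \fn_P$. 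This is exactly $\dim X_w$.

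Finally, since $X_w$ is an irreducible scheme of dimension $\dim X_w$, one always has $\dim_{k(x)} T_x X_w \geq \dim X_w$. The two inequalities force equality, so $X_w$ is smooth at $x$. There is no real obstacle here; the entire content is the bookkeeping in the second step, and in particular the observation that the hypothesis (\ref{reffzPL}) is designed precisely so that the excess predicted by Proposition \ref{propsmoX} collapses to zero.
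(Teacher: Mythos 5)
Your proof is correct and follows essentially the same route as the paper: plug the smoothness of $\overline{U_w}$ at $\pi(x)$ and the hypothesis (\ref{reffzPL}) into the tangent-space bound of Proposition \ref{propsmoX}, simplify using $\lg(w^{\max})-\lg(w^{\min})=\dim \fn_{L_P}$ and $\lg((w')^{\max}w_0)=\lg(w_0)-\lg((w')^{\max})$, and conclude since $\dim_{k(x)}T_xX_w\geq \dim X_w$ always holds. The bookkeeping and the final equality with $\dim X_w=\dim G-\dim B+\dim\fn_P+\dim\fz_{L_P}$ are exactly as in the paper.
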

\begin{proof}
	Recall $ \dim \overline{U_w}=\dim G-\dim B+\lg(w^{\min})$. Under the assumptions in the statement, we have by Proposition \ref{propsmoX} (and using $\lg(w^{\max})-\lg(w^{\min})=\dim \fn_{L_P}$):
	\begin{multline*}
		\dim_{k(x)} T_x X_w \\
		\leq \dim G-\dim B+\lg(w^{\min})+\dim \fz_{L_P}-\big(\lg(w^{\max})-\lg((w')^{\max})\big)+\lg(w_0)-\lg((w')^{\max}) \\
		\!\!\!\!\!\!\!\!\!\!=\dim G -\dim B -\dim \fn_{L_P}+\dim \fz_{L_P}+\lg(w_0) =\dim G-\dim B+\dim \fn_{P}+\dim \fz_{L_P}\\
		\ \ \ \ =\dim X_w.\ \ \ \ \ \ \ \ \ \ \ \ \ \ \ \ \ \ \ \ \ \ \ \ \ \ \ \ \ \ \ \ \ \ \ \ \ \ \ \ \ \ \ \ \ \ \ \ \ \ \ \ \ \ \ \ \ \ \ \ \ \ \ \ \ \ \ \ \ \ \ \ \ \ \ \ \ \ \ \ \ 
	         \ \ \ \ \ \ \ \ \ \ \ \ \ \ \ \ \ \ \ \ \ \ \ \ 
	\end{multline*}
	The corollary follows. 
\end{proof}

\begin{remark}\label{remsmo1}
(1) By \cite[Thm.~6.0.4]{BiLa} and \cite[Cor.~6.2.11]{BiLa}, if $\lg(w'^{\max})\geq \lg(w^{\max})-2$, then $\pi(x)$ is a smooth point of $\overline{U_w}$. Indeed, by \textit{loc.\ cit.}, under this assumption, $\overline{PwB/B}=\overline{Bw^{\max}B/B}$ is smooth at any point of $Bw'^{\max} B/B$. Then using the $P$-action, we deduce that $\overline{PwB/B}$ is smooth at any point of $Pw'B/B$, hence $\overline{U_w}$ is smooth at any point of $U_w'$. If moreover $\lg(w'^{\max})\geq \lg(w^{\max})-1$, it is clear that (\ref{reffzPL}) holds, so $X_w$ is smooth at $x$. When $\lg(w'^{\max})=\lg(w^{\max})-2$ however, (\ref{reffzPL}) does not necessarily hold when $P\neq B$: for example, when $P$ is a maximal parabolic subgroup, then $\dim \fz_{L_P}^{w(w')^{-1}}=1=\dim \fz_{L_P}-1<\lg(w^{\max})-\lg(w'^{\max})$. See Remark \ref{bruhInv} for related discussions.
	
(2) The proof of Proposition \ref{propsmoX} actually shows that its statement and the one of Corollary \ref{corosmo} hold with $X_w$ replaced by $X_w'$ in Remark \ref{remXw} (1). In particular, the closed immersion $X_w\hookrightarrow X_w'$ is an isomorphism on local rings at points satisfying the assumptions in Corollary \ref{corosmo}.
\end{remark}

\subsection{Characteristic cycles} \label{secCCyc}

We study the fibres $\kappa_{P,w}^{-1}(\{0\})\hookrightarrow X_w$, and show that they are closely related to the characteristic cycles of certain $G$-equivariant regular holonomic $D$-modules over $G/B \times G/P$.
\index{$Z_P$}

Let $Z_P:=\kappa_B^{-1}(\{0\})^{\red}=\kappa_P^{-1}(\{0\})^{\red}\hookrightarrow X_P$\index{$Z_P$} (see (\ref{kappaBP}), one may view $Z_P$ as a generalized Steinberg variety). Let $\cN\subset \ug$ denote the nilpotent cone and put $\widetilde{\cN}_P:=\{(gP, \psi) \in G/P \times \cN\ | \ \Ad(g^{-1}) \psi \in \fn_P\}$. We have $\widetilde{\cN}_P\cong G \times^P \fn_P$, $(gP, \psi) \mapsto (g, \Ad(g^{-1})\psi)$. We write $\widetilde{\cN}:=\widetilde{\cN}_B$ (defined as $\widetilde{\cN}_P$ with $B$ instead of $P$). The morphism $q_B$ (resp.\ $q_P$) in \S~\ref{prel} restricts to the so-called Springer resolution (resp.\ generalized Springer resolution): $q_B: G \times^B \fn \ra \cN$, $(g, \psi) \mapsto \Ad(g) \psi$ (resp.\ $q_P: G \times^P \fn_P \ra \cN$, $(g,\psi)\mapsto \Ad(g) \psi$). We have
\begin{equation*}
	Z_P \cong G \times^P q_B^{-1}(\fn_P)^{\red} \cong G \times^B q_P^{-1}(\fn)^{\red}.
\end{equation*}
The morphism $\pi$ (see (\ref{pi})) restricts to a natural morphism $\pi_Z: Z_P \ra G/B \times G/P$. Similarly to \S~\ref{sec: grgg}, for $w\in \sW$ we put $V_w':=\pi_Z^{-1}(U_w)=V_w \cap Z_P$, and let $Z_w$ be the Zariski-closure of $V_w'$ in $Z_P$ with the reduced scheme structure. We put $q_B^{-1}(\fn_P)_w^0:=q_B^{-1}(\ur_P)_w^0 \cap q_B^{-1}(\fn_P)$ and $q_P^{-1}(\fn)_w^0:=q_P^{-1}(\ub)_w^0 \cap q_P^{-1}(\fn)$ (see Remark \ref{remXw} (2) for $q_B^{-1}(\ur_P)_w^0$ and $q_P^{-1}(\ub)_w^0$). We define $q_B^{-1}(\fn_P)_w$ (resp.\ $q_P^{-1}(\fn)_w$) as the Zariski-closure of $q_B^{-1}(\fn_P)_w^0$ (resp.\ of $q_P^{-1}(\fn)_w^0$) in $q_B^{-1}(\fn_P)$ (resp.\ in $q_P^{-1}(\fn)$). By similar arguments as in \S~\ref{sec: grgg}, we have

\begin{proposition}
	(1) The scheme $Z_P$ (resp.\ $q_P^{-1}(\fn)$, resp.\ $q_B^{-1}(\fn_P)$) is equidimensional of dimension $\dim \fn+\dim \fn_P$ (resp.\ $\dim \fn_P$, resp.\ $\dim \fn$) with set of irreducible components given by $\{Z_w\}_{w\in \sW/\sW_{L_P}}$ \big(resp.\ $\{q_P^{-1}(\fn)_w\}_{w\in \sW/\sW_{L_P}}$, resp.\ $\{q_B^{-1}(\fn_P)_w\}_{w\in \sW_{L_P}\backslash \sW}$\big).
	
	(2) We have $Z_w\cong G\times^P q_B^{-1}(\fn_P)^{\red}_w \cong G \times^B q_P^{-1}(\fn)^{\red}_{w^{-1}}$.
\end{proposition}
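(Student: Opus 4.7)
The plan is to mirror the approach used for $X_P$, $q_B^{-1}(\ur_P)$, and $q_P^{-1}(\ub)$ in Section \ref{sec: grgg}, but restricted to the fibre at $0$ of the map $\kappa_P$ (equivalently $\kappa_B$, by Lemma \ref{LMwt}). First I would treat $Z_P$ directly by describing the stratification $V_w' = V_w\cap Z_P$; then I would transfer the result to $q_P^{-1}(\fn)$ and $q_B^{-1}(\fn_P)$ by restricting the isomorphisms of (\ref{Xpqpb}), which will yield simultaneously their equidimensionality, the description of their irreducible components, and part (2) of the proposition.

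For $Z_P$: using the description (\ref{fiber0}) of the fibre of $\pi\colon V_w\twoheadrightarrow U_w$ at a point $y=(gB, g(w^{\min})^{-1}P)$, namely
\[
\Ad(g)\big(\Ad((w^{\min})^{-1})\fz_{L_P}\ \oplus\ (\fn\cap \Ad((w^{\min})^{-1})\fn_P)\big),
\]
I would check that $\kappa_B$ restricted to this fibre is the projection onto the first summand (using that $\Ad((w^{\min})^{-1})\fz_{L_P}\subset \ft$, while $\fn\cap \Ad((w^{\min})^{-1})\fn_P\subset \fn$ projects to $0$ in $\ft$). Hence intersecting with $\kappa_B^{-1}(0)$ kills the $\fz_{L_P}$-summand, and $V_w' \twoheadrightarrow U_w$ is a geometric vector bundle of relative dimension $\dim \fn_P - \lg(w^{\min})$. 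Combined with (\ref{dimUw}) this gives $\dim V_w' = \dim \fn + \dim \fn_P$, independent of $w$. Since $V_w'$ depends only on the coset $\sW_{L_P}w$ and $Z_P = \bigcup_w V_w'$ set-theoretically (as $X_P = \bigsqcup_w V_w$), the closures $Z_w$ are irreducible of equal dimension, hence are exactly the irreducible components of $Z_P$, indexed by $\sW_{L_P}\backslash\sW$ (identified with $\sW/\sW_{L_P}$ via $w\mapsto w^{-1}$), and $Z_P$ is equidimensional of dimension $\dim \fn + \dim \fn_P$.

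For the other two schemes: the isomorphism $X_P\xrightarrow{\sim} G\times^B q_P^{-1}(\ub)$ of (\ref{Xpqpb}) intertwines $\kappa_B$ with the map $(g_1,(g_2'P, \psi'))\mapsto \overline{\psi'}\in \ft$, and the fibre of this map over $0$ corresponds precisely to the locus $\psi'\in \fn$ inside $q_P^{-1}(\ub)$. This yields a $G$-equivariant isomorphism $Z_P\simeq G\times^B q_P^{-1}(\fn)^{\red}$; symmetrically, the second isomorphism in (\ref{Xpqpb}) produces $Z_P\simeq G\times^P q_B^{-1}(\fn_P)^{\red}$. Consequently, $q_P^{-1}(\fn)$ is equidimensional of dimension $\dim Z_P - \dim G/B = \dim \fn_P$ and $q_B^{-1}(\fn_P)$ of dimension $\dim Z_P - \dim G/P = \dim \fn$, and their irreducible components are the images under the above isomorphisms of the $Z_w$, which by Remark \ref{remXw} (2) are $q_P^{-1}(\fn)_{w^{-1}}^{\red}$ and $q_B^{-1}(\fn_P)_w^{\red}$, giving the stated indexing. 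Part (2) then follows by restricting the isomorphisms of Remark \ref{remXw} (2) to the fibres at $0$. I do not foresee any serious obstacle: the whole argument is a direct restriction of what was proved in Section \ref{sec: grgg}, and the only elementary point to check is the identification of $q_P^{-1}(\fn)$ (respectively $q_B^{-1}(\fn_P)$) as the fibre at $0$ of the projection $q_P^{-1}(\ub)\to \ft$ (respectively $q_B^{-1}(\ur_P)\to \fz_{L_P}$) underlying $\kappa_B$ (respectively $\kappa_P$).
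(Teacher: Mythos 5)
Your proof is correct and is essentially the paper's own argument: the paper establishes this proposition simply by invoking the same arguments as in \S~\ref{sec: grgg}, which is exactly what you carry out (stratification by the $V_w'$, the fibrewise computation via (\ref{fiber0}) showing $V_w'\rightarrow U_w$ is a vector bundle of rank $\dim\fn_P-\lg(w^{\min})$, and transfer through the isomorphisms (\ref{Xpqpb}) identifying $Z_P\cong G\times^B q_P^{-1}(\fn)^{\red}\cong G\times^P q_B^{-1}(\fn_P)^{\red}$). One small correction: part (2) should be obtained, as in your preceding sentence, by restricting these \emph{global} isomorphisms to the open strata $V_w'$ and taking closures, not by literally restricting the isomorphisms of Remark \ref{remXw} (2) to the fibre at $0$, since the fibre $X_w\times_{\kappa_{P,w},\fz_{L_P}}\{0\}$ is in general strictly larger than $Z_w$ (cf.\ Lemma \ref{mul0}).
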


\begin{remark}\label{stenBP}
One can show that $q_B^{-1}(\fn)$ is also equidimensional of dimension $\dim \fn$, hence the subscheme $q_B^{-1}(\fn_P)^{\red}$ of $q_B^{-1}(\fn)$ is isomorphic to a union of irreducible components of $q_B^{-1}(\fn)$. The irreducible components of $q_B^{-1}(\fn)$ are $\{q_B^{-1}(\fn)_w\}_{w\in \sW}$, where $q_B^{-1}(\fn)_w$ is the Zariski closure of the preimage $q_B^{-1}(\fn)_w^0$ of $BwB/B$ via $q_B^{-1}(\fn) \ra G/B$. Using similar argument as in the proof of \cite[Prop.\ 2.2.1]{BHS3} (see in particular \cite[(2.6)]{BHS3}), one can show that $q_B^{-1}(\fn)_w^0\subseteq q_B^{-1}(\fn_P)$ if and only if $w\in \sW_{\max}^P$. If so we have $q_B^{-1}(\fn)_w^0=q_B^{-1}(\fn_{P})_w^0$. Hence we deduce $q_B^{-1}(\fn_P)_w=q_B^{-1}(\fn)_{w^{\max}}$ for $w\in \sW$.
\end{remark}

By exactly the same argument as in the proof of Theorem \ref{unibranch} replacing everywhere $\ur_P$ by $\fn_P$, we have the following theorem which is interesting in its own right (and which is new even for $P=B$, see \cite[Rem.\ 2.4.2]{BHS3}):

\begin{theorem}\label{unibranch2}
	Let $w\in \sW$ and $x=(g_1 B, g_2 P, 0)\in Z_{w}$, then the scheme $Z_w$ is unibranch at $x$.
\end{theorem}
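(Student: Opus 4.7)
The plan is to mimic verbatim the proof of Theorem \ref{unibranch}, with the single change that the full radical $\ur_P$ is replaced throughout by its nilpotent part $\fn_P$. By the isomorphism $Z_w \simeq G \times^P q_B^{-1}(\fn_P)^{\red}_w$ from Remark~\ref{stenBP} (and the discussion just before it), it suffices to establish that the reduced scheme $Y_w := q_B^{-1}(\fn_P)^{\red}_w \subset G \times^P \fn_P$ is unibranch at each point of the form $(g,0)$. As in the proof of Theorem \ref{unibranch}, I extend scalars to an algebraic closure of $E$ and reduce, via the normalization $f \colon \widetilde Y_w \twoheadrightarrow Y_w$ and \cite[Rem.\ 3.4.3]{LLHLM}, to showing that every geometric fiber $f^{-1}(g,0)$ is connected.

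The first key step is to introduce the left action of the algebraic monoid $\mathbb{A}^1$ on $G \times \fn_P$ given by $a\cdot(g,\psi) := (g, a\psi)$. This action is linear on $\fn_P$, hence descends to $G \times^P \fn_P$; it preserves $(Bw, \fn_P \cap \Ad(w^{-1})\ub)$ and therefore its image in $G \times^P \fn_P$, which is the smooth open subscheme $q_B^{-1}(\fn_P)^0_w \subset Y_w$ (the analogue, with $\fn_P$ in place of $\ur_P$, of the smooth locus used in Theorem \ref{unibranch}); by Zariski-closure the $\mathbb{A}^1$-action preserves $Y_w$. Exactly as before, the reduced $\mathbb{G}_{\rm m}$-fixed locus inside $G \times^P \fn_P$ is $G/P \times \{0\}$, so $Y_w^{\mathbb{G}_{\rm m}} \simeq C_w \times \{0\}$, where $C_w$ is the Zariski-closure of $BwP/P$ in $G/P$, i.e.\ a Schubert variety (which by \cite{MS} is normal). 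Note that $f$ is an isomorphism above the smooth open subset $q_B^{-1}(\fn_P)^0_w$ of $Y_w$, and in particular above the open subset $BwP/P \times \{0\}$ of $Y_w^{\mathbb{G}_{\rm m}}$.

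The next step is to transfer the $\mathbb{A}^1$-action (and also the commuting $B$-action) to $\widetilde Y_w$. Since $\mathbb{A}^1 \times \widetilde Y_w$ is normal, the composition $\mathbb{A}^1 \times \widetilde Y_w \to \mathbb{A}^1 \times Y_w \to Y_w$ factors through $f$ by \cite[\href{https://stacks.math.columbia.edu/tag/035J}{Lemma 035J}]{stacks-project}, producing an $\mathbb{A}^1$-action on $\widetilde Y_w$ such that $f$ is equivariant. Arguing as in Theorem \ref{unibranch}, by considering the morphism $m \colon \mathbb{A}^1 \times \widetilde Y_w \to \widetilde Y_w \times \widetilde Y_w$, $(a,\widetilde y) \mapsto (\widetilde y, a \widetilde y)$ together with the closed subscheme $\widetilde Y_w^{\mathbb{G}_{\rm m}} \subseteq \widetilde Y_w \times \widetilde Y_w$, one shows that $\mathbb{A}^1$ acts trivially on $\widetilde Y_w^{\mathbb{G}_{\rm m}}$. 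The pigeonhole argument using that $E^\times$ acts on the finite set $f^{-1}(g,0)(E)$ through its quotient by a sufficiently divisible power then identifies $\widetilde Y_w^{\mathbb{G}_{\rm m}} \xlongrightarrow{\sim} f^{-1}(Y_w^{\mathbb{G}_{\rm m}})^{\rm red}$ as closed subschemes of $\widetilde Y_w$.

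Finally, the contracting morphism $0 \colon \widetilde Y_w \to \widetilde Y_w^{\mathbb{G}_{\rm m}}$, $\widetilde y \mapsto 0 \cdot \widetilde y$, is surjective, and since $\widetilde Y_w$ is irreducible (as normalization of an irreducible scheme) so is $\widetilde Y_w^{\mathbb{G}_{\rm m}}$. Hence $f$ restricts to a finite birational surjective morphism of noetherian irreducible $E$-schemes $f^{-1}(Y_w^{\mathbb{G}_{\rm m}})^{\rm red} \to Y_w^{\mathbb{G}_{\rm m}} \simeq C_w \times \{0\}$; the target being normal, Zariski's connectedness theorem gives that all fibers $f^{-1}(g,0)$ are connected, which yields the unibranch property. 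I expect no genuine obstacle: the proof of Theorem \ref{unibranch} was structured so that the only specific feature of $\ur_P$ used was linearity of the $\mathbb{A}^1$-action and the normality of the induced Schubert fixed-point locus in $G/P$, both of which persist verbatim when $\ur_P$ is replaced by $\fn_P$.
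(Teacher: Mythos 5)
Your proof is correct and is exactly the paper's argument: the paper proves Theorem \ref{unibranch2} by running the proof of Theorem \ref{unibranch} verbatim with $\ur_P$ replaced everywhere by $\fn_P$, which is precisely what you do (scaling action of $\bA^1$, descent of the action to the normalization, identification of the reduced $\bG_m$-fixed locus with a Schubert variety, normality by \cite{MS}, and Zariski connectedness). One small notational correction: the fibre sitting inside $G\times^P\fn_P$, fibered over $G/P$ with fixed locus $\overline{BwP/P}\times\{0\}$, is $q_P^{-1}(\fn)_{w^{-1}}$ coming from $Z_w\cong G\times^B q_P^{-1}(\fn)^{\red}_{w^{-1}}$, whereas $q_B^{-1}(\fn_P)_w$ lives in $G\times^B\fn$ over $G/B$ (with fixed locus $\overline{PwB/B}\times\{0\}$) --- either presentation runs the same argument, since both fixed loci are normal Schubert varieties, but the objects should not be conflated.
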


For a finite type $E$-scheme $Z$, denote by $Z^0(Z)$ the free abelian group generated by the irreducible components $\{Z_i\}$ of $Z$.\index{$Z^0(Z)$} Given a scheme $Y$ whose underlying topological space is a union of irreducible components of $Z$, put\index{$[Y]$}
\begin{equation}\label{cha00}
[Y]:=\sum_{i} m(Z_i,Y)[Z_i]
\end{equation}
where $m(Z_i,Y)$ is the length of the $\co_{Y,\eta_i}$-module $\co_{Y,\eta_i}$ at the generic point $\eta_i$. 

For $w\in \sW$, let $\overline{X}_w:=\kappa_{P,w}^{-1}(\{0\})=X_w \times_{\kappa_{P,w}, \fz_{L_P}} \{0\}$, where ``$\times_{\kappa_{P,w}, \fz_{L_P}}$" means taking the fibre product over $\fz_{L_P}$ via the morphism $\kappa_{P,w}:X_w\ra \fz_{L_P}$. Note that we do {\it not} take the reduced associated scheme. By the same argument as in page 320 of \cite{BHS3}, $\overline{X}_w$ is equidimensional of dimension $\dim Z$. So each irreducible component of $\overline{X}_w$ is $Z_{w'}$ for some $w'\in \sW$. We have the following easy fact.

\begin{lemma}\label{mul0}
	We have $m(Z_w, \overline{X}_w)=1$, and $m(Z_{w'}, \overline{X}_w)\geq 1$ implying $w'^{\max} \leq w^{\max}$.
\end{lemma}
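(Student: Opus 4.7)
The plan is to use the explicit description of the vector bundle structure $V_w \twoheadrightarrow U_w$ from (\ref{fiber0}), together with the computation of $\kappa_P$ on this bundle, to reduce both statements to elementary observations.

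First I would compute $\kappa_{P,w}$ explicitly on $V_w$. Pick the minimal representative $w^{\min}$ and a point $y = (gB, g(w^{\min})^{-1}P) \in U_w$. By (\ref{fiber0}), any $x\in \pi^{-1}(y)$ has the form $(y,\psi)$ with $\psi = \Ad(g)\bigl(\Ad((w^{\min})^{-1})z + \eta\bigr)$, where $z \in \fz_{L_P}$ and $\eta \in \fn \cap \Ad((w^{\min})^{-1})\fn_P$. Setting $g_2 = g(w^{\min})^{-1}$, a direct computation gives
\[
\Ad(g_2^{-1})\psi = \Ad(w^{\min})\bigl(\Ad((w^{\min})^{-1})z + \eta\bigr) = z + \Ad(w^{\min})\eta
\]
with $\Ad(w^{\min})\eta \in \fn_P$. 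Projecting to $\fz_{L_P}$ kills the $\fn_P$-part, so $\kappa_{P,w}|_{V_w}$ is the linear projection onto the $z$-coordinate of this vector bundle over $U_w$.

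For the first assertion, it follows that the scheme-theoretic fibre $\overline{X}_w \cap V_w$ is the sub-vector-bundle of $V_w$ over $U_w$ with fibre $\Ad(g)(\fn \cap \Ad((w^{\min})^{-1})\fn_P)$; this is precisely $V'_w$ and is in particular smooth, hence reduced. Since $V'_w$ is open and dense in $Z_w$, the local ring of $\overline{X}_w$ at the generic point $\eta_w$ of $Z_w$ equals the local ring of $V'_w$ at $\eta_w$, which is the function field of $V'_w$. Its length as a module over itself is $1$, so $m(Z_w, \overline{X}_w) = 1$.

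For the second assertion, suppose $m(Z_{w'}, \overline{X}_w) \geq 1$, so that $Z_{w'}$ is an irreducible component of $\overline{X}_w$. Since $\overline{X}_w$ is a closed subscheme of $X_w$, we have $Z_{w'} \subseteq X_w$. The open dense subset $V'_{w'} \subseteq Z_{w'}$ is contained in $V_{w'}$, so $V_{w'} \cap X_w \supseteq V'_{w'} \neq \emptyset$. Lemma \ref{w'w} then gives $w'^{\min} \leq w^{\min}$, equivalently $w'^{\max} \leq w^{\max}$, as required. The main point is the explicit linear-projection description of $\kappa_{P,w}$ on $V_w$; everything else is formal.
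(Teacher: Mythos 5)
Your proof is correct and follows essentially the same route as the paper: the first part is exactly the observation (taken in the paper from the proof of Proposition \ref{geoVw}) that $V_w\times_{\kappa_{P,w},\fz_{L_P}}\{0\}\cong V_w\cap Z_P=V_w'$ is a smooth sub-bundle, hence reduced, and the second part is the same application of Lemma \ref{w'w}. You merely carry out explicitly the fibrewise computation of $\kappa_{P,w}$ that the paper leaves implicit.
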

\begin{proof}
	By the proof of Proposition \ref{geoVw}, we have $V_w \times_{\kappa_{P,w}, \fz_{L_P}}\{0\}\cong V_w \cap Z_P=V_w'$. The first part follows. The second part follows easily from Corollary \ref{w'w}.
\end{proof}
We construct some other cycles which are closely related to (the characteristic cycles of) parabolic Verma modules (see Proposition \ref{equcycl} below). Let $\fz_{L_P}^{\sd}\subset \fz_{L_P}$ be the set of strictly dominant integral weight $\lambda=(\lambda_1, \dots, \lambda_r)\in \fz_{L_P}$, i.e.\ $\lambda_i\in \Z$ and $\lambda_i>\lambda_{i'}$ for $i>i'$. In particular $\fz_{L_P}^{\sd}\subset \fz_{L_P}^{\reg}$ (see the end of \S~\ref{sec: grgg} for $\fz_{L_P}^{\reg}$). This assumption on $\lambda$ will be used in the proof of Proposition \ref{cycbc} in Appendix \ref{appenB}. Let $\fz_{\lambda}:=E\lambda\hookrightarrow \fz_{L_P}$ (a one-dimensional vector subspace of $\fz_{L_P}$) and $\ur_{P,\lambda}:=\fz_{\lambda} +\fn_P \hookrightarrow \ur_P$. Define:
\begin{eqnarray*}
\tilde{\ug}_{P,\lambda}&:=&G\times^P \ur_{P,\lambda} \hookrightarrow G\times^P \ur_P,\\
X_{P,\lambda}&:=&\tilde{\ug}_{P,\lambda} \times_{\ug} \tilde{\ug}\hookrightarrow X_P.
\end{eqnarray*}
Let $\pi_{\lambda}$ be the restriction of $\pi$ to $X_{P,\lambda}$. For $w\in \sW$ put $V_{w,\lambda}:=\pi^{-1}_{\lambda}(U_w)$, which is a vector bundle of relative dimension $\dim \ur_{P,\lambda}-\lg(w^{\min})$ over $U_w$. We let $X_{w,\lambda}$ be the reduced Zariski closure of $V_{w,\lambda}$ in $X_{P,\lambda}$. We have that $X_{P,\lambda}$ is equidimensional of dimension $\dim G- \dim P +\dim \ur_{P,\lambda}$ with irreducible components given by $\{X_{w,\lambda}\}$. Let $\overline{X}_{w,\lambda}:=X_{w,\lambda} \times_{\kappa_{P,w}, \fz_{\lambda}} \{0\}$.

\begin{lemma}\label{XwXwl}
(1) We have $m(Z_w, \overline{X}_{w,\lambda})=1$, and $m(Z_{w'},\overline{X}_{w,\lambda})\geq 1$ implies $w'^{\max} \leq w^{\max}$.
	
(2) We have $X_{w,\lambda}\cong (X_w \times_{\kappa_{P,w}, \fz_{L_P}} \fz_{\lambda})^{\red}$. Consequently, for $w'^{\max} \leq w^{\max}$, $m(Z_{w'},\overline{X}_w)\geq m(Z_{w'},\overline{X}_{w,\lambda})$. Moreover $m(Z_{w'},\overline{X}_w)>0$ if and only if $m(Z_{w'},\overline{X}_{w,\lambda})>0$. 
\end{lemma}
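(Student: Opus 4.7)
For part (1), the strategy mirrors Lemma \ref{mul0}. Using the fiber description (\ref{fiber0}) of $V_w \to U_w$ together with $V_{w,\lambda} = V_w \cap X_{P,\lambda}$, the scheme $V_{w,\lambda}$ is a vector sub-bundle of $V_w$ over $U_w$ with fibers $\Ad(g)\bigl(\Ad((w^{\min})^{-1})\fz_\lambda \oplus (\fn \cap \Ad((w^{\min})^{-1})\fn_P)\bigr)$, and the restriction of $\kappa_{P,w}$ to $V_{w,\lambda}$ is the linear projection onto the $\fz_\lambda$-factor, hence smooth surjective. Consequently $V_{w,\lambda} \times_{\fz_\lambda} \{0\} \cong V_w'$ as reduced schemes, and since $V_w'$ (resp.\ $V_{w,\lambda}$) is Zariski-open dense in $Z_w$ (resp.\ in $X_{w,\lambda}$), this yields $m(Z_w, \overline{X}_{w,\lambda}) = 1$. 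For the second assertion of (1), the inclusion $X_{w,\lambda} \subseteq X_w$ forces $\overline{X}_{w,\lambda} \subseteq X_w$; if $m(Z_{w'}, \overline{X}_{w,\lambda}) \geq 1$ then $Z_{w'} \subseteq X_w$ and the non-empty open $V_{w'} \cap Z_{w'}$ lies in $V_{w'} \cap X_w$, so Lemma \ref{w'w} gives $w'^{\max} \leq w^{\max}$.

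For the isomorphism in part (2), both $X_{w,\lambda}$ and $(X_w \times_{\fz_{L_P}} \fz_\lambda)^{\red}$ are reduced closed subschemes of $X_w$ containing $V_{w,\lambda} = V_w \cap \kappa_{P,w}^{-1}(\fz_\lambda)$ as a common Zariski-open subscheme, so the claim reduces to the topological density of $V_{w,\lambda}$ in $X_w \cap \kappa_{P,w}^{-1}(\fz_\lambda)$. The key geometric input is the $\bG_m$-action on $X_w$ scaling $\psi$, which intertwines $\kappa_{P,w}$ with the scaling action on $\fz_{L_P}$. Combined with the equality $\dim \kappa_{P,w}^{-1}(0) = \dim Z_w = \dim X_w - r$ (which follows from Lemma \ref{mul0} and the description of $Z_P \cap X_w$) and upper semi-continuity of fiber dimension, this forces $\dim \kappa_{P,w}^{-1}(z) = \dim X_w - r$ for every $z \in \fz_\lambda$: the constructible locus where the fiber dimension jumps must be $\bG_m$-stable inside $\fz_\lambda$, hence either all of $\fz_\lambda \setminus \{0\}$ (excluded by constructibility in $\bA^1$) or empty. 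Any irreducible component of $X_w \cap \kappa_{P,w}^{-1}(\fz_\lambda)$ mapping dominantly to $\fz_\lambda$ then has dimension $\dim X_w - r + 1$ and its generic fiber equals the irreducible generic fiber of $V_{w,\lambda} \to \fz_\lambda$, forcing equality with $X_{w,\lambda}$; any other component is contained in a single fiber and has dimension $\leq \dim X_w - r < \dim X_{w,\lambda}$, so it cannot be maximal unless contained in $X_{w,\lambda}$, which establishes the density.

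For the multiplicity statements, we work at the generic point $\eta_{w'}$ of $Z_{w'}$. Set $A := \co_{X_w, \eta_{w'}}$, let $I \subseteq A$ be the ideal generated by $\kappa_{P,w}^*(\fm_{\fz_\lambda})$ (where $\fm_{\fz_\lambda}$ is the ideal of $\fz_\lambda$ in $\fz_{L_P}$), and let $\bar t \in A/I$ be the image of a linear coordinate on $\fz_\lambda$ vanishing at $0$. Then $\co_{\overline{X}_w, \eta_{w'}} \cong (A/I)/(\bar t)$, and by the first assertion of (2) we have $\co_{\overline{X}_{w,\lambda}, \eta_{w'}} \cong (A/I)^{\red}/(\bar t)$. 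The surjection $A/I \twoheadrightarrow (A/I)^{\red}$ descends to a surjection on the quotients by $(\bar t)$, giving $m(Z_{w'}, \overline{X}_w) \geq m(Z_{w'}, \overline{X}_{w,\lambda})$. Since $A/I$ is local and its kernel to $(A/I)^{\red}$ is nilpotent, an element is a unit in $A/I$ iff its image in $(A/I)^{\red}$ is a unit; hence $(A/I)/(\bar t) = 0 \Leftrightarrow (A/I)^{\red}/(\bar t) = 0$, which yields the equivalence $m(Z_{w'}, \overline{X}_w) > 0 \Leftrightarrow m(Z_{w'}, \overline{X}_{w,\lambda}) > 0$. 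The main obstacle is the density statement in part (2), which genuinely requires the $\bG_m$-equivariance argument to rule out spurious irreducible components supported in single fibers of $\kappa_{P,w}$ (unlike the trianguline case $P=B$, we do not have access to the Cohen--Macaulay or normality properties of $X_w$ to short-circuit the argument via flatness).
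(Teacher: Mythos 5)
Your part (1) and the length comparisons at the end of part (2) are fine, and in fact coincide with the paper's argument: (1) is the argument of Lemma \ref{mul0}, and once the isomorphism $X_{w,\lambda}\cong (X_w \times_{\kappa_{P,w},\fz_{L_P}}\fz_\lambda)^{\red}$ is available, passing to the local ring at the generic point of $Z_{w'}$ and using the surjection onto the reduction is exactly how the paper deduces $m(Z_{w'},\overline{X}_w)\geq m(Z_{w'},\overline{X}_{w,\lambda})$ and the equivalence of non-vanishing. The problem is your proof of that isomorphism. A first (repairable) issue is the step ``either all of $\fz_\lambda\setminus\{0\}$ (excluded by constructibility in $\bA^1$) or empty'': $\fz_\lambda\setminus\{0\}\cong\bG_m$ \emph{is} constructible in $\bA^1$, and upper semicontinuity of fibre dimension holds on the source, not on the target of a non-proper morphism, so nothing in your argument excludes the jump locus being all of $\bG_m$. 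The fibre-dimension claim itself is true, but its proof must use that nonzero multiples of the strictly dominant $\lambda$ are regular, so that $\kappa_P^{-1}(z)\subseteq X_P^{\reg-\sss}$ and Proposition \ref{genesmoo} applies (alternatively, one can run an honest limit argument with the $\bA^1$-monoid action, using that $0\cdot x\in X_w$ lies over $0$ for every $x$ in the fibre over $\lambda$).

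The serious gap is the assertion that any dominant irreducible component of $X_w\cap\kappa_{P,w}^{-1}(\fz_\lambda)$ has ``generic fibre equal to the irreducible generic fibre of $V_{w,\lambda}\to\fz_\lambda$''. For $z\in\fz_\lambda\setminus\{0\}$ the fibre $X_w\cap\kappa_P^{-1}(z)$ contains the irreducible piece $V_w\cap\kappa_P^{-1}(z)$, but it can also contain components of the same dimension lying in the boundary $X_w\setminus V_w$, and a dominant component of the fibre product could be the closure of such a boundary family, i.e.\ some $X_{w',\lambda}$ with $\sW_{L_P}w'\neq\sW_{L_P}w$ contained in $X_w$. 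Your argument never invokes the strict dominance (regularity) of $\lambda$, and the statement is false without it: for $G=\GL_2$, $P=B$, $\lambda=(1,1)$, a point $(gB,gB,a\cdot\mathrm{id}+n)$ with $n$ a regular nilpotent is a limit of pairs of distinct flags through elements with eigenvalues $a$, $a+t$, so $X_{1,\lambda}\subseteq X_s$ and $(X_s\times_{\ft}\fz_\lambda)^{\red}\supsetneq X_{s,\lambda}$; in this example the fibres over nonzero $z\in\fz_\lambda$ have the expected dimension but are reducible, which is precisely the phenomenon your generic-fibre identification ignores. The regularity of $\lambda$ has to enter, and the paper's route does this cheaply: $X_w\times_{\fz_{L_P}}\fz_\lambda$ is a closed subscheme of the equidimensional $X_{P,\lambda}$ cut out in $X_w$ by $\dim\fz_{L_P}-1$ equations, so by Krull every irreducible component has dimension $\dim X_{P,\lambda}$ and hence is some $X_{w',\lambda}$; then Lemma \ref{lemWT0} (the map $(\kappa_B,\kappa_P)$ sends $X_w$ into $\sT_w$, whereas $V_{w',\lambda}$ has points mapping outside $\sT_w$ for $w'\notin\sW_{L_P}w$, because the stabilizer of the regular element $\lambda$ in $\sW$ is $\sW_{L_P}$) rules out all $w'\neq w$. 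To salvage your approach you would need, in addition to the corrected fibre-dimension claim, an argument identifying the fibres over regular $z$ (or at least excluding boundary components of maximal dimension), which again comes down to the same regularity input.
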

\begin{proof}
	(1) follows by the same argument as for Lemma \ref{mul0}. We have a natural closed immersion $X_{w,\lambda}\hookrightarrow (X_w \times_{\kappa_{P,w}, \fz_{L_P}} \fz_{\lambda})^{\red}$. By the same argument as on Page 320 of \cite{BHS3}, each irreducible component of $X_w \times_{\kappa_{P,w}, \fz_{L_P}} \fz_{\lambda}$ has dimension $\dim X_w-(\dim \fz_{L_P}-\dim \fz_{\lambda})=\dim X_{w,\lambda}$ and is thus some $X_{w',\lambda}$ for $w'\in \sW$. However, using Lemma \ref{lemWT0}, it is easy to see that, if $\sW_{L_P}w'\neq \sW_{L_P}w$, then $V_{w',\lambda}$ can not be contained in $(X_w \times_{\kappa_{P,w}, \fz_{L_P}} \fz_{\lambda})^{\red}$. The first part of (2) follows. 
	Together with the isomorphism $\overline{X}_w \cong (X_w\times_{\kappa_{P,w}, \fz_{L_P}} \fz_{\lambda}) \times_{\kappa_{P,w}, \fz_{\lambda}} \{0\}$, the other parts also follows.
\end{proof}

\begin{remark}\label{remXwXlam}
If $P$ is maximal, then $\fz_{\lambda}+\fz=\fz_{L_P}$. We have in this case $X_{w}\cong \fz \times X_{w,\lambda}$, and hence $\overline{X}_w\cong \overline{X}_{w,\lambda}$. If $P=B$, then $X_w$ is Cohen-Macaulay by \cite[Prop.\ 2.3.3]{BHS3}. As $X_w \times_{\kappa_{P,w}, \fz_{L_P}} \fz_{\lambda}$ is cut out by $(\dim X_w-\dim X_w \times_{\kappa_{P,w}, \fz_{L_P}} \fz_{\lambda})$-equations in $X_w$, we can deduce that $X_w \times_{\kappa_{P,w}, \fz_{L_P}} \fz_{\lambda}$ is also Cohen-Macaulay. Using Proposition \ref{genesmoo} and similar arguments in the proof of \cite[Thm.\ 2.2.6]{BHS3}, we can show	that $X_w \times_{\kappa_{P,w}, \fz_{L_P}} \fz_{\lambda}$ is reduced, hence equal to $X_{w,\lambda}$. By the proof of Lemma \ref{XwXwl}, we have in this case $\overline{X}_{w,\lambda}\cong \overline{X}_w$. 
\end{remark}

Let $\co^{\fp}$ be the parabolic BGG category $\co$ associated to $\fp$ (cf.\ \cite[\S~9.3]{Hum08}) and $\co^{\fp}(0)$ the full subcategory of $\co^{\fp}$ consisting of objects with trivial infinitesimal character. Let $\Mod_{\rh}(D_{G/B}, P)$ be the category of regular holonomic $P$-equivariant $D$-modules over $G/B$, which is the same as the category of coherent $P$-equivariant $D$-modules over $G/B$ by \cite[Thm.\ 11.6.1]{HTT}. By the Beilinson-Bernstein localization, we have an equivalence of categories with inverse given by taking global sections (see for example \cite[Thm.\ 11.5.3]{HTT}):
\begin{equation*}
	\Loc_{\BB}:\co^{\fp}(0) \xlongrightarrow{\sim}\Mod_{\rh}(D_{G/B},P) .
\end{equation*}	
Let $\Mod_{\rh}(D_{G/P}, B)$ (resp.\ $\Mod_{\rh}(D_{G/B\times G/P}, G)$) be the category of regular holonomic $B$-equiva\-riant (resp.\ $G$-equivariant for the diagonal $G$-action) $D$-modules over $G/P$ (resp.\ over $G/B \times G/P$). Put
\begin{eqnarray*}
	i_B: G/B \hooklongrightarrow G/B \times G/P, \ gB \mapsto (gB, P),\\
	i_P: G/P \hooklongrightarrow G/B \times G/P, \ gP \mapsto (B, gP).
\end{eqnarray*}
As in \cite[Prop.\ 13.1.1]{HTT} (see also \cite[Lemma 1.4]{Tan}), we have $R^j i_B^* \fM=0$ (resp.\ $R^j i_P^* \fM=0$) for $\fM\in \Mod_{\rh}(D_{G/B},P)$ (resp.\ for $\fM\in \Mod_{\rh}(D_{G/P},B)$) and $j>0$. Moreover the functor $i_B^*$ (resp.\ $i_P^*$) induces an equivalence of categories
$\Mod_{\rh}(D_{G/B \times G/P}, G) \xrightarrow{\sim} \Mod_{\rh}(D_{G/B}, P)$ (resp.\ $ \Mod_{\rh}(D_{G/B \times G/P}, G) \xrightarrow{\sim}\Mod_{\rh}(D_{G/P}, B)$). 

For a smooth algebraic variety $Y$, let $T^* Y$ be the cotangent bundle. For a regular holonomic $D$-module $\fM$ over $Y$, denote by $\Ch(\fM) \subset T^* Y$ the associated characteristic variety (cf.\ \cite[\S~2.2]{HTT}).\index{$\Ch(\fM)$}
We have $T^* G/B \cong G\times^B (\ug/\ub)^{\vee} \cong G\times^B \fn$, and $T^* G/P\cong G \times^P (\ug/\fp)^{\vee}\cong G\times^P \fn_P$ (see for example \cite[Lemma 1.4.9]{ChGi} where we identify $\ug$ with $\ug^{\vee}$ using the Killing form, cf.\ \cite[\S~5]{HumIntro}). By the same argument as in the proof of \cite[Prop.\ 2.4.4]{BHS3}, we have:

\begin{proposition}\label{cycbc}
For $\fM\in \Mod_{\rh}(D_{G/B \times G/P}, G)$
\[\Ch(\fM)\cong G \times^B \Ch(i_P^* \fM) \cong G\times^P \Ch(i_B^* \fM) \subseteq T^*(G/B \times G/P)\]
is equidimensional of dimension $\dim Z_P=\dim G/B+\dim G/P$. In particular, the underlying topological space of $\Ch(\fM)$ is a union of irreducible components of $Z_P$. 
\end{proposition}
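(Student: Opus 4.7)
The plan is to adapt the argument of \cite[Prop.\ 2.4.4]{BHS3} to our setting with parabolic $P$. The key observation is that since $\fM$ is $G$-equivariant with respect to the diagonal action, its characteristic variety $\Ch(\fM) \subseteq T^*(G/B \times G/P)$ is $G$-stable and contained in the zero fiber of the moment map for this action. Using the Killing form to identify $T^*(G/B) \cong G \times^B \fn$ and $T^*(G/P) \cong G \times^P \fn_P$, together with $T^*(G/B \times G/P) \cong T^*(G/B) \times T^*(G/P)$, the moment map sends a point $((g_1B, \psi_1), (g_2P, \psi_2))$ (with $\psi_1 \in \Ad(g_1)\fn$ and $\psi_2 \in \Ad(g_2)\fn_P$) to $\psi_1 + \psi_2 \in \ug$. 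One checks directly that its zero fiber coincides (as a reduced subscheme) with $Z_P$ as defined at the start of \S~\ref{secCCyc}.

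Next, I would use the equivalence of categories $i_P^*: \Mod_{\rh}(D_{G/B \times G/P}, G) \xrightarrow{\sim} \Mod_{\rh}(D_{G/P}, B)$ recalled before the proposition to describe $\Ch(\fM)$ as an ``induced'' variety. Any $G$-equivariant $\fM$ is recovered from $\fN := i_P^*\fM$ by a standard $G$-induction procedure: concretely, $\fM$ is obtained by pulling $\fN$ back along the action map $a: G \times G/P \to G/B \times G/P$, $(g,xP) \mapsto (gB, gxP)$, then descending along the $B$-action. Under this procedure, the characteristic variety transforms as $\Ch(\fM) = G \times^B \Ch(\fN)$, where the embedding $G \times^B T^*(G/P) \hookrightarrow T^*(G/B \times G/P)$ is the one arising from the conormal to $\{B\} \times G/P \subset G/B \times G/P$ (whose image lies in the moment-zero fiber $Z_P$). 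A symmetric argument applied to $i_B^*$ yields the second isomorphism $\Ch(\fM) \cong G \times^P \Ch(i_B^*\fM)$.

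Finally, since $\fN$ is regular holonomic on the smooth variety $G/P$, its characteristic variety $\Ch(\fN)$ is a Lagrangian subvariety of $T^*(G/P)$, hence equidimensional of dimension $\dim G/P$. Consequently $\Ch(\fM) \cong G \times^B \Ch(\fN)$ is equidimensional of dimension $\dim G/B + \dim G/P = \dim Z_P$. Since $\Ch(\fM)$ is contained in $Z_P$ and, by Corollary \ref{irrcmpXP} and the discussion preceding the proposition, $Z_P$ is equidimensional of the same dimension, the underlying topological space of $\Ch(\fM)$ must be a union of irreducible components of $Z_P$, proving the final assertion.

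The main obstacle is the precise formula $\Ch(\fM) = G \times^B \Ch(i_P^*\fM)$: this requires care in tracking how characteristic varieties behave under the quasi-inverse of $i_P^*$ (a form of $D$-module induction along $a$), in particular the explicit identification of the conormal embedding $G \times^B T^*(G/P) \hookrightarrow T^*(G/B \times G/P)$. This is standard for $G$-equivariant $D$-modules on equivariant fibrations (see \cite[\S~1.4]{Tan}), but since the BHS3 proof was stated only for $P = B$, some explicit verification is required for the general parabolic case.
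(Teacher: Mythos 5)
Your overall strategy coincides with the paper's: the proof in the text is simply a citation of the argument of \cite[Prop.\ 2.4.4]{BHS3}, which is exactly the induction you sketch (restrict along $i_P$, pull back along the action map $a\colon G\times G/P\to G/B\times G/P$, compare with the pullback along the projection to $G/P$ using the smooth-pullback formula for characteristic varieties, descend along the $B$-torsor, and conclude equidimensionality from the Lagrangian property of $\Ch(i_P^*\fM)$). Your moment-map remark is a harmless variant for obtaining the containment $\Ch(\fM)\subseteq Z_P$.

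However, the step you yourself flag as the main obstacle is also where your description is wrong. There is no embedding $G\times^B T^*(G/P)\hookrightarrow T^*(G/B\times G/P)$ with image inside $Z_P$: the source has dimension $\dim G/B+2\dim G/P$, whereas $\dim Z_P=\dim G/B+\dim G/P$, so this is impossible unless $P=G$; moreover the conormal bundle of $\{B\}\times G/P$ in $G/B\times G/P$ consists of covectors vanishing in the $G/P$-directions, which is not where $G\times^B\Ch(i_P^*\fM)$ sits (that description would only be adequate for modules of ``delta type'' along the fibre). The correct identification is the following: since $i_P^*\fM$ is a $B$-equivariant regular holonomic module on $G/P$, and $B$ acts on $G/P$ with finitely many orbits, $\Ch(i_P^*\fM)$ is a ($B$-stable, conic) union of closures of conormal bundles $T^*_{C_{P,w}}(G/P)$, hence is contained in $q_P^{-1}(\fn)^{\red}$; one then uses the isomorphism $Z_P\cong G\times^B q_P^{-1}(\fn)^{\red}$ recalled at the beginning of \S~\ref{secCCyc} to view $G\times^B\Ch(i_P^*\fM)$ inside $T^*(G/B\times G/P)$. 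Equivalently, carrying out the smooth-descent computation honestly for the two smooth maps $a$ and $\mathrm{pr}_2$ (note that $a^*\fM\cong \mathrm{pr}_2^*\,i_P^*\fM$ by $G$-equivariance) produces exactly this identification, with no choice of splitting of the fibration $G/B\times G/P\cong G\times^B(G/P)$ needed. With this correction the rest of your argument — purity of dimension $\dim G/B+\dim G/P$ from the Lagrangian property, and the final assertion from containment in the equidimensional $Z_P$ — goes through.
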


For $\fM\in \Mod_{\rh}(D_{G/B \times G/P},G)$, $[\Ch(\fM)]\in Z^0(Z_P)$ is thus well-defined. 
For $w\in \sW$, let
\[M_P(w^{\max} w_0 \cdot 0):=\text{U}(\ug) \otimes_{\text{U}(\fp)} (w^{\max}w_0 \cdot 0) \in \co^{\fp}(0)\]
be the parabolic Verma module, and $L(w^{\max} w_0\cdot 0)$ be the (unique) simple quotient of $M_P(w^{\max} w_0 \cdot 0)$ in $\co^{\fp}(0)$. Denote by $\fM_P(w^{\max}w_0\cdot 0)$ (resp.\ $\fL(w^{\max}w_0 \cdot 0)$) the $D$-module over $G/B$ associated to $M_P(w^{\max} w_0 \cdot 0)$ (resp.\ $L(w^{\max} w_0\cdot 0)$) via $i_B^* \circ \Loc_{\BB}$. The following proposition will be proved in Appendix \ref{appenB}.

\begin{proposition}\label{equcycl}
	Let $\lambda\in \fz_{L_P}^{\sd}$, then we have $[\overline{X}_{w,\lambda}]=[\Ch(\fM_P(w^{\max}w_0 \cdot 0))]$ for all $w\in \sW$.
\end{proposition}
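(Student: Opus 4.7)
The plan is to exhibit both cycles as the specialization at $0$ of a common one-parameter family indexed by $t\in\fz_\lambda\simeq\bA^1_E$. On the geometric side, by construction $X_{w,\lambda}$ is an irreducible component of the equidimensional scheme $X_{P,\lambda}$, and the map $\kappa_{P,w}|_{X_{w,\lambda}}\colon X_{w,\lambda}\ra\fz_\lambda$ is equidimensional with fibers of dimension $\dim Z_P$; its scheme-theoretic fiber at $0$ yields $[\overline{X}_{w,\lambda}]$, while for $t\neq 0$ the fiber $\kappa_{P,w}^{-1}(t)\cap X_{w,\lambda}$ lies in $X_P^{\reg-\sss}$ by Proposition \ref{genesmoo} (using $\lambda\in\fz_{L_P}^{\sd}\subset\fz_{L_P}^{\reg}$). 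On the $D$-module side, one introduces the parallel family of parabolic Verma modules $\{M_P((w^{\max}w_0\cdot 0)+t\lambda)\}_{t\in\fz_\lambda}$, viewed as a $\text{U}(\ug)\otimes\co_{\fz_\lambda}$-module, and its Beilinson--Bernstein localization produces a twisted $D$-module on $G/B\times\fz_\lambda$ whose characteristic cycle restricts at $t=0$ to $[\Ch(\fM_P(w^{\max}w_0\cdot 0))]$ and whose fiber at generic $t$ is the characteristic cycle of a Verma with regular central character.

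With this setup, I would first verify the elementary matches. Both cycles are supported on $\bigcup_{(w')^{\max}\leq w^{\max}}Z_{w'}$: for the LHS this is Lemma \ref{XwXwl}; for the RHS it follows since $M_P(w^{\max}w_0\cdot 0)\in\co^{\fp}(0)$ has composition factors only among the $L(w'^{\max}w_0\cdot 0)$ with $(w')^{\max}\leq w^{\max}$ (by Kazhdan--Lusztig/Soergel theory for $\co^{\fp}$), and each such simple $D$-module has characteristic variety contained in the corresponding conormal closure, which is identified with $Z_{w'}$ via Proposition \ref{cycbc} and Remark \ref{stenBP}. The coefficient at the top component $Z_w$ equals $1$ on both sides: on the LHS by Lemma \ref{XwXwl}(1); on the RHS because $i_B^*\Loc_{\BB}(M_P(w^{\max}w_0\cdot 0))$ restricts to a rank-one connection on the open $P$-orbit of its support, contributing conormal multiplicity $1$.

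The main computational step is to match multiplicities at the lower components $Z_{w'}$ with $(w')^{\max}<w^{\max}$. My approach is to put a good $\text{U}(\ug)\otimes\co_{\fz_\lambda}$-filtration (a PBW-type filtration tensored with the tautological coordinate on $\fz_\lambda$) on the global sections of the deformed $D$-module above, and identify the Rees scheme attached to its associated graded, viewed inside $G/B\times\ug\times\fz_\lambda$ after the standard identifications $T^*(G/B)\cong G\times^B\fn\subset\tilde\ug$ and $T^*(G/P)\cong G\times^P\fn_P\subset\tilde\ug_P$, with the scheme $X_{w,\lambda}\subset\tilde\ug\times_\ug\tilde\ug_{P,\lambda}$. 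Genericness of $\lambda$ forces agreement of the two cycles on the preimage of $\fz_\lambda\setminus\{0\}$ (where both reduce to the smooth locus), and a flat-specialization argument then propagates the equality to the special fiber at $0$, giving the result.

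The main obstacle will be the explicit matching of the Rees scheme of the filtered $D$-module with $X_{w,\lambda}$ together with correct multiplicity bookkeeping at the non-generic $Z_{w'}$; in the Borel case $P=B$ this is \cite[Prop.\ 2.4.4]{BHS3}, and for general $P$ one can either generalize that filtered-$D$-module argument directly using that $X_w$ is unibranch at points $(g_1B,g_2P,0)$ (Theorem \ref{unibranch}, which gives scheme-theoretic control), or else reduce to the Borel case via the BGG-type presentation $\bigoplus_{\alpha\in\Delta_{L_P}}M_B(s_\alpha\cdot\mu)\to M_B(\mu)\to M_P(\mu)\to 0$ and the geometric counterpart describing $X_{w,\lambda}$ as the scheme-theoretic image of a union of Borel-level components under the $P/B$-fibration $\tilde\ug\to\tilde\ug_P$.
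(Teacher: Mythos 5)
Your geometric intuition is the right one: the one-parameter family over $\fz_\lambda$ that you propose is, up to the identifications of Proposition \ref{cycbc}, exactly the family the paper uses, namely Ginzburg's twist $\Lambda + a\,df_{w^{-1}}$ of the conormal $\Lambda=T^*_{C_{P,w^{-1}}}G/P$ by the differential of the section $f_{w^{-1}}$ of $\cL_{-\lambda}$ (this is where $\lambda\in\fz_{L_P}^{\sd}$ enters: $f_{w^{-1}}$ cuts out exactly the boundary of $\overline{C_{P,w^{-1}}}$, and the moment-map computation (\ref{formome}) shows $df$ points in the $\lambda$-direction, so the total family is identified with $X_{w,\lambda}\to\fz_\lambda$ after checking reducedness of both sides of (\ref{forcyc22})). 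The genuine gap is your "flat-specialization argument then propagates the equality to the special fiber at $0$". Specialization of characteristic cycles at $t=0$ is not a formal flatness statement: in general, restricting a good filtration to the special fiber only gives a surjection of the specialized $\gr$ onto the $\gr$ of the specialized module, hence an inequality of cycles, and the equality you need is precisely the content of Ginzburg's theorem (\cite[Thm.~3.2, 6.3]{Gin86}), which states that $\Ch$ of the full direct image $\int_{j_{w^{-1}}}\co_{C_{P,w^{-1}}}$ across the non-closed embedding equals the limit at $a\to 0$ of the family $\Lambda+a\,df$. The paper's proof consists in verifying the hypotheses of that theorem and identifying its output with $\overline{X}_{w,\lambda}=X_{w,\lambda}\times_{\kappa_{P,\lambda},\fz_\lambda}\{0\}$; without that (or an equivalent specialization theorem for good filtrations), your argument only bounds one cycle by the other.

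Two further steps are unsubstantiated as written. First, your Rees-scheme identification: a "PBW-type filtration" on the deformed parabolic Verma does not induce the relevant good filtration on the localized $D$-module (the interplay between $\Gamma$, $\Loc_{\BB}$ and filtrations is exactly the hard point), so identifying its associated graded with $X_{w,\lambda}$ is not available by inspection; the paper sidesteps this by working with the cell pushforward $\int_{j_w}\co_{C_w^P}$ and proving separately (Proposition \ref{paraVerGeo}, via the dual BGG-type resolution coming from the stratification of $PwB/B$ by $B$-cells) that its global sections are $M_P(w^{\max}w_0\cdot 0)^{\vee}$, then transferring characteristic cycles through Verdier duality. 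Second, your fallback reduction to the Borel case via the presentation $\oplus_{\alpha}M(s_\alpha\cdot\mu)\to M(\mu)\to M_P(\mu)\to 0$ cannot compute $[\Ch(\fM_P(w^{\max}w_0\cdot 0))]$: characteristic cycles are additive only in short exact sequences, and a right-exact presentation gives no control on the cycle of the image, so knowing the Borel-case statement does not determine the parabolic one; likewise Theorem \ref{unibranch} gives no multiplicity bookkeeping at the lower components $Z_{w'}$. Your preliminary observations (support contained in $\bigcup_{(w')^{\max}\leq w^{\max}}Z_{w'}$ and top multiplicity $1$ on both sides) are correct but are far from the full multiplicity match, which is exactly what the Ginzburg-type argument delivers.
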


\begin{conjecture}\label{conjCC}
For $w\in \sW$, we have $[\overline{X}_{w}]=[\Ch(\fM_P(w^{\max}w_0 \cdot 0))]$.
\end{conjecture}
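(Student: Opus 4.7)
The conjecture reduces, via Proposition \ref{equcycl}, to proving $[\overline{X}_w]=[\overline{X}_{w,\lambda}]$ for some (equivalently, any) $\lambda\in\fz_{L_P}^{\sd}$. By Lemma \ref{XwXwl}, both cycles are supported on $\bigcup_{(w')^{\max}\leq w^{\max}}Z_{w'}$, they coincide with multiplicity $1$ along $Z_w$, and they satisfy $m(Z_{w'},\overline{X}_w)\geq m(Z_{w'},\overline{X}_{w,\lambda})$ for the remaining $w'$; the task is to prove the reverse inequalities.

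My plan is to exploit the tautological identity of schemes
\[\overline{X}_w\;=\;X_w\times_{\fz_{L_P}}\{0\}\;=\;\bigl(X_w\times_{\fz_{L_P}}\fz_\lambda\bigr)\times_{\fz_\lambda}\{0\},\]
and then to show that the scheme $Y:=X_w\times_{\fz_{L_P}}\fz_\lambda$ is already reduced. If so, then by Lemma \ref{XwXwl}(2) one has $Y=Y^{\red}=X_{w,\lambda}$, hence $\overline{X}_w=\overline{X}_{w,\lambda}$ as schemes, and the conjecture follows. Generic reducedness of $Y$ is essentially free: the scheme $Y$ is irreducible of pure dimension $\dim X_w-(\dim\fz_{L_P}-1)$, and Proposition \ref{genesmoo} shows that $\kappa_{P,w}$ is smooth on the dense open $X_w\cap X_P^{\reg-\sss}$, so $Y$ is smooth, in particular reduced, at its unique generic point (which lies in $V_{w,\lambda}$).

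What remains is to rule out embedded associated primes for $Y$, for which a natural sufficient condition is Serre's $(S_1)$, itself implied by the Cohen--Macaulay property of $X_w$: then $Y$ is cut out in $X_w$ by $\dim\fz_{L_P}-1$ equations of the expected codimension, hence is CM and thus reduced. When $P=B$ this CM property is known (indeed $X_w$ is normal by \cite[Prop.\ 2.3.3]{BHS3}), recovering the conjecture as in Remark \ref{remXwXlam}. For $P\ne B$, I would attempt to prove the CM property for $X_w$ by constructing a Bott--Samelson-type resolution of $X_w$, or by realizing $X_w\cong G\times^P q_B^{-1}(\ur_P)_w$ (Remark \ref{remXw}(2)) as a Kempf collapsing of a vector bundle over a smooth variety, in analogy with the classical argument underlying Corollary \ref{irrcmpXP}.

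The main obstacle is precisely this Cohen--Macaulay question for $X_w$ when $P\ne B$, which, as the authors themselves note in Remark \ref{remXw}(1), is open. A complementary strategy bypassing CM would be to strengthen Proposition \ref{equcycl} from one-parameter families $\fz_\lambda\subset\fz_{L_P}$ to the full fiber over $0\in\fz_{L_P}$, by analyzing the specialization of the characteristic cycles of parabolic Verma modules under multi-parameter deformations; this seems to require substantial additional input on the $D$-module side beyond the techniques of Appendix B. Yet another route would be a conservation-of-number argument based on the partial flatness of $\kappa_{P,w}$ guaranteed by Proposition \ref{genesmoo}, together with a suitable compactification of $\kappa_{P,w}$, although the non-properness of this morphism makes this approach delicate.
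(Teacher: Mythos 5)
You have correctly identified that the statement is only a conjecture in the paper: the authors do not prove Conjecture \ref{conjCC} in general. What they establish is Proposition \ref{equcycl} (the identity for $[\overline{X}_{w,\lambda}]$ with $\lambda\in\fz_{L_P}^{\sd}$), the comparison inequalities of Lemma \ref{XwXwl} and Lemma \ref{lemcyccomp}, and the two special cases recorded in Remark \ref{remXwXlam}: $P=B$, where $X_w$ is Cohen--Macaulay (indeed normal) by \cite[Prop.\ 2.3.3]{BHS3}, so $X_w\times_{\kappa_{P,w},\fz_{L_P}}\fz_\lambda$ is CM, generically reduced, hence reduced and equal to $X_{w,\lambda}$; and $P$ maximal, where $X_w\cong\fz\times X_{w,\lambda}$ trivially gives $\overline{X}_w\cong\overline{X}_{w,\lambda}$. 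Your reduction of the conjecture to the reducedness of $Y:=X_w\times_{\fz_{L_P}}\fz_\lambda$ is exactly this route, and your generic-reducedness argument via the smoothness of $\kappa_P$ on $X_P^{\reg-\sss}$ (Proposition \ref{genesmoo}) is the same one the authors use in the proof of Lemma \ref{lemcyccomp}(3) to get the statement at smooth points of $X_w$.

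The genuine gap is the one you yourself flag: for $P\ne B$ the Cohen--Macaulay (or even $(S_1)$) property of $X_w$, equivalently the absence of embedded components in $Y$, is precisely the open problem stated in Remark \ref{remXw}(1), and none of your proposed routes closes it. The authors explicitly note that the argument which gives CM-ness when $P=B$ (the last paragraph of the proof of \cite[Prop.\ 2.2.5]{BHS3}) does not extend to $P\ne B$; a Bott--Samelson or Kempf-collapsing construction for $X_w\cong G\times^P q_B^{-1}(\ur_P)_w$ is not supplied and is not known; a multi-parameter strengthening of Proposition \ref{equcycl} would require new $D$-module input beyond Appendix \ref{appenB}; and a conservation-of-number argument founders on the non-properness of $\kappa_{P,w}$, for which you give no compactification with controlled boundary. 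So your proposal amounts to a correct reduction plus an honest restatement of the same open question, and it proves no more than the paper already establishes (the cases $P=B$ and $P$ maximal, together with the local identity $\widehat{\co}_{\overline{X}_w,x}\cong\widehat{\co}_{\overline{X}_{w,\lambda},x}$ at smooth points $x$ of $X_w$ from Lemma \ref{lemcyccomp}(3)); it is not a proof of Conjecture \ref{conjCC}.
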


\begin{remark}
Conjecture \ref{conjCC} holds for $P=B$ by \cite[(6.2.3)]{Gin86} or \cite[Prop.\ 2.14.2]{BeRi} (see Remark \ref{remXwXlam}). It also holds in the case $P$ is maximal by Proposition \ref{equcycl} and Remark \ref{remXwXlam}.
\end{remark}

\begin{theorem}\label{thmcycl}
(1) The classes $\{Z_w\}$, $\{[\Ch(\fM_P(w^{\max} w_0 \cdot 0))]\}$, $\{[\Ch(\fL(w^{\max} w_0 \cdot 0)]\}$, $\{[\overline{X}_w]\}$ for $w\in \sW_{L_P}\backslash \sW$ are a basis of the finite free $\Z$-module $Z^0(Z_P)$.
	
(2) For $\lambda\in \fz_{L_P}^{\sd}$ and $w\in \sW$, we have \index{$b_{w,w'}$}
	\begin{equation}\label{chaf01}
		[\overline{X}_{w,\lambda}]=\sum_{w'\in \sW_{L_P}\backslash \sW} b_{w, w'} [\Ch(\fL(w'^{\max} w_0 \cdot 0))]
	\end{equation}
	where $b_{w,w'}\in \Z_{\geq 0}$ is the multiplicity of $L(w'^{\max}w_0 \cdot 0)$ in $M_P(w^{\max} w_0 \cdot 0)$ (hence $b_{w,w'}$ only depends on the cosets $\sW_{L_P} w$ and $\sW_{L_P}w'$, $b_{w, w}=1$ and $b_{w,w'}=0$ unless $w'^{\max} \leq w^{\max}$).
	
(3) Let $w,w'\in \sW$, there are integers $a_{w,w'}\in \Z_{\geq 0}$ only depending on the cosets $\sW_{L_P}w$, $\sW_{L_P}w'$ such that \index{$a_{w,w'}$}
	\begin{equation*}
		[\Ch(\fL(w^{\max}w_0 \cdot 0))]=\sum_{w'\in \sW_{L_P}\backslash \sW} a_{w,w'} [Z_{w'}]\in Z^0(Z_P)
	\end{equation*}
	where $a_{w, w}=1$ and $a_{w,w'}=0$ unless $w'^{\max} \leq w^{\max}$. Moreover, if $w'^{\max}<w^{\max}$ and $Bw'^{\max}B/B$ is contained in the smooth locus of the Zariski-closure of $Bw^{\max} B/B$, then $a_{w,w'}=0$.
\end{theorem}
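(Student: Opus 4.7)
The proof proceeds by first establishing (3), which provides the triangular change-of-basis information needed for (1), and by using Proposition \ref{equcycl} to deduce (2). The whole argument rests on viewing the four sets of cycles through the lens of $D$-module characteristic cycles on $G/B\times G/P$ via Proposition \ref{cycbc}.

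The plan for (3) is as follows. By Proposition \ref{cycbc} the characteristic cycle $[\Ch(\fL(w^{\max}w_0\cdot 0))]$ is a non-negative integral combination of irreducible components of $Z_P$, and these components are exactly $\{Z_{w'}\}_{w'\in\sW_{L_P}\backslash\sW}$, so integers $a_{w,w'}\in\Z_{\geq 0}$ exist and depend only on the cosets. The support of $\fL(w^{\max}w_0\cdot 0)$ as a $D$-module on $G/B$ is the Schubert variety $\overline{Bw^{\max}B/B}$, so its characteristic variety sits inside $\bigcup_{w''\leq w^{\max}} \overline{T^*_{Bw''B/B}G/B}$; passing to $G\times^P(-)$ and using Remark \ref{stenBP} (which identifies $q_B^{-1}(\fn_P)_w$ with $q_B^{-1}(\fn)_{w^{\max}}$) translates this into $a_{w,w'}=0$ unless $w'^{\max}\leq w^{\max}$. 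The equality $a_{w,w}=1$ is standard: over the dense smooth Bruhat cell $Bw^{\max}B/B$ in the support, $\fL(w^{\max}w_0\cdot 0)$ is the constant $D$-module and contributes its conormal bundle once. For the final assertion, if $Bw'^{\max}B/B$ lies in the smooth locus of $\overline{Bw^{\max}B/B}$, then locally around this cell $\fL$ coincides with the $D$-module extension-by-zero from the smooth locus, i.e.\ the constant $D$-module on $\overline{Bw^{\max}B/B}$, whose characteristic cycle is just $[T^*_{\overline{Bw^{\max}B/B}}G/B]$ without contribution from $T^*_{Bw'^{\max}B/B}G/B$ — this is Kashiwara's local computation for a smooth subvariety.

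For (2), by Proposition \ref{equcycl} we have $[\overline{X}_{w,\lambda}]=[\Ch(\fM_P(w^{\max}w_0\cdot 0))]$. In the BGG parabolic category $\cO^{\fp}(0)$, the module $M_P(w^{\max}w_0\cdot 0)$ admits a composition series in which $L(w'^{\max}w_0\cdot 0)$ appears exactly $b_{w,w'}$ times, with $b_{w,w}=1$ and $b_{w,w'}=0$ unless $w'^{\max}\leq w^{\max}$ (by the standard linkage/highest-weight considerations in category $\cO$). Applying the exact functor $i_B^*\circ \Loc_{\BB}$ and using that the characteristic cycle is additive on short exact sequences of regular holonomic $D$-modules yields
\[ [\Ch(\fM_P(w^{\max}w_0\cdot 0))]=\sum_{w'\in\sW_{L_P}\backslash\sW} b_{w,w'}\,[\Ch(\fL(w'^{\max}w_0\cdot 0))], \]
which is exactly (\ref{chaf01}).

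For (1), the set $\{Z_w\}_{w\in\sW_{L_P}\backslash\sW}$ is a $\Z$-basis of $Z^0(Z_P)$ by the very definition of this group together with the enumeration of irreducible components of $Z_P$. Each of the three remaining families expresses as an upper unitriangular integral combination of the $Z_w$ when $\sW_{L_P}\backslash\sW$ is totally ordered refining the Bruhat order on maximal representatives: Lemma \ref{mul0} gives $[\overline{X}_w]=[Z_w]+\sum_{w'^{\max}<w^{\max}} m(Z_{w'},\overline{X}_w)[Z_{w'}]$; combining Proposition \ref{equcycl} with Lemma \ref{XwXwl} (or arguing directly as for (3)) gives the same kind of triangular expression for $[\Ch(\fM_P(w^{\max}w_0\cdot 0))]$; and part (3) just established does so for $[\Ch(\fL(w^{\max}w_0\cdot 0))]$. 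Since each transition matrix is unitriangular with entries in $\Z$, each family is a $\Z$-basis of $Z^0(Z_P)$.

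The main technical obstacle is the last clause of (3), namely the vanishing $a_{w,w'}=0$ under the smoothness assumption on $\overline{Bw^{\max}B/B}$ at $Bw'^{\max}B/B$. This requires a genuinely local $D$-module input (Kashiwara's description of the IC-extension over the smooth part of its support) rather than the global multiplicity-freeness we use elsewhere, and it is this statement that feeds into the smoothness criterion of the Bernstein paraboline variety in Corollary \ref{corosmo} and the later cycle computations.
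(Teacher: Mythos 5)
Your argument is correct and follows essentially the same route as the paper: everything is reduced, via Proposition \ref{cycbc} and Proposition \ref{equcycl}, to standard facts on characteristic cycles and on composition series of parabolic Verma modules, and (1) is obtained from the unitriangular change of basis just as the paper intends. The only difference is one of packaging: where you reprove the Borel-case inputs directly (support plus generic multiplicity one for the triangularity and $a_{w,w}=1$, and Kashiwara's computation of the characteristic cycle of the constant $D$-module on a smooth closed subvariety for the vanishing clause of (3)), the paper transfers these statements from $q_B^{-1}(\fn_P)$ to $q_B^{-1}(\fn)$ via Remark \ref{stenBP} and cites \cite[Thm.\ 2.4.7]{BHS3}, whose proof rests on the same ingredients.
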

\begin{proof}
	(1) follows from the same argument as in the proof of \cite[Thm.\ 2.4.7]{BHS3} (with \cite[Prop.\ 2.4.6]{BHS3} replaced by Proposition \ref{equcycl} and using Lemma \ref{XwXwl} (2)). (2) is a direct consequence of Proposition \ref{equcycl}. \ By \ Proposition \ \ref{cycbc}, \ (3) \ will \ follow \ from \ a \ parallel \ statement for $[\Ch(i_B^* \fL(w^{\max}w_0 \cdot 0))]$. Let $a_{w,w'}\in \Z_{\geq 0}$ such that (see Remark \ref{stenBP} for the second equality):
	\begin{equation*}
		[\Ch(i_B^* \fL(w^{\max}w_0 \cdot 0))]=\sum_{w'\in \sW_{L_P}\backslash \sW} a_{w,w'} [q_B^{-1}(\fn_P)_{w'}]=\sum_{w'\in \sW_{L_P}\backslash \sW} a_{w,w'}[q_B^{-1}(\fn)_{w'^{\max}}].
	\end{equation*}
	(3) follows then from \cite[Thm.\ 2.4.7 (iii)]{BHS3}.
\end{proof}

\begin{remark}\label{remKLrel}
By \cite[Conj.\ 3.27]{CaCo} and \cite[Thm.\ 3.28]{CaCo} (see also \cite{Deo87}, \cite{Doug}), the coefficients $b_{w,w'}$ in Theorem \ref{thmcycl} may be described using certain relative Kazhdan-Lusztig polynomials. However, we couldn't find a precise statement in the literature and we don't need such description in the paper. We remark that, by \cite[Thm.\ 3.28]{CaCo}, $b_{w,w'}$ is also equal to the multiplicity of $L(w'^{\max}w_0 \cdot \lambda)$ in $M_P(w^{\max}w_0 \cdot \lambda)$ for any integral dominant weight $\lambda$. Finally, using \cite[Thm.\ 9.4 (b)]{Hum08} and \cite[Ex.\ 8.3 (a)]{Hum08}, we easily deduce $b_{w,w'}=1$ when $w'^{\max}< w^{\max}$ and $\lg(w'^{\max})=\lg(w^{\max})-1$.
\end{remark}

Let $\overline{X}_P:=X_P \times_{\kappa_{P}, \fz_{L_P}} \{0\}$. If $M$ is a coherent $\co_{\overline{X}_P}$-module, we define its class $[M]\in Z^0(Z_P)$ as in (\ref{cha00}) with $m(Z_w,Y)$ replaced by the length $m(Z_w,M)$ of the $\co_{\overline{X}_P, \eta_{Z_w}}$-module $M_{\eta_{Z_w}}$. Let $x$ be a closed point in $\overline{X}_P$ (or equivalently in $Z_P$), the complete local rings $\widehat{\co}_{Z_P,x}$, $\widehat{\co}_{Z_w,x}$ are equidimenisonal, and the set of irreducible components of $\Spec \widehat{\co}_{Z_P,x}$ is the (disjoint) union for all $w\in \sW_{L_P}\backslash \sW$ of the sets of irreducible components of $\Spec \widehat{\co}_{Z_w,x}$. Note that in general we don't know whether $\Spec \widehat{\co}_{Z_w,x}$ is irreducible (see the discussion above \cite[Lemma 2.5.5]{BHS3}). However if the $\fn_P$-coordinate of $x$ is zero, then $\Spec \widehat{\co}_{Z_w,x}$ is irreducible by Theorem \ref{unibranch2}. Put $\widehat{M}_x:=M \otimes_{\co_{\overline{X}}} \widehat{\co}_{\overline{X}_P,x}$, and define $[\widehat{M}_x]\in Z^0(\Spec \widehat{\co}_{Z_P,x})$ similarly to $[M]$ above.\index{$[M]$} By the same argument as in the proof of \cite[Lemma 2.5.5]{BHS3}, we have
\begin{equation}\label{chaf02}
	[\widehat{M}_x]=\sum_{w\in \sW_{L_P}\backslash \sW} m(Z_w,M)[\Spec \widehat{\co}_{Z_w,x}] \in Z^0(\Spec \widehat{\co}_{Z_P,x}),
\end{equation}
where $[\Spec \widehat{\co}_{Z_w,x}]\in Z^0(\Spec \widehat{\co}_{Z_P,x})$ is defined similarly as in (\ref{cha00}). For $w\in \sW_{L_P} \backslash \sW$, put (see Theorem \ref{thmcycl} (3) for $a_{w,w'}\in \Z_{\geq 0}$):
\begin{equation*}
	[\widehat{\fL}(w^{\max} w_0 \cdot 0)_x]:=\sum_{w'\in \sW_{L_P}\backslash \sW} a_{w,w'} [\Spec \widehat{\co}_{Z_{w'},x}] \in Z^0(\Spec \widehat{\co}_{Z_P,x}).
\end{equation*}

\begin{lemma}\label{lemcyccomp}
Let $w\in \sW$. 
	
(1) Let $\lambda\in \fz_{L_P}^{\sd}$, we have
	\begin{equation}\label{chaf00}
		[\widehat{\co}_{\overline{X}_{w,\lambda},x}]=\sum_{w'\in \sW_{L_P}\backslash \sW} b_{w,w'}[\widehat{\fL}(w'^{\max}w_0\cdot 0)_x]\in Z^0(\Spec \widehat{\co}_{Z_P,x}).
	\end{equation}
	
	(2) For $w'\in \sW$ there are integers $c_{w,w'}, c_{w,w'}' \in \Z_{\geq 0}$, depending only on the cosets $\sW_{L_P}w$ and $\sW_{L_P}w'$, satisfying the following conditions:
	\begin{itemize}
	\item$c_{w,w}=c_{w,w}'=1$;
	\item $c_{w,w'}'\geq c_{w,w'}$;
	\item $c_{w,w'}>0\Longleftrightarrow c_{w,w'}'>0$;
	\item $c_{w,w'}=c_{w,w'}'=0$ except when $w'^{\max}\leq w^{\max}$;
	\item we have the equalities in $Z^0(\Spec \widehat{\co}_{Z_P,x})$:
	\[ [\widehat{\co}_{\overline{X}_{w,\lambda},x}]=\sum_{w'\in \sW_{L_P}\backslash \sW} c_{w,w'}[\Spec \widehat{\co}_{Z_{w'},x}]\text{\ \ and\ \ } [\widehat{\co}_{\overline{X}_{w},x}]=\sum_{w'\in \sW_{L_P}\backslash \sW} c_{w,w'}'[\Spec \widehat{\co}_{Z_{w'},x}].\] 
	\end{itemize}
	
(3) If $x\in \overline{X}_w$ is a smooth point of $X_w$, then $\widehat{\co}_{\overline{X}_{w},x}\cong \widehat{\co}_{\overline{X}_{w,\lambda},x}$, in particular (\ref{chaf00}) holds with $\widehat{\co}_{\overline{X}_{w,\lambda},x}$ replaced by $\widehat{\co}_{\overline{X}_{w},x}$.
\end{lemma}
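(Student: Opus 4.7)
I would prove (1) by pushing the identities in $Z^0(Z_P)$ coming from Theorem \ref{thmcycl} through the local class formula (\ref{chaf02}); deduce (2) formally from (1) combined with Lemma \ref{mul0} and Lemma \ref{XwXwl}; and establish (3), which I expect to be the main difficulty, by a local reducedness argument at $x$ for the scheme $Y:=X_w\times_{\kappa_{P,w},\fz_{L_P}}\fz_\lambda$.

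For (1), combine Theorem \ref{thmcycl}(2), giving $[\overline{X}_{w,\lambda}]=\sum_{w'} b_{w,w'}[\Ch(\fL(w'^{\max}w_0\cdot 0))]\in Z^0(Z_P)$, with Theorem \ref{thmcycl}(3), giving $[\Ch(\fL(w'^{\max}w_0\cdot 0))]=\sum_{w''} a_{w',w''}[Z_{w''}]$. Reading off multiplicities yields $m(Z_{w''},\co_{\overline{X}_{w,\lambda}})=\sum_{w'} b_{w,w'}a_{w',w''}$; plugging into (\ref{chaf02}) and regrouping identifies the right-hand side as $\sum_{w'} b_{w,w'}[\widehat{\fL}(w'^{\max}w_0\cdot 0)_x]$ by the very definition of the latter.

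For (2), set $c_{w,w'}:=m(Z_{w'},\co_{\overline{X}_{w,\lambda}})$ and $c_{w,w'}':=m(Z_{w'},\co_{\overline{X}_w})$. The normalizations $c_{w,w}=c_{w,w}'=1$ and the vanishing outside $w'^{\max}\leq w^{\max}$ follow from Lemma \ref{mul0} and Lemma \ref{XwXwl}(1); the inequality $c_{w,w'}'\geq c_{w,w'}$ and the equivalence of positivity come from Lemma \ref{XwXwl}(2). The coset-invariance and $\lambda$-independence of $c_{w,w'}$ are then read off from part (1), via $c_{w,w'}=\sum_{w''}b_{w,w''}a_{w'',w'}$; coset-invariance of $c_{w,w'}'$ is clear since both $X_w$ and $Z_{w'}$ depend only on $\sW_{L_P}w$ and $\sW_{L_P}w'$.

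The main difficulty is (3). By Lemma \ref{XwXwl}(2), $Y^{\red}\simeq X_{w,\lambda}$, so it suffices to show $Y$ is reduced at $x$; base-changing along $\{0\}\hookrightarrow\fz_\lambda$ then yields $\widehat{\co}_{\overline{X}_w,x}\cong\widehat{\co}_{\overline{X}_{w,\lambda},x}$ and reduces (\ref{chaf00}) for $\overline{X}_w$ to the formula for $\overline{X}_{w,\lambda}$ from part (1). Since $x$ is a smooth point of $X_w$, the local ring $\co_{X_w,x}$ is regular, hence Cohen-Macaulay, and combined with equidimensionality of $\overline{X}_w$ of dimension $\dim X_w-\dim\fz_{L_P}$ and regularity of $\fz_{L_P}$, miracle flatness implies that $\kappa_{P,w}$ is flat at $x$; consequently $Y$ is cut out locally at $x$ by the regular sequence of $\dim\fz_{L_P}-1$ linear forms defining $\fz_\lambda\hookrightarrow\fz_{L_P}$, and is therefore Cohen-Macaulay at $x$. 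For the generic reducedness of $Y$, the generic point $\eta$ of the unique irreducible component $X_{w,\lambda}$ of $Y$ through $x$ lies in $V_{w,\lambda}$ with $\ur_P$-coordinate $a\lambda+\eta_0$ where $a\in E^\times$ and $\eta_0\in\fn_P$; since $\fz_{L_P}^{\sd}\subset\fz_{L_P}^{\reg}$ gives $a\lambda\in\fz_{L_P}^{\reg}$ and the $N_P$-orbit of $a\lambda$ is all of $a\lambda+\fn_P$ (using that $\ad(a\lambda)|_{\fn_P}$ is bijective), the isomorphism (\ref{P/LP}) shows $\eta\in X_P^{\reg-\sss}$, and Proposition \ref{genesmoo} then gives that $\kappa_{P,w}$, hence $Y$, is smooth at $\eta$. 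Combining Cohen-Macaulayness at $x$ with reducedness at the unique minimal prime through $x$ via Serre's criterion $(R_0)+(S_1)$ yields the required reducedness of $Y$ at $x$.
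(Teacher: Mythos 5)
Your proposal is correct and follows essentially the same route as the paper's proof: (1) by pushing (\ref{chaf01}) through (\ref{chaf02}), (2) by setting $c_{w,w'}=m(Z_{w'},\overline{X}_{w,\lambda})$ and $c'_{w,w'}=m(Z_{w'},\overline{X}_{w})$ and invoking Lemma \ref{XwXwl} (and Lemma \ref{mul0}), and (3) by showing $X_w\times_{\kappa_{P,w},\fz_{L_P}}\fz_{\lambda}$ is Cohen--Macaulay at $x$ and generically reduced (via the regular semisimple locus and Proposition \ref{genesmoo}), then concluding by Serre's criterion. Your use of miracle flatness plus a regular sequence to get Cohen--Macaulayness, and of the $N_P$-orbit computation to exhibit a regular semisimple point of $V_{w,\lambda}$, are only cosmetic variants of the paper's local-complete-intersection dimension count and its direct check that $X_P^{\reg-\sss}\cap V_{w,\lambda}\neq\emptyset$.
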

\begin{proof}
	(1) follows from (\ref{chaf01}) using (\ref{chaf02}). (2) follows from Lemma \ref{XwXwl} by putting $c_{w,w'}:=m(Z_{w'}, \overline{X}_{w,\lambda})$ and $c_{w,w'}':=m(Z_{w'}, \overline{X}_w)$. For (3), it suffices to show $\co_{\overline{X}_{w,\lambda},x}\cong \co_{\overline{X}_w, x}$ if $X_w$ is smooth at $x$. Using the description of $\overline{X}_{w,\lambda}$ and $\overline{X}_{w}$ in the proof of Lemma \ref{XwXwl}, we only need to show that $X_{w,\lambda}':=X_w \times_{\kappa_{P,w}, \fz_{L_P}} \fz_{\lambda}$ is reduced at the point $x$. It is easy to see that $X_{w,\lambda}$ is cut out by ($\dim \fz_{L_P}-\dim \fz_{\lambda}$)-equations in $X_w$. As $X_w$ is smooth at $x$ and $\dim X_{w,\lambda}=\dim X_w-(\dim \fz_{L_P}-\dim \fz_{\lambda})$, there exists an open neighbourhood $U \subset X_{w,\lambda}'$ of $x$ such that $U$ is a local complete intersection, hence Cohen-Macaulay. It is thus sufficient to show that $U$ is generically reduced (cf.\ \cite[Prop.\ 5.8.5]{EGAiv1}). By Proposition \ref{genesmoo}, $X_P^{\reg-\sss}$ is smooth. It is also Zariski-open and Zariski-dense in $X_P$. We deduce $X_P^{\reg-{\sss}}=\sqcup_{w'\in \sW/\sW_{L_P}} (X_P^{\reg-\sss} \cap X_{w'})$ where $\{X_P^{\reg-\sss} \cap X_{w'}\}$ are the irreducible components of $X_P^{\reg-\sss}$. In particular, we have that $X_P^{\reg-\sss} \cap X_w$ is Zariski-open and Zariski-dense in $X_w$. Moreover $\kappa_P: X_P^{\reg-\sss} \cap X_w \ra \fz_{L_P}$ is smooth by Proposition \ref{genesmoo}. We deduce that $X_{w,\lambda}^{\reg-\sss}:=(X_P^{\reg-\sss} \cap X_w) \times_{\kappa_{P}, \fz_{L_P}} \fz_{\lambda}$ is smooth and is Zariski-open in $X_{w,\lambda}'$. One easily checks $X_P^{\reg-\sss} \cap V_{w,\lambda} \neq \emptyset$ (e.g.\ using (\ref{fiber0})), so $X_P^{\reg-\sss}\cap V_{w,\lambda}$ is non-empty Zariski-open, hence Zariski-dense, in $V_{w,\lambda}$, which implies that $X_{w,\lambda}^{\reg-\sss}$ is Zariski-dense in $X_{w,\lambda}$. As $X_{w,\lambda}^{\reg-\sss}$ is smooth, we see that $U$ is generically smooth hence generically reduced. The lemma follows.
\end{proof}

\section{Local models for the Bernstein paraboline varieties}\label{secMod}

Generalizing \cite[\S~3]{BHS3}, we show that the geometry of the Bernstein paraboline varieties of \S~\ref{secDefva} is closely related to the schemes considered in \S~\ref{secGS}. We use the notation of \S~\ref{secDefva} and \S~\ref{secGS} applied to $G=\GL_n$. When applied to $G=(\Res^L_{\Q_p} \GL_n)\times_{\Spec \Q_p} \Spec E\simeq \prod_{\Sigma_L}\GL_n$, we modify the notation of \S~\ref{secGS} by adding a subscript ``$L$" to each scheme considered in \S~\ref{secGS} (to stress the field ``$L$" and to distinguish from the case $G=\GL_n$), for instance $\ug_L$ is $\ug \otimes_{\Q_p} L$, $X_{P,L}\cong \prod_{\tau\in \Sigma_L} X_P$ and we have $\tilde{\ug}_{P,L}$, etc.

\subsection{Almost de Rham $B_{\dR}$-representations}\label{sec61}

We define and study certain groupoids of deformations of an almost de Rham $B_{\dR}$-represen\-tation of $\Gal_L$. 

Let $B_{\pdR}^+$ be the algebra $B_{\dR}^+[\log(t)]$ defined in \cite[\S~4.3]{Fo04} and $B_{\pdR}=B_{\pdR}^+ \otimes_{B_{\dR}^+} B_{\dR}$. Recall $B_{\pdR}^+$ (resp.\ $B_{\pdR}$) is equipped with a natural action of $\Gal_L$ extending the usual $\Gal_L$-action on $B_{\dR}^+$ (resp.\ $B_{\dR}$) such that $g(\log(t))=\log(t)+\log(\chi_{\cyc}(g))$. Moreover there is a unique $B_{\dR}$-derivation $\nu_{B_{\pdR}}$ of $B_{\pdR}$ such that $\nu_{\pdR}(\log(t))=-1$. It is clear that $\nu_{B_{\pdR}}$ preserves $B_{\pdR}^+$ and commutes with $\Gal_L$. 

We fix an almost de Rham representation $W$ of $\Gal_L$ over $B_{\dR} \otimes_{\Q_p} E$, i.e.\ $W$ is a free $B_{\dR} \otimes_{\Q_p} E$-module equipped with a semi-linear $\Gal_L$-action such that $\dim_L (B_{\pdR} \otimes_{B_{\dR}} W)^{\Gal_L}=\dim_{B_{\dR}} W$ (cf.\ \cite[Thm.~4.1 (2)]{Fo04}). Let $P\subseteq \GL_n$ be a parabolic subgroup as in \S~\ref{Nota2.1}. Let $\cF_{\bullet}=(\cF_i)_{0\leq i \leq r}$ be a $P$-filtration on $W$, i.e.\ $0=\cF_0\subseteq \cF_1\subseteq \cdots \subseteq \cF_r$ are $B_{\dR} \otimes_{\Q_p} E$-subrepresentations of $W$ such that $\cF_i/\cF_{i-1}$ is free of rank $n_i$ over $B_{\dR} \otimes_{\Q_p} E$ for $1\leq i \leq r$. We assume that $\cF_i/\cF_{i-1}$ is de Rham for $1 \leq i \leq r$, so $\cF_i/\cF_{i-1}\cong (B_{\dR} \otimes_{\Q_p} E)^{\oplus n_i}$ as $\Gal_L$-representation.

For $A$ in $\Art(E)$, we call $B_{\dR} \otimes_{\Q_p} A$-representation of $\Gal_L$ a free $B_{\dR} \otimes_{\Q_p} A$-module of finite rank endowed with a semi-linear action of $\Gal_L$ (so $\Gal_L$ acts trivially on $A$). We define $X_{W, \cF_{\bullet}}$ as the following groupoid over $\Art(E)$:\index{$X_{W,\cF_{\bullet}}$}
\begin{enumerate}
	\item[(1)] The objects of $X_{W, \cF_{\bullet}}$ are the quadruples $(A, W_A,\cF_{A, \bullet}, \iota_A)$ where 
	\begin{itemize}
		\item $A\in \Art(E)$ and $W_A$ is a $B_{\dR} \otimes_{\Q_p} A$-representation of $\Gal_L$;
		\item $\cF_{A, \bullet}=(\cF_{A,i})_{0\leq i \leq r}$ is a $P$-filtration on $W_A$ by $B_{\dR} \otimes_{\Q_p} A$-subrepresentations of $\Gal_L$ such that $\cF_{A,0}=0$ and $\cF_{A,i}/\cF_{A,i-1}$, $1\leq i \leq r$, is free of rank $n_i$ over $B_{\dR} \otimes_{\Q_p} A$ and isomorphic to $(\cF_i/\cF_{i-1}) \otimes_{B_{\dR} \otimes_{\Q_p} E} \varepsilon_{A,i}$ for some rank one $B_{\dR} \otimes_{\Q_p} A$-representation $\varepsilon_{A,i}$;
		\item $\iota_A: W_A \otimes_A E \xrightarrow{\sim} W$ is an isomorphism of $B_{\dR} \otimes_{\Q_p} E$-representations which induces isomorphisms $\iota_A: \cF_{A,i} \otimes_{A} E \xrightarrow{\sim} \cF_i$ for $i\in \{0,\dots,r\}$.
	\end{itemize}
	\item[(2)] A morphism $(A, W_A, \cF_{A, \bullet}, \iota_A) \ra (A', W_{A'}, \cF_{A',\bullet}, \iota_{A'})$ is a morphism $A \ra A'$ in $\Art(E)$ and an isomorphism $W_{A} \otimes_A A' \xrightarrow{\sim} W_{A'}$ of $B_{\dR} \otimes_{\Q_p} A'$-representations which is compatible with $\iota_A$ and $\iota_{A'}$ and induces isomorphisms $\cF_{A, i} \otimes_A A' \xrightarrow{\sim} \cF_{A',i}$ for $i\in \{0,\dots,r\}$.
\end{enumerate}
Note that the rank one $\varepsilon_{A,i}$ in (1) and the $\cF_{A,i}$ are automatically almost de Rham since extensions of almost de Rham representations are always almost de Rham by \cite[\S~3.7]{Fo04}.

We fix an isomorphism of $L\otimes_{\Q_p} E$-modules:
\[\alpha: (L \otimes_{\Q_p} E)^n \xlongrightarrow{\sim} D_{\pdR}(W):=(B_{\pdR} \otimes_{B_{\dR}} W)^{\Gal_L},\]
and we let $X_W$, $X_{W}^{\square}$ be the groupoids over $\Art(E)$ defined as in \cite[\S~3.1]{BHS3} where ``$(-)^{\square}$" is with respect to $\alpha$. We have a natural morphism of groupoids $X_{W, \cF_{\bullet}} \ra X_W$ sending $(A,W_A, \cF_{A, \bullet}, \iota_A)$ to $(A,W_A, \iota_A)$. We put $X_{W, \cF_{\bullet}}^{\square}:=X_{W, \cF_{\bullet}}\times_{X_W} X_W^{\square}$.\index{$X_{W,\cF_{\bullet}}^{\square}$} The objects of $X_{W,\cF_{\bullet}}^{\square}$ are the $5$-tuples $(A,W_A, \cF_{A, \bullet}, \iota_A, \alpha_A)$ where $(A,W_A, \cF_{A, \bullet}, \iota_A)$ is an object in $X_{W, \cF_{\bullet}}$ and $\alpha_A$ is an isomorphism $\alpha_A: (L \otimes_{\Q_p} A)^n \xrightarrow{\sim} D_{\pdR}(W_A)$ such that $\alpha_A \equiv \alpha \pmod{\fm_A}$. A morphism 
$(A, W_A, \cF_{A, \bullet}, \iota_A, \alpha_A) \ra (A', W_{A'}, \cF_{A',\bullet}, \iota_{A'}, \alpha_{A'})$ is a morphism $(A, W_A, \cF_{A, \bullet}, \iota_A) \ra (A', W_{A'}, \cF_{A',\bullet}, \iota_{A'})$ in $X_{W, \cF_{\bullet}}$ such that the following diagram commutes
\begin{equation*}
	\begin{CD}
		(L \otimes_{\Q_p} A)^n \otimes_A A' @> \alpha_A \otimes 1 >> D_{\pdR}(W_A) \otimes_{A} A' \\
		@| @V \wr VV \\
		(L \otimes_{\Q_p} A')^n @> \alpha_{A'} >> D_{\pdR}(W_{A'}).
	\end{CD}
\end{equation*}
Let $(A, W_A, \cF_{A, \bullet}, \iota_A, \alpha_A)$ be an object in $X_{W, \cF_{\bullet}}^{\square}$. Recall that the $B_{\dR}$-derivation $\nu_{B_{\pdR}}$ on $B_{\pdR}$ induces an $L \otimes_{\Q_p} A$-linear nilpotent operator $\nu_{W_A}$ on $D_{\pdR}(W_A)$. We denote the matrix $\alpha_A^{-1} \circ \nu_{W_A} \circ \alpha_A$ by $N_{W_A}\in M_n(L\otimes_{\Q_p} A)=\ug_L(A)$. We let $\cD_{A, \bullet}:=(\cD_{A, i})_i$ with $\cD_{A,i}:=D_{\pdR}(\cF_{A,i})$. 

\begin{lemma}\label{lemrp}
	With the above notation $(\alpha_{A}^{-1}(\cD_{A, \bullet}), N_{W_A})\in \widetilde{\ug}_{P,L}(A)$.
\end{lemma}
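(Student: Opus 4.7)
The plan is to unwind the definitions and check the two conditions that define $\tilde{\ug}_{P,L}$: namely that $N_{W_A}$ preserves the flag $\alpha_A^{-1}(\cD_{A,\bullet})$ in $(L \otimes_{\Q_p} A)^n$, and that the induced operator on each graded piece is a scalar (i.e.\ lies in the factor $\fz_{L_P,L}$ of $\ur_{P,L} \cong \fn_{P,L} \rtimes \fz_{L_P,L}$). Concretely, if $g \in \GL_{n,L}(A)$ sends the standard $P$-filtration on $(L \otimes_{\Q_p} A)^n$ to $\alpha_A^{-1}(\cD_{A,\bullet})$, then I must verify $\Ad(g^{-1}) N_{W_A} \in \ur_{P,L}(A)$.

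First I would show that $\nu_{W_A}$ preserves each $\cD_{A,i} = D_{\pdR}(\cF_{A,i})$. Since $\cF_{A,i}$ is a $\Gal_L$-stable sub-$B_{\dR} \otimes_{\Q_p} A$-module of $W_A$, applying the functor $(B_{\pdR} \otimes_{B_{\dR}} -)^{\Gal_L}$ gives a natural inclusion $\cD_{A,i} \hookrightarrow D_{\pdR}(W_A)$. Because $\nu_{B_{\pdR}}$ is a $B_{\dR}$-derivation of $B_{\pdR}$ that commutes with the $\Gal_L$-action, the induced operator $\nu_{W_A}$ on $D_{\pdR}(W_A)$ restricts to each $\cD_{A,i}$. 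This gives the flag-preservation condition, i.e.\ the first component defines a point of $\GL_{n,L}/P_L(A)$ and $N_{W_A}$ lies in the stabilizer Lie algebra $\Ad(g)(\fp_L)$.

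Second, I would check the induced action on each graded piece $\gr_i \cD_{A,\bullet} \cong D_{\pdR}(\cF_{A,i}/\cF_{A,i-1})$. By hypothesis there is an isomorphism $\cF_{A,i}/\cF_{A,i-1} \cong (\cF_i/\cF_{i-1}) \otimes_{B_{\dR} \otimes_{\Q_p} E} \varepsilon_{A,i}$ of $B_{\dR} \otimes_{\Q_p} A$-representations. Since $D_{\pdR}$ commutes with tensor products of almost de Rham representations and the operator $\nu$ satisfies the Leibniz rule, we obtain
\[
\nu_{\cF_{A,i}/\cF_{A,i-1}} = \nu_{\cF_i/\cF_{i-1}} \otimes 1 + 1 \otimes \nu_{\varepsilon_{A,i}}.
\]
Now $\cF_i/\cF_{i-1}$ is de Rham by assumption, so $\nu_{\cF_i/\cF_{i-1}} = 0$. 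On the other hand $D_{\pdR}(\varepsilon_{A,i})$ is free of rank one over $L \otimes_{\Q_p} A$, so $\nu_{\varepsilon_{A,i}}$ acts as multiplication by a scalar $c_i \in L \otimes_{\Q_p} A$. Hence $\nu_{W_A}$ acts on $\gr_i \cD_{A,\bullet}$ as the scalar $c_i \cdot \mathrm{Id}$.

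Combining the two steps, in any $L \otimes_{\Q_p} A$-basis of $(L \otimes_{\Q_p} A)^n$ adapted to the flag $\alpha_A^{-1}(\cD_{A,\bullet})$, the matrix of $N_{W_A}$ is block upper-triangular with $i$-th diagonal block equal to $c_i \cdot \mathrm{Id}_{n_i}$; in other words $\Ad(g^{-1}) N_{W_A} \in \fn_{P,L}(A) \oplus \fz_{L_P,L}(A) = \ur_{P,L}(A)$, which is exactly the defining condition for $(\alpha_A^{-1}(\cD_{A,\bullet}), N_{W_A})$ to be an $A$-point of $\tilde{\ug}_{P,L}$. There is no serious obstacle here; the only mild subtlety is the careful identification of $D_{\pdR}$ applied to a tensor product together with the Leibniz rule for $\nu$, which then forces the scalar (rather than merely Levi) shape of the diagonal blocks.
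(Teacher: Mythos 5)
Your proof is correct and follows essentially the same route as the paper: stability of the flag $\cD_{A,\bullet}$ under $\nu_{W_A}$ (which the paper simply asserts and you justify by functoriality of the derivation $\nu_{B_{\pdR}}$), then the identification $\cD_{A,i}/\cD_{A,i-1}\cong D_{\dR}(\cF_i/\cF_{i-1})\otimes D_{\pdR}(\varepsilon_{A,i})$ and the Leibniz rule, with $\nu_{\cF_i/\cF_{i-1}}=0$ forcing the scalar action $\nu_{\varepsilon_{A,i}}\in L\otimes_{\Q_p}A$ on each graded piece. No substantive difference from the paper's argument.
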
 
\begin{proof}
	The $P$-filtration $\cD_{A, \bullet}$ is stable by $\nu_{W_A}$. It is then sufficient to show that the induced action of $\nu_{W_A}$ on $\cD_{A,i}/\cD_{A,i-1}$ is a scalar (in $L \otimes_{\Q_p} A$). Since the $\cF_{A,i}$ are almost de Rham and the $\cF_i/\cF_{i-1}$ are de Rham, we have an isomorphism of $E$-vector spaces $\cD_{A,i}/\cD_{A,i-1} \cong D_{\dR}(\cF_i/\cF_{i-1}) \otimes_{L\otimes_{\Q_p} E} D_{\pdR}(\varepsilon_{A,i})$ which is compatible with $\nu_{W_A}$. Hence $\nu_{W_A}$ is given on $\cD_{A,i}/\cD_{A,i-1}$ by $\nu_{\cF_i/\cF_{i-1}}\otimes \id + \id \otimes \nu_{\varepsilon_{A,i}}=\id \otimes \nu_{\varepsilon_{A,i}}$, which is the multiplication by the scalar $\nu_{\varepsilon_{A,i}}\in L \otimes_{\Q_p} A$.
\end{proof}

We denote by $\widehat{\tilde{\ug}}_{P,L}$ the completion of $\tilde{\ug}_{P,L}$ at the point $(\alpha^{-1} (\cD_{\bullet}), N_W)\in \tilde{\ug}_{P,L}(E)$, that we also see in the obvious way as a functor from $\Art(E)$ to sets. If $X$ is a groupoid over $\Art(E)$, we denote by $|X|$ the functor on $\Art(E)$ such that $|X|(A)$ is the set of isomorphism classes of the category $X(A)$ (see \cite[Appendix]{Kis09} and \cite[\S~3.1]{BHS3} for more details). The following proposition easily follows from \cite[Lemma 3.1.4]{BHS3}.

\begin{proposition}\label{XWF}
	The groupoid $X_{W, \cF_{\bullet}}^{\square}$ over $\Art(E)$ is pro-representable. The functor 
	\begin{equation*}
		(A, W_A, \cF_{A, \bullet}, \iota_A, \alpha_A) \longmapsto (\alpha_{A}^{-1}(\cD_{A, \bullet}), N_{W_A})
	\end{equation*}
	induces an isomorphism of functors between $|X_{W, \cF_{\bullet}}^{\square}|$ and $\widehat{\tilde{\ug}}_{P,L}$. In particular $|X_{W, \cF_{\bullet}}^{\square}|$ is pro-represented by a formally smooth noetherian complete local ring of residue field $E$ and dimension $[L:\Q_p](\dim \fn_P+\dim \ur_P)$.
\end{proposition}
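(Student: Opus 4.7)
The plan is to extend the (known) pro-representability of $X_W^\square$ to the filtered version by transporting everything to linear algebra via Fontaine's functor $D_{\pdR}$. By \cite[Lemma 3.1.4]{BHS3}, the functor $(A,W_A,\iota_A,\alpha_A)\mapsto \alpha_A^{-1}\circ \nu_{W_A}\circ \alpha_A$ identifies $|X_W^\square|$ with the completion $\widehat{\ug}_{L}$ of $\ug_L$ at $N_W$, and this reflects the fact that $D_{\pdR}$ induces an equivalence between almost de Rham $B_{\dR}\otimes_{\Q_p}A$-representations of $\Gal_L$ and pairs $(D,\nu)$ with $D$ a finite free $L\otimes_{\Q_p}A$-module and $\nu$ an $L\otimes_{\Q_p}A$-linear nilpotent endomorphism. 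The plan is to upgrade this equivalence to include $P$-filtrations on both sides.

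First, I would show that passing to $D_{\pdR}$ induces a bijection between $P$-filtrations $\cF_{A,\bullet}$ on $W_A$ of the type appearing in the definition of $X_{W,\cF_\bullet}$ (i.e.\ such that each $\cF_{A,i}/\cF_{A,i-1}$ is isomorphic to $(\cF_i/\cF_{i-1})\otimes_{B_{\dR}\otimes_{\Q_p}E}\varepsilon_{A,i}$ for some rank one almost de Rham $\varepsilon_{A,i}$) and $P$-filtrations $\cD_{A,\bullet}$ on $D_{\pdR}(W_A)$ preserved by $\nu_{W_A}$ such that $\nu_{W_A}$ acts on each graded piece $\cD_{A,i}/\cD_{A,i-1}$ by a scalar in $L\otimes_{\Q_p}A$. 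The direct implication is Lemma \ref{lemrp} (with the observation that $\nu$ on any rank one almost de Rham representation is given by a scalar); conversely, given such $\cD_{A,\bullet}$, the equivalence $D_{\pdR}$ applied stepwise to the successive quotients together with the scalar shape of $\nu$ on graded pieces allows us to reconstruct $\cF_{A,\bullet}$ and verify it has the required form. This part is purely an unwinding of the equivalence in families.

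Second, in the presence of the framing $\alpha_A$, the filtration $\cD_{A,\bullet}$ corresponds to a point $g_AP_L\in(G/P)_L(A)$ and the compatibility condition with $\nu_{W_A}$ translates into $\Ad(g_A^{-1})N_{W_A}\in \ur_{P,L}(A)$, i.e.\ an $A$-point of $\tilde{\ug}_{P,L}\simeq G_L\times^{P_L}\ur_{P,L}$ reducing to $(\alpha^{-1}(\cD_\bullet),N_W)$ over $E$. This defines the claimed functor $|X_{W,\cF_\bullet}^\square|\to\widehat{\tilde{\ug}}_{P,L}$, sitting over the isomorphism $|X_W^\square|\xrightarrow{\sim}\widehat{\ug}_L$. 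The fibres over a fixed underlying representation on both sides are matched by step one (on the rigid side, a $P$-filtration on $W_A$; on the geometric side, a flag in $G/P$ on which $N_{W_A}$ has $\ur_P$-shape), so the top arrow is also an isomorphism. Pro-representability and formal smoothness then follow immediately from the smoothness of $\tilde{\ug}_{P,L}$ (it is a vector bundle over $G_L/P_L$), and the dimension is $\dim(G_L/P_L)+\dim\ur_{P,L}=[L:\Q_p](\dim\fn_P+\dim\ur_P)$.

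The main obstacle I expect is step one: verifying in families over artinian coefficient rings the precise bijection between filtrations of the prescribed type on $W_A$ and $\nu_{W_A}$-stable $P$-filtrations with scalar graded action on $D_{\pdR}(W_A)$. The non-trivial direction requires constructing, from a flag on $D_{\pdR}(W_A)$, successive extensions of almost de Rham $B_{\dR}\otimes_{\Q_p}A$-representations whose graded pieces have the required form, which relies on the classification of rank one almost de Rham representations (and the de Rham hypothesis on the $\cF_i/\cF_{i-1}$) to identify the $\varepsilon_{A,i}$ correctly; the rest of the argument is then formal.
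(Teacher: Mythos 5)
Your proposal is correct and follows essentially the same route as the paper, whose proof simply invokes the equivalence of categories from \cite[Lemma 3.1.4]{BHS3} (almost de Rham representations versus pairs of a free $L\otimes_{\Q_p}A$-module with a nilpotent operator) and transports the $P$-filtration condition to the $\ur_P$-condition defining $\tilde{\ug}_{P,L}$, exactly as in your steps one and two, with Lemma \ref{lemrp} giving the forward direction. The smoothness and dimension count via $\tilde{\ug}_{P,L}$ being a vector bundle over $G_L/P_L$ is likewise the intended conclusion, so no further comment is needed.
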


\begin{remark}\label{equivgrfu}
	As discussed in \cite[\S~3.1]{BHS3}, the morphism $X_W^{\square} \ra |X_W^{\square}|$ is actually an equivalence. We then easily deduce $X_{W,\cF_{\bullet}}^{\square} \xrightarrow{\sim} |X_{W,\cF_{\bullet}}^{\square}|$.
\end{remark}

Since $\nu_{\cF_i/\cF_{i-1}}=0$, the morphism $\widetilde{\ug}_{P,L} \xrightarrow{\kappa_P} \fz_{L_P,L}$ (cf.\ \S~\ref{prel}) induces a morphism $\widehat{\tilde{\ug}}_{P,L} \xrightarrow{\kappa_P} \widehat{\fz}_{L_P,L}$ where $\widehat{\fz}_{L_P}$ denotes the completion of $\fz_{L_P}$ at $0$. Consider the following composition of groupoids over $\Art(E)$
\begin{equation}\label{kawF}
	\kappa_{W, \cF_\bullet}: X_{W, \cF_{\bullet}}^{\square} \lra | X_{W, \cF_{\bullet}}^{\square}| \xlongrightarrow{\sim} \widehat{\tilde{\ug}}_{P,L} \xlongrightarrow{\kappa_P} \widehat{\fz}_{L_P,L}.
\end{equation}
One checks that (\ref{kawF}) actually factors through
\[\kappa_{W,\cF_{\bullet}}: X_{W,\cF_{\bullet}} \lra \widehat{\fz}_{L_P,L}.\]
The morphism (\ref{kawF}) has the following functorial interpretation. Let $x_A:=(W_A, \cF_{A, \bullet}, \iota_A)\in X_{W, \cF_{\bullet}}$, then the endomorphism $\nu_{W_A}$ on $D_{\pdR}(W_A)$ induces an endomorphism $\nu_{A,i}$ on every $D_{\pdR}(\cF_{A,i})/D_{\pdR}(\cF_{A,i-1})\cong D_{\pdR}(\cF_{A,i}/\cF_{A,i-1})$. As in the proof of Lemma \ref{lemrp}, $\nu_{A,i}$ is a scalar in $L \otimes_{\Q_p} A$ which is $0$ modulo $\fm_A$. It follows that
\begin{equation}\label{pdRwt}
	\kappa_{W, \cF_\bullet}(x_A)=(\nu_{A,1}, \dots, \nu_{A,r})\in \widehat{\fz}_{L_P,L}(A).
\end{equation}

\subsection{$(\varphi, \Gamma)$-modules of type $\Omega$ over $\cR_{E,L}[\frac{1}{t}]$}\label{sec62}

We study certain groupoids of deformations of a $(\varphi, \Gamma)$-module over $\cR_{E,L}[\frac{1}{t}]$ equipped with an $\Omega$-filtration. 

Let $\Omega=\prod_{i=1}^r \Omega_i$ be a cuspidal component of $L_P(L)$. Let $A\in \Art(E)$ and $\cM$ be a $(\varphi, \Gamma)$-module of rank $n$ over $\cR_{A,L}[1/t]$. For $i=1,\dots, r$, let $x_i$ be a closed point of $\Spec \cZ_{\Omega_i}$ and let $\Delta_{x_i}$ be the associated $p$-adic differential equation. We call $\cM$ \textit{of type $\Omega$} if there exists a filtration $\cM_{\bullet}=(\cM_i)_{0\leq i \leq r}$ by $(\varphi, \Gamma)$-submodules of $\cM$ over $\cR_{A,L}[1/t]$ such that $\cM_0=0$ and $\cM_i/\cM_{i-1} \cong \Delta_{x_i} \otimes_{\cR_{E,L}} \cR_{A,L}(\delta_i)[\frac{1}{t}]$ for some continuous character $\delta_i: L^{\times} \ra A^{\times}$. Such a filtration $\cM_{\bullet}$ is called an \textit{$\Omega$-filtration} of $\cM$, and $(\ul{x}, \delta)=((x_i), \boxtimes_{i=1}^r\delta_i)\in (\Spec \cZ_{\Omega})^{\rig}\times \widehat{Z_{L_P}(L)}$ is called a parameter of $\cM_{\bullet}$ (compare with Definition \ref{defOF}). 

\begin{lemma}\label{paradif}
Let $\cM$ be a $(\varphi, \Gamma)$-module of type $\Omega$ over $\cR_{E,L}[1/t]$ and $\cM_{\bullet}$ an $\Omega$-filtration of $\cM$ of parameter $(\ul{x}, \delta)$. Then all parameters of $\cM_{\bullet}$ are of the form $(\ul{x}', \delta')$ such that, for $i=1, \dots, r$, $\Delta_{x_i'}\cong \Delta_{x_i} \otimes_{\cR_{E,L}} \cR_{E,L}(\psi_i)$ and $\delta_i'=\delta_i\psi_i^{-1} \eta_iz^{\textbf{k}}$ for some unramified character $\psi_i$ of $L^{\times}$, $\eta_i\in \mu_{\Omega_i}$ and $\textbf{k}\in \Z^{|\Sigma_L|}$.
\end{lemma}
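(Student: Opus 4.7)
The plan is to adapt the argument of Lemma~\ref{RmtOP} to the setting of $(\varphi,\Gamma)$-modules over $\cR_{E,L}[\frac{1}{t}]$ rather than $\cR_{E,L}$. The new feature, which accounts for the twist by $z^{\textbf{k}}$ appearing in the conclusion, is that the rank one $(\varphi,\Gamma)$-module $\cR_{E,L}(z^{\textbf{k}})[\frac{1}{t}]$ is canonically isomorphic to $\cR_{E,L}[\frac{1}{t}]$ for any $\textbf{k}\in \Z^{|\Sigma_L|}$, so the character $\delta_i$ appearing in a parameter is itself only well-defined up to a $z^{\textbf{k}}$-twist. Taking successive quotients in $\cM_\bullet$ from the two parameters produces, for each $i\in \{1,\dots,r\}$, an isomorphism of $(\varphi,\Gamma)$-modules over $\cR_{E,L}[\frac{1}{t}]$:
\[
\Delta_{x_i}\otimes_{\cR_{E,L}}\cR_{E,L}(\delta_i)\big[\tfrac{1}{t}\big] \;\cong\; \Delta_{x_i'}\otimes_{\cR_{E,L}}\cR_{E,L}(\delta_i')\big[\tfrac{1}{t}\big].
\]

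First I would compare Sen weights on the two sides. Since $\Delta_{x_i}$ and $\Delta_{x_i'}$ have constant Sen weight $0$, the Sen weights of each side (with multiplicity $n_i$) are $\wt(\delta_i)$ and $\wt(\delta_i')$ respectively. The Sen weights of a $(\varphi,\Gamma)$-module over $\cR_{E,L}[\frac{1}{t}]$ being well-defined only modulo $\Z^{|\Sigma_L|}$, this forces $\textbf{k}:=\wt(\delta_i)-\wt(\delta_i')\in \Z^{|\Sigma_L|}$. After replacing $\delta_i'$ by $\delta_i' z^{\textbf{k}}$ (which only modifies the chosen isomorphism by the canonical identification recalled above), one may assume that $\theta:=\delta_i(\delta_i')^{-1}$ is a smooth character of $L^\times$. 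The displayed isomorphism then becomes $\Delta_{x_i'}[\frac{1}{t}]\cong \Delta_{x_i}\otimes_{\cR_{E,L}}\cR_{E,L}(\theta)[\frac{1}{t}]$, with both sides $p$-adic differential equations (de Rham of constant Hodge--Tate weight $0$, hence with trivial Hodge filtration), so by Berger's equivalence \cite[Thm.~A]{Ber08a} the isomorphism lifts to one over $\cR_{E,L}$, yielding
\[
\Delta_{x_i'}\;\cong\; \Delta_{x_i}\otimes_{\cR_{E,L}}\cR_{E,L}(\theta),
\]
equivalently $\ttr_{x_i'}\cong \ttr_{x_i}\otimes_E \theta$ on the Weil--Deligne side.

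Next I would decompose $\theta=\eta\psi_i$ with $\eta\in \mu_{\Omega_i}$ and $\psi_i$ unramified. Writing $\theta=\theta^{\unr}\theta^0$ with $\theta^{\unr}$ unramified and $\theta^0(\varpi_L)=1$, the twist $\ttr_{x_i}\otimes \theta^{\unr}$ still lies in $\Omega_i$, say is equal to $\ttr_{x_i''}$ for some $x_i''\in (\Spec \cZ_{\Omega_i})^{\rig}$; then $\ttr_{x_i'}\cong \ttr_{x_i''}\otimes \theta^0$ with both sides in $\Omega_i$, which by the defining property (\ref{muOmega}) of $\mu_{\Omega_i}$ forces $\theta^0$ to lie in $\mu_{\Omega_i}\cdot \{\text{unramified characters of }L^\times\}$. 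Absorbing the resulting unramified factor into $\psi_i$ produces the desired decomposition $\theta=\eta\psi_i$, and hence $\Delta_{x_i'}\cong \Delta_{x_i}\otimes_{\cR_{E,L}}\cR_{E,L}(\psi_i)$. Reintroducing the twist absorbed in the first step then gives $\delta_i'=\delta_i\psi_i^{-1}\eta_i z^{-\textbf{k}}$ with $\eta_i:=\eta^{-1}\in \mu_{\Omega_i}$, which, up to the harmless relabelling $\textbf{k}\mapsto -\textbf{k}$, is the stated relation.

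The main obstacle I foresee is the Berger step: one has to lift an isomorphism after inverting $t$ between two $p$-adic differential equations to one of the underlying $(\varphi,\Gamma)$-modules over $\cR_{E,L}$. The key point is that Berger's equivalence, when restricted to $p$-adic differential equations, identifies objects with unfiltered Deligne--Fontaine modules (since the Hodge filtration is trivial in this case), and these are already recoverable from the $[\frac{1}{t}]$-localization. One should verify this carefully, for instance by forming saturations inside $\cR_{E,L}[\frac{1}{t}]$ and comparing Hodge--Tate weights in the manner of the proof of Lemma~\ref{RmtOP}.
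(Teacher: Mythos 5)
Your argument is correct and is essentially the paper's: the paper reduces the statement to the nonvanishing of $\Hom_{(\varphi,\Gamma)}\big(\Delta_{x_i}, t^{-N}\Delta_{x_i'}\otimes_{\cR_{E,L}}\cR_{E,L}(\delta_i'\delta_i^{-1})\big)$ for $N\gg 0$ and then invokes the argument of Lemma \ref{RmtOP}, which is precisely what you adapt (a Sen/Hodge--Tate weight comparison forcing $\delta_i(\delta_i')^{-1}$ to be locally algebraic, Berger's theorem to identify the differential equations up to a smooth twist, and the Bernstein-component bookkeeping yielding the unramified and $\mu_{\Omega_i}$ factors). Your repackaging of the weight step via Sen weights modulo $\Z^{|\Sigma_L|}$ of modules over $\cR_{E,L}[1/t]$, and of the Berger step as lifting an isomorphism of localizations, amounts to the same saturation-and-weight-comparison verification carried out there.
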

\begin{proof}
	We have $\Delta_{x_i} \otimes_{\cR_{E,L}} \cR_{E,L}(\delta_i)[1/t]\cong \Delta_{x_i'}\otimes_{\cR_{E,L}} \cR_{E,L}(\delta_i')[1/t]$ if and only if for sufficiently large $N$:
	\begin{equation*}
		\Hom_{(\varphi, \Gamma)}\big(\Delta_{x_i}, t^{-N} \Delta_{x_i'} \otimes_{\cR_{E,L}} \cR_{E,L}(\delta_i'\delta_i^{-1})\big)\neq 0
	\end{equation*}
	The lemma then follows by the same argument as in the proof of Lemma \ref{RmtOP}. 
\end{proof}

We now fix a $(\varphi, \Gamma)$-module $\cM$ over $\cR_{E,L}[\frac{1}{t}]$ of type $\Omega$ and an $\Omega$-filtration $\cM_{\bullet}$. We define the groupoid $X_{\cM, \cM_{\bullet}}$ over $\Art(E)$ as follows:
\begin{itemize}
	\item[(1)] The objects of $X_{\cM, \cM_{\bullet}}$ are the quadruples $(A, \cM_A, \cM_{A,\bullet}, j_A)$ where $A\in \Art(E)$, $\cM_A$ is a $(\varphi, \Gamma)$-module over $\cR_{A,L}[\frac{1}{t}]$ of type $\Omega$, $\cM_{A, \bullet}$ is an $\Omega$-filtration of $\cM_A$, and $j_A$ is an isomorphism $\cM_A \otimes_A E \xrightarrow{\sim} \cM$ which induces isomorphisms $\cM_{A,i}\otimes_A E \xrightarrow{\sim} \cM_i$.
	\item[(2)] A morphism $(A, \cM_A, \cM_{A,\bullet}, j_A) \ra (A', \cM_{A'}, \cM_{A', \bullet}, j_{A'})$ is a morphism $A \ra A'$ in $\Art(E)$ and an isomorphism $\cM_A \otimes_A A' \xrightarrow{\sim} \cM_{A'}$ which is compatible with the morphisms $j_A$, $j_{A'}$ and induces isomorphisms $\cM_{A,i} \otimes_A A' \xrightarrow{\sim} \cM_{A',i}$ for $i=1, \dots, r$. 
\end{itemize}

\begin{lemma}\label{parauni}
	Let $(\ul{x}, \delta)$ be a parameter of $\cM_{\bullet}$ and $(A,\cM_A, \cM_{A,\bullet}, j_A)\in X_{\cM, \cM_{\bullet}}$. There exists a unique character $\delta_A=\boxtimes_{i=1}^r \delta_{A,i}: Z_{L_P}(L)\ra A^{\times}$ such that $\delta_{A,i}\equiv \delta_i \pmod{\fm_A}$ and $(\ul{x}, \delta_A)$ is a parameter of $\cM_{A, \bullet}$.
\end{lemma}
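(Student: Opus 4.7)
The plan is to decompose the problem into each graded piece $\cQ_{A,i}:=\cM_{A,i}/\cM_{A,i-1}$ separately, since $\delta_A=\boxtimes_{i=1}^r\delta_{A,i}$ is determined coordinate-wise. For each $i$ we must produce a unique continuous character $\delta_{A,i}$ with $\delta_{A,i}\equiv\delta_i\pmod{\fm_A}$ and $\cQ_{A,i}\cong\Delta_{x_i}\otimes_{\cR_{E,L}}\cR_{A,L}(\delta_{A,i})[\frac{1}{t}]$.

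For existence, I would begin from the hypothesis that $\cM_{A,\bullet}$ is an $\Omega$-filtration: by definition there exist a closed $E$-point $x_i'\in\Spec\cZ_{\Omega_i}$ and a character $\delta_i':L^\times\to A^\times$ with $\cQ_{A,i}\cong\Delta_{x_i'}\otimes_{\cR_{E,L}}\cR_{A,L}(\delta_i')[\frac{1}{t}]$. Reducing modulo $\fm_A$ gives a second parameter $(x_i',\delta_i'\!\!\mod\fm_A)$ for the (trivially filtered) object $\cM_i/\cM_{i-1}$, so by Lemma \ref{paradif} there exist an unramified $\psi_i$, an element $\eta_i\in\mu_{\Omega_i}$, and $\textbf{k}_i\in\Z^{|\Sigma_L|}$ with $\Delta_{x_i'}\cong\Delta_{x_i}\otimes_{\cR_{E,L}}\cR_{E,L}(\psi_i)$ and $\delta_i'\!\!\mod\fm_A=\delta_i\psi_i^{-1}\eta_i z^{\textbf{k}_i}$. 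I then propose to transport these three identifications to $\cR_{A,L}[\frac{1}{t}]$: (i) base-changing the $E$-isomorphism $\Delta_{x_i'}\cong\Delta_{x_i}\otimes\cR_{E,L}(\psi_i)$ along $\cR_{E,L}\to\cR_{A,L}$, (ii) using the $E$-automorphism $\Delta_{x_i}\cong\Delta_{x_i}\otimes\cR_{E,L}(\eta_i)$ that $\eta_i\in\mu_{\Omega_i}$ provides via Fontaine--Berger, and (iii) trivializing $\cR_{E,L}(z^{\textbf{k}_i})[\frac{1}{t}]\cong\cR_{E,L}[\frac{1}{t}]$ through the element $\prod_\tau t_\tau^{k_{i,\tau}}$. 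Composing these yields $\cQ_{A,i}\cong\Delta_{x_i}\otimes_{\cR_{E,L}}\cR_{A,L}(\delta_{A,i})[\frac{1}{t}]$ with $\delta_{A,i}:=\delta_i'\psi_i\eta_i^{-1}z^{-\textbf{k}_i}$, which reduces to $\delta_i$ modulo $\fm_A$ by construction.

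For uniqueness, suppose $\delta_{A,i}$ and $\delta_{A,i}'$ both work. Setting $\eta:=\delta_{A,i}'\delta_{A,i}^{-1}$ I obtain an isomorphism $\Delta_{x_i}[\frac{1}{t}]\cong\Delta_{x_i}\otimes_{\cR_{E,L}}\cR_{A,L}(\eta)[\frac{1}{t}]$ with $\eta\equiv 1\pmod{\fm_A}$. Taking determinants reduces to a rank-one statement, namely $\cR_{A,L}(\eta^{n_i})[\frac{1}{t}]\cong\cR_{A,L}[\frac{1}{t}]$. I would then invoke the classification of rank-one $(\varphi,\Gamma)$-modules (\cite[Prop.\ 2.3.1]{BCh}) together with the fact that among characters $\chi:L^\times\to A^\times$, the relation $\cR_{A,L}(\chi)[\frac{1}{t}]\cong\cR_{A,L}[\frac{1}{t}]$ holds precisely for $\chi=z^{\textbf{k}}$ with $\textbf{k}\in\Z^{|\Sigma_L|}$: hence $\eta^{n_i}=z^{\textbf{k}}$. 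Reducing mod $\fm_A$ forces $\textbf{k}=0$, so $\eta^{n_i}=1$; since $\eta$ factors through $1+\fm_A$, a uniquely $n_i$-divisible group (as $n_i\in\Q^\times\subset A^\times$), this forces $\eta=1$.

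The main obstacle is the proper handling of the localization at $t$. In the untwisted setting (Lemma \ref{subf}), the Hodge--Tate weight comparison forced the character equality directly, but after inverting $t$ this information is partly lost. The determinant reduction isolates the key rank-one input, for which I expect the trickiest point to be justifying that the only rank-one character twists which trivialize $\cR_{A,L}[\frac{1}{t}]$ are the algebraic ones $z^{\textbf{k}}$; this reflects the fact that $t$ is a uniformizer for the Sen weights and its inversion destroys exactly that piece of the character, which is precisely why $\eta\equiv 1\pmod{\fm_A}$ can then be pinned down to the trivial character.
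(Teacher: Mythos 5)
Your existence argument is essentially the paper's. Both compare the reduction mod $\fm_A$ of an arbitrary parameter of $\cM_{A,\bullet}$ with the given $(\ul{x},\delta)$ (the paper redoes the computation of Lemma \ref{RmtOP} inside the proof, you quote Lemma \ref{paradif}, which also handles the change of point $x_i'$ explicitly), and then absorb the discrepancy---a character in $\mu_{\Omega_i}$, an unramified twist, and an algebraic twist $z^{\textbf{k}_i}$---into the isomorphism, using that the first two become automorphisms of $\Delta_{x_i}$ and the last dies after inverting $t$. For uniqueness you take a genuinely different route. The paper works with the adjoint module: it splits $\Delta_{x_i}\otimes_{\cR_{E,L}}\Delta_{x_i}^{\vee}\cong \cR_{E,L}\oplus(\Delta_{x_i}\otimes_{\cR_{E,L}}\Delta_{x_i}^{\vee})^0$, kills the traceless part after inverting $t$ by a d\'evissage on $A$, deduces $H^0_{(\varphi,\Gamma)}\big(\cR_{A,L}(\delta_{A,i}^{-1}\delta_{A,i}')[\frac{1}{t}]\big)\neq 0$, and concludes by \cite[Lemma 3.3.4]{BHS3} together with the congruence mod $\fm_A$. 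You instead pass to determinants, reducing at once to $\cR_{A,L}(\eta^{n_i})[\frac{1}{t}]\cong\cR_{A,L}[\frac{1}{t}]$, and recover $\eta=1$ from $\eta^{n_i}=1$ via torsion-freeness of $1+\fm_A$ in characteristic zero; this is valid and avoids both the d\'evissage and the vanishing input for the traceless part, at the cost of the extra (harmless) divisibility step. Two caveats: the left-hand side of your isomorphism should be $\Delta_{x_i}\otimes_{\cR_{E,L}}\cR_{A,L}[\frac{1}{t}]$ rather than $\Delta_{x_i}[\frac{1}{t}]$; and the rank-one fact you rightly single out as the crux is not covered by \cite[Prop.\ 2.3.1]{BCh}, which concerns $\cR_{A,L}$ before inverting $t$---the statement you need, that $H^0_{(\varphi,\Gamma)}(\cR_{A,L}(\chi)[\frac{1}{t}])\neq 0$ forces $\chi=z^{\textbf{k}}$ with $\textbf{k}\in\Z^{|\Sigma_L|}$, is precisely \cite[Lemma 3.3.4]{BHS3}, which is the same input the paper's proof ends with. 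So both arguments ultimately rest on the same rank-one classification over $\cR_{A,L}[\frac{1}{t}]$; yours reaches it by an exterior-power shortcut, the paper's by an adjoint-module d\'evissage applied directly to the character $\delta_{A,i}^{-1}\delta_{A,i}'$ itself.
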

\begin{proof}
	Let $\delta_A=\boxtimes_{i=1}^r \delta_{A,i}: Z_{L_P}(L) \ra A^{\times}$ be a continuous character such that $\cM_{A,i}/\cM_{A,i-1}\cong \Delta_{x_i}\otimes_{\cR_{E,L}} \cR_{A,L}(\delta_{A,i})[\frac{1}{t}]$. Denote by $\overline{\delta}_{A,i}: \delta_{A,i} \ra A^{\times} \ra E^{\times}$ the reduction of $\delta_{A,i}$ modulo $\fm_A$. We have $\Delta_{x_i}\otimes_{\cR_{E,L}} \cR_{E,L}(\overline{\delta}_{A,i})[\frac{1}{t}] \cong \Delta_{x_i}\otimes_{\cR_{E,L}} \cR_{E,L}(\delta_{i})[\frac{1}{t}]$. It follows that 
	\begin{equation*}
		\Hom_{(\varphi, \Gamma)}\big(\Delta_{x_i}, t^{-m}\Delta_{x_i}	\otimes_{\cR_{E,L}} \cR_{E,L}(\delta_i^{-1} \overline{\delta}_{A,i}) \big)\!\cong \!H^0_{(\varphi, \Gamma)}\big(t^{-m}\Delta_{x_i} \otimes_{\cR_{E,L}} \Delta_{x_i}^{\vee} \otimes_{\cR_{E,L}} \cR_{E,L}(\delta_i^{-1} \overline{\delta}_{A,i})\big)\!\cong \!E
	\end{equation*}
	for $m\gg 0$. By the same argument as in the proof of Lemma \ref{RmtOP}, we see that there exists an algebraic character $\chi_i$ of $L^{\times}$ and $\psi_i \in \eta_{\Omega_i}$ such that $\overline{\delta}_{A,i}=\delta_i \psi_i \chi_i$. Since $\Delta_{x_i}[\frac{1}{t}] \cong \Delta_{x_i}\otimes_{\cR_{E,L}} \cR_{E,L}(\psi_i^{-1} \chi_i^{-1}) [\frac{1}{t}]$, replacing $\delta_{A,i}$ by $\delta_{A,i} \psi_i^{-1} \chi_i^{-1}$, the existence in the lemma follows.
	
	Assume $\delta_A$, $\delta_A'$ are two characters satisfying the properties in the lemma. We have an injection
	\begin{equation*}
		A \hookrightarrow H^0_{(\varphi, \Gamma)}\Big(\Delta_{x_i} \otimes_{\cR_{E,L}} \Delta_{x_i}^{\vee} \otimes_{\cR_{E,L}} \cR_{A,L}(\delta_{A,i}^{-1} \delta_{A,i}')\Big[\frac{1}{t}\Big]\Big),
	\end{equation*}
	where by definition $H^0_{(\varphi, \Gamma)}(\cN):=\Hom_{(\varphi, \Gamma)}(\cR_A[1/t], \cN)$ for a $(\varphi, \Gamma)$-module $\cN$ over $\cR_{A,L}[1/t]$ (see \cite[\S~3.3]{BHS3} for the cohomology of $(\varphi, \Gamma)$-modules over $\cR_{E,L}[1/t]$). We write $\Delta_{x_i} \otimes_{\cR_{E,L}} \Delta_{x_i}^{\vee} \cong \cR_{E,L} \oplus (\Delta_{x_i} \otimes_{\cR_{E,L}} \Delta_{x_i}^{\vee})^0$, and we have $H^0_{(\varphi, \Gamma)}((\Delta_{x_i} \otimes_{\cR_{E,L}} \Delta_{x_i}^{\vee})^0[\frac{1}{t}])=0$ (using \cite[(3.11)]{BHS3}). By an easy d\'evissage on $A$, we deduce that 
	\begin{equation*}
		H^0_{(\varphi, \Gamma)}\Big(\Delta_{x_i} \otimes_{\cR_{E,L}} \Delta_{x_i}^{\vee} \otimes_{\cR_{E,L}} \cR_{A,L}(\delta_{A,i}^{-1} \delta_{A,i}')\Big[\frac{1}{t}\Big]\Big)\cong H^0_{(\varphi, \Gamma)}\Big(\cR_{A,L}(\delta_{A,i}^{-1} \delta_{A,i}')\Big[\frac{1}{t}\Big]\Big).
	\end{equation*}
	By \cite[Lemma 3.3.4]{BHS3} and $\delta_{A,i} \equiv \delta'_{A,i}\pmod{\fm_A}$, we must have $\delta_{A,i}=\delta_{A,i}'$.
\end{proof}

Let $\delta$ be a continuous character of $Z_{L_P}(L)$, that we also view as a point of $\widehat{Z_{L_P}(L)}$. We denote by $\widehat{Z_{L_P}(L)}_{\delta}$ the completion of $\widehat{Z_{L_P}(L)}$ at $\delta$.\index{$\widehat{Z_{L_P}(L)}_{\delta}$} It is easy to see that the functor
\[A\in \Art(E) \longmapsto \{\delta_A= \boxtimes_{i=1}^r\delta_{A,i}: Z_{L_P}(L) \ra A^{\times}, \ \delta_{A,i} \equiv \delta_i \pmod{\fm_A}\}\]
is pro-represented by $\widehat{Z_{L_P}(L)}_{\delta}$. By Lemma \ref{parauni}, we have a morphism of groupoids over $\Art(E)$:
\[\omega_{\delta}: X_{\cM, \cM_{\bullet}} \lra \widehat{Z_{L_P}(L)}_{\delta},\ \ (A, \cM_A, \cM_{A,\bullet}, j_A)\longmapsto \delta_A\index{$X_{\cM,\cM_{\bullet}}$}.\]

Recall we have a functor $W_{\dR}$ from the category of $(\varphi, \Gamma)$-module over $\cR_{E,L}[\frac{1}{t}]$ to the category of $B_{\dR} \otimes_{\Q_p} E$-representations of $\Gal_L$ (cf.\ \cite[Lemma 3.3.5 (ii)]{BHS3}). Moreover, by \textit{loc.\ cit.}, for $A\in \Art(E)$, $W_{\dR}$ sends a $(\varphi, \Gamma)$-module of rank $n$ over $\cR_{A,L}[\frac{1}{t}]$ to a $B_{\dR} \otimes_{\Q_p} A$-representation of $\Gal_L$ of rank $n$. Let $W:=W_{\dR}(\cM)$ and $\cF_i:=W_{\dR}(\cM_i)$. Assume that for one parameter (equivalently all parameters) $(\ul{x}, \delta)$ of $\cM_{\bullet}$, we have that $\delta_i$ is locally algebraic for all $i$. Then it is easy to see that $W_{\dR}(\cM_i/\cM_{i-1})\cong W_{\dR}(\cM_i)/W_{\dR}(\cM_{i-1})$ is de Rham \big(hence $\cong (B_{\dR} \otimes_{\Q_p} E)^{\oplus n_i}$\big). For $(A, \cM_A, \cM_{A,\bullet}, j_A)\in X_{\cM, \cM_{\bullet}}$, let $W_A:=W_{\dR}(\cM_A)$ and $\cF_{\bullet,i}:=W_{\dR}(\cM_{A,i})$. If $(\ul{x}, \delta_A)$ is a parameter of $\cM_{A, \bullet}$, then we have 
\begin{equation}\label{wdri}
	\cF_{A,i}/\cF_{A,i-1}\cong W_{\dR}(\cM_{A,i}/\cM_{A,i-1}) \cong W_{\dR}\Big(\Delta_{x_i}\Big[\frac{1}{t}\Big]\Big) \otimes_{B_{\dR} \otimes_{\Q_p} E} W_{\dR}\Big(\cR_{A,L}(\delta_{A,i})\Big[\frac{1}{t}\Big]\Big),
\end{equation}
where, for the last isomorphism, we use \cite[Prop.\ 2.2.6 (2)]{Ber08II} and the fact that $W_{\dR}(D[\frac{1}{t}])=W_{\dR}^+(D)[\frac{1}{t}]$ for a $(\varphi, \Gamma)$-module $D$ over $\cR_{E,L}$.
Let $\iota_A$ be the composition 
\begin{equation*}
	\iota_A: W_{\dR}(\cM_A) \otimes_A E \xlongrightarrow{\sim} W_{\dR}(\cM_A \otimes_A E) \xlongrightarrow{\sim} W_{\dR}(\cM)
\end{equation*}
where the last isomorphism is induced by $j_A$. By (\ref{wdri}), we see that $(W_A, \cF_{A, \bullet}, \iota_A)\in X_{W, \cF_{\bullet}}$, so $W_{\dR}$ defines a morphism $X_{\cM, \cM^{\bullet}} \ra X_{W, \cF_{\bullet}}$. Let $X_{\cM}$ be the groupoid over $\Art(E)$ defined as $X_{\cM, \cM_{\bullet}}$ but forgetting everywhere the $\Omega$-filtrations. It is easy to see that $W_{\dR}$ defines a morphism of groupoids $X_{\cM} \ra X_W$, and that the following diagram of groupoids commutes
\begin{equation*}
	\begin{CD}
		X_{\cM, \cM_{\bullet}} @> W_{\dR} >> X_{W, \cF_{\bullet}} \\
		@VVV @VVV \\
		X_{\cM} @> W_{\dR} >> X_W.
	\end{CD}
\end{equation*}
We fix an isomorphism $\alpha: (L\otimes_{\Q_p} E)^n \xrightarrow{\sim} D_{\pdR}(W)$, so we have the groupoids $X_W^{\square}$, $X_{W, \cF_\bullet}^{\square}$ over $\Art(E)$ (cf.\ \S~\ref{sec61}). We put\index{$X_{\cM,\cM_{\bullet}}$}
\begin{equation}\label{moreX}
\begin{gathered}
\begin{array}{ccl}
	&& X_{\cM}^{\square}:=X_{\cM} \times_{X_W} X_W^{\square}, \\
	&& X_{\cM, \cM_{\bullet}}^{\square}:=X_{\cM, \cM_{\bullet}} \times_{X_{W, \cF_\bullet}} X_{W, \cF_\bullet}^{\square} \cong X_{\cM, \cM_{\bullet}} \times_{X_W} X_{W}^{\square}
\end{array}
\end{gathered}
\end{equation}
and note that $X_{\cM}^{\square}\ra X_{\cM}$, $X_{\cM, \cM_{\bullet}}^{\square}\ra X_{\cM, \cM_{\bullet}}$ are formally smooth of relative dimension $[L:\Q_p]n^2$ by base change. We fix a parameter $(\ul{x}, \delta)$ of $\cM_{\bullet}$. For $A\in \Art(E)$, the natural map 
\begin{equation*}
	\delta_A=\boxtimes_{i=1}^r\delta_{A,i} \longmapsto (\wt(\delta_{A,i})-\wt(\delta_i)) \in (L \otimes_{\Q_p} A)^r \cong \widehat{\fz}_{L_P,L}(A)
\end{equation*}
induces a morphism of formal schemes $\wt-\wt(\delta): \widehat{Z_{L_P}(L)}_{\delta} \ra \widehat{\fz}_{L_P,L}$.

\begin{proposition}\label{wtmap1}
	The following diagram of groupoids over $\Art(E)$ is commutative:
	\begin{equation*}
		\begin{CD}
			X_{\cM, \cM_{\bullet}} @> W_{\dR} >> X_{W, \cF_{\bullet}} \\
			@V \omega_{\delta} VV @V \kappa_{W,\cF_{\bullet}} VV \\
			\widehat{Z_{L_P}(L)}_{\delta} @> \wt-\wt(\delta) >> \widehat{\fz}_{L_P,L}.
		\end{CD}
	\end{equation*}
\end{proposition}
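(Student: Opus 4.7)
The plan is to compute both routes of the diagram on an object $x_A=(A,\cM_A,\cM_{A,\bullet},j_A)$ of $X_{\cM,\cM_\bullet}(A)$. Going down-then-right: by Lemma \ref{parauni} there is a unique parameter $\delta_A=\boxtimes_{i=1}^r\delta_{A,i}$ of $\cM_{A,\bullet}$ lifting $\delta$, so $\omega_\delta(x_A)=\delta_A$ and $(\wt-\wt(\delta))(\delta_A)=(\wt(\delta_{A,i})-\wt(\delta_i))_i$ in $\widehat{\fz}_{L_P,L}(A)\cong(L\otimes_{\Q_p}A)^r$. Going right-then-down: $W_{\dR}(x_A)=(W_A,\cF_{A,\bullet},\iota_A)$ with $\cF_{A,i}:=W_{\dR}(\cM_{A,i})$, and by (\ref{pdRwt}) its image under $\kappa_{W,\cF_\bullet}$ is $(\nu_{A,i})_i$, where $\nu_{A,i}\in L\otimes_{\Q_p}A$ is the scalar by which $\nu_{W_A}$ acts on the graded piece $D_{\pdR}(\cF_{A,i})/D_{\pdR}(\cF_{A,i-1})\cong D_{\pdR}(\cF_{A,i}/\cF_{A,i-1})$ (as in the proof of Lemma \ref{lemrp}). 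It is therefore enough to establish the identity
\[
\nu_{A,i}=\wt(\delta_{A,i})-\wt(\delta_i)\quad\text{in }L\otimes_{\Q_p}A,\qquad i=1,\dots,r.
\]

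To compute $\nu_{A,i}$, I would exploit the canonical factorization $\cM_{A,i}/\cM_{A,i-1}\cong \Delta_{x_i}\otimes_{\cR_{E,L}}\cR_{A,L}(\delta_{A,i})[\tfrac{1}{t}]$ coming from the definition of parameter. Applying $W_{\dR}$ and using the compatibility of $W_{\dR}$ with tensor products together with $W_{\dR}(D[\tfrac{1}{t}])=W_{\dR}^+(D)[\tfrac{1}{t}]$ (cf.\ \cite[Prop.\ 2.2.6]{Ber08II}), one gets
\[
\cF_{A,i}/\cF_{A,i-1}\;\cong\;W_{\dR}\!\left(\Delta_{x_i}[\tfrac{1}{t}]\right)\otimes_{B_{\dR}\otimes_{\Q_p}E}W_{\dR}\!\left(\cR_{A,L}(\delta_{A,i})[\tfrac{1}{t}]\right).
\]
Taking $D_{\pdR}$ and using that $\nu_{W_A}$ is induced by the $B_{\dR}$-derivation $\nu_{B_{\pdR}}$ and so acts as a derivation on tensor products, we have $\nu_{W_A}=\nu_1\otimes 1+1\otimes\nu_2$ on the right-hand side. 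Since $\Delta_{x_i}$ is de Rham of constant Hodge-Tate weight $0$, the factor $W_{\dR}(\Delta_{x_i}[\tfrac{1}{t}])$ is de Rham, hence $D_{\pdR}=D_{\dR}$ there and $\nu_1=0$. Thus $\nu_{A,i}=\nu_2$ equals the scalar by which $\nu$ acts on $D_{\pdR}(W_{\dR}(\cR_{A,L}(\delta_{A,i})[\tfrac{1}{t}]))$, compatibly with the trivialization induced by $j_A$, under which the reduction mod $\fm_A$ corresponds to $\cR_{E,L}(\delta_i)$ (whose $W_{\dR}$ is de Rham, so carries $\nu=0$, since $\delta_i$ is locally algebraic by our standing assumption).

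The main obstacle, and the crux of the argument, is the computation of $\nu$ on the rank-one piece for the deformation $\delta_{A,i}$ of $\delta_i$. Writing $\chi_A:=\delta_{A,i}\delta_i^{-1}:L^\times\to A^\times$, which satisfies $\chi_A\equiv 1\pmod{\fm_A}$, and using the tensor splitting $\cR_{A,L}(\delta_{A,i})\cong\cR_{A,L}(\delta_i)\otimes_{\cR_{A,L}}\cR_{A,L}(\chi_A)$ together with a second application of the Leibniz rule, the claim reduces to showing that $\nu$ acts on $D_{\pdR}(W_{\dR}(\cR_{A,L}(\chi_A)[\tfrac{1}{t}]))$ by the scalar $\wt(\chi_A)\in L\otimes_{\Q_p}\fm_A$. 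This can be verified by an explicit computation on a canonical generator $e_{\chi_A}$ of $\cR_{A,L}(\chi_A)$: since $\chi_A\equiv 1\pmod{\fm_A}$, the logarithm $\log\chi_A(\gamma)\in L\otimes_{\Q_p}\fm_A$ is defined for $\gamma\in\Gamma$ close to $1$, and the identification $W_{\dR}(\cR_{A,L}(\chi_A)[\tfrac{1}{t}])\cong (B_{\dR}\otimes_{\Q_p}A)\cdot e_{\chi_A}$ shows that on a suitable basis of $D_{\pdR}$ of the form $t^{-\wt(\chi_A)}e_{\chi_A}$ (interpreted via $\log t$ for the non-integral part of $\wt(\chi_A)$), the derivation $\nu_{B_{\pdR}}$ acts by the scalar $\wt(\chi_A)$. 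Substituting $\wt(\chi_A)=\wt(\delta_{A,i})-\wt(\delta_i)$ yields the required identity and concludes the proof.
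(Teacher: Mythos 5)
Your argument is correct and follows essentially the same route as the paper: both use (\ref{pdRwt}) to reduce the commutativity to the identity $\nu_{A,i}=\wt(\delta_{A,i})-\wt(\delta_i)$, established by splitting off the de Rham factor $\Delta_{x_i}$ (whose $D_{\pdR}$ carries $\nu=0$) and computing the nilpotent operator on the remaining rank-one twist. The only difference is that the paper simply cites \cite[Lemma 3.3.6 (ii)]{BHS3} (noting its trivial extension to $\cR_{A,L}(\delta)[\frac{1}{t}]^{\oplus m}$) for exactly the rank-one computation that you carry out explicitly via the invariant element $\exp(-\wt(\chi_A)\log t)\,e_{\chi_A}$ and the convention $\nu_{B_{\pdR}}(\log t)=-1$.
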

\begin{proof}
	The proposition follows from (\ref{pdRwt}) and \cite[Lemma 3.3.6 (ii)]{BHS3} (which trivially generalizes to the case $\cM=\cR_{A,L}(\delta)[\frac{1}{t}]^{\oplus m}$).
\end{proof}

We call a parameter $(\ul{x}, \delta)$ of $\cM_{\bullet}$ \textit{generic} if the following condition is satisfied:
\begin{equation}\label{geneMf}
\begin{gathered}
\begin{array}{lll}
	&&\text{for $i \neq j$, if $\Delta_{x_i} \cong \Delta_{x_j} \otimes_{\cR_{E,L}} \cR_{E,L}(\psi)$ for some smooth character $\psi$ of $L^{\times}$, then}\\
	&&\text{$\delta_i \delta_{j}^{-1} \psi \neq z^{\textbf{k}}$ and $\delta_i \delta_{j}^{-1} \psi \neq\unr(q_L^{-1}) z^{\textbf{k}}$ for any $\textbf{k} \in \Z^{|\Sigma_L|}$.}
\end{array}
\end{gathered}
\end{equation}

By Lemma \ref{paradif}, if $\cM_{\bullet}$ admits a generic parameter, then any parameter of $\cM_{\bullet}$ is generic, and in this case we call $\cM_{\bullet}$ (or even $\cM$ if $\cM_{\bullet}$ is understated) \textit{generic}.

\begin{remark}\label{geneM1t}
(1) Let $D$ be a $(\varphi, \Gamma)$-module of rank $n$ over $\cR_{E,L}$ such that $D[\frac{1}{t}]\cong \cM$. The $\Omega$-filtration $\cM_{\bullet}$ on $\cM$ induces then an $\Omega$-filtration $\sF$ on $D$. It is straightforward to check that if $\cM_{\bullet}$ is generic then $\sF$ is generic in the sense of (\ref{conGene}). 

(2) Let $\rho$ as in \S~\ref{introPcr} and use the notation of \textit{loc.\ cit.} The $\Omega$-filtration $\sF$ on $\cM(\rho)\cong D_{\rig}(\rho)[\frac{1}{t}]$ is generic if and only if $\rho$ is generic. 
\end{remark}

For $1\leq i, j \leq r$, $i\neq j$, denote by $\cN_{i,j}^0:=\Delta_{x_i} \otimes_{\cR_{E,L}} \Delta_{x_j}^{\vee} \otimes_{\cR_{E,L}} \cR_{E,L}(\delta_{i} \delta_{j}^{-1})$ and $\cN_{i,j}:=\cN_{i,j}^0[\frac{1}{t}]$.

\begin{lemma}\label{phiGamCohotin}
	Assume $(\ul{x}, \delta)$ is a generic parameter of $\cM_{\bullet}$ and let $1\leq i, j \leq r$, $i\neq j$.
	
	\noindent (1) We have $H^0_{(\varphi, \Gamma)}(\cN_{i,j})=H^2_{(\varphi, \Gamma)}(\cN_{i,j})=0$ and 
	\begin{equation*}
		\dim_E H^1_{(\varphi, \Gamma)}(\cN_{i,j})=[L:\Q_p] n_i n_j.
	\end{equation*}
	
	\noindent (2) Suppose that $\delta$ is locally algebraic, then the natural morphism 
	\begin{equation}\label{cohog}
		H^1_{(\varphi, \Gamma)}(\cN_{i,j}) \lra H^1\big(\Gal_L, W_{\dR}(\cN_{i,j})\big)
	\end{equation}
	is an isomorphism.
	
	\noindent (3) Suppose that $\delta$ is locally algebraic and let $A \in \Art(E)$, $\delta_{A,i}, \delta_{A,j}: L^{\times} \ra A^{\times}$ be continuous characters such that $\delta_{A,i} \equiv \delta_i$, $\delta_{A,j} \equiv \delta_j \pmod{\fm_A}$, and $\cN_{i,j,A}:=\Delta_{x_i} \otimes_{\cR_{E,L}} \Delta_{x_j}^{\vee} \otimes_{\cR_{E,L}} \cR_{E,L}(\delta_{A,i}\delta_{A,j}^{-1})[\frac{1}{t}]$. Then the natural morphism
	\begin{equation*}
		H^1_{(\varphi, \Gamma)}(\cN_{i,j,A}) \lra H^1\big(\Gal_L, W_{\dR}(\cN_{i,j,A})\big)
	\end{equation*}
	is surjective.
\end{lemma}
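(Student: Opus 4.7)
The strategy follows closely \cite[Lemma 3.3.4]{BHS3} and related results in \cite[\S 3.3]{BHS3}, which handle the rank one case, and adapt them to our setting where $\Delta_{x_i}, \Delta_{x_j}$ are irreducible $p$-adic differential equations. For part (1), I would first dispose of $H^0_{(\varphi,\Gamma)}(\cN_{i,j})$. A nonzero class provides a nonzero morphism of $(\varphi,\Gamma)$-modules $\Delta_{x_j} \to \Delta_{x_i} \otimes_{\cR_{E,L}} \cR_{E,L}(\delta_i\delta_j^{-1})[1/t]$. Using irreducibility of the $\Delta_{x_\bullet}[1/t]$ and the fact that $\cR_{E,L}(z^{\textbf{k}})[1/t] \cong \cR_{E,L}[1/t]$, such a morphism must be injective with same ranks $n_i=n_j$ and, after comparing Hodge-Tate weights via \cite[Thm.~A]{Ber08a}, produce an isomorphism $\Delta_{x_j} \cong \Delta_{x_i} \otimes_{\cR_{E,L}} \cR_{E,L}(\chi)$ for a smooth character $\chi$, together with an equality $\delta_i\delta_j^{-1} = \chi^{-1} z^{\textbf{k}}$. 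This contradicts the first half of the genericity condition (\ref{geneMf}). The vanishing of $H^2_{(\varphi,\Gamma)}(\cN_{i,j})$ follows from the Tate duality $H^2_{(\varphi,\Gamma)}(\cN_{i,j}) \cong H^0_{(\varphi,\Gamma)}(\cN_{j,i} \otimes_{\cR_{E,L}} \cR_{E,L}(\unr(q_L^{-1})z))^{\vee}$ and an identical argument, this time invoking the second half of (\ref{geneMf}) (the $\unr(q_L^{-1})$-twist is exactly the Weil-Deligne piece of the cyclotomic character). The dimension of $H^1$ then follows from the Euler-Poincar\'e formula for $(\varphi,\Gamma)$-cohomology over $\cR_{E,L}[1/t]$ (cf.\ \cite[(3.11)]{BHS3} and the discussion there), which gives $\chi_{(\varphi,\Gamma)}(\cN_{i,j}) = -[L:\Q_p]\,n_in_j$.

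For part (2), under the local algebraicity of $\delta$, the $(\varphi,\Gamma)$-module $\cN_{i,j}^0$ is de Rham, so Berger's theory supplies a natural comparison map $H^1_{(\varphi,\Gamma)}(\cN_{i,j}) \to H^1(\Gal_L, W_{\dR}(\cN_{i,j}))$. I would check that both sides have the same dimension: by part (1) the left-hand side has dimension $[L:\Q_p]n_in_j$, and by the local Tate Euler characteristic formula together with $H^0=H^2=0$ for $W_{\dR}(\cN_{i,j})$ (which follow from the same genericity argument applied to the underlying $B_{\dR}$-representation, since $W_{\dR}$ is faithful on isomorphism classes of de Rham objects) the right-hand side also has dimension $[L:\Q_p]n_in_j$. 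To get bijectivity, the cleanest route is to show injectivity: using the long exact sequence comparing $H^*_{(\varphi,\Gamma)}(\cN_{i,j}^0)$ to $H^*_{(\varphi,\Gamma)}(\cN_{i,j})$ (the connecting map involves cohomology supported at $t=0$, i.e.\ the $B_{\dR}^+$-piece), and comparing with the analogous Bloch-Kato style exact sequence on the Galois side, one identifies the kernel of (\ref{cohog}) with a subspace that must vanish by the genericity condition; this is the approach of \cite[\S 3.3]{BHS3}, and I expect to mostly reuse their arguments.

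For part (3), I would argue by induction on $\dim_E A$, the base case $A=E$ being part (2). Given an exact sequence $0 \to I \to A \to A' \to 0$ with $I \cong E$, tensoring the universal complex computing $(\varphi,\Gamma)$-cohomology with $A$ yields a long exact sequence; similarly, the short exact sequence of $B_{\dR} \otimes A$-representations yields a long exact sequence of Galois cohomology. Because $W_{\dR}$ is exact on the relevant category and the comparison map is functorial, the five lemma reduces the surjectivity at level $A$ to surjectivity at levels $E$ and $A'$, the former being (2) and the latter the inductive hypothesis. The input needed is the vanishing of $H^2_{(\varphi,\Gamma)}(\cN_{i,j,A})$ to make the five lemma diagram work on the right; this is handled by the same genericity argument as in (1), since the genericity condition at the central fibre propagates to artinian thickenings.

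\textbf{Main obstacle.} The delicate point is part (2): verifying that the comparison map (\ref{cohog}) is actually an isomorphism, not merely a map between spaces of the same dimension. Proving injectivity requires carefully tracking the interplay between $H^*_{(\varphi,\Gamma)}$ of $\cN_{i,j}^0$ and of $\cN_{i,j}=\cN_{i,j}^0[1/t]$, and identifying the ``singular'' contributions with terms in the $B_{\dR}^+$-cohomology of $W_{\dR}^+(\cN_{i,j}^0)$. In the trianguline situation this is done by explicit rank one calculations in \cite[Lemma 3.3.4]{BHS3}; here the analogous calculation is not literally available since $\Delta_{x_i}, \Delta_{x_j}$ are not of character type, and one must either argue abstractly using the irreducibility and de Rham-ness with trivial Hodge-Tate weights of $\Delta_{x_i}\otimes \Delta_{x_j}^\vee$, or pass to an extension of scalars where these modules become successive extensions of characters and reduce to \cite[Lemma 3.3.4]{BHS3} directly.
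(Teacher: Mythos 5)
There is a genuine gap, and it sits exactly where you flagged the main obstacle: part (2). First, your dimension count for the target of (\ref{cohog}) rests on a false claim: you assert $H^0(\Gal_L, W_{\dR}(\cN_{i,j}))=0$ ``by genericity applied to the underlying $B_{\dR}$-representation''. But since $\Delta_{x_i}\otimes_{\cR_{E,L}}\Delta_{x_j}^{\vee}$ is de Rham of constant Hodge--Tate weight $0$ and $\delta_i\delta_j^{-1}$ is locally algebraic, $W_{\dR}(\cN_{i,j})$ is a \emph{trivial} semilinear $B_{\dR}\otimes_{\Q_p}E$-representation of rank $n_in_j$, so $H^0(\Gal_L,W_{\dR}(\cN_{i,j}))\cong D_{\dR}(\cN_{i,j})$ has $E$-dimension $[L:\Q_p]n_in_j$; genericity of the $(\varphi,\Gamma)$-parameter is invisible at the $B_{\dR}$-level, and there is no Tate Euler characteristic formula with $B_{\dR}$-coefficients (the Euler characteristic there is $0$). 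The correct count for the right-hand side comes directly from Tate's computation of $H^{\bullet}(\Gal_L,B_{\dR})$. Second, and more seriously, the injectivity of (\ref{cohog}) is not proved: your fallback of ``passing to an extension of scalars where these modules become successive extensions of characters'' cannot work, because $\Delta_{x_i}$, being attached to an irreducible Weil--Deligne representation of dimension $>1$, remains irreducible (in particular non-trianguline) after any finite extension of the coefficient field $E$; enlarging the base field $L$ instead changes the genericity hypothesis and does not allow a reduction to the rank-one computation of \cite[Lemma 3.3.4]{BHS3}. What the paper actually does is: prove first the stabilization $H^s_{(\varphi,\Gamma)}(\cN_{i,j})\cong H^s_{(\varphi,\Gamma)}(t^{-k}\cN_{i,j}^0)$ for $k\gg 0$ (by killing the cohomology of the torsion quotients $t^{-k-1}\cN_{i,j}^0/t^{-k}\cN_{i,j}^0$ via \cite[Lemma 5.1.1]{BD2} and \cite[Lemma 2.16]{Na}), then reduce injectivity to the vanishing of $H^1_g(t^{-k}\cN_{i,j}^0)$, and prove that via Nakamura's duality between $H^1_g$ and $H^1_e$ of the Tate dual together with a dimension count on a long exact sequence involving $H^0(\Gal_L, t^sW^+/t^kW^+)$ for $k\gg s$. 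None of this is supplied or replaced by an alternative in your proposal.

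A related but lesser gap occurs in part (1): you invoke ``Tate duality'' and an ``Euler--Poincar\'e formula'' for $(\varphi,\Gamma)$-modules over $\cR_{E,L}[\frac{1}{t}]$, but these are not available off the shelf; cohomology over $\cR_{E,L}[\frac{1}{t}]$ is only a colimit of the cohomology of the lattices $t^{-k}\cN_{i,j}^0$, and without the stabilization statement above one cannot control the colimit (transition maps could kill classes), so the duality and Euler characteristic arguments must be run at the lattice level after stabilization, which is exactly how the paper proceeds (genericity then gives $H^0=H^2=0$ for $t^{-k}\cN_{i,j}^0$ as in Lemma \ref{lem:generic}, and \cite[Thm.\ 1.2 (1)]{Liu07} gives the $H^1$-dimension). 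Your $H^0$-vanishing argument for $\cN_{i,j}$ itself is fine, and your part (3) is essentially the paper's d\'evissage (you use $H^2_{(\varphi,\Gamma)}$-vanishing of the special fibre where the paper invokes right exactness of $H^1(\Gal_L,-)$ on almost de Rham representations, and either input closes the diagram chase), but as it stands the proposal does not contain a proof of (2), nor of the stabilization needed for (1).
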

\begin{proof}
	(1) We claim that for $s=\{0,1,2\}$, 
	\begin{equation}\label{cohotinv}
	H^s_{(\varphi, \Gamma)}(\cN_{i,j})\cong H^s_{(\varphi, \Gamma)}(t^{-k} \cN_{i,j}^0) \text{ for $k$ sufficiently large.}
	\end{equation}
		Indeed, identifying the cohomology of $(\varphi, \Gamma)$-modules and the Galois cohomology of $B$-pairs (see for example \cite[\S~3]{Na}), we deduce from the morphism $t^{-k} \cN_{i,j}^0 \hookrightarrow t^{-k-1} \cN_{i,j}^0$ a long exact sequence 
	\begin{multline}\label{longNij}
	0 \ra H^0_{(\varphi, \Gamma)}(t^{-k} \cN_{i,j}^0) \ra H^0_{(\varphi, \Gamma)}(t^{-k-1} \cN_{i,j}^0) \ra H^0_{(\varphi, \Gamma)}(t^{-k-1} \cN_{i,j}^0/t^{-k} \cN_{i,j}^0)\\
	\ra H^1_{(\varphi, \Gamma)}(t^{-k} \cN_{i,j}^0) \ra H^1_{(\varphi, \Gamma)}(t^{-k-1} \cN_{i,j}^0) \ra H^1_{(\varphi, \Gamma)}(t^{-k-1} \cN_{i,j}^0/t^{-k} \cN_{i,j}^0)\\
	\ra H^2_{(\varphi, \Gamma)}(t^{-k} \cN_{i,j}^0) \ra H^2_{(\varphi, \Gamma)}(t^{-k-1} \cN_{i,j}^0) \ra H^2_{(\varphi, \Gamma)}(t^{-k-1} \cN_{i,j}^0/t^{-k} \cN_{i,j}^0).
	\end{multline}
	By \cite[Thm.\ 4.7]{Liu07}, we have $H^2_{(\varphi, \Gamma)}(t^{-k-1} \cN_{i,j}^0/t^{-k} \cN_{i,j}^0)=0$ and
	\[\dim_E H^0_{(\varphi, \Gamma)}(t^{-k-1} \cN_{i,j}^0/t^{-k} \cN_{i,j}^0)=\dim_E H^1_{(\varphi, \Gamma)}(t^{-k-1} \cN_{i,j}^0/t^{-k} \cN_{i,j}^0).\]
	By \cite[Lemma 5.1.1]{BD2}, we have
	\[H^0_{(\varphi, \Gamma)}(t^{-k-1} \cN_{i,j}^0/t^{-k} \cN_{i,j}^0) \cong H^0\big(\Gal_L, t^{-k-1} W_{\dR}^+(\cN_{i,j}^0)/t^{-k} W_{\dR}^+(\cN_{i,j}^0)\big).\]
	As $\Delta_{x_i} \otimes_{\cR_{E,L}} \Delta_{x_j}^{\vee}$ is de Rham of constant Hodge-Tate weight $0$, it follows that $W_{\dR}^+(\cN_{i,j}^0)\cong W_{\dR}^+\big(\cR_{E,L}(\delta_{i} \delta_{j}^{-1})\big)^{\oplus n_in_j}$, hence
\begin{multline*}
H^0\big(\Gal_L, t^{-k-1} W_{\dR}^+(\cN_{i,j}^0)/t^{-k} W_{\dR}^+(\cN_{i,j}^0)\big) \\
\cong H^0\big(\Gal_L, t^{-k-1} W_{\dR}^+\big(\cR_{E,L}(\delta_{i} \delta_{j}^{-1})\big)/t^{-k} W_{\dR}^+\big(\cR_{E,L}(\delta_{i} \delta_{j}^{-1})\big)\big)^{\oplus n_i n_j}.
\end{multline*}
	 By \ \cite[Lemma 2.16]{Na}, \ the \ latter \ is \ zero \ when \ $k$ \ is \ sufficiently \ large. \ We \ conclude \ that $H^s_{(\varphi, \Gamma)}(t^{-k-1} \cN_{i,j}^0/t^{-k} \cN_{i,j}^0)=0$ for $s=\{0,1,2\}$ and $k$ sufficiently large. By (\ref{longNij}) and \cite[(3.11)]{BHS3}, (\ref{cohotinv}) follows. Then (1) follows easily from the proof of Lemma \ref{lem:generic} (see also Remark \ref{geneM1t}) and \cite[Thm.\ 1.2 (1)]{Liu07}.
	
	(2) Since both sides of (\ref{cohog}) have dimension $[L:\Q_p]n_in_j$ over $E$, it suffices to show that the map is injective. By (\ref{cohotinv}), it is enough to show that $H^1_g(t^{-k}\cN_{i,j}^0):=\Ker[H^1(t^{-k} \cN_{i,j}^0) \ra H^1(\Gal_L,W_{\dR}(\cN_{i,j}))]$ is zero when $k$ is sufficiently large. Put 
	\begin{multline*}
		H^1_e\big(t^{k}(\cN_{i,j}^0)^{\vee} \otimes_{\cR_{E,L}}\cR_{E,L}(\chi_{\cyc})\big)\\
		:=\Ker\Big[H^1_{(\varphi, \Gamma)}\big(t^{k}(\cN_{i,j}^0)^{\vee} \otimes_{\cR_{E,L}}\cR_{E,L}(\chi_{\cyc})\big) \xlongrightarrow{f} H^1\big(\Gal_L, W_e(\cN_{i,j} \otimes_{\cR_{E,L}} \cR_{E,L}(\chi_{\cyc}))\big)\Big]
	\end{multline*}
	where $W_e(D)$ denotes the $B_e$-module associated to a $(\varphi, \Gamma)$-module $D$, see \cite[Prop.\ 2.2.6 (1)]{Ber08II}. By \cite[Prop.\ 2.11]{Na}, we only need to show $f$ is zero for $k$ sufficiently large. Let $s$ be an integer small enough so that $t^s (\cN_{i,j}^0)^{\vee} \otimes_{\cR_{E,L}}\cR_{E,L}(\chi_{\cyc})$ has only negative Hodge-Tate weights. For $k\geq s$, the morphism $f$ factors through (see the discussion below \cite[(3.22)]{BHS3})
	\begin{equation}\label{mapks}
		H^1_{(\varphi, \Gamma)}(t^{k}(\cN_{i,j}^0)^{\vee} \otimes_{\cR_{E,L}}\cR_{E,L}(\chi_{\cyc}))\lra H^1_{(\varphi, \Gamma)}(t^{s}(\cN_{i,j}^0)^{\vee} \otimes_{\cR_{E,L}}\cR_{E,L}(\chi_{\cyc})).
	\end{equation}
	By (an easy generalization of) Lemma \ref{lem:generic}, and \cite[Thm.\ 1.2 (1)]{Liu07}, $\dim_E H^1_{(\varphi, \Gamma)}(t^{k}(\cN_{i,j}^0)^{\vee} \otimes_{\cR_{E,L}}\cR_{E,L}(\chi_{\cyc}))=[L:\Q_p]n_in_j$. Using the cohomology of $B$-pairs (cf.\ \cite[\S~3]{Na}), we see that the map in (\ref{mapks}) lies in the following long exact sequence, where $W^+:=W_{\dR}^+((\cN_{i,j}^0)^{\vee} \otimes_{\cR_{E,L}}\cR_{E,L}(\chi_{\cyc}))$:
	\begin{multline*}
		0 \lra H^0_{(\varphi, \Gamma)}\big(t^{k}(\cN_{i,j}^0)^{\vee} \otimes_{\cR_{E,L}}\cR_{E,L}(\chi_{\cyc})\big) \lra H^0_{(\varphi, \Gamma)}\big(t^{s}(\cN_{i,j}^0)^{\vee} \otimes_{\cR_{E,L}}\cR_{E,L}(\chi_{\cyc})\big) \\
		\lra H^0(\Gal_L, t^s W^+/t^k W^+) \lra H^1_{(\varphi, \Gamma)}\big(t^{k}(\cN_{i,j}^0)^{\vee} \otimes_{\cR_{E,L}}\cR_{E,L}(\chi_{\cyc})\big) \\
		\lra H^1_{(\varphi, \Gamma)}\big(t^{s}(\cN_{i,j}^0)^{\vee} \otimes_{\cR_{E,L}}\cR_{E,L}(\chi_{\cyc})\big).
	\end{multline*}
	By the same argument as in the proof of Lemma \ref{lem:generic}, it follows that the terms $H^0_{(\varphi,\Gamma)}(\bullet)$ are both zero. When $k\gg s$, it is easy to see that $\dim_E H^0(\Gal_L, t^s W^+/t^k W^+)=[L:\Q_p] n_i n_j$, hence (\ref{mapks}) and the map $f$ are both zero. This concludes the proof of (2).
	
By d\'evissage, (3) follows from (2) together with the fact the functor $W \mapsto H^1(\Gal_L, W)$, on almost de Rham representations of $\Gal_L$ over $B_{\dR} \otimes_{\Q_p} E$, is right exact (see for example the discussion below \cite[(3.19)]{BHS3}). 
\end{proof}

\begin{theorem}\label{fsm}
	Assume $(\ul{x}, \delta)$ is a generic parameter of $\cM_{\bullet}$ such that $\delta$ is locally algebraic. The induced morphism of groupoids over $\Art(E)$
	\[X_{\cM, \cM_{\bullet}} \lra \widehat{Z_{L_P}(L)}_{\delta} \times_{\widehat{\fz}_{L_P,L}} X_{W, \cF_{\bullet}}\]
	is formally smooth. 
\end{theorem}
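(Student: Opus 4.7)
The plan is to verify the lifting criterion for formal smoothness directly. Let $A'\twoheadrightarrow A$ be a small surjection in $\Art(E)$ with kernel $I$ satisfying $I\cdot \fm_{A'}=0$. Given an object $y_A=(\cM_A,\cM_{A,\bullet},j_A)\in X_{\cM,\cM_\bullet}(A)$ together with a lift of its image under the morphism, namely a character $\delta_{A'}=\boxtimes_{i=1}^r\delta_{A',i}$ lifting $\delta_A=\omega_\delta(y_A)$ and a datum $(W_{A'},\cF_{A',\bullet},\iota_{A'})\in X_{W,\cF_\bullet}(A')$ lifting $(W_A,\cF_{A,\bullet},\iota_A)=W_{\dR}(y_A)$ and such that $\kappa_{W,\cF_\bullet}$ of the former agrees with $(\wt-\wt(\delta))$ of the latter (this is the fibre product condition, and is consistent on $A$ by Proposition \ref{wtmap1}), I would produce $y_{A'}=(\cM_{A'},\cM_{A',\bullet},j_{A'})\in X_{\cM,\cM_\bullet}(A')$ lifting $y_A$ and mapping to the chosen lift on the target side.

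I would proceed by induction on $i$, constructing compatible lifts $\cM_{A',i}$ of $\cM_{A,i}$ step by step. The graded pieces are forced: set $\cM_{A',i}/\cM_{A',i-1}:=\Delta_{x_i}\otimes_{\cR_{E,L}}\cR_{A',L}(\delta_{A',i})[\tfrac{1}{t}]$, a lift which, by construction, has parameter $\delta_{A',i}$ and whose image under $W_{\dR}$ is (via (\ref{wdri})) the graded piece $\cF_{A',i}/\cF_{A',i-1}$ up to the canonical rigidification provided by the Sen operator (Remark \ref{equivgrfu}, applied to the graded pieces whose weights are determined by the fibre-product compatibility). In the inductive step, given $\cM_{A',i-1}$ already constructed and compatible with $\cF_{A',i-1}$, the set of lifts of $\cM_{A,i}$ as an extension of $\cM_{A',i}/\cM_{A',i-1}$ by $\cM_{A',i-1}$ is controlled by $H^1_{(\varphi,\Gamma)}(\cN_i\otimes_E I)$ with $\cN_i:=\cM_{i-1}\otimes_{\cR_{E,L}[1/t]}(\cM_i/\cM_{i-1})^\vee$, and the obstruction to the existence of any such lift lies in $H^2_{(\varphi,\Gamma)}(\cN_i\otimes_E I)$. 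By d\'evissage along the $\Omega$-filtration on $\cM_{i-1}$, both groups reduce to the corresponding groups for each $\cN_{i,j}$ with $j<i$ (in the notation preceding Lemma \ref{phiGamCohotin}); the obstruction vanishes by Lemma \ref{phiGamCohotin}(1).

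Having chosen any such lift $\cM_{A',i}^0$, its image $W_{\dR}(\cM_{A',i}^0)$ is a lift of $W_{A,i}$ over $A'$ that may differ from the prescribed $W_{A',i}$ by a class in $H^1(\Gal_L,W_{\dR}(\cN_i)\otimes_E I)$. Applying Lemma \ref{phiGamCohotin}(3) (together with a d\'evissage on $\cM_{A',i-1}$ reducing to the modules $\cN_{i,j}$ with $j<i$, which is legitimate because the surjectivity of $H^1_{(\varphi,\Gamma)}\to H^1(\Gal_L,W_{\dR})$ is preserved under cones in the presence of the vanishing $H^2=0$ of Lemma \ref{phiGamCohotin}(1)), this difference class is hit by some element of $H^1_{(\varphi,\Gamma)}(\cN_i\otimes_E I)$. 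Twisting $\cM_{A',i}^0$ by this element yields $\cM_{A',i}$ with $W_{\dR}(\cM_{A',i})\cong W_{A',i}$ matching the filtration data; the parameter of the resulting $\cM_{A',i}$ is precisely $(\delta_{A',1},\dots,\delta_{A',i})$ by Lemma \ref{parauni} (uniqueness of parameters). Iterating yields $y_{A'}$ with the required properties.

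The main obstacle in carrying out this outline is the d\'evissage argument just described: one must propagate Lemma \ref{phiGamCohotin} from the graded pieces $\cN_{i,j}$ to the larger objects built from $\cM_{A',i-1}$. The vanishing of $H^2_{(\varphi,\Gamma)}$ propagates without issue by taking long exact sequences, but to propagate the surjectivity of $H^1_{(\varphi,\Gamma)}\to H^1(\Gal_L,W_{\dR})$ one needs to combine the surjectivity at each stage with the vanishing of the neighboring $H^2_{(\varphi,\Gamma)}$ and with right-exactness of the Galois-cohomology functor on almost de Rham representations (as used in the proof of Lemma \ref{phiGamCohotin}(3)). Once this is settled, one also checks (by inspection) that all the constructed data is compatible with the isomorphisms $j_A$, $\iota_A$, and the fibre-product condition, completing the verification that $X_{\cM,\cM_\bullet}\to\widehat{Z_{L_P}(L)}_\delta\times_{\widehat{\fz}_{L_P,L}}X_{W,\cF_\bullet}$ is formally smooth.
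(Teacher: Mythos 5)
Your proposal is correct and is essentially the paper's own argument: the paper proves Theorem \ref{fsm} by invoking the proof of Theorem 3.4.4 of \cite{BHS3} with the cohomological inputs there replaced by Lemma \ref{phiGamCohotin}, and your inductive lifting-criterion argument (forced graded pieces, obstruction killed by the vanishing of $H^2_{(\varphi,\Gamma)}$, adjustment of the lift via the surjectivity of $H^1_{(\varphi,\Gamma)}\to H^1(\Gal_L, W_{\dR})$ propagated by d\'evissage) is exactly that argument written out. The d\'evissage point you flag is handled just as you say, using Lemma \ref{phiGamCohotin} (1) and (3) together with the right-exactness of $W\mapsto H^1(\Gal_L,W)$ on almost de Rham representations, so there is no gap.
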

\begin{proof}
	The theorem follows by the same argument as in the proof of \cite[Thm.\ 3.4.4]{BHS3} with Lemma 3.4.2 and Lemma 3.4.3 of \textit{loc.\ cit.}\ replaced by Lemma \ref{phiGamCohotin}.
\end{proof}

\begin{corollary}\label{fsm2}
With the assumption of Theorem \ref{fsm}, the morphisms $X_{\cM, \cM_{\bullet}} \ra X_{W, \cF_{\bullet}}$ and $X_{\cM, \cM_{\bullet}}^{\square} \ra X_{W, \cF_{\bullet}}^{\square}$ are formally smooth.
\end{corollary}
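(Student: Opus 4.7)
The plan is to deduce the corollary from Theorem \ref{fsm} by factoring the first morphism through the fibre product and using standard base-change properties of formal smoothness.

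First I would verify that the weight map
\[\mathrm{wt}-\mathrm{wt}(\delta): \widehat{Z_{L_P}(L)}_{\delta} \longrightarrow \widehat{\fz}_{L_P,L}\]
is formally smooth. Since $Z_{L_P}(L) \cong (L^{\times})^r$, the functor $\widehat{Z_{L_P}(L)}_{\delta}$ is pro-represented by $E[[x_1,\dots,x_{r(1+[L:\Q_p])}]]$, while $\widehat{\fz}_{L_P,L}$ is pro-represented by $E[[y_1,\dots,y_{r[L:\Q_p]}]]$. Concretely, for $A\in \Art(E)$, given any lift $\widetilde{\lambda}_A \in \widehat{\fz}_{L_P,L}(A')$ of $\mathrm{wt}(\delta_A)-\mathrm{wt}(\delta)$ along a small surjection $A' \twoheadrightarrow A$, one can lift $\delta_A=\boxtimes_{i=1}^r\delta_{A,i}$ to $\delta_{A'}=\boxtimes_{i=1}^r\delta_{A',i}$ by first lifting each smooth part $\delta_{A,i}(\delta_i z^{\mathrm{wt}(\delta_{A,i})-\mathrm{wt}(\delta_i)})^{-1}$ arbitrarily (such a lift exists since the smooth characters of $L^\times$ form a formally smooth functor) and then multiplying by $\delta_i z^{\widetilde{\lambda}_{A',i}}$. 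This shows the weight map is formally smooth.

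The map $X_{\cM, \cM_{\bullet}} \to X_{W, \cF_{\bullet}}$ factors as
\[X_{\cM, \cM_{\bullet}} \xrightarrow{\ \Phi\ } \widehat{Z_{L_P}(L)}_{\delta} \times_{\widehat{\fz}_{L_P,L}} X_{W, \cF_{\bullet}} \xrightarrow{\ \Psi\ } X_{W, \cF_{\bullet}},\]
where $\Phi$ is formally smooth by Theorem \ref{fsm} and $\Psi$ is the base change of the weight map along $\kappa_{W,\cF_\bullet}: X_{W,\cF_\bullet}\to \widehat{\fz}_{L_P,L}$ (using Proposition \ref{wtmap1} to identify the relevant diagram). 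Since formal smoothness is stable under base change, $\Psi$ is formally smooth, and hence so is the composition $\Psi\circ\Phi$, giving the first assertion.

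For the second assertion, recall from (\ref{moreX}) that
\[X_{\cM, \cM_{\bullet}}^{\square} \cong X_{\cM, \cM_{\bullet}} \times_{X_{W, \cF_{\bullet}}} X_{W, \cF_{\bullet}}^{\square}.\]
The morphism $X_{\cM, \cM_{\bullet}}^{\square} \to X_{W, \cF_{\bullet}}^{\square}$ is therefore the base change of $X_{\cM, \cM_{\bullet}} \to X_{W, \cF_{\bullet}}$ along $X_{W, \cF_{\bullet}}^{\square} \to X_{W, \cF_{\bullet}}$, and formal smoothness is again preserved. This is essentially a routine unwinding; the only genuine ingredient beyond Theorem \ref{fsm} is the formal smoothness of the weight map, which is straightforward as explained above.
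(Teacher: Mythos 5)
Your proof is correct and follows essentially the same route as the paper: formal smoothness of the weight map $\wt-\wt(\delta)$ combined with Theorem \ref{fsm} gives the first morphism, and the framed case follows by base change via (\ref{moreX}). The only difference is that you verify the formal smoothness of the weight map by a direct lifting argument, whereas the paper simply cites \cite[Lemma 3.5.5]{BHS3}; your verification is sound.
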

\begin{proof}
	By \cite[Lemma 3.5.5]{BHS3}, the morphism $\wt-\wt(\delta): \widehat{Z_{L_P}(L)}_{\delta} \ra \widehat{\fz}_{L_P,L}$ is formally smooth of relative dimension $r$. Together with Theorem \ref{fsm}, the first part of the corollary follows. The second part follows from the first part by base change.
\end{proof}

The following proposition is analogous to \cite[Prop.\ 3.4.6]{BHS3} (for a closed immersion of groupoids over $\Art(E)$, see the discussion before \cite[Prop.\ 3.4.6]{BHS3}).

\begin{proposition}\label{relrepMM}
	Assume that $\cM_{\bullet}$ is generic, then the morphism $X_{\cM, \cM_{\bullet}} \ra X_{\cM}$ of groupoids over $\Art(E)$ is relatively representable and is a closed immersion.
\end{proposition}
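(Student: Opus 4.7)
The plan is to adapt the proof of \cite[Prop.~3.4.6]{BHS3} (which treats the trianguline case $P=B$) to our parabolic setting, with the generic vanishing of Lemma \ref{phiGamCohotin} playing the role of the analogous generic vanishing for rank-one subquotients in \emph{loc.\ cit.} Following the general criterion used in the proof of Proposition \ref{repab0} (a variant of \cite[Prop.~2.3.9]{BCh}), I would verify: (i) $X_{\cM,\cM_\bullet}$ is a subfunctor of $X_\cM$; (ii) stability under base change along $A\to A'$; (iii) descent along injections $A\hookrightarrow A'$ in $\Art(E)$; and (iv) compatibility with fiber products of the form $A\times_EA'$.

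For (i), given $(\cM_A,j_A)\in X_\cM(A)$, any two $\Omega$-filtrations $\cM_{A,\bullet}$, $\cM_{A,\bullet}'$ of $\cM_A$ lifting $\cM_\bullet$ must coincide; this is the $A$-linear analogue of Corollary \ref{Filuniq}. By induction on $r$ it suffices to show $\cM_{A,r-1}=\cM_{A,r-1}'$. Let $(\ul x,\delta_A)$ and $(\ul x,\delta_A')$ be the (unique) parameters provided by Lemma \ref{parauni}. The composition
\[\phi:\cM_{A,r-1}'\hooklongrightarrow \cM_A\twoheadlongrightarrow \cM_A/\cM_{A,r-1}\cong \Delta_{x_r}\otimes_{\cR_{E,L}}\cR_{A,L}(\delta_{A,r})\big[\tfrac{1}{t}\big]\]
vanishes modulo $\fm_A$. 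A d\'evissage using the $\fm_A$-adic filtration on $A$ together with the filtration of $\cM_{A,r-1}'$ reduces the vanishing of $\phi$ to that of $H^0_{(\varphi,\Gamma)}(\cN_{r,i})$ for $i=1,\dots,r-1$, which holds by Lemma \ref{phiGamCohotin}(1) since $\cM_\bullet$ is generic. Hence $\cM_{A,r-1}'\subseteq \cM_{A,r-1}$, and comparing ranks gives equality; iterating completes the proof of (i). Properties (ii) and (iv) are then routine: tensoring an $\Omega$-filtration by $A\to A'$ produces an $\Omega$-filtration over $A'$ (its parameters being inherited via Lemma \ref{parauni}), and two filtrations over $A$ and $A'$ glue tautologically into one over $A\times_E A'$.

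The main obstacle is (iii): given an injection $A\hookrightarrow A'$ in $\Art(E)$, an object $(\cM_A,j_A)\in X_\cM(A)$, and an $\Omega$-filtration $\cM_{A',\bullet}$ on $\cM_{A'}:=\cM_A\otimes_A A'$ lifting $\cM_\bullet$, one must produce an $\Omega$-filtration on $\cM_A$. Following the template of the proof of Proposition \ref{repab0}(2), I would set $\cM_{A,i}:=\cM_{A',i}\cap \cM_A$ (intersection inside $\cM_{A'}$) and argue by induction that each $\cM_{A,i}$ is a $(\varphi,\Gamma)$-submodule of $\cM_A$ over $\cR_{A,L}[\tfrac{1}{t}]$ whose graded piece $\cM_{A,i}/\cM_{A,i-1}$ is of type $\Omega$ and lifts $\cM_i/\cM_{i-1}$. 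The key input is again the generic vanishing of Lemma \ref{phiGamCohotin}(1), which, together with the uniqueness part (i) applied to subquotients, forces the saturated closure of $\cM_A$ inside each $\cM_{A',i}$ to have the correct rank; this parallels the ``$F$-functor'' dimension bookkeeping in the proof of Proposition \ref{repab0}(2) and is the hardest technical step, as one has to deal with the non-flatness of $A'$ over $A$. Once (i)--(iv) are established, the criterion used in the proof of Proposition \ref{repab0} (following \cite[Prop.~2.3.9]{BCh}) yields that $X_{\cM,\cM_\bullet}\to X_\cM$ is relatively representable by a closed immersion.
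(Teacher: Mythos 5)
Your proposal is correct and follows essentially the same route as the paper: uniqueness of the $\Omega$-filtration via Lemma \ref{phiGamCohotin} (1) to get the subfunctor/fibre-product description, then the Mazur--Bella\"iche--Chenevier criteria (base change, descent along an injection $A\hookrightarrow A'$, fibre products $A\times_E A'$), with the injection case handled exactly as in the paper by intersecting $\cM_{A,i}:=\cM_{A',i}\cap\cM_A$ and running a variation of the proof of Proposition \ref{repab0} together with an induction on the filtration. The only cosmetic difference is that the paper, in the descent step, also invokes \cite[Lemma 2.2.3]{BCh} to get freeness of the successive quotients over $\cR_{A,L}[\tfrac{1}{t}]$ (rather than speaking of ``saturated closures'', which is not quite the right notion once $t$ is inverted), but this does not change the substance of your argument.
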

\begin{proof}
	Since $\cM_{\bullet}$ is generic, using Lemma \ref{phiGamCohotin} (1) and an argument analogous to \cite[Prop.\ 2.3.6]{BCh}, an $\Omega$-filtration $\cM_{A,\bullet}$ on a deformation $\cM_A$ of $\cM$ is unique if it exists. We deduce that $|X_{\cM, \cM_{\bullet}}|$ is a subfunctor of $|X_{\cM}|$ and that we have an equivalence of groupoids over $\Art(E)$:
	\[X_{\cM,\cM_{\bullet}} \cong X_{\cM} \times_{|X_{\cM}|} |X_{\cM,\cM_{\bullet}}|.\]
Hence we only need to show that $|X_{\cM, \cM_{\bullet}}|\hookrightarrow |X_{\cM}|$ is relatively representable. By \cite[\S~23]{Ma}, it is enough to check the following three conditions:
	\begin{itemize}
		\item[(1)] If $A \ra A'$ is a morphism in $\Art(E)$ and $(\cM_A, \cM_{A, \bullet}, j_A)\in |X_{\cM, \cM_{\bullet}}|(A)$, then $(\cM_A \otimes_A A', \cM_{A, \bullet} \otimes_A A', j_A \otimes_A A')\in |X_{\cM, \cM_{\bullet}}|(A')$.
		\item[(2)] If $A \ra A'$ is an injective morphism in $\Art(E)$, $(\cM_A, j_A)\in |X_{\cM}|(A)$, and if $(\cM_A \otimes_A A', j_A \otimes_A A') \in |X_{\cM, \cM_{\bullet}}|(A')\hookrightarrow |X_{\cM}|(A')$, then $(\cM_A, j_A)\in |X_{\cM, \cM_{\bullet}}|(A)$.
		\item[(3)] If $A, A'\in \Art(E)$, $(\cM_A, j_A)\in |X_{\cM, \cM_{\bullet}}|(A)$ and $(\cM_{A'}, j_{A'})\in |X_{\cM, \cM_{\bullet}}|(A')$, then $(\cM_A \times_{\cM} \cM_{A'}, j_{A\times_E A'}:=j_A \circ \pr_1=j_{A'} \circ \pr_2)\in |X_{\cM, \cM_{\bullet}}|(A \times_E A')$.
	\end{itemize}
	(1) is clear. For (3), we just note that $\cM_{A\times_E A'}:=\cM_A \times_{\cM} \cM_{A'}$ admits the $\Omega$-filtration $\cM_{A\times_E A',i}:=\cM_{A,i} \times_{\cM_i} \cM_{A',i}$. We prove (2). Let $\cM_{A,1}:=\cM_{A',1} \cap \cM_A$: this is a $(\varphi, \Gamma)$-module over $\cR_{E,L}[\frac{1}{t}]$ (as it is a submodule of $\cM_A$) which is equipped with an action of $A$. By an easy variation of the proof of Proposition \ref{repab0}, one can show that there exists a continuous character $\delta_{A,1}: L^{\times} \ra A^{\times}$ such that $\delta_{A,1}\equiv \delta_{1} \pmod{\fm_A}$ and $\cM_{A,1}\cong \Delta_{x_1}\otimes_{\cR_{E,L}} \cR_{A,L}(\delta_{A,1})[1/t]$. As $\cM_A/\cM_{A,1}\hookrightarrow \cM_{A'}/\cM_{A',1}$ and $\cM_{A'}/\cM_{A',1}$ is free over $\cR_{E,L}[1/t]$, so is $\cM_{A}/\cM_{A,1}$. By \cite[Lemma 2.2.3 (i)]{BCh}, $\cM_A/\cM_{A,1}$ is free over $A$. It follows by \cite[Lemma 2.2.3 (ii)]{BCh} that $\cM_A/\cM_{A,1}$ ($\hookrightarrow \cM_{A'}/\cM_{A',1}$) is free over $\cR_{A,L}[\frac{1}{t}]$. Using $\cM_A/\cM_{A,1}\hookrightarrow \cM_{A'}/\cM_{A',1}$, by an induction argument, we can construct the desired filtration $\cM_{A, i}$ on $\cM_A$, and hence $(\cM_A, j_A)\in |X_{\cM, \cM_{\bullet}}|(A)\hookrightarrow |X_{\cM}|(A)$. This concludes the proof. 
\end{proof}
By \cite[Lemma 3.5.3 (i)]{BHS3} and the same argument as in the proof of \textit{loc.\ cit.}, we have (where the third relative representability follows from the second one by base change):
\begin{lemma}\label{relrep}
Assume that $\cM_{\bullet}$ is generic, the morphisms $X_{\cM} \ra X_W$, $X_{\cM, \cM_{\bullet}} \ra X_{W, \cF_{\bullet}}$ and $X_{\cM, \cM_{\bullet}}^{\square} \ra X_{W, \cF_{\bullet}}^{\square}$ are relatively representable. 
\end{lemma}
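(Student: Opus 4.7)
The plan is to follow the strategy of \cite[Lemma 3.5.3(i)]{BHS3} adapted to the parabolic setting, with Lemma \ref{phiGamCohotin} providing the key cohomological input in place of the analogous $\Gal_L$-cohomology computation used in the trianguline case. The argument proceeds in three steps, in reverse order of the statements.

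The third relative representability, namely for $X_{\cM,\cM_\bullet}^{\square}\to X_{W,\cF_\bullet}^{\square}$, is immediate by base change: by the definitions in (\ref{moreX}) one has $X_{\cM,\cM_\bullet}^{\square}\simeq X_{\cM,\cM_\bullet}\times_{X_{W,\cF_\bullet}} X_{W,\cF_\bullet}^{\square}$, so the morphism is the base change of $X_{\cM,\cM_\bullet}\to X_{W,\cF_\bullet}$ along $X_{W,\cF_\bullet}^{\square}\to X_{W,\cF_\bullet}$, and relative representability is stable under base change.

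For the second morphism $X_{\cM,\cM_\bullet}\to X_{W,\cF_\bullet}$, I would use the commutative square relating it to $X_\cM\to X_W$, combined with the observation that by Proposition \ref{relrepMM} the vertical morphism $X_{\cM,\cM_\bullet}\to X_\cM$ is a closed immersion, and the analogous statement for $X_{W,\cF_\bullet}\to X_W$ (an almost de Rham variant) is straightforward once the $\cF_i/\cF_{i-1}$ are known to be de Rham. Composing closed immersions with the first morphism (once established) gives relative representability, via the standard fact that in the category of groupoids over $\Art(E)$ a composition of relatively representable morphisms is relatively representable.

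The first and main step is to establish relative representability of $X_\cM\to X_W$. Given a small extension $A'\twoheadrightarrow A$ in $\Art(E)$, a deformation $\cM_A$ of $\cM$ over $\cR_{A,L}[1/t]$ with $W_{\dR}(\cM_A)\simeq W_A$, and a lift $W_{A'}$ of $W_A$, one studies the fiber of lifts $\cM_{A'}$ of $\cM_A$ with $W_{\dR}(\cM_{A'})\simeq W_{A'}$; the genericity of $\cM_\bullet$ (which forces any deformation to inherit a unique $\Omega$-filtration by Proposition \ref{relrepMM}) allows a d\'evissage along the filtration reducing lifting obstructions and the associated tangent/automorphism spaces to cohomology of $(\varphi,\Gamma)$-modules of the shape $\cN_{i,j}=\Delta_{x_i}\otimes_{\cR_{E,L}}\Delta_{x_j}^\vee\otimes_{\cR_{E,L}}\cR_{E,L}(\delta_i\delta_j^{-1})[1/t]$ and of the diagonal terms. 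Lemma \ref{phiGamCohotin}(1) gives finiteness, while parts (2)--(3) give the comparison between $H^1_{(\varphi,\Gamma)}(\cN_{i,j,A})$ and $H^1(\Gal_L,W_{\dR}(\cN_{i,j,A}))$ needed to match $\cM_{A'}$-liftings with the prescribed $W_{A'}$; this lets one verify Schlessinger's conditions for the fiber functor. The main obstacle will be the technical control of these fibers when $\delta_A$ is not locally algebraic throughout the deformation: here Lemma \ref{phiGamCohotin}(3), which provides surjectivity of the comparison map for general $A\in\Art(E)$, is what makes the argument go through, exactly as in \emph{loc.\ cit.}
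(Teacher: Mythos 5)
The reduction of the second relative representability to the first is where your proposal breaks down, and the statement it rests on is false. You assert that the analogue of Proposition \ref{relrepMM} for $X_{W,\cF_\bullet}\ra X_W$ is ``straightforward once the $\cF_i/\cF_{i-1}$ are known to be de Rham''; it is precisely because these graded pieces are de Rham that no such analogue can hold. The closed immersion $X_{\cM,\cM_\bullet}\ra X_\cM$ comes from genericity, which forces the $\Hom_{(\varphi,\Gamma)}$ between distinct graded pieces to vanish and hence the $\Omega$-filtration on any deformation to be unique; after applying $W_{\dR}$, each $\cF_i/\cF_{i-1}$ becomes a trivial $B_{\dR}\otimes_{\Q_p}E$-representation up to a rank one twist, these Hom spaces are large, and a fixed deformation $W_A$ carries many filtrations lifting $\cF_\bullet$. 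Concretely, by Proposition \ref{XWF} the framed version of $X_{W,\cF_\bullet}\ra X_W$ is the completion of the projection $\tilde{\ug}_{P,L}\ra \ug_L$ at $(\alpha^{-1}(\cD_\bullet),N_W)$, a map with positive dimensional fibres (when $N_W=0$ the fibre over the closed point is the formal completion of $G_L/P_L$ at a point); so $X_{W,\cF_\bullet}\ra X_W$ is not a monomorphism, let alone a closed immersion. Moreover, even granting a weaker statement, your composition argument only yields relative representability of the composite $X_{\cM,\cM_\bullet}\ra X_W$; to descend to the morphism with target $X_{W,\cF_\bullet}$ you would still need a graph argument, i.e.\ relative representability of the diagonal of $X_{W,\cF_\bullet}\ra X_W$, which you neither state nor prove.

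Your first step is also misdirected. What you sketch (d\'evissage, tangent and obstruction spaces, the comparison maps of Lemma \ref{phiGamCohotin} (2), (3)) is the shape of the proof of \emph{formal smoothness} (Theorem \ref{fsm} and Corollary \ref{fsm2}), not of relative representability, and it does not engage with the actual content of the latter, namely the criteria of \cite[\S~23]{Ma}: compatibility with base change, descent of a deformation along an injection $A\hookrightarrow A'$, compatibility with fibre products $A\times_E A'$, together with representability of the fibre groupoid; none of these follows from $H^1$-comparisons. In the paper, Lemma \ref{relrep} is purely formal: one repeats the argument of \cite[Lemma 3.5.3 (i)]{BHS3}, based on the description of $(\varphi,\Gamma)$-modules over $\cR_{A,L}[1/t]$ via their associated $B$-pairs, and checks these conditions directly, once for $X_\cM\ra X_W$ and once more with the filtration data carried along for $X_{\cM,\cM_\bullet}\ra X_{W,\cF_\bullet}$ (so the second morphism is treated in parallel with the first, not deduced from it); the cohomological Lemma \ref{phiGamCohotin} only enters later, for Theorem \ref{fsm}. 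The one part of your proposal that coincides with the paper is the last: the third relative representability is indeed obtained from the second by base change along $X_{W,\cF_\bullet}^{\square}\ra X_{W,\cF_\bullet}$, using (\ref{moreX}).
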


\begin{proposition}\label{XMM}
Assume that $\cM_{\bullet}$ is a generic, then the groupoid $X_{\cM, \cM_{\bullet}}^{\square}$ over $\Art(E)$ is pro-representable. The functor $|X_{\cM, \cM_{\bullet}}^{\square}|$ is pro-represented by a formally smooth noetherian complete local ring of residue field $E$ and dimension $[L:\Q_p](n^2+\dim_E \ur_P)$.
\end{proposition}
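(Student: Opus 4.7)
The proof combines the structural results of the preceding subsections in a manner parallel to the trianguline case in \cite[\S~3.5]{BHS3}. For pro-representability, the plan is to combine Lemma \ref{relrep} (the morphism $X_{\cM,\cM_\bullet}^{\square} \to X_{W,\cF_\bullet}^{\square}$ is relatively representable) with Corollary \ref{fsm2} (this morphism is formally smooth). Since Proposition \ref{XWF} pro-represents $|X_{W,\cF_\bullet}^{\square}|$ by a formally smooth noetherian complete local ring $R_{W,\cF_\bullet}$ of residue field $E$, standard deformation-theoretic arguments then yield that $|X_{\cM,\cM_\bullet}^{\square}|$ is pro-represented by a formal power-series extension of $R_{W,\cF_\bullet}$, in particular also by a formally smooth noetherian complete local ring of residue field $E$.

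For the dimension, the plan is to apply Theorem \ref{fsm} (in its framed version via the fibre product (\ref{moreX}) and Lemma \ref{relrep}) to obtain a formally smooth morphism
\[X_{\cM,\cM_\bullet}^{\square} \longrightarrow X_{W,\cF_\bullet}^{\square} \times_{\widehat{\fz}_{L_P,L}} \widehat{Z_{L_P}(L)}_\delta.\]
The target has dimension $[L:\Q_p](\dim \fn_P + \dim \ur_P) + r$: one combines Proposition \ref{XWF} with the formal smoothness of relative dimension $r$ of $\widehat{Z_{L_P}(L)}_\delta \to \widehat{\fz}_{L_P,L}$ (a direct consequence of the description of continuous characters of $L^{\times}$, cf.\ \cite[Lemma 3.5.5]{BHS3}). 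It then remains to verify that the relative dimension of the displayed morphism equals $[L:\Q_p](n^2 - \dim \fn_P) - r$, so that the total dimension works out to $[L:\Q_p](n^2 + \dim \ur_P)$ as claimed.

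This remaining relative dimension is computed by a direct tangent-space analysis at $A = E[\varepsilon]/(\varepsilon^2)$. Fixing a lift $(W_A, \cF_{A,\bullet}, \alpha_A)$ and a character $\delta_A$ on the base, we must count deformations $(\cM_A, \cM_{A,\bullet}, j_A)$ with prescribed $W_{\dR}$-image and prescribed graded parameters. The isomorphism (\ref{cohog}) in Lemma \ref{phiGamCohotin} shows that for $i \neq j$ the off-diagonal $(\varphi, \Gamma)$-extension data is already entirely encoded by the $W_{\dR}$-side, so fixing the $W_{\dR}$-lift eliminates all off-diagonal contributions. Only the ``diagonal'' deformations of each irreducible factor $\Delta_{x_i} \otimes_{\cR_{E,L}} \cR_{A,L}(\delta_{A,i})[1/t]$ remain, and these can be enumerated via the inverted-$t$ analogue of Proposition \ref{apxfm0} (equivalently Lemma \ref{twDR}), whose contributions sum to the required relative dimension. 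The main obstacle will be the careful bookkeeping of these diagonal contributions in the inverted-$t$ setting and checking that the genericity hypothesis ensures the tangent-space dimensions combine to give the precise expected number.
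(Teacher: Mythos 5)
Your treatment of pro-representability and formal smoothness is exactly the paper's (Lemma \ref{relrep} together with Proposition \ref{XWF} and Corollary \ref{fsm2}), so that part is fine. The problem is the dimension count. Your plan is to compute the relative dimension of $X_{\cM,\cM_\bullet}^{\square}\to X_{W,\cF_\bullet}^{\square}\times_{\widehat{\fz}_{L_P,L}}\widehat{Z_{L_P}(L)}_{\delta}$ by a tangent-space analysis in which, after fixing the $W_{\dR}$-lift and the parameter $\delta_A$, ``only the diagonal deformations of each $\Delta_{x_i}\otimes_{\cR_{E,L}}\cR_{A,L}(\delta_{A,i})[1/t]$ remain,'' enumerated by an inverted-$t$ analogue of Proposition \ref{apxfm0}. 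This does not compute what you need. Once $\delta_A=\delta$ is fixed, Lemma \ref{parauni} says the graded pieces of any object of $X_{\cM,\cM_\bullet}$ in the fibre are the \emph{constant} deformations $\Delta_{x_i}\otimes_{\cR_{E,L}}\cR_{A,L}(\delta_i)[1/t]$, so these ``diagonal'' directions contribute nothing; and the directions counted in Proposition \ref{apxfm0} and Lemma \ref{twDR} are Hodge-filtration directions of lattices over $\cR_{E,L}$, which are invisible after inverting $t$ (they reappear only at the level of $X_{D,\cM_\bullet}$, cf.\ Proposition \ref{XDM}). Carried out as written, your count would give relative dimension essentially zero, i.e.\ total dimension $[L:\Q_p](\dim\fn_P+\dim\ur_P)+r$, not $[L:\Q_p](n^2+\dim\ur_P)$. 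The actual source of the relative dimension $[L:\Q_p]\dim\fp-r$ is a groupoid phenomenon that your analysis ignores: the morphism $X_{\cM,\cM_\bullet}\to X_{W,\cF_\bullet}$ is very far from faithful on infinitesimal automorphisms (the filtration-preserving $\Gal_L$-equivariant endomorphisms of $W$ form a space of dimension $[L:\Q_p]\dim\fp$, while $\End_{\cF}(\cM)$ has dimension at most $r$ by genericity), and after imposing the framing these excess automorphisms of the target become genuine moduli of trivializations of $W_{\dR}(\cM_A)$ in the fibres. Without accounting for this, the tangent-space computation is simply of the wrong quantity.

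This is precisely why the paper does not argue over the fibre product with $X_{W,\cF_\bullet}^{\square}$ at all for the dimension: it introduces the rigidified groupoid $X_{\cM,\cM_\bullet}^{\ver}$ obtained by fixing isomorphisms $\beta_{A,i}\colon \Delta_{x_i}\otimes_{\cR_{E,L}}\cR_{A,L}(\delta_{A,i})[1/t]\xrightarrow{\sim}\cM_{A,i}/\cM_{A,i-1}$, which is honestly pro-representable, computes its dimension $r+[L:\Q_p]\dim\ur_P$ by an inductive vector-bundle/$\Ext^1$ argument over the character spaces $\widehat{\co}_{\widehat{L^\times},\delta_i}$ (the off-diagonal $\Ext^1$'s having dimension $[L:\Q_p]n_in_j$ by Lemma \ref{phiGamCohotin} (1)), and then compares: $X_{\cM,\cM_\bullet}^{\square,\ver}\to X_{\cM,\cM_\bullet}^{\ver}$ is formally smooth of relative dimension $n^2[L:\Q_p]$, while $X_{\cM,\cM_\bullet}^{\square,\ver}\to X_{\cM,\cM_\bullet}^{\square}$ is formally smooth of relative dimension $r$ because $\End_{(\varphi,\Gamma)}(\Delta_{x_i}\otimes_{\cR_{E,L}}\cR_{A,L}(\delta_{A,i})[1/t])\cong A$. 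If you want to keep your route through the fibre product, you would have to replace the ``diagonal deformation'' step by a careful computation of the fibres of the framed map as torsors under (roughly) $\End_{\cF}(W_{\dR}(\cM))$ modulo $\End_{\cF}(\cM)$, which amounts to redoing the same automorphism bookkeeping that the $\ver$-rigidification is designed to handle.
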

\begin{proof}
	The first statement follows from Lemma \ref{relrep} and Proposition \ref{XWF}. By Proposition \ref{XWF} and Corollary \ref{fsm2}, $|X_{\cM, \cM_{\bullet}}^{\square}|$ is pro-represented by a formally smooth noetherian complete local ring of residue field $E$. We only need to calculate $|X_{\cM, \cM_{\bullet}}^{\square}|(E[\varepsilon]/\varepsilon^2)$.
	
	Assume that $(\ul{x}, \delta)$ is a parameter of $\cM_{\bullet}$. For each $i=1, \dots, r$, we fix an isomorphism $\beta_i: \Delta_{x_i} \otimes_{\cR_{E,L}} \cR_{E,L}(\delta_i)[\frac{1}{t}] \xrightarrow{\sim} \cM_i/\cM_{i-1}$. As in the proof of \cite[Prop.\ 3.5.7]{BHS3}, we introduce a new groupoid $X_{\cM, \cM_{\bullet}}^{\ver}$ over $\Art(E)$: 
	\begin{itemize}
		\item[(1)] The objects of $X_{\cM, \cM_{\bullet}}^{\ver}$ are the $5$-tuples $(A, \cM_A, \cM_{A, \bullet}, j_A, \ul{\beta}_A)$, where $(A, \cM_A, \cM_{A, \bullet}, j_A)$ is an object of $X_{\cM, \cM_{\bullet}}$ and $\ul{\beta}_A=(\beta_{A,i})$ is a collection of isomorphisms $\beta_{A,i}: \Delta_{x_i} \otimes_{\cR_{E,L}} \cR_{A,L}(\delta_{A,i})[\frac{1}{t}]\xrightarrow{\sim} \cM_{A,i}/\cM_{A,i-1}$ (where $\delta_{A,i}$ is as in Lemma \ref{parauni}) which are compatible with $j_A$ and $\ul{\beta}=(\beta_i)$. 
		\item[(2)] A morphism in $X_{\cM,\cM_{\bullet}}^{\ver}$ is a morphism in $X_{\cM, \cM_{\bullet}}$ which is compatible with the liftings of $\ul{\beta}$.
	\end{itemize}
	We have $|X_{\cM, \cM_{\bullet}}^{\ver}|\cong X_{\cM, \cM_{\bullet}}^{\ver}$. For each $i\in \{1,\dots,r\}$, we also use $\cM_{\bullet}$ to denote the induced filtration on $\cM_i$, and we define $X_{\cM_i, \cM_{\bullet}}^{\ver}\cong |X_{\cM_i, \cM_{\bullet}}^{\ver}|$ similarly to $X_{\cM, \cM_{\bullet}}^{\ver}\cong |X_{\cM, \cM_{\bullet}}^{\ver}|$. We first use an induction argument on $i$ (inspired by the proof of \cite[Thm.\ 3.3]{Che13}) to show that the functor $|X_{\cM_i, \cM_{\bullet}}^{\ver}|$ is pro-represented by a formally smooth noetherian complete local ring of residue field $E$. It is easy to see that $|X_{\cM_1, \cM_{\bullet}}^{\ver}|$ is pro-represented by $\widehat{\co}_{\widehat{L^{\times}}, \delta_1}\cong E[[x_1, \dots, x_{[L:\Q_p]+1}]]$. Assume that $|X_{\cM_{i-1}, \cM_{\bullet}}^{\ver}|$ is pro-represented by a formally smooth noetherian complete local ring $R_{i-1}$ of residue field $E$ and dimension \[i-1+[L:\Q_p]\bigg(i-1+\sum_{1\leq j <j'\leq i-1} n_j n_j'\bigg).\]
	Let $S_i$ denote the completion of $R_{i-1} \otimes_E \widehat{\co}_{\widehat{L^{\times}}, \delta_i}$ with respect to the maximal ideal generated by the maximal ideal of $R_{i-1}$ and the one of $\widehat{\co}_{\widehat{L^{\times}},\delta_i}$. So $S_i$ is a noetherian complete local ring which is formally smooth over $E$ of dimension $i+[L:\Q_p](i-1+\sum_{1\leq j <j'\leq i-1} n_j n_j')$. For any morphism $S_i \ra A$ with $A\in \Art(E)$, let $\cM_{A,i-1}$ be the $(\varphi, \Gamma)$-module over $\cR_{A,L}[\frac{1}{t}]$ given by the pull-back along $R_{i-1} \ra S_i \ra A$ of the universal $(\varphi, \Gamma)$-module over $\cR_{R_{i-1},L}[\frac{1}{t}]$, and let $\delta_{A,i}$ be the character $L^{\times} \ra \widehat{\co}_{\widehat{L^{\times}}, \delta_i}^{\times} \ra S_i^{\times} \ra A^{\times}$. Let 
	\begin{equation*}
		N_i:=\varprojlim_{\substack{A\in \Art(E) \\ S_i \twoheadrightarrow A}} \Ext^1_{(\varphi, \Gamma)}\Big(\Delta_{x_i}\otimes_{\cR_{E,L}}\cR_{A,L}(\delta_{A,i})\Big[\frac{1}{t}\Big], \cM_{A,i-1}\Big).
	\end{equation*}
	Since $\cM_{\bullet}$ is generic, we deduce by Lemma \ref{phiGamCohotin} (1) (and a d\'evissage) that $N_i$ is a free $S_i$-module of rank $[L:\Q_p] (n_i \sum_{j=1}^{i-1} n_j)$. By definition, $[\cM_i]\in \Ext^1_{(\varphi, \Gamma)}(\Delta_{x_i} \otimes_{\cR_{E,L}} \cR_{E,L}(\delta_i)[1/t], \cM_{i-1})$, which corresponds then to a maximal ideal $\fm_i$ with residue field $E$ of the polynomial $S_i$-algebra $\Symm_{S_i} N_i^{\vee}$. We let $R_i$ be the completion of $\Symm_{S_i} N_i^{\vee}$ at $\fm_i$. Thus
	\[R_i\cong E\big[\!\big[x_1, \dots, x_{i+[L:\Q_p](i+\sum_{1\leq j<j' \leq i} n_jn_j')}\big]\!\big]\]
	and one can directly check that $|X_{\cM_i, \cM_{\bullet}}^{\ver}|$ is pro-represented by $R_i$. In particular $|X_{\cM, \cM_{\bullet}}^{\ver}|$ is pro-represented by a formally smooth noetherian complete local ring of dimension $r+[L:\Q_p] \dim \ur_{P}$.
	
	Now we define $X_{\cM, \cM_{\bullet}}^{\square, \ver}$ as $X_{\cM, \cM_{\bullet}}^{\square} \times_{X_{\cM, \cM_{\bullet}}} X_{\cM, \cM_{\bullet}}^{\ver}$. Since $X_{\cM, \cM_{\bullet}}^{\ver}\cong |X_{\cM, \cM_{\bullet}}^{\ver}|$ is pro-represen\-table, it is easy to see that $X_{\cM, \cM_{\bullet}}^{\square, \ver}$ is pro-representable (by adding formal variables corresponding to the framing $\alpha$). The morphism $X_{\cM, \cM_{\bullet}}^{\square, \ver} \ra X_{\cM, \cM_{\bullet}}^{\ver}$ is formally smooth of relative dimension $n^2[L:\Q_p]$. As $X_{\cM, \cM_{\bullet}}^{\square, \ver}$ can be constructed from $X_{\cM, \cM_{\bullet}}^{\square}$ by adding frames (with respect to $\ul{\beta}$) and $\End_{(\varphi, \Gamma)}(\Delta_{x_i} \otimes_{\cR_{E,L}} \cR_{A,L}(\delta_{A,i})[1/t])\cong A$ (which follows from the proof of Lemma \ref{parauni}), the morphism $X_{\cM, \cM_{\bullet}}^{\square, \ver} \ra X_{\cM, \cM_{\bullet}}^{\square}$ is formally smooth of relative dimension $r$. We then compute:
	\begin{equation*}
		\dim_E |X_{\cM, \cM_{\bullet}}^{\square}|(E[\varepsilon]/\varepsilon^2)=n^2[L:\Q_p]+(r+[L:\Q_p] \ur_P)-r=[L:\Q_p](n^2+\dim \ur_P) 
	\end{equation*}
which concludes the proof.
\end{proof}

\subsection{$(\varphi,\Gamma)$-modules of type $\Omega$ over $\cR_{E,L}$}

We study certain groupoids of deformations of a $(\varphi, \Gamma)$-module over $\cR_{E,L}$ equipped with an $\Omega$-filtration. We keep the notation of \S~\ref{sec62}.

Let $D$ be a $(\varphi, \Gamma)$-module of rank $n$ over $\cR_{E,L}$ and $\cM:=D[\frac{1}{t}]$. Let $W^+:=W_{\dR}^+(D)$ be the associated $B_{\dR}^+ \otimes_{\Q_p} E$-representation of $\Gal_L$ and $W:=W_{\dR}(\cM)\cong W^+[\frac{1}{t}]$. Assume that $W$ is almost de Rham. We define the groupoids $X_D$, $X_{W^+}$ over $\Art(E)$ of deformations of (respectively) $D$, $W^+$ as in \cite[\S~3.5]{BHS3}. Recall that we have natural morphisms $X_D \ra X_{W^+}$ (induced by the functor $W_{\dR}^+(-)$) and $X_D \ra X_{\cM}$, $X_{W^+} \ra X_W$ (inverting $t$), and that the following diagram commutes:
\begin{equation*}
	\begin{CD}
		X_D @>>> X_{\cM} \\
		@VVV @VVV \\
		X_{W^+} @>>> X_W.
	\end{CD}
\end{equation*}
By \cite[Prop.\ 3.5.1]{BHS3}, the induced morphism $X_D \ra X_{\cM} \times_{X_{W}} X_{W^+}$ is an equivalence.
Fix an isomorphism $\alpha: (L \otimes_{\Q_p} E)^n \xrightarrow{\sim} D_{\pdR}(W)$ so we have the groupoid $X_W^{\square}$ over $\Art(E)$ (\S~\ref{sec61} or \cite[\S~3.1]{BHS3}). We put
\begin{equation*}
	X_{W^+}^{\square}:=X_{W^+} \times_{X_{W}} X_W^{\square}, \ X_D^{\square}:=X_D \times_{X_W} X_W^{\square}. 
\end{equation*}
We assume that $D$ has distinct Sen weights $(h_{1,\tau}>h_{1,\tau}> \cdots> h_{n,\tau})_{\tau \in \Sigma_L}$. Then $W^+$ is regular in the sense of \cite[Def.\ 3.2.4]{BHS3}. Let $(A, W_A^+, \iota_A,\alpha_A) \in X_{W^+}^{\square}$ and $W_A:=W_A^+[1/t]$, then the $B_{\dR}^+ \otimes_{\Q_p} A$-lattice $W_A^+$ of $W_A$ induces a complete flag $\Fil_{W_A^+, \bullet}=\Fil_{W_A^+, \bullet}(D_{\pdR}(W_A)):=(\Fil_{W_A^+, i}(D_{\pdR}(W_A)))_{i=1, \dots, n}$ of $D_{\pdR}(W_A)$ by the formula
\begin{equation*}
	\Fil_{W_A^+, i}(D_{\pdR}(W_A)):=\bigoplus_{\tau\in \Sigma_L} \Fil_{W_A^+}^{-h_{n+1-i,\tau}}(D_{\pdR, \tau}(W_A)):=\bigoplus_{\tau \in \Sigma_L} (t^{-h_{n+1-i,\tau}} W_A^+)_{\tau}^{\Gal_L},
\end{equation*}
where
\begin{eqnarray*}
D_{\pdR, \tau}(W_A)&:=&D_{\pdR}(W_A)\otimes_{L \otimes_{\Q_p} E} (L \otimes_{L,\tau} E)\\
(t^{-h_{n+1-i,\tau}} W_A^+)_{\tau}&:=&t^{-h_{n+1-i,\tau}}\big(W_A^+ \otimes_{L \otimes_{\Q_p} E} (L \otimes_{L,\tau} E)\big).
\end{eqnarray*}
Since the flag $\Fil_{W_A^+, \bullet}$ is stable under the endomorphism $\nu_{W_A}$ of $D_{\pdR}(W_A)$ (see \S~\ref{sec61}), it follows that we have:
\[\big(\alpha_A^{-1} (\Fil_{W_A^+,\bullet}), N_{W_A}=\alpha_A^{-1} \circ \nu_{W_A} \circ \alpha_A\big)\in \tilde{\ug}_L(A).\]
Denote by $\widehat{\tilde{\ug}}_L$ the completion of $\tilde{\ug}_L$ at $(\alpha^{-1} (\Fil_{W^+,\bullet}), N_{W})$. By \cite[Thm.\ 3.2.5]{BHS3}, $X_{W^+}^{\square}$ is pro-representable and we have an isomorphism of functors
\begin{equation*}
	|X_{W^+}^{\square}| \xlongrightarrow{\sim} \widehat{\tilde{\ug}}_L, \ (W_A^+, \iota_A, \alpha_A)\longmapsto \big(\alpha_A^{-1} (\Fil_{W_A^+,\bullet}), N_{W_A}\big).
\end{equation*}
Consider the composition
\[\kappa_{W^+}: X_{W^+}^{\square} \lra | X_{W^+}^{\square}| \xlongrightarrow{\sim} \widehat{\tilde{\ug}}_L \xlongrightarrow{\kappa_B} \widehat{\ft}_L\]
where $\widehat{\ft}_L$ denotes the completion of $\ft_L$ at $0$ (and $\kappa_B$ is defined in (\ref{kappaBP})). The morphism $\kappa_{W^+}$ factors through a map still denoted by $\kappa_{W^+}: X_{W^+} \lra \widehat{\ft}_L$.

We call $D$ \textit{of type $\Omega$} if $D$ admits an $\Omega$-filtration $D_{\bullet}$ (see Definition \ref{defOF} (1)). The $D_i$ for $i=1,\dots,r$ are saturated $(\varphi, \Gamma)$-submodules of $D$ and $\cM_{\bullet}:=(\cM_i)_{0\leq i \leq r}:=(D_i[\frac{1}{t}])_{0 \leq i \leq r}$ is an $\Omega$ filtration of $\cM$ as in \S~\ref{sec62}. We assume that $\cM_{\bullet}$ is generic (which implies that $D_{\bullet}$ is generic, see Remark \ref{geneM1t} (1)). Assume $D$ is of type $\Omega$ and let $D_{\bullet}$ be an $\Omega$-filtration of $D$. We put $\cF_{\bullet}:=W_{\dR}(\cM_{\bullet})=W_{\dR}^+(D_{\bullet})[\frac{1}{t}]$ and define the following groupoids over $\cR_{E,L}$\index{$X_{W^+, \cF_{\bullet}}$} \index{$X_{D, \cM_{\bullet}}$} \index{$X_{W^+, \cF_{\bullet}}^{\square}$} \index{$X_{D, \cM_{\bullet}}^{\square}$}
\begin{eqnarray*}
	X_{W^+, \cF_{\bullet}}:=X_{W^+} \times_{X_W} X_{W, \cF_{\bullet}}, && X_{W^+, \cF_{\bullet}}^{\square}:=X_{W^+, \cF_{\bullet}} \times_{X_W} X_W^{\square}=X_{W^+} \times_{X_W} X_{W, \cF_{\bullet}}^{\square},\\
	X_{D, \cM_{\bullet}}:=X_D \times_{X_{\cM}} X_{\cM, \cM_{\bullet}}, &&X_{D, \cM_{\bullet}}^{\square}:=X_{D, \cM_{\bullet}} \times_{X_D} X_D^{\square}=X_{D, \cM_{\bullet}} \times_{X_W} X_W^{\square},
\end{eqnarray*}
where we have used $X_D^{\square}=X_D \times_{X_W} X_W^{\square}$.

\begin{proposition}\label{relrepD}
	The morphisms of groupoids $X_{D, \cM_{\bullet}} \ra X_{W^+, \cF_{\bullet}}$ and $X_{D, \cM_{\bullet}}^{\square} \ra X_{W^+, \cF}^{\square}$ are formally smooth and relatively representable. 
\end{proposition}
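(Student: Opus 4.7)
The plan is to reduce the statement to the corresponding facts already established for the pair $X_{\cM, \cM_\bullet} \to X_{W, \cF_\bullet}$ by purely formal base-change arguments, since both formal smoothness and relative representability of morphisms of groupoids over $\Art(E)$ are stable under base change.

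First, I would invoke \cite[Prop.~3.5.1]{BHS3} to get an equivalence $X_D \xrightarrow{\sim} X_{\cM} \times_{X_W} X_{W^+}$. Combining this with the definition $X_{D, \cM_{\bullet}} := X_D \times_{X_{\cM}} X_{\cM, \cM_{\bullet}}$ gives
\[
X_{D, \cM_{\bullet}} \;\simeq\; \bigl(X_{\cM} \times_{X_W} X_{W^+}\bigr) \times_{X_{\cM}} X_{\cM, \cM_{\bullet}} \;\simeq\; X_{W^+} \times_{X_W} X_{\cM, \cM_{\bullet}}.
\]
On the other hand $X_{W^+, \cF_{\bullet}} = X_{W^+} \times_{X_W} X_{W, \cF_{\bullet}}$ by definition. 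Hence the morphism $X_{D, \cM_{\bullet}} \to X_{W^+, \cF_{\bullet}}$ is identified with the base change of the morphism $X_{\cM, \cM_{\bullet}} \to X_{W, \cF_{\bullet}}$ along the structural map $X_{W^+, \cF_{\bullet}} \to X_{W, \cF_{\bullet}}$. By Corollary \ref{fsm2}, this latter morphism is formally smooth, and by Lemma \ref{relrep} it is relatively representable (both under the genericity hypothesis, which is in force by assumption on $\cM_\bullet$). Stability of these two properties under base change yields the claim for $X_{D, \cM_{\bullet}} \to X_{W^+, \cF_{\bullet}}$.

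For the framed version, I would observe that by definition
\[
X_{D, \cM_{\bullet}}^{\square} = X_{D, \cM_{\bullet}} \times_{X_W} X_W^{\square}, \qquad X_{W^+, \cF_{\bullet}}^{\square} = X_{W^+, \cF_{\bullet}} \times_{X_W} X_W^{\square},
\]
so that $X_{D, \cM_{\bullet}}^{\square} \to X_{W^+, \cF_{\bullet}}^{\square}$ is the base change of $X_{D, \cM_{\bullet}} \to X_{W^+, \cF_{\bullet}}$ along $X_{W^+, \cF_{\bullet}}^{\square} \to X_{W^+, \cF_{\bullet}}$, and thus inherits formal smoothness and relative representability from the unframed case.

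There is essentially no obstacle here: the argument is entirely a bookkeeping exercise in manipulating fibre products of groupoids. The only point requiring care is to confirm that the fibre product identifications really give the base-change square on the nose (i.e.\ that the two obvious descriptions of $X_{D, \cM_{\bullet}}^{\square}$ via $X_{D, \cM_{\bullet}} \times_{X_D} X_D^{\square}$ and via $X_{D, \cM_{\bullet}} \times_{X_W} X_W^{\square}$ agree), which follows from the chain of equalities already used in the definition of $X_{D, \cM_{\bullet}}^{\square}$ in the excerpt. The real substance of the proposition has already been done upstream, in Corollary \ref{fsm2} and Lemma \ref{relrep}, which rest on the cohomological input of Lemma \ref{phiGamCohotin} and the closed-immersion property of Proposition \ref{relrepMM}.
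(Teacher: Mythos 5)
Your proposal is correct and follows essentially the same route as the paper: using $X_D \cong X_{\cM} \times_{X_W} X_{W^+}$ to rewrite $X_{D,\cM_{\bullet}} \cong X_{W^+} \times_{X_W} X_{\cM,\cM_{\bullet}}$, then deducing both properties by base change from Corollary \ref{fsm2} and Lemma \ref{relrep}, and handling the framed case by a further base change. This matches the paper's argument.
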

\begin{proof}
Since $X_D \cong X_{\cM} \times_{X_{W}} X_{W^+}$, we have $X_{D, \cM_{\bullet}} \cong (X_{W^+} \times_{X_W} X_{\cM}) \times_{X_{\cM}} X_{\cM, \cM_{\bullet}}\cong X_{W^+} \times_{X_W} X_{\cM, \cM_{\bullet}}$. The first part then follows by base change from Lemma \ref{relrep} and Corollary \ref{fsm2}. The second part follows from the first again by base change. 
\end{proof}

We define:
\[\cD_{\bullet}:=(\cD_i)_{1 \leq i \leq r}:=(D_{\pdR}(\cF_i))_{1\leq i \leq r}=\big(D_{\pdR}(W_{\dR}(\cM_i))\big)_{1\leq i \leq r}.\]
Using $X_{P,L}\cong \tilde{\ug}_{P,L} \times_{\ug_L} \tilde{\ug}_L$, we have:
\begin{equation}\label{assPtGS}
	y:=\big(\alpha^{-1}(\cD_{\bullet}), \alpha^{-1}(\Fil_{W^+, \bullet}), N_W\big)\in X_{P,L}(E).
\end{equation}

\begin{proposition}\label{XDM}
	(1) The \ groupoid \ $X_{W^+, \cF_{\bullet}}^{\square}$ \ over \ $\Art(E)$ \ is \ pro-representable. \ The \ functor $|X_{W^+, \cF_{\bullet}}^{\square}|$ is pro-represented by the formal scheme $\widehat{X}_{P,L,y}$.
	
	(2) The groupoid $X_{D, \cM_{\bullet}}^{\square}$ over $\Art(E)$ is pro-representable. The functor $|X_{D, \cM_{\bullet}}^{\square}|$ is pro-represen\-ted by a formal scheme which is formally smooth of relative dimension $[L:\Q_p] \dim \fp$ over $\widehat{X}_{P,L, y}$. 
\end{proposition}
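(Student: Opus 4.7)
The plan for part~(1) is to identify $|X_{W^+,\cF_\bullet}^\square|$ as an honest fibre product via the two pro-representability results already at hand. By definition $X_{W^+,\cF_\bullet}^\square = X_{W^+}\times_{X_W} X_{W,\cF_\bullet}^\square$, so I would invoke \cite[Thm.~3.2.5]{BHS3} to get $|X_{W^+}^\square|\cong \widehat{\tilde{\ug}}_L$ (completed at $(\alpha^{-1}(\Fil_{W^+,\bullet}),N_W)$) and Proposition~\ref{XWF} to get $|X_{W,\cF_\bullet}^\square|\cong \widehat{\tilde{\ug}}_{P,L}$ (completed at $(\alpha^{-1}(\cD_\bullet),N_W)$). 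Both forgetful morphisms to $|X_W^\square|\cong \widehat{\ug}_L$ send an $A$-deformation to its Fontaine nilpotent operator $N_{W_A}$, so under the identifications they become the natural projections $q_B:\tilde{\ug}_L\to \ug_L$ and $q_P:\tilde{\ug}_{P,L}\to \ug_L$. Taking the fibre product and using the definition $X_{P,L}:=\tilde{\ug}_{P,L}\times_{\ug_L}\tilde{\ug}_L$ then produces $\widehat{X}_{P,L,y}$ with $y$ as in (\ref{assPtGS}).

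For part~(2) I would use the decomposition $X_D \cong X_\cM\times_{X_W} X_{W^+}$ from \cite[Prop.~3.5.1]{BHS3} to rewrite
\[X_{D,\cM_\bullet}^\square = X_{D,\cM_\bullet}\times_{X_W}X_W^\square \cong X_{W^+}\times_{X_W}X_{\cM,\cM_\bullet}^\square,\]
so that the map $X_{D,\cM_\bullet}^\square\to X_{W^+,\cF_\bullet}^\square$ is the base change along $X_{W^+}\to X_W$ of $X_{\cM,\cM_\bullet}^\square\to X_{W,\cF_\bullet}^\square$. Formal smoothness and relative representability of the latter are furnished by Corollary~\ref{fsm2} and Lemma~\ref{relrep}, and both properties are preserved under base change. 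Combined with part~(1), this yields pro-representability of $X_{D,\cM_\bullet}^\square$ together with formal smoothness over $\widehat{X}_{P,L,y}$. The relative dimension equals that of $X_{\cM,\cM_\bullet}^\square\to X_{W,\cF_\bullet}^\square$, which by Proposition~\ref{XMM} and Proposition~\ref{XWF} equals
\[[L{:}\Q_p](n^2+\dim\ur_P)-[L{:}\Q_p](\dim\fn_P+\dim\ur_P)=[L{:}\Q_p](n^2-\dim\fn_P)=[L{:}\Q_p]\dim\fp,\]
the last step using $n^2=\dim G=\dim\fp+\dim\fn_P$ since $G/P$ has dimension $\dim\fn_P$.

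The argument is essentially assembled from results already proved, so I do not anticipate a substantive obstacle. The one point requiring care is the bookkeeping in part~(1): one must verify that, under the explicit identifications of $|X_{W^+}^\square|$, $|X_{W,\cF_\bullet}^\square|$ and $|X_W^\square|$ with the appropriate formal completions, the two forgetful morphisms to $|X_W^\square|$ agree on the nose with $q_B$ and $q_P$ on the $\ug_L$-factor. This is built into the construction since all three identifications record the Fontaine operator $N_{W_A}$ in the same way, but it is the only step not entirely formal, and fixing the framing $\alpha$ once and for all is what makes the compatibility transparent.
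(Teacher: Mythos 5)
Your argument is correct and is essentially the paper's proof: part (1) is the same fibre-product identification $|X_{W^+,\cF_\bullet}^{\square}|\cong |X_{W^+}^{\square}|\times_{|X_W^{\square}|}|X_{W,\cF_\bullet}^{\square}|\cong \widehat{\tilde{\ug}}_{P,L}\times_{\widehat{\ug}_L}\widehat{\tilde{\ug}}_L=\widehat{X}_{P,L,y}$ that the paper gets by running the argument of \cite[Cor.~3.5.8]{BHS3} with Proposition \ref{XWF} in place of \cite[Cor.~3.1.9]{BHS3}, and part (2) uses the same decomposition $X_{D,\cM_\bullet}^{\square}\cong X_{W^+,\cF_\bullet}^{\square}\times_{X_{W,\cF_\bullet}^{\square}}X_{\cM,\cM_\bullet}^{\square}$ (the paper's (\ref{missing})), the same inputs (Corollary \ref{fsm2}, Lemma \ref{relrep}/Proposition \ref{relrepD}, Propositions \ref{XWF} and \ref{XMM}) and the same dimension count $[L{:}\Q_p](n^2+\dim\ur_P)-[L{:}\Q_p](\dim\fn_P+\dim\ur_P)=[L{:}\Q_p]\dim\fp$. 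The bookkeeping point you flag is handled exactly as you suggest, via the equivalences $Y\xrightarrow{\sim}|Y|$ for the framed groupoids (Remark \ref{equivgrfu} and its analogues).
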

\begin{proof}
	(1) follows by the same argument as in the proof of \cite[Cor.\ 3.5.8]{BHS3} (with \cite[Cor.\ 3.1.9]{BHS3} replaced by Proposition \ref{XWF}). Using $X_{D, \cM_{\bullet}} \cong X_{W^+} \times_{X_W} X_{\cM, \cM_{\bullet}}$ and $X_{W^+, \cF_{\bullet}}=X_{W^+} \times_{X_W} X_{W, \cF_{\bullet}}$, we deduce $X_{D, \cM_{\bullet}} \cong X_{W^+, \cF_{\bullet}} \times_{X_{W, \cF_{\bullet}}} X_{\cM, \cM_{\bullet}}$ and thus:
	\begin{equation}\label{missing}
	X_{D, \cM_{\bullet}}^{\square} \cong X_{W^+, \cF_{\bullet}}^{\square} \times_{X_{W, \cF_{\bullet}}} X_{\cM, \cM_{\bullet}}\cong X_{W^+, \cF_{\bullet}}^{\square} \times_{X_{W, \cF_{\bullet}}^{\square}} X_{\cM, \cM_{\bullet}}^{\square}.
	\end{equation}
The first part of (2) follows from (1) and Proposition \ref{relrepD}. From (\ref{missing}) and the fact that, for each groupoid $Y$ in the fibre product on the right hand side of (\ref{missing}), we have $Y\cong |Y|$ (see Remark \ref{equivgrfu} for $X_{W,\cF_{\bullet}}^{\square}$, the others being similar),
we deduce $|X_{D, \cM_{\bullet}}^{\square}| \cong |X_{W^+, \cF_{\bullet}}^{\square}| \times_{|X_{W, \cF_{\bullet}}^{\square}|} |X_{\cM, \cM_{\bullet}}^{\square}|$. By Corollary \ref{fsm2}, Proposition \ref{XWF} and Proposition \ref{XMM}, the functor $|X_{\cM, \cM_{\bullet}}^{\square}|$ is formally smooth over $|X_{W, \cF_{\bullet}}^{\square}|$ of relative dimension
\[[L:\Q_p](n^2+\dim \ur_P)-[L:\Q_p](\dim \fn_P + \dim \ur_P)=[L:\Q_p] \dim \fp.\]
The second part of (2) then follows by base change from the above fiber product.
\end{proof}

For $w=(w_{\tau})_{\tau\in \Sigma_L}\in \sW_{L}\cong \sW^{|\Sigma_L|}$, let $X_w:=\prod_{\tau\in \Sigma_L} X_{w_{\tau}}\hookrightarrow X_{P,L}$, where $X_{w_{\tau}} \hookrightarrow X_P$ is defined as in \S~\ref{sec: grgg}. This is an irreducible component of $X_{P,L}$ which only depends on the coset $\sW_{L_P,L}w$. We put (using Proposition \ref{XDM} (1) and with $\widehat{X}_{w,y}$ empty if $y\notin X_{w}$)\index{$X_{W^+, \cF_{\bullet}}^{\square, w}$}:
\begin{equation}\label{XWFw}
	X_{W^+,\cF_{\bullet}}^{\square,w}:=X_{W^+, \cF_{\bullet}}^{\square} \times_{|X_{W^+, \cF_{\bullet}}^{\square}|} \widehat{X}_{w,y}.
\end{equation}
 
\begin{corollary}\label{prorepr+}
The groupoid $X_{W^+, \cF_{\bullet}}^{\square, w}$ over $\Art(E)$ is pro-representable. The functor $|X_{W^+, \cF_{\bullet}}^{\square, w}|$ is pro-represented by the formal scheme $\widehat{X}_{w,y}$.
\end{corollary}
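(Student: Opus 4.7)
\textbf{Proof plan for Corollary \ref{prorepr+}.} The statement is an essentially formal consequence of Proposition \ref{XDM} (1), so the plan is short and there is no serious obstacle; I would present it in two steps.

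First, I would recall from Proposition \ref{XDM} (1) that $X_{W^+, \cF_{\bullet}}^{\square}$ is pro-representable, with $|X_{W^+, \cF_{\bullet}}^{\square}|$ pro-represented by $\widehat{X}_{P,L,y}$. Because the framing $\alpha_A$ rigidifies the deformation data (there are no non-trivial automorphisms of a framed object), the canonical morphism $X_{W^+, \cF_{\bullet}}^{\square} \longrightarrow |X_{W^+, \cF_{\bullet}}^{\square}|$ is an equivalence of groupoids over $\Art(E)$, exactly as in Remark \ref{equivgrfu} for $X_{W,\cF_{\bullet}}^{\square}$. Composing with the isomorphism of Proposition \ref{XDM} (1) gives an equivalence
\[
X_{W^+, \cF_{\bullet}}^{\square} \xlongrightarrow{\sim} \widehat{X}_{P,L,y}.
\]

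Second, by definition
\[
X_{W^+,\cF_{\bullet}}^{\square,w} := X_{W^+, \cF_{\bullet}}^{\square} \times_{|X_{W^+, \cF_{\bullet}}^{\square}|} \widehat{X}_{w,y},
\]
where $\widehat{X}_{w,y} \hookrightarrow \widehat{X}_{P,L,y} \simeq |X_{W^+, \cF_{\bullet}}^{\square}|$ is the natural closed immersion of formal schemes coming from the closed immersion of reduced subschemes $X_w \hookrightarrow X_{P,L}$ (and is empty if $y \notin X_w$). Under the equivalence of the previous step, this fibre product becomes $\widehat{X}_{P,L,y} \times_{\widehat{X}_{P,L,y}} \widehat{X}_{w,y} \simeq \widehat{X}_{w,y}$. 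Hence $X_{W^+,\cF_{\bullet}}^{\square,w}$ is equivalent, as a groupoid over $\Art(E)$, to the set-valued functor pro-represented by $\widehat{X}_{w,y}$, which proves both assertions of the corollary at once.

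The only step that requires a moment of care is the equivalence $X_{W^+, \cF_{\bullet}}^{\square} \xrightarrow{\sim} |X_{W^+, \cF_{\bullet}}^{\square}|$, but this is already the mechanism used repeatedly in \S\ref{sec61}--\S\ref{sec62} (for $X_W^{\square}$, $X_{W,\cF_{\bullet}}^{\square}$, $X_{\cM,\cM_{\bullet}}^{\square}$, cf.\ Remark \ref{equivgrfu} and the proof of Proposition \ref{XDM} (2)), and will be invoked here in exactly the same way; no new input is needed.
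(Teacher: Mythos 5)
Your proposal is correct and is essentially identical to the paper's argument: the paper also deduces the corollary directly from the definition (\ref{XWFw}) of $X_{W^+,\cF_{\bullet}}^{\square,w}$ as a fibre product together with the equivalence $X_{W^+, \cF_{\bullet}}^{\square} \xrightarrow{\sim} |X_{W^+, \cF_{\bullet}}^{\square}|$ and Proposition \ref{XDM} (1). Your write-up simply makes these two ingredients explicit.
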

\begin{proof}
This follows from (\ref{XWFw}) and the equivalence of groupoids $X_{W^+, \cF_{\bullet}}^{\square} \xrightarrow{\sim} |X_{W^+, \cF_{\bullet}}^{\square}|$.
\end{proof}

We define the groupoid $X_{W^+, \cF_{\bullet}}^{w}$ over $\Art(E)$ as the subgroupoid of $X_{W^+, \cF_{\bullet}}$ which is the image of $X_{W^+, \cF_{\bullet}}^{\square, w}$ by the forgetful morphism $X_{W^+, \cF_{\bullet}}^{\square} \ra X_{W^+, \cF_{\bullet}}$.\index{$X_{W^+, \cF_{\bullet}}^{w}$} Thus, for $A\in \Art(E)$, the objects of $X_{W^+, \cF_{\bullet}}^w(A)$ are the quadruples $(W_A^+, \cF_{A, \bullet}, \iota_A, j_A)$ in $X_{W^+, \cF_{\bullet}}(A)$ such that
\[\big(\alpha_A^{-1}(\Fil_{W_A^+, \bullet}), \alpha_A^{-1}(\cD_{A, \bullet}), \alpha_A^{-1}\circ \nu_{W_A}\circ \alpha_A\big)\in X_w(A)\]
for one (equivalently any) isomorphism $\alpha_A: (L \otimes_{\Q_p} A) \xrightarrow{\sim} D_{\pdR}(W_A)$. As in \cite[(3.26)]{BHS3} there is an equivalence of groupoids $X_{W^+, \cF_{\bullet}}^{\square, w} \xrightarrow{\sim} X_{W^+, \cF_{\bullet}}^w \times_{X_{W^+, \cF_{\bullet}}} X_{W^+, \cF_{\bullet}}^{\square}$. We define then\index{$X_{D, \cM_{\bullet}}^{\square, w}$} \index{$X_{D, \cM_{\bullet}}^{w}$}
\begin{equation}\label{XDMbullet}
	X_{D, \cM_{\bullet}}^{\square, w}:=X_{D, \cM_{\bullet}}^{\square} \times_{X_{W^+, \cF_{\bullet}}^{\square}} X_{W^+, \cF_{\bullet}}^{\square, w}\ {\rm and}\ X_{D, \cM_{\bullet}}^{w}:=X_{D, \cM_{\bullet}} \times_{X_{W^+, \cF_{\bullet}}} X_{W^+, \cF_{\bullet}}^{w}.
\end{equation}

\begin{proposition}\label{wclos}
	The morphisms of groupoids $ X_{W^+, \cF_{\bullet}}^w \ra X_{W^+, \cF_{\bullet}}$, $X_{W^+, \cF_{\bullet}}^{\square, w} \ra X_{W^+, \cF_{\bullet}}^{\square}$, $X_{D, \cM_{\bullet}}^{w} \ra X_{D, \cM_{\bullet}}$ and $X_{D, \cM_{\bullet}}^{\square, w} \ra X_{D, \cM_{\bullet}}^{\square}$ are relatively representable and are closed immersions.
\end{proposition}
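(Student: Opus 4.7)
The plan is to first reduce the four statements to a single core statement about the framed $W^+$-version, using the defining fiber product descriptions. Indeed, from (\ref{XDMbullet}) the morphisms $X_{D,\cM_{\bullet}}^{\square,w}\to X_{D,\cM_{\bullet}}^{\square}$ and $X_{D,\cM_{\bullet}}^{w}\to X_{D,\cM_{\bullet}}$ are base changes of $X_{W^+,\cF_{\bullet}}^{\square,w}\to X_{W^+,\cF_{\bullet}}^{\square}$ and $X_{W^+,\cF_{\bullet}}^{w}\to X_{W^+,\cF_{\bullet}}$ along $X_{D,\cM_{\bullet}}^{\square}\to X_{W^+,\cF_{\bullet}}^{\square}$ and $X_{D,\cM_{\bullet}}\to X_{W^+,\cF_{\bullet}}$ respectively; since relative representability and being a closed immersion are preserved under base change, it suffices to handle the two $W^+$-statements.

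For the framed statement, I would invoke Proposition \ref{XDM}(1) and the equivalence $X_{W^+,\cF_{\bullet}}^{\square}\xrightarrow{\sim}|X_{W^+,\cF_{\bullet}}^{\square}|$ (Remark \ref{equivgrfu}) to identify $|X_{W^+,\cF_{\bullet}}^{\square}|$ with the affine formal scheme $\widehat{X}_{P,L,y}$. By Corollary \ref{irrcmpXP} (applied to $G=(\Res_{\Q_p}^L \GL_n)\times_{\Spec\Q_p}\Spec E$), each $X_w$ is a Zariski-closed subscheme of $X_{P,L}$, so the induced morphism $\widehat{X}_{w,y}\hookrightarrow \widehat{X}_{P,L,y}$ of formal schemes is a closed immersion (corresponding to a surjection of the underlying complete local noetherian $E$-algebras; if $y\notin X_w$, then $\widehat{X}_{w,y}$ is empty and everything is trivial). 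Since $X_{W^+,\cF_{\bullet}}^{\square,w}$ is by definition (\ref{XWFw}) the fiber product $X_{W^+,\cF_{\bullet}}^{\square}\times_{|X_{W^+,\cF_{\bullet}}^{\square}|}\widehat{X}_{w,y}$, the morphism $X_{W^+,\cF_{\bullet}}^{\square,w}\to X_{W^+,\cF_{\bullet}}^{\square}$ is the base change of $\widehat{X}_{w,y}\hookrightarrow \widehat{X}_{P,L,y}\cong |X_{W^+,\cF_{\bullet}}^{\square}|$, hence is relatively representable and a closed immersion.

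For the unframed statement $X_{W^+,\cF_{\bullet}}^{w}\to X_{W^+,\cF_{\bullet}}$, I would give a direct argument rather than a descent from the framed case. For $(W_A^+,\cF_{A,\bullet},\iota_A,j_A)\in X_{W^+,\cF_{\bullet}}(A)$ and any choice of framing $\alpha_A\colon (L\otimes_{\Q_p}A)^n\xrightarrow{\sim}D_{\pdR}(W_A)$ lifting $\alpha$, the triple
\[\big(\alpha_A^{-1}(\Fil_{W_A^+,\bullet}),\,\alpha_A^{-1}(\cD_{A,\bullet}),\,\alpha_A^{-1}\circ\nu_{W_A}\circ\alpha_A\big)\]
defines an $A$-point of $\widehat{X}_{P,L,y}$; the condition that this point lies in $\widehat{X}_{w,y}$ is cut out by the (finitely generated) ideal of $X_w$ in $X_{P,L}$, hence is a closed condition on $\Spec A$. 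Moreover, this condition is independent of the choice of $\alpha_A$: another framing differs from $\alpha_A$ by the action of an element of $\GL_n(L\otimes_{\Q_p}A)=(\Res^L_{\Q_p}\GL_n)(A\otimes_{\Q_p} E)$ reducing to the identity modulo $\fm_A$, and $X_w$ is stable under the diagonal $(\Res^L_{\Q_p}\GL_n)_E$-action on $X_{P,L}$ (by Corollary \ref{irrcmpXP}, since this action permutes the irreducible components and fixes each $X_w$, both being the Zariski-closures of the $G$-stable $V_w$). This gives a well-defined closed subgroupoid $X_{W^+,\cF_{\bullet}}^{w}\subseteq X_{W^+,\cF_{\bullet}}$ that is relatively representable by a closed immersion, as desired.

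The main (minor) obstacle is precisely the independence of the closed condition on the choice of framing in the unframed step; once one notices the $G$-invariance of each irreducible component $X_w$ of $X_{P,L}$, the rest is routine fiber-product bookkeeping.
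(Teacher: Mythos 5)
Your proposal is correct and follows essentially the same route as the paper, whose proof is simply a citation of the argument of \cite[Prop.~3.5.10]{BHS3} with \cite[(3.25)]{BHS3} replaced by (\ref{XWFw}): namely, base change of the closed immersion $\widehat{X}_{w,y}\hookrightarrow\widehat{X}_{P,L,y}\cong|X_{W^+,\cF_{\bullet}}^{\square}|$ for the framed and $D$-versions, and, for the unframed case, the observation that the closed condition is independent of the choice of framing because two framings differ by an element of $\GL_n(L\otimes_{\Q_p}A)$ congruent to the identity modulo $\fm_A$ and each $X_w$ is stable under the diagonal $(\Res^L_{\Q_p}\GL_n)_E$-action. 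Your write-up just makes these details (which the paper leaves to the reference) explicit, modulo harmless slips of notation (e.g.\ the group of framings is $\big((\Res^L_{\Q_p}\GL_n)\times_{\Spec\Q_p}\Spec E\big)(A)=\GL_n(L\otimes_{\Q_p}A)$, and the equivalence $X_{W^+,\cF_{\bullet}}^{\square}\xrightarrow{\sim}|X_{W^+,\cF_{\bullet}}^{\square}|$ is the analogue of Remark \ref{equivgrfu} rather than that remark itself).
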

\begin{proof}
	The proposition follows by the same argument as in the proof of \cite[Prop.\ 3.5.10]{BHS3} with \cite[(3.25)]{BHS3} replaced by (\ref{XWFw}).
\end{proof}

Define
\begin{equation}\label{cSy}
\begin{gathered}
\begin{array}{ccl}
\cS(y)&:=&\{w\in \sW_L\ |\ y\in X_{w}(E)\}=\{w \in \sW_L\ |\ \widehat{X}_{w,y}\neq \emptyset\}\\
&=&\{w\in \sW_L\ |\ X_{W^+, \cF_{\bullet}}^w \neq 0\}=\{w\in \sW_L\ |\ X_{D, \cM_{\bullet}}^w \neq 0\}.
\end{array}
\end{gathered}
\end{equation}
Each groupoid $Y$ in the isomorphism $X_{D, \cM_{\bullet}}^{\square, w} \cong X_{D, \cM_{\bullet}}^{\square} \times_{X_{W^+, \cF_{\bullet}}^{\square}} X_{W^+, \cF_{\bullet}}^{\square, w}$ is equivalent to the associated functor $|Y|$ (as all the automorphisms of an object in the groupoid are trivial). Hence $|X_{D, \cM_{\bullet}}^{\square, w}| \cong |X_{D, \cM_{\bullet}}^{\square}| \times_{|X_{W^+, \cF_{\bullet}}^{\square}|} |X_{W^+, \cF_{\bullet}}^{\square, w}|$. From Proposition \ref{XDM} and Corollary \ref{prorepr+}, we deduce:

\begin{corollary}\label{XDMw}
	If $w\in \cS(y)$, the functor $|X_{D, \cM_{\bullet}}^{\square, w}|$ is pro-representable by a noetherian complete local $E$-algebra which is formally smooth of relative dimension $[L:\Q_p]\dim \fp$ over $\widehat{X}_{w,y}$.
\end{corollary}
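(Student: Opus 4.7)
The strategy is to combine the defining fibre product of $X_{D,\cM_{\bullet}}^{\square,w}$ in (\ref{XDMbullet}) with the pro-representability results of Proposition \ref{XDM} and Corollary \ref{prorepr+}, and then to invoke the stability of formal smoothness (of prescribed relative dimension) under arbitrary base change of noetherian complete local $E$-algebras. Concretely, one starts from the definition
\[X_{D, \cM_{\bullet}}^{\square, w} \cong X_{D, \cM_{\bullet}}^{\square} \times_{X_{W^+, \cF_{\bullet}}^{\square}} X_{W^+, \cF_{\bullet}}^{\square, w}.\]
As pointed out in the paragraph just preceding the statement, every object in each of the three square-versions of these groupoids has only trivial automorphisms, so the canonical map from each such groupoid to its functor of isomorphism classes is an equivalence. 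Consequently $|{-}|$ commutes with this fibre product and one obtains
\[|X_{D, \cM_{\bullet}}^{\square, w}| \cong |X_{D, \cM_{\bullet}}^{\square}| \times_{|X_{W^+, \cF_{\bullet}}^{\square}|} |X_{W^+, \cF_{\bullet}}^{\square, w}|.\]

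Next I would identify the three functors appearing on the right-hand side via the results already proved. By Proposition \ref{XDM}(1), $|X_{W^+, \cF_{\bullet}}^{\square}|$ is pro-represented by the formal scheme $\widehat{X}_{P,L,y}$; by Corollary \ref{prorepr+}, $|X_{W^+, \cF_{\bullet}}^{\square, w}|$ is pro-represented by $\widehat{X}_{w,y}$; and by Proposition \ref{XDM}(2), $|X_{D, \cM_{\bullet}}^{\square}|$ is pro-represented by a noetherian complete local $E$-algebra whose associated formal scheme is formally smooth of relative dimension $[L:\Q_p]\dim\fp$ over $\widehat{X}_{P,L,y}$. A small verification, essentially bookkeeping, is needed: namely that under the identifications in Proposition \ref{XDM}(1) and Corollary \ref{prorepr+}, the morphism $|X_{W^+, \cF_{\bullet}}^{\square, w}| \to |X_{W^+, \cF_{\bullet}}^{\square}|$ corresponds precisely to the completion at $y$ of the closed immersion $X_w \hookrightarrow X_{P,L}$; this is built into the defining equation (\ref{XWFw}) and follows by unwinding definitions.

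Putting these together, $|X_{D, \cM_{\bullet}}^{\square, w}|$ is pro-represented by the completed tensor product of the ring pro-representing $|X_{D, \cM_{\bullet}}^{\square}|$ with $\widehat{\co}_{X_w, y}$ over $\widehat{\co}_{X_{P,L}, y}$. This is a noetherian complete local $E$-algebra (the base change along a closed immersion of a formally smooth morphism of such algebras), and the resulting structure morphism to $\widehat{X}_{w,y}$ is formally smooth of relative dimension $[L:\Q_p]\dim \fp$, since formal smoothness of a given relative dimension is preserved under arbitrary base change.

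I do not anticipate a genuine obstacle here: the two substantive inputs, Proposition \ref{XDM} and Corollary \ref{prorepr+}, have already been established, and the remainder is a formal argument about fibre products of pro-representable functors and base change of formally smooth morphisms. The only point deserving a moment of care is the compatibility identification of the map $|X_{W^+, \cF_{\bullet}}^{\square, w}| \to |X_{W^+, \cF_{\bullet}}^{\square}|$ with the completed closed immersion $\widehat{X}_{w,y} \hookrightarrow \widehat{X}_{P,L,y}$, but this is essentially tautological from the definition (\ref{XWFw}).
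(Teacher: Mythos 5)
Your proposal is correct and follows essentially the same route as the paper: the paper's own proof consists precisely of the remark preceding the statement (triviality of automorphisms gives $|X_{D, \cM_{\bullet}}^{\square, w}| \cong |X_{D, \cM_{\bullet}}^{\square}| \times_{|X_{W^+, \cF_{\bullet}}^{\square}|} |X_{W^+, \cF_{\bullet}}^{\square, w}|$), combined with Proposition \ref{XDM} and Corollary \ref{prorepr+} and base change of the formally smooth morphism along $\widehat{X}_{w,y}\hookrightarrow \widehat{X}_{P,L,y}$. Your extra bookkeeping about the compatibility coming from (\ref{XWFw}) is exactly the tautology the paper leaves implicit.
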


The map $(\kappa_B, \kappa_P): X_{P,L} \ra \ft_L \times_{\ft_L/\sW_L} \fz_{L_P,L}={\sT}_{P,L}$ induces a morphism $\widehat{X}_{P, L,y} \ra \widehat{\sT}_{P,L,(0,0)}$. Denote by $\Theta$ the composition
\begin{equation*}
	X_{D, \cM_{\bullet}}^{\square} \lra X_{W^+, \cF_{\bullet}}^{\square} \xlongrightarrow{\sim} |X_{W^+, \cF_{\bullet}}^{\square}| \xlongrightarrow{\sim} \widehat{X}_{P,L,y} \lra \widehat{\sT}_{P,L, (0,0)},
\end{equation*}
which factors through a morphism still denoted by $\Theta: X_{D, \cM_{\bullet}} \ra \widehat{\sT}_{P,L,(0,0)}$. By Lemma \ref{Theta0}, we have

\begin{corollary}\label{ThetaDM}
	Let $w\in \cS(y)$ and $w'\in \sW_L$, then the morphisms $X_{D, \cM_{\bullet}}^{\square,w}\hookrightarrow X_{D, \cM_{\bullet}}^{\square} \ra \widehat{\sT}_{P,L,(0,0)}$ and $X_{D, \cM_{\bullet}}^{w}\hookrightarrow X_{D, \cM_{\bullet}} \ra \widehat{\sT}_{P,L,(0,0)}$ of groupoids over $\Art(E)$ induced by $\Theta$ factor through the embedding $\widehat{\sT}_{w',(0,0)}\hookrightarrow \widehat{\sT}_{P,L,(0,0)}$ if and only if $\sW_{L_P,L}w'^=\sW_{L_P,L}w$. 
\end{corollary}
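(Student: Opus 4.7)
The plan is to reduce Corollary \ref{ThetaDM} directly to Lemma \ref{Theta0} applied to the split reductive $E$-group $G := (\Res^L_{\Q_p}\GL_n)\times_{\Spec\Q_p}\Spec E\simeq \prod_{\tau\in\Sigma_L}\GL_n$ with parabolic $P_L$. The only non-immediate ingredient beyond that Lemma and the pro-representability results of \S~\ref{secMod} is a short sanity check that the basepoint $y$ lies over $(0,0)\in\sT_{P,L}$.

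First I would verify $(\kappa_B,\kappa_P)(y)=(0,0)$ for $y=(\alpha^{-1}(\cD_\bullet),\alpha^{-1}(\Fil_{W^+,\bullet}),N_W)$ as in (\ref{assPtGS}). The derivation $\nu_{B_\pdR}$ preserves $B_\dR^+$, so the induced operator $\nu_W$ on $D_\pdR(W)$ preserves $\Fil_{W^+,\bullet}$; it also preserves the partial flag $\cD_\bullet$ since each $\cF_i$ is $\Gal_L$-stable, and its action on each graded piece $\cD_i/\cD_{i-1}\cong D_\pdR(\cF_i/\cF_{i-1})$ vanishes because $\cF_i/\cF_{i-1}$ is de Rham (the argument in the proof of Lemma~\ref{lemrp}). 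Hence $N_W$ is nilpotent and upper-triangular with respect to $\Fil_{W^+,\bullet}$, so $\kappa_B(y)=0\in\ft_L$, while its image in the abelian Lie algebra $\fz_{L_P,L}$ via $\kappa_P$ is a nilpotent element, hence also $0$.

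Next, by Corollary \ref{prorepr+} the functor $|X^{\square,w}_{W^+,\cF_\bullet}|$ is pro-represented by $\widehat X_{w,y}$, and by the very construction of $\Theta$ the restriction of $\Theta$ to $X^{\square,w}_{W^+,\cF_\bullet}$ corresponds, under this identification, to the composition
\[
\widehat X_{w,y}\hooklongrightarrow \widehat X_{P,L,y}\xlongrightarrow{(\kappa_B,\kappa_P)}\widehat\sT_{P,L,(0,0)}.
\]
Applying Lemma \ref{Theta0} coordinate-wise over $\Sigma_L$ to $G=\prod_\tau\GL_n$ and $P_L=\prod_\tau P$ (the statement and proof of that Lemma are insensitive to replacing $\GL_n$ by a product), this composition factors through $\widehat\sT_{w',(0,0)}\hookrightarrow \widehat\sT_{P,L,(0,0)}$ if and only if $\sW_{L_P,L}w'=\sW_{L_P,L}w$. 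This settles the statement for $X^{\square,w}_{W^+,\cF_\bullet}$, and hence, by (\ref{XDMbullet}) and the base-change identification $X^{\square,w}_{D,\cM_\bullet}\simeq X^\square_{D,\cM_\bullet}\times_{X^\square_{W^+,\cF_\bullet}} X^{\square,w}_{W^+,\cF_\bullet}$ together with the compatibility of $\Theta$ with the morphism $X^\square_{D,\cM_\bullet}\to X^\square_{W^+,\cF_\bullet}$, also for $X^{\square,w}_{D,\cM_\bullet}$.

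Finally, for the unframed versions $X^w_{W^+,\cF_\bullet}$ and $X^w_{D,\cM_\bullet}$, the definitions (and (\ref{XDMbullet})) give presentations $X^{\square,w}_\bullet\simeq X^w_\bullet\times_{X_\bullet}X^\square_\bullet$, and the framing morphisms $X^\square_\bullet\to X_\bullet$ are formally smooth (Propositions \ref{XWF} and \ref{XDM}), hence essentially surjective on $\Art(E)$-points. Since $\Theta$ is by definition the same on framed and unframed objects (the framing only enters in order to identify the target $\widehat X_{P,L,y}$), the factorization through $\widehat\sT_{w',(0,0)}$ on the framed version transfers to the unframed one and conversely, finishing the proof. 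The main, and essentially only, conceptual input is Lemma \ref{Theta0}; the rest is bookkeeping through the groupoid formalism of \S~\ref{sec61}--\ref{secMod}, so I do not anticipate any serious obstacle once the nilpotence of $N_W$ and the resulting equality $(\kappa_B,\kappa_P)(y)=(0,0)$ are cleanly established.
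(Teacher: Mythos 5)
Your proposal is correct and is essentially the paper's own argument: the corollary is stated there as an immediate consequence of Lemma \ref{Theta0}, applied (via Corollary \ref{prorepr+}, Proposition \ref{XDM} and (\ref{XDMbullet})) to the pro-representing object $\widehat{X}_{w,y}$ of $|X^{\square,w}_{W^+,\cF_\bullet}|$, with the unframed case following by lifting framings along the formally smooth morphisms. Your preliminary check that $N_W$ is nilpotent, stabilizes both flags and hence that $(\kappa_B,\kappa_P)(y)=(0,0)$ is exactly the (implicit) normalization already built into the definition of $\Theta$ and $\widehat{\sT}_{P,L,(0,0)}$ in the paper.
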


\subsection{Galois representations and Bernstein paraboline varieties}\label{secGrDV}

We show that the completed local rings of the Bernstein paraboline varieties at generic points of distinct integral weights can be described (up to formally smooth morphisms) by completed local rings on the variety $X_{P,L}$.

Let $\rho:\Gal_L \ra \GL_n(E)$ be a continuous group morphism and let $V$ be the associated representation of $\Gal_L$. Let $X_{\rho}$ be the groupoid over $\Art(E)$ of deformations of the group morphism $\rho$, and $X_V$ be the groupoid over $\Art(E)$ of deformations of the representation $V$ (so $X_{\rho}$ can be viewed as the groupoid of framed deformations of $V$). The natural morphism $X_{\rho} \ra X_V$ is relatively representable and formally smooth of relative dimension $n^2$. Let $D:=D_{\rig}(V)$, we have then an equivalence $X_V \xrightarrow{\sim} X_D$. The morphism $X_{\rho} \ra |X_{\rho}|$ is an equivalence. In fact, this holds for any groupoid with ``$\rho$" in subscript in this section.

Assume that $D$ is almost de Rham with distinct Sen weights and that $D$ admits a generic $\Omega$-filtration. Let $\cM_{\bullet}$ be a generic $\Omega$-filtration on $\cM=D[\frac{1}{t}]$ (recall $\Omega$ is fixed in \S~\ref{sec62}) and put $X_{V, \cM_{\bullet}}:=X_V \times_{X_D} X_{D, \cM_{\bullet}}$, $X_{\rho, \cM_\bullet}:=X_{\rho} \times_{X_V} X_{V,\cM_{\bullet}} \cong X_{\rho} \times_{X_{\cM}} X_{\cM, \cM_{\bullet}}$. Note that $X_{\rho, \cM_{\bullet}} \ra X_{\rho}$ is a closed immersion by Proposition \ref{relrepMM} and base change. For $w\in \sW_L$, we put
\[X_{V, \cM_{\bullet}}^w:=X_V \times_{X_D} X_{D, \cM_{\bullet}}^w\textrm{ and }X_{\rho, \cM_{\bullet}}^w:=X_{\rho} \times_{X_V} X_{V, \cM_{\bullet}}^w\index{$X_{\rho, \cM_{\bullet}}^{w}$}\]
(where we use $W^+:=W_{\dR}^+(D)\cong B_{\dR}^+ \otimes_{\Q_p} V$ in the definition of $X_{D, \cM_{\bullet}}^w$, see (\ref{XDMbullet})). Let $y\in X_{P,L}$ be as in (\ref{assPtGS}) and $\cS(y)$ as in (\ref{cSy}).

\begin{theorem}\label{thmrM}
	(1) The functor $|X_{\rho, \cM_{\bullet}}|$ (resp.\ $|X_{\rho, \cM_{\bullet}}^w|$ for $w\in \cS(y)$) is pro-representable by an equidimensional noetherian complete local ring $R_{\rho, \cM_{\bullet}}$ (resp.\ $R_{\rho,\cM_{\bullet}}^w$) of residue field $E$ and dimension $n^2+[L:\Q_p](\frac{n(n-1)}{2}+r)$. If $\rho$ is moreover de Rham\footnote{Or equivalently potentially crystalline as $D$ admits a generic $\Omega$-filtration.}, then $R_{\rho,\cM_{\bullet}}^w\cong R_{\rho,\cM_{\bullet}}/\fp_w$ for a minimal prime ideal $\fp_w$ of $R_{\rho,\cM_{\bullet}}$. Finally, in this case, the map $w \mapsto \fp_w$ is a bijection between $\cS(y)$ and the set of minimal prime ideals of $R_{\rho,\cM_{\bullet}}$.\index{$R_{\rho,\cM_{\bullet}}$} \index{$R_{\rho,\cM_{\bullet}}^w$}
	
	(2) The morphism $|X_{\rho, \cM_{\bullet}}^{w'}| \ra |X_{V, \cM_{\bullet}}^{w'}| \hookrightarrow |X_{V, \cM_{\bullet}}| \cong |X_{D, \cM_{\bullet}}| \xrightarrow{\Theta} \widehat{\sT}_{P,L,(0,0)}$ of groupoids over $\Art(E)$ factors through $\widehat{\sT}_{w,(0,0)} \hookrightarrow \widehat{\sT}_{P,L,(0,0)}$ if and only if $\sW_{L_P,L}w'=\sW_{L_P,L}w$.
\end{theorem}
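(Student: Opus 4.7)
The strategy is to reduce Theorem \ref{thmrM} to the already-established pro-representability and dimension results for $X_{D, \cM_{\bullet}}^{\square}$ (Proposition \ref{XDM}) and $X_{D, \cM_{\bullet}}^{\square, w}$ (Corollary \ref{XDMw}), which are pro-represented by formally smooth extensions of the formal schemes $\widehat{X}_{P,L, y}$ and $\widehat{X}_{w, y}$ respectively. To establish pro-representability of $|X_{\rho, \cM_{\bullet}}|$ and $|X_{\rho, \cM_{\bullet}}^{w}|$, I would combine: the formal smoothness and relative representability of $X_{\rho} \to X_{V}$ of relative dimension $n^2$, the equivalence $X_V \xrightarrow{\sim} X_D$, the closed immersion $X_{D, \cM_{\bullet}} \to X_D$ from Proposition \ref{relrepMM}, and the closed immersion $X_{D, \cM_{\bullet}}^{w} \to X_{D, \cM_{\bullet}}$ from Proposition \ref{wclos}. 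These exhibit $R_{\rho, \cM_{\bullet}}$ and $R_{\rho, \cM_{\bullet}}^{w}$ as explicit quotients of the framed deformation ring $R_{\rho}$ of $\rho$.

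For the dimension, I would chain together the relative dimensions: $X_{D, \cM_{\bullet}}^{\square} \to X_{D, \cM_{\bullet}}$ is formally smooth of relative dimension $n^2[L:\Q_p]$ (by base change from $X_W^{\square} \to X_W$), while $X_{\rho} \to X_V$ has relative dimension $n^2$. By Proposition \ref{XDM}(2) combined with the equidimensionality of $X_{P,L}$ of dimension $[L:\Q_p]\bigl(\tfrac{n(n-1)}{2} + \dim \fn_P + r\bigr)$ from Corollary \ref{irrcmpXP}, the ring pro-representing $|X_{D, \cM_{\bullet}}^{\square}|$ has dimension $[L:\Q_p]\bigl(n^2 + \tfrac{n(n-1)}{2} + r\bigr)$, using $\dim \fp + \dim \fn_P = n^2$. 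Subtracting $n^2[L:\Q_p]$ and adding $n^2$ yields $\dim R_{\rho, \cM_{\bullet}} = n^2 + [L:\Q_p]\bigl(\tfrac{n(n-1)}{2} + r\bigr)$. Equidimensionality of $R_{\rho, \cM_{\bullet}}$ descends from that of $\widehat{X}_{P,L, y}$, which itself follows from the equidimensionality of $X_{P,L}$ together with the excellence of finite type $E$-algebras. The same argument applied to the irreducible component $X_w$ of $X_{P,L}$ gives the corresponding dimension for $R_{\rho, \cM_{\bullet}}^{w}$.

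For the de Rham case, the hypothesis forces $\nu_W = 0$, so $y = \bigl(\alpha^{-1}(\cD_{\bullet}), \alpha^{-1}(\Fil_{W^+, \bullet}), 0\bigr)$ sits in the zero fiber of the $\gl_{n,L}$-projection, which is precisely the setting where Theorem \ref{unibranch} applies and tells us that $X_w$ is unibranch at $y$ for each $w \in \cS(y)$. Since $X_w$ is reduced and finite type $E$-schemes are excellent, $\widehat{X}_{w, y}$ is also reduced, hence (being unibranch) an integral domain. Transferring through the formally smooth morphisms then shows that $R_{\rho, \cM_{\bullet}}^{w}$ is integral of dimension equal to $\dim R_{\rho, \cM_{\bullet}}$, so the surjection $R_{\rho, \cM_{\bullet}} \twoheadrightarrow R_{\rho, \cM_{\bullet}}^{w}$ (obtained from Proposition \ref{wclos} by base change) has kernel a minimal prime ideal $\fp_w$. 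The bijection $w \leftrightarrow \fp_w$ then follows because minimal primes of $R_{\rho, \cM_{\bullet}}$ correspond bijectively to those of $\widehat{X}_{P,L, y}$, which in the de Rham case are in bijection with $\cS(y)$ through $w \mapsto \widehat{X}_{w, y}$. Finally, part (2) is a direct consequence of Corollary \ref{ThetaDM} applied via the defining fiber product $X_{\rho, \cM_{\bullet}}^{w'} = X_{\rho} \times_{X_D} X_{D, \cM_{\bullet}}^{w'}$, since the formally smooth surjective morphism $|X_{\rho, \cM_{\bullet}}^{w'}| \to |X_{D, \cM_{\bullet}}^{w'}|$ preserves factorization through the closed subfunctor $\widehat{\sT}_{w, (0,0)}$. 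The main obstacle is the bijection argument in the de Rham case, which requires both the unibranch property of Theorem \ref{unibranch} and the excellence of finite type $E$-algebras to faithfully transfer the global irreducible decomposition of $X_{P,L}$ at $y$ to the formal-local picture.
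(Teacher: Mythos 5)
Your proposal is correct and follows essentially the same route as the paper: pro-representability via the fiber-product/closed-immersion structure over $X_{\rho}$ (Propositions \ref{relrepMM}, \ref{wclos}), the dimension count by chaining the formally smooth morphisms through $X_{\rho,\cM_{\bullet}}^{\square}$ and $X_{D,\cM_{\bullet}}^{\square}$ against $\widehat{X}_{P,L,y}$ (resp.\ $\widehat{X}_{w,y}$), the de Rham case via $N_W=0$ and the unibranch Theorem \ref{unibranch} making $\widehat{\co}_{X_w,y}$ a domain, and part (2) from Corollary \ref{ThetaDM} transported along the formally smooth surjection. The only cosmetic difference is bookkeeping (e.g.\ quoting Proposition \ref{relrepMM} for $X_{D,\cM_{\bullet}}\to X_D$ rather than for $X_{\cM,\cM_{\bullet}}\to X_{\cM}$ plus base change), which does not affect the argument.
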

\begin{proof}
	(1) We have $X_{\rho, \cM_{\bullet}}\cong X_{\rho} \times_{X_{\cM}} X_{\cM, \cM_{\bullet}}$. By Proposition \ref{relrepMM} and the fact that $X_{\rho}$ is pro-representable, we deduce that $X_{\rho, \cM_{\bullet}}$ is pro-representable. We have $X_{\rho, \cM_{\bullet}}\xrightarrow{\sim} |X_{\rho, \cM_{\bullet}}|$ and we let $R_{\rho, \cM_{\bullet}}$ be the noetherian complete local ring which pro-represents $ |X_{\rho, \cM_{\bullet}}|$. Define
	\[X_{\rho,\cM_{\bullet}}^{\square}:=X_{\rho, \cM_{\bullet}} \times_{X_{\cM, \cM_{\bullet}}} X_{\cM, \cM_{\bullet}}^{\square} \cong X_{\rho} \times_{X_{\cM}} X_{\cM, \cM_{\bullet}}^{\square}\]
	which is formally smooth of relative dimension $[L:\Q_p]n^2$ over $X_{\rho, \cM_{\bullet}}$ by base change (see (\ref{moreX})). Since $X_{\rho} \ra X_V \cong X_D$ is relatively representable and formally smooth of relative dimension $n^2$, so is the morphism $X_{\rho, \cM_{\bullet}}^{\square} \ra X_D \times_{X_{\cM}} X_{\cM, \cM_{\bullet}}^{\square} \cong X_{D, \cM_{\bullet}}^{\square}$. Together with Proposition \ref{XDM} and Corollary \ref{irrcmpXP}, we deduce that $R_{\rho,\cM_{\bullet}}$ is equidimensional and 
	\begin{eqnarray}\label{dimmissing}
		\nonumber\dim R_{\rho, \cM_{\bullet}}&=&n^2+([L:\Q_p]\dim \fp) + \Big([L:\Q_p]\big(\frac{n(n-1)}{2}+\dim \ur_P\big)\Big)-n^2[L:\Q_p]\\
		&=&n^2+[L:\Q_p]\Big(\frac{n(n-1)}{2}+r\Big).
	\end{eqnarray}
	Let $X_{\rho, \cM_{\bullet}} ^{\square,w}:=X_{\rho, \cM_{\bullet}}^w \times_{X_{\cM, \cM_{\bullet}}} X_{\cM, \cM_{\bullet}}^{\square} \cong X_{D, \cM_{\bullet}}^{\square,w} \times_{X_D} X_{\rho}$. As $X_{\rho}$ is relatively representable over $X_D$, we deduce by Corollary \ref{XDMw} (and the fact $X_{D,\cM_{\bullet}}^{\square, w} \xrightarrow{\sim} |X_{D, \cM_{\bullet}}^{\square, w}|$) that $X_{\rho, \cM_{\bullet}}^{\square, w}$ is pro-representable. It is also easy to see $X_{\rho, \cM_{\bullet}} ^{\square,w} \xrightarrow{\sim} |X_{\rho, \cM_{\bullet}} ^{\square,w}|$. As $X_{\rho}$ is formally smooth of relative dimension $n^2$ over $X_D$, using Corollary \ref{XDMw} again we have formally smooth morphisms (the first of relative dimension $[L:\Q_p]n^2$, the second of relative dimension $n^2+[L:\Q_p]\dim \fp$)
	\begin{equation}\label{fsmors}
		|X_{\rho, \cM_{\bullet}}^w| \longleftarrow |X_{\rho, \cM_{\bullet}} ^{\square,w}| \lra \widehat{X}_{w,y}.
	\end{equation}
As $X_{\rho,\cM_{\bullet}}^w \cong X_{D,\cM_{\bullet}}^w \times_{X_{D, \cM_{\bullet}}} X_{\rho, \cM_{\bullet}}$, by (\ref{dimmissing}), Proposition \ref{wclos} and (\ref{fsmors}) (and the fact $X_{\rho,\cM_{\bullet}}^w \xrightarrow{\sim} |X_{\rho,\cM_{\bullet}}^w|$), it follows that $|X_{\rho, \cM_{\bullet}}^w|$ is pro-representable by a (reduced) local complete noetherian ring of residue field $E$ and dimension $n^2+[L:\Q_p](\frac{n(n-1)}{2}+r)$. When $\rho$ is de Rham, the parameter $N_W$ in $y$ is zero, hence $\widehat{\co}_{X_w,y}$ is a domain by Theorem \ref{unibranch}. The second part of (1) follows. Using Corollary \ref{irrcmpXP}, the last part of (1) also follows. Part (2) follows easily from Corollary \ref{ThetaDM}.
\end{proof}

\begin{corollary}
	For $w\in \cS(y)$, we have 
	\begin{equation*}
		\dim_E |X_{\rho, \cM_{\bullet}}^w|(E[\varepsilon]/\varepsilon^2)=[L:\Q_p] \dim \fp+n^2-n^2[L:\Q_p] +\dim_E \widehat{X}_{w,y}(E[\varepsilon]/\varepsilon^2).
	\end{equation*}
\end{corollary}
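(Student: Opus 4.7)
The plan is to extract this directly from the diagram of formally smooth morphisms constructed in the proof of Theorem \ref{thmrM}(1). Recall that in that proof one builds the auxiliary groupoid $X_{\rho,\cM_{\bullet}}^{\square,w}\cong X_{D,\cM_{\bullet}}^{\square,w}\times_{X_D}X_{\rho}$ and exhibits (see (\ref{fsmors})) two formally smooth morphisms of pro-representable functors
\[
|X_{\rho,\cM_{\bullet}}^{w}|\;\longleftarrow\;|X_{\rho,\cM_{\bullet}}^{\square,w}|\;\longrightarrow\;\widehat{X}_{w,y},
\]
where the first map has relative dimension $[L:\Q_p]n^{2}$ (the framing adds $n^{2}$ free variables over each of the $|\Sigma_{L}|$ embeddings, via $X_{\cM,\cM_{\bullet}}^{\square}\rightarrow X_{\cM,\cM_{\bullet}}$) and the second map has relative dimension $n^{2}+[L:\Q_p]\dim\fp$ (coming from Corollary \ref{XDMw}, i.e.\ the combination of the framed versus unframed Galois deformations, contributing $n^{2}$, and of $|X_{D,\cM_{\bullet}}^{\square}|$ over $|X_{W^{+},\cF_{\bullet}}^{\square}|$, contributing $[L:\Q_p]\dim\fp$ by Proposition \ref{XDM}(2)).

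The only ingredient I would invoke beyond that diagram is the standard fact that for a formally smooth morphism $F\rightarrow G$ of pro-representable functors on $\Art(E)$ of relative dimension $d$, the tangent spaces satisfy
\[
\dim_{E}F(E[\varepsilon]/\varepsilon^{2})=\dim_{E}G(E[\varepsilon]/\varepsilon^{2})+d.
\]
Applying this to the two arrows above gives
\begin{align*}
\dim_{E}|X_{\rho,\cM_{\bullet}}^{\square,w}|(E[\varepsilon]/\varepsilon^{2}) &=[L:\Q_p]n^{2}+\dim_{E}|X_{\rho,\cM_{\bullet}}^{w}|(E[\varepsilon]/\varepsilon^{2}),\\
\dim_{E}|X_{\rho,\cM_{\bullet}}^{\square,w}|(E[\varepsilon]/\varepsilon^{2}) &=n^{2}+[L:\Q_p]\dim\fp+\dim_{E}\widehat{X}_{w,y}(E[\varepsilon]/\varepsilon^{2}).
\end{align*}
Equating the two right-hand sides and solving for $\dim_{E}|X_{\rho,\cM_{\bullet}}^{w}|(E[\varepsilon]/\varepsilon^{2})$ yields the claimed identity.

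There is essentially no obstacle: all the non-trivial work (pro-representability, relative smoothness of the two maps, and the exact relative dimensions) is already done in Proposition \ref{XDM}, Corollary \ref{XDMw}, Proposition \ref{wclos} and the proof of Theorem \ref{thmrM}(1). The only point to double-check is that the constant $n^{2}$ from the framing of $\rho$ and the constant $n^{2}[L:\Q_p]$ from the framing of the almost de Rham $B_{\dR}$-representation enter with the correct signs, i.e.\ that one contributes to the map $|X_{\rho,\cM_{\bullet}}^{\square,w}|\rightarrow\widehat{X}_{w,y}$ (together with $[L:\Q_p]\dim\fp$) while the other contributes to $|X_{\rho,\cM_{\bullet}}^{\square,w}|\rightarrow|X_{\rho,\cM_{\bullet}}^{w}|$; this is exactly what is recorded in the proof of Theorem \ref{thmrM}(1). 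Thus the corollary follows immediately once (\ref{fsmors}) is in place, so the proof can be written as a one-sentence computation referring back to that diagram.
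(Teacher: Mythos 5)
Your proposal is correct and follows essentially the same route as the paper: the paper's proof also reads off the two tangent-space dimensions from the formal smoothness of $|X_{\rho,\cM_{\bullet}}^{\square,w}|$ over $|X_{\rho,\cM_{\bullet}}^{w}|$ (relative dimension $[L:\Q_p]n^{2}$, the framing $\alpha$) and over $\widehat{X}_{w,y}$ (relative dimension $n^{2}+[L:\Q_p]\dim\fp$, via $|X_{D,\cM_{\bullet}}^{\square,w}|$ and Corollary \ref{XDMw}), then subtracts. Your bookkeeping of which constant attaches to which arrow matches the paper exactly, so nothing further is needed.
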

\begin{proof}
	The groupoid $X_{\rho,\cM_{\bullet}}^{\square, w}$ is formally smooth of relative dimension $n^2$ over $X_{V, \cM_{\bullet}}^{\square,w}\cong X_{D, \cM_{\bullet}}^{\square,w}$, and is formally smooth of relative dimension $[L:\Q_p] n^2$ over $X_{\rho,\cM_{\bullet}}^{w}$. We have $Y\xrightarrow{\sim} |Y|$ for $Y\in\{X_{\rho,\cM_{\bullet}}^{\square, w}, X_{\rho,\cM_{\bullet}}^{w}, X_{D,\cM_{\bullet}}^{\square, w}\}$, and thus
	\begin{eqnarray*}
		\dim_E |X_{\rho, \cM_{\bullet}}^w|(E[\varepsilon]/\varepsilon^2)&=&\dim_E|X_{D,\cM_{\bullet}}^{\square, w}|(E[\varepsilon]/\varepsilon^2)+n^2-[L:\Q_p]n^2 \\
		&=&[L:\Q_p] \dim \fp+n^2-n^2[L:\Q_p] +\dim_E \widehat{X}_{w,y}(E[\varepsilon]/\varepsilon^2)
	\end{eqnarray*}
	where the second equality follows from Corollary \ref{XDMw}.
\end{proof}

Let $w_y\in \sW^P_{\max,L}$ such that $\pi(y)=(\alpha^{-1}(\cD_{\bullet}), \alpha^{-1}(\Fil_{W^+, \bullet}))$ lies in the $G_L$-orbit of $(w_y,1)$ in $G_L/B_L \times G_L/P_L$. By Lemma \ref{w'w} and the equalities in (\ref{cSy}), we have

\begin{proposition}\label{exCon}
	Let $w\in \sW_L$, if $X_{\rho, \cM_{\bullet}}^w\neq \emptyset$, then $w^{\max} \geq w_{y}$. 
\end{proposition}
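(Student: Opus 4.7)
The plan is to unwind the definitions and directly apply the two cited ingredients, since the statement is essentially a reformulation of Lemma \ref{w'w} applied to the point $y$.

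First, I would translate the hypothesis $X_{\rho, \cM_{\bullet}}^w \neq \emptyset$ into an incidence statement on $X_{P,L}$. By the chain of equalities in (\ref{cSy}), the condition $X_{\rho, \cM_{\bullet}}^w \neq \emptyset$ is equivalent to $X_{V,\cM_{\bullet}}^w \neq \emptyset$, which in turn (by definition of $X_{V,\cM_\bullet}^w = X_V\times_{X_D}X_{D,\cM_\bullet}^w$ and the fact that $X_\rho\to X_V\to X_D$ has nontrivial fibers) amounts to $X_{D,\cM_\bullet}^w\neq \emptyset$, hence to $w\in \cS(y)$, i.e.\ $y\in X_w(E)$.

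Next, I would read off from the definition of $w_y$ that $y$ lies in the open stratum $V_{w_y}\subset X_{P,L}$. Indeed, the definition above Proposition \ref{exCon} says $\pi(y) = (\alpha^{-1}(\cD_\bullet),\alpha^{-1}(\Fil_{W^+,\bullet}))$ lies in the $G_L$-orbit of $(w_y,1)$ in $G_L/B_L\times G_L/P_L$, which is precisely the orbit $U_{w_y}=G_L(w_y,1)(B_L\times P_L)$ in the Bruhat stratification. Hence $y\in \pi^{-1}(U_{w_y})=V_{w_y}$.

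Combining the two previous paragraphs, $y\in X_w\cap V_{w_y}\neq \emptyset$. Applying Lemma \ref{w'w} componentwise over $\Sigma_L$ (recall $\sW_L=\sW^{|\Sigma_L|}$ and the schemes $X_w$, $V_{w'}$ in the present setting are products over $\tau\in \Sigma_L$ of their $\GL_n$-counterparts) yields $w_y^{\max}\leq w^{\max}$. Since $w_y\in \sW^P_{\max,L}$ by its very construction, $w_y^{\max}=w_y$, and the proposition follows. No obstacle is anticipated: the proof is a direct chain of definitional unravellings plus one application of the cited Bruhat-closure lemma.
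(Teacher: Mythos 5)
Your proof is correct and is exactly the argument the paper intends: Proposition \ref{exCon} is stated there as a direct consequence of Lemma \ref{w'w} together with the equalities in (\ref{cSy}), which is precisely your chain of reductions (forget the framing to pass from $X_{\rho,\cM_{\bullet}}^w$ to $X_{D,\cM_{\bullet}}^w$, use (\ref{cSy}) to get $y\in X_w(E)$, note $y\in V_{w_y}$ by the definition of $w_y$, and apply the closure lemma componentwise over $\Sigma_L$, using $w_y^{\max}=w_y$ since $w_y\in \sW^P_{\max,L}$). No gap; nothing further is needed.
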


Now fix a group morphism $\overline{\rho}: \Gal_L \ra \GL_n(k_E)$ and a strictly $P$-dominant weight $\textbf{h}\in \Z^{n|\Sigma_L|}$ of $G_L$ as in \S~\ref{s: DO}. Let $x=(\rho, \ul{x}, \chi)$ be a point in $X_{\Omega, \textbf{h}}(\overline{\rho}) \hookrightarrow \fX_{\overline{\rho}} \times (\Spec \cZ_{\Omega})^{\rig} \times \widehat{\cZ_{0,L}}$. Assume that $\rho$ is almost de Rham (hence $\chi$ is locally algebraic by Proposition \ref{Senwt}) and has distinct Sen weights.

\begin{corollary}\label{uniqMfil}
We have that $\cM:=D_{\rig}(\rho)[\frac{1}{t}]$ has an $\Omega$-filtration $\cM_{\bullet}$ of parameter $(\ul{x}, \chi_{\varpi_L})$. Moreover, if the parameter $(\ul{x}, \chi_{\varpi_L})$ is generic, then $\cM_{\bullet}$ is the unique $\Omega$-filtration of parameter $(\ul{x}, \chi_{\varpi_L})$ on $\cM$. 
\end{corollary}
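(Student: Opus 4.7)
The plan is to handle the two parts of the statement separately, both relying almost entirely on machinery already established earlier in the paper.

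For the existence assertion, I would simply invoke Corollary \ref{OFpw}. That result provides an $\Omega$-filtration $\sF = \{\Fil_i D_{\rig}(\rho)\}$ on $D_{\rig}(\rho)$ such that, for all $i=1,\dots,r$,
\[
\gr_i D_{\rig}(\rho)\otimes_{\cR_{k(x),L}}\cR_{k(x),L}(\chi_{i,\varpi_L}^{-1})\Big[\tfrac{1}{t}\Big]\cong \Delta_{x_i}\Big[\tfrac{1}{t}\Big].
\]
Inverting $t$ and setting $\cM_i := \Fil_i D_{\rig}(\rho)[1/t]$ produces an increasing filtration on $\cM=D_{\rig}(\rho)[1/t]$ by saturated $(\varphi,\Gamma)$-submodules over $\cR_{E,L}[1/t]$ whose graded pieces satisfy $\gr_i\cM\cong \Delta_{x_i}\otimes_{\cR_{E,L}}\cR_{E,L}(\chi_{i,\varpi_L})[1/t]$. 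This is exactly the defining condition for $\cM_\bullet$ to be an $\Omega$-filtration of parameter $(\ul x,\chi_{\varpi_L})$ in the sense of \S\ref{sec62}.

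For the uniqueness assertion, I would adapt the argument of Corollary \ref{Filuniq} to the setting of $(\varphi,\Gamma)$-modules over $\cR_{E,L}[1/t]$. The essential ingredient is the vanishing statement
\[
\Hom_{(\varphi,\Gamma)}(\gr_i\cM,\,\gr_j\cM)=0\quad\text{for } i\neq j,
\]
which follows from Lemma \ref{phiGamCohotin}(1): since $(\ul x,\chi_{\varpi_L})$ is assumed generic, we have $H^0_{(\varphi,\Gamma)}(\cN_{i,j})=0$ where $\cN_{i,j}=\Delta_{x_i}\otimes \Delta_{x_j}^\vee\otimes \cR_{E,L}(\chi_{i,\varpi_L}\chi_{j,\varpi_L}^{-1})[1/t]$, and tensoring by $\Delta_{x_j}$ converts this into the desired Hom-vanishing. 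Given two $\Omega$-filtrations $\cM_\bullet$ and $\cM'_\bullet$ of the same parameter, the composition
\[
\cM_{r-1}\hookrightarrow \cM\twoheadrightarrow \cM/\cM'_{r-1}\cong \Delta_{x_r}\otimes_{\cR_{E,L}}\cR_{E,L}(\chi_{r,\varpi_L})[1/t]
\]
is killed by a standard d\'evissage on the filtration of $\cM_{r-1}$, so $\cM_{r-1}\subseteq \cM'_{r-1}$; by symmetry (and rank count) equality holds, and one concludes by induction on $r$.

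There is no serious obstacle: the work has already been done in establishing Corollary \ref{OFpw} (which encodes the nontrivial global triangulation result of \S\ref{globaltriangulation}) and in proving the cohomological vanishing of Lemma \ref{phiGamCohotin}(1). The only point requiring minor attention is to check that the d\'evissage argument of Corollary \ref{Filuniq}, originally written for $(\varphi,\Gamma)$-modules over $\cR_{E,L}$, transfers verbatim to the category of $(\varphi,\Gamma)$-modules over $\cR_{E,L}[1/t]$; this is immediate because the vanishing of $\Hom$ groups used in the d\'evissage is exactly what Lemma \ref{phiGamCohotin}(1) provides after inverting $t$.
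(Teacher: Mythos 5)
Your proposal is correct and follows exactly the paper's argument: existence is deduced from Corollary \ref{OFpw} by inverting $t$, and uniqueness from the Hom-vanishing supplied by Lemma \ref{phiGamCohotin} (1) together with the d\'evissage of Corollary \ref{Filuniq} transported to $(\varphi,\Gamma)$-modules over $\cR_{E,L}[\frac{1}{t}]$. No gaps.
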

\begin{proof}
	The existence follows from Corollary \ref{OFpw}. By Lemma \ref{phiGamCohotin} (1) and the same argument as in the proof of Corollary \ref{Filuniq}, the uniqueness follows.
\end{proof}

Recall that $X_{\rho}\cong |X_{\rho}|$ is equivalent to $\widehat{(\fX_{\overline{\rho}})}_{\rho}$ (cf.\ \cite[\S~2.3]{Kis09}). We have a natural morphism of formal schemes
\begin{equation*}
	\widehat{X_{\Omega, \textbf{h}}(\overline{\rho})}_x \lra \widehat{(\fX_{\overline{\rho}})}_{\rho} \cong X_{\rho}.
\end{equation*}

\begin{proposition}\label{closedemR}
	(1) The canonical morphism $\widehat{X_{\Omega, \textbf{h}}(\overline{\rho})}_x \lra X_{\rho}$ factors through a morphism
	\[\widehat{X_{\Omega, \textbf{h}}(\overline{\rho})}_x \lra X_{\rho, \cM_{\bullet}}.\]
	(2) The morphisms $\widehat{X_{\Omega, \textbf{h}}(\overline{\rho})}_x \ra X_{\rho}$ and $\widehat{X_{\Omega, \textbf{h}}(\overline{\rho})}_x \ra X_{\rho, \cM_{\bullet}}$ are closed immersions of groupoids over $\Art(E)$. 
\end{proposition}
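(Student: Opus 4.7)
The plan is to view the ambient formal scheme
\[ \widehat{\big(\fX_{\overline{\rho}} \times (\Spec \cZ_\Omega)^{\rig} \times \widehat{\cZ_{0,L}}\big)}_x \;\simeq\; X_{\rho} \times \widehat{(\Spec \cZ_\Omega)^{\rig}}_{\ul{x}} \times \widehat{\widehat{\cZ_{0,L}}}_{\chi} \]
as a product of completions. Inside this, $\widehat{X_{\Omega,\textbf{h}}(\overline{\rho})}_x$ is automatically a closed subfunctor, since $X_{\Omega,\textbf{h}}(\overline{\rho})$ is by definition a reduced closed subspace of $\fX_{\overline{\rho}} \times (\Spec \cZ_\Omega)^{\rig} \times \widehat{\cZ_{0,L}}$. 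Both parts of the proposition will be obtained by interpreting the relevant maps as closed immersions between closed subfunctors of this product.

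For (1), given $A \in \Art(E)$ and an $A$-point of $\widehat{X_{\Omega,\textbf{h}}(\overline{\rho})}_x$, corresponding to a triple $(\rho_A, \ul{x}_A, \chi_A)$ deforming $(\rho, \ul{x}, \chi)$, I need to equip $\cM_A := D_{\rig}(\rho_A)[\tfrac{1}{t}]$ with an $\Omega$-filtration $(\cM_A)_\bullet$ lifting $\cM_\bullet$. I will apply the global triangulation machinery of \S~\ref{globaltriangulation} — specifically Corollary \ref{para} together with Corollary \ref{rgloOF} — to (a birational modification of) an affinoid neighbourhood of $x$ in $X_{\Omega,\textbf{h}}(\overline{\rho})$, exactly as in the proof of Corollary \ref{OFpw}. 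This produces a saturated $\Omega$-filtration on the universal object $D_{\rig}(\rho^{\univ})[\tfrac{1}{t}]$ whose parameter is read off from the universal data $((\Delta_{\Omega_i}), (\chi_i^{\univ}))$; pulling back to $\Spec A$ gives $(\cM_A)_\bullet$. The reduction modulo $\fm_A$ coincides with $\cM_\bullet$ by the uniqueness statement of Corollary \ref{uniqMfil} (using genericness of the parameter of $\cM_\bullet$). This produces the desired factorization through $X_{\rho,\cM_\bullet}$.

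For (2), I will use that the map $\widehat{X_{\Omega,\textbf{h}}(\overline{\rho})}_x \hookrightarrow X_{\rho} \times \widehat{(\Spec \cZ_\Omega)^{\rig}}_{\ul{x}} \times \widehat{\widehat{\cZ_{0,L}}}_{\chi}$ is a closed immersion of formal schemes by construction. On the other hand, $X_{\rho,\cM_\bullet}$ also maps naturally to the same product: the first coordinate is the forgetful map to $X_{\rho}$ (a closed immersion by Proposition \ref{relrepMM} combined with base change), while the other two coordinates are extracted from the (unique, by Lemma \ref{parauni}) parameter of the $\Omega$-filtration on each deformation, using the identification of parameters in $(\Spec \cZ_\Omega)^{\rig} \times \widehat{Z_{L_P}(L)}$ with parameters in $(\Spec \cZ_\Omega)^{\rig} \times \widehat{\cZ_{0,L}}$ (after accounting for the twists $z^{\textbf{h}_{s_i}}$ appearing in (\ref{galOF})). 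The resulting map $X_{\rho,\cM_\bullet} \hookrightarrow X_{\rho} \times \widehat{(\Spec \cZ_\Omega)^{\rig}}_{\ul{x}} \times \widehat{\widehat{\cZ_{0,L}}}_{\chi}$ is then a closed immersion as well. By Part (1), the closed subfunctor $\widehat{X_{\Omega,\textbf{h}}(\overline{\rho})}_x$ of the product is contained in the closed subfunctor $X_{\rho,\cM_\bullet}$, so the induced map $\widehat{X_{\Omega,\textbf{h}}(\overline{\rho})}_x \to X_{\rho,\cM_\bullet}$ is automatically a closed immersion, as is the composition to $X_{\rho}$.

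The main subtlety lies in step (1): one must verify that the $\Omega$-filtration produced by the global triangulation argument is genuinely compatible with the tautological data $(\ul{x}_A, \chi_A)$ coming from the embedding of $X_{\Omega,\textbf{h}}(\overline{\rho})$ into the triple product — that is, that its parameter is exactly $(\ul{x}_A, \boxtimes_i(\chi_{A,i})_{\varpi_L} z^{\textbf{h}_{s_i}})$. This compatibility will be enforced by the defining embedding (\ref{galOF}), together with the functoriality of the universal $p$-adic differential equation $\Delta_{\Omega_i}$ on $(\Spec \cZ_{\Omega_i})^{\rig}$ constructed in \S~\ref{sec_pDf} and the universal character $\chi_i^{\univ}$ on $\widehat{\co_L^{\times}}$, both of which interpolate correctly the point-wise data.
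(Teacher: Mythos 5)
Your overall strategy coincides with the paper's: the paper proves (1) by the argument of \cite[Prop.~3.7.2]{BHS3} with \cite[Cor.~6.3.10]{KPX} replaced by Corollary \ref{para} and Corollary \ref{rgloOF} (2), and proves (2) by the argument of \cite[Prop.~3.7.3]{BHS3}; your part (2), which rests on Proposition \ref{relrepMM}, Lemma \ref{parauni} and a comparison of closed subfunctors of the completed ambient product, is in the same spirit, and you correctly single out the parameter-matching as a point to be checked. However, there is a genuine gap in your part (1), at the step ``pulling back to $\Spec A$ gives $(\cM_A)_\bullet$''. The $\Omega$-filtration produced by Corollary \ref{para} does \emph{not} live on the universal $(\varphi,\Gamma)$-module over an affinoid neighbourhood of $x$ in $X_{\Omega,\textbf{h}}(\overline{\rho})$: it only exists on $f^*M$ over the proper birational modification $f\colon X'\to X_{\Omega,\textbf{h}}(\overline{\rho})$, away from the bad locus $Z$, and Corollary \ref{rgloOF} (2) is stated only for morphisms $\Spec A\to X'$. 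An Artinian point $\Spec A\to X_{\Omega,\textbf{h}}(\overline{\rho})$ supported at $x$ need not lift through $f$ (properness gives the valuative criterion for valuation rings, not for Artinian rings: already for the blow-up of a surface at a point, the square-zero fat point at the centre does not lift), and the filtration has no reason to descend to $X_{\Omega,\textbf{h}}(\overline{\rho})$ near $x$. So the pull-back you invoke is simply not available, and this transfer is precisely the content of the cited argument of \cite[Prop.~3.7.2]{BHS3}, i.e.\ the real point of the proof rather than a formality.

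One way to bridge it, in the spirit of that argument: since $X_{\rho,\cM_{\bullet}}\to X_{\rho}$ is relatively representable by closed immersions (Proposition \ref{relrepMM} and base change), the desired factorization of $\widehat{X_{\Omega, \textbf{h}}(\overline{\rho})}_x\to X_{\rho}$ amounts to the vanishing in $\widehat{\co}_{X_{\Omega,\textbf{h}}(\overline{\rho}),x}$ of the ideal cutting out the closed subfunctor; this completed local ring is reduced (excellence of rigid local rings), and using that $f$ is proper, surjective and birational one checks that it injects into the product of the $\widehat{\co}_{X',x'}$ over the points $x'\in f^{-1}(x)$, where the vanishing does follow from Corollary \ref{rgloOF} (2) together with the uniqueness of Corollary \ref{uniqMfil} (which identifies the residual filtration at each such $x'$ with the fixed $\cM_{\bullet}$, using genericity). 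Some argument of this kind, reducing the Artinian statement on $X_{\Omega,\textbf{h}}(\overline{\rho})$ to the Artinian statement on $X'$, is missing from your proposal; once it is supplied, your part (2) goes through as you describe (and is essentially the paper's route via \cite[Prop.~3.7.3]{BHS3}).
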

\begin{proof}
	(1) follows from the same argument as in the proof of \cite[Prop.\ 3.7.2]{BHS3} with \cite[Cor.\ 6.3.10]{KPX} replaced by Corollary \ref{para} and Corollary \ref{rgloOF} (2). (2) follows from the same argument as in the proof of \cite[Prop.\ 3.7.3]{BHS3}.
\end{proof}

Consider the composition
\begin{equation*}
	\Theta_x: \widehat{X_{\Omega, \textbf{h}}(\overline{\rho})}_x \hookrightarrow X_{\rho,\cM_{\bullet}} \ra X_{V, \cM_{\bullet}} \cong X_{D, \cM_{\bullet}} \xrightarrow{\Theta} \widehat{\sT}_{P,L,(0,0)}.
\end{equation*}
Let $\textbf{h}'=(h'_{1,\tau}>h'_{2,\tau}>\cdots>h'_{n,\tau})_{\substack{i=1, \dots, n\\ \tau \in \Sigma_L}}$ be the Sen weights of $\rho$. Then by Proposition \ref{Senwt}, there exists $w_x=(w_{x,\tau})_{\tau\in \Sigma_L}\in \sW^{P}_{\min,L}$ such that, for $j=1, \dots, n$, $h'_{w_{x,\tau}^{-1}(j),\tau}=\wt(\chi_i)_{\tau}+h_{j,\tau}$ where $i$ is the integer such that $s_{i-1}<j\leq s_i$.

\begin{proposition}\label{propTHeta}
	The morphism $\Theta_x$ factors through $\widehat{\sT}_{w_xw_{0,L},(0,0)}$.
\end{proposition}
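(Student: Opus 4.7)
The plan is to evaluate the two components $(\kappa_B, \kappa_P)\colon \widehat{\sT}_{P,L,(0,0)} \hookrightarrow \widehat{\ft}_L \times \widehat{\fz}_{L_P,L}$ of $\Theta_x$ on an arbitrary $A$-valued point and verify that, for $w=w_xw_{0,L}$, one has $\kappa_P\circ \Theta_x = \Ad(w)\circ \kappa_B\circ \Theta_x$; by Lemma \ref{lemWT0} this is exactly the condition for the factorization through $\widehat{\sT}_{w,(0,0)}$. So fix $A\in \Art(E)$ and $\xi\in \widehat{X_{\Omega,\textbf{h}}(\overline{\rho})}_x(A)$, which produces a deformation $\rho_A$ of $\rho$ together with deformations $\ul{x}_A,\chi_A$ of $\ul{x},\chi$, and by Corollary \ref{uniqMfil} a unique $\Omega$-filtration $\cM_{A,\bullet}$ on $\cM_A=D_{\rig}(\rho_A)[1/t]$ of parameter $(\ul{x}_A,\chi_{A,\varpi_L})$.

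The component $\kappa_P\circ\Theta_x(\xi)$ is immediate from Proposition \ref{wtmap1} applied to the forgetful maps $\widehat{X_{\Omega,\textbf{h}}(\overline{\rho})}_x\to X_{\rho,\cM_\bullet}\to X_{\cM,\cM_\bullet}$ (note the base parameter of $\cM_\bullet$ in the sense of \S\ref{sec62} is $\delta_i=\chi_{i,\varpi_L}z^{\textbf{h}_{s_i}}$, with the shift in $\textbf{h}_{s_i}$ cancelling in $\wt(\delta_{A,i})-\wt(\delta_i)$): the resulting element of $\widehat{\fz}_{L_P,L}(A)$ has $i$-th coordinate $\wt(\chi_{A,i})-\wt(\chi_i)$, and under the embedding $\widehat{\fz}_{L_P,L}\hookrightarrow \widehat{\ft}_L$ this is an $\ft_L$-tuple whose $j$-th $\tau$-coordinate is $(\wt(\chi_{A,i(j)})-\wt(\chi_{i(j)}))_\tau$, with $i(j)\in\{1,\ldots,r\}$ satisfying $s_{i(j)-1}<j\leq s_{i(j)}$.

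For $\kappa_B\circ\Theta_x(\xi)$ I would use the description of $\kappa_B$ as reading the diagonal of $\alpha_A^{-1}\nu_{W_A}\alpha_A$ on the graded pieces of the Hodge flag $\Fil_{W_A^+,\bullet}$ (after any framing lifting $\alpha$, and centered at the base point $y$, see \S\ref{sec61}): the scalar on the $i$-th graded piece is the generalized Sen $\tau$-weight of $\rho_A$ at Hodge position $n+1-i$, minus the base weight $h'_{n+1-i,\tau}$. A family version of Proposition \ref{galBE1}, provable by combining the analyticity of Sen weights as functions on $X_{\Omega,\textbf{h}}(\overline{\rho})$ with the Zariski-density in $X_{\Omega,\textbf{h}}(\overline{\rho})$ of the points where the formula of Proposition \ref{galBE1} holds pointwise (Theorem \ref{DFOL}~(2)), shows that this generalized Sen weight equals $h_{w_{x,\tau}(n+1-i),\tau}+\wt(\chi_{A,i(w_{x,\tau}(n+1-i))})_\tau$, using the defining relation $h'_{w_{x,\tau}^{-1}(j),\tau}=h_{j,\tau}+\wt(\chi_{i(j)})_\tau$ of $w_x$. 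Subtracting $h'_{n+1-i,\tau}$ kills the $h$-term, leaving the $i$-th $\tau$-coordinate of $\kappa_B\circ\Theta_x(\xi)$ equal to $\wt(\chi_{A,i(w_{x,\tau}(n+1-i))})_\tau-\wt(\chi_{i(w_{x,\tau}(n+1-i))})_\tau$.

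Setting $w=w_xw_{0,L}$, so that $w(i)=w_{x,\tau}(n+1-i)$ componentwise in $\tau$, and recalling that $\Ad(w)$ permutes $\ft_L$-coordinates by $(v_k)\mapsto (v_{w^{-1}(k)})$, a direct index substitution gives that the $j$-th $\tau$-coordinate of $\Ad(w)\circ \kappa_B\circ\Theta_x(\xi)$ is $\wt(\chi_{A,i(j)})_\tau-\wt(\chi_{i(j)})_\tau$, matching $\kappa_P\circ\Theta_x(\xi)$ computed above. The main obstacle is the family-level formula for generalized Sen weights of $\rho_A$: once established by the density/analyticity argument sketched above (or, alternatively, by a direct extension of the $(\varphi,\Gamma)$-module arguments in \S\ref{globaltriangulation} and \S\ref{secBE} to the formal deformation $\rho_A$), the remaining verification is pure bookkeeping through the permutation $w_xw_{0,L}$.
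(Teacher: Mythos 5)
Your argument is correct and is essentially the paper's proof, which simply invokes the argument of \cite[Lemma 3.7.4]{BHS3} together with Proposition \ref{Senwt}: one reads the $\fz_{L_P,L}$-component via Proposition \ref{wtmap1} as $(\wt(\chi_{A,i})-\wt(\chi_i))_i$, the $\ft_L$-component via the identification of $\kappa_B$ with differences of generalized Sen weights on the graded pieces of the Hodge flag (the BHS3 ingredient you assert), and the family Sen-weight formula you sketch by analyticity plus density is exactly Proposition \ref{Senwt} (and its formal-neighborhood upgrade), after which your index bookkeeping through $w_xw_{0,L}$ and the description of $\sT_w$ in Lemma \ref{lemWT0} gives the factorization. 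The only cosmetic point is that the filtration on $\cM_A$ and the map to $X_{\rho,\cM_\bullet}$ are already supplied by Proposition \ref{closedemR} (rather than Corollary \ref{uniqMfil}, which is a pointwise statement), but this is built into the definition of $\Theta_x$.
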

\begin{proof}
	The proposition follows by the same argument as for \cite[Lemma 3.7.4]{BHS3} using Proposition \ref{Senwt}.
\end{proof}

\begin{corollary}\label{locModdv}
Assume moreover that $\rho$ is de Rham. The closed immersion $\widehat{X_{\Omega, \textbf{h}}(\overline{\rho})}_x \hookrightarrow X_{\rho,\cM_{\bullet}}$ factors through an isomorphism 
	\begin{equation}\label{locMdOm}
		\widehat{X_{\Omega, \textbf{h}}(\overline{\rho})}_x \xlongrightarrow{\sim} X_{\rho, \cM_{\bullet}}^{w_xw_{0,L}}.
	\end{equation}
In particular, $X_{\Omega, \textbf{h}}(\overline{\rho})$ is unibranch, hence irreducible, at $x$.
\end{corollary}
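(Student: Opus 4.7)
The plan is to combine the closed immersion provided by Proposition \ref{closedemR} with the Weyl-group constraint from Proposition \ref{propTHeta} to pin $\widehat{X_{\Omega, \textbf{h}}(\overline{\rho})}_x$ inside the unique irreducible component $X_{\rho,\cM_{\bullet}}^{w_xw_{0,L}}$ of $X_{\rho,\cM_{\bullet}}$, and then to match dimensions to upgrade this inclusion to an isomorphism.

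First I would collect the reducedness and equidimensionality inputs on both sides. Since $X_{\Omega, \textbf{h}}(\overline{\rho})$ is reduced and equidimensional of dimension $d := n^2+[L:\Q_p]\bigl(\tfrac{n(n-1)}{2}+r\bigr)$ by Theorem \ref{DFOL}(1), and the local rings of reduced rigid analytic spaces are excellent, the completion $\widehat{\co}_{X_{\Omega,\textbf{h}}(\overline{\rho}),x}$ is a reduced equidimensional noetherian complete local ring of dimension $d$. On the other hand, because $\rho$ is de Rham the operator $N_W$ vanishes, so the point $y$ of (\ref{assPtGS}) has zero $\gl_{n,L}$-coordinate and Theorem \ref{unibranch} applies to show that each $\widehat{X}_{w',y}$ is a domain. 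Combining this with Theorem \ref{thmrM}(1), the ring $R_{\rho,\cM_{\bullet}}$ is equidimensional of dimension $d$ with minimal primes $\{\fp_{w'}\}$ indexed by the cosets $\sW_{L_P,L}w'$ with $w'\in\cS(y)$, and each quotient $R_{\rho,\cM_{\bullet}}^{w'}=R_{\rho,\cM_{\bullet}}/\fp_{w'}$ is a $d$-dimensional domain.

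The matching step is the core of the argument. Write the closed immersion as a surjection $R_{\rho,\cM_{\bullet}}\twoheadrightarrow \widehat{\co}_{X_{\Omega,\textbf{h}}(\overline{\rho}),x}$ with kernel $\fa$. Since $R_{\rho,\cM_{\bullet}}/\fa$ is reduced and equidimensional of dimension $d$, while $R_{\rho,\cM_{\bullet}}$ is itself equidimensional of dimension $d$ and catenary (being a complete noetherian local ring), every minimal prime of $R_{\rho,\cM_{\bullet}}/\fa$ lifts to a minimal prime $\fp_{w'}$ of $R_{\rho,\cM_{\bullet}}$, and the reducedness forces $\fa = \bigcap_{w' \in S}\fp_{w'}$ for some non-empty set $S$ of cosets in $\cS(y)$ (represented in $\sW_{\max,L}^P$). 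For each $w'\in S$ the induced closed immersion $X_{\rho,\cM_{\bullet}}^{w'}\hookrightarrow \widehat{X_{\Omega, \textbf{h}}(\overline{\rho})}_x$ composed with $\Theta$ factors through $\widehat{\sT}_{w_xw_{0,L},(0,0)}$ by Proposition \ref{propTHeta}, so the ``if and only if'' of Theorem \ref{thmrM}(2) forces $\sW_{L_P,L}w' = \sW_{L_P,L}w_xw_{0,L}$, hence $w'=w_xw_{0,L}$ in $\sW_{\max,L}^P$. Thus $S=\{w_xw_{0,L}\}$, $\fa = \fp_{w_xw_{0,L}}$, and the closed immersion yields the desired isomorphism (\ref{locMdOm}); the ``in particular'' assertion follows at once since $R_{\rho,\cM_{\bullet}}^{w_xw_{0,L}}$ is a domain. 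The main subtlety is gaining scheme-theoretic (rather than merely topological) control on $\widehat{X_{\Omega, \textbf{h}}(\overline{\rho})}_x$, which is precisely why we must invoke the excellence of rigid-analytic local rings together with the equidimensionality outputs of Theorems \ref{DFOL}(1) and \ref{thmrM}(1) to match dimensions on the nose.
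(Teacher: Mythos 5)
Your proposal is correct and follows essentially the same route as the paper: match dimensions via Theorem \ref{DFOL}(1) and Theorem \ref{thmrM}(1), use reducedness and equidimensionality (together with the de Rham hypothesis, which makes the $R_{\rho,\cM_{\bullet}}^{w'}$ the minimal-prime quotients) to identify $\widehat{X_{\Omega, \textbf{h}}(\overline{\rho})}_x$ with a union of irreducible components of $\Spec R_{\rho,\cM_{\bullet}}$, and then invoke Proposition \ref{propTHeta} together with the "if and only if" of Theorem \ref{thmrM}(2) to rule out every component except $w_xw_{0,L}$. Your write-up merely makes explicit the commutative-algebra bookkeeping (excellence, catenarity, lifting of minimal primes) that the paper leaves implicit, so there is nothing substantively different to compare.
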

\begin{proof}
	By Theorem \ref{DFOL} (1) and Theorem \ref{thmrM} (1), $\dim \widehat{X_{\Omega, \textbf{h}}(\overline{\rho})}_x=\dim X_{\rho,\cM_{\bullet}}$. We also know that $X_{\Omega, \textbf{h}}(\overline{\rho})$ is reduced and equidimensional by Proposition \ref{DFOL} (1). Hence $\widehat{X_{\Omega, \textbf{h}}(\overline{\rho})}_x$ is isomorphic to a union of irreducible components $\Spec R_{\rho,\cM_{\bullet}}^w$ of $\Spec R_{\rho,\cM_{\bullet}}$ (see Theorem \ref{thmrM} (1)). However, it follows from Proposition \ref{propTHeta} and Theorem \ref{thmrM} (2) that $\Spec R_{\rho,\cM_{\bullet}}^{w}$ can not be contained in $\widehat{X_{\Omega, \textbf{h}}(\overline{\rho})}_x$ if $\sW_{L_P,L}w\neq \sW_{L_P,L}(w_xw_{0,L})$. We then obtain the isomorphism in (\ref{locMdOm}). 
\end{proof}

Let $y\in X_{P,L}(k(x))$ be the point in (\ref{assPtGS}) associated to $(\rho, \cM_{\bullet})$. By Corollary \ref{locModdv} and Proposition \ref{exCon}, we have

\begin{corollary}\label{rescomp}
Assume moreover that $\rho$ is de Rham, then $w_x w_{0,L}\geq w_y$.
\end{corollary}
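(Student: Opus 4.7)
The proof will be an essentially immediate combination of Corollary \ref{locModdv} and Proposition \ref{exCon}, so the plan is short.

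First I would invoke Corollary \ref{locModdv}, which under the hypothesis that $\rho$ is de Rham provides the isomorphism
\[\widehat{X_{\Omega, \textbf{h}}(\overline{\rho})}_x \xlongrightarrow{\sim} X_{\rho, \cM_{\bullet}}^{w_x w_{0,L}}.\]
Since $x$ is by definition a point of $X_{\Omega,\textbf{h}}(\overline{\rho})$, its completed local ring $\widehat{X_{\Omega, \textbf{h}}(\overline{\rho})}_x$ is non-empty, and therefore $X_{\rho, \cM_{\bullet}}^{w_x w_{0,L}} \neq \emptyset$.

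Next I would apply Proposition \ref{exCon} to the element $w := w_x w_{0,L} \in \sW_L$. Proposition \ref{exCon} asserts that the non-emptiness of $X_{\rho,\cM_{\bullet}}^{w}$ forces $w^{\max} \geq w_y$ in the Bruhat order on $\sW^P_{\max,L}$. It remains to identify $w^{\max}$ with $w_x w_{0,L}$ itself: by definition $w_x$ lies in $\sW^P_{\min,L}$, and, as recalled in \S~\ref{Nota2.1} (and also noted in the discussion preceding Theorem \ref{intcs}), this is equivalent to $w_x w_{0,L}\in \sW^P_{\max,L}$. Hence $(w_x w_{0,L})^{\max} = w_x w_{0,L}$, and combining this with the inequality produced by Proposition \ref{exCon} gives
\[w_x w_{0,L} \geq w_y,\]
which is the claim.

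There is essentially no obstacle here, since all of the hard work has been encapsulated in Corollary \ref{locModdv} (which in turn relies on the local-model Theorem \ref{thmrM} and the unibranch statement of Theorem \ref{unibranch}) and in Proposition \ref{exCon} (a direct consequence of Lemma \ref{w'w} and the definition of $\cS(y)$). The only point requiring the slightest care is the bookkeeping between minimal and maximal length representatives, which is handled by the elementary equivalence $w\in \sW^P_{\min,L} \Leftrightarrow w w_{0,L} \in \sW^P_{\max,L}$.
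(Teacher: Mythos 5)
Your proposal is correct and is exactly the paper's argument: Corollary \ref{locModdv} gives the non-emptiness of $X_{\rho,\cM_{\bullet}}^{w_xw_{0,L}}$ (since $\widehat{X_{\Omega, \textbf{h}}(\overline{\rho})}_x$ is non-empty), and Proposition \ref{exCon} then yields $w_xw_{0,L}\geq w_y$, using that $w_x\in \sW^P_{\min,L}$ implies $w_xw_{0,L}\in \sW^P_{\max,L}$. Nothing further is needed.
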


By corollary \ref{corosmo}, we deduce:

\begin{corollary}\label{smDefVar}
Assume that $\rho$ is de Rham, that $\pi(y)\in G_L/P_L \times G_L/B_L$ lies in the smooth locus of the closure of $G_L(1,w_xw_{0,L}) P_L \times B_L$, and that
	\begin{equation*}
		\dim \fz_{L_P,L}^{w_xw_{0,L} w_y^{-1}}+\lg(w_xw_{0,L})-\lg(w_y)=\dim \fz_{L_P,L}.
	\end{equation*}
	Then $X_{\Omega, \textbf{h}}(\overline{\rho})$ is smooth at the point $x=(\rho, \ul{x}, \chi)$.
\end{corollary}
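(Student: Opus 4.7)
The plan is to combine three ingredients already established earlier: the local model isomorphism of Corollary~\ref{locModdv}, the formally smooth comparison in the proof of Theorem~\ref{thmrM}, and the geometric smoothness criterion Corollary~\ref{corosmo} applied to the Schubert-type variety $X_{w}$. Concretely, since $\rho$ is de Rham, Corollary~\ref{locModdv} gives an isomorphism of formal groupoids
\[
\widehat{X_{\Omega,\textbf{h}}(\overline{\rho})}_x \xlongrightarrow{\sim} X_{\rho,\cM_\bullet}^{w_xw_{0,L}},
\]
so the smoothness of $X_{\Omega,\textbf{h}}(\overline{\rho})$ at $x$ is equivalent to the formal smoothness of (the pro-representing ring of) $|X_{\rho,\cM_\bullet}^{w_xw_{0,L}}|$ over $E$. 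Recall $y\in X_{P,L}(E)$ is the point attached to $(\rho,\cM_\bullet)$ via \eqref{assPtGS}, and $y\in X_{w_xw_{0,L}}$ by Corollary~\ref{locModdv} while $\pi(y)\in U_{w_y}$ by definition of $w_y$; thus $y\in X_{w_xw_{0,L}}\cap V_{w_y}$, which is consistent with $w_xw_{0,L}\geq w_y$ by Corollary~\ref{rescomp} (and Lemma~\ref{w'w}).

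Next, the diagram of formally smooth morphisms exhibited in the proof of Theorem~\ref{thmrM},
\[
|X_{\rho,\cM_\bullet}^{w_xw_{0,L}}| \longleftarrow |X_{\rho,\cM_\bullet}^{\square,w_xw_{0,L}}| \longrightarrow \widehat{X}_{w_xw_{0,L},y},
\]
(the right arrow being obtained from Corollary~\ref{XDMw}, itself formally smooth of relative dimension $[L:\Q_p]\dim\fp$ over $\widehat{X}_{w_xw_{0,L},y}$, and the left arrow being formally smooth of relative dimension $[L:\Q_p]n^2$ by base change from $X_{\rho}\to X_V$), reduces the problem further to checking that $\widehat{X}_{w_xw_{0,L},y}$ is formally smooth. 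Since both arrows are formally smooth, formal smoothness propagates: if $\widehat{X}_{w_xw_{0,L},y}$ is smooth then so is $|X_{\rho,\cM_\bullet}^{w_xw_{0,L}}|$, and hence so is $X_{\Omega,\textbf{h}}(\overline{\rho})$ at $x$.

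Finally, the smoothness of $X_{w_xw_{0,L}}$ at $y$ is exactly what Corollary~\ref{corosmo} (applied over $L$, i.e.\ componentwise over $\Sigma_L$, to $X_{P,L}$) delivers: taking $w=w_xw_{0,L}\in\sW^P_{\max,L}$ and $w'=w_y\in\sW^P_{\max,L}$ (so that $w^{\max}=w_xw_{0,L}$ and $w'^{\max}=w_y$), the two required hypotheses are precisely the two assumptions of the statement, namely smoothness of $\overline{U_{w_xw_{0,L}}}$ at $\pi(y)$ and the identity $\dim\fz_{L_P,L}^{w_xw_{0,L}w_y^{-1}}+\lg(w_xw_{0,L})-\lg(w_y)=\dim\fz_{L_P,L}$. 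No genuine obstacle arises here: the argument is essentially a transport of the geometric smoothness of $X_{w_xw_{0,L}}$ at $y$ through the two formally smooth reductions, and the only care needed is to match the labelling conventions ($w_x w_{0,L}\in\sW^P_{\max,L}$ versus $w_x\in\sW^P_{\min,L}$) and to apply Corollary~\ref{corosmo} simultaneously over each embedding $\tau\in\Sigma_L$ using $X_{w_xw_{0,L}}=\prod_\tau X_{w_{x,\tau}w_0}$.
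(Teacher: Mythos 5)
Your argument is correct and is essentially the paper's own proof: the corollary is deduced from Corollary~\ref{corosmo} applied to $X_{w_xw_{0,L}}$ at $y$, transported through the local model, i.e.\ Corollary~\ref{locModdv} together with the formally smooth diagram (\ref{fsmors}) from the proof of Theorem~\ref{thmrM}. The only point worth making explicit is that the de Rham hypothesis also guarantees $N_W=0$, so that $y$ has vanishing $\ug_L$-component and Proposition~\ref{propsmoX}/Corollary~\ref{corosmo} indeed apply at $y$.
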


By the discussion in Remark \ref{remsmo1} (1), we obtain the following special case that will be frequently used:

	\begin{corollary}\label{smDefVar2}
		Assume that $\rho$ is de Rham, that $\lg(w_y)\geq \lg(w_xw_{0,L})-2$ and that 
		\begin{equation*}
			\dim \fz_{L_P,L}-\dim \fz_{L_P,L}^{w_xw_{0,L} w_y^{-1}}=2 \text{ \ if \ $\lg(w_y)=\lg(w_xw_{0,L})-2$}.
		\end{equation*}
	 Then $X_{\Omega, \textbf{h}}(\overline{\rho})$ is smooth at the point $x=(\rho, \ul{x}, \chi)$.
		\end{corollary}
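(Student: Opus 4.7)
The plan is to reduce Corollary \ref{smDefVar2} to Corollary \ref{smDefVar} by verifying its two hypotheses under the assumption $\lg(w_y)\geq \lg(w_xw_{0,L})-2$. First, Corollary \ref{rescomp} gives the Bruhat relation $w_y\leq w_xw_{0,L}$ in $\sW_L$; in particular, the two elements are comparable, which places us in the setting where Remark \ref{remsmo1}(1) can be applied.

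For the first hypothesis of Corollary \ref{smDefVar}, namely that $\pi(y)$ lies in the smooth locus of the Schubert-type closure $\overline{G_L(1,w_xw_{0,L})(P_L\times B_L)}$, this is precisely what Remark \ref{remsmo1}(1) gives based on \cite[Thm.~6.0.4, Cor.~6.2.11]{BiLa}, since by assumption the length gap is at most $2$.

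For the dimension identity
\begin{equation*}
\dim \fz_{L_P,L}^{w_xw_{0,L}w_y^{-1}}+\lg(w_xw_{0,L})-\lg(w_y)=\dim \fz_{L_P,L},
\end{equation*}
I would split into three cases according to $d:=\lg(w_xw_{0,L})-\lg(w_y)\in\{0,1,2\}$. When $d=0$, the Bruhat comparability forces $w_y=w_xw_{0,L}$, so $w_xw_{0,L}w_y^{-1}=1$ and the identity is trivial. When $d=2$, the identity is exactly the standing hypothesis of Corollary \ref{smDefVar2}. The only case with any content is $d=1$: here $w_xw_{0,L}w_y^{-1}$ is a reflection $s_\beta$ (Bruhat cover relation). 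Since both $w_y$ and $w_xw_{0,L}$ are distinct elements of $\sW^P_{\max,L}$ and every right $\sW_{L_P,L}$-coset in $\sW_L$ has a unique maximal length representative, the equality $w_xw_{0,L}=s_\beta w_y$ forces $s_\beta\notin \sW_{L_P,L}$, that is, $\beta$ is not a root of $L_P$. Since $\fz_{L_P,L}$ is the intersection of the kernels of the roots of $L_P$, this gives $\beta|_{\fz_{L_P,L}}\ne 0$, so $\dim \fz_{L_P,L}^{s_\beta}=\dim \fz_{L_P,L}-1$, which is the desired identity; this is also the ``moreover'' assertion of Remark \ref{remsmo1}(1). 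Having checked both hypotheses, Corollary \ref{smDefVar} yields the smoothness of $X_{\Omega,\textbf{h}}(\overline{\rho})$ at $x$.

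The argument is essentially bookkeeping, so there is no substantive obstacle: the geometric content (smoothness of low-codimension Schubert strata and the behavior of reflections modulo $\sW_{L_P,L}$) has already been packaged into Corollary \ref{smDefVar}, Corollary \ref{rescomp}, and Remark \ref{remsmo1}(1).
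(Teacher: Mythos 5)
Your proposal is correct and follows the same route as the paper: the paper deduces this corollary from Corollary \ref{smDefVar} via Remark \ref{remsmo1}(1) (using Corollary \ref{rescomp} for the comparability $w_y\leq w_xw_{0,L}$), exactly as you do. Your case analysis for the dimension identity merely spells out the step the paper dismisses as ``clear'' in Remark \ref{remsmo1}(1) (the length-gap $\leq 1$ cases, where the covering reflection lies outside $\sW_{L_P,L}$ and hence cuts $\fz_{L_P,L}$ in codimension one), so there is no substantive difference.
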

		
\begin{remark}\label{remnonsm}
When $P\neq B$, it could happen that $\dim \fz_{L_P,L}-\dim \fz_{L_P,L}^{w_xw_{0,L} w_y^{-1}}=1$ while $\lg(w_y)=\lg(w_xw_{0,L})-2$ (see Remark \ref{bruhInv} below, and compare with \cite[Rem.\ 4.1.6]{BHS3}). In this case, we don't know if $X_{\Omega, \textbf{h}}(\overline{\rho})$ is smooth at the point $x$. 
\end{remark}

As a quick application, we obtain the following full description of local companion points which completes Corollary \ref{compploc1}. Changing notation, let $z:=(\rho, (\ttr_1,\dots,\ttr_r))\in \widetilde{U}_{\overline{\rho}}^{\pcr}(\xi_0, \textbf{h})$ be as in \S~\ref{secPCD} (where it was denoted $x$ (see above Proposition \ref{VwSch})): $\rho$ is now a generic potentially crystalline representation with distinct Hodge-Tate weights $\textbf{h}$, $\xi_0=\oplus_{i=1}^r \xi_i$ with $\xi_i$ the inertial type of $\Omega_i$, and $(\ttr_1,\dots,\ttr_r)\in (\Spec \cZ_{\Omega})^{\rig}$ is an (ordered) $r$-tuple of absolutely irreducible Weil-Deligne representations such that $\ttr(\rho)\cong \oplus_{i=1}^r \ttr_i$. Recall we have attached to the point $z$ an element $w_z\in \sW_{\max,L}^P$ (see before Proposition \ref{VwSch} where $w_z$ was denoted $w_x$). In fact, if we let $y$ be the point in (\ref{assPtGS}) associated to $z$ (for $D=D_{\rig}(\rho)$ and $\cM_{\bullet}$ the $\Omega$-filtration on $\cM=D[\frac{1}{t}]$ associated to the filtration $\{\oplus_{i=1}^j \ttr_i\}_{j=1}^r$ on $\ttr(\rho)$ as in \S~\ref{introPcr}), then by definition $w_z=w_y$.

\begin{corollary}\label{coLoccomp}
	Let $w\in \sW_{\max,L}^P$, with the above notation the point
	\begin{equation*}
		\big(\rho, (\ttr_1,\dots,\ttr_r), 1\big)\in \fX_{\overline{\rho}} \times (\Spec \cZ_{\Omega})^{\rig} \times \widehat{\cZ_{0,L}}
	\end{equation*}
	lies in $X_{\Omega,ww_{0,L}(\textbf{h})}(\overline{\rho})$ if and only if $w \geq w_z$. 
\end{corollary}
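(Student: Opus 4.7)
The proof will split into the two directions of the equivalence. The ``if'' direction, namely $w\geq w_z \Rightarrow (\rho,(\ttr_1,\dots,\ttr_r),1)\in X_{\Omega,ww_{0,L}(\textbf{h})}(\overline{\rho})$, is precisely Corollary \ref{compploc1} applied to $z\in \widetilde{U}_{\overline{\rho}}^{\pcr}(\xi_0,\textbf{h})$ (since by construction the $w_z$ of the present statement is the element denoted $w_x$ in that corollary). So the remainder of the plan is entirely devoted to the ``only if'' direction.

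The plan for the converse is to apply Corollary \ref{rescomp} to the Bernstein paraboline variety $X_{\Omega,\textbf{h}'}(\overline{\rho})$ with $\textbf{h}':=ww_{0,L}(\textbf{h})$. First I would observe that $\textbf{h}'$ is strictly $P$-dominant (because $w\in\sW^P_{\max,L}$ gives $ww_{0,L}\in\sW^P_{\min,L}$), so the variety is well defined. Since $\rho$ is generic potentially crystalline with distinct Hodge-Tate weights (hence de Rham and almost de Rham with distinct Sen weights), and the character $\chi=1$ is locally algebraic, the hypotheses of Corollary \ref{rescomp} are met at the point $x=(\rho,(\ttr_i),1)\in X_{\Omega,\textbf{h}'}(\overline{\rho})$. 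That corollary then produces $w_x w_{0,L}\geq w_y$ for the element $w_x\in\sW_{\min,L}^{P}$ associated to the Sen-weight data at $x$ and the element $w_y\in\sW_{\max,L}^{P}$ associated to the point of $X_{P,L}$ attached to $(\rho,\cM_\bullet)$ (where $\cM_\bullet$ is the $\Omega$-filtration on $D_{\rig}(\rho)[\tfrac{1}{t}]$ coming from the ordering $(\ttr_i)$). The goal is then to identify both sides explicitly with $w$ and $w_z$ respectively.

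For the left hand side, I would unwind the definition of $w_x$ above Proposition \ref{propTHeta}: with $\chi=1$, Proposition \ref{Senwt} says that the Sen $\tau$-weights of $\rho$ are $\{h'_{j,\tau}\}_{j=1,\dots,n}$, which as a multiset equals the Hodge–Tate weights $\{h_{j,\tau}\}_{j=1,\dots,n}$ since $\textbf{h}'=ww_{0,L}(\textbf{h})$. The defining relation $h_{w_{x,\tau}^{-1}(j),\tau}=h'_{j,\tau}=h_{(ww_{0,L,\tau})^{-1}(j),\tau}$ combined with the distinctness of $\textbf{h}$ forces $w_{x,\tau}=ww_{0,L,\tau}$ for every $\tau$, which is consistent with $w_x\in\sW_{\min,L}^{P}$ because $w\in\sW_{\max,L}^{P}$. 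Hence $w_x w_{0,L}=w$. For the right hand side, I would observe that the $\Omega$-filtration $\cM_\bullet$ on $D_{\rig}(\rho)[\tfrac{1}{t}]$ depends only on the ordered decomposition $\ttr(\rho)\cong\oplus_{i=1}^r \ttr_i$ (via the construction of \S\ref{introPcr} and Berger's equivalence) and not on the Hodge–Tate weights labelling the Bernstein paraboline variety; consequently the associated point $y=(\alpha^{-1}(\cD_\bullet),\alpha^{-1}(\Fil_{W^+,\bullet}),0)\in X_{P,L}(k(x))$ is independent of $\textbf{h}'$, and $w_y$ equals the element $w_{\sF}$ measuring the relative position of the $\Omega$-filtration and the Hodge filtration on $D_{\dR}(\rho)$, which is exactly $w_z$. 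Putting the two identifications together, Corollary \ref{rescomp} becomes $w\geq w_z$, as required. The main bookkeeping point, rather than a deep obstacle, is reconciling the two conventions for the order of factors in $G/B\times G/P$ vs.\ $G/P\times G/B$ used in \S\ref{introPcr} and \S\ref{sec: grgg}, and checking that the two definitions of $w_{\sF}$ and $w_y$ match under the swap of factors.
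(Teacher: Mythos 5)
Your proposal is correct and is essentially the paper's own argument: the "if" direction is exactly Corollary \ref{compploc1}, and the "only if" direction is Corollary \ref{rescomp} applied to the point of $X_{\Omega,ww_{0,L}(\textbf{h})}(\overline{\rho})$, with the identifications $w_xw_{0,L}=w$ (via the Sen weights, Proposition \ref{Senwt}) and $w_y=w_z$ (the $\Omega$-filtration, hence the point $y$ of $X_{P,L}$, depending only on the ordered decomposition $(\ttr_i)$ and not on the labelling weight) — identifications the paper makes in the paragraph preceding the corollary. Your extra bookkeeping just spells out what the paper leaves implicit.
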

\begin{proof}
	The ``if" part is Corollary \ref{compploc1}. The ``only if" part follows from Corollary \ref{rescomp}.
\end{proof}

\begin{remark}
	The case for $P=B$ (and $\rho$ crystalline) was proved in \cite[Thm.\ 4.2.3]{BHS3}, see Remark \ref{remNPara} for related discussions.
\end{remark}

\subsection{Galois cycles}\label{secGalCyc}

We construct certain cycles on the deformation space $X_{\rho}\cong \widehat{(\fX_{\overline{\rho}})}_{\rho}$ of a characteristic zero representation $\rho$ of $\Gal_L$.

Let $\overline{\rho}$ be as in \S~\ref{secGrDV} and $\rho\in \fX_{\overline{\rho}}(E)$. Recall that the local complete noetherian $E$-algebra $\widehat{\co}_{\fX_{\overline{\rho},\rho}}$ is equidimensional of dimension $n^2+[L:\Q_p] n^2$ and pro-represents the functor $|X_{\rho}|$ of (framed) deformations of $\rho$ over $\Art(E)$. Denote by $Z(\Spec \widehat{\co}_{\fX_{\overline{\rho}, \rho}})$ (resp.\ $Z^d(\Spec \widehat{\co}_{\fX_{\overline{\rho},\rho}})$ for $d\in \Z_{\geq 0}$) the free abelian group generated by the irreducible closed subschemes (resp.\ the irreducible closed subschemes of codimension $d$) in $\Spec \widehat{\co}_{\fX_{\overline{\rho},\rho}}$. If $A$ is a noetherian complete local ring which is a quotient of $\widehat{\co}_{\fX_{\overline{\rho},\rho}}$, we set\index{$Z^i(-)$}
\begin{equation*}
	[\Spec A]:=\sum_{\fp} m(\fp, A) [\Spec A/\fp] \in Z(\Spec \widehat{\co}_{\fX_{\overline{\rho},\rho}})
\end{equation*}
where the sum is over the minimal prime ideals $\fp$ of $A$, $m(\fp, A)\in \Z_{\geq 0}$ is the finite length of $A_{\fp}$ as a module over itself and $[\Spec A/\fp]$ is the irreducible component $\Spec A/\fp$ seen in $Z(\Spec \widehat{\co}_{\fX_{\overline{\rho}, \rho}})$.

Assume that $\rho$ has integral distinct $\tau$-Sen weights for each $\tau\in \Sigma_L$, and that $D_{\rig}(\rho)[1/t]$ admits an $\Omega$-filtration $\cM_{\bullet}$ of generic parameter $(\ul{x}, \delta)\in \Spec \cZ_{\Omega} \times \widehat{Z_{L_P}(L)}$ (so $\delta$ is locally algebraic). We refer to \S~\ref{secCCyc} (and the very beginning of \S~\ref{secMod}) for the schemes $\overline{X}_{P,L}$ and $\overline{X}_w$. Let $y \in \overline{X}_{P,L}(E) \subseteq X_{P,L}(E)$ be the point associated to $(\rho, \cM_{\bullet})$ in (\ref{assPtGS}) (depending on a choice of framing $\alpha$). Let $w\in \sW_L$ such that $y\in \overline{X}_w(E) \subset X_w(E)$ (which implies $w^{\min}w_{0,L} \geq w_y$ by Proposition \ref{exCon}). Similarly as in \cite[\S~4.3]{BHS3}, by Proposition \ref{wtmap1} and Theorem \ref{thmrM} (and its proof), we have a commutative diagram of groupoids over $\Art(E)$:
\begin{equation*}
	\text{\xymatrixcolsep{4pc}\xymatrix{
			X_{\rho,\cM_{\bullet}}^w \ar[d] & X_{\rho, \cM_{\bullet}}^{\square, w} \ar[l] \ar[r] \ar [d] & \widehat{X}_{w,y} \ar[d] \\
			X_{\rho, \cM_{\bullet}} \ar[d] \ar[rd]^{\omega_{\delta}} & X_{\rho, \cM_{\bullet}}^{\square} \ar[l] \ar[r] & \widehat{X}_{P,L, y} \ar[d]^{\kappa_P}\\
			X_{\rho} & \widehat{Z_{L_P}(L)}_{\delta} \ar[r]^{\wt-\wt(\delta)} & \widehat{\fz}_{L_P,L}}
	}
\end{equation*}
where we still denote by $\omega_{\delta}$ the composition $X_{\rho, \cM_{\bullet}} \ra X_{\cM, \cM_{\bullet}} \xrightarrow{\omega_{\delta}} \widehat{Z_{L_P}(L)}_{\delta}$. Taking everywhere (except for $X_\rho$) the fibres over $0\in \widehat{\fz}_{L_P,L}$, we obtain the commutative diagram of affine schemes over $E$:
\begin{equation*}
	\text{\xymatrix{
			\Spec \overline{R}_{\rho,\cM_{\bullet}}^w \ar[d] & \Spec \overline{R}_{\rho, \cM_{\bullet}}^{\square, w} \ar[l] \ar[d] \ar[r] & \Spec \widehat{\co}_{\overline{X}_w, y} \ar[d] \\
			\Spec \overline{R}_{\rho,\cM_{\bullet}} \ar[d] & \Spec \overline{R}_{\rho,\cM_{\bullet}}^{\square} \ar[l] \ar[r] & \Spec \widehat{\co}_{\overline{X}_{P,L},y} \\
			\Spec \widehat{\co}_{\fX_{\overline{\rho}},\rho}.
	}}
\end{equation*}
For $w=(w_{\tau})\in \sW_L$, we denote $Z_{w}:=\prod_{\tau\in \Sigma_L} Z_{w_{\tau}}\hookrightarrow Z_{P,L}$ (see \S~\ref{secCCyc} for $Z_{w_\tau}$). The irreducible components of $\Spec \widehat{\co}_{\overline{X}_{P,L}, y}$ are the union of the irreducible components of $\Spec \widehat{\co}_{Z_{w},y}$ for $w$ such that $y\in Z_{w}(E)$ (the last condition does not depend on the choice of the framing $\alpha$). By pull-back and smooth descent, we obtain a bijection between the irreducible components of $\Spec \widehat{\co}_{\overline{X}_{P,L}, y}$ and the irreducible components of $\Spec \overline{R}_{\rho,\cM_{\bullet}}$. In particular, $\Spec \overline{R}_{\rho,\cM_{\bullet}}$ is equidimensional of dimension $n^2+[L:\Q_p]\frac{n(n-1)}{2}$ (equivalently of codimension $[L:\Q_p]\frac{n(n+1)}{2}$ in $\Spec \widehat{\co}_{\fX_{\overline{\rho}, \rho}}$). For $w\in \sW_L$, denote by 
\begin{equation}\label{fZw}
	\fZ_{w}\in Z^{[L:\Q_p]\frac{n(n+1)}{2}}(\Spec \widehat{\co}_{\fX_{\overline{\rho}, \rho}})
\end{equation}
the cycle corresponding via the embedding $\Spec \overline{R}_{\rho, \cM_{\bullet}}\hookrightarrow \Spec \widehat{\co}_{\fX_{\overline{\rho}, \rho}}$ to the cycle $[\Spec \widehat{\co}_{Z_{w}, y}]$ under this bijection. By Theorem \ref{unibranch2}, we have:

\begin{lemma}\label{irrcyc}
	Assume that $\rho$ is de Rham. Then the cycle $\fZ_w$ is irreducible for $w\in \sW_L$.
\end{lemma}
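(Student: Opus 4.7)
The plan is to reduce the statement to Theorem \ref{unibranch2}. The key point is that the de Rham hypothesis on $\rho$ forces the Fontaine monodromy operator $\nu_W$ on $D_{\pdR}(W)$ to vanish, so the $\ug_L$-component $N_W$ of the point $y\in X_{P,L}(E)$ attached to $(\rho,\cM_\bullet)$ in (\ref{assPtGS}) is zero. Writing $y=(\alpha^{-1}(\cD_\bullet),\alpha^{-1}(\Fil_{W^+,\bullet}),0)$, the point $y$ therefore has the form $(g_1B,g_2P,0)$ to which Theorem \ref{unibranch2} applies, taken with $G=(\Res^L_{\Q_p}\GL_n)\times_{\Spec\Q_p}\Spec E$. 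This theorem yields that $Z_w$ is unibranch at $y$, and since the local ring at $y$ of a finite-type $E$-scheme is excellent, this is equivalent to the complete local ring $\widehat{\co}_{Z_w,y}$ having a unique minimal prime. Consequently the cycle $[\Spec\widehat{\co}_{Z_w,y}]\in Z(\Spec\widehat{\co}_{\overline{X}_{P,L},y})$ is a positive integer multiple of a single irreducible closed subscheme.

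It then remains to transport this property across the correspondence defining $\fZ_w$. Recall from the discussion preceding (\ref{fZw}) that $\fZ_w$ is obtained by combining three operations: first the bijection between irreducible components of $\Spec\widehat{\co}_{\overline{X}_{P,L},y}$ and those of $\Spec\overline{R}_{\rho,\cM_\bullet}$ coming from smooth descent through the intermediate scheme $\Spec\overline{R}_{\rho,\cM_\bullet}^\square$ (the smoothness of both morphisms follows from Proposition \ref{XDM} and the proof of Theorem \ref{thmrM}); second the natural identification of the resulting component with a minimal prime of $\overline{R}_{\rho,\cM_\bullet}^w$ (using Theorem \ref{thmrM}(1) and Proposition \ref{XDM}(2), keeping track of multiplicities); and third the closed immersion $\Spec\overline{R}_{\rho,\cM_\bullet}\hookrightarrow\Spec\widehat{\co}_{\fX_{\overline\rho},\rho}$. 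Each of these operations sends a cycle supported on a single irreducible component to a cycle supported on a single irreducible component — smooth morphisms induce bijections on sets of irreducible components of local rings, and a closed immersion preserves irreducibility of support in an obvious way. We conclude that $\fZ_w$ is irreducible, as desired.

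There is no substantive obstacle: once the de Rham hypothesis has been used to zero out the $\ug_L$-coordinate, the argument is essentially a direct invocation of the unibranch result of Theorem \ref{unibranch2}, plus bookkeeping on how this property survives the smooth descent and the final closed immersion. The only point that requires a little care is to make the smooth descent statement for irreducible components precise (which relies on the chain of formally smooth morphisms from the proof of Theorem \ref{thmrM}), but this is routine and does not require any additional input.
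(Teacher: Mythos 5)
Your proof is correct and is essentially the paper's argument: the paper deduces the lemma directly from Theorem \ref{unibranch2}, using exactly your observation that the de Rham hypothesis forces the nilpotent coordinate $N_W$ of $y$ in (\ref{assPtGS}) to vanish, so that $\Spec\widehat{\co}_{Z_w,y}$ is irreducible, and then transports irreducibility through the smooth-descent bijection and the closed immersion defining $\fZ_w$. The bookkeeping you spell out (excellence giving unibranch $\Leftrightarrow$ irreducible completion, and preservation of irreducible components under the formally smooth morphisms from the proof of Theorem \ref{thmrM}) is precisely what the paper leaves implicit.
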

	
\begin{remark}\label{remPocrycycl}
It is easy to see that $Z_{w_{0,L}}=G_L/B_L \times G_L/P_L \times \{0\}$. Indeed, we have a natural embedding $G_L/B_L \times G_L/P_L \times \{0\}\hookrightarrow Z_{w_{0,L}}$, which has to be an isomorphism since both are irreducible schemes of the same dimension. Thus if $\fZ_{w_0}\neq 0$, the nilpotent operator $N_W$ associated to $\rho$ is zero, hence $\rho$ is de Rham. As $\rho$ admits a generic $\Omega$-filtration $\cM_{\bullet}$, $\rho$ is potentially crystalline. Let $\textbf{h}$ be the Hodge-Tate weights of $\rho$ and $\xi_0$ the inertial type of $\rho$ (thus $\xi_0$ has the form $\oplus_{i=1}^r \xi_i$). By the same argument as in \cite[Rem.\ 4.3.1]{BHS3} (using \cite{Kis08}), it follows that $\fZ_{w_0}= [\widehat{\co}_{\fX_{\overline{\rho}}^{\pcr}(\xi_0, \textbf{h}),\rho}]$.
\end{remark}

For $w=(w_{\tau})$ and $w'=(w'_{\tau})\in \sW_L$, put $a_{w,w'}:=\prod_{\tau\in \Sigma_L} a_{w_{\tau}, w'_{\tau}}$ and $b_{w,w'}:=\prod_{\tau\in \Sigma_L} b_{w_{\tau}, w'_{\tau}}$ where $a_{w_{\tau}, w'_{\tau}}$, $b_{w_{\tau}, w'_{\tau}}$ are given as in Theorem \ref{thmcycl} (applied to $G=\GL_n$). Put
\begin{equation}\label{cycGaldef}
	\fC_{w}:=\sum_{w'\in \sW_{L_P,L}\backslash \sW_L} a_{w,w'} \fZ_{w'} \in Z^{[L:\Q_p]\frac{n(n+1)}{2}}(\Spec \widehat{\co}_{\fX_{\overline{\rho},\rho}}).
\end{equation}
Note that $\fZ_w$ and $\fC_w$ are independent of the choice of the representative $w$ in its associated class in $\sW_{L_P,L}\backslash \sW_L$.
\begin{lemma}\label{lemnonzdR}
	Assume that $\rho$ is de Rham, and let $w\in \sW_L$. The followings are equivalent: (1) $\fC_{w} \neq 0$,
	 (2) $\fZ_{w} \neq 0$,
		(3) $w^{\max} \geq w_y$. 
\end{lemma}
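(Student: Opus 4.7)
The plan is to first prove the equivalence (2)$\Leftrightarrow$(3) by directly locating $y$ inside the stratification of $Z_{P,L}$, and then deduce (1)$\Leftrightarrow$(2) from the expansion (\ref{cycGaldef}) combined with positivity of the integers $a_{w,w'}$ and linear independence of the irreducible cycles $\{\fZ_{w'}\}$.

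For (2)$\Leftrightarrow$(3), I would first note that since $\rho$ is de Rham, the nilpotent operator $N_W$ on $D_{\pdR}(W)$ vanishes, so the point $y$ of (\ref{assPtGS}) has zero $\ug_L$-component and lies in $Z_{P,L}(E)\subseteq \overline{X}_{P,L}(E)$. By the definition of $\fZ_w$ and the bijection between irreducible components of $\Spec\widehat{\co}_{\overline{X}_{P,L},y}$ and those of $\Spec\overline{R}_{\rho,\cM_\bullet}$ recalled above (\ref{fZw}), one has $\fZ_w\neq 0$ iff $y\in Z_w(E)$. To decide when this holds I would use the product over $\tau\in\Sigma_L$ of the stratification of $Z_P$ by $V_{w_\tau}'=\pi_Z^{-1}(U_{w_\tau})$: by the formula (\ref{fiber0}) (restricted to the fibres of $\pi_Z$), each fibre of $V_{w_\tau}'\to U_{w_\tau}$ is a linear subspace of $\fn_P$, hence contains $0$. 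Thus $y$ belongs to the product stratum indexed by $w_y$, and $y\in Z_w$ iff this stratum meets the closure of the stratum indexed by $w$, which by Lemma \ref{w'w} (applied componentwise) together with its standard converse for $B\times P$-orbits in $G/B\times G/P$ is equivalent to $w^{\max}\ge w_y$.

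For (1)$\Leftrightarrow$(2), rewriting (\ref{cycGaldef}) as $\fC_w=\sum_{w'\in\sW_{L_P,L}\backslash\sW_L} a_{w,w'}\fZ_{w'}$, Theorem \ref{thmcycl}(3) (applied componentwise to $\prod_\tau\GL_n$) gives $a_{w,w'}\in\Z_{\ge 0}$, $a_{w,w}=1$, and $a_{w,w'}=0$ unless $w'^{\max}\le w^{\max}$. Lemma \ref{irrcyc} (itself relying on Theorem \ref{unibranch2}) ensures each non-zero $\fZ_{w'}$ is an irreducible cycle, and the bijection with irreducible components of $\Spec\widehat{\co}_{\overline{X}_{P,L},y}$ guarantees that distinct cosets $\sW_{L_P,L}w'$ produce distinct, hence linearly independent, irreducible cycles in $Z^{[L:\Q_p]\frac{n(n+1)}{2}}(\Spec\widehat{\co}_{\fX_{\overline{\rho}},\rho})$. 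Consequently (2)$\Rightarrow$(1) is immediate, since $\fZ_w$ occurs in $\fC_w$ with coefficient $a_{w,w}=1$. Conversely if $\fC_w\neq 0$, some $\fZ_{w'}\neq 0$ contributes with $a_{w,w'}>0$, forcing both $w'^{\max}\le w^{\max}$ and, by the already-established (2)$\Leftrightarrow$(3), $w'^{\max}\ge w_y$; this gives $w^{\max}\ge w_y$ and hence $\fZ_w\neq 0$.

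The main obstacle I anticipate is the careful bookkeeping in (2)$\Leftrightarrow$(3): I need to verify that $y$ does lie in the product stratum over $U_{w_y}$ despite $y$ carrying the auxiliary data of $\cD_\bullet$ and $\Fil_{W^+,\bullet}$, and I need the converse of Lemma \ref{w'w} for my specific $y$ (of the form $(\pi(y),0)$), which follows because $0$ lies in every fibre of each $V_{w_\tau}'\to U_{w_\tau}$. Once these geometric points are settled, the remaining cycle-theoretic argument is purely formal.
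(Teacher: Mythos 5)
Your proof is correct and follows essentially the same route as the paper: the implication from (3) uses the same observation that $N_W=0$ places $y=(\pi(y),0)$ in $\overline{U_w}\times\{0\}\subseteq Z_w$ once $w^{\max}\geq w_y$, the reverse geometric implication is Lemma \ref{w'w} (which is the content of Proposition \ref{exCon} invoked by the paper), and the passage between $\fC_w$ and $\fZ_w$ rests, as in the paper, on the expansion (\ref{cycGaldef}) with non-negative coefficients and $a_{w,w}=1$. The only difference is organizational (you prove (2)$\Leftrightarrow$(3) first and then transfer to (1), while the paper runs the cycle (2)$\Rightarrow$(1)$\Rightarrow$(3)$\Rightarrow$(2)), which does not change the substance.
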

\begin{proof}
	(2) $\Rightarrow$ (1) is clear. 
	
	(1) $\Rightarrow$ (3): 
	If $\fC_{w}\neq 0$, then $\fZ_{w'}\neq 0$ ($\Rightarrow y\in X_{w'}$) for some $w'$ with $w'^{\max}\leq w^{\max}$. By Proposition \ref{exCon}, $w'^{\max} \geq w_y$ hence $w^{\max} \geq w_y$. 
	
	(3) $\Rightarrow$ (2): As $\rho$ is de Rham (i.e.\ the entry $N_W$ in $y$ is zero), if $w^{\max} \geq w_y$, then $y$ is contained in the Zariski-closure of $\big(G_L(w^{\max},1) B_L \times P_L\big) \times \{0\}$ in $G_L/B_L \times G_L/P_L \times \{0\} \hookrightarrow Z_{P,L}$, thus $y\in Z_{w}\Rightarrow \fZ_{w}\neq 0$. 
\end{proof}

Now let $x=(\rho, \ul{x},\chi)\in X_{\Omega,\textbf{h}}(\overline{\rho})(E)$ such that $\rho$ is de Rham and $(\ul{x}, \chi)$ is generic. Let $\cM_{\bullet}$ be the unique $\Omega$-filtration on $D_{\rig}(\rho)[1/t]$ of parameter $(\ul{x}, \chi_{\varpi_L})$ (cf.\ Corollary \ref{uniqMfil}) and $y$ be the point (\ref{assPtGS}) of $X_{P,L}$ associated to $(\rho, \cM_{\bullet})$. Recall that we have defined elements $w_y\in \sW^P_{\max,L}$ and $w_x\in \sW^P_{\min,L}$ (see above Proposition \ref{exCon} for $w_y$ and above Proposition \ref{propTHeta} for $w_x$). Let $X_{\Omega,\textbf{h}}(\overline{\rho})_{\wt(\chi)}$ denote the fibre of $X_{\Omega,\textbf{h}}(\overline{\rho})$ at $\wt(\chi)$ (via the morphism $X_{\Omega,\textbf{h}}(\overline{\rho}) \ra \widehat{\cZ_{0,L}} \xrightarrow{\wt} \fz_{L_P,L}$). The following conjecture is a consequence of Conjecture \ref{conjCC} and Lemma \ref{lemnonzdR}:

\begin{conjecture}\label{conjcyc1}
We have 
	\begin{equation*}
		[\Spec \widehat{\co}_{X_{\Omega,\textbf{h}}(\overline{\rho})_{\wt(\chi)},x}]=\sum_{\substack{w\in \sW_{L_P,L}\backslash \sW_L\\ w_y \leq w^{\max} \leq w_xw_{0,L}}} b_{w_xw_{0,L},w} \fC_{w} \in Z^{[L:\Q_p] \frac{n(n+1)}{2}}(\Spec \widehat{\co}_{\fX_{\overline{\rho}, \rho}}).
	\end{equation*} 
\end{conjecture}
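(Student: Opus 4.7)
The plan is to transport the local model isomorphism of Corollary \ref{locModdv} through the formally smooth diagrams constructed in \S~\ref{secMod}, compute the resulting cycle on $\Spec \widehat{\co}_{\overline{X}_{w_xw_{0,L}}, y}$ using Conjecture \ref{conjCC} combined with Theorem \ref{thmcycl} and Lemma \ref{lemcyccomp}, and then translate the result back to the Galois side.

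First, by Corollary \ref{locModdv}, there is an isomorphism $\widehat{X_{\Omega,\textbf{h}}(\overline{\rho})}_x \xrightarrow{\sim} X_{\rho,\cM_{\bullet}}^{w_xw_{0,L}}$, where $\cM_\bullet$ is the unique $\Omega$-filtration on $D_{\rig}(\rho)[1/t]$ of parameter $(\ul{x},\chi_{\varpi_L})$ (Corollary \ref{uniqMfil}). Setting $\delta:=\chi_{\varpi_L}$ and applying Proposition \ref{wtmap1} to the composition
\[X_{\rho,\cM_{\bullet}} \longrightarrow X_{\cM,\cM_{\bullet}} \xrightarrow{\omega_{\delta}} \widehat{Z_{L_P}(L)}_{\delta} \xrightarrow{\wt-\wt(\delta)} \widehat{\fz}_{L_P,L},\]
I see that the fibre of $X_{\Omega,\textbf{h}}(\overline{\rho})$ at $\wt(\chi)$ corresponds, after passing to completions at $x$, to the fibre at $0\in\widehat{\fz}_{L_P,L}$ of this composition, which is exactly $\Spec \overline{R}_{\rho,\cM_{\bullet}}^{w_xw_{0,L}}$ from the diagram of \S~\ref{secGalCyc}.

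Next, the proof of Theorem \ref{thmrM} provides formally smooth morphisms
\[\Spec \overline{R}_{\rho,\cM_{\bullet}}^{w_xw_{0,L}} \longleftarrow \Spec \overline{R}_{\rho,\cM_{\bullet}}^{\square,w_xw_{0,L}} \longrightarrow \Spec \widehat{\co}_{\overline{X}_{w_xw_{0,L}}, y}\]
of equidimensional relative dimensions $[L:\Q_p]n^2$ and $n^2+[L:\Q_p]\dim\fp$ respectively. These are exactly the morphisms used to define the cycles $\fZ_{w''}$ in (\ref{fZw}). Smooth descent for codimension-zero cycles (which, for formally smooth morphisms of noetherian complete local rings with equidimensional source, reduces to the standard bijective correspondence between irreducible components and preservation of local multiplicities) then gives, by (\ref{chaf02}),
\[[\Spec \widehat{\co}_{X_{\Omega,\textbf{h}}(\overline{\rho})_{\wt(\chi)}, x}] = \sum_{w''\in \sW_{L_P,L}\backslash \sW_L} m(Z_{w''},\overline{X}_{w_xw_{0,L}})\, \fZ_{w''}.\]

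At this point I invoke Conjecture \ref{conjCC} applied to $w_xw_{0,L}$ (noting that $w_x\in\sW_{\min,L}^P$ forces $w_xw_{0,L}\in\sW_{\max,L}^P$, so $(w_xw_{0,L})^{\max}=w_xw_{0,L}$). Combined with Proposition \ref{equcycl}, it yields the equality of cycles $[\overline{X}_{w_xw_{0,L}}]=[\overline{X}_{w_xw_{0,L},\lambda}]$ in $Z^0(Z_{P,L})$ for any $\lambda\in\fz_{L_P,L}^{\sd}$, i.e.\ $m(Z_{w''},\overline{X}_{w_xw_{0,L}})=m(Z_{w''},\overline{X}_{w_xw_{0,L},\lambda})$ for every $w''$. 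Lemma \ref{lemcyccomp}~(1) together with the definition of $[\widehat{\fL}((w')^{\max}w_0\cdot 0)_y]$ preceding it then give
\[[\widehat{\co}_{\overline{X}_{w_xw_{0,L},\lambda}, y}] = \sum_{w'} b_{w_xw_{0,L},w'}\sum_{w''} a_{w',w''}[\Spec \widehat{\co}_{Z_{w''}, y}].\]
Transporting this identity via the chain of smooth morphisms above and recognizing the inner sum by the definition (\ref{cycGaldef}) of $\fC_{w'}$, I obtain
\[[\Spec \widehat{\co}_{X_{\Omega,\textbf{h}}(\overline{\rho})_{\wt(\chi)}, x}] = \sum_{w'\in \sW_{L_P,L}\backslash \sW_L} b_{w_xw_{0,L},w'}\,\fC_{w'}.\]
The summation range is then restricted as stated: Theorem \ref{thmcycl}~(2) kills the terms with $(w')^{\max}\not\leq w_xw_{0,L}$, while Lemma \ref{lemnonzdR}, applicable because $\rho$ is de Rham, kills the terms with $(w')^{\max}\not\geq w_y$.

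The hard part is not this implication, which is essentially a bookkeeping of cycle identities through smooth pullbacks: the genuine obstacle is Conjecture \ref{conjCC} itself, which provides the non-trivial identification $[\overline{X}_w]=[\overline{X}_{w,\lambda}]$ beyond the maximal-parabolic case where it follows from Remark \ref{remXwXlam}. Once that input is granted, the remaining smooth-descent step is essentially automatic given that the formally smooth morphisms involved are of pure relative dimension and that the formal schemes on both ends are equidimensional of the expected dimensions computed in the proof of Theorem \ref{thmrM}.
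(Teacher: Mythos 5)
Your derivation is correct and follows the paper's intended route: the statement appears in the paper only as a conjecture asserted to be a consequence of Conjecture \ref{conjCC} and Lemma \ref{lemnonzdR}, and your argument (transporting the cycle identity through the local model of Corollary \ref{locModdv} and the formally smooth diagram of \S~\ref{secGalCyc}, then invoking Proposition \ref{equcycl}, Theorem \ref{thmcycl}, Lemma \ref{lemcyccomp} and Lemma \ref{lemnonzdR}) is exactly the implicit derivation, of which the remark following the conjecture records the unconditional smooth-point case. You also correctly identify that the only genuine input beyond this bookkeeping is Conjecture \ref{conjCC} itself, which the paper establishes only for $P=B$ and $P$ maximal.
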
 

\begin{remark}
It follows from Lemma \ref{lemcyccomp} that Conjecture \ref{conjcyc1} holds if $x$ is moreover a smooth point of $X_{\Omega,\textbf{h}}(\overline{\rho})_{\wt(\chi)}$.
\end{remark}

\section{Applications}

Under the Taylor-Wiles hypothesis, we show several (global) results on $p$-adic automorphic representations including a classicality result, and the existence of all expected companion constituents for certain parabolic subgroup $P$.

\subsection{Automorphy cycles}

We use locally analytic representation theory to construct certain cycles on patched Bernstein eigenvarieties.

\subsubsection{Representation theoretic preliminaries}\label{sec711}

We give some preliminaries on locally analytic representations which we will use in \S~\ref{cyclebern}. We use the notation of \S~\ref{abCon}. 

Denote by $\co^{\fp}_{\alg} \subset \co^{\fp}$ the full subcategory of objects with integral weights (see \cite{OS}). Let $V$ be an admissible locally analytic representation of $G_p$. Let $M\in \co^{\fp}_{\alg}$, the $\fp$-action on $M$ canonically extends to a $P(\Q_p)$-action (\cite[Lemma 3.2]{OS}). We equip $\Hom_{\text{U}(\ug)}(M, V)$ with the left action of $P(\Q_p)$ given by $(pf)(v):=p f(p^{-1}v)$ for $p\in P(\Q_p)$, $v\in V$. Note that this action does preserve $\Hom_{\text{U}(\ug)}(M, V)$ because we have for $p\in P(\Q_p)$, $X\in \text{U}(\ug)$ and $v\in V$:
\begin{multline*}
(pf)(Xv)=p f(p^{-1} X v)=pf\big(\Ad(p^{-1})(X) p^{-1} v\big)=p \Ad(p^{-1})(X) f(p^{-1} v)=X p f(p^{-1}v)\\
=X(pf)(v).
\end{multline*}
We also see that the derived $\fp$-action is trivial, so the $P(\Q_p)$-action on $\Hom_{\text{U}(\ug)}\big(M, V\big)$ is smooth. 
We equip $\Hom_{\text{U}(\ug)}\big(M, \Pi\big)^{N_P^0}$ with a natural Hecke action of $ L_P(\Q_p)^+$ defined as in (\ref{Upheck}).

For any $M\in \co^{\ub}$ (in particular $M\in \co^{\fp}_{\alg}$), we endow $\Hom_{\text{U}(\ug)}(M,V)$ with a canonical topology of space of compact type as follows. Choose finitely many weights $\lambda^i$ of the Lie algebra of $T_{\sI}$ such that $\oplus_i M_{B_{\sI}}(\lambda^i) \twoheadrightarrow M$ where $M_{B_{\sI}}(\lambda^i):=\text{U}(\ug)\otimes_{\text{U}(\ub_{\sI})}\lambda^i$. Then $\Hom_{\text{U}(\ug)}(M,V)$ is naturally a closed subspace of $\oplus_i V[\ub_{\sI}=\lambda^i]$ that we endow with the induced topology. Using that a continuous bijection of vector spaces of compact type is a topological isomorphism, one easily checks that this doesn't depend on the choice of the $\lambda^i$ (for two choices $\lambda^i$ and $\mu^j$, consider $(\oplus_i M_{B_{\sI}}(\lambda^i)) \oplus (\oplus_j M_{B_{\sI}}(\mu^j)) \twoheadrightarrow M$) and that for a morphism $M \ra N$ in $\co^{\ub}$, the induced morphism $\Hom_{\text{U}(\ug)}(N,V) \ra \Hom_{\text{U}(\ug)}(M,V)$ is continuous. We denote by $\Hom_{\text{U}(\ug)}(M,V)^{N_P^0}_{\fss}$ the finite slope part of $\Hom_{\text{U}(\ug)}(M,V)^{N_P^0}$ defined as in \cite[\S~3.2]{Em11}. 
The following lemma follows by the same argument as in the proof of \cite[Lemma 5.2.1]{BHS3} (see \cite[Prop.\ 3.2.4 (ii)]{Em11} for the second isomorphism). 

\begin{lemma}\label{lemadj01}
Assume that $V$ is very strongly admissible (\cite[Def.\ 0.12]{Em2}), let $\pi$ be a finite length smooth representation of $L_P(\Q_p)$ over $E$ and $M\in \co_{\alg}^{\fp}$. There are natural bijections
	\begin{eqnarray*}
		\Hom_{G_p}\Big(\cF_{P^-(\Q_p)}^{G_p}\big(\Hom_E(M,E)^{\fn_{P^-}^{\infty}}, \pi(\delta_P^{-1})\big), V\Big)
		&\xlongrightarrow{\sim} & \Hom_{ L_P(\Q_p)^+}\big(\pi, \Hom_{\text{U}(\ug)}(M, V)^{N_P^0}\big)\\
		&\cong& \Hom_{L_P(\Q_p)}\big(\pi, \Hom_{\text{U}(\ug)}(M, V)^{N_P^0}_{\fss}\big)
	\end{eqnarray*}
	where $\Hom_E(M,E)^{\fn_{P^-}^{\infty}}\!\subset \Hom_E(M,E)$ is the object in $\co^{\fp^-}_{\alg}$ defined in \cite[\S~3]{Br13II} and $\delta_P$ is the modulus character of $P(\Q_p)$.
\end{lemma}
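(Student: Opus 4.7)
The plan is to mimic the proof of \cite[Lemma~5.2.1]{BHS3}, adapting the argument from the Borel subgroup $B$ to the general parabolic $P$. The very strong admissibility assumption on $V$ is what makes the Orlik--Strauch machinery applicable and the relevant topological identifications well-behaved.

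For the first bijection, I would use the adjunction property built into the definition of the Orlik--Strauch functor $\cF_{P^-}^{G_p}$. Unpacking its construction, a $G_p$-equivariant morphism $f$ from $\cF_{P^-}^{G_p}\big(\Hom_E(M,E)^{\fn_{P^-}^{\infty}}, \pi(\delta_P^{-1})\big)$ to $V$ should correspond, via the canonical evaluation pairing $M \otimes_E \Hom_E(M,E)^{\fn_{P^-}^{\infty}} \to E$, to the map $\widetilde{f}: \pi \to \Hom_{\text{U}(\ug)}(M,V)$ sending $v \mapsto [m \mapsto f(\mathrm{ev}_m \otimes v)]$, where $\mathrm{ev}_m$ denotes the ``evaluation at $m$'' element. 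I would then verify three points: (i) $\widetilde f(v)$ actually lands in $\Hom_{\text{U}(\ug)}(M,V)^{N_P^0}$, which follows from the fact that the vectors in $\cF_{P^-}^{G_p}(\cdots)$ coming from $\pi$ under the canonical inclusion are $N_P^0$-fixed (this is built into the Orlik--Strauch construction via induction from $P^-(\Q_p)$ and the Iwasawa-type decomposition); (ii) the map intertwines the action of the semigroup $L_P(\Q_p)^+$ on both sides, where on the target the action is the Hecke action from (\ref{Upheck}) --- this is a direct calculation from the explicit formulas; and (iii) the inverse map is constructed in the standard way by extending an $L_P(\Q_p)^+$-equivariant morphism $\pi \to \Hom_{\text{U}(\ug)}(M,V)^{N_P^0}$ to $\text{U}(\ug)\otimes_{\text{U}(\fp^-)}(\pi\otimes\delta_P^{-1})^\vee$-equivariant data and then applying the Orlik--Strauch functoriality to produce a $G_p$-morphism out of $\cF_{P^-}^{G_p}(\Hom_E(M,E)^{\fn_{P^-}^{\infty}}, \pi(\delta_P^{-1}))$.

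For the second isomorphism, I would apply \cite[Prop.\ 3.2.4~(ii)]{Em11} directly: since $\pi$ is smooth of finite length, hence admissible, and $\Hom_{\text{U}(\ug)}(M,V)^{N_P^0}$ is a locally analytic representation of $L_P(\Q_p)^+$ (it is an $\fl_P$-invariant closed subspace of an admissible locally analytic representation), Emerton's result provides the natural bijection between $L_P(\Q_p)^+$-equivariant morphisms from $\pi$ to $\Hom_{\text{U}(\ug)}(M,V)^{N_P^0}$ and $L_P(\Q_p)$-equivariant morphisms from $\pi$ to its finite slope part $\Hom_{\text{U}(\ug)}(M,V)^{N_P^0}_{\fss}$.

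The main obstacle is the rigorous verification of the first bijection: unwinding the construction of $\cF_{P^-}^{G_p}$ in the parabolic case, and checking that the formal adjunction map above really defines a bijection compatible with the Hecke action. In particular, one must carefully track how the $P(\Q_p)$-action on $\Hom_{\text{U}(\ug)}(M,V)$ described above the lemma relates, under restriction to the semigroup $L_P(\Q_p)^+$, to the Hecke action of (\ref{Upheck}), and exploit the functoriality of $\cF_{P^-}^{G_p}$ in both arguments as established in \cite{OS}.
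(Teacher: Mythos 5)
Your overall skeleton does match the paper's: the lemma is proved there by running the argument of \cite[Lemma 5.2.1]{BHS3}, and the second isomorphism is obtained exactly as you say, from \cite[Prop.\ 3.2.4 (ii)]{Em11} applied to the smooth, finite length (hence admissible) $\pi$ and the finite slope part of $\Hom_{\text{U}(\ug)}(M,V)^{N_P^0}$. The gap is in your treatment of the first bijection, which you present as a formal unwinding of the construction of $\cF_{P^-(\Q_p)}^{G_p}$. It is not: $\cF_{P^-(\Q_p)}^{G_p}\big(\Hom_E(M,E)^{\fn_{P^-}^{\infty}}, \pi(\delta_P^{-1})\big)$ is a closed subobject of a locally analytic induction from $P^-(\Q_p)$, its vectors are functions on $G_p$ rather than tensors ``$\mathrm{ev}_m\otimes v$'', and there is no canonical $N_P^0$-fixed copy of $\pi$ sitting inside it; so your point (i) and the very definition of $\widetilde f$ already require the duality between the induction and $\text{U}(\ug)\otimes_{\text{U}(\fp^-)}(-)$ that is the substance of the adjunction. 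More seriously, your proposed inverse (``extend to $\text{U}(\ug)\otimes_{\text{U}(\fp^-)}$-equivariant data and apply Orlik--Strauch functoriality'') does not produce anything: the functoriality of \cite{OS} gives maps between Orlik--Strauch representations, not continuous $G_p$-equivariant maps from such a representation to an arbitrary admissible $V$. The statement that a $(\text{U}(\ug), L_P(\Q_p)^+)$-equivariant map $M\otimes_E\pi \ra V^{N_P^0}$ (for the Hecke action) integrates to a continuous $G_p$-map out of $\cF_{P^-(\Q_p)}^{G_p}(\cdots)$ is precisely where the very strong admissibility of $V$ enters (through Emerton's theory of $J_P$, cf.\ \cite{Em2}), and it is a theorem, namely the adjunction of \cite[Thm.\ 4.3]{Br13II}, not something built into the definition.

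The intended argument (as in \cite[Lemma 5.2.1]{BHS3}) is therefore: apply \cite[Thm.\ 4.3]{Br13II} with the pair $\big(\Hom_E(M,E)^{\fn_{P^-}^{\infty}}, \pi(\delta_P^{-1})\big)$, which identifies $\Hom_{G_p}\big(\cF_{P^-(\Q_p)}^{G_p}(\cdots), V\big)$ with the space of $(\text{U}(\ug), L_P(\Q_p)^+)$-equivariant maps $M\otimes_E\pi \ra V^{N_P^0}$; then move $M$ across by tensor-Hom to land in $\Hom_{L_P(\Q_p)^+}\big(\pi, \Hom_{\text{U}(\ug)}(M,V)^{N_P^0}\big)$, checking (this is your step (ii), and it is the only routine verification left) that the $P(\Q_p)$-action defined above the lemma restricts to the Hecke action of (\ref{Upheck}) on $N_P^0$-invariants. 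If you insist on avoiding the citation, you would in effect have to reprove \cite[Thm.\ 4.3]{Br13II}, which your sketch does not do.
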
 

Fix $\fd$ an integral weight of $\fz_{L_P}$ and $\sigma$ a cuspidal type of $L_P(\Q_p)$ as in \S~\ref{sec3.1.1}. Consider 
\begin{eqnarray*}
	V(M, \fd, \sigma)&:=&\Big(\big(\Hom_{\text{U}(\ug)}(M, V)^{N_P^0}_{\fss} \otimes_E(\delta_{\fd}\circ \dett_{L_P})\big) \widehat{\otimes}_E \cC^{\Q_p-\la}(Z_{L_P}^0,E) \otimes_E \sigma^{\vee}\Big)^{L_P^0}\\
	&\cong &\Hom_{L_P^0}\Big(\sigma, \big(\Hom_{\text{U}(\ug)}(M, V)^{N_P^0}_{\fss} \otimes_E(\delta_{\fd}\circ \dett_{L_P})\big) \widehat{\otimes}_E \cC^{\Q_p-\la}(Z_{L_P}^0,E)\Big)
\end{eqnarray*}
where $L_P(\Q_p)$ acts on $\big(\Hom_{\text{U}(\ug)}(M, V)^{N_P^0}_{\fss} \otimes_E(\delta_{\fd}\circ \dett_{L_P} )\big)\widehat{\otimes}_E \cC^{\Q_p-\la}(Z_{L_P}^0,E)$ via the diagonal action with $L_P(\Q_p)$ acting on $\cC^{\Q_p-\la}(Z_{L_P}^0,E)$ via (\ref{dett1}). As $V(M, \fd, \sigma)$ is a closed subspace of $\big(\Hom_{\text{U}(\ug)}(M, V)^{N_P^0}_{\fss} \otimes_E(\delta_{\fd}\circ \dett_{L_P} )\big)\widehat{\otimes}_E \cC^{\Q_p-\la}(Z_{L_P}^0,E) \otimes_E \sigma^{\vee}$ (where $\sigma^{\vee}$ is equipped with the finest locally convex topology), it is also a space of compact type. We equip $V(M, \fd, \sigma)$ with an action of $\cZ_0 \times \Delta_0 \times \cZ_{\Omega}$ similarly as for $B_{\sigma, \lambda}(V)$ in (\ref{BslV}): $\cZ_0\cong Z_{L_P}^0$ acts via the regular action on $\cC^{\Q_p-\la}(Z_{L_P}^0,E)$, $\Delta_0\cong Z_{L_P}(\Q_p)$ acts via the diagonal action on $\Hom_{\text{U}(\ug)}(M, V)^{N_P^0}_{\fss} \otimes_E(\delta_{\fd}\circ \dett_{L_P}) \widehat{\otimes}_E \cC^{\Q_p-\la}(Z_{L_P}^0,E) $, and the $\cZ_{\Omega}$-action comes from the isomorphism (recalling that $\cZ_{\Omega}\cong \End_{L_P(\Q_p)}(\cind_{L_P^0}^{L_P(\Q_p)} \sigma)$):
\begin{equation*}
	V(M, \fd, \sigma)\cong \Hom_{L_P(\Q_p)}\!\Big(\!\cind_{L_P^0}^{L_P(\Q_p)} \sigma, \big(\Hom_{\text{U}(\ug)}(M, V)^{N_P^0}_{\fss} \otimes_E(\delta_{\fd}\circ \dett_{L_P})\big) \widehat{\otimes}_E \cC^{\Q_p-\la}(Z_{L_P}^0,E)\Big).
\end{equation*}
Similarly as in the discussion above Lemma \ref{BCmod}, the $\Delta_0$-action is equal to the action induced from $\cZ_{\Omega}$ via $Z_{L_P}(\Q_p)\ra \cZ_{\Omega}$. 
By considering the action of $\fl_P$ on $V(M, \fd, \sigma)$, we have also
\begin{eqnarray*}
	V(M, \fd, \sigma)&\cong \!&\Big(\big(\Hom_{\text{U}(\ug)}(M, V)^{N_P^0}_{\fss} \otimes_E(\delta_{\fd}\circ \dett_{L_P})\big) \widehat{\otimes}_E \big(\cC^{\infty}(Z_{L_P}^0,E) \otimes_E \delta_{\fd}^0\big) \otimes_E \sigma^{\vee}\Big)^{L_P^0}\nonumber\\
	&\cong \!&\Big(\Hom_{\text{U}(\ug)}(M, V)^{N_P^0}_{\fss} \otimes_E(\delta_{\fd}\circ \dett_{L_P}) \otimes_E \big(\cC^{\infty}(Z_{L_P}^0,E) \otimes_E \delta_{\fd}^0\big) \otimes_E \sigma^{\vee}\Big)^{L_P^0}
\end{eqnarray*}
where $\cC^{\infty}(Z_{L_P}^0,E)$ denotes the space of smooth $E$-valued functions on $Z_{L_P}^0$, where $\cC^{\infty}(Z_{L_P}^0,E) \otimes_E \delta_{\fd}^0$ embeds into $\cC^{\Q_p-\la}(Z_{L_P}^0,E)$ by $f \otimes 1 \mapsto [z \mapsto f(z) \delta_{\fd}^0(z)]$, and where the second isomorphism follows from \cite[Prop.\ 1.2]{Koh2011} and the fact that $\cC^{\infty}(Z_{L_P}^0,E)$ is topologically isomorphic to a direct limit of finite dimensional $E$-vector spaces (each equipped with the finest locally convex topology).
 
\begin{lemma}\label{VBiso}
Let $\lambda$ be an integral $P$-dominant weight of $T_{\sI}$, we have an isomorphism of vector spaces of compact type (cf.\ Notation \ref{charanot} and the discussion after Lemma \ref{diftyp})
	 \begin{equation*}
		\iota: V(M_P(\lambda),\fd, \sigma) \xlongrightarrow{\sim} B_{\Omega, \lambda}(V)[\fz_0=0]
	\end{equation*}
such that for $(\alpha, \beta)\in \cZ_{\Omega} \times \cZ_0$ and $v\in V(M_P(\lambda),\fd, \sigma)$:
\[\iota\big((\alpha, \beta)\cdot v\big)=\delta_{\fd}^0(\beta)\Big(\big(\iota_{(\delta_{\fd, \ul{\varpi}}^{\unr})^{-1}}(\alpha), \beta\big)\cdot \iota(v)\Big).\]
\end{lemma}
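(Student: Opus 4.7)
The plan is to build $\iota$ as a composition of canonical identifications: the defining adjunction $\Hom_{\text{U}(\ug)}(M_P(\lambda),V)\cong \Hom_{\fp}(L(\lambda)_P,V)$ for the parabolic Verma module $M_P(\lambda)=\text{U}(\ug)\otimes_{\text{U}(\fp)}L(\lambda)_P$, Lemma \ref{lemSLnalg}, and the inclusion $\cC^{\infty}(Z_{L_P}^0,E)\otimes_E\delta_{\fd}^0\hookrightarrow \cC^{\Q_p-\la}(Z_{L_P}^0,E)$ displayed just before the lemma.

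I would begin by rewriting both sides with the $\cC^{\Q_p-\la}$-factor replaced by $\cC^{\infty}$. On the left this is the second description of $V(M_P(\lambda),\fd,\sigma)$ given before the lemma; on the right the condition $[\fz_0=0]$ cuts out $\cC^{\Q_p-\la}(Z_{L_P}^0,E)[\fz_{L_P}=0]=\cC^{\infty}(Z_{L_P}^0,E)$, and the completed tensor product then collapses to an algebraic one by \cite[Prop.\ 1.2]{Koh2011} (as in the proof of Lemma \ref{compfini}).

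The core input is to relate $W:=\Hom_{\text{U}(\ug)}(M_P(\lambda),V)^{N_P^0}_{\fss}$ to $J_P(V)_\lambda$. Restriction along $L(\lambda)_P\hookrightarrow M_P(\lambda)$, $u\mapsto 1\otimes u$, produces a canonical bijection of $W$ with the subspace of $\Hom_{\fl_P^D}(L(\lambda)_P,V^{N_P^0})_{\fss}$ on which $\fz_{L_P}$ acts by $\lambda|_{\fz_{L_P}}$ (the weight condition is forced by $\fp$-equivariance, since $\fz_{L_P}$ acts on $L(\lambda)_P$ by $\lambda|_{\fz_{L_P}}$), which by Lemma \ref{lemSLnalg} lands in $J_P(V)_\lambda$. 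A direct computation from $(pf)(v)=p f(p^{-1}v)$ in \S\ref{sec711} shows this identification preserves the $L_P^D(\Q_p)$-action but twists the $Z_{L_P}(\Q_p)$-action by $\delta_\lambda^{-1}|_{Z_{L_P}(\Q_p)}$ (coming from how $Z_{L_P}$ acts on $L(\lambda)_P$).

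With this in hand, I would define $\iota$ by composing with the above identification and absorbing the remaining twists via a change of variable in the $\cC^{\infty}$-factor, analogously to the proof of Lemma \ref{twBE} (see the isomorphism (\ref{twO2}) there). Crucially, on $L_P^0$ the twists by $\delta_\fd\circ\dett_{L_P}$ and by $\delta_\fd^0$ already cancel (because $\dett_{L_P}(L_P^0)\subseteq Z_{L_P}^0$ and $\delta_\fd|_{Z_{L_P}^0}=\delta_\fd^0$), so that $\iota$ is well-defined on $L_P^0$-invariants and lands in $B_{\Omega,\lambda}(V)[\fz_0=0]$. The remaining twist by $\delta_\fd$ on the larger group $L_P(\Q_p)$, together with the $\delta_\lambda^{-1}$-twist from the adjunction, gets transported to the $\cZ_\Omega$-action on $\cind_{L_P^0}^{L_P(\Q_p)}\sigma$ as the automorphism $\iota_{(\delta_{\fd,\ul{\varpi}}^{\unr})^{-1}}$ (via the mechanism of Lemma \ref{twBE} and Proposition \ref{twBE2}), while the scalar $\delta_\fd^0(\beta)$ in the equivariance formula records the shift of the $\cC^{\infty}$-factor when transporting the $\cZ_0$-action across the change of variable. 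The main obstacle is the careful tracking of these twists (by $\delta_\lambda$ from the adjunction and by the locally algebraic and unramified components of $\delta_\fd$) and their interaction with $\dett_{L_P}$ and with the map $L_P^0\to Z_{L_P}^0$ of (\ref{dett1}); once this book-keeping is laid out, the compatibility formula follows by mechanical verification.
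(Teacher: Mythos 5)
Your overall route — Frobenius reciprocity for the parabolic Verma module to trade $\Hom_{\text{U}(\ug)}(M_P(\lambda),V)^{N_P^0}_{\fss}$ for $J_P(V)_{\lambda}[\fz_{L_P}=0]$ via Lemma \ref{lemSLnalg}, then absorbing the $\delta_{\fd}$-twists by the mechanism of Lemma \ref{twBE} (the analogues of (\ref{twO2}) and (\ref{twistun})) — is exactly the paper's strategy. But there is a concrete error in your central bookkeeping step: the claim that the restriction map $f\mapsto [u\mapsto f(1\otimes u)]$ ``preserves the $L_P^D(\Q_p)$-action but twists the $Z_{L_P}(\Q_p)$-action by $\delta_{\lambda}^{-1}$''. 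The $L_P(\Q_p)$-action on $J_P(V)_{\lambda}$ that enters the definition of $B_{\Omega,\lambda}(V)$ (and of all the actions $\Upsilon_1$, $\Delta_0$, $\cZ_{\Omega}$) is the one induced from the \emph{diagonal} action on $J_P(V)\otimes_E L(\lambda)_P^{\vee}$, i.e.\ the conjugation action $(zg)(u)=zg(z^{-1}u)$ on the Hom-space. Since on $W=\Hom_{\text{U}(\ug)}(M_P(\lambda),V)^{N_P^0}$ one likewise has $(zf)(1\otimes u)=zf(1\otimes z^{-1}u)$, the identification is $L_P(\Q_p)^+$-equivariant on the nose, and after applying the universal property of the finite slope part and Lemma \ref{lemSLnalg} one gets an $L_P(\Q_p)$-equivariant isomorphism $W_{\fss}\cong J_P(V)_{\lambda}[\fz_{L_P}=0]$ with \emph{no} twist; this is (\ref{missingagain}) and (\ref{JPMV}) in the paper's proof. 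The $\delta_{\lambda}^{-1}$-twist you describe only appears if you compare with the post-composition action of $Z_{L_P}(\Q_p)$ (acting on $J_P(V)$ alone), which is not the action the lemma's statement refers to.

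This is not a harmless change of convention, because you never undo it: you propose to merge the putative $\delta_{\lambda}^{-1}$-twist with the $\delta_{\fd}$-twist and transport both into $\iota_{(\delta_{\fd,\ul{\varpi}}^{\unr})^{-1}}$ on $\cZ_{\Omega}$. That cannot work: $\iota_{(\delta_{\fd,\ul{\varpi}}^{\unr})^{-1}}$ is independent of $\lambda$, and the restriction of $\delta_{\lambda}$ to $Z_{L_P}(\Q_p)$ is a nontrivial algebraic character (in particular nontrivial on $Z_{\ul{\varpi}}$), so a genuine $\delta_{\lambda}^{-1}$-twist would necessarily leave $\lambda$-dependent factors (both a scalar on $\cZ_0$ and an unramified shift of the $\cZ_{\Omega}$-point) in the final compatibility formula, contradicting the statement you are proving. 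The fix is simply to use the diagonal action throughout, so that the only twists to track are those by $\delta_{\fd}\circ\dett_{L_P}$ and $\delta_{\fd}^0$; with that correction, your remaining steps (collapse of $\cC^{\Q_p-\la}(Z_{L_P}^0,E)[\fz_{L_P}=0]$ to $\cC^{\infty}(Z_{L_P}^0,E)$ via \cite[Prop.\ 1.2]{Koh2011}, cancellation of the two $\delta_{\fd}$-twists on $L_P^0$, and the Lemma \ref{twBE}-type transport producing the factor $\delta_{\fd}^0(\beta)$ and the automorphism $\iota_{(\delta_{\fd,\ul{\varpi}}^{\unr})^{-1}}$) reproduce the paper's argument.
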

\begin{proof}
We have isomorphisms
	\begin{equation*}
		\Hom_{\text{U}(\ug)}(M_P(\lambda),V)\cong \Hom_{\fl_P}(L(\lambda)_P, V^{\fn_P})\cong (V^{\fn_P} \otimes_E L(\lambda)_P^{\vee})^{\fl_P}
	\end{equation*}
	which are moreover topological isomorphisms if $\Hom_{\fl_P}(L(\lambda)_P, V^{\fn_P})\cong (V^{\fn_P} \otimes_E L(\lambda)_P^{\vee})^{\fl_P}$ is equipped with the induced topology as closed subspace of $V \otimes_E L(\lambda)_P^{\vee}$. We deduce then an $ L_P(\Q_p)^+$-equivariant isomorphism (where $ L_P(\Q_p)^+$ acts on the right hand side by $(zf)(v)=zf(z^{-1}v)$): 
	\begin{equation}\label{missingagain}
		\Hom_{\text{U}(\ug)}(M_P(\lambda),V)^{N_P^0}\cong \Hom_{\fl_P}(L(\lambda)_P, V^{N_P^0}).
	\end{equation}
	This isomorphism induces $L_P(\Q_p)$-equivariant isomorphisms by the universal property of the finite slope part functor \cite[Prop.\ 3.2.4 (2)]{Em11} (for the first) and Lemma \ref{lemSLnalg} (for the second):
	\begin{equation}\label{JPMV}
		\Hom_{\text{U}(\ug)}(M_P(\lambda),V)^{N_P^0}_{\fss}\cong \Hom_{\fl_P}(L(\lambda)_P, V^{N_P^0}_{\fss})\cong J_P(V)_{\lambda}[\fz_{L_P}=0].
	\end{equation}
From (\ref{JPMV}) we deduce topological isomorphisms
	\begin{multline*}
	V(M_P(\lambda), \fd, \sigma)\\
	 \!\!\!\!\!\cong \Big(\big(J_P(V)_{\lambda}[\fz_{L_P}=0] \otimes_E((\delta_{\fd, \ul{\varpi}}^{\unr}\delta_{\fd,\ul{\varpi}}^0)\circ \dett_{L_P})\big) \widehat{\otimes}_E \cC^{\Q_p-\la}(Z_{L_P}^0,E) \otimes_E \sigma^{\vee}\Big)^{L_P^0}\\ 
		\!\!\!\!\!\!\!\!\!\!\!\!\!\!\!\!\!\!\!\!\cong \Big(\big(J_P(V)_{\lambda}\otimes_E(\delta_{\fd, \ul{\varpi}}^{\unr}\circ \dett_{L_P})\big) \widehat{\otimes}_E \cC^{\Q_p-\la}(Z_{L_P}^0,E)[\fz_{L_P}=0]\otimes_E \sigma^{\vee}\Big)^{L_P^0} \\
		\ \ \ \ \ \ \ \ \ \ \cong \Hom_{L_P(\Q_p)}\!\Big(\!\cind_{L_P^0}^{L_P(\Q_p)} \!\sigma, \big(J_P(V)_{\lambda}\otimes_E(\delta_{\fd, \ul{\varpi}}^{\unr}\circ \dett_{L_P})\big) \widehat{\otimes}_E \cC^{\Q_p-\la}(Z_{L_P}^0,E)[\fz_{L_P}=0]\Big)\\
		\ \ \ \ \ \ \ \ \ \ \ \ \cong \Hom_{L_P(\Q_p)}\!\Big(\!((\delta_{\fd, \ul{\varpi}}^{\unr})^{-1}\circ \dett_{L_P}) \otimes_E\cind_{L_P^0}^{L_P(\Q_p)}\! \sigma, J_P(V)_{\lambda}\widehat{\otimes}_E \cC^{\Q_p-\la}(Z_{L_P}^0,E)[\fz_{L_P}=0]\Big)\\ 
		\cong \Hom_{L_P(\Q_p)}\Big(\cind_{L_P^0}^{L_P(\Q_p)} \sigma, J_P(V)_{\lambda}\widehat{\otimes}_E \cC^{\Q_p-\la}(Z_{L_P}^0,E)[\fz_{L_P}=0]\Big) \cong B_{\Omega, \lambda}(V)[\fz_0=0]
	\end{multline*}
where the second isomorphism uses $\cC^{\Q_p-\la}(Z_{L_P}^0,E) \otimes_E \delta_{\fd,\ul{\varpi}}^0\cong \cC^{\Q_p-\la}(Z_{L_P}^0,E)$ and the fifth uses (\ref{twistun}). The last part of the statement on the $\cZ_{\Omega}\times \cZ_0$-action follows by the same argument as in the proof of Lemma \ref{twBE}.
\end{proof}

\begin{lemma}\label{decompVM}
For $M\in \co^{\fp}_{\alg}$, we have 
	\begin{equation*}
		V(M, \fd, \sigma) 
		\cong \bigoplus_{\delta, \chi}V(M, \fd, \sigma)[\fm_{\chi}][\fm_{\delta}^{\infty}]=\bigoplus_{\fm \in \Spm \cZ_{\Omega},\chi}V(M, \fd, \sigma)[\fm_{\chi}][\fm^{\infty}]
	\end{equation*}	
where $\delta$ (resp.\ $\chi$) runs through the smooth characters of $\Delta_0\cong Z_{L_P}(\Q_p)$ (resp.\ through the locally algebraic characters of $\cZ_0 \cong Z_{L_P}^0$ of weight $\fd$), and $\fm_{\delta}\subset E[\Delta_0]$ (resp.\ $\fm_{\chi}\subset E[\cZ_0]$) is the maximal ideal associated to the character $\delta$ (resp.\ $\chi$). Moreover, each term in the direct sums is finite dimensional over $E$. 
\end{lemma}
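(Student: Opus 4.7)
The plan is to reduce to the case $M = M_P(\lambda)$ for an integral $P$-dominant weight $\lambda$, where Lemma \ref{VBiso} and Lemma \ref{compfini} apply directly, and then descend the decomposition to general $M$ via a finite presentation by parabolic Verma modules.

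First, I would establish that every $M \in \co^{\fp}_{\alg}$ admits a finite presentation
\[\bigoplus_{j=1}^m M_P(\mu_j) \xlongrightarrow{\ d\ } \bigoplus_{i=1}^n M_P(\lambda_i) \xlongrightarrow{\ \epsilon\ } M \longrightarrow 0\]
by parabolic Verma modules with integral $P$-dominant weights $\lambda_i,\mu_j$. This uses: $M$ is finitely generated, and its generators may be chosen inside finite-dimensional $L_P$-simple subrepresentations $L(\lambda_i)_P$ of highest $\ub$-weight $\lambda_i$, giving $\epsilon$; the kernel of $\epsilon$ again lies in $\co^{\fp}_{\alg}$, which has finite length by the standard BGG theory, and the same argument then produces $d$. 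All maps in this presentation are $\fp$-equivariant and therefore $P(\Q_p)$-equivariant by the functorial extension of \cite[Lemma~3.2]{OS}.

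Next, I would apply the functor $V(-, \fd, \sigma)$ to this presentation. Each elementary piece of its definition is left exact: $\Hom_{\text{U}(\ug)}(-, V)$ is left exact, taking $N_P^0$-invariants is left exact, the finite slope part functor $(-)_{\fss}$ is left exact by \cite[Prop.\ 3.2.6]{Em11}, and tensoring with compact-type flat factors and taking $L_P^0$-invariants are left exact. This yields a $\cZ_{\Omega}\times \cZ_0 \times \Delta_0$-equivariant exact sequence
\[0 \lra V(M,\fd,\sigma) \lra \bigoplus_{i=1}^n V(M_P(\lambda_i),\fd,\sigma) \lra \bigoplus_{j=1}^m V(M_P(\mu_j),\fd,\sigma).\]
For each $\lambda_i$, Lemma \ref{VBiso} gives a topological isomorphism $V(M_P(\lambda_i),\fd,\sigma) \xrightarrow{\sim} B_{\Omega,\lambda_i}(V)[\fz_0=0]$ together with an explicit compatibility between the $\cZ_{\Omega}\times \cZ_0$-actions (involving the twist by $\delta_\fd^0$ on $\cZ_0$ and by $\iota_{(\delta_{\fd,\ul{\varpi}}^{\unr})^{-1}}$ on $\cZ_{\Omega}$). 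Lemma \ref{compfini} applied with weight $\fd=0$ then decomposes $B_{\Omega,\lambda_i}(V)[\fz_0=0]$ as a direct sum of finite-dimensional joint generalized eigenspaces $[\fm_\chi][\fm_\delta^{\infty}]$ (resp.\ $[\fm_\chi][\fm^{\infty}]$). Transporting this decomposition through the explicit twist of Lemma \ref{VBiso} converts the $\chi$'s of weight $0$ on the $B$-side into $\chi$'s of weight $\fd$ on the $V$-side, giving the statement of Lemma \ref{decompVM} for $M=M_P(\lambda_i)$.

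Finally, I would descend the decomposition from the ambient object to the subobject $V(M,\fd,\sigma)$. Since the $\cZ_0$- and $\Delta_0$-actions on the ambient direct sum are locally finite (by the existence of the decomposition into finite-dimensional joint eigenspaces just established) and on each summand are scalar-plus-nilpotent, standard generalized eigenspace decomposition applies to the equivariant subspace: if $W = \bigoplus_{(\chi,\delta)} W_{\chi,\delta}$ with each $W_{\chi,\delta}$ finite-dimensional and $U\subseteq W$ is $\cZ_{\Omega}\times\cZ_0\times \Delta_0$-stable, then $U = \bigoplus_{(\chi,\delta)} (U\cap W_{\chi,\delta})$ and $U\cap W_{\chi,\delta} = U[\fm_\chi][\fm_\delta^{\infty}]$. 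Each piece $V(M,\fd,\sigma)[\fm_\chi][\fm_\delta^{\infty}]$ is finite-dimensional as it sits inside a finite-dimensional one. The second equality, reindexing by $\fm\in\Spm\cZ_{\Omega}$, follows exactly as in the proof of Lemma \ref{compfini}, using that the smooth $Z_{\ul{\varpi}}$-component of a $\Delta_0$-character factors through the finite map $E[Z_{\ul{\varpi}}]\to \cZ_{\Omega}$ and that the $\cZ_0$-component is determined by $\chi$ together with the central character of $\sigma$.

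The main subtlety will be step two, namely carefully tracing the twist in Lemma \ref{VBiso} so that the eigenspace indexing on the $B_{\Omega,\lambda}(V)[\fz_0=0]$ side translates to the correct indexing on the $V(M_P(\lambda),\fd,\sigma)$ side. Beyond this, every step is routine; in particular, there is no new cohomological input and essential admissibility is inherited from the parabolic Verma case.
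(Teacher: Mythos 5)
Your argument is correct and follows essentially the same route as the paper: a surjection $\oplus_i M_P(\lambda_i)\twoheadrightarrow M$ together with left-exactness of the constituents of $V(-,\fd,\sigma)$ gives an equivariant injection $V(M,\fd,\sigma)\hookrightarrow \oplus_i V(M_P(\lambda_i),\fd,\sigma)$, the parabolic Verma case is settled by Lemmas \ref{VBiso} and \ref{compfini} (keeping track of the twist), and the decomposition descends to the stable subspace by local finiteness of the $\cZ_{\Omega}\times\cZ_0$-action. The only difference is cosmetic: the paper uses just the surjection, so your two-term presentation and the exactness at the middle term are superfluous.
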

\begin{proof}
As \ $M\in \co_{\alg}^{\fp}$, \ there \ exist \ finitely \ many \ $P$-dominant \ integral \ weights \ $\lambda_i$ \ such \ that $\oplus_{i} M_P(\lambda_i) \twoheadrightarrow M$. Using successively the following facts:
\begin{itemize}
\item[(1)]$\Hom_{\text{U}(\ug)}(-,V)$ is left exact;
\item[(2)]taking $N_P^0$-invariant vectors is exact on smooth representations of $P(\Q_p)$;
\item[(3)]taking $(-)_{\fss}$ preserves injectivity (cf.\ \cite[Prop.\ 3.2.6 (ii)]{Em11});
\item[(4)]$(-)^{L_P^0}$ is left exact,
\end{itemize}
we deduce an injection:
\begin{equation*}
V(M, \fd, \sigma) \hooklongrightarrow \oplus_i V(M_P(\lambda_i), \fd, \sigma).
\end{equation*}
It is sufficient to prove the statement for each $V(M_P(\lambda_i), \fd, \sigma)$. Indeed, if this holds, then any vector $v\in V(M, \fd, \sigma)$ generates, under the action of $\cZ_{\Omega}\times \cZ_0$, a finite dimensional $E$-vector space, and from this we easily deduce the decompositions in the lemma for $V(M, \fd, \sigma)$. Moreover for each $\fm\in \Spec \cZ_{\Omega}$ and each $\chi$, the vector space $V(M, \fd, \sigma)[\fm_{\chi}][\fm^{\infty}]$ is finite dimensional as it is a subspace of $\oplus_i V(M_P(\lambda_i), \fd, \sigma)[\fm_{\chi}][\fm^{\infty}]$. However, the statement in the lemma for $V(M_P(\lambda_i), \fd, \sigma)$ follows from Lemma \ref{VBiso} and Lemma \ref{compfini}. 
\end{proof}

\begin{lemma}\label{exactnessBE0}
Let $\fm\subset \cZ_{\Omega}$ be a maximal ideal and $\chi$ a locally algebraic character of $\cZ_0$ of weight $\fd$. Let $V$ be a continuous representation of $G_p$ on a Banach space over $E$ such that $V^{\vee}$ is a finitely generated projective $\co_E[[K_p]][1/p]$-module where $K_p:=G(\Z_p)$. Denote by $V^{\an}\subseteq V$ the subspace of locally analytic vectors (\cite[Def.\ 3.5.3]{Em04}), which is an admissible locally analytic representation of $G_p$ on a $E$-vector space of compact type (\cite[Prop.\ 6.2.4]{Em04}). Then the functor $V^{\an}(-,\fd, \sigma)[\fm_{\chi}][\fm^{\infty}]$ is exact on $\co_{\alg}^{\fp}$.
\end{lemma}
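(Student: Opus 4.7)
The plan is to translate the statement, via the adjunction of Lemma \ref{lemadj01} together with the computation of the fiber in Proposition \ref{pts1}, into the exactness of a $\Hom$ functor with values in $V^{\an}$, and then to exploit the projectivity hypothesis on $V^{\vee}$ to deduce the required $\Ext$ vanishing.

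First I would unwind the definition. By Lemma \ref{decompVM} the summand $V^{\an}(M, \fd, \sigma)[\fm_{\chi}][\fm^{\infty}]$ is finite dimensional over $E$, and writing $\chi=\chi^{\infty}\delta_{\fd}^0$ with $\chi^{\infty}$ smooth I would, as in the proof of Proposition \ref{pts1} and arguments in \S~\ref{secAbsOL}, identify the functor $M\mapsto V^{\an}(M,\fd,\sigma)[\fm_{\chi}][\fm^{\infty}]$ with the functor
\[M\longmapsto \Hom_{L_P(\Q_p)}\bigl(\pi'\otimes_E(\chi^{\infty}_{\ul{\varpi}}\circ \dett_{L_P}),\ \Hom_{\text{U}(\ug)}(M,V^{\an})^{N_P^0}_{\fss}\bigr),\]
where $\pi'$ is the finite length smooth representation of $L_P(\Q_p)$ cut out from $\cind_{L_P^0}^{L_P(\Q_p)}\sigma$ by a suitable power of $\fm\subset \cZ_\Omega$. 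Then Lemma \ref{lemadj01} rewrites this as $M\mapsto \Hom_{G_p}(\cF_{P^-}^{G_p}(\Hom_E(M,E)^{\fn_{P^-}^{\infty}}, \pi'\otimes_E (\chi^{\infty}_{\ul\varpi}\circ \dett_{L_P})\otimes \delta_P^{-1}), V^{\an})$, using that $V^{\an}$ is very strongly admissible because $V^\vee$ is finitely generated over $\co_E[[K_p]][1/p]$.

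Next I would check that the contravariant functor $M\mapsto \cF_{P^-}^{G_p}(\Hom_E(M,E)^{\fn_{P^-}^{\infty}},\pi'\otimes (\chi^{\infty}_{\ul\varpi}\circ \dett_{L_P})\otimes \delta_P^{-1})$ is exact on $\co^{\fp}_{\alg}$: the duality $M\mapsto \Hom_E(M,E)^{\fn_{P^-}^{\infty}}$ of \cite[\S 3]{Br13II} is an exact anti-equivalence between $\co^{\fp}_{\alg}$ and $\co^{\fp^-}_{\alg}$, and by \cite{OS} the Orlik--Schraen functor $\cF_{P^-}^{G_p}(-,\pi'')$ is exact on $\co^{\fp^-}_{\alg}$ for any finite length smooth representation $\pi''$ of $L_P(\Q_p)$. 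A short exact sequence in $\co^{\fp}_{\alg}$ therefore produces a short exact sequence of admissible locally analytic $G_p$-representations, and the problem reduces to showing that $\Hom_{G_p}(-, V^{\an})$ is exact on such a sequence.

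The input for this last step is the projectivity hypothesis. By \cite[Thm.\ 7.1]{ST03} the hypothesis that $V^{\vee}$ is a finitely generated projective $\co_E[[K_p]][1/p]$-module implies that $V^{\an,\vee}\cong D(K_p,E)\widehat{\otimes}_{E[[K_p]]}V^{\vee}$ is coadmissible and projective as a $D(K_p,E)$-module, hence $\Ext^i_{D(K_p,E)}(N, V^{\an,\vee})=0$ for every coadmissible module $N$ and every $i\geq 1$. Via Schneider-Teitelbaum duality this translates into the vanishing of the analytic $\Ext^i_{K_p,\an}(U,V^{\an})$ for any admissible locally analytic $K_p$-representation $U$ of compact type. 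Applying this vanishing to the restriction to $K_p$ of the short exact sequence produced by Orlik-Schraen, together with the Hochschild-Serre type comparison between $G_p$- and $K_p$-extensions (which reduces to the $K_p$ vanishing using that the relevant kernels/cokernels inject into their $K_p$-counterparts after taking $\fss$ and $[\fm_\chi][\fm^\infty]$), gives the desired exactness. The main obstacle will be this last step: carefully controlling the restriction of $\cF_{P^-}^{G_p}(-, \pi'')$ to $K_p$ and checking that the $K_p$-level $\Ext$-vanishing is enough to imply the exactness of $\Hom_{G_p}(-, V^{\an})$ after passing to the finite-dimensional $[\fm_\chi][\fm^\infty]$-isotypic part; this is essentially the patched analogue of \cite[Lem.\ 5.2.3]{BHS3} and I would mirror that argument.
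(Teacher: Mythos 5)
There is a genuine gap at the step you yourself flag as "the main obstacle", and it cannot be repaired by the tools you invoke. Projectivity of $V^{\vee}$ over $\co_E[[K_p]][1/p]$ does give that $V^{\an,\vee}$ is a projective coadmissible $D(K_p,E)$-module (note, by the way, that the vanishing you want is $\Ext^i_{D(K_p,E)}(V^{\an,\vee},N)=0$, not $\Ext^i_{D(K_p,E)}(N,V^{\an,\vee})=0$), and hence the exactness of $\Hom_{K_p}(-,V^{\an})$ on your short exact sequences of Orlik--Strauch representations. But the functor you actually need to be exact is $\Hom_{G_p}(-,V^{\an})$, and passing from $K_p$-equivariant to $G_p$-equivariant homomorphisms is not a formal "Hochschild--Serre" reduction: $K_p$ is neither normal nor cocompact in $G_p$, and there is no spectral sequence comparing these Ext groups. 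Indeed, via Lemma \ref{lemadj01} the $G_p$-side Hom is $\Hom_{L_P(\Q_p)}\big(\pi, \Hom_{\text{U}(\ug)}(M,V^{\an})^{N_P^0}_{\fss}\big)$, and the finite slope functor $(-)_{\fss}$ hidden in this identification is only left exact (\cite[Prop.\ 3.2.6]{Em11}); the surjectivity you need at the right end of the sequence is exactly the non-formal content of the lemma, so your argument is circular at the decisive point. (A smaller issue: cutting $[\fm^{\infty}]$ down to a fixed power $\fm^n$, i.e.\ to a finite length $\pi'$, introduces a further left-exact-only step unless one argues, as the paper does, with a direct-summand/generalized-eigenspace functor.)

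The paper's proof avoids the $G_p$-side and the finite slope part altogether. Using \cite[Lemma 3.2.8, Prop.\ 3.2.11]{Em11} it rewrites the space as
\[\Big(\Hom_{\text{U}(\ug)}(M, V^{\an})^{N_P^0}\,[\fm_{\delta,+}^{\infty}]\otimes_E(\delta_{\fd}\circ \dett_{L_P}) \otimes_E \chi \otimes_E \sigma^{\vee}\Big)^{L_P^0},\]
a generalized eigenspace for the Hecke action of $Z_{L_P}(\Q_p)^+$ on $\Hom_{\text{U}(\ug)}(M,V^{\an})^{N_P^0}$ (no $(-)_{\fss}$). Exactness of $\Hom_{\text{U}(\ug)}(-,V^{\an})$ is where the projectivity hypothesis enters (via \cite[Lemma 5.2.5]{BHS3}); taking smooth $N_P^0$- and $L_P^0$-invariants is exact; and the remaining point, exactness of $[\fm_{\delta,+}^{\infty}]$, is proved by the compact operator argument of \cite[Prop.\ 4.1]{BH2}: one exhibits the spaces as increasing unions of BH-subspaces stable under finitely many elements $z_i\in Z_{L_P}(\Q_p)^+$ acting compactly, so that the generalized eigenspace splits off functorially (Riesz theory) and the sequence stays exact. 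If you want to salvage your route, you would in effect have to reprove this compact-operator exactness to get the lifting of $G_p$-equivariant maps; the $D(K_p,E)$-projectivity alone does not do it.
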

\begin{proof}
Let $\pi_{\fm}$ be the smooth $L_P(\Q_p)$-representation associated to $\fm$ and $\omega$ its central character. Let $\fm_{\omega}\subset E[\Delta_0]$ be the maximal ideal associated to $\omega$. Then $V^{\an}(M, \fd, \sigma)[\fm_{\chi}][\fm^{\infty}]=V^{\an}(M, \fd, \sigma)[\fm^{\infty}][\fm_{\chi}]$ is a direct summand of $V^{\an}(M, \fd, \sigma)[\fm_{\omega}^{\infty}][\fm_{\chi}]$, and it is enough to prove the statement with $[\fm^{\infty}]$ replaced by $[\fm_{\omega}^{\infty}]$. Let
\[\delta:= \big(\big((\chi(\delta_{\fd}^0)^{-1})_{\ul{\varpi}} (\delta_{\fd,\ul{\varpi}}^{\unr})^{-1}\big) \circ \dett_{L_P}\big)\omega=\big(\big(\chi \delta_{\fd}^{-1}\big) \circ \dett_{L_P}\big) \omega\]
which is a smooth character of $Z_{L_P}(\Q_p)$. Let $\fm_{\delta} \subset E[Z_{L_P}(\Q_p)]$ (resp.\ $\fm_{\delta,+}\subset E[Z_{L_P}(\Q_p)^+]$) be the maximal ideal associated to $\delta$. For $M\in \co_{\alg}^{\fp}$, we have (by unwinding the $\Delta_0$-action, and noting that $L_P(\Q_p)$ acts on $\chi$ via $\chi_{\varpi}^{-1} \circ \dett_{L_P}$, cf.\ (\ref{dett1}))
	\begin{eqnarray}\label{ismVM1}
		\nonumber V^{\an}(M, \fd, \sigma)[\fm_{\chi}][\fm_{\omega}^{\infty}]& \cong \!\!&\big(\Hom_{\text{U}(\ug)}(M, V^{\an})^{N_P^0}_{\fss} [\fm_{\delta}^{\infty}]\otimes_E(\delta_{\fd}\circ \dett_{L_P}) \otimes_E \chi \otimes_E \sigma^{\vee}\big)^{L_P^0}\\
		&\cong \!\!&\big(\Hom_{\text{U}(\ug)}(M, V^{\an})^{N_P^0} [\fm_{\delta,+}^{\infty}]\otimes_E(\delta_{\fd}\circ \dett_{L_P}) \otimes_E \chi \otimes_E \sigma^{\vee}\big)^{L_P^0}
	\end{eqnarray}
	where the second isomorphism follows from \cite[Prop.\ 3.2.11]{Em11} and \cite[Lemma 3.2.8]{Em11}. In particular, the right hand side of (\ref{ismVM1}) is finite dimensional by Lemma \ref{decompVM}. By \cite[Lemma 5.2.5]{BHS3}, the functor $\Hom_{\text{U}(\ug)}(-,V^{\an})$ is exact, hence so is the functor $\Hom_{\text{U}(\ug)}(-,V^{\an})^{N_P^0}$ as $\pi\mapsto \pi^{N_P^0}$ is exact on smooth representations $\pi$ of $P(\Q_p)$. We deduce that, for an exact sequence $0 \ra M_1 \ra M_2 \ra M_3 \ra 0$ in $\co^{\fp}_{\alg}$, the following sequence is exact:
	\begin{multline}\label{exactlP0}
		0 \ra \big(\Hom_{\text{U}(\ug)}(M_3, V^{\an})^{N_P^0} \otimes_E(\delta_{\fd}\circ \dett_{L_P}) \otimes_E \chi\otimes_E \sigma^{\vee}\big)^{L_P^0} \\
		\ra \big(\Hom_{\text{U}(\ug)}(M_2, V^{\an})^{N_P^0} \otimes_E(\delta_{\fd}\circ \dett_{L_P})\otimes_E \chi\otimes_E \sigma^{\vee}\big)^{L_P^0} \\
		\ra \big(\Hom_{\text{U}(\ug)}(M_1, V^{\an})^{N_P^0} \otimes_E(\delta_{\fd}\circ \dett_{L_P}) \otimes_E \chi \otimes_E \sigma^{\vee}\big)^{L_P^0} \ra 0.
	\end{multline}
By (\ref{ismVM1}) it is enough to show that the sequence (\ref{exactlP0}) stays exact after applying $[\fm_{\delta,+}^{\infty}]$. 
	
We now explain how to prove this by a generalization of the arguments in the last paragraph of the proof of \cite[Prop.\ 4.1]{BH2}. We let $H$ be as in \textit{loc.\ cit.}\ and replace $N_P^0$ by $H \cap N_P(\Q_p)$ (which won't cause any problem). Using \cite[Prop.\ 3.3.2]{Em11}, we choose finitely many $z_i\in Z_{L_P}(\Q_p)^+$ such that $z_i N_{P}^0 z_i^{-1} \subset (N_{P}^0)^p$ for $N_{P}^0=N_{P}(\Q_p) \cap H$, and such that $Z_{L_P}^0$ and the $z_i$ generate $Z_{L_P}(\Q_p)$ as group. 
	For $M=M_P(\lambda)\in \co_{\alg}^{\fp}$, we have by (\ref{missingagain}) a topological isomorphism
	\begin{multline}\label{homGM}
	\big(\Hom_{\text{U}(\ug)}(M, V^{\an})^{N_P^0} \otimes_E(\delta_{\fd}\circ \dett_{L_P})\otimes_E \chi\otimes_E \sigma^{\vee}\big)^{L_P^0} \\
	\cong \big((V^{\an})^{N_P^0}\otimes_E L(\lambda)_P^{\vee}\otimes_E(\delta_{\fd}\circ \dett_{L_P})\otimes_E \chi\otimes_E \sigma^{\vee}\big)^{L_P^0}.
	\end{multline} 
	By an easy generalization of \cite[Lemma 5.3]{BHS2} (see also the proof of Proposition \ref{Pr}), we can write the right hand side (hence the left hand side) of (\ref{homGM}) as an increasing union over $j$ of BH-subspaces $V_j$ such that each $z_i$ preserves $V_j$ and acts on (the underlying Banach space) $V_j$ via a compact operator. The same holds for a general $M\in \co_{\alg}^{\fp}$ as $(\Hom_{\text{U}(\ug)}(M, V^{\an})^{N_P^0} \otimes_E(\delta_{\fd}\circ \dett_{L_P})\otimes_E \chi\otimes_E \sigma^{\vee})^{L_P^0}$ is a closed subspace of a finite direct sum $\bigoplus_i (\Hom_{\text{U}(\ug)}(M_P(\lambda_i), V^{\an})^{N_P^0} \otimes_E(\delta_{\fd}\circ \dett_{L_P})\otimes_E \chi\otimes_E \sigma^{\vee})^{L_P^0}$ for some integral $P$-dominant weights $\lambda_i$. We can now apply the argument in the last paragraph of the proof of \cite[Prop.\ 4.1]{BH2} with the BH-subspaces $\Pi_h$ of \textit{loc.\ cit.} replaced by the BH-subspaces $V_j$ (for general $M$) discussed above to conclude. 
\end{proof}

\subsubsection{Cycles on patched Bernstein eigenvarieties}\label{cyclebern}

We construct certain cycles in the completed local rings (at some specific points) of patched Bernstein eigenvarieties. 

We assume that we are in the setting of \S~\ref{secPBern} and we use without comment the notation and constructions of {\it loc.\ cit}. We fix $\lambda$ an integral $P$-dominant weight, $\Omega$ a cuspidal Bernstein component of $L_P(\Q_p)$ and $\overline\rho$ a $U^p$-modular continuous representation of $\Gal_F$ over $k_E$ (where $U^p$ is a ``prime-to-$p$ level''). We have the associated patched Bernstein eigenvariety $\cE_{\Omega, \lambda}^{\infty}(\overline{\rho})$, see (\ref{patchedbernstein}).

Let $\fd$ be an integral weight of $\fz_{L_P}$ and put $\mu:=\lambda+\fd\circ \dett_{L_P}$. Let $\cE_{\Omega, \lambda}^{\infty}(\overline{\rho})_{\fd}$ be the fibre of $\cE_{\Omega, \lambda}^{\infty}(\overline{\rho})$ at $\fd$ via $\cE_{\Omega, \lambda}^{\infty}(\overline{\rho}) \ra \widehat{\cZ_0} \ra \fz_0^{\vee}\cong \fz_{L_P}^{\vee}$. Note that using \cite[Lemma 6.2.5]{Che} and \cite[Lemma. 6.2.10]{Che}, we can deduce from Corollary \ref{PBEdim} (1), (2) and Proposition \ref{PBEFre} that $\cE_{\Omega, \lambda}^{\infty}(\overline{\rho})_{\fd}$ is equidimensional of dimension 
\[g+|S|n^2 +\sum_{v\in S_p}\Big([F_{\widetilde{v}}:\Q_p] \frac{n(n-1)}{2}\Big).\]
Let $\cM_{\mu, \lambda}:=\cM_{\Omega, \lambda}^{\infty} \otimes_{\co_{\cE_{\Omega, \lambda}^{\infty}(\overline{\rho})}} \co_{\cE_{\Omega, \lambda}^{\infty}(\overline{\rho})_{\fd}}$. Using \ (\ref{glosecBE}) \ the \ vector \ space \ of \ compact \ type $\Gamma\big( \cE_{\Omega, \lambda}^{\infty}(\overline{\rho})_{\fd}, \cM_{\mu, \lambda}\big)^{\vee}$ is topologically isomorphic to the following vector spaces of compact type (which are of compact type by an obvious generalization to the patched case of the discussion above Lemma \ref{lemadj01}):
\begin{multline*}
\big(J_P(\Pi^{R_{\infty}-\an}_{\infty})_{\lambda}\widehat{\otimes}_E \cC^{\Q_p-\la}(Z_{L_P}^0,E)\otimes_E \sigma^{\vee}\big)^{L_P^0}[\fz_0=\fd] \\
\!\!\!\!\!\!\!\!\!\!\!\!\!\!\!\!\!\!\!\!\!\!\!\!\!\!\!\!\!\!\!\!\!\!\!\!\!\!\!\!\!\!\!\!\!\!\!\!\!\!\cong \big(J_P(\Pi^{R_{\infty}-\an}_{\infty})_{\lambda}\widehat{\otimes}_E \cC^{\Q_p-\la}(Z_{L_P}^0,E)[\fz_{L_P}=\fd]\otimes_E \sigma^{\vee}\big)^{L_P^0} \\
\!\!\!\!\!\!\cong \!\big(J_P(\Pi^{R_{\infty}-\an}_{\infty})_{\lambda}[\fz_{L_P}=\fd\circ \dett_{L_P}]\widehat{\otimes}_E \cC^{\Q_p-\la}(Z_{L_P}^0,E)[\fz_{L_P}=\fd]\otimes_E \sigma^{\vee}\big)^{L_P^0} \\
\ \ \ \ \ \ \ \ \ \ \ \ \ \ \ \ \cong \!\Big(\!\Hom_{\text{U}(\ug)}(M_P(\mu), \Pi^{R_{\infty}-\an}_{\infty})^{N_P^0}_{\fss}\!\otimes_E (\delta_{\fd} \circ \dett_{L_P}) \widehat{\otimes}_E \cC^{\Q_p-\la}(Z_{L_P}^0,E)[\fz_{L_P}=\fd]\otimes_E \sigma^{\vee}\Big)^{\!L_P^0} \\
\cong\!\Big(\Hom_{\text{U}(\ug)}(M_P(\mu), \Pi^{R_{\infty}-\an}_{\infty})^{N_P^0}_{\fss} \otimes_E (\delta_{\fd} \circ \dett_{L_P}) \widehat{\otimes}_E \cC^{\Q_p-\la}(Z_{L_P}^0,E)\otimes_E \sigma^{\vee}\Big)^{L_P^0}
\end{multline*}
where the second and fourth isomorphism follow by considering the action of $\fl_P$ (see also (\ref{disfib0})), the third isomorphism follows from (\ref{JPMV}) and the natural isomorphism
\begin{equation}\label{JacVM2}
J_P(\Pi^{R_{\infty}-\an}_{\infty})_{\mu}[\fz_{L_P}=0]\cong J_P(\Pi^{R_{\infty}-\an}_{\infty})_{\lambda}[\fz_{L_P}=\fd \circ \dett_{L_P}] \otimes_E (\delta_{\fd}^{-1} \circ \dett_{L_P}).
\end{equation}
The natural surjection $M_P(\mu) \twoheadrightarrow L(\mu)$ induces an injection
\begin{multline}\label{LmuMmu}
	\Big(\big(\Hom_{\text{U}(\ug)}(L(\mu), \Pi^{R_{\infty}-\an}_{\infty})^{N_P^0}_{\fss}\otimes_E (\delta_{\fd} \circ \dett_{L_P})\big)\widehat{\otimes}_E \cC^{\Q_p-\la}(Z_{L_P}^0,E)\otimes_E \sigma^{\vee}\Big)^{L_P^0} \\ \hooklongrightarrow
	\Big(\big(\Hom_{\text{U}(\ug)}(M_P(\mu), \Pi^{R_{\infty}-\an}_{\infty})^{N_P^0}_{\fss}\otimes_E (\delta_{\fd} \circ \dett_{L_P})\big)\widehat{\otimes}_E \cC^{\Q_p-\la}(Z_{L_P}^0,E)\otimes_E \sigma^{\vee}\Big)^{L_P^0}.
\end{multline}
As in the discussion above Lemma \ref{lemadj01}, we equip the right hand side of (\ref{LmuMmu}) with the topology induced by the one on $\Pi^{R_{\infty}-\an}_{\infty}\otimes_E (\delta_{\fd} \circ \dett_{L_P})\widehat{\otimes}_E \cC^{\Q_p-\la}(Z_{L_P}^0,E)\otimes_E \sigma^{\vee}$, and the left hand side of (\ref{LmuMmu}) with the topology induced by the one on the right hand side, which identifies it with a closed subspace (in particular, all spaces are of compact type). It is also clear that the morphism in (\ref{LmuMmu}) is equivariant under the action of $R_{\infty}\times \cZ_0 \times \cZ_{\Omega}$. In particular the left hand side of (\ref{LmuMmu}) is preserved by $\co_{\cE_{\Omega, \lambda}^{\infty}(\overline{\rho})_{\fd}}$. Let $\cL_{\mu, \lambda}$ be the quotient of the $\co_{\cE_{\Omega, \lambda}^{\infty}(\overline{\rho})_{\fd}}$-module $\cM_{\mu, \lambda}$ such that 
\begin{multline}\label{Lmulam}
	\Gamma\big( \cE_{\Omega, \lambda}^{\infty}(\overline{\rho})_{\fd}, \cL_{\mu, \lambda}\big)^{\vee}\\
	\cong \Big(\big(\Hom_{\text{U}(\ug)}(L(\mu), \Pi^{R_{\infty}-\an}_{\infty})^{N_P^0}_{\fss} \otimes_E (\delta_{\fd} \circ \dett_{L_P})\big) \widehat{\otimes}_E \cC^{\Q_p-\la}(Z_{L_P}^0,E)\otimes_E \sigma^{\vee}\Big)^{L_P^0}.
\end{multline} 
The $\co_{\cE_{\Omega, \lambda}^{\infty}(\overline{\rho})_{\fd}}$-module $\cL_{\mu, \lambda}$ is finitely generated, and its schematic support defines a (possibly empty) Zariski-closed rigid subspace $\fZ_{\mu, \lambda}^0$ of $\cE_{\Omega, \lambda}^{\infty}(\overline{\rho})_{\fd}$. We denote by $\fZ_{\mu,\lambda}\subseteq \fZ_{\mu,\lambda}^{0,\red}$ the union of its irreducible components of dimension $\dim \cE_{\Omega, \lambda}^{\infty}(\overline{\rho})_{\fd}$, which is still Zariski-closed in $\cE_{\Omega, \lambda}^{\infty}(\overline{\rho})_{\fd}$.

Next we move to a similar discussion for the completion of the patched Bernstein eigenvarieties at some specific points. Let $x:=(x^p, x_p, \ul{x}, \chi_x)\in (\Spf R_{\infty}^p)^{\rig} \times (\Spf R_{\overline{\rho}_p}^{\square})^{\rig} \times (\Spec \cZ_{\Omega})^{\rig} \times \widehat{\cZ_0}$. We write $y
:=(x^p,x_p) \in (\Spf R_{\infty})^{\rig}\cong (\Spf R_{\infty}^p)^{\rig} \times (\Spf R_{\overline{\rho}_p}^{\square})^{\rig}$. Let $\fm_y$, $\fm_{\ul{x}}$, $\fm_{\chi_x}$ be the associated maximal ideals of $R_{\infty}[1/p]$, $\cZ_{\Omega}$, $E[\cZ_0]$ respectively. Let $\fd_x:=\wt(\chi_x)$ and $\lambda^x:=\lambda+\fd_x \circ \dett_{L_P}$. By definition, we have an isomorphism of finitely generated $\widehat{\co}_{\cE_{\Omega, \lambda}^{\infty}(\overline{\rho}),x}$-modules (for such sheaves of modules, we identify them with their global sections with no ambiguity):
\begin{equation*}
	\cM^{\infty}_{\Omega, \lambda} \otimes_{\co_{\cE_{\Omega, \lambda}^{\infty}(\overline{\rho})}} \widehat{\co}_{\cE_{\Omega, \lambda}^{\infty}(\overline{\rho}),x} \cong \Big(\big(J_P(\Pi_{\infty}^{R_{\infty}-\an})_{\lambda}[\fm_{y}^{\infty}] \widehat{\otimes}_E \cC^{\Q_p-\la}(Z_{L_P}^0,E)[\fm_{\chi_x}^{\infty}] \otimes_E \sigma^{\vee}\big)^{L_P^0}[\fm_{\ul{x}}^{\infty}]\Big)^{\vee}
\end{equation*}
and an isomorphism of finitely generated $\widehat{\co}_{\cE_{\Omega, \lambda}^{\infty}(\overline{\rho})_{\fd_x},x}$-modules:
\begin{multline}\label{extralabel}
	\cM^{\infty}_{\Omega, \lambda} \otimes_{\co_{\cE_{\Omega, \lambda}^{\infty}(\overline{\rho})}} \widehat{\co}_{\cE_{\Omega, \lambda}^{\infty}(\overline{\rho})_{\fd_x},x} \\
	\cong 
	\Big(\big(J_P(\Pi_{\infty}^{R_{\infty}-\an})_{\lambda}[\fm_{y}^{\infty}] \widehat{\otimes}_E \cC^{\Q_p-\la}(Z_{L_P}^0,E)[\fz_{L_P}=\fd_x][\fm_{\chi_x}^{\infty}] \otimes_E \sigma^{\vee}\big)^{L_P^0}[\fm_{\ul{x}}^{\infty}]\Big)^{\vee}\\
	\cong \Big(\big(J_P(\Pi_{\infty}^{R_{\infty}-\an})_{\lambda}[\fm_{y}^{\infty}] \otimes_E \chi_x \otimes_E \sigma^{\vee}\big)^{L_P^0}[\fm_{\ul{x}}^{\infty}]\Big)^{\vee}
\end{multline}
which are both non-zero if and only if $x\in \cE_{\Omega, \lambda}^{\infty}(\overline{\rho})$. Assume in the sequel $\fd_x\in \Z^{[F^+:\Q]}$. By modifying the Bernstein centre $\Omega$ and using Proposition \ref{twBEi} (which obviously generalizes to the patched case), we can and do assume that the character $\chi_x$ is algebraic. We have
\begin{multline}\label{adj00}
 \Big(J_P(\Pi_{\infty}^{R_{\infty}-\an})_{\lambda}[\fm_{y}^{\infty}] \otimes_E \chi_x \otimes_E \sigma^{\vee}\Big)^{L_P^0}[\fm_{\ul{x}}^{\infty}] \\
\!\!\!\!\!\!\!\!\!\!\!\!\!\!\!\!\!\!\!\!\!\!\!\!\!\!\!\!\!\!\!\!\!\!\cong \Big(J_P(\Pi_{\infty}^{R_{\infty}-\an})_{\lambda}[\fm_{y}^{\infty}][\fz_{L_P}=\fd_x \circ \dett_{L_P}]\otimes_E \chi_x \otimes_E \sigma^{\vee}\Big)^{L_P^0}[\fm_{\ul{x}}^{\infty}] \\
\cong \!\Big(\!\Hom_{\text{U}(\ug)}\big(M_P(\lambda^x), \Pi^{R_{\infty}-\an}_{\infty}\big)_{\fss}^{N_P^0}[\fm_y^{\infty}]\otimes_E (\delta_{\fd_x} \circ \dett_{L_P})\otimes_E \chi_x\otimes_E \sigma^{\vee}\Big)^{\!L_P^0}\! [\fm_{\ul{x}}^{\infty}] 
\end{multline}
where the second isomorphism follows from (\ref{JPMV}) and (\ref{JacVM2}) (be careful with the action of $\Delta_0$). 

\begin{lemma}\label{exacBE2}
	With the above notation, the functor 
	\begin{equation*}
		M\longmapsto \Big(\Hom_{\text{U}(\ug)}\big(M, \Pi^{R_{\infty}-\an}_{\infty}\big)^{N_P^0}_{\fss}[\fm_y^{\infty}]\otimes_E (\delta_{\fd_x} \circ \dett_{L_P})\otimes_E \chi_x\otimes_E \sigma^{\vee}\Big)^{L_P^0} [\fm_{\ul{x}}^{\infty}]
	\end{equation*} 
	on the category $\co^{\fp}_{\alg}$ is exact.
\end{lemma}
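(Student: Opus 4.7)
The plan is to run the argument of Lemma \ref{exactnessBE0} with $V$ taken to be $\Pi_\infty^{R_\infty-\an}$, and then to incorporate the extra localization at $\fm_y$ as a direct-summand operation on finite-dimensional spaces.

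First, I would use the chain of identifications in (\ref{adj00}) together with (\ref{JacVM2}) to rewrite the functor in the statement in the same shape as the one analyzed in the proof of Lemma \ref{exactnessBE0}, namely as
\begin{equation*}
M \longmapsto \Big(\Hom_{\text{U}(\ug)}(M, \Pi_\infty^{R_\infty-\an})^{N_P^0}[\fm_{\delta,+}^\infty] \otimes_E (\delta_{\fd_x} \circ \dett_{L_P}) \otimes_E \chi_x \otimes_E \sigma^\vee\Big)^{L_P^0}[\fm_{\ul{x}}^\infty][\fm_y^\infty]
\end{equation*}
for the appropriate smooth character $\delta$ of $Z_{L_P}(\Q_p)$ built from $\chi_x$, $\fd_x$ and the central character of $\pi_{\ul{x}}$, exactly as in the proof of Lemma \ref{exactnessBE0}.

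Second, I would observe that the argument of Lemma \ref{exactnessBE0} only invokes the hypothesis on $V$ through the exactness of $\Hom_{\text{U}(\ug)}(-, V^{\an})$ on $\co_{\alg}^{\fp}$ (via \cite[Lemma~5.2.5]{BHS3}). The analogous exactness of $M \mapsto \Hom_{\text{U}(\ug)}(M, \Pi_\infty^{R_\infty-\an})$ on $\co_{\alg}^{\fp}$ is available in the patched setting, since $\Pi_\infty^{R_\infty-\an}$ is an admissible locally analytic representation of $G_p$ and the proof of \cite[Lemma~5.2.5]{BHS3} applies verbatim. The remaining steps of the proof of Lemma \ref{exactnessBE0} --- exactness of $(-)^{N_P^0}$ on smooth $P(\Q_p)$-representations, tensoring with smooth characters, taking $L_P^0$-invariants, and the BH-space/compact operator argument (as in the last paragraph of the proof of \cite[Prop.~4.1]{BH2}) ensuring stability of exactness under $[\fm_{\delta,+}^\infty]$ and $[\fm_{\ul{x}}^\infty]$ --- then carry over unchanged. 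This produces an exact functor whose values are finite-dimensional over $E$ by (an obvious $R_\infty$-equivariant variant of) Lemma \ref{decompVM}.

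Third, these finite-dimensional $E$-vector spaces carry an action of $R_\infty[1/p]$ commuting with all previous operations, so they decompose as finite direct sums of their generalized eigenspaces indexed by the maximal ideals of $R_\infty[1/p]$ in their support. Applying $[\fm_y^\infty]$ is then nothing but projection onto the $\fm_y$-isotypic direct summand, which preserves exactness. The lemma follows. The only subtlety --- not a real obstacle --- is to verify that the various idempotent-type operations (localization at $\fm_y$, $\fm_{\ul{x}}$, $\fm_{\chi_x}$, taking the $\fss$-part and the generalized eigenspace $[\fm_{\delta,+}^\infty]$) all commute; this is immediate since they come from commuting actions of $R_\infty$, $\cZ_\Omega$, $\cZ_0$ and $Z_{L_P}(\Q_p)^+$.
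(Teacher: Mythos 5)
Your proposal has a genuine gap in the second step, and it propagates into the third. You want to apply the argument of Lemma \ref{exactnessBE0} directly with $V^{\an}$ replaced by $\Pi_{\infty}^{R_{\infty}-\an}$, asserting that $\Pi_{\infty}^{R_{\infty}-\an}$ is an admissible locally analytic representation of $G_p$ and that the proof of \cite[Lemma 5.2.5]{BHS3} applies verbatim. But Lemma \ref{exactnessBE0} is stated (and its proof works) only for $V$ whose dual is a finitely generated \emph{projective} $\co_E[[K_p]][1/p]$-module: this hypothesis is what gives both the exactness of $\Hom_{\text{U}(\ug)}(-,V^{\an})$ via \cite[Lemma 5.2.5]{BHS3} and the finite dimensionality in Lemma \ref{decompVM} that feeds the BH-subspace/compact operator argument. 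The patched module $M_{\infty}=\Hom_{\co_E}(\Pi_{\infty}^0,\co_E)$ is finite projective over $S_{\infty}[[K_p]]$, not over $\co_E[[K_p]]$, and $\Pi_{\infty}^{R_{\infty}-\an}$ is not an admissible locally analytic $G_p$-representation (it is only admissible in an $R_{\infty}$-relative sense); so neither Lemma \ref{exactnessBE0} nor \cite[Lemma 5.2.5]{BHS3} applies as stated. Consequently your claim that the functor evaluated before localizing at $\fm_y$ already lands in finite-dimensional spaces is false: without $[\fm_y^{\infty}]$ these spaces are (duals of) stalks of coherent sheaves over the whole patched eigenvariety and are infinite dimensional, so the final step --- viewing $[\fm_y^{\infty}]$ as projection onto a direct summand of a finite-dimensional $R_{\infty}[1/p]$-module --- does not get off the ground, and exactness of the generalized-eigenspace functor is not automatic in this generality.

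The paper's proof repairs exactly this point by first cutting down to finite level over the weight/patching variables: for each $t\geq 1$ one sets $\cI_t:=\Ker\big(S_{\infty}[1/p]\ra R_{\infty}[1/p]/\fm_y^t\big)$, notes that $\Pi_{\infty}[\cI_t]^{\vee}$ \emph{is} finite projective over $\co_E[[K_p]][1/p]$ and that $\Pi_{\infty}[\cI_t]^{\an}=\Pi_{\infty}^{R_{\infty}-\an}[\cI_t]$, applies Lemma \ref{exactnessBE0} and Lemma \ref{decompVM} to $V=\Pi_{\infty}[\cI_t]$ to get exactness at each finite level $t$, and then passes to the colimit over $t$ (arguing as at the end of the proof of \cite[Thm.\ 5.5]{BHS2}) to obtain exactness of the functor with $[\fm_y^{\infty}]$. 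If you want to salvage your approach, you must either insert this finite-level truncation $\Pi_{\infty}[\cI_t]$ (at which point you are reproducing the paper's argument), or else genuinely extend Lemma \ref{exactnessBE0} and Lemma \ref{decompVM} to the group $G_p\times\Z_p^q$ acting on $\Pi_{\infty}$ with the $S_{\infty}$-variables treated as extra analytic directions; the latter is plausible but requires a real argument, not the assertion that the existing proofs apply verbatim.
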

\begin{proof}
	For each $t\in \Z_{\geq 1}$, let $\cI_t$ be the kernel of the composition $S_{\infty}[1/p]\ra R_{\infty}[1/p]\twoheadrightarrow R_{\infty}[1/p]/\fm_y^t$. Then $\Pi_{\infty}[\cI_t]^{\vee}$ is a finite projective $\co_E[[K_p]][1/p]$-module (see \S~\ref{secPBern}) and we have $\Pi_{\infty}[\cI_t]^{\an}=\Pi^{R_{\infty}-\an}_{\infty}[\cI_t]$ (which easily follows from \cite[(3.2)]{BHS1}). By Lemma \ref{exactnessBE0} and Lemma \ref{decompVM} applied to $V:=\Pi_{\infty}[\cI_t]$, the functor $M\mapsto (\Hom_{\text{U}(\ug)}(M, V)^{N_P^0}_{\fss}\otimes_E (\delta_{\fd_x} \circ \dett_{L_P})\otimes_E \chi_x\otimes_E \sigma^{\vee})^{L_P^0} [\fm_{\ul{x}}^{\infty}]$ is exact. We then argue as at the end of the proof of \cite[Thm.\ 5.5]{BHS2}.
\end{proof}

Define 
\begin{equation}\label{Cmxy}
	\cM_{\lambda^x, \lambda,y}:=	\cM^{\infty}_{\Omega, \lambda} \otimes_{\co_{\cE_{\Omega, \lambda}^{\infty}(\overline{\rho})}} \widehat{\co}_{\cE_{\Omega, \lambda}^{\infty}(\overline{\rho})_{\fd_x},x} \cong \cM_{\lambda^x, \lambda} \otimes_{\co_{\cE_{\Omega, \lambda}^{\infty}(\overline{\rho})_{\fd_x}}} \widehat{\co}_{\cE_{\Omega, \lambda}^{\infty}(\overline{\rho})_{\fd_x},x}
\end{equation}
which by (\ref{extralabel}) is isomorphic to the strong dual of the space in (\ref{adj00}). Let $L(\mu)$ be an irreducible constituent of $M_P(\lambda^x)$ and define $\cN_{\mu,\lambda,y}$ as the dual of the following space of compact type:
\begin{equation}\label{duNml}
	\Big(\Hom_{\text{U}(\ug)}\big(L(\mu), \Pi^{R_{\infty}-\an}_{\infty}\big)^{N_P^0}_{\fss}[\fm_y^{\infty}]\otimes_E (\delta_{\fd_x} \circ \dett_{L_P})
	\otimes_E \chi_x \otimes_E \sigma^{\vee}\Big)^{L_P^0} [\fm_{\ul{x}}^{\infty}].
\end{equation}
By Lemma \ref{exacBE2}, $\cN_{\mu, \lambda, y}$ is a subquotient of the $\widehat{\co}_{\cE_{\Omega, \lambda}^{\infty}(\overline{\rho})_{\fd_x},x}$-module $\cM_{\lambda^x, \lambda, y}$. One can show moreover that $\cN_{\mu,\lambda, y}$ is preserved by the action of $R_{\infty} \times \cZ_0 \times \cZ_{\Omega}$, hence is also a finitely generated $\widehat{\co}_{\cE_{\Omega, \lambda}^{\infty}(\overline{\rho})_{\fd_x},x}$-module. 

\begin{lemma}\label{copconcyc}
	We have $\cN_{\mu,\lambda,y}\neq 0$ if and only if 
	\begin{equation*}
		\Hom_{G_p}\Big(\cF_{P^-(\Q_p)}^{G_p}\big(L^-(-\mu), \pi_{\ul{x}} \otimes_E \big((\delta_{\fd_x}^{-1} \chi_{x, \ul{\varpi}})\circ \dett_{L_P}\big) \otimes_E \delta_P^{-1}\big), \Pi_{\infty}^{R_{\infty}-\an}[\fm_y]\Big) \neq 0.
	\end{equation*}
\end{lemma}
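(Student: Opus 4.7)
The plan is to apply the adjunction of Lemma \ref{lemadj01} to $V=\Pi_\infty^{R_\infty-\an}[\fm_y]$, $M=L(\mu)$, and $\pi=\pi_{\ul{x}}\otimes_E\big((\delta_{\fd_x}^{-1}\chi_{x,\ul{\varpi}})\circ\dett_{L_P}\big)$, using the standard duality $L^-(-\mu)\cong \Hom_E(L(\mu),E)^{\fn_{P^-}^\infty}$. First, I would check the very strong admissibility hypothesis: since $\Pi_\infty$ is an admissible continuous Banach $G_p$-representation, $\Pi_\infty[\fm_y]$ is a closed admissible Banach subrepresentation, and $\Pi_\infty^{R_\infty-\an}[\fm_y]\cong \Pi_\infty[\fm_y]^{\an}$ is thus very strongly admissible. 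Applying Lemma \ref{lemadj01} identifies the $G_p$-Hom on the right-hand side of the statement with
\[\Hom_{L_P(\Q_p)}\big(\pi_{\ul{x}}\otimes_E((\delta_{\fd_x}^{-1}\chi_{x,\ul{\varpi}})\circ\dett_{L_P}),\,\Hom_{\text{U}(\ug)}(L(\mu),\Pi_\infty^{R_\infty-\an}[\fm_y])^{N_P^0}_{\fss}\big).\]

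Next, using the identification $\cZ_{\Omega}\cong \End_{L_P(\Q_p)}(\cind_{L_P^0}^{L_P(\Q_p)}\sigma)$, the smooth representation $\pi_{\ul{x}}$ corresponds to the maximal ideal $\fm_{\ul{x}}$, so (by analogues of Lemma \ref{BCmod}) the above Hom space can be rewritten as the $\fm_{\ul{x}}^\infty$-torsion of
\[\Hom_{L_P(\Q_p)}\big(\cind_{L_P^0}^{L_P(\Q_p)}\sigma \otimes_E((\delta_{\fd_x}^{-1}\chi_{x,\ul{\varpi}})\circ\dett_{L_P}),\,\Hom_{\text{U}(\ug)}(L(\mu),\Pi_\infty^{R_\infty-\an}[\fm_y])^{N_P^0}_{\fss}\big),\]
which, by Frobenius reciprocity applied to the compact induction and the identity $\chi_{x,\ul{\varpi}}|_{L_P^0}=\chi_x$ on $L_P^0$, unfolds into
\[\Big(\Hom_{\text{U}(\ug)}\big(L(\mu),\Pi_\infty^{R_\infty-\an}[\fm_y]\big)^{N_P^0}_{\fss}\otimes_E(\delta_{\fd_x}\circ\dett_{L_P})\otimes_E\chi_x\otimes_E\sigma^{\vee}\Big)^{L_P^0}[\fm_{\ul{x}}^\infty],\]
after reintroducing $\sigma^\vee$ and the twists on the appropriate side (compare with the description of $B_{\sigma,\lambda}(V)$ in \eqref{BslV}). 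This is almost exactly the space \eqref{duNml} whose dual is $\cN_{\mu,\lambda,y}$, the only discrepancy being that \eqref{duNml} uses $[\fm_y^\infty]$ inside $\Pi_\infty^{R_\infty-\an}$ rather than $[\fm_y]$.

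To reconcile the two, I would use that $\cN_{\mu,\lambda,y}$ is a finitely generated $\widehat{\co}_{\cE_{\Omega,\lambda}^\infty(\overline{\rho})_{\fd_x},x}$-module and that $\fm_y$ corresponds, via the map $R_\infty[1/p]\to \widehat{\co}_{\cE_{\Omega,\lambda}^\infty(\overline{\rho})_{\fd_x},x}$, to an ideal contained in the maximal ideal of this local ring; since $\fm_y$ is maximal in $R_\infty[1/p]$, any non-zero element of the $\fm_y^\infty$-torsion generates after multiplication by a suitable power of $\fm_y$ a non-zero vector annihilated by $\fm_y$, giving the equivalence between non-vanishing of the $[\fm_y^\infty]$-part and non-vanishing of the $[\fm_y]$-part. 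The main obstacle is not conceptual but bookkeeping: correctly tracking the interplay of the characters $\delta_{\fd_x}$, $\chi_x$, $\chi_{x,\ul{\varpi}}$ and $\delta_P$ through the adjunction and Frobenius reciprocity, in particular the twist by \eqref{dett1} in how $L_P(\Q_p)$ acts on $\cC^{\Q_p-\la}(Z_{L_P}^0,E)$, and matching it with the appearance of $(\delta_{\fd_x}\circ\dett_{L_P})$ and $(\delta_P^{-1})$ on the two sides of the claim.
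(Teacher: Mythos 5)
Your proposal is correct and follows essentially the same route as the paper: the paper's proof is exactly the combination of the adjunction of Lemma \ref{lemadj01} with Lemma \ref{BCmod} applied to $M=\cZ_{\Omega}/\fm_{\ul{x}}$ (i.e.\ your Frobenius-reciprocity step for $\cind_{L_P^0}^{L_P(\Q_p)}\sigma$), together with the same bookkeeping of the twists $\delta_{\fd_x}$, $\chi_{x,\ul{\varpi}}$, $\delta_P^{-1}$. The points you elaborate on (very strong admissibility of $\Pi_\infty^{R_\infty-\an}[\fm_y]$ and the equivalence between non-vanishing of the $[\fm_y^{\infty}]$-, resp.\ $[\fm_{\ul{x}}^{\infty}]$-part and of the $[\fm_y]$-, resp.\ $[\fm_{\ul{x}}]$-part) are treated as immediate in the paper, and your torsion argument for them is fine.
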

\begin{proof}
	By definition $\cN_{\mu,\lambda, y}\neq 0$ if and only if 
	\begin{equation*}
		\Big(\Hom_{\text{U}(\ug)}\big(L(\mu), \Pi^{R_{\infty}-\an}_{\infty}\big)^{N_P^0}_{\fss}[\fm_y]\otimes_E (\delta_{\fd_x} \circ \dett_{L_P})\otimes_E \chi_x\otimes_E \sigma^{\vee}\Big)^{L_P^0} [\fm_{\ul{x}}]\neq 0.
	\end{equation*}
	Using Lemma \ref{BCmod} (applied to $M=\cZ_{\Omega}/\fm_{\ul{x}}$) and Lemma \ref{lemadj01}, the above vector space is isomorphic to (be careful with the $L_P(\Q_p)$-action on the factor $\chi_x$):
	\begin{multline*}
		\Hom_{L_P(\Q_p)}\Big(\pi_{\ul{x}}, \Hom_{\text{U}(\ug)}\big(L(\mu), \Pi^{R_{\infty}-\an}_{\infty}[\fm_y]\big)^{N_P^0}_{\fss}\otimes_E (\delta_{\fd_x} \circ \dett_{L_P}) \otimes_E (\chi_{x,\ul{\varpi}})^{-1} \circ \dett_{L_P}\Big) \\
		\ \ \ \ \ \ \ \ \ \ \ \ \ \ \cong \Hom_{L_P(\Q_p)}\Big(\pi_{\ul{x}} \otimes_E \big((\delta_{\fd_x}(\chi_{x,\ul{\varpi}})^{-1}) \circ \dett_{L_P}\big), \Hom_{\text{U}(\ug)}\big(L(\mu), \Pi^{R_{\infty}-\an}_{\infty}[\fm_y]\big)^{N_P^0}_{\fss}\Big)\\
		\cong 	\Hom_{G_p}\Big(\cF_{P^-(\Q_p)}^{G_p}\big(L^-(-\mu), \pi_{\ul{x}} \otimes_E \big((\delta_{\fd_x}^{-1} \chi_{x, \ul{\varpi}})\circ \dett_{L_P}\big) \otimes_E \delta_P^{-1}\big), \Pi_{\infty}^{R_{\infty}-\an}[\fm_y]\Big).
	\end{multline*}
	The lemma follows.
\end{proof}

It follows from Theorem \ref{R=T0} and Proposition \ref{closedemR} (2) that the natural morphism $\widehat{\co}_{\fX_{\infty},y} \ra \widehat{\co}_{\cE_{\Omega, \lambda}^{\infty}(\overline{\rho})_{\fd_x},x}$ is surjective. Consequently $\cN_{\mu,\lambda, y}$ is a finitely generated $\widehat{\co}_{\fX_{\infty},y}$-module. Let $\lambda'$ be another integral $P$-dominant weight and $\fd$ an integral weight of $\fz_{L_P}$ such that $L(\mu)$ is an irreducible constituent of $M_P(\lambda'+\fd \circ \dett_{L_P})$. Let $\ul{x'}\in \Spec \cZ_{\Omega}$ such that $\pi_{\ul{x'}}\cong \pi_{\ul{x}} \otimes_E (\delta_{\fd-\fd_x, \ul{\varpi}}^{\unr}\circ \dett_{L_P})$ and $\chi':=\delta_{\fd}^0$ (recall $\chi_x=\delta_{\fd_x}^0$). Let $x':=(y,\ul{x}', \chi')\in (\Spf R_{\infty})^{\rig} \times (\Spec \cZ_{\Omega})^{\rig} \times \widehat{\cZ_0}$, hence $\lambda^{x'}=\lambda'+(\wt(\chi') \circ \dett_{L_P})=\lambda'+\fd \circ \dett_{L_P}$. Using similar arguments as in the proof of Lemma \ref{twBE} (especially in the last paragraph), we have a topological isomorphism which commutes with $R_{\infty}$:
\begin{multline*}
	\Big(\Hom_{\text{U}(\ug)}\big(L(\mu), \Pi^{R_{\infty}-\an}_{\infty}\big)^{N_P^0}_{\fss}[\fm_y^{\infty}]\otimes_E (\delta_{\fd_x} \circ \dett_{L_P})\otimes_E \chi_x \otimes_E \sigma^{\vee}\Big)^{L_P^0} [\fm_{\ul{x}}^{\infty}] \\
	\xlongrightarrow{\sim}\Big(\Hom_{\text{U}(\ug)}\big(L(\mu), \Pi^{R_{\infty}-\an}_{\infty}\big)^{N_P^0}_{\fss}[\fm_y^{\infty}]\otimes_E (\delta_{\fd} \circ \dett_{L_P})\otimes_E \chi'\otimes_E \sigma^{\vee}\Big)^{L_P^0} [\fm_{\ul{x}'}^{\infty}].
\end{multline*}
Note however that the $\cZ_{\Omega}\times \cZ_0$-actions on both sides differ by a twist.\footnote{As we will not use this result, we leave the curious readers work out the precise twist.} We then obtain (using Lemma \ref{exacBE2} for the second part):

\begin{lemma}\label{lemNmu'}
We have an isomorphism of $\widehat{\co}_{\fX_{\infty},y}$-modules $\cN_{\mu,\lambda, y} \cong \cN_{\mu,\lambda',y}$. Consequently, if $\cN_{\mu,\lambda, y}\neq 0$, then $\cM_{\lambda^{x'}, \lambda',y}\neq 0$, hence $x'=(y,\ul{x}', \chi')\in \cE_{\Omega, \lambda'}^{\infty}(\overline{\rho})$.
\end{lemma}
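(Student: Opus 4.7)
The plan is to inherit the topological isomorphism sketched in the paragraph preceding the lemma, verify its $R_\infty$-equivariance, and then deduce the nonvanishing statement from Lemma \ref{exacBE2}.

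First I would observe that the bulk space $M := \Hom_{\text{U}(\ug)}(L(\mu),\Pi_\infty^{R_\infty-\an})^{N_P^0}_{\fss}[\fm_y^{\infty}]$ appearing in both descriptions (\ref{duNml}) depends only on $(\mu,y)$, not on $(\lambda,\fd_x)$ or $(\lambda',\fd)$. Hence the two duals differ only by outer character twists and by the localization at $\fm_{\ul{x}}^{\infty}$ versus $\fm_{\ul{x}'}^{\infty}$. Using $\chi_x = \delta_{\fd_x}^0$, $\chi' = \delta_{\fd}^0$ and $\pi_{\ul{x}'}\cong \pi_{\ul{x}}\otimes_E(\delta_{\fd-\fd_x,\ul{\varpi}}^{\unr}\circ \dett_{L_P})$, the discrepancy amounts precisely to a twist by $\delta_{\fd-\fd_x}\circ \dett_{L_P}$ combined with its adjustment $\delta_{\fd-\fd_x}^0$ on the $\chi$-factor and a compatible shift of the $\cZ_\Omega$-point from $\ul{x}$ to $\ul{x}'$. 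I would then implement the twisting isomorphism explicitly, mimicking the last paragraph of the proof of Lemma \ref{twBE}: compose the analogue of (\ref{twistun}) for $L_P(\Q_p)$ with the algebraic-character shift (\ref{twO2}), then separate the ramified and unramified portions of $\delta_{\fd-\fd_x}$ so that $\delta_{\fd-\fd_x,\ul{\varpi}}^{\unr}$ gets absorbed into $\pi_{\ul{x}}$ to turn it into $\pi_{\ul{x}'}$, identifying $\fm_{\ul{x}}^{\infty}$ with $\fm_{\ul{x}'}^{\infty}$. All such operations happen purely on the $L_P(\Q_p)$-representation side and leave the $R_\infty$-action on $\Pi_\infty^{R_\infty-\an}$ untouched, so the resulting topological isomorphism is $R_\infty$-equivariant. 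Passing to strong continuous duals gives the desired isomorphism of $\widehat{\co}_{\fX_\infty,y}$-modules $\cN_{\mu,\lambda,y}\cong \cN_{\mu,\lambda',y}$.

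For the consequence, assume $\cN_{\mu,\lambda,y}\neq 0$, so also $\cN_{\mu,\lambda',y}\neq 0$. Since $L(\mu)$ is an irreducible constituent of $M_P(\lambda^{x'})$, choose a filtration of $M_P(\lambda^{x'})$ in $\co^{\fp}_{\alg}$ having $L(\mu)$ as one of its graded pieces. Applying the exact functor of Lemma \ref{exacBE2} with parameters $(\fd,\chi',\ul{x}')$ in place of $(\fd_x,\chi_x,\ul{x})$, this filtration is transported to one realizing the space dual to $\cN_{\mu,\lambda',y}$ as a subquotient of the space appearing in (\ref{adj00}) with $\lambda^{x'}$, $\chi'$, $\ul{x}'$ in place of $\lambda^{x}$, $\chi_x$, $\ul{x}$, whose strong dual is by (\ref{Cmxy}) precisely $\cM_{\lambda^{x'},\lambda',y}$. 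Nonvanishing of $\cN_{\mu,\lambda',y}$ thus forces $\cM_{\lambda^{x'},\lambda',y}\neq 0$, which by the definition of $\cE_{\Omega,\lambda'}^{\infty}(\overline\rho)$ as the schematic support of $\cM_{\Omega,\lambda'}^{\infty}$ yields $x'\in \cE_{\Omega,\lambda'}^{\infty}(\overline\rho)$. The only real obstacle is the careful bookkeeping of the character twists in the first step so that $(\lambda,\fd_x,\chi_x,\ul{x})$ and $(\lambda',\fd,\chi',\ul{x}')$ get matched by an honest topological isomorphism rather than merely an abstract compatibility; once this is done cleanly as in Lemma \ref{twBE}, the rest is formal.
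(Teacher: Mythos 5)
Your proposal is correct and follows essentially the same route as the paper: the paper obtains the isomorphism by the same character-twisting argument modelled on (the last paragraph of) Lemma \ref{twBE}, observes as you do that the twists happen on the $L_P(\Q_p)$-side so the map commutes with $R_\infty$ (with the $\cZ_\Omega\times\cZ_0$-actions matching only up to the twist $\ul{x}\mapsto\ul{x}'$, $\chi_x\mapsto\chi'$), and deduces the second assertion from the exactness in Lemma \ref{exacBE2} exactly as in your final paragraph.
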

Keep the notation of Lemma \ref{lemNmu'} and suppose moreover $\lambda^{x'}=\lambda^x$, then by similar arguments (with $L(\mu)$ replaced by $M_P(\lambda^x)$), we obtain an isomorphism of $\widehat{\co}_{\fX_{\infty},y}$-modules $\cM_{\lambda^{x}, \lambda, y}\cong \cM_{\lambda^x, \lambda',y}$. 
We will use the notation $\cN_{\mu,y}:=\cN_{\mu,\lambda, y}$, $\cM_{\lambda^x, y}:=\cM_{\lambda^x, \lambda,y}$ in the sequel when we are only concerned with the $R_{\infty}$-action. As $\cE_{\Omega, \lambda}^{\infty}(\overline{\rho})_{\fd_x}$ is equidimensional of dimension $g+|S|n^2 +\sum_{v\in S_p}([F_{\widetilde{v}}:\Q_p] \frac{n(n-1)}{2})$, so is $\widehat{\co}_{\cE_{\Omega, \lambda}^{\infty}(\overline{\rho})_{\fd_x},x}$ (see for example the discussion on page 309 of \cite{BHS3}).
For a finitely generated $\widehat{\co}_{\cE_{\Omega, \lambda}^{\infty}(\overline{\rho})_{\fd_x},x}$-module $\cN$ we set (where $Z^i(-)$ is defined in the same way as in \S~\ref{secGalCyc}):
\[[\cN]:=\sum_{\fZ} m(\fZ, \cN)[ \fZ]\in Z^0(\Spec \widehat{\co}_{\cE_{\Omega, \lambda}^{\infty}(\overline{\rho})_{\fd_x},x})\subset Z^{[F^+:\Q]\frac{n(n+1)}{2}}(\Spec \widehat{\co}_{\fX_{\infty}, y})\]
where $ \fZ=\Spec \co_{\eta_{\fZ}}$ runs through the irreducible components of $\Spec \widehat{\co}_{\cE_{\Omega, \lambda}^{\infty}(\overline{\rho})_{\fd_x},x}$ ($\eta_{\fZ}$ is the associated generic point and $\co_{\eta_{\fZ}}$ the localization at $\eta_{\fZ}$), and $m(\fZ,\cN)$ is the length of the $\co_{\eta_{\fZ}}$-module $\cN_{\eta_{\fZ}}$. We have by Lemma \ref{exacBE2}: 
\begin{equation}\label{equcyc11}[\cM_{\lambda^x,y}]=\sum_{\mu} b_{\lambda^x,\mu} [\cN_{\mu,y}]
\end{equation}where $b_{\lambda^x,\mu} $ denotes the multiplicity of $L(\mu)$ as an irreducible constituent in $M_P(\lambda^x)$. The following lemma is straightforward (comparing (\ref{Lmulam}) with (\ref{duNml})):

\begin{lemma}\label{loccycglocy}
	We have $[\cN_{\mu,y}]\neq 0$ (resp.\ $\cN_{\mu,y}\neq 0$) if and only if $x\in\fZ_{\mu,\lambda}$ (resp.\ $x\in \fZ_{\mu,\lambda}^0$).
\end{lemma}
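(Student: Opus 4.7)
\medskip

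\noindent\textbf{Proof proposal for Lemma \ref{loccycglocy}.} The plan is to identify $\cN_{\mu,y}$ with the completion at $x$ of the coherent sheaf $\cL_{\mu,\lambda}$ on $\cE_{\Omega,\lambda}^{\infty}(\overline{\rho})_{\fd_x}$, and then read off both statements from the definition of $\fZ_{\mu,\lambda}^{0}$ and $\fZ_{\mu,\lambda}$. Without loss of generality (see the remark made before (\ref{adj00})) we assume that $\chi_x$ is algebraic of weight $\fd_x$, and we set $\fd=\fd_x$ in the definition of $\cL_{\mu,\lambda}$.

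First I will show that
\begin{equation*}
\cN_{\mu,y}\ \cong\ \cL_{\mu,\lambda}\otimes_{\co_{\cE_{\Omega,\lambda}^{\infty}(\overline{\rho})_{\fd_x}}}\widehat{\co}_{\cE_{\Omega,\lambda}^{\infty}(\overline{\rho})_{\fd_x},x}.
\end{equation*}
Dually, this amounts to identifying the vector space of compact type in (\ref{duNml}) with the $\fm_x$-torsion of $\Gamma(\cE_{\Omega,\lambda}^{\infty}(\overline{\rho})_{\fd_x},\cL_{\mu,\lambda})^{\vee}$ given in (\ref{Lmulam}), where $\fm_x\subset \widehat{\co}_{\cE_{\Omega,\lambda}^{\infty}(\overline{\rho})_{\fd_x},x}$ is the maximal ideal. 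By the same formal argument as in (\ref{extralabel}), taking $[\fm_{\chi_x}^{\infty}]$ inside $\cC^{\Q_p-\la}(Z_{L_P}^0,E)[\fz_{L_P}=\fd_x]$ simply extracts the line $\chi_x$ (recall $\chi_x=\delta_{\fd_x}^{0}$), and taking $[\fm_{\ul{x}}^{\infty}]$, $[\fm_y^{\infty}]$ on the outer $\cZ_{\Omega}$- and $R_\infty$-factors corresponds to localizing then completing the associated coherent sheaf at $x$. The compatibility of these operations with passage to duals in the compact-type setting is formal (using that the actions on $\cL_{\mu,\lambda}$ of $R_\infty\times\cZ_0\times\cZ_{\Omega}$ factor through the commutative finite-type algebra cut out on $\cE_{\Omega,\lambda}^{\infty}(\overline{\rho})_{\fd_x}$).

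Given this identification, the first equivalence is immediate: a finitely generated module over the noetherian local ring $\widehat{\co}_{\cE_{\Omega,\lambda}^{\infty}(\overline{\rho})_{\fd_x},x}$ is non-zero if and only if its generic stalk at $x$ is non-zero, i.e.\ if and only if $x$ lies in the scheme-theoretic support of $\cL_{\mu,\lambda}$, which by definition is $\fZ_{\mu,\lambda}^{0}$.

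For the second equivalence, recall that $\cE_{\Omega,\lambda}^{\infty}(\overline{\rho})_{\fd_x}$ is equidimensional of dimension $d:=g+|S|n^2+\sum_{v\in S_p}[F_{\widetilde v}:\Q_p]\frac{n(n-1)}{2}$, and hence so is the complete local ring $\widehat{\co}_{\cE_{\Omega,\lambda}^{\infty}(\overline{\rho})_{\fd_x},x}$. By definition of the cycle map, $[\cN_{\mu,y}]\ne 0$ in $Z^{0}(\Spec\widehat{\co}_{\cE_{\Omega,\lambda}^{\infty}(\overline{\rho})_{\fd_x},x})$ if and only if some minimal prime of $\widehat{\co}_{\cE_{\Omega,\lambda}^{\infty}(\overline{\rho})_{\fd_x},x}$ lies in the support of $\cN_{\mu,y}$. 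Via the identification above, such a minimal prime corresponds to the germ at $x$ of an irreducible component of $\fZ_{\mu,\lambda}^{0,\red}$ of dimension exactly $d$, that is to say, an irreducible component belonging to $\fZ_{\mu,\lambda}$. Hence $[\cN_{\mu,y}]\ne 0$ if and only if $x\in\fZ_{\mu,\lambda}$.

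The only non-routine step is the identification in the first paragraph, which is essentially bookkeeping once one matches up the various group and Lie-algebra actions; the decomposition results (Lemma \ref{decompVM}) and the exactness statement (Lemma \ref{exacBE2}) ensure that all the completions and $\fm^{\infty}$-torsion operations behave as expected and that $\cN_{\mu,y}$ is indeed a finitely generated $\widehat{\co}_{\cE_{\Omega,\lambda}^{\infty}(\overline{\rho})_{\fd_x},x}$-module.
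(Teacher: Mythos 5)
Your argument is correct and is exactly the route the paper takes: the proof there is declared straightforward by comparing (\ref{Lmulam}) with (\ref{duNml}), i.e.\ by identifying $\cN_{\mu,y}$ with the completion of $\cL_{\mu,\lambda}$ at $x$ and reading the two equivalences off the definitions of $\fZ_{\mu,\lambda}^{0}$ and $\fZ_{\mu,\lambda}$ together with the equidimensionality of $\cE_{\Omega,\lambda}^{\infty}(\overline{\rho})_{\fd_x}$. Your write-up just makes these bookkeeping steps explicit.
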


Assume that $\rho_{x,\widetilde{v}}$ is generic potentially crystalline with distinct Hodge-Tate weights $\textbf{h}_{\widetilde{v}}$ for all $v|p$, and that $x^p$ lies in the smooth locus of $\fX_{\infty}^p:=(\Spf R_{\infty}^p)^{\rig}$. By Corollary \ref{locModdv}, $\fX_{\infty}^p \times X_{\Omega, \textbf{h}}(\overline{\rho}_p)$ is irreducible at the point $x$ (see Theorem \ref{R=T0} for the notation). We deduce:

\begin{corollary}\label{corR=T0}
	The embedding in Theorem \ref{R=T0} induces a local isomorphism at the point $x$, and $\cE_{\Omega, \lambda}^{\infty}(\overline{\rho})$ is irreducible at $x$.
\end{corollary}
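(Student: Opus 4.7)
The plan is to combine Corollary~\ref{locModdv} applied placewise at $v\mid p$ with the smoothness of $\fX_\infty^p$ at $x^p$ in order to show that the target of the embedding in Theorem~\ref{R=T0} has domain completed local ring at (the image of) $x$; the conclusion is then immediate from the statement of Theorem~\ref{R=T0}.

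At each place $v\mid p$, the representation $\rho_{x,\widetilde v}$ is generic potentially crystalline with distinct Hodge--Tate weights, so by Example~\ref{expcrpara}(2) together with the discussion around \eqref{geneMf}, the associated $\Omega_{\widetilde v}$-filtration on $D_{\rig}(\rho_{x,\widetilde v})[\frac{1}{t}]$ has a generic parameter. Corollary~\ref{locModdv} then identifies $\widehat{X_{\Omega_{\widetilde v},\textbf{h}_{\widetilde v}}(\overline\rho_{\widetilde v})}_{x_v}$ with $R_{\rho_{x,\widetilde v},\cM_\bullet}^{w_{x,v}w_{0,L}}$, and the latter is a domain by Theorem~\ref{thmrM}(1) (being a quotient of $R_{\rho_{x,\widetilde v},\cM_\bullet}$ by a minimal prime). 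Since the constructions of \S~\ref{secMod} only rely on formal properties of the ambient reductive group --- notably Theorem~\ref{unibranch}, which is proved for arbitrary connected split reductive groups over $E$ --- they apply verbatim to the product reductive group $\prod_{v\mid p}\Res_{\Q_p}^{F_{\widetilde v}}\GL_n\times_{\Q_p}E$, yielding the analogous identification for the product $X_{\Omega,\textbf{h}}(\overline\rho_p)=\prod_{v\mid p}X_{\Omega_{\widetilde v},\textbf{h}_{\widetilde v}}(\overline\rho_{\widetilde v})$ itself, and in particular the domain property of its completed local ring at the image of $x$.

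Combining this with the smoothness of $\fX_\infty^p$ at $x^p$ (so $\widehat\co_{\fX_\infty^p,x^p}$ is a formal power series ring over $E$), the completed local ring of $\fX_\infty^p\times X_{\Omega,\textbf{h}}(\overline\rho_p)$ at $x$ is a formal power series ring over a domain, hence itself a domain. Thus the target of the embedding in Theorem~\ref{R=T0} is irreducible and reduced at $x$. Since $\cE_{\Omega,\lambda}^\infty(\overline\rho)$ is identified with a union of irreducible components of this target equipped with the reduced structure and contains $x$, exactly one irreducible component of the target passes through $x$, and it must be contained in $\cE_{\Omega,\lambda}^\infty(\overline\rho)$. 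Reducedness of the target at $x$ then forces $\cE_{\Omega,\lambda}^\infty(\overline\rho)$ to coincide with the target locally at $x$, which simultaneously gives the local isomorphism and the irreducibility of $\cE_{\Omega,\lambda}^\infty(\overline\rho)$ at $x$. The main obstacle is the globalization step, i.e.\ extending Corollary~\ref{locModdv} from a single place to the product $X_{\Omega,\textbf{h}}(\overline\rho_p)$; however this should be formal, since Theorem~\ref{unibranch} applies to arbitrary connected split reductive groups over $E$ and the relevant point $(g_1B,g_2P,0)$ has zero $\gl_n$-coordinate thanks to $\rho_{x,\widetilde v}$ being de Rham at each $v\mid p$.
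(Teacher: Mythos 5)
Your argument is correct and is essentially the paper's: the paper deduces the corollary in one line from Corollary \ref{locModdv} (which gives irreducibility of $\fX_{\infty}^p \times X_{\Omega, \textbf{h}}(\overline{\rho}_p)$ at $x$, using smoothness of $\fX_{\infty}^p$ at $x^p$) combined with Theorem \ref{R=T0}. The extra care you take with the product over the places $v\mid p$ (rerunning the local-model machinery of \S~\ref{secMod} for the product group rather than just asserting that placewise irreducibility passes to the product) only makes explicit a step the paper leaves implicit, and does not change the approach.
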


Let $\textbf{h}=(\textbf{h}_{\widetilde{v}})$ and $\mu=(\mu_{\widetilde{v}})=(\mu_{\widetilde{v},i,\tau})_{\substack{i=1, \dots, n\\ v\in S_p, \tau \in \Sigma_{\widetilde{v}}}}$ with $\mu_{\widetilde{v},i,\tau}=h_{\widetilde{v},i,\tau}+i-1$. By Proposition \ref{galPBE}, there exists $w_x=(w_{x,\widetilde{v}})\in \sW^P_{\min}\cong \prod_{v\in S_p} \sW^{P_{\widetilde{v}}}_{\min, F_{\widetilde{v}}}$ such that $\lambda^x=w_x \cdot \mu$. For each $v \in S_p$ and $w_{\widetilde{v}} \in \sW_{F_{\widetilde{v}}}$, let $\fC_{\widetilde{v}, w_{\widetilde{v}}}\in Z^{[F_{\widetilde{v}}:\Q_p]\frac{n(n+1)}{2}}(\Spec \widehat{\co}_{\fX_{\overline{\rho}_{\widetilde{v}}, \rho_{\widetilde{v}}}})$ be the cycle defined in (\ref{cycGaldef}) (applied to $L=F_{\widetilde{v}}$, $\rho=\rho_{\widetilde{v}}$ and so on). For each $w=(w_{\widetilde{v}})\in \sW_F$, we put
\begin{equation}\label{Cwf}
	\fC_{w,F}:=[\Spec \widehat{\co}_{\fX_{\infty}^p, x^p}] \times \prod_{v\in S_p} \fC_{\widetilde{v}, w_{\widetilde{v}}} \ \in \ Z^{[F^+:\Q]\frac{n(n+1)}{2}}(\Spec \widehat{\co}_{\fX_{\infty}, y}).
\end{equation}
Similarly, let $\fZ_{w,F}:=[\widehat{\co}_{\fX_{\infty}^p, x^p}] \times \prod_{v\in S_p} \fZ_{\widetilde{v}, w_{\widetilde{v}}} \ \in \ Z^{[F^+:\Q]\frac{n(n+1)}{2}}(\Spec \widehat{\co}_{\fX_{\infty}, y})$, where $\fZ_{\widetilde{v}, w_{\widetilde{v}}}$ is the cycle defined in (\ref{fZw}) (applied to $L=F_{\widetilde{v}}$, $\rho=\rho_{\widetilde{v}}$).
Note that both $\fC_{w,F}$ and $\fZ_{w,F}$ are independent of the choice of the representative $w$ in its associated class in $\sW_{L_P}\backslash \sW_F$. By Lemma \ref{irrcyc}, we have:

\begin{lemma}\label{irrcyc2}
	For $w\in \sW_F$, the cycle $\fZ_{w,F}$ is irreducible.
\end{lemma}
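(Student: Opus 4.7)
The plan is to reduce the irreducibility of $\fZ_{w,F}$ to the irreducibility of each local factor $\fZ_{\widetilde v, w_{\widetilde v}}$ (supplied by Lemma \ref{irrcyc}) and of the factor $[\Spec \widehat{\co}_{\fX_\infty^p, x^p}]$ (supplied by the hypothesis that $x^p$ is a smooth point of $\fX_\infty^p$), combined with a careful analysis of the external product of cycles under the canonical identification $\widehat{\co}_{\fX_\infty, y} \cong \widehat{\co}_{\fX_\infty^p, x^p} \widehat{\otimes}_E \widehat{\otimes}_{v \in S_p} \widehat{\co}_{\fX_{\overline\rho_{\widetilde v}}, \rho_{\widetilde v}}$. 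First, since each $\rho_{\widetilde v}$ is generic potentially crystalline with distinct Hodge-Tate weights and hence de Rham, Lemma \ref{irrcyc} applies and gives $\fZ_{\widetilde v, w_{\widetilde v}} = [\Spec A_v]$ for a complete local noetherian $E$-domain $A_v$ with residue field $E$; tracing through the proof of Lemma \ref{irrcyc}, $A_v$ arises (via a formally smooth pullback to the framed deformation space) from $\widehat{\co}_{Z_{w_{\widetilde v}}, y_{\widetilde v}}$, which is reduced and unibranch by Theorem \ref{unibranch2} (the hypothesis applies because $\rho_{\widetilde v}$ being de Rham forces $N_W = 0$, placing the associated point in the zero-$\ug$-part stratum to which Theorem \ref{unibranch2} pertains).

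Next, using that the smoothness of $x^p$ makes $\widehat{\co}_{\fX_\infty^p, x^p}$ a regular complete local $E$-algebra with residue field $E$, hence isomorphic to a formal power series ring $E[[t_1, \ldots, t_N]]$ with $N = \dim \fX_\infty^p$, I would reduce the problem to showing that the completed tensor product $\widehat{\otimes}_{v \in S_p} A_v$ is a domain. Indeed one has $E[[t_1, \ldots, t_N]] \widehat{\otimes}_E R \cong R[[t_1, \ldots, t_N]]$ for any complete local noetherian $E$-algebra $R$ with residue field $E$, and a formal power series ring over a domain is a domain, so the factor $\widehat{\co}_{\fX_\infty^p, x^p}$ is harmless and only adjoins formal variables.

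The main obstacle is this last step: verifying that $\widehat{\otimes}_{v \in S_p} A_v$ is integral, since this is not automatic for completed tensor products of complete local noetherian $E$-domains with residue field $E$. The intended input is that each $A_v$ is geometrically integral over $E$, i.e.\ $A_v \widehat{\otimes}_E \overline E$ is still a domain; once this is known, an iterated application of the standard fact that the tensor product over $E$ of a geometrically integral $E$-domain with any $E$-domain remains a domain yields integrality of $\widehat{\otimes}_v A_v$. The geometric integrality of each $A_v$ should follow from the observation that it is obtained by completing, at an $E$-rational point, the geometrically integral $E$-scheme $Z_{w_{\widetilde v}}$ — which by the construction in \S~\ref{sec: grgg} (cf.\ Proposition \ref{geoVw} and Corollary \ref{irrcmpXP}) is the reduced Zariski closure of the geometrically integral vector bundle $V_{w_{\widetilde v}}$ over the smooth geometrically integral Schubert-type orbit $U_{w_{\widetilde v}}$ — combined with the unibranch property of Theorem \ref{unibranch2} which prevents new branches from appearing after base change to $\overline E$. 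Securing this geometric input, the successive reductions above then deliver the irreducibility of $\fZ_{w,F}$.
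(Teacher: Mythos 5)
Your overall route is in fact the paper's: the paper deduces Lemma \ref{irrcyc2} directly from Lemma \ref{irrcyc}, viewing $\fZ_{w,F}$ as the external product of the irreducible local cycles $\fZ_{\widetilde{v},w_{\widetilde{v}}}$ with the factor $[\Spec \widehat{\co}_{\fX_{\infty}^p,x^p}]$ coming from the smooth point $x^p$, and your first two reductions (the prime-to-$p$ factor only adjoins formal variables; Lemma \ref{irrcyc} handles each $v$) reproduce exactly that. You are also right that, over the non-algebraically closed field $E$, irreducibility of the resulting completed tensor product is not formal and needs a geometric-integrality input — a point the paper passes over in silence — so the extra care is welcome.

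However, your justification of that input has a genuine gap. You claim geometric integrality of each factor follows from $Z_{w_{\widetilde{v}}}$ being a geometrically integral $E$-scheme together with the unibranch statement of Theorem \ref{unibranch2}, which you say ``prevents new branches from appearing after base change to $\overline{E}$''. That implication is false in general: for $a\in E^{\times}$ a non-square, the curve $y^2=x^2(x+a)$ is geometrically integral and its completed local ring $E[[x,y]]/(y^2-x^2(x+a))$ at the origin is a domain (so the origin is unibranch), yet after $\widehat{\otimes}_E\overline{E}$ the two conjugate analytic branches separate and the completion is no longer irreducible. So unibranchness at an $E$-rational point of a geometrically integral scheme does not give geometric unibranchness. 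What rescues the argument is that the proof of Theorem \ref{unibranch2} (as for Theorem \ref{unibranch}) is carried out after extending scalars to $\overline{E}$ — it establishes geometric connectedness of the fibres of the normalization via Zariski's connectedness theorem and normality of Schubert varieties — hence it actually yields that $\widehat{\co}_{Z_{w_{\widetilde{v}}},y_{\widetilde{v}}}\widehat{\otimes}_E\overline{E}$ is irreducible; this, not the bare unibranch statement, is the geometric integrality of $A_v$ you need, and it transports through the formally smooth maps defining $\fZ_{\widetilde{v},w_{\widetilde{v}}}$. Relatedly, the ``standard fact'' you invoke concerns ordinary tensor products; since completion can create branches, you must use its completed analogue (after the base change to $\overline{E}$ just described, a completed tensor product over an algebraically closed residue field of complete local noetherian domains is again a domain, and one descends to $E$ by flatness/injectivity). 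With these two repairs your argument is complete — and more detailed than the paper's one-line deduction from Lemma \ref{irrcyc}.
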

	
For $w,w'\in \sW_F$, let \[a_{w,w'}:=\prod_{v\in S_p} a_{w_{\widetilde{v}}, w_{\widetilde{v}}'}, \ c_{w,w'}:=\prod_{v\in S_p} c_{w_{\widetilde{v}},w_{\widetilde{v}}'}, \ c_{w,w'}':=\prod_{v\in S_p} c_{w_{\widetilde{v}},w_{\widetilde{v}}'}'\]
where $a_{w_{\widetilde{v}}, w_{\widetilde{v}}'}\in \Z_{\geq 0}$ are those $a_{w,w'}$ defined above (\ref{cycGaldef}) (applied to $L=F_{\widetilde{v}}$ and so on), $c_{w_{\widetilde{v}},w_{\widetilde{v}}'} ,c_{w_{\widetilde{v}},w_{\widetilde{v}}'}' \in \Z_{\geq 0}$ are the integers $c_{w,w'}$, $c_{w,w'}'$ respectively in Lemma \ref{lemcyccomp} (2) (adapted to the setting $G=\Res^{F_{\widetilde{v}}}_{\Q_p} \GL_n$ and so on). By (\ref{cycGaldef}) and Theorem \ref{thmcycl} (3), we have 
\begin{equation}\label{cycGaldef2}
\fC_{w,F}=\sum_{w'\in \sW_{L_P}\backslash \sW_F} \!\!\! a_{w,w'} \fZ_{w',F} =\sum_{\substack{w'\in \sW_{L_P}\backslash \sW_F \\ w'^{\max}\leq w^{\max}}} \!\!\! a_{w,w'} \fZ_{w',F} \in Z^{[F^+:\Q_p]\frac{n(n+1)}{2}}(\Spec \widehat{\co}_{\fX_{\infty}, y}).
\end{equation}
We let $w_{y,\widetilde{v}}\in \sW_{\max,F_{\widetilde{v}}}^{P_{\widetilde{v}}}$ be the element associated to $\rho_{\widetilde{v}}$ (and $\Omega$) as in \S~\ref{introPcr} (denoted by $w_{\sF}$ there) and
\begin{equation}\label{wyprod}
w_y:=(w_{y,\widetilde{v}}) \in \sW_{\max}^{P}=\prod_{v\in S_p} \sW_{\max,F_{\widetilde{v}}}^{P_{\widetilde{v}}}
\end{equation}
which is exactly the product for $v\in S_p$ of the $w_y$ in \S~\ref{secGalCyc} for $L=F_{\widetilde{v}}$. By Lemma \ref{lemnonzdR} we have:

\begin{corollary}\label{nonzero}
	For $w=(w_{\widetilde{v}})\in \sW_F$, we have $\fC_{w,F}\neq 0 \Longleftrightarrow \fZ_{w,F}\neq 0 \Longleftrightarrow w^{\max}\geq w_y$.
\end{corollary}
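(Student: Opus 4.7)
The plan is to reduce the global statement to the local Lemma~\ref{lemnonzdR}, applied at each place $v\in S_p$, by exploiting the fact that both $\fC_{w,F}$ and $\fZ_{w,F}$ are defined as external products of local cycles together with the fundamental class $[\Spec\widehat{\co}_{\fX_{\infty}^p,x^p}]$.

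First, I would record the purely combinatorial observation that, since $w^{\max}=(w_{\widetilde v}^{\max})_{v\in S_p}$ and $w_y=(w_{y,\widetilde v})_{v\in S_p}$ (see (\ref{wyprod})) are defined componentwise in $\sW_F=\prod_{v\in S_p}\sW_{F_{\widetilde v}}$, the inequality $w^{\max}\geq w_y$ holds if and only if $w_{\widetilde v}^{\max}\geq w_{y,\widetilde v}$ for every $v\in S_p$.

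Second, I would address the nonvanishing of the products (\ref{Cwf}) and its analogue for $\fZ_{w,F}$. The cycle $[\Spec\widehat{\co}_{\fX_{\infty}^p,x^p}]$ is nonzero since it is the fundamental class of a nonempty local scheme (note that our genericity and smoothness assumptions on $x^p$ are not required for this). The remaining factor is a product of cycles on the completed tensor product $\widehat{\otimes}_{v\in S_p}\widehat{\co}_{\fX_{\overline\rho_{\widetilde v}},\rho_{\widetilde v}}$ embedded in $\widehat{\co}_{\fX_{\infty},y}$; by the standard behaviour of external products of cycles over local base rings (each factor being a cycle in an equidimensional local complete noetherian ring), the product $\prod_{v\in S_p}\fC_{\widetilde v,w_{\widetilde v}}$ is nonzero if and only if each factor $\fC_{\widetilde v,w_{\widetilde v}}$ is nonzero in $Z^{[F_{\widetilde v}:\Q_p]\frac{n(n+1)}{2}}(\Spec\widehat{\co}_{\fX_{\overline\rho_{\widetilde v}},\rho_{\widetilde v}})$, and similarly for the $\fZ_{\widetilde v,w_{\widetilde v}}$. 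Consequently
\[\fC_{w,F}\neq 0\ \Longleftrightarrow\ \fC_{\widetilde v,w_{\widetilde v}}\neq 0\ \forall v\in S_p,\quad \fZ_{w,F}\neq 0\ \Longleftrightarrow\ \fZ_{\widetilde v,w_{\widetilde v}}\neq 0\ \forall v\in S_p.\]

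Third, I would invoke the local Lemma~\ref{lemnonzdR}. By hypothesis above the statement of Corollary~\ref{nonzero}, each $\rho_{x,\widetilde v}$ is generic potentially crystalline, hence in particular de Rham, so Lemma~\ref{lemnonzdR} applies (with $L=F_{\widetilde v}$, $\rho=\rho_{x,\widetilde v}$ and the $\Omega$-filtration $\cM_{\bullet}$ on $D_{\rig}(\rho_{x,\widetilde v})[1/t]$ determined by $\pi_{x,L_P}$ and $\chi_x$ via Corollary~\ref{uniqMfil}) and yields for each $v\in S_p$ the chain of equivalences
\[\fC_{\widetilde v,w_{\widetilde v}}\neq 0\ \Longleftrightarrow\ \fZ_{\widetilde v,w_{\widetilde v}}\neq 0\ \Longleftrightarrow\ w_{\widetilde v}^{\max}\geq w_{y,\widetilde v}.\]

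Combining these three steps then gives the desired triple equivalence. The only point requiring care is the product cycle formalism in the second step: since the completed tensor product of equidimensional local noetherian rings is again equidimensional, and since the irreducible components of $\Spec \widehat{\co}_{\fX_{\infty},y}$ supporting each $\fZ_{w,F}$ are precisely the products of irreducible components of the local factors (an immediate consequence of the construction in \S\ref{secGalCyc} together with Lemma~\ref{irrcyc2}), the multiplicativity of nonvanishing is unproblematic. I do not anticipate any serious obstacle; the result is essentially a formal consequence of the local statement combined with the product structure of both $w_y$ and the cycles.
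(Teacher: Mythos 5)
Your proposal is correct and follows essentially the same route as the paper, which deduces the corollary directly from the local Lemma~\ref{lemnonzdR} applied factor by factor through the product definitions (\ref{Cwf}) of $\fC_{w,F}$ and $\fZ_{w,F}$; your extra step spelling out that nonvanishing of the (effective, and by Lemma~\ref{irrcyc2} irreducible) product cycles is equivalent to nonvanishing of each local factor is exactly what the paper leaves implicit.
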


Corollary \ref{nonzero} allows to further refine (\ref{cycGaldef2}) as:

\begin{equation}\label{cycGaldef3}
\fC_{w,F}=\sum_{\substack{w'\in \sW_{L_P}\backslash \sW_F \\ w_y \leq w'^{\max}\leq w^{\max}}} \!\!\!\!\! a_{w,w'} \fZ_{w',F} \in Z^{[F^+:\Q_p]\frac{n(n+1)}{2}}(\Spec \widehat{\co}_{\fX_{\infty}, y}).
\end{equation}

\begin{corollary}\label{cycGaA}
(1) We have	
\begin{equation*}
	[\Spec \widehat{\co}_{\cE_{\Omega, \lambda}^{\infty}(\overline{\rho})_{\fd_{x}},x}]=\sum_{\substack{w\in \sW_{L_P}\backslash \sW_{F}\\ w_y \leq w^{\max} \leq w_xw_{0,F}}} \!\!\!\!\!\! c_{w_xw_{0,F},w}' \fZ_{w,F} \in Z^{[F^+:\Q]\frac{n(n+1)}{2}}(\Spec \widehat{\co}_{\fX_{\infty}, y}).
\end{equation*}

(2) Assume $x$ is a smooth point of $\cE_{\Omega, \lambda}^{\infty}(\overline{\rho})$, then we have:
\begin{equation*}
		[\Spec \widehat{\co}_{\cE_{\Omega, \lambda}^{\infty}(\overline{\rho})_{\fd_{x}},x}]=\sum_{\substack{w\in \sW_{L_P}\backslash \sW_{F}\\ w_y \leq w^{\max} \leq w_xw_{0,F}}}\!\!\!\!\!\! b_{w_xw_{0,F},w} \fC_{w,F} \in Z^{[F^+:\Q]\frac{n(n+1)}{2}}(\Spec \widehat{\co}_{\fX_{\infty}, y})
\end{equation*}
	where $b_{w,w'}:=\prod_{v\in S_p} b_{w_{\widetilde{v}}, w'_{\widetilde{v}}}$ for $w=(w_{\widetilde{v}})$,$w'=(w'_{\widetilde{v}})\in \sW_F$, which is equal to the multiplicity of $L(w'^{\max} w_{0,F} \cdot 0)$ in $M_P(w^{\max}w_{0,F} \cdot 0)$.
\end{corollary}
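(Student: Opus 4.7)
The plan is to deduce both statements from the local model description of $\cE_{\Omega, \lambda}^{\infty}(\overline{\rho})$ at $x$ (Corollary \ref{corR=T0}) together with the cycle computations of Lemma \ref{lemcyccomp}. First, since $x^p$ is a smooth point of $\fX_\infty^p$ and each $\rho_{x,\widetilde v}$ is generic potentially crystalline with distinct Hodge-Tate weights, Corollary \ref{corR=T0} provides an isomorphism of complete local rings
\[\widehat{\co}_{\cE_{\Omega,\lambda}^{\infty}(\overline{\rho}),x}\;\cong\;\widehat{\co}_{\fX_\infty^p,x^p}\,\widehat{\otimes}_E\,\bigotimes_{v\in S_p}\widehat{\co}_{X_{\Omega,\textbf{h}_{\widetilde v}}(\overline{\rho}_{\widetilde v}),(x_{\widetilde v},\ul{x}_{\widetilde v},\chi_{x,\widetilde v})}.\]
Taking fibres at $\fd_x$ (which on the right only affects the $\widehat{\cZ_{0,F_{\widetilde v}}}$ coordinate in each factor) and computing cycle classes multiplicatively gives
\[[\Spec\widehat{\co}_{\cE_{\Omega,\lambda}^{\infty}(\overline{\rho})_{\fd_x},x}]=[\widehat{\co}_{\fX_\infty^p,x^p}]\times\prod_{v\in S_p}[\Spec\widehat{\co}_{X_{\Omega,\textbf{h}_{\widetilde v}}(\overline{\rho}_{\widetilde v})_{\fd_{x,\widetilde v}},(x_{\widetilde v},\ul{x}_{\widetilde v},\chi_{x,\widetilde v})}]\]
as classes in $Z^{[F^+:\Q]\frac{n(n+1)}{2}}(\Spec\widehat{\co}_{\fX_\infty,y})$, using the definition (\ref{Cwf}) of $\fC_{w,F}$ and $\fZ_{w,F}$.

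Second, for each $v\in S_p$, Corollary \ref{locModdv} gives $\widehat{X_{\Omega,\textbf{h}_{\widetilde v}}(\overline{\rho}_{\widetilde v})}_{(x_{\widetilde v},\ul{x}_{\widetilde v},\chi_{x,\widetilde v})}\xrightarrow{\sim}X_{\rho_{\widetilde v},\cM_{\bullet,\widetilde v}}^{w_{x,\widetilde v}w_0}$, and the fibre at $\fd_{x,\widetilde v}$ (which via the map $\wt-\wt(\delta_{\widetilde v}):\widehat{Z_{L_P}(F_{\widetilde v})}_{\delta_{\widetilde v}}\to\widehat{\fz}_{L_P,F_{\widetilde v}}$ amounts to the fibre at $0\in\widehat{\fz}_{L_P,F_{\widetilde v}}$) realises this formal scheme as $\Spec\overline{R}_{\rho_{\widetilde v},\cM_{\bullet,\widetilde v}}^{w_{x,\widetilde v}w_0}$. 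Via the smooth morphisms
\[\Spec\overline{R}_{\rho_{\widetilde v},\cM_{\bullet,\widetilde v}}\longleftarrow\Spec\overline{R}_{\rho_{\widetilde v},\cM_{\bullet,\widetilde v}}^{\square}\longrightarrow\Spec\widehat{\co}_{\overline{X}_{P,F_{\widetilde v}},y_{\widetilde v}}\]
of \S~\ref{secGalCyc} (which are flat and hence preserve cycle classes by pull-back), the class $[\Spec\overline{R}_{\rho_{\widetilde v},\cM_{\bullet,\widetilde v}}^{w_{x,\widetilde v}w_0}]$ corresponds to $[\Spec\widehat{\co}_{\overline{X}_{w_{x,\widetilde v}w_0},y_{\widetilde v}}]$. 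Applying Lemma \ref{lemcyccomp}(2) then yields
\[[\Spec\overline{R}_{\rho_{\widetilde v},\cM_{\bullet,\widetilde v}}^{w_{x,\widetilde v}w_0}]=\sum_{w_{\widetilde v}'\in\sW_{L_P}\backslash\sW_{F_{\widetilde v}}}c'_{w_{x,\widetilde v}w_0,w_{\widetilde v}'}\,\fZ_{\widetilde v,w_{\widetilde v}'}\]
in $Z^0(\Spec\overline{R}_{\rho_{\widetilde v},\cM_{\bullet,\widetilde v}})$. Multiplying over $v\in S_p$ and over the prime-to-$p$ factor $[\widehat{\co}_{\fX_\infty^p,x^p}]$, and using Corollary \ref{nonzero} (which forces $w^{\max}\geq w_y$ on nonzero $\fZ_{w,F}$) together with the constraint $w_{\widetilde v}'^{\max}\leq w_{x,\widetilde v}w_0$ built into $c'$, gives assertion (1).

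For (2), when $x$ is a smooth point of $\cE_{\Omega,\lambda}^{\infty}(\overline{\rho})$, the above isomorphism of complete local rings together with the assumed smoothness of $\fX_\infty^p$ at $x^p$ forces the irreducible component $X_{w_{x,\widetilde v}w_0}$ of $X_{P,F_{\widetilde v}}$ to be smooth at $y_{\widetilde v}$ for each $v\in S_p$. Lemma \ref{lemcyccomp}(3) then gives $\widehat{\co}_{\overline{X}_{w_{x,\widetilde v}w_0},y_{\widetilde v}}\cong\widehat{\co}_{\overline{X}_{w_{x,\widetilde v}w_0,\lambda},y_{\widetilde v}}$, and combining Lemma \ref{lemcyccomp}(1) with the definition of $\fC_{\widetilde v,w_{\widetilde v}'}$ (equivalently using $\fC_{w,F}=\sum_{w'}a_{w,w'}\fZ_{w',F}$) produces
\[[\Spec\overline{R}_{\rho_{\widetilde v},\cM_{\bullet,\widetilde v}}^{w_{x,\widetilde v}w_0}]=\sum_{w_{\widetilde v}'}b_{w_{x,\widetilde v}w_0,w_{\widetilde v}'}\,\fC_{\widetilde v,w_{\widetilde v}'}.\]
Multiplying over all $v\in S_p$ yields (2), again using Corollary \ref{nonzero} and Lemma \ref{lemcyccomp}(2) to restrict the sum to $w_y\leq w^{\max}\leq w_xw_{0,F}$. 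The one step requiring care is the precise bookkeeping between the weight coordinate $\fd_x$ on the patched Bernstein side and the coordinate $0\in\widehat{\fz}_{L_P,F_{\widetilde v}}$ on the local model side, since the parameters of the $\Omega$-filtration on each $\widetilde v$-component differ from $\chi_{x,\widetilde v}$ by the twist $z^{\textbf{h}_{\widetilde v,s_i}}$ appearing in (\ref{galOF}); this compatibility, however, is built into the definition of $\omega_\delta$ and is precisely what is checked via Proposition \ref{wtmap1} when constructing the cycles $\fZ_{\widetilde v,w}$ in \S~\ref{secGalCyc}.
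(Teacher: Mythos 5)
Your argument is correct and follows essentially the same route as the paper's (very terse) proof: the local identification of $\widehat{\co}_{\cE_{\Omega,\lambda}^{\infty}(\overline{\rho}),x}$ via Corollary \ref{corR=T0} and Corollary \ref{locModdv}, the transfer of cycles through the formally smooth morphisms of \S~\ref{secGalCyc}, Lemma \ref{lemcyccomp} (2) for (1) and Lemma \ref{lemcyccomp} (1), (3) for (2) after transporting smoothness to $X_{w_xw_{0,F}}$ at $y$, and Corollary \ref{nonzero} to cut the sums down to $w^{\max}\geq w_y$. The only cosmetic slip is that in (2) the bound $w^{\max}\leq w_xw_{0,F}$ comes from the vanishing of $b_{w_xw_{0,F},w}$ (Theorem \ref{thmcycl} (2)) rather than from Lemma \ref{lemcyccomp} (2), but this changes nothing in the argument.
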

\begin{proof}
	(1) (resp.\ (2)) is a consequence of Corollary \ref{corR=T0}, Lemma \ref{lemcyccomp} (2) (resp.\ Lemma \ref{lemcyccomp} (1), (3)) and Corollary \ref{nonzero}.
\end{proof}

\subsection{Companion constituents}

In this section, we prove our main (global) results on $p$-adic automorphic representations. 

For $v\in S_p$, we let $\rho_{\widetilde{v}}$ be an $n$-dimensional generic potentially crystalline representation of $\Gal_{F_{\widetilde{v}}}$ over $E$ with distinct Hodge-Tate weights $\textbf{h}_{\widetilde{v}}=(h_{\widetilde{v},i,\tau})_{\substack{i=1, \dots, n\\ \tau \in \Sigma_{\widetilde{v}}}}$. Let $\lambda:=(\lambda_{\widetilde{v}}):=(\lambda_{\widetilde{v},i,\tau})_{\substack{i=1, \dots, n\\ v\in S_p, \tau \in \Sigma_{\widetilde{v}}}}$ with $\lambda_{\widetilde{v},i,\tau}=h_{\widetilde{v},i,\tau}+i-1$ (so $\lambda$ is integral $P$-dominant). Let $x:=(y,\ul{x},1)=(x^p, x_p, \ul{x},1)\in (\Spf R_{\infty})^{\rig} \times (\Spec \cZ_{\Omega})^{\rig} \times \widehat{\cZ_0}$ (so $y:=(x^p, x_p)\in \Spf R_{\infty})^{\rig}$). Let $w=(w_{\widetilde{v}}) \in \sW^P_{\min}$, and assume $x$ lies in $\cE_{\Omega, w\cdot \lambda}^{\infty}(\overline{\rho})$. Assume moreover that $x^p$ lies in the smooth locus of $\fX_{\infty}^p$. We let $w_y\in \sW^P_{\max}$ be as in (\ref{wyprod}).

\begin{lemma}\label{lemcycnovan0}
Let $w'\in \sW^P_{\min}$, if $[\cN_{w'\cdot \lambda, y}]\ne 0$ then $w'w_{0,F} \geq w_y$.
\end{lemma}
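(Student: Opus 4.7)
The plan is to reduce the assertion, via the framing change of Lemma~\ref{lemNmu'}, to the local companion point statement Corollary~\ref{coLoccomp} applied on each Bernstein paraboline variety $X_{\Omega, w'(\textbf{h})}(\overline{\rho}_{\widetilde v})$. The key observation is that the auxiliary weight $\lambda$ in the definition of $\cN_{\mu,\lambda,y}$ can be replaced by $w'\cdot\lambda$ without losing non-vanishing, and then being a point of the associated patched Bernstein eigenvariety is controlled by the local Galois side.

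First I would check that the hypothesis of Lemma~\ref{lemNmu'} is met. Since $\lambda$ is strictly $B$-dominant (as $h_{\widetilde v,i,\tau}>h_{\widetilde v,i+1,\tau}$) and $w'\in \sW^P_{\min}$, the weight $w'\cdot\lambda$ is integral and $P$-dominant; moreover $L(w'\cdot\lambda)$ is the simple quotient of the parabolic Verma module $M_P(w'\cdot\lambda)$. Thus, with the choice $\lambda':=w'\cdot\lambda$ and $\fd:=0$ (so $\wt(\chi')=\fd=0=\fd_x$ and $\lambda^{x'}=w'\cdot\lambda$), we have $\pi_{\ul{x}'}\cong \pi_{\ul{x}}$ because $\delta_{\fd-\fd_x,\ul{\varpi}}^{\unr}=1$, hence $\ul{x}'=\ul{x}$ and $\chi'=1$. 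In particular the auxiliary point $x'$ is exactly $x$.

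Next, the assumption $[\cN_{w'\cdot\lambda,y}]\neq 0$ trivially implies $\cN_{w'\cdot\lambda,y}=\cN_{w'\cdot\lambda,\lambda,y}\neq 0$. Applying Lemma~\ref{lemNmu'}, this forces $\cM_{\lambda^{x'},\lambda',y}=\cM_{w'\cdot\lambda,w'\cdot\lambda,y}\neq 0$; by (\ref{extralabel}) (equivalently Proposition~\ref{pts1}) this means $x=x'\in \cE_{\Omega,w'\cdot\lambda}^{\infty}(\overline{\rho})$. Then I would invoke Theorem~\ref{R=T0}: the embedding (\ref{R->T}) for the framing $(\Omega,w'\cdot\lambda)$ uses the strictly $P$-dominant weight $\textbf{h}':=w'(\textbf{h})$ (the computation $h'_{\widetilde v,j,\tau}=(w'\cdot\lambda)_{\widetilde v,j,\tau}-j+1=h_{\widetilde v,w_{\widetilde v}'^{-1}(j),\tau}$ is immediate from the definition of the dot action on $\GL_n$-weights). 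Consequently $\jmath^{-1}(x_p,\ul{x},1)\in X_{\Omega,w'(\textbf{h})}(\overline{\rho}_p)=\prod_{v\in S_p}X_{\Omega_{\widetilde v},w'_{\widetilde v}(\textbf{h}_{\widetilde v})}(\overline{\rho}_{\widetilde v})$, i.e.\ each local component lies in the corresponding Bernstein paraboline variety.

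Finally, for each $v\in S_p$ the representation $\rho_{\widetilde v}$ is generic potentially crystalline with distinct Hodge-Tate weights, so Corollary~\ref{coLoccomp} applies. Setting $w:=w'_{\widetilde v}w_0\in \sW^{P_{\widetilde v}}_{\max,F_{\widetilde v}}$, one has $w w_0=w'_{\widetilde v}$ and thus $ww_0(\textbf{h}_{\widetilde v})=w'_{\widetilde v}(\textbf{h}_{\widetilde v})$; the corollary's ``only if'' direction then yields $w'_{\widetilde v}w_0\geq w_{y,\widetilde v}$. Assembling these inequalities over $v\in S_p$ gives $w'w_{0,F}\geq w_y$, as desired. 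There is no serious obstacle in this argument: the only subtlety is keeping track of the framing-change bookkeeping in Lemma~\ref{lemNmu'} so that $x'$ really coincides with $x$, and of the translation between dominant weights $w'\cdot\lambda$ and Hodge-Tate weights $w'(\textbf{h})$; everything else is formal once Corollary~\ref{coLoccomp} is in place.
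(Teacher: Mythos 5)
Your argument is correct and is exactly the paper's proof: apply Lemma~\ref{lemNmu'} with $\lambda'=w'\cdot\lambda$ and $\fd=\fd_x=0$ to conclude $(y,\ul{x},1)\in\cE_{\Omega,w'\cdot\lambda}^{\infty}(\overline{\rho})$, then combine Theorem~\ref{R=T0} with Corollary~\ref{coLoccomp} to obtain $w'w_{0,F}\geq w_y$. Your extra bookkeeping (checking $x'=x$ and that $w'\cdot\lambda$ corresponds to the Hodge--Tate weights $w'(\textbf{h})$) just spells out steps the paper leaves implicit.
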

\begin{proof}
	If $[\cN_{w'\cdot \lambda, y}]\ne 0$, we deduce by Lemma \ref{lemNmu'} (applied to $\lambda'=w' \cdot \textbf{h}$, $\fd=\fd_x=0$) that $(y, \ul{x},1)\in \cE_{\Omega, w'\cdot \lambda}^{\infty}(\overline{\rho})$. Then using Theorem \ref{R=T0} and Corollary \ref{coLoccomp}, we get $w'w_{0,F} \geq w_y$. 
\end{proof}
The following lemma is a consequence of (\ref{equcyc11}), Remark \ref{remKLrel} and Lemma \ref{lemcycnovan0} (compare with Corollary \ref{cycGaA} (2)):
\begin{lemma}
	We have an equality of cycles in $Z^{[F^+:\Q]\frac{n(n+1)}{2}}(\widehat{\co}_{\fX_{\infty}, y})$:
	\begin{equation}\label{equcyc12}
	[\cM_{w\cdot \lambda},y]=\sum_{\substack{w'\in \sW_{\min}^P\\ w_y \leq w'w_{0,F} \leq ww_{0,F}}} \!\!\!\!\!\! b_{ww_{0,F},w'w_{0,F}} [\cN_{w' \cdot \lambda,y}].
	\end{equation}
\end{lemma}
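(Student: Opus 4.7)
The plan is to directly specialize equation (\ref{equcyc11}) to the present setting and to translate multiplicities via Remark \ref{remKLrel}. Since the third coordinate of $x$ is $\chi_x=1$, we have $\fd_x=0$, so the algebraicity assumption above (\ref{Cmxy}) is satisfied trivially (no need to modify $\Omega$). Applying the construction of $\cM_{\lambda^x,\lambda,y}$ and $\cN_{\mu,\lambda,y}$ to the Bernstein eigenvariety $\cE_{\Omega,w\cdot\lambda}^{\infty}(\overline{\rho})$ (in place of $\cE_{\Omega,\lambda}^{\infty}(\overline{\rho})$), we have $\lambda^x=(w\cdot \lambda)+\fd_x\circ\det_{L_P}=w\cdot\lambda$, so (\ref{equcyc11}) gives
\[
[\cM_{w\cdot\lambda,y}]=\sum_{\mu}b_{w\cdot\lambda,\mu}\,[\cN_{\mu,y}]
\]
where $\mu$ runs over the highest weights of the irreducible constituents $L(\mu)$ of $M_P(w\cdot\lambda)$ in $\co^{\fp}_{\alg}$, and $b_{w\cdot\lambda,\mu}$ is the corresponding multiplicity.

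Next I would identify the set of such $\mu$. By standard BGG theory (Harish-Chandra's theorem on central characters) any constituent $L(\mu)$ of $M_P(w\cdot\lambda)$ satisfies $\mu\in \sW\cdot \lambda$, and for $L(\mu)$ to lie in $\co^{\fp}_{\alg}$ the weight $\mu$ must be $P$-dominant, which forces $\mu=w'\cdot\lambda$ for a unique $w'\in\sW^P_{\min,F}$. Since $\lambda$ is (strictly) dominant, Remark \ref{remKLrel} (applied with $\mu=\lambda$ and the cosets $\sW_{L_P,F}w$, $\sW_{L_P,F}w'$, whose maximal representatives are $ww_{0,F}$ and $w'w_{0,F}$ respectively) identifies the multiplicity of $L(w'\cdot\lambda)$ in $M_P(w\cdot\lambda)$ with the integer $b_{ww_{0,F},w'w_{0,F}}$ that already appears in the statement.

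Combining the previous two paragraphs yields
\[
[\cM_{w\cdot\lambda,y}]=\sum_{w'\in\sW^P_{\min}}b_{ww_{0,F},w'w_{0,F}}\,[\cN_{w'\cdot\lambda,y}].
\]
To obtain the claimed index set it remains to restrict the sum. On the one hand, by the standard vanishing in Theorem \ref{thmcycl} (2) (extended from $\sW$ to $\sW_F$ factorwise), $b_{ww_{0,F},w'w_{0,F}}=0$ unless $w'w_{0,F}\leq ww_{0,F}$, which yields the upper bound. On the other hand, by Lemma \ref{lemcycnovan0}, whenever $[\cN_{w'\cdot\lambda,y}]\ne 0$ we automatically have $w'w_{0,F}\geq w_y$, so the terms violating this inequality contribute zero and can be removed from the sum without changing the equality. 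Putting these two bounds together gives exactly (\ref{equcyc12}).

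This proof is essentially a formal manipulation; no step presents a genuine obstacle. The only mildly delicate point is the bookkeeping between the min/max representatives in $\sW_{L_P,F}\backslash\sW_F$ and the fact that Remark \ref{remKLrel}, although stated for $\sW$, applies componentwise over $v\in S_p$ — so the multiplicity $b_{w_1,w_2}:=\prod_{v\in S_p}b_{w_{1,\widetilde{v}},w_{2,\widetilde{v}}}$ is indeed the correct global multiplicity of $L(w_2w_{0,F}\cdot\lambda)$ in $M_P(w_1w_{0,F}\cdot\lambda)$ via the K\"unneth-type decomposition of the category $\co^{\fp}_{\alg}$ for $\ug\cong \prod_{v\in S_p}\Res^{F_{\widetilde v}}_{\Q_p}\gl_n\otimes_{\Q_p} E$.
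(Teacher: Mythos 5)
Your argument is correct and is essentially the paper's own proof: the lemma is obtained exactly as you do, by specializing (\ref{equcyc11}) (here $\chi_x=1$, so $\fd_x=0$ and $\lambda^x=w\cdot\lambda$), identifying the multiplicity of $L(w'\cdot\lambda)$ in $M_P(w\cdot\lambda)$ with $b_{ww_{0,F},w'w_{0,F}}$ via Remark \ref{remKLrel}, and then restricting the index set using the vanishing $b_{ww_{0,F},w'w_{0,F}}=0$ unless $w'w_{0,F}\leq ww_{0,F}$ together with Lemma \ref{lemcycnovan0}. One cosmetic slip only: the maximal representative of $\sW_{L_P,F}\,w$ is $w^{\max}$, not $ww_{0,F}$ — the cosets relevant for Remark \ref{remKLrel} are those of $ww_{0,F}$ and $w'w_{0,F}$, which are already maximal since $w,w'\in\sW^P_{\min}$, and with that reading your identification of the coefficient is exactly right.
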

\begin{proposition}\label{nonVancyc}
	We have $[\cN_{w\cdot \lambda, y}] \neq 0$. 
\end{proposition}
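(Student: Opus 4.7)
The strategy is to extract the coefficient of the ``top'' irreducible cycle $\fZ_{ww_{0,F}, F}$ (see Lemma~\ref{irrcyc2}) of $[\cM_{w\cdot \lambda, y}]$ in two different ways and compare them. Under our hypotheses ($\rho_{x,\widetilde v}$ generic potentially crystalline with distinct Hodge--Tate weights for all $v|p$, and $x^p$ smooth in $\fX_\infty^p$), Corollary~\ref{corR=T0} applies, so the embedding of Theorem~\ref{R=T0} identifies $\Spec \widehat{\co}_{\cE_{\Omega, w\cdot \lambda}^{\infty}(\overline{\rho})_0, x}$ locally at $x$ with a closed subscheme of $\Spec\widehat{\co}_{\fX_\infty, y}$. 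Applying Corollary~\ref{cycGaA}~(1)---with $w_x = w$, which follows from $\lambda^x = w\cdot \lambda$ together with the strict dominance of $\lambda$---gives
\[
[\Spec \widehat{\co}_{\cE_{\Omega, w\cdot \lambda}^{\infty}(\overline{\rho})_0, x}] = \sum_{w'} c'_{ww_{0,F}, w'}\,\fZ_{w', F},
\]
and since $c'_{ww_{0,F}, ww_{0,F}} = 1$, the cycle $\fZ_{ww_{0,F}, F}$ always occurs as an irreducible component.

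Since $\cM_{\Omega, w\cdot \lambda}^{\infty}$ is Cohen--Macaulay (Corollary~\ref{PBEdim}~(3)) with scheme-theoretic support equal to $\cE_{\Omega, w\cdot \lambda}^{\infty}(\overline{\rho})$, a Nakayama argument shows that $\cM_{w\cdot\lambda, y}$ has nonzero stalk at the generic point of every irreducible component of $\Spec\widehat{\co}_{\cE_{\Omega, w\cdot\lambda}^{\infty}(\overline{\rho})_0, x}$, hence strictly positive generic length there. In particular, the coefficient of $\fZ_{ww_{0,F}, F}$ in $[\cM_{w\cdot \lambda, y}]$ will be an integer $m_y \geq 1$.

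I will then prove by induction on $\lg(w'' w_{0,F})$ for $w'' \in \sW^P_{\min}$ that $[\cN_{w''\cdot \lambda, y}]$ is an effective cycle supported only on the $\fZ_{w', F}$ with $w'^{\max} \leq w''w_{0,F}$. Inverting (\ref{equcyc12}) via $b_{w''w_{0,F}, w''w_{0,F}} = 1$ gives
\[
[\cN_{w''\cdot \lambda, y}] = [\cM_{w''\cdot \lambda, y}] - \sum_{\substack{w''' \in \sW^P_{\min} \\ w_y \leq w'''w_{0,F} < w''w_{0,F}}} b_{w''w_{0,F}, w'''w_{0,F}}\,[\cN_{w'''\cdot \lambda, y}].
\]
By the induction hypothesis each $[\cN_{w'''\cdot \lambda, y}]$ is supported on $\fZ_{w', F}$ with $w'^{\max} < w'' w_{0,F}$, while the same analysis as in the previous paragraph applied to $w''$ (using Corollary~\ref{cycGaA}~(1) for $\cE^{\infty}_{\Omega, w''\cdot\lambda}(\overline{\rho})$, and taking $[\cM_{w''\cdot \lambda, y}] = 0$ when $x \notin \cE^{\infty}_{\Omega, w''\cdot\lambda}(\overline{\rho})$) shows $[\cM_{w''\cdot\lambda, y}]$ is supported on $\fZ_{w', F}$ with $w'^{\max} \leq w''w_{0,F}$. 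Since $[\cN_{w''\cdot\lambda, y}]$ is itself effective by definition, the claim follows.

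Finally, comparing the coefficient of $\fZ_{ww_{0,F}, F}$ on both sides of (\ref{equcyc12}) applied to $w$, the inductive claim kills every contribution from $[\cN_{w''\cdot \lambda, y}]$ with $w''w_{0,F} < ww_{0,F}$, so the coefficient of $\fZ_{ww_{0,F}, F}$ in $[\cN_{w\cdot \lambda, y}]$ equals exactly $m_y \geq 1$, whence $[\cN_{w\cdot \lambda, y}] \neq 0$. The delicate point is the inductive claim, which requires careful tracking of which irreducible subschemes of $\Spec\widehat{\co}_{\fX_\infty, y}$ appear in each cycle and verifying that the alternating recursion cannot introduce components outside the allowed range.
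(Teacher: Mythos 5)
Your argument is correct, but it is not the paper's route, and the comparison is worth recording. The paper argues in two cases: when $ww_{0,F}=w_y$ it deduces smoothness of $\cE_{\Omega,w\cdot\lambda}^{\infty}(\overline{\rho})$ at $x$ (Corollary \ref{corR=T0} and Corollary \ref{smDefVar2}), hence local freeness of $\cM_{\Omega,w\cdot\lambda}^{\infty}$ (Corollary \ref{PBEdim} (3)), so $[\cM_{w\cdot\lambda,y}]\neq 0$ and (\ref{equcyc12}) collapses to $[\cN_{w\cdot\lambda,y}]=[\cM_{w\cdot\lambda,y}]$; when $ww_{0,F}>w_y$ it spreads out globally over the potentially crystalline cell closure $\cC_{ww_{0,F}}\times\cU^p\hookrightarrow \cE_{\Omega,w\cdot\lambda}^{\infty}(\overline{\rho})$, applies the smooth case at a Zariski-dense set of points of the cell, and concludes at $x$ because the support locus $\fZ_{w\cdot\lambda,w\cdot\lambda}$ is Zariski-closed (Lemma \ref{loccycglocy}). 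You instead stay entirely local at $x$ and extract the leading coefficient: $\fZ_{ww_{0,F},F}$ is a multiplicity-one irreducible component of $\Spec\widehat{\co}_{\cE_{\Omega,w\cdot\lambda}^{\infty}(\overline{\rho})_0,x}$ (Corollary \ref{cycGaA} (1), $c'_{ww_{0,F},ww_{0,F}}=1$, Lemma \ref{irrcyc2}), the module $\cM_{w\cdot\lambda,y}$ has full support there because $\cE^{\infty}$ is by definition the schematic support of $\cM^{\infty}$ (this, together with $\Supp(\cM\otimes\co_Z)=\Supp\cM\cap Z$ and flatness of completion, is what your ``Nakayama'' step really uses; Cohen--Macaulayness is not needed), and triangularity of (\ref{equcyc12}) plus your inductive support bound transfers the coefficient $m_y\geq 1$ to $[\cN_{w\cdot\lambda,y}]$. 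Do make explicit that $ww_{0,F}\geq w_y$ (Theorem \ref{R=T0} and Corollary \ref{coLoccomp}), so that $\fZ_{ww_{0,F},F}\neq 0$ by Corollary \ref{nonzero}, that distinct nonzero cycles $\fZ_{w',F}$ are distinct irreducible subschemes (used implicitly, as in Lemma \ref{induccp}), and that the cycles $[\cN_{w''\cdot\lambda,y}]$ depend only on the $\widehat{\co}_{\fX_{\infty},y}$-module structure (Lemma \ref{lemNmu'}), so the recursion may be read coefficientwise across the different eigenvarieties $\cE_{\Omega,w''\cdot\lambda}^{\infty}(\overline{\rho})$. What each approach buys: yours avoids both the smoothness criterion and the global density argument over the crystalline cells, at the cost of invoking the local-model cycle decomposition at every intermediate weight $w''\cdot\lambda$; the paper's argument, in the case $ww_{0,F}=w_y$, yields the sharper identity $[\cN_{w\cdot\lambda,y}]=m_y\fZ_{ww_{0,F},F}$ coming from local freeness, which is what the later induction (Lemma \ref{lgleq1}, Lemma \ref{induccp}) actually exploits, whereas your argument only gives a lower bound on that leading coefficient.
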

\begin{proof}
	By Theorem \ref{R=T0} and Corollary \ref{coLoccomp}, we have $ww_{0,F} \geq w_y$.
	
	If $ww_{0,F}=w_y$, then by Corollary \ref{corR=T0} and Corollary \ref{smDefVar2}, we know that $\cE_{\Omega, w\cdot \lambda}^{\infty}(\overline{\rho})$ is smooth at the point $x$. By Corollary \ref{PBEdim} (3), it follows that $\cM_{\Omega, w\cdot \lambda}^{\infty}$ is (non-zero) locally free at the point $x$, hence $[\cM_{w\cdot \lambda,y}]\neq 0$. We easily deduce from (\ref{equcyc12}) $[\cM_{w\cdot \lambda,y}]=[\cN_{w\cdot \lambda, y}]$
hence $[\cN_{w\cdot \lambda, y}] \neq 0$. 
	
	Now assume $ww_{0,F}>w_y$. We let $\fX_{\overline{\rho}_p}^{\pcr}(\xi_0, \textbf{h}):=\prod_{v\in S_p} \fX_{\overline{\rho}_{\widetilde{v}}}^{\pcr}(\xi_{0,\widetilde{v}}, \textbf{h}_{\widetilde{v}})$ where $\xi_0=(\xi_{0,\widetilde{v}})$ is the inertial type of $\rho_p=(\rho_{\widetilde{v}})$. For $w'=(w'_{\widetilde{v}})\in \sW^{P}_{\max}$, we let
\[V_{\overline{\rho}_p}^{\pcr}(\xi_0, \textbf{h})_{w'}:=\prod_{v\in S_p}V_{\overline{\rho}_{\widetilde{v}}}^{\pcr}(\xi_{0,\widetilde{v}}, \textbf{h}_{\widetilde{v}})_{w'_{\widetilde{v}}}\subset \fX_{\overline{\rho}_p}^{\pcr}(\xi_0, \textbf{h})\]
where each term in the product is defined as in Proposition \ref{VwSch}. As $ww_{0,F}> w_y^{\max}$, by Proposition \ref{VwSch}, we see that $\rho_p=(\rho_{\widetilde{v}})$ lies in the Zariski-closure $\overline{V_{\overline{\rho}_p}^{\pcr}(\xi_0, \textbf{h})_{ww_{0,F}}}$ of $V_{\overline{\rho}_p}^{\pcr}(\xi_0, \textbf{h})_{ww_{0,F}}$ in $\fX_{\overline{\rho}_p}^{\pcr}(\xi_0, \textbf{h})$. Let $\cC_{ww_{0,F}}$ be the connected component of $\overline{V_{\overline{\rho}_p}^{\pcr}(\xi_0, \textbf{h})_{ww_{0,F}}}$ at the point $\rho_p$, and let $\cU^p$ be the smooth locus of the irreducible component of $\fX_{\infty}^p$ at the point $x^p$. The embedding in (\ref{embpcydef}) induces a closed embedding
	\begin{equation*}
		\iota:	\overline{V_{\overline{\rho}_p}^{\pcr}(\xi_0, \textbf{h})_{ww_{0,F}}} \times \cU^p \hooklongrightarrow X_{\Omega, w\cdot \lambda}(\overline{\rho}_p) \times \cU^p.
	\end{equation*}
	We have $\iota(\rho_p, x^p)=x\in \cE_{\Omega, w\cdot \lambda}^{\infty}(\overline{\rho})$. Moreover, by Corollary \ref{locModdv}, it follows that $X_{\Omega, w\cdot \lambda}(\overline{\rho}_p)\times \cU^p$ is irreducible at any point in the image of $\iota$. We then deduce that $\iota$ induces a closed embedding
	\begin{equation}\label{iotaww0F}
		\iota: \cC_{ww_{0,F}} \times \cU^p \hooklongrightarrow \cE_{\Omega, w\cdot \lambda}^{\infty}(\overline{\rho}).
	\end{equation}
	Let $\cV_{ww_{0,F}} :=\cC_{ww_{0,F}} \cap V_{\overline{\rho}_p}^{\pcr}(\xi_0, \textbf{h})_{ww_{0,F} }$, which is Zariski dense in $\cC_{ww_{0,F}}$, $(x_p',x'^p)$ be any point in $\cV_{ww_{0,F}}\times \cU^p$, and put $y':=(x_p', x'^p)\in \fX_{\infty}$. Since $ww_{0,F}=w_{y'}$ (where, similarly as for $w_y$, $w_{y'}$ is associated to $x_p'$ and $\Omega$, noting that the framing in $x_p'$ does not cause problems), by the same argument as for the case $ww_{0,F}=w_y$ above (but now applied with $\iota(y')$ instead of $x$), we have $[\cN_{w\cdot \lambda, y'}]\neq 0$. By Lemma \ref{loccycglocy}, this implies $\iota(y') \in \fZ_{w\cdot \lambda, w\cdot \lambda}$. As $\iota(y)$ lies in the Zariski closure in $\cE_{\Omega, w\cdot \lambda}^{\infty}(\overline{\rho})$ of the set of all points $\iota(y')$ as above, and $\fZ_{w\cdot \lambda, w\cdot \lambda}$ is by definition Zariski-closed in $\cE_{\Omega, w\cdot \lambda}^{\infty}(\overline{\rho})$, we deduce $\iota(y)\in\fZ_{w\cdot \lambda, w\cdot \lambda}$, hence (by Lemma \ref{loccycglocy} again) $[\cN_{w\cdot \lambda, y}]\neq 0$.
\end{proof}

\begin{corollary}\label{ptclas11}
We have $[\cN_{w'\cdot \lambda, y}]\neq 0$ for all $w'\in \sW^P_{\min}$ such that $w'w_{0,F}\geq ww_{0,F}$ (equivalently $w'\leq w$). 
\end{corollary}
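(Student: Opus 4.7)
The plan is to reduce Corollary \ref{ptclas11} directly to Proposition \ref{nonVancyc}: it suffices to show that the hypothesis $x\in\cE_{\Omega,w\cdot\lambda}^\infty(\overline\rho)$ implies $x\in\cE_{\Omega,w'\cdot\lambda}^\infty(\overline\rho)$ for every $w'\leq w$ in $\sW^P_{\min}$. Once this is established, Proposition \ref{nonVancyc} applied with $w$ replaced by $w'$ yields $[\cN_{w'\cdot\lambda,y}]\neq 0$, since its remaining input $w'w_{0,F}\geq w_y$ is inherited from $w'w_{0,F}\geq ww_{0,F}\geq w_y$.

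To transport $x$ across Bernstein eigenvarieties I would use Lemma \ref{lemNmu'}, which states that $\cN_{\mu,\lambda,y}$ is essentially independent of $\lambda$ as an $\widehat{\co}_{\fX_\infty,y}$-module, and that a nonzero $\cN_{\mu,\lambda,y}$ forces the transported point $x'$ to lie in $\cE_{\Omega,\lambda'}^\infty(\overline\rho)$. I would apply it with $\lambda=w\cdot\lambda$, $\lambda'=w'\cdot\lambda$, $\mu=w\cdot\lambda$, and $\fd=0$. The key compatibility to check is that $L(w\cdot\lambda)$ is a composition factor of $M_P(w'\cdot\lambda+\fd\circ\dett_{L_P})=M_P(w'\cdot\lambda)$; this holds because $w\geq w'$ in $\sW^P_{\min}$ places $w\cdot\lambda$ below $w'\cdot\lambda$ in the parabolic BGG order, with strictly positive multiplicity by Remark \ref{remKLrel}. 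With $\fd=0$ and using that $\fd_x=0$ and $\chi_x=1$ (automatic in the setting $x=(y,\ul{x},1)$), the recipe of Lemma \ref{lemNmu'}, namely $\pi_{\ul{x}'}\cong\pi_{\ul{x}}\otimes_E(\delta_{\fd-\fd_x,\ul{\varpi}}^{\unr}\circ\dett_{L_P})$ and $\chi'=\delta_\fd^0$, gives $x'=x$. Since Proposition \ref{nonVancyc} provides $\cN_{w\cdot\lambda,w\cdot\lambda,y}\neq 0$, Lemma \ref{lemNmu'} then forces $\cM_{w'\cdot\lambda,w'\cdot\lambda,y}\neq 0$, which is exactly $x\in\cE_{\Omega,w'\cdot\lambda}^\infty(\overline\rho)$.

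With the transport in place, a single reapplication of Proposition \ref{nonVancyc} at $w'$ completes the argument. The only delicate step is arranging Lemma \ref{lemNmu'} so that $x'$ coincides with $x$; as noted above, this reduces to the choice $\fd=0$ together with $\chi_x=1$ and $\fd_x=0$, and is automatic. I do not anticipate any further obstacle.
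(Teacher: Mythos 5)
Your overall plan---transport $x$ into $\cE_{\Omega,w'\cdot\lambda}^{\infty}(\overline{\rho})$ and then re-apply Proposition \ref{nonVancyc} at $w'$---is exactly the mechanism of the paper's proof, but your transport step contains a genuine gap. You claim that $L(w\cdot\lambda)$ is a composition factor of $M_P(w'\cdot\lambda)$ with strictly positive multiplicity whenever $w'\leq w$ in $\sW^P_{\min}$, citing Remark \ref{remKLrel}. For parabolic Verma modules this is false in general (it is a special feature of the case $P=B$): the relevant multiplicity is $b_{w'w_{0,F},\,ww_{0,F}}$, a relative Kazhdan--Lusztig number, and Remark \ref{remKLrel} only guarantees it equals $1$ when the lengths differ by $1$. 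Already in length $2$ it can vanish: by Lemma \ref{mult00} (2), $b_{w'w_{0,F},\,ww_{0,F}}=0$ whenever $\lg(w)=\lg(w')+2$ and the Bruhat interval $[ww_{0,F},w'w_{0,F}]$ is not full in $\sW^P_{\max}$, and such non-full intervals do occur as soon as $P\neq B$ (this is precisely the phenomenon exploited in ``Case (2)'' of the introduction and in condition (2) of Lemma \ref{induccp}). When the multiplicity vanishes, the hypothesis of Lemma \ref{lemNmu'} (that $L(\mu)$ be a constituent of $M_P(\lambda'+\fd\circ\dett_{L_P})$) fails, so you cannot conclude $\cM_{w'\cdot\lambda,\,w'\cdot\lambda,\,y}\neq 0$, and your one-shot transport from $w$ to an arbitrary $w'\leq w$ breaks down.

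The repair is the stepwise induction of the paper. First treat a covering step, i.e.\ $w'\leq w$ with $\lg(w')=\lg(w)-1$: there the multiplicity is $1$ (Remark \ref{remKLrel}, Lemma \ref{mult00} (1)), so from $[\cN_{w\cdot\lambda,y}]\neq 0$ and the cycle identity (\ref{equcyc12}) (equivalently, your use of Lemma \ref{lemNmu'}) one gets $[\cM_{w'\cdot\lambda,y}]\neq 0$, hence $x\in\cE_{\Omega,w'\cdot\lambda}^{\infty}(\overline{\rho})$ by (\ref{Cmxy}), and then Proposition \ref{nonVancyc} applied at $w'$ upgrades this to $[\cN_{w'\cdot\lambda,y}]\neq 0$. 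One then descends from $w$ to the target element along a saturated chain inside $\sW^P_{\min}$ (such a chain exists by \cite[Thm.~2.5.5]{BjBr}), repeating the two steps above at each link. The essential point you are missing is that Proposition \ref{nonVancyc} must be re-invoked at every intermediate element, because the cycle identity only controls the modules $\cM$ (membership in the eigenvariety), not the modules $\cN$, and the passage from one link to the next can only use adjacent-length multiplicities, which are the only ones known to be nonzero for a general parabolic $P$.
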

\begin{proof}
	Assume first $\lg(w')=\lg(w)+1$. As $[\cN_{w\cdot \lambda, y}]\neq 0$, by (\ref{equcyc12}), $[\cM_{w'\cdot \lambda, y}]\neq 0$ (note that $b_{ww_{0,F},w'w_{0,F}}=1$, cf.\ Remark \ref{remKLrel}) hence $(x^p, x_p, \ul{x}, 1)\in \cE_{\Omega, w'\cdot \lambda}^{\infty}(\overline{\rho})$ (cf.\ (\ref{Cmxy})). Applying Proposition \ref{nonVancyc} with $w$ replaced by $w'$, we obtain $[\cN_{w'\cdot \lambda, y}]\neq 0$. We can then start again this argument with a $w''$ such that $\lg(w'')=\lg(w')+1$. Using \cite[Thm.\ 2.5.5]{BjBr}, the corollary follows from an obvious induction.
\end{proof}

\begin{remark}
In particular, if $x\in \cE_{\Omega, w\cdot \lambda}^{\infty}(\overline{\rho})$, then $x\in \cE_{\Omega, w'\cdot \lambda}^{\infty}(\overline{\rho})$ for all $w'\in \sW^P_{\min}$, $w'\leq w$. In the trianguline case (i.e.\ $P=B$), it was proved in \cite[Thm.\ 5.5]{BHS2} that this holds without assuming $\rho_{\widetilde{v}}$ to be generic potentially crystalline with distinct Hodge-Tate weights (see also Remark \ref{remNPara} (2)). The proof was quite different and based on the fact that, for any $w'\leq w$ (in $\sW_F$), $L(w\cdot \lambda)$ is an irreducible constituent of the Verma module $M(w'\cdot \lambda)$, which does not hold in general if $w',w\in \sW_{\min}^P$ and $M(w' \cdot \lambda)$ is replaced by $M_P(w' \cdot \lambda)$. When $P\neq B$, the authors do not know if this statement holds without the potentially crystalline assumption. 
\end{remark}

\begin{corollary}
	Assume that we are in the same situation as Conjecture \ref{conjCPP} (we use the notation of \textit{loc.\ cit}.) Let $x^p$ be the point of $(\Spf R_{\infty}^p)^{\rig}$ associated to $\fm$, and assume moreover that $x^p$ is a smooth point of $(\Spf R_{\infty}^p)^{\rig}$. Then Conjecture \ref{conjCPP} (1) is equivalent to Conjecture \ref{conjCPP} (2).
\end{corollary}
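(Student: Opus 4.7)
The plan is to combine Lemma \ref{lemCPCC} with Proposition \ref{nonVancyc} to reduce the corollary to a direct translation between $\cN_{w\cdot\lambda,y}$ and the companion constituents. Recall that Lemma \ref{lemCPCC} already gives us (1) $\Rightarrow$ (2) and the ``only if'' part of (2) $\Rightarrow$ the ``only if'' part of (1), both without any smoothness hypothesis. So the only implication to prove is: under the assumption that $x^p$ is a smooth point of $(\Spf R_{\infty}^p)^{\rig}$, the ``if'' part of Conjecture \ref{conjCPP} (2) implies the ``if'' part of Conjecture \ref{conjCPP} (1).

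So assume the ``if'' part of (2). Fix $w=(w_{\widetilde v})\in \sW^P_{\min}$ with $w_{\widetilde v}\leq w_{\sF_{\widetilde v}} w_0$ for all $v\in S_p$. Let $\ul{x}\in (\Spec\cZ_\Omega)^{\rig}$ be the point corresponding to $\boxtimes_v \pi_{L_{P_{\widetilde v}}}$ and set $x=(\fm, \ul{x}, 1)$, $y=(x^p,x_p)\in (\Spf R_\infty)^{\rig}$. By the ``if'' part of (2) applied to this $w$, we have $x\in \cE_{\Omega, w\cdot\lambda}^{\infty}(\overline{\rho})$. Since $\rho_{\widetilde v}$ is generic potentially crystalline with distinct Hodge-Tate weights and $x^p$ is smooth in $\fX_\infty^p$, all the hypotheses of Proposition \ref{nonVancyc} are satisfied (note $\fd_x=0$, $\chi_x=1$ and $\lambda^x=\lambda$), whence $[\cN_{w\cdot \lambda, y}]\neq 0$, and in particular $\cN_{w\cdot\lambda,y}\neq 0$.

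Now I apply Lemma \ref{copconcyc} with $\mu=w\cdot\lambda$ and $\fd_x=0$, $\chi_x=1$: the non-vanishing of $\cN_{w\cdot\lambda,y}$ gives
\[
\Hom_{G_p}\Big(\cF_{P^-(\Q_p)}^{G_p}\big(L^-(-w\cdot\lambda),\pi_{\ul x}\otimes_E \delta_P^{-1}\big),\Pi_{\infty}^{R_\infty-\an}[\fm]\Big)\neq 0.
\]
Under the identification $L^-(-w\cdot\lambda)\cong \boxtimes_v L^-(-w_{\widetilde v}\cdot\lambda_{\widetilde v})$ and $\pi_{\ul x}\cong \boxtimes_v \pi_{L_{P_{\widetilde v}}}$, the source of this Hom space is precisely $\widehat{\otimes}_{v\in S_p} C(w_{\widetilde v},\sF_{\widetilde v})$. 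By the discussion preceding Conjecture \ref{conjSoc} (which uses the genericity of each $\rho_{\widetilde v}$ together with \cite[Thm.\ (iv)]{OS}), each factor $C(w_{\widetilde v},\sF_{\widetilde v})$ is topologically irreducible, and hence so is the completed tensor product as a locally analytic $G_p$-representation. Thus any non-zero morphism from $\widehat{\otimes}_v C(w_{\widetilde v},\sF_{\widetilde v})$ to $\Pi_\infty^{R_\infty-\an}[\fm]$ is automatically an embedding, which is exactly the ``if'' part of Conjecture \ref{conjCPP} (1) for this $w$.

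The only potentially delicate point is the identification of the finite-slope Jacquet-Emerton source with the completed tensor product, but this is functorial in the parabolic induction and in the twists; all the required compatibilities have already been set up in the definitions of $\cN_{\mu,\lambda,y}$ and $C(w_{\widetilde v},\sF_{\widetilde v})$ in \S~\ref{cyclebern} and \S~\ref{seccompCP} respectively. The genuine mathematical content of the corollary is entirely packaged in Proposition \ref{nonVancyc}; the corollary is essentially a formal consequence of that result together with the adjunction in Lemma \ref{copconcyc} and the topological irreducibility of $\widehat{\otimes}_v C(w_{\widetilde v},\sF_{\widetilde v})$.
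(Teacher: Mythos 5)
Your proposal is correct and follows essentially the same route as the paper: reduce via Lemma \ref{lemCPCC} to showing that $x\in \cE_{\Omega, w\cdot \lambda}^{\infty}(\overline{\rho})$ forces $\widehat{\otimes}_{v\in S_p} C(w_{\widetilde{v}},\sF_{\widetilde{v}})\hookrightarrow \Pi_{\infty}^{R_{\infty}-\an}[\fm]$, and then combine Proposition \ref{nonVancyc} with Lemma \ref{copconcyc} (applied with $\mu=w\cdot\lambda$, $\fd_x=0$, $\chi_x=1$). The extra remark that topological irreducibility of $\widehat{\otimes}_v C(w_{\widetilde{v}},\sF_{\widetilde{v}})$ upgrades a non-zero morphism to an embedding is exactly what the paper uses implicitly, so no discrepancy.
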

\begin{proof}
	By Lemma \ref{lemCPCC}, we are left to show that, if $x=(\fm, \boxtimes_{v\in S_p} \pi_{L_{P_{\widetilde{v}}}}, 1)\in \cE_{\Omega, w\cdot \lambda}^{\infty}(\overline{\rho})$, then $\widehat{\otimes}_{v\in S_p} C(w_{\widetilde{v}}, \sF_{\widetilde{v}})\hookrightarrow \Pi_{\infty}^{R_{\infty}-\an}[\fm]$. However, this follows directly from Proposition \ref{nonVancyc} and Lemma \ref{copconcyc} (applied to $\mu=w\cdot \lambda$, $\fm_y=\fm$ etc.).
\end{proof}

\begin{corollary}[Classicality]\label{classFM}
Let $\overline{\rho}$ be an $U^p$-modular continuous representation of $\Gal_F$ over $k_E$. Let $\cE_{\Omega, \lambda}(U^p, \overline{\rho})$ be as in (the end of) \S~\ref{galois} and $x=(\eta,\pi_{L_P},\chi)\in \cE_{\Omega, \lambda}(U^p, \overline{\rho})$. Assume
	\begin{enumerate}
		\item[(1)] Hypothesis \ref{TayWil};
		\item[(2)] $\rho_{x,\widetilde{v}}$ is generic potentially crystalline with distinct Hodge-Tate weights for all $v\in S_p$;
		\item[(3)] $\rho_{x,\widetilde{v}}$ is a smooth point of $(\Spf R_{\overline{\rho}_{\widetilde{v}}})^{\rig}$ for $v\in S\setminus S_p$.
	\end{enumerate} 
	Then $\widehat{S}(U^p,E)^{\lalg}[\bT^S=\eta]\neq 0$, i.e.\ $\rho_x$ is associated to a classical automorphic representation of $G(\bA_{F^+})$. 
\end{corollary}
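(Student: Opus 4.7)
The plan is to lift $x$ to the patched Bernstein eigenvariety, twist to normalize the $\chi$-coordinate to $1$, identify a Weyl element accounting for the gap between the given weight $\lambda$ and the canonical one attached to the Galois data, and finally invoke Corollary \ref{ptclas11} at $w'=1$ to extract a locally algebraic companion constituent.

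First, by Proposition \ref{bePbe} the point $x=(\eta,\pi_{L_P},\chi)$ lifts to $\tilde{x}=(y,\ul{x},\chi)\in \cE_{\Omega,\lambda}^{\infty}(\overline{\rho})$, where $y$ is the image of $\eta$ under $(\Spf R_{\overline{\rho},\cS})^{\rig}\hookrightarrow(\Spf R_{\infty})^{\rig}$. Via the identification $\Pi_{\infty}[\fa]\cong\widehat{S}(U^p,E)_{\overline{\rho}}$ from \S~\ref{secPBern}, it suffices to show $\Pi_{\infty}^{R_{\infty}-\an}[\fm_y]^{\lalg}\neq 0$. Proposition \ref{galBE1}, combined with the fact that $\rho_{x,\widetilde{v}}$ has distinct Hodge-Tate weights, forces $\fd:=\wt(\chi)$ to be integral, so $\chi$ is locally algebraic. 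A straightforward patched analogue of Proposition \ref{twBEi} then identifies $\cE_{\Omega,\lambda}^{\infty}(\overline{\rho})$ with $\cE_{\Omega',\lambda'}^{\infty}(\overline{\rho})$ for a twisted component $\Omega'$ and the $P$-dominant weight $\lambda':=\lambda+\fd\circ\dett_{L_P}$, mapping $\tilde{x}$ to a point $\tilde{x}'=(y,\ul{x}',1)$ (the Galois-side component $y$ being preserved).

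Define the strictly $B$-dominant weight $\lambda''$ by $\lambda''_{\widetilde{v},i,\tau}:=h_{\widetilde{v},i,\tau}+i-1$, where $\textbf{h}_{\widetilde{v}}=(h_{\widetilde{v},i,\tau})$ are the Hodge-Tate weights of $\rho_{x,\widetilde{v}}$. By Proposition \ref{galBE1}, the multisets $\{\lambda'_{\widetilde{v},i,\tau}-i+1\}_{i}$ and $\{\lambda''_{\widetilde{v},i,\tau}-i+1\}_{i}$ coincide for each $(\widetilde{v},\tau)$. Since $\lambda'$ is $P$-dominant and $\lambda''$ is strictly $B$-dominant, there exists a unique $w\in\sW^P_{\min}$ with $\lambda'=w\cdot\lambda''$. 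In particular $\tilde{x}'\in\cE_{\Omega',w\cdot\lambda''}^{\infty}(\overline{\rho})$. Hypothesis (3) ensures that $x^p$ is a smooth point of $(\Spf R_{\infty}^p)^{\rig}$, so Corollary \ref{corR=T0} applies at $\tilde{x}'$, and hence so does Corollary \ref{ptclas11} (which uses precisely this smoothness through Proposition \ref{nonVancyc}). Applying Corollary \ref{ptclas11} with $w':=1\leq w$, I obtain $[\cN_{\lambda'',y}]\neq 0$.

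By Lemma \ref{copconcyc}, applied with $\fd_{x'}=0$ and $\chi_{x',\ul{\varpi}}=1$ (the point $\tilde{x}'$ having trivial $\widehat{\cZ_0}$-coordinate), this non-vanishing produces a non-zero $G_p$-equivariant homomorphism
\[
\cF^{G_p}_{P^-}\bigl(L^-(-\lambda''),\,\pi_{\ul{x}'}\otimes_E\delta_P^{-1}\bigr)\lra\Pi_{\infty}^{R_{\infty}-\an}[\fm_y].
\]
Since $\lambda''$ is dominant, $L(\lambda'')$ is the finite-dimensional simple $\GL_n$-module of highest weight $\lambda''$, and by \cite{OS} the source is isomorphic to $\bigl(\Ind_{P^-(\Q_p)}^{G_p}\pi_{\ul{x}'}\otimes_E\delta_P^{-1}\bigr)^{\infty}\otimes_E L(\lambda'')$, which is locally algebraic. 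Therefore $\Pi_{\infty}^{R_{\infty}-\an}[\fm_y]^{\lalg}\neq 0$, yielding $\widehat{S}(U^p,E)^{\lalg}[\bT^S=\eta]\neq 0$ as desired. The main obstacle is the bookkeeping of the dot action in the second paragraph, namely producing the Weyl element $w\in\sW^P_{\min}$ with $\lambda'=w\cdot\lambda''$ and verifying compatibility with the Sen-weight formula of Proposition \ref{galBE1}; once this is in place, the remainder is a direct application of the cycle-theoretic machinery already developed in \S~\ref{cyclebern}.
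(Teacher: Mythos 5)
Your proposal is correct and follows essentially the same route as the paper's proof: twist away the $\widehat{\cZ_0}$-coordinate via Proposition \ref{twBEi} (the paper does this before patching, you after — immaterial), write the weight as $w\cdot\mu$ with $\mu$ dominant using the Sen-weight formula of Proposition \ref{galBE1}, pass to the patched eigenvariety via Proposition \ref{bePbe}, use assumption (3) for smoothness of $x^p$, and conclude by Corollary \ref{ptclas11} at $w'=1$ together with Lemma \ref{copconcyc} and the local algebraicity of the Orlik--Strauch representation for a dominant weight. No gaps; the bookkeeping of the dot action you flag is exactly the content of the paper's appeal to Proposition \ref{galBE1}.
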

\begin{proof}
	By Proposition \ref{twBEi}, we can and do assume $\chi=1$ (by modifying $\lambda$ and $\Omega$). Then by Proposition \ref{galBE1}, there exist a dominant weight $\mu$ and $w\in \sW^P_{\min}$ such that $\lambda=w\cdot \mu$. By Hypothesis \ref{TayWil} and Proposition \ref{bePbe}, we can associate to $x$ a point, still denoted by $x$, of the form $(y=(x^p,x_p), \pi_{L_P},1)$ in $\cE_{\Omega, \lambda}^{\infty}(\overline{\rho})$ satisfying $\widehat{S}(U^p,E)[\bT^S=\eta]\cong \Pi_{\infty}[\fm_y]$. Moreover, by the assumption (3), $x^p$ is a smooth point of $(\Spf R_{\infty}^p)^{\rig}$. By Corollary \ref{ptclas11}, we see $\cN_{\mu,y} \neq 0$ hence by Lemma \ref{copconcyc} (noting that, as $\mu$ is dominant, the Orlik-Strauch representation $\cF_{P^-(\Q_p)}^{G_p}(-)$ there is locally algebraic) $\widehat{S}(U^p,E)[\bT^S=\eta]^{\lalg}\cong \Pi_{\infty}[\fm_y]^{\lalg}\neq 0$. The corollary follows.
\end{proof}

\begin{remark}
(1) Using similar arguments as in the proof of \cite[Thm.\ 3.9]{BHS2}, one may remove the assumption (3) in Corollary \ref{classFM} but assuming the weight $\lambda+\wt(\chi)\circ \dett_{L_P}$ is dominant.

(2) If $S\setminus S_p=\{v_1\}$ where $v_1$ is a finite place of $F^+$ as in \cite[\S~2.3]{CEGGPS} and $U^p$ is chosen as in \textit{loc.\ cit.}, then $(\Spf R_{\infty}^p)^{\rig}$ is smooth (so the assumption (3) is automatically satisfied). 
\end{remark}

\begin{theorem}\label{class00}
	Assume that we are in the setting of \S~\ref{s:pAF}, and let $\rho: \Gal_F \ra \GL_n(E)$ be a continuous representation unramified outside $S$ such that the modulo $p$ reduction $\overline{\rho}$ is irreducible and $U^p$-modular. Let $\fm_{\rho}$ be the associated maximal ideal of $\bT^S[1/p]$ (via (\ref{ES})). Assume
	\begin{enumerate}[label=(\arabic*)]
		\item Hypothesis \ref{TayWil};
		\item $\rho_{\widetilde{v}}$ is generic potentially crystalline with distinct Hodge-Tate weights for all $v\in S_p$;
		\item $\rho_{\widetilde{v}}$ is a smooth point of $(\Spf R_{\overline{\rho}_{\widetilde{v}}})^{\rig}$ for $v\in S\setminus S_p$;
		\item there exists a parabolic subgroup $P\supset \prod_{v\in S_p}\Res^{F_{\widetilde{v}}}_{\Q_p} B$ such that $J_P(\widehat{S}(U^p,E)^{\an}[\fm_{\rho}])$ has non-zero locally algebraic vectors for $L_P^D(\Q_p)$.
	\end{enumerate}
	Then $\widehat{S}(U^p,E)[\fm_{\rho}]^{\lalg}\neq 0$, i.e.\ $\rho$ is associated to a classical automorphic representation of $G(\bA_{F^+})$. 
\end{theorem}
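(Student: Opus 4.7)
The strategy is to reduce the statement to Corollary \ref{classFM} by producing a point on a suitable Bernstein eigenvariety whose attached Galois representation is the given $\rho$. Concretely, the plan is to construct a parabolic subgroup $P'\subseteq P$ containing $\prod_{v\in S_p}\Res^{F_{\widetilde{v}}}_{\Q_p}B$, a cuspidal Bernstein component $\Omega$ of $L_{P'}(\Q_p)$, an integral $P'$-dominant weight $\mu$, a character $\chi$ of $\cZ_0:=Z_{L_{P'}}(\Q_p)\cap \prod_{v\in S_p}\GL_n(\co_{F_{\widetilde{v}}})$, and a smooth irreducible cuspidal representation $\pi_{L_{P'}}\in\Omega$ such that the triple $x=(\eta_\rho,\pi_{L_{P'}},\chi)$ lies in $\cE_{\Omega,\mu}(U^p,\overline{\rho})$. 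Since $\rho_x=\rho$, the hypotheses (1)--(3) of Corollary \ref{classFM} are then satisfied, so Corollary \ref{classFM} applied to $x$ yields $\widehat{S}(U^p,E)^{\lalg}[\bT^S=\eta_\rho]\neq 0$, which is the conclusion.

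The existence of $x$ will be obtained as follows. By hypothesis (4) and the definition (\ref{jplambda}) of $J_P(-)_\lambda$, there exists an integral $P$-dominant weight $\lambda$ with $J_P(\widehat{S}(U^p,E)^{\an}[\fm_\rho])_\lambda\neq 0$. This space is an essentially admissible locally $\Q_p$-analytic representation of $L_P(\Q_p)$ on which $\fl_P^D$ acts trivially, so, using the admissibility of the central action and the decomposition into generalised $\cZ_{L_P}$-eigenspaces (cf.\ Lemma \ref{compfini} and its proof), its socle contains an irreducible $L_P(\Q_p)$-subrepresentation of the form $\pi\otimes_E(\chi'\circ\dett_{L_P})\otimes_E L(\lambda)_P$ for some smooth irreducible admissible $L_P(\Q_p)$-representation $\pi$ and some locally algebraic character $\chi'$ of $Z_{L_P}(\Q_p)$. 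By Bernstein's theory of cuspidal support, there is a parabolic $P'\subseteq P$ containing $B$ and a smooth irreducible cuspidal representation $\pi_{L_{P'}}$ of $L_{P'}(\Q_p)$ such that $\pi$ is an irreducible constituent of the smooth parabolic induction $(\Ind_{(P'\cap L_P)^-}^{L_P}\pi_{L_{P'}})^\infty$. Using Emerton's transitivity of Jacquet functors, $J_{P'}\cong J_{(P'\cap L_P)}^{L_P}\circ J_P$ (up to a twist by $\delta_{P'\cap L_P}$), together with Frobenius reciprocity for the smooth Jacquet functor on $L_P(\Q_p)$, one converts the embedding $\pi\otimes_E(\chi'\circ\dett_{L_P})\otimes_E L(\lambda)_P\hookrightarrow J_P(\widehat{S}(U^p,E)^{\an})[\fm_\rho]$ into an $L_{P'}(\Q_p)$-equivariant embedding
\[
\pi_{L_{P'}}\otimes_E(\chi_{\ul{\varpi}}\circ\dett_{L_{P'}})\otimes_E L(\mu)_{P'}\hookrightarrow J_{P'}(\widehat{S}(U^p,E)^{\an})[\fm_\rho],
\]
where $\mu$ is the integral $P'$-dominant weight obtained from $\lambda$ and the algebraic part of $\chi'$ by the usual shift, and $\chi$ is the restriction to $\cZ_0$ of the smooth part of $\chi'$ (adjusted by the appropriate algebraic character). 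Letting $\Omega$ be the cuspidal Bernstein component of $L_{P'}(\Q_p)$ containing $\pi_{L_{P'}}$, Proposition \ref{basicBE} (3) then provides the desired point $x\in\cE_{\Omega,\mu}(U^p,\overline{\rho})$.

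The main obstacle is the passage from the possibly non-cuspidal parabolic $P$ appearing in hypothesis (4) to the cuspidal parabolic $P'$: it requires combining the Bernstein decomposition of smooth admissible representations of $L_P(\Q_p)$ with the compatibility of Emerton's locally analytic Jacquet functor with transitivity along $P'\subseteq P$, and a careful bookkeeping of how the locally algebraic character $\chi'$ is split into its algebraic part (which contributes to the weight $\mu$) and its smooth part (which contributes to $\chi$ and to the unramified twist inside $\Omega$). Once this point $x$ is produced, no further work is needed: Corollary \ref{classFM} is directly applicable and delivers the conclusion.
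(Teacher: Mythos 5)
Your proposal agrees with the paper's proof at both ends: extract from $J_P(\widehat{S}(U^p,E)^{\an}[\fm_{\rho}])_{\lambda}$ an irreducible subrepresentation of the form $\pi\otimes_E(\chi_{\ul{\varpi}}\circ\dett_{L_P})\otimes_E L(\lambda)_P$ (the paper does this by taking $H$-invariants for a compact open $H\subset L_P^D(\Z_p)$, picking a character $\delta$ of $Z_{L_P}(\Q_p)$, and using \cite[Cor.~6.4.14]{Em04} to get an admissible smooth representation), then recognise this as a point of a Bernstein eigenvariety via Proposition \ref{basicBE} (3), and conclude with Corollary \ref{classFM}. Where you diverge, and where your argument has a genuine gap, is the middle step. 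The paper takes $P$ to be \emph{minimal} among the parabolics satisfying hypothesis (4); minimality forces the Bernstein component of the extracted $\pi_{L_P}$ to be cuspidal (otherwise one could descend to a strictly smaller parabolic), so no change of parabolic, weight or character is ever needed: the point $(\fm_{\rho},\pi_{L_P},\chi)$ lies on $\cE_{\Omega,\lambda}(U^p,\overline{\rho})$ with the original $\lambda$ and a locally algebraic $\chi$ in the $\widehat{\cZ_0}$-coordinate. You instead keep the given $P$ and descend explicitly to the cuspidal support, and you do so by invoking a transitivity isomorphism $J_{P'}\cong J^{L_P}_{P'\cap L_P}\circ J_P$ for Emerton's locally analytic Jacquet functor. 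Such a transitivity statement is not among the results available in the references this paper relies on (\cite{Em11}, \cite{Em2}, \cite{HL}) and is precisely the hard content of your reduction; as written it is asserted, not proved. Two further points in the same step need care: Bernstein theory gives that $\pi$ is a \emph{constituent} of an induction from its cuspidal support, whereas your use of Frobenius reciprocity requires $\pi$ to be a subrepresentation of such an induction (true for a suitable choice of parabolic with Levi $L_{P'}$, but this must be arranged); and the splitting of the locally algebraic character into a weight shift for $\mu$ and a smooth part for $\chi$ must be tracked through the descent (by Proposition \ref{twBEi} this shift is in fact unnecessary, since the eigenvariety allows a locally algebraic $\chi$).

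The gap is repairable without appealing to transitivity: apply the adjunction of \cite[Thm.~4.3]{Br13II} to the embedding into $J_P$, rewrite the resulting Orlik--Strauch representation via induction in stages \cite{OS} using an embedding of $\pi$ into an induction from $(P'\cap L_P)^-$, and then apply the adjunction again for $P'$ to land in $J_{P'}(\widehat{S}(U^p,E)^{\an}[\fm_{\rho}])$; this is exactly the manipulation the paper carries out in the proofs of Proposition \ref{class} and Lemma \ref{lemcompCP}. But once one sees that this descent is what is required, choosing $P$ minimal from the outset makes it unnecessary, which is why the paper's proof is shorter.
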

\begin{proof}
	Let $P$ be a minimal parabolic subgroup such that the assumption (4) holds. Thus there exists an integral $P$-dominant weight $\lambda$ such that $J_P(\widehat{S}(U^p,E)^{\an}[\fm_{\rho}])_{\lambda}\neq 0$ (see (\ref{jplambda})). As $J_P(\widehat{S}(U^p,E)^{\an}[\fm_{\rho}])_{\lambda}$ is an essentially admissible locally analytic representation of $L_P(\Q_p)$ and is smooth for the $L_P^D(\Q_p)$-action, there exists a compact open subgroup $H\subset L_P^D(\Z_p)$ such that $J_P(\widehat{S}(U^p,E)^{\an}[\fm_{\rho}])_{\lambda}^{H}$ is a non-zero essentially admissible locally analytic representation of $Z_{L_P}(\Q_p)$. There exists thus a continuous character $\delta$ of $Z_{L_P}(\Q_p)$ such that
	\[J_P(\widehat{S}(U^p,E)^{\an}[\fm_{\rho}])_{\lambda}^{H} [Z_{L_P}(\Q_p)=\delta]\neq 0.\]
	We let $\chi$ be a continuous character of $Z_{L_P}^0$ such that $\wt(\chi)\circ \dett_{L_P}=\wt(\delta)$ and put $\delta^{\infty}:=\delta (\chi_{\ul{\varpi}}^{-1} \circ \dett_{L_P})$, which is a smooth character of $Z_{L_P}(\Q_p)$. Consider the (non-zero) $L_P(\Q_p)$-representation:
	\begin{equation*}
		(J_P(\widehat{S}(U^p,E)^{\an}[\fm_{\rho}])_{\lambda}\otimes_E \chi_{\ul{\varpi}}^{-1} \circ \dett_{L_P})[Z_{L_P}(\Q_p)=\delta^{\infty}]
	\end{equation*}
which is smooth and admissible by \cite[Cor.\ 6.4.14]{Em04}. Let $\pi_{L_P}$ be an irreducible subrepresentation of $(J_P(\widehat{S}(U^p,E)^{\an}[\fm_{\rho}])_{\lambda}\otimes_E \chi_{\ul{\varpi}}^{-1} \circ \dett_{L_P})[Z_{L_P}(\Q_p)=\delta^{\infty}]$, so we have
\begin{equation}\label{injLP1}
	\pi_{L_P} \otimes_E (\chi_{\ul{\varpi}} \circ \dett_{L_P}) \otimes_E L(\lambda)_P\hooklongrightarrow J_P(\widehat{S}(U^p,E)^{\an}[\fm_{\rho}]).
\end{equation} 
Let $\Omega$ be the Bernstein component of $\pi_{L_P}$. As $P$ is minimal, using (\ref{injLP1}), it is not difficult to see that $\Omega$ is cuspidal. By (\ref{fiberE}) and (\ref{injLP1}), it follows that $(\fm_{\rho}, \pi_{L_P}, \chi)\in \cE_{\Omega, \lambda}(U^p, \overline{\rho})$. The theorem then follows from Corollary \ref{classFM}.
\end{proof}

\begin{remark}
The case where $P=\prod_{v\in S_p}\Res^{F_{\widetilde{v}}}_{\Q_p} B$ was essentially proved (without the assumption (3)) in \cite[Thm.\ 5.1.3]{BHS3} (see also \cite[Rem.\ 5.1.5]{BHS3}).
\end{remark}

Next we prove results towards the socle conjecture in \cite{Br13I}. Let $\lambda=(\lambda_{\widetilde{v},i,\tau})$ be an integral dominant weight of $G_p$, $\textbf{h}=(h_{\widetilde{v},i,\tau})$ with $h_{\widetilde{v},i,\tau}=\lambda_{\widetilde{v},i,\tau}-i+1$ (hence $\textbf{h}$ is strictly dominant). For $w\in \sW^P_{\min}$, let $\sS_{w}$ be the set of points $x=(y=(x^p,x_p), \ul{x}, 1)\in \cE_{\Omega, \lambda}^{\infty}(\overline{\rho})_0$ (recall that $\cE_{\Omega, \lambda}^{\infty}(\overline{\rho})_0$ is the fibre of $\cE_{\Omega, \lambda}^{\infty}(\overline{\rho})$ at the weight $0$ of $\fz_{L_P}$) such that:
\begin{itemize}
	\item[(1)] for $v\in S\setminus S_p$, $\rho_{x,\widetilde{v}}$ is a smooth point in $(\Spf R_{\overline{\rho}_{\widetilde{v}}})^{\rig}$;
	\item[(2)] for $v\in S_p$, $\rho_{x,\widetilde{v}}$ is generic potentially crystalline of Hodge-Tate weights $\textbf{h}_{\widetilde{v}}$ and of inertial type $\xi_{0,\widetilde{v}}$ (which is determined by $\Omega$);
	\item[(3)] $w_y=ww_{0,F}$.
\end{itemize}
Put $\sS:=\cup_{w\in \sW^P_{\min}} \sS_w$. Note that $\sS_1$ consists of non-critical points in $\sS$. 
 
\begin{proposition}\label{proppcrycl}
Let $x\in \sS$, then we have $[\cN_{\lambda, y}] \in \Z \fC_{w_{0,F},F} \in Z^{\frac{n(n+1)}{2}[F^+:\Q]}(\widehat{\co}_{\fX_{\infty}, y})$ (cf.\ (\ref{Cwf})).
\end{proposition}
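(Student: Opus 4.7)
The plan is to identify $\cN_{\lambda,y}$ with a module over the potentially crystalline deformation ring, and then use genericity (smoothness) to pin down its cycle class exactly. First, by Lemma \ref{copconcyc} combined with Lemma \ref{lemadj01}, the finite-dimensional dual of $\cN_{\lambda,y}$ identifies with a subspace of $\Hom_{G_p}(\cF_{P^-(\Q_p)}^{G_p}(L^-(-\lambda), \pi_{\ul{x}} \otimes_E \delta_P^{-1}), \Pi_{\infty}^{R_{\infty}-\an}[\fm_y^{\infty}])$ (using $\chi_x = 1$, $\fd_x = 0$). Since $\lambda$ is dominant, $L^-(-\lambda)$ is finite-dimensional, so the source of this $\Hom$ is the locally algebraic representation $(\Ind_{P^-}^{G_p} \pi_{\ul{x}} \otimes_E \delta_P^{-1})^{\infty} \otimes_E L(\lambda)$, which carries the compact-open type $\sigma_{K_p}$ of \cite[Thm.~3.7]{CEGGPS} attached to the cuspidal Bernstein component $\Omega$.

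The central step is then to invoke \cite[Thm.~4.1]{CEGGPS} together with \cite[Lemma 4.17]{CEGGPS} (as already used in the proof of Proposition \ref{galPBE}): the action of $R_{\infty}$ on $\Pi_{\infty}(\sigma_{K_p},\lambda) := \Hom_{K_p}(\sigma_{K_p}, \Pi_{\infty} \otimes_E L(\lambda)^{\vee})$ factors through the quotient
\[
R_{\infty} \twoheadlongrightarrow R_{\infty}^p \widehat{\otimes}_{\co_E} \widehat{\otimes}_{v \mid p} R^{\pcr}_{\overline{\rho}_{\widetilde{v}}}(\xi_{0,\widetilde{v}}, \textbf{h}_{\widetilde{v}}).
\]
Consequently, the support of $\cN_{\lambda,y}$ as a $\widehat{\co}_{\fX_{\infty},y}$-module is contained in the closed subscheme
\[
\cY := \Spec \widehat{\co}_{\fX_{\infty}^p, x^p} \times \prod_{v \mid p} \Spec \widehat{\co}_{\fX^{\pcr}_{\overline{\rho}_{\widetilde{v}}}(\xi_{0,\widetilde{v}},\textbf{h}_{\widetilde{v}}), \rho_{\widetilde{v}}}.
\]

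Since each $\rho_{\widetilde{v}}$ is generic potentially crystalline with distinct Hodge-Tate weights $\textbf{h}_{\widetilde{v}}$, Kisin's results (\cite{Kis08}) imply that $\fX^{\pcr}_{\overline{\rho}_{\widetilde{v}}}(\xi_{0,\widetilde{v}}, \textbf{h}_{\widetilde{v}})$ is formally smooth at $\rho_{\widetilde{v}}$ of dimension $n^2 + [F_{\widetilde{v}}:\Q_p] \tfrac{n(n-1)}{2}$. Combined with the assumption (1) defining $\sS$, the scheme $\cY$ is regular and integral of codimension exactly $[F^+:\Q] \tfrac{n(n+1)}{2}$ in $\Spec \widehat{\co}_{\fX_{\infty}, y}$. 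By Remark \ref{remPocrycycl}, its cycle class equals $[\Spec \widehat{\co}_{\fX_{\infty}^p, x^p}] \times \prod_{v \mid p} \fZ_{\widetilde{v}, w_{0, F_{\widetilde{v}}}}$. Moreover, applying Theorem \ref{thmcycl}(3) with $w = w_{0, F_{\widetilde{v}}}$ and noting that $\overline{Bw_0B/B} = \GL_n/B$ is smooth everywhere, one gets $a_{w_0, w'} = 0$ for $w' \neq w_0$, so $\fC_{\widetilde{v}, w_{0,F_{\widetilde{v}}}} = \fZ_{\widetilde{v}, w_{0, F_{\widetilde{v}}}}$, and hence $[\cY] = \fC_{w_{0,F}, F}$.

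Finally, the cycle $[\cN_{\lambda,y}]$ lies in $Z^{[F^+:\Q]\frac{n(n+1)}{2}}(\Spec \widehat{\co}_{\fX_{\infty},y})$ and is supported on the irreducible subscheme $\cY$ of the same codimension whose cycle class is $\fC_{w_{0,F}, F}$; hence $[\cN_{\lambda,y}]$ must be a non-negative integer multiple of $\fC_{w_{0,F}, F}$, yielding the claim. The main obstacle of the plan is the second step: making precise that the local-global compatibility of \cite[\S~4]{CEGGPS} for $\Pi_{\infty}(\sigma_{K_p}, \lambda)$ transfers cleanly to a statement about the Jacquet-module construction defining $\cN_{\lambda, y}$, which requires checking its compatibility with the Bushnell-Kutzko type theory underlying the definition of $B_{\Omega, \lambda}(-)$ and with the finite slope/Jacquet functor formalism in \S~\ref{sec711}.
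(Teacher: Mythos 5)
Your proposal is correct and follows essentially the same route as the paper's own proof: reduce via Lemma \ref{BCmod}/Lemma \ref{lemadj01} and Frobenius reciprocity to $\Hom_{K_p}$ out of $L(\lambda)\otimes_E\widetilde{\sigma}$ for the $K_p$-type of the induced Bernstein component, invoke \cite[Lemma 4.17]{CEGGPS} to factor the $R_\infty$-action through $R_\infty^p\widehat{\otimes}R_{\overline{\rho}_p}^{\pcr}(\xi_0,\textbf{h})$, and conclude with Remark \ref{remPocrycycl} and the irreducibility of $\fZ_{w_{0,F},F}$. The "obstacle" you flag is closed in the paper exactly as you anticipate, with one refinement you should incorporate: to bound the support of the whole module $\cN_{\lambda,y}$ (whose dual is a generalized eigenspace for $\fm_{\ul{x}}$ and $\fm_y$, not an honest one) one works with the thickenings $\pi_{\ul{x},s}:=(\cind_{L_P^0}^{L_P(\Q_p)}\sigma)^{\infty}\otimes_{\cZ_{\Omega}}\cZ_{\Omega}/\fm_{\ul{x}}^s$ and $\Pi_{\infty}^{R_{\infty}-\an}[\fm_y^s]$, noting that $(\Ind_{P^-(\Q_p)}^{G_p}\pi_{\ul{x},s}\otimes_E\delta_P^{-1})^{\infty}$ is a successive extension of $(\Ind_{P^-(\Q_p)}^{G_p}\pi_{\ul{x}}\otimes_E\delta_P^{-1})^{\infty}$ so the same type still detects it (your "subspace" claim with $\pi_{\ul{x}}$ alone only controls the $\fm_{\ul{x}}$-torsion part, and your appeal to Kisin for formal smoothness at $\rho_{\widetilde{v}}$ is not needed once Remark \ref{remPocrycycl} and Lemma \ref{irrcyc2} are used).
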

\begin{proof}
Let $\sigma$ be a cuspidal $L_P(\Z_p)$-type for $\Omega$. For any $s \in \Z_{\geq 1}$, put $\pi_{\ul{x},s}:=(\cind_{L_P^0}^{L_P(\Q_p)} \sigma)^{\infty} \otimes_{\cZ_{\Omega}} \cZ_{\Omega}/\fm_{\ul{x}}^s$. By Lemma \ref{BCmod} and Lemma \ref{lemadj01}, we have (noting that $\cN_{\lambda,y}^{\vee}$ is isomorphic to the direct limit of the following $E$-vector spaces over $s$, cf.\ (\ref{duNml}))
	\begin{eqnarray}
		&&\!\!\!\!\!\!\!\!\!\!\!\!\!\!\!\!\!\!\!\!\!\!\!\!\!\!\!\!\!\!\!\!\!\!\!\!\!\!\!\!\!\!\!\!\!\!\!\!\!\!\!\!\! \big(\Hom_{\text{U}(\ug)}(L(\lambda), \Pi_{\infty}^{R_{\infty}-\an}[\fm_y^s])_{\fss}^{N_P^0} \otimes_E \sigma^{\vee}\big)^{L_P^0}[\fm_{\ul{x}}^s] \nonumber 
		\\
		&\xrightarrow{\sim}& \Hom_{L_P(\Q_p)}\Big(\pi_{\ul{x},s}, \Hom_{\text{U}(\ug)}(L(\lambda), \Pi_{\infty}^{R_{\infty}-\an}[\fm_y^s])_{\fss}^{N_P^0} \Big) \nonumber 
		\\
		&\xrightarrow{\sim} &
		\Hom_{G_p}\Big(\cF_{P^-(\Q_p)}^{G_p}\big(L^-(-\lambda), \pi_{\ul{x}, s} \otimes_E\delta_P^{-1}\big), \Pi_{\infty}^{R_{\infty}-\an}[\fm_y^{s}]\Big) \nonumber \\
	&	\cong &
		\Hom_{G_p}\big(L(\lambda) \otimes_E (\Ind_{P^-(\Q_p)}^{G_p}\pi_{\ul{x}, s} \otimes_E \otimes_E\delta_P^{-1})^{\infty}, \Pi_{\infty}^{R_{\infty}-\an}[\fm_y^{s}]\big)\nonumber \\
	&	\hookrightarrow& 
	\Hom_{G_p}\Big(L(\lambda) \otimes_E (\cind_{K_p}^{G_p} \widetilde{\sigma}), \Pi_{\infty}^{R_{\infty}-\an}[\fm_y^{s}]\Big) \nonumber \\
	&\cong&
		\Hom_{K_p}(L(\lambda)\otimes_E \widetilde{\sigma}, \Pi_{\infty}^{R_{\infty}-\an})[\fm_y^{s}] \label{Nlambday1}
	\end{eqnarray}
	where $\widetilde{\sigma}$ is a $K_p$-type of $G_p$ for $(\Ind_{P^-(\Q_p)}^{G_p} \pi_{\ul{x}} \otimes_E\delta_P^{-1})^{\infty}$ (note that $(\Ind_{P^-(\Q_p)}^{G_p}\pi_{\ul{x}, s} \otimes_E\delta_P^{-1})^{\infty}$ is isomorphic to a successive extension of $(\Ind_{P^-(\Q_p)}^{G_p} \pi_{\ul{x}} \otimes_E\delta_P^{-1})^{\infty}$). By \cite[Lemma 4.17]{CEGGPS}, the $R_\infty$-action on the last term in (\ref{Nlambday1}) factors through $R_{\infty}^p \widehat{\otimes}_{\co_E} R_{\overline{\rho}_p}^{\pcr}(\xi_0, \textbf{h})$ (where we recall that $R_{\overline{\rho}_p}^{\pcr}(\xi_0, \textbf{h})=\widehat{\otimes}_{v\in S_p} R_{\overline{\rho}_{\widetilde{v}}}^{\pcr}(\xi_{0,\widetilde{v}}, \textbf{h}_{\widetilde{v}})$). We deduce that the $R_\infty$-action on $\cN_{\lambda,y}$ also factors through $R_{\infty}^p \widehat{\otimes}_{\co_E} R_{\overline{\rho}_p}^{\pcr}(\xi_0, \textbf{h})$. By Remark \ref{remPocrycycl}, we have $\fC_{w_{0,F}, F}=[\widehat{\co}_{\fX^p_{\infty}, x^p} \widehat{\otimes} \widehat{\co}_{\fX_{\overline{\rho}_p}^{\pcr}(\xi_0, \textbf{h}), \rho_p}]$ (here the completion in $\widehat{\otimes}$ is with respect to the maximal ideal associated to $(x^p, \rho_p)$). The proposition follows.
\end{proof}

Let $x\in \sS$ and recall that, by Corollary \ref{corR=T0}, $\cE_{\Omega, \lambda}^{\infty}(\overline{\rho})$ is irreducible at the point $x$. Let $m_y\in \Z_{\geq 1}$ such that $\cM_{\Omega, \lambda}^{\infty}$ is locally free of rank $m_y$ in the smooth locus of a sufficiently small neighbourhood of $x$ in $\cE_{\Omega, \lambda}^{\infty}(\overline{\rho})$. We view $x$ as a point of $(\Spf R_{\infty})^{\rig} \times (\Spec \cZ_{\Omega})^{\rig} \times \widehat{\cZ_0}$. For $w\in \sW^P_{\min}$, we want to understand when $x\in \cE_{\Omega, w\cdot \lambda}^{\infty}(\overline{\rho})$.

\begin{lemma}\label{lgleq1}
	If $\lg(w_y) \geq \lg(w_{0,F})-1$, then for any $w\in \sW^P_{\min}$ such that $ww_{0,F} \geq w_y$, we have $[\cN_{w \cdot \lambda, y}]\neq 0$, hence $x\in \cE_{\Omega, w\cdot \lambda}^{\infty}(\overline{\rho})$. Moreover, $\cM_{\Omega, w\cdot \lambda}^{\infty}$ is locally free of rank $m_y$ at $x$.
\end{lemma}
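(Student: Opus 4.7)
The plan is to first establish smoothness of $\cE_{\Omega, w\cdot \lambda}^{\infty}(\overline{\rho})$ at $x$ for every $w\in \sW^P_{\min}$ with $ww_{0,F}\geq w_y$, then deduce that $\cM_{\Omega, w\cdot\lambda}^{\infty}$ is locally free of some rank $m_w$ at $x$, and finally compare these ranks via cycle identities to conclude $m_w=m_y\geq 1$. The length hypothesis $\lg(w_y)\geq \lg(w_{0,F})-1$ together with the constraint $w_y\leq ww_{0,F}\leq w_{0,F}$ restricts the admissible $w$ to $w=1$ (always), and, when $\lg(w_y)=\lg(w_{0,F})-1$, also to $w=w_1:=w_yw_{0,F}\in \sW^P_{\min}$ (characterized by $w_1w_{0,F}=w_y$), so there is essentially a single nontrivial case.

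For the smoothness step, I would observe that for any admissible $w$ one has $\lg(ww_{0,F})-\lg(w_y)\in\{0,1\}$, so Corollary~\ref{smDefVar2} applies (the subtle case of length gap $2$ discussed in Remark~\ref{remnonsm} never arises here), giving smoothness of $X_{\Omega, w(\mathbf{h})}(\overline{\rho}_p)$ at the image of $x$. Combined with the smoothness of $x^p$ on $\fX_\infty^p$ built into the definition of $\sS$ and the local isomorphism from Corollary~\ref{corR=T0}, this yields smoothness of $\cE_{\Omega, w\cdot \lambda}^{\infty}(\overline{\rho})$ at $x$. The Cohen--Macaulay property from Corollary~\ref{PBEdim}(3), combined with the Auslander--Buchsbaum formula over a regular local ring, then gives local freeness of $\cM_{\Omega, w\cdot\lambda}^{\infty}$ at $x$ with well-defined rank $m_w$ (using irreducibility at $x$ from Corollary~\ref{corR=T0}).

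For the rank identity $m_w=m_y$, I would combine (\ref{equcyc12}), the relation $[\cM_{w\cdot\lambda,y}]=m_w[\Spec \widehat\co_{\cE_{\Omega,w\cdot\lambda}^\infty(\overline\rho)_0,x}]$ coming from local freeness, Corollary~\ref{cycGaA}(2), Proposition~\ref{proppcrycl} giving $[\cN_{\lambda,y}]=c\fC_{w_{0,F},F}$ for some $c\in\Z_{\geq 1}$ (nonvanishing by Proposition~\ref{nonVancyc}), and the identity $\fC_{w_{0,F},F}=\fZ_{w_{0,F},F}$, which is immediate from the last assertion of Theorem~\ref{thmcycl}(3) since the whole flag variety is smooth. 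Using also $b_{w_{0,F},w_y}=1$ from Remark~\ref{remKLrel}, writing out the two cycle equations for $w=1$ and $w=w_1$ and equating coefficients in the basis $\{\fZ_{w',F}\}$ provided by Theorem~\ref{thmcycl}(1) forces $c=m_y$ from the coefficient of $\fZ_{w_{0,F},F}$ and $m_{w_1}=m_y$ from the coefficient of $\fZ_{w_y,F}$ (which has multiplicity $1$ in $\fC_{w_y,F}$). Once $m_w=m_y\geq 1$ is known, $[\cN_{w\cdot\lambda,y}]\neq 0$ follows automatically and Lemma~\ref{loccycglocy} gives $x\in\cE_{\Omega, w\cdot\lambda}^{\infty}(\overline{\rho})$.

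The main delicacy I expect is not in the cycle algebra, which is essentially forced once all inputs are in place, but in establishing smoothness uniformly in $w$: the length hypothesis has to be turned into precisely the form required by Corollary~\ref{smDefVar2}, and one must verify that the Cohen--Macaulayness of Corollary~\ref{PBEdim}(3) combined with smoothness really does yield a free completed stalk of the expected generic rank. This is exactly the point where Corollary~\ref{corR=T0} and the irreducibility of $\cE_{\Omega, w\cdot\lambda}^{\infty}(\overline{\rho})$ at $x$ become indispensable.
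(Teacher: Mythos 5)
Your plan begs the question at its first step. For the nontrivial $w$ (namely $ww_{0,F}=w_y$ when $\lg(w_y)=\lg(w_{0,F})-1$), asserting that $\cE_{\Omega, w\cdot \lambda}^{\infty}(\overline{\rho})$ is smooth at $x$, and that $\cM_{\Omega, w\cdot\lambda}^{\infty}$ is locally free of some rank $m_w$ at $x$, presupposes that $x$ lies on $\cE_{\Omega, w\cdot \lambda}^{\infty}(\overline{\rho})$ — and that membership is precisely part of the conclusion, being equivalent (via Lemma~\ref{lemNmu'}, which is the relevant statement here rather than Lemma~\ref{loccycglocy}) to the nonvanishing $[\cN_{w\cdot\lambda,y}]\neq 0$ that you only extract at the very end. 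Corollaries~\ref{coLoccomp} and \ref{smDefVar2} do give that the image of $x$ lies on $X_{\Omega, w(\textbf{h})}(\overline{\rho}_p)$ and that this local model is smooth there, and Corollary~\ref{corR=T0} gives local irreducibility of $(\Spf R_\infty^p)^{\rig}\times \jmath(X_{\Omega,w(\textbf{h})}(\overline{\rho}_p))$ at $x$; but Theorem~\ref{R=T0} only identifies $\cE_{\Omega, w\cdot\lambda}^{\infty}(\overline{\rho})$ with a \emph{union of irreducible components} of this product, and whether the component through $x$ is one of them is exactly the companion-point problem, which no purely local statement decides. Moreover, if $x$ were not on the $w$-eigenvariety, the cycle relation $[\cM_{w\cdot\lambda,y}]=m_w\,\fC_{w_y,F}$ coming from Corollary~\ref{cycGaA}~(2) would be vacuous ($0=0$), so "equating coefficients" between the two equations cannot force $m_w=m_y\geq 1$ without already knowing membership. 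As written, the argument is circular.

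The repair is a reordering, and it is what the paper does: work first only with the $\lambda$-eigenvariety, where membership, smoothness (Corollaries~\ref{corR=T0} and \ref{smDefVar2}) and local freeness of rank $m_y$ (Corollary~\ref{PBEdim}~(3)) are available. Then (\ref{equcyc12}) together with Corollary~\ref{cycGaA}~(2), Remark~\ref{remKLrel}/Lemma~\ref{mult00}~(1) and the identities $\fC_{w_{0,F},F}=\fZ_{w_{0,F},F}$, $\fC_{w_y,F}=\fZ_{w_y,F}$ give $[\cN_{\lambda,y}]+[\cN_{w\cdot\lambda,y}]=m_y(\fZ_{w_{0,F},F}+\fZ_{w_y,F})$, and Proposition~\ref{proppcrycl} confines $[\cN_{\lambda,y}]$ to $\Z\,\fZ_{w_{0,F},F}$; since $\fZ_{w_y,F}$ and $\fZ_{w_{0,F},F}$ are distinct irreducible cycles (Lemmas~\ref{nonzero}, \ref{irrcyc2}), the $\fZ_{w_y,F}$-coefficient of $[\cN_{w\cdot\lambda,y}]$ is $m_y\geq 1$, so $[\cN_{w\cdot\lambda,y}]\neq 0$ and $x\in\cE_{\Omega, w\cdot\lambda}^{\infty}(\overline{\rho})$ \emph{before} any appeal to that eigenvariety's geometry. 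Only at this point does your smoothness step become legitimate; it then yields $[\cN_{w\cdot\lambda,y}]=[\cM_{w\cdot\lambda,y}]=m_y'\,\fZ_{w_y,F}$ and a comparison with the previous equation gives $m_y'=m_y$. So all your ingredients suffice, but the nonvanishing must come out of the $w=1$ equation alone, not as a by-product of a rank identity whose formulation already assumes it.
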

\begin{proof}
	Since $\lg(w_y) \geq \lg(w_{0,F})-1$, by Corollary \ref{corR=T0} and Corollary \ref{smDefVar2}, $\cE_{\Omega, \lambda}^{\infty}(\overline{\rho})$ is smooth at the point $x$. Hence by Corollary \ref{PBEdim} (3), $\cM_{\Omega, \lambda}^{\infty}$ is locally free of rank $m_y$ at the point $x$. By Corollary \ref{cycGaA} (2) and (\ref{equcyc12}) (noting that, as $\lg(w_y) \geq \lg(w_{0,F})-1$, for $w\in \sW_{\min,F}^P$ we have $b_{w_{0,F}, ww_{0,F}}=\begin{cases}
	1 & ww_{0,F}\geq w_y \\ 0 & \text{otherwise}
	\end{cases}$, see Remark \ref{remKLrel} and Lemma \ref{mult00} (1) below), we have: 
	\begin{equation}\label{equcycl000}
	 \sum_{w_y\leq ww_{0,F} \leq w_{0,F}}\!\!\!\!	[\cN_{w \cdot \lambda, y}]=[\cM_{\lambda, y}]=m_y \sum_{w_y\leq ww_{0,F} \leq w_{0,F}}\!\!\!\! \fC_{ww_{0,F}, y}\in Z^{[F^+:\Q]\frac{n(n+1)}{2}}(\widehat{\co}_{\fX_{\infty}, y}).
	\end{equation}
	By Lemma \ref{nonVancyc}, $[\cN_{\lambda,y}]\neq 0$ and the case where $w_y=w_{0,F}$ (which implies $w=1$) follows. 
	Assume now $\lg(w_y) = \lg(w_{0,F})-1$ and $ww_{0,F}=w_y$. 
	By (\ref{equcycl000}) and Proposition \ref{proppcrycl}, we have 
	\begin{equation}\label{equcycl001}
	[\cN_{w \cdot \lambda, y}]=m_y \fC_{w_{0,F}, y}+m_y \fC_{ww_{0,F},y}-[\cN_{\lambda, y}]\in m_y \fC_{ww_{0,F},y}+\Z \fC_{w_{0,F}, y}.
	\end{equation}
	By (\ref{cycGaldef3}) and Lemma \ref{nonzero}, it is easy to see $\fC_{ww_{0,F},y}\notin \Z\fC_{w_{0,F}, y}$. We deduce $[\cN_{w \cdot \lambda, y}]\neq 0$, hence $x\in \cE_{\Omega, w\cdot \lambda}^{\infty}(\overline{\rho})$. Again by Corollary \ref{corR=T0} and Corollary \ref{smDefVar2}, $\cE_{\Omega, w\cdot \lambda}^{\infty}(\overline{\rho})$ is smooth at $x$ hence $\cM_{\Omega, w\cdot \lambda}^{\infty}$ is locally free at $x$, say of rank $m_y'$. Hence $[\cN_{w \cdot \lambda, y}]=[\cM_{w \cdot \lambda, y}]=m_y' \fC_{ww_{0,F}, y}\in \Z_{\geq 0} \fC_{ww_{0,F}, y}$. By (\ref{equcycl001}) and the fact $\fC_{ww_{0,F},y}\notin \Z\fC_{w_{0,F}, y}$ (again), we deduce $[\cN_{w \cdot \lambda, y}]=m_y \fC_{ww_{0,F}, y}$ (i.e.\ $m_y'=m_y$) and $[\cN_{\lambda, y}]=m_y \fC_{w, y}$. This concludes the proof.
\end{proof}

\begin{lemma}\label{indstLem1}
Let $w\in \sW^P_{\min}$.
	
(1) Assume that, for any $x'=(y', \ul{x'}, 1)\in \sS_{w}$, we have $[\cN_{w \cdot \lambda, y'}]\neq 0$. Then for any $x=(y,\ul{x},1)\in \sS$ with $w_y\leq ww_{0,F}$, we have $[\cN_{w \cdot \lambda, y}]\neq 0$.
	
(2) Keep the assumption in (1), and assume moreover that, for all $x'\in \sS_{w}$, $\cM_{\Omega, w\cdot \lambda}^{\infty}$ is locally free of rank $m_{y'}$ in the smooth locus of a sufficiently small neighbourhood of $x'$. Then, for any $x=(y,\ul{x},1)\in \sS$ with $w_y\leq ww_{0,F}$, $\cM_{\Omega, w\cdot \lambda}^{\infty}$ is locally free of rank $m_y$ in the smooth locus of a sufficiently small neighbourhood of $x$.
\end{lemma}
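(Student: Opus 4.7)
The plan is to mimic the strategy used in the proof of Proposition \ref{nonVancyc}, namely a Zariski-density argument inside the closed embedding of potentially crystalline loci into the patched Bernstein eigenvariety, combined with the fact that the support cycles $\fZ_{w\cdot\lambda,w\cdot\lambda}^0$ are Zariski-closed by construction.

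For part (1), given $x=(y,\ul{x},1)\in\sS$ with $w_y\leq ww_{0,F}$, Proposition \ref{VwSch}(2) puts $\rho_p$ in the Zariski-closure $\overline{V_{\overline{\rho}_p}^{\pcr}(\xi_0,\textbf{h})_{ww_{0,F}}}$ inside $\fX_{\overline{\rho}_p}^{\pcr}(\xi_0,\textbf{h})$. Letting $\cC_{ww_{0,F}}$ be the connected component at $\rho_p$ and $\cU^p$ the smooth locus at $x^p$ of the irreducible component of $\fX_\infty^p$ through $x^p$, Corollary \ref{locModdv} (applied as in the proof of Proposition \ref{nonVancyc}) yields a closed embedding
\[\iota: \cC_{ww_{0,F}}\times \cU^p \hooklongrightarrow \cE_{\Omega,w\cdot\lambda}^\infty(\overline{\rho})\]
sending $(\rho_p,x^p)$ to $x$. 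The open subset $\cV_{ww_{0,F}}\times \cU^p$, with $\cV_{ww_{0,F}}:=\cC_{ww_{0,F}}\cap V_{\overline{\rho}_p}^{\pcr}(\xi_0,\textbf{h})_{ww_{0,F}}$, is Zariski-dense in $\cC_{ww_{0,F}}\times \cU^p$, and for each of its points one produces an element $x'=(y',\ul{x}',1)\in\sS_w$ (with $w_{y'}=ww_{0,F}$ by construction, and $\ul{x}'$ uniquely determined via (\ref{noN2})). By assumption $[\cN_{w\cdot\lambda,y'}]\neq 0$, hence $\iota(y')\in\fZ_{w\cdot\lambda,w\cdot\lambda}^0$ by Lemma \ref{loccycglocy}. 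Since $\fZ_{w\cdot\lambda,w\cdot\lambda}^0$ is Zariski-closed in $\cE_{\Omega,w\cdot\lambda}^\infty(\overline{\rho})$ and $\iota(y)=x$ lies in the Zariski-closure of the set of such $\iota(y')$, we obtain $x\in\fZ_{w\cdot\lambda,w\cdot\lambda}^0$, hence $[\cN_{w\cdot\lambda,y}]\neq 0$ by the other direction of Lemma \ref{loccycglocy}.

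For part (2), the same embedding $\iota$ identifies $\cC_{ww_{0,F}}\times \cU^p$ with an irreducible closed subspace of $\cE_{\Omega,w\cdot\lambda}^\infty(\overline{\rho})$ of the correct dimension $g+|S|n^2+\sum_{v\in S_p}[F_{\widetilde v}:\Q_p]\frac{n(n-1)}{2}$; combined with Corollary \ref{corR=T0} and Corollary \ref{PBEdim}, it must be a whole irreducible component at $x$. The generic rank of the coherent sheaf $\cM_{\Omega,w\cdot\lambda}^\infty$ along this component can then be computed at any nearby $\iota(y')$ with $y'\in\sS_w$: by the hypothesis of (2) this rank is $m_{y'}$. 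By Corollary \ref{corR=T0} the sheaf $\cM_{\Omega,\lambda}^\infty$ is also supported on a single irreducible component at $x$ of the same dimension, and the same density argument applied to $\lambda$ identifies $m_{y'}=m_y$ for $y'\in\sS_w$ close enough to $x$. It remains to promote the equality of generic ranks to local freeness at $x$; this follows by combining the Cohen-Macaulay property of $\cM_{\Omega,w\cdot\lambda}^\infty$ (Corollary \ref{PBEdim}(3)) with the fact that $\cC_{ww_{0,F}}\times\cU^p$ is formally smooth at $x$ (as $\cU^p$ is smooth and $\cC_{ww_{0,F}}\subset \overline{V^{\pcr}_{\overline{\rho}_p}(\xi_0,\textbf{h})_{ww_{0,F}}}$ is smooth along $V^{\pcr}_{\overline{\rho}_p}(\xi_0,\textbf{h})_{ww_{0,F}}$ by Proposition \ref{VwSch}), so that any Cohen-Macaulay coherent sheaf of constant generic rank $m_y$ on this component is locally free of rank $m_y$ at $x$.

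The principal obstacle is the second statement of (2): transporting the rank $m_{y'}$ from a Zariski-dense set of \emph{smooth} points $y'\in\sS_w$ across to the possibly non-smooth point $x$. Part (1), by contrast, is a pure non-vanishing statement where the Zariski-closedness of $\fZ_{w\cdot\lambda,w\cdot\lambda}^0$ makes the argument essentially formal once the embedding $\iota$ is in place.
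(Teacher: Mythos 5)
Your part (1) follows the paper's strategy (spread $x$ out along a potentially crystalline locus, apply the hypothesis on the dense set of points of $\sS_w$, conclude by Zariski-closedness of the automorphy support), but as written it has two real gaps and one slip. First, the asserted closed embedding $\iota\colon \cC_{ww_{0,F}}\times \cU^p\hookrightarrow \cE^{\infty}_{\Omega,w\cdot\lambda}(\overline{\rho})$ sending $(\rho_p,x^p)$ to $x$ presupposes $x\in \cE^{\infty}_{\Omega,w\cdot\lambda}(\overline{\rho})$, which is not known at this stage (it is essentially part of the conclusion); in Proposition \ref{nonVancyc} that embedding was available only because this membership was an assumption there. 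The density argument must instead be run inside $X_{\Omega,w(\textbf{h})}(\overline{\rho}_p)\times\cU^p$ (or the ambient space), where $\fZ_{w\cdot\lambda,w\cdot\lambda}$ is still Zariski-closed. Second, and this is the substantive missing step, you never verify that your dense family of points lies in $\sS_w$: membership in $\sS_w$ requires membership in $\cE^{\infty}_{\Omega,\lambda}(\overline{\rho})_0$, i.e.\ ``automorphy'', which is not automatic for points of $\cV_{ww_{0,F}}\times\cU^p$. The paper gets this by also embedding $\cC\times\cU^p$ (with $\cC$ the full irreducible component of $\fX^{\pcr}_{\overline{\rho}_p}(\xi_0,\textbf{h})$ through $\rho_{x,p}$) into $X_{\Omega,\textbf{h}}(\overline{\rho}_p)\times\cU^p$ and using $x\in\cE^{\infty}_{\Omega,\lambda}(\overline{\rho})$, Theorem \ref{R=T0} and the irreducibility of Corollary \ref{locModdv}/\ref{corR=T0} to force $\iota(\cC\times\cU^p)\subset\cE^{\infty}_{\Omega,\lambda}(\overline{\rho})$; without this the hypothesis of (1) cannot be invoked at the points $\iota(y')$. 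Finally, your last step conflates $\fZ^0_{w\cdot\lambda,w\cdot\lambda}$ with $\fZ_{w\cdot\lambda,w\cdot\lambda}$: by Lemma \ref{loccycglocy}, $x\in\fZ^0_{w\cdot\lambda,w\cdot\lambda}$ only gives $\cN_{w\cdot\lambda,y}\neq 0$, not $[\cN_{w\cdot\lambda,y}]\neq 0$; you should work with $\fZ_{w\cdot\lambda,w\cdot\lambda}$ (also Zariski-closed), which your hypothesis does give at the points $\iota(y')$.

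Part (2) does not work as proposed. The claim that $\iota(\cC_{ww_{0,F}}\times\cU^p)$ is a whole irreducible component of $\cE^{\infty}_{\Omega,w\cdot\lambda}(\overline{\rho})_0$ of dimension $g+|S|n^2+\sum_{v\in S_p}[F_{\widetilde{v}}:\Q_p]\frac{n(n-1)}{2}$ is false unless $w=1$: $\cC_{ww_{0,F}}$ is the closure of the stratum $V^{\pcr}_{\overline{\rho}_p}(\xi_0,\textbf{h})_{ww_{0,F}}$, which has strictly smaller dimension than $\fX^{\pcr}_{\overline{\rho}_p}(\xi_0,\textbf{h})$ when $ww_{0,F}\neq w_{0,F}$. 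Moreover $\cC_{ww_{0,F}}$ is not known to be smooth at $\rho_p$ when $w_y<ww_{0,F}$ (the point then lies in a boundary stratum of the closure, cf.\ Proposition \ref{VwSch}), so the ``formally smooth at $x$'' input is unavailable; and even granting it, the Cohen--Macaulay property of Corollary \ref{PBEdim} (3) is over $\cE^{\infty}_{\Omega,w\cdot\lambda}(\overline{\rho})$, whose smoothness at $x$ is precisely what is unknown (cf.\ Remark \ref{remnonsm}), so ``CM of constant generic rank $\Rightarrow$ locally free at $x$'' does not apply. Note also that the statement only asks for local freeness of rank $m_y$ on the smooth locus of a small neighbourhood of $x$, not at $x$ itself. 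The intended argument is much softer: using Corollary \ref{corR=T0} choose irreducible neighbourhoods $U_w\subset\cE^{\infty}_{\Omega,w\cdot\lambda}(\overline{\rho})$ and $U_1\subset\cE^{\infty}_{\Omega,\lambda}(\overline{\rho})$ of $x$ with irreducible smooth loci, on which the ranks of $\cM^{\infty}_{\Omega,w\cdot\lambda}$ and $\cM^{\infty}_{\Omega,\lambda}$ are constant, say $m$ and $m_y$; by density of $\cV_{ww_{0,F}}\times\cU^p$ pick one point $y'$ with $\iota(y')\in U_1\cap U_w\cap\sS_w$; it is a smooth point of $U_w$ by Corollaries \ref{corR=T0} and \ref{smDefVar2} (since $w_{y'}=ww_{0,F}$), so the hypothesis of (2) gives $m=m_{y'}$, while $\iota(y')\in U_1$ gives $m_{y'}=m_y$.
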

\begin{proof}
The lemma follows by the same arguments as in Step 9 of the proof of \cite[Thm.\ 5.3.3]{BHS3}. We include a proof for the convenience of the reader.
	
	(1) By Lemma \ref{loccycglocy}, we only need to show $x\in \fZ_{w\cdot \lambda, w\cdot \lambda}$. We use the notation in the proof of Proposition \ref{nonVancyc}. Denote by $\cC \subset \fX_{\overline{\rho}_p}^{\pcr}(\xi_0, \textbf{h})$ the irreducible component containing $\rho_{x,p}$. The embedding in (\ref{embpcydef}) induces an embedding
	\begin{equation*}
		\iota: \cC \times \cU^p \hooklongrightarrow X_{\Omega, \textbf{h}}(\overline{\rho}_p) \times \cU^p
	\end{equation*}
	which sends $y$ to $x\in \cE_{\Omega, \lambda}^{\infty}(\overline{\rho})\hookrightarrow X_{\Omega, \textbf{h}}(\overline{\rho}_p) \times \cU^p$. 
	By Corollary \ref{corR=T0}, $X_{\Omega, \lambda}(\overline{\rho}_p) \times \cU^p$ is irreducible at any point in the image of $\iota$. We then deduce $\iota(\cC \times \cU^p)\subset \cE_{\Omega, \lambda}^{\infty}(\overline{\rho})$ (this is similar to (\ref{iotaww0F})). It is clear that $\cV_{ww_{0,F}}\subset \cC_{ww_{0,F}}\subset \cC$ (recalling that $\cC$ is irreducible and smooth, cf.\ \cite[Thm.\ 3.3.8]{Kis08}), and that any point in $\iota(\cV_{ww_{0,F}} \times \cU^p)\subset \cE_{\Omega, \lambda}^{\infty}(\overline{\rho})$ lies in $\sS_{w}$. Recall the morphism in 
	(\ref{embpcydef}) induces 
	\begin{equation*}
		\iota: \cC_{ww_{0,F}} \times \cU^p \hooklongrightarrow X_{\Omega, w(\textbf{h})}(\overline{\rho}_p) \times \cU^p.
	\end{equation*}
	For each $y'\in \cV_{ww_{0,F}}\times \cU^p$ with $x'=\iota(y')\in \sS_w$, we have by assumption $[\cN_{w \cdot \lambda}]\neq 0$, hence by Lemma \ref{loccycglocy} $x'\in \fZ_{w \cdot \lambda, w\cdot \lambda}\subset \cE_{\Omega, w\cdot \lambda}^{\infty}(\overline{\rho}) \hookrightarrow X_{\Omega, w(\textbf{h})}(\overline{\rho}_p) \times \cU^p$. 
	As $x$ lies in the Zariski-closure of such $x'$, it follows $x=\iota(y)\in \fZ_{w\cdot \lambda, w\cdot \lambda}$. (1) is then a consequence of Lemma \ref{loccycglocy}.
	
	(2) By (1), we have $x\in \fZ_{w\cdot \lambda, w\cdot \lambda}\hookrightarrow \cE_{\Omega, w\cdot \lambda}^{\infty}(\overline{\rho})$. Let $\cD_{w}$ (resp.\ $\cD_{1}$) be the irreducible component of $\cE_{\Omega, w\cdot \lambda}^{\infty}(\overline{\rho})$ (resp.\ $\cE_{\Omega, \lambda}^{\infty}(\overline{\rho})$) containing $x$. As in the proof of (1), we have $\iota(\cV_{ww_{0,F}})\subset \cD_1$ and $\iota(\cV_{ww_{0,F}})\subset \cD_w$. Let $U_w$ be a Zariski-open neighbourhood of $x$ in $\cE_{\Omega, w\cdot \lambda}^{\infty}(\overline{\rho})$. Since $\cE_{\Omega, w\cdot \lambda}^{\infty}(\overline{\rho})$ is irreducible at $x$ (Corollary \ref{corR=T0}), shrinking $U_w$, we can and do assume that $U_w$ and the smooth locus $U_w^{\sm}$ of $U_w$ are both irreducible (note that $U_w^{\sm}$ is Zariski-open Zariski-dense in $U_w$, cf.\ \cite[Prop.\ 2.3]{BHS1}). There exists $m$ such that $\cM_{\Omega, w\cdot \lambda}^{\infty}$ is locally free on $U_w^{\sm}$ of constant rank $m$. Similarly, we let $U_1$ be an irreducible Zariski-open neighbourhood of $x$ in $\cE_{\Omega, \lambda}^{\infty}(\overline{\rho})$ such that $U_1^{\sm}$ is also irreducible. Then $\cM_{\Omega, \lambda}^{\infty}$ is locally free on $U_1^{\sm}$ of rank $m_y$. We have $(x^p, x_p)\in \iota^{-1}(U_1) \cap \iota^{-1}(U_w) \cap (\cC_{ww_{0,F}}\times \cU^p)$. Since $\cV_{ww_{0,F}} \times \cU^p$ is Zariski-dense in $\cC_{ww_{0,F}} \times \cU^p$, we deduce $\iota^{-1}(U_1) \cap \iota^{-1}(U_w) \cap (\cV_{ww_{0,F}} \times \cU^p)\neq \emptyset$, and we let $y'=(x'^p, x'_p)$ be a point in the intersection. By Corollary \ref{corR=T0} and Corollary \ref{smDefVar2}, the point $x':=\iota(y')\in U_w$ is in fact a smooth point of $U_w$. Hence by assumption $m=m_{y'}=m_{y}$. This finishes the proof.
\end{proof}

\begin{lemma}\label{induccp}
	Let $x=(y,\ul{x},1)\in \sS$ and assume $\lg(w_y)\leq \lg(w_{0,F})-2$. Assume that, for all $w \in \sW^P_{\min}$, with $ww_{0,F} > w_y$, we have 
	\begin{itemize}
		\item $[\cN_{w \cdot \lambda, y}]\neq 0$ (so $x\in \cE_{\Omega, w\cdot \lambda}^\infty(\overline{\rho})$);
		\item $\cM_{\Omega, w\cdot \lambda}^{\infty}$ is locally free of rank $m_y$ in the smooth locus of a neighbourhood of $x\in \cE_{\Omega, w\cdot \lambda}^\infty(\overline{\rho})$.
	\end{itemize}
	We assume that one of the following two conditions holds:
	\begin{enumerate}[label=(\arabic*)]
		\item there exists $w\in \sW_{\max}^P$ such that
		\begin{itemize}
		\item $w>w_y$ and $\lg(w)=\lg(w_y)+2$;
		\item $\dim \fz_{L_P}^{ww_y^{-1}}=\dim \fz_{L_P}-2$;
		\item the Bruhat interval $[w_y, w]=\{w'| w_y<w'<w\}$ is full in $\sW_{\max}^P$, i.e.\ there exist distinct $w_1$, $w_2\in \sW_{\max}^P$ such that $w_y< w_1, w_2<w$, $\lg(w_1)=\lg(w_2)=\lg(w_y)+1$ (e.g.\ see \cite[Lemma 5.2.7]{BHS3});
		\end{itemize}
		\item there exists $w\in \sW_{\max}^P$ such that
		\begin{itemize}
		\item $w>w_y$ and $\lg(w)=\lg(w_y)+2$;
                 \item the Bruhat interval $[w_y,w]$ is not full in $\sW_{\max}^P$.
                 \end{itemize}
	\end{enumerate}
	Then $[\cN_{w_yw_{0,F} \cdot \lambda, y}]\neq 0$ (which implies $x\in \cE_{\Omega, w_yw_{0,F}\cdot \lambda}^{\infty}(\overline{\rho})$), and $\cM_{\Omega, w_yw_{0,F}\cdot \lambda}^{\infty}$ is locally free of rank $m_y$ at the point $x$. 
\end{lemma}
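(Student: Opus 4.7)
The proof follows the strategy sketched in the introduction: compare, for each $v\in\sW_{\min}^P$ close to $v_y:=w_yw_{0,F}$, the two expressions for $[\cM_{v\cdot\lambda,y}]$—one via (\ref{equcyc12}) as a $\Z_{\geq 0}$-combination of the $[\cN_{v'\cdot\lambda,y}]$, and one via the induction hypothesis combined with Corollary \ref{cycGaA} as $m_y$ times a $\Z_{\geq 0}$-combination of the irreducible cycles $\fZ_{w',F}$. By (\ref{equcyc12}) applied to $v=v_y$ and Lemma \ref{lemcycnovan0}, we automatically have $[\cN_{v_y\cdot\lambda,y}]=c\fZ_{w_y,F}$ for some $c\in\Z_{\geq 0}$, so the task reduces to extracting from the comparison enough positivity constraints to force $c\geq 1$ (in fact $c=m_y$). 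Writing $v(w'):=w'w_{0,F}\in\sW_{\min}^P$ for $w'\in\sW_{\max}^P$, I note first that $\fC_{w',F}=\fZ_{w',F}$ whenever $\lg(w')\leq\lg(w_y)+1$, by Theorem \ref{thmcycl}(3) and Remark \ref{remsmo1}(1); this will simplify the bookkeeping.

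In Case (1), Corollary \ref{smDefVar2} guarantees that $\cE_{\Omega,v\cdot\lambda}^{\infty}(\overline{\rho})$ is smooth at $x$ for each $v\in\{v(w_1),v(w_2),v(w)\}$—the case $v=v(w)$ invoking the hypothesis $\dim\fz_{L_P}^{ww_y^{-1}}=\dim\fz_{L_P}-2$. Corollary \ref{cycGaA}(2) (together with the induction hypothesis and Remark \ref{remKLrel}, which gives $b_{w_j,w_y}=b_{w,w_j}=1$ for $j=1,2$) then yields three cycle identities. Matching coefficients in the $\fZ$-basis, the $v(w_j)$-identities give $[\cN_{v(w_j)\cdot\lambda,y}]=(m_y-c)\fZ_{w_y,F}+m_y\fZ_{w_j,F}$ with $c\leq m_y$, and the $v(w)$-identity expresses $[\cN_{v(w)\cdot\lambda,y}]$ in terms of $c$ and the parabolic Kazhdan–Lusztig coefficient $b_{w,w_y}$; applying non-negativity of its $\fZ_{w_y,F}$-coefficient forces $c=m_y$.

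In Case (2), only $\cE_{\Omega,v(w_1)\cdot\lambda}^{\infty}(\overline{\rho})$ is smooth at $x$. For $v=v(w)$ we can only appeal to Corollary \ref{cycGaA}(1), giving $[\cM_{v(w)\cdot\lambda,y}]=m_y\sum c'_{w,w''}\fZ_{w'',F}$ with $c'_{w,w}=1$ and the other $c'_{w,w''}\in\Z_{\geq 0}$ a priori uncontrolled. Combining with the $v(w_1)$-identity (handled as in Case (1)) and extracting the $\fZ_{w_y,F}$-coefficient of $[\cN_{v(w)\cdot\lambda,y}]$, the non-fullness of $[w_y,w]$ in $\sW_{L_P}\backslash\sW$ makes the parabolic multiplicity $b_{w,w_y}$ sufficiently small (controlled by $c'_{w,w_y}$) so that non-negativity again forces $c=m_y$. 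For the local-freeness conclusion, $\cE_{\Omega,v_y\cdot\lambda}^{\infty}(\overline{\rho})$ is irreducible at $x$ by Corollary \ref{corR=T0}, and the preceding steps give $[\cM_{v_y\cdot\lambda,y}]=[\cN_{v_y\cdot\lambda,y}]=m_y\fZ_{w_y,F}$; the Cohen–Macaulay property of $\cM_{\Omega,v_y\cdot\lambda}^{\infty}$ (Corollary \ref{PBEdim}(3)) then yields local freeness of rank $m_y$ on the smooth locus of a neighbourhood of $x$.

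The main obstacle is Case (2), where the lack of smoothness of $\cE_{\Omega,v(w)\cdot\lambda}^{\infty}(\overline{\rho})$ at $x$ forces us to work with the geometric multiplicities $c'_{w,w''}$ from Corollary \ref{cycGaA}(1), which are a priori unknown. The delicate point is controlling the interaction between these multiplicities and the parabolic Kazhdan–Lusztig coefficients $b_{w,w_y}$ under the non-fullness assumption on $[w_y,w]$—this is the combinatorial bookkeeping that must be carried out carefully to squeeze the positivity constraint $c\geq 1$ out of a cycle identity with several a priori uncontrolled non-negative coefficients.
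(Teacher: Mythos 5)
Your Case (1) is essentially the paper's own argument (the three identities coming from Corollary \ref{cycGaA}(2) at the levels $w_1w_{0,F}\cdot\lambda$, $w_2w_{0,F}\cdot\lambda$, $ww_{0,F}\cdot\lambda$, smoothness at $x$ via Corollaries \ref{corR=T0} and \ref{smDefVar2}, then positivity of the $\fZ_{w_y,F}$-coefficient), with two small imprecisions: you need $\fC_{w',F}=\fZ_{w',F}$ also for $w'=w$ itself, i.e.\ up to length $\lg(w_y)+2$ and not only $\lg(w_y)+1$ (otherwise a possible $a_{w,w_y}\fZ_{w_y,F}$ inside $\fC_{w,F}$ on the right-hand side ruins the forcing; luckily Theorem \ref{thmcycl}(3) plus Remark \ref{remsmo1}(1) give $a_{w,w''}=0$ whenever $\lg(w'')\geq \lg(w)-2$, so the tools you quote do cover $w$), and you must pin down the exact value $b_{w,w_y}=1$, which is Lemma \ref{mult00}(2) in the full case, not just record that "some KL coefficient" enters.

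The genuine gap is Case (2), which you explicitly defer as "combinatorial bookkeeping". First, your formula $[\cM_{ww_{0,F}\cdot\lambda,y}]=m_y\sum_{w''}c'_{w,w''}\fZ_{w'',F}$ is not available: it would require $\cM^{\infty}_{\Omega,ww_{0,F}\cdot\lambda}$ to be locally free of rank $m_y$ over the whole completed local ring at $x$, which is exactly what is unknown without smoothness; what one actually gets is only that the $\widehat{\co}_{\fX_{\infty},y}$-action on $\cM_{ww_{0,F}\cdot\lambda,y}$ factors through $\widehat{\co}_{\cE^{\infty}_{\Omega,ww_{0,F}\cdot\lambda}(\overline{\rho})_0,x}$, so $[\cM_{ww_{0,F}\cdot\lambda,y}]=a_0\fZ_{w,F}+a_1\fZ_{w_1,F}$ with $a_0,a_1\in\Z_{\geq 0}$ uncontrolled. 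Second, the mechanism that makes the argument close is a pair of exact vanishings that you never identify: non-fullness of $[w_y,w]$ gives $b_{w,w_y}=0$ by Lemma \ref{mult00}(2), so the level-$w$ instance of (\ref{equcyc12}) reads $[\cM_{ww_{0,F}\cdot\lambda,y}]=[\cN_{ww_{0,F}\cdot\lambda,y}]+[\cN_{w_1w_{0,F}\cdot\lambda,y}]$ with no $[\cN_{w_yw_{0,F}\cdot\lambda,y}]$ term on the left; and since $c_{w,w_y}=b_{w,w_y}=0$ (Lemma \ref{lemcyccomp}(1),(2) combined with the vanishing of the relevant $a_{w,w''}$), Lemma \ref{lemcyccomp}(2) forces $c'_{w,w_y}=0$, so no $\fZ_{w_y,F}$ appears on the right either. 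Plugging in $[\cN_{w_1w_{0,F}\cdot\lambda,y}]=m_y\fZ_{w_1,F}+(m_y-m_y')\fZ_{w_y,F}$ and comparing $\fZ_{w_y,F}$-coefficients then yields $m_y=m_y'$ even though $a_0,a_1$ stay uncontrolled. Your sentence that non-fullness makes "$b_{w,w_y}$ sufficiently small (controlled by $c'_{w,w_y}$)" inverts the logic (it is $c'_{w,w_y}$ that vanishes because $b_{w,w_y}$ does), and without these two exact vanishings the positivity constraint cannot be extracted: with a possible $\fZ_{w_y,F}$-term on the right and a $b_{w,w_y}[\cN_{w_yw_{0,F}\cdot\lambda,y}]$ term on the left one is in exactly the unresolvable situation of Remark \ref{reobst}. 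Finally, for the last assertion note that once $[\cN_{w_yw_{0,F}\cdot\lambda,y}]\neq 0$, the space $\cE^{\infty}_{\Omega,w_yw_{0,F}\cdot\lambda}(\overline{\rho})$ is smooth at $x$ (Corollaries \ref{corR=T0} and \ref{smDefVar2}), so Corollary \ref{PBEdim}(3) gives local freeness of rank $m_y$ at $x$ itself, which is the stated (and, for the induction, needed) conclusion rather than freeness only on the smooth locus of a neighbourhood.
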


Before proving Lemma \ref{induccp}, we first give an easy lemma on the multiplicities $b_{w,w'}$ in Theorem \ref{thmcycl} (2).

\begin{lemma}\label{mult00}
Let $w,w'\in \sW_{\max}^P$ such that $w>w'$.
	
(1) If $\lg(w')=\lg(w)-1$, then $b_{w,w'}=1$.
	
(2) If $\lg(w')=\lg(w)-2$, then
	$b_{w,w'}=\begin{cases}1 & \text{$[w',w]$ is full in $\sW^P_{\max}$}\\
		0 & \text{otherwise}.
	\end{cases}$
\end{lemma}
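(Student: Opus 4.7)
By Remark \ref{remKLrel} (more precisely \cite[Thm.\ 3.28]{CaCo}), $b_{w,w'}$ equals the multiplicity $[M_P(ww_0 \cdot \lambda) : L(w'w_0 \cdot \lambda)]$ in the parabolic category $\co^{\fp}$ for any integral dominant $\lambda$ (and, if convenient, for $\lambda$ regular, to which one passes via translation). Setting $y := ww_0$ and $x := w'w_0$, both lie in $\sW^P_{\min}$; the map $v \mapsto vw_0$ reverses Bruhat order, so the hypothesis $w > w'$ becomes $y < x$ and $\lg(x) - \lg(y) = \lg(w) - \lg(w') \in \{1,2\}$. By the parabolic Kazhdan-Lusztig conjecture (\cite{Doug}, \cite[Thm.\ 3.28]{CaCo}), this multiplicity is $n^q_{y,x}(1)$, the value at $q = 1$ of Deodhar's parabolic Kazhdan-Lusztig polynomial of type $q$ (\cite{Deo87}).

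Part (1) is already recorded in Remark \ref{remKLrel}, citing \cite[Thm.\ 9.4 (b)]{Hum08} and \cite[Ex.\ 8.3 (a)]{Hum08}. A direct argument: choose a simple reflection $s$ with $ys = x$ (possible since $y \lessdot x$). The canonical map $M(x \cdot \lambda) \hookrightarrow M(y \cdot \lambda)$ composes with the projection $M(y \cdot \lambda) \twoheadrightarrow M_P(y \cdot \lambda)$ to a non-zero morphism, because $x \notin \sW_{L_P} y$ (both lie in $\sW^P_{\min}$ and $x \ne y$). This produces a submodule with simple head $L(x \cdot \lambda)$, so $b_{w,w'} \geq 1$; the upper bound $b_{w,w'} \leq 1$ follows from comparison with the ordinary Kazhdan-Lusztig polynomial $P_{y,x}(1) = 1$ for length-$1$ differences, together with the BGG-type surjection from a direct sum of ordinary Vermas onto $M_P(y \cdot \lambda)$.

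For part (2), the key combinatorial input is the dichotomy: a length-$2$ Bruhat interval in $\sW$ contains either $3$ or $4$ elements. Under the order-reversing bijection $v \mapsto vw_0$, the hypothesis ``$[w', w]$ is full in $\sW^P_{\max}$'' translates to ``$[y, x] \cap \sW^P_{\min}$ contains $4$ elements''. In the full case, I would check that the entire interval $[y, x] \subset \sW$ lies in $\sW^P_{\min}$; then the parabolic Kazhdan-Lusztig polynomial coincides with the ordinary one, and $n^q_{y,x}(1) = P_{y,x}(1) = 1$ (using that in type $A$ all length-$2$ Kazhdan-Lusztig polynomials are trivial). In the non-full case, exactly one intermediate element $z' \in [y, x] \subset \sW$ lies in $\sW_{L_P}\, \sW^P_{\min} \setminus \sW^P_{\min}$, and Deodhar's recursion for $n^q_{y,x}$ forces an exact cancellation, giving $n^q_{y,x}(q) \equiv 0$, hence $b_{w,w'} = 0$.

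The main obstacle is the vanishing in the non-full case: one must rule out any composition factor $L(x \cdot \lambda)$ in $M_P(y \cdot \lambda)$, even though $[M(y \cdot \lambda) : L(x \cdot \lambda)] = P_{y,x}(1) \geq 1$ on the ordinary Verma level. My plan is either (i) to invoke Deodhar's recursion \cite[\S~3]{Deo87} directly on the length-$2$ interval, or (ii) to use the right-exact sequence
\[\bigoplus_{\alpha \in \Delta \cap L_P} M(s_\alpha y \cdot \lambda) \lra M(y \cdot \lambda) \lra M_P(y \cdot \lambda) \lra 0,\]
(where $\Delta \cap L_P$ denotes the simple roots of $L_P$) and show that in the non-full case the ``bad'' intermediate element $z'$ can be written as $s_\alpha z$ for the unique $z \in (y, x) \cap \sW^P_{\min}$ and some $\alpha \in \Delta \cap L_P$. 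The constituent $L(x \cdot \lambda) \subset M(y \cdot \lambda)$ then factors through a Verma constituent of $M(s_\alpha y \cdot \lambda)$ on the left-hand side, and so is killed in passing to $M_P(y \cdot \lambda)$. I expect option (ii) to be the cleaner bookkeeping, although it requires careful tracking of Jordan-H\"older multiplicities through the non-exact sequence above.
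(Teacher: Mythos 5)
Your option (ii) is the paper's route (the right-exact sequence $\oplus_{\alpha} M(s_{\alpha} ww_0\cdot 0)\to M(ww_0\cdot 0)\to M_P(ww_0\cdot 0)\to 0$ over the simple reflections $s_\alpha$ of $\sW_{L_P}$), but as written it has a concrete error and leaves the crux undone. The combinatorial bridge you state, ``$z'=s_\alpha z$ for the unique $z\in(y,x)\cap\sW^P_{\min}$'', is impossible: $z$ and $z'$ both have length $\lg(y)+1$, while left multiplication by a simple reflection changes length by exactly $1$. The correct statement -- and it is precisely the key point of the paper's proof -- is that the bad intermediate element is $s_\alpha$ applied to an \emph{endpoint}: in max-representative terms, if $[w',w]$ is not full then the unique intermediate element $w''\notin\sW^P_{\max}$ satisfies $w''=s_\alpha w$ for a (unique) simple reflection $s_\alpha\in\sW_{L_P}$ (equivalently $z'=s_\alpha y$ in your notation). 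This is proved by the chain $w'=w'^{\max}<w''<w''^{\max}\le w^{\max}=w$, which together with $\lg(w)=\lg(w')+2$ forces $w''^{\max}=w$, hence $w''=s_\alpha w$; conversely if some $s_\alpha w$ lies in $(w',w)$ the interval is not full since $s_\alpha w\notin\sW^P_{\max}$. Only with this corrected claim does $L(w'w_0\cdot 0)$ actually occur in one of the left-hand Vermas $M(s_\alpha ww_0\cdot 0)$ (one needs $w'\le s_\alpha w$, which $z'=s_\alpha z$ would not give), and in the full case the same dichotomy is what shows it occurs in none of them.

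The second gap is the multiplicity bookkeeping you explicitly defer: the sequence is only right-exact, so one cannot naively subtract Jordan--H\"older multiplicities. The paper closes this with two inputs you do not supply: (a) $[M(ww_0\cdot 0):L(w'w_0\cdot 0)]=1$ since the length difference is at most $2$ (\cite[Ex.\ 8.3(a)]{Hum08}), and (b) the map $M(s_\alpha ww_0\cdot 0)\to M(ww_0\cdot 0)$ is the canonical embedding, so in the non-full case the unique copy of $L(w'w_0\cdot 0)$ lies in the image and dies in $M_P(ww_0\cdot 0)$, giving $b_{w,w'}=0$, while in the full case no copy is killed and $b_{w,w'}=1$. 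Your alternative route (i) via Deodhar's parabolic Kazhdan--Lusztig polynomials is only sketched: the assertion that the parabolic polynomial coincides with the ordinary one when the whole interval lies in $\sW^P_{\min}$ is not proved or cited (and Remark \ref{remKLrel} notes the lack of a precise reference for this description, which is why the paper avoids it), and in any case the vanishing in the non-full case would again reduce to the same dichotomy above. Finally, a minor slip in your part (1): a Bruhat cover $y<x$ with $\lg(x)=\lg(y)+1$ yields $x=yt$ for a reflection $t$, not in general a \emph{simple} reflection; part (1) is anyway already recorded in Remark \ref{remKLrel} and follows from the same exact-sequence counting.
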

\begin{proof}
	We only prove (2), (1) following from similar (and easier) arguments. We only need to prove a similar statement for a single $\GL_n(E)$ (rather than $G_p$). We let hence $P\subset \GL_n$ be a parabolic subgroup, $w, w'\in \sW_{\max}^P\subset \sW$, and $b_{w,w'}$ be the multiplicity of $L(w'w_0 \cdot 0)$ in $M_P(ww_0\cdot 0)$. Recall first that $L(w'w_0 \cdot 0)$ has multiplicity one in the Verma module $M(ww_0 \cdot 0)$ (see for instance \cite[Ex.\ 8.3(a)]{Hum08}). By \cite[Thm.\ 9.4(b)]{Hum08}, we have an exact sequence
	\begin{equation}\label{vermReso}
		\oplus_{\alpha\in S_P} M(s_{\alpha} ww_0 \cdot 0) \ra M(ww_0 \cdot 0) \ra M_P(ww_0 \cdot 0) \ra 0
	\end{equation} 
	where $S_P$ denotes the set of simple roots of $L_P$ and $s_{\alpha}$ denotes the corresponding simple reflection. 
	Note that for each $\alpha\in S_P$ we have $s_{\alpha}w<w$ and $\lg(s_{\alpha}w)=\lg(w)-1$ (e.g.\ see the proof of Proposition \ref{paraVerGeo}). 
	
	We claim that there exists a unique $\alpha \in S_P$ such that $w'w_0>s_{\alpha} ww_0$ if and only if $[w',w]$ is not full in $\sW^P_{\max}$. Indeed, we know there exist two, and only two, elements $w_i\in \sW$ such that $w'<w_i<w$ (see \cite[Lemma 5.2.7]{BHS3}) and by \cite[Thm.\ 2.5.5]{BjBr}, at least one of $w_i$, say $w_1$, lies in $\sW^P_{\max}$ . If $w'w_0>s_{\alpha} ww_0$ for some $\alpha\in S_P$, we then deduce $w'<s_{\alpha}w<w$, hence $s_{\alpha}w\in [w',w]$. However, $s_{\alpha}w\notin \sW^P_{\max}$ so $[w',w]$ is not full in $\sW^P_{\max}$. If $[w',w]$ is not full in $\sW^P_{\max}$, there then exists a unique element $w''\notin \sW^P_{\max}$ such that $w'<w''<w$. This implies \[w'=w'^{\max}<w''<w''^{\max} \leq w^{\max}=w.\]
	 As $\lg(w)=\lg(w')+2$, this implies $w''^{\max}=w^{\max}=w$. Using $\lg(w)=\lg(w'')+1$, it follows that there exists $s_{\alpha}\in S_P$ such that $w''=s_{\alpha} w$. 
	
	So if $[w',w]$ is full, $L(w'w_0 \cdot 0)$ does not appear in $M(s_{\alpha}ww_0 \cdot 0)$ for all $s_{\alpha}\in S_P$. By (\ref{vermReso}) and the discussion above it, we obtain $b_{w,w'}=1$. If $[w',w]$ is not full, let $\alpha\in S_P$ such that $w'<s_{\alpha} w< w$. Then $L(w'w_0\cdot 0)$ has multiplicity one in $M(s_{\alpha} ww_0 \cdot 0)$ (and zero in $M(s_{\alpha'} ww_0 \cdot 0)$ for $\alpha'\in S_P$, $\alpha'\neq \alpha$). As $M(s_{\alpha} ww_0 \cdot 0) \hookrightarrow M(ww_0 \cdot 0)$, we deduce by counting the multiplicities of $L(w'w_0 \cdot 0)$ in the first two terms of (\ref{vermReso}) that $b_{w,w'}=0$. 
\end{proof}
\begin{proof}[Proof of Lemma \ref{induccp}]
	Assume condition (1) holds. The proof is the same as Step 10 of the proof of \cite[Thm.\ 5.3.3]{BHS3}. As $w, w_1, w_2>w_y$, by assumption, $x \in \cE_{\Omega, w_iw_{0,F} \cdot \lambda}^{\infty}(\overline{\rho})$ for $i\in \{1,2, \emptyset\}$. Moreover, by Corollary \ref{corR=T0} and Corollary \ref{smDefVar2}, for $i\in \{1,2, \emptyset\}$, $\cE_{\Omega, w_iw_{0,F} \cdot \lambda}^{\infty}(\overline{\rho})$ is smooth at $x$. By Corollary \ref{PBEdim} (3), $\cM_{\Omega, w_iw_{0,F} \cdot \lambda}^{\infty}$ is locally free of rank $m_y$ at $x$. Similarly as for (\ref{equcycl000}), using Corollary \ref{cycGaA} (2) (applied to the completion of $\cE_{\Omega, w_iw_{0,F} \cdot \lambda}^{\infty}(\overline{\rho})_0$ at $x$ and to the completion of $\cE_{\Omega, ww_{0,F} \cdot \lambda}^{\infty}(\overline{\rho})_0$ at $x$), (\ref{equcyc12}) and Lemma \ref{mult00}, we obtain equations of cycles in $Z^{[F^+:\Q]\frac{n(n+1)}{2}}(\widehat{\co}_{\fX_{\infty}, y})$ for $i=1,2$:
	\begin{equation}\label{equcyc001}
		[\cN_{w_y w_{0,F} \cdot \lambda, y}]+[\cN_{w_iw_{0,F} \cdot \lambda, y}]=m_y\fC_{w_y, y}+m_y \fC_{w_i,y},
	\end{equation}
	\begin{equation}\label{equcyc002}	[\cN_{w_yw_{0,F} \cdot \lambda, y}]+[\cN_{w_1w_{0,F} \cdot \lambda, y}]+[\cN_{w_2w_{0,F} \cdot \lambda, y}]+[\cN_{ww_{0,F} \cdot \lambda, y}]= m_y (\fC_{w_y,y}+\fC_{w_1,y}+\fC_{w_2,y}+\fC_{w,y}).
	\end{equation}
	If $[\cN_{w_y w_{0,F} \cdot \lambda, y}] \neq 0$, then $x$ is a smooth point of $\cE_{\Omega, w_yw_{0,F}\cdot \lambda}^{\infty}(\overline{\rho})$ (by Corollary \ref{corR=T0} and Corollary \ref{smDefVar2}) and $\cM_{\Omega, w_yw_{0,F} \cdot \lambda}^{\infty}$ is locally free at $x$ (by Corollary \ref{PBEdim} (3)). By Corollary \ref{cycGaA} (2) and (\ref{equcyc12}), it follows $[\cN_{w_yw_{0,F} \cdot \lambda, y}]=[\cM_{w_yw_{0,F} \cdot \lambda, y}]\in \Z \fC_{w_y,y}$. Note that $[\cN_{w_yw_{0,F}}]\in \Z \fC_{w_y,y}$ obviously holds if $[\cN_{w_y w_{0,F} \cdot \lambda, y}] = 0$. In summary, there exists $m_y'\in \Z_{\geq 0}$ such that $[\cN_{w_y w_{0,F} \cdot \lambda, y}]=m_y' \fC_{w_y,y}$. We then deduce from (\ref{equcyc001}): $[\cN_{w_iw_{0,F} \cdot \lambda, y}]=m_y \fC_{w_i,y}+(m_y-m_y') \fC_{w_y, y}$. By Theorem \ref{thmcycl} (3) and Remark \ref{remsmo1} (1), for $w', w''\in \sW_{\max}^P$, $a_{w',w''}=0$ if $w'\neq w''$ and $\lg(w'')\geq \lg(w')-2$. Combined with (\ref{cycGaldef3}) and using Lemma \ref{irrcyc2}, it follows that, for $w'\in \{w_y, w_1, w_2, w\}$, we have $\fC_{w',y}=\fZ_{w',y}\neq 0$, and thus $\fC_{w',y}$ is irreducible. The equations (\ref{equcyc001}) and (\ref{equcyc002}) then imply:
	\begin{eqnarray*}	
		[\cN_{w_iw_{0,F} \cdot \lambda, y}]&=&m_y \fZ_{w_i, y}+(m_y'-m_y) \fZ_{w_y, y},\\
	\	[\cN_{ww_{0,F} \cdot \lambda, y}]&=&m_y \fZ_{w,y}+(m_y-m_y') \fZ_{w_y,y}.
\end{eqnarray*}
	As $\fZ_{w_y, y}$ can only have non-negative coefficients in $[\cN_{ww_{0,F} \cdot \lambda, y}]$ and $[\cN_{w_iw_{0,F} \cdot \lambda, y}]$, we must have $m_y'=m_y$. The proposition in this case follows.
	
	Assume condition (2) holds. Let $w_1$ be the unique element in $\sW^P_{\max}$ such that $w_y<w_1<w$. Similarly as above, we have that $\fC_{w',y}=\fZ_{w',y}\neq 0$ (and $\fC_{w',y}$ is irreducible) for $w'\in \{w_y, w_1, w\}$, that $\cE_{\Omega, w_1w_{0,F}\cdot \lambda}^{\infty}(\overline{\rho})$ is smooth at the point $x$, and that $\cM^{\infty}_{\Omega, w_1w_{0,F}\cdot \lambda}$ is locally free of rank $m_y$ at $x$. We have as in (\ref{equcyc001}):
	\begin{equation*}
		[\cN_{w_y w_{0,F} \cdot \lambda, y}]+[\cN_{w_1w_{0,F} \cdot \lambda, y}]=m_y(\fC_{w_y, y}+\fC_{w_1,y})=m_y( \fZ_{w_y,y}+\fZ_{w_1,y}).
	\end{equation*}
	By the same argument as in the last paragraph, there exists $m_y'\in \Z_{\geq 0}$ such that $[\cN_{w_y w_{0,F} \cdot \lambda, y}]=m_y'\fZ_{w_y,y}$ and $[\cN_{w_1w_{0,F} \cdot \lambda, y}]=m_y\fZ_{w_1,y}+(m_y-m_y')\fZ_{w_y,y}$ (so $m_y\geq m_y'$). By Corollary \ref{cycGaA} (1) (applied with ``$\lambda=ww_{0,F} \cdot \lambda$'' and $\fd_x=0$), we have 
	\begin{equation}\label{cyca}
	[\Spec \widehat{\co}_{\cE_{\Omega, ww_{0,F}\cdot\lambda}^{\infty}(\overline{\rho})_0,x}]=\sum_{\substack{w'\in \sW_{L_P}\backslash \sW_F \\ w_y \leq w'^{\max} \leq w}} \!\!\!\! c_{w, w'}' \fZ_{w',F} \in Z^{[F^+:\Q_p]\frac{n(n+1)}{2}} (\Spec \widehat{\co}_{\fX_{\infty},y}).
	\end{equation}
By the discussion on $a_{w',w''}$ in the last paragraph, we can deduce from Corollary \ref{lemcyccomp} (1), (2) that $c_{w,w'}=b_{w,w'}$ for $w'$ as in (the sum of) (\ref{cyca}). Together with Lemma \ref{mult00} (2), we see that $c_{w,w_y}=0$, hence that $c_{w,w_y}'=0$ by Lemma \ref{lemcyccomp} (2). We can thus refine (\ref{cyca}) as
	\begin{equation}\label{cycb}
[\Spec \widehat{\co}_{\cE_{\Omega, ww_{0,F}\cdot\lambda}^{\infty}(\overline{\rho})_0,x}]=\fZ_{w, F}+c_{w,w_1}'\fZ_{w_1,F} \in Z^{[F^+:\Q_p]\frac{n(n+1)}{2}} (\Spec \widehat{\co}_{\fX_{\infty},y}).
\end{equation}
	 As the $\widehat{\co}_{\fX_{\infty},y}$-action on $\cM_{ww_{0,F}\cdot \lambda,y}$ factors through $\widehat{\co}_{\cE_{\Omega, ww_{0,F}\cdot\lambda}^{\infty}(\overline{\rho})_0,x}$, using (\ref{cycb}) it follows that
	\begin{equation*}
	[\cM_{ww_{0,F}\cdot \lambda,y}]=a_0 \fZ_{w,F}+a_1 \fZ_{w_1,F}
	\end{equation*}
	for some $a_0$, $a_1\in \Z_{\geq 0}$. Using (\ref{equcyc12}) and Lemma \ref{lemcyccomp} (2), we get 
 \[ [\cN_{w_1w_{0,F} \cdot \lambda,y}]+[\cN_{ww_{0,F} \cdot \lambda, y}]=[\cM_{ww_{0,F} \cdot \lambda, y}]=a_0 \fZ_{w,F}+a_1 \fZ_{w_1,F},\]
 hence
\begin{equation}\label{cycc}
[\cN_{ww_{0,F} \cdot \lambda, y}]+m_y \fZ_{w_1,y}+(m_y-m_y') \fZ_{w_y,y}=a_0 \fZ_{w,F}+a_1 \fZ_{w_1,F}.
\end{equation}
 As $\fZ_{w,F}$, $\fZ_{w_1,F}$ are distinct from $\fZ_{w_y,y}$, and as $\fZ_{w_y,y}$ can only have non-negative coefficients in $[\cN_{ww_{0,F} \cdot \lambda, y}]$, (\ref{cycc}) implies (using $m_y\geq m_y'$) $m_y-m_y'=0$, hence $[\cN_{w_y w_{0,F} \cdot \lambda, y}]=m_y\fZ_{w_y,y}$. This concludes the proof. 
\end{proof}

\begin{remark}\label{reobst}
If none of the conditions (1) or (2) in Lemma \ref{induccp} hold, then for any $w\in \sW_{\max}^P$ satisfying $w>w_y$ and $\lg(w)=\lg(w_y)+2$, we have that $[w_y,w]$ is full in $\sW_{\max}^P$ and $\dim \fz_{L_P}^{ww_y^{-1}}>\dim \fz_{L_P}-2$. We use the notation in the proof of Lemma \ref{induccp} assuming condition (1). The main difference now is that we don't know if $\cE_{\Omega, w w_{0,F} \cdot \lambda}^{\infty}(\overline{\rho})$ is smooth at $x$ (see Remark \ref{remnonsm}). Consequently (by Corollary \ref{cycGaA} (1) and Lemma \ref{mult00}, and using similar arguments as in the proof of Lemma \ref{induccp} in the case of condition (2)), the equation (\ref{equcyc002}) has to be replaced by an equation of the form (the other equations being unchanged):
\begin{multline}\label{cycrpl}
[\cN_{w_yw_{0,F} \cdot \lambda, y}]+[\cN_{w_1w_{0,F} \cdot \lambda, y}]+[\cN_{w_2w_{0,F} \cdot \lambda, y}]+[\cN_{ww_{0,F} \cdot \lambda, y}]\\
=a_0\fZ_{w,y}+a_1\fZ_{w_1,y}+a_2\fZ_{w_2,y}+ a_3\fZ_{w_y,y}
\end{multline}
with $a_i\in \Z_{\geq 0}$. Unfortunately, we do not have more control on these coefficients $a_i$. The equations (\ref{equcyc001}) and (\ref{cycrpl}) seem not enough to deduce $[\cN_{w_yw_{0,F} \cdot \lambda, y}]=m_y \fZ_{w_y,y}$ or even $[\cN_{w_yw_{0,F} \cdot \lambda, y}]\neq 0$. Note that, though we also don't have much control on the coefficients $a_0$, $a_1$ in the proof of Lemma \ref{induccp} when condition (2) holds, the argument here can work as there are fewer terms.
\end{remark}

\begin{proposition}\label{corocomp0}
	Let $x=(y,\ul{x},1)\in \sS$ and $w\in \sW_{\min}^P$ such that $ww_{0,F}\geq w_y$. Assume that, for all $w'\in \sW_{\min}^P$ such that $w'w_{0,F}\geq ww_{0,F}$, one of the following properties holds
	\begin{itemize}
		 \item[(1)] $\lg(w'w_{0,F})\geq \lg(w_{0,F})-1$;
		 \item[(2)] one of the conditions (1), (2) in Lemma \ref{induccp} holds with $w_y$ replaced by $w'w_{0,F}$.
\end{itemize}
Then $[\cN_{w\cdot \lambda, y}]\neq 0$ (which implies $x\in \cE_{\Omega, w\cdot \lambda}^{\infty}(\overline{\rho})$), and $\cM_{\Omega, w\cdot \lambda}^{\infty}$ is locally free of rank $m_y$ in the smooth locus of a sufficiently small neighbourhood of the point $x$. 
\end{proposition}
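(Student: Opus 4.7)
The plan is to prove the result by decreasing induction on $\lg(ww_{0,F})$, ranging from $\lg(w_{0,F})$ (i.e.\ $w=1$) down to $\lg(w_y)$. In fact one proves the slightly stronger statement that the conclusion holds for every $x'\in \sS$ with $w_{y'}\leq ww_{0,F}$, not only for the given $x$; this strengthening is forced upon us since the reduction step via Lemma \ref{indstLem1} requires the statement for all points of $\sS_w$.

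The base case is $w=1$: for any $x'\in \sS\subset \cE_{\Omega, \lambda}^{\infty}(\overline{\rho})$, Proposition \ref{nonVancyc} gives $[\cN_{\lambda, y'}]\neq 0$, while the local freeness of rank $m_{y'}$ of $\cM_{\Omega, \lambda}^{\infty}$ near $x'$ holds by the very definition of $m_{y'}$ recalled just before Lemma \ref{lgleq1}. Note that the hypothesis of the proposition is automatic for $w=1$, since property (1) trivially holds for $w'=1$.

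For the inductive step, fix $w\in \sW_{\min}^P$ with $\lg(ww_{0,F})<\lg(w_{0,F})$ and assume the claim has been proved for every $w''\in \sW_{\min}^P$ with $\lg(w''w_{0,F})>\lg(ww_{0,F})$ and every $x''\in \sS$ with $w_{y''}\leq w''w_{0,F}$. By Lemma \ref{indstLem1}, it suffices to prove $[\cN_{w\cdot \lambda, y'}]\neq 0$ and the local freeness of rank $m_{y'}$ of $\cM_{\Omega, w\cdot \lambda}^{\infty}$ near $x'$ for every $x'\in \sS_w$, as this will then propagate to all $x\in \sS$ with $w_y\leq ww_{0,F}$, including our original $x$. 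Fix such an $x'\in \sS_w$, so $w_{y'}=ww_{0,F}$. Applying the hypothesis of the proposition at $w'=w$ itself gives two cases. If property (1) holds, then $\lg(w_{y'})=\lg(ww_{0,F})\geq \lg(w_{0,F})-1$ and Lemma \ref{lgleq1} applied at $x'$ yields both the desired non-vanishing and freeness. If instead property (2) holds (so $\lg(ww_{0,F})\leq \lg(w_{0,F})-2$ and one of the conditions of Lemma \ref{induccp} is satisfied with $w_y$ replaced by $ww_{0,F}=w_{y'}$), then the inductive hypothesis applied at $x'$ supplies exactly the input required by Lemma \ref{induccp}: for every $w''\in \sW_{\min}^P$ with $w''w_{0,F}>w_{y'}$ we already have $[\cN_{w''\cdot \lambda, y'}]\neq 0$ together with local freeness of rank $m_{y'}$ at $x'$. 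Invoking Lemma \ref{induccp} at $x'$ then delivers $[\cN_{w_{y'}w_{0,F}\cdot \lambda, y'}]=[\cN_{w\cdot \lambda, y'}]\neq 0$ and the freeness of $\cM_{\Omega, w\cdot \lambda}^{\infty}$ of rank $m_{y'}$ at $x'$, completing the inductive step.

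Since all the genuinely difficult content is encapsulated in Lemmas \ref{lgleq1}, \ref{induccp} and \ref{indstLem1}, the proposition is essentially a matter of organizing these three inputs via a decreasing induction. The only delicate point is the bookkeeping: one must verify that the hypothesis of the proposition, being expressed purely in terms of $w$ and independent of $x$, is preserved when passing from the original $x$ to the auxiliary points $x'\in \sS_w$ (where $w_{y'}=ww_{0,F}$), and that the inductive hypothesis is strong enough -- i.e.\ holds for all points at higher level -- to feed Lemma \ref{induccp} at each such $x'$ in the non-trivial case (2).
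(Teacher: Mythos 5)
Your proof is correct and follows essentially the same route as the paper: reduce via Lemma \ref{indstLem1} to points of $\sS_w$, settle the case $\lg(ww_{0,F})\geq \lg(w_{0,F})-1$ by Lemma \ref{lgleq1}, and otherwise feed Lemma \ref{induccp} with the statement at all higher levels, the paper phrasing this as an ``obvious induction'' where you make the decreasing induction (and the quantification over all auxiliary points $x'\in\sS_{w'}$) explicit. The only cosmetic difference is your base case $w=1$, handled via Proposition \ref{nonVancyc} and the definition of $m_y$ rather than through Lemma \ref{lgleq1}, which amounts to the same thing.
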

\begin{proof}
By Lemma \ref{indstLem1}, we are reduced to the case $x\in \sS_w$ (i.e.\ $w_y=ww_{0,F}$). The case where $\lg(w w_{0,F})\geq \lg(w_{0,F})-1$ already follows from Lemma \ref{lgleq1}. Assume $\lg(w w_{0,F})\leq \lg(w_{0,F})-2$, by Lemma \ref{induccp} and the assumption in the proposition, we only need to prove
\begin{eqnarray}
&&\text{$[\cN_{w'\cdot \lambda, y}]\neq 0$ and $\cM_{\Omega, w'\cdot \lambda}^{\infty}$ is locally free of rank $m_y$ in the}\nonumber \\ &&\text{smooth locus of a sufficiently small neighbourhood of the point $x$}\label{exw'}
\end{eqnarray}
for all $w'\in \sW_{\min}^P$ with $w'w_{0,F}>ww_{0,F}$. To prove (\ref{exw'}), again by Lemma \ref{indstLem1}, we only need to show (\ref{exw'}) for $x\in \sS_{w'}$. By an obvious induction, we are finally reduced to the case $\lg(ww_{0,F})\geq \lg(w_{0,F})-1$, which follows from Lemma \ref{lgleq1}.
\end{proof}
	
\begin{corollary}\label{thmComCon}
Let $x=(y,\ul{x},1)\in \sS$. Assume that, for any $v\in S_p$, any two factors $\GL_{n_{\widetilde{v},i}}$ in $L_{P_{\widetilde{v}}}$ with $n_{\widetilde{v},i}>1$ (if they exist) are not adjacent in the product $L_{P_{\widetilde{v}}}\cong \prod_{i=1}^{r_{\widetilde{v}}} \GL_{n_{\widetilde{v},i}}$. Then $[\cN_{w \cdot \lambda, y}]\neq 0$ for $w \in \sW^P_{\min}$ if and only if $ww_{0,F} \geq w_y$.
\end{corollary}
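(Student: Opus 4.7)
The ``only if'' direction is immediate from Lemma \ref{lemcycnovan0}. For the ``if'' direction, I fix $w \in \sW^P_{\min}$ with $ww_{0,F} \geq w_y$ and apply Proposition \ref{corocomp0}. I must therefore verify: for every $w' \in \sW^P_{\min}$ with $w'w_{0,F} \geq ww_{0,F}$ and $\lg(w'w_{0,F}) \leq \lg(w_{0,F})-2$, one of conditions (1) or (2) of Lemma \ref{induccp} holds with $w_y$ replaced by $w'w_{0,F}$. Setting $\tilde{w} := w'w_{0,F} \in \sW^P_{\max}$, this reduces to a purely combinatorial statement: there must exist $w'' \in \sW^P_{\max}$ with $w'' > \tilde{w}$, $\lg(w'') = \lg(\tilde{w})+2$, such that either the Bruhat interval $[\tilde{w}, w'']$ is not full in $\sW^P_{\max}$, or it is full and $\dim \fz_{L_P}^{w''\tilde{w}^{-1}} = \dim \fz_{L_P} - 2$.

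Since the Weyl group $\sW_F$, its Bruhat order, its length function, the parabolic cosets $\sW^P_{\max}$, and the action on $\fz_{L_P} = \prod_{v,\tau} \fz_{L_{P_{\widetilde{v}}}}$ all decompose as products over $v \in S_p$ and $\tau \in \Sigma_{\widetilde{v}}$, I can handle the statement place-by-place. Since $\lg(\tilde{w}) \leq \lg(w_{0,F})-2$, there is at least one pair $(v,\tau)$ with $\tilde{w}_{\widetilde{v},\tau} \neq w_0$. If there are two such pairs $(v_1,\tau_1) \neq (v_2,\tau_2)$, I take $w''$ to agree with $\tilde{w}$ everywhere except at these coordinates, where I choose any length-one cover in each $\sW^{P_{\widetilde{v}_i}}_{\max}$. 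Then $[\tilde{w}, w'']$ is full with exactly two intermediate elements obtained by incrementing only one of the two coordinates, and $w''\tilde{w}^{-1}$ acts by nontrivial reflections in two independent summands of $\fz_{L_P}$, each cutting the fixed subspace by one dimension. This verifies condition (1) of Lemma \ref{induccp} in this case.

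The essential case is when $\tilde{w}$ differs from $w_{0,F}$ in exactly one coordinate $(v_0, \tau_0)$, at which $\lg(\tilde{w}_{\widetilde{v}_0,\tau_0}) \leq \lg(w_0)-2$. The problem then collapses to a single-place statement in $\sW^{P_{\widetilde{v}_0}}_{\max} \subset S_n$, where the Levi $L_{P_{\widetilde{v}_0}}$ satisfies the non-adjacency hypothesis of the corollary. Precisely, I must show: for every $\tilde{w} \in \sW^{P_{\widetilde{v}_0}}_{\max}$ with $\lg(\tilde{w}) \leq \lg(w_0)-2$, there exists $w'' \in \sW^{P_{\widetilde{v}_0}}_{\max}$ of length $\lg(\tilde{w})+2$ above $\tilde{w}$ satisfying condition (a) $[\tilde{w}, w'']$ is not full in $\sW^{P_{\widetilde{v}_0}}_{\max}$, or (b) $[\tilde{w}, w'']$ is full and $\dim \fz_{L_{P_{\widetilde{v}_0}}}^{w''\tilde{w}^{-1}} = \dim \fz_{L_{P_{\widetilde{v}_0}}}-2$. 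This is the content of Proposition \ref{bruhInt} (referred to in the introduction).

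The hard part of the argument is precisely this single-place combinatorial lemma, which is the raison d'\^etre for hypothesis $(*)$: without non-adjacency, the bad configuration of Remark \ref{reobst} (a full length-two interval with $\dim \fz_{L_P}^{w''\tilde{w}^{-1}} > \dim \fz_{L_P}-2$) can occur for every choice of $w''$ above a particular $\tilde{w}$, as illustrated by the $\GL_4$ example in Remark \ref{bruhInv}. To prove Proposition \ref{bruhInt} I would analyze the combinatorics of coset representatives in $\sW^P_{\max}$ in terms of permutations that are $P$-antidominant on blocks, look at the possible length-two ascents $\tilde{w} \lessdot w_1, w_2 \lessdot w''$, and use hypothesis $(*)$ to show that one can always arrange either two distinct covers outside $\sW^P_{\max}$ (killing fullness) or, if fullness holds, that the two transpositions making up $w''\tilde{w}^{-1}$ identify four pairwise distinct block-labels of $\fz_{L_P}$, forcing the dimension drop to be exactly two. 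Once Proposition \ref{bruhInt} is in hand, Proposition \ref{corocomp0} delivers $[\cN_{w\cdot\lambda,y}]\neq 0$, completing the proof.
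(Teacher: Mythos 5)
Your proof is correct and takes essentially the same route as the paper's own three-line argument: the ``only if'' direction from Lemma \ref{lemcycnovan0}, and the ``if'' direction by checking the hypothesis of Proposition \ref{corocomp0} by means of the combinatorial Proposition \ref{bruhInt}. Your explicit place-by-place reduction over the pairs $(v,\tau)$ (handling two non-maximal coordinates directly via two covers, and invoking Proposition \ref{bruhInt} together with the remark that ``full and nice'' forces $\dim \fz_{L_P}^{w''\tilde w^{-1}}=\dim\fz_{L_P}-2$ only at a single factor) is a detail the paper leaves implicit, and your closing sketch of Proposition \ref{bruhInt} is superfluous since that result is proved in the appendix and may be cited as the paper does.
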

\begin{proof}
The ``only if" part follows from Lemma \ref{lemcycnovan0} (which actually holds without the assumption on the adjacent $\GL_{n_{\widetilde{v},i}}$ in the statement). By Proposition \ref{bruhInt} in the appendix, under the assumption in the statement, the condition in Corollary \ref{corocomp0} holds for all $w\in \sW_{\min}^P$. The ``if" part then follows by Corollary \ref{corocomp0}.
\end{proof}

\begin{remark}
Without the assumption on the adjacent $\GL_{n_{\widetilde{v},i}}$ in Corollary \ref{thmComCon} (for example, when $P_{\widetilde{v}}$ is maximal with $L_{P_{\widetilde{v}}}\cong \GL_{n_{\widetilde{v},1}} \times \GL_{n_{\widetilde{v},2}}$ and $n_{\widetilde{v},1}, n_{\widetilde{v},2}>1$), there may exist $w_y$ with $\lg(w_y)\leq \lg(w_{0,F})-2$ such that none of the conditions (1) or (2) in Lemma \ref{induccp} hold (see Remark \ref{bruhInv}). As discussed in Remark \ref{reobst}, in this case we don't know how to deduce $[\cN_{w_yw_{0,F} \cdot \lambda, y}]\neq 0$ from $[\cN_{w\cdot \lambda,y}]\neq 0$ for $w\in \sW_{\min}^P$ such that $ww_{0,F}>w_y$. Using the method of \cite{Wu21}, it may be possible to obtain some cases where the $P_{\widetilde{v}}$ don't satisfy the assumption in Corollary \ref{thmComCon}, but it seems that the maximal parabolic case mentioned above is still resisting.
\end{remark}

\begin{theorem}\label{casparticulier}
Assume we are in the setting of Conjecture \ref{conjSoc}. Assume moreover:
	\begin{enumerate}[label=(\arabic*)]
	\item Hypothesis \ref{TayWil};
	\item $\widehat{S}(U^p,E)^{\lalg}[\fm_{\rho}]\ne 0$;
	\item for all $v\in S_p$, the condition in Corollary \ref{thmComCon} holds for the parabolic subgroup $P_{\widetilde{v}}$ associated to $\sF_{\widetilde{v}}$.
	\end{enumerate}
	Then Conjecture \ref{conjSoc} holds.
\end{theorem}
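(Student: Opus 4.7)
The proof proceeds in two parts, corresponding to the ``only if'' and ``if'' directions of Conjecture \ref{conjSoc}, with only the latter requiring the adjacency hypothesis (3).

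\textbf{The ``only if'' direction.} This part already holds without hypothesis (3), as indicated in the introduction. Suppose $\widehat{\otimes}_{v\in S_p}C(w_{\widetilde{v}},\sF_{\widetilde{v}})$ embeds into $\widehat{S}(U^p,E)[\fm_{\rho}]$, equivalently into $\widehat{S}(U^p,E)^{\an}[\fm_{\rho}]$. By the argument in Lemma \ref{lemcompCP} (applying \cite[Thm.\ 4.3]{Br13II} and \cite[Thm.]{OS} to extract a parameter in the Jacquet module), we obtain a point $x=(\fm_{\rho},\pi_{L_P},1)\in \cE_{\Omega_\sF, w\cdot \lambda}(U^p,\overline{\rho})$. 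Via Proposition \ref{bePbe} this produces a point $(y,\ul{x},1) \in \cE_{\Omega_\sF,w\cdot\lambda}^\infty(\overline{\rho})$, and Theorem \ref{R=T0} places it inside $X_{\Omega_\sF, w(\textbf{h})}(\overline{\rho}_p)$ up to a shift. The local companion point result Corollary \ref{coLoccomp} then forces $w_{\widetilde{v}}\le w_{\sF_{\widetilde{v}}}w_{0,F_{\widetilde{v}}}$ for every $v\in S_p$.

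\textbf{The ``if'' direction.} Fix $w=(w_{\widetilde v})\in \sW^P_{\min}$ with $w_{\widetilde v}\le w_{\sF_{\widetilde v}}w_{0,F_{\widetilde v}}$ for all $v$, i.e.\ $ww_{0,F}\ge w_y$ in the notation of \S\ref{cyclebern} where $w_y=(w_{\sF_{\widetilde v}})$. Using hypothesis (2) together with \cite[Thm.\ 4.3]{Br13II} and local-global compatibility at $p$, the nonzero locally algebraic vectors $\widehat{S}(U^p,E)^{\lalg}[\fm_{\rho}]$ produce a classical point $x=(y,\ul{x},1)\in \cE_{\Omega_\sF,\lambda}^\infty(\overline{\rho})$ whose $\ul{x}$-coordinate matches the filtration $\sF$ and whose $\chi$-coordinate is trivial. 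The genericity of $\rho_{\widetilde v}$ together with the standard choice of Taylor--Wiles auxiliary primes (so that $(\Spf R_\infty^p)^{\rig}$ is smooth at $x^p$, as in the remark after Corollary \ref{classFM}) places $x$ in the subset $\sS$ of \S\ref{cyclebern}. The adjacency hypothesis (3) is exactly what is needed to invoke Corollary \ref{thmComCon}, yielding
\[ [\cN_{w\cdot \lambda, y}]\neq 0 \qquad \text{for every } w\in \sW^P_{\min} \text{ with } ww_{0,F}\ge w_y. \]

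\textbf{Conversion to embeddings.} Applying Lemma \ref{copconcyc} with $\mu=w\cdot\lambda$, $\fd_x=0$, $\chi_x=1$, this non-vanishing yields a nonzero $G_p$-equivariant map
\[\cF_{P^-(\Q_p)}^{G_p}\!\big(L^-(-w\cdot \lambda),\pi_{\ul{x}}\otimes_E\delta_P^{-1}\big)\longrightarrow \Pi_{\infty}^{R_\infty-\an}[\fm_y].\]
The source is precisely $\widehat{\otimes}_{v\in S_p}C(w_{\widetilde v},\sF_{\widetilde v})$ and is topologically irreducible by \cite[Thm.\ (iv)]{OS} (using the genericity of $\rho_{\widetilde{v}}$), so the map is injective. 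Finally the isomorphism $\Pi_{\infty}[\fa]\cong \widehat{S}(U^p,E)_{\overline{\rho}}$ and its analytic consequence (cf.\ the proof of Proposition \ref{bePbe}) transfer this embedding to $\widehat{S}(U^p,E)^{\an}[\fm_{\rho}]\subseteq \widehat{S}(U^p,E)[\fm_\rho]$, completing the ``if'' direction.

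\textbf{Main obstacle.} The only nontrivial difficulty is the appeal to Corollary \ref{thmComCon}: the ``if'' direction is extracted from an inductive control of the cycles $[\cN_{w\cdot\lambda,y}]$ running down the Bruhat order (Proposition \ref{corocomp0}, Lemma \ref{induccp}, Lemma \ref{indstLem1}), and this induction collapses in Case (3) of the introduction whenever there exists $w\in\sW_{\max}^P$ with $w>w_y$, $\lg(w)=\lg(w_y)+2$, $[w_y,w]$ full in $\sW_{\max}^P$, and $\dim\fz_{L_P}^{ww_y^{-1}}>\dim\fz_{L_P}-2$. Hypothesis (3) is precisely the combinatorial condition (verified in Proposition \ref{bruhInt}) that rules out this pathological configuration, so every other step of the argument is essentially the assembly of previously established results.
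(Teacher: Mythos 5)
Your proposal is correct and follows essentially the same route as the paper: the ``only if'' direction by passing through the (patched) Bernstein eigenvariety and the local companion point result (Corollary \ref{coLoccomp} via Theorem \ref{R=T0}), and the ``if'' direction by feeding assumption (3) into Corollary \ref{thmComCon} and converting $[\cN_{w\cdot\lambda,y}]\neq 0$ into an embedding via Lemma \ref{copconcyc} and $\Pi_{\infty}[\fa]\cong \widehat{S}(U^p,E)_{\overline{\rho}}$. The one inaccuracy is your justification that $x^p$ is a smooth point of $\fX_{\infty}^p$ (needed to place $x$ in $\sS$): you cannot invoke a choice of Taylor--Wiles auxiliary prime as in the remark after Corollary \ref{classFM}, since $U^p$ and $S$ are fixed by the statement; instead, as in the paper (following the discussion above \cite[Lemma 4.6]{BHS1}), smoothness at $x^p$ and at the points $\rho_{\widetilde{v}}$ for $v\in S\setminus S_p$ follows from assumption (2), because local-global compatibility away from $p$ shows the corresponding local components of the automorphic representation are generic, and such points of the local deformation spaces are smooth.
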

\begin{proof}
We use the notation of \S~\ref{seccompCP}. Note that, by local-global compatibility in the classical local Langlands correspondence (\cite{Car12}), the assumption (2) in the statement is equivalent to the existence of an embedding $\widehat{\otimes}_{v\in S_p} C(1, \sF_{\widetilde{v}})\hookrightarrow \widehat{S}(U^p,E)^{\lalg}[\fm_{\rho}]$. We let $\Omega:=\Omega_{\sF}$ and $\ul{x}\in (\Spec \cZ_{\Omega})^{\rig}$ be the point associated to $\pi_{L_P}$ ($\pi_{L_P}$ defined as in \S~\ref{seccompCP}). By the assumption (2) and the second sentence, we have $z=(\eta_{\rho}, \ul{x},1)\in \cE_{\Omega, \lambda}(U^p,\overline{\rho})$. By Proposition \ref{bePbe}, $z$ corresponds to a point $x=(y=(x^p,x_p), \ul{x}, 1) \in \cE_{\Omega, \lambda}^{\infty}(\overline{\rho})$. We have $\Pi_{\infty}^{R_{\infty}-\an}[\fm_y]\cong \widehat{S}(U^p,E)^{\an}[\fm_{\rho}]$. By similar arguments as above \cite[Lemma 4.6]{BHS1}, $x^p$ is a smooth point of $\fX_{\infty}^p$. By Corollary \ref{thmComCon} and Lemma \ref{copconcyc}, it follows that there is an embedding
	\begin{equation*}
		\widehat{\otimes}_{v\in S_p} C(w_{\widetilde{v}}, \sF_{\widetilde{v}}) \hooklongrightarrow \Pi_{\infty}^{R_{\infty}-\an}[\fm_y]\cong \widehat{S}(U^p,E)^{\an}[\fm_{\rho}]
	\end{equation*}
	if and only if $w_{\widetilde{v}}\leq w_{\sF_{\widetilde{v}}}w_{0, F_{\widetilde{v}}}$ for all $v\in S_p$ (noting that $(w_{\sF_{\widetilde{v}}})=w_y$).
\end{proof}

\begin{remark}
(1) By (the proof of) Theorem \ref{class00}, the assumption (2) in Theorem \ref{casparticulier} can also be replaced by the assumptions (2), (3), (4) in Theorem \ref{class00}, the assumption (4) being for $P=\prod_{v\in S_p} P_{\widetilde{v}}$ associated to $\{\sF_{\widetilde{v}}\}$.
	
(2) By the same argument but using Corollary \ref{corocomp0} instead of Corollary \ref{thmComCon}, we can prove 
	\begin{equation*}
		\widehat{\otimes}_{v\in S_p} C(w_{\widetilde{v}}, \sF_{\widetilde{v}}) \hooklongrightarrow \Pi_{\infty}^{R_{\infty}-\an}[\fm_y]\cong \widehat{S}(U^p,E)^{\an}[\fm_{\rho}]
	\end{equation*}
if $w_{\widetilde{v}}\leq w_{\sF_{\widetilde{v}}}w_{0, F_{\widetilde{v}}}$ for all $v\in S_p$ and $w=(w_{\widetilde{v}})$ satisfies the conditions in Corollary \ref{corocomp0}. 
\end{remark}

In particular, we have the following special case of Theorem \ref{casparticulier}:

\begin{corollary}
Assume that we are in the setting of Conjecture \ref{conjSoc}, and that:
	\begin{enumerate}[label=(\arabic*)]
	\item Hypothesis \ref{TayWil} holds;
	\item $\widehat{S}(U^p,E)^{\lalg}[\fm_{\rho}] \neq 0$;
	\item for all $v\in S_p$, each $\ttr(\rho_{\widetilde{v}})$ has at most one irreducible constituent of dimension $>1$.
	\end{enumerate}
For $v\in S_p$, let $\sF_{\widetilde{v}}$ be a minimal parabolic filtration of $\ttr(\rho_{\widetilde{v}})$. Then Conjecture \ref{conjSoc} holds for $\sF=(\sF_{\widetilde{v}})$.
\end{corollary}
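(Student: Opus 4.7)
The plan is to deduce this corollary directly from Theorem \ref{casparticulier} by verifying its three hypotheses under the hypotheses (1), (2), (3) of the corollary. Hypotheses (1) (Taylor-Wiles) and (2) (nonvanishing of locally algebraic vectors) of Theorem \ref{casparticulier} are immediate since they are identical to our hypotheses (1) and (2). The only nontrivial point is to verify hypothesis (3) of Theorem \ref{casparticulier}, namely that for each $v\in S_p$ the parabolic subgroup $P_{\widetilde{v}}\subseteq \GL_n$ associated to the minimal filtration $\sF_{\widetilde{v}}$ satisfies the condition in Corollary \ref{thmComCon}: any two factors $\GL_{n_{\widetilde{v},i}}$ of the Levi $L_{P_{\widetilde{v}}}\cong\prod_{i=1}^{r_{\widetilde{v}}}\GL_{n_{\widetilde{v},i}}$ with $n_{\widetilde{v},i}>1$ are nonadjacent.

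For the verification, recall from \S~\ref{introPcr} that the parabolic $P_{\widetilde{v}}$ attached to the minimal filtration $\sF_{\widetilde{v}}$ on $\ttr(\rho_{\widetilde{v}})$ is determined by the dimensions $n_{\widetilde{v},i}=\dim_E\gr_i^{\sF_{\widetilde v}}\ttr(\rho_{\widetilde{v}})$ of the graded pieces, which by minimality are the dimensions of the irreducible Weil--Deligne constituents of $\ttr(\rho_{\widetilde{v}})$ (reordered according to the filtration). Hypothesis (3) of the corollary asserts that at most one such constituent has dimension strictly greater than $1$, so at most one integer $n_{\widetilde{v},i}$ is strictly greater than $1$. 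Hence the condition in Corollary \ref{thmComCon} (``any two factors $\GL_{n_{\widetilde{v},i}}$ with $n_{\widetilde{v},i}>1$ are nonadjacent'') is trivially (vacuously) satisfied: there do not even exist two such factors.

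With all three hypotheses of Theorem \ref{casparticulier} verified, the conclusion of that theorem applies and yields Conjecture \ref{conjSoc} for the chosen filtration $\sF=(\sF_{\widetilde{v}})_{v\in S_p}$. No additional obstacle arises; the content of the corollary is simply that the dimensional hypothesis (3) on the Weil--Deligne constituents is a convenient sufficient condition (indeed the most natural one, covering an infinite family of examples including the case where $\ttr(\rho_{\widetilde{v}})$ has a unique nontrivial component and all others are characters) under which the combinatorial nonadjacency hypothesis of Theorem \ref{casparticulier} is automatic.
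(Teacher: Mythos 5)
Your proposal is correct and is exactly the paper's argument: the paper introduces this statement with ``In particular, we have the following special case of Theorem \ref{casparticulier}'', i.e.\ the corollary is obtained precisely by noting that hypotheses (1) and (2) coincide with those of Theorem \ref{casparticulier} and that the nonadjacency condition of Corollary \ref{thmComCon} is vacuous once at most one $n_{\widetilde{v},i}$ exceeds $1$. Your verification of that last point via the identification of the $n_{\widetilde{v},i}$ with the dimensions of the irreducible Weil--Deligne constituents is exactly what is implicit in the paper (and already spelled out in the introduction after Theorem \ref{intcs}), so nothing further is needed.
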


\begin{remark}
The case where all irreducible constituents of $\ttr(\rho_{\widetilde{v}})$ are $1$-dimensional (i.e.\ the (generic) crystabelline case) was proved in \cite[Thm.\ 5.3.3]{BHS3}.	
\end{remark}

\appendix

\renewcommand*{\thesection}{\Alph{section}}

\section{Appendix}

\subsection{$\Omega$-filtration in families}\label{globaltriangulation}

Generalizing \cite[\S~6.3]{KPX} we show that $\Omega$-filtrations on $(\varphi, \Gamma)$-modules (cf.\ \S~\ref{secDefOD}) can interpolate in $p$-adic families. The results in this section may be viewed as a parabolic analogue of the (so-called) global triangulations. 

The following theorem generalizes \cite[Thm.\ 6.3.9]{KPX}.

\begin{theorem}\label{KPXg}
Let $n\geq r\in \Z_{\geq 1}$. Let $X$ be a reduced rigid analytic space over $E$, $M$ a $(\varphi,\Gamma)$-module over $\cR_{X,L}$ of rank $n$, and $\Delta$ a $(\varphi,\Gamma)$-module over $\cR_{X,L}$ of rank $r$ such that, for any point $x$ of $X$, $\Delta_x$ is irreducible and de Rham of constant Hodge-Tate weight $0$. Let $\textbf{h}=(\textbf{h}_{\tau})_{\tau\in \Sigma_L}=(h_{i,\tau})_{\substack{i=1,\dots, r\\ \tau\in \Sigma_L}}$ be an integral positive strictly dominant weight (i.e.\ $h_{1,\tau}> h_{2,\tau}>\cdots > h_{r,\tau}\geq 0$ for all $\tau \in \Sigma_L$). Assume that there exists a Zariski-dense subset $X_{\alg}$ of points of $X$ such that for all $x\in X_{\alg}$:
\begin{enumerate}
\item[(1)] $H^0_{(\varphi,\Gamma)}(\Delta_x \otimes_{\cR_{k(x),L}} M_x^{\vee})$ is one-dimensional over $k(x)$;
\item[(2)] $\Ima \eta_x$ \ is \ de \ Rham \ of \ Hodge-Tate \ weights \ $\textbf{h}$ \ for \ $0\neq \!\eta_x \in \Hom_{(\varphi, \Gamma)}(M_x, \Delta_x)\cong H^0_{(\varphi,\Gamma)}(\Delta_x \otimes_{\cR_{k(x),L}} M_x^{\vee})$.
\end{enumerate}
Then there exist
\begin{itemize}
\item a proper birational surjective morphism $f: X' \ra X$;
\item a unique homomorphism $\eta: f^* M \ra f^* \Delta$ (up to multiplication by $\co_{X'}^{\times}$)
\end{itemize}
such that the following properties are satisfied:
\begin{enumerate}[label=(\arabic*)]
\item the set $Z$ of closed points $x\in X'$ failing to have properties (a) and (b) below is Zariski-closed and disjoint from $f^{-1}(X_{\alg})$:
\begin{enumerate}
\item $\Hom_{(\varphi,\Gamma)}(M_x, \Delta_x)$ is one-dimensional and generated by the pull-back $\eta_x: M_x \ra \Delta_x$ of $\eta$;
\item $\Ima(\eta_x)$ is de Rham of Hodge-Tate weights $\textbf{h}$;
\end{enumerate}
\item the kernel of $\eta$ is a $(\varphi, \Gamma)$-module over $\cR_{X',L}$ of rank $n-r$;
\item the cokernel of $\eta$ is locally $t$-torsion;
\item $\Ima \eta |_{X' \setminus Z}$ is a $(\varphi,\Gamma)$-module of rank $r$, and for all $x\in X' \setminus Z$, the induced morphism $(\Ima \eta)_x \ra \Delta_{f(x)}$ is injective.
\end{enumerate}
\end{theorem}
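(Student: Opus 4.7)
The plan is to follow the strategy of \cite[Thm.\ 6.3.9]{KPX}, extending it from the rank-one case treated there to our rank-$r$ setting. Set $N := M^{\vee} \otimes_{\cR_{X,L}} \Delta$, a $(\varphi,\Gamma)$-module of rank $nr$ over $\cR_{X,L}$ with $H^0_{(\varphi,\Gamma)}(N_x) \cong \Hom_{(\varphi,\Gamma)}(M_x, \Delta_x)$. The goal is to construct a coherent subsheaf $\cL$ of the sheafified $H^0_{(\varphi,\Gamma)}(N)$ which is generically of rank one and cuts out exactly those morphisms whose image is de Rham of Hodge-Tate weights $\textbf{h}$; then to modify $X$ so that $\cL$ becomes a line bundle, and use its universal section to produce the desired $\eta$. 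The pointwise irreducibility of $\Delta_x$, its constant zero Hodge-Tate weights, and the strict positivity and dominance of $\textbf{h}$ together supply the rigidity needed to interpolate the $\eta_x$ across $X$.

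First I would establish cohomological rank statements on $X_\alg$. Using hypothesis (1), Tate duality (e.g.\ \cite[Thm.\ 1.2]{Liu07}) and the Euler-Poincar\'e formula for $(\varphi,\Gamma)$-modules over $\cR_{k(x),L}$, I would compute $\dim_{k(x)} H^i_{(\varphi,\Gamma)}(N_x)$ for $i=0,1,2$ at $x\in X_\alg$: assumption (1) gives $\dim H^0 = 1$, while assumption (2) together with the positivity of $\textbf{h}$ and irreducibility of $\Delta_x$ forces $H^2_{(\varphi,\Gamma)}(N_x)\cong \Hom_{(\varphi,\Gamma)}(\Delta_x, M_x\otimes_{\cR_{k(x),L}} \cR_{k(x),L}(\chi_\cyc))^{\vee}=0$ by Hodge-Tate weight obstructions; hence $H^1$ has fixed dimension on $X_\alg$. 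Applying Lemma \ref{lemlfc} (or the constructions of \cite[\S 4]{KPX}) on an admissible open containing $X_\alg$, we obtain a coherent sheaf on $X$ representing $H^0_{(\varphi,\Gamma)}(N)$ that is generically locally free of rank one.

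The central step, generalizing \cite[\S 6.3]{KPX}, is to cut out the subsheaf $\cL$ corresponding to the Hodge-Tate condition. Concretely, I would use the $W_{\dR}^+$-formalism: the condition that an element of $H^0_{(\varphi,\Gamma)}(N_x)$, viewed as a morphism $\eta_x \colon M_x \to \Delta_x$, has image with Hodge-Tate weights $\textbf{h}$ translates into the statement that $W_{\dR}^+(\eta_x)$ maps $W_{\dR}^+(M_x)_\tau$ into a specific step of the Hodge filtration on $W_{\dR}^+(\Delta_x)_\tau$ determined by $\textbf{h}$ (using that $\Delta_x$ has all $\tau$-weights zero and $\textbf{h}$ is strictly dominant with $h_{r,\tau}\ge 0$). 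In families, this cuts out a coherent $\co_X$-subsheaf $\cL \subseteq H^0_{(\varphi,\Gamma)}(N)$ which, by Zariski-density of $X_\alg$ and assumption (2), is generically of rank one. Taking a suitable proper birational modification $f\colon X'\to X$ (for instance by blowing up the support of the torsion of the reflexive hull of $\cL$) produces a line bundle $\cL'$ on $X'$ together with a map $\cL' \hookrightarrow f^{*}H^0_{(\varphi,\Gamma)}(N)$; its universal section furnishes the desired $\eta\colon f^{*}M \to f^{*}\Delta$, unique up to multiplication by $\co_{X'}^\times$.

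Finally I would verify properties (1)--(4). Property (1) holds with $Z$ the Zariski-closed locus where either $\cL'$ fails to generate the full rank-one part of $H^0_{(\varphi,\Gamma)}(f^{*}N)$ or where the Hodge-Tate condition degenerates; by construction $Z$ is disjoint from $f^{-1}(X_\alg)$. For (2), $\ker(\eta)$ is saturated in $f^{*}M$ of generic rank $n-r$ and its local freeness everywhere follows by a flatness-saturation argument applied to $0\to \ker(\eta)\to f^{*}M\to \Ima(\eta)\to 0$, as in \cite[\S 6.3]{KPX}. For (3), $\coker(\eta)$ is locally $t$-torsion since $\eta[1/t]$ is surjective on the Zariski-dense $f^{-1}(X_\alg)$ and $\Delta[1/t]$ is locally free. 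For (4), on $X'\setminus Z$, $\Ima(\eta)$ is a $(\varphi,\Gamma)$-module of rank $r$ by flatness, and injectivity of $(\Ima \eta)_x \hookrightarrow \Delta_{f(x)}$ follows from irreducibility of $\Delta_{f(x)}$. The hardest step will be in the construction of $\cL$: for $r>1$ the Hodge filtration condition is substantially more subtle than a character twist (which is what makes the rank-one case of \cite{KPX} directly tractable), and controlling the rank of $\cL$ in families --- in particular, ensuring that the modification $f$ successfully resolves all jumps of the Hodge filtration --- will require a careful Sen-operator analysis, using crucially that each $\Delta_x$ is irreducible and de Rham of constant Hodge-Tate weight zero.
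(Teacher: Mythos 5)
Your overall skeleton (tensor $N=M^{\vee}\otimes\Delta$, produce a line bundle inside $H^0_{(\varphi,\Gamma)}(N)$ after a modification, take the universal section as $\eta$, then check (1)--(4)) is broadly compatible with the paper, but the two steps you rely on to get the line bundle are where the argument breaks. First, you cannot get a coherent sheaf computing $H^0_{(\varphi,\Gamma)}(N)$ with the right rank by "applying Lemma \ref{lemlfc} on an admissible open containing $X_{\alg}$": $X_{\alg}$ is only a Zariski-dense set of points, the cohomology dimensions can jump on proper closed subsets meeting every admissible open, and Lemma \ref{lemlfc} needs constancy at \emph{all} points of the space it is applied to. Moreover your input to that constancy, the vanishing of $H^2_{(\varphi,\Gamma)}(N_x)\cong \Hom_{(\varphi,\Gamma)}(\Delta_x, M_x(\chi_{\cyc}))^{\vee}$ at $x\in X_{\alg}$, is not a consequence of hypotheses (1)--(2): these only constrain maps $M_x\to\Delta_x$, not maps out of $\Delta_x$ into $M_x$ twisted by $\chi_{\cyc}$, so there is no "Hodge--Tate weight obstruction" forcing it to vanish. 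The correct tool here is the flattening result of Kedlaya--Pottharst--Xiao (their Cor.\ 6.3.6) applied to the perfect complex computing $H^{\bullet}_{(\varphi,\Gamma)}(\Delta\otimes M^{\vee})$: after a proper birational $f_0\colon X_0'\to X$ one makes $H^0$ flat (hence, by (1) and density of $X_{\alg}$, a line bundle) and $H^1,H^2$ of Tor-dimension $\le 1$; no subsheaf needs to be "cut out".

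Second, your central step --- defining $\cL$ as the subsheaf of sections whose image is de Rham of weights $\textbf{h}$ via a $W_{\dR}^+$-filtration condition --- is both unproved (you yourself flag it as the hardest point and give no construction; de Rham-ness with prescribed weights is not an obviously coherent/closed condition on sections in a family) and unnecessary: the theorem only asserts (1)(b) and (4) away from a Zariski-closed locus $Z$, so the de Rham property should be deduced \emph{a posteriori}, not imposed. The paper's route is: after the first modification, the cokernel $Q$ of $\eta$ is pointwise $t$-torsion (irreducibility of $\Delta_x$), one performs a \emph{second} modification so that $Q$ has Tor-dimension $\le 1$; then specializing $0\to\Ker\eta\to f^*M\to\Ima\eta\to 0$ and $0\to\Tor_1(Q,k(x))\to(\Ima\eta)_x\to(f^*\Delta)_x\to Q_x\to 0$ gives that $\Ker\eta$ is a rank $n-r$ $(\varphi,\Gamma)$-module at every point, that $Z$ (essentially the non-flat locus of $Q$ together with the base-change-failure locus of $H^0$) is Zariski-closed, and that $f^{-1}(X_{\alg})$ avoids $Z$ because at those points the fiber dimension of $Q$ is constant, being determined by $\textbf{h}$; finally the weights of $\Ima(\eta_x)$ on $X'\setminus Z$ are identified by interpolating Sen weights from the dense set $f^{-1}(X_{\alg})$. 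Your proposal omits this second flattening entirely, and several of your verifications lean on it implicitly: e.g.\ the injectivity of $(\Ima\eta)_x\to\Delta_{f(x)}$ in (4) does not follow from irreducibility of $\Delta_{f(x)}$ alone but from $\Tor_1(Q,k(x))=0$ on $X'\setminus Z$, and the everywhere-rank statement in (2) likewise needs the Tor-dimension control on $Q$, not just saturation and generic rank.
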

\begin{proof}
The theorem follows by a variation of the proof of \cite[Thm.\ 6.3.9]{KPX}. We include a proof for the convenience of the reader. One main difference from \textit{loc.\ cit.} is that in our case, for any $x\in X_{\alg}$, $\Ima(\eta_x)$ is not saturated in $\Delta_x$ except when $r=1$. As in the first paragraph of \emph{loc.\ cit.}, we are reduced to the case where $X$ is reduced, normal and connected (thus any coherent sheaf over $X$ has constant generic rank).
	
Step 1. By \cite[Thm.\ 4.4.5 (1)]{KPX} and \cite[Thm.]{KPX}, the complex $\cC_{(\varphi,\Gamma)}^{\bullet}(\Delta \otimes_{\cR_{X',L}} M^{\vee})$ is perfect and concentrated in degree $[0,2]$, i.e.\ is quasi-isomorphic to $[P_0 \ra P_1 \ra P_2]$ where the $P_i$ are locally free $\co_X$-modules of finite ranks. By \cite[Cor.\ 6.3.6 (2)]{KPX}, we can obtain a proper birational morphism $f_0: X_0' \rightarrow X$ with $X_0'$ reduced and normal (and irreducible) such that
\begin{itemize}
\item $H^0_{(\varphi,\Gamma)}(f_0^* (\Delta \otimes_{\cR_{X,L}} M^{\vee}))$ is flat;
\item $H^i_{(\varphi,\Gamma)}(f_0^* (\Delta \otimes_{\cR_{X,L}} M^{\vee}))$ has Tor-dimension less than or equal to $1$ for $i=1,2$.
\end{itemize}
Note that $f_0^{-1}(X_{\alg})$ is Zariski-dense in $X'_0$. Using the condition (1) and the above flatness, we see that $\cL:=H^0_{(\varphi,\Gamma)}(f_0^* (\Delta \otimes_{\cR_{X,L}} M^{\vee}))$ is locally free of rank $1$ over $\co_{X'_0}$. We have thus a tautological morphism of $(\varphi,\Gamma)$-modules over $\cR_{X'_0,L}$:
\begin{equation}\label{bira1}
\eta: f_0^* M \otimes_{\co_{X'_0}} \cL \lra f_0^* \Delta.
\end{equation}
It is clear that, for any $x\in X'_0$, the induced morphism $\eta_x: f_0^*M_x \ra f_0^*\Delta_x$ is non-zero. We have actually an exact sequence as in \cite[(6.3.9.1)]{KPX}:
\begin{multline*}
0 \ra H^0_{(\varphi,\Gamma)}(f_0^* (\Delta \otimes_{\cR_{X',L}} M^{\vee})) \otimes_{\co_{X'}} k(x) \ra H^0_{(\varphi,\Gamma)}\big(f_0^* (\Delta \otimes_{\cR_{X',L}} M^{\vee})\otimes_{\co_{X'}} k(x)\big)\\
\ra \Tor_1^{X'_0}\big(H^1_{(\varphi,\Gamma)}(f_0^* (\Delta \otimes_{\cR_{X',L}} M^{\vee})),k(x)\big) \ra 0
\end{multline*}
We deduce that the set $Z_0$ of points $x\in X'_0$ such that $H^0_{(\varphi,\Gamma)}(f_0^* (\Delta \otimes_{\cR_{X',L}} M^{\vee})) \otimes_{\co_{X'}} k(x)$ is not isomorphic to $H^0_{(\varphi,\Gamma)}(f_0^* (\Delta \otimes_{\cR_{X',L}} M^{\vee})\otimes_{\co_{X'}} k(x))$ is Zariski-closed in $X'_0$ (with its complement given by the flat locus of the coherent sheaf $H^1_{(\varphi,\Gamma)}(f_0^* (\Delta \otimes_{\cR_{X',L}} M^{\vee}))$).
	
Step 2. Let $Q$ be the cokernel of $\eta$. For any point $x\in X'_0$, since $\Delta_x$ is irreducible and $\eta_x$ is non-zero, we deduce that $Q_x=\Coker \eta_x$ is $t$-torsion. Let $\Spm A$ be an affinoid open of $X'$. By \cite[Lemma 2.2.9]{KPX}, the base change of the morphism $\eta$ via $\Spm A \ra \cR_{X',L}$ admits a model $\epsilon_A^r: M_A^r \ra \Delta_A^r$ and the cokernel $Q_A^r$ is a model of $Q_A$ (the restriction of $Q$ to $\cR_{A,L}$) (where we use the notation of \cite[\S~2]{KPX} for $M_A^r$ etc.). We have that $Q_A^{(r/p, r]}$ is a finitely presented module over $\cR_{A,L}^{(r/p,r]}$. Since $Q_x=\Coker \eta_x$ is $t$-torsion for all $x\in X'$, we see $Q_A^{(r/p,r]}$ is supported on the zero locus of $t$ in $\Spm \cR_{A,L}^{(r/p,r]}$, which is a rigid analytic space finite over $A$. We deduce that $Q_A^{(r/p,r]}$ is annihilated by $t^N$ for some $N\in \Z_{>0}$. Since $Q_A^{(r/p^m, r/p^{m-1}]} \cong (\varphi^{m-1})^* Q_A^{(r/p,r]}$, we deduce that $Q_A^r\cong \prod_{m \in \Z_{\geq 1}} Q_A^{(r/p^m, r/p^{m-1}]}$ is also annihilated by $t^N$ (note that $Q_A^r$ is coadmissible by \cite[Lemma 2.1.4 (5)]{KPX}). Applying \cite[Cor.\ 6.3.6]{KPX} to the finitely generated $A$-module $Q_A^{(r/p,r]}$, we obtain $g: \Spm A' \ra \Spm A$ such that $g^* Q_A^{(r/p,r]}$ has Tor-dimension less than $1$. Using the $\varphi$-action as above, we deduce that $g^*Q_A^{(r/p^m, r/p^{m-1}]}$ also has Tor-dimension less than $1$, and hence that $Q_{A'}^r:=g^* Q_A^r\cong \prod_{m \in \Z_{\geq 1}} g^*Q_A^{(r/p^m, r/p^{m-1}]}$ has Tor-dimension less than $1$ as $A'$-module (using the fact that a direct product of flat $A'$-modules is flat, as $A'$ is noetherian). The morphisms $g: A' \ra A$ (with $A$ varying) glue to a birational projective morphism $g: X' \ra X'_0$ such that $Q_{X'}:=g^* Q$ has Tor-dimension less than $1$, and we finally obtain $f: X' \xrightarrow{g} X'_0 \ra X$. By the above discussion, $Q_{X'}$ is locally $t$-torsion. 
	
Step 3. We have an exact sequence (by pulling-back (\ref{bira1}) via $g$, and where we use $\cL$ to denote the pull-back of $\cL$ in (\ref{bira1})):
\begin{equation*}
f^* M \otimes_{\co_{X'}} \cL \xrightarrow{\eta} f^* \Delta \ra Q_{X'} \ra 0.
\end{equation*}
Specializing to a point $x\in X'$ and using the fact that $Q_{X'}$ has Tor-dimension less than $1$, we get 
\begin{equation}\label{spcia1}
0 \ra \Tor_1^{X'}(Q_{X'}, k(x)) \ra \Ima(\eta)_x \ra (f^*\Delta)_x \ra Q_{X',x} \ra 0,
\end{equation}
and $\Tor_i^{X'}(\Ima(\eta), k(x))=0$ for $i>0$. Then, specializing $0 \ra \Ker(\eta) \ra (f^* M \otimes_{\co_{X'}} \cL) \ra \Ima (\eta) \ra 0$ to $x$, we get
\begin{equation}\label{spcial2}
0 \ra \Ker (\eta)_x \ra (f^*M)_x \ra \Ima (\eta)_x \ra 0.
\end{equation}
Since $Q_{X'}$ is locally $t$-torsion, both $ \Tor_1^{X'}(Q_{X'}, k(x))$ and $Q_{X',x}$ are annihilated by a certain power of $t$. We deduce that $\Ima(\eta_x)$ is a $(\varphi, \Gamma)$-module of rank $r$ over $\cR_{k(x),L}$, and $\Ker(\eta_x)$ is a $(\varphi, \Gamma)$-module of rank $n-r$ over $\cR_{k(x),L}$ (recall that $\eta_x$ denotes the induced morphism $(f^*M)_x \ra (f^* \Delta)_x$). We also deduce from (\ref{spcia1}) and (\ref{spcial2}):
\begin{equation*}
0 \ra \Ker(\eta)_x \ra \Ker(\eta_x) \ra \Tor_1^{X'}(Q_{X'}, k(x))\ra 0.
\end{equation*}
Since $ \Tor_1^{X'}(Q_{X'}, k(x))$ is $t$-torsion, it follows that $\Ker(\eta)_x$ is a $(\varphi,\Gamma)$-module of rank $n-r$ for all $x\in X'$. By the same argument as in the last paragraph of the proof of \cite[Thm.\ 6.3.9]{KPX}, we deduce that $\Ker(\eta)$ is a $(\varphi,\Gamma)$-module of rank $n-r$ (in the sense of \cite[Def.\ 2.2.12]{KPX}). 
	
Step 4. We prove that $U:=\{x\in X'\ |\ \Tor_1^{X'}(Q_{X'},k(x)) = 0\}$ is Zariski-open and contains $f^{-1}(X_{\alg})$.
Let $\Spm A$ be an affinoid in $X'$ and $Q_A^r$ be a model of $Q_A:=Q_{X'}|_{\Spm A}$. For $x\in \Spm A$, by the same argument as in Step 2, we see that the following statements are equivalent:
\begin{itemize}
\item $Q_A^{(r/p,r]}$ is flat over $A$ at $x$;
\item $Q_A^{r}$ is flat over $A$ at $x$.
\end{itemize}
Since $Q_{X'}$ has Tor-dimension less than $1$ (by Step 2), $U \cap \Spm A$ is exactly the flat locus of $Q_A^{(r/p,r]}$. We deduce that $U$ is Zariski-open. Now assume $f^{-1}(X_{\alg}) \cap \Spm A$ is Zariski-dense in $\Spm A$ (noting that such $\Spm A$ can cover $X'$). For each $x\in f^{-1}(X_{\alg}) \cap \Spm A$, by the assumption (2) in the theorem, $\Ima(\eta_x)\subset \Delta_{f(x)}$ is de Rham of Hodge-Tate weights $\textbf{h}$. We deduce that $Q_A^{(r/p,r]} \otimes_A k(x)$ has constant dimension (determined by $\textbf{h}$). Indeed, using \cite[Thm.~A]{Ber08a}, we can deduce that $Q_A^r \otimes_{A} k(x)=\Delta_{f(x)}^r/\Ima(\eta_x)^r$ admits a filtration by $\cR_{k(x),L}^r$-submodules of graded pieces given by $\{\cR_{k(x),L}^r/\prod_{\tau\in \Sigma_L} t_{\tau}^{h_{i,\tau}-h_{i+1,\tau}}\big\}_{i=1,\dots, r}$ where $t_{\tau}\in \cR_{k(x),L}$ is the element defined in \cite[Not.\ 6.2.7]{KPX}, and $h_{r+1,\tau}:=0$ for all $\tau$. Since $\cR_{k(x),L}^r$ is flat over $\cR_{k(x),L}^{(r/p,r]}$, we see that $Q^{(r/p,r]} \otimes_A k(x)$ admits a filtration with graded pieces given by $\{\cR_{k(x),L}^{(r/p,r]}/\prod_{\tau\in \Sigma_L} t_{\tau}^{h_{i,\tau}-h_{i+1,\tau}}\big\}_{i=1,\dots, r}$. Hence $\dim_{k(x)} Q+A^{(r/p,r]} \otimes_A k(x)$ is constant for $x\in f^{-1}(X_{\alg}) \cap \Spm A$. 
Together with the fact $f^{-1}(X_{\alg}) \cap \Spm A$ is Zariski-dense in $\Spm A$, we deduce that $Q_A^{(r/p,r]}$ is locally free at points in $f^{-1}(X_{\alg}) \cap \Spm A$. In particular, $f^{-1}(X_{\alg}) \cap \Spm A \subseteq U \cap \Spm A$.
	
Step 5. Consider the restriction $D$ of $\Ima \eta$ on $U$ (see Step 4). Specializing the exact sequence
\begin{equation*}
0 \ra D \ra (f^* \Delta)_U \ra Q_U\ra 0
\end{equation*}
at each point $x\in U$, we get 
\begin{equation*}
0 \ra D_x \ra (f^*\Delta)_x \ra Q_x\ra 0.
\end{equation*}
In particular, $D_x$ is a $(\varphi, \Gamma)$-module of rank $r$. By the same argument as in the last paragraph of the proof of \cite[Thm.\ 6.3.9]{KPX}, we deduce that $D$ is a $(\varphi, \Gamma)$-module (\cite[Def.\ 2.2.12]{KPX}). For $x\in U$, since $(f^*\Delta)_x$ is de Rham, $D_x$ is de Rham as well. Since $f^{-1}(X_{\alg})$ is Zariski-dense in $U$, by interpolating the Sen weights (see for example \cite[Def.\ 6.2.11]{KPX}), $D_x$ has Sen weights $\textbf{h}$. In summary, $D_x$ is de Rham of Hodge-Tate weights $\textbf{h}$ for all $x\in U$. The theorem follows by taking $Z$ to be the union of the complement of $U$ and $g^{-1}(Z_0)$ (see in particular Step 3 for the properties (2) and (3), Step 1 for (1)(a), and Step 4 \& 5 for (1)(b) and (4)).
\end{proof}

Note that in Theorem \ref{KPXg} $f^{-1}(X_{\alg})$ is Zariski-dense in $X'\setminus Z$.

\begin{corollary}\label{para}
Let $X$ be a reduced rigid analytic space over $E$. Let $n_i\in \Z_{\geq 1}$ for $1\leq i \leq r$ and $n:=\sum_{i=1}^{r} n_i$. Assume we have the following data:
\begin{enumerate}[label=(\arabic*)]
\item a $(\varphi, \Gamma)$-module $M$ over $\cR_{X,L}$ of rank $n$;
\item for each $i$, a $(\varphi, \Gamma)$-module $\Delta_i$ over $\cR_{X,L}$ of rank $n_i$ such that for all $x\in X$, $\Delta_{i,x}$ is irreducible and de Rham of constant Hodge-Tate weight $0$;
\item for each $i$, a continuous character $\delta_i$ of $L^{\times}$ over $X$;
\item for each $i$, an integral positive strictly dominant weight $\textbf{h}_i=(\textbf{h}_i)_{\tau \in \Sigma_L}=(h_{i,j,\tau})_{\substack{j=1, \dots, n_i \\ \tau \in \Sigma_L}}$ (i.e.\ $h_{i,1,\tau} > \cdots > h_{i,n_i,\tau}\geq 0$ for all $\tau \in \Sigma_L$);
\item a Zariski-dense subset $X_{\alg}$ of closed points of $X$ such that for all $x\in X_{\alg}$, $M_x$ admits a filtration $0 =\Fil_0 M_x \subsetneq \Fil_1 M_x \subsetneq \cdots \subsetneq \Fil_{r} M_x=M_x$ satisfying
\begin{itemize}
\item $\dim_{k(x)} \Hom_{(\varphi,\Gamma)}(\Fil_i M_x, \Delta_{i,x} \otimes_{\cR_{k(x),L}} \cR_{k(x),L}(\delta_{i,x}))=1$;
\item for any non-zero morphism of $(\varphi, \Gamma)$-modules $\eta_{i,x}: \Fil_i M_x \ra \Delta_x \otimes_{\cR_{k(x),L}} \cR_{k(x),L}(\delta_{i,x})$, $\Ima \eta_x \otimes_{\cR_{k(x),L}} \cR_{k(x),L}(\delta_{i,x}^{-1})$ is de Rham of Hodge-Tate weights $\textbf{h}_i$.
\end{itemize}
\end{enumerate}
Then there exist 
\begin{itemize}
\item a proper birational surjective morphism $f: X' \ra X$ of reduced rigid analytic spaces; 
\item a filtration $0=\Fil_0 (f^* M) \subsetneq \cdots \subsetneq \Fil_r (f^* M)=f^* M$ on $f^* M$ by $(\varphi, \Gamma)$-submodules over $\cR_{X',L}$;
\item line bundles $\cL_i$ over $X'$ for $1\leq i \leq r$;
\item unique morphisms $\eta_i:\Fil_i(f^* M) \otimes_{\co_{X'}} \cL_i \rightarrow f^*(\Delta_i \otimes_{\cR_{X,L}} \cR_{X,L}(\delta_i))$ for $1\leq i \leq r$;
\item a Zariski-closed subset $Z$ of $X$ disjoint from $X_{\alg}$
\end{itemize}
such that the following properties are satisfied
\begin{enumerate}[label=(\arabic*)]
\item for each $i$, the cokernel of $\eta_i$ is locally $t$-torsion and $\Ker \eta_i =\Fil_{i-1}(f^* M)\otimes_{\co_{X'}} \cL_i$;
\item the $(\Fil^i f^*M)|_{X'\setminus Z}$ are direct summands of $f^* M|_{X'\setminus Z}$ as $\cR_{X'\setminus Z,L}$-modules;
\item for all $x\in X'\setminus Z$, the $k(x)$-vector space
\[ \Hom_{(\varphi,\Gamma)}\big((\Fil_i f^* M)_x, \Delta_{i,f(x)} \otimes_{\cR_{k(x),L}} \cR_{k(x),L}(\delta_{i,f(x)})\big)\]
is one dimensional generated by the pull-back of $\eta_i$, and the $(\varphi, \Gamma)$-module $\Ima(\eta_{i,x}) \otimes_{\cR_{k(x),L}} \cR_{k(x),L}(\delta_{i,k(x)}^{-1})$ is de Rham of Hodge-Tate weights $\textbf{h}_i$ for $1\leq i\leq r$.
\end{enumerate}
\end{corollary}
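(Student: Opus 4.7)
The plan is to proceed by induction on $r$, peeling off the top piece of the filtration at each step via Theorem \ref{KPXg}. The base case $r=1$ is essentially a direct translation of Theorem \ref{KPXg}: I would apply it to the pair $(M\otimes_{\cR_{X,L}}\cR_{X,L}(\delta_1^{-1}),\Delta_1)$. The hypotheses of \ref{KPXg} are met because $\Delta_{1,x}$ is irreducible and de Rham of constant Hodge-Tate weight $0$, and at each $x\in X_{\alg}$ the identification $\Hom(M_x\otimes\cR(\delta_{1,x}^{-1}),\Delta_{1,x})\cong\Hom(M_x,\Delta_{1,x}\otimes\cR(\delta_{1,x}))$ together with condition (5) gives a one-dimensional Hom space whose image has the required Hodge-Tate weights. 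Twisting the resulting morphism back by $\cR(\delta_1)$ yields $\eta_1:f^*M\otimes\cL_1\to f^*(\Delta_1\otimes\cR(\delta_1))$ with all the desired properties.

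For the inductive step, assuming the result for $r-1$, I will apply Theorem \ref{KPXg} to $(M\otimes_{\cR_{X,L}}\cR_{X,L}(\delta_r^{-1}),\Delta_r)$, again using the $i=r$ case of hypothesis (5) to verify conditions (1) and (2) of \ref{KPXg}. This produces a proper birational $f_r:X_r'\to X$, a line bundle $\cL_r$, a unique morphism $\eta_r:f_r^*M\otimes\cL_r\to f_r^*(\Delta_r\otimes\cR(\delta_r))$, and a Zariski-closed $Z_r\subset X_r'$ disjoint from $f_r^{-1}(X_{\alg})$. In particular $M':=\Ker(\eta_r)$ is a $(\varphi,\Gamma)$-module over $\cR_{X_r',L}$ of rank $n-n_r$, and $f_r^{-1}(X_{\alg})$ remains Zariski-dense in $X_r'$. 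The key compatibility is that at each $x\in f_r^{-1}(X_{\alg})$ the fibre $M'_x$ coincides with $\Fil_{r-1}M_{f_r(x)}$: both are saturated $(\varphi,\Gamma)$-submodules of $M_{f_r(x)}$ of the same rank $n-n_r$, and the inclusion $\Fil_{r-1}M_{f_r(x)}\subseteq\Ker\eta_{r,x}$ is built into the meaning of ``filtration'' in hypothesis (5), namely that each $\eta_{i,x}$ factors through $\gr_iM_{f_r(x)}$. With this compatibility in hand, $M'$ with the induced filtration $\Fil_iM'_x:=\Fil_iM_{f_r(x)}$ for $i<r$ satisfies the corollary's hypotheses for $r-1$, with data $(\Delta_1,\ldots,\Delta_{r-1})$, $(\delta_1,\ldots,\delta_{r-1})$, $(\mathbf{h}_1,\ldots,\mathbf{h}_{r-1})$ and Zariski-dense subset $f_r^{-1}(X_{\alg})$. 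The inductive hypothesis then furnishes $g:X'\to X_r'$, the remaining line bundles $\cL_i$ and morphisms $\eta_i$, a filtration on $g^*M'$, and a Zariski-closed $Z'\subset X'$.

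Setting $f:=f_r\circ g$, $\Fil_i(f^*M):=\Fil_i(g^*M')$ for $i<r$, $\Fil_r(f^*M):=f^*M$, and $Z:=g^{-1}(Z_r)\cup Z'$ assembles all the data. The properties in the conclusion are inherited from the corresponding properties of Theorem \ref{KPXg} at each stage: the $t$-torsion cokernel and the rank of the kernel follow from (2), (3) of \ref{KPXg}; the direct-summand property on $X'\setminus Z$ follows from (4) of \ref{KPXg}, which ensures each $\Ima\eta_i$ is a saturated $(\varphi,\Gamma)$-submodule of the appropriate $\Delta_i\otimes\cR(\delta_i)$ off $Z$, making successive graded pieces locally free direct summands; and uniqueness of each $\eta_i$ up to $\co_{X'}^\times$ is inherited from the uniqueness in \ref{KPXg}. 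The main technical point to handle carefully is the compatibility identified above: one must check that $\Ker(\eta_r)$ genuinely realises the previously-given $\Fil_{r-1}$ at the dense set of algebraic points, so that the inductive hypothesis applies without loss of information; once this is verified, the rest of the argument is a clean bookkeeping exercise combining the outputs of \ref{KPXg} at each stage.
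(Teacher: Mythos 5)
Your proposal is essentially the paper's own argument: the paper iterates Theorem \ref{KPXg}, applying it first to $(M\otimes_{\cR_{X,L}}\cR_{X,L}(\delta_r^{-1}),\Delta_r,X)$ and then to the successive kernels $\Ker\eta_i$ twisted by $\cR_{X_i',L}(\delta_i\delta_{i-1}^{-1})$, and takes $Z$ to be the union of the pull-backs of the bad loci — exactly your induction phrased as an iteration, with the same bookkeeping of twists and line bundles. The only caution is that at points above $X_{\alg}$ the specialization $(\Ker \eta_r)_x$ agrees with $\Fil_{r-1}M_{f(x)}$ (suitably twisted) only up to $t$-torsion (compare the proof of Corollary \ref{rgloOF}), which suffices to re-verify the hypotheses of Theorem \ref{KPXg} at the next stage but is slightly weaker than the literal equality of saturated submodules you assert.
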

\begin{proof}
The corollary follows from Theorem \ref{KPXg} by induction. We first apply Theorem \ref{KPXg} to $(M \otimes_{\cR_{X,L}} \cR_{X,L}(\delta_r^{-1}), \Delta_r, X)$ (= the data $(M,\Delta,X)$ in the theorem) to obtain the data:
\[\big(f_r: X_r' \ra X,\ Z_r\subset X_r',\ \eta_r: f_r^* (M \otimes_{\cR_{X,L}} \cR_{X,L}(\delta_r^{-1}))\ra f_r^* \Delta_r\big)\] corresponding respectively to $f$, $Z$, $\eta$ in Theorem \ref{KPXg}. Next we apply Theorem \ref{KPXg} to $(\Ker \eta_r \otimes_{\cR_{X_r',L}} \cR_{X_r',L}(\delta_r \delta_{r-1}^{-1}), f_r^* \Delta_{r-1}, X_r')$ to obtain $(f_{r-1}: X_{r-1}' \ra X_r'$, $Z_{r-1}\subset X_{r-1}'$, $\eta_{r-1})$. By iterating this argument, we finally get $f: X':=X_1' \xrightarrow{f_1} X_2' \ra \cdots \ra X_r' \xrightarrow{f_r} X$ and we put $Z$ to be the union of the pull-backs of the $Z_i$'s. The corollary follows.
\end{proof}

\begin{corollary}\label{rgloOF}
Keep the setting of Corollary \ref{para} and let $x\in X'$.

(1) We have that $(f^* M)_x \cong M_{f(x)}$ admits a filtration $0=\Fil_0 M_{f(x)} \subsetneq \Fil_1 M_{f(x)} \subsetneq \cdots \subsetneq \Fil_r M_{f(x)}=M_{f(x)}$ by saturated $(\varphi, \Gamma)$-submodules of $M_{f(x)}$ such that $\gr_i M_{f(x)}[\frac{1}{t}]\cong \Delta_{i,x} \otimes_{k(x)} \cR_{k(x),L}(\delta_{i,x})[\frac{1}{t}]$.

(2) Let $A \in \Art(E)$ and $\Spec A \ra X'$ be a morphism sending the only point of $\Spec A$ to $x$. Let $M_A$ denote the pullback of $f^* M$ along $\Spec A \ra X'$. Then $M_A:=M_A[\frac{1}{t}]$ admits a filtration $0=\Fil_0 M_A \subsetneq \Fil_1 M_A \subsetneq \cdots \subsetneq \Fil_r M_A=M_A$ by $(\varphi, \Gamma)$-modules over $\cR_{A,L}[\frac{1}{t}]$ such that $\gr_i M_A\cong \Delta_{i,A} \otimes_{\cR_{A,L}} \cR_{A,L}(\delta_{i,A})[\frac{1}{t}]$.
\end{corollary}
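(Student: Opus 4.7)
My plan is to build the filtration after inverting $t$, where the data from Corollary \ref{para} becomes cleaner, and then descend to the Artin local pullback (for part (2)) and to the fiber (for part (1)).

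I would first note that from Corollary \ref{para}, the morphism $\eta_i$ has kernel $\Fil_{i-1}(f^*M) \otimes_{\co_{X'}} \cL_i$ and locally $t$-torsion cokernel in $f^*(\Delta_i \otimes_{\cR_{X,L}} \cR_{X,L}(\delta_i))$. After inverting $t$, the cokernel vanishes, yielding a short exact sequence of $(\varphi, \Gamma)$-modules over $\cR_{X',L}[\tfrac{1}{t}]$:
\begin{equation*}
0 \to \Fil_{i-1}(f^*M)\bigl[\tfrac{1}{t}\bigr] \otimes \cL_i \to \Fil_i(f^*M)\bigl[\tfrac{1}{t}\bigr] \otimes \cL_i \to f^*\Delta_i \otimes_{\cR_{X,L}} \cR_{X,L}(\delta_i)\bigl[\tfrac{1}{t}\bigr] \to 0.
\end{equation*}
Since each $\Fil_i(f^*M)$ is a $(\varphi, \Gamma)$-module over $\cR_{X',L}$ hence locally free, the modules $\Fil_i(f^*M)[\tfrac{1}{t}]$ are locally free over $\cR_{X',L}[\tfrac{1}{t}]$. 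Iterating the above short exact sequences, $f^*M[\tfrac{1}{t}]/\Fil_i(f^*M)[\tfrac{1}{t}]$ carries a finite filtration with locally free graded pieces of the form $f^*\Delta_j \otimes \cR_{X,L}(\delta_j)[\tfrac{1}{t}] \otimes \cL_j^{-1}$ for $j>i$, and is itself a $(\varphi, \Gamma)$-module over $\cR_{X',L}[\tfrac{1}{t}]$, hence locally free. In particular, these quotients are flat over $\co_{X'}$ locally, since $\cR_{X',L}[\tfrac{1}{t}]$ is flat over $\co_{X'}$.

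For part (2), the flatness just noted guarantees that pullback along $\Spec A \to X'$ preserves the exact sequences above. We obtain injections $\Fil_i((f^*M)_A)[\tfrac{1}{t}] \hookrightarrow (f^*M)_A[\tfrac{1}{t}] = M_A$, and I would define $\Fil_i M_A$ as their images. Strict inclusions follow from comparing generic ranks, and the graded pieces are isomorphic to $\Delta_{i,A} \otimes_{\cR_{A,L}} \cR_{A,L}(\delta_{i,A})[\tfrac{1}{t}] \otimes \cL_{i,A}^{-1}$; since $\cL_{i,A}$ is free of rank one over the local Artin ring $A$, this is (non-canonically) isomorphic to $\Delta_{i,A} \otimes_{\cR_{A,L}} \cR_{A,L}(\delta_{i,A})[\tfrac{1}{t}]$. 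For part (1), I would apply part (2) with $A=k(x)$ to obtain a filtration $\Fil_i M_{f(x)}[\tfrac{1}{t}]$ on $M_{f(x)}[\tfrac{1}{t}]$ and set
\begin{equation*}
\Fil_i M_{f(x)} := \Fil_i M_{f(x)}\bigl[\tfrac{1}{t}\bigr] \cap M_{f(x)} \subset M_{f(x)}\bigl[\tfrac{1}{t}\bigr].
\end{equation*}
This is saturated in $M_{f(x)}$ by construction and $\varphi, \Gamma$-stable as an intersection of two $\varphi, \Gamma$-stable submodules; any saturated $\varphi, \Gamma$-stable $\cR$-submodule of a $(\varphi, \Gamma)$-module is again a $(\varphi, \Gamma)$-module. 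The strict inclusions follow from those after inverting $t$, and the identification of the graded pieces after inverting $t$ follows since saturation does not change modules upon inverting $t$.

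The main technical point will be the local freeness of $\Fil_i(f^*M)[\tfrac{1}{t}]$ and of the quotients $f^*M[\tfrac{1}{t}]/\Fil_i(f^*M)[\tfrac{1}{t}]$, since this underpins the exactness of the pullback to $A$. This reduces to the facts that inverting $t$ preserves the locally free structure of $(\varphi, \Gamma)$-modules in the sense of \cite{KPX}, and that extensions within the category of $(\varphi, \Gamma)$-modules over $\cR_{X',L}[\tfrac{1}{t}]$ are again $(\varphi, \Gamma)$-modules; both are standard once unpacked from the definitions in \textit{loc.\ cit.} Apart from this, the argument is essentially formal bookkeeping with short exact sequences and intersections inside $M_{f(x)}[\tfrac{1}{t}]$.
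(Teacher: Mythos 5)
Your argument is correct in substance, but it is organized differently from the paper's proof. The paper never inverts $t$ at the level of the family: it specializes the maps $\eta_i$ at $x$ (resp.\ at $A$) and compares the specialization $\Ker(\eta_i)_x$ of the kernel with the kernel $\Ker(\eta_{i,x})$ of the specialized map, showing --- using that $\Coker\,\eta_i$ is locally $t$-torsion together with the Tor-dimension control arranged in Steps 2--3 of the proof of Theorem \ref{KPXg} --- that the natural map $\Ker(\eta_i)_x \to \Ker(\eta_{i,x})$ is injective with $t$-torsion cokernel; the filtration on $M_{f(x)}$ is then defined as the saturation of the image of the chain of injections $(\Fil_i f^*M)_x \hookrightarrow \Ker(\eta_{i+1,x}) \hookrightarrow (\Fil_{i+1}f^*M)_x \hookrightarrow \cdots \hookrightarrow M_{f(x)}$, the graded pieces being identified only after inverting $t$, and part (2) is handled by the same specialization argument. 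You instead invert $t$ over $X'$ first, so that the cokernels vanish and you get genuine short exact sequences, then use flatness of the $t$-inverted graded pieces over the base to get exact base change to $A$ and to $k(x)$, recovering the integral saturated filtration in (1) by intersecting with $M_{f(x)}$. The two constructions yield the same filtration (the saturation of the image of $(\Fil_i f^*M)_x$ coincides with the intersection of its $t$-localization with $M_{f(x)}$), and your intersection construction is precisely the one the paper itself uses in \S\ref{introPcr}, cf.\ (\ref{OmeFil00}). What your route buys is a uniform and arguably cleaner treatment of (2), where only the statement after inverting $t$ is needed anyway; what it costs is reliance on two facts you merely assert: (i) the flatness of $\cR_{A',L}[\frac{1}{t}]$ (equivalently of $\cR_{A',L}$) over an affinoid algebra $A'$, which is the load-bearing step for the vanishing $\Tor_1^{A'}(-,A)=0$ and genuinely requires an argument (flatness of the Banach pieces $\cR_{A',L}^{[s,r]}$ plus an inverse-limit step), whereas the paper avoids it entirely by exploiting the $t$-torsion and Tor-dimension properties already built into Theorem \ref{KPXg}; and (ii) your blanket claim that ``any saturated $(\varphi,\Gamma)$-stable submodule is a $(\varphi,\Gamma)$-module'' is too strong as stated --- what you actually need (true, standard, and used without comment in \S\ref{introPcr}) is that for a sub-$(\varphi,\Gamma)$-module $\cN \subset M_{f(x)}[\frac{1}{t}]$ over $\cR_{k(x),L}[\frac{1}{t}]$ the intersection $\cN \cap M_{f(x)}$ is a finite free saturated sub-$(\varphi,\Gamma)$-module of $M_{f(x)}$. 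Neither point is a genuine gap, but both deserve a precise reference or a short proof in your write-up.
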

\begin{proof}
Specializing the objects in Corollary \ref{para} at the point $x$, we have $(\varphi, \Gamma)$-modules over $\cR_{k(x),L}$: 
 \[\{(\Fil_i f^* M)_x=(\Ker \eta_{i+1})_x\}_{i=1,\dots, r-1}, \ \{\Ker \eta_{i,x}\}_{i=1, \dots, r}, \ \{\Delta_{i,f(x)}\}_{i=1, \dots, r}\]
 where (using the fact that $\Coker \eta_{i,x}$ is $t$-torsion):
 \[\rk_{\cR_{k(x),L}}(\Fil_i f^* M)_x=\rk_{\cR_{k(x),L}}(\Ker \eta_{i+1})_x=\sum_{j=1}^i n_j\textrm{ and }\rk_{\cR_{k(x),L}} \Ker(\eta_{i,x})=\sum_{j=1}^{i-1} n_j.\]
We also have morphisms of $(\varphi, \Gamma)$-modules over $\cR_{k(x),L}$:
	\begin{equation*}
	\begin{cases}
	\Ker(\eta_i)_x \lra \Ker(\eta_{i,x})& i=1, \dots, r\\
	\Ker(\eta_{i,x}) \hookrightarrow (\Fil_i f^*M)_x \cong \Ker (\eta_{i+1})_x & i=1, \dots, r-1.
	\end{cases}
	\end{equation*}
	We have that $\Ker(\eta_{i,x})$ is saturated in $(\Fil_if^*M)_x$ for all $i$. Since $\Coker \eta_i$ is locally $t$-torsion, by similar arguments as in Step 3 of the proof of Theorem \ref{KPXg}, we see that the cokernel of the morphism $\Ker(\eta_i)_x \ra \Ker(\eta_{i,x})$ is $t$-torsion, and hence this morphism is injective (as both source and target have the same rank over $\cR_{k(x),L}$). We take thus $\Fil_{r} M_{f(x)}:=M_{f(x)}$ and let $\Fil_i M_{f(x)}$ be the saturation of the image of the composition (note that some of the injections may not be saturated, for example the first one):
	\begin{multline*}
	(\Fil_{i} f^*M)_x\cong \Ker(\eta_{i+1})_x \hookrightarrow \Ker(\eta_{i+1,x}) \hookrightarrow (\Fil_{i+1}f^*M)_x \\ \hookrightarrow \Ker(\eta_{i+2,x}) \cdots \hookrightarrow \Ker(\eta_{r,x}) \hookrightarrow M_{f(x)}.
	\end{multline*} 
	Since $\Ker(\eta_{i+1})_x/\Ker(\eta_{i,x})=(\Fil_i f^* M)_x/\Ker(\eta_{i,x}) \hookrightarrow \Delta_{i,x} \otimes_{\cR_{k(x),L}} \cR_{k(x),L}(\delta_{i,x})$ with $t$-torsion cokernel (both source and target having the same rank), we deduce
	\[\gr_i M_{f(x)}\Big[\frac{1}{t}\Big]\cong \Ker(\eta_{i+1})_x\Big[\frac{1}{t}\Big]/\Ker(\eta_{i,x})\Big[\frac{1}{t}\Big]\cong \Delta_{i,x} \otimes_{\cR_{k(x),L}} \cR_{k(x),L}(\delta_{i,x})\Big[\frac{1}{t}\Big].\]
	Part (1) follows. Part (2) follows by similar arguments.
\end{proof}

\subsection{Characteristic cycles of parabolic Verma modules}\label{appenB}

We prove Proposition \ref{equcycl} by a generalization of Ginzburg's method (\cite[\S~6.3]{Gin86}).

We let $B\subset P \subset G$ be as in \S~\ref{secGS}, and we use without comment the notation there. For a smooth algebraic variety $X$ over $E$, we denote by $D_X$ the sheaf of differential operators on $X$, see for instance \cite[\S~1.1]{HTT}. Recall that $\co_X$ is equipped with a canonical left $D_X$-module structure. We will use below the notation of \cite{HTT}.

 For $w\in \sW$, we define $C_w:=BwB/B$ and $C_w^P:=PwB/B$, which are locally closed (smooth) subschemes of $G/B$, and $C_{P,w}:=BwP/P$, which is a locally closed (smooth) subscheme of $G/P$. So $C_w^P$ (resp.\ $C_{P,w}$) only depends on the image of $\sW$ in $\sW_{L_P}\backslash \sW$ (resp.\ in $\sW/\sW_{L_P}$). Denote by $\overline{C_w}$, (resp.\ $\overline{C_w^P}$, resp.\ $\overline{C_{P,w}}$) the Zariski closure of $C_w$ (resp.\ $C_w^P$, resp.\ $C_{P,w}$) in $G/B$ (resp.\ $G/B$, resp.\ $G/P$). We have
 \[G/B=\sqcup_{w\in \sW} C_w=\sqcup_{w\in \sW_{L_P}\backslash \sW} C_w^P\textrm{\ \ and\ \ }G/P=\sqcup_{w\in \sW/\sW_{L_P}} C_{P,w}\]
 and we remark that all these schemes are over $\Spec E$.

For $w\in \sW$, consider $j_w: C_w^P\hookrightarrow G/B$. By an easy variation of \cite[Lemma 5.1]{Shan12} and \cite[Rem.\ 5.2]{Shan12} to the case $G/B$, the open immersion $C_w^P \hookrightarrow \overline{C_w^P}$ is affine, hence so is $j_w$. Consider the direct image $\int_{j_w} \co_w^P=R (j_w)_* (D_{G/B \leftarrow C_w^P}\otimes_{D_{C_w^P}}^L \co_w^P)\in D^b_{\qc}(D_{G/B})$ (cf.\ \cite[\S~1.5]{HTT}). As $j_w$ is a locally closed immersion (i.e.\ the composition of an open immersion and a closed immersion), by \cite[Ex.\ 1.3.2]{HTT} and \cite[Ex.\ 1.5.12]{HTT}, we deduce that $D_{G/B \leftarrow C_w^P}$ is locally free over $D_{C_w^P}$. Together with the fact that $j_w$ is affine, we have $\int_{j_w} \co_w^P \cong (j_w)_* (D_{G/B \leftarrow C_w^P} \otimes_{D_{C_w^P}} \co_{C_w^P})=: \fN_w^P$, which is a $P$-equivariant coherent $D_{G/B}$-module, hence by \cite[Thm.\ 11.6.1 (i)]{HTT} a $P$-equivariant holonomic regular $D_{G/B}$-module. The following proposition is may-be well-known to experts, but we couldn't find a reference.

\begin{proposition}\label{paraVerGeo}
	We have $\Gamma(G/B, \fN_w^P) \cong M_P(w^{\max}w_0 \cdot 0)^{\vee}$ where $(-)^{\vee}$ denotes the dual in the BGG category $\co^{\ub}$ (cf.\ \cite[\S~3.2]{Hum08}).
\end{proposition}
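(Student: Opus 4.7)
The plan is to identify the global sections $\Gamma(G/B, \fN_w^P)$ with the dual parabolic Verma $M_P(w^{\max}w_0 \cdot 0)^{\vee}$ by combining the Beilinson-Bernstein equivalence $\Loc_{\BB}: \co^{\fp}(0) \xrightarrow{\sim} \Mod_{\rh}(D_{G/B}, P)$ recalled just above the proposition with the classical Borel-case analogue. Since $\fN_w^P$ is $P$-equivariant holonomic regular, $\Gamma(G/B, \fN_w^P)$ is already known to be an object of $\co^{\fp}(0)$, and the problem is to pin down which one.

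First I would recall the standard Borel-case identification: for $v \in \sW$ one has a canonical isomorphism $\Gamma(G/B, (j_v)_* \co_{C_v}) \cong M(vw_0 \cdot 0)^{\vee}$ in $\co(0)$ (see, for instance, Hotta--Takeuchi--Tanisaki Ch.~12). The key geometric observation is then that, since $\ell(vw^{\max}) < \ell(w^{\max})$ for all $v \in \sW_{L_P} \setminus \{1\}$ when $w^{\max} \in \sW^P_{\max}$, the Bruhat cell $C_{w^{\max}}$ is the unique top-dimensional piece inside the $P$-orbit $C_w^P = Pw^{\max}B/B = \bigsqcup_{v \in \sW_{L_P}} C_{vw^{\max}}$, so there is an open dense immersion $k: C_{w^{\max}} \hookrightarrow C_w^P$ with $j_{w^{\max}} = j_w \circ k$. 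The adjunction unit $\co_{C_w^P} \hookrightarrow k_* \co_{C_{w^{\max}}}$ (injective because $\co_{C_w^P}$ is $\co$-torsion-free) gives, after pushing forward by $j_w$, an injection of $D_{G/B}$-modules $\fN_w^P \hookrightarrow (j_{w^{\max}})_* \co_{C_{w^{\max}}}$; taking global sections yields an injective $\text{U}(\ug)$-equivariant map
\[
\Phi: \Gamma(G/B, \fN_w^P) \hooklongrightarrow M(w^{\max}w_0 \cdot 0)^{\vee}.
\]

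The next step is to show that $\Phi$ factors through the submodule $M_P(w^{\max}w_0 \cdot 0)^{\vee}$ of $M(w^{\max}w_0 \cdot 0)^{\vee}$, characterized as the maximal $\fl_P$-locally finite submodule. This follows from the $P$-equivariance of $\fN_w^P$: the algebraic $L_P$-action on $\Gamma(G/B, \fN_w^P)$ compatible with the $\ug$-action forces every vector to lie in a finite-dimensional $L_P$-subrepresentation (since $L_P$ is reductive and the representation is rational), hence to be $\fl_P$-locally finite. Thus $\Phi$ yields an injection $\Gamma(G/B, \fN_w^P) \hookrightarrow M_P(w^{\max}w_0 \cdot 0)^{\vee}$ in $\co^{\fp}(0)$.

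To conclude that this injection is an isomorphism, the most direct route is to compare characters: compute $\mathrm{ch}(\Gamma(G/B, \fN_w^P))$ using a \v{C}ech-type resolution adapted to the stratification $C_w^P = \bigsqcup_v C_{vw^{\max}}$ (equivalently, the filtration of $(j_w)_* \co_{C_w^P}$ by Bruhat cells), and match it to $\mathrm{ch}(M_P(w^{\max}w_0 \cdot 0)^{\vee})$ via the Weyl character formula for $V_{L_P}(w^{\max}w_0 \cdot 0)$. The main obstacle is the bookkeeping of this character computation, especially the twist relating the sheaf-theoretic and $D$-module pushforwards. A cleaner alternative would be to first identify $(j_w)_! \co_{C_w^P}$ as the \emph{standard} object in $\co^{\fp}(0)$, i.e.\ $\Gamma(G/B, (j_w)_! \co_{C_w^P}) \cong M_P(w^{\max}w_0 \cdot 0)$ (using that $(j_w)_!$ is characterized by a universal property among $D$-modules supported on $\overline{C_w^P}$), and then deduce the ``$*$'' version via Verdier duality on the $D$-module side, which corresponds under $\Loc_{\BB}$ to BGG duality $M \mapsto M^{\vee}$ on $\co^{\fp}(0)$.
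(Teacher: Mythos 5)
Your first half is sound and is a genuinely different entry point from the paper's: using that $C_{w^{\max}}$ is open dense in $C_w^P$ (which follows from $\lg(uw^{\max})=\lg(w^{\max})-\lg(u)$ for $u\in\sW_{L_P}$) to produce an injection $\fN_w^P\hookrightarrow H^0\!\int_{j_{w^{\max}}}\co_{C_{w^{\max}}}$, hence by $D$-affinity of $G/B$ an injection $\Gamma(G/B,\fN_w^P)\hookrightarrow M(w^{\max}w_0\cdot 0)^{\vee}$, and then using $P$-equivariance to land inside $M_P(w^{\max}w_0\cdot 0)^{\vee}$ (your characterization of $M_P(\mu)^{\vee}$ as the largest submodule of $M(\mu)^{\vee}$ lying in $\co^{\fp}$ is correct; it is the dual form of \cite[Thm.~9.4(c)]{Hum08}, which is also what the paper invokes at the end of its proof). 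The paper obtains the same injection as the first arrow of a longer exact sequence, so up to this point the two arguments are compatible.

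The genuine gap is the surjectivity step, i.e.\ the character identity $\mathrm{ch}\,\Gamma(G/B,\fN_w^P)=\mathrm{ch}\,M_P(w^{\max}w_0\cdot 0)$, which you explicitly defer ("the main obstacle is the bookkeeping"). That bookkeeping is the actual content of the proposition: the paper carries it out by building a Cousin-type resolution from the stratification $C_w^P=\sqcup_{u\in\sW_{L_P}}C_{uw^{\max}}$, namely an exact sequence $0\to\fN_w^P\to\int_{j_w}\co_{Z_w^0}\to\cdots\to\int_{j_w}\co_{Z_w^d}\to 0$ with $Z_w^k$ the union of cells $C_{uw^{\max}}$ with $\lg(u)=k$; taking global sections identifies the terms with sums of dual Vermas $M(uw^{\max}w_0\cdot 0)^{\vee}$, and the character of $\Gamma(G/B,\fN_w^P)$ then matches that of $M_P(w^{\max}w_0\cdot 0)^{\vee}$ by the alternating-sum character formula for parabolic Vermas (\cite[Prop.~9.6]{Hum08}). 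If you complete your "\v{C}ech-type" computation you will in effect reproduce this resolution, so your first route, once finished, collapses into the paper's proof rather than giving a shortcut. Your proposed "cleaner alternative" does not close the gap either: the assertion $\Gamma(G/B,(j_w)_!\co_{C_w^P})\cong M_P(w^{\max}w_0\cdot 0)$ is, via Verdier duality on $D$-modules and its compatibility with BGG duality under $\Loc_{\BB}$, exactly equivalent to the proposition you are trying to prove, and the adjunction $(j_w)_!\dashv j_w^{!}$ by itself only tells you how to map \emph{out of} $(j_w)_!\co_{C_w^P}$; to pin down its global sections you would still need to compute costalks or characters, i.e.\ the same work. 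So as written the proposal establishes only an injection $\Gamma(G/B,\fN_w^P)\hookrightarrow M_P(w^{\max}w_0\cdot 0)^{\vee}$, not the isomorphism.
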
 
\begin{proof}
	We have $C_w^P=\sqcup_{u\in \sW_{L_P}} C_{uw^{\max}}$. For $u\in \sW_{L_P}$, we have $\lg(uw^{\max})=\lg(w^{\max})-\lg(u)$. Indeed, let $\Phi^+$ (resp.\ $\Phi^-$) be the set of positive (resp.\ negative) roots with respect $B$, then for $w'\in \sW$, $\lg(w')=\big|\{\alpha \in \Phi^+\ |\ w'(\alpha)\in \Phi^-\}\big|$. As $w^{\max}$ has maximal length in $\sW_{L_P} w$, $w^{\max}(\Phi^+)\cap \Phi_{L_P}^+=\emptyset$ hence $\lg(w^{\max})=\big|w^{\max}(\Phi^+) \cap (\Phi^-\setminus \Phi_{L_P}^-)\big| + \big|\Phi_{L_P}^-\big|$. For $u\in \sW_{L_P}$, $u$ preserves the sets $\Phi^+\setminus \Phi_{L_P}^+$ and $\Phi^- \setminus \Phi_{L_P}^-$, hence 
	\begin{eqnarray*}
		\lg(uw^{\max})&=&\big|uw^{\max}(\Phi^+)\cap \Phi^-\big|\ =\ \big|uw^{\max}(\Phi^+)\cap (\Phi^- \setminus \Phi_{L_P}^-)\big|+\big|u(\Phi_{L_P}^-) \cap \Phi_{L_P}^-\big| 
		\\
		&=&\big|w^{\max}(\Phi^+) \cap (\Phi^-\setminus \Phi_{L_P}^-)\big|+ \big|\Phi_{L_P}^-\big| - \big|u(\Phi_{L_P}^+) \cap \Phi_{L_P}^-\big| \ =\ \lg(w^{\max})-\lg(u).
	\end{eqnarray*}
	Let $d:=|\sW_{L_P}|$. For $k=0, \dots, d$, denote by $C_{w}^k:=\sqcup_{\substack{u\in \sW_{L_P}\\ \lg(u) \leq k}} C_{uw^{\max}}$ and $Z_w^k:=\sqcup_{\substack{u\in \sW_{L_P}\\ \lg(u)=k}} C_{uw^{\max}}$. By the above discussion, it is clear that $Z_w^k$ is closed in $C_w^k$ and $C_w^{k-1}=C_w^k \setminus Z_w^k$ is open in $C_w^k$, hence open in $C_w^P$. Note that $Z_w^k$ and $C_{w}^k$ are all smooth. Denote by $i_k: Z_w^k \hookrightarrow C_w^k$, $j_k:= C_w^{k-1} \hookrightarrow C_w^k$, and without ambiguity, we put $i: Z_w^k \hookrightarrow C_w^d=C_w^P$, $j: C_w^k \hookrightarrow C_w^P$ for all $k$. Applying \cite[Prop.\ 1.7.1]{HTT}, for $k=1, \dots, d$, we have distinguished triangles in $D_{\qc}^b(D_{C_w^k})$ (where we use the notation of \textit{loc.\ cit.}, for instance see page 33 of \cite{HTT} for $i_k^{\dagger}$, $j_k^{\dagger}$):
	\begin{equation}\label{dtricwk}
		\int_{i_k} i_k^{\dagger} \co_{C_w^k} \ra \co_{C_w^k} \ra \int_{j_k} j_k^{\dagger} \co_{C_w^k}\xrightarrow{+1}.
	\end{equation}
	Applying $\int_j=Rj_*$ to (\ref{dtricwk}) (for the open immersion $j: C_w^k \hookrightarrow C_w^P$, cf.\ \cite[Ex.\ 1.5.22]{HTT}), we obtain distinguished triangles in $D_{\qc}^b(D_{C_w^P})$:
	\begin{equation*}
		\int_j	\int_{i_k} i_k^{\dagger} \co_{C_w^k} \ra \int_j \co_{C_w^k} \ra \int_j \int_{j_k} j_k^{\dagger} \co_{C_w^k}\xrightarrow{+1}.
	\end{equation*}
	We have by definition $i_k^{\dagger} \co_{C_w^k}\cong \co_{Z_w^k}[k]$. Indeed, by the discussion below \cite[Def.\ 1.3.1]{HTT}, we can deduce that the derived inverse image of $\co_{C_w^k}$ via $i_k$ as $D_{C_w^k}$-module is the same as the derived inverse image of $\co_{C_w^k}$ via $i_k$ as $\co_{C_w^k}$-module (which is $\co_{Z_w^k}$ as $\co_{C_w^k}$ is obviously flat over $\co_{C_w^k}$). Using \cite[Prop.\ 1.5.21]{HTT} and \cite[Prop.\ 1.5.24]{HTT}, we deduce
\[\int_j\int_{i_k} i_k^{\dagger} \co_{C_w^k}\cong \int_i \co_{Z_w^k}[k]\cong i_* (D_{C_w^P \leftarrow Z_w^k} \otimes_{D_{Z_w^k}} \co_{Z_w^k})[k].\]
We also have $\int_j \int_{j_k} j_k^{\dagger} \co_{C_w^k} \cong \int_j j_k^{\dagger} \co_{C_w^k} \cong \int_j \co_{C_w^{k-1}}\cong Rj_* \co_{C_w^{k-1}}$ (cf.\ \cite[Ex.\ 1.5.22]{HTT} for the last equality and note that there is an abuse of notation here: $j$ in the first term is the embedding $C_w^k \hookrightarrow C_w^P$, while $j$ in the other terms is the embedding $C_w^{k-1}\hookrightarrow C_w^P$). In summary, we obtain distinguished triangles in $D_{\qc}^b(D_{C_w^P})$:
	\begin{equation}\label{dtricwk0}
		i_* (D_{C_w^P \leftarrow Z_w^k} \otimes_{D_{Z_w^k}} \co_{Z_w^k})[k] \ra Rj_* \co_{C_w^k} \ra Rj_* \co_{C_w^{k-1}} \xrightarrow{+1}.
	\end{equation}
	By taking the long exact cohomology sequence of (\ref{dtricwk0}) in the case $k=d$ (recall $C_w^d=C_w^P$), it follows: 
	\begin{equation*}
		R^l j_*\co_{C_w^{d-1}}\cong \begin{cases}
			\co_{C_w^d} & l=0\\
			\int_i \co_{Z_w^{d}} & l=d-1 \\
			0 & \text{otherwise}.
		\end{cases}
	\end{equation*}
	Using induction and (\ref{dtricwk0}) with $k$ decreasing, we can show for $k\geq 2$:
	\begin{equation*}
		R^l j_* \co_{C_w^{k-1}}\cong \begin{cases}
			j_*\co_{C_w^{k}} & l=0\\
			0 & l=1, \dots,k-2 \ (\text{if $k>2$}) \\
			R^{l} j_* \co_{C_w^k} & l\geq k+1,
		\end{cases}
	\end{equation*}
	and we have an exact sequence for $k\geq 2$
	\begin{equation}\label{seqloc0}
		0 \ra R^{k-1} j_* \co_{C_w^{k-1}} \ra \int_i \co_{Z_w^{k}} \ra R^{k} j_* \co_{C_w^k} \ra R^k j_* \co_{C_w^{k-1}} \ra 0.
	\end{equation}
	Since $j: C_w^0 \ra C_w^P$ is affine, $R^l j_* \co_{C_w^0}=0$ for all $l>0$. By (\ref{dtricwk0}) for $k=1$, we deduce
	\begin{equation}\label{seqloc1}
		0 \ra j_* \co_{C_w^1} (\cong \co_{C_w^P}) \ra j_* \co_{C_w^0} \ra \int_i \co_{Z_w^1} \ra R^1 j_* \co_{C_w^1} \ra 0
	\end{equation}
	and $R^l j_* \co_{C_w^1}=0$ for $l\geq 2$. This last fact together with $R^l j_* \co_{C_w^k} \cong R^l j_* \co_{C_w^{k-1}}$ for $l\geq k+1$ and $k \geq 2$ imply $R^l j_* \co_{C_w^k}=0$ for $l \geq k+1$ and $k\in \{0,\dots,d\}$. Hence (\ref{seqloc0}) becomes (for $k\geq 2$)
	\begin{equation*}
		0 \ra R^{k-1} j_* \co_{C_w^{k-1}} \ra \int_i \co_{Z_w^{k}} \ra R^{k} j_* \co_{C_w^k} \ra 0.
	\end{equation*}
	These exact sequences together with (\ref{seqloc1}) form a long exact sequence (noting that $C_w^0=Z_w^0$):
	\begin{equation*}
		0 \ra \co_{C_w^P} \ra \int_i \co_{Z_w^0} \ra \int_i \co_{Z_w^1} \ra \cdots \ra \int_i \co_{Z_w^d} \ra 0.
	\end{equation*}
	Applying $\int_{j_w}$ (which is exact, since $j_w$ is an affine immersion, see the discussion above Proposition \ref{paraVerGeo}), we finally obtain a long exact sequence (where we also use $j_w$ to denote the affine embeddings $Z_w^i \hookrightarrow G/B$ for all $i$)
	\begin{equation*}
		0 \ra \int_{j_w} \co_{C_w^P} \ra \int_{j_w} \co_{Z_w^0} \ra \int_{j_w} \co_{Z_w^1} \ra \cdots \ra \int_{j_w} \co_{Z_w^d} \ra 0.
	\end{equation*}
	Taking global sections (which is exact, cf.\ \cite[Thm.\ 11.2.3]{HTT}), and using \cite[Prop.\ 12.3.2 (ii)]{HTT} (where $\mathbb{D}$ of \textit{loc.\ cit.} is also referred to as the Verdier dual for coherent left $D$-modules) and \cite[Thm.\ 2.4 (ii)]{Gin86}, we obtain a long exact sequence of $\ug$-modules
	\begin{multline}\label{resoBGG}
		0 \ra \Gamma(G/B, \fN_w^P) \ra M(w^{\max}w_0\cdot 0)^{\vee} \ra \oplus_{\substack{u\in \sW_{L_P}\\ \lg(u)=1}} M(uw^{\max} w_0 \cdot 0)^{\vee} \ra \cdots \\
		\cdots\ra M(w^{\min} w_0 \cdot 0)^{\vee} \ra 0.
	\end{multline}
	By \cite[Prop.\ 9.6]{Hum08}, $\Gamma(G/B,\fN_w^P)$ has the same formal character as $M_P(w^{\max}w_0 \cdot 0)^{\vee}$. By \cite[Thm.\ 11.5.3]{HTT}, $\Gamma(G/B,\fN_w^P)\in \co^{\fp}(0)$. Taking duals, we deduce from (\ref{resoBGG}) a surjective morphism $M(w^{\max}w_0 \cdot 0) \twoheadrightarrow \Gamma(G/B,\fN_w^P)^{\vee}$ which, by \cite[Thm.\ 9.4 (c)]{Hum08}, has to factor through $M_P(w^{\max}w_0 \cdot 0) \twoheadrightarrow \Gamma(G/B,\fN_w^P)^{\vee}$. As both have the same formal character, It follows $M_P(w^{\max}w_0 \cdot 0) \cong \Gamma(G/B,\fN_w^P)^{\vee}$, which concludes the proof. (Note that we cannot apply directly \cite[Thm.\ 9.4 (b)]{Hum08} to the dual of (\ref{resoBGG}) to deduce this result as it is {\it a priori} not clear if the map $\oplus_{\substack{u\in \sW_{L_P}\\ \lg(u)=1}} M(uw^{\max} w_0 \cdot 0)\ra M(w^{\max}w_0\cdot 0)$ is the same as in {\it loc.\ cit.}).
\end{proof}

Recall we have equivalences of categories (see the discussion above Proposition \ref{cycbc}):
\begin{equation*}
	\Mod_{\rh}(D_{G/P}, B) \xleftarrow[\sim]{i_P^*}\Mod_{\rh}(D_{G/B \times G/P}, G) \xrightarrow[\sim]{i_B^*} \Mod_{\rh}(D_{G/B}, P).
\end{equation*}
We remark that the equivalence of categories $\iota:=i_P^* \circ (i_B^*)^{-1}$ in general does not induce isomorphisms of $\text{U}(\ug)$-modules when taking global sections. For example, for $P=B$, we have $\Gamma(G/B, \iota(\fL(w \cdot 0)))\cong L(w^{-1}\cdot 0)$.

For $w\in \sW$ and $U_w=G (w,1) B \times P$ (seen in $G/B \times G/P$), we have $i_B^{-1}(U_w)=C_w^P$ (resp.\ $i_P^{-1}(U_w)=C_{P,w^{-1}}$) and $U_w \cong G \times^P C_w^P$ (resp.\ $U_w \cong G\times^B C_{P,w^{-1}}$). We use $j_w$ to denote the embeddings $C_w^P \hookrightarrow G/B$, $U_w \hookrightarrow G/B \times G/P$ and $C_{P,w} \hookrightarrow G/P$. Let $\widetilde{\fN}_w:=\int_{j_w} \co_{U_w} \in \Mod_{\rh}(D_{G/B\times G/P}, G)$, and $\fN_{P,w}:=\int_{j_w} \co_{C_{P,w}}\cong (j_w)_*(D_{G/P \leftarrow C_{P,w}} \otimes_{D_{C_{P,w}}} \co_{C_{P,w}})$ (where the isomorphism follows from the fact that $j_w: C_{P,w} \ra G/P$ is an affine locally closed immersion, see the discussion below (\ref{BWlam}), see also \cite[Prop.\ 1.4.5]{Brion}). Similarly as in \cite[(13.1.7)]{HTT} (with one $B$ replaced by $P$, which does not cause any problem), we have $i_B^*\widetilde{\fN}_w \cong \fN^P_w$ and $i_P^* \widetilde{\fN}_w\cong\fN_{P, w^{-1}}$.

Recall from \S~\ref{secCCyc} that $\fM_P(w^{\max}w_0\cdot 0)\in \Mod_{\rh}(D_{G/B\times G/P}, G)$ satisfies
\[i_B^* \fM_P(w^{\max}w_0\cdot 0) \cong \Loc_{\BB} \big(M_P(w^{\max}w_0\cdot 0)\big)\in \Mod_{\rh}(D_{G/B},P). \] 
By Proposition \ref{paraVerGeo}, $i_B^* \fM_P(w^{\max}w_0 \cdot 0)$ is isomorphic to the Verdier dual of $\fN_w^P \cong i_B^*\widetilde{\fN}_w$ (for example see \cite[Thm.\ 2.4]{Gin86} and see \cite[\S~2.6]{HTT} for the Verdier dual of coherent left $D$-modules). From this, together with \cite[Thm.\ 2.7.1]{HTT} and the fact that $i_B^*$ induces an equivalence of categories, we deduce that $\fM_P(w^{\max} w_0 \cdot 0)$ is isomorphic to the Verdier dual of $\widetilde{\fN}_w$.

We can now prove Proposition \ref{equcycl} by generalizing the proof of \cite[Thm.\ 6.2]{Gin86}.

\begin{proof}[Proof of Proposition \ref{equcycl}]
	As $\fM_P(w^{\max} w_0 \cdot 0)$ is isomorphic to the Verdier dual of $\widetilde{\fN}_w$, both have the same characteristic cycle (cf.\ \cite[Prop.\ 2.6.12]{HTT}). It is sufficient to show $[\overline{X}_{w,\lambda}]=[\Ch(\widetilde{\fN}_w)]$. 
	Consider $q_{P,\lambda}: G \times^P \ur_{P,\lambda} \ra \ug$. We have as in (\ref{Xpqpb}) $X_{P,\lambda}\cong G \times^B q_{P,\lambda}^{-1}(\ub)$. Similarly as in Remark \ref{remXw} (2), we see that $q_{P,\lambda}^{-1}(\ub)$ is equidimensional of dimension $\dim \ur_{P,\lambda}$ with irreducible components given by $\{q_{P,\lambda}^{-1}(\ub)_w\}_{w\in \sW/\sW_{L_P}}$, where $q_{P,\lambda}^{-1}(\ub)_w$ denotes the Zariski closure of the preimage $q_{P,\lambda}^{-1}(\ub)_w^0$ of $C_{P,w}$ in $q_{P,\lambda}^{-1}(\ub)$ (with the reduced subscheme structure) via the composition $q_{P,\lambda}^{-1}(\ub) \ra G/P \times \ug \ra G/P$. We also have $X_{w,\lambda}\cong G \times^B q_{P,\lambda}^{-1}(\ub)_w$.
	Let $\kappa_{P,\lambda}$ denote the morphism $G \times^P \ur_{P,\lambda} \ra \fz_{\lambda}$, $(g,\psi) \mapsto \overline{\psi}$ and $\overline{q_{P,\lambda}^{-1}(\ub)_w}:=q_{P,\lambda}^{-1}(\ub)_w \times_{\kappa_{P,\lambda}, \fz_{\lambda}} \{0\}$ (with the canonical scheme structure). Then we have an isomorphism of schemes
	\begin{equation*}
		\overline{X}_{w,\lambda}\cong G \times^B \overline{q_{P,\lambda}^{-1}(\ub)_{w^{-1}}}.
	\end{equation*} 
	By Proposition \ref{cycbc}, it is sufficient to show (as cycles in $T^* G/P$)
	\begin{equation*}
		\big[\overline{q_{P,\lambda}^{-1}(\ub)_{w^{-1}}}\big] \cong [\Ch(i_P^* \widetilde{\fN}_w)]=[\Ch(\fN_{P,w^{-1}})]=\Big[\Ch\Big(\int_{j_{w^{-1}}} \co_{C_{P,w^{-1}}}\Big)\Big].
	\end{equation*}
	By abuse of notation, we still denote $\lambda\circ \dett_{L_P}$ by $\lambda$, which is now a dominant weight of $\ft$ (with respect to $B$). 
	Let $\chi_{-\lambda}$ be the character of $P$ (which factors through $L_P$) of weight $-\lambda$ over $E$. Put $\cL_{-\lambda}:=G \times^P \chi_{-\lambda}$, which is a line bundle over $G/P$. By the Borel-Weil-Bott theorem, $H^0(G/P, \cL_{-\lambda})\cong L^-(-\lambda)$ (:= the finite dimensional algebraic representation of $G$ of lowest weight $-\lambda$, i.e.\ $L^-(-\lambda)^{N^-}=E (-\lambda)$ for the unipotent radical $N^-$ of the Borel subgroup $B^-$ opposite to $B$). Indeed, we have by definition
	\begin{equation}
		\label{BWlam}H^0(G/P, \cL_{-\lambda})\cong \{f: G \ra E \text{ algebraic functions}\ | \ f(gp)=\chi_{-\lambda}^{-1}(p) f(g), \ \forall\ p \in P, g\in G\}
	\end{equation}
	with the $G$-action given by $(gf)(g')=f(g^{-1} g')$. Let $L_P^{\lambda}\subset L_P$ be the kernel of $\chi_{-\lambda}: L_P \ra E^{\times}$. Consider $\kappa: Y:=G/(L_P^{\lambda} N_P) \twoheadrightarrow G/P$. Each element in $H^0(G/P, \cL_{-\lambda})$ can be viewed as an algebraic function on $Y$. Let $\fe$ be the highest weight vector of $L(\lambda)\cong L^-(-\lambda)^{\vee}$ (which is the finite dimensional algebraic representation of $G$ of highest weight $\lambda$ with respect to $B$), i.e.\ $N \fe=\fe$ and $\beta \fe=\chi_\lambda(\beta)\fe=\chi_{-\lambda}^{-1}(\beta) \fe$ for all $\beta\in P$ where $N\subset B$ is the unipotent radical. Let $\fe^*$ be the lowest weight vector of $L^-(-\lambda)$. Put $f_1: G \ra E$, $g \mapsto \langle \fe^*, g\fe\rangle$ (with $\langle-,-\rangle$ the natural pairing between $L(\lambda)$ and $L^-(-\lambda)$), which corresponds to $\fe^*$ via (\ref{BWlam}). Let 
	\[f_{w^{-1}}:=w^{-1}f_1=[g\mapsto \langle \fe^*, wg \fe\rangle]\] which corresponds to $w^{-1} \fe^*$ via (\ref{BWlam}) and which we can and do view as an algebraic function on $Y$. Applying \cite[Prop.\ 1.4.5]{Brion} (that easily generalizes to our connected split reductive group $G$) and using the natural surjection $G/B \twoheadrightarrow G/P$\footnote{This surjection induces a surjection $G \times^B \chi_{-\lambda} \twoheadrightarrow G \times^P \chi_{-\lambda}$, and we have $H^0(G/B, G \times^B \chi_{-\lambda})\cong L^-(-\lambda)$. We can then deduce the desired results for Schubert cells in $G/P$ from those for Schubert cells in $G/B$ given in \cite[Prop.\ 1.4.5]{Brion}.}, we can deduce that $\kappa^{-1}(\partial \overline{C_{P,w^{-1}}})$ (where $\partial \overline{C_{P,w^{-1}}}=\overline{C_{P,w^{-1}}}\setminus C_{P,w^{-1}}$) is exactly the zero locus of $f_{w^{-1}}|_{\kappa^{-1}(\overline{C_{P,w^{-1}}})}$.
	
	 Let $\cU\subset G/P$ be (Zariski-open) complement of the zero locus of the (global) section $f_{w^{-1}}$ of the line bundle $\cL_{-\lambda}$ over $G/P$. So $\kappa^{-1}(\cU)=Y\setminus f_{w^{-1}}^{-1}(0)$ and $C_{P,w^{-1}}=\overline{C_{P,w^{-1}}} \cap \cU$, in particular $C_{P,w^{-1}}$ is Zariski-closed in $\cU$. Denote by $\iota: C_{P,w^{-1}} \hookrightarrow \cU$ the closed embedding, then we have (see \cite[Ex.\ 2.3.8]{HTT} for the first isomorphism, the last isomorphism is induced by the Killing form)
	\begin{equation*}
		\Ch\Big(\int_{\iota} \co_{C_{P,w^{-1}}}\Big)\cong T_{C_{P,w^{-1}}}^* \cU\cong T^*_{C_{P,w^{-1}}} G/P \hookrightarrow T^* G/P \cong G\times^P (\ug/\fp)^{\vee}\cong G\times^P \fn_P.
	\end{equation*}
	where $T_{C_{P,w^{-1}}}^* G/P$ denotes the conormal bundle of $C_{P,w^{-1}}$ in $G/P$. Recall we have $q_P: T^*G/P \cong G \times^P \fn_P \ra \ug$, $(g, \psi)\mapsto \Ad(g) \psi$, and $q_P^{-1}(\fn)^{\red}=\cup_{u\in \sW/\sW_{L_P}} T_{C_{P,u}}^* G/P \hookrightarrow T^* G/P$ (for example by the same argument as in the proof of \cite[Prop.\ 3.3.4]{ChGi}). 
	
	We now apply Ginz\-burg's method in \cite[\S~6.3]{Gin86} to $\Lambda:=\Ch(\int_{\iota} \co_{C_{P,w^{-1}}})$ and $f:=f_{w^{-1}}$ in order to calculate $\Ch(\int_{j_{w^{-1}}} \co_{C_{P,w^{-1}}})=\Ch(\fN_{P,w^{-1}})$ (don't confuse $j_{w^{-1}}$ with the above $\iota$, and note that $j_{w^{-1}}$ is not closed in general). We have $T^* G/(L_P^{\lambda} N_P)\cong G \times^{L_P^{\lambda}N_P} \ur_{P,\lambda}$ (identifying $\ug$ to $\ug^{\vee}$ via the Killing form). For $x\in Bw^{-1} P$, let $df_x\in E\lambda+\fn_P$ such that $(x,df_x)\in df \subset G \times^{L_P^{\lambda}N_P} (E \lambda+\fn_P)\cong T^* G/(L_P^{\lambda} N_P)$. As the map $q: G \times^{L_P^{\lambda}N_P} (E \lambda+\fn_P) \ra \ug$, $(g, \psi)\mapsto \Ad(g) \psi$ coincides with the moment map $T^* G/(L_P^{\lambda} N_P) \ra \ug^\vee$ ($\cong \ug$) (cf.\ \cite[\S~1.4]{ChGi}), we have for $X\in \ug$ the equality $(q(x,df_x))(X)=\langle \fe^*, w (-X) x \fe\rangle$. By multiplying $x$ on the right by an element of $P$, we can and do assume that $x$ has the form $uw^{-1} \in C_{P,w^{-1}}$ with $u\in B$ satisfying $w uw^{-1} \in B^-$. Let $K(\cdot, \cdot)$ denote the Killing form on $\ug$, we can calculate:
	\begin{equation}\label{formome}
		q(uw^{-1}, df_{uw^{-1}})(X)=-\chi_{w^{-1}(\lambda)}(u)K\big(\Ad(uw^{-1})(\lambda),X\big).
	\end{equation}
	Hence $q(uw^{-1}, df_{uw^{-1}}) \in E^{\times} \Ad(uw^{-1})(\lambda)$. By unwinding the definition of $q(-,-)$, this implies $df_{uw^{-1}}\in E^{\times} \lambda$. 
	
It is clear that $G/(L_P^{\lambda} N_P)$ is a principal $\bG_m$-bundle over $G/P$, and there is a natural induced $\bG_m$-action on $T^* G/(L_P^{\lambda} N_P)$ such that $(T^* G/(L_P^{\lambda}N_P))/\bG_m\cong G\times^P \ur_{P,\lambda}$. For $a\in E^{\times}$, we claim that we have by (\ref{formome}) an isomorphism of schemes: 
	\begin{equation}\label{forcyc22}
	(\kappa^* \Lambda+adf)/\bG_m \cong q_{P,\lambda}^{-1}(-aw^{-1}(\lambda)+\fn) \cap \kappa_{P,\lambda}^{-1}(-a\lambda).
	\end{equation} 
	Indeed, by checking the formula at each closed point of $C_{P,w^{-1}}$, we can obtain the equality after taking the reduced subscheme structure on both sides. However, as in the proof of \cite[Thm.\ 6.3]{Gin86}, the left hand side is locally isomorphic to a translation of $\Lambda\cong T^*_{C_{P,w^{-1}}} G/P\subset T^*G/P \cong G \times^P \fn_P \subset G\times^P \ur_{P,\lambda}$ in $G\times^P \ur_{P,\lambda}$, hence is reduced. On the other hand, we have a closed immersion 
	\begin{equation}\label{w-1lambda}
q_{P,\lambda}^{-1}(-aw^{-1}(\lambda)+\fn) \times_{G \times^P \ur_{P,\lambda}} \kappa_{P,\lambda}^{-1}(-a\lambda)\hooklongrightarrow q_P^{-1}(\ub) \times_{G \times^P \ur_P} \kappa_P^{-1}(-a\lambda),
	\end{equation}
	which is an equality on closed points. By Proposition \ref{genesmoo} and its proof, $q_P^{-1}(\ub) \times_{G \times^P \ur_P} \kappa_P^{-1}(-a\lambda)$ is \ smooth \ and \ Zariski \ closed \ in \ $q_P^{-1}(\ub)$. \ It \ is \ easy \ to \ see \ that \ the \ closed \ subschemes $\kappa_P^{-1}(-a\lambda) \times_{G \times^P \ur_P} q_P^{-1}(\ub)_{w'}$ (for $w'\in \sW/\sW_{L_P}$) of $q_P^{-1}(\ub) \times_{G \times^P \ur_P} \kappa_P^{-1}(-a\lambda)$ are disjoint, hence each is open and smooth (and reduced). We deduce
\begin{equation*}
	q_P^{-1}(\ub) \times_{G \times^P \ur_P} \kappa_P^{-1}(-a\lambda) = \sqcup_{w'\in \sW/\sW_{L_P}} \kappa_P^{-1}(-a\lambda) \times_{G \times^P \ur_P} q_P^{-1}(\ub)_{w'}.
	\end{equation*}
Then one can check that (\ref{w-1lambda}) factors through a closed immersion 
	\begin{equation*}
	q_{P,\lambda}^{-1}(-aw^{-1}(\lambda)+\fn) \times_{G \times^P \ur_{P,\lambda}} \kappa_{P,\lambda}^{-1}(-a\lambda)\hooklongrightarrow q_P^{-1}(\ub)_{w^{-1}} \times_{G \times^P \ur_P} \kappa_P^{-1}(-a\lambda),
	\end{equation*}
	which is bijective on closed points hence is an isomorphism since the right hand side is reduced. In particular, the right hand side of (\ref{forcyc22}) is also reduced so (\ref{forcyc22}) holds. 
	
	Let $\Lambda^{\sharp}:=q_{P,\lambda}^{-1}(E^{\times} w^{-1}(\lambda) +\fn) \cap \kappa_{P,\lambda}^{-1}(E^{\times} \lambda)$. By similar arguments as above, one can show isomorphisms of reduced schemes
	\[\Lambda^{\sharp}\xrightarrow{\sim} q_{P,\lambda}^{-1}(\ub)_{w^{-1}} \times_{G \times^P \ur_{P,\lambda}} \kappa_{P,\lambda}^{-1}(E^{\times} \lambda) \xrightarrow{\sim}q_{P}^{-1}(\ub)_{w^{-1}} \times_{G \times^P \ur_P} \kappa_P^{-1}(E^{\times} \lambda).\]
	We also have $\Lambda^{\sharp} \cong q_{P,\lambda}^{-1}(\ub)_{w^{-1}} \times_{G \times^P \ur_P} \kappa_{P}^{-1}(\fz^{\reg-\sss})$ (e.g.\ using the fact that both have the same $\overline{E}$-points and are reduced), hence $\Lambda^{\sharp}$ is Zariski-open (and Zariski-dense) in $q_{P,\lambda}^{-1}(\ub)_{w^{-1}}$.
	Thus the scheme theoretic image of $\Lambda^{\sharp}$ in $(T^* G/(L_P^{\lambda} N_P))/\bG_m \cong G \times^P \ur_{P,\lambda}$ is just $q_{P,\lambda}^{-1}(\ub)_{w^{-1}}$.
	By Ginzburg's formula (\cite[Thm.\ 6.3]{Gin86}, see also \cite[Thm.\ 3.2]{Gin86}, and note that the theorem is ``algebraic" so it can be applied with $\bC$ replaced by $E$) applied to the case where $j$ is the open immersion $\cU \hookrightarrow G/P$ and $\mathscr{M}$ is $\int_{\iota} \co_{C_{P,w^{-1}}}$ (so $\gr \mathscr{M}$ is just $\Ch(\int_{\iota} \co_{C_{P,w^{-1}}})$ and $\lim_{s \ra 0} (\gr \mathscr{M})^s$ in \textit{loc.\ cit.}\ is the fibre of the scheme theoretic image of $\Lambda^{\sharp}$ at $0$ via $\kappa_{P}$), we obtain:
	\begin{equation*}
	\Big[\Ch\Big(\int_{j_{w^{-1}}} \co_{C_{P,w^{-1}}}\Big)\Big] = \Big[q_{P,\lambda}^{-1}(\ub)_{w^{-1}} \times_{ \kappa_{P,\lambda}, \fz_{\lambda}} \{0\}\Big].
	\end{equation*}
	The proposition follows. 
\end{proof}

\subsection{Bruhat intervals of length $2$}

We show some properties of Bruhat intervals of length $2$ in certain parabolic quotient of $S_n$, that are used in Corollary \ref{thmComCon}.

We write $w\in S_n$ in the form $w=\begin{pmatrix}
	1& 2&\cdots & n\\w^{-1}(1) & w^{-1}(2) & \cdots & w^{-1}(n)
\end{pmatrix}$. For $i\neq j\in \{1,\dots, n\}$, we denote by $t_{ij}$ the transposition exchanging $i$ and $j$. If $w=\begin{pmatrix}
1& 2&\cdots & n\\ a_1 & a_2 & \cdots &a_n
\end{pmatrix}$ and $i<j$, then $t_{ij} w=\begin{pmatrix}
1 & \cdots & i & \cdots &j &\cdots &n \\
a_1& \cdots & a_j& \cdots & a_i & \cdots & a_n
\end{pmatrix}$. Let $r\geq 1$, and $n_i\in \Z_{\geq 1}$ for $1\leq i \leq r$ such that $\sum_{i=1}^r n_i=n$. Put $s_i:=\sum_{j=1}^i n_i$ and $J_i:=\{s_{i-1}+1, \dots, s_{i}\}$. We have thus $\{1, \dots, n\}=\sqcup_{i=1}^{r} J_i$. Let $\sW_J$ be the subgroup of $S_n$ generated by the $t_{jk}$ for $j,k\in J_i$, $i=1, \dots, r$ (so $\sW_J\cong \prod_{i=1}^r S_{n_i}$). Denote by $\sW_{\max}^J\subseteq S_n$ the set of maximal length representatives of $\sW_J \backslash S_n$.

\begin{lemma}\label{maxcrit}
	We have $w=\begin{pmatrix}
		1& 2&\cdots & n\\ a_1 & a_2 & \cdots &a_n
	\end{pmatrix}\in \sW_{\max}^J $ if and only if for any $j=1, \dots, r$ the sequence $(a_j)_{j\in J_i}$ is decreasing.
\end{lemma}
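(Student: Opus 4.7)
The plan is to apply the standard exchange criterion for maximal length representatives in parabolic subquotients of Coxeter groups. Recall that $w\in\sW_{\max}^J$ means that $w$ has maximal length in its coset $\sW_J w$; equivalently, $\ell(sw)<\ell(w)$ for every simple reflection $s$ of $\sW_J$. The simple reflections of $S_n$ are the adjacent transpositions $s_k:=t_{k,k+1}$ for $k=1,\dots,n-1$, and under the isomorphism $\sW_J\cong\prod_{i=1}^r S_{n_i}$ the simple reflections of $\sW_J$ are precisely those $s_k$ with $k$ and $k+1$ lying in the same block $J_i$, i.e.\ with $k\in\{s_{i-1}+1,\dots,s_i-1\}$ for some $i\in\{1,\dots,r\}$.

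Next I would invoke the well-known length-change criterion: for a simple reflection $s_k=t_{k,k+1}$ one has $\ell(s_kw)<\ell(w)$ if and only if $w^{-1}(k)>w^{-1}(k+1)$, i.e.\ $a_k>a_{k+1}$. Rather than quote this from the literature, one can check it directly from the combinatorics set up in the paper: by the explicit formula for $t_{ij}w$ recalled above, left multiplication by $t_{k,k+1}$ swaps the values $a_k$ and $a_{k+1}$ in the one-line notation, while $\ell(w)$ equals the number of inversions of the sequence $(a_1,\dots,a_n)$, so swapping an adjacent pair changes the inversion count by exactly $+1$ (if $a_k<a_{k+1}$) or $-1$ (if $a_k>a_{k+1}$).

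Combining these two observations, $w\in\sW_{\max}^J$ if and only if $a_k>a_{k+1}$ for every $i\in\{1,\dots,r\}$ and every $k\in\{s_{i-1}+1,\dots,s_i-1\}$, which is exactly the assertion that $(a_j)_{j\in J_i}$ is strictly decreasing for each $i=1,\dots,r$. There is no essential obstacle here: the argument is elementary Coxeter-group combinatorics, and the only care needed is to match the convention $a_j=w^{-1}(j)$ with the length criterion, which follows immediately from the explicit formula for $t_{ij}w$ given in the paper.
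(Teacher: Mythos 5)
Your argument is correct: the reduction of membership in $\sW_{\max}^J$ to the condition $\ell(s_kw)<\ell(w)$ for the simple reflections $s_k=t_{k,k+1}$ of the standard parabolic subgroup $\sW_J$ (those with $k,k+1$ in a common block $J_i$), combined with the inversion-count verification that this descent condition is exactly $a_k>a_{k+1}$ under the convention $a_j=w^{-1}(j)$, gives precisely the stated criterion. The paper states this lemma without proof, treating it as a standard fact about maximal-length coset representatives, and your write-up is the standard justification and is consistent with the paper's conventions.
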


Let $[w_1, w_2]$ be a Bruhat interval of length $2$, i.e.\ $w_1<w_2$, $\lg(w_2)=\lg(w_1)+2$, and $[w_1, w_2]=\{w'\in S_n \ |\ w_1<w'<w_2\}$. Recall $|[w_1, w_2]|=2$ (cf.\ \cite[Lemma 5.2.7]{BHS3}). Assume $w_1, w_2\in \sW_{\max}^J$, and denote by $[w_1, w_2]_J:=\{w'\in W^J_{\max} \ | w_1< w' <w_2\}$. Recall $[w_1,w_2]_J$ is called \textit{full} if $[w_1,w_2]_J=[w_1,w_2]$. As $\lg(w_2)=\lg(w_1)+2$, there exist reflections $t_{ab}$, $t_{cd}$ such that $w_2=t_1 t_2 w_1$. We call a full interval $[w_1, w_2]_J$ (of length $2$) \textit{nice} if the integers $a$, $b$, $c$, $d$ cannot be contained in two $J_i$.

\begin{remark}
Identifying $S_n$ with the Weyl group $\sW$ of $\GL_n$, $J$ corresponds to a parabolic subgroup $P$ of $\GL_n$ containing $B$ such that $\sW_{L_P}=\sW_J$. It is easy to see that, if $[w_1, w_2]_J$ is full and nice, then $\dim \fz_{L_P}^{w_2w_1^{-1}}=\dim \fz_{L_P}-2$ (see Proposition \ref{propsmoX} for the notation). 
\end{remark}

\begin{proposition}\label{bruhInt}
	Let $w=\begin{pmatrix}
		1& 2&\cdots & n\\ a_1 & a_2 & \cdots &a_n
	\end{pmatrix}\in \sW_{\max}^J$ with $\lg(w)\leq \lg(w_{0})-2$. Assume that the partition $\{J_i\}$ satisfies: if $|J_i|>1$ then $|J_{i-1}|=1$ (if $i\geq 2$) and $|J_{i+1}|=1$ (if $i\leq r-1$). Then there exists $u\in \sW_{\max}^J$ such that $u>w$, $\lg(u)=\lg(w)+2$ and one of the following two properties is satisfied:
	\begin{enumerate}
		\item[(1)] $[w,u]_J$ is full and nice;
		\item[(2)] $[w,u]_J$ is not full.
	\end{enumerate}
\end{proposition}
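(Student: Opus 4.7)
My plan is to establish the proposition by case analysis on the set of ``admissible simple ascents'' of $w$ within $\sW_{\max}^J$. Define $I_J(w) := \{i \in \{1, \dots, n-1\} : a_i < a_{i+1} \text{ and } s_i w \in \sW_{\max}^J\}$. By Lemma \ref{maxcrit}, any $i \in I_J(w)$ must be a block boundary $s_k$, and the condition $s_i w \in \sW_{\max}^J$ imposes that the swap of $a_i, a_{i+1}$ preserves the decreasing-in-blocks condition at that boundary.

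The first case is when there exist $p, q \in I_J(w)$ with $q \geq p + 2$ (separated admissible ascents). Then $s_p$ and $s_q$ commute, so $u := s_p s_q w$ has length $\lg(w) + 2$, lies in $\sW_{\max}^J$, and the two intermediate elements $s_p w, s_q w$ of the Bruhat interval $[w, u]$ are both in $\sW_{\max}^J$; hence $[w, u]_J$ is full. The four positions $\{p, p+1, q, q+1\}$ lie in four blocks $J_{k_1}, J_{k_1+1}, J_{k_2}, J_{k_2+1}$ (since $q > p+1$ forces $k_2 > k_1+1$), and span at least three distinct blocks, so $[w, u]_J$ is nice, giving case~(1).

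The second case is $I_J(w) = \{p, p+1\}$ (two consecutive admissible ascents). This forces $p$ to be the last position of $J_k$, $J_{k+1} = \{p+1\}$, and $p+2$ to be the first of $J_{k+2}$, with $a_p < a_{p+1} < a_{p+2}$. I would consider $u_1 := s_p s_{p+1} w$ and $u_2 := s_{p+1} s_p w$, both of length $\lg(w) + 2$ and sharing the intermediate elements $s_p w, s_{p+1} w$ in the Bruhat interval. If either lies in $\sW_{\max}^J$, take it as $u$: the interval is full, and the three positions $\{p, p+1, p+2\}$ span three distinct blocks $J_k, J_{k+1}, J_{k+2}$, so it is nice; this gives case~(1).

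The remaining third case is when $|I_J(w)| \leq 1$, or when $I_J(w) = \{p, p+1\}$ but neither $u_1$ nor $u_2$ lies in $\sW_{\max}^J$. In such situations any length-$(\lg(w)+2)$ extension of $w$ in $\sW_{\max}^J$ must involve a non-simple reflection, and since $\lg(w) \leq \lg(w_0)-2$ some such extension exists. I would argue by contradiction: assume that every length-$(\lg(w)+2)$ extension $u \in \sW_{\max}^J$ above $w$ gives $[w,u]_J$ full and not nice, so each such $u$ is of the form $t_{ab} t_{cd} w$ with $\{a,c\} \subseteq J_i$ and $\{b,d\} \subseteq J_j$ for some pair of blocks $(i,j)$ (depending on $u$). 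Combining this restriction with the block hypothesis (no two blocks of size $>1$ are adjacent), one constructs explicitly either an admissible length-$2$ chain through a reflection between a third block (yielding a nice full interval), or a length-$2$ ascent via a non-simple reflection whose intermediate element fails the decreasing-in-blocks condition (yielding a non-full interval), contradicting the assumption. This detailed combinatorial argument — carefully exploiting the separation of large blocks to rule out the purely two-block configuration of length-$2$ extensions — is the main technical obstacle of the proof.
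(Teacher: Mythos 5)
Your Cases 1 and 2 are fine as far as they go (modulo the incorrect claim that $q>p+1$ forces $k_2>k_1+1$ — if $|J_{k_1+1}|\geq 2$ one can have $q=s_{k_1+1}\geq p+2$ with $k_2=k_1+1$; the niceness conclusion survives because the four positions still meet three blocks). The genuine gap is your "third case", which you explicitly leave as "the main technical obstacle": this is not a residual case but the generic and hardest one, and it is exactly where the paper's proof does all its work. For $w\in \sW_{\max}^J$ the covers of $w$ inside $\sW_{\max}^J$ are usually given by \emph{non-simple} transpositions $t_{j_1j_2}$ with $j_1,j_2$ in adjacent blocks (chosen to keep the blockwise-decreasing condition), so your set $I_J(w)$ of admissible simple ascents can easily be empty or a singleton, and even your Case 2 can fall through: for $n=5$, $J=(\{1,2\},\{3\},\{4,5\})$ and $w=(4,1,3,5,2)$ one has $I_J(w)=\{2,3\}$ but both $s_2s_3w=(4,5,1,3,2)$ and $s_3s_2w=(4,3,5,1,2)$ fail to lie in $\sW_{\max}^J$, so this $w$ lands in your unproved case. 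The paper's argument is organized instead around the set of ascent positions $I=\{j: a_j<a_{j+1}\}\subseteq\{s_1,\dots,s_{r-1}\}$ (one block boundary, two adjacent boundaries, or two non-adjacent boundaries) and in each case writes down explicit length-two extensions by non-simple transpositions, e.g.\ $u=t_{s_k(j-1)}t_{s_kj}w$ whose intermediate element $t_{s_k(j-1)}w\notin\sW_{\max}^J$ produces a non-full interval, or $u=t_{s_{k+1}j_2}t_{s_ks_{k+1}}w$ producing a full and nice one; it is only in these subcases that the hypothesis that blocks of size $>1$ are non-adjacent is actually invoked.

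Your proposed way out — assume every length-two extension in $\sW_{\max}^J$ is full and not nice and "construct explicitly" a good chain to reach a contradiction — gives no mechanism: producing such a chain is precisely the statement to be proved, and nothing in your sketch uses the block-size hypothesis concretely (recall Remark \ref{bruhInv} shows the statement is false without it, so any correct argument must use it in an essential way). To repair the proof you would need to carry out the case analysis on $I$ with explicit transpositions as above, i.e.\ essentially reproduce the paper's cases (2) and (3); as written, the proposal proves only the easy configurations.
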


We will frequently use the following easy lemma (see for example \cite[Lemma 2.1.4]{BjBr}):

\begin{lemma}\label{findcover} Let $w=\begin{pmatrix}
		1& 2&\cdots & n\\ a_1 & a_2 & \cdots &a_n
	\end{pmatrix}\in S_n$ and $i<j$. Then $t_{ij} w$ is a cover of $w$ (i.e.\ $t_{ij}w>w$ and $\lg(t_{ij}w)=\lg(w)+1$) if and only if $a_i<a_j$ and there does not exist $i$, $j$, $k$ with $i<k<j$ such that $a_i<a_k<a_j$. 
\end{lemma}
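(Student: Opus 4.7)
The plan is to compute $\lg(t_{ij}w) - \lg(w)$ directly from the one-line notation. Recall that for any permutation $u\in S_n$ with one-line notation $(b_1,\dots,b_n)$, one has $\lg(u)=\#\{(p,q):p<q,\ b_p>b_q\}$ (the number of inversions). By the convention recalled just above the lemma, the one-line notation of $t_{ij}w$ is obtained from that of $w$ by exchanging the entries at positions $i$ and $j$ (so position $i$ now carries $a_j$ and position $j$ carries $a_i$). So I will just bookkeep which pairs of positions have their inversion status changed.

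First I would observe that for a pair of positions $(p,q)$ with $\{p,q\}\cap\{i,j\}=\emptyset$, nothing changes. For a pair containing exactly one of $i,j$ and another position $k\notin\{i,j\}$ with either $k<i$ or $k>j$, the inversion status is also unchanged, because the positions $i,j$ lie on the same side of $k$ and the only effect of the swap is to permute which of $a_i,a_j$ sits at which of the two positions, leaving the total contribution to inversions involving $k$ invariant. Hence the only pairs whose inversion status can change are the pair $(i,j)$ itself and the pairs $(i,k),(k,j)$ for $i<k<j$. The pair $(i,j)$ contributes $+1$ to $\lg(t_{ij}w)-\lg(w)$ if $a_i<a_j$ and $-1$ if $a_i>a_j$. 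For a fixed $k$ with $i<k<j$, a direct case analysis on the relative position of $a_k$ with respect to $a_i$ and $a_j$ shows that the joint contribution of $(i,k)$ and $(k,j)$ to $\lg(t_{ij}w)-\lg(w)$ is $+2$ when $a_i<a_k<a_j$, is $-2$ when $a_j<a_k<a_i$, and is $0$ otherwise. Summing, I obtain
\[\lg(t_{ij}w)-\lg(w)=\begin{cases} 1+2\,\#\{k:i<k<j,\ a_i<a_k<a_j\} & \text{if } a_i<a_j,\\ -1-2\,\#\{k:i<k<j,\ a_j<a_k<a_i\} & \text{if } a_i>a_j.\end{cases}\]

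Finally, I will invoke the standard fact that the Bruhat order on the Coxeter group $S_n$ is such that $w'>w$ covers $w$ (i.e.\ $w'>w$ and $\lg(w')=\lg(w)+1$) if and only if $w'=tw$ for some reflection $t$ with $\lg(tw)=\lg(w)+1$; in particular any reflection either strictly increases or strictly decreases the length. Combined with the length formula above, $t_{ij}w$ covers $w$ if and only if $a_i<a_j$ and the set $\{k:i<k<j,\ a_i<a_k<a_j\}$ is empty, which is exactly the statement of the lemma. The entire argument is elementary combinatorics once the one-line notation convention is unwound; there is no genuine obstacle, and in writing it up I expect the only subtlety is carefully justifying the case-by-case cancellation in the contributions of the pairs $(i,k)$ and $(k,j)$.
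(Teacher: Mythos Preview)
Your proof is correct. The paper does not actually give a proof but simply cites \cite[Lemma~2.1.4]{BjBr}; your inversion-counting argument is precisely the standard elementary proof one finds there (or reconstructs oneself), so there is nothing to add.
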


\begin{proof}[Proof of Proposition \ref{bruhInt}]
	Let $I:=\{j\ |\ a_j<a_{j+1}\}$. Since $w\in \sW_{\max}^J$, for any $j=1, \dots, r$, the sequence $(a_j)_{j\in J_i}$ is decreasing. We have thus $I\subset \{s_j\ |\ j=1, \dots, r-1\}$. Since $w\neq w_{0}$, $I\neq \emptyset$. We prove the proposition by a (somewhat tedious) case-by-case discussion. 
	
	{\bf Case (1)}: Assume there exist $s_{k_1}, s_{k_2} \in I$ such that $k_2>k_1+1$. In this case, we don't need the assumption on $\{J_i\}$. For $k_i$, consider the set $I_i:=\{(j_1, j_2)\ | \ j_1 \in J_{k_i}, j_2\in J_{k_i+1}, a_{j_1}<a_{j_2}\}$. We have $(s_i, s_i+1)\in I_i$, so $I_i\neq \emptyset$. Let $(j_{i,1}, j_{i,2})\in I_i$ such that $j_{i,1}-j_{i,2}$ is maximal. We have thus $a_{j_{i,1}}<a_{j_{i,2}}$ and $\begin{cases}a_{j_{i,1}-1}>a_{j_{i,2}} \text{ if } j_{i,1}-1\in J_{k_i} \\ a_{j_{i,1}}>a_{j_{i,2}+1} \text{ if }j_{,2}+1\in J_{k_i+1}.\end{cases}$\!\!\!\!By Lemma \ref{findcover}, $t_{j_{i,1}j_{i,2}}w$ is a cover of $w$ (in fact, for any $(j_1,j_2)\in I_i$, $t_{j_1j_2}w$ is a cover of $w$). By the choice of $(j_{i,1},j_{i,2})$ and Lemma \ref{maxcrit}, we see that $t_{j_{i,1}j_{i,2}}w\in \sW^J_{\max}$. As $k_2>k_1+1$, $t_{j_{1,1}j_{1,2}}$ and $t_{j_{2,1} j_{2,2}}$ commute. Put $u:=t_{j_{1,1}j_{1,2}}t_{j_{2,1} j_{2,2}} w$, then it is easy to see that $[w,u]_J=[w,u]=w< t_{j_{1,1}j_{1,2}}w, t_{j_{2,1} j_{2,2}} w< u$ is full and nice.
	
	{\bf Case (2)}: There exists $k$ such that $I=\{s_k\}$. We have either $n_k=1$ or $n_{k+1}=1$. Note that we cannot have $n_k=n_{k+1}=1$ since if so $\lg(w)=\lg(w_0)-1$. 
	
	(2.1) Assume $n_k=1$, let $j\in J_{k+1}$ be maximal such that $a_{s_k}<a_{j}$. We have $j\geq s_{k}+2$ since otherwise $\lg(w)=\lg(w_0)-1$. Put $u:=t_{s_k (j-1)} t_{s_k j} w$, then $\lg(u)=\lg(w)+2$ and $[w,u]=w< t_{s_k j} w, t_{s_k(j-1)} w< u$. However $t_{s_k(j-1)}\notin \sW_{\max}^J$. So $[w,u]_J$ is not full. 
	
	(2.2) The case $n_{k+1}=1$ is parallel to (2.1). 
	
	{\bf Case (3)}: There exists $k$ such that $I=\{s_k,s_{k+1}\}$. By our assumption on $\{J_i\}$, we can further divide this case into two cases: $n_{k+1}=1$ or $n_{k}=n_{k+2}=1$. 
	
	(3.1) $n_{k+1}=1$:	let $j\in J_k$ be minimal such that $a_j<a_{s_k+1}$ (note that $s_k+1=s_{k+1}$ in this case). 
	
	(3.1.1) If $j<s_k$, put $u:=t_{j+1 s_{k+1}}t_{j s_{k+1}}w\in \sW^J_{\max}$. In $S_n$, the interval $[w,u]$ is given by $w<t_{js_{k+1}}w, t_{(j+1)s_{k+1}} w <u$. However, $t_{(j+1)s_{k+1}} w\notin \sW^J_{\max}$. So $[w,u]_J$ is not full. 
	
	(3.1.2) If $j=s_k$, let $j_1\in J_{k+2}$ (resp.\ $j_2\in J_{k+2}$) be maximal such that $a_{j_1}>a_{s_{k+1}}$ (resp.\ $a_{j_2}>a_{s_k}$) (recalling that $a_{s_{k+1}+1}>a_{s_{k+1}}>a_{s_k}$). Note that $j_2\geq j_1$. 
	
	(3.1.2.1) If $a_{s_{k+1}}>a_{j_2}$, then put $u:=t_{s_{k+1}j_2} t_{s_k s_{k+1}}w\in \sW^J_{\max}$. Then $[w,u]_J=[w,u]=w<t_{s_ks_{k+1}} w, t_{s_k j_2} w <u$. Indeed, as $a_{s_{k+1}}>a_{j_2}$, $t_{s_kj_2}w$ is a cover of $w$. It is also easy to see that $[w,u]_J$ is nice.
	
	(3.1.2.2) If $a_{s_{k+1}}<a_{j_2}$ (so $j_1=j_2$), put (again) $u:=t_{s_{k+1}j_2} t_{s_k s_{k+1}}w\in \sW^J_{\max}$. In this case we have $[w,u]_J=[w,u]=w<t_{s_ks_{k+1}} w, t_{s_{k+1} j_2} w < u$, and $[w,u]_J$ is nice (so the only difference with (3.1.2.1) is that $t_{s_k j_2}w$ is replaced by $t_{s_{k+1} j_2}w$).
	
	(3.2) $n_k=n_{k+2}=1$. Let $j_1\in J_{k+1}$ (resp.\ $j_2\in J_{k+1}$) be maximal (resp.\ minimal) such that $a_{j_1}>a_{s_k}$ (resp.\ $a_{j_2}<a_{s_{k+2}}$).
	
	(3.2.1) If $j_1\geq s_k+1$ (resp.\ $j_2 \leq s_{k+1}-1$), which implies $n_{k+1}>1$, then one can use the same argument as in (2.1) (resp.\ (2.2)) to find $u\in \sW^J_{\max}$ such that $[w,u]_J$ is not full.
	
	(3.2.2) \ If $j_1=s_k+1$ \ and \ $j_2=s_{k+1}$, \ and \ if \ $n_{k+1}>1$ \ (so \ $j_2>j_1$), \ then \ put $u\!:=\!t_{s_k (s_k+1)} t_{s_{k+1}s_{k+2}} w\!= t_{s_{k+1}s_{k+2}}t_{s_k (s_k+1)} w$. It is easy to see $[w,u]_J=w< t_{s_k (s_k+1)}w, t_{s_{k+1}s_{k+2}} w<u$ is full and nice.
	
	(3.2.3) If $n_{k+1}=1$, this is a special case of (3.1.2.2).
	
	This concludes the proof.	
\end{proof}

\begin{remark}\label{bruhInv}
Without the assumption on $\{J_i\}$ in Proposition \ref{bruhInt}, it could happen that for any $u\in \sW_{\max}^J$, $u>w$ and $\lg(u)=\lg(w)+2$, $[w,u]_J$ is full but not nice. For example, let $n=4$, $r=2$, $J_1=\{1,2\}$, $J_2=\{3,4\}$, and $w=t_{12}t_{34}t_{23}=\begin{pmatrix}
	1& 2 &3 & 4\\
	3 & 1 & 4 & 2
\end{pmatrix}\in \sW_{\max}^J$. Then $u=t_{34}t_{23}t_{12}t_{23}t_{34}=\begin{pmatrix}
1 & 2 & 3 & 4\\
4 & 2 & 3 &1
\end{pmatrix}$ is the only element in $\sW_{\max}^J$ such that $u>w$ and $\lg(u)=\lg(w)+2$. One can check $[w,u]_J=[w,u]=\Big\{\begin{pmatrix}
1 & 2 & 3 & 4\\
4&1&3&2
\end{pmatrix}, \begin{pmatrix}
1 & 2 & 3 & 4\\
3& 2 & 4 & 1
\end{pmatrix}\Big\}$ hence is full. However, in this case, any full interval (of length 2) is not nice. 
\end{remark}

\subsection{Errata to \cite{BHS3}}

In the equality following the definition of the scheme $Z$ in \cite[(2.11)]{BHS3}, $\widetilde \cN \times_{\cN}\widetilde \cN$ should be replaced by $(\widetilde \cN \times_{\cN}\widetilde \cN)^{\red}$ (in fact, it is possible that $\widetilde \cN \times_{\cN}\widetilde \cN=(\widetilde \cN \times_{\cN}\widetilde \cN)^{\red}$, but we don't need it).\\

\noindent
In \cite[\S~4.3]{BHS3}, all $[K:\Q_p] \frac{n(n+3)}{2}$ should be replaced by $n+[K:\Q_p] \frac{n(n+1)}{2}$.\\

\noindent
In \cite[\S~5]{BHS3}, the BGG category $\mathcal O$ should be replaced by its full subcategory ${\mathcal O}_{\alg}$ of objects with integral weights (see \cite{OS}).

\newpage

\Addresses
	\printindex
	\end{document}